\def\dated#1{\def\thedate{#1}}%
 \newdimen\xydashw@@
\newdimen\high%
\newdimen\ul%
\newdimen\wdth%
\def\ratchet#1#2{\ifnum#1<#2\global #1=#2\fi}%
\def\ifnextchar#1#2#3{\let\@tempe%
#1\def\@tempa{#2}\def\@tempb{#3}\futurelet%
    \@tempc\@ifnch}%
\def\@ifnch{\ifx \@tempc \@sptoken \let\@tempd\@xifnch%
      \else \ifx \@tempc \@tempe\let\@tempd\@tempa\else\let\@tempd\@tempb\fi%
      \fi \@tempd}%
\def\:{\let\@sptoken= } \:  
\def\:{\@xifnch} \expandafter\def\: {\futurelet\@tempc\@ifnch}%
\let\ifnextchar\@ifnextchar%
\newdimen\axis \axis=\fontdimen22\textfont2%
\def\scalefactor#1{\ul=#1\ul \X@xbase=#1\X@xbase \Y@ybase=#1\Y@ybase}%
\def\fontscale#1{%
\if#1h\relax%
\font\xydashfont=xydash10 scaled \magstephalf%
\font\xyatipfont=xyatip10 scaled \magstephalf%
\font\xybtipfont=xybtip10 scaled \magstephalf%
\font\xybsqlfont=xybsql10 scaled \magstephalf%
\font\xycircfont=xycirc10 scaled \magstephalf%
\else%
\font\xydashfont=xydash10 scaled \magstep#1%
\font\xyatipfont=xyatip10 scaled \magstep#1%
\font\xybtipfont=xybtip10 scaled \magstep#1%
\font\xybsqlfont=xybsql10 scaled \magstep#1%
\font\xycircfont=xycirc10 scaled \magstep#1%
\fi}%
\def\bfig{\vcenter\bgroup\xy}%
\def\efig{\endxy\egroup}%
\def\car#1#2\nil{#1}%
\def\morphism{\ifnextchar({\morphismp}{\morphismp(0,0)}}%
\def\morphismp(#1){\ifnextchar|{\morphismpp(#1)}{\morphismpp(#1)|a|}}%
\def\morphismpp(#1)|#2|{\ifnextchar/{\morphismppp(#1)|#2|}%
    {\morphismppp(#1)|#2|/>/}}%
\def\morphismppp(#1)|#2|/#3/{%
    \ifnextchar<{\morphismpppp(#1)|#2|/#3/}%
    {\morphismpppp(#1)|#2|/#3/<\default,0>}}%
\def\morphismpppp(#1,#2)|#3|/#4/<#5,#6>[#7`#8;#9]{%
\xend#1\advance \xend by #5%
\yend#2\advance \yend by #6%
\domorphism(#1,#2)|#3|/#4/<#5,#6>[{#7}`{#8};{#9}]}%
\def\domorphism(#1,#2)|#3|/#4/<#5,#6>[#7`#8;#9]{%
\def\next{\car#4.\nil}%
\if@\next\relax%
 \if#3l%
  \ifnum #6>0%
   \POS(#1,#2)*+!!<0ex,\axis>{#7}\ar#4^-{#9} (\xend,\yend)*+!!<0ex,\axis>{#8}%
  \else%
   \POS(#1,#2)*+!!<0ex,\axis>{#7}\ar#4_-{#9} (\xend,\yend)*+!!<0ex,\axis>{#8}%
  \fi%
 \else \if#3m%
    \setbox0\hbox{$#9$}%
   \ifdim \wd0=0pt%
     \POS(#1,#2)*+!!<0ex,\axis>{#7}\ar#4 (\xend,\yend)*+!!<0ex,\axis>{#8}%
   \else%
     \POS(#1,#2)*+!!<0ex,\axis>{#7}\ar#4|-*+<1pt,4pt>{\labelstyle#9}%
       (\xend,\yend)*+!!<0ex,\axis>{#8}%
   \fi%
 \else \if#3r%
  \ifnum #6<0%
   \POS(#1,#2)*+!!<0ex,\axis>{#7}\ar#4^-{#9} (\xend,\yend)*+!!<0ex,\axis>{#8}%
  \else%
   \POS(#1,#2)*+!!<0ex,\axis>{#7}\ar#4_-{#9} (\xend,\yend)*+!!<0ex,\axis>{#8}%
  \fi%
 \else \if#3a%
  \ifnum #5>0%
   \POS(#1,#2)*+!!<0ex,\axis>{#7}\ar#4^-{#9} (\xend,\yend)*+!!<0ex,\axis>{#8}%
  \else%
   \POS(#1,#2)*+!!<0ex,\axis>{#7}\ar#4_-{#9} (\xend,\yend)*+!!<0ex,\axis>{#8}%
  \fi%
 \else \if#3b%
  \ifnum #5<0%
   \POS(#1,#2)*+!!<0ex,\axis>{#7}\ar#4^-{#9} (\xend,\yend)*+!!<0ex,\axis>{#8}%
  \else%
   \POS(#1,#2)*+!!<0ex,\axis>{#7}\ar#4_-{#9} (\xend,\yend)*+!!<0ex,\axis>{#8}%
  \fi%
 \else%
   \POS(#1,#2)*+!!<0ex,\axis>{#7}\ar#4 (\xend,\yend)*+!!<0ex,\axis>{#8}%
 \fi\fi\fi\fi\fi%
\else%
 \if#3l%
  \ifnum #6>0%
   \POS(#1,#2)*+!!<0ex,\axis>{#7}\ar@{#4}^-{#9} (\xend,\yend)*+!!<0ex,\axis>{#8}%
  \else%
   \POS(#1,#2)*+!!<0ex,\axis>{#7}\ar@{#4}_-{#9} (\xend,\yend)*+!!<0ex,\axis>{#8}%
  \fi%
 \else \if#3m%
    \setbox0\hbox{$#9$}%
   \ifdim \wd0=0pt%
     \POS(#1,#2)*+!!<0ex,\axis>{#7}\ar@{#4} (\xend,\yend)*+!!<0ex,\axis>{#8}%
   \else%
     \POS(#1,#2)*+!!<0ex,\axis>{#7}\ar@{#4}|-*+<1pt,4pt>{\labelstyle#9}%
         (\xend,\yend)*+!!<0ex,\axis>{#8}%
   \fi%
 \else \if#3r%
  \ifnum #6<0%
   \POS(#1,#2)*+!!<0ex,\axis>{#7}\ar@{#4}^-{#9} (\xend,\yend)*+!!<0ex,\axis>{#8}%
  \else%
   \POS(#1,#2)*+!!<0ex,\axis>{#7}\ar@{#4}_-{#9} (\xend,\yend)*+!!<0ex,\axis>{#8}%
  \fi%
 \else \if#3a%
  \ifnum #5>0%
   \POS(#1,#2)*+!!<0ex,\axis>{#7}\ar@{#4}^-{#9} (\xend,\yend)*+!!<0ex,\axis>{#8}%
  \else%
   \POS(#1,#2)*+!!<0ex,\axis>{#7}\ar@{#4}_-{#9} (\xend,\yend)*+!!<0ex,\axis>{#8}%
  \fi%
 \else \if#3b%
  \ifnum #5<0%
   \POS(#1,#2)*+!!<0ex,\axis>{#7}\ar@{#4}^-{#9} (\xend,\yend)*+!!<0ex,\axis>{#8}%
  \else%
   \POS(#1,#2)*+!!<0ex,\axis>{#7}\ar@{#4}_-{#9} (\xend,\yend)*+!!<0ex,\axis>{#8}%
  \fi%
 \else%
   \POS(#1,#2)*+!!<0ex,\axis>{#7}\ar@{#4} (\xend,\yend)*+!!<0ex,\axis>{#8}%
 \fi\fi\fi\fi\fi%
\fi\ignorespaces}%
\def\vect(#1,#2)/#3/<#4,#5>{%
 \xend#1 \yend#2 \advance\xend by #4 \advance\yend by #5%
     \POS(#1,#2)\ar#3 (\xend,\yend)}%
\def\squarepppp(#1,#2)|#3|/#4`#5`#6`#7/<#8>[#9]{%
\xpos#1\ypos#2%
\def\next|##1##2##3##4|{%
 \def\xa{##1}\def\xb{##2}\def\xc{##3}\def\xd{##4}\ignorespaces}%
\next|#3|%
\def\next<##1,##2>{\deltax=##1\deltay=##2\ignorespaces}%
\next<#8>%
\def\next[##1`##2`##3`##4;##5`##6`##7`##8]{%
    \def\nodea{##1}\def\nodeb{##2}\def\nodec{##3}\def\noded{##4}%
    \def\labela{##5}\def\labelb{##6}\def\labelc{##7}\def\labeld{##8}\ignorespaces}%
\next[#9]%
\morphism(\xpos,\ypos)|\xd|/{#7}/<\deltax,0>[\nodec`\noded;\labeld]%
\advance \ypos by \deltay%
\morphism(\xpos,\ypos)|\xb|/{#5}/<0,-\deltay>[\nodea`\nodec;\labelb]%
\morphism(\xpos,\ypos)|\xa|/{#4}/<\deltax,0>[\nodea`\nodeb;\labela]%
 \advance \xpos by \deltax%
\morphism(\xpos,\ypos)|\xc|/{#6}/<0,-\deltay>[\nodeb`\noded;\labelc]%
\ignorespaces}%
\def\square{\ifnextchar({\squarep}{\squarep(0,0)}}%
\def\squarep(#1){\ifnextchar|{\squarepp(#1)}{\squarepp(#1)|alrb|}}%
\def\squarepp(#1)|#2|{\ifnextchar/{\squareppp(#1)|#2|}%
    {\squareppp(#1)|#2|/>`>`>`>/}}%
\def\squareppp(#1)|#2|/#3`#4`#5`#6/{%
    \ifnextchar<{\squarepppp(#1)|#2|/#3`#4`#5`#6/}%
    {\squarepppp(#1)|#2|/#3`#4`#5`#6/<\default,\default>}}%
\def\diamondpppp(#1,#2)|#3|/#4`#5`#6`#7/<#8>[#9]{%
\xpos#1\ypos#2%
\def\next|##1##2##3##4|{%
 \def\xa{##1}\def\xb{##2}\def\xc{##3}\def\xd{##4}\ignorespaces}%
\next|#3|%
\def\next<##1,##2>{\deltax=##1\deltay=##2\ignorespaces}%
\next<#8>%
\def\next[##1`##2`##3`##4;##5`##6`##7`##8]{%
    \def\nodea{##1}\def\nodeb{##2}\def\nodec{##3}\def\noded{##4}%
    \def\labela{##5}\def\labelb{##6}\def\labelc{##7}%
\def\labeld{##8}\ignorespaces}%
\next[#9]%
\advance\ypos\deltay
\morphism(\xpos,\ypos)|\xc|/{#6}/<\deltax,-\deltay>[\nodeb`\noded;\labelc]%
\advance\xpos \deltax
\advance\xpos \deltax
\morphism(\xpos,\ypos)|\xd|/{#7}/<-\deltax,-\deltay>[\nodec`\noded;\labeld]%
\advance\ypos\deltay \advance\xpos -\deltax
\morphism(\xpos,\ypos)|\xa|/{#4}/<-\deltax,-\deltay>[\nodea`\nodeb;\labela]%
\morphism(\xpos,\ypos)|\xb|/{#5}/<\deltax,-\deltay>[\nodea`\nodec;\labelb]%
}
\def\diamondp(#1){\ifnextchar|{\diamondpp(#1)}{\diamondpp(#1)|lrlr|}}%
\def\diamondpp(#1)|#2|{\ifnextchar/{\diamondppp(#1)|#2|}%
    {\diamondppp(#1)|#2|/>`>`>`>/}}%
\def\diamondppp(#1)|#2|/#3`#4`#5`#6/{%
    \ifnextchar<{\diamondpppp(#1)|#2|/#3`#4`#5`#6/}%
    {\diamondpppp(#1)|#2|/#3`#4`#5`#6/<400,400>}}%
\def\ptrianglepppp(#1,#2)|#3|/#4`#5`#6/<#7>[#8]{%
\xpos#1\ypos#2%
\def\next|##1##2##3|{\def\xa{##1}\def\xb{##2}\def\xc{##3}}%
\next|#3|%
\def\next<##1,##2>{\deltax=##1\deltay=##2\ignorespaces}%
\next<#7>%
\def\next[##1`##2`##3;##4`##5`##6]{%
    \def\nodea{##1}\def\nodeb{##2}\def\nodec{##3}%
    \def\labela{##4}\def\labelb{##5}\def\labelc{##6}}%
\next[#8]%
\advance\ypos by \deltay%
\morphism(\xpos,\ypos)|\xa|/{#4}/<\deltax,0>[\nodea`\nodeb;\labela]%
\morphism(\xpos,\ypos)|\xb|/{#5}/<0,-\deltay>[\nodea`\nodec;\labelb]%
\advance\xpos by \deltax%
\morphism(\xpos,\ypos)|\xc|/{#6}/<-\deltax,-\deltay>[\nodeb`\nodec;\labelc]%
\ignorespaces}%
\def\qtrianglepppp(#1,#2)|#3|/#4`#5`#6/<#7>[#8]{%
\xpos#1\ypos#2%
\def\next|##1##2##3|{\def\xa{##1}\def\xb{##2}\def\xc{##3}}%
\next|#3|%
\def\next<##1,##2>{\deltax=##1\deltay=##2\ignorespaces}%
\next<#7>%
\def\next[##1`##2`##3;##4`##5`##6]{%
    \def\nodea{##1}\def\nodeb{##2}\def\nodec{##3}%
    \def\labela{##4}\def\labelb{##5}\def\labelc{##6}}%
\next[#8]%
\advance\ypos by \deltay%
\morphism(\xpos,\ypos)|\xa|/{#4}/<\deltax,0>[\nodea`\nodeb;\labela]%
\morphism(\xpos,\ypos)|\xb|/{#5}/<\deltax,-\deltay>[\nodea`\nodec;\labelb]%
\advance\xpos by \deltax%
\morphism(\xpos,\ypos)|\xc|/{#6}/<0,-\deltay>[\nodeb`\nodec;\labelc]%
\ignorespaces}%
\def\dtrianglepppp(#1,#2)|#3|/#4`#5`#6/<#7>[#8]{%
\xpos#1\ypos#2%
\def\next|##1##2##3|{\def\xa{##1}\def\xb{##2}\def\xc{##3}}%
\next|#3|%
\def\next<##1,##2>{\deltax=##1\deltay=##2\ignorespaces}%
\next<#7>%
\def\next[##1`##2`##3;##4`##5`##6]{%
    \def\nodea{##1}\def\nodeb{##2}\def\nodec{##3}%
    \def\labela{##4}\def\labelb{##5}\def\labelc{##6}}%
\next[#8]%
\morphism(\xpos,\ypos)|\xc|/{#6}/<\deltax,0>[\nodeb`\nodec;\labelc]%
\advance\ypos by \deltay\advance \xpos by \deltax%
\morphism(\xpos,\ypos)|\xa|/{#4}/<-\deltax,-\deltay>[\nodea`\nodeb;\labela]%
\morphism(\xpos,\ypos)|\xb|/{#5}/<0,-\deltay>[\nodea`\nodec;\labelb]%
\ignorespaces}%
\def\btrianglepppp(#1,#2)|#3|/#4`#5`#6/<#7>[#8]{%
\xpos#1\ypos#2%
\def\next|##1##2##3|{\def\xa{##1}\def\xb{##2}\def\xc{##3}}%
\next|#3|%
\def\next<##1,##2>{\deltax=##1\deltay=##2\ignorespaces}%
\next<#7>%
\def\next[##1`##2`##3;##4`##5`##6]{%
    \def\nodea{##1}\def\nodeb{##2}\def\nodec{##3}%
    \def\labela{##4}\def\labelb{##5}\def\labelc{##6}}%
\next[#8]%
\morphism(\xpos,\ypos)|\xc|/{#6}/<\deltax,0>[\nodeb`\nodec;\labelc]%
\advance\ypos by \deltay%
\morphism(\xpos,\ypos)|\xa|/{#4}/<0,-\deltay>[\nodea`\nodeb;\labela]%
\morphism(\xpos,\ypos)|\xb|/{#5}/<\deltax,-\deltay>[\nodea`\nodec;\labelb]%
\ignorespaces}%
\def\Atrianglepppp(#1,#2)|#3|/#4`#5`#6/<#7>[#8]{%
\xpos#1\ypos#2%
\def\next|##1##2##3|{\def\xa{##1}\def\xb{##2}\def\xc{##3}}%
\next|#3|%
\def\next<##1,##2>{\deltax=##1\deltay=##2\ignorespaces}%
\next<#7>%
\def\next[##1`##2`##3;##4`##5`##6]{%
    \def\nodea{##1}\def\nodeb{##2}\def\nodec{##3}%
    \def\labela{##4}\def\labelb{##5}\def\labelc{##6}}%
\next[#8]%
\multiply\deltax by 2%
\morphism(\xpos,\ypos)|\xc|/{#6}/<\deltax,0>[\nodeb`\nodec;\labelc]%
\divide\deltax by 2%
\advance\ypos by \deltay\advance\xpos by \deltax%
\morphism(\xpos,\ypos)|\xa|/{#4}/<-\deltax,-\deltay>[\nodea`\nodeb;\labela]%
\morphism(\xpos,\ypos)|\xb|/{#5}/<\deltax,-\deltay>[\nodea`\nodec;\labelb]%
\ignorespaces}%
\def\Vtrianglepppp(#1,#2)|#3|/#4`#5`#6/<#7>[#8]{%
\xpos#1\ypos#2%
\def\next|##1##2##3|{\def\xa{##1}\def\xb{##2}\def\xc{##3}}%
\next|#3|%
\def\next<##1,##2>{\deltax=##1\deltay=##2\ignorespaces}%
\next<#7>%
\def\next[##1`##2`##3;##4`##5`##6]{%
    \def\nodea{##1}\def\nodeb{##2}\def\nodec{##3}%
    \def\labela{##4}\def\labelb{##5}\def\labelc{##6}}%
\next[#8]%
\advance\ypos by \deltay%
\morphism(\xpos,\ypos)|\xb|/{#5}/<\deltax,-\deltay>[\nodea`\nodec;\labelb]%
\multiply\deltax by 2%
\morphism(\xpos,\ypos)|\xa|/{#4}/<\deltax,0>[\nodea`\nodeb;\labela]%
\advance\xpos by \deltax \divide \deltax by 2%
\morphism(\xpos,\ypos)|\xc|/{#6}/<-\deltax,-\deltay>[\nodeb`\nodec;\labelc]%
\ignorespaces}%
\def\Ctrianglepppp(#1,#2)|#3|/#4`#5`#6/<#7>[#8]{%
\xpos#1\ypos#2%
\def\next|##1##2##3|{\def\xa{##1}\def\xb{##2}\def\xc{##3}}%
\next|#3|%
\def\next<##1,##2>{\deltax=##1\deltay=##2\ignorespaces}%
\next<#7>%
\def\next[##1`##2`##3;##4`##5`##6]{%
    \def\nodea{##1}\def\nodeb{##2}\def\nodec{##3}%
    \def\labela{##4}\def\labelb{##5}\def\labelc{##6}}%
\next[#8]%
\advance \ypos by \deltay%
\morphism(\xpos,\ypos)|\xc|/{#6}/<\deltax,-\deltay>[\nodeb`\nodec;\labelc]%
\advance\ypos by \deltay \advance \xpos by \deltax%
\morphism(\xpos,\ypos)|\xa|/{#4}/<-\deltax,-\deltay>[\nodea`\nodeb;\labela]%
\multiply\deltay by 2%
\morphism(\xpos,\ypos)|\xb|/{#5}/<0,-\deltay>[\nodea`\nodec;\labelb]%
\ignorespaces}%
\def\Dtrianglepppp(#1,#2)|#3|/#4`#5`#6/<#7>[#8]{%
\xpos#1\ypos#2%
\def\next|##1##2##3|{\def\xa{##1}\def\xb{##2}\def\xc{##3}}%
\next|#3|%
\def\next<##1,##2>{\deltax=##1\deltay=##2\ignorespaces}%
\next<#7>%
\def\next[##1`##2`##3;##4`##5`##6]{%
    \def\nodea{##1}\def\nodeb{##2}\def\nodec{##3}%
    \def\labela{##4}\def\labelb{##5}\def\labelc{##6}}%
\next[#8]%
\advance\xpos by \deltax \advance\ypos by \deltay%
\morphism(\xpos,\ypos)|\xc|/{#6}/<-\deltax,-\deltay>[\nodeb`\nodec;\labelc]%
\advance\xpos by -\deltax \advance\ypos by \deltay%
\morphism(\xpos,\ypos)|\xb|/{#5}/<\deltax,-\deltay>[\nodea`\nodeb;\labelb]%
\multiply \deltay by 2%
\morphism(\xpos,\ypos)|\xa|/{#4}/<0,-\deltay>[\nodea`\nodec;\labela]%
\ignorespaces}%
\def\ptrianglep(#1){\ifnextchar|{\ptrianglepp(#1)}{\ptrianglepp(#1)|alr|}}%
\def\ptrianglepp(#1)|#2|{\ifnextchar/{\ptriangleppp(#1)|#2|}%
    {\ptriangleppp(#1)|#2|/>`>`>/}}%
\def\ptriangleppp(#1)|#2|/#3`#4`#5/{%
    \ifnextchar<{\ptrianglepppp(#1)|#2|/#3`#4`#5/}%
    {\ptrianglepppp(#1)|#2|/#3`#4`#5/<\default,\default>}}%
\def\qtrianglep(#1){\ifnextchar|{\qtrianglepp(#1)}{\qtrianglepp(#1)|alr|}}%
\def\qtrianglepp(#1)|#2|{\ifnextchar/{\qtriangleppp(#1)|#2|}%
    {\qtriangleppp(#1)|#2|/>`>`>/}}%
\def\qtriangleppp(#1)|#2|/#3`#4`#5/{%
    \ifnextchar<{\qtrianglepppp(#1)|#2|/#3`#4`#5/}%
    {\qtrianglepppp(#1)|#2|/#3`#4`#5/<\default,\default>}}%
\def\dtrianglep(#1){\ifnextchar|{\dtrianglepp(#1)}{\dtrianglepp(#1)|lrb|}}%
\def\dtrianglepp(#1)|#2|{\ifnextchar/{\dtriangleppp(#1)|#2|}%
    {\dtriangleppp(#1)|#2|/>`>`>/}}%
\def\dtriangleppp(#1)|#2|/#3`#4`#5/{%
    \ifnextchar<{\dtrianglepppp(#1)|#2|/#3`#4`#5/}%
    {\dtrianglepppp(#1)|#2|/#3`#4`#5/<\default,\default>}}%
\def\btrianglep(#1){\ifnextchar|{\btrianglepp(#1)}{\btrianglepp(#1)|lrb|}}%
\def\btrianglepp(#1)|#2|{\ifnextchar/{\btriangleppp(#1)|#2|}%
    {\btriangleppp(#1)|#2|/>`>`>/}}%
\def\btriangleppp(#1)|#2|/#3`#4`#5/{%
    \ifnextchar<{\btrianglepppp(#1)|#2|/#3`#4`#5/}%
    {\btrianglepppp(#1)|#2|/#3`#4`#5/<\default,\default>}}%
\def\Atrianglep(#1){\ifnextchar|{\Atrianglepp(#1)}{\Atrianglepp(#1)|lrb|}}%
\def\Atrianglepp(#1)|#2|{\ifnextchar/{\Atriangleppp(#1)|#2|}%
    {\Atriangleppp(#1)|#2|/>`>`>/}}%
\def\Atriangleppp(#1)|#2|/#3`#4`#5/{%
    \ifnextchar<{\Atrianglepppp(#1)|#2|/#3`#4`#5/}%
    {\Atrianglepppp(#1)|#2|/#3`#4`#5/<\default,\default>}}%
\def\Vtrianglep(#1){\ifnextchar|{\Vtrianglepp(#1)}{\Vtrianglepp(#1)|alb|}}%
\def\Vtrianglepp(#1)|#2|{\ifnextchar/{\Vtriangleppp(#1)|#2|}%
    {\Vtriangleppp(#1)|#2|/>`>`>/}}%
\def\Vtriangleppp(#1)|#2|/#3`#4`#5/{%
    \ifnextchar<{\Vtrianglepppp(#1)|#2|/#3`#4`#5/}%
    {\Vtrianglepppp(#1)|#2|/#3`#4`#5/<\default,\default>}}%
\def\Ctrianglep(#1){\ifnextchar|{\Ctrianglepp(#1)}{\Ctrianglepp(#1)|arb|}}%
\def\Ctrianglepp(#1)|#2|{\ifnextchar/{\Ctriangleppp(#1)|#2|}%
    {\Ctriangleppp(#1)|#2|/>`>`>/}}%
\def\Ctriangleppp(#1)|#2|/#3`#4`#5/{%
    \ifnextchar<{\Ctrianglepppp(#1)|#2|/#3`#4`#5/}%
    {\Ctrianglepppp(#1)|#2|/#3`#4`#5/<\default,\default>}}%
\def\Dtrianglep(#1){\ifnextchar|{\Dtrianglepp(#1)}{\Dtrianglepp(#1)|lab|}}%
\def\Dtrianglepp(#1)|#2|{\ifnextchar/{\Dtriangleppp(#1)|#2|}%
    {\Dtriangleppp(#1)|#2|/>`>`>/}}%
\def\Dtriangleppp(#1)|#2|/#3`#4`#5/{%
    \ifnextchar<{\Dtrianglepppp(#1)|#2|/#3`#4`#5/}%
    {\Dtrianglepppp(#1)|#2|/#3`#4`#5/<\default,\default>}}%
\def\Atrianglepairpppp(#1)|#2|/#3`#4`#5`#6`#7/<#8>[#9]{%
\def\next(##1,##2){\xpos##1\ypos##2}%
\next(#1)%
\def\next|##1##2##3##4##5|{\def\xa{##1}\def\xb{##2}%
\def\xc{##3}\def\xd{##4}\def\xe{##5}}%
\next|#2|%
\def\next<##1,##2>{\deltax=##1\deltay=##2\ignorespaces}%
\next<#8>%
\def\next[##1`##2`##3`##4;##5`##6`##7`##8`##9]{%
 \def\nodea{##1}\def\nodeb{##2}\def\nodec{##3}\def\noded{##4}%
 \def\labela{##5}\def\labelb{##6}\def\labelc{##7}\def\labeld{##8}\def\labele{##9}}%
\next[#9]%
\morphism(\xpos,\ypos)|\xd|/{#6}/<\deltax,0>[\nodeb`\nodec;\labeld]%
\advance\xpos by \deltax%
\morphism(\xpos,\ypos)|\xe|/{#7}/<\deltax,0>[\nodec`\noded;\labele]%
\advance\ypos by \deltay%
\morphism(\xpos,\ypos)|\xa|/{#3}/<-\deltax,-\deltay>[\nodea`\nodeb;\labela]%
\morphism(\xpos,\ypos)|\xb|/{#4}/<0,-\deltay>[\nodea`\nodec;\labelb]%
\morphism(\xpos,\ypos)|\xc|/{#5}/<\deltax,-\deltay>[\nodea`\noded;\labelc]%
\ignorespaces}%
\def\Vtrianglepairpppp(#1)|#2|/#3`#4`#5`#6`#7/<#8>[#9]{%
\def\next(##1,##2){\xpos##1\ypos##2}%
\next(#1)%
\def\next|##1##2##3##4##5|{\def\xa{##1}\def\xb{##2}%
\def\xc{##3}\def\xd{##4}\def\xe{##5}}%
\next|#2|%
\def\next<##1,##2>{\deltax=##1\deltay=##2\ignorespaces}%
\next<#8>%
\def\next[##1`##2`##3`##4;##5`##6`##7`##8`##9]{%
 \def\nodea{##1}\def\nodeb{##2}\def\nodec{##3}\def\noded{##4}%
 \def\labela{##5}\def\labelb{##6}\def\labelc{##7}\def\labeld{##8}\def\labele{##9}}%
\next[#9]%
\advance\ypos by \deltay%
\morphism(\xpos,\ypos)|\xa|/{#3}/<\deltax,0>[\nodea`\nodeb;\labela]%
\morphism(\xpos,\ypos)|\xc|/{#5}/<\deltax,-\deltay>[\nodea`\noded;\labelc]%
\advance\xpos by \deltax%
\morphism(\xpos,\ypos)|\xb|/{#4}/<\deltax,0>[\nodeb`\nodec;\labelb]%
\morphism(\xpos,\ypos)|\xd|/{#6}/<0,-\deltay>[\nodeb`\noded;\labeld]%
\advance\xpos by \deltax%
\morphism(\xpos,\ypos)|\xe|/{#7}/<-\deltax,-\deltay>[\nodec`\noded;\labele]%
\ignorespaces}%
\def\Ctrianglepairpppp(#1)|#2|/#3`#4`#5`#6`#7/<#8>[#9]{%
\def\next(##1,##2){\xpos##1\ypos##2}%
\next(#1)%
\def\next|##1##2##3##4##5|{\def\xa{##1}\def\xb{##2}%
\def\xc{##3}\def\xd{##4}\def\xe{##5}}%
\next|#2|%
\def\next<##1,##2>{\deltax=##1\deltay=##2\ignorespaces}%
\next<#8>%
\def\next[##1`##2`##3`##4;##5`##6`##7`##8`##9]{%
 \def\nodea{##1}\def\nodeb{##2}\def\nodec{##3}\def\noded{##4}%
 \def\labela{##5}\def\labelb{##6}\def\labelc{##7}\def\labeld{##8}\def\labele{##9}}%
\next[#9]%
\advance\ypos by \deltay%
\morphism(\xpos,\ypos)|\xe|/{#7}/<0,-\deltay>[\nodec`\noded;\labele]%
\advance\xpos by -\deltax%
\morphism(\xpos,\ypos)|\xc|/{#5}/<\deltax,0>[\nodeb`\nodec;\labelc]%
\morphism(\xpos,\ypos)|\xd|/{#6}/<\deltax,-\deltay>[\nodeb`\noded;\labeld]%
\advance\ypos by \deltay%
\advance\xpos by \deltax%
\morphism(\xpos,\ypos)|\xa|/{#3}/<-\deltax,-\deltay>[\nodea`\nodeb;\labela]%
\morphism(\xpos,\ypos)|\xb|/{#4}/<0,-\deltay>[\nodea`\nodec;\labelb]%
\ignorespaces}%
\def\Dtrianglepairpppp(#1)|#2|/#3`#4`#5`#6`#7/<#8>[#9]{%
\def\next(##1,##2){\xpos##1\ypos##2}%
\next(#1)%
\def\next|##1##2##3##4##5|{\def\xa{##1}\def\xb{##2}%
\def\xc{##3}\def\xd{##4}\def\xe{##5}}%
\next|#2|%
\def\next<##1,##2>{\deltax=##1\deltay=##2\ignorespaces}%
\next<#8>%
\def\next[##1`##2`##3`##4;##5`##6`##7`##8`##9]{%
 \def\nodea{##1}\def\nodeb{##2}\def\nodec{##3}\def\noded{##4}%
 \def\labela{##5}\def\labelb{##6}\def\labelc{##7}\def\labeld{##8}\def\labele{##9}}%
\next[#9]%
\advance\ypos by \deltay%
\morphism(\xpos,\ypos)|\xc|/{#5}/<\deltax,0>[\nodeb`\nodec;\labelc]%
\morphism(\xpos,\ypos)|\xd|/{#6}/<0,-\deltay>[\nodeb`\noded;\labeld]%
\advance\ypos by \deltay%
\morphism(\xpos,\ypos)|\xa|/{#3}/<0,-\deltay>[\nodea`\nodeb;\labela]%
\morphism(\xpos,\ypos)|\xb|/{#4}/<\deltax,-\deltay>[\nodea`\nodec;\labelb]%
\advance\ypos by -\deltay%
\advance\xpos by \deltax%
\morphism(\xpos,\ypos)|\xe|/{#7}/<-\deltax,-\deltay>[\nodec`\noded;\labele]%
\ignorespaces}%
\def\Atrianglepairp(#1){\ifnextchar|{\Atrianglepairpp(#1)}%
{\Atrianglepairpp(#1)|lmrbb|}}%
\def\Atrianglepairpp(#1)|#2|{\ifnextchar/{\Atrianglepairppp(#1)|#2|}%
    {\Atrianglepairppp(#1)|#2|/>`>`>`>`>/}}%
\def\Atrianglepairppp(#1)|#2|/#3`#4`#5`#6`#7/{%
    \ifnextchar<{\Atrianglepairpppp(#1)|#2|/#3`#4`#5`#6`#7/}%
    {\Atrianglepairpppp(#1)|#2|/#3`#4`#5`#6`#7/<\default,\default>}}%
\def\Vtrianglepairp(#1){\ifnextchar|{\Vtrianglepairpp(#1)}%
{\Vtrianglepairpp(#1)|aalmr|}}%
\def\Vtrianglepairpp(#1)|#2|{\ifnextchar/{\Vtrianglepairppp(#1)|#2|}%
    {\Vtrianglepairppp(#1)|#2|/>`>`>`>`>/}}%
\def\Vtrianglepairppp(#1)|#2|/#3`#4`#5`#6`#7/{%
    \ifnextchar<{\Vtrianglepairpppp(#1)|#2|/#3`#4`#5`#6`#7/}%
    {\Vtrianglepairpppp(#1)|#2|/#3`#4`#5`#6`#7/<\default,\default>}}%
\def\Ctrianglepairp(#1){\ifnextchar|{\Ctrianglepairpp(#1)}%
{\Ctrianglepairpp(#1)|lrmlr|}}%
\def\Ctrianglepairpp(#1)|#2|{\ifnextchar/{\Ctrianglepairppp(#1)|#2|}%
    {\Ctrianglepairppp(#1)|#2|/>`>`>`>`>/}}%
\def\Ctrianglepairppp(#1)|#2|/#3`#4`#5`#6`#7/{%
    \ifnextchar<{\Ctrianglepairpppp(#1)|#2|/#3`#4`#5`#6`#7/}%
    {\Ctrianglepairpppp(#1)|#2|/#3`#4`#5`#6`#7/<\default,\default>}}%
\def\Dtrianglepairp(#1){\ifnextchar|{\Dtrianglepairpp(#1)}%
{\Dtrianglepairpp(#1)|lrmlr|}}%
\def\Dtrianglepairpp(#1)|#2|{\ifnextchar/{\Dtrianglepairppp(#1)|#2|}%
    {\Dtrianglepairppp(#1)|#2|/>`>`>`>`>/}}%
\def\Dtrianglepairppp(#1)|#2|/#3`#4`#5`#6`#7/{%
    \ifnextchar<{\Dtrianglepairpppp(#1)|#2|/#3`#4`#5`#6`#7/}%
    {\Dtrianglepairpppp(#1)|#2|/#3`#4`#5`#6`#7/<\default,\default>}}%
\def\pplace[#1](#2,#3)[#4]{\POS(#2,#3)*+!!<0ex,\axis>!#1{#4}\ignorespaces}%
\def\cplace(#1,#2)[#3]{\POS(#1,#2)*+!!<0ex,\axis>{#3}\ignorespaces}%
\def\place{\ifnextchar[{\pplace}{\cplace}}%
\def\pullback#1]#2]{\square#1]\trident#2]\ignorespaces}%
\def\tridentppp|#1#2#3|/#4`#5`#6/<#7,#8>[#9]{%
\def\next[##1;##2`##3`##4]{\def\nodee{##1}\def\labele{##2}%
   \def\labelf{##3}\def\labelg{##4}}%
\next[#9]%
\advance \xpos by -\deltax%
\advance \xpos by -#7\advance \ypos by #8%
\advance\deltax by #7%
\morphism(\xpos,\ypos)|#1|/{#4}/<\deltax,-#8>[\nodee`\nodeb;\labele]%
\advance\deltax by -#7%
\morphism(\xpos,\ypos)|#2|/{#5}/<#7,-#8>[\nodee`\nodea;\labelf]%
\advance\deltay by #8%
\morphism(\xpos,\ypos)|#3|/{#6}/<#7,-\deltay>[\nodee`\nodec;\labelg]%
\ignorespaces}%
\def\trident{\ifnextchar|{\tridentp}{\tridentp|amb|}}%
\def\tridentp|#1|{\ifnextchar/{\tridentpp|#1|}{\tridentpp|#1|/{>}`{>}`{>}/}}%
\def\tridentpp|#1|/#2/{\ifnextchar<{\tridentppp|#1|/#2/}%
  {\tridentppp|#1|/#2/<500,500>}}%
\def\setmorphismwidth#1#2#3#4{%
 \setbox0=\hbox{$#1{\labelstyle#3#3}#2$}#4=\wd0%
 \divide #4 by 2 \divide #4 by \ul%
 \advance #4 by 350 \ratchet{#4}{500}}%
\def\setSquarewidth[#1`#2`#3`#4;#5`#6`#7`#8]{%
 \setmorphismwidth{#1}{#2}{#5}{\topw}%
 \setmorphismwidth{#3}{#4}{#8}{\botw}%
\ratchet{\topw}{\botw}}%
\def\Squarepppp(#1)|#2|/#3/<#4>[#5]{%
 \setSquarewidth[#5]%
 \squarepppp(#1)|#2|/#3/<\topw,#4>[#5]%
\ignorespaces}%
\def\Squarep(#1){\ifnextchar|{\Squarepp(#1)}{\Squarepp(#1)|alrb|}}%
\def\Squarepp(#1)|#2|{\ifnextchar/{\Squareppp(#1)|#2|}%
    {\Squareppp(#1)|#2|/>`>`>`>/}}%
\def\Squareppp(#1)|#2|/#3`#4`#5`#6/{%
    \ifnextchar<{\Squarepppp(#1)|#2|/#3`#4`#5`#6/}%
    {\Squarepppp(#1)|#2|/#3`#4`#5`#6/<\default>}}%
\def\hsquarespppp(#1,#2)|#3|/#4/<#5>[#6;#7]{%
\Xpos=#1\Ypos=#2%
\def\next|##1##2##3##4##5##6##7|{%
 \def\Xa{##1}\def\Xb{##2}\def\Xc{##3}\def\Xd{##4}%
 \def\Xe{##5}\def\Xf{##6}\def\Xg{##7}}%
\next|#3|%
\def\next<##1,##2,##3>{\deltaX=##1\deltaXprime=##2\deltaY=##3}%
\next<#5>%
\def\next[##1`##2`##3`##4`##5`##6]{%
 \def\Nodea{##1}\def\Nodeb{##2}\def\Nodec{##3}%
 \def\Noded{##4}\def\Nodee{##5}\def\Nodef{##6}}%
\next[#6]%
\def\next[##1`##2`##3`##4`##5`##6`##7]{%
 \def\Labela{##1}\def\Labelb{##2}\def\Labelc{##3}\def\Labeld{##4}%
 \def\Labele{##5}\def\Labelf{##6}\def\Labelg{##7}}%
\next[#7]%
\dohsquares/#4/}%
\def\dohsquares/#1`#2`#3`#4`#5`#6`#7/{%
\squarepppp(\Xpos,\Ypos)|\Xa\Xc\Xd\Xf|/#1`#3`#4`#6/<\deltaX,\deltaY>%
 [\Nodea`\Nodeb`\Noded`\Nodee;\Labela`\Labelc`\Labeld`\Labelf]%
 \advance \Xpos by \deltaX%
\squarepppp(\Xpos,\Ypos)|\Xb\Xd\Xe\Xg|/#2``#5`#7/<\deltaXprime,\deltaY>%
[\Nodeb`\Nodec`\Nodee`\Nodef;\Labelb``\Labele`\Labelg]%
\ignorespaces}%
\def\hsquaresp(#1){\ifnextchar|{\hsquarespp(#1)}{\hsquarespp%
(#1)|aalmrbb|}}%
\def\hsquarespp(#1)|#2|{\ifnextchar/{\hsquaresppp(#1)|#2|}%
    {\hsquaresppp(#1)|#2|/>`>`>`>`>`>`>/}}%
\def\hsquaresppp(#1)|#2|/#3/{%
    \ifnextchar<{\hsquarespppp(#1)|#2|/#3/}%
    {\hsquarespppp(#1)|#2|/#3/<\default,\default,\default>}}%
\def\hSquarespppp(#1,#2)|#3|/#4/<#5>[#6;#7]{%
\Xpos=#1\Ypos=#2%
\def\next|##1##2##3##4##5##6##7|{%
 \def\Xa{##1}\def\Xb{##2}\def\Xc{##3}\def\Xd{##4}%
 \def\Xe{##5}\def\Xf{##6}\def\Xg{##7}}%
\next|#3|%
\deltaY=#5%
\def\next[##1`##2`##3`##4`##5`##6]{%
 \def\Nodea{##1}\def\Nodeb{##2}\def\Nodec{##3}%
 \def\Noded{##4}\def\Nodee{##5}\def\Nodef{##6}}%
\next[#6]%
\def\next[##1`##2`##3`##4`##5`##6`##7]{%
 \def\Labela{##1}\def\Labelb{##2}\def\Labelc{##3}\def\Labeld{##4}%
 \def\Labele{##5}\def\Labelf{##6}\def\Labelg{##7}}%
\next[#7]%
\dohSquares/#4/}%
\def\dohSquares/#1`#2`#3`#4`#5`#6`#7/{%
\Squarepppp(\Xpos,\Ypos)|\Xa\Xc\Xd\Xf|/#1`#3`#4`#6/<\deltaY>%
 [\Nodea`\Nodeb`\Noded`\Nodee;\Labela`\Labelc`\Labeld`\Labelf]%
 \advance \Xpos by \topw%
\Squarepppp(\Xpos,\Ypos)|\Xb\Xd\Xe\Xg|/#2``#5`#7/<\deltaY>%
[\Nodeb`\Nodec`\Nodee`\Nodef;\Labelb``\Labele`\Labelg]%
\ignorespaces}%
\def\hSquaresp(#1){\ifnextchar|{\hSquarespp(#1)}{\hSquarespp%
(#1)|aalmrbb|}}%
\def\hSquarespp(#1)|#2|{\ifnextchar/{\hSquaresppp(#1)|#2|}%
    {\hSquaresppp(#1)|#2|/>`>`>`>`>`>`>/}}%
\def\hSquaresppp(#1)|#2|/#3/{%
    \ifnextchar<{\hSquarespppp(#1)|#2|/#3/}%
    {\hSquarespppp(#1)|#2|/#3/<\default>}}%
\def\vsquarespppp(#1,#2)|#3|/#4/<#5>[#6;#7]{%
\Xpos=#1\Ypos=#2%
\def\next|##1##2##3##4##5##6##7|{%
 \def\Xa{##1}\def\Xb{##2}\def\Xc{##3}\def\Xd{##4}%
 \def\Xe{##5}\def\Xf{##6}\def\Xg{##7}}%
\next|#3|%
\def\next<##1,##2,##3>{\deltaX=##1\deltaY=##2\deltaYprime=##3}%
\next<#5>%
\def\next[##1`##2`##3`##4`##5`##6]{%
 \def\Nodea{##1}\def\Nodeb{##2}\def\Nodec{##3}%
 \def\Noded{##4}\def\Nodee{##5}\def\Nodef{##6}}%
\next[#6]%
\def\next[##1`##2`##3`##4`##5`##6`##7]{%
 \def\Labela{##1}\def\Labelb{##2}\def\Labelc{##3}\def\Labeld{##4}%
 \def\Labele{##5}\def\Labelf{##6}\def\Labelg{##7}}%
\next[#7]%
\dovsquares/#4/}%
\def\dovsquares/#1`#2`#3`#4`#5`#6`#7/{%
\squarepppp(\Xpos,\Ypos)|\Xd\Xe\Xf\Xg|/`#5`#6`#7/<\deltaX,\deltaYprime>%
[\Nodec`\Noded`\Nodee`\Nodef;`\Labele`\Labelf`\Labelg]%
 \advance\Ypos by \deltaYprime%
\squarepppp(\Xpos,\Ypos)|\Xa\Xb\Xc\Xd|/#1`#2`#3`#4/<\deltaX,\deltaY>%
 [\Nodea`\Nodeb`\Nodec`\Noded;\Labela`\Labelb`\Labelc`\Labeld]%
\ignorespaces}%
\def\vsquaresp(#1){\ifnextchar|{\vsquarespp(#1)}{\vsquarespp%
(#1)|aalmrbb|}}%
\def\vsquarespp(#1)|#2|{\ifnextchar/{\vsquaresppp(#1)|#2|}%
    {\vsquaresppp(#1)|#2|/>`>`>`>`>`>`>/}}%
\def\vsquaresppp(#1)|#2|/#3/{%
    \ifnextchar<{\vsquarespppp(#1)|#2|/#3/}%
    {\vsquarespppp(#1)|#2|/#3/<\default,\default,\default>}}%
\def\vSquarespppp(#1,#2)|#3|/#4/<#5,#6>[#7;#8]{%
\Xpos=#1\Ypos=#2%
\def\next|##1##2##3##4##5##6##7|{%
 \def\Xa{##1}\def\Xb{##2}\def\Xc{##3}\def\Xd{##4}%
 \def\Xe{##5}\def\Xf{##6}\def\Xg{##7}}%
\next|#3|%
\deltaX=#5%
\deltaY=#6%
\def\next[##1`##2`##3`##4`##5`##6]{%
 \def\Nodea{##1}\def\Nodeb{##2}\def\Nodec{##3}%
 \def\Noded{##4}\def\Nodee{##5}\def\Nodef{##6}}%
\next[#7]%
\def\next[##1`##2`##3`##4`##5`##6`##7]{%
 \def\Labela{##1}\def\Labelb{##2}\def\Labelc{##3}\def\Labeld{##4}%
 \def\Labele{##5}\def\Labelf{##6}\def\Labelg{##7}}%
\next[#8]%
\dovSquares/#4/\ignorespaces}%
\def\dovSquares/#1`#2`#3`#4`#5`#6`#7/{%
\setmorphismwidth{\Nodea}{\Nodeb}{\Labela}{\topw}%
\setmorphismwidth{\Nodec}{\Noded}{\Labeld}{\botw}%
\ratchet{\topw}{\botw}%
\setmorphismwidth{\Nodee}{\Nodef}{\Labelg}{\botw}%
\ratchet{\topw}{\botw}%
\square(\Xpos,\Ypos)|\Xd\Xe\Xf\Xg|/`#5`#6`#7/<\topw,\deltaY>%
 [\Nodec`\Noded`\Nodee`\Nodef;`\Labele`\Labelf`\Labelg]%
\advance \Ypos by \deltaY%
\square(\Xpos,\Ypos)|\Xa\Xb\Xc\Xd|/#1`#2`#3`#4/<\topw,\deltaX>%
 [\Nodea`\Nodeb`\Nodec`\Noded;\Labela`\Labelb`\Labelc`\Labeld]%
}%
\def\vSquaresp(#1){\ifnextchar|{\vSquarespp(#1)}{\vSquarespp%
(#1)|alrmlrb|}}%
\def\vSquarespp(#1)|#2|{\ifnextchar/{\vSquaresppp(#1)|#2|}%
    {\vSquaresppp(#1)|#2|/>`>`>`>`>`>`>/}}%
\def\vSquaresppp(#1)|#2|/#3/{%
    \ifnextchar<{\vSquarespppp(#1)|#2|/#3/}%
    {\vSquarespppp(#1)|#2|/#3/<\default,\default>}}%
\def\osquarepppp(#1)|#2|/#3`#4`#5`#6/<#7>[#8]{\squarepppp%
 (#1)|#2|/#3`#4`#5`#6/<#7>[#8]%
 \let\Nodea\nodea\let\Nodeb\nodeb%
\let\Nodec\nodec\let\Noded\noded\Xpos=\xpos\Ypos=\ypos%
\deltaX=\deltax \deltaY=\deltay \isquare}%
\def\osquarep(#1){\ifnextchar|{\osquarepp(#1)}{\osquarepp(#1)|alrb|}}%
\def\osquarepp(#1)|#2|{\ifnextchar/{\osquareppp(#1)|#2|}%
    {\osquareppp(#1)|#2|/>`>`>`>/}}%
\def\osquareppp(#1)|#2|/#3`#4`#5`#6/{%
    \ifnextchar<{\osquarepppp(#1)|#2|/#3`#4`#5`#6/}%
    {\osquarepppp(#1)|#2|/#3`#4`#5`#6/<1500,1500>}}%
\def\isquarepppp(#1)|#2|/#3`#4`#5`#6/<#7>[#8]{%
 \squarepppp(#1)|#2|/#3`#4`#5`#6/<#7>[#8]%
\ifnextchar|{\cubep}{\cubep|mmmm|}}%
\def\cubep|#1|{\ifnextchar/{\cubepp|#1|}{\cubepp|#1|/>`>`>`>/}}%
\def\isquare{\ifnextchar({\isquarep}{\isquarep(\default,\default)}}%
\def\isquarep(#1){\ifnextchar|{\isquarepp(#1)}{\isquarepp(#1)|alrb|}}%
\def\isquarepp(#1)|#2|{\ifnextchar/{\isquareppp(#1)|#2|}%
    {\isquareppp(#1)|#2|/>`>`>`>/}}%
\def\isquareppp(#1)|#2|/#3`#4`#5`#6/{%
    \ifnextchar<{\isquarepppp(#1)|#2|/#3`#4`#5`#6/}%
    {\isquarepppp(#1)|#2|/#3`#4`#5`#6/<500,500>}}%
\def\cubepp|#1#2#3#4|/#5`#6`#7`#8/[#9]{%
\def\next[##1`##2`##3`##4]{\gdef\Labela{##1}%
\gdef\Labelb{##2}\gdef\Labelc{##3}\gdef\Labeld{##4}}\next[#9]%
\xend\xpos \yend\ypos%
\Xend\xend\advance\Xend by -\Xpos%
\Yend\yend\advance\Yend by -\Ypos%
\domorphism(\Xpos,\Ypos)|#2|/#6/<\Xend,\Yend>[\Nodeb`\nodeb;\Labelb]%
\advance\Xpos by-\deltaX%
\advance\xend by-\deltax%
\Xend\xend\advance\Xend by -\Xpos%
\domorphism(\Xpos,\Ypos)|#1|/#5/<\Xend,\Yend>[\Nodea`\nodea;\Labela]%
\advance\Ypos by-\deltaY%
\advance\yend by-\deltay%
\Yend\yend\advance\Yend by -\Ypos%
\domorphism(\Xpos,\Ypos)|#3|/#7/<\Xend,\Yend>[\Nodec`\nodec;\Labelc]%
\advance\Xpos by\deltaX%
\advance\xend by\deltax%
\Xend\xend\advance\Xend by -\Xpos%
\domorphism(\Xpos,\Ypos)|#4|/#8/<\Xend,\Yend>[\Noded`\noded;\Labeld]%
\ignorespaces}%
\def\setwdth#1#2{\setbox0\hbox{$\labelstyle#1$}\wdth=\wd0%
\setbox0\hbox{$\labelstyle#2$}\ifnum\wdth<\wd0 \wdth=\wd0 \fi}%
\def\topppp/#1/<#2>^#3_#4{\:%
\ifnum#2=0%
   \setwdth{#3}{#4}\deltax=\wdth \divide \deltax by \ul%
   \advance \deltax by \defaultmargin  \ratchet{\deltax}{100}%
\else \deltax #2%
\fi%
\xy\ar@{#1}^{#3}_{#4}(\deltax,0) \endxy%
\:}%
\def\toppp/#1/<#2>^#3{\ifnextchar_{\topppp/#1/<#2>^{#3}}{\topppp/#1/<#2>^{#3}_{}}}%
\def\topp/#1/<#2>{\ifnextchar^{\toppp/#1/<#2>}{\toppp/#1/<#2>^{}}}%
\def\toop/#1/{\ifnextchar<{\topp/#1/}{\topp/#1/<0>}}%
\def\rlimto{{%
\font\xyatipfont=xyatip10 scaled 800
\font\xybtipfont=xybtip10 scaled 800
\raise 2pt\hbox{\,\xy\ar@{->}(100,0) \endxy}\,}}
\def\llimto{{%
\font\xyatipfont=xyatip10 scaled 800
\font\xybtipfont=xybtip10 scaled 800
\raise 2pt\hbox{\,\xy\ar@{<-}(100,0) \endxy}\,}}
\def\twopppp/#1`#2/<#3>^#4_#5{\:%
\ifnum0=#3%
  \setwdth{#4}{#5}\deltax=\wdth \divide \deltax by \ul \advance \deltax%
  by \defaultmargin \ratchet{\deltax}{200}%
\else \deltax#3 \fi%
\xy\ar@{#1}@<2.5pt>^{#4}(\deltax,0)%
\ar@{#2}@<-2.5pt>_{#5}(\deltax,0)\endxy\:}%
\def\twoppp/#1`#2/<#3>^#4{\ifnextchar_{\twopppp/#1`#2/<#3>^{#4}}%
  {\twopppp/#1`#2/<#3>^{#4}_{}}}%
\def\twopp/#1`#2/<#3>{\ifnextchar^{\twoppp/#1`#2/<#3>}{\twoppp/#1`#2/<#3>^{}}}%
\def\twop/#1`#2/{\ifnextchar<{\twopp/#1`#2/}{\twopp/#1`#2/<0>}}%
\def\threeppppp/#1`#2`#3/<#4>^#5|#6_#7{\:%
\ifnum0=#4%
\setbox0\hbox{$\labelstyle#5$}\wdth=\wd0%
\setbox0\hbox{$\labelstyle#6$}\ifnum\wdth<\wd0 \wdth=\wd0 \fi%
\setbox0\hbox{$\labelstyle#7$}\ifnum\wdth<\wd0 \wdth=\wd0 \fi%
\deltax=\wdth \divide \deltax by \ul \advance \deltax by%
\defaultmargin \ratchet{\deltax}{300}%
\else\deltax#4 \fi%
    \xy \ifnum\wd0=0 \ar@{#2}(\deltax,0)%
    \else \ar@{#2}|{#6}(\deltax,0)\fi%
\ar@{#1}@<4.5pt>^{#5}(\deltax,0)%
\ar@{#3}@<-4.5pt>_{#7}(\deltax,0)\endxy\:}%
\def\threepppp/#1`#2`#3/<#4>^#5|#6{\ifnextchar_{\threeppppp%
  /#1`#2`#3/<#4>^{#5}|{#6}}{\threeppppp/#1`#2`#3/<#4>^{#5}|{#6}_{}}}%
\def\threeppp/#1`#2`#3/<#4>^#5{\ifnextchar|{\threepppp%
  /#1`#2`#3/<#4>^{#5}}{\threepppp/#1`#2`#3/<#4>^{#5}|{}}}%
\def\threepp/#1`#2`#3/<#4>{\ifnextchar^{\threeppp/#1`#2`#3/<#4>}%
  {\threeppp/#1`#2`#3/<#4>^{}}}%
\def\threep/#1`#2`#3/{\ifnextchar<{\threepp/#1`#2`#3/}%
  {\threepp/#1`#2`#3/<0>}}%
\def\twoar(#1,#2){{%
 \scalefactor{0.1}%
 \deltax#1\deltay#2%
 \deltaX=\ifnum\deltax<0-\fi\deltax%
 \deltaY=\ifnum\deltay<0-\fi\deltay%
 \Xend\deltax \multiply \Xend by \deltax%
 \Yend\deltay \multiply \Yend by \deltay%
 \advance\Xend by \Yend \multiply \Xend by 3%
 \ifnum \deltaX > \deltaY%
    \multiply \deltaX by 3 \advance \deltaX by \deltaY%
 \else%
    \multiply \deltaY by 3 \advance \deltaX by \deltaY%
 \fi%
 \multiply\deltax by 500%
 \multiply\deltay by 500%
 \xpos\deltax \multiply \xpos by 3 \divide\xpos by \deltaX%
 \Xpos\deltax \multiply \Xpos by \deltaX \divide \Xpos by \Xend%
 \advance \xpos by \Xpos%
 \ypos\deltay \multiply \ypos by 3 \divide\ypos by \deltaX%
 \Ypos\deltay \multiply \Ypos by \deltaX \divide \Ypos by \Xend%
 \advance \ypos by \Ypos%
 \xy \ar@{=>}(\xpos,\ypos) \endxy%
}\ignorespaces}%
\def\iiixiiipppppp(#1,#2)|#3|/#4/<#5>#6<#7>[#8;#9]{%
 \xpos#1\ypos#2\relax%
 \def\next|##1##2##3##4##5##6##7|{\def\xa{##1}\def\xb{##2}%
 \def\xc{##3}\def\xd{##4}\def\xe{##5}\def\xf{##6}\nextt|##7|}%
 \def\nextt|##1##2##3##4##5##6|{\def\xg{##1}\def\xh{##2}%
 \def\xi{##3}\def\xj{##4}\def\xk{##5}\def\xl{##6}}%
 \next|#3|%
 \def\next<##1,##2>{\deltax##1\deltay##2}%
 \next<#5>%
 \def\next<##1,##2>{\deltaX##1\deltaY##2}%
 \next<#7>%
 \def\next##1{\topw##1\relax%
 \ifodd\topw \def\za{}\else\def\za{\relax}\fi \divide\topw by 2
 \ifodd\topw \def\zb{}\else\def\zb{\relax}\fi \divide\topw by 2
 \ifodd\topw \def\zc{}\else\def\zc{\relax}\fi \divide\topw by 2
 \ifodd\topw \def\zd{}\else\def\zd{\relax}\fi \divide\topw by 2
 \ifodd\topw \def\ze{}\else\def\ze{\relax}\fi \divide\topw by 2
 \ifodd\topw \def\zf{}\else\def\zf{\relax}\fi \divide\topw by 2
 \ifodd\topw \def\zg{}\else\def\zg{\relax}\fi \divide\topw by 2
 \ifodd\topw \def\zh{}\else\def\zh{\relax}\fi \divide\topw by 2
 \ifodd\topw \def\zi{}\else\def\zi{\relax}\fi \divide\topw by 2
 \ifodd\topw \def\zj{}\else\def\zj{\relax}\fi \divide\topw by 2
 \ifodd\topw \def\zk{}\else\def\zk{\relax}\fi \divide\topw by 2
 \ifodd\topw \def\zl{}\else\def\zl{\relax}\fi}%
 \next{#6}%
 \def\next[##1`##2`##3`##4`##5`##6`##7`##8`##9]{%
 \def\nodeA{##1}\def\nodeB{##2}\def\nodeC{##3}%
 \def\nodeD{##4}\def\nodeE{##5}\def\nodeF{##6}%
 \def\nodeG{##7}\def\nodeH{##8}\def\nodeI{##9}}%
 \next[#8]%
 \def\next[##1`##2`##3`##4`##5`##6`##7]{%
 \def\labela{##1}\def\labelb{##2}\def\labelc{##3}%
 \def\labeld{##4}\def\labele{##5}\def\labelf{##6}\nextt[##7]}%
 \def\nextt[##1`##2`##3`##4`##5`##6]{%
 \def\labelg{##1}\def\labelh{##2}\def\labeli{##3}%
 \def\labelj{##4}\def\labelk{##5}\def\labell{##6}}%
 \next[#9]%
 \def\next/##1`##2`##3`##4`##5`##6`##7`##8/{%
 \advance\ypos\deltay
    \ifx\zf\empty \morphism(\xpos,\ypos)/<-/<-\deltaX,0>[\nodeD`0;]\fi
 \morphism(\xpos,\ypos)|\xf|/{##6}/<\deltax,0>[\nodeD`\nodeE;\labelf]%
    \advance \xpos\deltax
    \morphism(\xpos,\ypos)|\xg|/{##7}/<\deltax,0>[\nodeE`\nodeF;\labelg]%
    \ifx\zg\empty \advance\xpos \deltax
        \morphism(\xpos,\ypos)<\deltaX,0>[\nodeF`0;]\fi
    \xpos#1 \advance\ypos\deltay
    \ifx\zd\empty \morphism(\xpos,\ypos)/<-/<-\deltaX,0>[\nodeA`0;]\fi
    \ifx\za\empty \morphism(\xpos,\ypos)/<-/<0,\deltaY>[\nodeA`0;]\fi
    \morphism(\xpos,\ypos)|\xa|/{##1}/<\deltax,0>[\nodeA`\nodeB;\labela]%
 \morphism(\xpos,\ypos)|\xc|/{##3}/<0,-\deltay>[\nodeA`\nodeD;\labelc]%
    \advance \xpos\deltax
     \morphism(\xpos,\ypos)|\xb|/{##2}/<\deltax,0>[\nodeB`\nodeC;\labelb]%
     \morphism(\xpos,\ypos)|\xd|/{##4}/<0,-\deltay>[\nodeB`\nodeE;\labeld]%
     \ifx\zb\empty \morphism(\xpos,\ypos)/<-/<0,\deltaY>[\nodeB`0;]\fi
     \advance\xpos\deltax
 \morphism(\xpos,\ypos)|\xe|/{##5}/<0,-\deltay>[\nodeC`\nodeF;\labele]%
     \ifx\zc\empty \morphism(\xpos,\ypos)/<-/<0,\deltaY>[\nodeC`0;]\fi
     \ifx\ze\empty \morphism(\xpos,\ypos)<\deltaX,0>[\nodeC`0;]\fi
   \nextt/##8/}%
 \def\nextt/##1`##2`##3`##4`##5/{%
 \xpos#1\ypos#2\relax%
   \ifx\zh\empty \morphism(\xpos,\ypos)/<-/<-\deltaX,0>[\nodeG`0;]\fi
   \ifx\zj\empty \morphism(\xpos,\ypos)<0,-\deltaY>[\nodeG`0;]\fi
   \morphism(\xpos,\ypos)|\xk|/{##4}/<\deltax,0>[\nodeG`\nodeH;\labelk]%
   \advance\xpos\deltax
   \morphism(\xpos,\ypos)|\xl|/{##5}/<\deltax,0>[\nodeH`\nodeI;\labell]%
   \ifx\zk\empty \morphism(\xpos,\ypos)<0,-\deltaY>[\nodeH`0;]\fi
   \advance\xpos\deltax
   \ifx\zi\empty \morphism(\xpos,\ypos)<\deltaX,0>[\nodeI`0;]\fi
   \ifx\zl\empty \morphism(\xpos,\ypos)<0,-\deltaY>[\nodeI`0;]\fi
   \xpos#1 \advance\ypos\deltay
    \morphism(\xpos,\ypos)|\xh|/{##1}/<0,-\deltay>[\nodeD`\nodeG;\labelh]%
    \advance \xpos\deltax
    \morphism(\xpos,\ypos)|\xi|/{##2}/<0,-\deltay>[\nodeE`\nodeH;\labeli]%
    \advance \xpos\deltax
 \morphism(\xpos,\ypos)|\xj|/{##3}/<0,-\deltay>[\nodeF`\nodeI;\labelj]}%
 \next/#4/\ignorespaces}%
\def\iiixiiip(#1){\ifnextchar|{\iiixiiipp(#1)}%
  {\iiixiiipp(#1)|aalmrmmlmrbb|}}%
\def\iiixiiipp(#1)|#2|{\ifnextchar/{\iiixiiippp(#1)|#2|}%
    {\iiixiiippp(#1)|#2|/>`>`>`>`>`>`>`>`>`>`>`>/}}%
\def\iiixiiippp(#1)|#2|/#3/{%
    \ifnextchar<{\iiixiiipppp(#1)|#2|/#3/}%
    {\iiixiiipppp(#1)|#2|/#3/<\default,\default>}}%
\def\iiixiiipppp(#1)|#2|/#3/<#4>{\ifnextchar[{\iiixiiippppp(#1)|#2|/#3/%
   <#4>0<0,0>}{\iiixiiippppp(#1)|#2|/#3/<#4>}}%
\def\iiixiiippppp(#1)|#2|/#3/<#4>#5{\ifnextchar<%
   {\iiixiiipppppp(#1)|#2|/#3/<#4>{#5}}%
   {\iiixiiipppppp(#1)|#2|/#3/<#4>{#5}<400,400>}}%
\def\iiixiipppppp(#1,#2)|#3|/#4/<#5>#6<#7>[#8;#9]{%
 \xpos#1\ypos#2\relax%
 \def\next|##1##2##3##4##5##6##7|{\def\xa{##1}\def\xb{##2}%
 \def\xc{##3}\def\xd{##4}\def\xe{##5}\def\xf{##6}\def\xg{##7}}%
 \next|#3|%
 \def\next<##1,##2>{\deltax##1\deltay##2}%
 \next<#5>%
 \deltaX#7
 \topw#6
 \def\next{%
 \ifodd\topw \def\za{}\else\def\za{\relax}\fi \divide\topw by 2
 \ifodd\topw \def\zb{}\else\def\zb{\relax}\fi \divide\topw by 2
 \ifodd\topw \def\zc{}\else\def\zc{\relax}\fi \divide\topw by 2
 \ifodd\topw \def\zd{}\else\def\zd{\relax}\fi}%
 \next%
 \def\next[##1`##2`##3`##4`##5`##6]{%
 \def\nodea{##1}\def\nodeb{##2}\def\nodec{##3}%
 \def\noded{##4}\def\nodee{##5}\def\nodef{##6}}%
 \next[#8]%
 \def\next[##1`##2`##3`##4`##5`##6`##7]{%
 \def\labela{##1}\def\labelb{##2}\def\labelc{##3}%
 \def\labeld{##4}\def\labele{##5}\def\labelf{##6}\def\labelg{##7}}%
 \next[#9]%
 \def\next/##1`##2`##3`##4`##5`##6`##7/{%
 {\ifx\zc\empty\advance\xpos -\deltaX
\relax\morphism(\xpos,\ypos)<\deltaX,0>[0`\noded;]\fi}%
 \morphism(\xpos,\ypos)|\xf|/##6/<\deltax,0>[\noded`\nodee;\labelf]%
 \advance\xpos by \deltax%
 \morphism(\xpos,\ypos)|\xg|/##7/<\deltax,0>[\nodee`\nodef;\labelg]%
 {\ifx\zd\empty \advance\xpos by \deltax
\relax  \morphism(\xpos,\ypos)<\deltaX,0>[\nodef`0;]\fi}%
 \advance\xpos by -\deltax  \advance\ypos by \deltay
 {\ifx\za\empty\advance \xpos by -\deltaX
\relax\morphism(\xpos,\ypos)<\deltaX,0>[0`\nodea;]\fi}%
 \morphism(\xpos,\ypos)|\xa|/##1/<\deltax,0>[\nodea`\nodeb;\labela]%
 \morphism(\xpos,\ypos)|\xc|/##3/<0,-\deltay>[\nodea`\noded;\labelc]%
 \advance\xpos by \deltax%
 \morphism(\xpos,\ypos)|\xb|/##2/<\deltax,0>[\nodeb`\nodec;\labelb]%
 \morphism(\xpos,\ypos)|\xd|/##4/<0,-\deltay>[\nodeb`\nodee;\labeld]%
 \advance\xpos by \deltax%
 \morphism(\xpos,\ypos)|\xe|/##5/<0,-\deltay>[\nodec`\nodef;\labele]%
 \ifx\zb\empty\relax \morphism(\xpos,\ypos)<\deltaX,0>[\nodec`0;]\fi}%
 \next/#4/\ignorespaces}%
\def\iiixiip(#1){\ifnextchar|{\iiixiipp(#1)}%
  {\iiixiipp(#1)|aalmrbb|}}%
\def\iiixiipp(#1)|#2|{\ifnextchar/{\iiixiippp(#1)|#2|}%
    {\iiixiippp(#1)|#2|/>`>`>`>`>`>`>/}}%
\def\iiixiippp(#1)|#2|/#3/{%
    \ifnextchar<{\iiixiipppp(#1)|#2|/#3/}%
    {\iiixiipppp(#1)|#2|/#3/<\default,\default>}}%
\def\iiixiipppp(#1)|#2|/#3/<#4>{\ifnextchar[{\iiixiippppp(#1)|#2|/#3/%
   <#4>{0}<0>}{\iiixiippppp(#1)|#2|/#3/<#4>}}%
\def\iiixiippppp(#1)|#2|/#3/<#4>#5{\ifnextchar<%
   {\iiixiipppppp(#1)|#2|/#3/<#4>{#5}}%
   {\iiixiipppppp(#1)|#2|/#3/<#4>{#5}<400>}}%
\def\node#1(#2,#3)[#4]{%
\expandafter\gdef\csname x@#1\endcsname{#2}%
\expandafter\gdef\csname y@#1\endcsname{#3}%
\expandafter\gdef\csname ob@#1\endcsname{#4}%
\place(#2,#3)[#4]\ignorespaces}%
\def\arrow{\ifnextchar|{\arrowp}{\arrowp|a|}}%
\def\arrowp|#1|{\ifnextchar/{\arrowpp|#1|}{\arrowpp|#1|/>/}}%
\def\arrowpp|#1|/#2/[#3`#4;#5]{%
\xfinish=\csname x@#4\endcsname%
\yfinish=\csname y@#4\endcsname%
\advance\xfinish by -\csname x@#3\endcsname%
\advance\yfinish by -\csname y@#3\endcsname%
\morphism(\csname x@#3\endcsname,\csname y@#3\endcsname)|#1|/{#2}/%
<\xfinish,\yfinish>[\phantom{\csname ob@#3\endcsname}`\phantom{\csname
ob@#4\endcsname};#5]%
}%
\def\Loop(#1,#2)#3(#4,#5){\POS(#1,#2)*+!!<0ex,\axis>{#3}\ar@(#4,#5)}%
\def\iloop#1(#2,#3){\xy\Loop(0,0)#1(#2,#3)\endxy}%
     \let \PATHafterPOS\PATHafterPOS@default%
     \let \arsavedPATHafterPOS@@\relax%
     \let\afterar@@\relax%
\xydef@\endxyobj{\if\inxy@\else\xyerror@{Unexpected \string\endxy}{}\fi%
>  \relax%
>   \dimen@=\Y@max \advance\dimen@-\Y@min%
>   \ifdim\dimen@<\z@ \dimen@=\z@ \Y@min=\z@ \Y@max=\z@ \fi%
>   \dimen@=\X@max \advance\dimen@-\X@min%
>   \ifdim\dimen@<\z@ \dimen@=\z@ \X@min=\z@ \X@max=\z@ \fi%
>   \edef\tmp@{\egroup%
>     \setboxz@h{\kern-\the\X@min \boxz@}%
>     \ht\z@=\the\Y@max \dp\z@=-\the\Y@min \wdz@=\the\dimen@%
>     \noexpand\maybeunraise@ \raise\dimen@\boxz@%
>     \noexpand\recoverXyStyle@ \egroup \noexpand\xy@end%
>     \U@c=\the\Y@max \advance\U@c-\the\Y@c%
>     \D@c=-\the\Y@min \advance\D@c\the\Y@c%
>     \L@c=-\the\X@min  \advance\L@c\the\X@c%
>     \R@c=\the\X@max  \advance\R@c-\the\X@c%
>    }\tmp@}%
\gdef\xymerge@MinMax{}%
\xydef@\twocell{\hbox\bgroup\xysave@MinMax\@twocell}%
\xydef@\uppertwocell{\hbox\bgroup\xysave@MinMax\@uppertwocell}%
\xydef@\lowertwocell{\hbox\bgroup\xysave@MinMax\@lowertwocell}%
\xydef@\compositemap{\hbox\bgroup\xysave@MinMax\@compositemap}%
\xydef@\xysave@MinMax{\xdef\xymerge@MinMax{%
   \noexpand\ifdim\X@max<\the\X@max \X@max=\the\X@max\noexpand\fi%
   \noexpand\ifdim\X@min>\the\X@min \X@min=\the\X@min\noexpand\fi%
   \noexpand\ifdim\Y@max<\the\Y@max \Y@max=\the\Y@max\noexpand\fi%
   \noexpand\ifdim\Y@min>\the\Y@min \Y@min=\the\Y@min\noexpand\fi%
  }}%
\xydef@\drop@Twocell{\boxz@ \xymerge@MinMax}%
\xydef@\twocell@DONE{%
  \edef\tmp@{\egroup%
   \X@min=\the\X@min \X@max=\the\X@max%
   \Y@min=\the\Y@min \Y@max=\the\Y@max}\tmp@%
  \L@c=\X@c \advance\L@c-\X@min \R@c=\X@max \advance\R@c-\X@c%
  \D@c=\Y@c \advance\D@c-\Y@min \U@c=\Y@max \advance\U@c-\Y@c%
  \ht\z@=\U@c \dp\z@=\D@c \dimen@=\L@c \advance\dimen@\R@c \wdz@=\dimen@%
  \computeLeftUpness@%
  \setboxz@h{\kern-\X@p \raise-\Y@c\boxz@ }%
  \dimen@=\L@c \advance\dimen@\R@c \wdz@=\dimen@ \ht\z@=\U@c \dp\z@=\D@c%
  \Edge@c={\rectangleEdge}\Invisible@false \Hidden@false%
  \edef\Drop@@{\noexpand\drop@Twocell%
   \noexpand\def\noexpand\Leftness@{\Leftness@}%
   \noexpand\def\noexpand\Upness@{\Upness@}}%
  \edef\Connect@@{\noexpand\connect@Twocell%
   \noexpand\ifdim\X@max<\the\X@max \X@max=\the\X@max\noexpand\fi%
   \noexpand\ifdim\X@min>\the\X@min \X@min=\the\X@min\noexpand\fi%
   \noexpand\ifdim\Y@max<\the\Y@max \Y@max=\the\Y@max\noexpand\fi%
   \noexpand\ifdim\Y@min>\the\Y@min \Y@min=\the\Y@min\noexpand\fi }%
  \xymerge@MinMax%
}%
\letcs\replicate{prg_replicate:nn}
\newcommand*\longsum[1][1]{%
  \mathop{\textnormal{%
    \clipbox{0pt 0pt {.367\width} 0pt}{\scalebox{1.2}{$\displaystyle\sum$}}%
    \replicate{#1*4}{\clipbox{{.67\width} 0pt {.32\width} 0pt}{\scalebox{1.2}{$\displaystyle\sum$}}}%
   \clipbox{{.638\width} 0pt 0pt 0pt}{\scalebox{1.2}{$\displaystyle\sum$}}}}%
}
\newcommand\cyr
\renewcommand\rmdefault{wncyr}
\renewcommand\sfdefault{wncyss}
\renewcommand\encodingdefault{OT2}
\DeclareTextFontCommand{\textcyr}{\cyr}
\numberwithin{equation}{section}
\newtheorem{thm}{Theorem}[section]
\newtheorem{lem}[thm]{Lemma}
\newtheorem{lemma}[thm]{Lemma}
\newtheorem{prop}[thm]{Proposition}
\newtheorem{cor}[thm]{Corollary}
\newtheorem{conj}[thm]{Conjecture}
\newtheorem{defn}[thm]{Definition}
\newenvironment{rem}
  {\pushQED{\qed}\remark}
  {\popQED\endremark}
\newtheorem{Thm}{Theorem}
\newcommand{\bracket}[1]{\mathchoice{\left\langle{#1}\right\rangle}{\langle{#1}\rangle}{\langle{#1}\rangle}{\langle{#1}\rangle}}
\newcommand{\floor}[1]{\lfloor{#1}\rfloor}
\newcommand{\widebarr}[1]{\accentset{\rule{\widthof{$#1\!$}}{0.7pt}}{#1}}
\newcommand{\id}{\mathrm{id}}
\newcommand{\GrLi}[1]{Gr(#1)}
\newcommand{\Spec}{\mathrm{Spec}}
\newcommand{\AChar}[1]{\mathrm{Spec}_{\mathfrak m}(#1)}
\newcommand{\LT}[1]{\ifmmode \mathsf{#1} \else \textsf{\small #1}\fi}
\newcommand{\PLexp}{\#}
\newcommand{\kay}{\mathscr k}
\newcommand{\elln}{\ell\mkern -3mu} 
\newcommand{\ellfun}{\mathfrak l}
\newcommand{\gcdle}{\mathfrak d}
\newcommand{\gcdleopp}{\mathfrak n}
\newcommand{\delzeta}[1]{{\scalebox{.9}{$\nabla$}\mkern -3mu}_{#1}\mkern -1mu}
\newcommand{\ellsc}[1]{\raise 3pt \hbox{\scalebox{0.7}{$\ell$}}\mkern -2mu {#1}}
\newcommand{\ellscscr}[1]{\raise 1.6pt \hbox{\scalebox{0.5}{$\ell$}}\mkern -2mu {#1}}
\newcommand{\slashone}{\scalebox{.66}{$\boldsymbol \succ$}}
\newcommand{\slashoned}{\scalebox{.75}{$\boldsymbol\sim$}}
\newcommand{\slashtwo}{\scalebox{.7}{$\boldsymbol =$}}
\newcommand{\slashthree}{\rotatebox{90}{\scalebox{1.15}{$\bullet$}}}
\newcommand{\Lsignp}{\raise 0.43mm \hbox{$\slashone$}\mkern -8mu \zeta}
\newcommand{\Lsign}{\raise 0.43mm \hbox{$\slashoned$}\mkern -8mu \zeta}
\newcommand{\LLsign}{\raise 0.48mm \hbox{$\slashtwo$}\mkern -7.7mu \zeta}
\newcommand{\Qsign}{\raise 0.29mm \hbox{$\slashthree$}\mkern -8.3mu \zeta}
\def\invexcl{{\normalfont\rotatebox[origin=c]{180}!}}
\newcommand{\qfacrelA}[2]{[#1\mkern -3mu :\mkern -3mu#2]\mkern -1.6mu \invexcl\mkern -1.6mu\invexcl} 
\newcommand{\qfacrelB}[2]{\left\{#1\mkern -3mu :\mkern -3mu #2\right \}}
\newcommand{\qfactcollJ}[2]{\llparenthesis #1, j\rrparenthesis_{#2}}
\newcommand{\qfactcollI}[2]{\llparenthesis #1, i\rrparenthesis_{#2}}
\newcommand{\elli}{{\ell_i}}
\newcommand{\ellj}{{\ell_j}}
\newcommand{\qbin}[3]{\begin{bmatrix} \vspace{-0.7mm} \raise 1mm \hbox{$#1$}\vspace{-0.7mm}\\ #2\vspace{-0.8mm}\end{bmatrix}_{#3}}
\newcommand{\qbinsmall}[2]{\begin{bmatrix} \vspace{-2.7mm} \raise 1.1mm \hbox{$\scriptstyle{#1}$} \\ \scriptstyle{#2}\vspace{-1.1mm}\end{bmatrix}}
\newcommand{\Lfact}{\mathcal{D}}
\newcommand{\twococ}{\mathbb{c}}
\newcommand{\lclbd}{\chi}
\newcommand{\bbz}{\mathbb{Z}}
\newcommand{\bbq}{\mathbb{Q}}
\newcommand{\bbc}{\mathbb{C}}
\newcommand{\bbn}{\mathbb{N}}
\newcommand{\nnN}{\mathbb{N}_0}
\newcommand{\VanLocus}{\mathcal V}
\newcommand{\VanIdeal}{\mathcal I}
\newcommand{\lpip}{\boldsymbol{\left(\right.}}
\newcommand{\rpip}{\boldsymbol{\left.\right)}}
\newcommand{\ZarClos}[1]{\mathrm{Zcl}(#1)}
\newcommand{\radic}[1]{\raise 0.8pt \hbox{\scalebox{.85}{$\surd$}} \mkern -1mu {#1}}
\newcommand{\bbk}{\mathbb{k}}
\newcommand{\Qh}{\bbq\lBrack \hbar\rBrack}
\newcommand{\Qq}{\bbq(q)}
\newcommand{\Qz}{\bbq(\zeta)}
        \newcommand{\vaccent}{{\mkern -4mu \raisebox{.11em}{\scalebox{.47}{$\blacktriangledown$}}}}
         \newcommand{\Zgen}{\Lambda}
        \newcommand{\Zgenv}{\Zgen^{\vaccent}}
        \newcommand{\Zqq}{\Zgen_q}
        \newcommand{\Zqqv}{\Zgen_q^\vaccent} 
        \newcommand{\Zqqn}[1]{\Zgen_{q,#1}}
        \newcommand{\Zqqvn}[1]{\Zgen_{q,#1}^\vaccent}
        \newcommand{\Zz}{\Zgen_\zeta}
        \newcommand{\Zzv}{\Zgen_\zeta^\vaccent} 
        \newcommand{\Zzn}[1]{\Zgen_{\zeta,#1}}
        \newcommand{\Zzvn}[1]{\Zgen_{\zeta,#1}^\vaccent}
\newcommand{\utypechar}{{\raisebox{1pt}{$\scriptscriptstyle\Box$}}}
\newcommand{\Tinv}{\mathit{\Gamma}}
\newcommand{\lbacc}[1]
{  
  \phantom{\scalebox{.65}{$#1$}}
  \accentset{\bullet\mkern -1.1 mu\bullet}
   {
   \scalebox{.82}{\vphantom{$#1$}}
    }
  \negphantom{\scalebox{.65}{$#1$}}
    {#1}
 }
\newcommand{\ELu}{\lbacc{E}}
\newcommand{\FLu}{\lbacc{F}}
\newcommand{\Esing}{\breve{E}}
\newcommand{\Fsing}{\breve{F}}
\newcommand{\wgrad}{\mathbb w}
\newcommand{\wgradp}{\wgrad^+}
\newcommand{\wgradn}{\wgrad^-}
\newcommand{\wgradZBtwo}{\mathbb u}
\newcommand{\ztgrad}{\mathbb o}
\newcommand{\ztgradp}{\ztgrad^+}
\newcommand{\ztgradn}{\ztgrad^-}
\newcommand{\ZYXgrad}{\mathbb z}
\newcommand{\pgrad}{\mathbb p}
\newcommand{\fg}{\mathfrak{g}}
\newcommand{\Uhg}{U_{\hbar}(\fg)}
\newcommand{\Uqg}{U_{q}(\fg)}
\newcommand{\Uzg}{U_{\zeta}(\fg)}
\newcommand{\Uq}{U_q}
\newcommand{\Uqre}[1]{U_{q,#1}}
\newcommand{\UqQ}{\Uqre{\mathbb{Q}}}
\newcommand{\UqreC}[1]{\hat U_{q,#1}}
\newcommand{\UqQC}{\UqreC{\mathbb{Q}}}
\newcommand{\Uqrest}[2]{{U}_{\ \mkern -9mu q}\mkern -1.5mu \raise 0.08em\hbox{$\scriptstyle (#2)$}^{#1}}
\newcommand{\Uqrerest}[3]{{U}_{\ \mkern -9mu q, #3}\mkern -1.5mu \raise 0.08em\hbox{$\scriptstyle (#2)$}^{#1}}
\newcommand{\Uz}{U_\zeta}
\newcommand{\Uzn}[1]{U_{\zeta,#1}}
\newcommand{\Uzk}[1]{U_{\zeta,#1}}
\newcommand{\UzQ}{\Uzk{\mathbb Q}}
\newcommand{\Uzrest}[2]{{U}_{\ \mkern -9mu \zeta}\mkern -1.5mu \raise 0.08em\hbox{$\scriptstyle (#2)$}^{#1}} 
\newcommand{\LUq}{{\raise -0.15em \hbox{$\scriptstyle\mathscr L$}}{U}}
\newcommand{\DCPU}{{\raise -0.15em \hbox{$\scriptstyle\mathscr S$}}{U}}
\newcommand{\Zsubalgchar}{\mathcal Z}
\newcommand{\Zmaxidchar}{\mathcal K}
\newcommand{\Zaugideal}[1]{\Zmaxidchar\raise 0.7pt \hbox{\scalebox{.82}{$(#1)$}}}
\newcommand{\Zaugidealhat}[1]{\widehat{\Zmaxidchar}\raise 0.7pt \hbox{\scalebox{.82}{$(#1)$}}}
\newcommand{\ZaugidealhatIn}[2]{
        \Zaugidealhat{#1}_{\scalebox{.62}{$#2$}}
        }
\newcommand{\pTmIdeal}[1]{{\mathcal N\mkern -5mu}_\circ(#1)}
\newcommand{\FullpTmIdeal}[1]{{\mathcal N}\mkern -1mu(#1)}
\newcommand{\FullpTmIdealSym}[1]{{\mathcal N\!}_*\mkern -1mu(#1)}
\newcommand{\ZsubalgcharT}{\mkern 1.5mu\overset{\sim}{\rule{0pt}{3.7pt}}\mkern -12mu {\Zsubalgchar}}
\newcommand{\ZSisom}{\mathscr m}
\newcommand{\ZmaxId}[1]{\Zmaxidchar_{#1}}
\newcommand{\ZmaxIdHat}[1]{\widehat{\Zmaxidchar}_{#1}}
\newcommand{\siAN}{
            \scalebox{.97}{$\bullet$}
            \mkern -1.6 mu
            \raise .06em \hbox{\scalebox{.7}{$\circ$}}
            }
\newcommand{\siNA}{
            \raise .06em \hbox{\scalebox{.7}{$\circ$}}
            \mkern -1.6 mu
            \scalebox{.97}{$\bullet$}
            }
\newcommand{\siAA}{
            \scalebox{.95}{$\bullet$}
            \mkern -1.4 mu
            \scalebox{.95}{$\bullet$}
            }
\newcommand{\FullpTmIdealMaxT}[1]{{\mathcal N}^{\,#1}_{\siAA}}
\newcommand{\geqzero}{{\mathchoice
   {\scalebox{.86}{$\mkern 1mu \geq\mkern -4.3mu 0$}}
   {\scalebox{.86}{$\mkern 1mu \geq\mkern -4.3mu 0$}}
   {\scalebox{.5}{$\mkern 1mu \geq\mkern -4.3mu 0$}}
   {\scalebox{.5}{$\mkern 1mu \geq\mkern -4.3mu 0$}}
   }}
\newcommand{\leqzero}{{\mathchoice
   {\scalebox{.86}{$\mkern 1mu \leq\mkern -4.3mu 0$}}
   {\scalebox{.86}{$\mkern 1mu \leq\mkern -4.3mu 0$}}
   {\scalebox{.5}{$\mkern 1mu \leq\mkern -4.3mu 0$}}
   {\scalebox{.5}{$\mkern 1mu \leq\mkern -4.3mu 0$}}
   }}
\newcommand{\opp}{{\mathchoice
   {\scalebox{.86}{$\mkern 1mu \mathsf{opp}$}}
   {\scalebox{.86}{$\mkern 1mu \mathsf{opp}$}}
   {\scalebox{.52}{$\mkern 1mu \mathsf{opp}$}}
   {\scalebox{.52}{$\mkern 1mu \mathsf{opp}$}}
   }}
\newcommand{\Deltaopp}{{\Delta\mkern -3mu}^\opp}
\newcommand{\cop}{{\mathchoice
   {\scalebox{.86}{$\mkern 1mu \mathsf{cop}$}}
   {\scalebox{.86}{$\mkern 1mu \mathsf{cop}$}}
   {\scalebox{.52}{$\mkern 1mu \mathsf{cop}$}}
   {\scalebox{.52}{$\mkern 1mu \mathsf{cop}$}}
   }}
\newcommand{\redsup}{{\mathchoice
   {\scalebox{.93}{$\mkern 1mu \mathsf{rest}$}}
   {\scalebox{.93}{$\mkern 1mu \mathsf{rest}$}}
   {\scalebox{.62}{$\mkern 1mu \mathsf{rest}$}}
   {\scalebox{.62}{$\mkern 1mu \mathsf{rest}$}}
   }}
\newcommand{\mytypeqg}{restricted\ }
\newcommand{\Bor}{U^{\geq0}}
\newcommand{\Borq}{\Bor_q}
\newcommand{\Cartaninv}{\Omega}
\newcommand{\CartaninvTwo}{\Omega^\#}
\newcommand{\Cartanaut}{\Pi}
\newcommand{\Kinvaut}{\mho}
\newcommand{\Kconaut}{\amalg}
\newcommand{\Qinvaut}{\Upsilon}
\newcommand{\DynkInv}{\mbox{\hspace*{-4.5mm} \cyr I}}
\newcommand{\qitgen}{\varpi}
\newcommand{\htsign}{\boldsymbol{\iota}}
\newcommand{\sdrchar}[2]{
 \mathchoice{\vartheta^{#2}\mkern -9mu\raisebox{-1.7pt}{$\scriptstyle #1$}\mkern 2.7mu}
              {\vartheta^{#2}\mkern -9mu\raisebox{-1.7pt}{$\scriptstyle #1$}\mkern 2.7mu}
              {\vartheta^{#2}\mkern -9mu\raisebox{-1.7pt}{$\scriptscriptstyle #1$}\mkern 2.7mu}
              {\vartheta^{#2}\mkern -3mu{\scriptscriptstyle #1}}
              }
\newcommand{\reldrchar}{\boldsymbol{\kappa}}
\newcommand{\KinvautTwoCoeff}[3]{\scalebox{.76}{$\boldsymbol{\mho}$}^{#1}_{#2}{}}
\newcommand{\KinvautExpSet}[2]{\Kinvaut_{\{#1\}}(#2)}
\newcommand\Kscalechar{\mbox{\hspace{-4mm}\cyr Ch}} 
\newcommand{\Kscale}[2]{\Kscalechar^{#2}_{#1}}
\newcommand{\Kscaleword}{Che\ }
\newcommand{\barDelta}{\raise 0.72em\hbox{\rule{1.8mm}{0.27mm}}\mkern -10.6mu \Delta}
\newcommand{\emptyword}{\scalebox{.9}{$\emptyset$}}
\newcommand{\grf}[1]{
   \mathchoice{\raisebox{-1pt}{\scalebox{1.18}{$\mathsf{#1}$}}}
              {\raisebox{-1pt}{\scalebox{1.18}{$\mathsf{#1}$}}}
              {\raisebox{-0.15pt}{\scalebox{0.8}{$\mathsf{#1}$}}}
              {\raisebox{-0.15pt}{\scalebox{0.8}{$\mathsf{#1}$}}}}
\newcommand{\sroots}{\grf{\Delta}}
\newcommand{\roots}{\grf{\Phi}}
\newcommand{\rootsL}{\roots_L}
\newcommand{\rootsS}{\roots_S}
\newcommand{\proots}{{\roots^+}}
\newcommand{\nroots}{{\roots^-}}
\newcommand{\prootsL}{{\roots^+_L}}
\newcommand{\nrootsL}{{\roots^-_L}}
\newcommand{\prootsS}{{\roots^+_S}}
\newcommand{\nrootsS}{{\roots^-_S}}
\newcommand{\coproots}{{\breve\roots}^+}
\newcommand{\symCart}{C}
\newcommand{\symCartinv}{B}
\newcommand{\prootstwo}[1]{{\roots^+}\mkern -2mu {\scriptstyle (#1)}}
\newcommand{\rootstwo}[1]{{\roots}\mkern -.3mu {\scriptstyle (#1)}}
\newcommand{\longchar}[1]{{#1\mkern -1mu _\bullet}}
\newcommand{\longweyl}{\longchar{s}}
\newcommand{\flweyl}[1]{#1\strut^{\llcorner}}
\newcommand{\wcomplrel}{\dualmap}
\newcommand{\winvchar}{\dagger}
\newcommand{\dyninv}{\eta}
\newcommand{\longtwo}[2]{
   \mathchoice
      {{#2}^\circ\mkern -7mu \raise -0.21em \hbox{$\scriptstyle #1$}}
      {{#2}^\circ\mkern -7mu \raise -0.21em \hbox{$\scriptstyle #1$}}
      {{#2}^\circ\mkern -7mu \raise -0.16em \hbox{$\scriptscriptstyle #1$}}
      {{#2}^\circ\mkern -7mu \raise -0.11em \hbox{\small $\scriptscriptstyle #1$}}
      }
\newcommand{\longtwoalt}[2]{
   \mathchoice
      {{#2}\mkern -1mu \raise -0.17em \hbox{\scalebox{0.6}{$|#1|$}}}
     {{#2}\mkern -0.8mu \raise -0.17em \hbox{\scalebox{0.6}{$|#1|$}}}
     {{#2}\mkern -0.77mu \raise -0.15em \hbox{\scalebox{0.46}{$|#1|$}}}
      {{#2}^\circ\mkern -7mu \raise -0.11em \hbox{\small $\scriptscriptstyle #1$}}
      }
\newcommand{\longtwoweyl}[1]{\longtwoalt{#1}{s}}
\newcommand{\longtwoword}[1]{\longtwoalt{#1}{z}}
\newcommand{\symbrack}[2]{\mathchoice{\left( #1 | #2 \right)}{( #1 | #2)}{( #1 | #2)}{( #1 | #2)}}
\newcommand{\Weyl}{\mathscr{W}}
\newcommand{\WeylS}{\Weyl_S}
\newcommand{\WeylL}{\Weyl_L}
\newcommand{\SimRefl}{\mathscr{S}}
\newcommand{\wordset}{\mathscr{W}^*}
\newcommand{\wordsetmax}{\mathscr{W}^*_{\mkern -2mu \mathfrak m}}
\newcommand{\Weylrest}[1]{\Weyl\mkern -4mu \raise -0.04em \hbox{$\scriptstyle(#1)$}}
\newcommand{\wordsetrest}[1]{\wordset\mkern -4mu \raise -0.06em \hbox{$\scriptstyle(#1)$}}
\newcommand{\wordsetmaxrest}[1]{\wordsetmax\mkern -2.5mu \raise -0.06em \hbox{$\scriptstyle(#1)$}}
\newcommand{\weightvecchar}{\rho}
\newcommand{\weightvec}[1]{\weightvecchar_{#1}}
\newcommand{\weightlattice}{\grf{\Sigma}}
\newcommand{\len}[1]{\length(#1)}
\newcommand{\length}{l}
\newcommand{\height}{\mathrm{ht}}
\newcommand{\edgenum}{\mathsf e}
\newcommand{\maxd}{\mathsf e}
\newcommand{\dpone}{{\mathsf e^*}}
\newcommand{\Weylpres}{\mathscr r}
\newcommand{\Matsec}{\mathscr s}
\newcommand{\wordroot}{\gamma}
\newcommand{\relrootsplit}[1]{\mathscr b_{(#1)}}
\newcommand{\descrootschar}{\mkern 0.4mu\pmb{\mathscr N}\mkern -1mu}
\newcommand{\descroots}[1]{\descrootschar(#1)}
\newcommand{\descrootsS}[1]{\descrootschar(#1)_S}
\newcommand{\descrootsL}[1]{\descrootschar(#1)_L}
\newcommand{\sumdescrootssym}{\theta}
\newcommand{\sumdescroots}[1]{\sumdescrootssym(#1)}
\newcommand{\sumdescrootsrel}[2]{\sumdescrootssym_{#2}(#1)}
\newcommand{\sumdesccoroots}[1]{\breve\sumdescrootssym(#1)}
\newcommand{\wordindexset}[1]{J({#1})}
\newcommand{\wordindexlattice}[1]{\grf{\Xi}(#1)}
\newcommand{\idsymm}{\mathsfit i\!d}
\newcommand{\symmmapspace}[1]{{\mathbb S}(#1)}
\newcommand{\leqRB}{\leq_R}
\newcommand{\leqLB}{\leq_L}
\newcommand{\geqRB}{\geq_R}
\newcommand{\prLleq}{\precsim}
\newcommand{\prLgeq}{\succsim}
\newcommand{\leqwt}[1]{\preceq_{#1}}
\newcommand{\letwt}[1]{\prec_{#1}}
\newcommand{\garside}{\boldsymbol{\partial}}
\newcommand{\dBsg}[1]{{\mathrm W}_{#1}}
\newcommand{\qRchar}{\mathcal{P}}
\newcommand{\qRm}{\qRchar}
\newcommand{\Ri}{\qRm_i}
\newcommand{\Rw}{\qRm_w}
\newcommand{\Ra}[1]{\qRm_{#1}}
\newcommand{\Rprod}[2]{\qRchar_{\mkern -4mu \lflex{#1}}^{#2}}
\newcommand{\RprodInv}[2]{\widetilde{\qRchar}_{\mkern -4mu \lflex{#1}}^{#2}}
\newcommand{\Rprodmax}{\qRchar_{\mkern -4mu \bullet}}
\newcommand{\Rexp}{\mathcal{D}}
\newcommand{\taniauto}{\mbox{\hspace{-4mm} \cyr L}}
\newcommand{\Rcoeff}[2]{\mathbf{c}_{#1}\mkern -.7 mu \scalebox{.83}{$(#2)$}\mkern 1.6 mu}
\newcommand{\Rcoeffdenom}[2]{\boldsymbol{\sigma}_{#1}\mkern -.7 mu \scalebox{.83}{$(#2)$}\mkern 1.6 mu}
\newcommand{\SingCoeff}[1]{\boldsymbol{\delta}^{#1}}
\newcommand{\SingCoeffvar}[2]{\boldsymbol{\delta}^{#1}\mkern -1.7 mu \scalebox{.83}{$(#2)$}\mkern 1.6 mu}
\newcommand{\Multcoeff}[4]{\mathbf{m}_{#1}^{#2,#3}\mkern -.7 mu \scalebox{.83}{$(#4)$}}
\newcommand{\CoMultcoeff}[4]{\mathbf{d}^{#1}_{#2,#3}\mkern -.7 mu \scalebox{.83}{$(#4)$}}
\newcommand{\ExpGrp}{\scalebox{-.93}[1]{\hspace{-4.75mm} \cyr Z}}
\newcommand{\trunacc}{{\scalebox{.35}{$\blacklozenge$}}}
\newcommand{\Rt}{\qRchar^\trunacc}
\newcommand{\Rtinv}{
    \raise .73em\hbox{\rule{2.3mm}{.28mm}}
    \mkern -11.8mu
     {\qRchar}^\trunacc
     }
\newcommand{\RtprodInv}[1]{\widetilde{\qRchar}^\trunacc_{\mkern -4mu \lflex{#1}}}
\newcommand{\redacc}{{\scalebox{.75}{$\star$}}}
\newcommand{\Rtred}{\qRchar^\redacc}
\newcommand{\RtredInv}{\widetilde{\qRchar}^\redacc}
\newcommand{\Rtsub}[1]{\Rt_{\!#1}}
\newcommand{\Rtinvsub}[1]{\Rtinv_{\!#1}}
\newcommand{\Rti}{\Rtsub{i}}
\newcommand{\Rtii}{\Rtinvsub{i}}
\newcommand{\Rtprod}[1]{\Rtsub{\lflex{#1}}} 
\newcommand{\Rtprodi}[1]{\Rtinvsub{{#1}\rflex}}
\newcommand{\Rtprodfac}[1]{\Rtred_{\lflex{#1}}}
\newcommand{\Rtani}{\mathcal{F}}
\newcommand{\RtaniImp}[2]{D_{#1,#2}}
\newcommand{\commphchar}{\grf{\kappa}}
\newcommand{\commph}[2]
   {\raise 0.12em \hbox{$\commphchar$}
       \mkern -1.5mu \hbox{\scalebox{0.85}{$(#1,#2)$}}}
\newcommand{\commphelchar}{\grf{\hat\kappa}} 
\newcommand{\commphel}[3]
   {\raise 0.12em \hbox{$\commphelchar$}
       \mkern -1.5mu \hbox{\scalebox{0.85}{$(#2,#3)^{#1}$}}}
\newcommand{\commphinchar}{\grf{\pi}}
\newcommand{\commphin}[2]
   {\raise 0.12em \hbox{$\commphinchar$}
       \mkern -1.5mu \hbox{\scalebox{0.85}{$(#1,#2)$}}}
\newcommand{\commphinelchar}{\grf{\hat\pi}} 
\newcommand{\commphinel}[2]
   {\raise 0.12em \hbox{$\commphinelchar$}
       \mkern -1.5mu \hbox{\scalebox{0.85}{$(#1,#2)$}}}
\newcommand{\commphshufexpchar}{\grf{\varsigma}} 
\newcommand{\commphshufexp}[3]
   {\raise 0.12em \hbox{$\commphshufexpchar$}
       \mkern -1.5mu \hbox{\scalebox{0.85}{$(#2,#3)_{#1}$}}}
\newcommand{\commphallexpchar}{\grf{\hat{\varsigma\,}\!}} 
\newcommand{\commphallexp}[4]
   {\raise 0.12em \hbox{$\commphallexpchar$}
       \mkern -1.5mu \hbox{\scalebox{0.85}{$(#3,#4)_{#1,#2}$}}}
\newcommand{\commphXEpwchar}{\grf{\overrightarrow\kappa}} 
\newcommand{\commphXEpw}[3]
   {\raise 0.12em \hbox{$\commphXEpwchar$}
       \mkern -1.5mu \hbox{\scalebox{0.85}{$(#2,#3)_{#1}$}}}
  \newcommand{\commphEXpwchar}{\grf{\overleftarrow\kappa}} 
\newcommand{\commphEXpw}[3]
   {\raise 0.12em \hbox{$\commphEXpwchar$}
       \mkern -1.5mu \hbox{\scalebox{0.85}{$(#2,#3)_{#1}$}}}
\newcommand{\commphbihomchar}
    {\raise 0.07em \hbox{$\grf{\varepsilon}$}}
\newcommand{\commphbihom}[2]
   {\raise 0.01em \hbox{$\commphbihomchar$}
       \mkern 0mu \hbox{\scalebox{0.92}{$(#1,#2)$}}}
\newcommand{\commphbigrchar}
    {\raise 0.07em \hbox{$\grf{\upsilon}$}}
\newcommand{\commphbigr}[2]
   {\raise 0.01em \hbox{$\commphbigrchar$}
       \mkern 0.15mu \hbox{\scalebox{0.92}{$(#1,#2)$}}}
\newcommand{\Epw}{X}
\newcommand{\Fpw}{Y}
\newcommand{\Kpw}{L}
\newcommand{\Glg}{P}
\newcommand{\Glgpw}{Q}
\newcommand{\Wops}{\mathsf M}
\newcommand{\Wopsalg}{\mathbb M}
\newcommand{\Commsigngrp}{\mathscr G}
\newcommand{\Commsignact}{\raise -2pt \hbox{\scalebox{0.6}{$\smalltriangleright$}}}
\newcommand{\skewcoeffchar}{\mathsf c}
\newcommand{\skewcoeffgen}[3]{#1\scalebox{0.75}{$(#2,#3)$}}
\newcommand{\skewcoeff}[2]{\skewcoeffgen{\skewcoeffchar}{#1}{#2}}
\newcommand{\UtriM}[1]{\mathrm{T}_{#1}}
\newcommand{\SUtriM}[1]{\mathrm{U}_{#1}}
\newcommand{\DiagM}[1]{\mathrm{D}_{#1}}
\newcommand{\QMatAlg}[3]{\mathcal O(\UtriM{#3},#1,#2)}
\newcommand{\QMatAlgS}[2]{\mathcal O(\UtriM{#2},#1)}
\newcommand{\SQMatAlg}[3]{\mathcal{O}\!_\circ(\UtriM{#3},#1,#2)}
\newcommand{\SQMatAlgS}[2]{\mathcal{O}\!_\circ(\UtriM{#2},#1)}
\newcommand{\derideal}[3]{{\widehat {#1}}}
\newlength{\negph@wd}
\DeclareRobustCommand{\negphantom}[1]{%
  \ifmmode
    \mathpalette\negph@math{#1}%
  \else
    \negph@do{#1}%
  \fi
}
\newcommand{\negph@math}[2]{\negph@do{$\m@th#1#2$}}
\newcommand{\negph@do}[1]{%
  \settowidth{\negph@wd}{#1}%
  \hspace*{-\negph@wd}%
}
\newlength{\lenone@wd}
\newlength{\lentwo@wd}
\DeclareRobustCommand{\maxphantom}[2]{%
  \ifmmode
    \maxph@math{#1}{#2}%
  \else
    \maxph@do{#1}{#2}%
  \fi
}
\newcommand{\maxph@math}[2]{\maxph@do{$#1$}{$#2$}}
\newcommand{\maxph@do}[2]{%
  \settowidth{\lenone@wd}{#1}%
  \settowidth{\lentwo@wd}{#2}%
  \ifthenelse{\lenone@wd>\lentwo@wd}
        {\hspace*{\lenone@wd}}%
        {\hspace*{\lentwo@wd}}%
}
\newcommand{\lmin}{}
\newcommand{\rflex}{\raisebox{0.8pt}{\scalebox{.33}{$\mkern 2mu  \blacktriangleright\mkern 1mu $}}}
\newcommand{\rmin}{}
\newcommand{\lflex}{\raisebox{0.8pt}{\scalebox{.33}{$\mkern 1.3mu  \blacktriangleleft\mkern 3.8mu $}}}
\newcommand{\Genbase}[3]{ {#3}\mkern 1.5mu\raisebox{5.5pt}{$\scriptstyle #2$}\mkern -2.8mu {\scriptstyle \negphantom{#2\!}} \raisebox{-2.7pt}{$\scriptstyle \lmin #1 \rflex$}
{\scriptstyle\negphantom{{\lmin #1 \rflex}}}
 {\scriptstyle \maxphantom{{\lmin #1 \rflex}}{#2\!\!\!}}
}
\newcommand{\Genbaseopp}[3]{ {#3}\mkern 1.3mu\raisebox{5.5pt}{$\scriptstyle #2$}\mkern -2.8mu {\scriptstyle \negphantom{#2\!}} \raisebox{-2.7pt}{$\scriptstyle  \lflex #1 \rmin$}
{\scriptstyle\negphantom{{ \lflex #1 \rmin}}}
 {\scriptstyle \maxphantom{{ \lflex #1 \rmin}}{#2\!\!\!}}
}
\newcommand{\Ebase}[2]{\Genbase{#1}{#2}{E}}
\newcommand{\Fbase}[2]{\Genbase{#1}{#2}{F}} 
\newcommand{\Ebaseopp}[2]{\Genbaseopp{#1}{#2}{E}}
\newcommand{\Fbaseopp}[2]{\Genbaseopp{#1}{#2}{F}}  
\newcommand{\Edivbase}[2]{\Genbase{#1}{(#2)}{E}}
\newcommand{\Edivbaseopp}[2]{\Genbaseopp{#1}{(#2)}{E}}
\newcommand{\Fdivbaseopp}[2]{\Genbaseopp{#1}{(#2)}{F}}
\newcommand{\Epwbase}[2]{\Genbase{#1}{#2}{\Epw}}
\newcommand{\Fpwbase}[2]{\Genbase{#1}{#2}{\Fpw}} 
\newcommand{\Epwbaseopp}[2]{\Genbaseopp{#1}{#2}{\Epw}}
\newcommand{\Fpwbaseopp}[2]{\Genbaseopp{#1}{#2}{\Fpw}}
\newcommand{\circL}{\mkern 5mu\raise 1.4ex\hbox{$\scriptstyle{\circ}$}\mkern -11mu\mathcal L}
\newcommand{\barL}{\hat{\mathcal L}}
\newcommand{\TCompModopp}[1]{\Genbaseopp{#1\!}{}{\barL}}
\newcommand{\TCompModopptwo}[1]{\Genbaseopp{#1\!}{}{\barL^{\,2}\!\!\!}}
\newcommand{\TCompModGenopp}[1]{\Genbaseopp{#1\!}{}{\circL}}
\newcommand{\TCompModGenopptwo}[1]{\Genbaseopp{#1}{}{\circL^{\,2}\!\!\!}}
\newcommand{\EbaseLU}[2]{\Genbase{#1}{#2}{\ELu}}
\newcommand{\EbaseoppLU}[2]{\Genbaseopp{#1}{#2}{\ELu}}
\newcommand{\Ebasesing}[2]{\Genbase{#1}{#2}{\Esing}}
\newcommand{\Ebaseoppsing}[2]{\Genbaseopp{#1}{#2}{\Esing}}
\newcommand{\Fbasesing}[2]{\Genbase{#1}{#2}{\Fsing}}
\newcommand{\basischar}{\scalebox{.96}{${\mathfrak B}\mkern -1mu$}}
\newcommand{\spanchar}{\scalebox{.96}{${\mathfrak S}\mkern -1mu$}}
\newcommand{\basisgen}[4]{
    \IfEqCase{#3}{
        {0}{\Genbase{#1}{#2}{#4}}
        {1}{\Genbaseopp{#1}{#2}{#4}}
        }
    } 
\newcommand{\basis}[2]{\basisgen{#1}{#2\hspace{-1ex}}{0}{\basischar}}
\newcommand{\basisopp}[2]{\basisgen{#1}{#2\hspace{-1ex}}{1}{\basischar}}
\newcommand{\bspan}[2]{\basisgen{#1}{#2\hspace{-1ex}}{0}{\spanchar}}
\newcommand{\bspanopp}[2]{\basisgen{#1}{#2\hspace{-1ex}}{1}{\spanchar}}
\newcommand{\basisp}[1]{\basis{#1}{+}}
\newcommand{\basisn}[1]{\basis{#1}{-}}
\newcommand{\basispopp}[1]{\basisopp{#1}{+}}
\newcommand{\basisnopp}[1]{\basisopp{#1}{-}}
\newcommand{\bspanp}[1]{\bspan{#1}{+}}
\newcommand{\bcdot}{\mkern -1.8mu\centerdot\mkern -1.8mu}
\newcommand{\expsetZ}{\scalebox{.45}{$\star$}}
\newcommand{\expsetchar}{\mathcal E}
\newcommand{\expset}[1]{\expsetchar^{\expsetZ}\mkern -1.2mu (#1)}
\newcommand{\expsetmax}{\expsetchar^{\expsetZ}_\bullet\mkern -1.2mu}
\newcommand{\expsetup}[1]{\expsetchar(#1)} 
\newcommand{\expsetupmax}{\expsetchar_\bullet} 
\newcommand{\NexpSet}[1]{{V\mkern -5mu}_{#1}}
\newcommand{\NexpExpSet}[1]{V^{[#1]}}
\newcommand{\NexpExpSetij}[1]{V^{[#1]}}
\newcommand{\NexpExpCompSet}[1]{ \bar V^{]#1[}}
\newcommand{\NexpExpCompSetij}[1]{\bar V^{]#1[}}
\newcommand{\restr}{\iota}
\newcommand{\Lexpset}[1]{(\ellfun, #1)}
\newcommand{\Lmaxexpset}[1]{[0,\ellfun)_{#1}}
\newcommand{\Lminexpset}[1]{[\ellfun,\infty)^{#1}}
\newcommand{\transp}{{\mkern -1mu \scalebox{.63}{$\mathsf T$}}}
\newcommand{\GL}{\mathrm{GL}}
\newcommand{\SO}{\mathrm{SO}}
\newcommand{\SOFBchar}{\boldsymbol{\phi}}
\newcommand{\SOFB}[6]{
\setlength{\extrarowheight}{-2mm}
\SOFBchar\mkern -2mu
\left(
\begin{NiceArray}{c@{\;}c@{\;}c@{\;}}
#1&#3&#5\\
#2&#4&#6
\end{NiceArray}\!\!
\right)
\setlength{\extrarowheight}{0mm}
}
\newcommand{\Borelsof}{\mathrm{SO}_5^\geqzero}
\newcommand{\Borelsofopp}{\mathrm{SO}_5^\leqzero}
\newcommand{\Unipsof}{\mathrm{SO}_5^+}
\newcommand{\Cartsof}{\mathrm{SO}_5^0}
\newcommand{\cofun}[1]{\hat #1}
\newcommand{\cofuntr}[1]{\bar #1}
\newcommand{\cofuntrpar}[2]{\cofuntr{#1}_{#2}}
\newcommand{\cogen}[1]{\tilde #1}
\newcommand{\Funsofb}{\mathbb C[\Borelsof]}
\newcommand{\SOFAugIdchar}{\mathcal{I}^{\scalebox{.55}{$\mathrm{SO}$}}_5}
\newcommand{\AbstMatSof}[1]{\mathcal M^{}_{#1}}
\renewcommand\thepart{\@Roman\c@part}%
\renewcommand\part{%
   \if@noskipsec \leavevmode \fi
   \par
   \addvspace{6.7ex}%
   \@afterindentfalse
   \secdef\@part\@spart}
\def\@part[#1]#2{%
    \ifnum \c@secnumdepth >\m@ne
      \refstepcounter{part}%
      \addcontentsline{toc}{part}{Part~\thepart.\ #1}%
    \else
      \addcontentsline{toc}{part}{#1}%
    \fi
    {\parindent \z@ \raggedright
     \interlinepenalty \@M
     \normalfont
     \ifnum \c@secnumdepth >\m@ne
       \centering\large\scshape \partname~\thepart.%
       \hspace{1ex}%
     \fi%
     \large\scshape #2%
     \markboth{}{}\par}%
    \nobreak
    \vskip 4.7ex
    \@afterheading}
  \def\@spart#1{
  \refstepcounter{part}%
  \addcontentsline{toc}{part}{#1}%
    {\parindent \z@ \raggedright
     \interlinepenalty \@M
     \normalfont
     \centering\large\scshape #1\par}%
     \nobreak
     \vskip 4.7ex
     \@afterheading}
\renewcommand*\l@part[2]{%
  \ifnum \c@tocdepth >-2\relax
    \addpenalty\@secpenalty
    \addvspace{10.75em \@plus\p@}%
    \begingroup
      \parindent \z@ \rightskip \@pnumwidth
      \parfillskip -\@pnumwidth
      {\leavevmode
       \normalsize \bfseries #1\hfil \hb@xt@\@pnumwidth{\hss #2}}\par
       \nobreak
       \if@compatibility
         \global\@nobreaktrue
         \reverypar{\global\@nobreakfalse\reverypar{}}%
      \fi
    \endgroup
  \fi}
\def\l@subsection{\@tocline{2}{0pt}{2pc}{6pc}{}}
\address{Department of Mathematics, University of California, Riverside
\newline
\indent Department of Mathematics, Michigan State University}
\email{mrhmath@proton.me} 
\address{Department of Mathematics, The Ohio State University}
\email{kerler.2@osu.edu}
\begin{document}
\title[Hopf Ideals, Integrality, and Automorphisms  of Quantum Groups at Roots of 1]
{Hopf Ideals, Integrality, and Automorphisms \\ of Quantum Groups at Roots of 1}
\bigskip
\author{Matthew Harper  \ \ and \ \  Thomas Kerler}
\allowdisplaybreaks

\begin{abstract}
We consider skew-commutative subalgebras in Drinfeld-Jimbo quantum groups at a root of unity $\zeta$ generated by primitive power elements. We classify the centrality and commutativity of these skew-polynomial algebras depending on the Lie type and the order of $\zeta$ modulo 8. We describe Hopf ideals in the quantum group induced from ideals in these subalgebras, including the non-commutative cases.

Among these, we construct and analyze a family of Hopf ideals that depend on the choice of an element in the Weyl group. We show that they arise naturally both in the construction of (partial) $R$-matrices and as vanishing ideals of Bruhat subgroups. Specialization to the maximal element yields a rigorous construction of restricted quantum groups as pre-triangular Hopf algebras, independent of any choices.

Our treatment also includes even orders of $\zeta$, non-simply laced Lie types, and minimal ground rings. Consequently, we extend some results of De Concini-Kac-Procesi, whose work focuses on odd orders of $\zeta$, which forces the subalgebra to be strictly central, and complex ground fields. This includes the identification of the subalgebras for Lie types $\mathsf{A}_n$ and $\mathsf{B}_2$ with the coordinate rings of associated algebraic groups in the commutative cases, even if $\zeta$ has even order. Our descriptions are computationally explicit and do not utilize Poisson structures. 

As technical preparations, we discuss PBW bases over minimal rings, dependencies on choices of convex orderings, as well as various new constructions of, and relations among, automorphisms on quantum groups. The latter include formulae for the Garside element in the Lustzig-Artin group action and the family of Che-transformations.

\end{abstract}

\maketitle

\begin{adjustwidth}{50pt}{50pt}
\begin{spacing}{.75}
{\footnotesize
\tableofcontents
}
\end{spacing}
\end{adjustwidth}
\section{Introduction}
One of the original motivations for this monograph was to provide accessible constructions and computational tools for quantum groups at roots of unity. Although initially aimed at practitioners engaged in newer developments in quantum topology, the topics and theorems presented here focus exclusively on the quantum algebra side, introducing several new results and perspectives. Keeping the intended audience in mind, algebraic background material will be selectively introduced.

Broadly speaking, a quantum group $U$ is a Hopf algebra associated to a simple Lie algebra $\mathfrak g$ over a ground ring $\Zgen$. In the original definitions by Drinfeld and Jimbo \cite{Dri87,Ji85} $\Zgen$ is naturally given as $\mathbb Q(q)$, where $q$ is viewed as a formal ``deformation parameter'' relating $U$ to the standard universal enveloping algebra of $\mathfrak g$. In many variants of the original definition $\Zgen$ can be chosen as a subring of  $\mathbb Q(q)$ or an extension of scalars of such a subring. Analogous to the universal envelope of $\mathfrak g$, any relevant version of $U$ has a decomposition $U=U^+\,U^0\,U^-$ into subalgebras with generating sets typically of the form $\{E_i\}$, $\{K_i\}$, and $\{F_i\}$, respectively, where the indices enumerate a set of simple roots of $\mathfrak g$.

Quantum groups play a central role in low-dimensional and quantum topology thanks to their associated $R$-matrices, which give rise to solutions of the Yang Baxter equation. The latter yield representations of braid groups and invariants of knots, links, and 3-manifolds. Early 
constructions of 3-manifold invariants and TQFTs, inspired by quantum field theories in physics, 
start from a Hopf algebra $H$ \cite{Hennings,KerlerTowards}, its representation theory \cite{RT}, 
or, more generally, certain types of tensor categories \cite{TuraevBook,KL}. An important requirement 
for these constructions is that $H$ is effectively finite-dimensional, to assure the existence of integrals, well-defined and finite Kirby colors, or certain coends (which are all essentially equivalent). 

The so-called \emph{small quantum groups} provide the quintessential examples of finite dimensional Hopf algebras in our context. Lusztig \cite{lu90b,lu} obtains these by first defining a divided power algebra $\LUq$ version of $U$ over $\Zgen=\mathbb Z[q,q^{-1}]$, and then extending this ring to $\mathbb Z[\zeta]$, where $q$ is specialized to a primitive $\kay$-th root of unity $\zeta$. The small quantum group appears then as the subalgebra generated by the original $\{E_i,F_i,K_i\}$ in $\LUq$. The construction is, thus, independent of further choices and may also be formulated as the kernel of a Frobenius homomorphism. An important caveat is that $\Spec(U^0)$ is, of course, finite and discrete for these algebras, a feature that is not easily circumvented in this construction due to extra generators required in $\LUq^0$.

More recently, invariants constructed from quantum groups with \emph{continuous} $\Spec(U^0)$ have gained considerable interest as they establish connections to classical topological invariants. Typically, parameters associated with $\Spec(U^0)$ are identified with formal variables that appear in algebraic topology constructions. For generic $q$, one obtains $R$-matrix representations of braid groups for which the continuous weights of the $K_i$ correspond to deck transformation parameters of homological representations \cite{JK11, Ito,Anghel,Martel}.

At roots of unity $U$ admits finite-dimensional representations and a rigidity structure, while retaining continuity of $\Spec(U^0)$. Invariants of knots and links are obtained by extending to modified quantum traces on the braid representations derived from $R$-matrices.
In this setting of non-semisimple $U$ and modified traces,
the continuous parameters supplied by $\Spec(U^0)$ often imply that the quantum invariants   
subsume 
classical invariants
as special cases. 
Among the  first such examples is the Alexander polynomial, for which highest weights yield the 
indeterminate variable \cite{Mu92,Mu93,oh}. For the more involved Akutsu-Deguchi-Ohtsuki (ADO) invariant \cite{ADO}, the continuous weights have similar homological interpretations \cite{ItoAlexander,AnghelADO}.

All the examples mentioned thus far assume the simplest Lie type $\mathfrak g=\mathfrak{sl}_2$\,. A higher rank example, leading to non-commutative properties of the knot invariant, 
has been studied in \cite{Har20}. In 
\cite{CGP,BCGP16}  quantum groups at roots of unity (in rank one) are used to define infinitely graded categories from which 3-manifold invariants and TQFTs are constructed. These, in turn, extend both the Reshetikhin-Turaev (RT) invariants and the classical Turaev-Reidemeister torsion. The invariant depends on a 1-cohomology class $\omega\in H^1(M,\mathbb{C}/\mathbb{Z})$ whose coefficients stem again from the continuous spectrum of $K$. Specifically, the torsion map assigns the meridian homology generators of a surgery link to $\zeta^\lambda$, where $\zeta$ is a fourth root of unity and $\lambda$ a complex weight. Constructions for more general categories, such as those in \cite{Renzi}, are, thus, likely candidates to admit similar connections to more involved classical geometric invariants. 

The \emph{restricted quantum groups} underlying these invariants may be characterized by the conditions that $U^-$ and $U^+$ are finite-dimensional, that $U^0=\Zgen[\{K_i^{\pm 1}\}]$, and that there is at least a quasi-$R$-matrix. Their construction typically starts from the specialization of the full (unrestricted) quantum to a root of unity $\zeta$, sometimes also called the De Concini-Kac-Procesi (DCKP) algebra over $\mathbb Q(\zeta)$. The allowed restrictions to subrings $\Zgen$ in $\mathbb Q(\zeta)$ heavily depend on renormalizations of generators and the condition that the algebra is stable under the Artin group action defined by Lusztig.

The restricted quantum group, for some given $\mathfrak g$ and $\zeta$, is then commonly written as the quotient of the DCKP algebra by an ideal generated by a collection of elements $\Epw_\alpha=E_\alpha^{\ell_\alpha}$ and $\Fpw_\alpha=F_\alpha^{\ell_\alpha}$, which we refer to as \emph{primitive power generators}. Here $\alpha$ ranges over the set of positive roots $\proots$ and $E_\alpha$ is defined via the Artin group action, depending on the choice of a convex ordering of $\proots$. The exponent $\ell_\alpha$ is the order of $\zeta^{(\alpha|\alpha)}$, thus depending on the length of $\alpha$. 

Although it is clear that this definition yields continuous $\Spec(U^0)$ as desired, it does not immediately imply that the constructed quotient is indeed a Hopf algebra with finite-dimensional $U^\pm$. To illustrate the involved subtleties, assume that $\mathfrak g$ is of type $\LT{B}$, $\LT{C}$, or $\LT{F}$ and the order $\kay$ of $\zeta$ is a multiple of 4. In this case, there exists \emph{no} (uniform) $\ell$ for which $\{E_\alpha^\ell, F_\alpha^\ell\}$ generates a Hopf ideal. 
Hence, the quotient fails to be a Hopf algebra for any $\ell$. 

A basic computation shows that the $\Epw_i=\Epw_{\alpha_i}$ are \emph{skew-primitive} for simple roots $\alpha_i$\,, meaning $\Delta(\Epw_i)=\Epw_i\otimes L_i+ 1\otimes \Epw_i$ with $L_i=K_i^\elli$ group-like. One easily infers from this that the set  $\{\Epw_i,\Fpw_i: i=1,\ldots,n\}$ indeed generates a Hopf ideal. However, the quotient algebra by this ideal yields \emph{infinite} dimensional $U^\pm$ in ranks greater than 1. Finally, for non-simple roots $\alpha\in\proots\setminus\sroots$, the $\Epw_\alpha$ are far from skew-primitive. 
Consequently, the ideal generated by an arbitrary subset of these elements is generally also \emph{not} a Hopf ideal.

Computationally explicit and self-contained proofs that the $\Epw_\alpha$ and $\Fpw_\alpha$ generate Hopf ideals for all Lie types and roots of unity tend to be difficult to locate in the existing literature. For the $\LT{A}_n$ Lie type one such treatment can be found in \cite{BW04}, relying on recursions that are considerably more complicated for other Lie types. For $\zeta$ of odd order $\kay$, respective statements can be inferred indirectly from the differential methods developed in \cite{dck92}. 
Our initial aim at the onset of this project has been the clarification of these points, which 
has led us to the exploration of numerous other directly related questions.

One of them is, which subsets of the $\Epw_\alpha$ generate Hopf ideals. The answer we give here is in terms of the inversion sets $\descroots s =\proots\cap s(\nroots)$ (and unions thereof), where $s\in \Weyl$ is an element of the Weyl group. The associated ideals $\Zaugidealhat{s}$ also occur naturally in the iterative construction of quasi-$R$-matrices, in the sense that intertwining relations for partial $R$-matrices hold on the (partial) quotients by the $\Zaugidealhat{s}$. Like the generators themselves, the $\Zaugidealhat{s}$ ideal \textit{a priori} depends on the choice of a total convex ordering or, equivalently, a reduced word presentation of $s$. We show independence of the latter so that, in particular, the fully restricted quantum groups are also independent of these choices and admit unique quasi-$R$-matrices.

Another source of classical parameter spaces inherent to quantum groups at roots of unity and, thus, potential starting points for quantum invariants extending classical invariants, are Lie groups associated to central subalgebras. Assign to a connected, reductive algebraic group $G$ a solvable group $G^\PLexp<G\times G$ of the same dimension by choosing Borel subgroups $B^+<G$ and $B^-<G$ for which $H=B^+\cap B^-$ is a maximal torus in $G$ and denote 
\begin{equation}\label{eq:DefPoissonLie}
G^\PLexp
\,=\,\ker\!\left(m\circ(\pi^-\times\pi^+):B^-\times B^+\rightarrow H\right)
\,=\,\left\{(g,h)\in B^-\times B^+\,:\,\pi^-(g)\cdot \pi^+(h)=1\right\}\;,
\end{equation}
where $\pi^{\pm}:B^{\pm}\rightarrow H$ are the usual abelianization maps. This group naturally appears in various contexts, such as the study of flag varieties, compactifications of algebraic groups, and Poisson Lie groups.  

The algebra $\Zsubalgchar$ generated by the $\Epw_\alpha$\,, $\Fpw_\alpha$\,, and $L_i$ is in the center of the DCKP algebra for {\em odd} orders $\kay$ and is complementary to the generic Harish-Chandra center. In \cite{dck92} De Concini, Kac, and Procesi establish an isomorphism of Poisson groups between $\Spec(\Zsubalgchar\otimes \mathbb{C})$ and $G^\PLexp$, or, equivalently, a Poisson Hopf isomorphism between $\Zsubalgchar\otimes \mathbb{C}$ and the coordinate ring of $G^\PLexp$. The construction relies on differential Poisson structures that reduce the proofs to the comparison of skew-primitive elements for simple roots and, therefore, side-steps explicit assignments of the $\Epw_\alpha$ generators for non-simple roots and comparisons of those coproducts.

For even $\kay$, the algebra $\Zsubalgchar$ is generally only a \emph{skew-commutative} (commutative up to sign or unit factor) Hopf algebra. In \cite{tan16} Tanisaki defines the Frobenius center as the intersection of $\Zsubalgchar$ with the center of $U$, which is generally no longer a Hopf subalgebra. He shows it is  isomorphic to the coordinate ring of the quotient $G^\PLexp$/$\mathscr G$ as an algebraic variety for a finite group $\mathscr G$, utilizing similar techniques as in \cite{dck92}.

In this paper, we provide Hopf algebra isomorphisms from $\Zsubalgchar\otimes \mathbb{C}$ to $\mathbb{C}[G^\PLexp]$ with \emph{explicit} assignments of \emph{all} $\Epw_\alpha$ generators for Lie type $\LT{A}_n$ with $\kay\not\equiv 2 \mod 4$ and type $\LT{B}_2$ for any $\kay$. Our computations directly compare the coalgebra structures 
of $\Zsubalgchar$
with those of $\mathbb{C}[\mathrm{GL}(n,\mathbb{C})^\PLexp]$ and $\mathbb{C}[\mathrm{SO}(5,\mathbb{C})^\PLexp]$, and do not rely on Poisson structures. The concrete forms of these identifications make them more suitable for applications in quantum topology.

The congruence restriction above for $\LT{A}_n$ is imposed solely to assure that $\Zsubalgchar$ is commutative. For $\kay\equiv 2 \mod 4$, the full $\Zsubalgchar$ algebra is isomorphic to 
a non-commutative Artin-Schelter-Tate central extension \cite{AST91} of classical matrix algebras. For this case, we suggest modifications of the Hopf algebra structures on $\mathbb{C}[G^\PLexp]$ via $\mathscr G$ weights. We provide a concrete description of $\mathscr G$ and the isomorphism of $\mathbb C[G^\PLexp/\mathscr G]$ with the Frobenius center, as proven abstractly in  \cite{tan16}.  

Our calculations for the $\LT{B}_2$ algebra illustrate further that, for $\kay$ a multiple of four, the group $G^\PLexp$ needs to be replaced by $\breve G^\PLexp$. Here, the Langlands dual group $\breve G$ is the algebraic group associated to the coroot system $\breve \roots$, reflecting the fact that long and short roots are exchanged for the primitive power generators $\Epw_\alpha$\,.
In both examples, we also show the ideals $\Zaugideal{s}$ in $\Zsubalgchar$ encountered above are 
isomorphically mapped to the vanishing ideals on the Bruhat subgroups. The latter are defined here  as
\begin{equation}\label{def:BorelDef}
    B(s)=B\cap \dot sB\dot s^{-1}\;,
\end{equation}
where $\dot s$ is a lift from the geometric Weyl group to $G$.

An additional important feature of many quantum invariants is that they are valued in integral polynomial rings, since one may expect integer coefficients to relate to topological information. For example, RT-type invariants of homology spheres are valued in $\mathbb{Z}[\zeta]$ with coefficients giving rise to so-called finite type invariants of homology 3-spheres \cite{Oh95,Oh96,oh}, which are, in turn, 
often related to geometric invariants. 
In a more direct approach, the Hennings invariant of a 3-manifold is valued in the ground ring of the Hopf algebra chosen for the construction, assuming adequate properties of (co)integrals \cite{Hennings,GNPHennings22,GeerGcoalg23}.

Among the different versions of quantum groups, we consider here   the original Jimbo normalization with generators $\{E_i, F_i, K_i\}$ as well as elements obtained from these via the Artin group action. We write $\UqQ$ for the rational form of the quantum group with these generators. It is easy to see that the minimal ground ring for the Hopf subalgebra $U^\geqzero$ generated by $\{E_i,K_i\}$ must contain $\mathbb{Z}[q,q^{-1},[\edgenum]^{-1}]$. Here $[n]$ indicates the usual quantum number and $\edgenum=1$ for Lie types $\LT{ADE}$\,, $\edgenum=2$ for types $\LT{BCF}$\,, and $\edgenum=3$ for type $\LT{G}$\,.

We prove that a PBW theorem holds for this minimal ring with respect to 
appropriate monomial bases.
For the full quantum group with the described standard generators $\{E_i, F_i, K_i\}$ the  ring over which the algebra is defined also includes $(q-q^{-1})^{-1}$. Minimal ground rings for quantum algebras with differently normalized generators are discussed as well.

Throughout this project, (anti)automorphisms of quantum groups are an important tool for generating and relating monomial basis elements. In addition to the Artin group action, symmetries are generated by numerous (anti)involutions as well as an infinite family of so-called \emph{\Kscaleword transformations} $\Kscale{u}{\mathsfit h}$. The latter act on generators via multiplication with Cartan generators and scalars and appear in numerous calculations, particularly, relations and expressions involving the antipode.  

We carefully organize the catalog of relations between all types of automorphisms, allowing us to derive explicit expressions for other automorphisms, most notably, the action of the Garside element with respect to the Artin group action. The study of these automorphisms serves to provide insights into symmetries of quantum invariants. 

\subsection{Organization and Summary of Main Results
}
Section~\ref{sec:Weyl} introduces basic background on spherical Artin and Coxeter groups as well as their actions on roots systems. In Sections~\ref{subsec:descroots}, \ref{subsec:ComplWords}, and \ref{subsec:LettSpecElem} we present numerous properties of inversion sets $\descroots s$ and discuss relations between reduced word expressions, weak Bruhat orders, as well as convex orderings of roots. We introduce root sums for inversion sets and a relation $v \wcomplrel w$ for reduced words, which is later used to describe involutions of basis elements. The notion of symmetric root lattice maps, discussed in Section~\ref{subsec:rootsymmaps}, prepares the definition of \Kscaleword transformations. We further compile formulae for rank 2 root systems in Section~\ref{subsec:rank2coxsys}, as they will be frequently used in later proofs.

Section~\ref{sec:quantumgen} covers the basic definitions of the Drinfeld-Jimbo quantum groups, which will be the focus of the paper as the most commonly used version.  The section includes discussions of their various minimal ground rings, extensions of scalars, basic Hopf algebra structures, and pointedness. Specifically, the subalgebras $U_q^\geqzero$ generated by $E_i$ and $K_i$ are defined over $\Zqqn{\dpone}$\,, which is the extension of $\mathbb Z[q,q^{-1}]$ by $([\edgenum]!)^{-1}\,$, where $[n]!$ is the usual quantum factorial and $\edgenum\in\{1,2,3\}$ depends on the Lie type as above. The full algebra $\Uq$ is defined over $\Zqqvn{\dpone}$\,, which is obtained by further extending  $\Zqqn{\dpone}$ by $(q-q^{-1})^{-1}$. 

In addition to the standard integer root lattice grading of quantum groups, we introduce in Section~\ref{subsec:gradings} an independent $\mathbb{F}_2=\mathbb{Z}/2\mathbb{Z}$ root lattice grading, which is useful for the derivation and structure analysis of the skew-commutative subalgebras at roots of unity. We assign to each $\roots$-symmetric lattice map $\mathsfit h$ and homomorphism $u:\mathbb Z^{\sroots}\rightarrow \Zgen^{\star}$ a \Kscaleword automorphism $\Kscale{u}{\mathsfit h}$ in Section~\ref{subsec:Kscale}. We also introduce in that subsection (anti)involutions, such as $\Cartaninv$, $\Kinvaut$, $\Qinvaut$, $\Cartanaut$, and $\Kconaut$, and list numerous relations between them, including a formula for the antipode.

Lusztig’s action of the Artin group $\mathscr A$ on $U$ and its relations with the other automorphisms is explored in Section~\ref{subsec:LATM}. Our convention differs slightly as we use instead the automorphisms $\Tinv_b=T_{\iota(b)}\,$, where $b\in\mathscr{A}$ and $\iota$ the inversion automorphism on $\mathscr{A}$.

In Section~\ref{sec:PBW} we construct families of bases for $U$, starting in Section~\ref{subsec:genwords} with the definition of elements $E_w$ and $F_w$ assigned to any reduced word $w$ of the underlying Weyl group. There we show that $E_w=E_k$ if the standard degree of $E_w$ is a simple root $\alpha_k$\,, extending the proof in \cite{Lu90a} to all Lie types. As an application, we derive in Section~\ref{subsec:GarsAut+DictGens} the formula
\[
\Tinv_{\garside}=\DynkInv\circ\Cartanaut\circ\Kinvaut\circ S
\]
for the Lusztig-Artin action $\Tinv_{\garside}$ of the Garside element $\garside\in\mathscr{A}$ in terms of the automorphisms from Section~\ref{sec:gradingauts} as well as the involution $\DynkInv$ given by symmetries of the Dynkin diagram. In the remainder of the subsection, we provide a dictionary between generators in Lusztig’s and our convention in the rank 2 case via the $\Kinvaut$-involution. Aside from facilitating the translation of results and formulae, it serves to show that later definitions of ideals and quotients are independent of which of the two $\mathscr{A}$-actions is chosen.  

Section~\ref{subsec:Emonomials} develops a formalism for PBW-style monomial elements $\Ebase{w}{\psi}$ and $\Ebaseopp{w}{\psi}\,$, depending on reduced words $w$ as well as an exponent functions $\psi: \descroots w\rightarrow \nnN$\,. We derive explicit formulae for the actions of automorphisms from Section~\ref{sec:gradingauts} and introduce notations for spanning sets and spans. Various commutation relations between these monomials are gathered in Section~\ref{subsec:Ecommrels} for later use. 

In Sections~\ref{subsec:orderings} and \ref{subsec:PBWspec} we adapt Lusztig’s proof of a PBW basis for the divided power algebras from \cite{Lu90a,lu90b} to produce a PBW theorem for $U_q^+$ and $U_q$  over respective minimal extensions of $\mathbb Z[q,q^{-1}]$ for one particular convex ordering. In Section~\ref{subsec:spanorder} we state criteria for subsets of exponent functions from which one can deduce  that their respective spans are independent of the choice of a reduced word expression for a given Weyl element. The results are combined in Section~\ref{subsec:PBWmain} to yield Proposition~\ref{prop:PBW2} and Theorem~\ref{thm:mainPBW}. They are restated in less formal terms as follows.

\begin{Thm}
    Assume $\mathfrak g$ to be of any simple Lie type and let $z$ be any reduced word of maximal length. 
Then $\{\Ebase z \psi\}$ and $\{\Ebase z \psi \cdot K^\mu\}$ are $\Zqqn{\dpone}$-bases of $U_q^+$ and $U_q^\geqzero\,$, respectively. 

Moreover, the set $\{\Ebase z \psi \cdot \Fbase{z}{\psi'}\cdot K^\mu\}$ is a $\Zqqvn{\dpone}$-basis of the Hopf subalgebra $U_q\subset \UqQ$\,.
\end{Thm}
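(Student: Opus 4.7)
The plan is to assemble the three statements from results already in place: the PBW basis for one fixed convex ordering, the word-independence of the relevant spans, and the triangular decomposition with careful bookkeeping of denominators required for passage from $U_q^\geqzero$ to $U_q$.

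First, for $U_q^+$ I would fix a particular convex ordering arising from one reduced expression $z_0$ of the longest element (the one used in Section~\ref{subsec:PBWspec}) and invoke the minimal-ring refinement of Lusztig's argument already carried out there: after checking that the straightening relations between Lusztig-Artin generators $E_\alpha$ in the chosen order have coefficients that lie in $\Zqqn{\dpone}$ and not merely in a larger localization, one obtains a spanning set $\{\Ebase{z_0}{\psi}\}$; linear independence over $\Zqqn{\dpone}$ follows by base change to $\mathbb{Q}(q)$ where classical PBW applies, using that $\Zqqn{\dpone}\hookrightarrow \mathbb{Q}(q)$ is flat. To transfer from $z_0$ to an arbitrary reduced word $z$ of maximal length, I would apply the span-independence criterion established in Section~\ref{subsec:spanorder} together with Matsumoto's theorem: since any two reduced words of $\longword$ are connected by braid moves and the span of $\{\Ebase{\cdot}{\psi}\}$ is shown there to be unchanged under a braid move once the exponent set $\{\psi\}$ is chosen appropriately, the spanning property is inherited; freeness then follows from the $z_0$ case via $\Zqqn{\dpone}$-linearity of the change-of-basis matrix, which is unitriangular with respect to a suitable filtration by root height.

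Second, for $U_q^\geqzero$, I would use that $U_q^\geqzero = U_q^+ \cdot \Zqqn{\dpone}[K_1^{\pm 1},\ldots,K_n^{\pm 1}]$ as an algebra, and that the Cartan subalgebra $U^0$ is a free $\Zqqn{\dpone}$-module on the monomials $\{K^\mu : \mu \in \mathbb{Z}^n\}$. The point to verify is that the multiplication map $U_q^+ \otimes_{\Zqqn{\dpone}} U^0 \to U_q^\geqzero$ is a $\Zqqn{\dpone}$-module isomorphism; this reduces, via the commutation relations $K_i E_\alpha = q^{(\alpha_i|\alpha)} E_\alpha K_i$, to the same question over $\mathbb{Q}(q)$, where it is classical, combined with the fact that the $K^\mu$ span $U^0$ freely over any extension of $\mathbb{Z}[q,q^{-1}]$.

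Third, for the full algebra $U_q$, I would write the elements of the ring $\Zqqvn{\dpone}$ and note that the enlargement by $(q-q^{-1})^{-1}$ is precisely what is required for the $[E_i, F_j] = \delta_{ij}\,(K_i - K_i^{-1})/(q_i - q_i^{-1})$ relation to remain expressible over the base ring; thus $U_q$ is defined over $\Zqqvn{\dpone}$. Using the antiautomorphism $\Cartaninv$ (or the analogous involution exchanging $E$'s and $F$'s) from Section~\ref{subsec:Kscale}, the $U_q^-$ side gives an analogous basis $\{\Fbase{z}{\psi'}\}$ over $\Zqqn{\dpone}\subset \Zqqvn{\dpone}$. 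The triangular decomposition $U_q = U_q^- U^0 U_q^+$ is then handled by induction on a filtration by total weight of the $E,F$-letters, using the $[E,F]$-relation to rewrite any product into the claimed ordered form; spanning is immediate and linear independence is again obtained by base change to $\mathbb{Q}(q)$.

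The main obstacle, in my view, is the word-independence step for $U_q^+$, i.e.\ showing that the span of $\{\Ebase{z}{\psi}\}$ does not depend on the choice of reduced expression $z$ for $\longword$ \emph{while staying over $\Zqqn{\dpone}$}. Over $\mathbb{Q}(q)$ one can always invert offending coefficients, but over the minimal ring one must check that all structure constants entering the braid-move comparison have denominators dividing a power of $[\edgenum]$, which ultimately comes down to explicit quantum-Serre-type computations in the rank-two subalgebras tabulated in Section~\ref{subsec:rank2coxsys}. Everything else is a fairly mechanical consequence of the preparatory sections once that point is in hand.
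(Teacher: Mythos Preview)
Your proposal matches the paper's route: Proposition~\ref{prop:PBW1} gives the basis for one specific ordering refining Lusztig's box pre-order, Corollary~\ref{cor:BaseOrdSplit} (via Proposition~\ref{prop:KinvRelCrit}) transfers the span to an arbitrary $z\in\wordsetmax$, and Lemma~\ref{lm:multiso} (adapting Rosso's coalgebra argument) handles the triangular decompositions for $U_q^\geqzero$ and $U_q$.

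One small correction: the unitriangularity claim for the change-of-basis matrix under a braid move is neither used in the paper nor obviously true as stated. The paper instead observes that for a single braid move the transformation is block-diagonal, with each block given by the matrix of the involution $\Kinvaut$ acting on the rank-two subalgebra (see~\eqref{eq:EpsiMTtransf}); invertibility is then automatic from $\Kinvaut^2=\id$. Alternatively, and more simply, once you have established that the $\Zqqn{\dpone}$-span of $\{\Ebase{z}{\psi}\}$ equals $U_q^+$ for every $z$, linear independence over $\Zqqn{\dpone}$ follows immediately from linear independence over $\mathbb{Q}(q)$, which holds for any $z$ by Lusztig's result; the unitriangularity detour is unnecessary.
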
  
\noindent The exponents are integer functions $\psi,\psi'\in\nnN^{{\proots}}$ and $\mu\in\mathbb Z^{{\sroots}}$ on the positive and simple roots, respectively. Analogous statements hold for bases obtained from the monomials $\Ebaseopp z \psi$ defined with respect to the reverse direction of multiplication. 
We discuss a few less commonly used normalizations of generators that yield smaller ground rings. 

In Section~\ref{sec:rootsofunity} we turn to the root of unity case, beginning in Section~\ref{subsec:QnumsRingsRo1} with the introduction of various rings related to $\bbz[\zeta]=\Zz$\,, notation for various orders, and formulae for multinomial coefficients. In Section~\ref{subsec:CommPrimGen} we derive skew-commutation relations involving the primitive power generators $\Epw_w=E_w^{\ell_w}$, $\Fpw_w=F_w^{\ell_w}$, and $\Kpw_i=K_i^{\ell_i}$ in terms of special functions $(\proots)^2\rightarrow \{\pm 1\}$. The discussion is extended in Section~\ref{subsec:Z=SignPolyn} to monomials, where we also introduce unital algebras such as $\Zsubalgchar^+_w$\,,  $\Zsubalgchar^\geqzero_w$\,, and $\Zsubalgchar_w$\,, generated over $\Zzn{\dpone}$ or $\Zzvn{\dpone}$ by $\Epw_u$\,, $\Fpw_u$\,, and $\Kpw^\mu$ with $u\leqRB w$. We conclude the section with a full classification of Lie types and $\kay$ congruences for which these algebras are central as subalgebras of $\Uz$ and for which they are commutative. For $\utypechar\in\{+,-,0,\geqzero, \leqzero\}$ and a given reduced word $w$, they give rise to the ideal $\Zaugideal{w}^{\utypechar}$ in a maximal $\Zsubalgchar_{\bullet}^\utypechar$ generated by the augmentation ideal $A_w^+$ of $\Zsubalgchar^+_w$\,.

For each rank 2 type $\LT{A}_2$\,, $\LT{B}_2$\,, $\LT{G}_2$ there are two possible convex orderings, each of which is represented by a maximal reduced word. In Section~\ref{subsec:Mho-Invar-Z}  we compute explicit formulae that express the $\Epw_w$ from one ordering in terms of those of the other ordering, with the exception of words of length 3 and 4 in type $\LT{G}_2$\,. Drawing on the results in previous section, the implied closure under the $\Kinvaut$-action entails that the algebras above do not depend on the word $w$ representing an element $s$ of the Weyl group, but only on $s$ itself. Summarizing Theorem \ref{thm:Zwordindep}, Corollary \ref{cor:Zorderindep}, Theorem \ref{thm:ZTinvInvar}, and Corollary \ref{cor:KidealWordIndep} this observation results in the following.

\begin{Thm}
    Assume a Lie type different from $\LT{G}_2$ and $\zeta$ of any order $\kay\not\in\{1,2,\edgenum,2\edgenum\}$. Suppose $w$ is a reduced word and $s$ the Weyl group element it represents. For $\utypechar\in\{+,-,0,\geqzero, \leqzero\}$ we have the following. 
    \begin{enumerate}[label=\roman*), leftmargin=12mm,] 
        \item 
        The algebra $\Zsubalgchar^\utypechar_w$ depends only on $s$.
        \vspace*{1.3mm}\item 
        The maximal skew-commutative algebra $\Zsubalgchar_\bullet^{\utypechar}$ is defined independently of the choice of ordering.
         \vspace*{1.3mm}\item
         $\Zsubalgchar_\bullet$ is invariant under the Artin group action.
         \vspace*{1.3mm}\item
         The augmentation ideal $\Zaugideal{w}^{\utypechar}$ in  $\Zsubalgchar_\bullet^\utypechar$ is only dependent on $s$.
    \end{enumerate}
\end{Thm}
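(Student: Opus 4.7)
The plan is to reduce all four statements to rank 2 computations via Matsumoto's theorem and then invoke the explicit conversion formulae for the $\Epw_w$ derived in Section~\ref{subsec:Mho-Invar-Z}. Any two reduced expressions $w,w'$ of the same Weyl group element $s$ are connected by a finite sequence of braid moves, each supported on a rank 2 parabolic subgroup of $\Weyl$. The trivial commutations coming from orthogonal simple roots obviously preserve the generator sets, and the $\LT{G}_2$ case is excluded by hypothesis; thus only the $\LT{A}_2$ and $\LT{B}_2$ braid moves need to be analyzed. For those, the excluded congruences $\kay\not\in\{1,2,\edgenum,2\edgenum\}$ ensure that the conversion formulae are valid, expressing each primitive power generator $\Epw_\alpha$ indexed by one ordering as a (skew-commutative) polynomial in the generators indexed by the opposite ordering.

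Equipped with this, statement~(i) follows by iterating the argument: moving from $w$ to $w'$ one braid move at a time, at each step the locally replaced primitive power generators of the rank 2 parabolic belong to the same $\Zzn{\dpone}$-subalgebra, so the overall algebras $\Zsubalgchar_w^\utypechar$ and $\Zsubalgchar_{w'}^\utypechar$ coincide. The argument is uniform in the superscript $\utypechar$: the $\Kpw_i$ factors are unaffected by braid moves acting on $E$-generators, and the $\Fpw$-side follows either by the analogous conversion formulae or by applying the Cartan involution $\Cartaninv$ to the $E$-side identities. Statement~(ii) is the specialization to the longest element $\longweyl$, whose reduced words are in bijection with the convex orderings of $\proots$; since by~(i) the generated subalgebra does not depend on the particular reduced word, the maximal subalgebra $\Zsubalgchar_\bullet^\utypechar$ is canonically defined.

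For statement~(iii), I would use that the Lusztig-Artin action $\Tinv_i$ permutes the primitive power generators up to sign factors and Cartan multipliers: in the generic situation where $s_iu$ has length exceeding that of $u$, the element $\Tinv_i(\Epw_u)$ is the $\Epw$-element attached to a conjugate reduced word, while the sign-exchange cases along a wall contribute $\Fpw$-generators weighted by suitable $\Kpw^\mu$. Both outcomes lie in $\Zsubalgchar_\bullet$, yielding Artin-invariance. Statement~(iv) is then immediate: the augmentation ideal $\Zaugideal{w}^\utypechar$ is generated inside $\Zsubalgchar_\bullet^\utypechar$ by the non-unit primitive power generators indexed by the inversion set $\descroots{s}$, which depends only on $s$; combining this with~(i) shows that these generators cut out the same ideal regardless of the chosen reduced expression.

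The main obstacle is the book-keeping of sign factors, scalar coefficients, and Cartan elements appearing in the rank 2 conversion formulae, most delicately in the $\LT{B}_2$ case with $\kay$ a multiple of $4$. One must verify that each replacement monomial actually lies in the smaller algebra $\Zsubalgchar_{w'}^\utypechar$ rather than a larger localization, and that the skew-commutation relations established in Section~\ref{subsec:Z=SignPolyn} are indeed preserved under each braid move. Once these rank 2 identities are in hand, the propagation to arbitrary $w$ and the derivation of (ii)--(iv) from (i) is formal.
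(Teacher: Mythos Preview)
Your plan for parts (i), (ii), and (iv) is essentially the paper's approach: Matsumoto's theorem reduces to rank~2 braid moves, and Propositions~\ref{prop:XJ-form} and~\ref{prop:XI-form} supply the explicit conversion formulae showing that the $\Epw_{b_k}$ are polynomials in the $\Epw_{a_s}$ over $\Zz$. The paper packages this via the $\Kinvaut$-invariance criterion of Corollary~\ref{cor:BaseOrdSplit}, but the content is the same. One small inaccuracy: no Cartan elements appear in the rank~2 conversion formulae \eqref{eq:XJ-form} and \eqref{eq:XI-form}; they live entirely in $\Zsubalgchar^+_{\longtwoword{ij}}$.

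For part (iii) there is a genuine gap. Your claim that $\Tinv_i$ ``permutes the primitive power generators up to sign factors and Cartan multipliers'' is false in the non-generic cases. For instance, in type $\LT{A}_2$ the paper records in \eqref{eq:A2Tinv1Form} that
\[
\Tinv_1(\Epw_{(12)})=-\Epw_2+\delzeta{\;}\Kpw_1^{-1}\Epw_{(12)}\Fpw_1,
\]
which is a genuine polynomial in the generators of $\Zsubalgchar_\bullet$, not a single $\Fpw$-generator weighted by a $\Kpw^\mu$. The paper avoids computing such expressions altogether by a different device: having established word independence in (i), one chooses the maximal word $w=u\cdot w_{\eta(i)}$ with $u$ representing $s_i\cdot\longweyl$, so that $z=w_i\cdot u$ is also maximal. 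Then for every $v\leqRB u$ the word $w_i\cdot v$ is reduced and $\Tinv_i(\Epw_v)=\Epw_{w_i\cdot v}\in\Zsubalgchar_\bullet^+$ trivially. Only the single remaining generator $\Epw_w$ is problematic, and for it one computes $\Tinv_i(\Epw_w)=\Tinv_z(\Epw_{\eta(i)})=\Tinv_{\longweyl}(\Epw_{\eta(i)})$, which the Garside automorphism formula of Corollary~\ref{cor:GarsideAutom} evaluates to $-\Kpw_i^{-1}\Fpw_i\in\Zsubalgchar_\bullet^{\leqzero}$. Without this reduction to a single generator and the Garside identity, your argument for (iii) does not go through.
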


In Section~\ref{subsec:Z_Ideals}  we consider the action of an elementary abelian 2-group $\Commsigngrp$ on $\Zsubalgchar_\bullet$ and induce for any $\Commsigngrp$-stable ideal $J$ in $\Zsubalgchar_\bullet$ a two-sided ideal $\widehat J$ in $\Uz$\,. We introduce notations for special and extended cases of the resulting $\Zaugidealhat{s}$ and discuss various examples. 
 
Sections \ref{sec:Ideals_An=GLn} and \ref{sec:Ideals_B2=SO5} are devoted to the identifications of the $\Zsubalgchar_\bullet$ algebras with coordinate rings of algebaric groups  in the basic $\LT{A}_n$ type as well as the doubly laced $\LT{B}_2$ case. Section~\ref{subsec:AlgGrps-basics} collects background on algebraic groups and their coordinate rings, with emphasis on the correspondence between subgroups and Hopf ideals, and a discussion of relevant elementary examples.

Generators, commutation relations, and coproducts are computed for the quantum algebras for $\mathfrak g=\mathfrak{gl}_n$ and $\mathfrak g=\mathfrak{sl}_n$ in Section~\ref{subsec:CoalgAn}. Specializing to a root of unity of order $\kay$, we identify in Section~\ref{subsec:Zn=CAn} the respective $\Zsubalgchar_\bullet^\geqzero$ algebra with the AST algebra over $\Zz$ \cite{AST91}. For $\kay\not\equiv 2 \mod 4$ the extension is trivial, resulting in a Hopf algebra isomorphism to the coordinate ring of the group of upper triangular matrices over $\mathbb k=\Qz$ or $\mathbb k=\mathbb C$. In Section~\ref{subsec:AnBruhat} the full $\Zsubalgchar_\bullet$ algebra for $\mathfrak{gl}_n$ over $\mathbb C$ is identified with the coordinate ring of $\mathrm{GL}(n,\mathbb{C})^\PLexp$ and the augmentation ideals $\Zaugideal{s}$ with vanishing ideals of Bruhat subgroups $B(s)$ defined in \textit{loc. cit}. The section concludes with a discussion of potential interpretations of the skew-commutative case. 

We start our treatment of the $\LT{B}_2$ case with a complete description of the Hopf algebra structure of $\Zsubalgchar_\bullet^\geqzero$ in Section~\ref{subsec:Coalg-ZB2}, deferring the detailed computations of the coproducts to Section~\ref{subsec:ProofCoprodB2} and Appendix \ref{app:powerformula}. The required replacement of the root system by the coroot system, as mentioned above, is achieved by the choice of an isomorphism between the $\LT{B}_2$ and $\LT{C}_2$ algebras.  

In Section~\ref{subsec:BorelCordSO5} we use a specific complex parametrization of $\mathrm{SO}(5,\bbc)^\geqzero$ to derive formulae for the coalgebra structure of its coordinate ring. Using its Lie-Kolchin form, the vanishing ideals of the Bruhat subgroups are readily identified. Comparing the coalgebra structures obtained in Section~\ref{subsec:ProofCoprodB2} and \ref{subsec:BorelCordSO5}, we obtain Hopf algebra isomorphisms between the central subalgebra and the coordinate ring in Section~\ref{subsec:IsomHA-B2}. The involved Hopf algebras fit into parametrized families of abstract weighted Hopf algebras, which are classified in the same sections. 

The theorem below summarizes the main results from Lemma \ref{lem:fullZ=Qn}, Theorem \ref{thm:AB=Ks}, and Theorem \ref{thm:Z=CSO5}. The field $\mathbb k$ can be chosen as either $\Qz$ or $\bbc$. The notation $G^\PLexp$, $B$, and $B(s)$ for an algebraic group $G$ is as in \eqref{eq:DefPoissonLie} and \eqref{def:BorelDef}. The isomorphisms are given as explicit assignments of polynomial generators. 

\begin{Thm}
    Suppose $\Zsubalgchar_\bullet$ is the full commutative subalgebra for $\mathfrak g = \mathfrak{gl}_n$ with $\kay \not\equiv 2 \mod 4$ or $\mathfrak g$ of type $\LT{B}_2$ for any $\kay$. Let $G=\GL(n,\mathbb k)$ or $G=\SO(5,\mathbb k)$ respectively. Then 
    \begin{enumerate}[label=\roman*), leftmargin=12mm,] 
        \item 
        There is a Hopf algebra isomorphism from  $\Zsubalgchar_\bullet\otimes\mathbb k$ to $\mathbb k[G^\PLexp]$, which restricts to an isomorphism between $\Zsubalgchar_\bullet^\geqzero\otimes\mathbb k$ and $\mathbb k[B]$. 
         \vspace*{1.3mm}\item
         The isomorphism maps the augmentation ideal  $\Zaugideal{s} \subseteq \Zsubalgchar_\bullet^\geqzero $ exactly to the vanishing ideal in $\mathbb k [B]$ of the Bruhat subgroup $B(s)<B$. 
    \end{enumerate}
\end{Thm}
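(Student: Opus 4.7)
The plan is to construct an explicit Hopf algebra map on generators and verify that it has the required properties, rather than to produce it via Poisson-theoretic duality as in \cite{dck92,tan16}. For each of the two cases, the section-wise computations preceding the theorem will supply (a) a presentation of $\Zsubalgchar_\bullet^\geqzero$ by generators $\{\Epw_\alpha,\Kpw_i\}_{\alpha\in\proots,\,i\in\sroots}$ together with their commutation relations and explicit coproducts, and, symmetrically, for $\Zsubalgchar_\bullet^\leqzero$, and (b) a matching presentation of $\mathbb k[B]$ coming from a Lie--Kolchin-style parametrization of the Borel subgroup of $G$ (matrix-entry coordinates for $\GL(n)$, the parametrization from Section~\ref{subsec:BorelCordSO5} for $\SO(5)$). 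The first step is to define the assignment $\Phi^\geqzero\colon\Zsubalgchar_\bullet^\geqzero\otimes\mathbb k\to\mathbb k[B]$ on these generators and verify it is a well-defined algebra homomorphism, which reduces to comparing the classification of commutative/skew-commutative relations from Section~\ref{subsec:CommPrimGen} with the abelian/unipotent factorization $B=H\ltimes U^+$ of the Borel. Under the congruence hypothesis on $\kay$, the algebra $\Zsubalgchar_\bullet^\geqzero$ is genuinely commutative, and the generator map can be taken to send $\Kpw_i$ to the relevant torus character and each $\Epw_\alpha$ to the coordinate function extracting the appropriate $U^+$-entry.

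The second and most involved step is to verify that $\Phi^\geqzero$ is a coalgebra map. For simple roots the coproduct of $\Epw_i=E_i^{\ell_i}$ is already skew-primitive by a direct $q$-binomial computation, and this matches the matrix-entry coproduct on $B$ by inspection. For non-simple roots in the $\LT{A}_n$ case, one uses the inductive formula for $\Delta(\Epw_\alpha)$ derived in Section~\ref{subsec:CoalgAn}, and compares it term-by-term with the minor-expansion coproduct on $\mathbb k[\SUtriM n]$. For $\LT B_2$, the coproducts are computed in Section~\ref{subsec:ProofCoprodB2} and Appendix~\ref{app:powerformula}, and the matching target coproducts on $\mathbb k[\SO(5)^\geqzero]$ come from Section~\ref{subsec:BorelCordSO5}; here the swap between long and short roots when $\kay\equiv 0\pmod 4$ is absorbed by passing to the Langlands dual group $\SO(5)$ (rather than $\Sp(4)$), reflecting the fact that the exponent $\ell_\alpha$ depends on root length. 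Injectivity and surjectivity of $\Phi^\geqzero$ then follow from counting: both sides are polynomial algebras (up to Laurent generators on the torus part) whose generator sets have been put in bijection.

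For the passage from $\Zsubalgchar_\bullet^\geqzero$ to the full $\Zsubalgchar_\bullet$, I would construct $\Phi^\leqzero$ analogously on the opposite Borel and then assemble $\Phi=(\Phi^\leqzero\otimes\Phi^\geqzero)\circ\Delta$. The key observation, which is really the content of statement (i) for the full algebra, is that the relation $\pi^-(g)\cdot\pi^+(h)=1$ defining $G^\PLexp\subset B^-\times B^+$ corresponds exactly to the torus-identification $\Kpw_i^{-}\cdot\Kpw_i^{+}=1$ built into $\Zsubalgchar_\bullet$ via the fact that both copies of the primitive power $\Kpw_i$ inside $U^\geqzero$ and $U^\leqzero$ are identified with a single group-like element. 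This reduces the full isomorphism to the two Borel isomorphisms already established, plus a check on the common Cartan part.

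For part (ii), I would identify $\Zaugideal s$ as the ideal generated by $\{\Epw_\alpha:\alpha\in\descroots s\}$, using the word-independence and invariance results of Section~\ref{subsec:Mho-Invar-Z}. On the geometric side, the Bruhat subgroup $B(s)=B\cap\dot sB\dot s^{-1}$ admits a unipotent radical whose complement inside $U^+$ is spanned precisely by the root subgroups $U_\alpha$ with $\alpha\in\descroots s$; hence the vanishing ideal of $B(s)$ in $\mathbb k[B]$ is generated by the matrix coordinates dual to exactly these roots. Under $\Phi^\geqzero$ these are the images of the $\Epw_\alpha$, so the two ideals match. The main obstacle I expect is the coproduct verification for non-simple roots in the $\LT B_2$ case with $\kay\equiv 0\pmod 4$: here the primitive power $\Epw_\alpha$ is far from skew-primitive, involves mixed terms in both root lengths, and the sign factors from the skew-commutation must combine correctly with the matrix-coordinate coproduct to yield an exact match. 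Managing these signs and verifying the Langlands-dual identification is the place where the proof stops being bookkeeping and requires the explicit computations deferred to Appendix~\ref{app:powerformula}.
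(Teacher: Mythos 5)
Your plan for the $\mathfrak{gl}_n$ half and for part (ii) is essentially the paper's: the paper also proceeds by an explicit generator assignment (Theorem~\ref{thm:Z=O} and Corollary~\ref{cor:AnCoordids}), and proves Theorem~\ref{thm:AB=Ks} exactly as you describe, by observing that membership in $B(s)$ is the vanishing of the matrix entries indexed by $\descroots{s}$, whose preimages are the $\Epw_\alpha$ generating $\Zaugideal{s}$. Two caveats on your version. First, the naive assignment ``$\Epw_\alpha\mapsto$ the $U^+$-coordinate'' is not a coalgebra map: $\Delta(\Epw_{i,j})$ carries group-like factors $\Kpw_{i,k}$ in the second tensor leg, so one must twist by $\Glg_i^{-\ell}$ and a scalar (this is why the paper works in $\mathfrak{gl}_n$ and routes through the AST matrix algebra, where the map is $R_{i,j}\mapsto b\,\Glgpw_i^{-1}\Epw_{i,j}$). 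Second, your assembly $\Phi=(\Phi^{\leqzero}\otimes\Phi^{\geqzero})\circ\Delta$ does not typecheck, since $\Phi^{\leqzero}$ is only defined on $\Zsubalgchar^{\leqzero}_\bullet$; the paper instead uses the multiplication surjection $\Zsubalgchar^{\geqzero,\cop}_\bullet\otimes\Zsubalgchar^{\geqzero}_\bullet\to\Zsubalgchar_\bullet$ (via $\Cartanaut$) whose kernel is the Hopf ideal $\sum_i\lpip \Kpw_i\otimes\Kpw_i-1\rpip$, and identifies that kernel with $\VanIdeal(G^\PLexp)$ in $\mathbb k[B^-\times B^+]$ (Lemma~\ref{lem:fullZ=Qn}); your ``identify the common torus'' idea is the right one, but it should be implemented on the multiplication side, not the comultiplication side.

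The genuine gap is in the $\LT{B}_2$ case, precisely where you stop. A term-by-term comparison of Proposition~\ref{prop:CoprodB2} with the coproducts \eqref{eq:SO5coprod} on $\mathbb C[\Borelsof]$ cannot succeed as stated: the geometric coproducts contain mixed-monomial terms (e.g.\ $\cofun{a}\otimes\cofun{\lambda}^{-1}\cofun{\mu}^{-1}\cofun{a}\cofun{b}$ in $\Delta(\cofun{d})$) that never occur in $\Delta(\Epw_{(iji)})$ or $\Delta(\Epw_{(ij)})$, and the nonzero structure constants on the two sides do not literally agree for any root of unity. The paper's resolution is a two-step device you would have to supply: first the $t$-parametrized change of coordinates \eqref{eq:MsofNewGent} on the $\SO_5$ side, which eliminates the mixed terms; and then, instead of coefficient matching, the classification in Proposition~\ref{prop:ClassifB2Calg} of all weight-graded Hopf structures of the constrained form \eqref{eq:GenB2Coprod} into four isomorphism classes determined only by which structure constants vanish. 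Both $\Zsubalgchar^{\geqzero}_\bullet\otimes\mathbb C$ and $\mathbb C[\Borelsof]$ land in the generic class, so the isomorphism exists and is realized by rescaling generators, with the $\kay\equiv 0\bmod 4$ long/short swap absorbed by the $\wgradZBtwo$-grading (Theorem~\ref{thm:Z=CSO5}). Without this classification (or an equivalent normalization argument), the ``explicit computations in Appendix~\ref{app:powerformula}'' give you only the quantum-side coproducts, not the match, so the final step of your proposal is missing an idea rather than merely bookkeeping.
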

Section~\ref{sec:QuasiTriang} is mainly concerned with the construction of $R$-matrices and quasi-triangular structures. We review relevant algebra completions as well as Tanisaki’s axioms for pre-triangular bialgebras in Section~\ref{subsec:GenTensStr}. The algebraic properties of elementary and partial quasi-$R$-matrices for generic $q$ are discussed in Sections \ref{subsec:QuasiR_genq} and \ref{subsec:PreTriang}. We consider their relations with the Artin group action, introduce the Taniski automorphism $\taniauto$ for the pre-triangular structure, and define coefficients related to the Tanisaki-Lusztig pairings. Specialized at a root of unity, the latter is used in Section~\ref{subsec:LuszTaniPair} to define the full quotient into the restricted quantum group, where maximal ideals are identified with null spaces.

The technical core result of Section~\ref{subsec:TruncQmats} is a set of intertwining relations via truncated partial $R$-matrices between the coproduct $\Delta(x)$ and a twisted coproduct $\Delta^s(x)$, modulo ideals derived from the ideals $\Zaugidealhat{w}$ introduced in Section~\ref{sec:rootsofunity}. Here $\Delta^s(x)$ is the coproduct conjugated by $\Tinv_s$ and the underlying quantum group $\Uzn{\ell}$ is now over the ring $\Zzvn{\ell}\,$, which also includes $([\ell-1]!)^{-1}$. The rather involved computations for generators are deferred to Appendix \ref{sec:proofcoprodbase}. An important consequence of these relations is that the two-sided ideals $\Zaugidealhat{w}$ depend only on the Weyl elements represented by $w$ and are Hopf for all cases, including $\LT{G}_2$\,. The following combines the statements in Theorem \ref{thm:MainIdeals} and Proposition \ref{prop:PartialIntertw}.

\begin{Thm}\label{Thm:RIdeal}
    Let $\zeta$ be of any order $\kay\not\in\{1,2,\edgenum, 2\edgenum\}$ and $\Uzn{\ell}$ of any simple Lie type. Suppose $w$ is a reduced word representing a Weyl element $s$. Let $\Zaugidealhat{w,w}$ be the two-sided ideal generated by all $\Epw_u$ and $\Fpw_u$ with $u\leqRB w$. Then
    \begin{enumerate}[label=\roman*), leftmargin=12mm,] 
        \item 
        $\Zaugidealhat{w,w}$ is a Hopf ideal depending only on $s$ so that the quotient $\Uzn{\ell}^{[s]}$ by this ideal is a well-defined Hopf algebra over $\Zzvn{\ell}$\,.
         \vspace*{1.3mm}\item
         PBW bases of $\Uzn{\ell}^{[s]}$ are given by the set of exponents with $\psi(\alpha)<\ell_\alpha\,$ for all $\alpha\in\descroots s$. 
         \vspace*{1.3mm}\item
         For any reduced word $u$ representing some $t\leqRB s$ there is a partial quasi-$R$-matrix $\Rtprodfac u$ such that 
\[
\Delta^{[t]}(x)\cdot \Rtprodfac u\,=\,\Rtprodfac u\cdot \Delta(x)\;,
\]
where $x\in \Uzn{\ell}^{[s]}$ and $\Delta^{[t]}$ is the coproduct on $\Uzn{\ell}^{[s]}$ obtained by conjugation with $\Tinv_t$\,.
    \end{enumerate}
\end{Thm}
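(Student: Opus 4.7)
The plan is to derive parts (i), (ii), and (iii) jointly, with the intertwining relations in (iii) providing the computational engine for the Hopf ideal property in (i), while the word-independence in (i) rests on the earlier structural theorems on $\Zsubalgchar_\bullet$. The two-sided ideal $\Zaugidealhat{w,w}$ is generated by the primitive power elements $\Epw_u$ and $\Fpw_u$ for $u\leqRB w$, together with the Cartan part needed to make the quotient Hopf. By the theorem of Section~\ref{sec:rootsofunity} establishing that $\Zsubalgchar_w^{\utypechar}$ depends only on the Weyl element $s$ represented by $w$, the generating set, and hence the ideal, is an invariant of $s$ alone. With this identification in hand, (ii) follows directly from the PBW theorem of Section~\ref{sec:PBW}: fixing a reduced word $z$ for the longest element that extends $w$, the monomials $\Ebase{z}{\psi}\cdot\Fbase{z}{\psi'}\cdot K^\mu$ with $\psi(\alpha),\psi'(\alpha)<\ell_\alpha$ for $\alpha\in\descroots{s}$ descend to a basis of the quotient, since the generators of $\Zaugidealhat{w,w}$ precisely truncate the powers exceeding $\ell_\alpha$ in the positions indexed by $\descroots s$.

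For the Hopf ideal property, it suffices to show that $\Delta(\Epw_\alpha)$ and $\Delta(\Fpw_\alpha)$ lie in $\Zaugidealhat{w,w}\otimes \Uzn\ell + \Uzn\ell \otimes \Zaugidealhat{w,w}$ for each $\alpha\in\descroots s$, and that $S(\Zaugidealhat{w,w})\subseteq \Zaugidealhat{w,w}$. For a simple root $\alpha=\alpha_i$ the skew-primitivity $\Delta(\Epw_i)=\Epw_i\otimes L_i+1\otimes \Epw_i$ noted in the introduction is immediate. For a non-simple $\alpha\in\descroots s$, I would write $\alpha=u(\alpha_i)$ for some $u\leqRB s$ with $u s_i \leqRB s$ and $\alpha_i\in\sroots$, so that $\Epw_\alpha=\Tinv_u(\Epw_i)$. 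The key is then to expand $\Delta(\Tinv_u(\Epw_i))$ via the intertwining identity from (iii) applied to $x=\Epw_i$: it re-expresses $(\Tinv_u\otimes\Tinv_u)\Delta(\Epw_i)$ modulo the partial ideal $\Zaugidealhat{u,u}$ in terms of $\Delta$ conjugated by the partial quasi-$R$-matrix, which only involves $\Epw_\beta,\Fpw_\beta$ for $\beta$ in $\descroots u\subseteq\descroots s$. Invariance under $S$ is then obtained from the antipode formula $S=\DynkInv\circ\Cartanaut\circ\Kinvaut\circ\Tinv_\garside^{-1}$ derived in Section~\ref{subsec:GarsAut+DictGens}, using that each factor preserves $\Zsubalgchar_\bullet$ up to $\Kpw$-adjustments.

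Part (iii) is then established by induction on the length of $u$. For $u=\emptyword$ the statement is trivial with $\Rtprodfac{\emptyword}=1$. For the inductive step, decompose $u=u'\cdot s_i$ and set
\[
\Rtprodfac{u}\,=\,(\Tinv_{u'}\otimes\Tinv_{u'})\bigl(\Rtprodfac{s_i}\bigr)\cdot\Rtprodfac{u'}\;,
\]
where $\Rtprodfac{s_i}$ is the truncated elementary quasi-$R$-matrix associated to $\alpha_i$ from Section~\ref{subsec:QuasiR_genq}. The rank-one identity $\Delta^{[s_i]}(x)\Rtprodfac{s_i}=\Rtprodfac{s_i}\Delta(x)$ modulo $(\Epw_i,\Fpw_i)$ is the root-of-unity analogue of Lusztig's intertwining relation, whose truncation after $\ell_i$ terms is forced by the vanishing of $\Epw_i,\Fpw_i$ in the quotient. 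Applying $\Tinv_{u'}\otimes\Tinv_{u'}$ and composing with the inductive hypothesis on $u'$ yields the intertwining modulo $\Zaugidealhat{u,u}$, which is contained in $\Zaugidealhat{w,w}$ whenever $u\leqRB w$. Independence of the resulting relation from the choice of reduced word $u$ for $t$ is inherited from the corresponding independence of $\Zaugidealhat{t,t}$ established in part (i).

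The main obstacle is the rank-one elementary identity for $\Rtprodfac{s_i}$ modulo $(\Epw_i,\Fpw_i)$: classically the $q$-exponential form of the $R$-matrix intertwines only after completion or passage to divided powers, while here one needs the finite truncation at the $\ell_i$-th term to both be well-defined over $\Zzvn{\ell}$ and produce a remainder that lies in the Hopf ideal rather than merely in a formal completion. This is precisely the content of the appendix computations referenced in the text, and it is where the inversion of $[\ell-1]!$ and the skew-commutation formulae from Section~\ref{subsec:CommPrimGen} enter in an essential way.
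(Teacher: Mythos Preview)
Your inductive construction of the partial quasi-$R$-matrices in (iii) and your use of the intertwining relation to show $\Delta(\Epw_v)$ lies in the ideal tensor sum are essentially the paper's approach (Propositions~\ref{prop:CoprodIntWord} and \ref{prop:KhatIdealDiagWord}). The rank-one base case is indeed the technical core, and you locate it correctly.

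However, there are two genuine gaps in your argument for (i).

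\textbf{Word independence for $\LT{G}_2$.} You invoke the theorem of Section~\ref{sec:rootsofunity} stating that $\Zsubalgchar_w^\utypechar$ depends only on $s=\Weylpres(w)$, but that result (Theorem~\ref{thm:Zwordindep}) explicitly \emph{excludes} Lie type $\LT{G}_2$; the $\LT{G}_2$ case is left as Conjecture~\ref{conj:Zwordindep}. Since the present theorem is stated for \emph{all} simple Lie types, your route to word independence does not close. The paper reverses the logical order: it first establishes the Hopf ideal property (see below), from which $S$-invariance of $\Zaugidealhat{w}^{\geqzero}$ follows automatically. Then the relation $S=\Kscale{\htsign\cdot\qitgen}{-\idsymm}\circ\Kinvaut$ from \eqref{eq:SXiMhoRel}, together with the fact that $\Kscale{u}{\mathsfit h}$ preserves any $\wgrad$-graded subspace, forces $\Kinvaut$-invariance of the ideal. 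Word independence then follows from the rank-2 criterion in Corollary~\ref{cor:BaseOrdSplit} applied to the complementary exponent set $\Lminexpset{s}$, an argument that works uniformly for all types including $\LT{G}_2$.

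\textbf{Antipode invariance.} Your proposed use of the Garside formula $S=\Kinvaut\circ\Cartanaut\circ\DynkInv\circ\Tinv_\garside$ does not work: while Theorem~\ref{thm:ZTinvInvar} shows the Artin group preserves the \emph{full} $\Zsubalgchar_\bullet$, there is no reason $\Tinv_\garside$ preserves the partial ideal $\Zaugidealhat{w,w}$ for non-maximal $w$ (the explicit formulae in \eqref{eq:A2Tinv1Form} already show $\Tinv_1$ mixes generators nontrivially). The paper instead passes to the field $\Qz$, uses pointedness of $\UzQ$ (Proposition~\ref{prop:pointed}), and invokes Nichols's theorem \cite[Thm~1(v)]{Ni78} that any bi-ideal in a pointed Hopf algebra is automatically a Hopf ideal. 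The bi-ideal property over $\Qz$ suffices, and the integral statement is recovered by intersecting with $\Uzn{m}$ via the PBW basis.

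A minor correction: the ideal $\Zaugidealhat{w,w}$ contains no Cartan relations; the quotient $\Uzn{\ell}^{[s]}$ retains the full $\Uzn{\ell}^0$.
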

 
In Section~\ref{subsec:RestQRMats} we consider the specialization to the maximal length element 
$\longweyl$ in the Weyl group, defining the restricted quantum group as $\Uzn{\ell}^\redsup= \Uzn{\ell}^{[ \longweyl]}$. Using our formula for the Garside element, we identify $\Delta^{[\longweyl]}$ with the Tanisaki-Lusztig conjugate $\barDelta$ and show the maximal $\Rtprodfac z$ is independent of the chosen word. A slightly less formal statement of Theorem \ref{thm:final} is as follows, which now encompasses also the $\LT{G}_2$ case.

\begin{Thm}
    For any Lie type and root of unity as in Theorem \ref{Thm:RIdeal}, the triple $(\Uzn{\ell}^\redsup,\Rtred,\taniauto)$ is a pre-triangular Hopf algebra over $\Zzvn{\ell}$\,. Its construction is independent of any choices, it has the properties of a restricted quantum group, and it admits the actions of the Artin group and all involutions.
\end{Thm}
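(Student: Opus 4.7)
The plan is to specialize Theorem \ref{Thm:RIdeal} to the longest element $\longweyl$ in the Weyl group and then upgrade the partial intertwining relation to a full pre-triangular structure. Set $s=\longweyl$ in part (i) of that theorem: the Hopf ideal $\Zaugidealhat{\longweyl,\longweyl}$ depends only on $\longweyl$ and not on a reduced-word representative, so the quotient $\Uzn{\ell}^\redsup = \Uzn{\ell}/\Zaugidealhat{\longweyl,\longweyl}$ is a well-defined Hopf algebra over $\Zzvn{\ell}$, independent of any choices. Moreover, the PBW basis from part (ii) now imposes $\psi(\alpha)<\ell_\alpha$ for all $\alpha\in\descroots{\longweyl}=\proots$, which shows that the subalgebras $U^\pm\subset \Uzn{\ell}^\redsup$ are finite-rank free $\Zzvn{\ell}$-modules, while $U^0=\Zzvn{\ell}[\{K_i^{\pm 1}\}]$ is unaffected by the quotient. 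This already verifies the defining properties of a restricted quantum group listed in the Introduction.

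Next, I would construct the universal quasi-$R$-matrix. Applying part (iii) of Theorem \ref{Thm:RIdeal} with $t=s=\longweyl$, any reduced word $z$ of maximal length produces a $\Rtprodfac z$ satisfying
\[
\Delta^{[\longweyl]}(x)\cdot \Rtprodfac z \,=\, \Rtprodfac z\cdot \Delta(x)
\]
in (a completion of) $\Uzn{\ell}^\redsup\otimes \Uzn{\ell}^\redsup$. For two maximal reduced words $z,z'$, the words are related by braid moves, under which each elementary factor of the partial $R$-matrix transforms in a controlled way consistent with the Artin group action from Section~\ref{subsec:LATM}; combined with the already-established word-independence of the ideal and the uniqueness of a grouplike/skew-invertible element satisfying the above intertwining property in the completed algebra (Sections~\ref{subsec:QuasiR_genq} and \ref{subsec:PreTriang}), this forces $\Rtprodfac z=\Rtprodfac{z'}$, defining the element $\Rtred$ unambiguously.

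The analytic core is then the identification of $\Delta^{[\longweyl]}$ with the Tanisaki–Lusztig conjugate $\barDelta$. I would combine the Garside formula
\[
\Tinv_{\garside}=\DynkInv\circ\Cartanaut\circ\Kinvaut\circ S
\]
from Section~\ref{subsec:GarsAut+DictGens} with the definition of $\taniauto$ and the Tanisaki–Lusztig pairing of Section~\ref{subsec:LuszTaniPair}. Conjugation by $\Tinv_{\longweyl}$ acts on $\Delta$ by swapping the role of $U^+$ and $U^-$ up to the compensating involutions $\DynkInv,\Cartanaut,\Kinvaut$, which is precisely how $\barDelta$ is defined through $\taniauto$; the effect on the $K_i$-factors is controlled by the pairing. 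Once this identification is in hand, the intertwining relation above becomes exactly Tanisaki's pre-triangular axiom, so $(\Uzn{\ell}^\redsup,\Rtred,\taniauto)$ satisfies the axioms reviewed in Section~\ref{subsec:GenTensStr}. The compatibility of $\Rtred$ with $\Delta\otimes\id$ and $\id\otimes\Delta$ (the remaining hexagon-type identities) propagates from the generic-$q$ statements of Section~\ref{subsec:QuasiR_genq} via the PBW basis.

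The Artin group invariance of $\Uzn{\ell}^\redsup$ is immediate from the word-independence in Theorem \ref{Thm:RIdeal}(i): each $\Tinv_b$ permutes the generating elements $\Epw_\alpha,\Fpw_\alpha$ (up to unit scalars coming from Section~\ref{subsec:Kscale}) and therefore preserves $\Zaugidealhat{\longweyl,\longweyl}$. The (anti)involutions $\Cartaninv,\Kinvaut,\Qinvaut,\Cartanaut,\Kconaut,\DynkInv$ permute the same generators within $\Zsubalgchar_\bullet$ — using the $\Kinvaut$-action analysis of Section~\ref{subsec:Mho-Invar-Z} and the relations among involutions compiled in Section~\ref{subsec:Kscale} — and pass to $\Uzn{\ell}^\redsup$. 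The main obstacle, I expect, is the rigorous matching $\Delta^{[\longweyl]}=\barDelta$ through the Garside formula, because the partial-quotient intertwining must be globalized and the Tanisaki-automorphism identity checked on all generators (not only skew-primitive ones for simple roots); once this coproduct identity is in place, both the word-independence of $\Rtred$ and the pre-triangular axioms follow without further difficulty.
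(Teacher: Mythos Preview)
Your overall strategy matches the paper's: specialize Theorem~\ref{Thm:RIdeal} to $s=\longweyl$, identify $\Delta^{[\longweyl]}=\barDelta=\taniauto\circ\Deltaopp$ via the Garside formula, and verify the Tanisaki axioms by descending the generic-$q$ coefficient identities through the PBW basis. Two points of emphasis differ from the paper.

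First, you locate the main obstacle at the identification $\Delta^{[\longweyl]}=\barDelta$. In the paper this is actually the easy step: it is proved once for generic $q$ (Lemma~\ref{lm:PzDel=DelbPz}) as an automorphism identity $\Tinv_{\longweyl}^{\otimes 2}\circ\Delta\circ\Tinv_{\longweyl}^{-1}=\barDelta$, using only $\Tinv_{\garside}=\Kconaut\circ(\DynkInv\circ\Cartaninv\circ S)$ and the fact that the bracketed composite is a coalgebra automorphism. No generator-by-generator coproduct check beyond the simple-root case is needed; the identity descends verbatim to $\Uzn{\ell}$. The actual work lies in the cabling axioms (Proposition~\ref{prop:Tani1Restr}): one must expand both $(\id\otimes\Delta)(\Rtprod{z})$ and $\taniauto_{12}(\Rtprod{z})_{13}(\Rtprod{z})_{12}$ in PBW monomials, invoke the specialized coefficient identity~\eqref{eq:CoeffId}, and verify that every term with an exponent outside $\Lmaxexpset{\bullet}$ lands in $\FullpTmIdealMaxT{3}$. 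Your phrase ``propagate from generic $q$ via PBW'' is the right idea but hides this bookkeeping.

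Second, your argument for word-independence of $\Rtred$ is underspecified. The elementary factors of $\Rtprod{z}$ do \emph{not} transform simply under a braid move; the change-of-ordering formulae of Section~\ref{subsec:Mho-Invar-Z} show the transformation is polynomial, not monomial. The paper offers three routes and executes the third: (a) duality with the word-independent Lusztig--Tanisaki pairing, (b) adapt Lusztig's uniqueness argument to $\Uzn{\ell}^\redsup$, or (c) use the $\Kinvaut$-symmetry $\Kinvaut^{\otimes 2}(\Rtprod{z})=\Rtprod{z^\winvchar}\equiv\Rtprod{z}\mod\FullpTmIdealMaxT{2}$ (Lemma~\ref{lem:PzMhoSym}), derived from the antipode formulae~\eqref{eq:AntipBasis} and the two-sided inverse computation~\eqref{eq:PqInvMaw}, then reduce via Matsumoto to a single rank-2 relator and observe that $\Tinv_a^{\otimes 2}$ carries the rank-2 discrepancy into the ideal. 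Your ``uniqueness'' suggestion corresponds to option (b), which works but is not what the paper does. Finally, no completion is needed in the restricted setting: $U^\pm$ are finite-rank free modules, so $\Rtred$ lives in the ordinary tensor square.
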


\subsection*{Acknowledgments}
MH was partially supported through the NSF-RTG grant
\#DMS-2135960.

\section{Root Systems and Weyl Groups}\label{sec:Weyl}

This section combines a general review of basic notions of
root systems, Coxeter groups, and Artin groups with a discussion of 
several more specialized lemmas and concepts. The latter serve as technical tools in later sections, but 
are typically not treated in standard textbooks on Coxeter groups. Examples of such topics 
are various properties of inversion sets, complementary words, as well as $\roots$-symmetric lattice maps. 

\subsection{Root Systems and Weyl Reflections}\label{subsec:rootweyl} Let $\roots$ be an irreducible root system of rank $n$ in some Euclidean space $E$. For a choice of simple roots $\sroots=\{\alpha_1,\dots,\alpha_n\}$ denote by $\proots$ and $\nroots$ the sets of positive and negative roots, with $\roots=\proots\cup\nroots\,$. Although it is not dependent on the choice of $\sroots\,$, we will frequently denote the root lattice of $\roots$ in $E$ by $\mathbb Z^{\sroots}\,$.

The Weyl group $\Weyl$ is, by definition, generated by the reflections $s_\alpha$ of $E$ along roots $\alpha\in\roots$, which are 
expressed as $s_\alpha(x)=x-\bracket{\alpha, x}\alpha$. The form $\bracket{\,\cdot\,,\,\cdot\,}$ is linear in the second entry 
and gives rise to the Cartan matrix $A$ associated to $\roots$ via $A_{ij}=\bracket{{\alpha_i,\alpha_j}}\,$. The convention for $A$ chosen here conforms with that in \cite{dck90,lu90b} but is transposed to that of textbooks such as \cite{Bou02}. Clearly,
$\Weyl$ also acts on $\mathbb Z^{\sroots}\,$ as an invariant subspace of $E$.

For a given Cartan matrix, we may find integers $d_i\in \{1,2,3\}$ that symmetrize the Cartan matrix in the sense that $\symCart_{ij}=d_iA_{ij}$ is symmetric. Accordingly, we set $d_\alpha=d_i$ if $\alpha$ has the same length as $\alpha_i$ or, equivalently, if $\alpha$ is in the $\Weyl$-orbit of $\alpha_i\,$. We will use the following convention and notation,
\begin{equation}\label{eq:def_maxd}
    \min_i\{d_i\}=1\qquad \mbox{and}\qquad \maxd=\max_i\{d_i\}\,\in\{1,2,3\}\,.
\end{equation}
Thus, in the simply laced cases \LT{ADE} we have $\maxd=1$, meaning all $d_i=1$. We have $\maxd=2$ for 
doubly laced Lie types $\LT{BCF}$ and $\maxd=3$ for type $\LT{G}\,$.
The renomalization leads to the definition of an inner form
\begin{equation}\label{eq:def:symbrack}
    \symbrack{\,\cdot\,}{\,\cdot\,}\,:\mathbb Z^{\sroots}\times \mathbb Z^{\sroots}\rightarrow\mathbb Z\,
\end{equation}
by setting $\symbrack{\alpha_i}{\alpha_j}=\symCart_{ij}=d_iA_{ij}\,$ for the basis roots vectors. It is readily verified that the Weyl group $\Weyl$ acts by isometries with respect to this form, meaning $\symbrack{s(\alpha)}{s(\beta)}=\symbrack{\alpha}{\beta}\,$ for any $s\in\Weyl$ and $\alpha,\beta\in\mathbb Z^{\sroots}\supset\roots\,$. So $\symbrack{\,\cdot\,}{\,\cdot\,}$ is a scalar multiple of the standard Euclidean inner product on $E$ for an irreducible root system $\roots\,$. Specifically, it coincides with the canonical inner product for the realizations in the appendix of \cite{Bou02} when the Lie type is
one of $\LT{ACDEG}$\,, and it is twice the canonical inner product given there if the Lie type is $\LT{B}$ or $\LT{F}\,$.
The integer from \eqref{eq:def_maxd} may thus also be understood as the lower bound on inner products of positive roots,
\begin{equation}\label{eq:Ddef}
    \maxd= \max_{i,j}\{-\symCart_{ij}\} = \max_{i,j}\{-\symbrack{\alpha_i}{\alpha_j}\}= \max\{-\symbrack{\alpha}{\beta}:\alpha,\beta\in\proots\}\,. 
\end{equation}
In these conventions we also find $\symbrack{\alpha}{\alpha}=2d_\alpha$ by choosing some $s\in\Weyl$ and $\alpha_i\in\sroots$
with $\alpha=s(\alpha_i)$ and, hence, $d_i=d_\alpha$\,. As usual, for a fixed root system, we will call $\alpha$ a {\em short} root if $d_\alpha=1$ and, in the non-simply laced case, we say $\alpha$ is a {\em long} root if $d_\alpha=\maxd>1\,$. The initial integral pairing is recovered from $\symbrack{\,\cdot\,}{\,\cdot\,}$ via
\begin{equation}\label{eq:bracketrel}
   \bracket{\,\cdot\,,\,\cdot\,}:\,\roots\times\roots\rightarrow\mathbb Z \qquad \mbox{with}\quad  \bracket{\alpha,\beta} \,=\,\frac 1 {d_\alpha}\symbrack{\alpha}{\beta}\quad \in\mathbb Z\,.
\end{equation}
For later notational convenience  we also introduce 
\begin{equation}\label{eq:Donedef}
    \dpone=\maxd+1 \in\{2,3,4\}\,.
\end{equation}
That is, $\dpone=2$ for Lie types \LT{ADE}, $\dpone=3$ for the \LT{BCF} types, and $\dpone=4$ for $\LT{G}_2$\,.

\subsection{Coxeter Systems, Reduced Words, and Bruhat Orders}\label{subsec:coxeter}
Recall that $(\Weyl,\SimRefl)$ forms a Coxeter system, where the set of generators $\SimRefl=\{s_1,\ldots, s_n\}$ consists of  the simple reflections $s_i=s_{\alpha_i}\,$. The relations for a presentation of $\Weyl$ in generators $\SimRefl$ are given by the involution $s_i^2=1$ and the Artin-Tits relations. They are given for each pair $(i,j)$ with $i\neq j$ by
\begin{equation}\label{eq-ATrels}
     {s_is_j\ldots}  =  {s_js_i\ldots} 
\end{equation}
with $m_{ij}$ reflections terms on each side with alternating indices. Here $m_{ij}=2, 3, 4, \mbox{or } 6$ depending on whether the edge number $\max(|A_{ij}|,|A_{ji}|)$ is 0, 1, 2, or 3, respectively. 

As usual, the length $\length (s)$ of an element $s\in\Weyl$ is the smallest number $k$ such that $s=s_{i_1}\ldots s_{i_k}$ can be expressed as a product of $k$ simple reflections. 
A {\em reduced word} is a formal word expression $w_{i_1}\ldots w_{i_k}$ in the letters
$w_1, \ldots, w_n\,$ such that the respective product $s=s_{i_1}\ldots s_{i_k}$
in $\Weyl$ is of minimal length $k=\len a\,$. We use the separate notation $\wordset$ for the set of reduced words, since several objects in the article will explicitly depend on words rather than just elements in $\Weyl$.

An obvious surjection $\Weylpres: \wordset\twoheadrightarrow\Weyl$ is obtained by replacing each letter $w_i$ by the respective $s_i$ and mapping the empty word $\emptyword$ to the identity element $1\in\Weyl$. We say $s\in \Weyl$ is represented by a word $w\in \wordset$ if $w$ maps to $s$ so that, by definition, $l(s)$ is the number of letters in $w$. Denote by $\mathscr A$ the {\em Artin group} associated to the same Coxeter data as $\Weyl$. That is, $\mathscr A$ shares the same generator set $\mathscr S$ and the relations in (\ref{eq-ATrels}) but lacks the relations $s_i^2=1$.   Also denote by $\mathscr A^+$ the {\em Artin monoid}, defined as the subset of elements in $\mathscr A$ that are a product of generators with positive powers.

Matsumoto's Theorem states that there is a well-defined map $\Matsec: \Weyl \rightarrow \mathscr A^+\subset \mathscr A$ such that $\Matsec\circ \Weylpres: \wordset \rightarrow \mathscr A^+$ is given as the assignment of letters $w_i$ to the respective Artin generators (see Part IV, \S 1.5 in \cite{Bou02} or \cite{Mat64}). The more practical formulation is as follows.

\begin{cor}[Matsumoto]\label{cor:matsumoto} Suppose two words in $\wordset$ represent the same element in $\Weyl$. Then they are related by a sequence of Artin relations given in \eqref{eq-ATrels}. 
\end{cor}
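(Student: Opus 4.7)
The plan is to proceed by strong induction on the common length $k = \length(s)$, where $s = \Weylpres(w) = \Weylpres(w')$; the cases $k \le 1$ are trivial, as then $\wordset$ contains at most one preimage of $s$. For the inductive step, write $w = w_{i_1}\cdots w_{i_k}$ and $w' = w_{j_1}\cdots w_{j_k}$. In the easy sub-case $i_1 = j_1$, deleting the common initial letter yields two reduced words of length $k-1$ representing $s_{i_1}s$, which by the inductive hypothesis are related by a sequence of braid moves; prepending $w_{i_1}$ produces the desired braid-equivalence between $w$ and $w'$.

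The substance lies in the sub-case $i_1 \neq j_1$. Set $i = i_1$, $j = j_1$, and $m = m_{ij}$. The key claim is that, under the standing hypotheses $\length(s_i s) < \length(s)$ and $\length(s_j s) < \length(s)$, the element $s$ admits a reduced expression $v \in \wordset$ whose initial segment of length $m$ equals the alternating word $w_i w_j w_i \cdots$ representing the longest element of the dihedral parabolic $\langle s_i, s_j\rangle$. Given this, apply the single Artin-Tits relation \eqref{eq-ATrels} to $v$ to produce a reduced word $v'$ starting with $w_j w_i w_j \cdots$. Now $v$ and $w$ share the initial letter $w_i$, so by the easy sub-case above they are braid-equivalent; likewise $v'$ and $w'$ share the initial letter $w_j$ and are braid-equivalent. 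Concatenating yields $w \sim v \sim v' \sim w'$, completing the induction.

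The principal obstacle is therefore the claim about the alternating prefix of length $m$. I would derive it from the Exchange Condition for Coxeter systems: whenever $s_{i_1}\cdots s_{i_k}$ is reduced and $\length(s_r \cdot s_{i_1}\cdots s_{i_k}) < k$ for some simple reflection $s_r$, there exists an index $p$ with $s_r \cdot s_{i_1}\cdots s_{i_k} = s_{i_1}\cdots \widehat{s_{i_p}}\cdots s_{i_k}$. Iterating this condition alternately with $s_i$ and $s_j$, and exploiting that the parabolic $\langle s_i, s_j\rangle$ is a finite dihedral group of order $2m$ whose reduced expressions are completely explicit (so that once both $s_i$ and $s_j$ remain left descents, the length available for cancellation can be pushed up to $m$), forces $s$ to admit a reduced expression whose initial segment is the stated alternating word. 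The Exchange Condition itself is standard and can be proved geometrically from the action of $\Weyl$ on the root lattice $\bbz^{\sroots}$ reviewed in Section~\ref{subsec:rootweyl}, using the identification $\length(s) = |\descroots{s}|$ and tracking how simple reflections act on inversion sets to detect left descents.
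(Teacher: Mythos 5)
Your argument is correct in outline, but it is not the route the paper takes: the paper offers no proof of this corollary at all, deducing it instead from Matsumoto's theorem as cited from \cite{Bou02} and \cite{Mat64} — namely the existence of a well-defined section $\Matsec:\Weyl\rightarrow\mathscr A^+$ with $\Matsec\circ\Weylpres$ the letterwise assignment, so that two reduced words for the same element become equal in the Artin monoid and are therefore connected by the defining relations \eqref{eq-ATrels}. What you supply is the classical self-contained proof of the word property (Tits/Matsumoto): strong induction on length, the easy reduction when the two words share their first letter, and, when they do not, the key lemma that an element with both $s_i$ and $s_j$ as left descents admits a reduced expression beginning with the alternating word of length $m_{ij}$ for the longest element of the dihedral parabolic, after which a single relation \eqref{eq-ATrels} bridges the two cases. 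That lemma is the only real content, and your proposed derivation via the Exchange Condition does go through (equivalently one can use the parabolic coset decomposition $s=u\cdot t$ with $u\in\Weylrest{i,j}$ and lengths adding, forcing $u$ to be the longest dihedral element, as in Section~2.4 of \cite{BB05}); your phrasing of the iteration is sketchier than a complete argument, but the route is sound, and finiteness of $m_{ij}$ is automatic here since $\Weyl$ is finite. The trade-off: your proof makes the corollary self-contained and elementary, while the paper's citation is shorter and simultaneously records the stronger fact it actually needs later, namely the Matsumoto section into $\mathscr A^+$ used to define the braid-group action $\Tinv_s$ and the Garside element.
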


Given a reduced word $w=w_{i_1}\ldots w_{i_k}\in\wordset$, we write $w[a,b]=w_{i_a}\ldots w_{i_b}$ for the subword between indices $a$ and $b$ with $1\leq a\leq b\leq k$. Further, denote $w^*=w_{i_k}\ldots w_{i_1}$ the word with letters multiplied in the opposite direction so that $\Weylpres(w^*)=\Weylpres(w)^{-1}\,$. 

The \emph{weak Bruhat orders} on $\Weyl$ and $\wordset$ are defined in terms of smaller elements occurring as subwords. For our purposes, the most relevant is the weak right Bruhat order $\leqRB$\,. For two words $w,u\in \wordset$ we say $u\leqRB w$ if there exists $v\in\wordset$ such that $w=u\cdot v$ or, equivalently, if $u=w[1,j]$ for some $j$. Similarly, for two elements $s,t\in \Weyl$ we have $s\leqRB t$ if $t$ and $s$ are represented by words $w$ and $u$, respectively, with $u\leqRB w$. 

To define the left weak Bruhat order, replace $u=w[j,k]$ in the above definition. If $\longweyl\in\Weyl$ is the unique longest element, then $\longweyl$ is also the unique maximal element for any of the Bruhat orders, including $\leqRB$ (e.g., Proposition 3.1.2 in \cite{BB05}). 

Correspondingly, we write $\wordsetmax=\Weylpres^{-1}(\longweyl)$ for the set of reduced words of maximal length. The set will be used to bijectively label all convex orders on $\proots\,$. See, for example, \cite{St84} for formulae for the size and rapid growth of $|\wordsetmax|\,$.

Note that any reduced word $u\in\wordset$ can be written as the initial subword of some word in $\wordsetmax$. Explicitly, let $s=\Weylpres(u)$, set $s'=s^{-1}\longweyl$\,, and choose any $u'\in\wordset$ representing $s'$. Since $s\leqRB \longweyl$ we have $\length(u')=\length(s')=\length(\longweyl)-\length(s)=\length(\longweyl)-\length(u)$ by the same proposition in \cite{BB05}. Now, $w=u\cdot u'$ represents $\longweyl$ and, since $\length(u)+\length(u')=\length(\longweyl)$, it is a reduced expression so that $w\in\wordsetmax$. 

Similarly, the element $\garside=\Matsec(\longweyl)\in\mathscr A^+$ plays an important role as the fundamental element that renders $(\mathscr A^+,\garside)$ a Garside monoid (see, for example, Proposition~1.29 in \cite{Deh15}). In particular, Lemma~5.2 in \cite{BS72} asserts that for all 
$b\in\mathscr A^+\,$ the commutation relation 
\begin{equation}\label{eq:gardsidecomm}
    \garside\cdot b=\sigma(b)\cdot \garside 
\end{equation}
holds, where $\sigma$ is an involutive automorphism of $\mathscr A^+$. Since $\sigma$ needs to map generators to themselves, we may rewrite \eqref{eq:gardsidecomm} as the relation 
$\garside\cdot w_i\cdot \garside^{-1}=w_{\sigma(i)}\,$ in $\mathscr A\,$, with $\sigma$ redefined as an involution on $\{1,\ldots,n\}\,$. 

For example, for type $\LT{A}_n$ the Artin-Tits monoid is the set of positive braids $B_n^+$ in the braid group in $n$ strands, and $\garside$ is the full positive half-twist. In this situation, relation \eqref{eq:gardsidecomm} is readily visualized by moving a single positive neighbor crossing through this twist resulting in a crossing in reflected position, implying that $\sigma(w_i)=w_{n+1-i}\,$. For other types, the involution coincides with the respective ones in the Coxeter or Weyl group situation as in Section~\ref{subsec:ComplWords}.

Note that relation \eqref{eq:gardsidecomm} can also be derived directly from the presentations of $\longweyl$ for other Lie types given in Section~\ref{subsec:orderings} below, essentially following the methods for relative Garside elements developed in \cite{ABI15}.

\subsection{Inversion Root Sets and Convex Orderings}\label{subsec:descroots} In numerous calculations with words, it will be convenient to formally denote the index of the last letter as well as the word with the last letter deleted. That is, for a non-empty ($k\geq 1$) word
\begin{equation}\label{def:flat+tau_word}
    w=w_{i_1}\ldots w_{i_k}\in\wordset \qquad\mbox{set}\qquad w^\flat=w_{i_1}\ldots w_{i_{k-1}}\quad\mbox{and}\quad \tau(w)=i_k\,. 
\end{equation}
These components naturally enter the statement of the following lemma, which asserts an assignment of a positive root to any reduced word.

\begin{lemma} \label{lem:red=pos}
Suppose $w=w_{i_1}\ldots w_{i_k}\in\wordset$ is a non-empty reduced word. Then we have  
$$
\wordroot(w)=w^\flat(\alpha_{\tau(w)})=w_{i_1}\ldots w_{i_{k-1}}(\alpha_{i_k})\quad\in\proots\,, 
$$
where $w$ acts on roots by its respective element in $\Weyl$. 
\end{lemma}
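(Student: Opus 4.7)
The plan is to prove the claim by contradiction, invoking the classical ``sign change'' property of simple reflections together with reducedness of $w$.

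First, I would reduce to the single key input:
\emph{(Key Fact)} For a simple reflection $s_i$ and any $\beta \in \proots$ with $\beta \neq \alpha_i$, one has $s_i(\beta) \in \proots$; equivalently, $s_i$ permutes $\proots \setminus \{\alpha_i\}$, while $s_i(\alpha_i) = -\alpha_i$. This is standard from the formula $s_i(\beta) = \beta - \langle \alpha_i, \beta\rangle \alpha_i$ together with the fact that every positive root is a non-negative integer combination of simple roots.

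Next, set $\beta_j := s_{i_j} s_{i_{j+1}} \cdots s_{i_{k-1}}(\alpha_{i_k})$ for $j = 1, \dots, k$, with the convention $\beta_k = \alpha_{i_k}$. Observe $\beta_k \in \proots$ and the claim asserts $\beta_1 \in \proots$. Suppose for contradiction that $\beta_1 \in \nroots$. Since $\beta_{j-1} = s_{i_{j-1}}(\beta_j)$ and the sequence starts positive (at $j = k$) and ends negative (at $j = 1$), there is a smallest index $j \in \{2,\dots,k\}$ with $\beta_j \in \proots$ but $\beta_{j-1} \in \nroots$. By the Key Fact this forces $\beta_j = \alpha_{i_{j-1}}$, i.e.,
\[
\alpha_{i_{j-1}} \,=\, s_{i_j} s_{i_{j+1}} \cdots s_{i_{k-1}}(\alpha_{i_k})\,.
\]

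Then I would exploit the standard identity that conjugating a simple reflection by an element $g \in \Weyl$ yields the reflection along the image root: applying this with $g = s_{i_j} \cdots s_{i_{k-1}}$ gives
\[
s_{i_{j-1}} \,=\, \bigl(s_{i_j} \cdots s_{i_{k-1}}\bigr)\, s_{i_k}\, \bigl(s_{i_j} \cdots s_{i_{k-1}}\bigr)^{-1}\,,
\]
and therefore
\[
s_{i_{j-1}} s_{i_j} \cdots s_{i_{k-1}} \,=\, s_{i_j} s_{i_{j+1}} \cdots s_{i_{k-1}} s_{i_k}\,.
\]
Substituting this identity into the expression for $\Weylpres(w)$ cancels $s_{i_{j-1}}$ and the final $s_{i_k}$, producing a word of length $k-2$ representing the same Weyl element. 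This contradicts $\length\bigl(\Weylpres(w)\bigr) = k$, proving $\wordroot(w) \in \proots$.

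The main obstacle is essentially bookkeeping: one must carefully track the indices in the telescoping sequence $\beta_k, \beta_{k-1}, \dots, \beta_1$ and verify that the conjugation identity correctly produces a length-reducing rewriting of $w$. No genuinely hard step is involved, since the Key Fact and the reflection-conjugation identity are both elementary consequences of the definitions in Section~\ref{subsec:rootweyl}. The argument is independent of Lie type and does not use any property beyond the Coxeter presentation of $(\Weyl,\SimRefl)$.
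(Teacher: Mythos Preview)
Your argument is correct. It is the standard Exchange Condition proof, carried out in full detail, and there are no gaps.

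The paper takes a slightly different route: rather than unwinding the sign-change sequence and deriving a deletion by hand, it simply invokes the textbook characterization that $s(\beta)\in\proots$ if and only if $\length(s\cdot s_\beta)>\length(s)$ (Proposition~4.4.6 in \cite{BB05}), and specializes to $s=\Weylpres(w^\flat)$, $\beta=\alpha_{i_k}$. Since $w$ is reduced, $\length(s\cdot s_{i_k})=k>k-1=\length(s)$, and the conclusion is immediate. Your approach is essentially a self-contained proof of that very characterization (in the special case needed here), so the two are closely related: the paper buys brevity by citing a reference, while your version buys independence from that reference at the cost of writing out the Exchange-Condition mechanics explicitly.
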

  
To see this, note that $s(\beta)\in\proots$ if and only of $\length(s\cdot s_\beta)>\length(s)$ by Proposition 4.4.6 in \cite{BB05}, where $s_\beta$ denotes the reflection along $\beta\in\proots$. The lemma follows by specializing to $s=\Weylpres(w^\flat)$ and $\beta=\alpha_{i_k}\,$. 
Next, define for any element $s\in\Weyl$ the {\em inversion set} of $s$ as 
\begin{equation}\label{eq:defdescroots}
    \descroots{s}=\,\{\alpha\in\proots : s^{-1}(\alpha)\in\nroots\}\,=\,\proots\cap s(\nroots)\,.
\end{equation}
For a type $\LT{A}_{n-1}$ root system realized as  $\proots=\{\alpha_{i,j}=\epsilon_i-\epsilon_j:\,1\leq i<j\leq n\}\subset \mathbb R^n$ with $\Weyl=S_{n}$\,, the symmetric group in $n$ letters, $\descroots{s}$ is indeed given by the literal inversion set of the permutation $s^{-1}$.
Proposition 3.6 in \cite{Hil82} or Corollary 2 in Chapter VI \S 1.6 of \cite{Bou02} show that for any reduced word $w\in\wordset$ as in (\ref{def:flat+tau_word}) with $s=\Weylpres(w)\in\Weyl$ this set can be enumerated as 
\begin{equation}\label{eq:defdescrootsEnum}
    \descroots{s}=\{\beta_m=\wordroot(w[1,m]):1\leq m\leq k\}\qquad
    \mbox{where}\quad\beta_m=s_{i_1}\ldots s_{i_{m-1}}(\alpha_{i_m})
\end{equation}
and $k=\len{w}=\len{s}$\,. Thus, any word representing $s$ imposes a total ordering on $\descroots{s}$
given by  
$\beta_p\leqwt{w}\beta_j$ if $p\leq j\,$, writing also $\beta_p\letwt{w}\beta_j$ if $p< j\,$. 
Clearly, $\descroots{1}=\emptyset$\,, $\descroots{s_i}=\{\alpha_i\}$\,, and 
$\descroots{w}=\{\wordroot(u): u\leqRB w\}$ if the notation is extended to words.  Additional immediate 
consequences of the above are 
\begin{equation}\label{eq:descrootsbasics}
   \len s \,=\,    |\descroots s |\;,\quad  s^{-1}\left({\descroots s}\right)=-\descroots{s^{-1}}\;,
    \qquad \mbox{and} \qquad \descroots \longweyl =\proots\;.
\end{equation}

We next explain how these sets determine the defect in length additivity for composites of Weyl group elements. In the first observation, we denote for an arbitrary  subset $A\subseteq\roots$ 
two subsets of $\proots\,$ as 
$$
A^+=A\cap\proots\qquad \mbox{and}\qquad A^-=-(A\cap\nroots)=(-A)\cap\proots\,,
$$
so that $A=A^+\sqcup -A^-\,$. Also write $B\setminus A$ for the complement of $A$ in $B$.
\begin{lemma}\label{lem:desradditivdef}
    Suppose $s=a\cdot b$ for $a,b\in \Weyl$. Then $\,\descroots a \cap a(\descroots b )=\emptyset\,$ and 
    \begin{equation}\label{eq:Weylgenunion}
        \descroots s \;=\;a(\descroots b )^+\,\sqcup\, \descroots a \setminus a(\descroots b )^-\,.
    \end{equation} 
    Moreover,
    \begin{equation}\label{eq:Weyladddefect}
    \len s \,=\, \len a +\len b \,-\,2\varsigma(a,b)\qquad \mbox{where} \quad \varsigma(a,b)=|a(\descroots b )^-|=|\descroots b \cap \descroots {a^{-1}}|\,.
    \end{equation} 
\end{lemma}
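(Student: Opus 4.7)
\medskip
\noindent\textbf{Proof plan.}
The plan is to unpack the defining characterization
$\descroots s=\{\alpha\in\proots:s^{-1}(\alpha)\in\nroots\}$
and bookkeep carefully how the condition $s^{-1}(\alpha)=b^{-1}a^{-1}(\alpha)\in\nroots$ splits according to the sign of $a^{-1}(\alpha)$. Disjointness comes essentially for free: if $\alpha$ lies in both $\descroots a$ and $a(\descroots b)$, then on the one hand $a^{-1}(\alpha)\in\nroots$, while on the other $a^{-1}(\alpha)\in\descroots b\subseteq\proots$, which is impossible. So $\descroots a\cap a(\descroots b)=\emptyset$, which in particular forces the union in \eqref{eq:Weylgenunion} to be disjoint once established.

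For the decomposition itself, I would fix $\alpha\in\proots$ and set $\gamma=a^{-1}(\alpha)$. If $\gamma\in\proots$ (equivalently $\alpha\notin\descroots a$), then $\alpha\in\descroots s$ is equivalent to $b^{-1}(\gamma)\in\nroots$, i.e.\ $\gamma\in\descroots b$, i.e.\ $\alpha\in a(\descroots b)\cap\proots=a(\descroots b)^+$. If instead $\gamma\in\nroots$ (equivalently $\alpha\in\descroots a$), then $b^{-1}(\gamma)\in\nroots$ iff $-\gamma\notin\descroots b$, iff $\alpha=a(\gamma)\notin -a(\descroots b)$. Intersecting with $\proots$ and using
$(-a(\descroots b))\cap\proots=-\bigl(a(\descroots b)\cap\nroots\bigr)=a(\descroots b)^{-}$,
this reads $\alpha\in\descroots a\setminus a(\descroots b)^-$. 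The two cases exhaust $\proots$ and give exactly \eqref{eq:Weylgenunion}.

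For the length formula, I would first observe that $a$ is a bijection on $\roots$, so
\[
|a(\descroots b)^+|+|a(\descroots b)^-|\,=\,|a(\descroots b)|\,=\,|\descroots b|\,=\,\len b.
\]
Next, any $\beta\in a(\descroots b)^-$ satisfies $-\beta=a(\gamma)$ for some $\gamma\in\descroots b\subseteq\proots$, hence $a^{-1}(\beta)=-\gamma\in\nroots$ and $\beta\in\proots$, so $\beta\in\descroots a$. Thus $a(\descroots b)^-\subseteq\descroots a$, giving $|\descroots a\setminus a(\descroots b)^-|=\len a-|a(\descroots b)^-|$. Combining with the already-established disjoint union \eqref{eq:Weylgenunion},
\[
\len s\,=\,|a(\descroots b)^+|+\len a-|a(\descroots b)^-|\,=\,\len a+\len b-2|a(\descroots b)^-|,
\]
which is the claimed identity with $\varsigma(a,b)=|a(\descroots b)^-|$.

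Finally, for the second expression of $\varsigma(a,b)$, I would note that $a$ restricts to a bijection from $\descroots b$ onto $a(\descroots b)$ that sends $a(\descroots b)^-$-preimages to those $\gamma\in\descroots b$ with $a(\gamma)\in\nroots$. By definition $\descroots{a^{-1}}=\{\gamma\in\proots:a(\gamma)\in\nroots\}$, so the relevant preimages are exactly $\descroots b\cap\descroots{a^{-1}}$, giving the claimed equality. No step is genuinely hard; the only mild obstacle is keeping the $(\cdot)^{\pm}$ signs straight, which is why I would treat the two cases $\gamma\in\proots$ and $\gamma\in\nroots$ explicitly rather than by a single symbolic computation.
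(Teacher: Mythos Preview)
Your proof is correct and follows essentially the same approach as the paper: both arguments partition $\proots$ according to the sign of $a^{-1}(\alpha)$ (the paper phrases this as the split $\roots=a(\proots)\sqcup a(\nroots)$ and manipulates sets, whereas you do an element-wise case analysis on $\gamma=a^{-1}(\alpha)$), and both derive the length defect by counting after showing $a(\descroots b)^-\subseteq\descroots a$ and identifying $a(\descroots b)^-$ with $\descroots b\cap\descroots{a^{-1}}$ via the bijection induced by $a$.
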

\begin{proof} The disjointness statement is clear since $\descroots b\subseteq\proots$ and $a^{-1}(\descroots{a})\subseteq \nroots\,$. For \eqref{eq:Weylgenunion} we may split $\descroots s$
    according to the partition $\roots=a(\proots)\sqcup a(\nroots\,)$. Then 
    $a(\proots)\cap \descroots s=a(\proots\cap a^{-1}(\descroots s))=a(\proots\cap a^{-1}(\proots)\cap b(\nroots))=a(a^{-1}(\proots)\cap \descroots b)=\proots\cap a( \descroots b)=a( \descroots b)^+\,$.  

    Now, from $\proots=b(\proots)\cap\proots\sqcup b(\nroots)\cap\proots$ we see that $b(\proots)\cap\proots=\proots\setminus\descroots b\,$ and, hence, $b(\nroots)\cap\nroots=\nroots\setminus(-\descroots b)\,$. Applying $a$ to this we get 
    $s(\nroots)\cap a(\nroots)=a(\nroots)\setminus(-a(\descroots b))\,$, and intersecting this with 
    $\proots$ we find $\descroots s \cap a(\nroots)=\descroots a\setminus((-a(\descroots b))\cap\proots)=\descroots a\setminus a(\descroots b)^-\,$. 

    Note that $-a(\descroots b)^-=a(\descroots b)\cap\nroots=a(\descroots b\cap a^{-1}(\nroots))=
    a(\descroots b\cap \descroots{a^{-1}}$ as $\descroots b\subseteq \proots$. This readily implies the equality of the two expressions for $\varsigma(a,b)\,$. Clearly, $|a(\descroots b)^+|=|a(\descroots b)|-|a(\descroots b)^-|=|\descroots b|-\varsigma(a,b)=\len b -\varsigma(a,b)\,$. Now, $a^{-1}(a(\descroots b)^-)=(-\descroots b)\cap a^{-1}(\proots)=-(\descroots b\cap a^{-1}(\nroots)=-(\descroots b\cap \descroots {a^{-1}})\,$. Thus, using also \eqref{eq:descrootsbasics},
    $|\descroots a \setminus a(\descroots b )^-|=|a^{-1}(\descroots a )\setminus a^{-1}(a(\descroots b )^-)|=
    |(-\descroots {a^{-1}} )\setminus -(\descroots b\cap \descroots {a^{-1}})|=|\descroots {a^{-1}}|-|\descroots b\cap \descroots {a^{-1}}|=\len {a^{-1}}-\varsigma(a,b)\,$. Using $\len {a^{-1}}=\len a$ and adding these expressions yields the length defect equation in \eqref{eq:Weyladddefect}. 
\end{proof}

The following criteria for exact additivity and monotonicity are now easily derived from Lemma~\ref{lem:desradditivdef}. For example, if $\descroots{a}\subseteq\descroots{s}$  we have $\varsigma(a^{-1},s)=|\descroots{s}\cap\descroots{a}|=|\descroots{a}|=\len a\,$. 
So, for $b=a^{-1}s$ \eqref{eq:Weyladddefect} implies $\len{b}=\len{a^{-1}}+\len{s}-2\len{a}=\len{s}-\len{a}\,$ or $\len{s}=\len{a}+\len{b}\,$. The latter and $s=a\cdot b$ now imply $a\leqRB s\,$ as claimed in \eqref{eq:descrmonot}.

\begin{cor}\label{cor:sumdescroots} For two elements $a, b\in\Weyl$ the following conditions are equivalent.
\begin{multicols}{2} 
\begin{enumerate}[label=\roman*), leftmargin=11mm,]
    \item $\len{a\cdot b}=\len{a} + \len{b}$\vspace{1.5mm}
    \item $\descroots {a\cdot b}=a(\descroots{b})\sqcup \descroots a$\vspace{1.5mm}
    \item $a(\descroots b)\subseteq \proots$\vspace{1.5mm}
    \item $\descroots b \cap \descroots {a^{-1}}=\emptyset$\vspace{1.5mm}
    \end{enumerate} 
\end{multicols}\vspace{-2mm}
\noindent Moreover, we have for elements $a,b,s\in\Weyl$ that 
\begin{equation}\label{eq:descrmonot}
    a\leqRB s\;\Leftrightarrow \;\descroots a \subseteq \descroots s\qquad\mbox{and}\qquad
    \qquad b\leqLB s \;\Leftrightarrow \;\descroots {b^{-1}} \subseteq \descroots {s^{-1}}\,. 
\end{equation}
In particular, $\descroots{s}=\descroots{a}$ implies $s=a\,$. 
\end{cor}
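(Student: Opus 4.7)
The plan is to derive everything from Lemma~\ref{lem:desradditivdef}, which is the real workhorse; the corollary is essentially a bookkeeping consequence of the defect formula $\len s = \len a + \len b - 2\varsigma(a,b)$ and the explicit decomposition of $\descroots s$.

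First I would observe that conditions (i), (iii), (iv) are all equivalent to the single statement $\varsigma(a,b) = 0$. Indeed, $\len{ab} = \len a + \len b$ is literally $\varsigma(a,b)=0$, while the two alternate expressions for $\varsigma$ in \eqref{eq:Weyladddefect} give $\varsigma(a,b)=0 \iff |a(\descroots b)^-|=0 \iff a(\descroots b)\subseteq \proots$ (this is (iii)) and $\varsigma(a,b)=0 \iff |\descroots b \cap \descroots{a^{-1}}|=0$ (this is (iv)). For (ii), I would feed $\varsigma(a,b)=0$ into \eqref{eq:Weylgenunion}: when $a(\descroots b)^- = \emptyset$ the set $a(\descroots b)$ equals $a(\descroots b)^+$ and the second component of the union simplifies to $\descroots a$, yielding the disjoint-union description (disjointness was already asserted in Lemma~\ref{lem:desradditivdef}). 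Conversely, if (ii) holds then $|\descroots s| = |a(\descroots b)| + |\descroots a| = \len b + \len a$, forcing (i). So (i) $\Leftrightarrow$ (ii) $\Leftrightarrow$ (iii) $\Leftrightarrow$ (iv).

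Next, for the monotonicity \eqref{eq:descrmonot}, the forward direction is immediate: if $a\leqRB s$ write $s = a\cdot b$ with $\len s = \len a + \len b$, so (i) holds and (ii) delivers $\descroots a \subseteq \descroots s$. For the converse, set $b = a^{-1}s$ so that $s = a\cdot b$, and compute $\varsigma(a,b)$ using the right-hand expression from \eqref{eq:Weyladddefect}. The idea is to rewrite $\varsigma(a,b) = |\descroots b \cap \descroots{a^{-1}}|$ in terms of $\descroots s$ and $\descroots a$; alternatively, and perhaps more cleanly, I would apply Lemma~\ref{lem:desradditivdef} to the factorization $b = a^{-1}\cdot s$ to get $\varsigma(a^{-1},s) = |\descroots s \cap \descroots a|$. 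The hypothesis $\descroots a \subseteq \descroots s$ makes this intersection all of $\descroots a$, so $\varsigma(a^{-1},s) = \len a$, and the defect formula yields $\len b = \len a + \len s - 2\len a = \len s - \len a$. Hence $s = a\cdot b$ is length-additive, giving $a\leqRB s$. The $\leqLB$ statement then follows by applying the $\leqRB$ version to $s^{-1} = b^{-1}\cdot a^{-1}$ together with $\descroots{s^{-1}} = -s^{-1}(\descroots s)$ from \eqref{eq:descrootsbasics}, noting that $b\leqLB s \iff b^{-1}\leqRB s^{-1}$ by the standard left/right symmetry of the weak Bruhat orders. Finally, $\descroots s = \descroots a$ gives $a\leqRB s$ with $\len a = \len s$, forcing $b = 1$ and $s = a$.

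The whole argument is short and formal once Lemma~\ref{lem:desradditivdef} is in hand; I do not anticipate a genuine obstacle. The one slightly delicate point is making sure the two forms of $\varsigma$ are applied to the right factorization when proving the backward direction of \eqref{eq:descrmonot} (one wants to bound $\varsigma$ by $\descroots a \cap \descroots s$, not $\descroots b \cap \descroots{a^{-1}}$, since $b$ is unknown), which is why I would pivot to the factorization $b = a^{-1}\cdot s$ rather than $s = a\cdot b$ at that step.
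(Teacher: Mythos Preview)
Your proposal is correct and matches the paper's approach essentially line for line: the equivalence of (i)--(iv) via $\varsigma(a,b)=0$ and the two expressions in \eqref{eq:Weyladddefect}, and in particular the pivot to the factorization $b=a^{-1}\cdot s$ with $\varsigma(a^{-1},s)=|\descroots s\cap\descroots a|=\len a$ for the backward direction of \eqref{eq:descrmonot}, is exactly what the paper does. Your write-up is simply more explicit than the paper's terse paragraph.
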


The fact that the longest element of $\Weyl$ yields all of $\proots$ implies that any reduced word of maximal length $w\in\wordsetmax$ imposes a total order $\letwt{w}$ on the set of positive roots via $\,\proots=\descroots{w}\,$. Specifically, if we denote 
$\beta_j=\wordroot(w[1,j])$ we have $\beta_1\letwt{w}\beta_2\letwt{w}\ldots \letwt{w}\beta_L$ where $L=\len{w}\,$. We
write $\leqwt{w}$ if equality is included. 
 
The result of Papi in \cite{Pa94} states that the assignment $w\mapsto\leqwt{w}$ establishes a bijection between elements in $\wordsetmax$ and the (total) {\em convex} orders on $\proots$. Here convexity means that if for two roots $\alpha,\beta\in\proots$ we have $\alpha\letwt{w}\beta$ and  
$\alpha+\beta\in\proots$, then $\alpha\letwt{w}\alpha+\beta\letwt{w}\beta\,$.
 
Let $s=\Weylpres(w)$ and observe that, since the $\beta_j$ as above are distinct and $L=|\descroots{s}|$, we have that the map $[1,L]\rightarrow\descroots{s}:\,j\mapsto\beta_j$ is a bijection. 
 Thus, to each $\alpha\in \descroots{s}$ we can assign a unique $j_\alpha$ such that 
$\alpha=\beta_{j_\alpha}\,$. It is convenient to also introduce the inverse map from the root set to words,
\begin{equation}\label{eq:root2word}
    \descroots{s}\rightarrow\{u\in\wordset :u\leqRB w\}\,:\;\alpha\mapsto w[\alpha]\,,
\end{equation}
where we denote $w[\alpha]=w[1,j_\alpha]\,$ so that $\alpha=\Weylpres(w[\alpha])\,$. In particular, if $w\in\wordsetmax$ the correspondence \eqref{eq:root2word} assigns to any positive root a reduced word $\leqRB w\,$.

For non-simply laced root systems (types $\LT{BCFG}$) with $\maxd >1$ it is useful to distinguish the short and long roots portions of the various root sets. 
That is, we have $\roots=\rootsL\sqcup \rootsS$ where $\rootsL=\{\alpha\in \roots:d_\alpha=\maxd\}$ and 
$\rootsS=\{\alpha\in \roots:d_\alpha=1\}$. 
Correspondingly, denote 
\begin{equation}
\begin{aligned}
    \prootsL&=\proots\cap \roots_L\,, \qquad&
    \nrootsL&=\nroots\cap \roots_L\,, \qquad&
    \descrootsL{s}&=\descroots{s}\cap\prootsL\,,
    \\
    \prootsS&=\proots\cap \roots_S\,, \qquad&
    \nrootsS&=\nroots\cap \roots_S\,, \qquad&
    \descrootsS{s}&=\descroots{s}\cap\prootsS\,.
\end{aligned}
\end{equation}
Since $s(\roots_L) = \roots_L$ and $s(\roots_S) = \roots_S$ for any $s\in\Weyl$, we also have  
$\descroots{s}_L=s(\nrootsL)\cap \prootsL$ and $\descroots{s}_S=s(\nrootsS)\cap \prootsS\,$. Thus, for example, the relations between root sets in Lemma~\ref{lem:desradditivdef} will hold for the respective long root and short root portions separately. Furthermore, introduce  the following subsets of the Weyl group.
\begin{equation}\label{eq:def:WeylSL}
    \WeylS=\left\{{s\in\Weyl\,:\,\descrootsS{s}=\emptyset}\right\}
    \qquad\mbox{and}\qquad 
    \WeylL=\left\{{s\in\Weyl\,:\,\descrootsL{s}=\emptyset}\right\}
\end{equation}

\begin{lem}\label{lm:WeylSL}
Suppose $\roots$ is a non-simply laced root system with $\maxd>1\,$. Then $\WeylS$ and $\WeylL$ are the subgroups of $\Weyl$ generated by $\{s_i: d_i=\maxd\}$ and $\{s_i: d_i=1\}$ respectively.  

\noindent Thus, for type $\LT{B}_n$\,, we have that $\WeylS\cong S_{n-1}$ is the canonical $\LT{A}_{n-1}$-subgroup and $\WeylS\cong\mathbb F_2$ the $\LT{A}_1$-subgroup. The assignments are reversed for $\LT{C}_n$\,.
For $\LT{F}_4$ they are the respective $\LT{A}_2$-subgroups and for $\LT{G}_2$ the respective $\LT{A}_1$-subgroups.
\end{lem}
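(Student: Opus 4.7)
The plan is to reduce the entire statement to a single observation about the enumeration \eqref{eq:defdescrootsEnum} of inversion sets. Given a reduced word $w = w_{i_1} \ldots w_{i_k}$ representing $s$, the positive roots $\beta_m = s_{i_1} \cdots s_{i_{m-1}}(\alpha_{i_m})$ enumerating $\descroots{s}$ each lie in the $\Weyl$-orbit of $\alpha_{i_m}$. Since $\Weyl$ acts by isometries with respect to $\symbrack{\cdot}{\cdot}$ and $\symbrack{\alpha}{\alpha} = 2 d_\alpha$ (both noted in Section~\ref{subsec:rootweyl}), it follows that $d_{\beta_m} = d_{i_m}$ for every $m$. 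I will use this to conclude that $|\descrootsS{s}|$ equals the number of short-type letters appearing in $w$; and since the set $\descrootsS{s}$ depends only on $s$, this count is automatically independent of the chosen reduced word.

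From here the first assertion follows in two quick steps. For the inclusion $\WeylS \subseteq \langle s_i : d_i = \maxd \rangle$, I note that $s \in \WeylS$ forces every letter of any reduced expression of $s$ to correspond to a long simple reflection, thereby exhibiting $s$ as an explicit product of such generators. For the reverse inclusion, each long simple reflection $s_i$ lies in $\WeylS$ because $\descroots{s_i} = \{\alpha_i\}$ is a single long root, and it remains to check that $\WeylS$ is a subgroup. Closure under inversion follows from $\descroots{s^{-1}} = -s^{-1}(\descroots{s})$ together with the Weyl-invariance of root lengths; closure under products follows from the decomposition \eqref{eq:Weylgenunion}, since whenever $\descroots{a}, \descroots{b} \subseteq \prootsL$ both $a(\descroots{b})^+$ and $\descroots{a} \setminus a(\descroots{b})^-$ remain in $\prootsL$. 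The argument for $\WeylL$ is entirely parallel, with short and long interchanged.

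For the second assertion I will just read off the standard Dynkin diagrams: removing the short (resp.\ long) simple nodes from $\LT{B}_n$, $\LT{C}_n$, $\LT{F}_4$, and $\LT{G}_2$ leaves the sub-Dynkin diagrams whose Weyl groups match the claimed identifications. There is no real obstacle in the proof; the only essential technical input is the isometric action of $\Weyl$ on $\symbrack{\cdot}{\cdot}$, which supplies the length-preservation underlying every step above.
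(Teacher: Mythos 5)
Your proof is correct and follows essentially the same route as the paper: both inclusions rest on the observation that $\beta_m=\wordroot(w[1,m])$ has the same length as $\alpha_{i_m}$ (isometry of the $\Weyl$-action), together with $\descroots{s_i}=\{\alpha_i\}$ for the generators. The only cosmetic difference is that you verify the subgroup property of $\WeylS$ directly via \eqref{eq:Weylgenunion} and \eqref{eq:descrootsbasics}, whereas the paper obtains it at once by identifying $\WeylS$ with the stabilizer $\{s: s(\prootsS)=\prootsS\}$.
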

\begin{proof} $\WeylS$ is readily identified with the stabilizer $\{s:s(\prootsS)=\prootsS\}$ and thus a subgroup of $\Weyl$. 
If $d_i=\maxd$ we also have $\descroots{s_i}=\{\alpha_i\}$ so that  $\descrootsS{s_i}=\emptyset$ and, hence, $s_i\in \WeylS$\,,
implying that $\WeylS$ contains the subgroup generated by $\{s_i: d_i=\maxd\}$. For the converse inclusion, assume 
$s=\Weylpres(w)\in\WeylS$ is given by a reduced expression $w=w_{i_1}\ldots w_{i_k}$\,. Then $\beta_m=\wordroot(w[1,m])=t(\alpha_{i_m})$ where $t=\Weylpres(w[1,m-1])$ so that $\beta_m$ has the same length as $\alpha_{i_m}$\,.
Hence, $\descrootsS{s}=\emptyset$ if and only if $d_{i_1}=\ldots =d_{i_k}=\maxd$, implying that $s$ is in the subgroup generated by $s_i$ with $d_i=\maxd\,$. The arguments for $\WeylL$ are analogous. 
\end{proof}

We will need the following observation about root decompositions later only in the $\LT{A}$ case. The elementary argument, though, extends to all classical types. The exceptional types are not discussed here. 

\begin{lem} \label{lem:sumposroots}
Let $\roots$ be a root system of classical type $\LT{ABCD}\,$. Assume that $\proots$ is equipped with any convex ordering $\letwt{w}\,$. Suppose $\beta=\beta_1+\ldots+\beta_k$
with $\beta,\beta_i\in\proots$ for $i=1,\ldots,k\,$ with $k>1\,$. Then\vspace*{-1mm}
\begin{enumerate}[label=\roman*), leftmargin=2cm,] 
    \item There exists $j$ for which $\beta-\beta_j\in\proots$.\label{item:sumposroots:subsum}\vspace*{2mm}
    \item There exists $r$ for which $\beta_r\letwt{w}\beta\,$.  \label{item:sumposroots:min}
\end{enumerate} 
\end{lem}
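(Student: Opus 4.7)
The plan is to prove (i) via an inner-product computation combined with the standard $\gamma$-string fact, and then to deduce (ii) by induction on $k$, using the convexity of $\letwt{w}$ to propagate the comparison downward.

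For (i), I will take the inner product of the relation $\beta=\sum_i\beta_i$ with $\beta$ itself, obtaining $(\beta,\beta)=\sum_{i=1}^k(\beta,\beta_i)$. Since $(\beta,\beta)>0$, some index $j$ satisfies $(\beta,\beta_j)>0$. The hypothesis $k>1$ together with $\beta_i\in\proots$ rules out $\beta_j=\beta$, since otherwise $\sum_{i\neq j}\beta_i=0$, which is impossible for a nonempty sum of positive roots. The standard $\gamma$-string argument then gives that $\beta-\beta_j$ is a root; but $\beta-\beta_j=\sum_{i\neq j}\beta_i$ is a nonempty sum of positive roots, hence a nonzero element of the positive cone in the root lattice, so this root necessarily lies in $\proots$.

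For (ii), I will argue by induction on $k\ge 2$. In the base case $k=2$, the assumption that $\roots$ is reduced—which holds for all of $\LT{ABCD}$—forces $\beta_1\neq\beta_2$, because otherwise $\beta=2\beta_1$ would place $2\beta_1$ in $\roots$. Without loss of generality $\beta_1\letwt{w}\beta_2$, and convexity of the ordering applied to $\beta=\beta_1+\beta_2$ then yields $\beta_1\letwt{w}\beta\letwt{w}\beta_2$, so taking $r=1$ suffices. For $k\ge 3$, apply (i) to produce $j$ with $\gamma:=\beta-\beta_j\in\proots$, and apply the $k=2$ analysis to the two-term decomposition $\beta=\beta_j+\gamma$: either $\beta_j\letwt{w}\beta$, in which case $r=j$ works, or $\gamma\letwt{w}\beta$. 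In the latter subcase, $\gamma=\sum_{i\neq j}\beta_i$ is a decomposition of a positive root into $k-1\ge 2$ positive roots, so the inductive hypothesis yields some $r\neq j$ with $\beta_r\letwt{w}\gamma$, and transitivity of $\letwt{w}$ gives $\beta_r\letwt{w}\beta$.

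The only step where the classical-type hypothesis plays a role is in excluding $\beta_1=\beta_2$ in the base case of (ii), via the reduced-root-system fact that $2\alpha\notin\roots$ for any $\alpha\in\roots$; the remaining ingredients—the inner-product identity, the $\gamma$-string lemma, and convexity of $\letwt{w}$—are entirely type-uniform and would apply verbatim to $\LT{EFG}$ as well. I do not anticipate a serious obstacle: the mild delicacy is in ensuring that the intermediate root $\gamma$ appearing during the induction is itself positive, and this is precisely what (i) supplies, so the two parts fit together cleanly.
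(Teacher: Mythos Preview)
Your proof is correct, and in fact your argument for part~(i) is more general than the paper's. The paper proves~(i) by a separate combinatorial analysis for each of the types $\LT{A}$, $\LT{B}$, $\LT{C}$, $\LT{D}$: it writes every positive root in the form $\beta_{a,b}=\sum_{s=a}^b\alpha_{\mathscr c(s)}$ for a type-specific sequence $\mathscr c$, identifies a summand $\beta_j$ with minimal leading index, and checks case-by-case that the complementary sum $\beta-\beta_j$ is again of the form $\beta_{c,d}$ (with extra care needed in types $\LT{B}$ and $\LT{D}$ where not every $\beta_{a,b}$ is a root). Your inner-product argument---$(\beta,\beta)=\sum_i(\beta,\beta_i)>0$ forces some $(\beta,\beta_j)>0$, and then the root-string lemma gives $\beta-\beta_j\in\roots$---is type-uniform and, as you observe, applies equally to the exceptional types. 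The paper explicitly flags that the exceptional types are not treated; your approach closes that gap at no cost. What the paper's approach buys is an explicit, constructive choice of $j$ readable from the support of $\beta$, whereas yours is an existence argument.

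For part~(ii) your induction is essentially identical to the paper's: both pass from $\beta=\beta_j+(\beta-\beta_j)$ via convexity to either finish immediately or descend to a $(k-1)$-term decomposition of $\beta-\beta_j$. The paper does not isolate the $k=2$ base case as you do (it folds it into the same dichotomy), but the logic is the same.
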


\begin{proof}
    Consider the same numbering of simple roots as in \cite[Plate I-IV]{Bou02} so that $\alpha_1$ is opposite the special roots. The formulae in 
    \cite{Bou02} imply that every positive root is of the form $\beta_{a,b}=\sum_{s=a}^b\alpha_{\mathscr c(s)}$, where
    $\mathscr c:\{1,\ldots,N\}\rightarrow\{1,\ldots,n\}\,$ is a 
    numbering sequence depending on Lie type, $n=|\sroots|\,$, and $1\leq a\leq b\leq N$. 

    For $\beta=\sum_ic_i\alpha_i$ set $\,\height(\beta)=\sum_ic_i\,$ and 
    $\,r(\beta)=\min\{i:c_i\neq 0\}\,$. The formulae $\height(\beta_{a,b})=b-a+1\,$
    and $r(\beta_{a,b})=\min\{\mathscr c(a),\mathscr c(b)\}\,$ are readily checked for all cases below.
  We use this notation to prove the statement {\em \ref{item:sumposroots:subsum}} for each Lie type separately.
    
    For type $\LT{A}_n$ set $N=n$ and $\mathscr c(i)=i$. Then any positive root is of the form $\beta=\beta_{a,b}$\,. Now, for $\beta$ as in the lemma, at least one $\beta_j$ must have non-zero coefficient for $\alpha_a$ and no $\beta_i$ can have $\alpha_s$ with $s<a$ as a summand. Thus, we must have $r(\beta_j)=a$
    for at least one  $j$, which implies $\beta_j=\beta_{a,c}$ for some $c\leq b$ and, with $k>1$, also $c<b$. Then
    $\beta-\beta_j=\beta_{a,b}-\beta_{a,c}=\beta_{c+1,b}\in\proots\,$. 

    Next consider type $\LT{C}_n\,$, setting $N=2n-1$ and $(\mathscr c(1),\ldots,\mathscr c(N))=(1,\ldots, n-1,n,n-1,\ldots,1)\,$. 
    It follows by inspection that all positive roots are of the form $\beta_{a,b}\,$ and all $\beta_{a,b}\in\proots\,$. In fact, $(a,b)\mapsto\beta_{a,b}$ defines a bijection between $\proots$ and the set of pairs $(a,b)$ for which $a\leq\min(n,b)$ and $\mathscr c(a)\leq \mathscr c(b)\,$. Let $\beta=\beta_{a,b}$ with these constraints so that $a=r(\beta)\,$. As before, 
     we can pick $\beta_j$ such  that $a=r(\beta_j)$ and, hence, $\beta_j=\beta_{a,c}$ for some $c<b\,$. Then, 
     $\beta-\beta_j=\beta_{c+1,b}$ which is again in $\proots$ by previous remarks. 
    
    For $\LT{B}_n$ we set $N=2n$ and $(\mathscr c(1),\ldots,\mathscr c(N))=(1,\ldots, n-1,n,n,n-1,\ldots,1)\,$. As opposed to previous Lie types, not all $\beta_{a,b}$ are roots. Specifically, $\beta_{a,b}\not\in\proots$ if and only if
    $a\neq b$ but $\mathscr c(a)=\mathscr c(b)\,$. This case has to be excluded to establish 
    a one-to-one correspondence,
    as in the $\LT{C}_n$ case. We may thus assume $\beta=\beta_{a,b}$ with $a=r(\beta)\leq n$ and $\mathscr c(a)<\mathscr c(b)\,$
    since $k>1$ implies $a\neq b\,$. Then, as before, we have $\beta_j=\beta_{a,c}$ for some  $c<b$ so that 
    $\gamma=\beta-\beta_j=\beta_{c+1,b}\,$. If either $c+1=b$ or $\mathscr c(c+1)\neq \mathscr c(b)$ we are done since
    $\beta_{c+1,b}\in\proots\,$. In case $c+1<b$ and $\mathscr c(c+1)=\mathscr c(b)$ we have $\gamma=2\sum_{c<i\leq n}\alpha_i=\sum_{t\neq j}\beta_t\,$ so that there has to be an $s\neq j$ with $\beta_s=\beta_{c+1,d}$ with $d<b$. 
    With $\mathscr c(N+1-i)=\mathscr c(i)$, $b=N-c$, and $d'=N-d$ we have $\beta_s=\beta_{d'+1,b}$\,. Now, $\beta-\beta_s=\beta_{a,d'}$ which is in $\proots$ since $\mathscr{c}(a)<\mathscr{c}(d')\,$.

    For type $\LT{D}_n$ we may choose $N=2n-2$ and $(\mathscr c(1),\ldots,\mathscr c(N))=(1,\ldots, n-2, n-1,n,n-2,\ldots,1)\,$. A second sequence $\mathscr c'$ may be defined via $\mathscr c'(i)=\mathscr c(N+1-i)=\eta(\mathscr c(i))$, where 
    $\eta=(n-1,n)$ is the non-trivial involution of the Dynkin diagram. All positive roots are of the form $\beta_{a,b}$ or
    $\beta'_{a,b}$ defined via the respective sequences and $\beta_{\mathscr c(a),\mathscr c(b)}=\beta'_{a,b}$ 
    if $\{a,b\}\cap\{n-1,n\}=\emptyset\,$. As for $\LT{B}_n$\,, $\beta_{a,b}\not\in\proots$ if and only if
    $a\neq b$ and $\mathscr c(a)=\mathscr c(b)\,$. The argument now proceeds as in the $\LT{D}_n$ case, except that a subsequence
    may be chosen from either $\mathscr c$ or $\mathscr c'\,$.

    Item {\em \ref{item:sumposroots:min}} follows now by induction in $k\,$. That is, if $\beta=\beta_1+\ldots+\beta_k$ 
    and $j$ such that $\beta'=\beta-\beta_j=\sum_{i\neq j}\beta_i\,\in\proots\,$, convexity implies that
    either $\,\beta_j\letwt{w}\beta \letwt{w}\beta'\,$ or $\,\beta'\letwt{w}\beta \letwt{w}\beta_j\,$.  
    In the former case we are done. In the latter, the induction hypothesis implies $\beta_i\letwt{w}\beta'$
    for some $i$ ($i\neq j$) so that, by transitivity,  also $\beta_i\letwt{w}\beta\,$.
\end{proof}

\subsection{Longest Elements, Involutions, and Complementary Words}\label{subsec:ComplWords}
The longest element $\longweyl\in\Weyl$ is an involution by uniqueness. Its respective involutive action on the root system maps $\proots$ isometrically to $\nroots$ (e.g., Corollary 3 in \cite[VI.1.6]{Bou02}) and is thus of the form
$\longweyl (\alpha_i)=-\alpha_{\dyninv(i)}$ for an involution $\dyninv$ of the Dynkin diagram associated to $\roots\,$. 

As specified in Item XI of each plate in the appendix of \cite{Bou02}, $\dyninv$ is the non-trivial involution for $\LT{A}_n$\,, $\LT{D}_{2k+1}$\,, and $\LT{E}_6$\,, and it is identity in all other cases.  Mapping the $\mathscr A$-version of \eqref{eq:gardsidecomm} to $\Weyl$ then implies that $\sigma=\dyninv\,$. We use the same notation $\dyninv$ for the conjugation action of $\longweyl$ on $\Weyl$\,. Since $s_i$ is given by reflection along $\alpha_i$ we, thus, find  
\begin{equation}\label{eq:def_eta}
\dyninv(s_i)=s_{\dyninv(i)}\qquad\mbox{where}\quad
    \dyninv(t)={\longweyl} t {\longweyl}^{-1}={\longweyl} t {\longweyl}\,.
\end{equation}
We use the same notation for the obvious extension of $\dyninv$ to the set of 
reduced words $\wordset\,$. We will frequently use the involution
\begin{equation}\label{eq:def:dyninv}
    w^\winvchar=\dyninv(w)^*=\dyninv(w^*)
\end{equation}
on $\wordset$, explicitly given as $(w_{i_1}\ldots w_{i_N})^\winvchar=w_{\dyninv(i_N)}\ldots w_{\dyninv(i_1)}\,$ for reduced expressions. Clearly, $\len{w^\winvchar}=\len{w}$ and $s^\winvchar=\wordroot(w^\winvchar)=\dyninv(s)^{-1}\,$.  A related involutive map on $\Weyl$ is given by right multiplication with the longest element by 
\begin{equation}\label{eq:def:flweyl}
    t\,\mapsto\,\flweyl{t}=t\cdot \longweyl\,. 
\end{equation}

\begin{lem}\label{lem:longinvol}
Suppose $t\in\Weyl$ and $\flweyl{t}$ is as in \eqref{eq:def:flweyl}. Then\vspace{-2mm}

\begin{multicols}{2}
\begin{enumerate}[label=\roman*), leftmargin=11mm,]
    \item $t=\flweyl{\flweyl{t}}\,,$\vspace*{2mm}
      \label{item:longinvol:invol}
    \item $\longweyl \cdot t=\dyninv(\flweyl{t})=\flweyl{\dyninv(t)}\,,$\vspace*{2mm}
      \label{item:longinvol:oppact}
    \item $\len{\flweyl{t}}=\len{\longweyl}-\len{t}\,,$\vspace*{2mm}
      \label{item:longinvol:length}
    \item $t\leqRB u\;\Leftrightarrow \;\flweyl{u}\leqRB \flweyl{t}\,,$\vspace*{2mm}
      \label{item:longinvol:order}
    \item $\descroots{\flweyl{t}}=\proots\setminus \descroots{t} = \proots\cap t(\proots)\,,$\vspace*{2mm}
      \label{item:longinvol:descsets}
    \item $\flweyl{t}\left({\descroots{t^\winvchar}}\right)=\descroots{t}\,.$\vspace*{2mm}
      \label{item:longinvol:daggerdesc}
\end{enumerate}
\ 
    \end{multicols}
\vspace{-8mm}
    
    \end{lem}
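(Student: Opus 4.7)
I would handle the six parts largely in the order \emph{(i)}, \emph{(ii)}, \emph{(v)}, \emph{(iii)}, \emph{(iv)}, \emph{(vi)}, since the cardinality and order statements are corollaries of the description of the inversion set of $\flweyl t$.

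For \emph{(i)}, $\flweyl{\flweyl t} = (t\cdot\longweyl)\cdot\longweyl = t$, using $\longweyl^2=1$. For \emph{(ii)}, expand $\dyninv(\flweyl t) = \longweyl(t\longweyl)\longweyl = \longweyl t$ and, likewise, $\flweyl{\dyninv(t)} = (\longweyl t\longweyl)\longweyl = \longweyl t$.

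The key technical input for \emph{(iii)}--\emph{(v)} is the bijection $\longweyl:\proots\to\nroots$, which is immediate from $\descroots \longweyl=\proots$. For \emph{(v)}, I would unfold the definition: $\alpha\in\descroots{\flweyl t}$ iff $\alpha\in\proots$ and $\longweyl t^{-1}(\alpha)\in\nroots$, and because $\longweyl$ swaps $\proots$ and $\nroots$ the latter condition is equivalent to $t^{-1}(\alpha)\in\proots$. Thus $\descroots{\flweyl t}=\proots\setminus\descroots t = \proots\cap t(\proots)$. Then \emph{(iii)} drops out from $\len s = |\descroots s|$ (equation \eqref{eq:descrootsbasics}): $\len{\flweyl t} = |\proots|-|\descroots t| = \len\longweyl - \len t$. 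For \emph{(iv)}, combine \emph{(v)} with the equivalence $a\leqRB s \Leftrightarrow \descroots a\subseteq \descroots s$ from \eqref{eq:descrmonot}: since complementation in $\proots$ reverses inclusions, $\descroots t\subseteq \descroots u$ iff $\descroots{\flweyl u}\subseteq \descroots{\flweyl t}$, which by the same equivalence is $\flweyl u \leqRB \flweyl t$.

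The last item \emph{(vi)} is a direct computation of the action on roots, which will be the main bookkeeping task. Using $t^\winvchar = \dyninv(t)^{-1} = \longweyl t^{-1}\longweyl$, I would check that $\alpha\in\descroots{t^\winvchar}$ iff $\alpha\in\proots$ and $\longweyl t\longweyl(\alpha)\in\nroots$; applying $\longweyl$ inside the second condition (again using $\longweyl(\nroots)=\proots$) this is equivalent to $\alpha\in\proots$ and $t\longweyl(\alpha)\in\proots$. Substituting $\beta=\longweyl(\alpha)$ (so that $\alpha\in\proots \Leftrightarrow \beta\in\nroots$) then gives
\[
\flweyl t(\descroots{t^\winvchar}) \;=\; t\longweyl(\descroots{t^\winvchar}) \;=\; \{\, t(\beta)\,:\, \beta\in\nroots,\;t(\beta)\in\proots\,\} \;=\; \proots\cap t(\nroots)\;,
\]
which is $\descroots t$ by definition. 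The main obstacle is purely clerical: keeping the interplay between $\longweyl$, $\dyninv$, and the inversion operation consistent when translating between group-theoretic and root-theoretic statements; no deeper tool beyond \eqref{eq:defdescroots}, \eqref{eq:descrootsbasics}, \eqref{eq:descrmonot}, and $\longweyl(\proots)=\nroots$ is needed.
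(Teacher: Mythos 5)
Your proof is correct, and for items \emph{i)}, \emph{ii)}, \emph{v)}, and \emph{vi)} it follows essentially the same path as the paper: \emph{i)} and \emph{ii)} are one-line consequences of $\longweyl^2=1$, and your computations for \emph{v)} and \emph{vi)} are the same unwinding of $\descroots{s}=\proots\cap s(\nroots)$ together with $\longweyl(\nroots)=\proots$ that the paper performs. The genuine difference is in \emph{iii)} and \emph{iv)}. The paper treats \emph{iii)} as immediate from the definitions (citing Proposition~2.3.2 of \cite{BB05}) and proves \emph{iv)} group-theoretically: from $u=t\cdot a$ with $\len{u}=\len{t}+\len{a}$ it extracts the explicit factorization $\flweyl{t}=\flweyl{u}\cdot \dyninv(a)^{-1}$ and checks length additivity, which directly exhibits $\flweyl{u}\leqRB\flweyl{t}$. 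You instead derive both items from the inversion-set description in \emph{v)}: \emph{iii)} via $\len{s}=|\descroots{s}|$ and $\descroots{\longweyl}=\proots$, and \emph{iv)} by noting that complementation in $\proots$ reverses inclusions and invoking the equivalence $a\leqRB s\Leftrightarrow\descroots{a}\subseteq\descroots{s}$ from \eqref{eq:descrmonot}. Since Corollary~\ref{cor:sumdescroots} precedes the lemma, there is no circularity, and your route is uniform and arguably cleaner, resting entirely on root-set bookkeeping; the paper's argument for \emph{iv)} is independent of \eqref{eq:descrmonot} and has the side benefit of producing the explicit complementary factor $\dyninv(a)^{-1}$, which is in the spirit of the complementary-word constructions that follow in that subsection.
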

\begin{proof}
    Items {\em \ref{item:longinvol:invol}}, {\em \ref{item:longinvol:oppact}},  and {\em \ref{item:longinvol:length}} are immediate from the definitions. See also Proposition~2.3.2 in \cite{BB05}.
    For Item {\em \ref{item:longinvol:order}}
    assume $t\leqRB t$, meaning $u=t\cdot a$ with $\len{u}=\len{t}+\len{a}$. Using \eqref{eq:def_eta} this yields the decomposition $\flweyl{t}=\flweyl{u}\cdot \dyninv(a)^{-1}\,$.
    Further, using {\em \ref{item:longinvol:length}}, $\len{\flweyl{t}}-\len{\longweyl}-\len{t}=\len{\flweyl{u}}+\len{a} =\len{\flweyl{u}}+\len{\dyninv(a)^{-1}}\,$. So, $\flweyl{u}\leqRB \flweyl{t}\,$. For Item {\em \ref{item:longinvol:descsets}} note that 
    $\descroots{\flweyl{t}}=\proots\cap \flweyl{t}(\nroots)=\proots\cap t(\longweyl(\nroots))= 
    \proots\cap t(\proots)=\proots\setminus\descroots{t}\,$. Finally,
    Item {\em \ref{item:longinvol:order}} follows from
    $\flweyl{t}\left({\descroots{t^\winvchar}}\right)
      =t\longweyl(\proots\cap t^\winvchar(\nroots))
      =t(\longweyl(\proots)\cap \longweyl t^\winvchar(\nroots))
       =t(\nroots\cap  t^{-1}\longweyl(\nroots))
       =t(\nroots\cap  t^{-1}(\proots))
       =t(\nroots)\cap  \proots=\descroots{t}\,
      $.
\end{proof}
 
For the description of involutive automorphisms on quantum groups, the related notion of {\em complementary words} will be relevant. Given a non-empty reduced word $w\in\wordset$, we say that $v\in\wordset$ is complementary to $w$, writing $v\wcomplrel w$, if the following two conditions hold.
\begin{equation}\label{eq:def_complwords}
 v\wcomplrel w\qquad \Leftrightarrow \qquad  \tau(v)=\dyninv(\tau(w)) \qquad \mbox{and} \qquad \flweyl{\Weylpres(v)}=\Weylpres(w^\flat)\,.
\end{equation}
If we write $s=\Weylpres(w^\flat)$, $t=\Weylpres(v^\flat)$, $j=\tau(w)$, and $k=\tau(v)$ the condition in \eqref{eq:def_complwords} can be rephrased more symmetrically as $k=\dyninv(j)$ and 
\begin{equation}\label{eq:defalt_complwords}
    t^{-1}\cdot s=q_j\qquad\mbox{where}\quad q_j=\longweyl\cdot s_j=s_{\dyninv(j)}\cdot \longweyl\,. 
\end{equation}
Note, with $N=\len{\longweyl}$, these imply $\len{w}+\len{v}=N+1$ or $\len{t}+\len{s}=N-1$.  
 
\begin{lem} \label{lm:complwordprops}
Suppose $v\wcomplrel w$ for non-empty, reduced words $v,w\in\wordset\,$. Then\vspace{-2mm}

\begin{multicols}{2} 
\begin{enumerate}[label=\roman*), leftmargin=11mm,]
    \item $w\wcomplrel v\,,$\vspace*{2mm} \label{item:complwordprops:symmrel}
    \item $\wordroot(v)=\wordroot(w)\,,$\vspace*{2mm}\label{item:complwordprops:eqroot}
    \item $\descroots{w}\cap\descroots{v}=\{\wordroot(w)\}\,,$\vspace*{2mm}\label{item:complwordprops:descrcap}
    \item $\proots=\descroots{w}\cup\descroots{v}\,.$
    \label{item:complwordprops:descrcup}
    \end{enumerate} 
\end{multicols}\vspace{-7mm}

\noindent
    Moreover, if $z\in\wordsetmax$ is a reduced word of maximal length $N=\len{z}$ and $y=z^\winvchar$, then 
    \begin{equation}\label{eq:complmaxword}
        z[1,r]\wcomplrel z[r,N]^\winvchar=y[1,N+1-r]\,.
    \end{equation}
\end{lem}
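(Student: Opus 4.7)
The plan is to unpack the complementarity condition \eqref{eq:def_complwords} into a compact ``Weyl-group'' form and then derive each item by direct computation, invoking mainly Lemma~\ref{lem:longinvol} and Corollary~\ref{cor:sumdescroots}. Write $s=\Weylpres(w^\flat)$ and $j=\tau(w)$, so $\Weylpres(w)=s\,s_j$. The condition $v\wcomplrel w$, in the alternate form \eqref{eq:defalt_complwords}, says $\Weylpres(v)=s\,\longweyl=\flweyl{s}$ and $\tau(v)=\dyninv(j)$. Using $\longweyl s_{\dyninv(j)}=s_j\longweyl$ from \eqref{eq:def_eta}, a one-line computation gives
$\Weylpres(v^\flat)=\Weylpres(v)\,s_{\dyninv(j)}=s\longweyl s_{\dyninv(j)}=s\,s_j\,\longweyl=\flweyl{\Weylpres(w)}$,
which, combined with $\tau(w)=\dyninv(\tau(v))$, is precisely $w\wcomplrel v$. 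This settles \emph{\ref{item:complwordprops:symmrel}}.

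For \emph{\ref{item:complwordprops:eqroot}} I evaluate $\Weylpres(v^\flat)=s\,s_j\,\longweyl$ on $\alpha_{\tau(v)}=\alpha_{\dyninv(j)}$. Since $\longweyl(\alpha_{\dyninv(j)})=-\alpha_j$ by definition of $\dyninv$ and $s_j(-\alpha_j)=\alpha_j$, the outcome is $\wordroot(v)=s(\alpha_j)=\wordroot(w)$. For \emph{\ref{item:complwordprops:descrcap}} and \emph{\ref{item:complwordprops:descrcup}}, reduction of $w$ gives $\len{\Weylpres(w)}=\len{s}+1$, so Corollary~\ref{cor:sumdescroots} yields the disjoint decomposition $\descroots{w}=\descroots{s}\sqcup\{s(\alpha_j)\}$. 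Meanwhile Lemma~\ref{lem:longinvol}\emph{\ref{item:longinvol:descsets}} applied to $\Weylpres(v)=\flweyl{s}$ gives $\descroots{v}=\proots\setminus\descroots{s}$. Since $s(\alpha_j)=\wordroot(w)$ is not in $\descroots{s}$ (because $s\,s_j$ is reduced), intersecting produces $\descroots{w}\cap\descroots{v}=\{\wordroot(w)\}$ and unioning produces $\descroots{s}\cup(\proots\setminus\descroots{s})=\proots$.

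For the \emph{moreover} statement, set $w=z[1,r]$ and $u=z[r,N]^\winvchar$ with $z=w_{i_1}\ldots w_{i_N}\in\wordsetmax$. Then $\tau(u)=\dyninv(i_r)=\dyninv(\tau(w))$. From $\Weylpres(z[1,r-1])\,\Weylpres(z[r,N])=\longweyl$ one has $\Weylpres(z[r,N])=\Weylpres(z[1,r-1])^{-1}\longweyl$. Using the standard identity $\Weylpres(a^\winvchar)=\dyninv(\Weylpres(a)^{-1})$ (which follows straight from \eqref{eq:def:dyninv}) together with $\dyninv(\longweyl)=\longweyl$, I compute
\begin{equation*}
\Weylpres(u)=\dyninv\!\bigl(\longweyl\,\Weylpres(z[1,r-1])\bigr)=\longweyl\,\dyninv(\Weylpres(z[1,r-1]))=\Weylpres(z[1,r-1])\,\longweyl=\flweyl{\Weylpres(w^\flat)}\,,
\end{equation*}
verifying the second half of \eqref{eq:def_complwords}. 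Reducedness of $u$ follows because $z[r,N]$ is a contiguous subword of a reduced word and both $\dyninv$ and word-reversal preserve reducedness. Finally, $z[r,N]^\winvchar=y[1,N+1-r]$ is immediate from the definition $y=z^\winvchar$ by spelling out both sides. The proof is almost entirely bookkeeping; the only nonobvious inputs are the complementation of descent sets under $\flweyl{\,\cdot\,}$ and the additivity statement for reduced products, both of which are already established, so no step requires fresh ideas.
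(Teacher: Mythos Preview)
Your proof is correct and follows essentially the same approach as the paper's: both unpack the complementarity condition via the identity $\Weylpres(v)=\flweyl{\Weylpres(w^\flat)}$, use Corollary~\ref{cor:sumdescroots} for the disjoint split $\descroots{w}=\descroots{w^\flat}\sqcup\{\wordroot(w)\}$ together with Lemma~\ref{lem:longinvol}\emph{\ref{item:longinvol:descsets}} for the complement $\descroots{v}=\proots\setminus\descroots{w^\flat}$, and verify the \emph{moreover} part by the same direct word-level manipulation. The only cosmetic differences are that the paper phrases \emph{\ref{item:complwordprops:symmrel}} via the symmetric formulation \eqref{eq:defalt_complwords} (using $q_j^{-1}=q_{\dyninv(j)}$) and computes \emph{\ref{item:complwordprops:eqroot}} through $q_j(\alpha_j)=\alpha_{\dyninv(j)}$, whereas you compute $\Weylpres(v^\flat)$ explicitly; these are the same calculation in different notation.
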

\begin{proof}
Symmetry in {\em \ref{item:complwordprops:symmrel}} is immediate from \eqref{eq:defalt_complwords} together with
$q_j^{-1}=q_{\dyninv(j)}$ and $\dyninv^2=\id\,$. Note further that $q_j(\alpha_j)=\longweyl(s_j(\alpha_j))=\longweyl(-\alpha_j)=\alpha_{\dyninv(j)}\,$. So, with notation as in \eqref{eq:defalt_complwords}, $\wordroot(w)=s(\alpha_j)=t(q_j(\alpha_j))
=t(\alpha_{\dyninv(j)})=\wordroot(v)\,$. For {\em \ref{item:complwordprops:descrcap}} and {\em \ref{item:complwordprops:descrcup}}  note that $\descroots{w}=\descroots{w^\flat}\sqcup \{\wordroot(w)\}$ by 
Corollary~\ref{cor:sumdescroots}, and $\descroots{w^\flat}=\proots\setminus\descroots{v}$ by \eqref{eq:def_complwords} and Lemma~\ref{lem:longinvol}
{\em \ref{item:longinvol:descsets}}. 

For the last statement, let $z=w_{i_1}\ldots w_{i_N}$ so that $y=w_{\dyninv(i_N)}\ldots w_{\dyninv(1)}\,$. Denote further 
$w=z[1,r]=w_{i_1}\ldots w_{i_r}\,$ and $v=z[r,N]^\winvchar=\dyninv(w_{i_r}\ldots w_{i_N})^*=w_{\dyninv(i_N)}\ldots w_{\dyninv(i_r)}=y[1,N+1-r\,]$. Clearly, $\dyninv(\tau(w))=\dyninv(i_r)=\tau(v)$, and $z=z[1,r-1]z[r,N]=w^\flat \dyninv(v^{*})\,$.
The latter implies $\longweyl=\Weylpres(w^\flat)\Weylpres(\dyninv(v)^*)=\Weylpres(w^\flat)\Weylpres(\dyninv(v))^{-1}$ and hence $\Weylpres(w^\flat)=\longweyl\cdot\dyninv(\Weylpres(v))
=\Weylpres(v)\cdot\longweyl=\flweyl{\Weylpres(v)}\,$ as desired. 
\end{proof}

Combining Item~{\em \ref{item:complwordprops:eqroot}} and \eqref{eq:complmaxword} the following order reversal relation is an immediate consequence for the total ordering $\letwt{z}$ on $\proots$ depending on a maximal length reduced word $z\in\wordsetmax$ as introduced in Section~\ref{subsec:descroots}.

\begin{cor}\label{cor:revwordorder} Suppose $z\in \wordsetmax$ and $y=z^\winvchar\,$. Then  $\,\alpha\letwt{z}\beta\,$ if and only if 
    $\,\beta\letwt{y}\alpha\,$. 
\end{cor}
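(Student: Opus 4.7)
The plan is to leverage the complementary-word relation \eqref{eq:complmaxword} together with Lemma \ref{lm:complwordprops}\ref{item:complwordprops:eqroot}, which together already furnish an index-reversal dictionary between the enumerations of $\proots$ induced by $z$ and by $y$. Once this dictionary is in place, the corollary reduces to the tautology ``$p<j \Leftrightarrow N+1-p > N+1-j$''.

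To carry this out, let $N=\len{\longweyl}$ and, following \eqref{eq:defdescrootsEnum}, write $\beta^z_m=\wordroot(z[1,m])$ and $\beta^y_m=\wordroot(y[1,m])$ for $1\leq m\leq N$. Since both $z$ and $y=z^\winvchar$ represent $\longweyl$ (using $\dyninv(\longweyl)=\longweyl$), both enumerations exhaust $\proots$. The first step is to fix $r\in\{1,\ldots,N\}$ and apply \eqref{eq:complmaxword} to conclude $z[1,r]\wcomplrel y[1,N+1-r]$. The second step is to invoke Lemma \ref{lm:complwordprops}\ref{item:complwordprops:eqroot} for this complementary pair, yielding the key identity
\[
\beta^z_r \,=\, \wordroot(z[1,r]) \,=\, \wordroot(y[1,N+1-r]) \,=\, \beta^y_{N+1-r}\,.
\]

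With this identity, suppose $\alpha\letwt{z}\beta$. By definition, there exist indices $p<j$ with $\alpha=\beta^z_p$ and $\beta=\beta^z_j$. Substituting via the identity above, $\alpha=\beta^y_{N+1-p}$ and $\beta=\beta^y_{N+1-j}$, and since $N+1-j < N+1-p$ we immediately get $\beta\letwt{y}\alpha$. The converse direction is the same argument with the roles of $z$ and $y$ exchanged, using $(z^\winvchar)^\winvchar=z$, which shows the complementary relation and hence the identity is symmetric in $z,y$.

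There is no genuine obstacle here; the only point requiring care is the bookkeeping that $y\in\wordsetmax$ (so that $\letwt{y}$ is in fact a total order on all of $\proots$) and that the complementary pairing in \eqref{eq:complmaxword} does match index $r$ in $z$ with index $N+1-r$ in $y$ rather than, say, index $N-r$. Both points are handled by inspecting the definitions directly.
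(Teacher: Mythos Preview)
Your proof is correct and follows exactly the approach the paper indicates: it combines \eqref{eq:complmaxword} with Lemma~\ref{lm:complwordprops}\ref{item:complwordprops:eqroot} to obtain the index-reversal identity $\beta^z_r=\beta^y_{N+1-r}$, from which the order reversal is immediate. The paper itself states only that the corollary is an immediate consequence of these two facts, so your write-up is a faithful expansion of what the authors have in mind.
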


Relation \eqref{eq:complmaxword} provides explicit constructions of complementary words that imply a partitioning of $\proots$ into two totally ordered sets, which have only their maximal elements in common. That is, for a given $w\in\wordset$ find
$z\in\wordsetmax$ with $z\geqRB w$ and hence $w=z[1,r]$. Set then $v=y[N+1-r]$ with $y=z^\winvchar\,$.  
Setting $\beta_i=\wordroot(z[1,i])$ and $\mu_i=\wordroot(y[1,i])$, the sets $\descroots{w}$ and
$\descroots{v}$ are then given by the elements in respective maximal chains $\beta_1\letwt{z}\beta_2\letwt{z}\ldots \letwt{z}\beta_r$ and $\mu_1\letwt{y}\mu_2\letwt{y}\ldots \letwt{y}\mu_{N+1-r}$, with $\beta_r=\mu_{N+1-r}$ but 
$\beta_i\neq\mu_j$ otherwise. 

For example, consider the reduced word $w=w_2w_3w_2w_1$ in a type $\LT{A}_3$ Coxeter system. A maximal length reduced word
$z\in\wordsetmax$ with $w\leqRB z$ is then given by $z=w_2w_3w_2w_1w_2w_3\,$ so that $w=z[1,4]\,$. With $\dyninv(1)=3$ we find
$y=\dyninv(z)^*=w_1w_2w_3w_2w_1w_2$, which implies that $v=y[1,3]=w_1w_2w_3$ is a complementary word $w\wcomplrel v\,$. Then 
the ordered elements of $\descroots{w}$ are $\alpha_2\letwt{z} \alpha_2+\alpha_3\letwt{z} \alpha_3\letwt{z} \alpha_1+\alpha_2+\alpha_3$ and those of $\descroots{v}$ are $\alpha_1\letwt{y} \alpha_1+\alpha_2\letwt{y} \alpha_1+\alpha_2+\alpha_3\,$. Complementary words can also be constructed directly from the conditions in \eqref{eq:defalt_complwords} by choosing a presentation for $t=\Weylpres(w^\flat)q_j^{-1}$ and adjoining $w_{\tau(w)}\,$.

In the rank 2 case, complementary words are unique. A complete discussion of this situation is provided in Section~\ref{subsec:rank2coxsys} below.

\subsection{Root Lattices and Special Elements}\label{subsec:LettSpecElem}

Let $\wordindexset s\subseteq\{1,\ldots,n\}$ be the set of indices $\wordindexset s=\{i_1,\ldots,i_M\}$ occurring in a reduced expression $s=s_{i_1}\ldots s_{i_M}\,$. It is clear by either Matsumoto's theorem or properties of parabolic subgroups in Section~2.4 of \cite{BB05} (or the lemma below) that $\wordindexset s$  does not depend on the choice of this presentation. 

Denote also by $\,\wordindexlattice{s}=\langle\descroots s\rangle_{\mathbb Z} \subset\mathbb Z^{\sroots}$ the sublattice of the root lattice generated by roots in the inversion set $\descroots s$. It admits the following simple description in terms of the set $\wordindexset s\,$. 

\begin{lem}\label{lem:descrootlattice}
The subset $\{\alpha_i\in\sroots\,:\,i\in \wordindexset s\}$ is a basis for 
$\,\wordindexlattice{s}\,$.     
\end{lem}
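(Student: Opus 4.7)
The plan is to fix a reduced expression $s=s_{i_1}\cdots s_{i_M}$ and prove the two inclusions of lattices separately, with linear independence coming for free from the linear independence of simple roots in $E$.

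First I would establish $\,\wordindexlattice{s}\subseteq\langle\alpha_i:i\in\wordindexset{s}\rangle_{\mathbb Z}\,$. Using the enumeration $\beta_m=s_{i_1}\cdots s_{i_{m-1}}(\alpha_{i_m})$ from \eqref{eq:defdescrootsEnum}, it suffices to observe that for any subset $I\subseteq\{1,\dots,n\}$ the sublattice $\langle\alpha_i:i\in I\rangle_{\mathbb Z}$ is stable under each simple reflection $s_j$ with $j\in I$, since $s_j(\alpha_k)=\alpha_k-\bracket{\alpha_j,\alpha_k}\alpha_j\,$. Taking $I=\wordindexset{s}$ and starting from $\alpha_{i_m}\in\langle\alpha_i:i\in\wordindexset{s}\rangle_{\mathbb Z}$, successive applications of $s_{i_{m-1}},\dots,s_{i_1}$ (all with indices in $\wordindexset{s}$) keep $\beta_m$ in this sublattice.

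For the reverse inclusion I would argue by induction on $M=\len{s}$. The base case $M\leq 1$ is immediate. For the step, let $s'=s_{i_1}\cdots s_{i_{M-1}}$ so that $\descroots{s}=\descroots{s'}\sqcup\{\beta_M\}$ and $\beta_M=s'(\alpha_{i_M})$. By induction, $\wordindexlattice{s'}=\langle\alpha_i:i\in\wordindexset{s'}\rangle_{\mathbb Z}\,$. If $i_M\in\wordindexset{s'}$, then $\wordindexset{s}=\wordindexset{s'}$, and since $s'$ lies in the parabolic subgroup generated by the $s_j$ with $j\in\wordindexset{s'}$, the stability observation above gives $\beta_M\in\wordindexlattice{s'}$, so $\wordindexlattice{s}=\wordindexlattice{s'}\,$. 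If $i_M\notin\wordindexset{s'}$, the same stability observation shows that applying the $s_{i_k}$ (with $i_k\neq i_M$) to $\alpha_{i_M}$ never alters the coefficient of $\alpha_{i_M}$, so
\[
\beta_M=\alpha_{i_M}+v\qquad\text{with}\quad v\in\langle\alpha_i:i\in\wordindexset{s'}\rangle_{\mathbb Z}=\wordindexlattice{s'}\subseteq\wordindexlattice{s}.
\]
Hence $\alpha_{i_M}=\beta_M-v\in\wordindexlattice{s}$, and combined with the induction hypothesis we conclude $\langle\alpha_i:i\in\wordindexset{s}\rangle_{\mathbb Z}\subseteq\wordindexlattice{s}\,$.

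More conceptually, the same argument works in one step: for each $i\in\wordindexset{s}$, take the smallest $m$ with $i_m=i$; then none of $s_{i_1},\dots,s_{i_{m-1}}$ equals $s_i$, so the coefficient of $\alpha_i$ in $\beta_m$ is $1$, and subtracting the other simple-root contributions (which lie in the span of $\{\alpha_j:j\in\wordindexset{s},\,j\neq i\}$ and, by the already established inclusion and a nested induction on $|\wordindexset{s}|$, in $\wordindexlattice{s}$) extracts $\alpha_i$. The main obstacle I anticipate is bookkeeping this extraction cleanly, which is why the induction on $M$ above is the more transparent route. Linear independence of $\{\alpha_i:i\in\wordindexset{s}\}$ is inherited from the linear independence of $\sroots$ in $E$, completing the proof.
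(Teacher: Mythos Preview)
Your proof is correct. Both you and the paper argue by induction on $\len{s}$ and rely on the stability of $\langle\alpha_i:i\in I\rangle_{\mathbb Z}$ under the reflections $s_j$ with $j\in I$. The difference is where the induction step attaches the new simple reflection: you peel off the \emph{last} letter, writing $s=s'\cdot s_{i_M}$, which adds the non-simple root $\beta_M=s'(\alpha_{i_M})$ to the inversion set and forces your case distinction on whether $i_M\in\wordindexset{s'}$. The paper instead peels off the \emph{first} letter, writing $t=s_k\cdot s$, so that by Corollary~\ref{cor:sumdescroots} one has $\descroots{t}=s_k(\descroots{s})\sqcup\{\alpha_k\}$ and hence $\wordindexlattice{t}=s_k(\wordindexlattice{s})+\mathbb Z\alpha_k$. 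Since $s_k(\alpha_i)=\alpha_i-A_{ki}\alpha_k$, this immediately gives $\wordindexlattice{t}=\langle\alpha_i:i\in\wordindexset{s}\cup\{k\}\rangle_{\mathbb Z}$ in a single line, with no case analysis and no separate treatment of the two inclusions. Your argument buys nothing extra for the additional bookkeeping, so the left-side induction is the cleaner route here.
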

\begin{proof}
    Proceeding by induction in $\len s$, we note that for $s=s_j$ we have 
    $\wordindexlattice{s}=\langle\descroots {s_j}\rangle_{\mathbb Z}
    =\langle\{\alpha_j\}\rangle_{\mathbb Z}=\mathbb Z\alpha_j\,$ as desired. 
    Suppose now $\,t=s_k\cdot s\,$ with $\len t = \len s +1$ and assume the claim holds for $s\,$.
    By Corollary~\ref{cor:sumdescroots} we then have 
       $\descroots {t}=s_k\left({\descroots {s}}\right)\sqcup \{s_k\}$ and, hence,
      $\,\wordindexlattice{t}=s_k \left({\wordindexlattice {s}}\right)+\mathbb Z\alpha_k\,$. 
      By induction hypothesis $\wordindexlattice {s}$ is spanned by all $\alpha_i$ with $i\in\wordindexset{s}$ so that the first summand is spanned by all 
      $s_k(\alpha_i)=\alpha_i-A_{ki}\alpha_k$ with $i\in\wordindexset{s}$. Hence,
      $\wordindexlattice {t}$ is spanned as a lattice by all $\alpha_i$ with 
      $i\in\wordindexset{s}\cup\{k\}=\wordindexset{t}$ as desired. 
\end{proof}

For later use in computations, we introduce the sum of roots $\sumdescroots{s}$ and sum of coroots
$\sumdesccoroots{s}$ as elements of lattices in $E$, namely,
\begin{equation}\label{eq:deftheta}
    \sumdescroots{s}=\sum_{\alpha\,\in\,\descroots{s}}\!\alpha
     \;\;\in\mathbb Z^{\sroots}
     \qquad\mbox{and}\qquad \sumdesccoroots s =\sum_{\alpha\,\in\,\descroots{s}}\tfrac 1 {d_\alpha}\alpha
     =\sum_{\alpha\,\in\,\descroots{s}}\breve\alpha
     \;\;\in\tfrac 1 {\maxd}\mathbb Z^{\sroots}\;.
\end{equation}

Here $\breve\alpha=d^{-1}_\alpha\alpha$ are the usual coroots. 
Observe also that, by \eqref{eq:bracketrel}, the pairings $\symbrack{\sumdesccoroots s}{\beta}$ are integers for any $\beta\in\roots$ and any $s\in\Weyl$. Corollary~\ref{cor:sumdescroots} and $\Weyl$-invariance of $d_\alpha$ readily imply that for elements $t_1,t_2\in\Weyl$ and $s=t_1\cdot t_2$ with $\length(s)=\length(t_1)+\length(t_2)$   we have 
\begin{equation}
\label{eqn:theta}
    \sumdescroots{s}\,=\,\sumdescroots{t_1}\,+\,t_1\!\left({\sumdescroots{t_2}}\right)\qquad\mbox{and}\qquad
    \sumdesccoroots{s}\,=\,\sumdesccoroots{t_1}\,+\,t_1\!\left({\sumdesccoroots{t_2}}\right)\,.
\end{equation}

The following mirror relations for both roots sets and root sums are immediate from \eqref{eq:defdescroots} and \eqref{eq:deftheta}. 
\begin{align}\label{eq:descrootsmirror}
    s^{-1}(\sumdescroots s )=-\sumdescroots{s^{-1}}
  \qquad\mbox{and}\qquad
    s^{-1}(\sumdesccoroots s )=-\sumdesccoroots{s^{-1}} 
\end{align}
 
Finally, the vectors $\sumdescroots{s}$ and  $\sumdesccoroots{s}$ can be expressed more directly using the usual half sums of positive roots or coroots via the  formulae, 
\begin{equation}\label{eq:thetaviarho}
\begin{aligned}
    &&\sumdescroots{s}&=\weightvecchar-s(\weightvecchar)  
       \qquad\mbox{and}&\quad 
    \sumdesccoroots{s}&=\breve\weightvecchar-s(\breve\weightvecchar)\,, \\
   \mbox{where}\qquad\rule{0mm}{5mm} &&
\weightvecchar&=\tfrac 12 \sum_{\alpha\in\proots}\alpha=\tfrac{1}{2}\sumdescroots{\longweyl} 
    \qquad\mbox{and}&\quad 
\breve\weightvecchar&=\tfrac 12 \sum_{\alpha\in\proots}\breve\alpha=\tfrac{1}{2}\sumdesccoroots{\longweyl}\,.
\end{aligned}
\end{equation}
The formula for $\sumdescroots{s}$ is proved in Proposition~21.15 in \cite{Bu04} and the proof for the one for $\sumdesccoroots{s}$ is analogous using also invariance $d_{s(\alpha)}=d_\alpha\,$. Applying a simple reflection to \eqref{eq:thetaviarho}, a standard argument shows that $\symbrack{\weightvecchar}{\alpha_i}=\frac 12 \symbrack{\alpha_i}{\alpha_i}=d_i$ and $\symbrack{\breve\weightvecchar}{\alpha_i}=1$ for all $\alpha_i\in\sroots\,$. 

The latter allows us to express the height function $\height$, given as $\height(\beta)=\sum_ib_i$ for a root vector $\beta=\sum_ib_b\alpha_i$\,, as $\height(\beta)=\symbrack{\breve\weightvecchar}{\beta}\,$. Thus, together with 
\eqref{eq:thetaviarho} we can write
\begin{equation}\label{eq:sumdesccoheight}
    \symbrack{\sumdesccoroots{s}}{\beta}\,=\,\height(\beta)-\height(s^{-1}(\beta))\quad\in\mathbb Z\,.
\end{equation}
Finally, we note that $\weightvecchar$ is an element of the weight lattice $\weightlattice$ and that 
$\breve\weightvecchar\in\tfrac 1 {\maxd}\weightlattice\,$.

\subsection{\texorpdfstring{$\roots$}{Φ}-Symmetric Lattice Maps}\label{subsec:rootsymmaps} In preparation for the  construction
of a large family of automorphisms on quantum groups in Section~\ref{subsec:Kscale}, we generalize some  classical notions of root systems to non-canonical integral inner products on $E$. 

We say that a lattice homomorphism $\mathsfit h: {\mathbb Z}^{\sroots}\rightarrow {\mathbb Z}^{\sroots}$ 
is {\em $\roots$-symmetric} if $\symbrack{\mathsfit h(\mu)}{\nu}=\symbrack{\mu}{\mathsfit h(\nu)} $ for all $\mu,\nu\in\mathbb Z^{\sroots}\,$, that is, $\mathsfit h$ is symmetric with respect to an underlying Euclidean space $E\,$. Denote also $\symmmapspace{\roots}$ the space of such $\roots$-symmetric maps, which, thus, forms a sublattice of $\,\mathrm{End}({\mathbb{Z}}^{\sroots})$ of 
rank $\binom{n+1}{2}\,$. Note that  the quotient $\mathrm{End}({\mathbb Z}^{\sroots})/\symmmapspace{\roots}$ will, generally, contain torsion. 

Clearly, orthogonality of the Weyl group action implies that for any $s\in\Weyl$ and $\roots$-symmetric map $\mathsfit h$ also $s{\mathsfit h}s^{-1}$ is $\roots$-symmetric. Note that for any $\alpha\in\roots$\,, the Weyl reflection $s_\alpha\in \symmmapspace{\roots}$ since $s_\alpha=s_\alpha^{-1}$\,. Hence, any integer combination of such reflections $\sum_\alpha c_\alpha s_\alpha$ is also $\roots$-symmetric. 
Generalizing the element $\weightvecchar$ from \eqref{eq:thetaviarho}, we define for
a given $\roots$-symmetric lattice map $\mathsfit h$ the vector $\weightvec {\mathsfit h}$ in $E$ via the condition
\begin{equation}\label{eq:defrhosymm}
 \symbrack{\weightvec {\mathsfit h}}{\alpha_i}\,=\,\tfrac 1 2 \symbrack{\alpha_i}{\mathsfit h(\alpha_i)} \,.
\end{equation}
While the concept of $\roots$-symmetric maps does not depend on the choice of a simple root system, the definition of $\weightvec {\mathsfit h}$ strongly depends on a choice of $\sroots\,$. 
We, further, generalize formula \eqref{eq:thetaviarho} for the sum $\sumdescroots{s}$ above by setting for any $s\in\Weyl$
\begin{equation}\label{eq:defthetasymm}
 \sumdescrootsrel{s}{\mathsfit h} \,=\,\weightvec {s{\mathsfit h}s^{-1}}\,-\,s(\weightvec {\mathsfit h})\,.
\end{equation}

Several basic properties of these objects are summarized in the following lemma, where $\weightlattice$ is the weight lattice associated to $\roots$ and $\idsymm:\mathbb Z^{\sroots}\rightarrow\mathbb Z^{\sroots}$ is the identity map, which is obviously $\roots$-symmetric. 

\begin{lem}\label{lem:symmbasicprops}
For any $\roots$-symmetric lattice maps $\mathsfit h, \mathsfit k\in\symmmapspace{\roots}$, any root vector $\beta\in\mathbb Z^{\sroots}$, any Weyl group elements $s, t\in\Weyl$, and any $m\in\mathbb Z$ the following hold:\vspace*{-2mm}
\begin{multicols}{2}
\begin{enumerate}[label=\roman*), leftmargin=2cm,]
    \item $\weightvec {\mathsfit h+\mathsfit k}=\weightvec {\mathsfit h}+\weightvec {\mathsfit k}\,,$\vspace*{2mm}
      \label{item:symmbasicprops:rhoadd}
    \item $\weightvec {\mathsfit h}\in\frac 12 \weightlattice\,,$\vspace*{2mm}
      \label{item:symmbasicprops:rhoweight}
    \item $\weightvec {m\cdot \idsymm}=m\cdot\weightvecchar\,,$\vspace*{2mm}
      \label{item:symmbasicprops:rhogen}
    \item $\symbrack{\beta}{\mathsfit h(\beta)}-2\symbrack{\weightvec{\mathsfit h}}{\beta}\;
          \in\;2\mathbb Z$\vspace*{2mm}
      \label{item:symmbasicprops:rhoint}
    \item $\sumdescrootsrel s {\mathsfit h+ \mathsfit k}=\sumdescrootsrel s {\mathsfit h}+\sumdescrootsrel s {\mathsfit k}\,,$\vspace*{2mm}
      \label{item:symmbasicprops:thetaadd}
    \item $\symbrack{\sumdescrootsrel s {\mathsfit h}}{\beta}\;\in\;\mathbb Z\,,$\vspace*{2mm}
      \label{item:symmbasicprops:thetaint}
    \item $\sumdescrootsrel s {m\cdot \idsymm}=m\cdot \sumdescroots{s}\,,$\vspace*{2mm}
      \label{item:symmbasicprops:thetagen}
    \item $\sumdescrootsrel {ts} {\mathsfit h}=\sumdescrootsrel {t} {s{\mathsfit h}s^{-1}}+t\left({\sumdescrootsrel {s} {\mathsfit h}}\right)\,.$
      \label{item:symmbasicprops:thetarec}
\end{enumerate}
\ 
    \end{multicols}
\end{lem}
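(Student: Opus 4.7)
Most of the eight items follow by direct substitution into the definitions \eqref{eq:defrhosymm} and \eqref{eq:defthetasymm}; the only substantive step is the integrality assertion \emph{(iv)}, which then drives \emph{(vi)}. I would prove the items roughly in the order \emph{(i)}, \emph{(iii)}, \emph{(iv)}, \emph{(ii)}, \emph{(v)}, \emph{(viii)}, \emph{(vi)}, \emph{(vii)}, exploiting the fact that the assignment $\mathsfit h \mapsto \weightvec{\mathsfit h}$ is $\mathbb Z$-linear by construction.

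Items \emph{(i)} and \emph{(iii)} are immediate. For \emph{(i)}, both sides satisfy the defining condition \eqref{eq:defrhosymm} for $\mathsfit h+\mathsfit k$, and since $\symbrack{\,\cdot\,}{\,\cdot\,}$ is non-degenerate on $E$ and the $\alpha_i$ form a $\bbq$-basis, equality follows. For \emph{(iii)}, specialize to $\mathsfit h=\idsymm$ to get $\symbrack{\weightvec\idsymm}{\alpha_i}=\tfrac12\symbrack{\alpha_i}{\alpha_i}=d_i=\symbrack{\weightvecchar}{\alpha_i}$ (using the formula $\symbrack{\weightvecchar}{\alpha_i}=d_i$ noted just after \eqref{eq:thetaviarho}), hence $\weightvec\idsymm=\weightvecchar$, and \emph{(i)} extends this to integer multiples.

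For \emph{(iv)}, expand $\beta=\sum_ib_i\alpha_i$ and set $a_{ij}=\symbrack{\alpha_i}{\mathsfit h(\alpha_j)}\in\bbz$; by $\roots$-symmetry $a_{ij}=a_{ji}$. A short computation gives
\[
\symbrack{\beta}{\mathsfit h(\beta)}-2\symbrack{\weightvec{\mathsfit h}}{\beta}
\,=\,\sum_i b_i(b_i-1)a_{ii}+2\!\!\sum_{i<j}\!b_ib_ja_{ij}\;,
\]
which lies in $2\mathbb Z$ since $b_i(b_i-1)$ is always even. For \emph{(ii)}, recall that $\weightlattice$ is characterized by $\symbrack{\alpha_i}{\mu}\in d_i\bbz$, so $2\weightvec{\mathsfit h}\in\weightlattice$ is equivalent to $\symbrack{\alpha_i}{\mathsfit h(\alpha_i)}\in d_i\bbz$; writing $\mathsfit h(\alpha_i)=\sum_j c_{ij}\alpha_j$ with $c_{ij}\in\bbz$, this equals $\sum_j c_{ij}d_iA_{ij}\in d_i\bbz$ since Cartan entries are integers.

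Items \emph{(v)}, \emph{(vii)}, \emph{(viii)} then follow by routine substitution: \emph{(v)} is linearity of $\weightvec{}$ from \emph{(i)} applied twice in \eqref{eq:defthetasymm}; \emph{(vii)} uses $s(m\idsymm)s^{-1}=m\idsymm$ together with \emph{(iii)} and the half-sum formula \eqref{eq:thetaviarho}; \emph{(viii)} is a telescoping cancellation after adding and subtracting $t(\weightvec{s\mathsfit h s^{-1}})$. The main obstacle, item \emph{(vi)}, is handled by applying \emph{(iv)} twice: once with $(\mathsfit k,\gamma)=(s\mathsfit h s^{-1},\beta)$ and once with $(\mathsfit k,\gamma)=(\mathsfit h,s^{-1}\beta)$. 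Using orthogonality of the Weyl action one sees $\symbrack{\beta}{s\mathsfit h s^{-1}(\beta)}=\symbrack{s^{-1}\beta}{\mathsfit h(s^{-1}\beta)}$, so subtracting the two congruences modulo $2\bbz$ cancels the $\mathsfit h$-dependent terms and yields $2\symbrack{\sumdescrootsrel s{\mathsfit h}}{\beta}\in 2\bbz$, as required. The only point requiring care throughout is keeping track of the distinction between $\symbrack{\,\cdot\,}{\,\cdot\,}$ and $\bracket{\,\cdot\,,\,\cdot\,}$ and the $d_i$-dependent normalization of the weight lattice.
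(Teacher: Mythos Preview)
Your proposal is correct and essentially identical to the paper's own proof. The paper likewise treats \emph{(i)}, \emph{(iii)}, \emph{(v)}, \emph{(vii)}, \emph{(viii)} as immediate from the definitions, proves \emph{(ii)} via $\symbrack{\weightvec{\mathsfit h}}{\breve\alpha_i}=\tfrac12\sum_jA_{ij}\mathsfit h_{ji}\in\tfrac12\mathbb Z$, expands \emph{(iv)} exactly as you do into $\sum_i b_i(b_i-1)a_{ii}+2\sum_{i<j}b_ib_ja_{ij}$, and deduces \emph{(vi)} by the same subtraction of two instances of \emph{(iv)}, one for $(\mathsfit h,s^{-1}\beta)$ and one for $(s\mathsfit h s^{-1},\beta)$.
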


\begin{proof} Items {\em \ref{item:symmbasicprops:rhoadd}}, {\em \ref{item:symmbasicprops:rhogen}},  
    {\em \ref{item:symmbasicprops:thetaadd}}, {\em \ref{item:symmbasicprops:thetagen}}, and {\em \ref{item:symmbasicprops:thetarec}} are immediate from the definitions in \eqref{eq:defrhosymm} and \eqref{eq:defthetasymm} as well as the formulae  for $\sumdescroots s$ and $\weightvecchar$ in \eqref{eq:thetaviarho}.
    For the statement in {\em \ref{item:symmbasicprops:rhoweight}} note that $\symbrack{\weightvec {\mathsfit h}}{\breve\alpha_i}=d_i^{-1}\symbrack{\weightvec {\mathsfit h}}{\alpha_i}=\frac 12 d_i^{-1}\symbrack{\alpha_i}{ {\mathsfit h}(\alpha_i)}=\frac 12 \sum_jd_i^{-1}\symbrack{\alpha_i}{\alpha_j}{\mathsfit h}_{ji}=\frac 12 \sum_jA_{ij}{\mathsfit h}_{ji}\in\frac 12\mathbb Z$, where $\breve\alpha_i=d^{-1}_i\alpha_i$ is the usual coroot, $A_{ij}$ the Cartan data, and ${\mathsfit h}_{ji}$ are the matrix coefficients of $\mathsfit h$ in the $\sroots$-basis. 
    
    For $\beta=\sum_ib_i\alpha_i$ and using symmetry of $n_{ij}=\symbrack{\alpha_i}{\mathsfit h(\alpha_j)}$ the expression in {\em \ref{item:symmbasicprops:rhoint}} can be worked out as $2\sum_{i>j}b_ib_jn_{ij}\,+\,\sum_i(b_i^2-b_i)n_{ii}\,$, which is clearly in $2\mathbb Z\,$. For Item  
    {\em \ref{item:symmbasicprops:thetaint}}, note that both terms $\symbrack{s^{-1}(\beta)}{\mathsfit h(s^{-1}(\beta))}-2\symbrack{\weightvec{\mathsfit h}}{s^{-1}(\beta)}=\symbrack{\beta}{(s\mathsfit h s^{-1})(\beta)}-2\symbrack{s(\weightvec{\mathsfit h})}{\beta}$ and $\symbrack{\beta}{(s\mathsfit h s^{-1})(\beta)}-2\symbrack{\weightvec{s{\mathsfit h}s^{-1}}}{\beta}$ are in $2\mathbb Z$ by {\em \ref{item:symmbasicprops:rhoint}}. Thus, also their difference $2\symbrack{\sumdescrootsrel{s}{\mathsfit h}}{\beta}$ has to be in $2\mathbb Z\,$. 
\end{proof}

For an illustration of these notions, we briefly discuss the rank 2 root systems. We assume $A_{12}=\edgenum\in \{1,2,3\}$ depending on whether $\roots$ is of type $\LT{A}_2$\,, $\LT{B}_2$\,, or $\LT{G}_2$\,, as well as $A_{21}=-1\,$. A $\mathbb Z$-basis for $\symmmapspace{\roots}$ for any type is given by $\{\idsymm, s_1, s_2\}$ where $s_i=s_{\alpha_i}$ are the usual reflections along the simple roots. 
Note that $s_{12}=s_{\alpha_1+\alpha_2}=s_2s_1s_2=(\edgenum-2)s_2-s_1$\,. So, $\{\idsymm, s_1, s_{12}\}$ would be an alternate basis for $\edgenum\in\{1,3\}$ but not for $\edgenum=2\,$. 
It follows from elementary calculations that $\,\mathrm{End}({\mathbb Z}^{\sroots})/\symmmapspace{\roots}$ is $\mathbb Z\oplus \mathbb F_2\,$ for $\edgenum=2$ but only $\mathbb Z$ for $\edgenum\in\{1,3\}\,$. 

Since $\weightvec{\mathsfit h}$ is additive in ${\mathsfit h}$ is suffices to compute the vector for the basis elements $\{\idsymm, s_1, s_2\}$, for which we already know that $\weightvec{\idsymm}=\weightvecchar\,$. The other two elements are computed as
\begin{align*}
    \weightvec{s_1}&=-\tfrac{1}{2}( 1+(\edgenum-1)(2\edgenum-3))\alpha_1\,-\,\tfrac{1}{2}(\edgenum-1)^2\alpha_2\,,\\
    \weightvec{s_2}&= - (\edgenum-1)^2\alpha_1-\tfrac{1}{2}( 1+(\edgenum-1)^2)\alpha_2\,.
\end{align*}  
Thus, even in the $\edgenum=1$ ($\LT{A}_2$) case we have $\weightvec{s_i}=-\tfrac{1}{2}\alpha_i\not\in\weightlattice\,$, as opposed to $\weightvecchar\,$. Similarly, for the $\sumdescrootsrel{s}{\mathsfit h}$ is suffices to consider $\mathsfit h\in\{s_1,s_2\}\,$. The values $\sumdescrootsrel{s_j}{s_i}$ are explicitly computed below: 
\begin{align*}
\sumdescrootsrel{s_1}{s_1}&=s_1(\alpha_1)=-\alpha_1\,, & \sumdescrootsrel{s_2}{s_1}&=s_1(\alpha_2)=\edgenum\alpha_1 +\alpha_2\,,\\
\rule{0mm}{3mm}\sumdescrootsrel{s_2}{s_2}&=s_2(\alpha_2)=-\alpha_2 \,,
& 
\sumdescrootsrel{s_2}{s_1}&=s_2(\alpha_1)+
\mbox{${\binom{\edgenum}{2}}$}
\alpha_1=\tfrac{1}{2}(\edgenum^2-\edgenum+2)\alpha_1 + \edgenum\alpha_2\,.
\end{align*}
From these one can obtain $\sumdescrootsrel{s}{s_i}$ using the 
recursion in Item {\em \ref{item:symmbasicprops:thetarec}} of Lemma~\ref{lem:symmbasicprops} together with the explicit formulae $s_is_js_i=(\edgenum-2)s_i-s_j$ for $i\neq j$ for the adjoint action of $\Weyl$ on $\langle s_1,s_2\rangle\subset \symmmapspace{\roots}\,$. In particular, we see that in this example 
$\sumdescrootsrel{s}{\mathsfit h}\in\roots\,$ for all $s\in\Weyl$ and $\mathsfit h\in\symmmapspace{\roots}\,$. 
Consider the case $\edgenum=1$ ($\LT{A}_2$)\,, for which $\sumdescrootsrel{s_i}{s_j}=s_i(\alpha_j)=s_j(\alpha_i)$\,, suppose $\mathsfit h = n_1s_1+n_2s_2\,$, and set $\beta=n_1\alpha_1+n_2\alpha_2\,$. The suggested computation then yields that 
\begin{align*}
    \sumdescrootsrel{s}{\mathsfit h}=s(\beta) \quad\mbox{if $\,\len s\,$ is odd} 
       \qquad\mbox{and }\qquad 
    \sumdescrootsrel{s}{\mathsfit h}=0 \quad\mbox{if $\,\len s\,$ is even.} 
\end{align*} 

We defer further discussion of $\roots$-symmetric maps to future work. Open questions include whether one can always find a subset $\mathscr T\subset \proots$ such that $\{s_\alpha:\alpha\in\mathscr T\}$ is a $\mathbb Z$-basis for $\symmmapspace{\roots}$ and how to use this to efficiently compute the $\weightvec{\mathsfit h}$ and 
$\sumdescrootsrel{s}{\mathsfit h}$ in generality. Other questions are whether $\sumdescrootsrel{s}{\mathsfit h}\in\roots$ also for higher ranks and what the possible torsion factors of $\,\mathrm{End}({\mathbb Z}^{\sroots})/\symmmapspace{\roots}$ are.

\subsection{Rank 2 Subsystems}\label{subsec:rank2coxsys}
     
 As a simple but useful example, consider the Coxeter subsystem for the generator set $\{s_i,s_j\}\,$, given two distinct but fixed indices $i,j\in\{1,\ldots,n\}\,$. The respective Weyl subgroup $\Weylrest{i,j}$ is a dihedral group of order $2m_{ij}\in\{4,6,8,12\}\,$. The longest element $\longtwoweyl{ij}$ is given by the expressions in \eqref{eq-ATrels} and, hence, of length $\len {\longtwoweyl{ij}}=m_{ij}$\,.  
 The element of maximal length has two word presentations given by 
\begin{equation}\label{eq:maxwordrk2}
    \wordsetmaxrest{i,j}=\{\longtwoword{ij},\longtwoword{ji}\}=\Weylpres^{-1}(\longtwoweyl{ij})\,,
\end{equation}
where the words with $m_{ij}$ letters are $\longtwoword{ij}=w_iw_j\ldots$ and $\longtwoword{ji}=w_jw_i\ldots\,$. 
Thus, Matsumoto's Theorem says that two reduced $w,w'\in\wordset$ represent the same 
$s=\Weylpres(w)=\Weylpres(w')$ if $w'$ is obtained from $w$ by successively swapping sub words $\longtwoword{ij}\leftrightharpoons\longtwoword{ji}\,$.

For any element $s\in\Weylrest{i,j}$ with $\len s<m_{ij}$ there is exactly one reduced word 
in $\Weylpres^{-1}(s)\subset\wordsetrest{i,j}$ representing $s$. To simplify notation here and in later applications, let $m=m_{ij}$ as well as 
\begin{equation}\label{eq:def:abwords}
    a_r=\longtwoword{ij}[1,r]=
\begin{cases}
    (w_iw_j)^{\frac r2} & r \mbox{ even}   \\
     (w_iw_j)^{\frac {r-1}2}w_i & r \mbox{ odd}
\end{cases}
\qquad
b_r=\longtwoword{ji}[1,r]=
  \begin{cases}
    (w_jw_i)^{\frac r2} & r \mbox{ even}   \\
    (w_jw_i)^{\frac {r-1}2}w_j & r \mbox{ odd}
\end{cases}\;
\end{equation}
for $0\leq r\leq m$, so, $a_m=\longtwoword{ij}$ and $b_m=\longtwoword{ji}\,$ and $\len {a_r}=\len{b_r}=r\,$. The $a_r$ and $b_r$ provide a distinct enumeration of elements in $\wordsetrest{i,j}$ with the exception of the empty word $a_0=b_0=\emptyword\,$. This organization 
of words provides a succinct way of expressing the complementary word relations introduced in  
Section~\ref{subsec:ComplWords}.

\begin{lem} \label{lm:rk2complw}
With notation as above, (non-empty) complementary words are uniquely given for all finite Coxeter types by 
    $$
    a_r\wcomplrel b_{m+1-r}\;. 
    $$
\end{lem}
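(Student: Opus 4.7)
The plan is to verify the two defining conditions of $\wcomplrel$ from \eqref{eq:def_complwords} directly, relying on the explicit form of the Dynkin involution $\dyninv$ on the rank~2 Coxeter subsystem. Recall that the finite rank~2 types are $\LT{A}_1\!\times\!\LT{A}_1$ ($m=2$), $\LT{A}_2$ ($m=3$), $\LT{B}_2$ ($m=4$), and $\LT{G}_2$ ($m=6$). Inspection shows that $\dyninv$ swaps $i \leftrightarrow j$ exactly when $m$ is odd (the $\LT{A}_2$ case) and is trivial when $m$ is even. Also, from \eqref{eq:def:abwords}, the terminal letter $\tau(a_r)$ equals $w_i$ if $r$ is odd and $w_j$ if $r$ is even, and similarly for $b_r$ with $i,j$ swapped.

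First, I would dispatch the condition $\tau(b_{m+1-r}) = \dyninv(\tau(a_r))$ by a short four-case parity check on $m$ and $r$. Writing out the four cases:
\begin{itemize}
\item If $m,r$ are both even, then $\tau(a_r)=j$ and $m+1-r$ is odd, so $\tau(b_{m+1-r})=j=\dyninv(j)$.
\item If $m$ even, $r$ odd, then $\tau(a_r)=i$ and $m+1-r$ is even, so $\tau(b_{m+1-r})=i=\dyninv(i)$.
\item If $m$ odd, $r$ even, then $\tau(a_r)=j$ and $m+1-r$ is even, so $\tau(b_{m+1-r})=i=\dyninv(j)$.
\item If $m,r$ are both odd, then $\tau(a_r)=i$ and $m+1-r$ is odd, so $\tau(b_{m+1-r})=j=\dyninv(i)$.
\end{itemize}

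Second, I would verify the coset condition $\flweyl{\Weylpres(b_{m+1-r})} = \Weylpres(a_r^\flat) = \Weylpres(a_{r-1})$, equivalently $\Weylpres(a_{r-1})\cdot\longtwoweyl{ij} = \Weylpres(b_{m+1-r})$. Lemma~\ref{lem:longinvol}\ref{item:longinvol:length} (applied inside the parabolic subgroup $\Weylrest{i,j}$) shows both sides have length $m+1-r$. For $r=1$, both sides equal $\longtwoweyl{ij}=\Weylpres(b_m)$. For $r\geq 2$, the length $m+1-r$ is strictly less than $m$, and in the dihedral group $\Weylrest{i,j}$ every element of length $<m$ has a unique reduced expression, determined by its length and its first letter. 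Since $b_{m+1-r}$ begins with $w_j$, it suffices to exhibit $s_j$ as a left descent of $\Weylpres(a_{r-1})\cdot\longtwoweyl{ij}$. But the concatenation $w_j\cdot a_{r-1}$ is the reduced word $b_r$ for $r\leq m$, so $s_j\cdot\Weylpres(a_{r-1}) = \Weylpres(b_r)$, and Lemma~\ref{lem:longinvol}\ref{item:longinvol:length} then gives $\len{s_j\Weylpres(a_{r-1})\cdot\longtwoweyl{ij}} = m-r < m+1-r$, as required.

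Finally, uniqueness: any $v\wcomplrel a_r$ satisfies $\len{v}=m+1-r$, its last letter is forced by $\dyninv(\tau(a_r))$, and $\Weylpres(v^\flat)=\Weylpres(a_{r-1})\cdot\longtwoweyl{ij}$ of length $m-r<m$ admits the unique reduced presentation computed above, pinning down $v=b_{m+1-r}$. An alternative, essentially equivalent, route is to apply \eqref{eq:complmaxword} to $z=\longtwoword{ij}\in\wordsetmaxrest{i,j}$: a direct calculation shows $z^\winvchar=\longtwoword{ji}$ in both parity cases (for $m$ odd, $\dyninv(\longtwoword{ij})=\longtwoword{ji}$ is a palindrome; for $m$ even, $\dyninv$ is trivial and reversing the alternating word swaps $\longtwoword{ij}\leftrightarrow\longtwoword{ji}$), and then the claimed pairing reads off directly. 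There is no substantive obstacle here: the entire argument is parity bookkeeping plus the uniqueness of reduced expressions for non-longest elements in dihedral groups.
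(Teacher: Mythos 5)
Your proof is correct, but your primary argument takes a genuinely different route from the paper's. The paper's proof is exactly the "alternative route" you sketch at the end: it checks that $\longtwoword{ij}^\winvchar=\longtwoword{ji}$ (treating $m=3$ and $m\in\{4,6\}$ separately), feeds this into the general relation \eqref{eq:complmaxword} for maximal words, and settles uniqueness by observing that $b_{m+1-r}$ is the only reduced word in $\wordsetrest{i,j}$ with the forced length and forced final letter. Your main argument instead verifies the two conditions of \eqref{eq:def_complwords} directly: parity bookkeeping for the final-letter condition, and, for the coset condition, the identity $w_j\cdot a_{r-1}=b_r$ together with Lemma~\ref{lem:longinvol}~\emph{\ref{item:longinvol:length}} to show $s_j$ is a left descent of $\Weylpres(a_{r-1})\cdot\longtwoweyl{ij}$, so that uniqueness of reduced expressions for non-longest dihedral elements identifies this element with $\Weylpres(b_{m+1-r})$. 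What your route buys is self-containedness and uniform treatment of all $m\in\{2,3,4,6\}$ (including $m_{ij}=2$, which the paper's case check does not mention explicitly), using only the definition, symmetry of $\wcomplrel$ from Lemma~\ref{lm:complwordprops}, and elementary dihedral facts; what the paper's route buys is brevity, since the real work was already packaged into \eqref{eq:complmaxword}. One cosmetic slip in your uniqueness paragraph: if $v\wcomplrel a_r$, the definition gives $\Weylpres(v)=\Weylpres(a_{r-1})\cdot\longtwoweyl{ij}$, of length $m+1-r$, equivalently $\Weylpres(v^\flat)=\Weylpres(a_{r})\cdot\longtwoweyl{ij}$, of length $m-r$; you wrote a hybrid of the two ($\Weylpres(v^\flat)=\Weylpres(a_{r-1})\cdot\longtwoweyl{ij}$ "of length $m-r$"). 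Either corrected version, combined with your length and last-letter constraints and the descent computation, pins down $v=b_{m+1-r}$, so the argument stands as intended.
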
 

\begin{proof} Note that for $m=3$ we have $\longtwoword{ij}^*=\longtwoword{ij}$ and 
$\dyninv(\longtwoword{ij})=\longtwoword{ji}$\,, whereas for $m\in\{4,6\}$ we have 
$\longtwoword{ij}^*=\longtwoword{ji}$ and $\dyninv(\longtwoword{ji})=\longtwoword{ji}\,$. Thus,  $\longtwoword{ij}^\winvchar=\dyninv(\longtwoword{ij}^*)=\longtwoword{ji}$ for all Lie types, which implies the statement by  
\eqref{eq:complmaxword}. Uniqueness is also clear since $b_{m+1-r}$ is the only word matching the length constraint and the constraint on the final letter index. 
\end{proof}

In order to describe the total orders on rank 2 root systems implied by the two words $\longtwoword{ij}$ and 
$\longtwoword{ji}$ we choose $\alpha_i$ to be the shorter and $\alpha_j$ to be the longer root. Thus, in the conventions from 
Section~\ref{subsec:rootweyl}, the Cartan data is given as $-A_{ji}=d_i=1$ and
$-A_{ij}=d_j=\edgenum\in\{1,2,3\}\,$ depending on Lie type. Following the definitions in 
Section~\ref{subsec:descroots}, the maximal word $\longtwoword{ji}$ imposes a total order 
$\letwt{\longtwoword{ji}}$ with maximal chains given by the sequence of roots 
$\beta_r=\wordroot(b_r)=\wordroot(\longtwoword{ji}[1,r])$. Their explicit expressions in terms of simple roots $\alpha_i$ and $\alpha_j$ are as follows for Lie types $\LT{A}_2$\,, 
$\LT{B}_2$\,, and $\LT{G}_2$\,, respectively.
\begin{equation}\label{eq:rank2rootorder}
   \begin{aligned}
\edgenum&=1 \qquad &&
\alpha_j\letwt{\longtwoword{ji}}\alpha_i+\alpha_j\letwt{\longtwoword{ji}}\alpha_i\\
\rule{0mm}{4.5mm}\edgenum&=2 \qquad   &&
\alpha_j\letwt{\longtwoword{ji}}\alpha_i+\alpha_j\letwt{\longtwoword{ji}}2\alpha_i+\alpha_j\letwt{\longtwoword{ji}}\alpha_i\\
\rule{0mm}{4.5mm}
\edgenum&=3 \qquad   &&
\alpha_j\letwt{\longtwoword{ji}}\alpha_i+\alpha_j
        \letwt{\longtwoword{ji}}3\alpha_i+2\alpha_j
        \letwt{\longtwoword{ji}}2\alpha_i+\alpha_j
        \letwt{\longtwoword{ji}}3\alpha_i+\alpha_j
        \letwt{\longtwoword{ji}}\alpha_i
\end{aligned} 
\end{equation} 
By Corollary~\ref{cor:revwordorder} and the fact that $\longtwoword{ij}=\longtwoword{ji}^\winvchar$
the total $\letwt{\longtwoword{ij}}$ order imposed by the other maximal reduced word $\longtwoword{ij}$ is exactly the opposite one.

Often  questions for general Coxeter systems can be reduced to ones of rank 2 situations. This is illustrated in the following method of extracting from some $s\in\Weyl$ a maximal element $r\leqLB s$ that belongs to such a subsystem. The proof here extends the argument used in the proof of Proposition~1.8 of \cite{Lu90a} to also the non-simply laced Lie types. 

\begin{lem}\label{lem:Weyl_ijextr}
Suppose $s\in\Weyl$ and $1\leq i\leq n$ are such that $\len{s\cdot s_i}=\len{s}+1\geq 2\,$. Then there exists $j\neq i$,
$t\in\Weyl$ and $y\in\wordsetrest{i,j}$\,, such that for $r=\Weylpres(y)$ we have 
$$
s=t\cdot r\;,\quad \len s=\len t + \len r\;,\quad 1\leq \len r < m \;, \quad t(\alpha_i), t(\alpha_j)\in\proots\;, \quad \mbox{and} \quad \tau(y)=j\,.
$$
Moreover, if $s(\alpha_i)=\alpha_k\in\sroots$, then $r\cdot s_i=\longtwoweyl{ij}\,$ so that $r(\alpha_i)\in\{\alpha_i,\alpha_j\}\,$ and, hence, either $t(\alpha_i)=\alpha_k$ or $t(\alpha_j)=\alpha_k$\,. 
\end{lem}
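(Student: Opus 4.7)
My plan is to exploit the theory of minimal coset representatives for a rank-$2$ parabolic subgroup. Pick any reduced expression $s=s_{i_1}\cdots s_{i_p}$ and set $j=i_p$. The hypothesis $\length(s\cdot s_i)=\length(s)+1$ is equivalent to $s(\alpha_i)\in\proots$, whereas the appearance of $s_j$ as the last letter of this reduced word gives $s(\alpha_j)\in\nroots$, which immediately forces $j\neq i$.

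Let $\Weylrest{i,j}$ denote the rank-$2$ parabolic subgroup, let $t$ be the unique minimal-length representative of the left coset $s\cdot \Weylrest{i,j}$, and set $r=t^{-1}s\in\Weylrest{i,j}$. The standard parabolic decomposition yields $s=t\cdot r$ with $\length(s)=\length(t)+\length(r)$, and the minimality of $t$ is characterized by $t(\alpha_i),t(\alpha_j)\in\proots$. The length additivity $\length(t\cdot w)=\length(t)+\length(w)$ for $w\in\Weylrest{i,j}$ transfers the two sign conditions on $s$ directly to $r$, yielding $r(\alpha_i)\in\proots$ and $r(\alpha_j)\in\nroots$. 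The latter forces $r\neq 1$, while the former rules out $r=\longtwoweyl{ij}$ (which sends both simple roots to negative roots), so $1\leq\length(r)<m_{ij}$. By the uniqueness of reduced expressions for elements of $\Weylrest{i,j}$ of length below $m_{ij}$ discussed in Section~\ref{subsec:rank2coxsys}, $r$ has a unique reduced word $y$, and its last letter must be $j$ rather than $i$ since $r(\alpha_i)\in\proots$.

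For the moreover part, assume $s(\alpha_i)=\alpha_k\in\sroots$. Writing $r(\alpha_i)=c_i\alpha_i+c_j\alpha_j$ with $c_i,c_j\in\mathbb Z_{\geq 0}$ not both zero (using that $r(\alpha_i)$ is a positive root in the $\{i,j\}$-subsystem), one obtains $\alpha_k=t\,r(\alpha_i)=c_i\cdot t(\alpha_i)+c_j\cdot t(\alpha_j)$. Since $t(\alpha_i)$ and $t(\alpha_j)$ are positive roots of height at least one, comparing with $\height(\alpha_k)=1$ forces $(c_i,c_j)\in\{(1,0),(0,1)\}$. Hence $r(\alpha_i)\in\{\alpha_i,\alpha_j\}$ and either $t(\alpha_i)=\alpha_k$ or $t(\alpha_j)=\alpha_k$.

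The main obstacle is the final step of deducing $r\cdot s_i=\longtwoweyl{ij}$. Set $r'=r\cdot s_i$; this element has $\length(r')=\length(r)+1$ and satisfies $r'(\alpha_i)=-r(\alpha_i)\in\{-\alpha_i,-\alpha_j\}$. The task reduces to classifying those elements of $\Weylrest{i,j}$ that send $\alpha_i$ to a negative simple root. In types $\LT{B}_2$ and $\LT{G}_2$ the image $-\alpha_j$ is excluded because $r'$ is an isometry while $\alpha_i,\alpha_j$ have different lengths, and the equation $r'(\alpha_i)=-\alpha_i$ is satisfied precisely by $s_i$ and $\longtwoweyl{ij}$ (noting that $\longtwoweyl{ij}$ acts as $-\idsymm$ on the rank-$2$ subsystem in these types); in type $\LT{A}_2$, a direct inspection of the six elements of $\Weylrest{i,j}$ yields the same conclusion. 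Since $\length(r)\geq 1$ excludes $r'=s_i$, we conclude $r'=\longtwoweyl{ij}$, completing the proof.
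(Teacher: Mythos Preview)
Your proof is correct and follows essentially the same approach as the paper: pick $j$ as the last letter of a reduced expression for $s$, take $t$ to be the minimal-length representative of the coset $s\cdot\Weylrest{i,j}$, and deduce the listed properties from standard parabolic theory (the paper derives the length additivity from its Corollary~\ref{cor:sumdescroots} rather than citing it as known). The only cosmetic difference is in the final step: you explicitly classify the elements $r'\in\Weylrest{i,j}$ with $r'(\alpha_i)\in\{-\alpha_i,-\alpha_j\}$ type by type, whereas the paper simply asserts that $r(\alpha_i)\in\{\alpha_i,\alpha_j\}$ together with $\len r\geq 1$ entail $\len r=m-1$, leaving the same case check implicit.
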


\begin{proof}
   If $u\in\wordset$ with $\Weylpres(u)=s$ we have by the first assumption that $u\cdot w_i$ is reduced as well. Thus, by Lemma~\ref{lem:red=pos}, we have 
   $\wordroot(u\cdot w_i)=s(\alpha_i)\in\proots\,$. Now let $j=\tau(u)$ so that $s=s'\cdot s_j$ with $s'=\Weylpres(u^\flat)$\,. Therefore,
   $\wordroot(u)=s'(\alpha_j)=-s'(s_j(\alpha_j))=-s(\alpha_j)\in\proots$ again by Lemma~\ref{lem:red=pos}, which implies
   $s(\alpha_j)\in\nroots$ and, hence, $j\neq i$\,. 

   Next, let $t\in\Weyl$ be an element of minimal length in the coset $\,s\cdot \Weylrest{ij}$\,. Since 
   $s\cdot s_j=\Weylpres(u^\flat)$ we have $\len {s\cdot s_j}<\len s$ so that $\len t < \len s\,$ and $\len r\geq 1\,$. 
   Minimality of $t$ further implies that $\len {t\cdot s_i}>\len t$ and $\len {t\cdot s_j}>\len t\,$.
   Thus, if $x\in\wordset$ is a reduced word representing $t$ also $x\cdot w_i$ and $x\cdot w_j$ are reduced, implying positivity of both $\wordroot(x\cdot w_i)=t(\alpha_i)\,,\, \wordroot(x\cdot w_j)=t(\alpha_j) \in \proots\,$. 

    Since $\descroots r\subset \nnN\alpha_i+\nnN\alpha_j$ this also implies that $t(\descroots r)\subseteq \proots$. By Corollary~\ref{cor:sumdescroots} we thus obtain that $\len s=\len t + \len r\,$. 

    Observe further that $r(\alpha_i)=n_i\alpha_i+n_j\alpha_j$ with either both $n_i, n_j>0$ if $r(\alpha_i)\in\proots$ or both $n_i, n_j<0$ if $r(\alpha_i)\in\nroots$. The latter case is not possible since it would imply $s(\alpha_i)=n_it(\alpha_i)+n_jt(\alpha_j)\in\nroots\,$ by the previous observation. Now, $r(\alpha_i)\in\proots$ implies that a reduced word representing $r$ ends in $w_j$ and that $\len r<m\,$. 

    Finally, the condition $s(\alpha_i)=\alpha_k=n_it(\alpha_i)+n_jt(\alpha_j)$ implies that either $t(\alpha_i)=\alpha_k$\,, with $(n_i,n_j)=(1,0)$ and hence $r(\alpha_i)=\alpha_i$\,, or 
    $t(\alpha_j)=\alpha_k$\,, with $(n_i,n_j)=(0,1)$ and hence $r(\alpha_i)=\alpha_j\,$. Given that $\len r\geq 1$ this now entails that $\len r=m-1$ so that $y\cdot w_i\in \wordsetmaxrest{i,j}\,$. 
\end{proof}

\medskip  

\section{Quantum Groups at Generic Parameter \texorpdfstring{$q$}{q}}\label{sec:quantumgen}
 
We recall the standard definitions of quantum groups as defined, for example, in \cite{Ji85, CP95}. Various ground rings are introduced over which subalgebras and variants of the quantum groups can be defined. Throughout this section we fix a simple Lie algebra $\mathfrak g$ with root system $\roots$ and Cartan data $A_{ij}$ as introduced in the previous section.

\subsection{Quantization Rings, Quantum Numbers, Integral Subrings} \label{subsec:q-nums}
In their original definitions, such as in \cite{Dri87}, quantum groups are obtained as deformations of the universal enveloping algebra of a Lie algebra $\mathfrak g$ along a formal parameter $\hbar\,$. The natural ground ring in this context is the ring of formal power series $\Qh\,$, which contains $q=e^\hbar=\sum_{n=0}^\infty \frac{1}{n!}\hbar^n\,$. The parameter identification yields a ring embedding $\Qq^1\hookrightarrow \Qh$\,, where $\Qq^1$ denotes the subring of rational functions without poles at $q=1$. It extends to an embedding $\Qq\hookrightarrow \Qh_\hbar$\,, where the localization $\Qh_\hbar$ is given by formal Laurent series with finite numbers of negative powers. We continue our discussion with various frequently used quantities and formulae in $\mathbb Z[q,q^{-1}]$.

For a positive root $\alpha\in\proots$ in the $\Weyl$-orbit of $\alpha_i\in\sroots$\,, let $d_\alpha=d_{i}$ and $q_\alpha=q_i=q^{d_i}$\,. Given $a,b\in\mathbb N$\,, quantum numbers, factorials, and binomials are denoted with respect to this data as 
\begin{align}\label{eqn:quantumnumbers}
    [a]_\alpha=\dfrac{q_\alpha^a-q_\alpha^{-a}}{q_\alpha-q_\alpha^{-1}}\,,
    &&
    [a]!_\alpha=\prod_{j=1}^a \dfrac{q^{j}_\alpha-q^{-j}_\alpha}{q_\alpha-q^{-1}_\alpha}\,,
    &&
    \mbox{and}
    &&
    \qbin{a+b}{a}{\alpha}
    =\dfrac{[a+b]!_\alpha}{[a]!_\alpha[b]!_\alpha}\,,
\end{align}
all of which are valued in $\mathbb Z[q_\alpha,q^{-1}_\alpha]\subseteq \mathbb Z[q,q^{-1}]\,$. The subscript $\alpha$ is suppressed if $q_\alpha=q$, meaning $d_i=d_\alpha=1$. We will frequently also use the abbreviation $[n]_i=[n]_{\alpha_i}$ for simple roots and analogous notation for factorials and binomial coefficients. The binomial recursion below is easily verified and frequently used.  
\begin{equation}\label{eq:qbin-recurs}
    \qbin{n+1}{t}{\alpha}=q_\alpha^{\pm t}\qbin{n}{t}{\alpha}+q_\alpha^{\mp(n+1-t)}\qbin{n}{t-1}{\alpha}
\end{equation}
The $q$-binomial coefficients arise naturally in the
respective binomial formula for $q$-commuting generators. Specifically,
assume the algebra $\mathcal H_q$ over $\mathbb Z[q,q^{-1}]$ is generated  by  $U$ and $V$ subject to the relation $VU=q^2UV$.
Then the following is easily
proved by induction,
\begin{equation}\label{eq:q-binom-form}
    (U+V)^a\,=\,\sum_{k=0}^aq^{-k(a-k)}\qbin{a}{k}{}U^{k}V^{a-k}\,=\,\sum_{k=0}^aq^{k(a-k)}\qbin{a}{k}{}V^{k}U^{a-k}\,.
\end{equation} 
Adjoining an inverse $V^{-1}$ to $\mathcal H_q$ and multiplying \eqref{eq:q-binom-form} by $V^{-a}$ from the right one readily obtains, after the substitution $z=V^{-1}U$, the following variant of Gauss's formula,
\begin{equation}\label{eq:q-combin-form}
    \sum_{k=0}^aq^{-k(a+1)}\qbin{a}{k}{}z^k\,=\,\prod_{t=1}^a(q^{-2t}z+1)\,=\,q^{-\binom{a+1}{2}}\prod_{t=1}^a(q^t+q^{-t}z)\,.
\end{equation} 

For convenience in later computations we express ratios of factorials for different root lengths $d_j\in\{1,2,3\}$ as 
\begin{equation}\label{eq:fracfactorials}
    \frac{[d_j k]!}{[k]!_j}\,=\,[d_j]^k\cdot \qfacrelA k{d_j}
    \qquad \mbox{and}\qquad
    \frac{[k]!_j}{[k]!}\,=\,[d_j]^{-k}\cdot \qfacrelB k{d_j}\,, 
\end{equation}
where we use the following notation for elements in $\mathbb Z[q,q^{-1}]$ ,
\begin{equation}\label{eq:factorialratios}
\qfacrelA k\edgenum \,=\,\prod_{s=0}^{k-1}\prod_{t=1}^{\edgenum-1}[s\edgenum + t]
    \qquad \mbox{and}\qquad
    \qfacrelB k\edgenum\,=\,\prod_{s=1}^{k}\frac{q^{\edgenum s}-q^{-\edgenum s}}{q^s-q^{-s}}\,.
\end{equation}
Note that $\qfacrelA k 1=1=\qfacrelB k 1$ for all $k\in\mathbb N\,$. On occasion, we will also use notation for quantum multinomial coefficients in $\mathbb Z[q,q^{-1}]$ given by
\begin{equation}\label{eq:def-qmultinom}
    \qbin{a_1+a_2+\ldots+a_r}{a_1\,,\,a_2\,,\,\ldots\,,\,a_r}{\alpha}
=
 \frac{[a_1+a_2+\ldots+a_r]!_{\alpha}}{[a_1]!_\alpha[a_2]!_\alpha\ldots[a_r]!_\alpha}
=\qbin{a_1+\ldots+a_r}{a_r}{\alpha}\qbin{a_1+\ldots+a_{r-1}}{a_{r-1}}{\alpha}\ldots
\qbin{a_1+a_2}{a_2}{\alpha}\,.
\end{equation} 

Concluding, we introduce the following notation for subrings of $\Qq\,$, over which various versions of quantum groups can be defined. Clearly, all of these are integral domains and have $\Qq$ as a common field of fractions. However, as opposed to $\Qq\,$, they have well-defined specializations to roots of unity of large enough orders.  
\begin{equation}
    \begin{aligned}\label{eq:defgenrings}
    \Zqq&=\mathbb Z[q,q^{-1}] & \quad & \Zqqn{n}=\mathbb Z\textstyle{\left[q,q^{-1},\frac 1 {[n-1]!}\right]}\\
    \rule{0mm}{7mm}\Zqqv&=\mathbb Z\textstyle{\left[{q,q^{-1},\frac 1{q-q^{-1}}}\right]} && \Zqqvn{n}=\mathbb Z\textstyle{\left[q,q^{-1},\frac 1 {q-q^{-1}},\frac 1 {[n-1]!}\right]}
\end{aligned}
\end{equation} 

\subsection{Quantum Group Generators and Relations}  \label{subsec:qgroups-genrel}
For Cartan data as above, the quantum universal enveloping algebra $\Uhg$ is defined in \cite{Dri87,CP95} as the algebra over $\Qh$ topologically generated by elements $E_i,F_i,H_i$ for $1\leq i\leq n$ subject to relations 
\begin{align}
    &[H_i,H_j]=0\,,
    &&
    [E_i,F_j]=\delta_{ij}\dfrac{e^{d_i\hbar H_i}-e^{-d_i\hbar H_i}}{q_i-q_i^{-1}}\,, \label{eq:Lierels1}
    \\
    &[H_i,E_j]=A_{ij}E_j\,, 
    &&
    [H_i,F_j]=-A_{ij}F_j\,,\label{eq:Lierels2}
    \end{align}
    \vspace{-.5em}
    \begin{align}
    \sum_{r+s=1-A_{ij}}(-1)^s\qbin{1-A_{ij}}{s}{i}E_i^rE_jE_i^s=0 \quad (\text{for } i\neq j)\,,\label{eq:SerreE}\\
    \sum_{r+s=1-A_{ij}}(-1)^s\qbin{1-A_{ij}}{s}{i}F_i^rF_jF_i^s=0 \quad (\text{for } i\neq j)\,. \label{eq:SerreF}
\end{align}
Although \eqref{eq:Lierels1} and the other relations are well-defined over $\Qh$ we  extend the ground ring of $\Uhg$ to $\Qh_\hbar\,$ for convenience. If we denote as usual $K_i=e^{d_i\hbar H_i}$ the above imply the following relations:
\begin{align}
    &K_iK_j=K_jK_i\,,
    &&
    [E_i,F_j]=\delta_{ij}\dfrac{K_i-K_i^{-1}}{q_i-q_i^{-1}}\,,\label{eq:EFcomm}
    \\
    &K_iE_jK_i^{-1}=q_i^{A_{ij}}E_j=q^{\symbrack{\alpha_i}{\alpha_j}}E_j\,, 
    && 
    K_iF_jK_i^{-1}=q_i^{-A_{ij}}F_j=q^{-\symbrack{\alpha_i}{\alpha_j}}F_j\label{eq:Kcomm}\,. 
\end{align}

Observing $(q_i-q_i^{-1})=(q-q^{-1})[d_i]\,$, we note that all relations (\ref{eq:SerreE}-\ref{eq:Kcomm}) are indeed over $\Zqqvn{\dpone}$ where $\dpone\in\{2,3,4\}$ is as in \eqref{eq:Ddef} and we view $\Zqqvn{\dpone}\subset \Qq\subset \Qh_\hbar\,$. We thus define $\Uqg$ as the $\Zqqvn{\dpone}$-algebra generated by the elements $\{E_i,F_i,K_i:1\leq i\leq n\}$ and subject to relations (\ref{eq:SerreE}-\ref{eq:Kcomm}). If $\mathfrak g$ is a general simple Lie algebra or its type is clear from context, we will usually write $\Uq$ instead. 

Additionally, we denote by $\Uq^+$ the subalgebra generated by only the $E_i$\,, $\Uq^-$ the subalgebra generated by only the $F_i$\,, which may be considered as algebras over the subring $\Zqqn{\dpone}$ of $\Zqqvn{\dpone}$\,. The algebra $\Uq^0$ generated by only the $K_i$ may be considered over $\Zqq$. Further, we set $\Uq^{\geqzero}=\Uq^+\Uq^0$ and $\Uq^{\leqzero}=\Uq^-\Uq^0$ the respective subalgebras obtained from two sets of generators, which we also take to be defined over $\Zqqn{\dpone}\,$ by default. 

Suppose $\,f\!:\Zqqn{\dpone}\rightarrow\Zgen$ or $\,f^{\,\,\vaccent}\!:\Zqqvn{\dpone}\rightarrow\Zgenv$ is a ring homomorphism where $\Zgen$ or $\Zgenv$ is commutative with unit. We may define quantum algebras over ground rings $\Zgen$ or $\Zgenv$ by respective extensions of scalars of the original algebras, viewed as $\Zqqn{\dpone}$ or $\Zqqvn{\dpone}$ modules. Specifically, for any $\,\utypechar\in\{+,-,0,\geqzero, \leqzero\}\,$ we denote 
\begin{equation}\label{eq:ringchange}
    \Uqre{\Zgenv}=f^{\,\,\vaccent}_{!}\Uq=\Uq\otimes_{\Zqqvn{\dpone}}\Zgenv
    \qquad\mbox{and}\qquad
    \Uqre{\Zgen}^{\utypechar}=f_{!}\Uq^{\utypechar}=\Uq^{\utypechar}\otimes_{\Zqqn{\dpone}}\Zgen \,.
\end{equation} 

The further abbreviated notation $\Uqre{\mathbb Q}$ and $\Uqre{\mathbb Q}^\utypechar$ is used for algebras over $\mathbb Q(q)$ derived from the inclusions of $\Zqqvn{\dpone}$ and $\Zqqn{\dpone}$ in $\mathbb Q(q)$\,. Similarly, for $m\geq \dpone$ we write
$\Uqre{m}$ and $\Uqre{m}^\utypechar$ for the  inclusions of $\Zqqvn{\dpone}$ and $\Zqqn{\dpone}$ in $\Zqqvn{m}$ and $\Zqqn{m}\,$, respectively. That is, the ground rings of $\Uqre{m}$ and $\Uqre{m}^\utypechar$
contain also elements $\frac 1 {[m-1]!}$.  

It is easy to see that the free abelian group algebra $\Zqq[\{K_i^{\pm 1}\}]$ embeds in the polynomial algebra $\Qh_\hbar[\{H_i\}]$ via $K_i\mapsto e^{d_i\hbar H_i}\,$. Using the PBW results below and in \cite{Ro89,CP95} it further follows that the respective algebra homomorphism $\Uq\rightarrow\Uhg$ is injective.

An important variant of quantum groups at roots of unity are Lusztig's divided power algebras $\LUq$, see \cite{lu,Lu90a,lu90b}. They are defined as the $\Zqq$-subalgebras of $\Uqre{\mathbb Q}$ generated by elements $E_i^{(k)}=\frac 1 {[k]!_i}E_i^k$ and $F_i^{(k)}=\frac 1 {[k]!_i}F_i^k$ as well as  certain additional rational expressions in the $K_i$ generators.

We denote the respective subalgebras by  $\LUq^{+}$, $\LUq^{-}$, and $\LUq^{0}\,$. Although they are not discussed explicitly in this paper, we will refer to these algebras in some of the proofs below as we adopt arguments from \cite{Lu90a,lu90b,lu}.

\subsection{Hopf Algebra Structures} Coalgebra structures on $\Uhg$ and $\Uq$ are given by requiring $K_i$ to be group-like, $H_i$ to be primitive, and $E_i$ and $F_i$ to be {\em skew-primitives} via the  co-relations
\begin{align}\label{eq:corels_gen}
		\Delta(E_i)&=E_i\otimes K_i+1\otimes E_i & \mbox{and} && \Delta(F_i)&=F_i\otimes 1+ K_i^{-1}\otimes F_i\,.
	\end{align}
In this sense we have that the inclusions of the quantum algebras above, such as $\Borq\hookrightarrow\Uq\hookrightarrow\Uhg\,$, are inclusions of Hopf algebras. A counit is given by
$\varepsilon(E_i)=\varepsilon(F_i)=\varepsilon(H_i)=0$ and $\varepsilon(K_i)=1$. For later computations we also record here the coproducts for powers of generators, which easily follow from \eqref{eq:Kcomm} and \eqref{eq:corels_gen}.
\begin{align}\label{eq:corels_pwrs_gen}
		\Delta(E_i^m)&=\sum_{s=0}^mq_i^{-s(m-s)}\qbinsmall{m}{s}_iE_i^s\otimes K_i^sE_i^{m-s}  & && \Delta(F_i^m)&=\sum_{s=0}^mq_i^{-s(m-s)}\qbinsmall{m}{s}_iK_i^{-s}F_i^{m-s}\otimes F_i^{s}
	\end{align}

The antipode acts on the Cartan generators as $S(K_i)=K_i^{-1}$ and $S(H_i)=-H_i$ and is given on the other generators as  
\begin{align}\label{eq:antipode_gen}
    S(E_i)&=-E_iK_i^{-1}=-q_i^2K_i^{-1}E_i & \mbox{and} && S(F_i)&=-K_iF_i=-q_i^{-2}F_iK_i\,. 
\end{align}
Again, it will be useful to also state the antipode of powers of generators.
\begin{align}\label{eq:antipode_pow}
    S(E_i^m)&=(-1)^mq_i^{-m(m-1)}E_i^mK_i^{-m} &   S(F_i^m)&=(-1)^mq_i^{m(m-1)}K_i^{m}F_i^m\,. 
\end{align}
Clearly, $\Uq^{\geqzero}$ and $\Uq^{\leqzero}$ form Hopf subalgebras of $\Uq$\,, and $\Uq^{0}=\Zqqn{\dpone}[\{K_i^{\pm}\}]$ is isomorphic as a Hopf algebra to the group algebra of $\mathbb Z^n\,$. 

Recall \cite{Sw69, Mon93b, Ra12} that a coalgebra $C$ over a field $\mathbb k$ is called {\em pointed} if all its (co)simple subcoalgebras are 1-dimensional or, equivalently, if its coradical $C_0$ coincides with the subcoalgebra $\mathbb k[\GrLi{C}]$ of group-like elements. Pointed coalgebras and Hopf algebras have been extensively studied, though most results assume the ground ring to be a field.

Suppose $\mathbb k$ is a field and $\vartheta:\Zqqvn{\dpone}\rightarrow \mathbb k\,$ a ring homomorphism. As noted in \eqref{eq:ringchange}, this yields quantum algebras $\Uqre{\mathbb k}$ and  $\Uqre{\mathbb k}^{\utypechar}$ over the field $\mathbb k$ for any $\utypechar\in\{0, \geqzero, \leqzero\}\,$. Besides the case $\mathbb k=\mathbb Q(q)$ given by the obvious inclusions, the following result will also be of interest in the case of a cyclotomic field $\mathbb k=\mathbb Q(\zeta)$\,.

\begin{prop}[{\cite[Theorem 2.2]{Mon93a}}]\label{prop:pointed}
The Hopf algebras $\Uqre{\mathbb k}\,$, $\Uqre{\mathbb k}^{\geqzero}\,$, and $\Uqre{\mathbb k}^{\leqzero}\,$ as defined above are pointed. Their common coradical is  $\Uqre{\mathbb k}^0\,$.
\end{prop}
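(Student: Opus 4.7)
The plan is to invoke the classical theorem of Sweedler that a bialgebra over a field generated as an algebra by a pointed subcoalgebra is itself pointed (cf.\ Montgomery, \emph{Hopf Algebras and Their Actions on Rings}, Ch.~5). The proof splits naturally into three steps, of which only the first requires any serious input from the surrounding theory.

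First I would identify $\GrLi{\Uqre{\mathbb k}}$ as exactly the set of monomials $\{K^\mu:\mu\in\mathbb Z^{\sroots}\}$. The inclusion $\supseteq$ is immediate from $\Delta(K_i)=K_i\otimes K_i$. For the reverse, the $\mathbb Z^{\sroots}$-weight grading on $\Uqre{\mathbb k}$ (induced by the adjoint $K_i$-action together with relations \eqref{eq:Kcomm}) forces any group-like element $g$, being fixed by the diagonal $g\mapsto g\otimes g$, to be homogeneous of weight zero, and hence to lie in the weight-zero subspace $\Uqre{\mathbb k}^0=\mathbb k[\{K_i^{\pm 1}\}]$. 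Since a Laurent polynomial Hopf algebra in commuting group-like variables has monomials as its only group-likes, this gives $\mathbb k[\GrLi{\Uqre{\mathbb k}}]=\Uqre{\mathbb k}^0$.

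Next I would exhibit an explicit pointed subcoalgebra $C\subset \Uqre{\mathbb k}$ that generates $\Uqre{\mathbb k}$ as an algebra. Concretely, take $C$ to be the linear span of all $K^\mu$, $E_iK^\mu$, and $F_iK^\mu$ with $\mu\in\mathbb Z^{\sroots}$ and $1\le i\le n$. Using \eqref{eq:corels_gen} together with $\Delta(K^\mu)=K^\mu\otimes K^\mu$, one verifies that $E_iK^\mu$ is $(K^\mu,K_iK^\mu)$-skew primitive and $F_iK^\mu$ is $(K_i^{-1}K^\mu,K^\mu)$-skew primitive, so $\Delta(C)\subseteq C\otimes C$ and $C$ is a subcoalgebra. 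More importantly, $C$ decomposes as a union of subcoalgebras of dimension at most two, each spanned by a pair of group-likes together with a single skew-primitive between them. In any such two-dimensional subcoalgebra the only simple subcoalgebras are the one-dimensional $\mathbb k\cdot K^\mu$, so every simple subcoalgebra of $C$ is one-dimensional and $C$ is pointed with $C_0=\mathbb k[\GrLi{\Uqre{\mathbb k}}]$.

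Finally, since $\{E_i,F_i,K_i^{\pm 1}\}\subset C$, the subcoalgebra $C$ generates $\Uqre{\mathbb k}$ as an algebra, and Sweedler's theorem delivers pointedness of $\Uqre{\mathbb k}$ with coradical $\mathbb k[\langle\GrLi{C}\rangle]=\Uqre{\mathbb k}^0$. Pointedness of $\Uqre{\mathbb k}^{\geqzero}$ and $\Uqre{\mathbb k}^{\leqzero}$ follows by the same argument applied to the subcoalgebras spanned by $\{K^\mu,E_iK^\mu\}$ and $\{K^\mu,F_iK^\mu\}$ respectively. The main technical obstacle is the first step: one must know enough about the structure of $\Uqre{\mathbb k}$ to rule out ``hidden'' group-likes arising after base change along $\vartheta:\Zqqvn{\dpone}\to\mathbb k$. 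The PBW bases constructed in Section~\ref{sec:PBW} supply the weight-grading argument needed, but invoking them here creates a logical dependency on later material that must be respected in the global organization of the paper.
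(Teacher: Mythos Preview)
Your steps 2 and 3 are essentially the paper's argument: the paper sets $A_0=\Uqre{\mathbb k}^0$, $A_1=A_0\oplus\sum_i E_iA_0\oplus F_iA_0$ (your subcoalgebra $C$), and $A_n=(A_1)^n$, then invokes Montgomery's Lemma~5.3.4 (the same result you call Sweedler's theorem) to conclude that the coradical is contained in $A_0$.

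Your step~1, however, contains an error and is in any case superfluous. The claim that the weight-zero subspace of $\Uqre{\mathbb k}$ equals $\Uqre{\mathbb k}^0$ is false: elements such as $E_iF_i$ have $\wgrad$-degree zero but lie outside $\Uqre{\mathbb k}^0$. So even granting that group-likes are weight-homogeneous of degree zero (which itself is not argued), the conclusion $g\in\Uqre{\mathbb k}^0$ does not follow from the grading alone. More importantly, you do not need to identify $\GrLi{\Uqre{\mathbb k}}$ in advance. The filtration argument gives $C_0\subseteq A_0=\Uqre{\mathbb k}^0$ directly; since $\Uqre{\mathbb k}^0$ is a group algebra it is contained in the coradical, whence $C_0=\Uqre{\mathbb k}^0$ and pointedness follows. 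The identification $\GrLi{\Uqre{\mathbb k}}=\{K^\mu\}$ is then a \emph{consequence} of pointedness, not a prerequisite. This also dissolves your worry about a forward dependency on the PBW results of Section~\ref{sec:PBW}: no structure theory beyond the defining generators and relations is needed.
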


\noindent The proof uses coalgebra filtrations $\{A_n\}$ of the quantum algebras, where $A_0=\Uqre{\mathbb k}^0$\,. Set $A_1=A_0\oplus \sum_i^\oplus E_iA_0 \oplus F_iA_0\,$ for $\Uqre{\mathbb k}\,$
as well as $A_1=A_0\oplus \sum_i^\oplus E_iA_0$ for $\Uqre{\mathbb k}^{\geqzero}\,$ and analogous for $\Uqre{\mathbb k}^{\leqzero}\,$. The subspaces are then given as $A_n=(A_1)^n\,$, which clearly exhaust the entire algebra. Lemma~5.3.4 in \cite{Mon93a}, for example, now implies that $C_0$ is contained in and thus equal to 
$\Uqre{\mathbb k}^0\,$. Pointedness now follows immediately from $C_0=\Uqre{\mathbb k}^0\,$, which is then also equal to the total subcoalgebra spanned by group-like elements.

\section{Gradings and Automorphisms on Quantum Groups}\label{sec:gradingauts}

Several automorphisms and anti-automorphsisms will play an important role in constructions throughout this paper. They include, besides the antipode,  Lusztig's Artin group actions, involutions, and \Kscaleword transformations. Additionally, we will refer to two types of gradings in several of our later the proofs. This section will collect basic properties of these automorphisms and gradings as well as establish various useful relations between them.

\subsection{Gradings}\label{subsec:gradings}  Recall that a grading of an algebra $A$ by a monoid $G$ entails a decomposition $A=\oplus_{g\in G}A_g$ with $A_g\cdot A_h\subseteq A_{gh}\,$. We call elements $b\in A_g\setminus\{0\}$ for some $g\in G$ {\em homogeneous} with respect to such a given grading. If $A$ is defined in terms of generators and relations, a grading on the free algebra of generators descends to $A$ if the relations are homogeneous elements. 
Let  $G$ be a monoid and  $S$ any set.
In the following discussion, we write $G^S$ to mean interchangeably additive combinations $\sum_{s\in S}c_ss$ as well as maps $c:S\rightarrow G\,:\,s\mapsto c_s\,$.

A grading in $G=\nnN^{\sroots}$ with $\nnN=\mathbb N\cup\{0\}$ on the algebra $\Uq^+$ is given by defining 
$(\Uq^+)_\mu$ as the subspace spanned by $E_{i_1}\ldots E_{i_m}$ such that $\mu=\sum_{s=1}^m\alpha_{i_s}\in \nnN^{\sroots}\,$. It is clear that the relations in \eqref{eq:SerreE}-\eqref{eq:Kcomm} are homogeneous with respect to this grading, which is therefore well-defined on $\Uq^+$. For a homogeneous element $b\in (\Uq^+)_\mu$ we sometimes also write $\wgradp(b)=\mu$.

An analogous grading in $\nnN^{\sroots}$ is given on $\Uq^-$ by the function 
$\wgradn$ from the set of homogeneous elements in $\Uq^-$ to $\nnN^{\sroots}$\,. 
These extend to a $\mathbb Z^{\sroots}$ grading on $\Uq$ with $(\Uq)_\mu$ spanned by 
elements $K_{j_1}\ldots K_{j_k}E^{\epsilon_1}_{i_1}\ldots E^{\epsilon_m}_{i_m}$ with $\mu=\sum_{s=1}^m\epsilon_s\alpha_{i_s}\,$ where $E_i^+=E_i$ and $E_i^-=F_i\,$, observing that both sides of the relation for $[E_i,F_j]$ are in $(\Uq)_{\alpha_i-\alpha_j}\,$. For a homogeneous element $a\in (\Uq)_\mu$ with respect to this weight grading we also write $\wgrad(a)=\mu\,$ 
Thus, if $b\in \Uq^+$ and $c\in \Uq^-$ are homogeneous elements we have $\wgrad(bc)=\wgrad(cb)=\wgradp(b)-\wgradn(c)$, which readily implies, for example 
$(\Uq^+)_{\mu_1}\cdot (\Uq^-)_{\mu_2}\subseteq (\Uq)_{\mu_1-\mu_2}$\,.  

For an element 
$\nu=\sum_ic_i\alpha_i\,\in\,\mathbb Z^{\sroots}\,$ in the root lattice, we denote a basis element of $\Uq^0$ by
\begin{equation}\label{eq:Kexpdef}
    K^\nu=K_1^{c_1}\ldots K_n^{c_n}\;. 
\end{equation}
Suppose $b\in \Uq$ is homogeneous with $\mu=\wgrad(b)\in \mathbb Z^{\sroots}$\,. It is easy to derive the following commutation relation,
where the inner form $\symbrack{\,\cdot\,}{\,\cdot\,}$ is naturally extended to a biadditive form on $\mathbb Z^{\sroots}\supset \roots\,$,
\begin{equation}\label{eq:Kwgrad} 
     K^\nu b  = q^{\symbrack{\nu}{\wgrad(b)}}bK^\nu \,.
\end{equation} 

Additional gradings in $\mathbb F_2^{\sroots}\,$, where $\mathbb F_2=\mathbb Z/2\mathbb Z=\{0,1\}\,$, are artifacts of the $q\leftrightharpoons q^{-1}$-symmetric definition of quantum numbers and Cartan elements for quantum groups. Two such gradings are defined on generators as follows:
\begin{align}
\ztgradp(E_i)&=\alpha_i\,, & \ztgradp(K_i)&=\alpha_i\,, & \ztgradp(F_i)=0\,,\label{eq:Topgrad}\\
\ztgradn(E_i)&=0\,, & \ztgradn(K_i)&=\alpha_i\,, & \ztgradn(F_i)=\alpha_i\,.\label{eq:Tomgrad}
\end{align}

As before, it is easy to check that relations \eqref{eq:SerreE}-\eqref{eq:Kcomm} are homogeneous with respect to both $\ztgradp$ and $\ztgradn$\,, which are thus well-defined on $\Uq$\,. For example, $\ztgradp(E_iF_i)=\ztgradp(F_iE_i)=\ztgradp(K_i)
=\ztgradp(K_i^{-1})=\alpha_i$ over $\mathbb F_2$ in the case of relation \eqref{eq:EFcomm}. We also note the following interactions of the antipode with these gradings, when evaluated on homogeneous elements:
\begin{align}
     \wgrad\circ S& =\wgrad\,, & \ztgradp\circ S& =\ztgradn\,, & \ztgradn\circ S& =\ztgradp \,.
\end{align}
    
Although the gradings are related by $\,\ztgradp-\ztgradn=\wgrad\mod 2$, it is useful to denote them separately.
Combining $\wgrad$ with either $\ztgradp$ or $\ztgradn$ yields a grading in $(\mathbb Z\times \mathbb F_2)^{\sroots}$ that provides a refinement of the usual weight decomposition, splitting summands further according to respective multi-parities of Cartan elements in the sense that $\ztgradp(K^\nu)=\nu \mod 2\,$.

For a discussion of the behavior of the coproduct with respect to these gradings denote by $U$ any of the Hopf algebras $\Uq$\,, $\Uq^{\geqzero}$ and $\Uq^{\leqzero}$. For $\mu\in\mathbb Z^{\sroots}$ and $\eta\in\mathbb F_2^{\sroots}$ write also  $U_\mu$ and $U^\eta$ for the graded components for $\wgrad$ and $\ztgradp$ respectively. The algebra $U\otimes U$ thus assumes natural bi-gradings with 
$(U\otimes U)_{(\mu_1,\mu_2)}=U_{\mu_1}\otimes U_{\mu_2}$ and 
$(U\otimes U)^{(\eta_1,\eta_2)}=U^{\eta_1}\otimes U^{\eta_2}$. A simple verification on generators now yields 
\begin{equation}\label{eq:gradscoprod}
    \Delta(U_\mu)\subseteq \sum_{\nu} U_{\mu-\nu}\otimes U_{\nu}
    \qquad \mbox{and}\qquad \Delta(U^\eta)\subseteq U\otimes U^\eta\,.
\end{equation}

Finally, let $b\in (\Uq^+)_\mu$ and  $c\in (\Uq^-)_{-\nu}$ be $\wgrad$-homogeneous elements with $\mu,\nu\in\nnN^{\sroots}$\,. It follows from  an easy inductive verification on generators $E_{i_1}\ldots E_{i_k}$ and $F_{j_1}\ldots F_{j_m}\,$ that 
the $(\mu,0)$ and $(0,\mu)$ graded components of $\Delta(b)$ as well as the $(-\nu,0)$ and $(0,-\nu)$ graded components of 
$\Delta(c)$ have the form
\begin{equation}\label{eq:xgradcoprod}
    \Delta(b)=b\otimes K^\mu +\,\ldots\, +1\otimes b \qquad\mbox{and}\qquad \Delta(c)=c\otimes 1   +\,\ldots \,+ K^{-\nu}\otimes c\,. 
\end{equation}

\medskip

\subsection{Che Transformations and Anti-Involutions}\label{subsec:Kscale}
A basic family of automorphisms can be obtained by multiplying the step generators by Cartan elements and units subject to certain consistency conditions. The main data characterizing such a 
{\em \Kscaleword }($\Kscalechar$) {\em transformation} is a $\roots$-symmetric map $\mathsfit h\in\symmmapspace{\roots}$ as introduced in Section~\ref{subsec:rootsymmaps}.  

Additionally, assume a homomorphism $u:\mathbb Z^{\sroots}\rightarrow \Zgen^{\star}$ from the root lattice to the multiplicative subgroup of units $\Zgen^{\star}=\{\pm q^k:k\in\mathbb Z\}$ of the ground ring. We define a map $\Kscale{u}{\mathsfit h}$ on generators as follows:  
\begin{align}\label{eq:defKscale}
    \Kscale{u}{\mathsfit h}(E_i)&=u(\alpha_i)K^{\mathsfit h(\alpha_i)}E_i\,,  &  
      \Kscale{u}{\mathsfit h}(F_i)&=u(\alpha_i)^{-1}F_iK^{-\mathsfit h(\alpha_i)}\,, &
      \Kscale{u}{\mathsfit h}(K_i)&=K_i\,.
\end{align}

The next proposition establishes the existence and basic properties of these automorphisms. As before, $\Uq^\utypechar$ may be $\Uq$\,, $\Uq^{\geqzero}$\,, or $\Uq^{\leqzero}$ defined over the respective minimal ground rings.
The $q$-exponent in the formula in {\em \ref{item:Kscaleprops:genform}} is integral by Lemma~\ref{lem:symmbasicprops}, so that the $q$-term is well-defined by
{\em \ref{item:symmbasicprops:rhoint}} of the same lemma. 
\begin{prop}\label{prop:Kscaleprops}
Let $\mathsfit h, \mathsfit k \in \symmmapspace{\roots}$ and
$u, v \in \mathrm{Hom}(\mathbb Z^{\sroots},\Zgen^{\star})$ as above.  Then the following hold.
\begin{enumerate}[label=\roman*), leftmargin=8mm,]
    \item 
    The map $\Kscale{u}{\mathsfit h}$  defined in \eqref{eq:defKscale} extends uniquely to an automorphism of $\Uq^\utypechar$\,.
    \vspace{2mm}\label{item:Kscaleprops:defin}
    \item $\Kscale{u}{\mathsfit h}\circ\Kscale{v}{\mathsfit k}
    =\Kscale{u\cdot v}{\mathsfit h+\mathsfit k}\,$, and 
    $\Kscale{1}{0}$ \  is identity, where $1$ is the constant map to $1\in\Zgen^{\star}\,$.
    \vspace{2mm}\label{item:Kscaleprops:homom}
\item If $b$ is a homogeneous element with grading $\beta=\wgrad(b)$
we have 
$$
\Kscale{u}{\mathsfit h}(b)=u(\beta) q^{-\frac 12(\symbrack{\beta}{\mathsfit h(\beta)}-2\symbrack{\weightvec{\mathsfit h}}{\beta})}K^{\mathsfit h(\beta)}b\;.
$$\label{item:Kscaleprops:genform}
\item With $b$ as above, the action on gradings is \ $\wgrad\circ \Kscale{u}{\mathsfit h}=\wgrad$ \ and \ 
$\ztgradp(\Kscale{u}{\mathsfit h}(b))=\ztgradp(b)+\mathsfit h(\beta)\mod 2\,$.\vspace{2mm}
\label{item:Kscaleprops:grad}
\end{enumerate}  
\end{prop}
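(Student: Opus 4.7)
I would prove (i) and (iii) together, deriving (iii) first so that it disposes of the Serre relations in (i) uniformly. Work on the intermediate algebra $\tilde U$ obtained from the free $\Zgen$-algebra on $\{E_i,F_i,K_i^{\pm 1}\}$ by imposing only the Cartan and EF-commutation relations \eqref{eq:EFcomm}--\eqref{eq:Kcomm}, not yet the Serre relations. To show that \eqref{eq:defKscale} extends to an endomorphism of $\tilde U$, one checks preservation of each imposed relation: $K_iK_j=K_jK_i$ and $K_iE_jK_i^{-1}=q^{\symbrack{\alpha_i}{\alpha_j}}E_j$ (and its $F$-analogue) are immediate since the $K_j$ are fixed. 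For $[\Kscale{u}{\mathsfit h}(E_i),\Kscale{u}{\mathsfit h}(F_j)]$ the $u$-prefactors cancel, and moving all $K$-exponentials to one side produces two phase factors $q^{\symbrack{\mathsfit h(\alpha_i)}{\alpha_j}}$ and $q^{\symbrack{\mathsfit h(\alpha_j)}{\alpha_i}}$ whose agreement via the $\roots$-symmetry of $\mathsfit h$ is exactly what forces their cancellation; in the case $i=j$ the remaining expression reduces to $E_iF_i-F_iE_i=(K_i-K_i^{-1})/(q_i-q_i^{-1})$, which is fixed by $\Kscale{u}{\mathsfit h}$.

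I would then establish (iii) on $\tilde U$ by induction on the multiplicative length of a $\wgrad$-homogeneous monomial. The base case is direct on each generator: the exponent $\symbrack{\mathsfit h(\alpha_i)}{\alpha_i}-2\symbrack{\weightvec{\mathsfit h}}{\alpha_i}$ vanishes by the defining relation \eqref{eq:defrhosymm} of $\weightvec{\mathsfit h}$, and the $K_i$ case is trivial with $\wgrad(K_i)=0$ and $u(0)=1$. For the inductive step on a product $bc$ with $\wgrad(b)=\beta_1$ and $\wgrad(c)=\beta_2$, applying the hypothesis to each factor and commuting the resulting $K^{\mathsfit h(\beta_1)}$ past $c$ via \eqref{eq:Kwgrad} introduces a phase $q^{\symbrack{\mathsfit h(\beta_1)}{\beta_2}}$; $\roots$-symmetry combined with the bi-additive expansion $\symbrack{\mathsfit h(\beta_1+\beta_2)}{\beta_1+\beta_2}=\symbrack{\mathsfit h(\beta_1)}{\beta_1}+2\symbrack{\mathsfit h(\beta_1)}{\beta_2}+\symbrack{\mathsfit h(\beta_2)}{\beta_2}$ then shows the combined scalar reorganizes into the formula at weight $\beta_1+\beta_2$. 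Integrality of the resulting $q$-exponent is Lemma~\ref{lem:symmbasicprops}~{\em \ref{item:symmbasicprops:rhoint}}. Once (iii) is available on $\tilde U$, every monomial $E_i^rE_jE_i^s$ in the Serre sum \eqref{eq:SerreE} has the same weight $\beta=(1-A_{ij})\alpha_i+\alpha_j$ and is therefore sent by $\Kscale{u}{\mathsfit h}$ to the same factor $u(\beta)q^\bullet K^{\mathsfit h(\beta)}$ times itself, so the Serre sum becomes a common nonzero factor times zero and the relation is preserved; the $F$-Serre relation is handled identically, and $\Kscale{u}{\mathsfit h}$ descends to an endomorphism of $\Uq^\utypechar$.

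For (ii) I would evaluate $\Kscale{u}{\mathsfit h}\circ\Kscale{v}{\mathsfit k}$ on each generator: since $\Kscale{u}{\mathsfit h}$ fixes all $K_j$, its action on $v(\alpha_i)K^{\mathsfit k(\alpha_i)}E_i$ produces $(u\cdot v)(\alpha_i)K^{(\mathsfit h+\mathsfit k)(\alpha_i)}E_i=\Kscale{u\cdot v}{\mathsfit h+\mathsfit k}(E_i)$, and the $F_i$ and $K_i$ cases are analogous. Agreement on generators yields the identity globally, and $\Kscale{1}{0}=\id$ is trivial; this gives $\Kscale{u^{-1}}{-\mathsfit h}$ as an explicit two-sided inverse of $\Kscale{u}{\mathsfit h}$ and completes the automorphism claim in (i). Both assertions of (iv) follow directly from (iii): $K^{\mathsfit h(\beta)}$ is $\wgrad$-neutral, so $\wgrad\circ\Kscale{u}{\mathsfit h}=\wgrad$, and since $\ztgradp(K^\mu)=\mu\mod 2$ by multiplicative extension of \eqref{eq:Topgrad} while the scalar prefactor is $\ztgradp$-trivial, one obtains $\ztgradp(\Kscale{u}{\mathsfit h}(b))=\ztgradp(b)+\mathsfit h(\beta)\mod 2$. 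The main obstacle in the whole argument is verifying the Serre relations without circularity; handling (iii) first on $\tilde U$ rather than directly on $\Uq^\utypechar$ is what bypasses this, and the integrality of the $q$-exponent in (iii)---essential for the map to be defined over $\Zgen$---relies on Lemma~\ref{lem:symmbasicprops}~{\em \ref{item:symmbasicprops:rhoint}} together with the $\roots$-symmetry of $\mathsfit h$.
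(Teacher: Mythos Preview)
Your proof is correct and follows essentially the same strategy as the paper: define the map on an intermediate algebra carrying only the $K$-commutation relations, prove the formula in (iii) there by induction on monomial length, and then observe that (iii) forces preservation of any $\wgrad$-homogeneous relation, so the map descends to $\Uq^\utypechar$. The paper's intermediate algebra $L_q^\utypechar$ omits the relation \eqref{eq:EFcomm} as well, treating it on the same footing as the Serre relations via homogeneity; your choice to include it in $\tilde U$ and verify it by hand is harmless but slightly redundant, since $[E_i,F_j]-\delta_{ij}(K_i-K_i^{-1})/(q_i-q_i^{-1})$ is itself $\wgrad$-homogeneous and is therefore already covered by your descent argument.
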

\begin{proof} Consider the algebra $L_q^\utypechar$ with the same generators as $\Uq^\utypechar$ but subject only to the relations \eqref{eq:Kcomm} in addition to those for $\Uq^0\cong\Zgen[\{K_i^{\pm 1}\}]\,$. $L_q^\utypechar$ admits the same gradings as $\Uq^\utypechar$\,, and commutation of the $K_i$ in $L_q^0\subset L_q^\utypechar$ implies that $\Kscale{u}{\mathsfit h}$ is well-defined on $L_q^\utypechar$\,. We next prove that 
{\em \ref{item:Kscaleprops:genform}} holds on $L_q^\utypechar$ by induction in the number of $E_i$ and $F_i$ generators in an expression for $b\,$. Since $\Kscale{u}{\mathsfit h}$ acts trivially on $\Uq^0$ we may omit any additional $K_i$ factors in $b\,$. 

For $b=E_i$ and $\beta=\alpha_i$ the $q$-exponent is zero because of 
\eqref{eq:defrhosymm} so that the expression is the same as in \eqref{eq:defKscale}. For $b=F_i$ and $\beta=-\alpha_i$ use
$u(-\alpha_i)=u(\alpha_i)^{-1}\,$, $\mathsfit h(-\alpha_i)=- \mathsfit h(\alpha_i)\,$, and 
$\symbrack{-\alpha_i}{\mathsfit h(-\alpha_i)}-2\symbrack{\weightvec{\mathsfit h}}{-\alpha_i}
=2\symbrack{\alpha_i}{\mathsfit h(\alpha_i)}$ again by \eqref{eq:defrhosymm}.

For the induction step, we compute the formula for $E_ib$ with $\wgrad(E_ib)=\alpha_i+\beta$ assuming the asserted expressions for $E_i$ and $b$ and using the fact that  $u$ and $\mathsfit h$ are homomorphisms,
\begin{align*}
\Kscale{u}{\mathsfit h}(E_ib)&=\Kscale{u}{\mathsfit h}(E_i)\Kscale{u}{\mathsfit h}(b)\\
    &=u(\alpha_i)u(\beta)q^{-\frac 12(\symbrack{\alpha_i}{\mathsfit h(\alpha_i)}-2\symbrack{\weightvec{\mathsfit h}}{\alpha_i})}q^{-\frac 12(\symbrack{\beta}{\mathsfit h(\beta)}-2\symbrack{\weightvec{\mathsfit h}}{\beta})}
    K^{\mathsfit h(\alpha_i)}E_iK^{\mathsfit h(\beta)}b\\
    &=u(\alpha_i+\beta)q^{-\frac 12(\symbrack{\alpha_i}{\mathsfit h(\alpha_i)}+\symbrack{\beta}{\mathsfit h(\beta)}-2\symbrack{\weightvec{\mathsfit h}}{\alpha_i+\beta})}q^{-\symbrack{\alpha_i}{\mathsfit h(\beta)}}
    K^{\mathsfit h(\alpha_i)}K^{\mathsfit h(\beta)}E_ib\\
    &=u(\alpha_i+\beta)q^{-\frac 12(\symbrack{\alpha_i}{\mathsfit h(\alpha_i)}+2\symbrack{\alpha_i}{\mathsfit h(\beta)}+\symbrack{\beta}{\mathsfit h(\beta)}-2\symbrack{\weightvec{\mathsfit h}}{\alpha_i+\beta})} 
    K^{\mathsfit h(\alpha_i+\beta)}E_ib\\
   &=u(\alpha_i+\beta)q^{-\frac 12(\symbrack{\alpha_i+\beta}{\mathsfit h(\alpha_i+\beta)}-2\symbrack{\weightvec{\mathsfit h}}{\alpha_i+\beta})} 
    K^{\mathsfit h(\alpha_i+\beta)}E_ib\,.\\
\end{align*}
In the last step, we invoke the identity 
$\symbrack{\alpha_i}{\mathsfit h(\beta)}=\symbrack{\beta}{\mathsfit h(\alpha_i)}$\,, which is 
equivalent to our assumption that $\mathsfit h$ is $\roots$-symmetric. The respective computation for $F_ib$ is analogous, and the extension to $\Zgen$-linear combinations obvious. 

Since $\Kscale{u}{\mathsfit h}(b)=g_\beta b$ for a $\wgrad$-homogeneous elements $b$ with $g_\beta$ invertible,  
$\Kscale{u}{\mathsfit h}$ maps the two-sided ideal generated by $b$ to itself. Thus, $\Kscale{u}{\mathsfit h}$
is well-defined on any quotient of $L_q^\utypechar$ obtained by dividing by $\wgrad$-homogeneous relations.
Since all relations in \eqref{eq:SerreE}-\eqref{eq:Kcomm} are indeed $\wgrad$-homogeneous, we conclude the statement in Item {\em \ref{item:Kscaleprops:defin}}. The statement in Item {\em \ref{item:Kscaleprops:homom}} is easily checked on the generators, and Item {\em \ref{item:Kscaleprops:grad}} is immediate from Item {\em \ref{item:Kscaleprops:genform}}. 
\end{proof}

Given some $\mathsfit h\in\symmmapspace{\roots}$ and $\beta\in\mathbb Z^{\sroots}$, we know 
from Lemma~\ref{lem:symmbasicprops} {\em \ref{item:symmbasicprops:rhoweight}} that $2\symbrack{\weightvec{\mathsfit h}}{\beta}\in\mathbb Z\,$. We can thus define a character  $\qitgen_{\mathsfit h}\in\mathrm{Hom}(\mathbb Z^{\sroots},\Zgen^{\star})$ by
\begin{equation}\label{eq:defhqi}
    \qitgen_{\mathsfit h}:\,\mathbb Z^{\sroots}\rightarrow \Zgen^{\star}\;:\quad \beta\,\mapsto\,\qitgen_{\mathsfit h}(\beta)=q^{2\symbrack{\weightvec{\mathsfit h}}{\beta}}\,.
\end{equation}
Note that $\qitgen_{\mathsfit h}\cdot\qitgen_{\mathsfit k}=\qitgen_{\mathsfit h+k}$\,. We will also use the abbreviation $\qitgen=\qitgen_{\idsymm}\,$ so that $\qitgen(\alpha_i)=q_i^2\,$.

In \cite{lu90b} Lusztig introduces two anti-involutions. The Cartan anti-involution $\Cartaninv:\Uq\rightarrow \Uq^{\opp,\cop}$ is defined as the $\mathbb{Z}$-algebra homomorphism with the following actions on algebra generators and the indeterminate of the ground ring.
\begin{align}\label{eq:defCinv}
    \Cartaninv(E_i)=F_i\,, && 
    \Cartaninv(F_i)=E_i\,, && 
    \Cartaninv(K_i)=K_i^{-1}\,, && 
    \Cartaninv(q)=q^{-1}\,.
\end{align}
Secondly, let $\Kinvaut:\Uq\rightarrow \Uq^{\opp}$ be the involutive $\Zqqvn{\dpone}$-algebra homomorphism defined on generators as follows.
\begin{align}\label{eq:defKinv}
    \Kinvaut(E_i)=E_i\,, && 
    \Kinvaut(F_i)=F_i\,, && 
    \Kinvaut(K_i)=K_i^{-1}\,.
\end{align}
Clearly, $\Kinvaut$ preserves both the $\wgrad$ and $\ztgrad^{\pm}$ gradings, while \ $\wgrad\circ \Cartaninv=-\wgrad$ \ and \ $\ztgradp\circ \Cartaninv=\ztgradn\,$. In addition to Lusztig's involutions, it will be useful to also introduce the conjugation automorphism $\Qinvaut:\Uq\rightarrow \Uq^{\opp}$ defined on generators by
\begin{align}\label{eq:defQinv}
    \Qinvaut(E_i)=E_i\,, && 
    \Qinvaut(F_i)=F_i\,, && 
    \Qinvaut(K_i)=K_i\,, && 
    \Qinvaut(q)=q^{-1}\,.
\end{align}
All three of the above involutions may also be viewed  as involutive anti-homomorphisms. It follows easily by inspection on generators that $\Cartaninv$\,, $\Kinvaut$\,, and $\Qinvaut$ commute with each other and, thus, implement an action of $(\mathbb F_2)^3$ on $\Uq\,$. 

For later use, we introduce notation for two composites of these. Firstly, $\Cartanaut=\Qinvaut\circ\Cartaninv=\Cartaninv\circ\Qinvaut: \Uq\rightarrow \Uq$ is a $\Zqqvn{\dpone}$-algebra homomorphism given on generators by 
\begin{align}\label{eq:defCaut}
    \Cartanaut(E_i)=F_i\,, && 
    \Cartanaut(F_i)=E_i\,, && 
    \Cartanaut(K_i)=K_i^{-1}\,.
\end{align}
Secondly, let $\Kconaut=\Qinvaut\circ\Kinvaut=\Kinvaut\circ\Qinvaut: \Uq\rightarrow \Uq$\,, which is a $\mathbb Z$-algebra homomorphism with non-trivial action on the ground ring given by 
\begin{align}\label{eq:defKaut}
    \Kconaut(E_i)=E_i\,, && 
    \Kconaut(F_i)=F_i\,, && 
    \Kconaut(K_i)=K_i^{-1}\,,&&
    \Kconaut(q)=q^{-1\,}\,.
\end{align}
Clearly, the automorphisms $\Kinvaut$\,, $\Qinvaut$\,, and $\Kconaut$ map each of the subalgebras $\Uq^{\utypechar}$ with $\utypechar\in\{0, +, -, \geqzero, \leqzero\}\,$ to itself, while the $\Cartaninv$ and $\Cartanaut$ map these algebras to their respective opposites.  

Additional commutation relations  with the antipode and the \Kscaleword transformations from \eqref{eq:defKscale} are immediate from checks on generators as well. More specifically, for a $\roots$-symmetric map $\mathsfit h \in\mathrm{End}(\mathbb Z^{\sroots})$ and a character $u\in\mathrm{Hom}(\mathbb Z^{\sroots},\Zgen^\star)$ we obtain the following commutations relations of these involutions with the $\Kscale{u}{\mathsfit h}$ transformations, where $\qitgen_{\mathsfit h}$ is as in \eqref{eq:defhqi}.
\begin{equation}\label{eq:InvKScalComm}
\begin{aligned}
    \Cartaninv\,\circ\Kscale{u}{\mathsfit h}&\,=\,\Kscale{u}{\mathsfit h}\,\circ\,\Cartaninv
    \hspace*{21mm}&\hspace*{21mm}
    \Kinvaut\,\circ\Kscale{u}{\mathsfit h}&\,=\,\Kscale{u\cdot\qitgen_{\mathsfit h}}{-\mathsfit h}\,\circ\,\Kinvaut
    \\ \rule{0mm}{5mm}
    \Cartanaut\,\circ\Kscale{u}{\mathsfit h}&\,=\,\Kscale{u^{-1}\cdot\qitgen^{-1}_{\mathsfit h}}{\mathsfit h}\,\circ\,\Cartanaut
    &
    \Qinvaut\,\circ\Kscale{u}{\mathsfit h}&\,=\,\Kscale{u^{-1}\cdot\qitgen_{\mathsfit h}^{-1}}{\mathsfit h}\,\circ\,\Qinvaut
\end{aligned}
\end{equation} 

Furthermore, the antipode can be expressed as the composite 
\begin{equation}\label{eq:SXiMhoRel}
    S\,=\,\Kscale{\htsign\cdot\qitgen}{-\idsymm}\,\circ\,\Kinvaut\,=\,\Kinvaut\,\circ\Kscale{\htsign}{\idsymm}\,,
    \qquad\mbox{where}\quad 
    \htsign(\beta)=(-1)^{\height(\beta)}=(-1)^{\symbrack{\breve\rho}{\beta}}\,,
\end{equation} 
with $\height$ and $\breve\weightvecchar$ as discussed at the end of Section~\ref{subsec:descroots}. Commutation relations of the antipode with the other automorphisms can now be derived directly from \eqref{eq:SXiMhoRel}
and the commutation relations in \eqref{eq:InvKScalComm}. For example, $S$ commutes with $\Cartaninv$\,, $S\circ \Kinvaut=\Kscale{\qitgen}{-2\idsymm}\circ \Kinvaut\circ S\,$, and $S^2=\Kscale{\qitgen}{0}\,$. 

Finally, note that $\Kinvaut$ and $\Qinvaut$ preserve the $\wgrad$ and $\ztgrad^{\pm}$ gradings. For $\Cartaninv$ we find
\begin{align}\label{eq:CartinvGrad}
     \wgrad\circ \Cartaninv& =-\wgrad\,, & \ztgradp\circ \Cartaninv& =\ztgradn\,, & \mbox{and}&&\ztgradn\circ \Cartaninv& =\ztgradp\,, 
\end{align}
implying the same equations for $\Cartanaut$.

\subsection{Lusztig's Action of Artin-Tits Monoid}\label{subsec:LATM}
In \cite{Lu90a,lu90b,lu} Lusztig introduces for any Lie type $\mathfrak g$ algebra automorphisms $T_i$ on $\Uqre{\mathbb Q}$ as well as on $\LUq$ for $1\leq i\leq n$ where $n$ is the rank of $\mathfrak g$\,. Importantly, they fulfill the Artin-Tits relations (\ref{eq-ATrels}) for the respective Cartan data. 

In order to conform with conventions in the $R$-matrix constructions in \cite{KR90} we will prefer to work with the inverses $\Tinv_i=T_i^{-1}$ of these automorphisms. Their explicit actions on the generators of $\Uq$ are as follows: 
\begin{align}
    \Tinv_i(E_i)&=- K_i^{-1}F_i\,,&
    \Tinv_i(F_i)&=-E_iK_i\,, & 
    \Tinv_i(K_j)&=K_jK_i^{-A_{ij}}\,,\label{eq:defLeasy}
    \\
     \Tinv^{-1}_i(E_i)&=-F_iK_i\,, &
    \Tinv^{-1}_i(F_i)&=-K_i^{-1}E_i\,, & \Tinv^{-1}_i(K_j)&=K_jK_i^{-A_{ij}}\,,\label{eq:defTeasy}
\end{align} and for $i\neq j$
\begin{align}
    \Tinv_i(E_j)=\sum_{r+s=-A_{ij}}\frac {(-1)^rq_i^{-s}} {[s]!_i[r]!_i}E_i^{s}E_jE_i^{r}\,, && 
    \Tinv_i(F_j)=\sum_{r+s=-A_{ij}}\frac {(-1)^rq_i^{s}} {[r]!_i[s]!_i}F_i^{r}F_jF_i^{s}\,,
    \label{eq:defLserre}
    \\
     \Tinv^{-1}_i(E_j)=\sum_{r+s=-A_{ij}}\frac {(-1)^rq_i^{-s}} {[r]!_i[s]!_i}E_i^rE_jE_i^{s}\,, && \Tinv^{-1}_i(F_j)=\sum_{r+s=-A_{ij}}\frac {(-1)^rq_i^{s}} {[s]!_i[r]!_i}F_i^{s}F_jF_i^{r}\,.\label{eq:defTserre}
\end{align}

\begin{lemma}
The automorphisms $\Tinv_i$ restrict to automorphisms on the algebra $\Uq$ over $\Zqqvn{\dpone}$  and fulfill the Artin-Tits relations \eqref{eq-ATrels} for all Lie types. 
\end{lemma}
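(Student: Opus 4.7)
The plan is to combine Lusztig's original construction of $T_i=\Tinv_i^{-1}$ on $\Uqre{\mathbb Q}$ (see \cite{Lu90a,lu90b,lu}) with a careful integrality check over the minimal ring $\Zqqvn{\dpone}$. Lusztig already proves that $\Tinv_i$ is an algebra automorphism of $\Uqre{\mathbb Q}$ satisfying \eqref{eq-ATrels}; the content of the lemma is that this restricts to an automorphism of $\Uq$, and this is what I would verify.

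First I would show $\Tinv_i(\Uq)\subseteq \Uq$ by inspecting the images of the generators. For $\Tinv_i(E_i)$, $\Tinv_i(F_i)$, and $\Tinv_i(K_j)$ in \eqref{eq:defLeasy}, the images are manifestly in $\Uq$. For $i\neq j$, the coefficient of $E_i^{s}E_jE_i^{r}$ in \eqref{eq:defLserre} is $(-1)^rq_i^{-s}/([s]!_i[r]!_i)$ with $r+s=-A_{ij}$. I would argue this lies in $\Zqqvn{\dpone}$ by cases: if $d_i>1$, then $-A_{ij}\in\{0,1\}$, so $[r]!_i[s]!_i=1$ is trivially inverted; if $d_i=1$, then $-A_{ij}\leq \maxd=\dpone-1$, and since $\qbinsmall{r+s}{r}=[r+s]!/([r]![s]!)\in\mathbb Z[q,q^{-1}]$, we have $1/([r]![s]!)\in\mathbb Z[q,q^{-1}]\cdot [r+s]!^{-1}\subseteq \mathbb Z[q,q^{-1}]\cdot [\dpone-1]!^{-1}\subseteq \Zqqvn{\dpone}$ since $[r+s]!$ divides $[\dpone-1]!$. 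The same analysis applied to \eqref{eq:defTeasy}--\eqref{eq:defTserre} yields $\Tinv_i^{-1}(\Uq)\subseteq \Uq$, so $\Tinv_i$ restricts to a bijective endomorphism of $\Uq$.

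Next I would verify that the restriction is an algebra homomorphism. Since $\Uq$ is defined by the same relations \eqref{eq:SerreE}--\eqref{eq:Kcomm} as $\Uqre{\mathbb Q}$ and the inclusion $\Uq\hookrightarrow\Uqre{\mathbb Q}$ is an embedding (via the PBW theorem of Rosso-Lusztig, to be revisited in Section~\ref{sec:PBW}), it suffices to invoke Lusztig's verification that $\Tinv_i$ respects these relations on $\Uqre{\mathbb Q}$. No new computation is needed because both sides of each relation live inside $\Uq\subseteq \Uqre{\mathbb Q}$, and an identity that holds in the larger ring automatically holds in the subring. The Artin-Tits relations \eqref{eq-ATrels} follow by the same descent: Lusztig's proof establishes them as equalities of endomorphisms of $\Uqre{\mathbb Q}$, and by the first step both sides restrict to endomorphisms of $\Uq$, so the equalities persist over $\Zqqvn{\dpone}$.

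The main obstacle is not conceptual but bookkeeping: one must be sure that the denominator bound $r+s\leq \dpone-1$ holds in every case appearing in \eqref{eq:defLserre}, and this is precisely how the exponent $\dpone-1$ in the minimal ground ring was calibrated in \eqref{eq:Donedef}. A secondary subtlety is that the braid relations themselves, while classical, are non-trivial for the non-simply laced types; if one wished to avoid citing Lusztig, one could verify them directly on generators in each rank-two subsystem $\Weylrest{i,j}$, which is routine for $\LT{A}_1\times\LT{A}_1$ and $\LT{A}_2$, tractable for $\LT{B}_2$, but genuinely laborious for $\LT{G}_2$. In our setting the cleaner path is to cite \cite{lu} and focus the argument on the integrality reduction.
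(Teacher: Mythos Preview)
Your proposal is correct and follows essentially the same route as the paper: check that the denominators $[r]!_i[s]!_i$ with $r+s=-A_{ij}$ are units in $\Zqqvn{\dpone}$, and then cite Lusztig for the braid relations on $\Uqre{\mathbb Q}$. The paper handles the integrality step by the single observation that $[r]!_i[s]!_i$ divides $[-A_{ij}]!_i$, which in turn divides $[-d_iA_{ij}]!$ and hence $[\dpone-1]!$ via \eqref{eq:Ddef}; your case split on $d_i$ arrives at the same conclusion with slightly more bookkeeping.
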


The proof of the first assertion is immediate, observing that the denominators in the above expressions divide $[-A_{ij}]!_i$ and hence 
$[-d_iA_{ij}]!$ as well as $[\edgenum]!=[\dpone-1]!$\,. Thus, the $\Tinv_i$ automorphisms are well-defined over the ground ring $\Zqqvn{\dpone}$\,. Moreover, the Artin-Tits relations have been verified for the $T_i$ automorphisms in \cite{Lu90a,lu90b,lu} and thus also hold for the $\Tinv_i=T_i^{-1}$.

An immediate consequence is that we can associate an automorphism to any Weyl group element. For each reduced word $w=w_{i_1}\dots w_{i_k}\in\wordset$\,, let $\Tinv_w=\Tinv_{i_1}\circ\ldots\circ \Tinv_{i_k}$\,. By Lemma~\ref{cor:matsumoto}, the composed automorphism only depends on the Weyl group element $s={\Weylpres(w)}$ represented by $w$\,, allowing us to write $\Tinv_w=\Tinv_s$\,. Now, for $s,t\in\Weyl$ the construction implies 
\begin{equation}\label{eq:Thomrel}
    \Tinv_{st}=\Tinv_s\circ \Tinv_t \qquad \mbox{if}\quad \len {st}=\len s +\len t\,.
\end{equation} 
That is, due to the length additivity condition $s\mapsto\Tinv_s$ is {\em not} a representation of $\Weyl\,$. Instead, the above notation should be understood as
the composite of the Matsumoto map 
$\mathscr s: \Weyl\rightarrow \mathscr A^+\,$ from Section~\ref{subsec:coxeter} with the natural monoid homomorphism $\Tinv:\,\mathscr A^+\rightarrow \mathrm{Aut}(\Uq)\,$ defined on the positive Artin-Tits monoid, which extends to an action $\Tinv:\,\mathscr A\rightarrow \mathrm{Aut}(\Uq)\,$ of the Artin group on $\Uq$\,. 
Thus, depending on context, we allow subscripts of $\Tinv$ to be in $\Weyl$\,, $\wordset$\,, or 
$\mathscr A$. These notations are related, for example, by  $\Tinv_w=\Tinv_{\Weylpres(w)}$ if $w\in \wordset$ or $\Tinv_s=\Tinv_{\mathscr s(s)}$ if $s\in \Weyl\,$.

Observe that $\Cartaninv$ commutes with all $\Tinv_i$ and $T_i\,$. Also, for the $\Kinvaut$ involution one 
readily infers $\Kinvaut\circ \Tinv_i\circ \Kinvaut=\Tinv_i^{-1}=T_i$\,, or, more generally,
\begin{equation}\label{eq:LTconjinv}
    \Kinvaut \,\circ \Tinv_s \circ \,\Kinvaut\,=\,(\Tinv_{s^{-1}})^{-1}\,=\,T_s
    \qquad \forall s\in \Weyl\,.
\end{equation}
Here $T_s=T_{i_1}\circ\ldots\circ T_{i_k}$ denotes the analogous automorphisms used by Lusztig  to define generators associated to general positive roots. The relation in \eqref{eq:LTconjinv} will thus serve to relate our conventions of choosing generators to those in \cite{Lu90a,lu90b}. 
The second equality may also be written as $\Tinv_b=T_{\iota(b)}$ for an element $b\in\mathscr A$,
where $\iota\in\mathrm{Aut}(\mathscr A)$ maps each generator of $\mathscr A$ to its inverse. 
The action of the braid automorphisms on the $\Uq^0$ basis elements is naturally given by
\begin{align}
    \Tinv_s(K^\nu)  & =K^{s(\nu)}\,.  \label{eq:TKact} 
\end{align}

Finally, for a homogeneous element $b\in\Uq$ one readily works out the following equivariance relations for the gradings, where the action of $s\in \Weyl$ is naturally extended to the root lattice $\mathbb Z^{\sroots}\supset \roots\,$ as well as $\mathbb F_2^{\sroots}\,$.
\begin{align}
 \wgrad(\Tinv_s(b)) & =s(\wgrad(b))  \label{eq:Twgrad}\\
    \ztgrad^{\pm}(\Tinv_s(b)) & =s(\ztgrad^{\pm}(b))   \label{eq:Togradequiv}
\end{align}
Clearly, \eqref{eq:TKact}, \eqref{eq:Twgrad} and \eqref{eq:Togradequiv} hold also if the $\Tinv_s$ are replaced by the original $T_s$ automorphisms.

\subsection{More \texorpdfstring{$\mathscr A^+$}{A+} Commutation Relations}
In this section, we  provide commutation relations between the $\Tinv_s$ automorphisms and the \Kscaleword transformations $\Kscale{u}{\mathsfit h}$\,, from which we also obtain respective commutation relations with the antipode. We denote the natural (left) actions of $\Weyl$ on elements $\mathsfit h\in \symmmapspace{\roots}$ and $u\in \mathrm{Hom}(\mathbb Z^{\sroots},\Zgen^\star)$ by $\mathsfit h^s=shs^{-1}$ and $s_*u=u\circ s^{-1}\,$, respectively. Additionally, define the character $\sdrchar{s}{\mathsfit h}\in \mathrm{Hom}(\mathbb Z^{\sroots},\Zgen^\star)$ by 
\begin{equation}
    \sdrchar{s}{\mathsfit h}(\beta)=q^{-\symbrack{\sumdescrootsrel{s}{\mathsfit h}}{\beta}}\,,
\end{equation}
which is well-defined by Lemma~\ref{lem:symmbasicprops}
{\em \ref{item:symmbasicprops:thetaint}}. The following basic relations involving this character are easily derived from statement {\em\ref{item:symmbasicprops:thetarec}} as well as the combination of \eqref{eq:thetaviarho} and \eqref{eq:defhqi}:
\begin{equation}\label{eq:thetacharprops}
    \sdrchar{t\cdot s}{\mathsfit h}=\sdrchar{t}{\mathsfit h^s}\cdot t_*(\sdrchar{s}{\mathsfit h})\, 
    \qquad\mbox{and}\qquad
    (\sdrchar{s}{\mathsfit h})^2=(s_*\qitgen_{\mathsfit h})\cdot (\qitgen_{\mathsfit h^s})^{-1}\,.
\end{equation}

\begin{prop}\label{prop:ArtinKscaleComm} Suppose $\mathsfit h\in\symmmapspace{\roots}$, $u\in\mathrm{Hom}(\mathbb Z^{\sroots},\Zgen^\star)$, and $s\in\Weyl$. Using notations and conventions as above we have
    \begin{equation}\label{eq:ArtinKscaleComm}
        \Tinv_s\,\circ \,\Kscale{u}{\mathsfit h}\,\,=\,\, \Kscale{(s_*u)\cdot \sdrchar{s}{\mathsfit h}}{\mathsfit h^s}\,\circ\, \Tinv_s\;.
    \end{equation} 
\end{prop}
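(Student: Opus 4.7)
The plan is to reduce the proof to the following observation: both sides of \eqref{eq:ArtinKscaleComm} are $\Zgen$-linear endomorphisms of $\Uq$ (in fact automorphisms, since $s_*u \in \mathrm{Hom}(\mathbb Z^{\sroots},\Zgen^\star)$, $\mathsfit h^s \in \symmmapspace{\roots}$, $\sdrchar{s}{\mathsfit h}$ is a character, and composition of automorphisms is an automorphism). Since $\Uq = \bigoplus_\beta \Uq_\beta$ decomposes into its $\wgrad$-homogeneous components, it suffices to verify the identity on an arbitrary $b \in \Uq_\beta$.

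For such a $b$, apply the explicit formula from Proposition~\ref{prop:Kscaleprops}~\ref{item:Kscaleprops:genform} to each side. Using \eqref{eq:TKact} and $\Weyl$-equivariance \eqref{eq:Twgrad}, the left-hand side becomes
\[
\Tinv_s \circ \Kscale{u}{\mathsfit h}(b) \,=\, u(\beta)\,q^{-\frac 12(\symbrack{\beta}{\mathsfit h(\beta)}-2\symbrack{\weightvec{\mathsfit h}}{\beta})}\, K^{s(\mathsfit h(\beta))}\, \Tinv_s(b)\,.
\]
Since $\Tinv_s(b)$ has weight $s(\beta)$, the right-hand side evaluates to
\[
(s_*u)(s(\beta))\cdot \sdrchar{s}{\mathsfit h}(s(\beta)) \cdot q^{-\frac 12(\symbrack{s(\beta)}{\mathsfit h^s(s(\beta))}-2\symbrack{\weightvec{\mathsfit h^s}}{s(\beta)})}\, K^{\mathsfit h^s(s(\beta))}\, \Tinv_s(b)\,.
\]

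Next, match factors. The Cartan part agrees since $\mathsfit h^s(s(\beta)) = s(\mathsfit h(\beta))$ by definition of $\mathsfit h^s = s\mathsfit h s^{-1}$. The character value $(s_*u)(s(\beta)) = u(\beta)$ by definition of $s_*u$. By $\Weyl$-isometry of $\symbrack{\,\cdot\,}{\,\cdot\,}$, $\symbrack{s(\beta)}{s(\mathsfit h(\beta))} = \symbrack{\beta}{\mathsfit h(\beta)}$. After canceling these common factors, equality of the remaining scalars reduces to
\[
\symbrack{\weightvec{\mathsfit h}}{\beta} \,=\, \symbrack{\weightvec{\mathsfit h^s}}{s(\beta)} - \symbrack{\sumdescrootsrel{s}{\mathsfit h}}{s(\beta)}\,,
\]
which is an immediate consequence of the definition $\sumdescrootsrel{s}{\mathsfit h} = \weightvec{\mathsfit h^s} - s(\weightvec{\mathsfit h})$ from \eqref{eq:defthetasymm}, combined once more with $\Weyl$-isometry (so that $\symbrack{s(\weightvec{\mathsfit h})}{s(\beta)} = \symbrack{\weightvec{\mathsfit h}}{\beta}$).

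There is little in the way of a serious obstacle; the argument is essentially a bookkeeping check, and the only subtlety is the need to verify that the $q$-exponent on each side is actually an integer (otherwise the separate terms in the comparison would be ambiguous). This is guaranteed by Lemma~\ref{lem:symmbasicprops}~\ref{item:symmbasicprops:rhoint} and~\ref{item:symmbasicprops:thetaint}, which ensure that the halved expression $\frac 12(\symbrack{\beta}{\mathsfit h(\beta)}-2\symbrack{\weightvec{\mathsfit h}}{\beta})$ and $\symbrack{\sumdescrootsrel{s}{\mathsfit h}}{s(\beta)}$ both lie in $\mathbb Z$. As an alternative, one could instead induct on $\len{s}$ via \eqref{eq:Thomrel}, verifying the base case $s=s_i$ on generators $E_j, F_j, K_j$ and then using the cocycle identity $\sdrchar{ts}{\mathsfit h} = \sdrchar{t}{\mathsfit h^s}\cdot t_*(\sdrchar{s}{\mathsfit h})$ from \eqref{eq:thetacharprops} to complete the step; but the homogeneous-element approach above bypasses this entirely.
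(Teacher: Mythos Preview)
Your proof is correct and follows essentially the same idea as the paper's: apply the explicit formula for $\Kscale{u}{\mathsfit h}$ on homogeneous elements from Proposition~\ref{prop:Kscaleprops}\ref{item:Kscaleprops:genform}, use $\Weyl$-equivariance of $\wgrad$ and $\Weyl$-isometry of $\symbrack{\,\cdot\,}{\,\cdot\,}$, and reduce the remaining $q$-exponent comparison to the defining identity \eqref{eq:defthetasymm} for $\sumdescrootsrel{s}{\mathsfit h}$. The paper organizes the verification slightly differently---it first reduces to generators $E_i$ (via the $\Cartaninv$-symmetry and triviality on $K_i$) and separates the $u$-part from the $\mathsfit h$-part before computing---whereas you work directly with an arbitrary homogeneous element and handle both parts at once; your version is a little more streamlined but the substance is the same.
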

\begin{proof} Since all maps in the relation are automorphisms it suffices to check \eqref{eq:ArtinKscaleComm} for the  generators of $\Uq\,$. Moreover, since the $\Kscale{u}{\mathsfit h}$ act trivially on the $K_i$'s and since $\Cartaninv$ commutes with both $\Kscale{u}{\mathsfit h}$ and $\Tinv_s$ it suffices to check \eqref{eq:ArtinKscaleComm} for only the $E_i$ generators. Note further that for a homogeneous element $b$ with $\wgrad(b)=\beta$ we have 
   $\Kscale{s_*u}{0}(\Tinv_s(b))=(s_*u)(s(\beta))\Tinv_s(b)=u(\beta)\Tinv_s(b)=\Tinv_s(u(\beta)b)=\Tinv_s(\Kscale{u}{0}(b))$\,, using $\wgrad(\Tinv_s(b))=s(\beta)$ by \eqref{eq:Twgrad}, so that \eqref{eq:ArtinKscaleComm} holds for $\mathsfit h=0$\,. Since $\Kscale{u}{\mathsfit h}=\Kscale{u}{\mathsfit 0}\circ \Kscale{1}{\mathsfit h}$ we may, therefore, assume $u=1$ in the following verification. 
    \begin{align*}
        \Kscale{\sdrchar{s}{\mathsfit h}}{\mathsfit h^s}(\Tinv_s(E_i))&
           =\sdrchar{s}{\mathsfit h}(s(\alpha_i))
             q^{-\frac 12 \left({\symbrack{s(\alpha_i)}{\mathsfit h^s(s(\alpha_i))}-2\symbrack{\weightvec{\mathsfit h^s}}{s(\alpha_i)}}\right)}K^{\mathsfit h^s(s(\alpha_i))}\Tinv_s(E_i)
    \\
    &
           =q^{-\symbrack{\sumdescrootsrel{s}{\mathsfit h}}{s(\alpha_i)}}
             q^{-\frac 12 \left({\symbrack{s(\alpha_i)}{s(\mathsfit h(\alpha_i))}-2\symbrack{\weightvec{s\mathsfit h s^{-1}}}{s(\alpha_i)}}\right)}K^{s(\mathsfit h(\alpha_i))}\Tinv_s(E_i)
    \\
    &
            =q^{-\left({\symbrack{\weightvec{s\mathsfit h s^{-1}}}{s(\alpha_i)}-\symbrack{s(\weightvec{\mathsfit h})}{s(\alpha_i)}}\right)}
             q^{-\frac 12 \left({\symbrack{\alpha_i}{\mathsfit h(\alpha_i)}-2\symbrack{\weightvec{s\mathsfit h s^{-1}}}{s(\alpha_i)}}\right)}\Tinv_s(K^{\mathsfit h(\alpha_i)})\Tinv_s(E_i)
     \\
    &
            = 
             q^{-\frac 12 \left({\symbrack{\alpha_i}{\mathsfit h(\alpha_i)}-2\symbrack{s(\weightvec{\mathsfit h})}{s(\alpha_i)}}\right)}\Tinv_s(K^{\mathsfit h(\alpha_i)}E_i)
      \\
      &
            = 
             q^{-\frac 12 \left({\symbrack{\alpha_i}{\mathsfit h(\alpha_i)}-2\symbrack{\weightvec{\mathsfit h}}{\alpha_i}}\right)}\Tinv_s(\Kscale{1}{{\mathsfit h}}(E_i))
             \qquad = \Tinv_s\circ\Kscale{1}{{\mathsfit h}}(E_i)
    \end{align*}
    In the first line, we use Proposition~\ref{prop:Kscaleprops}{\em \ref{item:Kscaleprops:genform}} for $\beta=\wgrad(\Tinv_s(E_i))=s(\alpha_i)\,$. In the second line, definitions are inserted, including 
    $\mathsfit h^s(s(\alpha_i))=s\mathsfit h s^{-1}s(\alpha_i)=s\mathsfit h(\alpha_i)\,$. The third line 
    invokes isometry of $s$ and \eqref{eq:TKact}. In the fourth line, exponent terms are combined and canceled, and we use that $\Tinv_s$ is an automorphism. In the last lines $s$-isometry, the definition for  $\Kscale{1}{{\mathsfit h}}$\,, and \eqref{eq:defrhosymm} are employed. 
\end{proof}

The action of the Artin-Tits monoid $\mathscr A^+$ on the set of \Kscaleword transformations given by conjugation thus factors into an action of $\Weyl$ on $\symmmapspace{\roots}\times \mathrm{Hom}(\mathbb Z^{\sroots})$\,. The latter is defined as
$s.(\mathsfit h,u)=(\mathsfit h^s, (s_*u)\cdot \sdrchar{s}{\mathsfit h})$ using \eqref{eq:thetacharprops}. An immediate consequence is that any \Kscaleword transformation commutes with the action of the {\em pure} Artin-Tits group on $\Uq$\,, defined as the kernel of the map $\mathscr A\rightarrow\Weyl\,$.

As an application, we also derive an explicit commutation relation between the $\Tinv_s$ maps and the antipode. To this end, let $\reldrchar_s\in\mathrm{Hom}(\mathbb Z^{\sroots},\Zgen^\star)\,$ be the character defined by
\begin{equation}
    \reldrchar_s(\beta)=(-1)^{\symbrack{\sumdesccoroots{s}}{\beta}}q^{-\symbrack{\sumdescroots{s}}{\beta}}\,, 
\end{equation}
with $\sumdescroots{s}$ and $\sumdesccoroots{s}$ as in \eqref{eq:deftheta} or \eqref{eq:thetaviarho}. Relations
\eqref{eqn:theta} and \eqref{eq:descrootsmirror} translate for $\len {t\cdot s}=\len t + \len s$ into 
\begin{equation}\label{eq:reldrcharrec}
    \reldrchar_{t\cdot s}=\reldrchar_t\cdot t_*(\reldrchar_s)\qquad\mbox{and} \qquad s_*(\reldrchar_{s^{-1}})=(\reldrchar_s)^{-1}\,.
\end{equation}

\begin{prop}
For any $s\in\Weyl$ we have 
\begin{equation}
    S\circ (\Tinv_s)^{-1}=\Tinv_{(s^{-1})}\circ S\circ \Kscale{\reldrchar_s}{0}
    \qquad\mbox{and}\qquad 
   (\Tinv_s)^{-1}\circ  S\circ \Kscale{\reldrchar_s}{0}=S\circ \Tinv_{(s^{-1})}\;.
  \end{equation}
\end{prop}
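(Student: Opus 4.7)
The plan is to reduce both identities to a single pointwise character equality by starting from the factorization $S = \Kinvaut\circ \Kscale{\htsign}{\idsymm}$ of \eqref{eq:SXiMhoRel}. Since $\idsymm^s = \idsymm$, the specialization of Proposition~\ref{prop:ArtinKscaleComm} to $\mathsfit h = \idsymm$, rewritten in inverse form, reads
\[
\Kscale{u}{\idsymm}\circ\Tinv_s^{-1} \,=\, \Tinv_s^{-1}\circ\Kscale{(s_*u)\cdot\sdrchar{s}{\idsymm}}{\idsymm}\,,
\]
while Lemma~\ref{lem:symmbasicprops}{\em \ref{item:symmbasicprops:thetagen}} gives $\sdrchar{s}{\idsymm}(\beta)=q^{-\symbrack{\sumdescroots{s}}{\beta}}$, which is precisely the $q$-part of $\reldrchar_s$. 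The second ingredient is the inverted form of \eqref{eq:LTconjinv}, namely $\Kinvaut\circ\Tinv_s^{-1}=\Tinv_{s^{-1}}\circ\Kinvaut$, which carries the $\Kinvaut$ factor past $\Tinv_s^{-1}$ at the cost of replacing $s$ by $s^{-1}$ in the braid automorphism.

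Applying these ingredients in sequence to $S\circ\Tinv_s^{-1}=\Kinvaut\circ\Kscale{\htsign}{\idsymm}\circ\Tinv_s^{-1}$ yields
\[
S\circ\Tinv_s^{-1} \,=\, \Tinv_{s^{-1}}\circ \Kinvaut \circ \Kscale{(s_*\htsign)\cdot\sdrchar{s}{\idsymm}}{\idsymm}\,,
\]
and Proposition~\ref{prop:Kscaleprops}{\em \ref{item:Kscaleprops:homom}} refactors the tail as $S\circ\Kscale{\chi_s}{0}$, where $\chi_s = (s_*\htsign)\cdot\htsign^{-1}\cdot\sdrchar{s}{\idsymm}$. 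Thus the first identity reduces to the character equality $\chi_s = \reldrchar_s$; its $q$-part is the observation above, and its sign part says $(-1)^{\height(s^{-1}\beta)-\height(\beta)} = (-1)^{\symbrack{\sumdesccoroots{s}}{\beta}}$, which is \eqref{eq:sumdesccoheight} read modulo $2$.

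The second identity follows by the parallel computation applied to $(\Tinv_s)^{-1}\circ S\circ\Kscale{\reldrchar_s}{0}$, with the roles of $s$ and $s^{-1}$ exchanged; the resulting character identity reduces to the same mod-$2$ height computation, now also invoking $s_*(\sdrchar{s^{-1}}{\idsymm})=\sdrchar{s}{\idsymm}^{-1}$ (a special case of \eqref{eq:thetacharprops} at $t\cdot s = 1$) together with $s_*(\reldrchar_{s^{-1}})=\reldrchar_s^{-1}$ from \eqref{eq:reldrcharrec}. The main obstacle is purely bookkeeping: keeping composition order and $s_*$ actions straight, and verifying that the sign carried by $\htsign$ in the antipode factorization contributes exactly the sign of $\reldrchar_s$, which is the content of \eqref{eq:sumdesccoheight} reduced modulo $2$.
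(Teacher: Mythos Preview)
Your proposal is correct and follows essentially the same approach as the paper's proof: factor $S=\Kinvaut\circ\Kscale{\htsign}{\idsymm}$, pass $\Kscale{\htsign}{\idsymm}$ through $\Tinv_s^{-1}$ via Proposition~\ref{prop:ArtinKscaleComm}, pass $\Kinvaut$ through via \eqref{eq:LTconjinv}, and then verify the character identity $(s_*\htsign)\cdot\htsign^{-1}\cdot\sdrchar{s}{\idsymm}=\reldrchar_s$ using \eqref{eq:sumdesccoheight} for the sign part. The only minor stylistic difference is that the paper derives the second identity directly from the first by substituting $s\mapsto s^{-1}$ and rearranging, rather than running a parallel computation, but the ingredients you cite (\eqref{eq:thetacharprops} and \eqref{eq:reldrcharrec}) are exactly what that rearrangement needs.
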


\begin{proof}
    We first establish that $\reldrchar_s=(s_*\htsign)\cdot\htsign^{-1}\cdot \sdrchar{s}{\idsymm}$ with $\htsign$ 
    as in \eqref{eq:SXiMhoRel}. Using \eqref{eq:thetaviarho} and the fact that $s$ is an isometry we compute 
    \begin{align*}
        \left({(s_*\htsign)\cdot\htsign^{-1}\cdot \sdrchar{s}{\idsymm}}\right)(\beta)
          &=
        \htsign(s^{-1}(\beta))\htsign(\beta)^{-1}\sdrchar{s}{\idsymm}(\beta)
          &&=
        (-1)^{\symbrack{\breve\weightvecchar}{s^{-1}(\beta)}}(-1)^{-\symbrack{\breve\weightvecchar}{\beta}}q^{-\symbrack{\sumdescrootsrel{s}{\idsymm}}{\beta}}
        \\
        \ 
        &=(-1)^{\symbrack{s(\breve\weightvecchar)}{\beta}-\symbrack{\breve\weightvecchar}{\beta}} q^{-\symbrack{\sumdescroots{s}}{\beta}}
       &&=(-1)^{\symbrack{\sumdesccoroots{s}}{\beta}} q^{-\symbrack{\sumdescroots{s}}{\beta}}
        \qquad =\reldrchar_s(\beta)\,.
    \end{align*}
    The following verification of the first identity invokes the expression in \eqref{eq:SXiMhoRel} for the antipode, the commutation rule from Proposition~\ref{prop:ArtinKscaleComm}, the conjugation with $\Kinvaut$ from \eqref{eq:LTconjinv}, homomorphy from Proposition~\ref{prop:Kscaleprops}{\em\ref{item:Kscaleprops:homom}}, and the above relation for $\reldrchar_s\,$. 
    \begin{align*}
        S\circ (\Tinv_s)^{-1} &= 
           \Kinvaut\circ \Kscale{\htsign}{\idsymm}\circ (\Tinv_s)^{-1} 
            &&=
           \Kinvaut\circ (\Tinv_s)^{-1} \circ \Kscale{(s_*\htsign)\cdot\sdrchar{s}{\idsymm}}{\idsymm}
            &&=
            \Tinv_{(s^{-1})} \circ \Kinvaut\circ \Kscale{(s_*\htsign)\cdot\sdrchar{s}{\idsymm}}{\idsymm}
            \\
            &=\Tinv_{(s^{-1})} \circ S\circ (\Kscale{\htsign}{\idsymm})^{-1}\circ \Kscale{(s_*\htsign)\cdot\sdrchar{s}{\idsymm}}{\idsymm}
           &&=\Tinv_{(s^{-1})} \circ S\circ \Kscale{\htsign^{-1}\cdot(s_*\htsign)\cdot\sdrchar{s}{\idsymm}}{0}
           &&=\Tinv_{(s^{-1})} \circ S\circ \Kscale{\reldrchar_s}{0}
    \end{align*}
    The second identity can be derived from the first by replacing $s$ with $s^{-1}$ and \eqref{eq:reldrcharrec}. Specifically, 
    $S\circ (\Tinv_{(s^{-1})})^{-1}=\Tinv_{s}\circ S\circ \Kscale{\reldrchar_{(s^{-1})}}{0}$ implies 
    $$(\Tinv_{s})^{-1}\circ S= S\circ \Kscale{\reldrchar_{(s^{-1})}}{0}\circ \Tinv_{(s^{-1})}
    = S\circ \Kscale{(s^{-1})_*(\reldrchar_s^{-1})}{0}\circ \Tinv_{(s^{-1})}= S\circ \Tinv_{(s^{-1})}\circ \Kscale{(\reldrchar_s^{-1})}{0}= S\circ \Tinv_{(s^{-1})}\circ (\Kscale{\reldrchar_s}{0})^{-1}\,,$$
    and, hence, the second identity.
\end{proof}

The character $\reldrchar_s$ also appears in  commutation relations of the Artin-Tits groups action with the conjugation anti-automorphism $\Qinvaut\,$.

\begin{prop}\label{lem:ArtinConjComm} For any element $s\in\Weyl$ we have
\begin{equation}\label{eq:ArtinConjComm}
    \Tinv_s\circ \Qinvaut\,=\,\Qinvaut\circ \Tinv_s\circ\Kscale{\reldrchar_{(s^{-1})}}{0}\,=\,\Kscale{\reldrchar_{s}}{0}\circ\Qinvaut\circ \Tinv_s\,.
\end{equation}
\end{prop}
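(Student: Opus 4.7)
The plan is to prove the first equality and then derive the second purely from commutation relations already at hand. For the derivation, apply Proposition~\ref{prop:ArtinKscaleComm} with $\mathsfit h=0$: here $\mathsfit h^s=0$ and $\sdrchar{s}{0}=1$, so $\Tinv_s\circ\Kscale{\reldrchar_{s^{-1}}}{0}=\Kscale{s_*\reldrchar_{s^{-1}}}{0}\circ\Tinv_s$, and $s_*(\reldrchar_{s^{-1}})=\reldrchar_s^{-1}$ from \eqref{eq:reldrcharrec} brings the middle expression into the form $\Qinvaut\circ\Kscale{\reldrchar_s^{-1}}{0}\circ\Tinv_s$. Then \eqref{eq:InvKScalComm} (with $\qitgen_0=1$) gives $\Qinvaut\circ\Kscale{u}{0}=\Kscale{u^{-1}}{0}\circ\Qinvaut$, converting this to the third expression and establishing the second equality.

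The first equality I would prove by induction on $\len s$. The base case $s=s_i$ is verified directly on algebra generators. The case $K_j$ is immediate since $\Kscale{\reldrchar_{s_i}}{0}$ acts trivially on $K_j$ and $\Qinvaut(K_j)=K_j$. For $E_j$ with $i\neq j$, compute $\reldrchar_{s_i}(\alpha_j)=(-1)^{A_{ij}}q_i^{-A_{ij}}$ from $\sumdescroots{s_i}=\alpha_i$ and $\sumdesccoroots{s_i}=\breve\alpha_i$, then apply the anti-homomorphism $\Qinvaut$ to the explicit sum \eqref{eq:defLserre} for $\Tinv_i(E_j)$. This reverses each monomial $E_i^sE_jE_i^r$ and, using $q\mapsto q^{-1}$ invariance of the quantum factorials $[n]!_i$, replaces the $q$-power $q_i^{-s}$ by $q_i^s$; after pulling the scalar $\reldrchar_{s_i}(\alpha_j)$ across and relabeling $r\leftrightarrow s$, the identities $s+k\equiv r\pmod 2$ and $r-k=-s$ (for $k=r+s=-A_{ij}$) reconcile the two sides. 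The cases $F_j$ for $i\neq j$ and $E_i,F_i$ for $i=j$ are analogous, with the latter reducing to the commutation \eqref{eq:Kcomm}.

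For the inductive step, write $s=t\cdot s_i$ with $\len{ts_i}=\len t+1$, so $\Tinv_s=\Tinv_t\circ\Tinv_{s_i}$. Successive applications of the base case and the inductive hypothesis yield
\begin{equation*}
\Tinv_s\circ\Qinvaut\,=\,\Qinvaut\circ\Tinv_t\circ\Kscale{\reldrchar_{t^{-1}}}{0}\circ\Tinv_{s_i}\circ\Kscale{\reldrchar_{s_i}}{0}\,.
\end{equation*}
Proposition~\ref{prop:ArtinKscaleComm} lets the middle Che pass $\Tinv_{s_i}$, producing $\Tinv_{s_i}\circ\Kscale{(s_i)_*\reldrchar_{t^{-1}}}{0}$, and merging with the final Che yields the character $(s_i)_*\reldrchar_{t^{-1}}\cdot\reldrchar_{s_i}$. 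The length additivity $\len{ts_i}=\len t+1$ is equivalent to $\len{s_it^{-1}}=\len{s_i}+\len{t^{-1}}$, so \eqref{eq:reldrcharrec} applied with $a=s_i$ and $b=t^{-1}$ identifies this product with $\reldrchar_{s_it^{-1}}=\reldrchar_{s^{-1}}$ (the character group being abelian handles the order of the factors), closing the induction.

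The principal obstacle is the sign and $q$-exponent bookkeeping in the base case for $E_j,F_j$ with $i\neq j$: one must track how the anti-homomorphism $\Qinvaut$ reverses monomial order, how $q\mapsto q^{-1}$ acts on the coefficients in \eqref{eq:defLserre}, and how the Che scalar combines with the reindexing. Once the parity and exponent identities above are isolated, the inductive step is entirely formal, depending only on \eqref{eq:reldrcharrec} and Proposition~\ref{prop:ArtinKscaleComm}.
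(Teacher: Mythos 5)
Your proposal is correct and follows essentially the same route as the paper: the second equality via Proposition~\ref{prop:ArtinKscaleComm}, \eqref{eq:reldrcharrec}, and \eqref{eq:InvKScalComm} with $\mathsfit h=0$, and the first equality by induction from a direct check on generators for $s=s_i$ (your sign/exponent bookkeeping with $k=r+s=-A_{ij}$ is exactly the verification the paper leaves as "readily verified", and it checks out). The only cosmetic differences are that the paper handles the $F_j$ case by conjugating with $\Cartaninv$ rather than repeating the computation, and its inductive step composes two arbitrary length-additive factors using the third form of the identity instead of appending a single $s_i$ to the middle form; both variants rest on the same two lemmas.
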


\begin{proof}The second equality follows from $\Tinv_s\circ\Kscale{\reldrchar_{(s^{-1})}}{0}=\Kscale{s_*\reldrchar_{(s^{-1})}}{0}\circ\Tinv_s=\Kscale{\reldrchar_{s}^{-1}}{0}\circ\Tinv_s$ by \eqref{eq:ArtinKscaleComm} and \eqref{eq:reldrcharrec} as well as \eqref{eq:InvKScalComm}.
 For $s=s_i$ equation \eqref{eq:ArtinConjComm} applied to a generator $E_j$ reads 
 \begin{align*}
     \Tinv_i(E_j) &= \Qinvaut(\Tinv_i(\reldrchar_{s_i}(\alpha_j)E_j)) 
  = \reldrchar_{s_i}(\alpha_j)^{-1}\Qinvaut(\Tinv_i(E_j))\\
  &=(-1)^{\symbrack{\breve\alpha_i}{\alpha_j}}q^{\symbrack{\alpha_i}{\alpha_j}}\Qinvaut(\Tinv_i(E_j))=(-1)^{A_{ij}}q_i^{A_{ij}}\Qinvaut(\Tinv_i(E_j))\,.
 \end{align*} 
  This is readily verified for both the $i=j$ and $i\neq j$ cases from \eqref{eq:defLeasy}, \eqref{eq:defLserre}, and \eqref{eq:defQinv}. Since $\Cartaninv$ commutes with both $\Tinv_i$ and $\Qinvaut$ and, furthermore, inverts $q$ the relation for $s=s_i$ also holds on $F_i$\,. As it is also trivially true on $K_i$ we proved \eqref{eq:ArtinConjComm} for $s=s_i$\,.
  Suppose now \eqref{eq:ArtinConjComm} holds for two elements $s,t\in\Weyl$ with $\len s\,,\len t\geq 1$ and $\len {ts}=\len t + \len s$\,. Then  
  \begin{align*}
    \Tinv_{ts}\circ\Qinvaut&=\Tinv_{t}\circ\Tinv_{s}\circ\Qinvaut
    = \Tinv_{t}\circ\Kscale{\reldrchar_{s}}{0}\circ \Qinvaut\circ\Tinv_{s}
    = \Kscale{t_*(\reldrchar_{s})}{0}\circ \Tinv_{t}\circ\Qinvaut\circ\Tinv_{s} \\
  &= \Kscale{t_*(\reldrchar_{s})}{0}\circ\Kscale{\reldrchar_{t}}{0}\circ\Qinvaut\circ \Tinv_{t}\circ\Tinv_{s} 
  = \Kscale{t_*(\reldrchar_{s})\cdot\reldrchar_{t}}{0} \circ\Qinvaut\circ \Tinv_{ts} 
  = \Kscale{\reldrchar_{ts}}{0} \circ\Qinvaut\circ \Tinv_{ts}\,,  
  \end{align*}
   where we used \eqref{eq:ArtinKscaleComm} and \eqref{eq:reldrcharrec}. The assertion now follows by induction.
\end{proof}

 \section{Generators, Bases, and Integrality} \label{sec:PBW}

 Ponicar\'e-Birkhoff-Witt (PBW) bases have been established for quantum deformations of Lie algberas in many settings. These typically consider quantum algebras  defined over a field such as $\mathbb Q(q)$ or $\mathbb C$ and assume specific fixed choices with respect to which the quantum analogs of the $E_\alpha$ operators are defined. See, for example, \cite{Ro89,CP95,Jan96,Ri96,lu}.

 Lusztig describes the algebras $\LUq^{\utypechar}$ of divided powers as free modules over $\Zqq$ in \cite{Lu90a,lu90b} for specific choices of $E_\alpha\,$. In \cite{lu} explains independence of some of these choices for respective bases over $\mathbb Q(q)\,$. Specifically, 
 certain spanning sets $\basisp{w}$ of monomial expressions, to be defined below, will depend on the choice of a maximal reduced word $w\in\wordsetmax\,$. 
 
Corollary~40.2.2 in \cite{lu} implies that $\basisp{w}$ is a basis of $\Uqre{\mathbb Q}^+$ over the field $\mathbb Q(q)\,$ for all Lie types and all choices of $w$\,, which also implies that $\basisp{w}$ is a linearly independent set of generators for the $\Zqqn{\dpone}$-module $\Uq^+\,$ and that
its span $\bspanp{w}$ is a free $\Zqqn{\dpone}$-module. The  $\mathbb Q(q)$-basis does, however, not exclude the possibility that 
$\mathcal T_w=\Uq^+/\bspanp{w}$ is a non-zero torsion module and, thus, does not imply that $\basisp{w}$ is also a $\Zqqn{\dpone}$-basis.

 For versions of $\Uq^+$ for classical Lie types, bases over $\Zqq$ are obtained from iterated $q$-brackets in \cite{Tak90} for specific lexicographic orderings. In the formalism and constructions of generators used there, however, it is difficult to describe quasi-triangular Hopf algebra structures or braid actions for reordering. Still, the example provided in Proposition~5.6 in the same paper illustrates the delicate dependence of integrality of bases on the chosen orders. 

 Specifically, Takeuchi shows for an algebra $\bar{\mathscr B}_3^\circ\,$, which can be roughly understood as a quotient of $\Uq^+$ for Lie type $\LT{B}_3\,$, that the analog of the torsion module $\mathcal T_w$ is indeed zero for a convex ordering but contains non-zero torsion summands such 
as $\mathbb Z[q,q^{-1}]/\langle q^3-q^2+1\rangle$ for a non-convexly ordered monomial basis. A careful treatment of orderings will thus have to be a crucial component in the derivation of integral PBW bases.

 The plan of this section is to adapt arguments in \cite{Lu90a,lu90b} and related references to infer PBW bases that generate the algebras $\Uq^{\pm}$ and $\Uq$ as free $\Zqqn{\dpone}$ and $\Zqqvn{\dpone}$ modules respectively. Generators $E_w$ depending on reduced words $w\in\wordset$ are constructed in Section~\ref{subsec:genwords} and some of their basic properties discussed. From these monomial expressions that, in addition, depend on exponent functions are introduced, their relations with the two anti-involutions $\Cartaninv$ and $\Kinvaut$ are explained in Section~\ref{subsec:Emonomials}. A formalism for the spanning sets associated to the monomials is laid out in Section~\ref{subsec:spanorder}, where a general criterion for order independence is derived. 

The relations between the box pre-orderings introduced by Lusztig, convex orderings, and words representing the longest elements in $\Weyl$ are explained in Section~\ref{subsec:orderings}. Lusztig's proof of PBW bases is adapted to the $\Uq$-algebras in Section~\ref{subsec:PBWspec} and the general theorem for integral PBW bases for general orders is stated in Section~\ref{subsec:PBWmain}.
 
 \subsection{Generators Associated to Words}\label{subsec:genwords} In a series of articles, Lusztig defines of a generator $\ELu_\alpha$ for each $\alpha\in\proots$, starting with a  
presentation $\alpha=s_{i_1}\ldots s_{i_{k-1}}(\alpha_{i_k})$ obtained from a maximal reduced word. The simple root $\alpha_j$ is replaced by $E_j\,$ and each $s_i$ by an automorphism $T_i$ described in Section~\ref{subsec:LATM}. 
We will adopt and adjust the definition here by replacing the $T_i$ by their inverses 
$\Tinv_i\,$ and explicitly retain the dependence on words in the notation. That is, for a non-empty
reduced word $w=w_{i_1}\ldots w_{i_k}$ and using the notation in \eqref{def:flat+tau_word} we set 
\begin{equation}\label{eq:EFwordDef}
    E_w=\Tinv_{w^\flat}(E_{\tau(w)})=\Tinv_{i_1}\ldots\Tinv_{i_{k-1}}(E_{i_k})
    \qquad \mbox{and} \qquad 
    F_w=\Tinv_{w^\flat}(F_{\tau(w)})\,. 
\end{equation}
As remarked in Section~\ref{subsec:LATM}, we may write $E_w=\Tinv_s(E_j)$ where $s=\Weylpres(w^\flat)$ and $j=\tau(w)\,$,
implying that $E_w$ only depends on the pair $(s,j)\,$. Further redundancies in the labeling are implied by Proposition~\ref{prop:wordgencont} below, from which one can easily obtain examples of distinct pairs $(s,j)\neq(s',j')$ with $\Tinv_s(E_j)=\Tinv_{s'}(E_{j'})\,$.  For computations with specific words $w=w_{i_1}\ldots w_{i_k}$ it is often convenient to also use the index notation
\begin{equation}\label{eq:Esubseqind}
    E_w=E_{(i_1\ldots i_k)}\,. 
\end{equation}

Given a {\em fixed} reduced word $z\in\wordsetmax$ of {\em maximal} length or, equivalently, a convex order $\leqwt{w}$ on $\proots$, we will sometimes also use the abbreviated notation 
\begin{equation}\label{eq:Genbyroot}
E_\alpha=E_{z[\alpha]} \qquad \mbox{and} \qquad F_\alpha=F_{z[\alpha]}
\end{equation}
for an $\alpha\in\proots$ and with $z[\alpha]$  as defined in \eqref{eq:root2word}. On occasion, we may also use the analogous notation for the respectively defined Cartan element $K_w=\Tinv_{w^\flat}(K_{\tau(w)})=K^\nu\,$, where $K^\nu$ is as defined above \eqref{eq:Kwgrad} with $\nu=\wordroot(w)\,$. 
For most applications in this article the word labeling is, however, the preferred one
as we often consider orderings depending on choices of words. 
We list next several immediate properties of these generators.

Suppose $w=u\cdot v$ for non-empty reduced words $u,v\in\wordset$  with $\len w=\len u + \len v$ and let $s=\Weylpres(u)$\,. Then \eqref{eq:EFwordDef} and \eqref{eq:Thomrel} imply
\begin{equation}\label{eq:EFdefRecurs}
    E_w=\Tinv_s(E_v)  \qquad \mbox{and} \qquad F_w=\Tinv_s(F_v)\,.
\end{equation}
Furthermore, commutation of the $\Tinv_i$ with $\Cartaninv$ yields the identity
\begin{equation}\label{eq:CinvEFwords}
    \Cartaninv(E_w)=F_w\qquad\mbox{and }\qquad\Cartaninv(F_w)=E_w\;.
\end{equation}
The generators from \eqref{eq:EFwordDef} as well as the Cartan element $K^\nu$  are homogeneous elements,
for which gradings  follow immediately from \eqref{eq:Twgrad} and 
\eqref{eq:Togradequiv} with notation from Lemma~\ref{lem:red=pos}.
\begin{align}\label{eq:gradgens}
    \wgrad(E_w)&=\wordroot(w) & \wgrad(F_w)&=-\wordroot(w) & \wgrad(K_\nu)&=0\\
    \ztgradp(E_w)&=\wordroot(w)\mod 2 & \ztgradp(F_w)&= 0    & \ztgradp(K_\nu)&=\nu \mod 2
\end{align}

Extending the definition in \eqref{eq:EFwordDef} to non-reduced words leads to elements outside the respective Borel subalgebra, such as $E_{w_i\cdot w_i}=\Tinv_i(E_i)\not\in\Uq^+\,$. The proposition below states that this does not happen for reduced words. 
The second assertion on the uniqueness of generators graded by simple roots will be used in later computation of braid automorphisms.

The statement was first proved by Lusztig for simply laced Lie types 
$\LT{ADE}$ in \cite{Lu90a}. The extension of the result to general Lie types can be found, for example, in \cite[Proposition~8.20]{Jan96}. Keeping our presentation self-contained, we offer below a more explicit version of the proof, based the arguments in \cite{Lu90a}, formulae in \cite{lu}, as well as Lemma~\ref{lem:Weyl_ijextr}. It applies to the regular power quantum algebras $\Uq^{\pm}$ over $\Zqqn{\dpone}\,$.

\begin{prop}\label{prop:wordgencont}
Suppose $w\in\wordset$ is any non-empty reduced word. Then $E_w\in \Uq^+$ and $F_w\in \Uq^-$.

Moreover, if $\wordroot(w)=\alpha_k\in\sroots\,$, then $E_w=E_k$ and $F_w=F_k\,$.
\end{prop}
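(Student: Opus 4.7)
The plan is to induct on $\len{w}$, using the $\Cartaninv$-equivariance \eqref{eq:CinvEFwords} to reduce the whole proposition to the claim for $E_w$, and exploiting Lemma~\ref{lem:Weyl_ijextr} at each step to peel off a maximal rank-2 piece. The base case $\len{w}=1$ is immediate: $w=w_i$ gives $E_w=E_i$ and $\wordroot(w)=\alpha_i$, so both assertions hold.

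For the inductive step I set $j=\tau(w)$ and $s=\Weylpres(w^\flat)$, so that $E_w=\Tinv_s(E_j)$. Since $w=w^\flat\cdot w_j$ is reduced, the hypothesis $\len{s\cdot s_j}=\len{s}+1\geq 2$ of Lemma~\ref{lem:Weyl_ijextr} holds whenever $\len{w}\geq 2$, and produces an index $j'\neq j$, a Weyl element $t$, and a reduced word $y\in\wordsetrest{j,j'}$ with $r=\Weylpres(y)$ of length $1\leq\len{r}<m_{jj'}$ and $\tau(y)=j'$, satisfying $s=t\cdot r$ with $\len$-additivity and $t(\alpha_j),t(\alpha_{j'})\in\proots$. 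Writing $E_w=\Tinv_t(\Tinv_r(E_j))$, my first task is to show that $\Tinv_r(E_j)$ is a polynomial in $E_j$ and $E_{j'}$ with coefficients in $\Zqqn{\dpone}$. This follows by iteratively unrolling the defining formulae \eqref{eq:defLeasy}--\eqref{eq:defLserre} across the at most $m_{jj'}-1$ factors of $\Tinv_r$ inside the rank-2 subalgebra; the only denominators produced are quantum factorials $[k]!_i$ with $k\leq\maxd$, all of which are units in $\Zqqn{\dpone}$ by construction of that ring. Because $t(\alpha_j)$ and $t(\alpha_{j'})$ are both positive, any reduced expression $x$ for $t$ extends to reduced words $x\cdot w_j$ and $x\cdot w_{j'}$ of length $\len{t}+1\leq\len{s}<\len{w}$. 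The induction hypothesis then gives $\Tinv_t(E_j)=E_{x\cdot w_j}\in\Uq^+$ and $\Tinv_t(E_{j'})=E_{x\cdot w_{j'}}\in\Uq^+$, and applying the algebra homomorphism $\Tinv_t$ to the polynomial expression for $\Tinv_r(E_j)$ places $E_w$ in $\Uq^+$.

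For the moreover clause I would assume $\wordroot(w)=s(\alpha_j)=\alpha_k\in\sroots$ and invoke the additional conclusion of Lemma~\ref{lem:Weyl_ijextr}: one has $r\cdot s_j=\longtwoweyl{jj'}$ with $r(\alpha_j)\in\{\alpha_j,\alpha_{j'}\}$, and correspondingly $t(\alpha_j)=\alpha_k$ or $t(\alpha_{j'})=\alpha_k$. The rank-2 claim to establish is that $\Tinv_r(E_j)$ equals $E_j$ in the first case and $E_{j'}$ in the second. The first part of the proposition already places $\Tinv_r(E_j)$ in the positive part of the rank-2 subalgebra, and by \eqref{eq:Twgrad} this element is $\wgrad$-homogeneous of weight $r(\alpha_j)$, which is a simple root of the rank-2 system. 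Since the weight-$\alpha_j$ and weight-$\alpha_{j'}$ subspaces of the rank-2 $\Uq^+$ are each one-dimensional, $\Tinv_r(E_j)=c\cdot E_{r(\alpha_j)}$ for a scalar $c\in\Zqqn{\dpone}$; that scalar is pinned to $1$ by inspecting the highest-degree term in the explicit expansion via \eqref{eq:defLeasy}--\eqref{eq:defLserre}. With this in hand, $E_w$ is either $\Tinv_t(E_j)=E_{x\cdot w_j}$ or $\Tinv_t(E_{j'})=E_{x\cdot w_{j'}}$; in either case the associated positive root is $\alpha_k$ and the word has length strictly less than $\len{w}$, so the induction hypothesis yields $E_w=E_k$.

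The hard part will be the rank-2 identity in the moreover clause, i.e., verifying that the scalar $c$ above is exactly $1$ in all the cases $m_{jj'}\in\{2,3,4,6\}$. This is vacuous for $\LT{A}_1\times\LT{A}_1$ and classical for $\LT{A}_2$, but for $\LT{B}_2$ and $\LT{G}_2$ it requires direct manipulation of the Serre-type sums \eqref{eq:defLserre}, with careful bookkeeping of the powers of $q_i$ produced by repeated commutations. A secondary technicality is ensuring that every intermediate coefficient remains inside $\Zqqn{\dpone}$ rather than the larger $\Zqqvn{\dpone}$; this reduces to the observation that every divided-power denominator in the rank-2 expansion divides $[\maxd]!_i$, whose inverse belongs to $\Zqqn{\dpone}$ by construction.
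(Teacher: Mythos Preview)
Your overall architecture---induction on $\len{w}$, reduction via Lemma~\ref{lem:Weyl_ijextr} to a rank-2 computation, and the use of the $\Cartaninv$-symmetry---matches the paper exactly. The gap is in the rank-2 step.

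You claim that $\Tinv_r(E_j)$ is a polynomial in $E_j,E_{j'}$ by ``iteratively unrolling'' the formulae \eqref{eq:defLeasy}--\eqref{eq:defLserre}. But this fails as written: the word $y$ for $r$ alternates in $j$ and $j'$, so after the first (innermost) application $\Tinv_{j'}(E_j)\in\langle E_j,E_{j'}\rangle$, the next factor is $\Tinv_j$, and you must evaluate $\Tinv_j$ on a polynomial containing $E_j$. Formula \eqref{eq:defLeasy} gives $\Tinv_j(E_j)=-K_j^{-1}F_j$, which is \emph{not} in $\Uq^+$. That the result nonetheless lands in $\Uq^+$ is the whole content of the rank-2 case: the $K^{-1}F$-contributions cancel, but this cancellation is a nontrivial algebraic identity in each of the types $\LT{A}_2,\LT{B}_2,\LT{G}_2$, not a formal consequence of unrolling. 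The paper does not attempt an inductive argument here; it invokes Lusztig's explicit computations in \cite[\S39.2]{lu}, which list each $E_z$ for every reduced rank-2 word $z$ as a concrete element of $\langle E_i,E_j\rangle$ over $\Zqqn{\dpone}$, and simultaneously verify the identities $E_z=E_k$ for the maximal-length words. Your sentence about denominators (``the only denominators produced are quantum factorials $[k]!_i$ with $k\leq\maxd$'') addresses a secondary issue but skips the primary one.

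Your dimension argument for the ``moreover'' clause---that $\Tinv_r(E_j)$ lies in a one-dimensional simple-root weight space of the rank-2 $\Uq^+$, hence equals $c\cdot E_{r(\alpha_j)}$---is a pleasant observation that the paper does not make explicit. But it presupposes the first part already proved, and pinning $c=1$ by ``inspecting the highest-degree term'' is not a free move: the explicit expansion you would inspect is precisely the one whose $\Uq^+$-membership you have not yet established. In practice both facts come out of the same explicit rank-2 formulas, which is why the paper treats them together via the citation to \cite{lu}. You correctly flag the $\LT{B}_2$ and $\LT{G}_2$ cases as the hard part at the end, but the difficulty already appears at the containment step, not only at the scalar determination.
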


\begin{proof} We first provide the proof of the statement for a rank 2 subalgebra $\Uqrest{}{i,j}$ generated by $\{E_i, E_j, F_i, F_j, K_i, K_j\}$ for some fixed pair of indices $i\neq j\,$. The notations  $\Uqrest{\pm}{i,j}$ and minimal ground rings are chosen analogous to those for $\Uq\,$. 
Omitting the trivial $A_1\times A_1$ case, we thus consider the Lie types $\LT{A}_2$\,, $\LT{B}_2=\LT{C}_2$\,, and $\LT{G}_2\,$. As in Section~39.2 of \cite{lu} assume that $\alpha_i$ is the shorter root. So, $A_{ji}=-1$ and $A_{ij}=-\edgenum$ with $\edgenum\in\{1,2,3\}\,$, as well as $d_i=1$ and $d_j=\edgenum\,$. 

The respective sets of reduced words $\wordsetrest{i,j}$ are given by alternating sequences of $w_i$ and $w_j$ of length at most $m_{ij}\in\{3,4,6\}$ have been described in Section~\ref{subsec:rank2coxsys} and subject to \eqref{eq-ATrels}. 

Note that the automorphisms $\Tinv_i$ from \eqref{eq:defLeasy} through \eqref{eq:defTserre} coincide with the $T'_{i,-1}$ 
operators defined in Section~37.1.3 of \cite{lu}. There, Lusztig computes the $E_z$ explicitly for every non-empty reduced word $z\in\wordsetrest{i,j}$ in Section~39.2.2 for $\edgenum=3$\,, in Section~39.2.3 for $\edgenum=2$\,, and in Section~39.2.4 for $\edgenum=1$\,. The (a) sequences in each section contain all words $z$ with $\tau(z)=j$\,, that is, ending in the long root generator, and the (b) sequences correspond to words $z$ starting in $\tau(z)=i$\,. 

For $\edgenum=1$ or $2$, each $E_z$ is expressed as an element $x_{1,m;-1}$ or $x'_{1,m;-1}$ in the subalgebra generated by $E_i$ and $E_j$ over $\Zqqn{m+1}\,$. In the $\edgenum=3$ case, additional elements $\hat x$ and $\hat x'$ are  included, which are easily seen to be elements of the subalgebra over $\Zqqn{4}\,$. Hence, each $E_z$ is in $\Uqrest{+}{ij}\,$ defined over $\Zqqn{\dpone}$ as desired. 

For a word $w\in \wordsetrest{i,j}$ of maximal length $\length(w)=m_{ij}$\,, the formulae in Section~39.2 in \cite{lu} imply the identity
$E_w=E_{k}$ where $k\in\{i,j\}$ such that $\wordroot(w)=\alpha_k\,$. 
So, for example, we have $\Tinv_i\Tinv_j(E_i)=E_j$ in the $\edgenum=1$ case and 
 $\Tinv_i\Tinv_j\Tinv_i(E_j)=E_j$ in the $\edgenum=2$ case. The same relations hold for $i$ and $j$ exchanged, as well as analogous relations for $\edgenum=3\,$ according to the computations in \cite{lu}. 
The respective relations for the $F_i$ generators are obtained by application of the $\Cartaninv$ involution, completing the proof in the rank 2 case.  

The proof of the statement for higher ranks extends the induction argument in the length $\len w$ in the proof of Proposition~1.8 of \cite{Lu90a} to all Lie types. 
Let $s=\Weylpres(w^\flat)$ and $i=\tau(w)$ so that $s(\alpha_i)=\wordroot(w)\in\proots$ by  Lemma~\ref{lem:red=pos} and $E_w=\Tinv_s(E_i)\,$. Also $\len {s\cdot s_i}=\len w=\len {w^\flat}+1=\len s +1\,$. 

Following Lemma~\ref{lem:Weyl_ijextr} we next write $s=t\cdot r$ with $r\in\Weylrest{i,j}\,$. Since $\len s = \len t +\len r$ we may infer from \eqref{eq:Thomrel} that $\Tinv_s(E_i)=\Tinv_t\circ\Tinv_r(E_i)=\Tinv_t(E_z)$ where 
$E_z=\Tinv_r(E_i)$\,. Here $z=y\cdot w_i$ with $y\in\wordsetrest{i,j}$ and $r=\Weylpres(y)$ as in Lemma~\ref{lem:Weyl_ijextr}. Now $\len s = \len t +\len r$ and $\len {s\cdot s_i} = \len s +1$ also imply 
$\len {r\cdot s_i} = \len r +1$ so that $z$ is indeed a reduced word. The statement above for the rank 2 case now implies that $E_z\in\Uqrest{+}{i,j}$ and, thus, can be written as a $\Zqqn{\dpone}$ combination of monomials in $E_i$ and $E_j\,$.

Let now $x\in\wordset$ be a reduced word presenting $t=\Weylpres(x)$\,. With $t(\alpha_i), t(\alpha_j)\in\proots$ implied by Lemma~\ref{lem:Weyl_ijextr}, we also know that $u_i=x\cdot w_i$ and $u_j=x\cdot w_j$ are reduced words. Since  $\len r\geq 1$ we have $\len t<\len s$ and, hence, $\len {u_i}, \len{u_j} <\len w\,$. Thus, by induction hypothesis, we have that
$E_{u_i}=\Tinv_t(E_i),\,E_{u_j}=\Tinv_t(E_j)\in\Uq^+\,$. Since $E_z$  is a combination of monomials in $E_i$ and $E_j$ we conclude that $E_w=\Tinv_t(E_z)$ is the same combination of monomials in $E_{u_i}$ and $E_{u_j}$\,. In particular, $E_w\in\Uq^+$ as desired. 

Suppose, further, that $\wordroot(w)=s(\alpha_i)=\alpha_k\in\sroots\,$. By Lemma~\ref{lem:Weyl_ijextr} we then have $r\cdot s_i=\longtwoweyl{ij}$\,, meaning that $z=y\cdot w_i\in \wordsetmaxrest{i,j}\,$ and $r(\alpha_i)\in\{\alpha_i,\alpha_j\}\,$. The rank 2 results above assure that $E_z=E_m$ where $m\in\{i,j\}$ and $r(\alpha_i)=\alpha_m$\,. Also, $t(\alpha_m)=t(r(\alpha_i))=s(\alpha_i)=\alpha_k\,$. The induction hypothesis, therefore, implies that 
$\Tinv_t(E_m)=E_k$  so that also $\Tinv_s(E_i)=\Tinv_t(\Tinv_r(E_i))=\Tinv_t(E_m)=E_k\,$ as claimed. 

The respective statements for the $F_i$ follow by applications of the $\Cartaninv\,$ involution. 
\end{proof}

An important special case of Proposition~\ref{prop:wordgencont} are words $z\in\wordsetmax$ of maximal length. 
If $j=\tau(z)\,$,   recall from \eqref{eq:defalt_complwords} that $q_j=\Weylpres(z^\flat)=\longweyl\cdot s_j=s_{\dyninv(j)}\cdot \longweyl\,$, where $\dyninv$ is the involution of the Dynkin diagrams as defined at the start of Section~\ref{subsec:ComplWords}.
As noted in the proof of Lemma~\ref{lm:complwordprops},
this yields $\wordroot(z)=q_j(\alpha_j)=\alpha_{\dyninv(j)}\,$. Thus, 
\begin{equation}\label{eq:Elong=simple}
    E_z=E_{\dyninv(\tau(z))} \qquad\mbox{for all}\quad z\in\wordsetmax\,.
\end{equation}

The next formulae summarize notations for two relevant renormalizations  of a generator $E_w$ for some word $w\in\wordset$ with $\alpha=\wordroot(w)\,$. The quantities $d_\alpha$\,, $q_\alpha$\,, and $[n]_\alpha$ are as defined in \eqref{eqn:quantumnumbers} and $k\in\mathbb N\,$.
\begin{align}
    E^{(k)}_w&=\frac 1 {\,[k]!_\alpha\!\!}E_w^k\label{eq:def:EgenDivPow}\\
    \Esing_w&=(q_\alpha^{-1}-q_\alpha)E_w=(q^{-1}-q)[d_\alpha]E_w\label{eq:def:EgenSing}
\end{align}

The first extends Lusztig's {\em divided power generators}, already mention in Section~\ref{subsec:qgroups-genrel} for simple roots, to all positive roots and \emph{a priori}
requires the ground ring to contain the reciprocals of all quantum numbers, such as for $\Zqqn{\infty}$ or $\mathbb Q(q)$. 

The {\em singularized generators} $\Esing_w$ coincide,
for appropriate ordering and  up to signs, with the generators $\overline{E}_\beta$ introduced in \cite[Sec 12.1]{dcp93a}. In calculations, this renomalization often removes the $(q-q^{-1})$ denominators, such as in 
\eqref{eq:EFcomm}, and avoids the use of the extended ground rings such as  $\Zqqv$ or $\Zqqvn{n}$\,.

\subsection{Garside Automorphisms and Dictionary between Generators}\label{subsec:GarsAut+DictGens}

In this section, we aim to provide precise correspondences between Lusztig's and our definition of generators, which entails various useful identities. For a reduced word
$w=w_{i_1}\ldots w_{i_k}\,$, the $T_i$-analog of \eqref{eq:EFwordDef} is defined as 
\begin{equation}\label{eq:defEgenLu}
    \ELu_w=T_{w^\flat}(E_{\tau(w)})=T_{i_1}\ldots T_{i_{k-1}}(E_{i_k})\,,
\end{equation}
with the analogous formula for $\FLu_w\,$. As in  \eqref{eq:Genbyroot}, for a positive root $\alpha\in\proots$, we set  $\ELu_\alpha=\ELu_{z[\alpha]}$\,, matching the definition of generators in \cite{Lu90a,lu90b} for an adequately chosen reduced word $z\in\wordsetmax$ of maximal length. They can be related to our convention using the $\Kinvaut$ involution as well as the notion of complementary words defined in Section~\ref{subsec:ComplWords}.

\begin{prop}\label{prop:MohGenConv}
For non-empty reduced words $v, w\in\wordset$, assume that $w\wcomplrel v$. Then 
    $$
    \ELu_w=\Kinvaut(E_w)=E_v\qquad\mbox{and}\qquad \FLu_w=\Kinvaut(F_w)=F_v\,. 
    $$
\end{prop}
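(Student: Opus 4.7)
I will split the proof according to the two equalities, treating them by rather different means. The first, $\Kinvaut(E_w)=\ELu_w$, is essentially a repackaging of \eqref{eq:LTconjinv}. Rewriting that relation as the commutation $\Kinvaut\circ\Tinv_s = T_s\circ\Kinvaut$, taking $s=\Weylpres(w^\flat)$ and $j=\tau(w)$, applying both sides to $E_j$, and invoking the elementary identity $\Kinvaut(E_j)=E_j$, I obtain
$$\Kinvaut(E_w)=\Kinvaut(\Tinv_s(E_j))=T_s(\Kinvaut(E_j))=T_s(E_j)=\ELu_w.$$
The same computation works with $F_j$ in place of $E_j$. As a corollary I get a dual version of \eqref{eq:Elong=simple} that will be needed below: for every $z\in\wordsetmax$, $\ELu_z=\Kinvaut(E_z)=\Kinvaut(E_{\dyninv(\tau(z))})=E_{\dyninv(\tau(z))}$.

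For the second equality, $\ELu_w=E_v$, the strategy is to show that both sides agree after applying $\Tinv_t^{-1}$, where $t=\Weylpres(v^\flat)$. The complementary-word condition \eqref{eq:defalt_complwords} provides the Weyl-group factorization $t^{-1}\cdot s=q_j$ with $q_j=\longweyl\cdot s_j$. The length balance $\len w+\len v=N+1$ noted after \eqref{eq:defalt_complwords} translates into $\len t+\len s=N-1=\len q_j$, so this factorization is length-additive; hence $T_{q_j}=T_{(t^{-1})}\circ T_s$, and \eqref{eq:LTconjinv} gives $T_{(t^{-1})}=\Tinv_t^{-1}$. Evaluating at $E_j$ therefore yields $T_{q_j}(E_j)=\Tinv_t^{-1}(\ELu_w)$.

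It thus suffices to identify $T_{q_j}(E_j)$ with $E_k=E_{\dyninv(j)}$. For this I pick any reduced word $u$ representing $q_j$; since $q_j\cdot s_j=\longweyl$ length-additively, the word $u\cdot w_j$ lies in $\wordsetmax$ with $\tau(u\cdot w_j)=j$, and the dual of \eqref{eq:Elong=simple} derived above produces
$$T_{q_j}(E_j)=T_u(E_j)=\ELu_{u\cdot w_j}=E_{\dyninv(j)}=E_k.$$
Combining, $\Tinv_t^{-1}(\ELu_w)=E_k$, which rearranges to $\ELu_w=\Tinv_t(E_k)=E_v$. The $F$-version follows either by the identical chain of arguments or, more economically, by applying $\Cartaninv$ using \eqref{eq:CinvEFwords} and the fact that $\Cartaninv$ commutes with each $\Tinv_i$ and hence with each $T_i$. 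I do not anticipate a genuine obstacle: the only delicate point is recognizing that $q_j=t^{-1}\cdot s$ is length-additive, which is a direct consequence of the length count built into the defining condition for complementary words.
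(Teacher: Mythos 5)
Your proof is correct and follows essentially the same route as the paper's: the first equality from \eqref{eq:LTconjinv} together with $\Kinvaut(E_j)=E_j$, and the second from the length-additive factorization $t^{-1}\cdot s=q_j$ of \eqref{eq:defalt_complwords} combined with \eqref{eq:Elong=simple}, with the $F$-statements obtained via $\Cartaninv$. The only (cosmetic) difference is that you phrase the key step through the $T$-automorphisms and a $T$-version of \eqref{eq:Elong=simple}, whereas the paper computes $\Kinvaut(E_w)$ directly using the $\Tinv$-identities $\Tinv_{q_j}=\Tinv_{t^{-1}}\circ\Tinv_s$ and $\Tinv_{q_j}(E_j)=E_{\dyninv(j)}$.
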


\begin{proof} The first equality is immediate from \eqref{eq:LTconjinv} expressed as $\Kinvaut\circ\Tinv_{i_1}\circ\ldots \circ\Tinv_{i_{k-1}} = T_{i_1}\circ\ldots \circ T_{i_{k-1}}\circ\Kinvaut\,$. 
For the second equality set $s=\Weylpres(v^\flat)$\,, $t=\Weylpres(w^\flat)$\,, and 
$j=\tau(v)\,$.
As in \eqref{eq:defalt_complwords} we have $t^{-1}\cdot s=q_j$ with $\len{q_j}=\len{t^{-1}}+\len{s}$ so $\Tinv_{q_j}=\Tinv_{t^{-1}}\circ \Tinv_s$ by 
\eqref{eq:Thomrel}. 
With $\Tinv_{q_j}(E_j)=E_{\dyninv(j)}$ implied by \eqref{eq:Elong=simple} as well as 
\eqref{eq:LTconjinv}, we thus compute
$\Kinvaut(E_w)=\Kinvaut(\Tinv_{t}(E_{\dyninv(j)}))=\Tinv_{t^{-1}}^{-1}(\Kinvaut(E_{\dyninv(j)}))=\Tinv_{t^{-1}}^{-1}(E_{\dyninv(j)})=\Tinv_{t^{-1}}^{-1}(\Tinv_{q_j}(E_{j}))=\Tinv_s(E_j)=E_v\,$. The assertion for $F_w$ follows again by application of $\Cartaninv\,$, which commutes with $\Kinvaut\,$. 
\end{proof}

As a second application of Proposition~\ref{prop:wordgencont} we are able to explicitly compute the Lusztig automorphism $\Tinv_{\garside}$ 
associated to the Garside element $\garside=\mathscr s(\longweyl)\in\mathscr A^+$
as in \eqref{eq:gardsidecomm}. 
In our conventions as explained in Section~\ref{subsec:LATM}, 
we may also write  $\Tinv_{\longweyl}=\Tinv_{\garside}$\,, where $\longweyl\in\Weyl$ is the longest element. 

The relation also involves  the automorphism $\DynkInv:\Uq\rightarrow\Uq\,$ that implements the involution
$\dyninv$ on Dynkin diagrams defined at the beginning of Section~\ref{subsec:ComplWords}.
It is given on generators  by 
\begin{align}\label{eq:defDynkInv}
    \DynkInv(E_i)=E_{\dyninv(i)}\,,
    &&
    \DynkInv(F_i)=F_{\dyninv(i)}\,,
    &&
    \DynkInv(K_i)=K_{\dyninv(i)}\,.
\end{align} 
The maps $\htsign, \qitgen\in\mathrm{Hom}(\mathbb Z^{\sroots},\Zgen^\star)$ are as in  
\eqref{eq:defhqi} and \eqref{eq:SXiMhoRel} and the remaining automorphisms are as defined in Section~\ref{sec:gradingauts}. 

\begin{cor} \label{cor:GarsideAutom}
   The Gardside automorphism and its square are given by the following composites of elementary automorphisms:
    \begin{equation}\label{eq:GarsideAutom}
    \Tinv_{\garside}=\DynkInv\circ\Cartanaut\circ\Kscale{\htsign}{\idsymm}
    =\DynkInv\circ\Cartanaut\circ\Kinvaut\circ S\qquad\mbox{and}\qquad \Tinv_{\garside}^2=\Kscale{\qitgen^{-1}}{2\idsymm}\;.
    \end{equation}
\end{cor}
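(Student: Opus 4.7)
Both sides of the first identity are algebra automorphisms of $\Uq$: the left-hand side by construction of the Lusztig-Artin action, and the right-hand side as a composite of the automorphisms described in Section~\ref{sec:gradingauts}. Hence, it suffices to verify the equation on the generators $E_i$, $F_i$, and $K_i$. Further, the equality of the two expressions on the right of \eqref{eq:GarsideAutom} is immediate from the formula $S=\Kinvaut\circ\Kscale{\htsign}{\idsymm}$ in \eqref{eq:SXiMhoRel} together with the involutivity of $\Kinvaut$, so only one of them needs to be checked against $\Tinv_\garside$.

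For $K_i$, use \eqref{eq:TKact} together with $\longweyl(\alpha_i)=-\alpha_{\dyninv(i)}$ to get $\Tinv_\garside(K_i)=K_{\dyninv(i)}^{-1}$, which matches a direct computation of the right-hand side. For the action on $F_i$, the key is to invoke Proposition~\ref{prop:wordgencont} in the form of \eqref{eq:Elong=simple}: pick any $z\in\wordsetmax$ with $\tau(z)=i$, so that $\wordroot(z)=\alpha_{\dyninv(i)}$ and therefore $E_z=\Tinv_{z^\flat}(E_i)=E_{\dyninv(i)}$. Writing $\Tinv_\garside=\Tinv_{z^\flat}\circ\Tinv_i$ by \eqref{eq:Thomrel} and applying both sides to $\Tinv_i^{-1}(E_i)=-F_iK_i$ from \eqref{eq:defTeasy}, one solves for
\[
\Tinv_\garside(F_i)\,=\,-E_{\dyninv(i)}K_{\dyninv(i)}\,.
\]
A direct computation using \eqref{eq:defKscale}, \eqref{eq:defCaut}, and \eqref{eq:defDynkInv} shows that the right-hand side of \eqref{eq:GarsideAutom} also sends $F_i$ to $-E_{\dyninv(i)}K_{\dyninv(i)}$. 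The identity on $E_i$ is then obtained by applying the Cartan anti-involution $\Cartaninv$, which commutes with every $\Tinv_j$ as well as with each factor on the right-hand side (using \eqref{eq:InvKScalComm} and \eqref{eq:CartinvGrad}).

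For the square, first observe that $\DynkInv$ commutes with $\Cartanaut$ and with $\Kscale{\htsign}{\idsymm}$, since $\htsign$ and $\idsymm$ are $\dyninv$-invariant, and that $\DynkInv^2=\id$. Hence
\[
\Tinv_\garside^{\,2}=\bigl(\Cartanaut\circ\Kscale{\htsign}{\idsymm}\bigr)^{2}.
\]
The commutation rule $\Cartanaut\circ\Kscale{u}{\mathsfit h}=\Kscale{u^{-1}\cdot\qitgen_{\mathsfit h}^{-1}}{\mathsfit h}\circ\Cartanaut$ from \eqref{eq:InvKScalComm}, combined with $\Cartanaut^2=\id$ and $\htsign^2=1$, collapses the product to $\Kscale{\qitgen^{-1}}{2\idsymm}$, using additionally the composition formula from Proposition~\ref{prop:Kscaleprops}\emph{\ref{item:Kscaleprops:homom}}. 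The main technical ingredient is the identification $E_z=E_{\dyninv(\tau(z))}$ from \eqref{eq:Elong=simple}, which in turn relies on Proposition~\ref{prop:wordgencont}; everything else is a bookkeeping exercise with the automorphism relations collected in Section~\ref{sec:gradingauts}.
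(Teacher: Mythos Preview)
Your proposal is correct and follows essentially the same route as the paper. The only cosmetic difference is that the paper verifies the first identity directly on $E_i$ using the factorization $\longweyl=s_{\dyninv(i)}\cdot q_i$ (so $\Tinv_\garside(E_i)=\Tinv_{\dyninv(i)}(E_{\dyninv(i)})=-K_{\dyninv(i)}^{-1}F_{\dyninv(i)}$), whereas you check it on $F_i$ via the right factorization $z=z^\flat\cdot w_i$ and then transfer to $E_i$ using $\Cartaninv$; both routes pivot on the same key input \eqref{eq:Elong=simple} from Proposition~\ref{prop:wordgencont}, and your derivation of the square is slightly more explicit than the paper's one-line remark.
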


\begin{proof} As explained above, we express $\Tinv_{\longweyl}=\Tinv_{\garside}$ in terms of the longest Weyl element. With the same notation as in \eqref{eq:LTconjinv} or the proof of Proposition~\ref{prop:MohGenConv} we have $\longweyl=s_{\dyninv(i)}\cdot q_i$ with $\len{\longweyl}=\len{q_i}+1$ and $\Tinv_{q_i}(E_i)=E_{\dyninv(i)}\,$ for given $i\in\{1,\ldots,n\}\,$. Hence,  
\begin{equation}\label{eq:GarsideAutomEi}
\begin{aligned}
    \Tinv_{\longweyl}(E_i)&=\Tinv_{\dyninv(i)}(\Tinv_{q_i}(E_i))=\Tinv_{\dyninv(i)}(E_{\dyninv(i)})=-K_{\dyninv(i)}^{-1}F_{\dyninv(i)}=\\
    &=\DynkInv(-K_i^{-1}F_i))=\DynkInv\circ\Cartanaut(-K_iE_i))=\DynkInv\circ\Cartanaut\circ\Kscale{\htsign}{\idsymm}(E_i)\,,
\end{aligned}
\end{equation}
which proves the first identity \eqref{eq:GarsideAutom} on $E_i\,$. The verification for $F_i$ follows, as usual, by application of $\Cartaninv$ to  \eqref{eq:GarsideAutomEi}, and the automorphisms on both ends of \eqref{eq:GarsideAutomEi} clearly map $K^\nu$ to $K^{-\dyninv(\nu)}$, where $\dyninv$ is the obvious extension to $\mathbb Z^{\sroots}\,$. The formula for the square of the Garside automorphism is readily derived from the first identity and the commutation relations in \eqref{eq:InvKScalComm}.
\end{proof}

Aside from $\Kinvaut$ and $\Cartaninv$ introduced in Section~\ref{subsec:Kscale} we also record here the action of 
the conjugation automorphism $\Qinvaut$ on the generators. The formula contains an additional factor in $\Zgen^\star$, which only depends on the root associated with the word label. 
\begin{lem}\label{lem:ConjWordGen}
Let $w\in\wordset$ be a non-empty, reduced word and $\beta=\wordroot(w)$ the associated root. Then
\begin{equation}\label{eq:ConjWordGen}
    \Qinvaut(E_w)= (-1)^{\height(\beta)-1}
       q^{\symbrack{\weightvecchar}{\beta}-d_\beta}E_w  
       \qquad\mbox{and}\qquad 
  \Qinvaut(F_w)= (-1)^{\height(\beta)-1}
       q^{-\symbrack{\weightvecchar}{\beta}+d_\beta}F_w\,.    
\end{equation}    
\end{lem}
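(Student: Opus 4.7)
The plan is to reduce the statement to Proposition~\ref{lem:ArtinConjComm} applied to a simple root generator, and then to unpack the character $\reldrchar_{s^{-1}}$ using the mirror relations of Section~\ref{subsec:LettSpecElem}. Concretely, I would fix a reduced expression and write $E_w=\Tinv_s(E_i)$ with $s=\Weylpres(w^\flat)$, $i=\tau(w)$, so $\beta=s(\alpha_i)$. Applying the commutation $\Tinv_s\circ \Qinvaut=\Qinvaut\circ \Tinv_s\circ\Kscale{\reldrchar_{s^{-1}}}{0}$ to the generator $E_i$ and using $\Qinvaut(E_i)=E_i$ gives $E_w=\Qinvaut\bigl(\reldrchar_{s^{-1}}(\alpha_i)\,E_w\bigr)$. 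Because $\Qinvaut$ is an anti-homomorphism inverting $q$, scalars are conjugated under $q\mapsto q^{-1}$, so this yields
\[
\Qinvaut(E_w)\,=\,\overline{\reldrchar_{s^{-1}}(\alpha_i)}^{\,-1}\,E_w\;,
\]
where the overline denotes $q\mapsto q^{-1}$.

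Next I would compute $\reldrchar_{s^{-1}}(\alpha_i)$ explicitly. Using the mirror relations \eqref{eq:descrootsmirror}, namely $\sumdescroots{s^{-1}}=-s^{-1}(\sumdescroots{s})$ and $\sumdesccoroots{s^{-1}}=-s^{-1}(\sumdesccoroots{s})$, together with isometry of the Weyl action under $\symbrack{\,\cdot\,}{\,\cdot\,}$, I get
\[
\symbrack{\sumdescroots{s^{-1}}}{\alpha_i}\,=\,-\symbrack{\sumdescroots{s}}{\beta}\qquad\text{and}\qquad \symbrack{\sumdesccoroots{s^{-1}}}{\alpha_i}\,=\,-\symbrack{\sumdesccoroots{s}}{\beta}\,.
\]
The two right-hand pairings are then evaluated via \eqref{eq:thetaviarho} and $\symbrack{\weightvecchar}{\alpha_i}=d_i=d_\beta$, $\symbrack{\breve\weightvecchar}{\alpha_i}=1$, yielding $\symbrack{\sumdescroots{s}}{\beta}=\symbrack{\weightvecchar}{\beta}-d_\beta$ and $\symbrack{\sumdesccoroots{s}}{\beta}=\height(\beta)-1$. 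Substituting these, conjugating under $q\mapsto q^{-1}$, and inverting gives exactly $(-1)^{\height(\beta)-1}q^{\symbrack{\weightvecchar}{\beta}-d_\beta}$, which is the claimed eigenvalue of $\Qinvaut$ on $E_w$.

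For the $F_w$ formula I would simply apply $\Cartaninv$, which commutes with $\Qinvaut$ by the discussion following \eqref{eq:defKaut} and sends $E_w\mapsto F_w$ by \eqref{eq:CinvEFwords}, while inverting $q$ in scalars. This changes the sign of the $q$-exponent but preserves the $\pm 1$ factor, yielding the second formula in \eqref{eq:ConjWordGen}.

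The main obstacle is bookkeeping: $\Qinvaut$ is an anti-automorphism that also acts non-trivially on the ground ring by $q\mapsto q^{-1}$, so when pulling a scalar $\reldrchar_{s^{-1}}(\alpha_i)$ out of $\Qinvaut$ one must remember to invert $q$ in that scalar, which is what creates the correct sign on the $q$-exponent. Verifying that the resulting factor depends only on the root $\beta$ rather than on the particular word $w$ (so that the notation in the lemma is consistent) is automatic once one writes everything in terms of $\symbrack{\sumdescroots{s}}{\beta}$ and $\symbrack{\sumdesccoroots{s}}{\beta}$, since these pairings are $\Weyl$-invariant and depend only on the image $\beta=s(\alpha_i)$.
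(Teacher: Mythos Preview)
Your proof is correct and follows essentially the same strategy as the paper: reduce to a simple root generator via $E_w=\Tinv_s(E_i)$, apply the $\Qinvaut$--$\Tinv_s$ commutation from Proposition~\ref{lem:ArtinConjComm}, and then evaluate the resulting $\sumdescroots{s}$ and $\sumdesccoroots{s}$ pairings using \eqref{eq:thetaviarho}.

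The only difference is which form of \eqref{eq:ArtinConjComm} is invoked. You use the first equality $\Tinv_s\circ\Qinvaut=\Qinvaut\circ\Tinv_s\circ\Kscale{\reldrchar_{s^{-1}}}{0}$, which forces you to (i) track the $q\mapsto q^{-1}$ action of $\Qinvaut$ on the scalar $\reldrchar_{s^{-1}}(\alpha_i)$ and (ii) convert $\reldrchar_{s^{-1}}(\alpha_i)$ back to pairings with $\beta$ via the mirror relations \eqref{eq:descrootsmirror}. The paper instead uses the second equality, rewritten as $\Qinvaut\circ\Tinv_s=(\Kscale{\reldrchar_s}{0})^{-1}\circ\Tinv_s\circ\Qinvaut$, so that $\Kscale{\reldrchar_s}{0}^{-1}$ is applied directly to $\Tinv_s(E_i)=E_w$, giving $\Qinvaut(E_w)=\reldrchar_s(\beta)^{-1}E_w$ in one step with no scalar conjugation or mirror relations needed. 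Both routes land on the same expression $(-1)^{\symbrack{\sumdesccoroots{s}}{\beta}}q^{\symbrack{\sumdescroots{s}}{\beta}}$ and finish identically; the paper's version is simply a bit shorter. One small remark: your final sentence about the pairings being ``$\Weyl$-invariant'' is imprecise, since $\sumdescroots{s}$ itself depends on $s$; what you mean (and what your computation shows) is that the combinations $\symbrack{\sumdescroots{s}}{\beta}=\symbrack{\weightvecchar}{\beta}-d_\beta$ and $\symbrack{\sumdesccoroots{s}}{\beta}=\height(\beta)-1$ depend only on $\beta$.
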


\begin{proof} Let $s=\wordroot(w^\flat)$ and $i=\tau(w)$ so that $E_w=\Tinv_s(E_i)\,$ and $\beta=s(\alpha_i)\,$. 
  Relation \eqref{eq:ArtinConjComm} implies 
  $\Qinvaut(E_w)=\Qinvaut(\Tinv_s(E_i))=(\Kscale{\reldrchar_{s}}{0})^{-1}(\Tinv_s(\Qinvaut(E_i)))=(\reldrchar_{s}(\beta))^{-1}\Tinv_s(E_i)=(-1)^{\symbrack{\sumdesccoroots{s}}{\beta}}
       q^{\symbrack{\sumdescroots{s}}{\beta}}E_w\,$. Now, by \eqref{eq:thetaviarho}, 
       $\symbrack{\sumdescroots{s}}{\beta}=\symbrack{\weightvecchar-s(\weightvecchar)}{\beta}=
       \symbrack{\weightvecchar}{\beta}-\symbrack{s(\weightvecchar)}{s(\alpha_i)}=
       \symbrack{\weightvecchar}{\beta}-\symbrack{\weightvecchar}{\alpha_i}=\symbrack{\weightvecchar}{\beta}-d_i=\symbrack{\weightvecchar}{\beta}-d_\beta\,$ since $d_\beta$ is $\Weyl$-invariant. Also, 
       \eqref{eq:sumdesccoheight} implies $\symbrack{\sumdesccoroots{s}}{\beta}=\height(\beta)-\height(\alpha_i)=
       \height(\beta)-1\,$. The relation for $F_w$ follows by application of  $\Cartaninv$ and 
       \eqref{eq:CinvEFwords}. 
\end{proof}

In the remainder of this section, 
we discuss the generators and actions more explicitly in the rank 2 case. 
As in the proof of Proposition~\ref{prop:wordgencont},   let $\Uqrest{+}{i,j}$ be the subalgebra generated by $E_i$ and $E_j$
for two indices $i$ and $j$. We assume $A_{ji}=-1\,$, implying $d_i=1$ and
$-A_{ij}=d_j=\edgenum\in\{1,2,3\}\,$, corresponding to Lie type $\LT{A}_2$\,, $\LT{B}_2$\,, or $\LT{G}_2\,$, respectively, for $\Uqrest{+}{i,j}$\,. 
As before, let $m=m_{ij}\in\{3,4,6\}$ be the length of the longest element in $\Weylrest{i,j}\,$. 

For $r\in\{1,\ldots, m\}$ let $a_r=w_iw_j\ldots$ and $b_r=w_jw_i\ldots$ be the two possible reduced words of length $r$ in $\wordsetrest{i,j}\,$ as in \eqref{eq:def:abwords}. Combining Lemma~\ref{lm:rk2complw} and Proposition~\ref{prop:MohGenConv} now yields the identities
\begin{equation}\label{eq:Eabcompl}
  \Kinvaut(E_{a_r})=E_{b_{m-r+1}}=\ELu_{a_r}  \qquad\mbox{and}\qquad 
   \Kinvaut(F_{a_r})=F_{b_{m-r+1}}=\FLu_{a_r}\,. 
\end{equation}

Since later proofs will refer to and invoke several commutation relations in  \cite{lu90b}, we provide here a dictionary from the notation used there for generators in a rank 2 system to our conventions. Specifically, in Section~{5.1} of \cite{lu90b} Lusztig uses subscripts of the form $1\ldots 1 2\ldots 2$ for the $E$-generators. Replacing $i\leftrightharpoons 1$ to index the short root and $j\leftrightharpoons 2$ for the (possibly) long root, the definition there can be summarized in the formula
\begin{equation}
    \ELu_{\underbracket[0.5pt]{\scriptstyle i\ldots i}_k\underbracket[0.5pt]{\scriptstyle j\ldots j}_p}=\ELu_{\longtwoword{ji}[\alpha]}\qquad\mbox{  with} \quad\alpha=k\alpha_i+p\alpha_j\,\in\prootstwo{ij}\;.
\end{equation}
That is, the frequency of $i$'s and $j$'s determined the coefficients of the root $\alpha$ in the root sublattice. This is identified with the unique word $b_r\leqRB \longtwoword{ji}$ as in  \eqref{eq:def:abwords} for which $\alpha=\wordroot(b_r)$\,, with $r$ determined by the position of the root in the ordering in \eqref{eq:rank2rootorder}. The respective $E$-generator in our convention is then obtained via \eqref{eq:Eabcompl}, which, in turn, can be expressed in the notation from \eqref{eq:Esubseqind}.

For example, in the $\LT{G}_2$ ($\edgenum=3$, $m=6$) case, $\ELu_{iiijj}$ indicates in \cite{lu90b} the generator associated to root $3\alpha_i+2\alpha_j$\,. In the ordering for $\longtwoword{ji}$ in
\eqref{eq:rank2rootorder} it appears in position $r=3$ so that $\ELu_{iiijj}=\ELu_{b_3}=\ELu_{w_jw_iw_j}=\ELu_{(jij)}=T_jT_i(E_j)\,$. From \eqref{eq:Eabcompl} we next obtain
that $\ELu_{b_3}=E_{a_{6+1-3}}=E_{a_4}=E_{w_iw_jw_iw_j}=E_{(ijij)}=\Tinv_i\Tinv_j\Tinv_i(E_j)\,$. 
Similarly, the generators associated to simple roots are readily identified as  
\begin{equation}\label{eq:Edictsimple}
\ELu_j=\ELu_{w_j}=E_{\longtwoword{ij}}=E_{(\underbracket[0.5pt]{\scriptstyle ij\ldots}_m)}=E_j
\qquad\mbox{and}
\qquad 
\ELu_i=\ELu_{\longtwoword{ji}}=\ELu_{(\underbracket[0.5pt]{\scriptstyle ji\ldots}_m)}=E_{(i)}=E_i\,.
\end{equation}
The associated simple roots appear either as maximal or minimal elements in the two total orders on $\prootstwo{ij}$\,. 

The full dictionary translating between the notation in \cite{lu90b} and our conventions is as follows. Here, generators are ordered by the roots and $\letwt{\longtwoword{ji}}$ as in \eqref{eq:rank2rootorder}, omitting the first and last terms given in \eqref{eq:Edictsimple} for even $m$\,. For $m=3$ the identity 
is $\ELu_i=E_i=E_{(iji)}\,$.

\begingroup
\allowdisplaybreaks
\begin{align}
\label{eq:LusztigDictionary} 
\begin{split}
\edgenum=1 \quad &\left\{{\quad 
\begin{aligned}
\rule[-2mm]{0mm}{5mm}  
\ELu_{ij} = \ELu_{(ji)}=E_{(ij)}\,\;   
\end{aligned}
}\right.
\\
\rule{0mm}{11mm}
\edgenum=2 \quad &\left\{{\quad 
\begin{aligned}
\rule[-2mm]{0mm}{5mm}  
    \ELu_{ij}&= \ELu_{(ji)}=E_{(iji)}\,,
    &
    \ELu_{iij}&= \ELu_{(jij)}=E_{(ij)}
\end{aligned}
}\right.
\\
\rule{0mm}{15mm}
\edgenum=3 \quad &\left\{{\quad 
\begin{aligned}
    \ELu_{ij}&= \ELu_{(ji)}=E_{(ijiji)}\,,
    &
    \ELu_{iiijj}&=\ELu_{(jij)}=E_{(ijij)}\,,
    \\
    \rule{0mm}{6mm}\ELu_{iij}&= \ELu_{(jiji)}=E_{(iji)}\,,
    &
    \ELu_{iiij}&= \ELu_{(jijij)}=E_{(ij)}
\end{aligned}
}\right. 
\end{split}
\end{align}
\endgroup

\bigskip

\subsection{Monomial Expressions, Exponent Sets, and Their Spans}\label{subsec:Emonomials}
The PBW type bases of interest for us are given by monomials whose exponents are labeled by positive roots. To describe these more formally, we introduce {\em exponents sets} 
\begin{equation}\label{eq:ExpsetDef}
    \expsetup s =\mathrm{Maps}\left({\descroots s,\nnN}\right)=\nnN^{\descroots s}
\qquad\mbox{and}\qquad 
\expset s =\mathrm{Maps}\left({\descroots s,\mathbb Z_{}}\right)=\mathbb Z^{\descroots s}\,,
\end{equation}
where the inversion sets $\descroots s$ are as in (\ref{eq:defdescroots}).
Additionally, we use the more compact notation for the maximal sets
\begin{equation}\label{eq:ExpsetDefMax}
    \expsetupmax=\expsetup {\longweyl} =\mathrm{Maps}\left({\proots,\nnN}\right) =\nnN^{{\proots}}
    \qquad\mbox{and}\qquad 
    \expsetmax=\expset {\longweyl} =\mathrm{Maps}\left({\proots,\mathbb Z}\right)\,. 
\end{equation}

Suppose $s=t_1\cdot t_2$ with $\length(s)=\length(t_1)+\length(t_2)$. Recall from Corollary~\ref{cor:sumdescroots}   that $\descroots{s}$ is then the disjoint union of $\descroots{t_1}$ and $t_1\!\left({\descroots{t_2}}\right)$, which, correspondingly, implies the following bijections of exponent sets:
\begin{align}\label{eq:ExpsetCorr}
    \hspace*{12mm} \expsetup {t_1} \times \expsetup {t_2}&\rightarrow \expsetup{s}\,&&:&\quad (\psi_1,\psi_2)\,&\mapsto \,\psi=\psi_1\sqcup (t_1^{-1})^*(\psi_2)\,,\nonumber\hspace*{12mm}\\
     \expsetup{s} &\rightarrow\expsetup {t_1} \times \expsetup {t_2} \,&&:&\quad \psi \,&\mapsto \,(\restr_1^*\psi,\restr_2^*t_1^*(\psi)))\,.        
\end{align}
Here, $s^*(\psi)=\psi\circ s$ denotes the usual pull-back, the disjoint union of maps indicates the respective definition on the two sets, and $\restr_i^*$ is the restriction to $\descroots{s_i}$\,.  
 
Suppose $w\in \wordset$\,, $s=\Weylpres(w)\in\Weyl\,$ with $L=\length(w)=|\descroots s|\,$, and $\psi\in\expsetup{s}\,$. 
Following \eqref{eq:root2word} and the explanations in Section~\ref{subsec:descroots}, we write $\beta_j=\wordroot(w[1,j])$ for the ordered root sequence implied by $w$ as well as  $w^j=w[1,j]=w[\beta_j]$ for the words $\leqRB w$\,. Given these conventions, we define the following monomials, expressed in two equivalent notations and for each of two opposing directions of multiplication.
\begin{equation}\label{eq:PBWEgenDef}
    \begin{aligned}
    \Ebase{w}{\psi}&=E_{w^1}^{\psi(\wordroot(w^1))}\ldots E_{w^L}^{\psi(\wordroot(w^L))}
    =E_{w[\beta_1]}^{\psi(\beta_1)}\ldots E_{w[\beta_L]}^{\psi(\beta_L)}\vspace*{2mm}
    \\
 \Ebaseopp{w}{\psi}&=E_{w^L}^{\psi(\wordroot(w^L))}\ldots E_{w^1}^{\psi(\wordroot(w^1))} 
    =E_{w[\beta_L]}^{\psi(\beta_L)}\ldots E_{w[\beta_1]}^{\psi(\beta_1)} \,
\end{aligned}
\end{equation}
Generators $\Fbase{w}{\psi}$ and $\Fbaseopp{w}{\psi}$ are defined by the same formulae \eqref{eq:PBWEgenDef} with $E$ replaced by $F$. Using that $\Cartaninv$ is an  anti-homomorphism we  find from \eqref{eq:CinvEFwords} the direction reversing relations 
\begin{equation}\label{eq:EFCartOrder}
    \Cartaninv(\Ebase{w}{\psi})=\Fbaseopp{w}{\psi} \qquad \mbox{and} \qquad \Cartaninv(\Fbase{w}{\psi})=\Ebaseopp{w}{\psi}\;. 
\end{equation}

Similarly, from Lemma~\ref{lem:ConjWordGen} we find that the conjugation automorphism reverses 
the direction for the same type of monomial up to a unit factor. Specifically, if we denote $\vec\psi=\sum_\alpha\psi(\alpha)\alpha\in\mathbb Z^{\sroots}$\,, $d(\psi)=\sum_\alpha\psi(\alpha)d_\alpha\in\nnN$\,, and $n(\psi)=\sum_\alpha\psi(\alpha)\in\nnN$ the following formulae hold for the $\Qinvaut$ anti-involution. 
    \begin{equation}\label{eq:EFConjOrder}
    \begin{aligned}
    \Qinvaut(\Ebase{w}{\psi})&= (-1)^{\height(\vec\psi)-n(\psi)}
       q^{\symbrack{\weightvecchar}{\vec\psi}-d(\psi)}\Ebaseopp{w}{\psi}  \vspace*{4mm}
       \\ 
 \Qinvaut(\Fbase{w}{\psi})&= (-1)^{\height(\vec\psi)-n(\psi)}
       q^{-\symbrack{\weightvecchar}{\vec\psi}+d(\psi)}\Fbaseopp{w}{\psi} 
    \end{aligned}    
\end{equation}

Suppose $w=u_1\cdot u_2\in\wordset$ with $\length(w)=\length(u_1)+\length(u_2)$ and $s=\Weylpres(w)$ and $t_i=\Weylpres(u_i)$ for $i=1, 2\,$. Moreover, assume that $\psi\in\expsetup{s}$ is assigned to 
$(\psi_1,\psi_2)$ in the correspondence in \eqref{eq:ExpsetCorr}. Then the relation from \eqref{eq:EFdefRecurs} applied to each term of the monomial and the fact that the $\Tinv_i$ are homomorphisms implies the following recursions for monomials:
\begin{equation}\label{eq:PBWgenRecur}
    \Ebase{w}{\psi}\,=\,\Ebase{u_1}{\psi_1}\,\cdot\,\Tinv_{t_1}\!\left({\Ebase{u_2}{\psi_2}}\right)
    \qquad\mbox{and}\qquad 
    \Ebaseopp{w}{\psi}\,=\,\Tinv_{t_1}\!\left({\Ebaseopp{u_2}{\psi_2}}\right)\,\cdot\,\Ebaseopp{u_1}{\psi_1}\,.
\end{equation}

Analogous monomial expressions $\EbaseLU{w}{\psi}$ and $\EbaseoppLU{w}{\psi}$ are defined 
by replacing the $E_{w^j}^{\psi(\beta_j)}$ factors in \eqref{eq:PBWEgenDef}
by Lusztig's $\ELu^{\psi(\beta_j)}_{w^j}$ generators, with $\ELu_w$ as defined in \eqref{eq:defEgenLu}.  
The next proposition extends the correspondence in Proposition~\ref{prop:MohGenConv} to these expressions. 

\begin{prop} \label{prop:mhomonomial}
For a non-empty $w\in\wordset$\,, let $t=\Weylpres(w)\in\Weyl$ and $r=\flweyl{t}$ as in 
\eqref{eq:def:flweyl}. Moreover, let $\psi\in\expsetup t \,$ and
$\phi=r_*\psi=\psi\circ r\in \expsetup {t^\winvchar} \,$. Then 
    \begin{equation}\label{eq:mhomonomial}
        \Kinvaut(\Ebase{w}{\psi})\,=\,\EbaseoppLU{w}{\psi}\,=\,\Tinv_r\left({\Ebase{w^\winvchar}{\phi}}\right)\,. 
    \end{equation} 
The analogous relation holds with directions flipped in all terms as well as with $E$ generators replaced by $F$ generators. 
\end{prop}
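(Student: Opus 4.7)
The first identity $\Kinvaut(\Ebase{w}{\psi}) = \EbaseoppLU{w}{\psi}$ follows almost immediately. By Proposition~\ref{prop:MohGenConv}, the equality $\Kinvaut(E_{w'}) = \ELu_{w'}$ holds for every non-empty reduced word $w'$ (this half of that proposition requires no complementary condition and is a direct consequence of \eqref{eq:LTconjinv}). Since $\Kinvaut$ fixes the ground ring and is an anti-homomorphism, applying it to the monomial product in \eqref{eq:PBWEgenDef} reverses the order of the $E_{w^j}^{\psi(\beta_j)}$ factors while substituting $\ELu_{w^j}$ for $E_{w^j}$, which yields the defining expression for $\EbaseoppLU{w}{\psi}$.

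For the second identity $\EbaseoppLU{w}{\psi} = \Tinv_r(\Ebase{w^\winvchar}{\phi})$, the plan is to proceed by induction on $L=\len{w}$. Write $w=u\cdot w_i$ with $u=w^\flat$ and $i=\tau(w)$, so that $w^\winvchar=w_{\dyninv(i)}\cdot u^\winvchar$. Set $t'=\Weylpres(u)$ and $r'=\flweyl{t'}$. The identity $s_i\longweyl=\longweyl s_{\dyninv(i)}$ gives $r=r'\cdot s_{\dyninv(i)}$, and a direct length count shows $\len{r's_{\dyninv(i)}} = \len{r}$ and hence $\len{r}+1=\len{r'}$, so $\Tinv_r\circ \Tinv_{s_{\dyninv(i)}}=\Tinv_{r'}$. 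Applying the recursion \eqref{eq:PBWgenRecur} to $w^\winvchar=w_{\dyninv(i)}\cdot u^\winvchar$ therefore produces
\[
 \Tinv_r\!\left({\Ebase{w^\winvchar}{\phi}}\right)
 \;=\;\Tinv_r(E_{\dyninv(i)})^{\phi_1(\alpha_{\dyninv(i)})}\cdot \Tinv_{r'}\!\left({\Ebase{u^\winvchar}{\phi_2}}\right)\,,
\]
where $(\phi_1,\phi_2)$ is the decomposition of $\phi$ under \eqref{eq:ExpsetCorr}. A short computation using $r(\alpha_{\dyninv(i)})=\beta_L$ and $r\cdot s_{\dyninv(i)}=r'$ shows $\phi_1(\alpha_{\dyninv(i)})=\psi(\beta_L)$ and $\phi_2=(r')_*\psi'$ with $\psi'=\psi\big|_{\descroots{t'}}$. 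The induction hypothesis applied to the shorter word $u$ then gives $\Tinv_{r'}(\Ebase{u^\winvchar}{\phi_2})=\EbaseoppLU{u}{\psi'}$, and the inductive step reduces to a single key identity
\[
 \Tinv_r(E_{\dyninv(i)})\;=\;\ELu_w\,.
\]

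This identity is the main obstacle of the proof. The approach will be the following: the Weyl-group equation $(t')^{-1}\cdot(t\longweyl)=s_i\longweyl$ combined with the length count $(L-1)+(N-L)=N-1=\len{s_i\longweyl}$ lifts via the Matsumoto map $\mathscr s$ to a length-additive factorization in $\mathscr A^+$, yielding the operator equation $\Tinv_{s_i\longweyl}=\Tinv_{(t')^{-1}}\circ \Tinv_r$. The left-hand side is then evaluated by applying \eqref{eq:Elong=simple} to any maximal word $z\in\wordsetmax$ with $\tau(z)=\dyninv(i)$: since $\Weylpres(z^\flat)=\longweyl\cdot s_{\dyninv(i)}=s_i\longweyl$, one has $\Tinv_{s_i\longweyl}(E_{\dyninv(i)})=E_z=E_i$. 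Inverting and invoking the general identity $\Tinv_s^{-1}=T_{s^{-1}}$ for $s\in\Weyl$, we conclude $\Tinv_r(E_{\dyninv(i)})=T_{t'}(E_i)$, which equals $\ELu_w$ by Proposition~\ref{prop:MohGenConv} together with the conjugation rule \eqref{eq:LTconjinv}. The base case $L=1$ is the same identity with $t'=1$ and is covered by this computation.

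The analogous statements with the direction of multiplication reversed, or with $E$ replaced by $F$, will follow by applying the anti-involutions $\Qinvaut$ and $\Cartaninv$ respectively to the $E$-identity. The conversion formulae \eqref{eq:EFCartOrder} and \eqref{eq:EFConjOrder} transform $\Ebase{w}{\psi}$ into $\Fbaseopp{w}{\psi}$ and $\Ebaseopp{w}{\psi}$ (up to a scalar in the latter case), while the commutation of $\Cartaninv$ with both $\Kinvaut$ and $\Tinv_r$ and the commutation of $\Qinvaut$ with $\Tinv_r$ up to a \Kscaleword factor from Proposition~\ref{lem:ArtinConjComm} ensure that the right-hand sides transform accordingly.
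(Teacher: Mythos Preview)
Your proof is correct and takes a genuinely different route from the paper's.

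The paper does not induct on $\len{w}$. Instead it extends $w$ to a maximal reduced word $z=w\cdot u$, sets $y=z^\winvchar=u^\winvchar\cdot w^\winvchar$, and then invokes the complementary word relation of Lemma~\ref{lm:complwordprops}, namely $z[1,j]\wcomplrel y[1,N+1-j]$, for every $1\leq j\leq L$. Combined with Proposition~\ref{prop:MohGenConv} this yields, in one stroke, the factor-wise identity
\[
\Kinvaut\bigl(E_{w[1,j]}\bigr)\;=\;\ELu_{w[1,j]}\;=\;E_{u^\winvchar\cdot (w^\winvchar[1,L+1-j])}\;=\;\Tinv_r\bigl(E_{w^\winvchar[1,L+1-j]}\bigr),
\]
and the monomial identity then follows by multiplying the factors in the required order and matching exponents through $\beta_j=r(\delta_{L+1-j})$. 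Your induction instead peels off the last letter of $w$ and reduces the step to the single operator identity $\Tinv_r(E_{\dyninv(i)})=T_{t'}(E_i)$, which you derive directly from the length-additive factorization $s_i\longweyl=(t')^{-1}\cdot r$, \eqref{eq:Elong=simple}, and the relation $T_s=(\Tinv_{s^{-1}})^{-1}$ from \eqref{eq:LTconjinv}. This bypasses the complementary word machinery entirely. The paper's argument is shorter and more uniform (all factors are treated at once), while yours is more self-contained (it does not appeal to Lemma~\ref{lm:complwordprops}). Note that your key identity is exactly the paper's factor-wise identity for $j=L$, so the two proofs ultimately rest on the same Artin group computation.

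For the reversed direction, the paper simply says ``identities for reversed directions follow analogously'' rather than pushing through $\Qinvaut$; your route via $\Qinvaut$ works too, but the scalar bookkeeping is unnecessary since the factor-wise identity already gives both multiplication orders directly.
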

\begin{proof}
    As noted in Section~\ref{subsec:coxeter}, we can find $u\in\wordset$ such that $z=w\cdot u$ is a reduced word of maximal length. So, with $N=\len z $ and $L=\len w $ we have $\len u =N-L\,$. Moreover, $g=\Weylpres(u)=t^{-1}\cdot \longweyl=\longweyl \cdot \dyninv(t)^{-1}=\longweyl \cdot t^\winvchar\,$ and, hence, $\Weylpres(u^\winvchar) = g^\winvchar=t \cdot \longweyl=\flweyl{t}=r\,$. Now, Lemma~\ref{lem:longinvol}{\em \ref{item:longinvol:daggerdesc}} implies $r(\descroots{t^{\winvchar}})=\flweyl{t}(\descroots{t^{\winvchar}})=\descroots{t}$ so that, indeed,
    $r_*$ maps $\expsetup t$ to $\expsetup {t^{\winvchar}}\,$.

    Assume that $j\leq L\,$, and let $y=z^\winvchar=u^\winvchar\cdot w^\winvchar$. Then we have $z[1,j]=w[1,j]$ and 
    $y[1,N+1-j]=u^\winvchar\cdot (w^\winvchar[1,L+1-j])$\,. Thus,  
    Lemma~\ref{lm:complwordprops} implies
     $w[1,j]\wcomplrel u^\winvchar\cdot (w^\winvchar[1,L+1-j])\,$, which entails together with 
    Proposition~\ref{prop:MohGenConv} and \eqref{eq:EFdefRecurs} that $\Kinvaut(E_{w[1,j]})=\ELu_{w[1,j]}
    =E_{u^\winvchar\cdot (w^\winvchar[1,L+1-j])}=\Tinv_{u^\winvchar}(E_{w^\winvchar[1,L+1-j]})=\Tinv_r(E_{w^\winvchar[1,L+1-j]})\,$. 
    From Lemma~\ref{lm:complwordprops} we also find that $\beta_j=\wordroot(w[1,j])=
    \wordroot(u^\winvchar\cdot (w^\winvchar[1,L+1-j]))=r(\delta_{L+1-j})$ where $\delta_i=\wordroot(w^\winvchar[1,i])\,$. Thus,
    \begin{align}
            \Kinvaut(\Ebase{w}{\psi})&=\Kinvaut\left({E_{w[1,1]}^{\psi(\beta_1)}\ldots E_{w[1,L]}^{\psi(\beta_L)}}\right)
    &&=\Kinvaut\left({E_{w[1,L]}^{\psi(\beta_L)}}\right)\ldots \Kinvaut\left({E_{w[1,1]}^{\psi(\beta_1)}}\right)\label{eq:mhomonmcomp}\\
    & =\Tinv_r\left({E_{w^\winvchar[1,1]}}\right)^{\psi(\beta_L)}\ldots \Tinv_r\left({E_{w^\winvchar[1,L]}}\right)^{\psi(\beta_1)}
    &&=\Tinv_r\left({E_{w^\winvchar[1,1]}^{\psi(r(\delta_1))}}\right)\ldots \Tinv_r\left({E_{w^\winvchar[1,L]}^{\psi(r(\delta_L))}}\right)\nonumber\\
    &=\Tinv_r\left({E_{w^\winvchar[1,1]}^{\phi(\delta_1)} \ldots   E_{w^\winvchar[1,L]}^{\phi(\delta_L)}}\right)
    &&=\Tinv_r(\Ebase{w^\winvchar}{\phi})\,,\nonumber
        \end{align} 
which shows equality of the first and third term in \eqref{eq:mhomonomial}. Furthermore, the last expression in the first line of \eqref{eq:mhomonmcomp} is readily identified with 
$\ELu^{\psi(\beta_L)}_{w[1,L]}\ldots \ELu^{\psi(\beta_1)}_{w[1,1]}=\EbaseoppLU{w}{\psi}\,$.
Identities for reversed directions follow analogously. The respective statements for $F$ generators are obtained by application of $\Cartaninv\,$, using 
\eqref{eq:EFCartOrder} as well as commutation with both $\Kinvaut$ and $\Tinv_t\,$. 
\end{proof}

We will use this result mainly in the case of reduced words of maximal lengths, for which $t=\longweyl$ and $r=\id\,$. 
That is, let $z\in\wordsetmax$ and $\psi\in \expsetupmax\,$ as in (\ref{eq:ExpsetDefMax}). 
Then \eqref{eq:mhomonomial} specializes to
\begin{equation}\label{eq:mhomonomax}
\Kinvaut(\Ebase{z}{\psi})\,=\,\EbaseoppLU{z}{\psi}\,=\,{\Ebase{z^\winvchar}{\psi}}\,. 
\end{equation} 
The respective identity for the opposite  directions is  readily derived from commutation of $\Kinvaut$ with
$\Qinvaut$. The variants with $E$'s replaced by $F$'s follows from commutation with $\Cartaninv\,$.

Further relations that directly relate (maximal) monomial expressions in the $T$ and $\Tinv$ conventions are provided
by the composite automorphism $\Kconaut=\Qinvaut\circ\Kinvaut$. For example, for $z$ and $\psi$ as above we find
 \begin{equation}\label{eq:Kconautmonom}
    \begin{aligned}
    \Kconaut(\Ebase{z}{\psi})&= (-1)^{\height(\vec\psi)-n(\psi)}
       q^{\symbrack{\weightvecchar}{\vec\psi}-d(\psi)}\EbaseLU{z}{\psi}  
       &=(-1)^{\height(\vec\psi)-n(\psi)}
       q^{\symbrack{\weightvecchar}{\vec\psi}-d(\psi)}\Ebaseopp{z^\winvchar}{\psi}   
    \end{aligned}    
\end{equation}   
using the notation from \eqref{eq:EFConjOrder}. That is, up to unit factors, $\Kconaut$ maps monomials in our convention to those in Lusztig's convention for the same choices of words and directions. Similarly,
formulae for the antipode can be derived from those in \eqref{eq:SXiMhoRel}, \eqref{eq:mhomonomax}, and Proposition~\ref{prop:Kscaleprops}.
\begin{equation}\label{eq:AntipBasis}
\begin{aligned}
    S(\Ebase{z}{\psi})
&=(-1)^{\height(\vec\psi)}q^{-\frac 12(\symbrack{\vec\psi}{\vec\psi}-2\symbrack{\weightvecchar}{\vec\psi})}\Ebase{z^\winvchar}{\psi}K^{-\vec\psi}
&&=(-1)^{\height(\vec\psi)}q^{\frac 12(\symbrack{\vec\psi}{\vec\psi}+2\symbrack{\weightvecchar}{\vec\psi})}K^{-\vec\psi}\Ebase{z^\winvchar}{\psi}\\
\rule{0mm}{6mm}
    S(\Fbase{z}{\psi})
&=(-1)^{\height(\vec\psi)}q^{-\frac 12(\symbrack{\vec\psi}{\vec\psi}+2\symbrack{\weightvecchar}{\vec\psi})}\Fbase{z^\winvchar}{\psi}K^{\vec\psi}
&&=(-1)^{\height(\vec\psi)}q^{\frac 12(\symbrack{\vec\psi}{\vec\psi}-2\symbrack{\weightvecchar}{\vec\psi})}K^{\vec\psi}\Fbase{z^\winvchar}{\psi}
\end{aligned}
\end{equation}

The same equations hold for the basis elements with respect to the opposite directions, given the same is
true for those in (\ref{eq:mhomonomax}).
For later reference, we also record the rank 2 case discussed in Section~\ref{subsec:rank2coxsys}, for which $\longtwoword{ij}$ and $\longtwoword{ji}=\longtwoword{ij}^\winvchar$ are maximal words:
\begin{equation}\label{eq:KinvEtworefl}
 \Kinvaut(\Ebase{\longtwoword{ij}}{\psi})\,=\,\EbaseoppLU{\longtwoword{ij}}{\psi}\,=\,{\Ebase{\longtwoword{ji}}{\psi}}   \,.
\end{equation}
 
 Finally, we introduce notation for basis elements in $\UqQ$ given by divided power generators. 
 Recall from \eqref{eq:def:EgenDivPow} that for $k\in\mathbb N$ and $i=\tau(w)$ we have $E^{(k)}_w=\frac 1 {[k]!_i}E^k_w\,$. 
 Given $w\in\wordset$\,, $L=\len w$\,, $s=\Weylpres(w)\in\Weyl$\,, and $\psi\in\expsetup s$\,, set, as before, $w^j=w[1,j]$\,, $\beta_j=\wordroot(w^j)$\,, and $k_j=\psi(\beta_j)\,$. Set
\begin{equation}\label{eq:PBWEgenDivDef}
 \begin{aligned}
 &   \Edivbase{w}{\psi}=E_{w^1}^{(k_1)}\ldots E_{w^L}^{(k_L)}= \tfrac 1 {[\psi]!} \Ebase{w}{\psi}
      \qquad\mbox{and}\qquad 
     \Edivbaseopp{w}{\psi}=E_{w^L}^{(k_L)}\ldots E_{w^1}^{(k_1)}= \tfrac 1 {[\psi]!} \Ebaseopp{w}{\psi}\\
 \mbox{where }
 &\rule{0pt}{26pt}\hspace*{33mm}[\psi]!=\prod_{j=1}^L[k_j]!_{d_j}=\prod_{\alpha\in\proots}[\psi(\alpha)]!_{\alpha} \quad\mbox{with}\quad d_j=\tfrac 12 \symbrack{\beta_j}{\beta_j} \,.
\end{aligned}
\end{equation}
Here we wrote $[\psi]!$ as a product over $\proots$ rather than $\descroots{s}\,$, tacitly extending the exponent function 
as $\psi(\gamma)=0$ for $\gamma\not\in \descroots{s}\,$. With the same convention we introduce analogous notation for the singularized generators from \eqref{eq:def:EgenSing} as
\begin{equation}\label{eq:PBWEgenSingDef}
 \begin{aligned}
&\Ebasesing{w}{\psi}=\Esing_{w^1}^{k_1}\ldots \Esing_{w^L}^{k_L}=\SingCoeff{\psi}\Ebase{w}{\psi}
      \qquad\mbox{and}\qquad 
  \Ebaseoppsing{w}{\psi}=\Esing_{w^L}^{k_L}\ldots \Esing_{w^1}^{k_1}=\SingCoeff{\psi}\Ebaseopp{w}{\psi}
\\
\mbox{where }
 &\rule{0pt}{26pt}\hspace*{33mm}
\SingCoeff{\psi}=\SingCoeffvar{\psi}{q}=\prod_{\alpha\in\proots}(q^{-1}_\alpha-q_\alpha)^{\psi(\alpha)}
=(q^{-1}-q)^{n(\psi)}\prod_{\alpha\in\proots}[d_\alpha]^{\psi(\alpha)}\,.
\end{aligned}
\end{equation}
As usual, respective elements in $\Uq^-$ are defined by replacing all $E$'s by $F$'s.
See also again \cite[Sec 12.1]{dcp93a} for a variant of the $\Ebasesing{w}{\psi}\,$.

We conclude this section with notation for subsets of generators and their spans determined by subsets of exponent
functions.
Suppose $w\in\wordset$ with $s=\Weylpres(w)\,$. We introduce the monomial  spanning sets corresponding to a subset 
$\mathsf S\subseteq \expsetup{s}$ of exponents as follows:
\begin{equation}\label{eq:PBW_p_basis}
    \basis{w}{\mathsf S,+}  =\left\{{\Ebase{w}{\psi}\,:\,\psi\in \mathsf S}\right\}
    \qquad \mbox{and} \qquad 
    \basisopp{w}{\mathsf S,+}=\left\{{\Ebaseopp{w}{\psi}\,:\,\psi\in\mathsf S}\right\}\,.
\end{equation}
The sets $\basis{w}{\mathsf S,-}$ and $\basisopp{w}{\mathsf S,-}$ are defined analogously with $E$'s replaced by $F$'s. The relation \eqref{eq:EFCartOrder}
then translates to the relations of sets 
\begin{equation}\label{eq:CartBasisRefl}
    \basis{w}{\mathsf S,-}=\Cartaninv\left(\basisopp{w}{\mathsf S,+}\right) \qquad \mbox{and}\qquad \,\basisopp{w}{\mathsf S,-}=\Cartaninv\left(\basis{w}{\mathsf S,+}\right)\,.
\end{equation}
In the case that $\mathsf S=\expsetup{s}$\,, that is, if all exponent functions are included, we use the shorthand $\basis{w}{\pm}=\basis{w}{\expsetup{s},\pm}\,$ and $\basisopp{w}{\pm}=\basisopp{w}{\expsetup{s},\pm}$\,. 

Recursion relations derived from (\ref{eq:PBWgenRecur}) exist only in the special case when the exponent set $\mathsf S$
splits into a respective Cartesian product. More precisely, suppose $w=u_1\cdot u_2$ and $s=t_1\cdot t_2$ with
$t_i=\Weylpres(u_i)$ and $\len{s}=\len{t_1}+\len{t_2}$\,. Assume that there are sets $\mathsf S_i\subseteq\expsetup{t_i}$
such that the bijection from (\ref{eq:ExpsetCorr}) restricts to a bijection 
$\mathsf S\rightarrow \mathsf S_1\times \mathsf S_2$\,. In this case, (\ref{eq:PBWgenRecur}) indeed implies
\begin{equation}\label{eq:BasisRecurs}
\begin{split}
    \basis{w}{\mathsf S,+} &= 
    \basis{u_1}{\mathsf S_1,+}\,\bcdot \Tinv_{t_1}\!\bigl({\basis{u_2}{\mathsf S_2,+}}\,\bigr)
    \qquad\mbox{and}\qquad  
    \basisopp{w}{\mathsf S,+} = 
    \Tinv_{t_1}\bigl({\basisopp{u_2}{\mathsf S_2,+}}\bigr)\bcdot \basisopp{u_1}{\mathsf S_1,+}.
\end{split}
\end{equation} 
The above splitting property $\mathsf S\cong \mathsf S_1\times \mathsf S_2$ may be rephrased as the condition
that $\psi\in\mathsf S$ if and only if $\restr_1^*(\psi)\in\restr_1^*(\mathsf S)$ and 
$\restr_2^*(t_1^*(\psi))\in\restr_2^*(t_1^*(\mathsf S))$, independently. Although very restrictive in general, the property 
applies to several useful situations below. 

Assume now that $\Zgen$ is an integral domain with a ring homomorphism $f\!:\Zqqn{\dpone}\rightarrow\Zgen$\,, and set $\Uqre{\Zgen}^{\pm}=\Uq^{\pm}\otimes_f\!\Zgen$ to be the respective quantum algebra over $\Zgen$\,. We denote the respective $\Zgen$-spans of the above sets in 
$\Uqre{\Zgen}^{\pm}$ by 
\begin{equation}\label{eq:DefSspans}
    \bspan{w}{\mathsf S,\pm}\,=\,\bigl\langle \basis{w}{\mathsf S,\pm}\bigr\rangle_\Zgen
    \qquad\mbox{and}\qquad
    \bspanopp{w}{\mathsf S,\pm}\,=\,\bigl\langle \basisopp{w}{\mathsf S,\pm}\bigr\rangle_\Zgen\,.
\end{equation}

As before, we write $\bspan{w}{\pm}$ or $\bspanopp{w}{\pm}$ if $\mathsf S=\expsetup{s}$. Additional identities for the action
of $\Kinvaut$ on either basis sets or spans, such as $\Kinvaut\bigl(\basis{w}{\mathsf S,+}\bigr)
=\Tinv_r\bigl(\basis{w^\winvchar}{r^*(\mathsf S),+}\,\bigr)$\,, follow from Proposition~\ref{prop:mhomonomial}.

\subsection{Some Commutation Relations}
\label{subsec:Ecommrels} We collect next various commutation relations among the $E_w$ generators that are useful in establishing integral PBW bases and also support later computations of ideal properties. The most basic commutation relation for 
a pair of generators $(A,B)$ is that they {\em skew-commute}, meaning that $BA=zAB$ where $z$ is a unit in the ground ring. An important skew-commutation relation among the $E_w$ generators is given by the following.

\begin{lemma}\label{lm:scommgens} Suppose $w\in\wordset$ with reduced decomposition $w=u\cdot v$ for non-empty $u,v\in\wordset$. Denote $j=\tau(u)$, $\alpha=\wordroot(u)$, and $\beta=\wordroot(w)$. Assume further that $w_j$ does not occur as a letter in $v$. Then
\begin{equation}\label{eq:scommgens}
    E_w E_u=q^{\symbrack{\alpha}{\beta}}E_uE_w\,.
\end{equation}
\end{lemma}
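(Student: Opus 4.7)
The strategy is to strip off the automorphism $\Tinv_{\Weylpres(u^\flat)}$, transport the first generator through $\Tinv_{s_j}^{-1}$ into a Cartan-modified step generator, and then invoke the defining quantum Serre relation $[E_i,F_j]=0$ for $i\neq j$. Set $t=\Weylpres(u^\flat)$ and $r=\Weylpres(v)$, and write $\gamma=\wordroot(v)\in\proots$. Since $w=u\cdot v$ is reduced, so is $u\cdot v^\flat=w^\flat$, and in particular $\len{t\cdot s_j\cdot r}=\len t +1+\len r$. By \eqref{eq:EFdefRecurs} and \eqref{eq:Thomrel} this gives
\begin{equation*}
E_u\,=\,\Tinv_t(E_j)\qquad\text{and}\qquad E_w\,=\,\Tinv_t\bigl(\Tinv_{s_j}(E_v)\bigr),
\end{equation*}
so by applying the automorphism $\Tinv_t^{-1}$ and using $\Weyl$-isometry of $\symbrack{\cdot}{\cdot}$ (which preserves the exponent $\symbrack{\alpha}{\beta}=\symbrack{\alpha_j}{s_j(\gamma)}$), the target identity reduces to
\begin{equation*}
\Tinv_{s_j}(E_v)\cdot E_j\,=\,q^{\symbrack{\alpha_j}{s_j(\gamma)}}\,E_j\cdot\Tinv_{s_j}(E_v).
\end{equation*}

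Next, apply $\Tinv_{s_j}^{-1}=T_j$ to both sides. By \eqref{eq:defTeasy} we have $T_j(E_j)=-F_jK_j$, and by a second application of isometry $\symbrack{\alpha_j}{s_j(\gamma)}=\symbrack{s_j(\alpha_j)}{\gamma}=-\symbrack{\alpha_j}{\gamma}$. So the claim further reduces to
\begin{equation*}
E_v\cdot F_jK_j\,=\,q^{-\symbrack{\alpha_j}{\gamma}}\,F_jK_j\cdot E_v.
\end{equation*}
By \eqref{eq:gradgens} the element $E_v$ is $\wgrad$-homogeneous of weight $\gamma$, so \eqref{eq:Kwgrad} gives $K_jE_v=q^{\symbrack{\alpha_j}{\gamma}}E_vK_j$. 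Substituting into the right-hand side, the factors of $q^{\pm\symbrack{\alpha_j}{\gamma}}$ cancel, and it remains to prove the bare commutation $E_vF_j=F_jE_v$.

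This last identity is the place where the hypothesis that $w_j$ does not occur in $v$ is used. Since every letter of $v$, including $\tau(v)$, has index different from $j$, an immediate induction on $\len v$ using the definition $E_v=\Tinv_{v^\flat}(E_{\tau(v)})$ and the explicit formulae \eqref{eq:defLeasy}, \eqref{eq:defLserre} shows that $E_v$ lies in the subalgebra of $\Uq^+$ generated by $\{E_i:i\neq j\}$. Since the relation \eqref{eq:EFcomm} gives $[E_i,F_j]=0$ for all $i\neq j$, the element $F_j$ commutes with every such generator and hence with $E_v$, completing the proof.

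The only non-routine point is the last step: one must verify that the successive applications of $\Tinv_i$ with $i\neq j$ to $E_{\tau(v)}$ never produce an $E_j$ factor. This is transparent from the generator-level formulae \eqref{eq:defLeasy}--\eqref{eq:defLserre}, but it is the essential role played by the hypothesis on $v$; without it, the reduction above fails at precisely this step, as $E_v$ and $F_j$ would no longer commute.
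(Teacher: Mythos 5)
Your proposal is correct and is essentially the paper's own argument run in the opposite direction: the paper conjugates the commutation relation $E_v(-F_jK_j)=q^{-\symbrack{\wordroot(v)}{\alpha_j}}(-F_jK_j)E_v$ forward by $\Tinv_u$ (using $E_u=\Tinv_u(-F_jK_j)$, the $K_j$-weight commutation, $E_vF_j=F_jE_v$, and $\Weyl$-isometry of $\symbrack{\cdot}{\cdot}$), which is exactly your reduction by stripping off $\Tinv_t$ and then $\Tinv_{s_j}$. The one caveat is your closing "immediate induction": since $\Tinv_i(E_i)=-K_i^{-1}F_i$, applying the braid formulae to a polynomial in the $E$'s does not visibly stay inside $\Uq^+$, so the containment of $E_v$ in the subalgebra generated by $\{E_i:i\neq j\}$ really rests on Proposition~\ref{prop:wordgencont} applied to the sub-quantum group on the letters of $v$ (the paper asserts this containment with no more detail than you do).
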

\begin{proof} Since $w_j$ is not a letter in $v$, $E_v$ is an expression in $\{E_i\}_{i\neq j}$ and, hence, commutes with $F_j$\,. Also, by \eqref{eq:gradgens} and \eqref{eq:Kwgrad} we find
$E_vK_j=q^{-\symbrack \nu {\alpha_j}}K_jE_v$ where $\nu=\wordroot(v)\,$\,. Observe further that by \eqref{eq:PBWgenRecur} $E_w=\Tinv_u(E_v)$ and 
   $E_u=\Tinv_{u^\flat}(E_j)=\Tinv_u(\Tinv_j^{-1}(E_j))=\Tinv_u(-F_jK_j)\,$. Thus
   $E_wE_u=\Tinv_u(E_v(-F_jK_j))=q^{-\symbrack \nu {\alpha_j}}\Tinv_u((-F_jK_j)E_v)=q^{-\symbrack  {\alpha_j} \nu}E_uE_w\,$. Now, let $s=\Weylpres(u)$ and $t=\Weylpres(u^\flat)$ so that
   $s=t\cdot s_j\,$. Then $s(\alpha_j)=t(s_j(\alpha_j))=-t(\alpha_j)=-\wordroot(u)=-\alpha\,$. Moreover, we have $\beta=\wordroot(u\cdot v)=s(\wordroot(v))=s(\nu)\,$. $\Weyl$-invariance of the form now implies $\symbrack  {\alpha_j} \nu= - \symbrack \alpha {\beta}\,$, from which we infer the assertion.
\end{proof}

The most basic case is given by $v=w_k$\,, when $\len w =\len u +1$ already implies the requirement $k\neq j\,$ of the lemma. It encompasses, for example, 2.3.(d3,d4) in \cite{Lu90a} and several skew-commutation relations in Section~5.2 of \cite{Lu90a}. The same references also contain strict commutation relations $E_\alpha E_\beta=E_\beta E_\alpha$ in special situations when $\alpha+\beta\not\in\proots\,$ and, hence, $\symbrack{\alpha}{\beta}=0$. We will see, however, in the proof of Proposition~\ref{prop:PBW1} below that for non-simply laced Lie type \LT{B} we may have weakly orthogonal pairs of roots $\alpha,\beta\in\proots$, meaning $\alpha+\beta\in\proots$ but $\symbrack{\alpha}{\beta}=0$, for which the associated generators do not commute.

We next reformulate the commutation relations for divided and ordinary powers of simple root generators derived in (c), (i), and (a1) from Section~5.2 in \cite{lu90b} into our conventions and a more compact formalism. 
As in Section~\ref{subsec:rank2coxsys}, assume $-A_{ji}=d_i=1$ and
$-A_{ij}=d_j=\edgenum\in\{1,2,3\}\,$ depending on Lie type. The translation utilizes the dictionary of generators from \eqref{eq:LusztigDictionary} as well as additional notation as follows.

 To begin with, define for a root lattice vector $\vec n=M\alpha_i+N\alpha_j\in\mathbb Z^{\sroots}$ the set
\begin{equation}
    S_\edgenum(\vec n)=\left\{\psi \in \nnN^{\proots}: \vec\psi=M\alpha_i+N\alpha_j\right\}\,,
\end{equation}
where $\vec\psi=\sum_\alpha\psi(\alpha)\alpha\in\mathbb Z^{\sroots}$ as in  Section~\ref{subsec:Emonomials}.
Note that the ordering of generators from right to left is the one determined by $\longtwoword{ij}$
as discussed in Section~\ref{subsec:rank2coxsys}, opposite to the one displayed in \eqref{eq:rank2rootorder}
for $\longtwoword{ji}$\,. As before, set $\beta_s=\wordroot(\longtwoword{ij}[1,s])$ so that $\beta_1=\alpha_i$\,,
$\beta_2=s_i(\alpha_j)=\edgenum\alpha_i+\alpha_j$ and so forth. 

For a given $\psi\in \nnN^{\proots}$
define $k_s=\psi(\beta_s)$. So, explicitly, for $\edgenum=1$ we have $k_1=\psi(\alpha_i)$\,, $k_2=\psi(\alpha_i+\alpha_j)$\,, and $k_3=\psi(\alpha_j)$\,, and for $\edgenum=2$ we have $k_1=\psi(\alpha_i)$\,, $k_2=\psi(2\alpha_i+\alpha_j)$\,, $k_3=\psi(\alpha_i+\alpha_j)$ and $k_4=\psi(\alpha_j)$\,. Moreover, if $\edgenum=3$\,, we have $k_1=\psi(\alpha_i)$\,, $k_2=\psi(3\alpha_i+\alpha_j)$\,, $k_3=\psi(2\alpha_i+\alpha_j)$\,, $k_4=\psi(3\alpha_i+2\alpha_j)$\,, $k_5=\psi(\alpha_i+\alpha_j)$\,, and $k_6=\psi(\alpha_j)\,$. The explicit forms of the associated root lattice vectors are thus
\begin{align}\label{eq:deffvecexp}
    \vec\psi&=
    \begin{cases}
    \;\; (k_1+k_2)\alpha_i+ (k_2+k_3)\alpha_j& \;\edgenum=1\vspace*{2mm}
    \\
    \;\; (k_1+2k_2+k_3)\alpha_i+ (k_2+k_3+k_4)\alpha_j& \;\edgenum=2\vspace*{2mm}
    \\
    \;\;  (k_1+3k_2+2k_3+3k_4+k_5)\alpha_i+ (k_2+k_3+2k_4+k_5+k_6)\alpha_j& \;\edgenum=3\,.
    \end{cases}\,
\end{align}
With these conventions we also define a quadratic function $f_\edgenum:\nnN^{\proots}\rightarrow\nnN\,$ as 
\begin{align}\label{eq:deffexp}
    f_\edgenum(\psi)&=
    \begin{cases}
    \;\; k_1k_3\,+\,k_2 & \edgenum=1\vspace*{2mm}
    \\
    \;\; 2k_4(k_1+k_2)+k_3k_1\,+\,2k_2+2k_3 & \edgenum=2\vspace*{2mm}
    \\
    \begin{array}{l}
          3k_6(k_1+2k_2+k_3+k_4)\,+\,k_5(2k_1+3k_2+k_3)\,+\,3k_4(k_1+k_2)\,+\,k_3k_1\\
    \,\;\;+\,3k_2+4k_3+6k_4+3k_5  
    \end{array}& \edgenum=3\;. 
    \end{cases}\,
\end{align}

Suppose $\hat\chi_i,\hat\chi_j\in \nnN^{\proots}$ are the indicator functions for the simple roots $\alpha_i$ and $\alpha_j$ respectively. That is, $(k_1,\ldots,k_m)$ is equal to $(1,0,\ldots,0)$ for the former and 
$(0,\ldots,0,1)$ for the latter. The variation of $f_\edgenum$ with respect to the simple root exponents can then be expressed as 
\begin{equation}\label{eq:fshiftform}
\begin{aligned} 
f_\edgenum(\psi+s\hat\chi_j)\,&= \, f_\edgenum(\psi)\,+\,s\edgenum\left({\psi(\alpha_j)+\mathscr u_j(\vec\psi)}\right)&\\
\mbox{and}\rule{0pt}{8pt}\quad \qquad
f_\edgenum(\psi+s\hat\chi_i)\,&= \, f_\edgenum(\psi)\,+\,s\left({\psi(\alpha_i)+\mathscr u_i(\vec\psi)}\right)\,. &
\end{aligned}   
\end{equation} 
Here $\mathscr u_i, \mathscr u_j\in (\mathbb Z^{\sroots})^*$ are defined for $\vec \psi=M\alpha_i+N\alpha_j$ as
\begin{align}
    \mathscr u_j(\vec\psi)&=M-N=
\begin{cases}
    k_1-k_3 &  \edgenum=1\vspace*{2mm}\\
    k_1+k_2-k_4 & \edgenum=2\vspace*{2mm}\\
    k_1+2k_2+k_3+k_4-k_6&  \edgenum=3
\end{cases}
\label{eq:ulf-j}
\\[2mm]
    \mathscr u_i(\vec\psi)&=\edgenum N-M=
\begin{cases}
    -k_1+k_3 &  \edgenum=1\vspace*{2mm}\\
    -k_1+k_3+2k_4&  \edgenum=2\vspace*{2mm}\\
    k_1+k_3+3k_4+2k_5+3k_6& \edgenum=3\,.
\end{cases}
\label{eq:ulf-i}
\end{align}
Using the basis elements from \eqref{eq:PBWEgenDivDef}, the mentioned equations in \cite{lu90b} can thus be written in the following form, which applies to all Lie types.
\begin{equation}\label{eq:LusztigCommsSumm-div} 
    E_i^{(M)}E_j^{(N)}=\sum_{\psi\in S_\edgenum(\vec n)}q^{f_\edgenum(\psi)} \Edivbaseopp{\longtwoword{ij}}{\psi}
\end{equation}
For convenience, we provide the explicit forms of the basis elements 
\begin{align}\label{eq:Eexp}
    \Edivbaseopp{\longtwoword{ij}}{\psi}&=
    \begin{cases}
    \;\; E_j^{(k_3)}E_{(ij)}^{(k_2)}E_i^{(k_1)} & \edgenum=1\vspace*{3mm}
    \\
    \;\; E_j^{(k_4)}E_{(iji)}^{(k_3)}E_{(ij)}^{(k_2)}E_i^{(k_1)} & \edgenum=2\vspace*{3mm}
    \\
    \;\; E_j^{(k_6)}E_{(ijiji)}^{(k_5)}E_{(ijij)}^{(k_4)}E_{(iji)}^{(k_3)}E_{(ij)}^{(k_2)}E_i^{(k_1)}& \edgenum=3\;. 
    \end{cases}\,
\end{align}

Multiplying both sides of \eqref{eq:LusztigCommsSumm-div} by respective quantum factorials, we obtain the  commutation relation of ordinary powers,
 \begin{equation}\label{eq:LusztigCommsSumm-ord} 
    E_i^{M}E_j^{N}=\sum_{\psi\in S_\edgenum(\vec n)}q^{f_\edgenum(\psi)} \lBrack\psi\rBrack_{\edgenum}\Ebaseopp{\longtwoword{ij}}{\psi}\,,
\end{equation}
where all coefficients are in $\mathbb Z[q,q^{-1}]\,$. Explicitly, these are expressed in terms of quantum factorials and quantum multinomial coefficients as follows.
\begin{align}\label{eq:defpsimulitnom}
    \lBrack\psi\rBrack_{\edgenum}&=
    \begin{cases}
    \;\; [k_2]!\qbin{k_1+k_2}{k_2}{}\qbin{k_2+k_3}{k_2}{}\ & \edgenum=1\vspace*{4.5mm}
    \\
    \;\; [2k_2]![k_3]!_j\qbin{k_1+2k_2+k_3}{k_1\,,\,2k_2\,,\,k_3}{}
\qbin{k_2+k_3+k_4}{k_2\,,\,k_3\,,\,k_4}{j} & \edgenum=2\vspace*{4.5mm}
    \\
    \begin{array}{l}
          [3k_2]![k_3]![k_3]!_j[3k_4]![k_4]!_j[k_5]!_j
\qbin{2k_3}{k_3}{}
\qbin{2k_4}{k_4}{j}\!\!\cdot\\
    \,\;\;\cdot\!\qbin{k_1+3k_2+2k_3+3k_4+k_5}{k_1\,,\,3k_2\,,\,2k_3\,,\,3k_4\,,\,k_5}{} 
\qbin{k_2+k_3+2k_4+k_5+k_6}{k_2\,,\,k_3\,,\,2k_4\,,\,k_5\,,\,k_6}{j}  
    \end{array}& \edgenum=3
    \end{cases}\,
\end{align}

As usual, the $j$ subscripts at the quantum numbers and quantum multinomial coefficients indicate that $q_j=q^\edgenum$ is used instead of $q_i=q$. The resummation formulae in the next lemma are an important
tool for relating bases in later sections.
\begin{lem} \label{lem:EEEresum}
Suppose $\,N,M\in\nnN\,$ and $\,\vec n=M\alpha_i+N\alpha_j\,$. Then, for an indeterminate $z$,  
    \begin{align}
    \sum_{k=0}^Nz^kq_j^{k(\mathscr u_j(\vec n)-1)}E_j^{(k)}E_i^{(M)}E_j^{(N-k)}
       &\,=\,\sum_{\phi\in S_\edgenum(\vec n)}\,q^{f_\edgenum(\phi)}
       \prod_{t=1}^{\phi(\alpha_j)}\bigl(q_j^{-2 t}z+1\bigr)
       \Ebaseopp{\longtwoword{ij}}{(\phi)}\label{eq:EEEresum-j}\\
     \sum_{k=0}^Mz^kq_i^{k(\mathscr u_i(\vec n)-1)}E_i^{(M-k)}E_j^{(N)}E_i^{(k)}
     &\,=\,\sum_{\phi\in S_\edgenum(\vec n)}\,q^{f_\edgenum(\phi)}
     \prod_{t=1}^{\phi(\alpha_i)}\bigl(q_i^{-2 t}z+1\bigr)
     \Ebaseopp{\longtwoword{ij}}{(\phi)}\,.\label{eq:EEEresum-i}
\end{align}
\end{lem}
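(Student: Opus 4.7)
The plan is to prove both identities by the same direct computation, which reduces them to an application of Gauss's $q$-formula \eqref{eq:q-combin-form} after using the rank-two commutation relation \eqref{eq:LusztigCommsSumm-div}. I sketch the argument for \eqref{eq:EEEresum-j}; identity \eqref{eq:EEEresum-i} follows by the symmetric computation, reading the basis $\Edivbaseopp{\longtwoword{ij}}{\psi}$ from the right instead of the left.

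First I would apply \eqref{eq:LusztigCommsSumm-div} to the product $E_i^{(M)}E_j^{(N-k)}$ that sits inside each summand on the left-hand side of \eqref{eq:EEEresum-j}, producing a sum over $\psi\in S_\edgenum(M\alpha_i+(N-k)\alpha_j)$ of terms $q^{f_\edgenum(\psi)}\,\Edivbaseopp{\longtwoword{ij}}{\psi}$. Since the leftmost factor of $\Edivbaseopp{\longtwoword{ij}}{\psi}$ is $E_j^{(\psi(\alpha_j))}$, left-multiplying by $E_j^{(k)}$ and using $E_j^{(k)}E_j^{(m)}=\qbin{k+m}{k}{j}E_j^{(k+m)}$ lets me re-index by $\phi=\psi+k\hat\chi_j\in S_\edgenum(\vec n)$, so that $\phi(\alpha_j)=\psi(\alpha_j)+k$ and all other values of $\phi$ agree with those of $\psi$.

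Next I would replace $f_\edgenum(\psi)$ by $f_\edgenum(\phi)$ using \eqref{eq:fshiftform}. Since $\mathscr u_j(\alpha_j)=-1$, the identity
\[
\psi(\alpha_j)+\mathscr u_j(\vec\psi)=(\phi(\alpha_j)-k)+(\mathscr u_j(\vec\phi)+k)=\phi(\alpha_j)+\mathscr u_j(\vec\phi)
\]
yields $f_\edgenum(\psi)=f_\edgenum(\phi)-k\edgenum\bigl(\phi(\alpha_j)+\mathscr u_j(\vec n)\bigr)$, where I have used $\vec\phi=\vec n$. Combining this with the prefactor $q_j^{k(\mathscr u_j(\vec n)-1)}=q^{k\edgenum(\mathscr u_j(\vec n)-1)}$ in front of the $k$-th term of the original sum produces an overall $q_j$-exponent of $-k(\phi(\alpha_j)+1)$, so that after interchanging the two summations the left-hand side becomes
\[
\sum_{\phi\in S_\edgenum(\vec n)} q^{f_\edgenum(\phi)}\,\Edivbaseopp{\longtwoword{ij}}{\phi}\sum_{k=0}^{\phi(\alpha_j)}\qbin{\phi(\alpha_j)}{k}{j} q_j^{-k(\phi(\alpha_j)+1)}z^k.
\]
The inner sum is precisely the left-hand side of Gauss's formula \eqref{eq:q-combin-form} at $q\to q_j$ and $a=\phi(\alpha_j)$, which equals $\prod_{t=1}^{\phi(\alpha_j)}(q_j^{-2t}z+1)$, matching the right-hand side of \eqref{eq:EEEresum-j}.

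For \eqref{eq:EEEresum-i} I would carry out the analogous computation: apply \eqref{eq:LusztigCommsSumm-div} to $E_i^{(M-k)}E_j^{(N)}$, then right-multiply by $E_i^{(k)}$, absorb it into the rightmost factor $E_i^{(\psi(\alpha_i))}$ via $E_i^{(m)}E_i^{(k)}=\qbin{k+m}{k}{}E_i^{(k+m)}$, and substitute $\phi=\psi+k\hat\chi_i$. Using $\mathscr u_i(\alpha_i)=-1$ the same cancellation applied via the $i$-variant of \eqref{eq:fshiftform} leaves only the exponent $-k(\phi(\alpha_i)+1)$ on $q_i$, after which \eqref{eq:q-combin-form} closes the argument. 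No genuine obstacle arises; the only point requiring care is the bookkeeping in step two, where the two $k$-dependent contributions to the power of $q$ (one from $\mathscr u$ and one from $\phi(\alpha_j)$, respectively $\phi(\alpha_i)$) must cancel cleanly so that the resulting geometric-type sum over $k$ fits the hypothesis of Gauss's formula.
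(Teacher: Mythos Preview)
Your proof is correct and follows essentially the same approach as the paper: apply \eqref{eq:LusztigCommsSumm-div} to the inner product, absorb the outer $E_j^{(k)}$ (respectively $E_i^{(k)}$) into the extremal factor of the basis element, shift the summation index via $\phi=\psi+k\hat\chi_j$, use \eqref{eq:fshiftform} to cancel the $\mathscr u_j$-dependent exponents, and close with Gauss's formula. The only cosmetic difference is that the paper absorbs the prefactor into a single variable $w=z\,q_j^{\mathscr u_j(\vec n)-1}$ at the outset and resubstitutes at the end, whereas you carry the two $q$-powers separately; the cancellation is the same.
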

\begin{proof} We show here only \eqref{eq:EEEresum-j} 
since the proof for \eqref{eq:EEEresum-i} is analogous. Set $w=zq_j^{\mathscr u_j(\vec n)-1}=zq^{\edgenum(M-N-1)}$. The  left side of \eqref{eq:EEEresum-j} can be evaluated as 
\begin{align*}
\sum_{k=0}^Nw^kE_j^{(k)}E_i^{(M)}E_j^{(N-k)}
=\sum_{\substack{0\leq k\leq N\\ \psi\in S_\edgenum(\vec n-k\alpha_j) }}w^kq^{f_\edgenum(\psi)}E_j^{(k)}\Ebaseopp{\longtwoword{ij}}{(\psi)}
=\sum_{\substack{0\leq k\leq N\\ \psi\in S_\edgenum(\vec n-k\alpha_j) }}w^kq^{f_\edgenum(\psi)}\qbinsmall{\psi(\alpha_j)+k}{k}_j\Ebaseopp{\longtwoword{ij}}{(\psi+k\hat\chi_j)}\\ 
=\sum_{\substack{0\leq k\leq \phi(\alpha_j)\\ \phi\in S_\edgenum(\vec n) }}w^kq^{f_\edgenum(\phi-k\hat\chi_j)}\qbinsmall{\phi(\alpha_j)}{k}_j\Ebaseopp{\longtwoword{ij}}{(\phi)}
=\sum_{\phi\in S_\edgenum(\vec n)}q^{f_\edgenum(\phi)}\left(
\sum_{k=0}^{\phi(\alpha_j)}w^kq^{-k\edgenum(\phi(\alpha_j)+M-N)}\qbinsmall{\phi(\alpha_j)}{k}_j\right)\Ebaseopp{\longtwoword{ij}}{(\phi)}\,.
\end{align*}
Here we use \eqref{eq:LusztigCommsSumm-div} in the first equality with $\vec n-k\alpha_j=M\alpha_i+(N-k)\alpha_j\,$. The second step is immediate from the forms \eqref{eq:deffexp} and combining quantum factorials. In the third equality, we substitute $\phi=\psi+k\hat\chi_j$\,, summing instead over $\phi\in S_\edgenum(\vec n)$ with $\phi(\alpha_j)\geq k$\,. The last step invokes 
\eqref{eq:fshiftform} and rearranges summations. After resubstituting $z$ in the last expression, we  apply Gauss's formula from \eqref{eq:q-binom-form} with $a=\phi(\alpha_j)$ to arrive at the desired expression.
\end{proof}

Specializing \eqref{eq:EEEresum-j} to $z=-q_j^2$ and \eqref{eq:EEEresum-i} to $z=-q_i^2$ yields the commutation relations
in Section~5.5 of \cite{lu90b}. In this case, the product terms will be zero unless $\phi(\alpha_j)=k_m=0$ or $\phi(\alpha_i)=k_1=0$\,, respectively. Assuming further $\mathscr u_j(\vec n)=M-N=0$ for \eqref{eq:EEEresum-j}, it is immediate from \eqref{eq:ulf-j} that $k_1=\ldots=k_{m-2}=0\,$. Thus, the only exponent function $\phi'$ that occurs in the summation
is the one that is $N$ on $\wordroot(a_{m-1})=\alpha_i+\alpha_j$ and zero on all other roots, where $a_{j}$ is as in 
\eqref{eq:def:abwords}. Observing also that
$f_\edgenum(\phi')=\edgenum N\,$, the right side of \eqref{eq:EEEresum-j} thus reduces to $q^{\edgenum N}E_{a_{m-1}}^{(N)}\,$. 

Analogously, for \eqref{eq:EEEresum-i} the constraint $\mathscr u_i(\vec n)=\edgenum N-M=0$ implies $k_3=\ldots=k_m=0$ so that
the only remaining exponent function is the one which is $N$ on $\wordroot(a_2)=\edgenum \alpha_i+\alpha_j$ and zero on others. The right side of \eqref{eq:EEEresum-j} thus becomes $q^{\edgenum N}E_{a_{2}}^{(N)}\,$. The resulting identities are summarized in the left column of the corollary below.  
\begin{cor}\label{cor:EdivpwSimpExp} The following identities hold:
    \begin{equation}\label{eq:EdivpwSimpExp-a}
    E_{a_{m-1}}^{(N)}=\sum_{k=0}^N (-1)^{N-k}q_j^{-k} E_j^{(N-k)}E_i^{(N)}E_j^{(k)}\,,
\qquad \quad 
E_{(ji)}^{(N)}=\sum_{k=0}^N (-1)^{k}q_j^{-(N-k)} E_j^{(N-k)}E_i^{(N)}E_j^{(k)}\,,
\end{equation}
\begin{equation}\label{eq:EdivpwSimpExp-b}
    E_{(ij)}^{(N)}=\sum_{k=0}^{\edgenum N} (-1)^{\edgenum N-k}q^{-k}E_i^{(k)}E_j^{(N)}E_i^{(\edgenum N-k)}\,,
    \qquad\quad 
    E_{b_{m-1}}^{(N)}=\sum_{k=0}^{\edgenum N} (-1)^{k}q^{-(\edgenum N-k)}E_i^{(k)}E_j^{(N)}E_i^{(\edgenum N-k)}\,.
\end{equation}
\end{cor}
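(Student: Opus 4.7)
My plan is to derive the four identities in two logical steps: first establish the first and third identities by specializing Lemma~\ref{lem:EEEresum}, then obtain the second and fourth by applying the anti-involution $\Kinvaut$.

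For the first identity, I would specialize \eqref{eq:EEEresum-j} by setting $z=-q_j^2$ and $M=N$ (so that $\mathscr u_j(\vec n)=M-N=0$). With this choice of $z$, the product factor $\prod_{t=1}^{\phi(\alpha_j)}(q_j^{-2t}z+1)$ vanishes at $t=1$, so the only surviving $\phi$'s on the right satisfy $\phi(\alpha_j)=k_m=0$. Combined with the constraint $\mathscr u_j(\vec\phi)=M-N=0$ from \eqref{eq:ulf-j}, a short case check for $\edgenum\in\{1,2,3\}$ shows that all $k_s$ with $s\neq m-1$ must vanish, leaving a unique surviving exponent function with $k_{m-1}=N$. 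The corresponding basis element reduces to $E_{a_{m-1}}^{(N)}$, and from \eqref{eq:deffexp} we compute $f_\edgenum(\phi)=\edgenum N$. The left side of \eqref{eq:EEEresum-j} simplifies to $\sum_{k=0}^{N}(-1)^k q_j^{k}E_j^{(k)}E_i^{(N)}E_j^{(N-k)}$, and substituting $k\mapsto N-k$ after solving for $E_{a_{m-1}}^{(N)}$ yields the first identity. The third identity follows completely analogously from \eqref{eq:EEEresum-i} with $z=-q_i^2$ and $M=\edgenum N$; in this case the constraints force $k_s=0$ for $s\neq 2$ and $k_2=N$, so the unique basis element is $E_{(ij)}^{(N)}$ with $f_\edgenum(\phi)=\edgenum N$, and the substitution $k\mapsto \edgenum N-k$ produces the stated form.

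For the second and fourth identities, I would apply $\Kinvaut$ to the first and third identities, respectively. Using Proposition~\ref{prop:MohGenConv} together with \eqref{eq:Eabcompl}, we have $\Kinvaut(E_{a_{m-1}})=E_{b_{m-(m-1)+1}}=E_{b_2}=E_{(ji)}$ and $\Kinvaut(E_{a_2})=E_{b_{m-1}}$. Since $\Kinvaut$ is a $\Zqqvn{\dpone}$-algebra anti-homomorphism fixing both $E_i$ and $E_j$, applying $\Kinvaut$ to a divided power $E_w^{(k)}=\tfrac{1}{[k]!_w}E_w^k$ produces $E_{\Kinvaut(w)}^{(k)}$ with the same scalar, and reverses the order of multiplication in the products $E_j^{(N-k)}E_i^{(N)}E_j^{(k)}$ or $E_i^{(k)}E_j^{(N)}E_i^{(\edgenum N-k)}$. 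Reindexing the summation with $k\mapsto N-k$ (respectively $k\mapsto \edgenum N-k$) converts the resulting expression to the stated form of the second (respectively fourth) identity.

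The main obstacle will be the careful case-by-case verification that the combined constraints $\phi(\alpha_j)=0$ and $\mathscr u_j(\vec\phi)=0$ (and analogously for the $i$-variant) really do force a unique surviving $\phi$ in each of the three rank-2 types, together with the corresponding bookkeeping for $f_\edgenum(\phi)$ using \eqref{eq:deffexp}. This is entirely elementary but somewhat tedious in type $\LT{G}_2$; however, the paragraph preceding the corollary already sketches the argument in the simply laced case, and the remaining cases follow the same pattern. Once these constraint analyses are done, the algebraic manipulations and the application of $\Kinvaut$ are routine.
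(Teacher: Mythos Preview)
Your proposal is correct and follows essentially the same approach as the paper: specialize Lemma~\ref{lem:EEEresum} at $z=-q_j^2$ (respectively $z=-q_i^2$) with the appropriate choice of $M$ to obtain the first and third identities, then apply $\Kinvaut$ together with \eqref{eq:Eabcompl} and the complementary word relations $a_{m-1}\wcomplrel b_2$, $a_2\wcomplrel b_{m-1}$ to deduce the second and fourth. The constraint analysis you outline matches the paper's, and the paper in fact handles all $\edgenum\in\{1,2,3\}$ uniformly via \eqref{eq:ulf-j} and \eqref{eq:ulf-i} rather than just the simply laced case.
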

The identities in the right column are obtained by applying $\mho$ and using \eqref{eq:Eabcompl} as well as the relations
$a_{m-1}\wcomplrel b_2=w_jw_i\,$ and $b_{m-1}\wcomplrel a_2=w_iw_j\,$ as in Lemma~\ref{lm:rk2complw}. 

As already indicated, \eqref{eq:EdivpwSimpExp-a} corresponds to the first relation in Section~5.5 of \cite{lu90b}. 
For the change in conventions, note that $\ELu_{ij}=\ELu_{(ji)}=E_{a_{m-1}}$ by \eqref{eq:LusztigDictionary}. 
Similarly, the right identity of \eqref{eq:EdivpwSimpExp-b}  
is the translation of the second expression in \cite[Section 5.5]{lu90b}.
For this, the generators $\ELu_{ij}$\,, $\ELu_{iij}$\,, and $\ELu_{iiij}$ listed in \cite{lu90b}
for $\edgenum=1$, $2$, and $3$, respectively, coincide in all cases with $\ELu_{b_{m-1}}=E_{(ij)}$ via 
\eqref{eq:LusztigDictionary}.

We thus have expansions into products of generators for simple roots of divided power generators $E_w^{(N)}$  for all words, with exception of the words $a_3\wcomplrel b_4$ and $a_4\wcomplrel b_3$ in the $\LT{G}_2$-case. Yet, unlike
\eqref{eq:LusztigCommsSumm-ord}, equations \eqref{eq:EdivpwSimpExp-a} and \eqref{eq:EdivpwSimpExp-b} cannot be written as expansions of ordinary powers over $\mathbb Z[q,q^{-1}]\,$.

\subsection{Numberings, Boxes, and Refining Convex Orderings} \label{subsec:orderings}
The integral PBW bases derived in \cite{Lu90a,lu90b} for the divided power algebra $\LUq^+$ require orderings on $\proots$ that are compatible with a specific pre-order $\prLleq\,$ on $\proots\,$, defined in Section~4.3 of \cite{lu90b}. As we adapt the proofs there to the situation of regular powers, we review this pre-order next and establish various relations to convex orderings and maximal words. 

Lusztig's construction of $\prLleq\,$ starts by imposing a {\em good} numbering on the set $\sroots\,$ of simple roots. For a given indexing $\,\sroots=\{\alpha_1,\ldots,\alpha_n\}\,$, write $\proots[1,j]$ for the respective subroot system generated by  $\{\alpha_1,\ldots,\alpha_j\}$\,. Assuming that each $\proots[1,j]$ is connected, set $L_j=\proots[1,j]-\{\rho_j\}$\,, where $\rho_j$ is the highest root in $\proots[1,j]\,$. The numbering is called good if for any root $\beta\in L_j$ the coefficient of $\alpha_j$ is at most 1. 

The numberings in appendix of \cite{Bou02} are good with the exception of types \LT{B} and \LT{F}. For the latter two, the reverse numberings given by $i\leftrightharpoons l+1-i$ are good. For type A any numbering is good, for \LT{C} also the reverse of \cite{Bou02} is good, and for type \LT{D} any numbering coinciding with the one in \cite{Bou02} in the highest index as well as the reverse of the one in \cite{Bou02} are good. 

For a given good numbering, any positive root $\beta$ can be uniquely written as $\beta=b_j\alpha_j+b_{j-1}\alpha_{j-1}+\ldots+b_1\alpha_1$ with $b_j>0\,$. Lusztig introduces maps $g:\proots\rightarrow \{1,\ldots,n\}$ and $c:\proots\rightarrow \{1,2\}\,$ given by $g(\beta)=j$ and $c(\beta)=c_j\,$. It is clear from the good numbering construction that $c(\beta)=2$ implies that $\beta=\rho_j$ in $\proots[1,j]\,$. 
Lusztig further defines $h':\proots\rightarrow\mathbb Q$ by $h'(\beta)=c(\beta)^{-1}\height(\beta)\,$, where $\height(\beta)=\sum_ib_i$ is the height of the root with respect to $\sroots\,$. 

The pre-order is now defined by writing $\,\alpha\prLleq \beta\,$ if $\,g(\alpha)\geq g(\beta)\,$ and $\,h'(\alpha)\leq h'(\beta)\,$. The equivalence classes $R_{m,k}=\{\beta\in\proots: g(\beta)=m,\,h'(\beta)=k\}$ are called {\em boxes} in \cite{lu90b}. We record the following two observations.

\begin{lemma}\label{lm:nosumbox}
    Suppose $\alpha,\beta\in\proots$ ($\alpha\neq\beta$) belong to the same box. Then none of $\pm\alpha\pm\beta$ are roots. 
\end{lemma}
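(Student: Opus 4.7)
The plan is a case analysis on $(c(\alpha),c(\beta))\in\{1,2\}^2$, using the good-numbering property that $c(\beta)=2$ forces $\beta=\rho_{g(\beta)}$, together with a parity observation for $\height(\rho_m)$. Throughout, set $m=g(\alpha)=g(\beta)$ and $k=h'(\alpha)=h'(\beta)$.

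First I would dispose of the easy cases: if $c(\alpha)=c(\beta)=2$, then both equal $\rho_m$, contradicting $\alpha\neq\beta$, so this case is void. In the main case $c(\alpha)=c(\beta)=1$, one has $\height(\alpha)=\height(\beta)=k$, so $\pm(\alpha-\beta)$ cannot be a root: its $\alpha_m$-coefficient vanishes, so as a hypothetical root it would lie in $\proots[1,m-1]$, yet its signed height is $\pm(\height(\alpha)-\height(\beta))=0$, while roots have nonzero height. For $\pm(\alpha+\beta)$, the $\alpha_m$-coefficient is $2$, so the defining property of $L_m=\proots[1,m]\setminus\{\rho_m\}$ leaves only the candidate $\alpha+\beta=\rho_m$, necessarily with $c(\rho_m)=2$, and hence forces $\height(\rho_m)=2k$ to be even. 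The remaining mixed case $c(\alpha)=2,\ c(\beta)=1$ is analogous: $\alpha=\rho_m$, and matching the $h'$-values gives $\height(\rho_m)/2=\height(\beta)=k$, again forcing $\height(\rho_m)$ even.

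The hard part will be justifying the resulting parity claim: whenever $c(\rho_m)=2$, the height of $\rho_m$ in $\proots[1,m]$ is odd. My argument would be that $c(\rho_m)=2$ precludes $\proots[1,m]$ from being of type $\LT{A}$, whose highest root has every simple coefficient equal to $1$; since $\proots[1,m]$ is connected and irreducible by the good-numbering hypothesis, it must therefore be of one of the types $\LT{B},\LT{C},\LT{D},\LT{E}_6,\LT{E}_7,\LT{E}_8,\LT{F}_4,$ or $\LT{G}_2$. For each of these, the Coxeter number is even, and the standard identity $\height(\rho_m)=h_m-1$ (where $h_m$ denotes the Coxeter number of $\proots[1,m]$) then yields an odd $\height(\rho_m)$, contradicting $\height(\rho_m)=2k$ and completing the argument. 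Everything else reduces to a direct unpacking of the definitions of $g$, $c$, $h'$, and the good numbering.
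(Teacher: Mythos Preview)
Your proof is correct and follows the same case analysis on $(c(\alpha),c(\beta))$ as the paper. The one substantive difference is in how you establish that $\height(\rho_m)$ is odd when $c(\rho_m)=2$: the paper observes that, with the specific good numberings it chooses, $c(\rho_m)=2$ occurs only when $\proots[1,m]$ is of type $\LT{E}_8$, $\LT{F}_4$, or $\LT{G}_2$, and then checks the heights $29$, $11$, $5$ directly; you instead note that $c(\rho_m)=2$ rules out type $\LT{A}$ and then invoke the identity $\height(\rho_m)=h_m-1$ together with evenness of the Coxeter number in all non-$\LT{A}$ types. Your route is a bit cleaner in that it avoids tracking which subsystems $\proots[1,m]$ actually arise from the chosen numberings, at the cost of citing the Coxeter-number fact; the paper's route is more self-contained but relies on the particular good numberings set up earlier.
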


\begin{proof}
    Given $m=g(\alpha)=g(\beta)$ we have $\alpha,\beta\in M=\proots[1,m]\,$, where $M$ itself is a root system with highest root $\rho$, so that it suffices to show $\pm\alpha\pm\beta\not\in M\,$. Assume first $c(\alpha)=c(\beta)=1$\,. Then $\height(\alpha-\beta)=\height(\alpha)-\height(\beta)=h'(\alpha)-h'(\beta)=0=\height(\beta-\alpha)$\,, which is not possible. For $\gamma=\alpha+\beta$ we have 
    $c(\gamma)=2$ and $\height(\gamma)=2k\,$. If $\gamma\in M$ the former implies that $\gamma=\rho$ and that $M$ is of type $\LT{E}_8$\,, $\LT{F}_4$\,, or $\LT{G}_2$\,. In all three cases $\height(\rho)$ is odd (29, 11, or 5), leading to a contradiction. Now, $c(\alpha)=2$ implies $\alpha=\rho$ and hence $k=h'(\alpha)\in\{14.5, 5.5, 2.5\}$ so that there is no other root in the same box.
\end{proof}

An immediate corollary is that any two roots in the same box are (strongly) orthogonal. In \cite{Lu90a,lu90b} Lusztig goes on to consider total orders refining $\prLleq\,$. With good numberings as above, we notice the following.

\begin{lemma}
    Any total order on $\proots$ refining $\prLleq\,$ is convex and thus of the form $\leqwt{w}$ for some $w\in\wordsetmax\,$.
\end{lemma}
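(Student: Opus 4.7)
The plan is to establish convexity of $\prec$ directly, after which Papi's bijection recalled in Section~\ref{subsec:descroots} between convex orders on $\proots$ and elements of $\wordsetmax$ supplies the desired $w$. So fix a total order $\prec$ on $\proots$ refining $\prLleq$, let $\alpha,\beta\in\proots$ satisfy $\alpha\prec\beta$, and suppose $\gamma:=\alpha+\beta\in\proots$. The task is to verify $\alpha\prec\gamma\prec\beta$.

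First I would apply Lemma~\ref{lm:nosumbox}: since $\gamma\in\proots$, the roots $\alpha$ and $\beta$ cannot inhabit the same box $R_{m,k}$, so they are strictly comparable under $\prLleq$, and $\alpha\prec\beta$ then forces $\alpha\prLleq\beta$ with $\beta\not\prLleq\alpha$. The essential step is to locate the box $R_{g(\gamma),h'(\gamma)}$ strictly between those of $\alpha$ and $\beta$ in $\prLleq$; strict separation in $\prLleq$ survives every refinement and therefore delivers $\alpha\prec\gamma\prec\beta$.

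I would split into two cases according to whether $g(\alpha)>g(\beta)$ or $g(\alpha)=g(\beta)$. In the first case, $\beta$ lies in $\proots[1,g(\alpha)-1]$ and contributes $0$ to the $\alpha_{g(\alpha)}$-coefficient of $\gamma$, so $g(\gamma)=g(\alpha)$ and that coefficient equals $c(\alpha)$. The value $c(\alpha)=2$ is ruled out, for it would identify $\alpha$ with the highest root $\rho_{g(\alpha)}$ of $\proots[1,g(\alpha)]$, obstructing the existence of $\gamma=\alpha+\beta\in\proots$ by maximality in the dominance order. Hence $c(\alpha)=c(\gamma)=1$, so $h'(\gamma)=h'(\alpha)+\height(\beta)>h'(\alpha)$; together with $g(\gamma)>g(\beta)$, this places $\gamma$ strictly between $\alpha$ and $\beta$. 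In the second case, strict comparability forces $h'(\alpha)<h'(\beta)$, while both roots have positive $\alpha_m$-coefficient, so the $\alpha_m$-coefficient of $\gamma$ equals $c(\alpha)+c(\beta)\geq 2$; since $c(\gamma)\in\{1,2\}$ this is only consistent with $c(\alpha)=c(\beta)=1$, $c(\gamma)=2$, and $\gamma=\rho_m$. (Either of $c(\alpha)=2$ or $c(\beta)=2$ is again excluded by the highest-root obstruction.) Then $h'(\gamma)=\tfrac12(h'(\alpha)+h'(\beta))$ strictly separates $h'(\alpha)$ from $h'(\beta)$ at the same value of $g$, as required.

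The primary technical subtlety, and the main point at which care is needed, is the systematic use of the good-numbering hypothesis on $\sroots$: it is precisely this property that guarantees $c(\cdot)\in\{1,2\}$ and that $c(\rho_j)=2$ singles out the highest roots of the subsystems $\proots[1,j]$. The latter is what ensures that the degenerate configurations in which $\gamma$'s box collides with or bypasses those of $\alpha$ and $\beta$ cannot occur. With both cases handled, every total refinement of $\prLleq$ is convex, and Papi's theorem produces a unique $w\in\wordsetmax$ with $\prec{}=\letwt{w}$.
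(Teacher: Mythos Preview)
Your proof is correct and follows the same strategy as the paper's: invoke Lemma~\ref{lm:nosumbox} to separate the boxes of $\alpha$ and $\beta$, split on $g(\alpha)>g(\beta)$ versus $g(\alpha)=g(\beta)$, and in each case locate the box of $\gamma$ strictly between those of $\alpha$ and $\beta$ in the pre-order. The only difference is cosmetic: the paper packages both cases via the formula $h'(\gamma)=t\,h'(\alpha)+s\,h'(\beta)$ with $t=c(\alpha)c(\gamma)^{-1}$ and $s=c(\beta)c(\gamma)^{-1}$, whereas you compute $h'(\gamma)$ directly in each case and are more explicit about the highest-root obstruction excluding $c(\alpha)=2$.
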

\begin{proof} Let $\alpha,\beta,\gamma=\alpha+\beta\in\proots$ and suppose $\alpha\in R_{m,k}$ and $\beta\in R_{m',k'}$\,.
Set $t=c(\alpha)c(\gamma)^{-1}$ and $s=c(\beta)c(\gamma)^{-1}$ so that $h'(\gamma)=t\cdot k+s\cdot k'\,$,
using that $\height(\gamma)=\height(\alpha)+\height(\beta)$\,. Write also $\mu\not\sim \nu$ to indicate two roots are not in the same box. 

Suppose now $<$ is a total order refining $\prLleq$ and assume $\alpha < \beta$\,. Since $\gamma\in\proots$, Lemma~\ref{lm:nosumbox} implies $\alpha\not\sim \beta$\,. Hence, either we have $m>m'$ or we have $m=m'$ and $k<k'\,$. The former case implies $g(\gamma)=m$\,, which means $\gamma\prLleq\beta$ and $\beta\not\sim\gamma$ and, hence, $\gamma<\beta\,$. Moreover, with $c(\gamma)=c(\alpha)\,$, we find $t=1$ so that 
$h'(\gamma)=k+s\cdot k'>k\,$. Hence, $\gamma\prLgeq\alpha$ and $\gamma\not\sim \alpha$ so that $\alpha<\gamma\,$. 

In the second case we have $g(\alpha)=g(\beta)=g(\gamma)$ and $c(\gamma)=c(\alpha)+c(\beta)$ so that $s=(1-t)$\,. Thus $h'(\gamma)=t\cdot k+(1-t)\cdot k'$ and $k<h'(\gamma)<k'\,$ since $0<t<1$\,. We, therefore, obtain $\alpha<\gamma<\beta$ in this case as well and, hence, convexity of the total ordering. As shown in \cite{Pa94} this implies that $<$ needs to be of the form $\leqwt{w}$ as desired.
\end{proof}

Note, for Lie types \LT{A}\,, \LT{B}\,, and \LT{G} each box contains no more than one element. Thus, there is only one $w\in\wordsetmax$ for which the total order $\leqwt{w}$ refines $\prLleq\,$. Specifically, the chosen good numbering for $\LT{A}_n$ is realized by the maximal word $w_{\LT{A}_n}=a_1a_2\ldots a_n$\,, where $a_k=w_nw_{n-1}\ldots w_k\,$. Also, for $\LT{B}_n$ we have $w_{\LT{B}_n}=b_n\ldots b_1$\,, where $b_k=w_kw_{k-1}\ldots w_2w_1w_2\ldots w_{k-1}w_k$\,, and for $\LT{G}_2$ that $w_{\LT{G}_2}=(w_2w_1)^3\,$. 

For $\LT{C}_n$ with the Bourbaki numbering one longest word choice refining $\prLleq\,$
is given as a product $w=u\cdot v$. Here $u=\alpha_0\beta_0\alpha_1\beta_1\ldots \beta_1\alpha_1\beta_0\alpha_0$ is a palindromic expression in $\alpha_j=w_nw_{n-2}\ldots w_{n-2j}$ and $\beta_j=w_{n-1}w_{n-3}\ldots w_{n-2j-1}$ centered around the longest $\alpha_k$ or $\beta_k$\,, and $v$ is a  longest word for $A_{n-1}$\,. The reduced word 
$$
w_4w_3w_2w_3w_1w_4w_2w_3w_4w_2w_3w_1w_2w_3w_4w_3w_2w_3w_1w_2w_3w_1w_2w_1
$$ is compatible with the pre-order for $\LT{F}_4$ using the reverse numbering of that in \cite{Bou02}. The words for the simply laced \LT{D} and \LT{E} Lie types are similarly worked out with additional choices within boxes. 

\subsection{PBW Bases for Box Orderings}\label{subsec:PBWspec} The sole focus of this section is the adaption of the proofs in \cite{Lu90a,lu90b} to yields a PBW theorem for regular power 
generators instead of divided power generators. The main 
change from the original proof is that the involved
commutation and recursion relations for generators are valid only over
$\Zqqn{\dpone}$ rather than $\Zqq$ as in \cite{Lu90a,lu90b}. 
The second difference is merely concerning conventions, 
namely the alternate choice of generators $E_w$ defined
via the automorphisms $\Tinv_i$ in Section~\ref{subsec:genwords} rather than the $\ELu_w$ used in \cite{Lu90a,lu90b}. Thanks to the relation in (\ref{eq:mhomonomax}) 
it is not difficult to move between 
these two settings using the $\Kinvaut$ involution, 
while also replacing the root ordering for $z$ by 
the one for $z^\winvchar$. 
The form (\ref{eq:EFConjOrder}) for
the action of the $\Qinvaut$ involutions 
implies, further,  
that bases with generators multiplied in opposing directions have the same spans. 

Although the inductive argument for a PBW theorem itself is unchanged from \cite{Lu90a,lu90b}, we outline  the main steps
again in the proof below. We also adapt the formulae to fit our conventions, even as the mentioned
automorphisms and symmetries would afford us the flexibility to retain those in \cite{Lu90a,lu90b}. We freely use the notation for monomial sets and spans introduced at the end of Section~\ref{subsec:Emonomials}, with a focus on the  minimal
ground ring $\Zqqn{\dpone}\,$ and the maximal exponent set $\mathsf S=\expsetup{\longweyl}\,$.

\begin{prop}\label{prop:PBW1}
Suppose $w\in\wordsetmax$ is such that $\leqwt{w}$  refines $\prLleq\,$. Then $\Uq^+$ is free a $\Zqqn{\dpone}$-module with basis given by either the generators in $\basisp{w}$ or those in $\basispopp{w}$ as  defined in \eqref{eq:PBW_p_basis}.
The analogous statement holds for $\Uq^-$ and the sets $\basisn{w}$ and $\basisnopp{w}\,$. 
\end{prop}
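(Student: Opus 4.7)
My plan is to adapt Lusztig's inductive straightening procedure for $\LUq^+$ in \cite[\S 4--5]{lu90b} to the regular-power setting over the minimal ring $\Zqqn{\dpone}$. The four cases reduce to $\basisp{w}$: identity (\ref{eq:EFCartOrder}) shows that the anti-involution $\Cartaninv$ interchanges $\basisp{w}\leftrightarrow \basisnopp{w}$ and $\basisn{w}\leftrightarrow \basispopp{w}$, while (\ref{eq:EFConjOrder}) matches $\basisp{w}$ with $\basispopp{w}$ up to invertible scalar factors; both transformations preserve $\Zqqn{\dpone}$-freeness.

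Linear independence then follows from the corresponding statement over the fraction field $\mathbb Q(q)\supset \Zqqn{\dpone}$. The identity $\Ebase{w}{\psi}=[\psi]!\,\Edivbase{w}{\psi}$ of (\ref{eq:PBWEgenDivDef}), combined with the invertibility of each $[\psi]!$ in $\mathbb Q(q)$, reduces the question to whether $\{\Edivbase{w}{\psi}\}_{\psi\in\expsetupmax}$ is a $\mathbb Q(q)$-basis of $\Uqre{\mathbb Q}^+$; this is \cite[Cor.~40.2.2]{lu} once the $T_i$- and $\Tinv_i$-conventions are matched via (\ref{eq:mhomonomax}).

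For the spanning property I would argue by induction. Since $E_1,\dots,E_n$ generate $\Uq^+$ over $\Zqqn{\dpone}$, every element is a $\Zqqn{\dpone}$-linear combination of monomials in the simple generators; given any adjacent out-of-order pair $E_iE_j$, I would apply the rank 2 identity (\ref{eq:LusztigCommsSumm-ord}) to the longest contiguous block $E_i^M E_j^N$, producing a $\mathbb Z[q,q^{-1}]$-linear combination of ordered monomials $\Ebaseopp{\longtwoword{ij}}{\psi}$ with coefficients $q^{f_\edgenum(\psi)}\lBrack\psi\rBrack_\edgenum$. Since $\leqwt{w}$ refines $\prLleq$, its restriction to each rank 2 subsystem is one of the two convex orderings on that subsystem, so the higher-root generators $E_\gamma$ that appear are exactly those indexed by subwords of $w$, and the only denominators that occur are those already absorbed into the definition of the $E_\gamma$ themselves, all of which are present in $\Zqqn{\dpone}$.

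The main obstacle is constructing a well-founded termination measure for this straightening procedure. Following \cite[\S 5.4--5.5]{lu90b}, I would use a lexicographic composite of the box invariants $(g,h')$ together with an ordinary word-length measure in the simple generators; swaps between generators associated with roots in distinct boxes are handled by the scalar skew-commutation of Lemma~\ref{lm:scommgens}, and Lemma~\ref{lm:nosumbox} ensures that no new roots arise from intra-box straightening. The delicate case is the weakly orthogonal pairs arising in type $\LT{B}$ flagged immediately after Lemma~\ref{lm:scommgens}, where the correction terms produced by (\ref{eq:LusztigCommsSumm-ord}) must be checked to lie in $\bspanp{w}$ and to strictly decrease in the chosen measure; this compatibility check is the technical heart of the argument. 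The statements for $\Uq^-$ then follow immediately by applying $\Cartaninv$.
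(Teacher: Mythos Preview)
Your overall strategy matches the paper's: linear independence via $\mathbb Q(q)$ (Corollary~40.2.2 in \cite{lu}), reduction to a single case via $\Cartaninv$ and $\Qinvaut$, and an induction organized by the box pre-order $\prLleq$. The paper indeed phrases the spanning argument as closure of $\bspanp{w}$ under right multiplication by each $E_i$, following Lusztig's Lemma~2.11 structure with box-indexed spaces $X^*_{m,k},Y^*_{m,k}$.

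There is, however, a genuine gap in your proposed straightening mechanism. Formula~(\ref{eq:LusztigCommsSumm-ord}) rewrites a product $E_i^M E_j^N$ of \emph{simple} generators as a $\Zqq$-combination of rank~2 PBW monomials, but after one application you have non-simple generators $E_\gamma$ (for $\gamma\in\prootstwo{ij}$) sandwiched between other simple generators $E_k$. To continue, you need commutation relations of the form $E_\gamma E_k \leadsto \sum c\, E_\bullet\cdots E_\bullet$ among \emph{root-vector} generators, not (\ref{eq:LusztigCommsSumm-ord}) again. Your appeal to Lemma~\ref{lm:scommgens} for all inter-box pairs is too optimistic: that lemma has the restrictive hypothesis that $\tau(u)$ does not occur in $v$, and the paper explicitly exhibits cases (e.g.\ $\tau=\alpha_m+\cdots+\alpha_1$ with $E_1$ in type~$\LT{B}$, or $\tau=\alpha_m+\cdots+\alpha_j+2\alpha_{j-1}+\cdots+2\alpha_1$ with $E_j$) where the commutator has nontrivial correction terms. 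The actual work is to derive these commutation relations among the $E_\alpha$ by applying $\Tinv$-automorphisms to the rank~2 relations of \cite[\S5.2--5.4]{lu90b}, and then to verify---as the paper does in (\ref{eq:LuR2_g})--(\ref{eq:LuR2_i}) and the surrounding discussion---that multiplying through by factorials keeps all coefficients in $\Zqqn{\dpone}$ (with the $[2]^{-s}$ in (\ref{eq:LuR2_g}) being precisely why $\Zqq$ is insufficient for type~$\LT{B}$). Your proposal identifies the right difficulty (the weakly orthogonal pairs) but misidentifies the tool: the induction runs on commutation relations among root vectors, not on repeated applications of (\ref{eq:LusztigCommsSumm-ord}) to simple generators.
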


\begin{proof} Recall that, by Proposition~\ref{prop:wordgencont}, the  $\Zqqn{\dpone}$-module $\bspanp{w}$, generated by elements in $\basisp{w}$, is contained in $\Uq^+$\,. 
Since the elements in $\basisp{w}$ form a basis of $\Uqre{\mathbb Q}$ over the field $\mathbb Q(q)$\,, see Corollary 40.2.2 in \cite{lu}, we also know that they form a basis of $\bspanp{w}$ as a $\Zqqn{\dpone}$-module and that  $\Uq^+/\bspanp{w}$ is at most a torsion module. 
Since $1\in \bspanp{w}$ is a cyclic vector for  the left or right regular action of $\Uq^+$ on itself,
either $\Uq\cdot \bspanp{w}\subseteq \bspanp{w}$ or $\bspanp{w}\cdot \Uq \subseteq \bspanp{w}$
would imply $\Uq= \bspanp{w}\,$.
Thus, it remains to show that  $\bspanp{w}\cdot E_i\subseteq \bspanp{w}\,$ for all $i\,$ or $E_i\cdot \bspanp{w}\subseteq \bspanp{w}\,$ for all $i\,$. 

The inductive proof in the simply laced case can be obtained by modifying the arguments in Section~2 of \cite{Lu90a}. Specifically, instead of the spaces in 2.10 we consider analogous spaces defined with regular rather than divided powers. For example, $X_{m,k}$ is replaced by $X_{m,k}^*\,$, defined as the $\Zqqn{\dpone}$-algebra of $\Uq^+$ generated by $E_{w[\alpha]}$ (as in \eqref{eq:root2word}) with $\alpha\in R_{m,k}$ as well as $E_j$ with $j<m\,$. Similarly, $Y^*_{m,k}$ is the free $\Zqqn{\dpone}$-submodule generated by $\basisp{u}$ where $u\leqRB w$ is the maximal word with $\descroots u \cap R^+_{m,k}=\emptyset\,$.  
Statement (b) of Lemma~2.11 in 
\cite{Lu90a}
is then changed to $Y^*_{m,k}\cdot X_{m,k}^* \subseteq 
Y^*_{m',k'}\cdot X_{m',k'}^*\,$ if $R^+_{m',k'}$ is the next smaller box in the $\prLleq$ 
order after $R^+_{m,k}$ with integer $k'\,$. 

In the adjustment of the proof, we first redefine the generators $\xi_t$ as ordinary powers $E^{N_t}_{\alpha_t}$. The notions of distinguished and non-distinguished generators as well as 
bad or good pairs are adapted to ordinary powers in the same manner.
The commutation argument invokes equations (d2)-(d5) in $\S2.3$ of \cite{Lu90a} that are imposed on the algebra $V^+$ defined there. The isomorphism $V^+\cong \LUq^+$ in Section~4.7 implies that these relations also hold in $\Uqre{\mathbb Q}^+$ for the divided powers of the $\ELu_\alpha$ generators. Analogous equations can be readily derived from these for ordinary powers of the $E_\alpha$ generators, as defined in \eqref{eq:EFwordDef} and \eqref{eq:Genbyroot}. For example, for $\alpha$ and $\alpha'$ fulfilling (e1) or (e2), equation (d3) in \cite{Lu90a} specializes for $N=M=1$ to $\ELu_{\alpha'}\ELu_{\alpha}=q\ELu_{\alpha+\alpha'}+q\ELu_{\alpha}\ELu_{\alpha'}$\,. Applying $\Kinvaut$ with $\Kinvaut(\ELu_\alpha)=E_\alpha$ we obtain $E_{\alpha'}E_{\alpha}=(-1)E_{\alpha+\alpha'}+q^{-1}E_{\alpha}E_{\alpha'}$\,. Analogs for (d2), (d4), and (d5) are also obtained by application of $\Kinvaut$ and multiplication of factorials. The desired analog of (d3) is then obtained from these relations by the same induction, 
\begin{equation}\nonumber
E_{\alpha'}^NE_{\alpha}^M=\sum_{j=0}^{\min(N,M)}(-1)^jq^{-(N-j)(M-j)}[j]!\qbinsmall{N}{j}\qbinsmall{M}{j} E_{\alpha}^{M-j}E_{\alpha+\alpha'}^jE_{\alpha'}^{N-j}\,.
\end{equation}
Clearly, all coefficients are in $\Zqq=\Zqqn{2}\,$. The same process is applied to (b) in Proposition 2.7 to express $E_{\alpha_i}^NE_{\beta_0}^M$ as
$\Zqq$-combinations of respective ordinary power generators with reversed cyclic order of indices, using also that $\psi_0=1$ and $\qbinsmall{N}{s}\qbinsmall{M}{s}\in\Zqq\,$. The other equations in Proposition 2.7 and 2.8 are modified, as before, by application of $\Kinvaut$ and multiplication of factorials.  
 
 All other parts of the proof of Lemma~2.11 in \cite{Lu90a} can be adapted verbatim. As in Proposition~2.12(b) the inclusion can be iterated to infer $Y^*_{n,1}\cdot X_{n,1}^* \subseteq 
Y^*_{1,1}\cdot X_{1,1}^*\,$, which is exactly the inclusion $\Uq^+\subseteq \mathbf{B}^+\,$.

As noted in \cite{lu90b} for the case of divided powers, the proof for the non-simply laced cases follows the same type of inductive argument. Commutation relations of all divided power generators in the rank 2 case $\LT{B}_2=\LT{C}_2$ are explicitly worked out in Sections~5.2 and 5.3 of \cite{lu90b}. As with the simply laced cases, it is straightforward to derive relations for $\edgenum=2$ and $\edgenum=3$ in Section~5.2 for the opposite orders of generators and, hence, after application of $\Kinvaut$, respective relations for our convention of generators. From these, analogs of the equations in Section~5.3 can be readily derived for our generators by rescaling arguments. The coefficient will thus differ only by units in $\Zqqn{\dpone}\,$. 

Multiplication of the analogs of equations (d) through (i) in the same section of \cite{lu90b} by respective quantum factorials readily yields commutation relations for ordinary powers with coefficients in $\Zqqn{3}\,$. The  variant of (g) for our generators is, for example, is as follows, 
\begin{equation}\label{eq:LuR2_g}
    E_{\gamma}^kE_{\delta}^{k'}=\sum_{s=0}^{\min(k,k') }\frac 1 {[2]^s}q^{2s(k+k')-3s^2}(q-q^{-1})^s[s]_2!\qbinsmall{k}{s}_2\qbinsmall{k'}{s}_2 E_{\delta}^{k'-s}E_{\beta}^{2s}E_{\gamma}^{k-s}\,.
\end{equation}
Here $\delta=\alpha_2$\,, $\gamma=\alpha_2+2\alpha_1$\,, and $\beta=\alpha_1+\alpha_2$ with $\alpha_2$ the long root. The coefficients are all in $\Zqqn{3}$ but not in $\Zqq$ due to the additional $\frac 1 {[2]}$ factors. The respective variant for (h) is 
\begin{equation}\label{eq:LuR2_h}
E_{1}^kE_{\tau}^{k'}=\sum_{s=0}^{\min(k,k') }(-1)^s {[2]^s}q^{s(k+k')-\frac 12 s(7s+3)}[s]!\qbinsmall{k}{s}\qbinsmall{k'}{s} E_{\tau}^{k'-s}E_{\tau+\alpha_1}^{s}E_{1}^{k-s}\,,
\end{equation}
where $\tau=\alpha_2+\alpha_1$\,. Using respective multinomial coefficients and $\nu=\alpha_2$ we also reexpress equation (i) in \cite{lu90b} as
\begin{equation}\label{eq:LuR2_i}
E_{1}^kE_{\nu}^{k'}=\sum_{\substack{r,s,t,u\geq 0\\r+s+t=k'\\ s+2t+u=k}}(-1)^s  q^{-2r(u+t)+us}[s]!_2[2t]!\qbinsmall{k}{s, 2t, u}\qbinsmall{k'}{r,s,t}_2 
E_{\nu}^{r}E_{\nu+\alpha_1}^sE_{\nu+2\alpha_1}^{t}E_{1}^{u}\,.
\end{equation}

It is immediate that the set of relations allows reordering of any monomial over $\Zqqn{3}$ into the order on the positive roots of $\LT{B}_2$ given by $w_2w_1w_2w_1$\,. Explicit relations for $\LT{G}_2$ are given in Section~5.4 of \cite{lu90b} as well. Although more complicated, all of them can be multiplied by respective factorials and transformed into relations for our generators in the same manner, except that powers of $\frac 1 {[3]}$ occur in addition to those of  $\frac 1 {[2]}\,$.  

The higher rank \LT{BCF} cases follow as in the simply laced case from commutation relations obtained by application of Artin automorphisms to the rank 2 relations as well as those for the respective \LT{A} type subalgebras. In the above ordering for the \LT{B} case Lemma~\ref{lm:scommgens} readily implies that generators from consecutive boxes $R^+_{m,k}$ and $R^+_{m,k+1}$ skew-commute. Consider next required
commutation relations between elements $E_\tau$ with $\tau\in R^+_{m,k}$ and generators $E_j$ for $j<m\,$. From 
Proposition~\ref{prop:wordgencont}
it follows that $\Tinv_{d_m}(E_j)=E_j$ for $d_m=b_n\ldots b_{m+1}\in\wordset\,$, reducing the arguments to the case $m=n\,$ and, hence, $E_\tau=E_v$ with $v\leqRB b_m\,$.

In all cases in which $\tau+\alpha_j\not\in \proots$ it is then readily checked that $E_\tau$ and $E_j$ and, hence, all their powers commute up to signs. 
For $\tau=\alpha_m+\ldots+\alpha_{j+1}$ when $j>1$ we invoke the known commutation relations for $\LT{A}_{n-1}\subset\LT{B}_n$\,. Applying $\Tinv_{w_m\ldots w_3}$ to 
\eqref{eq:LuR2_h} we see that the relation holds more generally for $\nu=\alpha_m+\ldots+\alpha_{2}$\,, yielding the needed relation between $E_\nu$ and $E_1\,$. Similarly, applying $\Tinv_{w_m\ldots w_3}$ to \eqref{eq:LuR2_h} extends the relation to the case
$\tau=\alpha_m+\ldots+\alpha_{1}$\,. Note that in this case $\symbrack{\tau}{\alpha_1}=0$, even as $\tau+\alpha_1\in\proots\,$.

Finally, for $j>1$ and 
$\tau=\alpha_m+\ldots+\alpha_j+2\alpha_{j-1}+\ldots+2\alpha_{1}$ we find $E_jE_\tau=q^{-2}E_\tau E_j-E_{\tau+\alpha_j}$\,, $E_{\tau+\alpha_j}E_j=q^{-2}E_jE_{\tau+\alpha_j}$\,, and 
$E_{\tau+\alpha_j}E_\tau=q^{2}E_\tau E_{\tau+\alpha_j}\,$, which implies $E_\tau$ and $E_j$ fulfill the $\LT{A}_2$ Serre relations with $q$ replaced by $q^2$\,. This, again, implies the usual $\LT{A}$ type commutation relations, only with $q$ replaced by $q^2$\,.  

The treatment for the \LT{C} and \LT{F} cases uses the same arguments and analogous computations. Indeed, choosing the numbering opposite to that of \cite{Bou02} is also good for \LT{C}\,, leading to essentially the same relations as in the \LT{B} case. The $\LT{F}_4$ can be treated explicitly as an extension of $\LT{B}_3$ or $\LT{C}_3\,$. This completes the proof for the basis $\basisp{w}$ for one direction of multiplication.

It follows now immediately from \eqref{eq:EFConjOrder} that $\basispopp{w}$ provides a basis of $\Uq^+$ as well, since $\Qinvaut$ maps elements from $\basisp{w}$ to those of $\basispopp{w}$ up to unit factors. Similarly, \eqref{eq:EFCartOrder} implies the respective statements for $\Uq^-\,$ with basis elements mapped to each other by $\Cartaninv\,$. 
\end{proof} 

As noted in the introductory remarks, we could have also followed the Lusztig's conventions with 
the respectively renormalized commutation relations in the $\ELu_w$\,. This would have yielded 
$\EbaseLU{w}{\psi}$ as a $\Zqqn{\dpone}$-basis for $\Uq^+$ for $w$ compatible with the box ordering chosen in \cite{Lu90a,lu90b}. 
Since $\Uq^+$ is invariant under $\Kinvaut$\,, we have from (\ref{eq:mhomonomax}) that  
$\Ebaseopp{w^\winvchar}{\psi}$ is also a basis. So, after application $\Qinvaut$\,, we find that
$\Ebase{w^\winvchar}{\psi}$ is a basis as well. For the general PBW theorem,  
it suffices to establish a PBW statement for only one ordering, so that 
the replacement of $w$ by $w^\winvchar$ does not change the proof of Propositions~\ref{prop:PBW2}
or Theorem~\ref{thm:mainPBW} below.

\begin{cor}\label{cor:PBW1F} For $w\in\wordsetmax$ as in Proposition~\ref{prop:PBW1} the sets $\basis{w}{\pm}$ and $\basisopp{w}{\pm}$ are bases of $\Uq^-$ as a free $\Zqqn{\dpone}$-module. 
\end{cor}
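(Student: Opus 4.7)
The plan is to deduce the $\Uq^-$ bases directly from the $\Uq^+$ case of Proposition~\ref{prop:PBW1} by transporting them through the involutions of Section~\ref{subsec:Kscale}, with no new commutation relations needed. Since $\Cartaninv$ and $\Cartanaut$ differ precisely in whether they preserve or reverse the order of factors in a monomial, they connect each of the two established $\Uq^+$ bases to each of the two $\Uq^-$ spanning sets, yielding all four assertions at once.

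The cleanest route is to apply the algebra involution $\Cartanaut = \Qinvaut \circ \Cartaninv$ from \eqref{eq:defCaut}. As the composite of two $q$-inverting anti-homomorphisms, $\Cartanaut$ is a genuine $\Zqqn{\dpone}$-linear algebra automorphism of $\Uq$ sending $E_i \mapsto F_i$, $F_i \mapsto E_i$, $K_i \mapsto K_i^{-1}$, so it restricts to a $\Zqqn{\dpone}$-module isomorphism $\Uq^+ \xrightarrow{\sim} \Uq^-$. Since $\Cartanaut$ is a homomorphism (not an anti-homomorphism), applying it termwise to \eqref{eq:PBWEgenDef} gives the order-preserving identities
\begin{equation*}
\Cartanaut(\Ebase{w}{\psi}) = \Fbase{w}{\psi}
\qquad\text{and}\qquad
\Cartanaut(\Ebaseopp{w}{\psi}) = \Fbaseopp{w}{\psi}.
\end{equation*}
Hence $\Cartanaut$ sends $\basisp{w}$ bijectively to $\basisn{w}$ and $\basispopp{w}$ bijectively to $\basisnopp{w}$; since the sources are $\Zqqn{\dpone}$-bases of $\Uq^+$ by Proposition~\ref{prop:PBW1}, the images are $\Zqqn{\dpone}$-bases of $\Uq^-$.

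Alternatively, one may invoke $\Cartaninv$ directly together with the order-reversal identity \eqref{eq:EFCartOrder}, which gives $\Cartaninv(\basisp{w}) = \basisnopp{w}$ and $\Cartaninv(\basispopp{w}) = \basisn{w}$. Although $\Cartaninv$ is only semilinear for the ring automorphism $q \mapsto q^{-1}$ of $\Zqqn{\dpone}$, this automorphism is a bijection of the ground ring, so semilinear module isomorphisms carry $\Zqqn{\dpone}$-bases to $\Zqqn{\dpone}$-bases; this reproduces the conclusion from the opposite direction of multiplication.

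There is no genuine obstacle: the entire content has been assembled in Proposition~\ref{prop:PBW1} and the involution dictionary of Section~\ref{subsec:Kscale}. The only piece of bookkeeping is tracking whether the chosen involution preserves or reverses the order of factors in a monomial, which is encoded precisely by \eqref{eq:EFCartOrder} and by the homomorphism (as opposed to anti-homomorphism) property of $\Cartanaut$.
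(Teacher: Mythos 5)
Your fallback route is exactly the paper's own argument: the paper deduces the $\Uq^-$ statements by applying $\Cartaninv$ together with the order-reversal identity \eqref{eq:EFCartOrder} to the $\Uq^+$ bases of Proposition~\ref{prop:PBW1}, noting (as you do) that semilinearity over $q\mapsto q^{-1}$ is harmless. So the conclusion and the essential mechanism agree.

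Your preferred route via $\Cartanaut$ contains one inaccuracy worth flagging: the identities $\Cartanaut(\Ebase{w}{\psi})=\Fbase{w}{\psi}$ and $\Cartanaut(\Ebaseopp{w}{\psi})=\Fbaseopp{w}{\psi}$ are not exact. While $\Cartaninv(E_w)=F_w$ holds on the nose by \eqref{eq:CinvEFwords}, the conjugation $\Qinvaut$ does \emph{not} fix the root vectors $E_w$ for non-simple roots; by Lemma~\ref{lem:ConjWordGen} it rescales them by the unit $(-1)^{\height(\beta)-1}q^{\symbrack{\weightvecchar}{\beta}-d_\beta}$, so $\Cartanaut(E_w)$ equals $F_w$ only up to such a unit (for instance, in type $\LT{A}_2$ one has $\Cartanaut(E_{(12)})=-q^{-1}F_{(12)}$). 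Consequently $\Cartanaut$ maps $\basisp{w}$ onto $\basisn{w}$ only up to unit factors, exactly as encoded in \eqref{eq:EFConjOrder} combined with \eqref{eq:EFCartOrder}. This does not damage the argument — rescaling basis elements by units of $\Zqqn{\dpone}$ still produces a basis — but the claim of an order-preserving, coefficient-free correspondence should be weakened to ``up to unit factors,'' or one should simply use the $\Cartaninv$ route as the paper does.
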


We conclude with another useful consequence of Proposition~\ref{prop:PBW1}, which allows us to identify subalgebras generated by a subset of generators as quantum algebras themselves. 

\begin{cor}\label{cor:Uqtwoembed}
    For any pair of distinct indices $i,j\in\{1,\ldots,n\}$, let $\Uqrest{+}{i,j}$ be the rank 2 quantum algebra with respectively restricted Cartan data. Then the canonical algebra homomorphism  $\Uqrest{+}{i,j}\rightarrow \Uq^+$ is well-defined and injective, and its image is the subalgebra of $\Uq^+$ generated by $E_i$ and $E_j\,$.
\end{cor}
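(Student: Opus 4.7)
The plan is to split the corollary into three claims---well-definedness of the canonical map, identification of its image, and injectivity---handling the first two formally and focusing the argument on the third.

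For well-definedness and the image, I would observe that the canonical map is specified by sending the two generators $E_i, E_j$ of $\Uqrest{+}{i,j}$ to their like-named counterparts in $\Uq^+$. The only Serre relations \eqref{eq:SerreE} present in the rank 2 algebra are those for the pair $(i,j)$, and these are satisfied in $\Uq^+$ by construction. Since the rank 2 edge number $\edgenum_{ij} = \max(|A_{ij}|,|A_{ji}|)$ is at most $\edgenum$, the rank 2 ground ring $\Zqqn{\dpone_{ij}}$ embeds into $\Zqqn{\dpone}$, so the map becomes a homomorphism of $\Zqqn{\dpone}$-algebras after extension of scalars. The image is then by construction the subalgebra of $\Uq^+$ generated by $E_i$ and $E_j$.

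For injectivity, I would fix $z_0 \in \wordsetmaxrest{i,j}$, a reduced word for $\longtwoweyl{ij}$ chosen so that $\leqwt{z_0}$ refines the pre-order $\prLleq$ within the rank 2 subsystem, and extend it (as explained in Section~\ref{subsec:coxeter}) to a reduced word $w = z_0 \cdot v \in \wordsetmax$ for the full longest element $\longweyl$. Applying Proposition~\ref{prop:PBW1} to $\Uqrest{+}{i,j}$ shows that $\basisp{z_0}$ is a free $\Zqqn{\dpone_{ij}}$-basis of the rank 2 algebra, so it suffices to show that its image is linearly independent inside $\Uq^+$. The recursion \eqref{eq:PBWgenRecur} applied to the factorization $w = z_0 \cdot v$ specializes, on exponents $\psi \in \expsetup{\longweyl}$ supported on $\descroots{\longtwoweyl{ij}}$, to
\[
\Ebase{w}{\psi}\,=\,\Ebase{z_0}{\psi|_{\descroots{\longtwoweyl{ij}}}}\cdot \Tinv_{\longtwoweyl{ij}}(1)\,=\,\Ebase{z_0}{\psi|_{\descroots{\longtwoweyl{ij}}}}\,,
\]
which identifies $\basisp{z_0}$ with a subset of $\basisp{w}\subset\Uq^+$. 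By Corollary~40.2.2 of \cite{lu}, noted at the start of Section~\ref{sec:PBW}, the full set $\basisp{w}$ is $\mathbb Q(q)$-linearly independent inside $\Uqre{\mathbb Q}^+$ and hence $\Zqqn{\dpone}$-linearly independent inside $\Uq^+$. Its subset $\basisp{z_0}$ is then linearly independent as well, yielding injectivity.

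The main subtle step will be identifying the rank 2 monomials $\Ebase{z_0}{\psi_0}$ computed inside $\Uqrest{+}{i,j}$ with the identically labeled monomials sitting inside $\Uq^+$ under the canonical map. This amounts to noting that $\Tinv_i$ and $\Tinv_j$ act on $E_i, E_j$ through the formulae \eqref{eq:defLeasy}--\eqref{eq:defLserre}, which involve only the restricted Cartan entries $A_{ij}, A_{ji}$, so the formal expressions for the PBW generators agree in the two algebras.
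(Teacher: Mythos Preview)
Your proof is correct. Both you and the paper reduce injectivity to showing that the rank~2 PBW basis supplied by Proposition~\ref{prop:PBW1} maps to a linearly independent set in $\Uq^+$, but the two arguments obtain that independence differently. The paper applies Proposition~\ref{prop:PBW1} on \emph{both} sides: it argues that Lusztig's pre-order $\prLleq$ on $\proots$ restricts to the pre-order on $\prootstwo{i,j}$, so the specially ordered full basis and the rank~2 basis are compatible, and concludes that the rank~2 basis lands inside an integral basis of $\Uq^+$ (in fact exhibiting $\Uqrest{+}{i,j}$ as a direct $\Zqqn{\dpone}$-summand). You instead extend $z_0$ to an \emph{arbitrary} maximal word $w=z_0\cdot v$, identify $\basisp{z_0}$ with a subset of $\basisp{w}$ via \eqref{eq:PBWgenRecur}, and invoke Lusztig's $\mathbb Q(q)$-basis result \cite[40.2.2]{lu} for $\basisp{w}$. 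Your route sidesteps the pre-order compatibility check at the mild cost of not immediately yielding the direct-summand statement. One small refinement for your last paragraph: the identification of generators is cleanest when phrased as the canonical map $\Uqrest{}{i,j}\to\Uq$ of the \emph{full} rank~2 quantum group (including $F_i,F_j,K_i^{\pm1},K_j^{\pm1}$) intertwining $\Tinv_i$ and $\Tinv_j$, since intermediate steps such as $\Tinv_i(E_i)=-K_i^{-1}F_i$ temporarily leave $\Uq^+$.
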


\begin{proof} Surjectivity on the subalgebra generated by $E_i$ and $E_j$ is obvious. Similarly, it is clear the map is well-defined, since the relations of $\Uqrest{+}{i,j}$ are also fulfilled in $\Uq^+\,$. For $\LT{G}_2$ (or any rank 2 case)  the statement is also obvious. To show injectivity in the $\LT{A}_1\times \LT{A}_1$\,, $\LT{A}_2$\,, and $\LT{B}_2=\LT{C}_2$ cases, we note that either numbering for each of these is a good numbering and can thus be chosen to inherit the order of the numbering imposed by $\Uq^+\,$. 

Since also the height and lead coefficients of positive roots are the same, the pre-order 
$\prLleq\,$ of $\proots$ for $\Uq^+$ restricts to the pre-order of positive root system $\prootstwo{i,j}$ of $\Uqrest{+}{i,j}\,$. This implies, by Proposition~\ref{prop:PBW1}, that we have a basis for $\Uqrest{+}{i,j}$ that is mapped to basis elements of $\Uq^+\,$, implying injectivity. Indeed, 
$\Uqrest{+}{i,j}$ is a direct summand of $\Uq^+$ as a $\Zqqn{\dpone}$-module. 
\end{proof}

\medskip

We conclude our discussion of the existence of bases with a brief outline of other approaches. 
As mentioned previously, Lusztig proves in \cite{Lu90a,lu90b,lu} that the monomials 
$\Edivbase{w}{\psi}$ from (\ref{eq:PBWEgenDivDef}) define a PBW basis of his divided
power algebra $\LUq^+\,$ over $\Zqq=\mathbb Z[q,q^{-1}]\,$. It can be correspondingly extended 
to a basis of all of 
$\LUq$ over $\Zqq$ at the expense of introducing additional generators in $\LUq^0\,$ given by rational expressions in the $K_i$ and $q$.

In \cite[Sec 12]{dcp93a}, De Concini and Procesi define the algebra $\DCPU$ to be the smallest subalgebra of 
$\Uq\otimes \mathbb k(q)$ over 
$\mathbb k[q,q^{-1}]$ that contains the 
singularized generators $\Esing_i$ and $\Fsing_i$  from (\ref{eq:def:EgenSing}) and 
is stable under the Artin group action generated by the $\Tinv_i\,$. They offer a proof 
that the monomials $\Ebasesing{w}{\psi}K^\mu\Fbasesing{w}{\chi}$ from (\ref{eq:PBWEgenSingDef}) form a 
$\Zqq$-basis of $\DCPU$. Their argument is based on recursions formulae over $\mathbb C$ in \cite{LS91} and proves that recursion coefficients are indeed in $\mathbb k[q,q^{-1}]$ for an appropriate $\mathbb k\,$. 

The fact that the $\Ebaseoppsing{w}{\psi}$ are a basis of $\DCPU^+$ may also be inferred from
the Lusztig-Tanisaki pairing discussed in Section~\ref{subsec:LuszTaniPair} and the PBW theorem for
$\LUq^+$ in
\cite{Lu90a,lu90b,lu}. Comparing the 
coefficients in (\ref{eq:PBWEgenDivDef}), (\ref{eq:PBWEgenSingDef}), and (\ref{eq:RcoeffGen})
the pairing in (\ref{eq:LTpairs}) can be rewritten as 
\begin{equation}\label{eq:RenomLTpairs}
    \langle\,\cdot\,,\,\cdot\,\rangle_z:\LUq^-\times \DCPU^+\rightarrow \Zqq\qquad
    \mbox{with}\quad 
\langle \Fdivbaseopp{w}{\psi},\Ebaseoppsing{z}{\phi}\rangle\,=\,\delta_{\psi,\phi}\cdot (-1)^{n(\psi)} \cdot q^{-\sum_\alpha d_\alpha\binom{\psi(\alpha)}{2}}
\end{equation}

Since the pairings for $\psi=\phi$ are units in $\Zqq$ the span of the $\Ebaseoppsing{z}{\phi}$
is the exact $\Zqq$-dual to $\LUq^-\,$. The product and the  $q$-braided coproduct $\barDelta$
defined by Lusztig \cite{lu} on $\LUq^-$ are then dual to respective structures on this span. 
This now identifies the latter with $\DCPU^+$ so that the basis result for $\DCPU^\geqzero$ is essentially the dual equivalent of Lusztig's basis result on $\LUq^\leqzero\,$. 

However, as algebras over $\Zqq$ or $\Zqqn{\dpone}\,$, $\DCPU^+$ and $\Uq^+$ exhibit rather different behaviors. This becomes especially apparent in the $q\mapsto 1$ specialization, realized by the obvious
ring maps $\Zqq\rightarrow\mathbb Z$ or $\Zqqn{\dpone}\rightarrow\mathbb Z[{\edgenum}^{-1}]$\,. 
For example, for Lie type  $\LT{A}_n$  one easily sees from Proposition~\ref{prop:AComms}
that the specialization $\DCPU^+\otimes\mathbb Z$ is the {\em commutative} polynomial algebra 
in the $E_{i,j}\,$, while the specialization $\Uq^+\otimes\mathbb Z$ is the universal envelope $U(\mathfrak n)$\,, where $\mathfrak n$
is the Lie algebra of the strictly upper triangular matrices. 

Another variant is to consider renormalized generators 
$\mathring E_w=[d_w]E_w=\Tinv_{w^\flat}(\mathring E_i)\,$ with $i=\tau(w)$ and 
$\mathring E_i=[d_i]E_i\,$. The respective renormalization of the formulae (\ref{eq:LuR2_g}, \ref{eq:LuR2_h}, \ref{eq:LuR2_i}) then yields expansion coefficients entirely in $\Zqq\,$. We, thus,
expect that there is a Hopf   algebra $\mathring U_q^\geqzero$ over $\Zqq\,$ 
with a PBW basis over this ring, even for the non-simply laced Lie types.

\subsection{Spanning Sets and Their Order Dependence}\label{subsec:spanorder} For any subset of exponent 
functions $\psi\in \expsetup s$ for some $s\in\Weyl$ 
one can consider the submodule spanned by the respective  monomials 
$\Ebase{w}{\psi}$, where $s=\Weylpres(w)\,$. Numerous subalgebras and ideals we will consider later can be defined as such spans. 
In this section we extract sufficient conditions on a set of exponent functions which imply
that the respective free submodule is independent of the choice of a convex root ordering  or, equivalently, 
dependent only on the Weyl element $s=\Weylpres(w)\,$.

Suppose $s\in\Weyl\,$. We say $(a,b)$ with $a,b\in\Weyl$ is a {\em relator pair} for $s$ if  
\begin{equation}\label{eq:relatorRootSplit}
    s=a\cdot\longtwoweyl{ij}\cdot b\qquad\mbox{with}\quad \len{s}=\len{a}+\len{\longtwoweyl{ij}}+\len{b}\,
\end{equation}
for the longest element $\longtwoweyl{ij}$ is some rank 2 subsystem $\rootstwo{ij}\,$ of $\roots\,$. Observe now that
Corollary~\ref{cor:sumdescroots} provides us with a bijection of inversion sets given as 
\begin{equation}\label{eq:relatorSetSplit}
    \relrootsplit{a,b}=\id\sqcup a\sqcup a\cdot\longtwoweyl{ij}\,:\,\descroots{a}\sqcup \descroots{\longtwoweyl{ij}}\sqcup \descroots{b}\,\longrightarrow\,\descroots{s}\,. 
\end{equation}
Correspondingly, as in (\ref{eq:ExpsetCorr}), we obtain the bijection of exponent sets
\begin{equation}\label{eq:relatorExpSplit}
 \relrootsplit{a,b}^*\,:\,\expsetup{s}\,\rightarrow\,\expsetup{a}\times \expsetup{\longtwoweyl{ij}}\times \expsetup{b}\,.
\end{equation}
Given a subset $\mathsf S\subseteq\expsetup{s}\,$ we abbreviate its image in the Cartesian product as $\mathsf S_{(a,b)}=\relrootsplit{a,b}^*(\mathsf S)\,$.

The following discussion assumes a general ground ring $\Zgen$ with an extension of scalars function $f:\Zqqn{\dpone}\rightarrow\Zgen\,$. Note, that
Proposition~\ref{prop:PBW1} implies, that both $\basis{\longtwoword{ij}}{+}$ and 
 $\basis{\longtwoword{ji}}{+}$  are bases of $\Uqrerest{+}{i,j}{\Zgen}=\Uqrest{+}{i,j}\otimes_f\Zgen\,$. 
 Recall from Corollary~\ref{cor:Uqtwoembed}, that the latter may be viewed as the subalgebra generated by $E_i$ and $E_j$\,. 
 We, thus, can define coefficients $\KinvautTwoCoeff{\varphi}{\varphi'}{ij}\in\Zgen$ for any $\varphi,\varphi'\in\expsetup{\longtwoweyl{ij}}$ by 
\begin{equation}\label{eq:MhoTwoCoeff}
    \Kinvaut(\Ebase{\longtwoword{ij}}{\varphi})=\Ebase{\longtwoword{ji}}{\varphi}=\sum_{\varphi'}\KinvautTwoCoeff{\varphi}{\varphi'}{ij}\cdot\Ebase{\longtwoword{ij}}{\varphi'}\,. 
\end{equation}
From these we define the following subsets of  $\expsetup{\longtwoweyl{ij}}$ for any $\varphi\in \expsetup{\longtwoweyl{ij}}$
and $\mathsf Q\subseteq \expsetup{\longtwoweyl{ij}}\,$:
\begin{equation}\label{eq:MhoTwoSets}
    \KinvautExpSet{ij}{\varphi}=
    \bigl\{\varphi'\in \expsetup{\longtwoweyl{ij}}\,:\,\KinvautTwoCoeff{\varphi}{\varphi'}{ij}\neq 0\bigr\}
    \qquad\mbox{and}\qquad
    \KinvautExpSet{ij}{\mathsf Q}=\bigcup_{\varphi\in\mathsf Q} \KinvautExpSet{ij}{\varphi}\,. 
\end{equation}
Note that, since $\Kinvaut$ is an involution, 
we have $\KinvautExpSet{ij}{\mathsf Q}=\KinvautExpSet{ji}{\mathsf Q}$\,. Commutation of $\Kinvaut$ with
$\Qinvaut$ and (\ref{eq:EFConjOrder}) imply that the sets are the same if the oppositely ordered basis elements are used instead. Commutation with $\Cartaninv$ shows the same for the respective bases of $\Uq^-\,$. The sets may, however, depend on the choice of 
the ground ring as $f$ may map some non-zero coefficients in $\Zqqn{\dpone}$ to 
zero in $\Zgen\,$. 

For given $s\in\Weyl$ we say an exponent set $\mathsf S\subseteq\expsetup{s}$ is {\em relator stable} if for 
any relator pair $(a,b)$ for $s$ we have the implication 
\begin{equation}\label{eq:RelStabCond}
    (\chi,\varphi,\xi)\in\mathsf S_{(a,b)}\quad\mbox{and}\quad \varphi'\in \Kinvaut_{\{ij\}}(\varphi)
    \qquad\Longrightarrow\qquad (\chi,\varphi',\xi)\in\mathsf S_{(a,b)}\,. 
\end{equation} 
Defining $U^{\mathsf S,a,b}_{\chi,\xi}\subseteq \Uqrerest{+}{i,j}{\Zgen}$ as the   $\Zgen$-span 
of the set $\{\Ebase{\longtwoword{ij}}{\varphi}:\,(\chi,\varphi,\xi)\in \mathsf S_{(a,b)}\}$\,, it is clear from (\ref{eq:MhoTwoCoeff}) that this
condition may be equivalently restated as the requirement that $\Kinvaut(U^{\mathsf S,a,b}_{\chi,\xi})\subseteq U^{\mathsf S,a,b}_{\chi,\xi}$ for all relator pairs $(a,b)$ and $(\chi,\xi)\in\expsetup{a}\times \expsetup{b}$\,.

Note that we only need to consider cases when $m_{ij}=\len{\longtwoweyl{ij}}\in \{3,4,6\}$ since the condition  is void for $m_{ij}=2$. 
Indeed, in the latter case $\rootstwo{ij}=\{\alpha_i,\alpha_j\}$ and $E_i$ and $E_j$ commute so that 
$\Ebase{\longtwoword{ij}}{\varphi}=\Ebase{\longtwoword{ji}}{\varphi}$. Thus 
$\KinvautExpSet{ij}{\varphi}=\{\varphi\}$\,, rendering (\ref{eq:RelStabCond}) trivially true.

Equipped with this terminology, we state the following criterion for word independence of spanning sets, which may be viewed as a reformulation of Matsumoto's equivalence as a property of exponent sets. 
 
\begin{prop}\label{prop:KinvRelCrit}
Let $w,\tilde w\in\wordset$ be two reduced words representing the same element 
$s=\Weylpres(w)=\Weylpres(\tilde w)$ in $\Weyl$. Assume further that $\mathsf S\subseteq\expsetup{s}$ is a relator stable exponent set for given ground ring $\Zgen\,$. Then we have for the $\Zgen$-spans $\bspan{w}{\mathsf S,+}=\bspan{\tilde w}{\mathsf S,+}\,$.  Further, if the elements of $\basis{w}{\mathsf S,+}$ are linearly independent, then so are those of $\basis{\tilde w}{\mathsf S,+}$\,.

\noindent The analogous assertions hold for the  $\basis{w}{\mathsf S,-}$ and $\basisopp{w}{\mathsf S,\pm}$ and their spans. 
\end{prop}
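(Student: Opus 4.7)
The plan is to reduce the assertion to a single braid-move substitution $\longtwoword{ij}\leftrightharpoons\longtwoword{ji}$ and then to isolate the reordering inside the rank-2 block, where it is controlled by relator stability together with the involutivity $\Kinvaut^2=\id$. By Matsumoto's theorem (Corollary~\ref{cor:matsumoto}), any two reduced words representing $s$ are connected by a chain of such replacements. Since relator stability is intrinsic to the pair $(\mathsf S,s)$ and independent of the chosen word, iterating the single-move case suffices.

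For a single move, write $w=u\cdot\longtwoword{ij}\cdot v$ and $\tilde w=u\cdot\longtwoword{ji}\cdot v$, and put $a=\Weylpres(u)$, $b=\Weylpres(v)$, so $(a,b)$ is a relator pair for $s$ realizing the bijection $\relrootsplit{a,b}^*:\psi\leftrightarrow(\chi,\varphi,\xi)$ from \eqref{eq:relatorExpSplit}. Two applications of the recursion \eqref{eq:PBWgenRecur} yield
\[
\Ebase{w}{\psi}\,=\,\Ebase{u}{\chi}\cdot\Tinv_a\!\bigl(\Ebase{\longtwoword{ij}}{\varphi}\bigr)\cdot\Tinv_{a\cdot\longtwoweyl{ij}}\!\bigl(\Ebase{v}{\xi}\bigr),
\]
and the analogous formula for $\Ebase{\tilde w}{\psi}$ with $\longtwoword{ij}$ replaced by $\longtwoword{ji}$. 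Grouping the spanning sets by the outer indices $(\chi,\xi)$ gives
\[
\bspan{w}{\mathsf S,+}\,=\,\sum_{(\chi,\xi)}\Ebase{u}{\chi}\cdot\Tinv_a\!\bigl(U^{\mathsf S,a,b}_{\chi,\xi}\bigr)\cdot\Tinv_{a\cdot\longtwoweyl{ij}}\!\bigl(\Ebase{v}{\xi}\bigr),
\]
with the analogous expression for $\bspan{\tilde w}{\mathsf S,+}$ in terms of $\tilde U^{\mathsf S,a,b}_{\chi,\xi}$, the span of the $\Ebase{\longtwoword{ji}}{\varphi}$ over the same index set. By \eqref{eq:KinvEtworefl} one has $\Kinvaut\!\bigl(U^{\mathsf S,a,b}_{\chi,\xi}\bigr)=\tilde U^{\mathsf S,a,b}_{\chi,\xi}$, while the relator stability \eqref{eq:RelStabCond}, via \eqref{eq:MhoTwoCoeff} and \eqref{eq:MhoTwoSets}, is precisely the statement that $\Kinvaut$ also preserves $U^{\mathsf S,a,b}_{\chi,\xi}$. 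Applying $\Kinvaut$ once more and using $\Kinvaut^2=\id$ yields the equality $U^{\mathsf S,a,b}_{\chi,\xi}=\tilde U^{\mathsf S,a,b}_{\chi,\xi}$, hence $\bspan{w}{\mathsf S,+}=\bspan{\tilde w}{\mathsf S,+}$.

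The linear-independence claim uses the same decomposition: the transition from $\basis{w}{\mathsf S,+}$ to $\basis{\tilde w}{\mathsf S,+}$ is a block-diagonal matrix $M$ whose $(\chi,\xi)$-block $C^{(\chi,\xi)}$ is the restriction of $\bigl(\KinvautTwoCoeff{\varphi}{\varphi'}{ij}\bigr)$ to the index set $\{\varphi:(\chi,\varphi,\xi)\in\mathsf S_{(a,b)}\}$. The rank-2 PBW basis (Proposition~\ref{prop:PBW1} after extension of scalars to $\Zgen$) combined with $\Kinvaut^2=\id$ forces $C\cdot C=I$ on the full index, while relator stability ensures the support of $C$ respects the restricted index set, so $(C^{(\chi,\xi)})^2=I$ is invertible. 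Hence $M$ is invertible over $\Zgen$ and linear independence transfers from $\basis{w}{\mathsf S,+}$ to $\basis{\tilde w}{\mathsf S,+}$. The variants for $\basis{w}{\mathsf S,-}$ and $\basisopp{w}{\mathsf S,\pm}$ follow by the identical argument applied to the $F$-generators and to the opposite multiplication order, since the relator-stability condition is unchanged under these symmetries (the sets $\KinvautExpSet{ij}{\mathsf Q}$ being stable under the action of $\Cartaninv$ and $\Qinvaut$ via \eqref{eq:CartBasisRefl} and \eqref{eq:EFConjOrder}, which scale the basis elements by units individually). The main technical point will be to verify cleanly that relator stability yields exactly the invariance needed to promote the local rank-2 $\Kinvaut$-action to a block-diagonal invertible change of basis; once this identification is unwound, the remainder is formal.
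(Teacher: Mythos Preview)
Your proof is correct and follows essentially the same route as the paper: Matsumoto reduction to a single braid move, the factorization $\Ebase{w}{\psi}=\Ebase{u}{\chi}\cdot\Tinv_a(\Ebase{\longtwoword{ij}}{\varphi})\cdot\Tinv_{a\cdot\longtwoweyl{ij}}(\Ebase{v}{\xi})$, and the observation that relator stability is precisely $\Kinvaut$-invariance of the rank-2 block $U^{\mathsf S,a,b}_{\chi,\xi}$. Your justification of block invertibility via $(C^{(\chi,\xi)})^2=I$ from $\Kinvaut^2=\id$ together with the support condition $C_{PQ}=0$ makes explicit what the paper summarizes as ``the restricted involution expressed by $\KinvautTwoCoeff{\varphi}{\varphi'}{ji}$''; otherwise the arguments coincide.
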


\begin{proof}
By Matsumoto's Theorem (see Corollary~\ref{cor:matsumoto}) the exists a sequence of reduced words, starting with $w$ and ending with $\tilde w$\,, each of which is obtained from the previous one by replacing a substring of the form  
$\longtwoword{ij}$ by $\longtwoword{ji}\,$, vice versa. By transitivity of the assertions, it suffices to consider the case in which $\tilde w$ is obtained from $w$ by one such substitution. That is, $w=u\cdot \longtwoword{ij}\cdot v$ and $\tilde w=u\cdot \longtwoword{ji}\cdot v$ as reduced decompositions. Thus, with $a=\Weylpres(u)$ and $b=\Weylpres(v)$\,, we find $s=a\cdot \longtwoweyl{ij}\cdot b$ and $\len s=\len a+m_{ij}+\len b\,$. In particular, $(a,b)$ is a relator pair for $s$\,. 

Suppose now $\psi\in\mathsf S$ so that $\Ebase{w}{\psi}$ is a spanning element for
$\bspan{w}{\mathsf S,+}$ and let $(\chi,\varphi,\xi)=\relrootsplit{a,b}^*(\psi)$\,. 
Applying (\ref{eq:PBWgenRecur}) twice, we find 
$\Ebase{w}{\psi}=\Ebase{u}{\chi}\cdot\Tinv_a(\Ebase{\longtwoword{ij}}{\varphi})\cdot \Tinv_{a\cdot\longtwoweyl{ij}}(\Ebase{v}{\xi})\,$. Expanding 
$\Ebase{\longtwoword{ij}}{\varphi}$ as in (\ref{eq:MhoTwoCoeff}) into a sum of 
$\Ebase{\longtwoword{ji}}{\varphi'}\,$, each term can then be rewritten as some 
$\Ebase{\tilde w}{\psi}$ using again (\ref{eq:PBWgenRecur}). More precisely, writing 
$\psi(\varphi')=(\relrootsplit{a,b}^*)^{-1}(\chi,\varphi',\xi)$ for fixed $\chi$ and $\xi$ we find
\begin{equation}\label{eq:EpsiMTtransf}
\Ebase{w}{\psi}\,=\,\Ebase{w}{\psi(\varphi)}\,=\,\sum_{\varphi'}\KinvautTwoCoeff{\varphi}{\varphi'}{ji}\cdot\Ebase{\tilde w}{\psi(\varphi')}\,.
\end{equation}
Condition (\ref{eq:RelStabCond}) finally ensures that all basis elements on the right side are 
in $\basis{\tilde w}{\mathsf S,+}$ whenever $\KinvautTwoCoeff{\varphi}{\varphi'}{ji}\neq 0$\,.
Thus $\Ebase{w}{\psi}\in \bspan{\tilde w}{\mathsf S,+}$ and, therefore, 
$\bspan{w}{\mathsf S,+}\subseteq\bspan{\tilde w}{\mathsf S,+}\,$.
The opposite inclusion is implied by $\mho$-symmetry. 

Now, for fixed $\chi$ and $\xi$\,, (\ref{eq:EpsiMTtransf}) yields a transformation
of the spanning elements $B^{(\chi,\xi)}_w=\{\Ebase{w}{\psi(\varphi)}:\,\psi(\varphi)\in\mathsf S\}$
to those of $B^{(\chi,\xi)}_{\tilde w}$ via the restricted involution expressed by $\KinvautTwoCoeff{\varphi}{\varphi'}{ji}$\,. Thus,  $B^{(\chi,\xi)}_w$ is a basis if and only if  
$B^{(\chi,\xi)}_{\tilde w}$ is a basis, proving the second statement.  
Similarly, applications of $\Qinvaut$
and $\Cartaninv$ yield the assertions for the other types of spans. 
\end{proof}

As an example, we may consider the following basic type of exponent sets.
Given two sets $\NexpSet{1}, \NexpSet{\edgenum}\subseteq\nnN$ of non-negative integers and $s\in\Weyl$,
define 
\begin{equation}\label{eq:VexpSetEx}
    \NexpExpSet{s}\,=\,\bigl\{\psi\in\expsetup{s}:\,\psi(\alpha)\in\NexpSet{d_\alpha}\,,\,\forall \alpha\in\descroots{s}\bigr\}
\qquad\mbox{and}\quad
\NexpExpSetij{ij}=\NexpExpSet{\longtwoweyl{ij}}\,. 
\end{equation}
Suppose that $s=r\cdot t$ with $\len{s}=\len{r}+\len{t}$\,. Observe that the bijection 
$\expsetup{s}\rightarrow\expsetup{r}\times\expsetup{t}$ from (\ref{eq:ExpsetCorr})
restricts to a bijection $\NexpExpSet{s}\rightarrow\NexpExpSet{r}\times \NexpExpSet{t}$\,.
Conversely, it is not hard to show that any family of exponent sets $\{\NexpExpSet{s}:s\in\Weyl\}$ with this {\em splitting property} has to be
of the form in (\ref{eq:VexpSetEx}). For convenience, we also 
introduce  notation for the complements
\begin{equation}\label{eq:VexpSetExComp}
    \NexpExpCompSet{s}=\expsetup{s}\setminus\NexpExpSet{s}=\bigl\{\psi\in\expsetup{s}:\,\exists\alpha\in\descroots{s}
    \;\mbox{ with }\;
    \psi(\alpha)\not\in\NexpSet{d_\alpha}\bigr\}\,, 
\end{equation}
and, analogously, $\NexpExpCompSetij{ij}=\NexpExpCompSet{\longtwoweyl{ij}}\,$.

\begin{cor}\label{cor:BaseOrdSplit}\ \vspace*{-2mm}

\noindent Let $w,\tilde w\in\wordset$ with $s=\Weylpres(w)=\Weylpres(\tilde w)\,$ and $\{\NexpExpSet{s}\}$ be a family of exponent sets as
in (\ref{eq:VexpSetEx}). \vspace*{-1mm}
\begin{enumerate}[label=\roman*), leftmargin=11mm,]
    \item\label{item:BaseOrdSplit:Vs} 
    Suppose $\Kinvaut$ maps the $\Zgen$-span $\bspan{\longtwoword{ij}}{\NexpExpSetij{ij},+}$
to itself for all $1\leq i,j\leq n\,$. 
Then we have for the $\Zgen$-spans $\bspan{w}{\NexpExpSet{s},+}=\bspan{\tilde w}{\NexpExpSet{s},+}$\,.
Moreover, $\basis{w}{\NexpExpSet{s},+}\,$ is linearly independent if and only if $\basisopp{w}{\NexpExpSet{s},+}$\, is.\vspace*{3mm}
    \item\label{item:BaseOrdSplit:VsComp}  
    Suppose $\Kinvaut$ maps the $\Zgen$-span $\bspan{\longtwoword{ij}}{\NexpExpCompSetij{ij},+}$
to itself for all $1\leq i,j\leq n\,$. 
Then we have for the $\Zgen$-spans $\bspan{w}{\NexpExpCompSet{s},+}=\bspan{\tilde w}{\NexpExpCompSet{s},+}$\,.
Moreover,  $\basis{w}{\NexpExpCompSet{s},+}$\, is linearly independent if and only if $\basisopp{w}{\NexpExpCompSet{s},+}$\, is. 
\end{enumerate} 
The analogous statements hold for opposite directions and respective bases of $\Uqre{\Zgen}^-\,$.   
\end{cor}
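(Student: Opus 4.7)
The plan is to verify that the exponent families $\NexpExpSet{s}$ and $\NexpExpCompSet{s}$ satisfy the relator stability condition \eqref{eq:RelStabCond} and then invoke Proposition~\ref{prop:KinvRelCrit}. First, I would record the splitting property of $\{\NexpExpSet{s}\}_{s\in\Weyl}$: for any length-additive factorization $s=r\cdot t$, the bijection \eqref{eq:ExpsetCorr} restricts to a bijection $\NexpExpSet{s}\to\NexpExpSet{r}\times\NexpExpSet{t}$, since membership in $\NexpExpSet{s}$ is a pointwise condition that is compatible with the decomposition $\descroots{s}=\descroots{r}\sqcup r(\descroots{t})$ via the $\Weyl$-invariance $d_{r(\alpha)}=d_\alpha$. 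Iterating this for a relator pair $s=a\cdot\longtwoweyl{ij}\cdot b$ yields
\[
    \NexpExpSet{s}_{(a,b)}\;=\;\NexpExpSet{a}\times\NexpExpSetij{ij}\times\NexpExpSet{b}\,.
\]

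Second, I would translate the span hypothesis into a pointwise condition on exponents. By Proposition~\ref{prop:PBW1} together with Corollary~\ref{cor:Uqtwoembed}, the set $\basis{\longtwoword{ij}}{+}$ is a free $\Zgen$-basis of $\Uqrerest{+}{ij}{\Zgen}$, so the expansion \eqref{eq:MhoTwoCoeff} combined with this linear independence makes $\Kinvaut$-stability of $\bspan{\longtwoword{ij}}{\NexpExpSetij{ij},+}$ equivalent to the implication $\varphi\in\NexpExpSetij{ij}\Rightarrow\KinvautExpSet{ij}{\varphi}\subseteq\NexpExpSetij{ij}$. Together with the product decomposition above, this is precisely the relator stability condition \eqref{eq:RelStabCond} for $\NexpExpSet{s}$, so part \textit{\ref{item:BaseOrdSplit:Vs}} follows directly from Proposition~\ref{prop:KinvRelCrit}. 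For part \textit{\ref{item:BaseOrdSplit:VsComp}}, a short case analysis suffices. A triple $(\chi,\varphi,\xi)$ lies in $\NexpExpCompSet{s}_{(a,b)}$ precisely when $\chi\notin\NexpExpSet{a}$, or $\xi\notin\NexpExpSet{b}$, or $\varphi\in\NexpExpCompSetij{ij}$. In the first two situations the membership is preserved under any replacement $\varphi\to\varphi'$ of the middle coordinate; in the third the hypothesis on $\bspan{\longtwoword{ij}}{\NexpExpCompSetij{ij},+}$ forces $\KinvautExpSet{ij}{\varphi}\subseteq\NexpExpCompSetij{ij}$. Hence \eqref{eq:RelStabCond} holds for $\NexpExpCompSet{s}$ as well, and Proposition~\ref{prop:KinvRelCrit} again closes the argument.

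Finally, the equivalence of linear independence between $\basis{w}{\NexpExpSet{s},+}$ and $\basisopp{w}{\NexpExpSet{s},+}$ (for the same $w$) follows from \eqref{eq:EFConjOrder}, which identifies $\Qinvaut(\Ebase{w}{\psi})$ with a unit multiple of $\Ebaseopp{w}{\psi}$. Since $\Qinvaut$ is a bijective semilinear anti-homomorphism, it transports $\Zgen$-linear relations bijectively between the two spanning sets up to unit factors. The assertions for the opposite direction of multiplication and for $\Uqre{\Zgen}^-$ follow by further applying $\Cartaninv$ via \eqref{eq:EFCartOrder}. The main technical delicacy I anticipate lies in the second step: one must know that $\basis{\longtwoword{ij}}{+}$ is an honest $\Zgen$-basis of the rank-two subalgebra (not merely a spanning set) in order to extract pointwise information about $\KinvautExpSet{ij}{\cdot}$ from the span-level hypothesis. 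This is secured by Proposition~\ref{prop:PBW1} applied to whichever of $\longtwoword{ij}$ or $\longtwoword{ji}$ is compatible with the box order of the rank-two system, combined with the $\Kinvaut$-transport \eqref{eq:KinvEtworefl} that carries a basis for one ordering to a basis for the other.
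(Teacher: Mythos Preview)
Your proposal is correct and follows essentially the same approach as the paper: establish the product decomposition $\NexpExpSet{s}_{(a,b)}=\NexpExpSet{a}\times\NexpExpSetij{ij}\times\NexpExpSet{b}$ from the splitting property, reduce relator stability to the rank-two $\Kinvaut$-stability hypothesis (using that $\basis{\longtwoword{ij}}{+}$ is a $\Zgen$-basis, which the paper records just before defining the coefficients $\KinvautTwoCoeff{\varphi}{\varphi'}{ij}$), and for part \textit{(ii)} use the same three-case union decomposition of $\NexpExpCompSet{s}_{(a,b)}$ that you describe. Your explicit handling of the linear-independence equivalence between $\basis{w}{\cdot}$ and $\basisopp{w}{\cdot}$ via $\Qinvaut$ and \eqref{eq:EFConjOrder} is a point the paper leaves implicit in its proof of the corollary, so your write-up is in fact slightly more complete there.
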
 

\begin{proof} Suppose $(a,b)$ is a relator pair for $s$\,. Then the aforementioned splitting property implies that 
$\NexpExpSet{s}_{(a,b)}=\NexpExpSet{a}\times\NexpExpSetij{ij}\times \NexpExpSet{b}\,$. So, in the case of {\em \ref{item:BaseOrdSplit:Vs} }, (\ref{eq:RelStabCond})
reduces to the localized condition that $\varphi\in\NexpExpSetij{ij}$ and $\KinvautTwoCoeff{\varphi}{\varphi'}{ji}\neq 0$
implies $\varphi'\in\NexpExpSetij{ij}$, which is equivalent to
the condition that $\Kinvaut$ preserves $\bspan{\longtwoword{ij}}{\NexpExpSetij{ij},+}$\,.
Alternatively, this can be inferred from the remark following in (\ref{eq:RelStabCond}), observing that $U^{\mathsf S,a,b}_{\chi,\xi}=\bspan{\longtwoword{ij}}{\NexpExpSetij{ij},+}$ whenever 
$(\chi,\xi)\in\NexpExpSet{a}\times\NexpExpSet{b}$ and zero otherwise. 

For the complementary statement in {\em\ref{item:BaseOrdSplit:VsComp}}, note that 
$$
\NexpExpCompSet{s}_{(a,b)}\,=\,
   \NexpExpCompSet{a}\times\expsetup{\longtwoweyl{ij}}\times \expsetup{b}
   \,\cup\,
   \expsetup{a}\times\NexpExpCompSet{\longtwoweyl{ij}}\times \expsetup{b}
   \,\cup\,
   \expsetup{a} \times\expsetup{\longtwoweyl{ij}}\times \NexpExpCompSet{b}\,.
$$
If $(\chi,\varphi,\xi)$ is either in the first or third set, the condition (\ref{eq:RelStabCond}) is 
trivially true since $\Kinvaut$ preserves 
$U^{\mathsf S,a,b}_{\chi,\xi}= \Uqrerest{+}{i,j}{\Zgen}$\,. If $(\chi,\varphi,\xi)$ is in the second set
the condition (\ref{eq:RelStabCond}) is, again, equivalent to $\Kinvaut$-stability of
$U^{\mathsf S,a,b}_{\chi,\xi}=\bspan{\longtwoword{ij}}{\NexpExpCompSetij{ij},+}$\,.
\end{proof}
 
Observe, finally, that Corollary~\ref{cor:BaseOrdSplit} can be slightly strengthened by noting that only pairs 
$(i,j)$ need to be considered  for which there is a reduced decomposition $s=a\cdot\longtwoweyl{ij}\cdot b$\,.

\subsection{General Ordering Bases and Module Restrictions}\label{subsec:PBWmain}
The aim of this section is to extend the special bases from Proposition~\ref{prop:PBW1} to bases for any convex ordering and the entire $\Uq$ algebra. We will, further, consider extensions of scalars to ground rings $\Zgen$ or $\Zgenv$ as in 
\eqref{eq:ringchange}
for given homomorphisms $f:\Zqqn{\dpone}\rightarrow\Zgen$ and   $f^{\,\,\vaccent}:\Zqqvn{\dpone}\rightarrow\Zgenv\,$. Since the associated  
functor $-\otimes_{\Zqqn{\dpone}}\Zgen$   maps free modules to free modules, any $\Zqqn{\dpone}$-basis of $\Uq^{\utypechar}$ is also a $\Zgen$-basis for $\Uqre{\Zgen}^{\utypechar}\,$, with
$\utypechar\in\{+,-,0,\geqzero,\leqzero\}\,$. Similar considerations apply to $\Uqre{\Zgenv}\,$.

We begin by adapting Rosso's arguments for the tensor decomposition of the full quantum algebra over a field to our situation for rings $\Zgen$ or $\Zgenv\,$. 
Lemma~2 in \cite{Ro88} proves that $\Uqre{\Zgen}^0$ is isomorphic to $\Zgen[\{K^{\pm}_i\}]\cong \Zgen[\mathbb Z^n]$ and admits a  $\Zgen$-basis is given by 
\begin{equation}\label{eq:defKbasis}
    \basischar^{\,0}\,=\,\left\{{K^\nu:\,\nu\in\mathbb Z^{\sroots}}\right\}
\end{equation} 
for $\Zgen$ a field. The argument there does not require the latter assumption  so that the 
statement holds for any integral domain $\Zgen$ as a ground ring.
We next consider  multiplication maps, such as the following.
\begin{equation}\label{eq:Umults}
    \begin{aligned}
    \Uqre{\Zgen}^0\otimes_\Zgen \Uqre{\Zgen}^{+}\; &\rightarrow \;\Uqre{\Zgen}^{\geqzero}\\
   \rule{0mm}{7mm} \Uqre{\Zgenv}^{+}\otimes_{\Zgenv} \Uqre{\Zgenv}^0\otimes_{\Zgenv} \Uqre{\Zgenv}^{-} \; &\rightarrow \;\Uqre{\Zgenv}
\end{aligned}
\end{equation}
Obvious additional maps are given by permuting the tensor factors as well as swapping the algebras  
$\Uq^+\leftrightharpoons\Uq^-$ and $\Uq^{\geqzero}\leftrightharpoons\Uq^{\leqzero}$. 

To show surjectivity of these maps, it suffices to show that their respective images are preserved under the left regular action of the respective target algebra, which is immediate from the commutation relations \eqref{eq:EFcomm} and \eqref{eq:EFcomm}. For example, applying $E_k$ to a general element in the image of the first type of map yields with  $E_k(K^\nu E_{i_1}\ldots E_{i_N})=q^{-\symbrack{\nu}{\alpha_k}}(K^\nu E_kE_{i_1}\ldots E_{i_N})$ again an element in the image. Similarly, for the second type of maps, 
$F_k(E_{i_1}\ldots E_{i_N}K^\nu F_{j_1}\ldots F_{j_M})$ can be expressed as a $\Zqqv$-combination of  summands of the form
$E_{i_1}\ldots E_{i_N}K^\nu F_kF_{j_1}\ldots F_{j_M}$ and 
$E_{i_1}\ldots \widehat{E_{i_s}} \ldots E_{i_N}K^\mu F_{j_1}\ldots F_{j_M}$\,, where $\mu\in \mathbb Z^{\sroots}$ and the hat denotes omission whenever $i_s=k\,$. 

The proofs of Proposition~1 and 2 in \cite{Ro88} for injectivity assume that the quantum algebra is defined over a field. As before, a straightforward inspection of the arguments there immediately shows that the only assumptions needed are  the existence of a free basis over the ground ring as well as a grading-preserving coalgebra structure. The former is assured by  Proposition~\ref{prop:PBW1} and Corollary~\ref{cor:PBW1F} above. The latter is clearly also defined  over the respective integral domains, and its grading properties are listed in \eqref{eq:gradscoprod} and \eqref{eq:xgradcoprod}. These observations are summarized next.

 \begin{lemma}[\cite{Ro88}]\label{lm:multiso}
    The multiplication maps from \eqref{eq:Umults} and their obvious variants are isomorphisms of free $\Zgen$ or $\Zgenv$ modules.  
 \end{lemma}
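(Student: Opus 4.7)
\smallskip

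\noindent\textit{Proof plan.}
The plan is to reduce the statement to the PBW bases already established in Proposition~\ref{prop:PBW1}, Corollary~\ref{cor:PBW1F}, Proposition~\ref{prop:PBW2}, and Theorem~\ref{thm:mainPBW}, together with the $K$-commutation rule \eqref{eq:Kwgrad} and the anti-involution $\Cartaninv$ from \eqref{eq:defCinv}. Surjectivity is essentially already sketched in the paragraph preceding the lemma: in each case the image of the multiplication map contains $1$ and is preserved under the left regular action of the generators $E_i$, $F_i$, $K_i^{\pm 1}$ of the target algebra by virtue of \eqref{eq:EFcomm}--\eqref{eq:Kcomm} and \eqref{eq:Kwgrad}. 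Hence it coincides with the entire target. So the real content is injectivity, and since the source and target are free $\Zgen$- (respectively $\Zgenv$-) modules by the PBW results, it suffices to exhibit a basis of the source whose image is contained in a known basis of the target up to unit scalars.

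For the map $m\colon \Uqre{\Zgenv}^{+}\otimes\Uqre{\Zgenv}^{0}\otimes \Uqre{\Zgenv}^{-}\to \Uqre{\Zgenv}$, fix a maximal reduced word $z\in\wordsetmax$. Then $\{\Ebase{z}{\psi}\otimes K^\mu\otimes \Fbase{z}{\phi}\}_{\psi,\phi\in\expsetupmax,\,\mu\in\mathbb Z^{\sroots}}$ is a $\Zgenv$-basis of the source by Proposition~\ref{prop:PBW1}, Corollary~\ref{cor:PBW1F}, and \eqref{eq:defKbasis}. Using \eqref{eq:Kwgrad} with $\wgrad(\Fbase{z}{\phi})=-\vec\phi$, one rewrites
\[
m\bigl(\Ebase{z}{\psi}\otimes K^\mu\otimes \Fbase{z}{\phi}\bigr)\;=\;q^{-\symbrack{\mu}{\vec\phi}}\,\Ebase{z}{\psi}\,\Fbase{z}{\phi}\,K^\mu,
\]
which up to a unit scalar in $\Zgenv^{\star}$ is a distinct element of the $\Zgenv$-basis of $\Uqre{\Zgenv}$ provided by Theorem~\ref{thm:mainPBW}. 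Hence $m$ is injective, and thus bijective. The map $m'\colon \Uqre{\Zgen}^{0}\otimes \Uqre{\Zgen}^{+}\to\Uqre{\Zgen}^{\geqzero}$ is handled in the same manner: $\{K^\mu\otimes\Ebase{z}{\psi}\}$ is a basis of the source by Proposition~\ref{prop:PBW1} and \eqref{eq:defKbasis}, and $m'(K^\mu\otimes\Ebase{z}{\psi})=q^{\symbrack{\mu}{\vec\psi}}\Ebase{z}{\psi}K^\mu$ lies up to a unit in the basis of $\Uqre{\Zgen}^{\geqzero}$ supplied by Proposition~\ref{prop:PBW2}.

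For the ``obvious variants'' I would handle the two types of modifications separately. A permutation of the tensor factors just amounts to moving each $K^\mu$ past an $E$- or $F$-monomial via \eqref{eq:Kwgrad}, which contributes only a unit scalar in $\Zgen^{\star}$ or $\Zgenv^{\star}$ and preserves the basis property. Swapping the roles of $U^{+}$ and $U^{-}$ (respectively $U^{\geqzero}$ and $U^{\leqzero}$) is achieved by applying the anti-involution $\Cartaninv$, which by \eqref{eq:defCinv} interchanges $E_i\leftrightharpoons F_i$ and $K_i\leftrightharpoons K_i^{-1}$, hence interchanges $U^{\pm}$ as $\Zgen$-modules and reverses the order of tensor factors. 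This reduces every variant to one of the two canonical cases treated above.

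No step here is really an obstacle: the only non-formal input is the PBW theorems of Section~\ref{sec:PBW}, which have already been proved, and the clean commutation formula \eqref{eq:Kwgrad}. The mildest care is required in ensuring that the scalar prefactors $q^{\pm\symbrack{\mu}{\cdot}}$ arising from the $K$-commutations are indeed units in the coefficient ring under consideration, which is automatic since they lie in $\mathbb Z[q,q^{-1}]\subseteq \Zgen$. Thus the lemma follows without any genuinely new computation.
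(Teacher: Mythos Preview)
Your argument is circular. You invoke Theorem~\ref{thm:mainPBW} to obtain a $\Zgenv$-basis $\{\Ebase{z}{\psi}\Fbase{z}{\phi}K^\mu\}$ of $\Uqre{\Zgenv}$, but in the paper Theorem~\ref{thm:mainPBW} is \emph{deduced from} Lemma~\ref{lm:multiso}: the paragraph preceding the theorem says exactly that the lemma, combined with the bases of the tensor factors, yields the PBW basis of the full algebra. Likewise, you claim that Proposition~\ref{prop:PBW2} supplies a basis of $\Uqre{\Zgen}^{\geqzero}$, but that proposition only gives bases of $\Uqre{\Zgen}^{\pm}$; passing from $\Uqre{\Zgen}^{+}$ to $\Uqre{\Zgen}^{\geqzero}$ is precisely the content of the first map in \eqref{eq:Umults}. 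So at both places where you need injectivity you are assuming what is to be proved.

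The paper avoids this by a different mechanism, namely Rosso's coalgebra argument from Propositions~1 and~2 of \cite{Ro88}. That argument needs only the free PBW bases of $\Uq^{\pm}$ individually (available from Proposition~\ref{prop:PBW1} and Corollary~\ref{cor:PBW1F}) together with the grading behavior of the coproduct recorded in \eqref{eq:gradscoprod} and \eqref{eq:xgradcoprod}; applying $\Delta$ and reading off extreme-weight components forces any vanishing relation in the target to split into vanishing of each tensor summand in the source. Your route could be salvaged by replacing the forward reference to Theorem~\ref{thm:mainPBW} with the classical PBW over the field $\mathbb Q(q)$ taken from the literature, and then descending injectivity along $\Zgenv\hookrightarrow\mathbb Q(q)$; but as written it begs the question.
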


 We next combine results from Sections~\ref{subsec:spanorder} and \ref{subsec:PBWspec} to obtain PBW bases for the $\Uqre{\Zgen}^{\pm}$ algebras for general orderings.

\begin{prop}\label{prop:PBW2}
Suppose  $w\in\wordsetmax$ is any reduced word of maximal length. Then both $\basis{w}{+}$ and $\basisopp{w}{+}$
are $\Zgen$-bases for $\Uqre{\Zgen}^+$\,. Similarly,  $\basis{w}{-}$ and $\basisopp{w}{-}$
are $\Zgen$-bases for $\Uqre{\Zgen}^-$\,. 
\end{prop}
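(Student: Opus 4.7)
The plan is to bootstrap from Proposition \ref{prop:PBW1}, which already establishes the result for one particular convex ordering, to arbitrary $w \in \wordsetmax$ by invoking the word-independence criterion of Proposition \ref{prop:KinvRelCrit}, and then to move between $E$-side and $F$-side and between the two multiplication orders using the built-in assertions of that proposition (alternatively, via the anti-involutions of Section \ref{sec:gradingauts}).

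First, I would fix a particular $w_0 \in \wordsetmax$ whose total order $\leqwt{w_0}$ refines the box pre-order $\prLleq$ of Section \ref{subsec:orderings}. Proposition \ref{prop:PBW1} then asserts that $\basis{w_0}{+}$ and $\basisopp{w_0}{+}$ are $\Zqqn{\dpone}$-bases of $\Uq^+$, and Corollary \ref{cor:PBW1F} gives the analogous statement for $\basis{w_0}{-}$ and $\basisopp{w_0}{-}$ in $\Uq^-$. Since the base-change functor $-\otimes_{\Zqqn{\dpone}} \Zgen$ carries free modules with a marked basis to free modules with the same marked basis, these four sets are $\Zgen$-bases of $\Uqre{\Zgen}^{\pm}$, respectively.

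Second, for an arbitrary $w \in \wordsetmax$ we have $\Weylpres(w) = \Weylpres(w_0) = \longweyl$, so Proposition \ref{prop:KinvRelCrit} applies with $\mathsf S = \expsetup{\longweyl}$. Relator stability \eqref{eq:RelStabCond} is then automatic: any reduced decomposition $\longweyl = a \cdot \longtwoweyl{ij} \cdot b$ with length additivity gives $\mathsf S_{(a,b)} = \expsetup{a} \times \expsetup{\longtwoweyl{ij}} \times \expsetup{b}$, which already contains every triple $(\chi,\varphi',\xi)$, so the implication in \eqref{eq:RelStabCond} holds vacuously regardless of the sets $\KinvautExpSet{ij}{\varphi}$. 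The conclusion of Proposition \ref{prop:KinvRelCrit} then yields $\bspan{w}{+} = \bspan{w_0}{+} = \Uqre{\Zgen}^+$ and transfers linear independence from $\basis{w_0}{+}$ to $\basis{w}{+}$, making $\basis{w}{+}$ a $\Zgen$-basis. The proposition's parallel assertions for $\basisopp{w}{+}$, $\basis{w}{-}$, and $\basisopp{w}{-}$ handle the three remaining cases in the same stroke.

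The only delicate point is checking that relator stability really is trivial in this setting, and this is immediate from the definition once the bijection $\relrootsplit{a,b}^*$ is unpacked. No further computation should be needed, but as a cross-check one could instead derive $\basisopp{w}{+}$ from $\basis{w}{+}$ by applying $\Qinvaut$ (using \eqref{eq:EFConjOrder}, which shows $\Qinvaut$ sends each $\Ebase{w}{\psi}$ to a unit multiple of $\Ebaseopp{w}{\psi}$), and pass from the $E$-side to the $F$-side via $\Cartaninv$ and \eqref{eq:EFCartOrder}–\eqref{eq:CartBasisRefl}; this gives an independent verification with no new work.
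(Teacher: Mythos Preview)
Your proof is correct and follows essentially the same route as the paper: bootstrap from the special ordering of Proposition~\ref{prop:PBW1}, then use word-independence of the span for the full exponent set $\mathsf S=\expsetup{\longweyl}$. The only cosmetic difference is that the paper invokes Corollary~\ref{cor:BaseOrdSplit} (with $V_d=\nnN$, noting $\bspan{\longtwoword{ij}}{+}=\Uqrerest{+}{i,j}{\Zgen}$ is trivially $\Kinvaut$-stable) whereas you go directly to Proposition~\ref{prop:KinvRelCrit} and observe that relator stability is vacuous when $\mathsf S$ is the full exponent set; these are the same argument packaged slightly differently.
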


\begin{proof} Proposition~\ref{prop:PBW1} ensures that there is at least one reduced word 
$w^*\in \Weylpres^{-1}(\longweyl)$ for which $\basis{w^*}{+}$ and $\basisopp{w^*}{+}$ are $\Zqqn{\dpone}$-bases of $\Uq^+$ and hence also $\Zgen$-bases of $\Uqre{\Zgen}^+\,$. Suppose now $w\in\wordsetmax$ is another presentation of $\longweyl\,$.

The full exponent set $\mathsf S=\expsetup \longweyl =\nnN^{\proots}\,$ is clearly of the form (\ref{eq:VexpSetEx})
with $\NexpSet{d}=\nnN\,$. Moreover, as already noted in the proof of Corollary~\ref{cor:BaseOrdSplit}, we have for 
the full $\Zgen$-span 
$\bspan{\longtwoword{ij}}{+}\!\!=\bspan{\longtwoword{ji}}{+}\!\!=\Uqrerest{+}{i,j}{\Zgen}\,$.
Thus, the second condition of  Corollary~\ref{cor:BaseOrdSplit} is also fulfilled, which
implies that $\bspan{w^*}{+}\!\!=\bspan{w}{+}$. Thus $\basis{w}{+}$ also spans $\Uqre{\Zgen}^+\,$ and is linearly independent.
The second case for $\Uq^-$ follows readily from Corollary~\ref{cor:PBW1F} in the same manner.  
\end{proof}

Clearly, Lemma~\ref{lm:multiso} implies that, given bases for the tensor factors on the left sides of \eqref{eq:Umults}, their products as spanning sets yield bases for the quantum algebras on the right side. This, combined with Proposition~\ref{prop:PBW2} and the observation for $\Uq^0$ in \eqref{eq:defKbasis}, now yields the following general construction of PBW bases.

\begin{thm}\label{thm:mainPBW}
Let $w,u\in\wordsetmax$ be any reduced words of maximal length and $\Zgen$\,, $\Zgenv$ rings as in \eqref{eq:ringchange}.
Then any of the spanning sets $\,\basis{w}{+}\!\bcdot \basischar^{\,0}\,$, $\,\basischar^{\,0}\!\bcdot\basis{w}{+}\,$, 
$\,\basisopp{w}{+}\!\bcdot \basischar^{\,0}\,$, or $\,\basischar^{\,0}\bcdot\basisopp{w}{+}$ is a $\Zgen$-basis for $\Uqre{\Zgen}^{\geqzero}$\,. 

\noindent The analogous statement holds for $\Uqre{\Zgen}^{\leqzero}\,$. 
Moreover, the spanning set $\basis{w}{+}\!\bcdot \basischar^{\,0}\bcdot \basis{u}{-}$ and any reordering of the factor sets as well as any direction reversals (24 versions in total) is a $\Zgenv$-basis for $\Uqre{\Zgenv}$\,. 
\end{thm}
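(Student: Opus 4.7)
The plan is to deduce Theorem~\ref{thm:mainPBW} by combining three ingredients already established in the preceding subsections: Proposition~\ref{prop:PBW2}, which provides $\Zgen$-bases $\basis{w}{\pm}$ and $\basisopp{w}{\pm}$ of $\Uqre{\Zgen}^{\pm}$ for any $w\in\wordsetmax$; the fact from \eqref{eq:defKbasis} that $\basischar^{\,0}=\{K^\nu:\nu\in\mathbb Z^{\sroots}\}$ is a free $\Zgen$-basis of $\Uqre{\Zgen}^{0}$; and Lemma~\ref{lm:multiso}, which asserts that the multiplication maps between the various tensor products of $\Uqre{\Zgen}^{0},\Uqre{\Zgen}^{+},\Uqre{\Zgen}^{-}$ (and their $\Zgenv$-versions) are isomorphisms of free modules for any permutation of the factors. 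The general principle used throughout is that if $M_1,\ldots,M_r$ are free $R$-modules with bases $\basischar_1,\ldots,\basischar_r$ and the multiplication $M_1\otimes\cdots\otimes M_r\to M$ is an $R$-module isomorphism, then the product set $\basischar_1\bcdot\cdots\bcdot\basischar_r$ is a free $R$-basis of $M$.

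For the $\Uqre{\Zgen}^{\geqzero}$ statement, Lemma~\ref{lm:multiso} gives the two isomorphisms
\[
\Uqre{\Zgen}^{0}\otimes_\Zgen\Uqre{\Zgen}^{+}\;\xrightarrow{\;\sim\;}\;\Uqre{\Zgen}^{\geqzero}\qquad\mbox{and}\qquad \Uqre{\Zgen}^{+}\otimes_\Zgen\Uqre{\Zgen}^{0}\;\xrightarrow{\;\sim\;}\;\Uqre{\Zgen}^{\geqzero}\;.
\]
Pairing each tensor ordering with either of the two bases $\basis{w}{+}$ and $\basisopp{w}{+}$ from Proposition~\ref{prop:PBW2} yields precisely the four product sets $\basis{w}{+}\bcdot\basischar^{\,0}$, $\basischar^{\,0}\bcdot\basis{w}{+}$, $\basisopp{w}{+}\bcdot\basischar^{\,0}$, $\basischar^{\,0}\bcdot\basisopp{w}{+}$, each of which is therefore a $\Zgen$-basis of $\Uqre{\Zgen}^{\geqzero}$. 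The $\Uqre{\Zgen}^{\leqzero}$ case is formally identical, using the analogous bases of $\Uqre{\Zgen}^{-}$ from Proposition~\ref{prop:PBW2} and the variants of Lemma~\ref{lm:multiso} with $\Uq^{+}$ replaced by $\Uq^{-}$.

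For the full algebra, Lemma~\ref{lm:multiso} (together with its ``obvious variants'' obtained by permuting tensor factors) supplies six multiplication isomorphisms, one for each ordering of $\{\Uqre{\Zgenv}^{+},\Uqre{\Zgenv}^{0},\Uqre{\Zgenv}^{-}\}$, e.g.\ $\Uqre{\Zgenv}^{+}\otimes_{\Zgenv}\Uqre{\Zgenv}^{0}\otimes_{\Zgenv}\Uqre{\Zgenv}^{-}\xrightarrow{\;\sim\;}\Uqre{\Zgenv}$. Proposition~\ref{prop:PBW2} contributes two choices of basis for $\Uqre{\Zgenv}^{+}$ (ordered or oppositely ordered) and two for $\Uqre{\Zgenv}^{-}$, while $\basischar^{\,0}$ is the unique basis used for $\Uqre{\Zgenv}^{0}$. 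Multiplying out the bases along each of the six tensor orderings and all four direction choices produces exactly $6\times 2\times 2=24$ product bases of $\Uqre{\Zgenv}$, as claimed; the set $\basis{w}{+}\bcdot\basischar^{\,0}\bcdot\basis{u}{-}$ is one of them and the remaining $23$ are obtained from it by reordering factors and reversing multiplication directions.

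The argument is essentially formal once Proposition~\ref{prop:PBW2} and Lemma~\ref{lm:multiso} are in place; no further computation with explicit commutation relations is required. The genuine obstacle lay upstream, namely in showing that a PBW spanning set exists for at least one convex ordering over the minimal ring $\Zqqn{\dpone}$ (carried out in Proposition~\ref{prop:PBW1}) and that the span is independent of the chosen reduced word $w\in\wordsetmax$ (handled via the $\Kinvaut$-stability criterion of Proposition~\ref{prop:KinvRelCrit} and Corollary~\ref{cor:BaseOrdSplit}). Relative to those results, the only care needed here is bookkeeping: verifying that each of the $24$ orderings corresponds to an instance of Lemma~\ref{lm:multiso}, and that the extension of scalars $-\otimes_{\Zqqvn{\dpone}}\Zgenv$ (resp.\ $-\otimes_{\Zqqn{\dpone}}\Zgen$) preserves both freeness and the tensor decomposition, which it does since it is exact and sends free modules to free modules.
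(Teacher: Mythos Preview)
Your proof is correct and follows essentially the same approach as the paper: the theorem is deduced directly from Proposition~\ref{prop:PBW2}, the basis \eqref{eq:defKbasis} for $\Uq^0$, and the multiplication isomorphisms of Lemma~\ref{lm:multiso}, with the $24$ versions arising from the $6$ tensor orderings times the $2\times 2$ direction choices. The paper records this argument in the paragraph immediately preceding the theorem rather than in a separate proof environment, but the content is identical.
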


One elementary but useful application of this theorem arises  when subspaces constructed over fields need to be pulled back to submodules over the other rings. More concretely, suppose $\basischar$ is any $\Zgen$-basis as above for one of the algebras $\Uqre{\Zgen}^{\utypechar}\,$. Assume that $\Zgen$ is an integral domain and $\mathbb F$ its field of fractions so that we may view
$\Uqre{\Zgen}^{\utypechar}$ as a subalgebra of $\Uqre{\mathbb F}^{\utypechar}\,$. 

Suppose now $\mathcal I\subset\mathcal B$ is a subset of basis elements and denote $I=\langle \mathcal I\rangle$ its $\Zgen$-span in $\Uqre{\Zgen}^{\utypechar}\,$. Similarly, let $I_{\mathbb F}\cong I\otimes\mathbb F$ be the respective $\mathbb F$-span in  
$\Uqre{\mathbb F}^{\utypechar}\,$. Since $I$ is a direct summand, we then immediately have  
\begin{equation}\label{eq:restrImodule}
    I\,=\,I_{\mathbb F}\cap \Uqre{\Zgen}^{\utypechar}\,. 
\end{equation}

\section{Skew-Commutative Algebras at Roots of Unity}\label{sec:rootsofunity}

In this section, we turn to the specialization $q\mapsto\zeta$ of a quantum group to a primitive
$\kay$-th root of unity $\zeta$. The main focus of our discussion are the primitive power generators
$\Epw_\alpha=E_\alpha^{\ell_\alpha}\,$, where $\ell_\alpha$ is the order of 
$\zeta^{\symbrack{\alpha}{\alpha}}$\,. For odd $\kay$\,, the algebra $\Zsubalgchar^+_\bullet$ generated 
by these elements lies in the center of $\Uz$ 
and has been studied, for example, in \cite{dck90,dcp93a}.

Our treatment here includes, in addition,  {\em all} even $\kay>2\dpone$. 
In many of these cases,  the generators $\Epw_\alpha$
 commute only up to signs with other elements in $\Uz$\,. Depending on Lie type and
the congruence of $\kay$ modulo 8, the algebra $\Zsubalgchar^+_\bullet$ may be central, 
commutative but not central, or neither. Besides a classification of these cases, we also prove 
word independence of respective partial subalgebras $\Zsubalgchar^+_w$ (with one exception) 
and invariance under automorphisms. 
Various types of implied and induced ideals are discussed as well. 

\subsection{Quantum Numbers and Rings at Roots of Unity} \label{subsec:QnumsRingsRo1} 
Assume a root system with integers $d_i\in\{1,2,3\}$ as in Section~\ref{subsec:rootweyl} such that 
$1=\min_i\{d_i\}$ and $\maxd=\max_i\{d_i\}\,$. Fix an integer $\kay\in \mathbb N$ with
\begin{equation}\label{eq:kaycond}
    \kay\not\in\{1,2,\maxd, 2\maxd\}\,.
\end{equation} 
The ring $\mathbb Z[\zeta]$ of cyclotomic integers may be understood as the quotient of either $\mathbb Z[q]$ or $\mathbb Z[q,q^{-1}]$ by the principal ideal $J_\kay=(\Phi_\kay(q))$ of the $\kay$-th cyclotomic polynomial. Equivalently, we may view $\mathbb Z[\zeta]$ as the ring of integers of the cyclotomic field $\mathbb Q(\zeta)$, considered as a subfield of $\mathbb C$ for a given a choice of a primitive $\kay$-th root of unity $\zeta\in\mathbb C\,$. That is, the kernel of the map $\mathbb Z[q]\rightarrow\mathbb C: q\mapsto \zeta$ is precisely $J_\kay\,$ (see, for example, \cite{Wa82} for details). 
We use the same notation as in \eqref{eqn:quantumnumbers} for the images of the quantum numbers in $\mathbb Z[\zeta]$. 

Denote by $\elln=\kay/\gcd(\kay,2)\,$ the order of $\zeta^2\,$. For integers $d_i$ as above write $\zeta_i=\zeta^{d_i}\,$ and denote by $\kay_i=\kay/\gcd(\kay,d_i)\,$ the order of $\zeta_i\,$. Similarly, we set
$\ell_i=\elln/\gcd(\elln,d_i)=\kay/\gcd(\kay,2d_i)=\kay_i/\gcd(\kay_i,2)$ to be the order of 
$\zeta_i^2=\zeta^{2d_i}=\zeta^{\symbrack{\alpha_i}{\alpha_i}}\,$.
Note also that for short roots we have $\ell_i=\elln\,$ since, by definition, $d_i=1\,$.

Condition \eqref{eq:kaycond} is indeed equivalent to requiring $\zeta^2_i\neq 1$ for all 
$i\in\{1,\ldots, n\}$. This ensures that the factor $(\zeta_i-\zeta_i^{-1})^{-1}$ occurring in 
\eqref{eq:EFcomm} exists in $\mathbb Q(\zeta)$ so that the specialization $q\mapsto\zeta$ for quantum groups  is well-defined. Similarly, \eqref{eq:kaycond} implies that $m=\ell_i$ is precisely the smallest positive integer for which $[m]_i=0\,$.

The following additional powers of $\zeta$ will frequently appear in subsequent formulae. 
\begin{equation}\label{eq:defspeczetas}
    \Lsign_i=\zeta_i^{\elln_i}=\zeta^{d_i\elln_i}=\zeta^{\mathrm{lcm}(d_i,\ell)}
    \qquad \qquad  \LLsign_i=\Lsign_i^{\elln_i}=\zeta_i^{{\elln_i}^2}
    \qquad\qquad \Qsign_i=\zeta_i^{\binom{\elln_i}{2}}
\end{equation} 

We note that $\Lsign_i, \LLsign_i\in\{+1,-1\}\,$. Specifically, $\Lsign_i=-1$ if and only if $\kay_i$ is even, and 
$\LLsign_i=-1$ if and only if $\kay_i\equiv 2 \mod 4\,$. Moreover, $\Qsign_i\,$ is a primitive fourth root of unity ($\Qsign_i^2=-1$) if $\kay_i\equiv 0\mod 4$, we have $\Qsign_i=-1\,$ if $\kay_i\equiv 6\mod 8$, and $\Qsign_i=1\,$ in all other cases. These relations also imply the identity $\Qsign_i^2=(-1)^{\ell_i-1}\,$ in all cases.

As for generic $q$\,, if $d_i=1\,$, we will often drop the $i$ subscript  from the 
roots of unity appearing in \eqref{eq:defspeczetas} as well as the quantum numbers. 
In any of these, we may replace any subscript $i$ by a subscript $\alpha$ if $\alpha\in\proots$ is in the $\Weyl$-orbit of $\alpha_i\,$. So, for example, $\elln_\alpha=\elln_i$\,, $\kay_\alpha=\kay_i$\,, $\zeta_\alpha=\zeta_i$\,, and $\Lsign_\alpha=\Lsign_i$\,. Analogously, for a reduced word
$w\in\wordset$ we will write $\elln_w=\elln_i$\,, $\zeta_w=\zeta_i$\,, $\Lsign_w=\Lsign_i$\,, and so forth if $i=\tau(w)$ or, equivalently, $\wordroot(w)$ is in the  $\Weyl$-orbit of $\alpha_i\,$. 
 
The commutation relations in \eqref{eq:EFcomm} for the standard definition of quantum groups also contain denominators $(q_i-q_i^{-1})^{-1}\,$, which \textit{a priori} map only to $\mathbb Q(\zeta)$ but not $\mathbb Z[\zeta]$\,. A slightly sharper statement asserts that 
the image $(\zeta_i-\zeta_i^{-1})^{-1}$ is an element of the smaller ring $\mathbb Z[\zeta,\tfrac 1 {\elln_i}]\subset \mathbb Q(\zeta)\,$. This follows readily from the following formula, in which $r=\elln/\gcd(m,\ell)$ is the order of $\zeta^{2m}\,$:
\begin{equation}\label{eq:zetadenomexp}
    (\zeta^m-\zeta^{-m})^{-1}\,=\,-\frac 1 {r}\sum_{j=0}^{r-2}(r-j-1)\zeta^{m(2j+1)}\qquad\in\;\mathbb Z[\zeta,\tfrac 1 {r}] \,\subset \, \mathbb Q(\zeta)\,.
\end{equation}
For non-prime orders $\kay$, smaller denominators may be chosen as, for example, $(\zeta-\zeta^{-1})^{-1}\in \mathbb Z[\zeta,\tfrac 1 {3}]$ if $\elln=\kay=9\,$. The use of singularized generators as in
(\ref{eq:def:EgenSing}) would avoid these denominators. 

Aside from vanishing quantum numbers, the specialization also leads to additional units. For example, if $r$ and $\elln_i$ are coprime, then $[r]_i$ is a 
unit in $\mathbb Z[\zeta]\,$. More generally, for any $m\in\mathbb N$ and $d=\gcd(m,\elln_i)$ we have the factorization $[m]_i=[d]_i\cdot u$\,, where $u$ is a (cyclotomic) unit in $\mathbb Z[\zeta]$\,. 

From \eqref{eq:zetadenomexp} we also find that $ ([m]_i)^{-1}\in\mathbb Z[\zeta,\tfrac 1 s]$, where $s=\elln_i/d\in\mathbb Z\,$ is the order of $\zeta_i^{2m}=\zeta^{2d_im}\,$. 
The next identity expresses the last non-vanishing factorial in terms of the order $\elln_i$ and the element $(\zeta_i-\zeta_i^{-1})\,$.
\begin{equation}\label{eq:Dfactell}
    \Lfact_i\,=\,[\elln_i-1]_{i}!\,=\,\elln_i\cdot  \Qsign_i^{-1}\cdot (\zeta_i^{-1}-\zeta_i)^{-\elln_i+1}\;
\end{equation}

For an integer $n>1$, we introduce the following notation for specializations of the rings in \eqref{eq:defgenrings}, which will be used frequently in later sections. 
They are well-defined as subrings of $\mathbb Q(\zeta)\,$
if and only if $n\leq\elln\,$, which we will thus assume. 
\begin{equation}
    \begin{aligned}\label{eq:defzetarings}
    \Zz&=\mathbb Z[\zeta] & \qquad & \Zzn{n}=\mathbb Z\textstyle{\left[\zeta,\frac 1 {[n-1]!}\right]}\\
    \rule{0mm}{7mm}\Zzv&=\mathbb Z\textstyle{\left[{\zeta,\frac 1{\zeta-\zeta^{-1}}}\right]} && \Zzvn{n}=\mathbb Z\textstyle{\left[\zeta,\frac 1 {\zeta-\zeta^{-1}},\frac 1 {[n-1]!}\right]}
\end{aligned}
\end{equation}

The minimal rings $\Zqqn{\dpone}$ and $\Zqqvn{\dpone}$ over which $\Uq^{\pm}$ and $\Uq$ are defined specialize to $\Zzn{\dpone}$ and $\Zzvn{\dpone}$\,, respectively, if and only if $\elln\geq \dpone\,$, where $\dpone=\maxd+1\in\{2,3,4\}$ is as in \eqref{eq:Ddef} depending on Lie type. As a condition on $\kay$\,, the bound on $\ell$ is equivalent to requiring \eqref{eq:kaycond} and, additionally, $\kay\neq 4$ whenever $\maxd=3$\,. 
The latter exclusion refers to the $\LT{G}_2$ quantum group, which is well-defined for 
$\kay= 4$ but for which the braid automorphisms from Section~\ref{subsec:LATM} are not defined.

Previous remarks about cyclotomic units imply that $\Zz=\Zzn{n}$ and 
$\Zzv=\Zzvn{n}$  if $j$ and $\elln$ are coprime ($\gcd(\elln,j)=1$) 
for all $1\leq j\leq n$. This is, of course, the case if $\elln$ itself is a prime or if $n=1\,$. 
Consequently, we have 
$\Zzn{\dpone}=\Zz$ and $\Zzvn{\dpone}=\Zzv$ for types $\LT{ADE}$ and all $\elln>1$\,, for 
types $\LT{BCF}$ if $\elln$ is odd, and for type $\LT{G}_2$ if $\gcd(\elln,6)= 1\,$. 

As before, the relation $(\zeta_i-\zeta_i^{-1})=[d_i](\zeta-\zeta^{-1})$ implies that 
$(\zeta_i-\zeta_i^{-1})^{-1}\in \Zzvn{\dpone}\,$. The definition of an $R$-matrix in our setting will require all non-zero factorials to appear as denominators, thus requiring $n=\elln\,$.
It is clear that all rings listed in \eqref{eq:defzetarings} are subrings of  
$\Zzvn{\elln}\,$. In fact, equations \eqref{eq:zetadenomexp} and \eqref{eq:Dfactell} imply 
the equality of rings 
$$
\Zzvn{\elln}=\mathbb Z[\zeta,\frac 1 {\elln}]\;.
$$
 
The evaluation of quantum binomial coefficients at a root of unity also leads to vanishing patterns and factorizations of classical binomial coefficients that will be important later. In order to describe the former, we introduce functions $\twococ_i:\nnN\times\nnN\rightarrow \{0,1\}$ depending on $\elln_i$ as follows. For two integers $u, v\in \nnN\,$, write $u=a\cdot \elln_i+r$ and $v=b\cdot \elln_i+s$ with remainders $r,s\in\{0,\ldots,\elln_i-1\}\,$. Define then
\begin{align}
\twococ_i(u,v)=
   \left\lfloor{\frac{u+v}{\elln_i}}\right\rfloor-
\left\lfloor{\frac{u}{\elln_i}}\right\rfloor-
\left\lfloor{\frac{v}{\elln_i}}\right\rfloor
\,=\,
\twococ_i(r,s)
\,=\,
\begin{cases}
1 & \text{for } r+s\geq \elln_i\\
0 & \text{for } r+s< \elln_i\,.
\end{cases}\;
\end{align}
Here $\floor{x}$ is the floor function, which yields the greatest integer less than or equal to $x$. The expression clearly only depends on the classes $\overline u,\overline v\in \mathbb Z/\elln_i\mathbb Z$ and, as a map on these, is the standard 2-cocycle in $\mathrm Z^2(\mathbb Z/\elln_i\mathbb Z)$ generating $\mathrm H^2(\mathbb Z/\elln_i\mathbb Z)\,$. 

\begin{lem}\label{lem:qbin-zetaform}
With integers $u,v,r,s\in\nnN$ as above, the image of the quantum binomial coefficient in $\mathbb Z[\zeta]$ is given by the formula
\begin{equation}\label{eq:qbinform}
\qbin{u+v}{u}{i}\,=\,
\Lsign_i^{(as+br)}\cdot \LLsign_i^{ab}\cdot \qbin{r+s}{r}{i}\cdot {\binom{a+b}{a}}\;,
\end{equation}\vspace*{-1mm}

so that
\hspace*{28mm}
$\displaystyle
\qbin{u+v}{u}{i}\,=\,0\qquad\Longleftrightarrow\qquad \twococ_i(u,v)\neq 0\,. 
$
\end{lem}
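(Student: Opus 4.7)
The plan is to treat both assertions by a block decomposition of the quantum factorials at $q_i=\zeta_i$, separating in each factorial the factors $[k]_i$ with $\ell_i\nmid k$ (which remain units at $\zeta_i$) from the factors $[j\ell_i]_i$ (which all vanish to simple order). For the vanishing criterion I would simply count: the numerator $[u+v]!_i$ contains $\lfloor(u+v)/\ell_i\rfloor = a+b+\twococ_i(u,v)$ vanishing factors while $[u]!_i[v]!_i$ contains exactly $a+b$, so $\qbin{u+v}{u}{i}$ vanishes at $\zeta_i$ if and only if $\twococ_i(u,v)=1$. This settles the second assertion and reduces the formula to the case $r+s<\ell_i$.

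For the unit contribution, I would apply the shift identity $[j\ell_i+k]_i=\Lsign_i^j[k]_i$ at $\zeta_i$ (immediate from $\zeta_i^{\ell_i}=\Lsign_i$ and $\zeta_i^{2\ell_i}=1$) to obtain
\[
\prod_{\substack{1\le m\le a\ell_i+r\\ \ell_i\nmid m}}[m]_i\,=\,\Lsign_i^{(\ell_i-1)\binom{a}{2}+ar}\,\Lfact_i^{a}\,[r]!_i,
\]
with $\Lfact_i=[\ell_i-1]!_i$ a unit by \eqref{eq:Dfactell}. Taking the corresponding ratio for $u+v$, $u$, and $v$, the $\Lfact_i^{a+b}$ factors cancel, the residual $[r+s]!_i/([r]!_i[s]!_i)$ becomes $\qbin{r+s}{r}{i}$, and the $\Lsign_i$-exponents combine via $\binom{a+b}{2}-\binom{a}{2}-\binom{b}{2}=ab$ into $\Lsign_i^{(\ell_i-1)ab+as+br}$.

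For the vanishing contribution, the key observation is that
\[
[j\ell_i]_i(q_i)\,=\,[\ell_i]_i(q_i)\cdot\rho_j(q_i)\qquad\text{with}\quad\rho_j(q_i)\,=\,q_i^{(1-j)\ell_i}\sum_{t=0}^{j-1}q_i^{2t\ell_i}
\]
a genuine Laurent polynomial with $\rho_j(\zeta_i)=j\Lsign_i^{j-1}$. The factors $[\ell_i]_i^{a+b}$ then cancel rigorously between numerator and denominator of $\qbin{u+v}{u}{i}$, leaving $\prod_{j=1}^{a+b}\rho_j\Big/\Bigl(\prod_{j=1}^{a}\rho_j\prod_{j=1}^{b}\rho_j\Bigr)$, which evaluates at $\zeta_i$ to $\binom{a+b}{a}\Lsign_i^{ab}$ by the same binomial identity. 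Combining both contributions gives prefactor $\Lsign_i^{(\ell_i-1)ab+as+br+ab}=\Lsign_i^{\ell_i ab+as+br}=\LLsign_i^{ab}\Lsign_i^{as+br}$, which is precisely \eqref{eq:qbinform}. The main obstacle is the bookkeeping of the $\Lsign_i$-exponents across both contributions, though each identity involved is elementary; the cleanest conceptual step is the explicit factorization $[j\ell_i]_i=[\ell_i]_i\rho_j$, which makes the otherwise ad hoc $0/0$ cancellation rigorous and replaces a direct L'H\^opital argument.
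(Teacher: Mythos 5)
Your proposal is correct and follows essentially the same route as the paper: your block decomposition of the quantum factorials after dividing out the powers of $[\ell_i]$ is exactly the paper's key identity \eqref{eq:factelllim}, and taking the ratio for $u+v$, $u$, and $v$ yields \eqref{eq:qbinform} just as the paper sketches. Your explicit factorization $[j\ell_i]=[\ell_i]\,\rho_j$ with $\rho_j(\zeta_i)=j\Lsign_i^{j-1}$, together with the simple-zero count for the vanishing criterion, merely fills in the details the paper leaves to inspection.
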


Both the quantum binomial coefficient in $r$ and $s$ at $\zeta_i$ and the classical binomial coefficient in $a$ and $b$ are elements in $\mathbb Z[\zeta]\,$ and the powers of $\Lsign$ and $\LLsign$  are signs $\pm 1$.
The vanishing criterion follows from inspection of the quantum binomial term in $r$ and $s$.

Importantly, this implies that if $u+v$ is a multiple of $\elln_i$\,, then the expression is non-zero only if also both $u$ and $v$ are multiples of $\elln_i$ as well. In this case, $r=s=0$ and the quantum binomial is given by the respective classical binomial up to a sign. A main ingredient in the derivation of \eqref{eq:qbinform} is that the quantum factorial for $u$ in $\mathbb Z[q,q^{-1}]$ is divisible by $[\elln_i]^a$ and can thus be evaluated at $\zeta_i$\,. More precisely, we have for $\elln_i>1$ that
\begin{align}\label{eq:factelllim}
\left.\frac {[u]_{q_i}!}{[\elln_i]_{q_i}^a}\right\vert_{q_i\rightarrow \zeta_i}
=\Lsign_i^{ar}\cdot \LLsign_i^{\binom{a}{2}}\cdot [r]_{\zeta_i}!\cdot a!\cdot \Lfact_i^a\,,
\end{align} 
where the vertical line indicates the map to $\mathbb Z[\zeta]\,$. We will later use the following basic consequence of Lemma~\ref{lem:qbin-zetaform} for the mulitnomial coefficients from \eqref{eq:def-qmultinom}. 
\begin{cor}\label{cor:zeros-mulitnom}
Suppose $a_1,\ldots,a_r\in\nnN$ such that $a_1+\ldots+a_r=\ell_i\,$. Then
    $$
    \qbin{a_1+\ldots+a_r}{a_1\,,\,\ldots\,,\,a_r}{i}=
    \begin{cases}
       \; 1 & \mbox{ if } \, \exists s: \, a_s=\ell_i \, \mbox{ and } \, a_{t}=0 \, \mbox{ for } t\neq s
       \vspace*{2mm}\\
        \; 0 & \mbox{ otherwise }
    \end{cases}
    $$
\end{cor}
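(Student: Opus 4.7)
The plan is to reduce the statement to the binomial vanishing criterion in Lemma~\ref{lem:qbin-zetaform} by exploiting the factorization of the quantum multinomial coefficient into a product of quantum binomials. Recall from \eqref{eq:def-qmultinom} the identity
\begin{equation*}
\qbin{a_1+\ldots+a_r}{a_1,\,\ldots,\,a_r}{i}
\,=\,\prod_{k=2}^{r}\qbin{S_k}{a_k}{i},\qquad S_k=a_1+\ldots+a_k,
\end{equation*}
where $S_r=\elln_i$ by hypothesis. Each factor is of the form treated in Lemma~\ref{lem:qbin-zetaform} with $u=S_{k-1}$ and $v=a_k$.

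For the ``concentrated'' case, suppose there exists $s$ with $a_s=\elln_i$ and $a_t=0$ for all $t\neq s$. Then each partial sum $S_k$ equals either $0$ or $\elln_i$, and each factor is either $\qbin{0}{0}{i}=1$ or $\qbin{\elln_i}{\elln_i}{i}=1$ or $\qbin{\elln_i}{0}{i}=1$. Thus the product equals $1$, confirming the first case.

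For the ``spread'' case, assume no single $a_s$ carries all the mass, so at least two indices $t$ have $a_t>0$. Let $s$ be the largest index with $a_s>0$; then $1<s\leq r$ and $S_{s-1}=\elln_i-a_s$ with $0<a_s<\elln_i$ and $0<S_{s-1}<\elln_i$. Applying Lemma~\ref{lem:qbin-zetaform} to the factor $\qbin{S_s}{a_s}{i}=\qbin{\elln_i}{a_s}{i}$ with $u=S_{s-1}$, $v=a_s$, we have remainders $r=S_{s-1}$ and $s=a_s$ (both strictly positive and strictly less than $\elln_i$), and quotients $a=b=0$. Since $r+s=\elln_i\geq\elln_i$, the cocycle value $\twococ_i(u,v)=1$, so this factor vanishes in $\Zz$, hence the entire multinomial vanishes.

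The only subtlety to watch is that we have picked $s$ to be the \emph{largest} index with nonzero $a_s$ precisely so that $S_s=\elln_i$; if we had chosen a smaller index, the partial sum $S_s$ need not have reached $\elln_i$ and the vanishing criterion would not apply directly. No further obstacle is anticipated: the argument is a routine application of the binomial-level result combined with the chain decomposition of the multinomial, and nothing here depends on the specific Lie type.
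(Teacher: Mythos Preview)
Your argument is correct and follows essentially the same route the paper intends: decompose the multinomial into a product of binomials via \eqref{eq:def-qmultinom} and apply the vanishing criterion from Lemma~\ref{lem:qbin-zetaform}. The paper does not give a formal proof but notes just before the corollary that if $u+v$ is a multiple of $\elln_i$ then the binomial is nonzero only when both $u$ and $v$ are multiples of $\elln_i$; your choice of the largest nonzero index picks out exactly a factor where this forces vanishing.

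One presentational point: you reuse the symbols $r$ and $s$ from Lemma~\ref{lem:qbin-zetaform} (the remainders) while $r$ already denotes the number of parts and $s$ your chosen index. The mathematics survives, but renaming the remainders when you invoke the lemma would avoid the clash.
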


We will distinguish two types of specializations of quantum groups to roots of unity, one given by the minimal choice $n=\dpone$ and the other by the maximal choice $n=\elln\,$. The notation is as follows, where $\utypechar\in\{\geqzero,\leqzero, +, -\}\,$ and the indicated extensions of scalars
are with respect to the evaluation maps $q\mapsto \zeta$ between the rings in  
\eqref{eq:defgenrings} and those in \eqref{eq:defzetarings}.

\begin{equation}
    \begin{aligned}\label{eq:defzetaQG}
         \Uz^{\utypechar}&=\Uq^{\utypechar}\otimes\Zzn{\dpone} & 
         \qquad && \Uz=\Uq\otimes\Zzvn{\dpone} &\qquad &
         \UzQ^{\utypechar}&=\Uq^{\utypechar}\otimes\mathbb{Q} (\zeta) 
         \\
        \rule{0mm}{7mm} 
        \Uzn{\ell}^{\utypechar}&=\Uq^{\utypechar}\otimes\Zzn{\elln} && &\Uzn{\ell}=\Uq\otimes\Zzvn{\elln}&& \\ 
    \end{aligned}
\end{equation}

Many properties established for  quantum groups at generic $q$ in previous sections extend without 
changes to these quantum groups at roots of unity. 

\pagebreak[2]

\begin{samepage}
\begin{cor} The following statements hold. \label{cor:UzProps}\  \vspace*{-1.6mm}

\begin{enumerate}[label=\roman*), leftmargin=8mm,] 
\item\label{item:UzProps:free} 
The algebras defined in \eqref{eq:defzetaQG} are free modules over their respective ground rings, with bases given as in Theorem~\ref{thm:mainPBW}.\vspace*{1.5mm}

\item\label{item:UzProps:incl}  
The natural inclusions of algebras in \eqref{eq:defgenrings} induce respective embeddings for the algebras in \eqref{eq:defzetaQG}.\vspace*{1.5mm}

\item\label{item:UzProps:autom} 
All gradings and automorphisms defined  in Section~\ref{sec:gradingauts} factor into well-defined gradings and automorphisms for the algebras in \eqref{eq:defzetaQG}.
\end{enumerate}
\end{cor}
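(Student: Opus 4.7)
The plan is to derive all three items as consequences of the extension-of-scalars formalism already set up in Sections~\ref{sec:quantumgen}--\ref{sec:PBW}, together with \eqref{eq:kaycond} (equivalently $\ell\geq\dpone$), which guarantees the specialization maps $\Zqqn{\dpone}\to\Zzn{\dpone}\to\Zzn{\elln}$ and $\Zqqvn{\dpone}\to\Zzvn{\dpone}\to\Zzvn{\elln}$ are well-defined ring homomorphisms sending $q\mapsto\zeta$.

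For Item~\ref{item:UzProps:free}, I would observe that the algebras in \eqref{eq:defzetaQG} are, by construction in \eqref{eq:ringchange}, obtained as $\Uq^{\utypechar}\otimes_{\Zqqn{\dpone}}\Zgen$ or $\Uq\otimes_{\Zqqvn{\dpone}}\Zgenv$ for the appropriate specialization homomorphism. Since tensoring with $-\otimes_{\Zqqn{\dpone}}\Zgen$ or $-\otimes_{\Zqqvn{\dpone}}\Zgenv$ is an additive functor that sends free modules to free modules and preserves given bases, Theorem~\ref{thm:mainPBW} immediately yields that each of the specialized algebras is free with basis the image of the corresponding PBW basis. The only check to perform is that the specialization maps between \eqref{eq:defgenrings} and \eqref{eq:defzetarings} are legitimate ring maps, which follows from \eqref{eq:kaycond} via the identification of $\Zz$ with $\mathbb Z[q,q^{-1}]/J_\kay$ and the fact that the non-vanishing quantum factorials $[j]!$ for $j<\dpone$ specialize to units in $\Zzn{\dpone}$ (resp.\ in $\Zzvn{\ell}$ for $j<\ell$ by the discussion around \eqref{eq:zetadenomexp}), together with the fact that $\zeta-\zeta^{-1}$ is invertible in $\Zzvn{\dpone}\,$.

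For Item~\ref{item:UzProps:incl}, the point is that each inclusion in \eqref{eq:defgenrings} becomes after specialization an inclusion of free modules for which a PBW basis on the smaller algebra extends to one on the larger, by Theorem~\ref{thm:mainPBW}. Concretely, if $R\hookrightarrow R'$ is one of the inclusions of ground rings in \eqref{eq:defzetarings} and $B\subseteq B'$ are the chosen PBW spanning sets on $\Uq^{\utypechar_1}\subseteq\Uq^{\utypechar_2}$ (for instance $\basischar^{\,0}\subseteq \basis{w}{+}\!\bcdot\basischar^{\,0}$), then freeness of both sides over $R'$ with respective bases $B, B'$ makes the induced map of $R'$-modules a split injection. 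In the mixed situations (e.g.\ passing from $\Uz^{\geqzero}$ over $\Zzn{\dpone}$ to $\Uz$ over $\Zzvn{\dpone}$) the same argument applies after first extending scalars on the smaller algebra to $\Zzvn{\dpone}$; freeness ensures this extension itself is injective, so the composite is too. The main step is just to verify that the PBW bases chosen in Theorem~\ref{thm:mainPBW} are compatible under these inclusions in the sense of subsets, which is immediate from the form of the spanning sets $\basis{w}{\pm}\!\bcdot \basischar^{\,0}\bcdot \basis{u}{-}$.

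For Item~\ref{item:UzProps:autom}, the gradings $\wgrad,\ztgradp,\ztgradn$ from Section~\ref{subsec:gradings} are defined on generators by formulae independent of $q$, and their homogeneity of the defining relations does not involve the ground ring, so they pass to the quotient algebras and their specializations verbatim. For the automorphisms in Section~\ref{sec:gradingauts}, I would argue case by case but along a common line: each of $\Cartaninv,\Kinvaut,\Qinvaut,\Cartanaut,\Kconaut,\DynkInv$ is defined on generators by formulae whose coefficients lie in $\mathbb Z$ and which act on $q$ either trivially or by $q\mapsto q^{-1}$; since $q\mapsto q^{-1}$ descends to $\zeta\mapsto \zeta^{-1}$ on each of the rings in \eqref{eq:defzetarings}, each such involution lifts to the respective specialized algebra. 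The \Kscaleword transformations $\Kscale{u}{\mathsfit h}$ and the Lusztig-Artin automorphisms $\Tinv_s$ involve only integer polynomial coefficients and denominators of the form $[k]!_i$ with $k\leq -A_{ij}\leq \maxd$, which by \eqref{eq:kaycond} remain units in $\Zzn{\dpone}$, and of the form $(q-q^{-1})^{-1}$, which lives in $\Zzvn{\dpone}$. Thus the defining formulae specialize coefficient-wise, and the relations among them derived in Section~\ref{sec:gradingauts} (including the Artin-Tits relations and Proposition~\ref{prop:ArtinKscaleComm}) persist after specialization because they are polynomial identities over the generic ground ring. The only genuine obstacle is checking that the $\Tinv_s$ are well-defined on $\Uzn{\ell}$ as opposed to $\Uz\,$: I expect no issue here since Section~\ref{subsec:LATM} already shows the $\Tinv_i$ restrict to automorphisms of $\Uq$ over $\Zqqvn{\dpone}$, and the specialization map factors through this ring.
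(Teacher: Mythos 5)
Your proposal is correct and follows essentially the same route as the paper, which disposes of all three items by exactly these observations: extension of scalars preserves the free PBW bases of Theorem~\ref{thm:mainPBW}, compatible bases make the induced inclusions injective, and the (anti)automorphisms are module (anti)isomorphisms compatible with the conjugations on the ground rings, so they factor through the specializations. The only nitpick is your parenthetical claim that \eqref{eq:kaycond} is equivalent to $\ell\geq\dpone$; as noted in Section~\ref{subsec:QnumsRingsRo1}, one must additionally exclude $\kay=4$ when $\maxd=3$, but this does not affect the argument.
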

\end{samepage}

The first assertion is obvious, since all modules are free. Similarly, the existence of compatible free bases for the natural inclusions assures that the induced maps are also injective, mapping basis elements to each other. The last assertion is clear as well, since all automorphisms are, in particular,  module isomorphisms or module anti-isomorphisms with respect to the conjugations on the ground rings.  

In later discussions, we will  consider root lattice vectors 
$\ellsc{\alpha}=\elln_\alpha\cdot \alpha$ for a given root $\alpha\in\roots$ in a connected root system $\roots$ of finite type and fixed $\ell>1\,$. 
Let $\edgenum=\max\{d_i\}_i\in\{1,2,3\}\,$ as in Section~\ref{subsec:rootweyl}. If $\edgenum$ and $\ell$ are coprime, then all $\elln_\alpha=\ell$ so that $\ellsc{\alpha}=\ell\cdot \alpha\,$. Conversely, if $\edgenum>1$ and divides $\ell$, we have $\ell_\alpha=d_\alpha^{-1}\ell$ so that 
$\ellsc{\alpha}=\ell\cdot \breve\alpha\,$ in our convention for coroots. 

The collection $\roots_\ell=\{\ellsc{\alpha}=\elln_\alpha\cdot \alpha:\alpha\in\roots\}$ thus has the following properties.

\begin{lem}\label{lm:ell-root-lattice}  We have $\,\roots_\ell=\ell\cdot\roots\,$ 
if $\,\gcd(\edgenum,\ell)=1\,$
and $\,\roots_\ell=\ell\cdot\breve\roots\,$ if $\,\gcd(\edgenum,\ell)>1\,$, where $\breve\roots$ denotes the coroot system. Thus $\roots_\ell$ is itself a root system that is canonically isomorphic to either $\roots$ or 
$\breve\roots$.

\noindent The Weyl group $\Weyl$ of $\roots$ preserves $\,\roots_\ell\,$ and its action coincides with that of the intrinsic Weyl group of  $\,\roots_\ell\,$ in the sense that 
$s_{\ellscscr{\alpha}}(\ellsc{\beta})=\ellsc{(s_{\alpha}(\beta))}=s_{\alpha}(\ellsc{\beta})\,$.
Moreover, the set $\sroots_\ell=\{\elln_i\alpha_i\}_i$ is a basis of  $\roots_\ell\,$.
\end{lem}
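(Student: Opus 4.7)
The proof is essentially an unpacking of the definitions together with the elementary observation that scaling a root by a positive constant preserves its reflection hyperplane. I would proceed as follows.

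\textbf{Step 1: Verify the two defining formulas.} The only values of $d_\alpha$ that occur are $1$ and $\edgenum$, and $d_\alpha$ divides $\edgenum\in\{1,2,3\}$. Since $\edgenum$ is $1$, $2$, or $3$, the condition $\gcd(\edgenum,\ell)>1$ is equivalent to $\edgenum\mid\ell$. If $\gcd(\edgenum,\ell)=1$, then $\gcd(d_\alpha,\ell)=1$ for every $\alpha\in\roots$, so $\elln_\alpha=\ell$ and $\ellsc{\alpha}=\ell\cdot\alpha$. If $\edgenum\mid\ell$, then for long roots $d_\alpha=\edgenum$ and $\elln_\alpha=\ell/\edgenum$, giving $\ellsc{\alpha}=(\ell/\edgenum)\alpha=\ell\cdot\breve\alpha$; for short roots $d_\alpha=1$ and $\breve\alpha=\alpha$, so $\ellsc{\alpha}=\ell\alpha=\ell\cdot\breve\alpha$ as well. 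This establishes the two case-wise identities.

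\textbf{Step 2: Root system structure.} Multiplication by the positive scalar $\ell$ is a bijection of $E$ that preserves the defining axioms of a finite root system (finiteness, closure under $s_\alpha$, integrality of pairings), so both $\ell\cdot\roots$ and $\ell\cdot\breve\roots$ are root systems canonically isomorphic to $\roots$ and $\breve\roots$ respectively. Combining with Step 1, $\roots_\ell$ is a root system with the stated isomorphism type.

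\textbf{Step 3: Weyl group compatibility.} For any nonzero $\alpha\in E$ and any positive scalar $c>0$, the reflection $s_{c\alpha}$ depends only on the hyperplane $\alpha^{\perp}$ and hence equals $s_\alpha$; in particular $s_{\ellscscr{\alpha}}=s_\alpha$. Because $\Weyl$ acts by isometries, any $s\in\Weyl$ preserves root lengths and so $d_{s(\beta)}=d_\beta$, which gives $\elln_{s(\beta)}=\elln_\beta$. Combining these two observations,
\[
s_{\ellscscr{\alpha}}(\ellsc{\beta})\;=\;s_\alpha(\elln_\beta\beta)\;=\;\elln_\beta\, s_\alpha(\beta)\;=\;\elln_{s_\alpha(\beta)}\,s_\alpha(\beta)\;=\;\ellsc{(s_\alpha(\beta))},
\]
which is simultaneously equal to $s_\alpha(\ellsc{\beta})$ by linearity. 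This identifies $\Weyl$ with the intrinsic Weyl group of $\roots_\ell$ generated by the reflections along its own elements.

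\textbf{Step 4: The simple basis.} By Step 1 we have $\elln_i\alpha_i=\ell\alpha_i$ in the coprime case and $\elln_i\alpha_i=\ell\breve\alpha_i$ in the case $\edgenum\mid\ell$. Since $\{\alpha_i\}$ is a $\mathbb Z$-basis of the root lattice of $\roots$ and $\{\breve\alpha_i\}$ is a $\mathbb Z$-basis of the coroot lattice of $\roots$ (equivalently, the root lattice of $\breve\roots$), rescaling by $\ell$ yields a $\mathbb Z$-basis of $\ell\cdot\roots=\roots_\ell$ or $\ell\cdot\breve\roots=\roots_\ell$ respectively. Hence $\sroots_\ell=\{\elln_i\alpha_i\}$ is a basis of $\roots_\ell$.

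None of these steps presents a real obstacle; the proof is essentially bookkeeping, and the only thing to be slightly careful about is that $d_\alpha\mid\edgenum$ so that $\gcd(\edgenum,\ell)>1$ forces $\edgenum\mid\ell$ (using $\edgenum\in\{1,2,3\}$), which is what makes Step 1 clean.
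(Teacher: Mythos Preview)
Your proof is correct and follows the same approach as the paper. The paper simply states that the assertions are immediate from the preceding discussion (which already established $\ellsc{\alpha}=\ell\alpha$ or $\ellsc{\alpha}=\ell\breve\alpha$ depending on $\gcd(\edgenum,\ell)$), together with the fact that uniform scaling preserves the root-system isomorphism class and standard properties of coroot systems; your Steps~1--4 are a careful unpacking of exactly these points.
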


The assertions are immediate from the preceding discussion, the fact that uniform scaling of a root system does not change its isomorphism class, and well-known properties of coroot systems.

\subsection{Skew-Commutation Relations of Primitive Power Generators} \label{subsec:CommPrimGen}

In Lemma~3.1 of \cite{dck90} De Concini and Kac observe that $\elln$-th powers of $E_\alpha$ and $F_\alpha$ generators commute with other generators of $\Uz$ up to unit factors. In this section, we will sharpen this result to include the $\elln_\alpha$-th powers, where
$\elln_\alpha=\elln/\gcd(\elln,d_\alpha)\,$. This extension is required for our later discussion since for $\elln_\alpha<\elln$\,, certain subsets of $E_\alpha^{\elln_\alpha}$ elements will  generate  Hopf ideals, while the 
$E_\alpha^{\elln}$ elements generally do not give rise to Hopf ideals.

 We provide here an explicit and conceptual proof of the commutation relations based on the $\ztgrad^{\pm}$-gradings introduced in Section~\ref{subsec:gradings} and their $\Weyl$-covariance properties. Moreover, statements will be in terms of word-dependent generators, which allows us to study independence of the relations of root orderings.

The function $\commphchar$\,, defined in \eqref{eq:defsigncomm}, keeps track of the signs in commutation relations. Here  $\Lsign_\alpha=\zeta_\alpha^{\elln_\alpha}=\zeta^{d_\alpha\elln_\alpha}\in\{+1,-1\}$ is as in \eqref{eq:defspeczetas} and $\breve\alpha=d^{-1}_\alpha\alpha$ denotes the  coroot, implying the last equality below.
\begin{equation}\label{eq:defsigncomm}
    \commphchar\,:\,\roots\times {\mathbb F_2}^{\Delta}\,\rightarrow\,\{+1,-1\}\qquad 
(\alpha,\bar\mu)\,\mapsto\,
\commph{\alpha}{\bar\mu}=\zeta^{\elln_\alpha\symbrack{\alpha}{\bar\mu}}=\Lsign_\alpha^{\symbrack{\breve\alpha}{\bar\mu}}\,. 
\end{equation}
Note, the respective map $\roots\times {\mathbb Z}^{\Delta}\,\rightarrow\,\{+1,-1\}$\,, given by the same formula for 
$(\alpha,\mu)\in\roots\times {\mathbb Z}^{\Delta}\,$, is a homomorphism in the $\mu$ argument  onto the group $\{+1,-1\}\cong\mathbb Z/2\mathbb Z\,$. The map thus factors into  
$\roots\times {\mathbb F_2}^{\Delta}$ as implied. It is also immediate from $\ell_\alpha=\ell_{s(\alpha)}$ that $\commphchar$ is $\Weyl$-bivariant in the sense that $\commph{s(\alpha)}{s(\bar\mu)}=\commph{\alpha}{\bar\mu}$ for all $s\in\Weyl\,$.

Note that $\commph{\alpha}{\bar\mu}=1\,$ for all
$\alpha$ and $\bar\mu\,$ if  $\kay$ is odd since in this case $\Lsign_\alpha=1\,$. The next lemma discusses several cases for even $\kay\,$.

\begin{lem}\label{lem:kappaprops}
Let $\kay=2\ell\,$. 
Suppose $\ell$ is odd {\em or} $\roots$ is of oddly laced Lie type $\LT{ADEG}$\,. Then 
$\,\commph{\alpha}{\bar\mu}=\commphbihom{\bar\alpha}{\bar\mu}\,$ for all $\alpha\in\roots$ and $\bar\mu\in\mathbb F^{\sroots}$, where 
$\commphbihomchar$ is the bihomomorphism
\begin{equation}\label{eq:signbihom}
\commphbihomchar\,:\;\mathbb F_2^{\sroots}\times \mathbb F_2^{\sroots}\,\longrightarrow\,\{+1,-1\}\;:\quad
(\bar\nu,\bar\mu)\,\mapsto\,\commphbihom{\bar\nu}{\bar\mu}=(-1)^{\symbrack{\bar\nu}{\bar\mu}}\,,
\end{equation}
and $\bar\alpha$ is the image of $\alpha$ under the canonical map 
$\roots\rightarrow\mathbb Z^{\sroots}\rightarrow \mathbb F_2^{\sroots}\,$.

\noindent If $\ell$ is even ($\kay\equiv 0\mod 4$) {\em and} $\roots$ is of Lie type $\,\LT{B}_n\,$, $\LT{C}_n\,$, or $\LT{F}_4\,$ with $n\geq 2\,$, then
\begin{equation}\label{eq:kappaexcept}
\commph{\alpha}{\bar\mu}\,=\,(-1)^{\symbrack{\breve \alpha}{\bar\mu}}\,,
\end{equation}
which is {\em not} additive in the first argument and, thus, does {\em not} extend to a bihomomorphic lattice map.  
\end{lem}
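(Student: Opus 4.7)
The plan is to reduce the statement to a case-by-case determination of the sign $\Lsign_\alpha \in \{\pm 1\}$ and apply the identity $\commph{\alpha}{\bar\mu} = \Lsign_\alpha^{\symbrack{\breve\alpha}{\bar\mu}}$ already given in \eqref{eq:defsigncomm}. With $\kay = 2\ell$, one has $\Lsign_\alpha = \zeta^{d_\alpha \elln_\alpha} = \zeta^{\mathrm{lcm}(d_\alpha,\ell)}$, and a direct calculation gives $\Lsign_\alpha = -1$ whenever $d_\alpha \in \{1,3\}$ (independently of $\ell$), while for $d_\alpha = 2$ one has $\Lsign_\alpha = +1$ when $\ell$ is odd and $\Lsign_\alpha = -1$ when $\ell$ is even. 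Thus, under the hypothesis of the first assertion, the short/long dichotomy in $\LT{BCF}$ with $\ell$ odd corresponds to $\Lsign_\alpha = -1$ versus $\Lsign_\alpha = +1$, whereas under the hypothesis of the second assertion, $\Lsign_\alpha = -1$ uniformly on all of $\roots$.

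For the first assertion I would verify the bihomomorphism identity $\Lsign_\alpha^{\symbrack{\breve\alpha}{\bar\mu}} = (-1)^{\symbrack{\bar\alpha}{\bar\mu}}$ in two subcases. If $d_\alpha\in\{1,3\}$, then $\breve\alpha = \alpha/d_\alpha$ has integer coordinates, and division by the odd integer $d_\alpha$ preserves parity, so $\symbrack{\breve\alpha}{\mu}\equiv\symbrack{\alpha}{\mu}\pmod 2$; both sides then equal $(-1)^{\symbrack{\alpha}{\mu}}$. If $d_\alpha = 2$ (so necessarily $\ell$ odd), the left side is trivially $1$, and the right side is also $1$ because $\symbrack{\alpha}{\mu} = d_\alpha\bracket{\alpha,\mu} \in 2\mathbb Z$ for every $\mu\in\mathbb Z^{\sroots}$ by \eqref{eq:bracketrel}, forcing $\symbrack{\bar\alpha}{\bar\mu}=0$ in $\mathbb F_2$. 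Bihomomorphy of $(\bar\nu,\bar\mu)\mapsto(-1)^{\symbrack{\bar\nu}{\bar\mu}}$ is built into its definition, and this identification of $\commphchar$ with $\commphbihomchar$ then follows.

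For the second assertion, the formula $\commph{\alpha}{\bar\mu} = (-1)^{\symbrack{\breve\alpha}{\bar\mu}}$ is immediate from $\Lsign_\alpha = -1$ for every $\alpha$. The remaining point is non-additivity in $\alpha$, which I would settle by an explicit counterexample in rank two. In $\LT{B}_2$, with $\alpha_1$ long and $\alpha_2$ short, take $\alpha = \alpha_2$ and $\beta = \alpha_1 + \alpha_2$: both are short, so $\breve\alpha + \breve\beta = \alpha_1 + 2\alpha_2$, while their sum $\alpha+\beta = \alpha_1 + 2\alpha_2$ is long with coroot $\tfrac12\alpha_1 + \alpha_2$. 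Pairing against $\bar\alpha_2$ yields $\symbrack{\breve\alpha+\breve\beta}{\alpha_2}=2$ versus $\symbrack{\tfrac12\alpha_1+\alpha_2}{\alpha_2}=1$, which have opposite parities. An analogous rank-two example in $\LT{C}_2$ uses the short roots $\epsilon_1\pm\epsilon_2$ summing to the long root $2\epsilon_1$, and the higher-rank $\LT{B}_n$, $\LT{C}_n$, $\LT{F}_4$ inherit non-additivity by restriction to these embedded rank-two subsystems.

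The sole substantive obstacle is the careful bookkeeping needed to keep the mod-$2$ reduction $\bar\alpha\in\mathbb F_2^{\sroots}$ cleanly distinguished from the coroot $\breve\alpha\in\tfrac{1}{\maxd}\mathbb Z^{\sroots}$; switching between these two lattices is exactly what governs additivity, and explains why $\LT{BCF}$ with $\ell$ even is precisely the situation where the cocycle fails to descend to a bihomomorphism on $\mathbb F_2^{\sroots}$.
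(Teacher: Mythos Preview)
Your proposal is correct and follows essentially the same approach as the paper: compute $\Lsign_\alpha$ case by case, reduce the first assertion to the parity identity $\symbrack{\breve\alpha}{\mu}\equiv\symbrack{\alpha}{\mu}\pmod 2$ for odd $d_\alpha$ together with the evenness of $\symbrack{\alpha}{\mu}$ for $d_\alpha=2$, and exhibit non-additivity via a rank-two $\LT{B}_2$ counterexample that propagates to the higher-rank $\LT{BCF}$ types. The paper's counterexample is marginally simpler (it uses the simple roots $\alpha_1,\alpha_2$ directly, pairing against $\bar\alpha_1$, rather than two short non-simple roots summing to a long one), but the idea is identical.
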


The assertion for oddly laced types follows from the observations that in these cases $\Lsign_\alpha=-1$ and $\symbrack{\breve  \alpha}{\bar\mu}\equiv \symbrack{\alpha}{\bar\mu} \mod 2\,$. Moreover, for types $\LT{B}, \LT{C}$ and $\LT{F}$ and odd $\ell$ one verifies $\Lsign_\alpha=(-1)^{d_\alpha}\,$. Note that for $\LT{B}$ the form $\symbrack{\,\cdot\,}{\,\cdot\,}$ is always even so that 
$\commphchar=\commphbihomchar=1$, given odd $\ell\,$.

For the doubly laced types $\LT{BCF}$ but with even $\ell$, equation 
\eqref{eq:kappaexcept} follows from the fact that $\Lsign_\alpha=-1$ for all $\alpha\in\proots\,$. 

Failure of additivity for even $\ell$ is readily verified for type $\LT{B}_2$\,, which occurs as a subsystem in the listed Lie types. If $\alpha_1$ is the short and $\alpha_2$ the long root (so, $d_1=1$ and $d_2=2$) one finds that $\commph{\alpha_1}{\bar\alpha_1}=1$\,, $\commph{\alpha_2}{\bar\alpha_1}=-1$\,, and  
$\commph{\alpha_1+\alpha_2}{\bar\alpha_1}=1$ as $\alpha_1+\alpha_2$ is also a short root. So, clearly,
$\commph{\alpha_1}{\bar\alpha_1}\commph{\alpha_2}{\bar\alpha_1}\neq \commph{\alpha_1+\alpha_2}{\bar\alpha_1}\,$, obstructing an extension as a lattice map to $\mathbb Z^{\sroots}\supset\proots\,$.  

Since $\commphchar$ will be more frequently used in combination with the $\ztgrad^{\pm}$-gradings from \eqref{eq:Topgrad} and \eqref{eq:Tomgrad} it is convenient to introduce the following functions $\commphel+{\,\cdot\,}{\cdot\,}$ and $\commphel-{\,\cdot\,}{\,\cdot\,}$\,. Here $w\in\wordset$ is a (non-empty) reduced word and $b$ is a 
$\ztgrad^{\pm}$-homogeneous element in either $\Uq$ or $\Uz$\,:
\begin{equation}\label{eq:commphelDef}
    \commphel{\pm}{w}{b}\,=\,\commph{\wordroot(w)}{\ztgrad^{\pm}(b)}\,.
\end{equation}
For $w$ and $b$ as above, suppose $w_i\cdot w$ is also a reduced word. Using $\wordroot(w_i\cdot w)=s_i(\wordroot(w))$\,,
\eqref{eq:Togradequiv}, and $\Weyl$-bivariance  of $\commph{\,\cdot\,}{\,\cdot\,}$ we infer that for any $i$\,,
\begin{equation}\label{eq:commphelEquivar}
    \commphel{\pm}{w_i\cdot w}{\Tinv_i(b)}= \commphel{\pm}{w}{b}\,.
\end{equation}

For a given non-empty, reduced word $w\in\wordset$\,, $\elln_w=\elln_{\tau(w)}=\elln_{\wordroot(w)}$\,, and root $\alpha\in\roots$ we now introduce the notation for the respective {\em primitive power generators}:
\begin{equation}\label{eq:DefXY}
    \Epw_w=E_w^{\elln_w}\;,\qquad \Fpw_w=F_w^{\elln_w}\,,\quad \mbox{and}\quad \Kpw_\alpha=K^{(\elln_\alpha\cdot\alpha)}\,,
\end{equation}
where $K^\mu$ is as in \eqref{eq:Kexpdef} and $\elln_\alpha\cdot\alpha$ indicates the integer multiple of the root vector. That is, $\Kpw_i=\Kpw_{\alpha_i}=K_i^{\elln_i}\,$. 
For $\mu\in\mathbb Z^{\Delta}$\,, $\mu_i=\mu(\alpha_i)$\,, and  $\eta=\sum_i\mu_i\ell_i\alpha_i$ set
\begin{equation}\label{eq:DefLmu}
    \Kpw^{\mu}=\Kpw_1^{\mu_1}\cdot\ldots\cdot \Kpw_n^{\mu_n}=K^\eta \,. 
\end{equation}

The $\wgrad$-grading in $\mathbb Z^{\sroots}$ from Section~\ref{subsec:gradings} on a generator for $\alpha=\wordroot(w)$
is then given by 
\begin{equation}\label{eq:wgradX_rvscor}
    \wgrad(\Epw_w)=\ell_\alpha\cdot\alpha=\frac{\ell}{\gcd(\ell,d_\alpha)}\cdot\alpha
=\begin{cases}
    \ell\cdot\alpha & \mbox{ if } \gcd(\ell,\maxd)=1\\
    \ell\cdot\breve\alpha & \mbox{ if } \gcd(\ell,\maxd)=\maxd\,.
\end{cases}
\end{equation}
That is, if $\maxd$ and $\ell$ are coprime the generators are graded by the lattice of $\ell\cdot\roots\cong\roots\,$. However, if  $\maxd$ divides $\ell$ the grading is given by the $\ell$-scaled (but equivalent) version of the lattice of the coroot system  $\breve \roots\,$. So, for example, if $\ell$ is even and $\roots$ is of type $\LT{B}_n$\,, then the $\Epw_w$
generators admit a  natural grading in the root lattice for $\LT{C}_n\,$.

Recall that in Lusztig's construction of the divided power algebras the elements $\Epw_w$ and $\Fpw_w$ are mapped to zero in the specialization 
from a $\Zqq$-algebra to a $\Zz$-algebra. 
Since the $\Tinv_j$ are algebra automorphisms and the $\elln_\alpha$ are $\Weyl$-invariant, the definition in \eqref{eq:EFwordDef} readily translates into 
\begin{equation}\label{eq:TinvXY}
\Epw_w=\Tinv_{w^\flat}(\Epw_{\tau(w)})
=\Tinv_{u}(\Epw_v)
\qquad\mbox{and}\qquad 
\Fpw_w=\Tinv_{w^\flat}(\Fpw_{\tau(w)})
=\Tinv_{u}(\Fpw_v)\,,
\end{equation}
assuming $w=u\cdot v$ is reduced and $\len{v}\geq 1\,$. Similarly, \eqref{eq:TKact}, \eqref{eq:DefXY}, and the same $\Weyl$-invariance imply
\begin{equation}\label{eq:TinvL}
\Tinv_s(\Kpw_\alpha)=\Kpw_{s(\alpha)}\qquad\mbox{for all}\;\alpha\in\roots\;\mbox{ and }\; s\in\Weyl\,. 
\end{equation}

Various other relations and properties for the $E_w$ and $F_w$ generators from Section~\ref{subsec:genwords} are easily extended to properties of the $\Epw_w$ and $\Fpw_w$ elements in the same manner. Remarkably, this includes the following identity that holds for all  $\kay\,$, due to the identity  $\Qsign^2=(-1)^{\ell_i-1}$ mentioned in Section~\ref{subsec:QnumsRingsRo1}:
\begin{equation}\label{eq:TinvXYsame}
    \Tinv_i(\Epw_i)=-\Kpw_i^{-1}\Fpw_i=-\Fpw_i\Kpw_i^{-1} \qquad\mbox{and}\qquad \Tinv_i(\Fpw_i)=-\Kpw_i \Epw_i=-\Epw_i\Kpw_i\,.
\end{equation}

The following commutation relations for a rank 2 subsystem in $\Uq^+$ are obtained from equations (c) and (i) in Section~5.3 and equations (a2) and (a6) in Section~5.4 of \cite{lu90b}. As in Section~\ref{subsec:rank2coxsys} above, assume for a pair of indices $(i,j)$ that $-A_{ji}=d_i=1$ and $-A_{ij}=d_j=\edgenum\in\{1,2,3\}\,$. The index notation for generators is as defined in  \eqref{eq:Esubseqind}
or, specialized to the rank 2 setting, in \eqref{eq:LusztigDictionary}, where also translations to Lusztig's conventions are explained. 

\begingroup
\allowdisplaybreaks
\begin{align}\label{eq:EpwE-genrels} 
\edgenum=1 \quad &\left\{{\quad 
\begin{aligned}
\rule[-2mm]{0mm}{5mm}E_i^kE_j&=q^{-k}E_jE_i^k+q^{-1}[k]_iE_{(ij)}E_i^{k-1}
\end{aligned}
}\right.
\\
\rule{0mm}{13mm}
\edgenum=2 \quad &\left\{{\quad 
\begin{aligned}
    E_j^kE_i&=q^{-2k}E_iE_j^k+q^{-2}[k]_jE_{(ji)}E_j^{k-1}
    \\
    \rule{0mm}{6.5mm}E_i^kE_j&=q^{2k}E_jE_i^k-q^{(k-1)}[k]_iE_i^{k-1}E_{(ji)}-q^{2(k-1)}[k]_i[k-1]_iE_i^{k-2}E_{(jij)}
\end{aligned}
}\right.
\\
\rule{0mm}{18mm}
\edgenum=3 \quad &\left\{{\quad 
\begin{aligned}
    E_j^kE_i&=q^{-3k}E_iE_j^k + q^{-3}[k]_jE_{(ji)}E_j^{k-1}
    \\
    \rule{0mm}{6.5mm}
    E_i^kE_j&=q^{3k}E_jE_i^k-q^{(k-1)}[k]_iE_i^{k-1}E_{(ji)}-q^{2(k-1)}[k]_i[k-1]_iE_i^{k-2}E_{(jiji)}\\
    &\quad \; -q^{3(k-1)}[k]_i[k-1]_i[k-2]_iE_i^{k-3}E_{(jijij)}
\end{aligned}
}\right. 
\end{align}
\endgroup

Since all equations in \eqref{eq:EpwE-genrels} are well-defined equations in $\Uq^+$ over $\Zqqn{\dpone}$
they descend to the same equations in $\Uz^+$  or $\Uz$ via \eqref{eq:defzetaQG}, upon specializing $q\mapsto \zeta\,$ with the previous assumptions in the order of $\zeta$\,. If we set $k=\elln_i$ in the equations with an $E_i^k$-term and $k=\elln_j$ in equations with an $E_j^k$-term, only the first term on the right side of each equation remains, since 
$[\elln_i]_i=[\elln_j]_j=0$ by the restriction in \eqref{eq:kaycond}. 
The resulting relations can be summarized in the following lemma, where $\Epw_s=\Epw_{\alpha_s}=E_s^{\elln_s}$\,.

\begin{lem}\label{lem:XEcommrel}  
For all $r,s\in\{1,\ldots,n\}$ we have  $  \Epw_sE_r=\zeta^{\elln_sd_sA_{sr}}E_r\Epw_s\,$.
\end{lem}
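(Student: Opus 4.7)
The plan is to reduce to the rank 2 case and then read the desired commutation directly off the specialized equations in \eqref{eq:EpwE-genrels}. First I would dispose of the trivial cases: for $r=s$ the claim is tautological, and also the exponent $\zeta^{2\ell_s d_s}$ equals $1$ since $\ell_s d_s$ is a multiple of $\ell$ and $\zeta^{2\ell}=1$. For $r\neq s$ with $A_{sr}=0$ the Serre relation degenerates to $E_sE_r=E_rE_s$, so $\Epw_s$ and $E_r$ commute, matching the value $\zeta^0=1$ on the right-hand side.

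For the remaining cases, with $r\neq s$ and $A_{sr}\neq 0$, Corollary~\ref{cor:Uqtwoembed} lets the computation be carried out inside the rank 2 subalgebra $\Uqrest{+}{s,r}$. Using the $(i,j)$-labeling of Section~\ref{subsec:rank2coxsys}, each of the relations in \eqref{eq:EpwE-genrels} has the shape
\[
E_s^k E_r \,=\, q^{\epsilon k}E_rE_s^k \,+\, (\text{summands containing a factor }[k]_s\text{ or }[k]_s[k-1]_s\cdots)\,,
\]
where $\epsilon\in\mathbb Z$ is the $q$-exponent visible on the leading term. Setting $k=\ell_s$ and specializing $q\mapsto\zeta$, every summand past the leading one drops out because $[\ell_s]_s=0$ by \eqref{eq:kaycond}, leaving $\Epw_s E_r = \zeta^{\epsilon\ell_s}E_r\Epw_s$.

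The remaining task will be to identify $\epsilon\ell_s$ with $\ell_s d_s A_{sr}$ modulo $\kay$. In the rows of \eqref{eq:EpwE-genrels} where $d_s=d_r$ (namely $\edgenum=1$ and the long-root rows of $\edgenum\in\{2,3\}$) the two exponents agree on the nose. In the short-root rows $d_s=1$, $A_{sr}=-\edgenum$ the displayed equations yield $\zeta^{+\edgenum\ell_s}$ while the lemma asserts $\zeta^{-\edgenum\ell_s}$; these coincide because $\ell_s d_s=\mathrm{lcm}(\ell,d_s)$ is a multiple of $\ell$, so $\zeta^{2\ell_s d_s A_{sr}}=1$ and the sign of the exponent is immaterial modulo $\kay$. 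This sign-matching is the only conceptual subtlety; once it is noted the lemma follows, since the hard computational work is already absorbed in the rank 2 identities \eqref{eq:EpwE-genrels}.
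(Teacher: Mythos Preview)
Your proposal is correct and follows essentially the same approach as the paper: dispose of the cases $r=s$ and $A_{sr}=0$ separately, then for the remaining rank~2 situation specialize the relations in \eqref{eq:EpwE-genrels} at $k=\ell_s$ and $q=\zeta$, using $[\ell_s]_s=0$ to kill all but the leading term. One minor slip in your write-up: in the parenthetical, the ``long-root rows'' of \eqref{eq:EpwE-genrels} (those with $s=j$, $r=i$) do \emph{not} have $d_s=d_r$ when $\edgenum>1$; nonetheless your explicit exponent comparison for each row is correct. You are in fact more careful than the paper in spelling out the sign reconciliation in the short-root rows via $\zeta^{2\ell_s d_s A_{sr}}=1$, which the paper simply subsumes under ``easily verified.'' The invocation of Corollary~\ref{cor:Uqtwoembed} is harmless but unnecessary, since the relations \eqref{eq:EpwE-genrels} are already stated as holding in $\Uq^+$.
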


\begin{proof}
    If $r\neq s$ and $A_{rs}\neq 0$ the $\zeta$ power is easily verified to be identical to the respective lead coefficient in the formulae in \eqref{eq:EpwE-genrels} for $(s,r)=(i,j)$ and $(s,r)=(j,i)\,$. If $A_{rs}= 0$\,,
    the relation is clear since $E_s$ and $E_r$ strictly commute to begin with. If $r=s$ we find $\zeta^{\elln_sd_sA_{ss}}=\zeta^{\elln_sd_s2}=1$ by definition and as desired. 
\end{proof}

Note, further, that the relations in \eqref{eq:EFcomm} and \eqref{eq:Kcomm} entail commutation relations  $E_s^{k}K_r=q^{-k\symbrack{\alpha_s}{\alpha_r}}K_rE_s$ and  $[E_s^k,F_r]=\delta_{sr}(q_s-q_s^{-1})^{-1}[k]_sE_s^{k-1}(q_s^{k-1}K_s-q_s^{-(k-1)}K_s^{-1})$ in $\Uq$\,,
which specialize for $k=\elln_i$ and $q\mapsto \zeta$ to 
\begin{equation}\label{eq:XFXKcommrel}
    \Epw_sF_r=F_r\Epw_s \qquad \mbox{and}\qquad \Epw_sK_r=\zeta^{\elln_s\symbrack{\alpha_s}{\alpha_r}} K_r\Epw_s\;. 
\end{equation}
 
\begin{prop} \label{prop:PwGenCommRels}
Let $w\in\wordset$ be a non-empty, reduced word and $b$ a $\ztgrad$-homogeneous element. Then 
$$
\Epw_wb\,=\,\commphel{+}{w}{b}\cdot b\Epw_w\qquad\mbox{and}\qquad \Fpw_wb\,=\,\commphel{-}{w}{b}\cdot b\Fpw_w\,.
$$
Each relation holds in $\Uz$ as well as each $\Uz^\utypechar$ with $\utypechar\in\{+,-,\geqzero,\leqzero\}\,$, provided the elements in the relation belong to the respective algebra. 
\end{prop}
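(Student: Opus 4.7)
The plan is to prove this by induction on $\length(w)$, using the skew-commutation relations for simple-root primitive powers from Lemma~\ref{lem:XEcommrel} and \eqref{eq:XFXKcommrel} as the base case, and then propagating to arbitrary reduced words via the recursion $\Epw_w=\Tinv_{u}(\Epw_v)$ in \eqref{eq:TinvXY}. Since $\Cartaninv$ swaps $E\leftrightharpoons F$, reverses multiplication, commutes with $\Tinv_i$, and exchanges $\ztgradp\leftrightharpoons \ztgradn$ by \eqref{eq:CartinvGrad}, the relation for $\Fpw_w$ follows from the one for $\Epw_w$ by applying $\Cartaninv$, so I focus on the $\Epw_w$ assertion.

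For the base case $w=w_s$, first I verify the relation on the algebra generators $\{E_r, F_r, K_r\}$ directly. Lemma~\ref{lem:XEcommrel} gives $\Epw_s E_r=\zeta^{\elln_s d_s A_{sr}}E_r \Epw_s=\zeta^{\elln_s\symbrack{\alpha_s}{\alpha_r}}E_r\Epw_s$, which equals $\commph{\alpha_s}{\ztgradp(E_r)}\cdot E_r\Epw_s=\commphel{+}{w_s}{E_r}E_r\Epw_s$ since $\ztgradp(E_r)=\alpha_r$. The identity \eqref{eq:XFXKcommrel} handles $K_r$ (with $\ztgradp(K_r)=\alpha_r$, same phase) and $F_r$ (with $\ztgradp(F_r)=0$, phase $1$). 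Because $\commph{\alpha_s}{\bar\mu}$ is a homomorphism in its $\mathbb F_2^{\sroots}$-argument and $\ztgradp$ is additive under multiplication of homogeneous elements, the relation extends from single generators to all $\ztgradp$-homogeneous products by a straightforward induction on the number of generator factors. This settles the case $w=w_s$ on all of $\Uz$ (or $\Uz^\utypechar$).

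For the induction step, write $w=w_i\cdot v$ as a reduced word with $\len v<\len w$, so that $\Epw_w=\Tinv_i(\Epw_v)$ by \eqref{eq:TinvXY}. Given a $\ztgradp$-homogeneous $b$, the element $b'=\Tinv_i^{-1}(b)$ is again $\ztgradp$-homogeneous, with $\ztgradp(b')=s_i^{-1}(\ztgradp(b))$ by \eqref{eq:Togradequiv}. Applying the induction hypothesis to $v$ and $b'$ gives
\[
\Epw_v\, b'\,=\,\commphel{+}{v}{b'}\cdot b'\Epw_v\,,
\]
and then applying the automorphism $\Tinv_i$ to both sides yields
\[
\Epw_w\, b\,=\,\commphel{+}{v}{b'}\cdot b\,\Epw_w\,.
\]
By the equivariance relation \eqref{eq:commphelEquivar} (read in reverse with $b=\Tinv_i(b')$), we have $\commphel{+}{v}{b'}=\commphel{+}{w_i\cdot v}{b}=\commphel{+}{w}{b}$, which completes the induction. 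The same argument yields the assertion on each of the subalgebras $\Uz^{\utypechar}$, since the $\Tinv_i$ preserve $\Uz^{+}$, $\Uz^{-}$, $\Uz^{\geqzero}$, and $\Uz^{\leqzero}$ when restricted appropriately to homogeneous elements of these subalgebras.

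The only mildly delicate point is the base case bookkeeping: one must check that the scalar produced by Lemma~\ref{lem:XEcommrel} matches $\commphel{+}{w_s}{-}$ rather than some variant (e.g., $\commphel{-}{w_s}{-}$), and that the sign function indeed descends from $\mathbb Z^{\sroots}$ to $\mathbb F_2^{\sroots}$ in the second argument so that multiplicativity in $b$ works — but this is exactly the content of \eqref{eq:defsigncomm} together with the observation that $\zeta^{\elln_s\symbrack{\alpha_s}{\mu}}$ only depends on $\mu\bmod 2$ whenever $\Lsign_s\in\{\pm1\}$. Once this is in place, the induction mechanism above is essentially automatic.
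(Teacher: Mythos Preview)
Your argument is essentially identical to the paper's: verify the base case $w=w_s$ on the generators $E_r,K_r,F_r$ via Lemma~\ref{lem:XEcommrel} and \eqref{eq:XFXKcommrel}, extend to all $\ztgradp$-homogeneous $b$ by multiplicativity, induct on $\len w$ using the $\Tinv_i$-recursion \eqref{eq:TinvXY} and the equivariance \eqref{eq:commphelEquivar}, and deduce the $\Fpw_w$ case by applying $\Cartaninv$.

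There is one genuine slip in your last paragraph. You justify the subalgebra statement by asserting that the $\Tinv_i$ preserve $\Uz^{+},\Uz^{-},\Uz^{\geqzero},\Uz^{\leqzero}$. This is false: for instance $\Tinv_i(E_i)=-K_i^{-1}F_i\notin\Uz^{+}$ by \eqref{eq:defLeasy}, so $b'=\Tinv_i^{-1}(b)$ need not lie in the same subalgebra as $b$. The paper instead proves the identity once in the full algebra $\Uz$ and then observes (via Corollary~\ref{cor:UzProps}\ref{item:UzProps:incl}) that the canonical maps $\Uz^{\utypechar}\hookrightarrow\Uz$ are embeddings, so any relation holding in $\Uz$ among elements that happen to lie in $\Uz^{\utypechar}$ automatically holds in $\Uz^{\utypechar}$. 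Replacing your final sentence with this embedding argument fixes the gap; the rest of your proof is correct.
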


\begin{proof}
We begin by considering the relation for $\Epw_s=\Epw_{w_s}$ in the full algebra $\Uz$\,. Since 
$\commphel{+}{w_s}{E_r}=\commph{\wordroot(w_s)}{\ztgradp(E_r)}=\commph{\alpha_s}{\alpha_r}=\zeta^{\elln_s\symbrack{\alpha_s}{\alpha_r}}=\zeta^{\elln_sd_sA_{sr}}\,$, Lemma~\ref{lem:XEcommrel}
 implies the desired relation for $b=E_r\,$. The calculation for $\commphel{+}{w_s}{K_r}$ is exactly the same so that, by the second identity in \eqref{eq:XFXKcommrel}, the assertion also holds for $b=K_r\,$. Further, the first identity in \eqref{eq:XFXKcommrel} together with $\commphel{+}{w_s}{F_r}=\commph{\wordroot(w_s)}{\ztgradp(F_r)}=\commph{\alpha_s}{0}=1\,$ shows the commutation formula for $b=F_r\,$.

 Next, observe that $\commphel{+}{w}{bc}=\commphel{+}{w}{b}\commphel{+}{w}{c}$ for $\ztgrad$-homogeneous elements $b$ and $c$ because $\commphchar$ is a homomorphism in the second argument and $\ztgrad$ is a grading. Thus, we can establish $\Epw_sb=\commphel{+}{w}{b}\cdot b\Epw_s$ for any $b=b_1\ldots b_N$  with $b_j\in\{E_r,F_r,K_r\}$ by iteration and, by additivity, further extend the relation to all 
 homogeneous $b\in\Uz\,$.

 The proof for general $w\in\wordset$ follows now by  induction in $\len{w}\,$.
 Suppose  $w$ and $w_i\cdot w$ are reduced words so that $\Tinv_i(X_w)=X_{w_i\cdot w}\,$ by \eqref{eq:TinvXY}. Assume further the commutation relation holds for $w$ and any homogeneous element in $\Uz\,$. Applying the automorphism $\Tinv_i$ to 
 $\Epw_wc\,=\,\commphel{+}{w}{c}\cdot c\Epw_w$ then yields 
 $\Epw_{w_i\cdot w}b\,=\,\commphel{+}{w}{c}\cdot b\Epw_{w_i\cdot w}\,$, where $b=\Tinv_i(c)\,$. From 
 \eqref{eq:commphelEquivar} we find that $\commphel{+}{w}{c}=\commphel{+}{w_i\cdot w}{b}\,$. Since $\Tinv_i$ is a bijection among homogeneous elements, this shows that the relation also holds for $w_i\cdot w$ and, hence, for all reduced words.

 Note that \eqref{eq:CinvEFwords} implies $\Cartaninv(\Epw_w)=\Fpw_w$\,, and we have
 $\commphel{+}{w}{\Cartaninv(b)}=\commphel{-}{w}{b}$ by \eqref{eq:CartinvGrad}. 
 From this, the second relation with $\Fpw_w$ is inferred by applying $\Cartaninv$ to the first, using also that 
 the $\commphel{\pm}{w}{b}\in\{+1,-1\}\,$. Finally, 
 the relations in $\Uz$ imply the same relations in the embedded algebras $\Uz^\utypechar\,$ via Corollary~\ref{cor:UzProps}.\ref{item:UzProps:incl}.
 \end{proof}

Define also $\commphel{\circ}{\alpha}{b}=\commph{\alpha}{\wgrad(b)}=\commph{\alpha}{\ztgradp(b)}\commph{\alpha}{\ztgradn(b)}\,$ for a root $\alpha\in\roots$ and a  
fully homogeneous element $b$\,, where the second equality results from
$\wgrad=\ztgradp-\ztgradn$\,, noted in Section~\ref{subsec:gradings}. Equation
\eqref{eq:Kwgrad} now immediately implies the following 
commutation relations for generators defined 
in \eqref{eq:DefXY} and \eqref{eq:DefLmu}: 
\begin{equation}\label{eq:Lbcommrel}
    \Kpw_\alpha b=\commphel{\circ}{\alpha}{b}\cdot b \Kpw_\alpha 
    \qquad \mbox{ and } \qquad 
    \Kpw^\mu b=\commphel{\circ}{\mu}{b}\cdot b \Kpw^\mu\,.
\end{equation}

Commutation relations among the primitive power generators themselves are readily derived from Proposition~\ref{prop:PwGenCommRels}:
\begin{equation}\label{eq:XYcommrels}
    \Epw_v\Epw_w=\commphinel v w \Epw_w\Epw_v\,,\qquad
    \Fpw_v\Fpw_w=\commphinel v w \Fpw_w\Fpw_v\,,
    \quad\mbox{and}\quad
    \Epw_v\Fpw_w= \Fpw_w\Epw_v
\end{equation}
where 
\begin{equation}
 \commphinel v w = \commphin{\wordroot(v)}{\wordroot(w)}
\qquad\mbox{with}\quad 
 \commphin \alpha \beta =\zeta^{\elln_\alpha\elln_\beta\symbrack{\alpha}{\beta}}\in\{+1,-1\}\;.
 \end{equation}

For the generators $\Kpw_\alpha$ as in \eqref{eq:DefXY} we also derive from 
$K^\eta E_w=q^{\symbrack{\eta}{\wordroot(w)}}E_wK^\eta$ that
\begin{equation}\label{eq:LcommXYrels}
 \Kpw_\alpha \Epw=  \commphin {\alpha}{\wordroot(w)}\Epw\Kpw_\alpha
 \qquad\mbox{and}\qquad
 \Kpw_\alpha \Fpw=  \commphin {\alpha}{\wordroot(w)}\Fpw\Kpw_\alpha\,.
\end{equation}

We conclude this section with criteria for strict commutativity of a primitive power generator $X_v$ with $\alpha=\wordroot(v)\,$ for non-empty $v\in\wordset\,$. As before, we assume that the root system is indecomposable of rank $\geq 2\,$ and denote by $\kay$ the order of $\zeta\,$. 

\begin{lem} \label{lm:Xcentral}
The power generator $X_v$ is central in $\Uz^+$ if and only if (at least) one of the following conditions holds.
\vspace{-2mm}

\begin{enumerate}[label=\roman*), leftmargin=13mm,] 
    \item $\kay$ is odd.\vspace{1.5mm}
    \item $\kay\equiv 2 \mod 4$\,, $d_\alpha=2$ (that is, the Lie type is $\LT{BCF}$ and $\alpha$ is a long root).  \vspace{1.5mm}
    \item The Lie type is $\LT{B}$ and $\alpha$ is a short root (that is, $d_\alpha=1$). 
\end{enumerate}
\vspace{-1.5mm}

\noindent
In these cases $\Epw_v$ also commutes with all elements in $\Uz$\,. The analogous statement holds for $\Fpw_v$ and $\Kpw_\alpha$\,.  
\end{lem}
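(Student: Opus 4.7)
The plan is to reduce centrality to a sign condition on the bihomomorphism $\commphchar$ and then carry out a case analysis using the formula $\commph{\alpha}{\bar\mu}=\Lsign_\alpha^{\symbrack{\breve\alpha}{\bar\mu}}$ from \eqref{eq:defsigncomm}. Set $\alpha=\wordroot(v)$. By Proposition~\ref{prop:PwGenCommRels} the element $\Epw_v$ commutes with every $\ztgradp$-homogeneous $b\in\Uz$ up to the scalar $\commph{\alpha}{\ztgradp(b)}$, and this scalar depends only on $\ztgradp(b)\in\mathbb F_2^{\sroots}$. Consequently, centrality of $\Epw_v$ in $\Uz^+$ is equivalent to $\commph{\alpha}{\bar\alpha_r}=1$ for all simple roots $\alpha_r$, since $\ztgradp(E_r)=\alpha_r$ and these span $\mathbb F_2^{\sroots}$. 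The same condition then yields centrality in all of $\Uz$: commutation with $F_r$ is automatic from $\ztgradp(F_r)=0$, and commutation with $K_r$ reduces to that with $E_r$ since $\ztgradp(K_r)=\alpha_r$. This takes care of the last sentence of the lemma, and also gives the reduction for $\Fpw_v$ (using $\commphel{-}$ and $\ztgradn(F_r)=\alpha_r$) and for $\Kpw_\alpha$ (using $\commphel{\circ}$ of \eqref{eq:Lbcommrel}, which coincides with $\commphel{+}$ on $\Uz^+$ because $\wgrad\equiv\ztgradp\pmod 2$ there).

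Sufficiency of (i)--(iii) then reduces to verifying that either $\Lsign_\alpha=1$ or $\symbrack{\breve\alpha}{\alpha_r}\in 2\mathbb Z$ for every $r$. Case (i) $\kay$ odd implies $\kay_\alpha$ odd for every $\alpha$, hence $\Lsign_\alpha=1$. Case (ii) with $\kay\equiv 2\pmod 4$ and $d_\alpha=2$ yields $\kay_\alpha=\kay/2$ odd, so again $\Lsign_\alpha=1$. Case (iii) uses a lattice computation specific to type $\LT{B}_n$: the intrinsic form satisfies $\symbrack{\epsilon_i}{\epsilon_j}=2\delta_{ij}$, so for any short root $\alpha=\pm\epsilon_i$ and any $\beta$ in the root lattice $\mathbb Z\epsilon_1\oplus\cdots\oplus\mathbb Z\epsilon_n$ one has $\symbrack{\alpha}{\beta}\in 2\mathbb Z$; since $d_\alpha=1$ this is exactly the required evenness of $\symbrack{\breve\alpha}{\alpha_r}$.

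For the converse, one shows that in every situation not covered by (i)--(iii) both $\Lsign_\alpha=-1$ and some $\symbrack{\breve\alpha}{\alpha_r}$ is odd. This is the main bookkeeping step and proceeds Lie-type by Lie-type: for $\LT{ADE}$ with $\kay$ even, $d_\alpha=1$ and each nonzero root pairs as $\pm 1$ with some simple root; in $\LT{G}_2$ one has $\symbrack{\alpha_1}{\alpha_2}=-3$, so both $\breve\alpha_1$ and $\breve\alpha_2$ have an odd pairing with the other simple root; in $\LT{C}_n$ short roots $\epsilon_i-\epsilon_j$ pair as $\pm 1$ with adjacent short simple roots, and for long $\alpha=2\epsilon_i$ one gets $\symbrack{\breve\alpha}{\alpha_1}=\pm 1$; the same style of check disposes of $\LT{F}_4$ and of the long roots of $\LT{B}_n$ when $4\mid\kay$. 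The main obstacle is purely organizational -- ensuring the case table is complete and that one exhibits a concrete odd pairing in each non-central case -- but each individual check is a one-line root-coordinate computation given the explicit realizations recalled above.
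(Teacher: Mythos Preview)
Your proposal is correct and follows essentially the same approach as the paper, which states only that ``the proof consists of checking for which $\alpha$, Lie types, and orders $\kay$ we have that $\commph{\alpha}{\bar\mu}=1$ for all $\bar\mu$.'' Your reduction via Proposition~\ref{prop:PwGenCommRels} to the vanishing of $\commph{\alpha}{\bar\alpha_r}$ for all simple $\alpha_r$, together with the case analysis through $\Lsign_\alpha$ and the parity of $\symbrack{\breve\alpha}{\alpha_r}$, is precisely this check carried out in detail.
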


Note that the first condition includes all Lie types and was already observed in \cite{dck90}. See also the remarks following 
Proposition~\ref{prop:Zcentral} below.
The third condition applies to all $\kay\,$ and it is the only allowed situation if $\kay\equiv 0 \mod 4\,$. The proof consists of checking for which $\alpha$\,, Lie types, and orders $\kay$ we have that $\commph{\alpha}{\bar\mu}=1$ for all $\bar\mu$\,. As a weaker condition, one may consider commutation of $\Epw_v$ with only the other primitive power generators. 

\begin{lem}\label{lm:Xcommut} $\Epw_v$ commutes with all $\Epw_w$ if and only if (at least) one of the following conditions holds.
\vspace{-2mm}

\begin{enumerate}[label=\roman*), leftmargin=13mm,] 
    \item $\kay$ is odd or $\kay \equiv 0 \mod 8\,$.\vspace{1.4mm}
    \item $\kay\equiv 2 \mod 4$ and the Lie type is $\LT{B}$\,.\vspace{1.4mm}
    \item $\kay\equiv 2 \mod 4$, the Lie type is $\LT{CF}$\,, 
    and $\alpha$ is a long root ($d_\alpha=2$).\vspace{1.4mm}
    \item $\kay\equiv 4 \mod 8$ and the Lie type is
    $\LT{ACDEG}$\,.
    \vspace{1.4mm}
    \item $\kay\equiv 4 \mod 8$, the Lie type is $\LT{F}$\,, and $\alpha$ is a short root.\vspace{1.4mm}
    \item $\kay$ is arbitrary, the Lie type is $\LT{B}$\,, and $\alpha$ is a short root.
\end{enumerate}
\vspace{-1.5mm}

\noindent
    In these cases $\Epw_v$ also commutes with all $\Fpw_v$ and $\Kpw_\mu$\,. 
    The analogous statement holds for $Y_v$ and $\Kpw_\alpha$\,. 
\end{lem}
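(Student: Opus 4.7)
The plan is to convert the question into an arithmetic divisibility problem and then verify it by a case split on $\kay \bmod 8$ and Lie type. By the first identity of (\ref{eq:XYcommrels}), strict commutation of $\Epw_v$ with every $\Epw_w$ is equivalent to $\commphin\alpha\beta = \zeta^{\elln_\alpha\elln_\beta\symbrack\alpha\beta} = 1$ for every $\beta\in\proots$, with $\alpha = \wordroot(v)$. Since $\elln_\gamma$, $d_\gamma$, and $\symbrack{\,\cdot\,}{\,\cdot\,}$ are $\Weyl$-invariant, I may assume $\alpha\in\sroots$ and let $\beta$ range over all of $\roots$. The third identity of (\ref{eq:XYcommrels}) already gives $\Epw_v\Fpw_w = \Fpw_w\Epw_v$ unconditionally, and by (\ref{eq:LcommXYrels}) the same sign $\commphin{\wordroot(v)}{\wordroot(w)}$ controls the $\Kpw_\alpha$--$\Epw_w$ commutator; the $\Kpw_\alpha$ assertion and the $\Fpw_v$ assertion (via $\Cartaninv$) thus follow from the same arithmetic criterion.

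Next, I would use $\symbrack\alpha\beta = d_\alpha\bracket{\alpha,\beta}$ together with $\elln_\alpha d_\alpha = \mathrm{lcm}(\elln,d_\alpha)$ to rewrite
\[
\commphin\alpha\beta \,=\, (\zeta^\elln)^{(d_\alpha/\gcd(\elln,d_\alpha))\,\elln_\beta\,\bracket{\alpha,\beta}},
\]
where $\zeta^\elln$ equals $+1$ if $\kay$ is odd and $-1$ if $\kay$ is even. For odd $\kay$ the condition is automatic in every Lie type, which yields the odd half of clause (i). For $\kay = 2\elln$ it reduces to requiring that the integer $(d_\alpha/\gcd(\elln,d_\alpha))\,\elln_\beta\,\bracket{\alpha,\beta}$ be even for every $\beta\in\roots$. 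This naturally splits along the three even congruence classes. If $\kay\equiv 0\mod 8$ then $4\mid\elln$, so $\elln_\beta$ is even for every $\beta$ regardless of Lie type, completing clause (i). If $\kay\equiv 2\mod 4$ then $\elln$ is odd and every $\elln_\beta$ is odd, so the condition becomes that $(d_\alpha/\gcd(\elln,d_\alpha))\,\bracket{\alpha,\beta}$ is even for all $\beta$. A direct inspection of the Cartan integers in the standard realizations then shows this holds for $\alpha$ arbitrary in $\LT{B}$ (clause (ii)) and for long $\alpha$ in $\LT{CF}$ (clause (iii)), while it fails in the simply-laced types, in $\LT{G}_2$, and for short roots in $\LT{CF}$. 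The remaining case $\kay\equiv 4\mod 8$ has $\elln\equiv 2\mod 4$, making $\elln_\beta$ even for short $\beta$ but odd for long $\beta$; the same inspection of Cartan integers produces clauses (iv) and (v).

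Clause (vi) is independent of $\kay$ and is treated separately. In type $\LT{B}_n$ the standard realization gives $\symbrack{\pm e_i}{\beta}\in\{-2,0,2\}$ for every root $\beta\in\{\pm e_j,\ \pm e_j\pm e_k\}$, so $\symbrack{\alpha_s}{\beta}$ is always even when $\alpha_s$ is short. Combined with the divisibility $\elln\mid\elln_\alpha d_\alpha$, this extra factor of $2$ makes the exponent $\elln_\alpha\elln_\beta\symbrack\alpha\beta$ divisible by $2\elln=\kay$ in every even case, and divisible by $\elln=\kay$ in the odd case. The main technical obstacle lies in the case-by-case bookkeeping of Cartan integers across Lie types where both root lengths coexist, especially $\LT{F}_4$ and $\LT{G}_2$; for this, Lemma \ref{lem:kappaprops} together with the rank-$2$ data of Section \ref{subsec:rank2coxsys} should supply all the needed ingredients.
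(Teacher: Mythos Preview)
Your proposal is correct and follows the approach the paper leaves implicit: reduce via (\ref{eq:XYcommrels}) to the vanishing of the sign $\commphin{\alpha}{\beta}=\zeta^{\ell_\alpha\ell_\beta\symbrack{\alpha}{\beta}}$ for all $\beta$, then carry out a case split on $\kay\bmod 8$ and Lie type using the root-length data. The paper offers no explicit proof here (only the analogous one-line sketch after Lemma~\ref{lm:Xcentral}), so your direct arithmetic verification is precisely what is intended; your rewriting of the exponent as $(d_\alpha/\gcd(\ell,d_\alpha))\,\ell_\beta\,\langle\alpha,\beta\rangle$ is a clean way to organize the case check.
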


\subsection{Skew-Central Subalgebras and Monomial Bases}\label{subsec:Z=SignPolyn} 
The $\Epw_w$ and $\Fpw_w$ elements introduced in Section~\ref{subsec:CommPrimGen} generate almost polynomial subalgebras whose indeterminate variables commute up to signs. This section establishes the basic properties of these algebras and classifies when they are central, commutative but not central, or neither. We begin with definitions of the monomial elements in analogy to those introduced in Section~\ref{subsec:Emonomials}.

Let $\ellfun\in\nnN^{\roots}$ be the length function $\ellfun(\alpha)=\elln_\alpha\,$ and, for $\psi\in \nnN^{A}$ with $A\subseteq\roots$\,, denote by $\ellfun\cdot\psi$ the point-wise product 
$(\ellfun\cdot\psi)(\alpha)=\elln_\alpha\psi(\alpha)\in\nnN\,$. For a non-empty, reduced word $w\in\wordset\,$,
$\psi\in\expsetup s\,$ with $s=\Weylpres(w)\,$,
and with conventions as for \eqref{eq:ExpsetDef}, set
\begin{equation}\label{eq:defXmonom}
    \Epwbase{w}{\psi}\,=\,\Ebase{w}{\ellfun\cdot\psi}
\,=\,\Epw_{w[\beta_1]}^{\psi(\beta_1)}\ldots \Epw_{w[\beta_L]}^{\psi(\beta_L)}
\,=\,E_{w[\beta_1]}^{\elln_{\beta_1}\psi(\beta_1)}\ldots E_{w[\beta_L]}^{\elln_{\beta_L}\psi(\beta_L)}\,.
\end{equation}
Analogously, we denote \ $\Epwbaseopp{w}{\psi}=\Ebaseopp{w}{\ellfun\cdot\psi}$\,, $\Fpwbase{w}{\psi}=\Fbase{w}{\ellfun\cdot\psi}$\,, and $\Fpwbaseopp{w}{\psi}=\Fbaseopp{w}{\ellfun\cdot\psi}\,$. Suppose next that $w,u\in\wordset$ are reduced words with associated Weyl group elements $s=\Weylpres(w)$ and $t=\Weylpres(u)$\,. Further, let $\psi,\phi\in \expsetup s$ and $\chi\in \expsetup t$ be respective exponent maps. Using the root sets and orderings explained in   
Section~\ref{subsec:descroots} we then define bihomomorphisms valued in $\{+1,-1\}$ as  
\begin{equation}
    \commphallexp{w}{u}{\psi}{\chi}   =\prod_{\substack{\alpha\in \descroots s \\ \beta\in \descroots t }}
\commphin{\alpha}{\beta}^{\psi(\alpha)\chi(\beta)}
\qquad\mbox{and}\qquad
\commphshufexp{w}{\psi}{\phi}   =\prod_{\substack{\alpha, \beta\in \descroots s \\ \alpha \,\letwt{w}\,\beta}}
\commphin{\alpha}{\beta}^{\psi(\beta)\phi(\alpha)}\,,
\end{equation}
for which we observe that $\;\commphshufexp{w}{\psi}{\phi}\,\commphshufexp{w}{\phi}{\psi}= \commphallexp{w}{w}{\psi}{\phi}\,$.

The commutation relations from \eqref{eq:XYcommrels} readily generalize to  
\begin{equation}
    \Epw_w^\psi\,\Epw_u^\chi\,=\,\commphallexp{w}{u}{\psi}{\chi}\,\Epw_u^\chi\,\Epw_w^\psi
    \;,\quad
    \Fpw_w^\psi\,\Fpw_u^\chi\,=\,\commphallexp{w}{u}{\psi}{\chi}\,\Fpw_u^\chi\,\Fpw_w^\psi
    \;,\quad\mbox{and}\quad 
    \Fpw_w^\psi\,\Epw_u^\chi\,=\, \Epw_u^\chi\,\Fpw_w^\psi\,.
\end{equation}
Here, $\Epw^\psi_w$ and $\Fpw^\psi_w$ denotes a monomial with any fixed direction of multiplication. That is, we may replace 
 $\Epw^\psi_w$ in each equation by either  $\Epwbase{w}{\psi}$ or $\Epwbaseopp{w}{\psi}$ 
 and similarly for $\Fpw^\psi_w\,$. 
Commutation relations for a fixed reduced word are given by 
\begin{equation}\label{eq:Xmultrels}
    \Epwbase{w}{\psi}\Epwbase{w}{\phi}\,=\,\commphshufexp{w}{\psi}{\phi} \,\Epwbase{w}{\psi+\phi}
    \,,\;
    \Epwbaseopp{w}{\psi}\Epwbaseopp{w}{\phi}\,=\,\commphshufexp{w}{\phi}{\psi} \,\Epwbaseopp{w}{\psi+\phi}
    \,,\;\;\mbox{and}\;\;
    \Epwbase{w}{\psi}\,=\commphshufexp{w}{\psi}{\psi}\, \Epwbaseopp{w}{\psi}\,.
\end{equation}
The same equations hold if all $\Epw$'s are replaced by $\Fpw$'s. We also record formulae for 
the involutions from Section~\ref{subsec:Kscale} given by 
\begin{equation}\label{eq:CartMhoXYacts}
    \Cartaninv(\Epwbase{w}{\psi})=\Fpwbaseopp{w}{\psi}=\commphshufexp{w}{\psi}{\psi}\Fpwbase{w}{\psi}
\qquad\mbox{and}\qquad 
    \Kinvaut(\Epwbase{w}{\psi})\,=\,\Tinv_r\left({\Epwbase{w^\winvchar}{\phi}}\right)\,.
\end{equation}
The first is immediate from \eqref{eq:EFCartOrder} and the second follows from 
Proposition~\ref{prop:mhomonomial} with elements $r$ and $\phi$ as stated there.
In either formula we use that $\Weyl$ preserves root lengths.

For a non-empty, reduced word $w\in\wordset$ we denote by $\Zsubalgchar^+_w\subseteq \Uz^+$ and $\Zsubalgchar^-_w\subseteq \Uz^-$ the unital subalgebras generated by the sets $\{\Epw_v:\,v\leqRB w\}\cup\{1\}\,$ and 
$\{\Fpw_v:\,v\leqRB w\}\cup\{1\}\,$, respectively. Further, let 
$\Zsubalgchar^0_s$ be the unital subalgebra generated by 1 and all $\Kpw_\alpha^{\pm 1}$ with $\alpha\in\descroots{s}$\,.  
For a given $s\in\Weyl$ denote also the $\ell$-scaled exponent set
\begin{equation}\label{eq:Lexpsetdef}
    \Lexpset s\,=\,\ellfun\cdot \expsetup{s}\,=\,\left\{\phi\,:\,\phi(\alpha)\in\elln_\alpha\nnN\right\}\,.
\end{equation}
In the notation from \eqref{eq:PBW_p_basis} we thus have $\basis{w}{{\Lexpset s},+} =\{\Epwbase{w}{\psi}:\,\psi\in\expsetup{s}\}\,$ and analogous identities for $\basisopp{w}{{\Lexpset s},+}$,   
$\basis{w}{{\Lexpset s},-}$, and  $\basisopp{w}{{\Lexpset s},-}$. The ground ring $\Zgen$ in the following result is normally assumed to be $\Zzn{\dpone}$ for $\Zsubalgchar^{\pm}_w$ or $\Zz$ for $\Zsubalgchar^{0}_s$ but may be extended to
any $\Zzn{n}$ with $n\leq\ell$ or $\mathbb Q(\zeta)$\,. We are also reusing the notation for spans $\basis{w}{{\mathsf S},\pm}$ from
\eqref{eq:DefSspans} for these ground rings $\Zgen\,$.

\begin{prop}\label{prop:ZPBWbases} Suppose $w\in\wordset$ and $s=\Weylpres(w)$\,. 
Then both $\basis{w}{{\Lexpset s},+}$ and $\basisopp{w}{{\Lexpset s},+}$ are $\Zgen$-bases for  $\Zsubalgchar^+_w\,$.
In particular, $\Zsubalgchar^+_w=\bspan{w}{{\Lexpset s},+}\!\!=\bspanopp{w}{{\Lexpset s},+}$.

\noindent The subalgebra $\Zsubalgchar^+_w\,$ is isomorphic to  the abstract $\Zgen$-algebra with $1$, freely generated by elements 
$\Epwbase{w}{\psi}$ and subject to the first set of relations in \eqref{eq:Xmultrels}.
The analogous statements for   $\Zsubalgchar^-_w\,$ apply.

\noindent The subalgebra $\Zsubalgchar^0_s\,$ is generated by $\Kpw_i^{\pm 1}$ with $i\in\wordindexset{s}\,$ as in Section~\ref{subsec:descroots}. It is thus isomorphic to the group algebra $\Zgen\left[\left\{L_i^{\pm}:\,i\in\wordindexset{s}\right\}\right]\cong\Zgen\left[{\wordindexlattice{s}}\right]\,$.  
\end{prop}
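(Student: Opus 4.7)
The plan is to establish the basis and presentation for $\Zsubalgchar^+_w$ by hand, transfer the conclusion to $\Zsubalgchar^-_w$ by applying the anti-involution $\Cartaninv$, and finally treat the Cartan piece $\Zsubalgchar^0_s$ by a $\mathbb{Z}$-lattice argument based on Lemma~\ref{lm:ell-root-lattice}.

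For the bases on $\Zsubalgchar^+_w$, observe first that each generator $\Epw_v$ with $v\leqRB w$ is itself an element $\Epwbase{w}{\chi}$ for $\chi$ the indicator function at the root $\wordroot(v)\in\descroots{s}$, so that $\basis{w}{\Lexpset{s},+}\subseteq\Zsubalgchar^+_w$. Any product of such generators can then be iteratively reordered into the canonical form $\Epwbase{w}{\psi}$, up to a sign from the skew-commutation rules~\eqref{eq:XYcommrels} and~\eqref{eq:Xmultrels}, which gives $\Zsubalgchar^+_w=\bspan{w}{\Lexpset{s},+}$ and identically $=\bspanopp{w}{\Lexpset{s},+}$. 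For linear independence I would extend $w$ to some $z\in\wordsetmax$, which is possible as noted in Section~\ref{subsec:coxeter}; by Proposition~\ref{prop:PBW2} the set $\basis{z}{+}$ is a free $\Zgen$-basis of $\Uz^+$, and the zero-extension $\expsetup{s}\hookrightarrow\expsetup{\longweyl}$ combined with multiplication by $\ellfun$ identifies $\basis{w}{\Lexpset{s},+}$ with a literal subset of $\basis{z}{+}$. The same holds for $\basisopp{w}{\Lexpset{s},+}\subseteq\basisopp{z}{+}$, which completes the basis assertion.

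For the abstract presentation, let $A$ be the $\Zgen$-algebra with $1$ generated by symbols $x_\psi$ ($\psi\in\expsetup{s}$) subject to $x_0=1$ and $x_\psi x_\phi=\commphshufexp{w}{\psi}{\phi}\,x_{\psi+\phi}$. Iterating this relation reduces any word in the generators to a single $x_\psi$ up to a sign, so $\{x_\psi\}$ already spans $A$ as a $\Zgen$-module. The surjection $A\twoheadrightarrow\Zsubalgchar^+_w$ sending $x_\psi\mapsto\Epwbase{w}{\psi}$ maps this spanning set bijectively to the basis from the previous paragraph and is therefore an isomorphism. The analogous statements for $\Zsubalgchar^-_w$ then follow immediately: $\Cartaninv$ is an anti-algebra involution of $\Uz$ with $\Cartaninv(\Epw_v)=\Fpw_v$ by~\eqref{eq:CinvEFwords}, so $\Cartaninv(\Zsubalgchar^+_w)=\Zsubalgchar^-_w$, and~\eqref{eq:CartMhoXYacts} writes each $\Fpwbaseopp{w}{\psi}$ (and hence, up to a sign, each $\Fpwbase{w}{\psi}$) as $\Cartaninv(\Epwbase{w}{\psi})$, transporting the bases and the presentation.

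The Cartan assertion is the most delicate point because the generators $\Kpw_\alpha=K^{\elln_\alpha\alpha}$ for $\alpha\in\descroots{s}$ involve the non-uniform scalings $\elln_\alpha$. Since the $K_i$'s are algebraically independent in $\Uz^0$, the assertion reduces to showing the equality of two $\mathbb{Z}$-sublattices of $\wordindexlattice{s}$, namely $G_s=\bigl\langle \elln_\alpha\alpha:\alpha\in\descroots{s}\bigr\rangle$ and $H_s=\bigl\langle \elln_i\alpha_i:i\in\wordindexset{s}\bigr\rangle$. The inclusion $G_s\subseteq H_s$ is immediate from Lemma~\ref{lm:ell-root-lattice} applied to the sub-root-system with simple roots $\{\alpha_i:i\in\wordindexset{s}\}$, which contains $\descroots{s}$. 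For the converse I would induct on $\len{s}$ by writing $s=s'\cdot s_k$ with $\len{s}=\len{s'}+1$; in the nontrivial case $k\notin\wordindexset{s'}$, Corollary~\ref{cor:sumdescroots} gives $\descroots{s}=\descroots{s'}\sqcup\{s'(\alpha_k)\}$, and expanding $\elln_k s'(\alpha_k)=\elln_k\alpha_k+\elln_k\gamma$ with $\gamma\in\wordindexlattice{s'}$ against the basis $\{\elln_j\alpha_j:j\in\wordindexset{s}\}$ of the enlarged scaled sub-lattice (Lemma~\ref{lm:ell-root-lattice} again) forces the $\alpha_k$-coefficient of $\elln_k s'(\alpha_k)$ to be exactly $\elln_k$, so that $\elln_k\alpha_k\equiv \elln_k s'(\alpha_k)\pmod{H_{s'}}$ and hence $\elln_k\alpha_k\in G_s$. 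This induction is the hard part, since it is the only step where the differing scalings for short and long roots interact nontrivially with the combinatorics of $\descroots{s}$.
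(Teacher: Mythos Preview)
Your arguments for the basis and presentation of $\Zsubalgchar^+_w$ are essentially those of the paper: reorder products via~\eqref{eq:Xmultrels}, extend $w$ to a maximal word, and invoke the PBW basis (the paper phrases the spanning step as ``$1$ is cyclic'', but the content is the same). The transfer to $\Zsubalgchar^-_w$ via $\Cartaninv$ is also as in the paper.

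For the Cartan statement the approaches diverge. Your induction on $\len{s}$ is correct: the key observation that $\elln_k s'(\alpha_k)$ lies in the lattice with basis $\{\elln_j\alpha_j\}$ (so that its $\alpha_k$-component is forced to equal $\elln_k\alpha_k$, leaving a remainder in $H_{s'}$) does work, and it handles the interaction of the non-uniform scalings $\elln_\alpha$ with the combinatorics of $\descroots{s}$ properly. The paper, however, avoids the induction entirely by recognizing that $\{\elln_\alpha\alpha:\alpha\in\descroots{s}\}$ is \emph{precisely} the inversion set of $s$ in the root system $\roots_\ell$ (using that the Weyl actions on $\roots$ and $\roots_\ell$ agree, Lemma~\ref{lm:ell-root-lattice}). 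Lemma~\ref{lem:descrootlattice} then applies verbatim to $\roots_\ell$ and immediately yields that the lattice $G_s$ has $\{\elln_i\alpha_i:i\in\wordindexset{s}\}$ as a basis. Your induction is in effect re-proving Lemma~\ref{lem:descrootlattice} for $\roots_\ell$ by hand; the paper's shortcut is to notice that this lemma is root-system-agnostic and applies directly. One minor wording point: your first inclusion $G_s\subseteq H_s$ should be justified by Lemma~\ref{lm:ell-root-lattice} applied to the \emph{full} system $\roots$, not to a sub-root-system (where the intrinsic $\elln_\alpha$ could differ).
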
 

\begin{proof}
    The relations in \eqref{eq:Xmultrels} imply that for any $v\in\wordset$ with $v\leqRB w$ we have 
    $\Epw_v\cdot\bspan{w}{{\Lexpset s},+}\subseteq\bspan{w}{{\Lexpset s},+}\!$, writing $\Epw_v=\Epwbase{w}{\chi_\alpha}$
    with $\chi_\alpha$ the indicator function for $\alpha=\wordroot(v)\,$. Thus 
    $\Zsubalgchar^+_w=\Zsubalgchar^+_w\cdot 1\subseteq\bspan{w}{{\Lexpset s},+}$. From 
    the definition in \eqref{eq:defXmonom}, it is further clear that $1$ is a cyclic element of $\bspan{w}{{\Lexpset s},+}\!$
    as a $\Zsubalgchar^+_w$-module and hence $\Zsubalgchar^+_w=\bspan{w}{{\Lexpset s},+}\!$ as desired. Linear independence established in Section~\ref{sec:PBW} then completes the proof of the first claim. Independence under reversal of the direction of multiplication is
    immediate from \eqref{eq:Xmultrels}.

    It is clear that the set of generators $\Epwbase{w}{\psi}$ of an abstractly defined algebra with relations as in 
    \eqref{eq:Xmultrels} is also a basis. Thus, the canonical map from this algebra onto $\Zsubalgchar^+_w$ is an 
    isomorphism of $\Zgen$-modules and, hence, also an algebra isomorphism.

    By its definition, $\Zsubalgchar^0_s\,$ is the $\Zgen$-span of all $K^\nu$ with $\nu$ in the root 
    sublattice generated by all $\ellsc{\alpha}=\elln_\alpha\cdot \alpha\,$ with $\alpha\in\descroots{s}\,$.
    Following Lemma~\ref{lm:ell-root-lattice}, we consider the root system $\roots_\ell$ with the same Weyl group 
    action. Since, clearly, $s(\alpha)\in\nroots$ iff $s(\ellsc{\alpha})\in\nroots_\ell$ the 
    inversion set $\descroots{s}_\ell$
    with respect to $\roots_\ell$ is given precisely by $\{\ellsc \alpha:\,\alpha\in \descroots{s}\}$\,.
    By Lemma~\ref{lem:descrootlattice} the root sublattice generated by this set is spanned by the simple roots 
    of $\roots_\ell$ with indices in $\wordindexset{s}\,$. By Lemma~\ref{lm:ell-root-lattice} these are
    $\ellsc{\alpha_i}=\elln_i\alpha_i$\,, yielding the $\Kpw_i=K^{\ellsc{\alpha_i}}$ as free generators.  
\end{proof}

 The next observations are consequences of \eqref{eq:CartMhoXYacts}, particularly, in the case when 
 the word in the $\Kinvaut$ formula is maximal so that $r=1\,$. The stated equivariance under the
 antipode follows from \eqref{eq:SXiMhoRel}.

\begin{cor}\label{cor:InvolZalg}
Suppose $w\in\wordset$ is a reduced word and $z\in\wordsetmax$ one of maximal length. 
Then $\Cartaninv(\Zsubalgchar_s^{0})=\Kinvaut(\Zsubalgchar_s^{0})=S(\Zsubalgchar_s^{0})=\Zsubalgchar_s^{0}$ as well as  
\begin{equation}\label{eq:InvolZalg}
    \Cartaninv(\Zsubalgchar_w^{\pm})=\Zsubalgchar_w^{\mp}\,,\qquad 
    \Kinvaut(\Zsubalgchar_z^{\pm})=\Zsubalgchar_{z^\winvchar}^{\pm}\,,\qquad 
    \mbox{and}
    \qquad
    S(\Zsubalgchar_z^{\geqzero})=\Zsubalgchar_{z^\winvchar}^{\geqzero}\;. 
\end{equation}
\end{cor}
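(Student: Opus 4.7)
The plan is to treat the four assertions separately, beginning with the Cartan piece and then handling the two pure triangular pieces before combining everything for the antipode statement. Throughout, I would rely on the explicit monomial formulae in (\ref{eq:CartMhoXYacts}), the decomposition $S=\Kinvaut\circ\Kscale{\htsign}{\idsymm}$ from (\ref{eq:SXiMhoRel}), and the identification of the lattice appearing in Lemma~\ref{lm:ell-root-lattice} and Proposition~\ref{prop:ZPBWbases}.

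First, for $\Zsubalgchar_s^0$: each of $\Cartaninv$, $\Kinvaut$, and $S$ acts on the individual Cartan generators by $K_i\mapsto K_i^{-1}$, hence extends multiplicatively/antimultiplicatively to $K^\nu\mapsto K^{-\nu}$ for any $\nu\in\mathbb Z^{\sroots}$. In particular, each sends $\Kpw_\alpha=K^{\ellsc{\alpha}}$ to $\Kpw_\alpha^{-1}$, so each preserves the subalgebra generated by $\{\Kpw_\alpha^{\pm1}:\alpha\in\descroots{s}\}$, yielding the stated equalities.

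For the Cartan anti-involution on the triangular pieces, the first identity in (\ref{eq:CartMhoXYacts}) reads $\Cartaninv(\Epwbase{w}{\psi})=\commphshufexp{w}{\psi}{\psi}\Fpwbase{w}{\psi}$, a sign times a basis element of $\Zsubalgchar_w^-$ by Proposition~\ref{prop:ZPBWbases}. Hence $\Cartaninv(\Zsubalgchar_w^+)\subseteq\Zsubalgchar_w^-$; the reverse inclusion follows from $\Cartaninv^2=\id$. For $\Kinvaut$ with $w=z\in\wordsetmax$, specialize (\ref{eq:CartMhoXYacts}) (or equivalently (\ref{eq:mhomonomax})) to the maximal length case so that $t=\longweyl$ and $r=\flweyl{\longweyl}=1$, giving $\Kinvaut(\Epwbase{z}{\psi})=\Epwbase{z^\winvchar}{\psi}$. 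This maps the PBW basis of $\Zsubalgchar_z^+$ bijectively onto that of $\Zsubalgchar_{z^\winvchar}^+$; the involutivity of $\Kinvaut$ produces equality. The $-$ version then follows by composing with $\Cartaninv$, using that $\Cartaninv$ and $\Kinvaut$ commute (cf.\ (\ref{eq:InvKScalComm})) and that $\Cartaninv(\Zsubalgchar_w^\pm)=\Zsubalgchar_w^\mp$ is already established.

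For the antipode, write $S(b)=\Kinvaut(\Kscale{\htsign}{\idsymm}(b))$. Taking $b=\Epwbase{z}{\psi}$ and $\beta=\wgrad(b)=\sum_{\alpha\in\proots}\ell_\alpha\psi(\alpha)\alpha$, Proposition~\ref{prop:Kscaleprops}(iii) yields $\Kscale{\htsign}{\idsymm}(b)=c_\psi\,K^{\beta}b$ for a unit scalar $c_\psi\in\Zzvn{\dpone}^\star$. The key point is that $\beta$ lies in the lattice $\mathbb Z\cdot\sroots_\ell=\langle\ellsc{\alpha_i}\rangle_{\mathbb Z}$: in the simply laced case $\ell_\alpha=\ell$ so $\beta\in\ell\mathbb Z^{\sroots}$, while in the non-simply laced case with $\maxd\mid\ell$ one has $\ell_\alpha\alpha=\ell\breve{\alpha}$, which expands integrally in the $\ell_i\alpha_i$. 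Hence $K^\beta\in\Zsubalgchar_{\longweyl}^0$ by Proposition~\ref{prop:ZPBWbases}, so $\Kscale{\htsign}{\idsymm}(b)\in\Zsubalgchar_z^{\geqzero}$. Since $\Kinvaut$ is an anti-homomorphism, $S(b)=c_\psi\Kinvaut(b)\Kinvaut(K^\beta)=c_\psi\Epwbase{z^\winvchar}{\psi}\,K^{-\beta}$, which lies in $\Zsubalgchar_{z^\winvchar}^+\cdot\Zsubalgchar_\longweyl^0=\Zsubalgchar_{z^\winvchar}^{\geqzero}$. Combined with $S(\Zsubalgchar_\longweyl^0)=\Zsubalgchar_\longweyl^0$, this gives $S(\Zsubalgchar_z^{\geqzero})\subseteq\Zsubalgchar_{z^\winvchar}^{\geqzero}$. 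The reverse inclusion follows by applying $S^{-1}=\Kscale{\qitgen^{-1}}{0}\circ S$ (from $S^2=\Kscale{\qitgen}{0}$, cf.\ Corollary~\ref{cor:GarsideAutom}) to $\Zsubalgchar_{z^\winvchar}^{\geqzero}$ and noting that $\Kscale{\qitgen^{-1}}{0}$ rescales each homogeneous element by a unit and hence preserves $\Zsubalgchar_z^{\geqzero}$.

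The main technical obstacle is the $S$-statement: one must verify that the $K$-twist introduced by $\Kscale{\htsign}{\idsymm}$ on a primitive power monomial of weight $\beta$ remains inside $\Zsubalgchar_\longweyl^0$ rather than only inside the full Cartan $\Uz^0$. This is exactly where Lemma~\ref{lm:ell-root-lattice} becomes essential, since one needs the full lattice identity $\sum_\alpha\ell_\alpha\psi(\alpha)\alpha\in\langle\ell_i\alpha_i\rangle_{\mathbb Z}$, with the non-simply laced case requiring the coroot reinterpretation; the other three identities are direct consequences of formulae already in the text.
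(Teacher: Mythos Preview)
Your proof is correct and follows essentially the same route as the paper, which simply cites \eqref{eq:CartMhoXYacts} for the $\Cartaninv$ and $\Kinvaut$ statements and \eqref{eq:SXiMhoRel} for the antipode statement. You have filled in the details the paper omits; in particular, your observation that the Cartan twist $K^{\beta}$ produced by $\Kscale{\htsign}{\idsymm}$ lies in $\Zsubalgchar_\longweyl^0$ via Lemma~\ref{lm:ell-root-lattice} is the nontrivial verification the paper leaves implicit.
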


Proposition~\ref{prop:PwGenCommRels} further implies the following relations between $\Epwbase{w}{\psi}$ and
$\Ebase{w}{\rho}$ elements. Here $w$ is a reduced word and $\psi,\rho\in\expsetup{s}$ with $s=\Weylpres(w)\,$,
and $\commphchar$ from \eqref{eq:defsigncomm} is lifted to $\roots\times\mathbb Z^{\sroots}\,$:
\begin{equation}\label{eq:XexEex=Eex}
\begin{aligned}
     \Epwbase{w}{\psi}\cdot \Ebase{w}{\rho}\,&=\,\commphXEpw{w}{\psi}{\rho}\cdot \Ebase{w}{\ellfun\cdot \psi +\rho}
     &&\mbox{and}\qquad&
     \Ebase{w}{\rho}\cdot \Epwbase{w}{\psi}\,&=\,\commphEXpw{w}{\psi}{\rho}\cdot \Ebase{w}{\ellfun\cdot \psi +\rho}
     \\ 
     \mbox{where}\rule{0mm}{7mm}\qquad
     \commphXEpw{w}{\psi}{\rho}
            &=\!\prod_{\substack{\alpha, \beta\in \descroots s \\ \alpha \,\letwt{w}\,\beta}}
            \!\commph{\alpha}{\beta}^{\psi(\beta)\rho(\alpha)}
    &&\mbox{and}\quad&
    \commphEXpw{w}{\psi}{\rho}&=\!\prod_{\substack{\alpha, \beta\in \descroots s \\ \alpha \,\letwt{w}\,\beta}}
    \!\commph{\alpha}{\beta}^{\psi(\alpha)\rho(\beta)}\,.
\end{aligned}
\end{equation}

 Using division with remainder pointwise for each $\alpha$\,, any $\phi\in \expsetup s$ can be uniquely written as $\phi=\ellfun\cdot \psi +\rho$ with $\rho\in \Lmaxexpset s$\,, where 
 \begin{equation}\label{eq:Lmaxexpset}
     \Lmaxexpset s =\left\{{\psi\in\expsetup s\,:\,\psi(\alpha)<\elln_\alpha\;,\;\forall\alpha\in\descroots{s}}\right\}
     \quad\mbox{and}\qquad
     \Lmaxexpset \bullet =\Lmaxexpset {\longweyl}
     \,.
 \end{equation}
 It is now immediate that the multiplication map below is an isomorphism of $\Zgen$-modules, mapping pairs of PBW basis elements to another PBW basis element up to a sign.
\begin{equation}\label{eq:ZSrem=S}
  \ZSisom^+_w:\,  \Zsubalgchar^+_w\otimes_\Zgen\bspan{w}{\Lmaxexpset s, +}\!\longrightarrow \,\bspan{w}{ +}
\end{equation}

Viewing $\bspan{w}{ +}\!$ as a $\Zsubalgchar^+_w$-module with its left regular action given by the signed shifts in \eqref{eq:XexEex=Eex}, this implies that $\bspan{w}{ +}$ splits into directed summands, each of which has  $\Ebase{w}{\rho}$
for some $\rho\in\Lmaxexpset s$ as cyclic element. 

Note also that $\bspan{w}{\Lmaxexpset s, +}\!$ generally depends explicitly on the choice of the
reduced word $w$ rather than just the respective Weyl element $s=\Weylpres(w)\,$. For example, for Lie type $\LT{A}_2$ and $\kay=4\,$ both $w=w_1w_2w_1$
and $\widebarr w=w_2w_1w_2$ represent the maximal element in $\Weyl$. Given that $\ell=2$\,, the 
components of  $\bspan{w}{\Lmaxexpset s, +}$ and $\bspan{\widebarr w}{\Lmaxexpset s, +}$ with maximal $\wgrad$-grading of 
$2\alpha_1+2\alpha_2$ are $\Zgen \Ebase{w}{\mathbf 1}$ and $\Zgen \Ebase{\widebarr w}{\mathbf 1}\,$ respectively, where
$\mathbf 1(\alpha)=1$ for all $\alpha\in\proots\,$. With  
$\,\Ebase{w}{\mathbf 1}=E_1E_{(12)}E_2\,$ and 
$\,\Ebase{\widebarr w}{\mathbf 1}=E_2E_{(21)}E_1=\Ebase{w}{\mathbf 1}+E_{(12)}^2-2\zeta^{-1}E_1^2E_2^2\,$, these are clearly different spaces. We will, however, prove below that $\Zsubalgchar^+_w$ only depends on $s=\Weylpres(w)$ for almost all Lie types.

Recall from Theorem~\ref{thm:mainPBW} and Corollary~\ref{cor:UzProps} that $\bspan{w}{ +}=\Uz^+$ for a reduced word
$w\in\wordsetmax$ of maximal length. Variations of \eqref{eq:XexEex=Eex} and \eqref{eq:ZSrem=S} are readily derived 
with reversed directions or replacing $E$'s by $F$'s and $\Epw$'s by $\Fpw$'s. 

Similar to the full quantum groups, we introduce combined subalgebras such as 
\begin{equation}\label{eq:defZ-othtyp}
    \Zsubalgchar^{\geqzero}_w=\Zsubalgchar^{+}_w\cdot\Zsubalgchar^{0}_s
\;,\quad 
\Zsubalgchar^{\leqzero}_w=\Zsubalgchar^{-}_w\cdot\Zsubalgchar^{0}_s
\;,\quad \mbox{and}\quad
\Zsubalgchar^{}_w=\Zsubalgchar^{+}_w\cdot\Zsubalgchar^{-}_w\cdot\Zsubalgchar^{0}_s\,,
\end{equation} 
where $s=\Weylpres(w)$\,. Analogous statements about PBW bases and identifications with signed polynomial algebras hold for each of these subalgebras. Since all elements in $\Zsubalgchar^{+}_w$ commute with all elements in $\Zsubalgchar^{-}_w$ we do not require the denominators $(\zeta-\zeta^{-1})^{-1}$ so that each $\Zsubalgchar^{}_w$ is well-defined over $\Zzn{\dpone}\,$.
We next record the precise criteria for these subalgebras to be strictly central or strictly commutative. 

\begin{prop} \label{prop:Zcentral}
Suppose $w\in\wordset$ is non-empty, $s=\Weylpres(w)\,$, and $\utypechar\in\{\phantom{+},+,\geqzero\}$\,.
The algebra $\Zsubalgchar^+_w$ is central in $\Uq^\utypechar$ if and only if (at least) one of the following conditions holds:
\vspace{-1mm}

\begin{enumerate}[label=\roman*), leftmargin=13mm,] 
    \item $\kay$ is odd (any Lie type).\vspace{1.4mm}\label{item:Zcentral:1}
    \item $\kay\not\equiv 0 \mod 4$ and the Lie type is $\LT{B}$\,.\vspace{1.4mm}\label{item:Zcentral:2}
    \item $\kay\equiv 2 \mod 4$\,, the Lie type is $\LT{CF}$\,,
    and $s\in\WeylS$\,.\vspace{1.4mm}\label{item:Zcentral:3}
    \item Lie type is $\LT{B}$ and $s\in\WeylL$ (any $\kay$). \label{item:Zcentral:4}
\end{enumerate}
Thus, if $w\in\wordsetmax$\,, then $\Zsubalgchar^+_w$ is central if and only if (at least) one of {\em \ref{item:Zcentral:1}} and {\em \ref{item:Zcentral:2}} hold. Analogous statements hold for $\Zsubalgchar^\utypechar_w\,$. 
\end{prop}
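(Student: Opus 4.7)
The plan is to combine Proposition~\ref{prop:PwGenCommRels} with the generation statement of Proposition~\ref{prop:ZPBWbases} to reduce centrality to a character-triviality condition on $\mathbb{F}_2^{\sroots}$, and then to carry out a Lie-type-by-Lie-type analysis using Lemma~\ref{lem:kappaprops}.

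First I would observe that by Proposition~\ref{prop:ZPBWbases}, $\Zsubalgchar^+_w$ is generated as an algebra by $\{\Epw_v:v\leqRB w,\,v\neq\emptyword\}$, and Proposition~\ref{prop:PwGenCommRels} gives the commutation
\[
\Epw_v\,b\;=\;\commph{\wordroot(v)}{\ztgradp(b)}\cdot b\,\Epw_v
\]
for every $\ztgradp$-homogeneous $b\in\Uz^\utypechar$. Because $v\mapsto\wordroot(v)$ enumerates $\descroots s$ via~\eqref{eq:defdescrootsEnum}, centrality of $\Zsubalgchar^+_w$ in $\Uz^\utypechar$ is equivalent to $\commph{\alpha}{\bar\mu}=1$ for every $\alpha\in\descroots s$ and every $\bar\mu$ in the image of $\ztgradp$ on $\Uz^\utypechar$. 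A direct inspection using $\ztgradp(E_i)=\ztgradp(K_i)=\alpha_i$ and $\ztgradp(F_i)=0$ shows this image equals all of $\mathbb{F}_2^{\sroots}$ in each of the three cases $\utypechar\in\{+,\geqzero,\emptyset\}$. The task thus becomes: classify the \emph{central roots} $\alpha\in\proots$ for which the character $\commph{\alpha}{\cdot}\colon\mathbb{F}_2^{\sroots}\to\{\pm1\}$ is trivial.

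Lemma~\ref{lem:kappaprops} provides three regimes. (a) If $\kay$ is odd, then $\Lsign_\alpha=1$ and every root is central, yielding (i). (b) If $\kay=2\ell$ with $\ell$ odd or $\roots$ of type $\LT{ADEG}$, then $\commph{\alpha}{\bar\mu}=(-1)^{\symbrack{\bar\alpha}{\bar\mu}}$, so centrality of $\alpha$ is equivalent to $\bar\alpha$ lying in the radical of $\symbrack{\cdot}{\cdot}\bmod 2$. (c) If $\kay\equiv 0\bmod 4$ and $\roots$ is of type $\LT{BCF}$, then $\commph{\alpha}{\bar\mu}=(-1)^{\symbrack{\breve\alpha}{\bar\mu}}$ and centrality requires $\symbrack{\breve\alpha}{\alpha_r}\in 2\mathbb{Z}$ for every $r$. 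I would then run each Lie type through these criteria using the explicit Cartan data from Section~\ref{subsec:rootweyl}. For $\LT B$, the form $\symbrack{\cdot}{\cdot}$ is even-valued throughout (the long/short off-diagonal is $-2$), so in (b) every root is central, giving (ii); in (c), short roots with $\breve\alpha=\alpha$ remain central, whereas a long root such as $\alpha_1$ yields $\symbrack{\alpha_1/2}{\alpha_2}=-1$, reducing centrality to $\descroots s\subseteq\prootsS$, i.e.\ $s\in\WeylL$ by Lemma~\ref{lm:WeylSL}, giving (iv). For $\LT C$ and $\LT F$ in regime (b), each long positive root reduces mod~$2$ into the radical (in $\LT C_n$ every long root becomes $\bar\alpha_n$, which pairs evenly with every simple root) while short roots lie in the $\LT A$-subdiagram and produce odd pairings with a neighboring simple root, yielding (iii) via $s\in\WeylS$. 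In regime (c) for $\LT{CF}$, both classes of roots fail centrality and no additional case appears. For $\LT{ACDEG}$ with $\kay$ even, the first letter $\alpha_{i_1}$ of any reduced word is a simple root adjacent in the Dynkin diagram to some $\alpha_j$ with $\symbrack{\alpha_{i_1}}{\alpha_j}$ odd, so $\alpha_{i_1}\in\descroots s$ fails centrality and no condition applies beyond (i).

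The converse (only-if) direction is established in each case by the explicit witness $\alpha_r\in\sroots$ with $\commph{\alpha}{\alpha_r}=-1$, producing an element $E_r\in\Uz^+$ that fails to commute with $\Epw_v$. The maximal-length statement follows because $\descroots\longweyl=\proots$ contains both short and long roots in every non-simply-laced type, so neither (iii) nor (iv) applies when $w\in\wordsetmax$. The analogous statements for $\Zsubalgchar^-_w$ and the variants $\Zsubalgchar^{\leqzero}_w$, $\Zsubalgchar_w$ follow by applying the anti-involution $\Cartaninv$ from Corollary~\ref{cor:InvolZalg} together with the second identity of Proposition~\ref{prop:PwGenCommRels}, in view of $\ztgradp\circ\Cartaninv=\ztgradn$. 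The main obstacle I expect is the analysis in regime~(b) for the exceptional type $\LT F_4$: one must identify the two-dimensional radical of $\symbrack{\cdot}{\cdot}\bmod 2$ (spanned by $\bar\alpha_1,\bar\alpha_2$) and verify that exactly the twelve long positive roots — and none of the twelve short ones — reduce into it, which is the most delicate bookkeeping needed before the case analysis closes.
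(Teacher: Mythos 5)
Your reduction of centrality to the per-root condition $\commph{\alpha}{\bar\mu}=1$ for all $\alpha\in\descroots{s}$ and all $\bar\mu\in\mathbb F_2^{\sroots}$, followed by a type-by-type check against the Cartan data, is exactly the paper's route: the paper packages that check as Lemma~\ref{lm:Xcentral} (whose proof is precisely this character-triviality verification) and then derives the proposition from it together with Lemma~\ref{lm:WeylSL}, so your argument is essentially the same, just with the per-root lemma re-derived inline via Lemma~\ref{lem:kappaprops}. One slip to repair: your closing sentence for ``$\LT{ACDEG}$ with $\kay$ even'' overreaches for type $\LT{C}$, since the long simple root $\alpha_n$ pairs evenly (value $-2$) with every simple root and has $\Lsign_{\alpha_n}=1$ when $\kay\equiv 2\bmod 4$, so the first-letter witness does not fail centrality there; this contradicts the correct conclusion (iii) you derived in the preceding sentence, and the witness argument should be restricted to the types and congruences in which no positive root is central.
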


The subgroups $\WeylL$ and $\WeylS$ are as defined in \eqref{eq:def:WeylSL}. The one referred to in items {\em \ref{item:Zcentral:3}} and {\em \ref{item:Zcentral:4}} are of order 2 for the $\LT{B,C}$ types and of order 6 for $\LT{F}_4$ by Lemma~\ref{lm:WeylSL}. The assertions in 
Proposition~\ref{prop:Zcentral} are readily derived from Lemma~\ref{lm:Xcentral}. We note that the last statement for maximal words is commensurable with Corollary~3.1 of \cite{dck90}. The elements  considered there in part (b) correspond to our $X_w$ if $\alpha=\wordroot(w)$ is a short root but to our $X_w^2$ if $\alpha$ is a long root in case $\kay$ is a multiple of 4. Requiring centrality of $X_w$ rather than $X_w^2$ then excludes this case. 

In order to formulate criteria for commutativity of the subalgebras, we say that an element $s\in\Weyl$ is
{\em mod-2 discrete} if 
 \begin{equation}\label{eq:def:mod2discrete}
     \symbrack{\breve \alpha}{\beta}\equiv 0\,\mod 2\qquad \forall\alpha,\beta \in\descroots{s}\;\mbox{ with }\,d_\alpha\geq d_\beta\,. 
 \end{equation}
 For example, for Lie type $\LT{A}_5$ and $s=s_1s_3s_6$ we have $\descroots{s}=\{\alpha_1,\alpha_3,\alpha_6\}$\,, which are all orthogonal so that $s$ is mod-2 discrete. For non-simply laced types we will also call an element $s\in\Weyl$
 {\em mod-2 long discrete} if condition \eqref{eq:def:mod2discrete} is required only for pairs $\alpha,\beta\in\descrootsL{s}$\,. 
 The notion of {\em mod-2 short discrete} is analogous.

\begin{prop} \label{prop:Zcommut} Suppose $w\in\wordset$ is non-empty and $s=\Weylpres(w)$\,. The subalgebra $\Zsubalgchar^+_w$ is commutative if and only if (at least) one of the following conditions holds:
\vspace{-1mm}

\begin{enumerate}[label=\roman*), leftmargin=13mm,] 
    \item $s$ is mod-2 discrete  (any Lie type and $\kay$).\vspace{1.4mm}\label{item:Zcommut:1}
    \item $\kay$ is odd or $\kay\equiv 0\mod 8$  (any Lie type).\vspace{1.4mm}\label{item:Zcommut:2}
     \item $\kay\equiv 0\mod 4$ and the Lie type is $\LT{ADEG}$\,.
     \vspace{1.4mm}\label{item:Zcommut:3}
     \item $\kay\equiv 2\mod 4$\,, the Lie type is $\LT{BCF}$\,,
     and $s$ is mod-2 short discrete.\vspace{1.4mm}\label{item:Zcommut:4}
     \item $\kay\equiv 4\mod 8$\,, the Lie type is $\LT{BCF}$\, 
     and $s$ is mod-2 long discrete.\label{item:Zcommut:5}
\end{enumerate}
The identical statement holds for $\Zsubalgchar^\geqzero_w\,$. 
\end{prop}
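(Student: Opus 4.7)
The plan is to reduce commutativity of $\Zsubalgchar^+_w$ to a pointwise vanishing condition for the sign character $\commphinchar$, carry out a case analysis indexed by $\kay$ modulo $8$ together with the Lie type of $\roots$, and finally extend the resulting characterization to $\Zsubalgchar^\geqzero_w$.

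By Proposition~\ref{prop:ZPBWbases}, $\Zsubalgchar^+_w$ is a skew-polynomial algebra on the generators $\Epw_v$ with $v\leqRB w$, subject to the relations \eqref{eq:XYcommrels}. Hence $\Zsubalgchar^+_w$ is commutative if and only if $\commphinel{v}{u}=1$ for every pair of generators, equivalently,
\begin{equation*}
\commphin{\alpha}{\beta}\,=\,\zeta^{\elln_\alpha\elln_\beta\symbrack{\alpha}{\beta}}\,=\,\Lsign_\alpha^{\bracket{\alpha,\beta}\,\elln_\beta}\,=\,1\qquad\text{for all}\ \alpha,\beta\in\descroots{s}.
\end{equation*}
The first step proper is to tabulate $\Lsign_\alpha\in\{\pm 1\}$ and the parity of $\elln_\alpha$ as functions of the root length $d_\alpha\in\{1,2,3\}$ and the class of $\kay$ modulo $8$, using the formulae of Section~\ref{subsec:QnumsRingsRo1}. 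Two uniform observations dispatch entire families of cases: if $\kay$ is odd then $\Lsign_\alpha=1$ for every root length, and if $\kay\equiv 0\mod 8$, or if $\kay\equiv 0\mod 4$ and $\roots$ is of type $\LT{ADEG}$, then $\elln_\alpha$ is even for every root length occurring in $\roots$. In either situation the exponent $\bracket{\alpha,\beta}\,\elln_\beta$ is annihilated and $\commphinchar$ vanishes identically on $\proots\times\proots$, establishing items~\ref{item:Zcommut:2} and \ref{item:Zcommut:3}.

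In the remaining congruence classes, obstructions to commutativity are non-trivial and must be localized to specific root-length strata of $\descroots{s}$. The crucial auxiliary fact for the non-simply laced cases is that whenever $\alpha$ is short and $\beta$ is long, the Cartan integer $\bracket{\alpha,\beta}=\symbrack{\alpha}{\beta}$ is even, because $\symbrack{\alpha}{\beta}$ is divisible by $d_\beta=\maxd$. Combining this with the tabulation of $\Lsign_\alpha$ and parities of $\elln_\alpha$ shows that for $\kay\equiv 2\mod 4$ in Lie types $\LT{BCF}$ the only obstructing pairs are short-short, yielding item~\ref{item:Zcommut:4}, while for $\kay\equiv 4\mod 8$ in $\LT{BCF}$ the obstruction is isolated to long-long pairs, yielding item~\ref{item:Zcommut:5}. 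In the simply laced types at $\kay\equiv 2\mod 4$ and in $\LT{G}_2$ at $\kay\equiv 2\mod 4$, all pairs contribute an obstruction, and the criterion collapses to the mod-$2$ discreteness condition, yielding item~\ref{item:Zcommut:1}. A systematic run through the six combinations of congruence class and Lie type shows the stated list is exhaustive.

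To extend the characterization to $\Zsubalgchar^\geqzero_w$, relation \eqref{eq:LcommXYrels} reduces the additional requirement to $\commphin{\alpha_i}{\gamma}=1$ for all $i\in\wordindexset{s}$ and $\gamma\in\descroots{s}$. For items~\ref{item:Zcommut:2} and \ref{item:Zcommut:3} this is automatic since $\commphinchar\equiv 1$ on $\proots\times\proots$. For items~\ref{item:Zcommut:1}, \ref{item:Zcommut:4} and \ref{item:Zcommut:5}, one combines Lemma~\ref{lem:descrootlattice}, which identifies the sublattice spanned by $\descroots{s}$ with the one spanned by $\{\alpha_i\}_{i\in\wordindexset{s}}$, with the observation that $\commphinchar$ restricted to pairs of roots of a fixed length is $\mathbb{Z}$-bilinear. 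A short case expansion along these fixed-length strata then yields $\commphin{\alpha_i}{\gamma}=1$ from the already established commutativity of $\Zsubalgchar^+_w$. The main obstacle is the bookkeeping in the non-simply laced case analysis, where one must simultaneously track $\Lsign_\alpha$, the parity of $\elln_\alpha$ and divisibility properties of $\symbrack{\alpha}{\beta}$ across short-short, short-long and long-long strata, together with their $\LT{G}_2$ analogues. The mod-$2$ discrete case~\ref{item:Zcommut:1} further relies on the structural observation that pairwise orthogonality of $\descroots{s}$ combined with its biconvexity forces $s$ to factor as a product of commuting reflections along the roots of $\descroots{s}$, pinning down $\wordindexset{s}$ exactly.
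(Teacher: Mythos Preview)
Your case analysis for the commutativity of $\Zsubalgchar^+_w$ is correct and is the natural approach; the paper does not spell out a proof here beyond remarking on the extension to $\Zsubalgchar^\geqzero_w$, so your tabulation of $\Lsign_\alpha$ and the parities of $\ell_\alpha$ across $\kay\bmod 8$ and Lie type is exactly what is needed.

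Where you diverge from the paper is the extension to $\Zsubalgchar^\geqzero_w$. The paper's argument is a single observation: by definition, $\Zsubalgchar^0_s$ is generated by the $\Kpw_\alpha^{\pm 1}$ with $\alpha\in\descroots{s}$ (not the $\Kpw_i$), and \eqref{eq:LcommXYrels} shows that the commutation sign of $\Kpw_\alpha$ with $\Epw_\beta$ is exactly $\commphin{\alpha}{\beta}$, the same sign as between $\Epw_\alpha$ and $\Epw_\beta$. Hence $\Zsubalgchar^+_w$ commutative immediately gives $\Zsubalgchar^\geqzero_w$ commutative, with no lattice argument needed. Your route through the $\Kpw_i$ generators is workable but, as written, has a precision issue: you invoke Lemma~\ref{lem:descrootlattice} for the ordinary root lattice and ``bilinearity on fixed-length strata,'' but $\commphin{\cdot}{\gamma}$ is not additive in the first root argument across lengths, and writing $\alpha_i$ as a $\mathbb Z$-combination of elements of $\descroots{s}$ mixes lengths. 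The argument that actually goes through uses the $\ell$-scaled lattice of Lemma~\ref{lm:ell-root-lattice} and Proposition~\ref{prop:ZPBWbases}: since $\ell_i\alpha_i$ lies in the lattice spanned by $\{\ell_\alpha\alpha:\alpha\in\descroots{s}\}$, and $\mu\mapsto\zeta^{\ell_\gamma\symbrack{\mu}{\gamma}}$ is a genuine homomorphism on $\mathbb Z^{\sroots}$, triviality on the $\ell_\alpha\alpha$ generators forces $\commphin{\alpha_i}{\gamma}=1$. This is equivalent to the paper's one-liner, just phrased in the other generating set. The final ``structural observation'' about mod-$2$ discrete elements factoring as commuting simple reflections is not needed for the argument.
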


The extension from $\Zsubalgchar^+_w$ to $\Zsubalgchar^\geqzero_w$ holds since, by \eqref{eq:LcommXYrels}, the sign occurring in the commutation of $\Epw_\alpha$ with $\Epw_\beta$ (with respect to some order) is
the same as the one in the commutation of $\Epw_\alpha$ with $\Kpw_\beta\,$. Note also that for type
$\LT{A}_1$ case {\em \ref{item:Zcommut:1}} holds for any $s\,$.  

For the maximal case, observe that the longest element $\longweyl\in\Weyl$ is never mod-2 discrete for any Lie type of rank $\geq 2$\,. In the non-simply laced case we have that $\longweyl\in\Weyl$ is mod-2 short discrete if and only if the Lie type is $\LT{B}$\,. It is 
mod-2 long discrete if and only if the Lie type is $\LT{C}\,$. This yields the following characterization of maximal commutative subalgebras, which can also be derived from Lemma~\ref{lm:Xcommut}. 

\begin{cor}\label{cor:Zcommutmax}
    Let $z\in\wordsetmax$ be a reduced word of maximal length. Then $\Zsubalgchar^+_z$ is commutative if and only if (at least) one of the following applies:
\vspace{-1mm}

\begin{enumerate}[label=\roman*), leftmargin=13mm,] 
    \item $\kay\equiv 0, 1, 3, 5, \mbox{ or } 7\mod 8\,$ (any Lie type).\vspace{1.4mm}
    \item $\kay\equiv 2 \mbox{ or } 6\mod 8\,$ and the Lie type is $\LT{B}\,$.\vspace{1.4mm}
    \item $\kay\equiv 4\mod 8\,$ and the Lie type is $\LT{ACDEG}$\,.
    \vspace{1.4mm}
    \item The Lie type is $\LT{A}_1$ (any $\kay$). 
\end{enumerate}
The identical statement holds for $\Zsubalgchar^\geqzero_z\,$.
\end{cor}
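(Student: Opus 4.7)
The plan is to derive the corollary as the specialization $s = \longweyl$ of Proposition \ref{prop:Zcommut}, for which $\descroots{\longweyl} = \proots$, and then to translate the five sufficient conditions listed there into conditions on $\kay \bmod 8$ and the Lie type. The parallel statement for $\Zsubalgchar^\geqzero_z$ is absorbed at no extra cost, since the two halves of Proposition \ref{prop:Zcommut} have identical content.

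First I would dispose of the $s$-insensitive parts. Cases (ii) and (iii) of the proposition yield commutativity in every Lie type when $\kay$ is odd or $\kay \equiv 0 \pmod 8$, and additionally in types $\LT{ADEG}$ when $\kay \equiv 4 \pmod 8$; these account for case (i) of the corollary and for the $\LT{ADEG}$ contribution to case (iii). Case (i) of the proposition, specialized to $\longweyl$, demands mod-$2$ discreteness on all of $\proots$. In rank $\geq 2$ this fails: any adjacent pair of simple roots $\alpha_i, \alpha_j$, ordered so that $d_i \geq d_j$, satisfies $\symbrack{\breve\alpha_i}{\alpha_j} = A_{ji}$, which by connectedness of the Dynkin diagram is odd for at least one adjacent pair. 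Only $\LT{A}_1$ survives, producing case (iv) of the corollary.

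The substantive step is to determine, for Lie types $\LT{BCF}$, when $\longweyl$ is mod-$2$ short or mod-$2$ long discrete, since this controls conditions (iv) and (v) of the proposition. I would compute directly from the realizations of $\proots$ in the appendix of \cite{Bou02}, together with the form normalization from Section \ref{subsec:rootweyl} (in particular, $\symbrack{\cdot}{\cdot}$ is twice the Euclidean form for types $\LT{B}$ and $\LT{F}$). For $\LT{B}_n$ the short roots are $\pm\epsilon_i$ with $\symbrack{\breve\epsilon_i}{\epsilon_j} = 2\delta_{ij}$, always even, so $\longweyl$ is mod-$2$ short discrete; for the long roots, $\symbrack{\breve(\epsilon_i+\epsilon_j)}{\epsilon_i+\epsilon_k}$ with $j\neq k$ is an odd witness once $n \geq 3$. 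For $\LT{C}_n$ the computation is symmetric with short and long exchanged, giving mod-$2$ long discreteness only. For $\LT{F}_4$ one writes down explicit odd-pairing witnesses within both root-length sublattices, using for example $\symbrack{\breve\epsilon_1}{\tfrac{1}{2}(\epsilon_1+\epsilon_2+\epsilon_3+\epsilon_4)}$ and $\symbrack{\breve(\epsilon_1-\epsilon_2)}{\epsilon_1+\epsilon_3}$. Feeding this back, condition (iv) of the proposition contributes precisely $\kay \equiv 2 \pmod 4$ in type $\LT{B}$ (residues $2, 6 \bmod 8$), producing case (ii) of the corollary, and condition (v) contributes $\kay \equiv 4 \pmod 8$ in type $\LT{C}$, completing case (iii); the rank-$2$ identification $\LT{B}_2 \cong \LT{C}_2$ is handled automatically by inclusion of $\LT{C}$ in the $\LT{ACDEG}$-list. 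The converse follows by contrapositive from the same case split. The principal risk of error lies in the $\LT{F}_4$ bookkeeping, where the two parities must be checked independently rather than extrapolated from the lower-rank patterns.
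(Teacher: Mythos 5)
Your proposal is correct and follows essentially the same route as the paper, which also obtains the corollary by specializing Proposition~\ref{prop:Zcommut} to $s=\longweyl$ and observing that $\longweyl$ is never mod-2 discrete in rank $\geq 2$, is mod-2 short discrete exactly in type $\LT{B}$, and mod-2 long discrete exactly in type $\LT{C}$ (the paper merely asserts these facts, which you verify by the explicit root-system computations, and it notes the alternative derivation from Lemma~\ref{lm:Xcommut}). The only blemish is a harmless index-labeling slip in identifying the odd Cartan entry ($\symbrack{\breve\alpha}{\beta}$ with $\alpha$ the longer simple root is the entry $-1$); the witness itself is right.
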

\begin{rem}\label{rem:B2C2commcentr}
    Corollary \ref{cor:Zcommutmax} implies that $\Zsubalgchar^+_z$ in type $\LT{B}_2=\LT{C}_2$ is commutative for any $\kay$ and any $z\in\wordsetmax$\,. However, it is central only if $\kay$ is not a multiple of 4\,. 
\end{rem}
It is immediate from Proposition~\ref{prop:ZPBWbases} that, if $\Zsubalgchar^+_w$ is strictly commutative, it is isomorphic to a free polynomial algebra, assigning each $X_v$ with $v\leqRB w$ to an in indeterminate $x_\alpha$\,, where $\alpha=\wordroot(v)\in\descroots{s}\,$ and $s=\Weylpres(w)\,$. That is,
\begin{equation}\label{eq:Z=Polyn}
    \Zsubalgchar^+_w\,\cong\,\Zgen\left[\{x_\alpha:\alpha\in\descroots s\}\right]\,.
\end{equation}

Comparing the criteria in Propositions~\ref{prop:Zcentral} and Corollary~\ref{cor:Zcommutmax}, we note that there are various situations in which the subalgebras are commutative but not central. Given a maximal $w\in\wordsetmax$\,, $\Zsubalgchar^+_w$ has this property if $\kay\equiv 0\mod 8$ for any Lie type 
as well as if $\kay\equiv 4\mod 8$ and the Lie type is 
neither $\LT{B}$ nor $\LT{F}\,$. So, for example, in the simply laced cases $(\LT{ADE})$ and $\kay\equiv 0\mod 4$
we have that $\Zsubalgchar^+_w$ is commutative as in \eqref{eq:Z=Polyn} but for any $b\in \Uq$ with $\mu=\ztgradp(b)$ and $\alpha=\wordroot(v)$ we have
\begin{equation}
 X_vb=(-1)^{\symbrack{\alpha}{\mu}}bX_v\,.    
\end{equation}

\subsection{\texorpdfstring{$\Kinvaut$}{ʊ}-Invariance and Word Independence of \texorpdfstring{$\Zsubalgchar_w$\,}{Zw}}\label{subsec:Mho-Invar-Z} 
The definition of the algebras $\Zsubalgchar^\utypechar_w$  in 
Section~\ref{subsec:Z=SignPolyn} \textit{a priori} depends on a choice of a word $w\in\wordset\,$. In this section, we show for Lie types different from $\LT{G}_2$ that $\Zsubalgchar^\utypechar_w$ depends 
only on the respective Weyl element $s=\Weylpres(w)\,$, employing the techniques from Section~\ref{subsec:spanorder}. 

We begin with the derivation of formulae for the action of $\mho$ on generators in $\Uq$\,, starting with  the generic case and coefficients in $\mathbb Q(q)\,$.
We continue here the  conventions from Section~\ref{subsec:Ecommrels}, 
with $i$ the index of a short root $(d_i=1)$
and $j$ the index of a (possibly) long root $(d_j=\edgenum)$.
Observe that specializing the left side of \eqref{eq:EEEresum-j} to $z=-1$ and $\mathscr{u}_j(\vec n)=0$ yields the expression for $E^{(N)}_{(ji)}$ in \eqref{eq:EdivpwSimpExp-a} multiplied by $(-1)^N\,$. Similarly, for $z=-1$ and $\mathscr{u}_i(\vec n)=0$\,, 
the left side of \eqref{eq:EEEresum-i} yields the expression for $E^{(N)}_{b_{m-1}}$ in \eqref{eq:EdivpwSimpExp-a} multiplied by $(-1)^{\edgenum N}\,$. The products over the respective $(1-q_\alpha^{-2t})=q_\alpha^{-t}(q_\alpha-q_\alpha^{-1})[t]_\alpha$ terms are then expressed as factorials, resulting in the following formulae.  
\begin{align}
    E^{(N)}_{(ji)}\,&=\,(-1)^N\longsum[15]_{\substack{\phi\in S_{\edgenum}(\vec \sigma)\\ k_m=\phi(\alpha_j)}}q^{f_\edgenum(\phi)}(q_j-q_j^{-1})^{k_m}q_j^{-\binom{k_m+1}{2}}[k_m]!_j\Ebaseopp{\longtwoword{ij}}{(\phi)}
&\quad \vec \sigma &=N s_j(\alpha_i)=N\alpha_i+N\alpha_j
\label{eq:EdivRexpandJ} 
\\
\rule{0mm}{7mm}
E^{(N)}_{b_{m-1}}\,&=\,(-1)^{\edgenum N}\longsum[15]_{\substack{\phi\in S_{\edgenum}(\vec \tau)\\ k_1=\phi(\alpha_i)}}q^{f_\edgenum(\phi)}(q-q^{-1})^{k_1}q^{-\binom{k_1+1}{2}}[k_1]!\Ebaseopp{\longtwoword{ij}}{(\phi)}&\quad \vec \tau&=N s_i(\alpha_j)=\edgenum N\alpha_i+N\alpha_j\label{eq:EdivRexpandI}
\end{align}

Multiplying these expressions by $[N]!$ and $[N]!_j\,$, respectively, we obtain expressions for ordinary powers in terms of  elements $\qfactcollI{\phi}{\edgenum}$ and $\qfactcollJ{\phi}{\edgenum}$\,, obtained by combining $q$-factorials. 
\begin{align}
    E^{N}_{(ji)}\,&=\,(-1)^N\longsum[15]_{\substack{\phi\in S_{\edgenum}(\vec \sigma)\\ k_m=\phi(\alpha_j)}}q^{f_\edgenum(\phi)}(q-q^{-1})^{k_m}q_j^{-\binom{k_m+1}{2}}
    \qfactcollJ{\phi}{\edgenum} 
    \Ebaseopp{\longtwoword{ij}}{\phi}  
    \label{eq:EordRexpandJ}
    \\
\rule{0mm}{7mm}    
E^{N}_{b_{m-1}}\,&=\,(-1)^{\edgenum N}\longsum[15]_{\substack{\phi\in S_{\edgenum}(\vec \tau)\\ k_1=\phi(\alpha_i)}}q^{f_\edgenum(\phi)}(q-q^{-1})^{k_1}q^{-\binom{k_1+1}{2}}
\qfactcollI{\phi}{\edgenum} 
\Ebaseopp{\longtwoword{ij}}{\phi}
\label{eq:EordRexpandI}
\end{align} 
In the case of \eqref{eq:EordRexpandJ} we have $\mathscr u_j(\vec\sigma)=0$\,, which implies $k_3=k_1$ for $\edgenum=1$\,,
$k_4=k_1+k_2$ for $\edgenum=2$\,, and $k_6=k_1+2k_2+k_3+k_4$ for $\edgenum=3\,$. The ratios of factorials can readily be
worked out as follows, using the elements in \eqref{eq:factorialratios} and identities in \eqref{eq:fracfactorials}.
\begin{equation}\label{eq:factcomb-J}
\begin{aligned}
    &\qfactcollJ{\phi}{1}=\qbin{k_1+k_2}{k_1}{} \hspace*{13mm}
    \qfactcollJ{\phi}{2}=[2]^{k_2+k_4}\qfacrelA{k_2}{2}\cdot \qbin{k_1+2k_2+k_3}{k_1\,,\, 2k_2\,,\,k_3 }{}
     \\
    \rule{0pt}{23pt}
    &\qfactcollJ{\phi}{3}=[3]^{k_2+k_4+k_6}\cdot \qfacrelA{k_2}{3}\cdot \qfacrelA{k_4}{3}\cdot [k_3]!\cdot \qbin{2k_3}{k_3}{}\cdot 
    \qbin{k_1+3k_2+2k_3+3k_4+k_5}{k_1\,,\,3k_2\,,\,2k_3\,,\,3k_4\,,\,k_5}{}
\end{aligned}
\end{equation}

The multinomial coefficients are as defined in \eqref{eq:def-qmultinom}. 
Similarly, for \eqref{eq:EordRexpandJ}, the condition $\mathscr u_i(\vec\tau)=0$
implies $k_1=k_3$ for $\edgenum=1$,
$k_1=k_3+2k_4$ for $\edgenum=2$, and $k_1=k_3+3k_4+2k_5+3k_6$ for $\edgenum=3\,$. 
As before, with notation from \eqref{eq:factorialratios} and \eqref{eq:fracfactorials},
we find analogous formulae.
\begin{equation}\label{eq:factcomb-I}
\begin{aligned}
    &\qfactcollI{\phi}{1}=\qbin{k_2+k_3}{k_3}{}  \hspace*{13mm}
    \qfactcollI{\phi}{2}=[2]^{-k_3}\cdot \qfacrelB{k_3}{2}\cdot \qbin{k_2+k_3+k_4}{k_2\,,\, k_3\,,\,k_4 }{j}
     \\
    \rule{0pt}{23pt}
    &\qfactcollI{\phi}{3}=[3]^{-(k_3+k_5)}\cdot \qfacrelB{k_3}{3}\cdot \qfacrelB{k_5}{3}\cdot [k_4]!_j\cdot \qbin{2k_4}{k_4}{j}
    \cdot \qbin{k_2+k_3+2k_4+k_5+k_6}{k_2\,,\,k_3\,,\,2k_4\,,\,k_5\,,\,k_6}{j}
\end{aligned}
\end{equation}

Clearly, all coefficients in \eqref{eq:factcomb-J} and \eqref{eq:factcomb-J} lie in 
$\Zqqn{\dpone}$ as in \eqref{eq:defgenrings}. It is easy to check that in a corresponding
relation for rescaled generators $\mathring  E_\alpha=[d_\alpha]E_\alpha$\,,
already mentioned at the end of Section~\ref{subsec:PBWspec}, the 
 factors of the form $[\edgenum]^{-k}$  drop out so that  the coefficients would be entirely in $\Zqq\,$. 

In either case, we may specialize the identity to a root of unity $\zeta$, assuming that $\ell>\edgenum\,$ where
$\ell$ is the order of $\zeta^2\,$. This allows us, in particular, to derive 
expansions for $\Epw_{(ji)}=E^\ell_{(ji)}$ and $\Epw_{b_{m-1}}=E^{\ell_j}_{b_{m-1}}$
in terms of the $\Ebaseopp{\longtwoword{ij}}{(\phi)}$ basis elements. The number of 
non-zero terms will depend in either case on whether $\ell=\ell_j$ or $\ell>\ell_j\,$, that is,
whether $\gcd(\ell,\edgenum)=1$ or not. To keep subsequent formulae more compact, it is  convenient to introduce 
\begin{equation}\label{eq:gcdle}
    \gcdle=\gcd(\ell,\edgenum)\,\in\,\{1,\edgenum\}
\end{equation} 
for given $\ell$ and $\edgenum$. Using the Kronecker notation, we can then express the evaluation of
the factorial ratio from \eqref{eq:factorialratios} at $q=\zeta\,$ as 
\begin{equation}\label{eq:factratformZ-I}
    \qfacrelB{\ell_j}{\edgenum}\,=\,\delta_{\gcdle,1}\cdot \edgenum\cdot  \zeta^{-(\edgenum-1)\binom{\ell+1}{2}}\,,
\end{equation} 
implying that $\qfacrelB{\ell_j}{\edgenum}=0$ if $\elln_j<\ell\,$. The other factorial ratio 
from \eqref{eq:factorialratios} can be evaluated at $q=\zeta\,$ using \eqref{eq:Dfactell}.
\begin{lem}\label{lm:factratformZ-J}
Suppose $\ell=\edgenum\cdot\ell_j$\,. Then
 $$
\qfacrelA{\ell_j}{\edgenum}= \edgenum\cdot\zeta^{-\binom{\edgenum}{ 2}\ell_j^2}
\cdot (\zeta^{-1}-\zeta)^{-(\edgenum-1)\ell_j}\,.
$$   
\end{lem}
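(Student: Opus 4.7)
The strategy is to rewrite $\qfacrelA{\ell_j}{\edgenum}$ as a ratio of two factorials, each of which can be evaluated at $q=\zeta$ via formula \eqref{eq:Dfactell}. Starting from the definition in \eqref{eq:factorialratios}, I would first observe that as $(s,t)$ ranges over $\{0,\ldots,\ell_j-1\}\times\{1,\ldots,\edgenum-1\}$, the integer $s\edgenum+t$ enumerates exactly the positive integers in $\{1,\ldots,\ell-1\}$ that are not divisible by $\edgenum$ (using $\ell=\edgenum\ell_j$). Hence
\[
\qfacrelA{\ell_j}{\edgenum}\,=\,\frac{\prod_{n=1}^{\ell-1}[n]}{\prod_{m=1}^{\ell_j-1}[m\edgenum]}\,=\,\frac{[\ell-1]!}{\prod_{m=1}^{\ell_j-1}[m\edgenum]}\,.
\]

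The next step is to convert the denominator into a factorial at $q_j=q^\edgenum$. The identity $[m\edgenum]_q=[\edgenum]_q\cdot[m]_{q^\edgenum}=[\edgenum]\cdot[m]_j$ yields
\[
\prod_{m=1}^{\ell_j-1}[m\edgenum]\,=\,[\edgenum]^{\ell_j-1}\cdot[\ell_j-1]!_j\,,
\]
so that $\qfacrelA{\ell_j}{\edgenum}=[\ell-1]!\cdot\bigl([\edgenum]^{\ell_j-1}[\ell_j-1]!_j\bigr)^{-1}$. Now I would apply \eqref{eq:Dfactell} in both the short- and long-root versions: at $q=\zeta$, $[\ell-1]!=\Lfact=\ell\cdot\Qsign^{-1}(\zeta^{-1}-\zeta)^{-\ell+1}$ and $[\ell_j-1]!_j=\Lfact_j=\ell_j\cdot\Qsign_j^{-1}(\zeta_j^{-1}-\zeta_j)^{-\ell_j+1}$. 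The relation $\zeta_j^{-1}-\zeta_j=[\edgenum](\zeta^{-1}-\zeta)$ (since $\zeta_j=\zeta^\edgenum$) makes the $[\edgenum]^{\ell_j-1}$ factor cancel cleanly against $(\zeta_j^{-1}-\zeta_j)^{-\ell_j+1}=[\edgenum]^{-\ell_j+1}(\zeta^{-1}-\zeta)^{-\ell_j+1}$, leaving
\[
\qfacrelA{\ell_j}{\edgenum}\,=\,\frac{\ell}{\ell_j}\cdot\frac{\Qsign_j}{\Qsign}\cdot(\zeta^{-1}-\zeta)^{-(\edgenum-1)\ell_j}\,=\,\edgenum\cdot\frac{\Qsign_j}{\Qsign}\cdot(\zeta^{-1}-\zeta)^{-(\edgenum-1)\ell_j}\,.
\]

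What remains is to identify $\Qsign_j/\Qsign$ with $\zeta^{-\binom{\edgenum}{2}\ell_j^2}$. Using $\Qsign=\zeta^{\binom{\ell}{2}}$ and $\Qsign_j=\zeta_j^{\binom{\ell_j}{2}}=\zeta^{\edgenum\binom{\ell_j}{2}}$, the exponent difference simplifies via
\[
\edgenum\binom{\ell_j}{2}-\binom{\ell}{2}\,=\,\tfrac{1}{2}\edgenum\ell_j\bigl[(\ell_j-1)-(\edgenum\ell_j-1)\bigr]\,=\,-\tfrac{1}{2}\edgenum(\edgenum-1)\ell_j^2\,=\,-\binom{\edgenum}{2}\ell_j^2\,,
\]
which delivers the claimed formula. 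There is no real obstacle here; the only mildly subtle point is keeping track of signs in $\zeta^{-1}-\zeta$ versus $\zeta-\zeta^{-1}$ when factoring through the long-root quantum number, which is why I prefer to write everything in terms of $(\zeta^{-1}-\zeta)$ from the outset to match \eqref{eq:Dfactell}.
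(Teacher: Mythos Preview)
Your proof is correct and follows exactly the approach the paper indicates: the text preceding the lemma states that the factorial ratio ``can be evaluated at $q=\zeta$ using \eqref{eq:Dfactell},'' and you carry this out in full detail by rewriting $\qfacrelA{\ell_j}{\edgenum}=[\ell-1]!\big/\bigl([\edgenum]^{\ell_j-1}[\ell_j-1]!_j\bigr)$ and applying \eqref{eq:Dfactell} to both factorials. The paper itself offers no further argument beyond this hint, so your write-up is in fact more complete than what appears there.
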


Another quantity that frequently appears in the computations below is the  expression 
\begin{equation}\label{eq:def-delzeta}
    \delzeta{j}\coloneqq(\zeta_j-\zeta_j^{-1})^{\ell_j}=[\edgenum]^{\ell_j}(\zeta-\zeta^{-1})^{\ell_j}\,.
\end{equation} 
This is the singularization factor of $\Epw_j$ in the sense of \eqref{eq:def:EgenSing}, as we introduce   the analogous notation  $\breve\Epw_j=\Esing^\ellj_j=(-1)^\ellj\delzeta{j}E_j^\ellj$.
Notice also that $\delzeta{j}^\gcdle=\delzeta{i}\cdot[\edgenum]^\ell$.

The next two propositions consider the specializations of \eqref{eq:EordRexpandJ} and \eqref{eq:EordRexpandI} to a root of unity $\zeta$ with $N=\ell$ and $N=\ell_j\,$, respectively. 
These yield explicit expressions for $\Epw_{(ji)}=E_{(ji)}^\ell$ and  $\Epw_{b_{m-1}}=E_{b_{m-1}}^{\ell_j}\,$. 
\begin{prop}\label{prop:XJ-form} We have $\Epw_{(ji)}\in\Zsubalgchar^+_{\longtwoword{ij}}$ as a ring over
$\mathbb Z[\zeta]$, given by 
\begin{equation}\label{eq:XJ-form}
\begin{aligned}
   (-1)^\ell\Epw_{(ji)}\,&= \,\zeta_j^{\ell}\Epw_{a_{m-1}}\,+\,
\zeta_j^{\binom{\ell}{2}}\delzeta{j}^{\gcdle}\Epw_j^\gcdle\Epw_i\,+\,
(1-\delta_{\gcdle,1})\cdot\edgenum\cdot(\zeta-\zeta^{-1})^{-\ell} \cdot T_1\vspace*{8mm}\\
\mbox{where} & \hspace*{24mm} 
     T_1= 
     \begin{cases}
    (-\zeta)^{\ell_j}\delzeta{j}^2 \Epw_j\Epw_{(ij)} & \edgenum=2\vspace*{2.5mm}\\
    (-\zeta_j)^{-\ell_j^2}\delzeta{j}^3\Epw_j^2\Epw_{(ij)}
    \,+\,
    \zeta_j^{-\binom{\ell_j+1}{ 2}}\delzeta{j}^2\Epw_j\Epw_{(ijij)}& \edgenum=3\,.
    \end{cases}
\end{aligned}
\end{equation}
\end{prop}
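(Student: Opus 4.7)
The plan is to start from the generic identity \eqref{eq:EordRexpandJ} with $N=\ell$. Its coefficients already lie in $\Zqqn{\dpone}$ and, under the specialization $q\mapsto\zeta$ permitted by condition \eqref{eq:kaycond}, it becomes an identity in $\Uz^+$ whose left side is $\Epw_{(ji)}$. The right side is a sum indexed by $\phi\in S_\edgenum(\ell\alpha_i+\ell\alpha_j)$ of coefficients multiplying $\Ebaseopp{\longtwoword{ij}}{\phi}$. The task is to extract which tuples $\phi$ survive at $\zeta$, compute the residual coefficients, and recognize that each surviving monomial is a product of generators $\Epw_w$ with $w\leqRB \longtwoword{ij}$, so that it belongs to $\Zsubalgchar^+_{\longtwoword{ij}}$ by Proposition~\ref{prop:ZPBWbases}.

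The vanishing analysis is driven by the multinomial factors in $\qfactcollJ{\phi}{\edgenum}$ displayed in \eqref{eq:factcomb-J}. The multinomial $\qbin{\ell}{k_1,\edgenum k_2,k_3,\ldots}{}$ has argument sum equal to $\ell$, so Corollary~\ref{cor:zeros-mulitnom} forces one argument to equal $\ell$ and the rest to vanish, leaving a short list of candidate $\phi$. Combining this with the parallel vanishing at $\zeta_j$ of the multinomial in the $j$-indexed factorials via Lemma~\ref{lem:qbin-zetaform} further restricts the support. The dichotomy $\gcdle=1$ versus $\gcdle=\edgenum$ enters here: in the latter case $\ell=\edgenum\ell_j$, and partitions in which one argument equals $\ell_j$ instead of $\ell$ also contribute, producing precisely the tuples that assemble into the term $T_1$.

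For each surviving $\phi$, I will evaluate the coefficient $q^{f_\edgenum(\phi)}(q-q^{-1})^{k_m}q_j^{-\binom{k_m+1}{2}}\qfactcollJ{\phi}{\edgenum}$ at $q=\zeta$. The $\zeta$-powers from $f_\edgenum(\phi)$ combine with $q_j^{-\binom{k_m+1}{2}}$ to produce the exponents $\zeta_j^\ell$ and $\zeta_j^{\binom{\ell}{2}}$ appearing in \eqref{eq:XJ-form}. Non-vanishing factorials reduce via \eqref{eq:factelllim} and \eqref{eq:Dfactell}; the ratios $\qfacrelA{\cdot}{\edgenum}$ and $\qfacrelB{\cdot}{\edgenum}$ reduce via Lemma~\ref{lm:factratformZ-J} and \eqref{eq:factratformZ-I}; and the singular factors $(q-q^{-1})^{k_m}$ combine with these to yield $\delzeta{j}^{\gcdle}$ for the lead pair of terms and, when $\gcdle=\edgenum$, the compensating denominator $(\zeta-\zeta^{-1})^{-\ell}$ in the $T_1$ prefactor. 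The two leading tuples produce $\Epw_{a_{m-1}}$ (from $k_1=k_m=0$) and $\Epw_j^\gcdle\Epw_i$ (from $k_1=\ell$ paired with $k_m\in\{\ell,\ell_j\}$), while the residual tuples regroup via Proposition~\ref{prop:PwGenCommRels} as $\Epw_j\Epw_{(ij)}$ for $\edgenum=2$ and as $\Epw_j^2\Epw_{(ij)}$ together with $\Epw_j\Epw_{(ijij)}$ for $\edgenum=3$.

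The main obstacle is the $\edgenum=3$ case, where the constraint $k_6=k_1+2k_2+k_3+k_4$ still leaves a five-parameter family and the two multinomials must be jointly restricted; anticipate three surviving non-trivial tuples contributing to $T_1$. Tracking the $\zeta$-powers from $f_3$ simultaneously with the $\Lsign_i$, $\LLsign_i$, and $\Qsign_i$ signs produced by the factorial reductions, and collapsing them using identities such as $\Qsign_i^2=(-1)^{\ell_i-1}$, will be the delicate step and will constitute the bulk of the verification. I expect the resulting exponents to match the displayed $-\ell_j^2$ and $-\binom{\ell_j+1}{2}$ after elementary rearrangement, confirming the stated form of $T_1$.
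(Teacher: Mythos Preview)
Your approach is exactly the paper's: specialize \eqref{eq:EordRexpandJ} at $N=\ell$, $q=\zeta$, apply Corollary~\ref{cor:zeros-mulitnom} to the single large multinomial in $\qfactcollJ{\phi}{\edgenum}$ (there is no second $j$-indexed multinomial here---that appears only in $\qfactcollI{\phi}{\edgenum}$), and evaluate the survivors with Lemma~\ref{lm:factratformZ-J}. One correction to your anticipation: for $\edgenum=3$ only \emph{two} tuples contribute to $T_1$, namely $(k_2,k_6)=(\ell_j,2\ell_j)$ and $(k_4,k_6)=(\ell_j,\ell_j)$; the candidate $2k_3=\ell$ is killed by the extra factor $\qbinsmall{2k_3}{k_3}=\qbinsmall{\ell}{\ell/2}$, which vanishes at $\zeta$ by Lemma~\ref{lem:qbin-zetaform}.
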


The computation starts with the observation that, by Corollary~\ref{cor:zeros-mulitnom}, we only need to consider cases in which one of the terms in the large multinomial coefficient for each $\qfactcollJ{\phi}{\edgenum}$ in
\eqref{eq:factcomb-J} is equal to $\ell$ and all the other $k_s$ with $1\leq s\leq m-1$ are zero. The value of $k_m$ is then determined via 
$\mathscr u_j(\phi)=0$ and \eqref{eq:ulf-j} from the non-zero $k_s\,$.

The first term in \eqref{eq:XJ-form} results from the case $k_{m-1}=\ell$ and all other $k_s=0$. The second term accounts for the case $k_1=k_m=\ell$ and all other $k_s=0$. Note also that $2k_3=\ell$ for $\edgenum=3$ will still lead to $\qfactcollJ{\phi}{\edgenum}=0$ due to the extra binomial term.

All other cases require that $\edgenum$ divides $\ell$, meaning $\gcdle=\edgenum>1\,$ and $\ell=\edgenum\ell_j$\,. For $\edgenum=2$ 
the only remaining situation is then $k_2=k_4=\ell_j$ and $k_1=k_3=0$. 
For $\edgenum=3$ we have two cases, the first of which is
$k_2=\ell_j$\,, $k_6=2\ell_j$ and $k_1=k_3=k_4=k_5=0$. The second is
given by $k_4=\ell_j$\,, $k_6=\ell_j$ and $k_1=k_2=k_3=k_5=0$.

The coefficients for each of these values of $k_s$ are then worked out using Lemma~\ref{lm:factratformZ-J}. Next, we provide the expressions for $\Epw_{b_{m-1}}$\,, where we use the additional notation
\begin{equation}\label{eq:gcdleopp}
\gcdleopp=\frac {\edgenum}{\gcdle}\qquad\mbox{so that}\qquad \gcdleopp\cdot\ell=\edgenum\cdot \ell_j\,.
\end{equation} 
Note that the third term in \eqref{eq:XI-form} is non-zero only if $\edgenum>1$ and $\gcd(\ell,\edgenum)=1$. As remarked in Section~\ref{subsec:QnumsRingsRo1} this condition 
implies that $[\edgenum]$ is a unit, meaning $[\edgenum]^{-1}\in\mathbb Z[\zeta]$. Analogous to 
\eqref{eq:def-delzeta} we also write
$\delzeta{i}=(\zeta-\zeta^{-1})^\ell$.

\begin{prop}\label{prop:XI-form} We have $\Epw_{b_{m-1}}\in\Zsubalgchar^+_{\longtwoword{ij}}$ as a ring over
$\mathbb Z[\zeta]$, given by 
\begin{equation}\label{eq:XI-form}
\begin{aligned}
   (-1)^{\edgenum\ell_j}\Epw_{b_{m-1}}\,&= \,\zeta_j^{\ell_j}\Epw_{(ij)}\,+\,
\zeta^{\binom{{\gcdleopp\cdot\ell}}{2}}\delzeta{i}^{\gcdleopp}\Epw_j\Epw_i^\gcdleopp\,+\,
\delta_{\gcdle,1}\cdot(1-\delta_{\edgenum,1})\cdot\edgenum\cdot [\edgenum]^{-\ell}\cdot T_2\vspace*{8mm}\\
\mbox{where} & \hspace*{24mm} 
     T_2= 
     \begin{cases}
    \zeta^{\ell}\delzeta{i} \Epw_{(iji)}\Epw_{i} & \edgenum=2\vspace*{3mm}\\
    \zeta^{-\binom{\ell}{ 2}}\delzeta{i}\Epw_{(iji)}\Epw_i\,
    +\,\zeta^{\ell(\ell+1)}\delzeta{i}^2\Epw_{(ijiji)}\Epw_{i}^2
    & \edgenum=3\,.
    \end{cases}
\end{aligned}
\end{equation}
\end{prop}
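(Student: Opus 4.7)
The strategy mirrors the proof of Proposition~\ref{prop:XJ-form}: specialize equation \eqref{eq:EordRexpandI} with $N=\ell_j$ to $q=\zeta$, then use the factorization \eqref{eq:factcomb-I} of $\qfactcollI{\phi}{\edgenum}$ together with the vanishing criteria for quantum multinomials at roots of unity to cut the sum down to a handful of surviving terms. More precisely, for each $\edgenum\in\{1,2,3\}$ the factorization \eqref{eq:factcomb-I} contains one ``large'' quantum multinomial whose entries sum to $\ell_j$ (in base $\zeta_j$), so Corollary~\ref{cor:zeros-mulitnom} forces all but one of those entries to be zero. Enumerating the resulting cases subject to the length constraint $\mathscr u_i(\vec\tau)=0$ from \eqref{eq:ulf-i} and the explicit form of $\vec\psi$ in \eqref{eq:deffvecexp} gives a short list of candidate exponent functions $\phi\in S_\edgenum(\vec\tau)$, each of which produces one monomial on the right side of \eqref{eq:XI-form}.

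The main calculation is then the bookkeeping of the prefactor in each case. For $\edgenum=1$ only two cases survive and the two terms $\zeta_j^{\ell_j}\Epw_{(ij)}$ and $\zeta^{\binom{\ell}{2}}\delzeta{i}\Epw_j\Epw_i$ are recovered using $f_1(\phi)=k_1k_3+k_2$, $\binom{\ell+1}{2}-\ell^2=-\binom{\ell}{2}$, and $\Ebaseopp{\longtwoword{ij}}{\phi}=E_j^{k_3}E_{(ij)}^{k_2}E_i^{k_1}$. For $\edgenum\in\{2,3\}$ there is an additional factor $\qfacrelB{k_3}{\edgenum}$ (and $\qfacrelB{k_5}{\edgenum}$ for $\edgenum=3$) in \eqref{eq:factcomb-I}; by \eqref{eq:factratformZ-I} this factor vanishes when evaluated at $\ell_j$ unless $\gcdle=\gcd(\ell,\edgenum)=1$. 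This is precisely the origin of the $\delta_{\gcdle,1}$ indicator that multiplies $T_2$. The additional surviving exponent functions are then those with $k_3=\ell_j$ (for $\edgenum=2$) respectively $k_3=\ell_j$ or $k_5=\ell_j$ (for $\edgenum=3$), and the other $k_s$ determined by the $\mathscr u_i=0$ and $\vec\psi=\vec\tau$ constraints. Substituting \eqref{eq:factratformZ-I}, simplifying the resulting $q$-powers via $f_\edgenum$ in \eqref{eq:deffexp}, and comparing with $\Ebaseopp{\longtwoword{ij}}{\phi}$ as listed in \eqref{eq:Eexp} yields the explicit monomials $\Epw_{(iji)}\Epw_i$ and $\Epw_{(ijiji)}\Epw_i^2$ inside $T_2$.

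The final step is to verify membership in $\Zsubalgchar^+_{\longtwoword{ij}}$ over $\mathbb Z[\zeta]$. The factors $\delzeta{i}\in\mathbb Z[\zeta]$ by definition, and the prefactor $[\edgenum]^{-\ell}$ occurring in $T_2$ is well-defined in $\mathbb Z[\zeta]$ precisely because $T_2$ is gated by $\delta_{\gcdle,1}$, under which condition $[\edgenum]$ is a cyclotomic unit, as discussed in Section~\ref{subsec:QnumsRingsRo1}. The monomials appearing are products of $\Epw_v$ for $v\leqRB\longtwoword{ij}$, so each lies in $\Zsubalgchar^+_{\longtwoword{ij}}$ by definition.

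The anticipated obstacle is the $\LT{G}_2$ case ($\edgenum=3$), where the auxiliary multinomial $\qbin{2k_4}{k_4}{j}$ and the twin factors $\qfacrelB{k_3}{3}$, $\qfacrelB{k_5}{3}$ in \eqref{eq:factcomb-I} need to be examined simultaneously, and where the identities $\binom{3\ell_j}{2}$, $-\binom{\ell}{2}$, and $\ell(\ell+1)$ in the exponent of $\zeta$ in the two summands of $T_2$ must be matched against the combined contribution of $f_3(\phi)$, the $\zeta^{-\binom{k_1+1}{2}}$ prefactor, and the factorial ratio $\qfacrelB{\ell_j}{3}$. The bookkeeping is a direct but tedious extension of the $\edgenum=2$ case and does not introduce new phenomena.
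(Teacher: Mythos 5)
Your proposal is correct and follows essentially the same route as the paper: specializing \eqref{eq:EordRexpandI} at $N=\ell_j$, $q=\zeta$, killing terms via Corollary~\ref{cor:zeros-mulitnom} applied to the large multinomial in \eqref{eq:factcomb-I}, tracing the $\delta_{\gcdle,1}$ gate to the factors $\qfacrelB{\ell_j}{\edgenum}$ via \eqref{eq:factratformZ-I}, and enumerating exactly the surviving exponent tuples ($k_2=\ell_j$, $k_m=\ell_j$, plus $k_3=\ell_j$ and, for $\edgenum=3$, $k_5=\ell_j$). Your added remarks on integrality of $[\edgenum]^{-\ell}$ under $\gcdle=1$ and on the role of the auxiliary binomial $\qbin{2k_4}{k_4}{j}$ in the $\LT{G}_2$ bookkeeping are consistent with the paper's treatment.
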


As for Proposition~\ref{prop:XJ-form}, the only contributing terms are those for which exactly one of 
the integers in the multinomial expressions of \eqref{eq:factcomb-I} is $\ell_j$ and all others are zero, once we set $N=\ell_j\,$ and $q=\zeta$ in \eqref{eq:EordRexpandI}. The first term in \eqref{eq:XI-form} accounts for the case when $k_2=\ell_j$\,, implying by $\mathscr u_i=0$ that all other $k_s=0$. The second term 
corresponds to the case $k_{m}=\ell_j$\,, which entails $k_1=\edgenum\ell_j=\gcdleopp\ell$ and all remaining $k_s=0\,$. 

The last terms for $\edgenum>1$ all involve coefficients $\qfacrelB{\ell_j}{\edgenum}$, which, by
\eqref{eq:factratformZ-I}, are non-zero only if $\gcdle=1\,$ and hence $\ell=\ell_j\,$. The extra term for $\edgenum=2$ corresponds to $k_3=k_1=\ell_j=\ell$ and all other indices zero. As before, for $\edgenum=3$ we only need to consider the cases
$k_3=\ell_j=\ell$ and $k_5=\ell_j=\ell$, which imply $k_1=\ell$ and $k_1=2\ell$, respectively.  

Comparing Propositions~\ref{prop:XJ-form} and \ref{prop:XI-form}, we note that additional terms and exponents occur in the non-simply laced cases in exactly the opposite situations with respect
to whether $\ell$ and $\edgenum$ are coprime or not. In addition, a term by term matching, up to coefficients, can also be achieved by switching an $\Epw_{a_j}$ by an $\Epw_{a_{m+1-j}}\,$. This 
can be thought of replacing the underlying root system by a corresponding coroot system as in \eqref{eq:wgradX_rvscor}, a phenomenon that  will resurface in Section~\ref{subsec:Coalg-ZB2}.

We discuss a normalization of the generators $\Epw_\alpha$ in Appendix \ref{sec:integralcoalg} for which the above formulae are integral. The main application of \eqref{eq:XJ-form} and \eqref{eq:XI-form} word independence of the skew-central subalgebras.

\begin{thm}\label{thm:Zwordindep}
Suppose $\roots$ is not of Lie type $\LT{G}_2$\,. Assume $w, u\in\wordset$ are reduced words for the same Weyl element $s=\Weylpres(w)=\Weylpres(u)$. Then $\Zsubalgchar^+_w=\Zsubalgchar^+_{u}$ as $\wgrad$-graded subalgebras in $\Uz^+$ over $\Zzn{\dpone}\,$. 
\end{thm}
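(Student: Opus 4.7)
The plan is to combine Matsumoto's theorem with the word-independence criterion of Section~\ref{subsec:spanorder}, reducing the problem to an explicit verification inside each relevant rank-2 subsystem. By Corollary~\ref{cor:matsumoto} it suffices to handle the case where $u$ is obtained from $w$ by a single substitution of a substring $\longtwoword{ij}$ by $\longtwoword{ji}$. Writing $w = x\cdot\longtwoword{ij}\cdot y$ and $u = x\cdot\longtwoword{ji}\cdot y$ as reduced decompositions, $(a,b) := (\Weylpres(x),\Weylpres(y))$ is a relator pair for $s$. Since $\Zsubalgchar^+_w = \bspan{w}{\Lexpset{s},+}$ by Proposition~\ref{prop:ZPBWbases}, and the family $\Lexpset{s} = \ellfun\cdot\expsetup{s}$ from \eqref{eq:Lexpsetdef} has the splitting form of \eqref{eq:VexpSetEx} with $\NexpSet{d} = \elln_d\nnN$, Corollary~\ref{cor:BaseOrdSplit}\,\ref{item:BaseOrdSplit:Vs} applies. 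It reduces the desired equality to verifying that $\Kinvaut$ preserves the rank-2 span $\bspan{\longtwoword{ij}}{\Lexpset{\longtwoweyl{ij}},+} = \Zsubalgchar^+_{\longtwoword{ij}}\subset\Uqrest{+}{i,j}$ for each pair $(i,j)$ arising in such a substring.

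Because $\roots$ is not of type $\LT{G}_2$, the only rank-2 subsystems occurring are of type $\LT{A}_1\times\LT{A}_1$, $\LT{A}_2$, or $\LT{B}_2=\LT{C}_2$. For $\LT{A}_1\times\LT{A}_1$ the two primitive power generators strictly commute, so $\Zsubalgchar^+_{\longtwoword{ij}} = \Zsubalgchar^+_{\longtwoword{ji}}$ trivially. For $\LT{A}_2$ and $\LT{B}_2$, Propositions~\ref{prop:XJ-form} and \ref{prop:XI-form} display $\Epw_{(ji)}$ and $\Epw_{b_{m-1}}$ as explicit $\Zz$-polynomial combinations of the generators $\Epw_v$ with $v\leqRB\longtwoword{ij}$. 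Together with $\Epw_i$ and $\Epw_j$, which are common to both orderings, these exhaust the full list of primitive power generators associated to subwords of $\longtwoword{ji}$, so $\Zsubalgchar^+_{\longtwoword{ji}}\subseteq\Zsubalgchar^+_{\longtwoword{ij}}$. The symmetric argument (interchanging $i\leftrightarrow j$) gives the reverse inclusion, hence equality. Since $\Kinvaut$ maps generators $\Epw_{a_r}$ to generators $\Epw_{b_{m+1-r}}$ by \eqref{eq:Eabcompl}, this equality is exactly the $\Kinvaut$-stability required. Compatibility with the $\wgrad$-grading is automatic: every $\Epw_v$ is $\wgrad$-homogeneous and the rank-2 expansion formulae are grading-preserving, so the two algebras agree as $\wgrad$-graded subalgebras of $\Uz^+$.

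Iterating along a Matsumoto sequence of substring swaps then gives the equality $\Zsubalgchar^+_w = \Zsubalgchar^+_u$ over $\Zzn{\dpone}$ in general. The hard part, and precisely the obstruction in $\LT{G}_2$, is the rank-2 closure step: Propositions~\ref{prop:XJ-form} and \ref{prop:XI-form} only cover the two extremal intermediate generators $\Epw_{(ji)}$ and $\Epw_{b_{m-1}}$. For $\LT{A}_2$ and $\LT{B}_2$ those, plus the simple-root generators, already account for every subword of $\longtwoword{ji}$; but for $\LT{G}_2$ one would additionally need analogous expansions for the length-3 and length-4 subwords of $\longtwoword{ji}$, namely $\Epw_{b_3}$ and $\Epw_{b_4}$, which are precisely the formulae not derived in Section~\ref{subsec:Mho-Invar-Z}.
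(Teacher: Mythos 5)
Your proposal is correct and follows essentially the same route as the paper: reduce via Proposition~\ref{prop:ZPBWbases} and the split form of $\Lexpset{s}$ to the $\Kinvaut$-stability criterion of Corollary~\ref{cor:BaseOrdSplit}, then verify it in the rank-2 subsystems with $m\in\{3,4\}$ using Propositions~\ref{prop:XJ-form} and \ref{prop:XI-form} together with \eqref{eq:Eabcompl}, which is exactly how the paper argues (your explicit Matsumoto reduction is already subsumed in Corollary~\ref{cor:BaseOrdSplit}, so it is harmless redundancy). You also correctly identify the missing $\Epw_{b_3}$, $\Epw_{b_4}$ expansions as the sole obstruction in the $\LT{G}_2$ case, matching the paper's remark.
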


\begin{proof} By Proposition~\ref{prop:ZPBWbases} $\Zsubalgchar^+_w$ is as a $\Zzn{\dpone}$-module given by 
$\bspanopp{w}{{\Lexpset s},+}$. We note that $\Lexpset s$, as defined in (\ref{eq:Lexpsetdef}), 
is of the form prescribed in 
(\ref{eq:VexpSetEx}) by choosing $\NexpSet{d}=\ell_d\cdot \nnN$ with $\ell_d=\ell/\gcd(d,\ell)\,$.

Following Corollary~\ref{cor:BaseOrdSplit} it thus suffices to show 
that $\Kinvaut$ maps $\Zsubalgchar^+_{\longtwoword{ij}}\!=\bspanopp{\longtwoword{ij}}{{\Lexpset {\longtwoweyl{ij}}},+}\!\!$
to itself for any pair 
$(i,j)$ that occurs for relator pairs.
Now, $\bspanopp{\longtwoword{ij}}{{\Lexpset s},+}$ is spanned by monomials in the skew-commuting generators $\Epw_{a_s}$. Note also that by \eqref{eq:KinvEtworefl} and \eqref{eq:Eabcompl} and with words as in \eqref{eq:def:abwords} we also have
\begin{equation}\label{eq:Xr2=mho}
    \Kinvaut(\Epw_{a_s})=\Epw_{b_{m+1-s}}\,. 
\end{equation}
 
Thus $\Kinvaut$-invariance of the space $\bspanopp{\longtwoword{ij}}{{\Lexpset s},+}$ is given by showing that it contains all
$\Epw_{b_k}$. This is obvious for $\Epw_{b_1}=X_j$ and $\Epw_{b_{m}}=\Epw_i\,$, implied, for example, by \eqref{eq:Edictsimple}.  
For $\Epw_{b_2}=\Epw_{(ji)}$ this is verified in Proposition~\ref{prop:XJ-form} and for 
$\Epw_{b_{m-1}}$ in Proposition~\ref{prop:XI-form}. Thus, if $m\in\{3,4\}$ all cases are covered. 

By construction any $\Zsubalgchar^+_w$ inherits the $\wgrad$-grading from $\Uz$ since it admits a $\wgrad$-graded basis. The graded components of $\Zsubalgchar^+_w$ and 
$\Zsubalgchar^+_u$ thus also have to coincide.  
 \end{proof}

Missing from the proof for the $\LT{G}_2$ Lie type are computations that show also $\Epw_{b_3}$ and $\Epw_{b_4}$ 
can be written as a polynomial expression in the $\{\Epw_{a_i}\}$ only, meaning they are in  
$\bspanopp{\longtwoword{ij}}{{\Lexpset {\longtwoweyl{ij}}},+}\!$. We leave this as a question, which appears to be open, at least for even $\kay\,$.
\begin{conj}\label{conj:Zwordindep}
    Theorem~\ref{thm:Zwordindep} extends also to the $\LT{G}_2$ case for all $\kay>6$. 
\end{conj}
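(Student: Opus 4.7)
The plan is to complete the argument of Theorem~\ref{thm:Zwordindep} in the remaining $\LT{G}_2$ case by producing explicit expansions for the two missing primitive power generators $\Epw_{b_3}$ and $\Epw_{b_4}$ as polynomials in $\{\Epw_{a_s}\}_{s=1}^{6}$. First I would repeat the reduction from that proof: Corollary~\ref{cor:BaseOrdSplit} shows the statement follows once $\Kinvaut$ stabilizes $\Zsubalgchar^+_{\longtwoword{ij}}$ for every rank-2 relator pair, and for the irreducible $\LT{G}_2$ root system the only such pair is $(i,j)$ itself with $m_{ij}=6$. By \eqref{eq:Xr2=mho}, stability reduces to the memberships $\Epw_{b_k}\in \Zsubalgchar^+_{\longtwoword{ij}}$ for $k=1,\ldots,6$; the endpoints $k\in\{1,2,5,6\}$ are already covered by \eqref{eq:Edictsimple}, Proposition~\ref{prop:XJ-form}, and Proposition~\ref{prop:XI-form}, leaving only $k=3$ and $k=4$.

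To handle these, I would derive expansions of $E_{b_3}^{(N)}$ and $E_{b_4}^{(N)}$ in the PBW basis $\{\Ebaseopp{\longtwoword{ij}}{\phi}\}$, in the spirit of \eqref{eq:EordRexpandJ} and \eqref{eq:EordRexpandI}. Starting from the divided-power identities of Corollary~\ref{cor:EdivpwSimpExp} combined with the iterated $\LT{G}_2$ commutation relations catalogued in Section~5.4 of \cite{lu90b}, one reorders each resulting monomial $E_i^{(\cdot)}E_j^{(\cdot)}E_i^{(\cdot)}\cdots$ into the ordering prescribed by $\longtwoword{ij}$, producing coefficients in $\Zqqn{4}=\mathbb{Z}[q,q^{-1},[2]^{-1},[3]^{-1}]$. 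Setting $N=\ell_j$ for $b_3$ (whose root $3\alpha_i+2\alpha_j$ is long) and $N=\ell$ for $b_4$ (whose root $2\alpha_i+\alpha_j$ is short), then specializing $q\to\zeta$, I would invoke Lemma~\ref{lem:qbin-zetaform} and Corollary~\ref{cor:zeros-mulitnom}: the vanishing of quantum multinomials at roots of unity forces every exponent $\phi(\wordroot(a_s))$ appearing in each surviving term to be a multiple of $\ell_{a_s}$. Each such term is therefore a monomial in the $\Epw_{a_s}$'s, establishing the required memberships; the $\wgrad$-grading argument at the end of the proof of Theorem~\ref{thm:Zwordindep} then transfers verbatim.

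The main obstacle lies in the derivation of these generic expansions. Unlike the extremal generators $E_{a_{m-1}}$ and $E_{b_{m-1}}$, which yield to a single application of the Serre identities of Corollary~\ref{cor:EdivpwSimpExp}, the middle generators $E_{b_3}$ and $E_{b_4}$ require two or three successive layers of reordering, with the complexity of the $\LT{G}_2$ commutation formulae amplified at each step. Tracking the factorial quotients $\qfacrelA{\cdot}{3}$ and $\qfacrelB{\cdot}{3}$ of \eqref{eq:factcomb-J} and \eqref{eq:factcomb-I} through this iteration, and identifying the surviving terms after a separate case analysis depending on $\gcd(\kay,6)$, is the combinatorial heart of the argument. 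A possible circumvention would be to first establish the field-level equality $\Zsubalgchar^+_{\longtwoword{ij}}\otimes\mathbb{Q}(\zeta)=\Zsubalgchar^+_{\longtwoword{ji}}\otimes\mathbb{Q}(\zeta)$ inside $\UqQ^+$---via the De Concini--Kac--Procesi big-center analysis of \cite{dck92} for odd $\kay$, and via a classical-limit deformation argument for $\kay\equiv 0\mod 4$---and then descend to $\Zz$ using \eqref{eq:restrImodule}; however, the residual case $\kay\equiv 2\mod 4$, where the subalgebra is no longer central, appears to require the direct computational approach.
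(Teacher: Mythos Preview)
The statement you are addressing is a \emph{conjecture} in the paper, not a theorem: the authors explicitly do not prove it and write just before it that the required computations ``appear to be open, at least for even $\kay$.'' There is therefore no proof in the paper to compare your proposal against.

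Your reduction is exactly what the paper identifies as the missing step: one needs to show $\Epw_{b_3},\Epw_{b_4}\in\Zsubalgchar^+_{\longtwoword{ij}}$, after which the Matsumoto-style argument of Theorem~\ref{thm:Zwordindep} goes through unchanged. Your proposed strategy---reorder iterated divided-power expressions via the $\LT{G}_2$ commutation relations of \cite[Section~5.4]{lu90b}, specialize to $\zeta$, and use the vanishing of quantum multinomials---is the natural one, and is precisely the method that produced Propositions~\ref{prop:XJ-form} and \ref{prop:XI-form} for the extremal generators.

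However, what you have written is a plan, not a proof. You yourself flag the obstacle: for $b_3$ and $b_4$ the reordering is multi-layered, and you do not carry out the bookkeeping of the factorial quotients or the case analysis in $\gcd(\kay,6)$. The crucial claim---that after specialization every surviving exponent $\phi(\wordroot(a_s))$ is forced to be a multiple of $\ell_{a_s}$---is asserted but not verified; for the extremal cases this followed from a single multinomial constraint, whereas here the constraints are coupled across several reordering steps and it is not obvious that non-multiple exponents cannot survive with nonzero coefficient. Your proposed circumvention via \cite{dck92} for odd $\kay$ is plausible (and the paper's phrasing ``at least for even $\kay$'' hints the odd case may indeed be accessible), but the even cases, particularly $\kay\equiv 2\bmod 4$, remain genuinely unresolved by your outline. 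As it stands, your proposal is a reasonable roadmap toward the conjecture rather than a proof of it.
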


For Lie types different from $\LT{G}_2$\,, we may thus unambiguously 
write $\Zsubalgchar^+_s$ for an element in $s\in\Weyl$. In the $\LT{G}_2$ case,
$\Zsubalgchar_{s}^{+}$ is, of course, also well-defined for $\len{s}\leq 5$,
since there is only one reduced expression for such $s\,$. For the longest element 
$\longweyl\,$, there are  exactly two reduced expressions. So,
the only missing equality between subalgebras  addressed by this  conjecture is  
the one between 
$\Zsubalgchar_{\longtwoword{ij}}^{+}$ and $\Zsubalgchar_{\longtwoword{ji}}^{+}\,$.

Assuming the other Lie types, we denote the maximal subalgebras associated to  
the unique longest element $\longweyl\in\Weyl$ as 
\begin{equation}\label{eq:Zmax}
  \Zsubalgchar^{\pm}_\bullet=\Zsubalgchar^{\pm}_{\longweyl}\,,
  \quad\Zsubalgchar^{\geqzero}_\bullet=\Zsubalgchar^{+}_{\bullet}\cdot\Zsubalgchar^{0}_{\bullet}\,,
  \quad\Zsubalgchar^{\leqzero}_\bullet=\Zsubalgchar^{-}_{\bullet}\cdot\Zsubalgchar^{0}_{\bullet}\,,
  \,\qquad\mbox{and}\qquad \Zsubalgchar_\bullet=\Zsubalgchar^{+}_\bullet\cdot\Zsubalgchar^{0}_\bullet\cdot\Zsubalgchar^{-}_\bullet\,,
\end{equation}
where $\Zsubalgchar^{0}_{\bullet}$ is the full subalgebra generated by the $\Kpw_i\,$. 
Also immediate from the definition is that 
\begin{equation}\label{eq:Zposet}
  \Zsubalgchar^+_t\subseteq\Zsubalgchar^+_s \qquad \forall s,t\in\Weyl\;\mbox{ with }\; t\leqRB s\,. 
\end{equation}

Analogous statements hold for the $\Zsubalgchar^\utypechar_s$ algebras with $\utypechar\in\{\phantom{+},-,\geqzero,\leqzero\}$. Note also that Corollary~\ref{eq:InvolZalg} and Theorem~\ref{thm:Zwordindep} imply for these Lie types  
\begin{equation}\label{eq:InvolZpm}
\Cartaninv(\Zsubalgchar_s^{\pm})=\Zsubalgchar_s^{\mp}\,,\qquad 
    \Kinvaut(\Zsubalgchar_\bullet^{\pm})=\Zsubalgchar_{\bullet}^{\pm}\,,\qquad 
    \mbox{and}
    \qquad
    S(\Zsubalgchar_\bullet^{\geqzero})=\Zsubalgchar_\bullet^{\geqzero}\;.     
\end{equation}
Combining this with $\Kinvaut$-correspondence with Lusztig's conventions from 
Proposition~\ref{prop:mhomonomial} we make the following observations. 
\begin{cor}\label{cor:Zorderindep}
The maximal algebras in \eqref{eq:Zmax} remain unchanged if the $\Tinv_i$ are replaced by 
$T_i$ in the definition of the generators $E_w$ and $\Epw_w\,$. 
\end{cor}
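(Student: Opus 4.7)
The plan is to relate our convention $\Zsubalgchar_\bullet^\utypechar$, built from $\Tinv_i$, to the Lusztig analog built from $T_i$, by exploiting the anti-involution $\Kinvaut$ and its established stability property \eqref{eq:InvolZpm}. Denote by $\Zsubalgchar_\bullet^{\utypechar,\mathrm{LU}}$ the subalgebras one obtains by replacing $\Tinv_i$ by $T_i$ everywhere in the definitions from Section~\ref{subsec:genwords} and Section~\ref{subsec:CommPrimGen}, so that the generators become $\ELu_w = T_{w^\flat}(E_{\tau(w)})$ and $\ELu\Epw_w \coloneqq \ELu_w^{\ell_w}$, with the same $\Kpw_i$ on the Cartan side.

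First I would record the fundamental translation between the two conventions. Combining \eqref{eq:LTconjinv} with $\Kinvaut(E_i)=E_i$ gives, for any non-empty reduced word $w$,
\[
\ELu_w \,=\, T_{w^\flat}(E_{\tau(w)}) \,=\, \Kinvaut\circ \Tinv_{w^\flat}\circ \Kinvaut (E_{\tau(w)}) \,=\, \Kinvaut(E_w).
\]
Since $\Kinvaut$ is an anti-homomorphism, it commutes with taking powers of a single element, so $\ELu\Epw_w = \Kinvaut(\Epw_w)$. For the Cartan data, $\Kinvaut(\Kpw_i)=\Kpw_i^{-1}$ is immediate, so the generating set $\{\Kpw_i^{\pm 1}\}$ is setwise fixed by $\Kinvaut$, giving $\Zsubalgchar_\bullet^{0,\mathrm{LU}}=\Zsubalgchar_\bullet^{0}$ tautologically.

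Second, as $\Kinvaut$ is an anti-automorphism of $\Uz$, it maps the unital subalgebra generated by $\{\Epw_w : w\leqRB \longweyl\}$ onto the unital subalgebra generated by the image family $\{\Kinvaut(\Epw_w)\}=\{\ELu\Epw_w\}$, which is by definition $\Zsubalgchar_\bullet^{+,\mathrm{LU}}$. Hence $\Kinvaut(\Zsubalgchar_\bullet^+)=\Zsubalgchar_\bullet^{+,\mathrm{LU}}$. Plugging in the $\Kinvaut$-invariance $\Kinvaut(\Zsubalgchar_\bullet^+)=\Zsubalgchar_\bullet^+$ from \eqref{eq:InvolZpm} yields $\Zsubalgchar_\bullet^{+,\mathrm{LU}}=\Zsubalgchar_\bullet^+$. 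Applying $\Cartaninv$ (which commutes with $\Kinvaut$ and interchanges $\Epw_w$ with $\Fpw_w$ by \eqref{eq:CinvEFwords} and \eqref{eq:CartMhoXYacts}) produces the analogous equality $\Zsubalgchar_\bullet^{-,\mathrm{LU}}=\Zsubalgchar_\bullet^-$. Equality for the composite algebras $\Zsubalgchar_\bullet^{\geqzero}$, $\Zsubalgchar_\bullet^{\leqzero}$, and $\Zsubalgchar_\bullet$ then follows at once from the factorizations in \eqref{eq:Zmax}, since all three factors on each right-hand side have been shown to agree between the two conventions.

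The only subtle point, and the chief caveat rather than a true obstacle, is the Lie type hypothesis implicit in the use of \eqref{eq:InvolZpm}: both the well-definedness of the subscript $\bullet=\longweyl$ in $\Zsubalgchar_{\longweyl}^\utypechar$ and the $\Kinvaut$-invariance of the result rely on Theorem~\ref{thm:Zwordindep}, which excludes the $\LT{G}_2$ case. In $\LT{G}_2$ the argument still shows $\Kinvaut(\Zsubalgchar_w^+)$ is generated by the Lusztig-convention power generators along the word $w^\winvchar$, but identifying this with $\Zsubalgchar_w^+$ itself is exactly what Conjecture~\ref{conj:Zwordindep} would provide; granted the conjecture, the proof above applies verbatim also in type $\LT{G}_2$.
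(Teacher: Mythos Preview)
Your proof is correct and follows essentially the same approach as the paper. Both arguments rest on the identity $\ELu_w=\Kinvaut(E_w)$ (which the paper records as Proposition~\ref{prop:MohGenConv} and packages at the monomial level in Proposition~\ref{prop:mhomonomial}), combined with the $\Kinvaut$-invariance $\Kinvaut(\Zsubalgchar_\bullet^\pm)=\Zsubalgchar_\bullet^\pm$ from \eqref{eq:InvolZpm}; your treatment of the Cartan part and the $\LT{G}_2$ caveat likewise matches the paper's conventions.
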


 We conclude this section with a proof that the full maximal algebra is invariant under the actions of Lusztig's $T_i$ automorphisms. It uses the explicit expression of the Garside element $\Tinv_{\longweyl}\,$ from Section~\ref{subsec:GarsAut+DictGens}. 

\begin{thm}\label{thm:ZTinvInvar}
Assuming Lie types other than $\LT{G}_2$ and any $\kay>2\edgenum$ (or $\kay=3$ for $\edgenum=2$), the full algebra  $\Zsubalgchar_\bullet$ over $\Zzvn{\dpone}$ is invariant under the
Artin group action generated by the $\Tinv_i\,$. 
\end{thm}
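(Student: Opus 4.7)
The plan is to verify that each simple generator $\Tinv_i$ of the Artin group action preserves $\Zsubalgchar_\bullet$, which suffices since the $\Tinv_i$ generate the Artin action and each one is an algebra automorphism of $\Uz$. Since $\Zsubalgchar_\bullet$ is generated as a unital algebra over $\Zzvn{\dpone}$ by $\{\Epw_w,\,\Fpw_w,\,\Kpw^\mu : w\leqRB\longweyl,\,\mu\in\mathbb{Z}^\sroots\}$, and $\Tinv_i$ is a homomorphism, it suffices to check the image of each generator lies in $\Zsubalgchar_\bullet$. For $\Kpw^\mu$ this follows from $\Tinv_i(\Kpw^\mu)=\Kpw^{s_i(\mu)}$ via \eqref{eq:TinvL}, combined with the Weyl invariance of the lattice $\wordindexlattice{\longweyl}$ guaranteed by Lemma~\ref{lm:ell-root-lattice}. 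The case of $\Fpw_w$ reduces to that of $\Epw_w$ via the anti-involution $\Cartaninv$, which commutes with $\Tinv_i$ and exchanges $\Zsubalgchar^\pm_\bullet$ by \eqref{eq:InvolZpm}.

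For $\Tinv_i(\Epw_w)$ I split into two cases based on whether $w_i\cdot w$ is reduced. In the easy case it is reduced, and \eqref{eq:TinvXY} gives $\Tinv_i(\Epw_w)=\Epw_{w_i\cdot w}\in\Zsubalgchar^+_\bullet$. For the hard case, where $w_i\cdot w$ is not reduced, I first establish that the Garside automorphism preserves $\Zsubalgchar_\bullet$. Using the formula $\Tinv_\garside=\DynkInv\circ\Cartanaut\circ\Kinvaut\circ S$ from Corollary~\ref{cor:GarsideAutom}, I verify that each factor preserves $\Zsubalgchar_\bullet$: $\DynkInv$ permutes the reduced-word generators via the Dynkin involution; $\Kinvaut$ preserves $\Zsubalgchar^\pm_\bullet$ by \eqref{eq:InvolZpm}; $\Cartanaut=\Qinvaut\circ\Cartaninv$ swaps $\Zsubalgchar^\pm_\bullet$ up to sign and $q$-conjugation factors (the monomial formulae in \eqref{eq:CartMhoXYacts} ensuring these scalars lie in $\Zzvn{\dpone}$); and the antipode $S$ preserves $\Zsubalgchar^{\geqzero}_\bullet$ by \eqref{eq:InvolZpm} and extends to $\Zsubalgchar^{\leqzero}_\bullet$ via $S\circ\Cartaninv=\Cartaninv\circ S$.

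For the hard case proper, I use Theorem~\ref{thm:Zwordindep} to pick a reduced expression $w'=w_i v$ for $s=\Weylpres(w)$ and observe that $\Epw_w\in\Zsubalgchar^+_s=\Zsubalgchar^+_{w'}$ admits a polynomial expression in the generators $\{\Epw_u:u\leqRB w'\}=\{1\}\cup\{\Epw_i\}\cup\{\Epw_{w_iv'}=\Tinv_i(\Epw_{v'}):v'\text{ nonempty prefix of }v\}$. Applying $\Tinv_i$ term-by-term and using $\Tinv_i(\Epw_i)=-\Kpw_i^{-1}\Fpw_i$ from \eqref{eq:TinvXYsame} reduces the task to showing $\Tinv_i^2(\Epw_{v'})\in\Zsubalgchar_\bullet$ for each prefix $v'$ of $v$. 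Using the length-additive decomposition $\garside=w_i\cdot b_i$ in $\mathscr A^+$, I rewrite $\Tinv_i=\Tinv_\garside\circ\Tinv_{b_i}^{-1}$; the identity \eqref{eq:LTconjinv} then expresses $\Tinv_{b_i}^{-1}=\Kinvaut\circ\Tinv_{b_i^*}\circ\Kinvaut$ for a reverse word $b_i^*$ of length $N-1$. An induction on $\ell(s)$, combined with the key observation that $v'$ does not begin with $w_i$ so that $\Tinv_i(\Epw_{v'})=\Epw_{w_iv'}$ always falls into the easy case and lies in $\Zsubalgchar^+_\bullet$, closes the argument via the already-established Garside invariance.

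The principal obstacle is the self-referential character of Case~B: applying $\Tinv_i$ to $\Epw_w$ for $w$ starting with $w_i$ demands control of $\Tinv_i^2$ on elements whose subsequent $\Tinv_i$-image falls again in Case~B at comparable length, so a naïve length induction does not close. The Garside factorization supplies the essential ``outer'' automorphism $\Tinv_\garside$, known independently to preserve $\Zsubalgchar_\bullet$, which together with word independence from Theorem~\ref{thm:Zwordindep} breaks the recursion. The exclusion of $\LT{G}_2$ in the hypotheses enters precisely through the use of Theorem~\ref{thm:Zwordindep}.
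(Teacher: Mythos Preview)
Your proposal has a genuine gap in Case~B (the hard case). The length induction on $\ell(s)$ does not close: when you rewrite $\Epw_w$ as a polynomial in $\{\Epw_u:u\leqRB w'\}$ with $w'=w_iv$, the longest prefix $v'=v$ contributes the generator $\Epw_{w'}$ itself, and $\Tinv_i(\Epw_{w'})=\Tinv_i^2(\Epw_v)$ lives at the same length $\ell(s)$, not a smaller one. Your Garside factorization $\Tinv_i=\Tinv_\garside\circ\Tinv_{b_i}^{-1}=\Tinv_\garside\circ\Kinvaut\circ\Tinv_{b_i^*}\circ\Kinvaut$ does not break this circularity: the middle factor $\Tinv_{b_i^*}$ is a product of $N-1$ simple Artin generators $\Tinv_j$, so invoking it presupposes exactly the invariance you are trying to prove. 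The final sentence (``closes the argument via the already-established Garside invariance'') asserts a closure that is not actually supplied.

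The paper's proof sidesteps this recursion entirely by applying word independence \emph{once at the maximal level}, rather than at each intermediate $s$. For a fixed $i$, choose a reduced word $u$ for $s_i\longweyl$, so that both $z=w_i\cdot u$ and $w=u\cdot w_{\eta(i)}$ lie in $\wordsetmax$. By Theorem~\ref{thm:Zwordindep}, $\Zsubalgchar^+_\bullet=\Zsubalgchar^+_w$ is generated by $\{\Epw_v:v\leqRB w\}=\{\Epw_v:v\leqRB u\}\sqcup\{\Epw_w\}$. For every $v\leqRB u$, the word $w_iv$ is reduced (it is a prefix of the maximal word $z=w_iu$), so $\Tinv_i(\Epw_v)=\Epw_{w_iv}\in\Zsubalgchar^+_\bullet$ --- all of these fall into your easy case. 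The \emph{single} remaining generator $\Epw_w$ satisfies $\Tinv_i(\Epw_w)=\Tinv_i\Tinv_u(\Epw_{\eta(i)})=\Tinv_z(\Epw_{\eta(i)})=\Tinv_{\longweyl}(\Epw_{\eta(i)})$, which by the Garside formula \eqref{eq:GarsideAutomEi} equals $-\Kpw_i^{-1}\Fpw_i\in\Zsubalgchar^{\leqzero}_\bullet$. No induction is needed; the strategic choice of maximal word confines the ``hard'' computation to one explicit evaluation.
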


\begin{proof}
    Invariance of $\Zsubalgchar^0_\bullet$ under $\Tinv_i$ follows immediately from \eqref{eq:TinvL} and Proposition~\ref{prop:ZPBWbases}. It thus suffices to show $\Tinv_i(\Zsubalgchar^+_\bullet)\subset\Zsubalgchar_\bullet\,$ for each fixed $i\in\{1,\ldots ,n\}\,$,
    which implies $\Tinv_i(\Zsubalgchar^-_\bullet)\subset\Zsubalgchar_\bullet\,$ via 
    \eqref{eq:InvolZpm} and commutation of $\Cartaninv$ with $\Tinv_i\,$. 

    Let $u\in\wordset$ be a reduced word for $q_{\eta(i)}=s_i\cdot\longweyl=\longweyl\cdot s_{\eta(i)}$ as in \eqref{eq:defalt_complwords} so that $z=w_i\cdot u$ and $w=u\cdot w_{\eta(i)}$ are maximal reduced words
    $z,w\in\wordsetmax\,$. By Theorem~\ref{thm:Zwordindep} we can view $\Zsubalgchar^+_\bullet=\Zsubalgchar^+_w\,$, as
    generated by $\Epw_v$ with $v\leqRB w=u\cdot w_{\eta(i)}\,$. The latter generating set splits into the union
    $\{\Epw_v:v\leqRB u\}\sqcup\{\Epw_w\}$. If $v\leqRB u$ we have that $w_i\cdot v$ is reduced since 
    $w_i\cdot u$ is reduced. Hence $\Tinv_i(X_{v})=X_{w_i\cdot v}\in\Zsubalgchar^+_\bullet\subset \Zsubalgchar_\bullet\,$ by
    \eqref{eq:EFdefRecurs} and \eqref{eq:TinvXY}.

    Now, by definition, $\Epw_w=\Tinv_u(\Epw_{\eta(i)})$ so that $\Tinv_i(\Epw_w)=\Tinv_i\circ\Tinv_u(\Epw_{\eta(i)})=\Tinv_z(\Epw_{\eta(i)})=\Tinv_{\longweyl}(\Epw_{\eta(i)})$ since $z=w_i\cdot u$ is reduced and maximal. The formula \eqref{eq:GarsideAutomEi}
    in the proof of Corollary~\ref{cor:GarsideAutom} then implies that 
    $\Tinv_i(\Epw_w)=\Tinv_{\longweyl}(E_{\eta(i)}^{\ell_i})=(-K_i^{-1}F_i)^{\ell_i}=-\Kpw_i^{-1}\Fpw_i\in\Zsubalgchar^{\leqzero}_\bullet\subset \Zsubalgchar_\bullet\,$. Thus, $\Tinv_i$
    maps all generators of $\Zsubalgchar^+_w$ to $\Zsubalgchar_\bullet\,$ as desired.
\end{proof}

The action of the Artin groups on the maximal skew-commutative algebras is rather non-trivial. 
We leave it here as an open question whether $\mathscr A$ acts effectively or faithfully, as
suggested by the identifications with actions on Lie groups discussed in Sections~\ref{subsec:AnBruhat} and \ref{subsec:IsomHA-B2}. 

For illustration, we gather below explicit formulae for the Lie type $\LT{A}_2$ and odd $\kay\,$, with
$\delzeta{\;}=(\zeta-\zeta^{-1})^{\ell}$. The action of $\Tinv_1$ on the Cartan algebra, given by
$\Tinv_1(\Kpw_1)=\Kpw_1^{-1}$ and $\Tinv_1(\Kpw_2)=\Kpw_2\Kpw_1^{-1}$, factors through
the quotient $\mathscr A=B_3\rightarrow \Weyl=S_3\,$. The remaining expressions are
as follows.
\begin{equation}\label{eq:A2Tinv1Form}
\begin{aligned} 
     \rule{0mm}{4.7mm}
   \Tinv_1(\Epw_1)&=-\Kpw_1^{-1}\Fpw_1\
   \quad &
   \Tinv_1(\Epw_2)&=\Epw_{(12)} 
   \quad &
   \Tinv_1(\Epw_{(12)})&=-\Epw_2+\delzeta{\;}\Kpw_1^{-1}\Epw_{(12)}\Fpw_1\\
     \rule{0mm}{4.7mm}
   \Tinv_1(\Fpw_1)&=-\Kpw_1\Epw_1 
   &
   \Tinv_1(\Fpw_2)&=\Fpw_{(12)}
   &
   \Tinv_1(\Fpw_{(12)})&=-\Fpw_2-\delzeta{\;}\Kpw_1\Fpw_{(12)}\Epw_1
\end{aligned}   
\end{equation}

Those in the first two columns are immediate from (\ref{eq:TinvXYsame}) and (\ref{eq:TinvXY}).
For those in the third column, we invoke the identity $\Epw_{(12)}+\Epw_{(21)}+\delzeta{\;}\Epw_{1}\Epw_{2}=0$ with $\delzeta{\;}=(\zeta-\zeta^{-1})^{\ell}$,
which is obtained by specializing Propositions~\ref{prop:XJ-form}. We also use 
the identity $\Tinv_1(\Epw_{(21)})=\Epw_2$\,, which is implied by Propositions~\ref{prop:wordgencont}. 
The action on $\Fpw_{(12)}$ follows via application of $\Cartaninv\,$. 

It is now not hard to find many infinite families of braids in $B_3$ for which the action in (\ref{eq:A2Tinv1Form}) on generators produces polynomials whose (na\"ive) degrees monotonously increase
with the crossing numbers of the braids.  
 
 \subsection{\texorpdfstring{$\Zsubalgchar$}{Z}-Induced Ideals, \texorpdfstring{$\Commsigngrp$}{G}-Invariance, and \texorpdfstring{$\Commsigngrp$}{G}-Grading}\label{subsec:Z_Ideals} 
In this section, we describe the general process of obtaining ideals for the full quantum groups from ideals of the skew-central subalgebras. To keep the exposition more readable, we will often
suppress the choice of a maximal word $z\in\wordsetmax$
in the notation of the maximal algebra $\Zsubalgchar_\bullet^{\utypechar}=\Zsubalgchar_z^{\utypechar}$.
By Theorem~\ref{thm:Zwordindep} this is unambiguous for Lie types other than $\LT{G}_2$\,. In the case
$\LT{G}_2$ it is tacitly understood that there may be (at most) two or four choices, depending on $\utypechar\,$.

 As skew-polynomial algebras, the $\Zsubalgchar^{\utypechar}_\bullet$  have a  rich ideal structure. Ideals in $\Zsubalgchar^{\utypechar}_\bullet$ extend to ideals in $\Uz^{\utypechar}\,$ provided they are commensurable with the action of an elementary abelian 2-group stemming from  commutation relations as in Proposition~\ref{prop:PwGenCommRels}.

More formally, the latter group is defined as  $\Commsigngrp=\mathbb F_2^{\sroots}\times\mathbb F_2^{\sroots}$\,, acting 
 on the algebra $\Zsubalgchar_w$ for any $w\in\wordset\,$. 
An element $(\widebarr \mu, \widebarr \nu)\in \Commsigngrp$ acts on generators in 
$\Zsubalgchar_w$ associated to some $v\leqRB w$ by multiplication with $\pm 1$ via the
formulae 
 \begin{equation}\label{eq:Gactdefgen}
     (\widebarr \mu, \widebarr \nu)\Commsignact\Epw_v=\commph{\wordroot(v)}{\widebarr{\mu}} \Epw_v
     \;,\quad 
     (\widebarr \mu, \widebarr \nu)\Commsignact\Fpw_v=\commph{\wordroot(v)}{\widebarr{\nu}} \Fpw_v
     \;,\quad \mbox{and}\quad
     (\widebarr \mu, \widebarr \nu)\Commsignact\Kpw_i=\commph{\alpha_i}{\widebarr{\mu}+\widebarr{\nu}} \Kpw_i\;.
\end{equation}
Since the $\{X_v,Y_v,L_i:v\leqRB w, 1\leq i\leq n\}$ are independent (sign-commuting) generators of $\Zsubalgchar_w$\,, 
the formulae in \eqref{eq:Gactdefgen} uniquely extend to a $\Commsigngrp$-action on $\Zsubalgchar_w$ as algebra automorphisms. 
The action clearly preserves all $\Zsubalgchar^{\utypechar}_w$ subalgebras and specializes to a
$\mathbb F_2^{\sroots}$-action on either $\Zsubalgchar^{+}_w$ or $\Zsubalgchar^{-}_w\,$ as one of the components acts trivially.

The commutation relations from Proposition~\ref{prop:PwGenCommRels} and \eqref{eq:Lbcommrel} can now 
be rephrased more compactly using the $\Commsigngrp$-action. Specifically, for a fully homogeneous element $b\in\Uq$  and
$W\in \Zsubalgchar_w$ we find
\begin{equation}\label{eq:GactCommrel}
    W\cdot b=b\cdot\ztgrad^*(b)\Commsignact W\;,
    \quad\mbox{where}\quad \ \ztgrad^*(b)=(\ztgradp (b), \ztgradn (b))\in\Commsigngrp\;,
\end{equation}
 by straightforward verification on the generators. Here we use also $\wgrad=\ztgradp+\ztgradn\mod 2$ and the extension to general elements via standard action properties. We say that a subspace $A\subseteq\Zsubalgchar^{\utypechar}_w$ is 
 {\em $\Commsigngrp$-invariant} if $g\Commsignact x\in A$ for all $g\in\Commsigngrp$ and $x\in A\,$. 
 
\begin{lem}
    The $\Commsigngrp$-action on $\Zsubalgchar^{\utypechar}_\bullet$ does not depend on a choice of $w\in\wordsetmax\,$. 
\end{lem}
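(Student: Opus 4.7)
The plan is to characterize the $\Commsigngrp$-action intrinsically through the commutation identity \eqref{eq:GactCommrel}, which refers only to the ambient algebra $\Uz$ and makes no use of a chosen PBW presentation. For each $w\in\wordsetmax$, denote temporarily by $\Commsignact^{(w)}$ the action defined on the generators $\Epw_v,\Fpw_v,\Kpw_i$ ($v\leqRB w$) of $\Zsubalgchar_w=\Zsubalgchar_\bullet$ via the formulae \eqref{eq:Gactdefgen}; my goal is to show $\Commsignact^{(w)}=\Commsignact^{(w')}$ for any $w,w'\in\wordsetmax$.

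The first step is to verify that for each $w$ the identity
$$W\cdot b\;=\;b\cdot(\ztgrad^*(b)\Commsignact^{(w)} W)$$
holds for every $W\in\Zsubalgchar_\bullet$ and every fully $\ztgrad$-homogeneous $b\in\Uz$. On the generators $\Epw_v$, $\Fpw_v$, and $\Kpw_i$ of $\Zsubalgchar_w$, this is precisely Proposition~\ref{prop:PwGenCommRels} combined with \eqref{eq:Lbcommrel}; and since $\Commsignact^{(w)}$ acts on $\Zsubalgchar_\bullet$ by algebra automorphisms (by construction, using the skew-polynomial presentation from Proposition~\ref{prop:ZPBWbases}), while the right-hand commutation sign is multiplicative in $W$, the identity extends to all $W\in\Zsubalgchar_\bullet$. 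This is the key structural observation reducing word dependence of the action to a question inside the fixed algebra $\Zsubalgchar_\bullet$.

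The second step is a uniqueness argument. For every $g=(\widebarr\mu,\widebarr\nu)\in\Commsigngrp$ I exhibit a nonzero fully $\ztgrad$-homogeneous element $b\in\Uz$ with $\ztgrad^*(b)=g$; an explicit choice is
$$b\;=\;E_1^{\mu_1}\cdots E_n^{\mu_n}F_1^{\nu_1}\cdots F_n^{\nu_n},$$
which is nonzero by the PBW theorem (Theorem~\ref{thm:mainPBW} as specialized in Corollary~\ref{cor:UzProps}). I then claim that for such a $b$, the equation $W\cdot b=b\cdot X$ uniquely determines $X\in\Uz$, because left multiplication by $b$ is injective on $\Uz$. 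The cleanest way to see this is to base change to $\mathbb Q(\zeta)$: the subalgebras $\UzQ^{\pm}$ are iterated skew-polynomial extensions over $\mathbb Q(\zeta)$ in their PBW generators, hence Ore domains, so products of PBW generators like $b$ are not zero divisors, and freeness of $\Uz$ as a $\Zzvn{\dpone}$-module propagates this back to $\Uz$.

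Combining these two steps, $g\Commsignact^{(w)}W$ is characterized purely by the identity $W\cdot b=b\cdot(g\Commsignact^{(w)}W)$ applied to \emph{any} convenient $b$ with $\ztgrad^*(b)=g$, a characterization that involves only $\Zsubalgchar_\bullet$ (well-defined independently of $w$ by Theorem~\ref{thm:Zwordindep}) and the ambient algebra $\Uz$. Hence $\Commsignact^{(w)}=\Commsignact^{(w')}$ on all of $\Zsubalgchar_\bullet$. The only real obstacle in the plan is the injectivity of left multiplication by $b$; this is mild but needs explicit verification at roots of unity, and I expect the Ore-domain reduction over $\mathbb Q(\zeta)$ to be the most efficient route.
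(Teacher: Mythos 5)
Your argument has the same skeleton as the paper's: both characterize the action through the commutation identity \eqref{eq:GactCommrel} (whose form, by Proposition~\ref{prop:PwGenCommRels} and \eqref{eq:Lbcommrel}, depends only on the gradings and not on the word), and then cancel a homogeneous element $b$ with prescribed $\ztgrad^*(b)$. Where you differ is in how the cancellation is justified. The paper's terse ``choosing respective PBW basis elements'' is most economically read as follows: since both actions are group homomorphisms $\Commsigngrp\rightarrow\mathrm{Aut}(\Zsubalgchar_\bullet)$, it suffices to compare them on the generators $(\bar\alpha_i,0)$ and $(0,\bar\alpha_i)$ of $\Commsigngrp$, realized by $b=E_i$ and $b=F_i$; picking a maximal word beginning with the letter $w_i$, left multiplication by $E_i$ (resp.\ $F_i$) sends PBW basis elements of Theorem~\ref{thm:mainPBW} to PBW basis elements, so it is injective and the two actions coincide on group generators, hence everywhere. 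You instead treat an arbitrary $g$ at once via the long monomial $b=E_1^{\mu_1}\cdots E_n^{\mu_n}F_1^{\nu_1}\cdots F_n^{\nu_n}$ and a domain argument over $\mathbb Q(\zeta)$. That route works, but as written it has a small jump: the Ore-extension structure of $\UzQ^{+}$ and $\UzQ^{-}$ (which rests on the Levendorskii--Soibelman straightening relations, never stated in this generality in the paper) gives non-zero-divisors inside $\UzQ^{\pm}$, not immediately in the full algebra. To close it, note that under the triangular decomposition left multiplication by $x\in\UzQ^{+}$ acts as $(\mbox{left mult.\ on }\UzQ^{+})\otimes\id\otimes\id$ in the ordering $U^+U^0U^-$, similarly for $y\in\UzQ^{-}$ in the opposite ordering, and compose the two injections to handle $b=xy$; alternatively invoke the De Concini--Kac theorem that the unrestricted specialization is a domain. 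With either patch your proof is complete, though the reduction to $b=E_i,F_i$ is lighter and stays entirely inside the paper's toolkit.
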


If $W\in\Zsubalgchar_\bullet=\Zsubalgchar_{\longweyl}$ and $w,z\in\wordsetmax$ represent
$\longweyl$ denote by $(\bar u,\bar\nu)\Commsignact\!^w$ and $(\bar u,\bar\nu)\Commsignact\!^z$ the respective actions defined via \eqref{eq:Gactdefgen}.
Equation \eqref{eq:GactCommrel} then implies
$b\cdot\ztgrad^*(b)\Commsignact\!^z W=b\cdot\ztgrad^*(b)\Commsignact\!^w W$ for homogeneous elements $b$. Choosing respective PBW basis elements one may then infer
that the actions need to be the same.  Another immediate consequence of \eqref{eq:GactCommrel} is as follows.

 \begin{lem}\label{lem:Ginvar2Sid}
     If $J$ is a $\Commsigngrp$-invariant left or right ideal in $\Zsubalgchar^{\utypechar}_\bullet$, then $J$ is also a two-sided ideal.
 \end{lem}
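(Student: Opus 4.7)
The plan is to reduce both implications of the lemma to a single line computation using the commutation identity \eqref{eq:GactCommrel}, together with the fact that $\Commsigngrp=\mathbb F_2^{\sroots}\times\mathbb F_2^{\sroots}$ is an elementary abelian $2$-group and the $\Commsigngrp$-action on $\Zsubalgchar^{\utypechar}_\bullet$ consists of $\mathbb Z$-algebra automorphisms, so that $g\Commsignact(g\Commsignact y)=y$ for all $g\in\Commsigngrp$ and $y\in\Zsubalgchar^{\utypechar}_\bullet$. Throughout, ideal is meant in the unital $\Zgen$-algebra $\Zsubalgchar^{\utypechar}_\bullet$.

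First, I would observe that by Proposition~\ref{prop:ZPBWbases} the algebra $\Zsubalgchar^{\utypechar}_\bullet$ admits a PBW-type basis consisting of monomials in the generators $\Epw_v$, $\Fpw_v$, and $\Kpw_i^{\pm 1}$, each of which is fully homogeneous with respect to the combined $(\wgrad,\ztgrad^{\pm})$-grading, and hence each of which lies in a single homogeneous component $(\Uq)_{\mu}$ with a well-defined value of $\ztgrad^*\in\Commsigngrp$. By $\Zgen$-linearity it then suffices to verify the two-sided condition with the test element taken to be a fully homogeneous $b\in\Zsubalgchar^{\utypechar}_\bullet$.

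Next, suppose $J$ is a $\Commsigngrp$-invariant left ideal, let $x\in J$, and let $b\in\Zsubalgchar^{\utypechar}_\bullet$ be fully homogeneous. Applying \eqref{eq:GactCommrel} with $W=x$ gives
\begin{equation}\nonumber
x\cdot b \;=\; b\cdot\bigl(\ztgrad^*(b)\Commsignact x\bigr).
\end{equation}
The element $\ztgrad^*(b)\Commsignact x$ lies in $J$ by $\Commsigngrp$-invariance, and therefore $b\cdot\bigl(\ztgrad^*(b)\Commsignact x\bigr)$ lies in $J$ because $J$ is a left ideal. Hence $x\cdot b\in J$, showing that $J$ is also a right ideal. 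For the converse, assume $J$ is a $\Commsigngrp$-invariant right ideal, let $y\in J$, and let $b\in\Zsubalgchar^{\utypechar}_\bullet$ be fully homogeneous. Setting $g=\ztgrad^*(b)$ and substituting $x=g\Commsignact y$ in \eqref{eq:GactCommrel}, the involutivity $g\Commsignact(g\Commsignact y)=y$ rewrites the identity as
\begin{equation}\nonumber
b\cdot y \;=\; \bigl(g\Commsignact y\bigr)\cdot b.
\end{equation}
Now $g\Commsignact y\in J$ by $\Commsigngrp$-invariance, so $(g\Commsignact y)\cdot b\in J$ because $J$ is a right ideal, and therefore $b\cdot y\in J$, as required.

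There is no real obstacle here; the argument is essentially a one-line consequence of \eqref{eq:GactCommrel}. The only points that need a brief mention are the reduction to fully homogeneous test elements via the PBW basis of Proposition~\ref{prop:ZPBWbases}, and the self-inverse property of the $\Commsigngrp$-action which is built into $\Commsigngrp$ being an elementary abelian $2$-group. No case analysis by Lie type or by the parity of $\kay$ is required.
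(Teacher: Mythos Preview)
Your proof is correct and follows essentially the same approach as the paper: apply the commutation identity \eqref{eq:GactCommrel} to a homogeneous test element and use $\Commsigngrp$-invariance of $J$. The paper only spells out the left-ideal case explicitly, whereas you also handle the right-ideal case using the involutivity of the $\Commsigngrp$-action; this is a natural and correct addition.
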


 For example, suppose $J$ is left sided, $W\in J$, and $b\in\Zsubalgchar^+_\bullet$ is homogeneous. Then $W\cdot b=b\cdot W'$ where
 $W'=\ztgrad^*(b)\Commsignact W\in J$ by $\Commsigngrp$-invariance. Thus $W\cdot b=b\cdot W'\in J$ since $J$ is a left ideal and, hence, also a right ideal.
 It is not difficult to find counterexamples to Lemma~\ref{lem:Ginvar2Sid} if the
 condition of $\Commsigngrp$-invariance is dropped. 
 In the next proposition, we
 consider {\em algebra} ideals only, disregarding any coalgebra structures. 

 \begin{prop}\label{prop:Zideal}Suppose $J$ is a $\Commsigngrp$-invariant
 (any-sided) ideal in $\Zsubalgchar^{\utypechar}_\bullet$.  
 Then $\derideal J \utypechar w = J\cdot \Uz^{\utypechar}\,$ is a two-sided ideal in $\Uz^{\utypechar}$.
 Moreover, the multiplication map from \eqref{eq:ZSrem=S} induces an isomorphism of $\Zsubalgchar^{\utypechar}_\bullet$-modules, 
 \begin{equation}
     \left(\Zsubalgchar^\utypechar_\bullet/J\right)\otimes_\Zgen\bspan{z}{\Lmaxexpset s, +}\!\longrightarrow \,\Uz^{\utypechar}/\derideal J \utypechar w\,:\;
     ([z],b)\,\mapsto\,[z\cdot b]\,,
 \end{equation}
 where $[.]$ denotes the class of an element in the respective quotient algebra and $z\in\wordsetmax$ is a reduced word of maximal length. 
 \end{prop}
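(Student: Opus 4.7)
The plan is to handle the two assertions in turn, both following from combining the $\Commsigngrp$-commutation relation \eqref{eq:GactCommrel} with the PBW splitting \eqref{eq:ZSrem=S}. For the two-sided ideal property, note first that $\derideal{J}{\utypechar}{w}=J\cdot\Uz^{\utypechar}$ is by construction a right ideal, so only left-absorption requires argument. By $\Zgen$-linearity it suffices to show $b\cdot W\in \derideal{J}{\utypechar}{w}$ for each $\ztgrad$-bihomogeneous $b\in\Uz^{\utypechar}$ and each $W\in J$. Writing $g=\ztgrad^*(b)\in\Commsigngrp$, the commutation formula \eqref{eq:GactCommrel} reads $W\cdot b=b\cdot (g\Commsignact W)$; applying this with $W$ replaced by $g\Commsignact W$ and using that $\Commsigngrp\cong\mathbb F_2^{\sroots}\times\mathbb F_2^{\sroots}$ has exponent $2$ produces the inverted identity $b\cdot W=(g\Commsignact W)\cdot b$. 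Since $J$ is $\Commsigngrp$-invariant, $W'=g\Commsignact W$ lies in $J$, whence $b\cdot W\cdot x=W'\cdot b\cdot x\in J\cdot\Uz^{\utypechar}$ for any $x\in\Uz^{\utypechar}$. Note also that Lemma~\ref{lem:Ginvar2Sid} already guarantees $J$ is two-sided in $\Zsubalgchar^{\utypechar}_\bullet$, which will be needed in the second step.

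For the module isomorphism I would start from the multiplication map of \eqref{eq:ZSrem=S}, namely $\ZSisom^{\utypechar}_\bullet:\Zsubalgchar^{\utypechar}_\bullet\otimes_{\Zgen}\bspan{z}{\Lmaxexpset s,+}\longrightarrow\Uz^{\utypechar}$, which is a $\Zgen$-linear isomorphism by the PBW theorem (Theorem~\ref{thm:mainPBW}) and is simultaneously a left $\Zsubalgchar^{\utypechar}_\bullet$-module isomorphism by associativity of multiplication. The next step is to identify the preimage of $\derideal{J}{\utypechar}{w}$ under $\ZSisom^{\utypechar}_\bullet$ with $J\otimes_{\Zgen}\bspan{z}{\Lmaxexpset s,+}$. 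The inclusion $\supseteq$ is immediate since $\ZSisom^{\utypechar}_\bullet(W\otimes b)=W\cdot b\in J\cdot\Uz^{\utypechar}$ for $W\in J$. For $\subseteq$, decompose an arbitrary element $W\cdot x\in\derideal{J}{\utypechar}{w}$ via the inverse of $\ZSisom^{\utypechar}_\bullet$ as $x=\sum_i Z_i\cdot b_i$ with $Z_i\in\Zsubalgchar^{\utypechar}_\bullet$ and $b_i\in\bspan{z}{\Lmaxexpset s,+}$; then $W\cdot x=\sum_i(W\cdot Z_i)\cdot b_i$ and each $W\cdot Z_i$ lies in $J$ because $J$ is two-sided in $\Zsubalgchar^{\utypechar}_\bullet$. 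Because $\bspan{z}{\Lmaxexpset s,+}$ is free as a $\Zgen$-module (Proposition~\ref{prop:PBW2}), tensoring with it is exact, and the quotient isomorphism stated in the proposition follows by passing to the cokernel in the first tensor factor.

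The main obstacle I anticipate is not the logic above but the technicality that \eqref{eq:ZSrem=S} is formulated explicitly only for $\utypechar=+$, whereas the proposition covers all $\utypechar\in\{\phantom{+},+,-,\geqzero,\leqzero\}$. I would handle this by invoking the PBW decomposition of Theorem~\ref{thm:mainPBW} separately for each type, grouping basis monomials so that the $\Epw$, $\Fpw$, and $\Kpw$ blocks generate $\Zsubalgchar^{\utypechar}_\bullet$ on the left while the remainder spans (built from exponents in the appropriate $\Lmaxexpset s$) constitute the right tensor factor; the involutions $\Cartaninv$ and $\Qinvaut$ from Section~\ref{subsec:Kscale} can then transport the argument between $\Uz^+$ and $\Uz^-$, and the decomposition $\Uz=\Uz^+\cdot\Uz^0\cdot\Uz^-$ from Lemma~\ref{lm:multiso} gives the mixed cases. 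Once this bookkeeping is in place, the two paragraphs above apply verbatim.
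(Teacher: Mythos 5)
Your proposal is correct and follows essentially the same route as the paper: the left-ideal property via the $\Commsigngrp$-commutation \eqref{eq:GactCommrel} (inverted using that $\Commsigngrp$ has exponent two), and the quotient isomorphism by showing the image of $J\otimes_\Zgen\bspan{z}{\Lmaxexpset s,+}$ under the splitting \eqref{eq:ZSrem=S} is exactly $\widehat J$, using Lemma~\ref{lem:Ginvar2Sid} to absorb the $\Zsubalgchar^\utypechar_\bullet$-factors into $J$. The paper packages your element-level surjectivity argument as a short commutative-diagram chase and, like you, works with the $+$-version of \eqref{eq:ZSrem=S}, leaving the bookkeeping for the other values of $\utypechar$ implicit; your closing remark on transporting the argument via $\Cartaninv$, $\Qinvaut$, and the triangular decomposition is a reasonable way to fill that in.
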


 \begin{proof}
    $\derideal J \utypechar w$ is a right ideal by definition. To see it is also a left ideal, write  general elements
    $B\in\Uz^{\utypechar}$ and $C\in J$ as
    $B=\sum_sb_s$ and $C=\sum_tW_tc_t$ where $b_s, c_t\in \Uz^{\utypechar}$ are fully homogeneous and $W_t\in J\,$. 
    \eqref{eq:GactCommrel} then implies $B\cdot C=\sum_{s,t}\ztgrad^*(b_s)\Commsignact (W_t)b_sc_t\,$. By 
    $\Commsigngrp$-invariance of $J$ we have $\ztgrad^*(b_s)\Commsignact (W_t)\in J$ so that 
    $B\cdot C\in \derideal J \utypechar w$. Thus $\Uz^{\utypechar}\cdot J\subseteq \derideal J \utypechar w$,
    which implies 
    $\Uz^{\utypechar}\cdot \derideal J \utypechar w = \Uz^{\utypechar}\cdot    J \cdot \Uz^{\utypechar}\subseteq 
    \derideal J \utypechar w\cdot \Uz^{\utypechar}\subseteq\derideal J \utypechar w$ as desired. 

    For the second assertion, it suffices to show that $\,\ZSisom^+_z\left(J\otimes_\Zgen \bspan{z}{\Lmaxexpset s, +}\!\right)=\derideal J \utypechar w\,$. The inclusion of the image in $\derideal J \utypechar z$ is clear from 
    the ideal property. 
    Surjectivity of the restriction of $\ZSisom^+_z$ from $J\otimes_\Zgen \bspan{z}{\Lmaxexpset s, +}\!$ to $\derideal J \utypechar w$ is immediate from the commutative diagram below, where the top horizontal arrow is an isomorphism and the two vertical arrows are surjections.
    $$
     \bfig
    \square/>->>`>>`>>`>/<1330,500>[
      J\otimes_\Zgen \Zsubalgchar^\utypechar_\bullet \otimes_\Zgen \bspan{z}{\Lmaxexpset s, +}`
      J\otimes_\Zgen \Uz^{\utypechar} `
      \quad J\otimes_\Zgen \bspan{z}{\Lmaxexpset s, +}`
      \derideal J \utypechar w;
      \;\id_J\otimes \ZSisom^+_z\;`\ZSisom_J\otimes_\Zgen \id`\ZSisom_J`\ZSisom^+_z]
\efig$$
\vspace*{-8mm}
\end{proof}

An important special family of $\Commsigngrp$-invariant ideals are those generated by augmentation ideals of the
subalgebra $\Zsubalgchar^{+}_w$ for some reduced word $w\in\Weyl$ and $\utypechar=+$, $\utypechar=\geqzero$, or $\utypechar$ omitted for the full algebra. They are defined as 
\begin{equation}\label{eq:defAugZideal}
    \Zaugideal{w}^{\utypechar}=\left(A_w^{+}\right)=A_w^{+}\cdot \Zsubalgchar_\bullet^{\utypechar}
\qquad \mbox{ where } \qquad 
A_w^{+}=\mathrm{ker}\left(\varepsilon:\Zsubalgchar^{+}_w\rightarrow\Zgen\right)\,.   
\end{equation}
It is clear from the PBW bases that 
$\Zaugideal{w}^{+}$ is identical to the ideal generated by $\{X_v:\emptyword\neq v\leqRB w\}\,$. 
Assume some $z\in\wordsetmax$ with $w\leqRB z$ and let $s=\Weylpres(w)\,$. 
It is then not hard to see  that, as a
free $\Zgen$-module, $\Zaugideal{w}^{+}$ is spanned by all $\Epwbase{z}{\psi}$ in $\basis{z}{{\Lexpset s},+}$ 
for which the restriction of $\psi$ to the inversion set $\descroots{s}\subseteq\proots$ is non-zero.

For a more concise description of the ideals in the full quantum algebras, we
introduce abbreviated notation for exponent subsets complementary to the $\Lmaxexpset s$ from (\ref{eq:Lmaxexpset}), namely, 
\begin{equation}\label{eq:LminExpSet}
    \Lminexpset s=\left\{\psi\in\expsetup s:\,\exists\alpha\in\descroots{s} \;\mbox{ for which }\;\psi(\alpha)\geq \ell_\alpha  \right\}
    \qquad
    \mbox{and}
    \qquad
    \Lminexpset \bullet=\Lminexpset {\longweyl}\;.
\end{equation}
It is clear from the isomorphism in (\ref{eq:ZSrem=S}) and with notation as in (\ref{eq:DefSspans}) that
the associated ideals in $\Uz^+$ may also be written as the two-sided ideals 
generated by the spans corresponding to these exponent sets. That is, 
\begin{equation}\label{eq:Kw=SpangeqL}
    \Zaugidealhat{w}^{+}\,=\,\Uz^+\cdot\bspan{w}{\Lminexpset s,+}\!=\,\Uz^+\cdot\bspanopp{w}{\Lminexpset s,+}\!.
\end{equation}
Observe that for a maximal word $z\in\wordsetmax$  we have $\Zaugideal{z}^{+}=A^+_z\,$, as defined in
(\ref{eq:defAugZideal}), and $\Zaugideal{z}^{\geqzero}=A^+_z\cdot\Zsubalgchar^0\,$. 
Correspondingly, (\ref{eq:Kw=SpangeqL}) reduces to 
$\Zaugidealhat{z}^{+}\,=\,\bspan{z}{\Lminexpset{\bullet},+}\!$. 

Note that
since the automorphisms from Corollary~\ref{cor:InvolZalg} preserve the counit, the analogs of
the identities in (\ref{eq:InvolZalg}) also hold for the respective ideals. Applied to the 
induced ideals in the quantum algebras one finds
\begin{equation}\label{eq:KhatInvolIds}
    \Cartaninv\bigl(\Zaugidealhat{w}^{\pm}\bigr)=\Zaugidealhat{w}^{\mp}\,,
\qquad
\Kinvaut\bigl(\Zaugidealhat{z}^{\pm}\bigr)=\Zaugidealhat{z^\winvchar}^{\pm}\,,
\quad\mbox{and}\quad
S\bigl(\Zaugidealhat{z}^{\geqzero(\leqzero)}\bigr)=\Zaugidealhat{z^\winvchar}^{\geqzero(\leqzero)}\,.
\end{equation}
 
Similarly, Theorem~\ref{thm:Zwordindep}
and the fact that the definition in \eqref{eq:defAugZideal} does not refer to choices of bases
imply the next remark.

\begin{cor}\label{cor:KidealWordIndep}
Assume the Lie type is not $\LT{G}_2$\,, $\kay$ as before, and $s=\Weylpres(w)=\Weylpres(w')$ for two reduced
words $w,u\in\wordset$. Then $\Zaugideal{w}^{\utypechar}=\Zaugideal{u}^{\utypechar}\,$.
\end{cor}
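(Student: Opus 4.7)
The proof is essentially a direct application of the word-independence result already established in Theorem~\ref{thm:Zwordindep}. My plan is as follows.

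First I would note that the only piece of data in the definition \eqref{eq:defAugZideal} of $\Zaugideal{w}^{\utypechar}=A_w^{+}\cdot\Zsubalgchar_\bullet^{\utypechar}$ that \emph{a priori} depends on the word $w$ is the augmentation ideal $A_w^{+}=\ker(\varepsilon\colon\Zsubalgchar_w^{+}\to\Zgen)$, since the maximal algebra $\Zsubalgchar_\bullet^{\utypechar}$ is itself a choice-free object under the standing Lie type hypothesis. Here I would invoke Theorem~\ref{thm:Zwordindep}: for Lie types different from $\LT{G}_2$ and $\kay$ as in \eqref{eq:kaycond}, the subalgebra $\Zsubalgchar_w^{+}\subseteq\Uz^{+}$ depends only on $s=\Weylpres(w)$, so $\Zsubalgchar_w^{+}=\Zsubalgchar_u^{+}$.

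Next I would observe that the counit $\varepsilon$ is inherited from the ambient Hopf algebra $\Uz$, hence its restriction to the equal subalgebras $\Zsubalgchar_w^{+}=\Zsubalgchar_u^{+}$ is literally the same $\Zgen$-linear map. Therefore the kernels coincide: $A_w^{+}=A_u^{+}$. Multiplying by the common maximal algebra $\Zsubalgchar_\bullet^{\utypechar}$ on the right then gives
\[
\Zaugideal{w}^{\utypechar}\,=\,A_w^{+}\cdot\Zsubalgchar_\bullet^{\utypechar}\,=\,A_u^{+}\cdot\Zsubalgchar_\bullet^{\utypechar}\,=\,\Zaugideal{u}^{\utypechar},
\]
as required.

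There is no genuine obstacle here, since all the heavy lifting was done in Theorem~\ref{thm:Zwordindep}, whose proof reduced word-independence to the rank 2 relator-stability criterion of Corollary~\ref{cor:BaseOrdSplit} together with the explicit formulae of Propositions~\ref{prop:XJ-form} and \ref{prop:XI-form}. The corollary is really just recording the consequence for augmentation ideals. One small point worth emphasising is that, since the PBW description of $\Zsubalgchar_w^{+}$ shows $A_w^{+}$ is the free $\Zgen$-span of the monomials $\Epwbase{w}{\psi}$ with $\psi\neq 0$, the equality $A_w^{+}=A_u^{+}$ can equivalently be seen as the statement that these two spans agree inside the common subalgebra, consistent with the identification of spans already used in the proof of Theorem~\ref{thm:Zwordindep}.
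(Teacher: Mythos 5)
Your proposal is correct and is essentially the paper's own argument: the paper derives the corollary in one line from Theorem~\ref{thm:Zwordindep} together with the observation that the definition in \eqref{eq:defAugZideal} refers only to $\Zsubalgchar_w^{+}$ (via its augmentation ideal) and the choice-free maximal algebra, which is exactly the reduction you spell out.
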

We may thus write $\Zaugideal{s}^{\utypechar}$ for Lie types different from $\LT{G}_2\,$. 
We then adopt the abbreviated notation for a maximal reduced word $z\in\wordsetmax\,$
\begin{equation}\label{eq:MaxKidealNots}
    \ZmaxId{\bullet}^\utypechar=\Zaugideal{z}^\utypechar =\Zaugideal{\longweyl}^\utypechar 
       \qquad\mbox{and}\qquad
     \ZmaxIdHat{\bullet}^\utypechar=\Zaugidealhat{z}^\utypechar=\Zaugidealhat{\longweyl}^\utypechar\,,
\end{equation} 
where the first equalities also apply to the $\LT{G}_2$ case (for which $\ZmaxId{\bullet}^\utypechar$ may be 
ambiguous). Furthermore,
$\mathscr A$-invariance of $\Zsubalgchar_\bullet$\,, as asserted in Theorem~\ref{thm:ZTinvInvar}, the recursion \eqref{eq:TinvL}, and the basis
generators for $\Zaugideal{s}$ imply the following recursion for ideals. 

\begin{lem} Suppose $\Zaugideal{s}$ is the ideal of the {\em full}  algebra $\Zsubalgchar_\bullet$\,, $\len{s\cdot t}=\len{s}+\len{t}$ for $s,t\in\Weyl$, and Lie type $\neq\LT{G}_2\,$. Then\vspace*{-9mm}

    \begin{equation}
    \Zaugideal{s\cdot t}=\Zaugideal{s}+\Tinv_s(\Zaugideal{t})\,. 
\end{equation} 
\end{lem}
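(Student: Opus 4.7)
The plan is as follows. First, I would fix reduced words $w, u \in \wordset$ with $\Weylpres(w) = s$ and $\Weylpres(u) = t$. Because $\len{s \cdot t} = \len{s} + \len{t}$, the concatenation $w \cdot u$ is a reduced word representing $s \cdot t$, and by Corollary~\ref{cor:KidealWordIndep} (which excludes Lie type $\LT{G}_2$) all three ideals $\Zaugideal{s}$, $\Zaugideal{t}$, $\Zaugideal{s \cdot t}$ are independent of these choices. Unwinding the definition in \eqref{eq:defAugZideal}, the two-sided ideal $\Zaugideal{s\cdot t}$ in $\Zsubalgchar_\bullet$ is generated by the augmentation set $\{\Epw_v : \emptyword \neq v \leqRB w \cdot u\}$, since $A_{s\cdot t}^+$ is itself the $\Zgen$-submodule spanned by these $\Epw_v$ and is $\Commsigngrp$-invariant.

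The core step would be to partition this generating set according to whether $v$ sits inside $w$ or extends beyond it. A non-empty prefix $v$ of $w\cdot u$ either satisfies $v \leqRB w$, in which case $\Epw_v \in A_s^+ \subseteq \Zaugideal{s}$, or has the form $v = w \cdot v'$ with $\emptyword \neq v' \leqRB u$, in which case the recursion \eqref{eq:TinvXY} yields $\Epw_v = \Tinv_s(\Epw_{v'})$ with $\Epw_{v'} \in A_t^+ \subseteq \Zaugideal{t}$. The two halves of the generating set therefore lie in $\Zaugideal{s}$ and in $\Tinv_s(\Zaugideal{t})$ respectively, giving the inclusion $\Zaugideal{s\cdot t} \subseteq \Zaugideal{s} + \Tinv_s(\Zaugideal{t})$. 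For the reverse inclusion, $\Zaugideal{s} \subseteq \Zaugideal{s\cdot t}$ follows from $s \leqRB s\cdot t$ (via \eqref{eq:descrmonot} and length additivity) together with the monotonicity \eqref{eq:Zposet}, while $\Tinv_s(\Zaugideal{t}) \subseteq \Zaugideal{s\cdot t}$ follows because its generators $\Tinv_s(\Epw_{v'}) = \Epw_{w\cdot v'}$ already lie in $A_{s \cdot t}^+$ and because Theorem~\ref{thm:ZTinvInvar} lets me rewrite $\Tinv_s(\Zaugideal{t}) = \Tinv_s(A_t^+) \cdot \Tinv_s(\Zsubalgchar_\bullet) = \Tinv_s(A_t^+) \cdot \Zsubalgchar_\bullet$, so that this is genuinely a right ideal in $\Zsubalgchar_\bullet$ contained in the right ideal $\Zaugideal{s\cdot t}$.

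The main obstacle in this argument is conceptual rather than computational: one needs Theorem~\ref{thm:ZTinvInvar} to ensure that $\Tinv_s(\Zaugideal{t})$ is even a meaningful object inside $\Zsubalgchar_\bullet$, and Corollary~\ref{cor:KidealWordIndep} to render the statement basis-free. Both hypotheses fail, or are unresolved, for $\LT{G}_2$, which is precisely why that type is excluded in the hypothesis. With these in hand, the proof reduces to the bookkeeping of generators via the identity $\Epw_{w \cdot v'} = \Tinv_s(\Epw_{v'})$ outlined above, plus the observation that the sum of two two-sided ideals in $\Zsubalgchar_\bullet$ is again a two-sided ideal.
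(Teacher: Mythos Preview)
Your proposal is correct and follows essentially the same approach as the paper, which gives only a one-sentence justification citing $\mathscr A$-invariance of $\Zsubalgchar_\bullet$ (Theorem~\ref{thm:ZTinvInvar}), the recursion \eqref{eq:TinvXY}, and the description of $\Zaugideal{s}$ via its generators. One minor simplification: since $\Tinv_s$ restricts to an algebra automorphism of $\Zsubalgchar_\bullet$ by Theorem~\ref{thm:ZTinvInvar}, the image $\Tinv_s(\Zaugideal{t})$ of a two-sided ideal is automatically a two-sided ideal, so you need not argue separately that it is a right ideal.
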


Analogous augmentation ideals $\Zsubalgchar_\bullet^\leqzero$ and $\Zsubalgchar_\bullet^-$ are defined in  using
$A_s^{-}=\mathrm{ker}\left(\varepsilon:\Zsubalgchar^{-}_s\rightarrow\Zgen\right)$ as generators. For a pair $s,t\in\Weyl$, combined ideals in the full algebra $\Zsubalgchar_\bullet$ are given
as
\begin{equation}\label{eq:defAugZidealComb}
    \Zaugideal{s,t}\,=\,\left(A_s^{+},A_t^{-}\right)\,=\,(A_s^{+}+A_t^{-})\Zsubalgchar_\bullet\;. 
\end{equation}

Observe also that $A_1^{\pm}=0$ and than \eqref{eq:InvolZpm} implies $\Cartaninv(A^+_s)=A^-_s\,$, which yields the identity
\begin{equation}\label{eq:Ks1Ids}
    \Zaugideal{s}=\Zaugideal{s,1}=\Cartaninv(\Zaugideal{1,s})\;.
\end{equation}
For later convenience, we introduce  compact notation for the ideals in $\Zsubalgchar_\bullet$ corresponding to the longest element $\longweyl\in\Weyl$ as 
\begin{equation}\label{eq:ZidealMaxNot}
\ZmaxId{\siAN}=\Zaugideal{\longweyl,1}\,,\qquad \ZmaxId{\siNA}=\Zaugideal{1,\longweyl}\,,
\quad\mbox{and}\quad
\ZmaxId{\siAA}=\Zaugideal{\longweyl,\longweyl}\,.
\end{equation}
Also for Lie types different from $\LT{G}_2$ (or keeping in mind the aforementioned possible 
ambiguity for $\LT{G}_2$) we use the suggested notation from Proposition~\ref{prop:Zideal} for the 
entailed ideals in $\Uz$ such as, for example,
\begin{equation}\label{eq:ZidealMaxNotHat}
    \Zaugidealhat{s}\,,\quad \Zaugidealhat{s,t}\,,\quad \ZmaxIdHat{\siAN}=\Zaugidealhat{\longweyl,1}\,,\quad \mbox{or}\quad 
    \ZmaxIdHat{\siAA}=\Zaugidealhat{\longweyl,\longweyl}\,.
\end{equation}

More generally, any subset $M\subset \basis{w}{{\Lexpset s},+}$ generates a $\Commsigngrp$-invariant ideal
$J=(M)$ of $\Zsubalgchar^+_w$ since all monomials in $\basis{w}{{\Lexpset s},+}$ are homogeneous in the 
sense that $g\Commsignact  \Epwbase{w}{\psi}=\pm \Epwbase{w}{\psi}\,$ for all $g\in \Commsigngrp\,$.
So, if $M=\left({\Epwbase{w}{\chi_1},\ldots, \Epwbase{w}{\chi_K}}\right)$ a PBW basis for $\Zsubalgchar^+_w/M$
is given by the set of all $\Epwbase{w}{\psi}$ for which $\psi\not\geq \chi_i$ for any $i=1,\ldots,K\,$. The 
partial order used here is defined as 
$\psi\geq\chi$ iff $\psi(\alpha)\geq\chi(\alpha)$ for all $\alpha\in\descroots{w}\,$. Consequently, a PBW basis
of $\Uz^+/\widehat{M}$ if given by the set of all $\Ebase{w}{\phi}$ for which $\phi\not\geq \ellfun\cdot\chi_i$ for any $i=1,\ldots,K\,$.

We briefly elaborate here on the caution expressed in the introduction to this monograph regarding quotients by only the 
$\Epw_i$ for simple roots. Let $M_{\sroots}$ be ideal in $\Zsubalgchar_z^+$ generated by the set 
$\{\Epw_i:i=1,\ldots,n\}=\{\Epw_{v}:\wordroot(v)\in\sroots\,,\;v\leqRB z\}$, see Proposition~\ref{prop:wordgencont}. Considering only the positive part, a PBW basis of $\Uz^+/\widehat M_{\sroots}$ is given by $\Epwbase{w}{\psi}$ with $\psi(\alpha_i)<\ell_i$ on simple roots 
but $\psi$ unbounded on $\proots\setminus\sroots\,$, so that $\Uz^+/\widehat M_{\sroots}$ is of {\em infinite} rank if $n>1\,$. 

Even more concretely, for Lie type $\LT{A}_2\,$ and $\kay=2\ell$, the algebra obtained by imposing only relations $E_1^\ell=0$ and 
$E_2^\ell=0$ (but not $E_{(12)}^\ell=0$) has PBW basis $\{E_1^{i_1}E_{(12)}^jE_2^{i_2}:\,0\leq i_1,i_2<\ell \mbox{ and } j\in\nnN\,\}\,$. The observations for the full quantum groups are analogous, leading to algebras without the desired properties of a restricted quantum group. 

We mention also a natural $\Commsigngrp$-grading $\ZYXgrad^*$ on any of the $\Zsubalgchar^\utypechar_w$ algebras, which provides a dual picture to $\Commsigngrp$-invariance and which is analogous to $\ztgrad^*$ on $\Uq\,$. Given $\alpha=\wordroot(v)$ 
we set $\,\ZYXgrad^*(\Epw_v)=(\bar\alpha,0)\,$ and $\,\ZYXgrad^*(\Fpw_v)=(0,\bar\alpha)\,$ where $\bar\alpha\in\mathbb F_2^{\sroots}$ is the image of the canonical lattice map $\proots\rightarrow \mathbb F_2^{\sroots}$ that assigns $\alpha_i\mapsto\alpha_i$\,. Similarly, we set 
$\ZYXgrad^*(\Kpw^\mu)=(\bar\mu,\bar\mu)\,$ for $\mu\in\mathbb Z^{\sroots}$ and $\Kpw$ as in \eqref{eq:DefLmu}. By Proposition~\ref{prop:ZPBWbases} the grading extends uniquely to all 
of $\Zsubalgchar^\utypechar_w$\,, yielding a decomposition
\begin{equation}\label{eq:ZGdecomp}
\Zsubalgchar^\utypechar_w\,=\,\bigoplus_{g\in\Commsigngrp}\left(\Zsubalgchar^\utypechar_w\right)_g\;.
\end{equation}
A pairing of $\Commsigngrp$ with itself is now given by the symmetric bihomomorphism
\begin{equation}
  \commphbigrchar:\Commsigngrp\times \Commsigngrp\longrightarrow \{+1,-1\}
    \qquad \mbox{with} \qquad 
\commphbigr{(\bar\mu,\bar\nu)}{(\bar\mu',\bar\nu')}=\commphbihom{\bar\mu}{\bar\mu'}\cdot \commphbihom{\bar\nu}{\bar\nu'}\,,
\end{equation}
where $\commphbihomchar$ is as defined in \eqref{eq:signbihom}. 

Suppose now 
that $\kay=2\ell\,$ and that $\roots$ is of oddly laced Lie type ($\LT{ADEG}$). Then
Lemma~\ref{lem:kappaprops} implies that \eqref{eq:Gactdefgen} can be expressed more compactly.
Namely, for any $g,h\in \Commsigngrp$ and $W\in \left(\Zsubalgchar^\utypechar_w\right)_g$ 
\begin{equation}
    h\Commsignact W= \commphbigr{g}{h}\cdot W\,. 
\end{equation} 
Assume, further, that the Lie type is $\LT{A}_{2k}$\,, $\LT{E}_6$\,, $\LT{E}_8$\,, or $\LT{G}_2$\,. For each of these, $\commphbihomchar$ and, hence, $\commphbigrchar$ are non-degenerate pairings of respective $\mathbb F_2$ lattices. So,
in these cases a submodule $J\subseteq \Zsubalgchar^\utypechar_w$ is $\Commsigngrp$-invariant if and only if it respects 
the decomposition in \eqref{eq:ZGdecomp}, meaning that for $J_g=J\cap (\Zsubalgchar^\utypechar_w)_g$ we have
\begin{equation}\label{eq:JGdecomp}
J\,=\,\bigoplus_{g\in\Commsigngrp}J_g\,.
\end{equation}

The decomposition in \eqref{eq:ZGdecomp} can be described in more concrete terms under further assumptions. 
Aside from the restriction to $\{\LT{A}_{2k}\,,\LT{E}_6\,,\LT{E}_8\,,\LT{G}_2\}\,$, suppose further $\ell$ is even ($\kay\equiv 0\mod 4$) and $w\in\wordsetmax$ is maximal so that, by Corollary~\ref{cor:Zcommutmax}, $\Zsubalgchar^\utypechar_w$ can be identified with a polynomial algebra as in \eqref{eq:Z=Polyn}. There is a natural epimorphism from the former with indeterminates labeled by all
positive roots to the polynomial algebra with inderminates given by only the simple roots.
$$
\textstyle 
\Zgen\left[\{x_\alpha:\,\alpha\in\proots\}\right]\,\rightarrow\,\Zgen\left[z_1,\ldots,z_n\right]\,:\;
x_\alpha\mapsto \prod_iz_i^{c_i}\quad \mbox{for } \; \alpha=\sum_ic_i\alpha_i\,. 
$$
Given $\mu\in\mathbb F_2^{\sroots}$ we say that a polynomial $f\in\Zgen\left[z_1,\ldots,z_n\right]$ has parity 
$\mu$ if 
$$
f(z_1,\ldots,z_{i-1},-z_i,z_{i+1},\ldots,z_n)=(-1)^{\mu_i}f(z_1,\ldots,z_n)\;\qquad \forall i=1,\ldots, n\,.
$$
Thus, a polynomial $p\in \Zgen\left[\{x_\alpha:\,\alpha\in\proots\}\right]$ represents an element 
in $(\Zsubalgchar_w^+)_{(\bar\mu,0)}$ with $\bar\mu=(\mu_1,\ldots,\mu_n)$ precisely if it is mapped to a polynomial in $\Zgen\left[z_1,\ldots,z_n\right]$ of
parity $\mu\,$, in which case we assign the parity $\bar\mu$ also to $p\,$ and well as the respective element in 
$\Zsubalgchar_w^+\,$.

Thus, for example for type $\LT{A}_2\,$, the element $X_{(12)}^3+X_1^5X_2$ is of parity $(1,1)$  in $(\Zsubalgchar_w^+)_{((1,1),0)}$, while the element $X_1+X_2$ is not of any fixed parity since 
$X_1\in(\Zsubalgchar_w^+)_{((1,0),0)}$ but $X_2\in(\Zsubalgchar_w^+)_{((0,1),0)}\,$. 
Thus, $(X_{(12)}^3+X_1^5X_2)$ is a $\Commsigngrp$-invariant ideal of $\Zsubalgchar_w^+$ but $(X_1+X_2)$ is not.
The smallest $\Commsigngrp$-invariant ideal the latter is contained in is $(X_1+X_2, 2X_1)\,$. 

Any finite collection of elements with fixed parity in $\Zsubalgchar_w^+$ thus generates a 
$\Commsigngrp$-invariant ideal. For a converse statement, suppose $\mathbb L$ is a Noetherian integral domain in which 2 is a unit, suppose $\mathscr j:\Zgen\rightarrow \mathbb L$ is a ring epimorphism, and let
$\Zsubalgchar_{w,\mathbb L}^+=\Zsubalgchar_w^+\otimes_{\mathscr j}\mathbb L\,$.
Then, in fact, {\em all} $\Commsigngrp$-invariant ideals of $\Zsubalgchar_{w,\mathbb L}^+$
are given as ideals generated by a finite collection of elements of fixed parity. 

\medskip

\section{Hopf Ideals for \texorpdfstring{$\Uz(\mathfrak{sl}_{n+1})$}{Uz(sl(n+1))} from Solvable Linear Algebraic Groups}
\label{sec:Ideals_An=GLn}

The $\LT{A}_n$ case is computationally more accessible than other Lie types, allowing 
explicit descriptions of the correspondence between the skew-central subalgebras discussed in the previous chapter and the (semi) classical coordinate rings of Borel subgroups. We start by providing 
basic background 
on linear algebraic groups and their coordinate rings in Section~\ref{subsec:AlgGrps-basics}.

In Section~\ref{subsec:CoalgAn} we construct specific generators of $\Uz(\mathfrak{g})$
for both $\mathfrak{g}=\mathfrak{sl}_{n+1}$ and $\mathfrak{g}=\mathfrak{gl}_{n+1}\,$. 
They are used in Section~\ref{subsec:Zn=CAn} to establish  
$\Zsubalgchar^{\geqzero}_\bullet$ as a Hopf subalgebra with explicit forms for 
the coproduct. The latter are used for identifications with centrally extended 
quantum matrix algebras,
whose complexifications for $\kay\not\equiv 2 \mod 4$, coincide with the coordinate rings 
of upper triangular matrices. 

Recall from Section~\ref{subsec:Z=SignPolyn} 
that for $\kay\equiv 0 \mod 4$ the algebra $\Zsubalgchar^{\geqzero}_\bullet$ is
commutative but {\em not} central in $\Uz(\mathfrak{g})$.
The remainder and Section~\ref{subsec:AnBruhat} discuss the construction of Hopf ideals from
subgroups and the relation of the $\Zaugideal{s}^{\geqzero}$ ideals from
\eqref{eq:defAugZideal} to respective Bruhat subgroups.

\subsection{Basic Background and Elementary Examples}\label{subsec:AlgGrps-basics}
We provide here some basic vocabulary and elementary examples as preparation and motivation for 
the correspondences between Hopf ideals and algebraic subgroups discussed in   
Sections~\ref{sec:Ideals_An=GLn}  and \ref{sec:Ideals_B2=SO5}. Most of the material reviewed here
can be found, for example, in the early chapters of \cite{Sp98} and \cite{Mi17}.

Suppose $G$ is any algebraic set
and $M$ some ring of functions on $G$. Further, assume $H$ is any subset of $G$ and $I$ some 
ideal in $M$. We denote   the {\em vanishing ideal} of $H$ in $M$ as well as the 
{\em vanishing locus} of $I$ in $G$ as 
\begin{equation}\label{eq:VanishingDefs}
     \VanIdeal(H)=\left\{f\in M:\,f(h)=0\;\forall h\in H\right\}
    \quad \mbox{ and }\quad
    \VanLocus(I)=\left\{g\in G: f(g)=0\;\forall f\in I\right\} \,,
   \end{equation}
respectively. We, trivially, have inclusions $H\subseteq \VanLocus(\VanIdeal(H))$ and 
$I\subseteq \VanIdeal(\VanLocus(I))$. Suppose $\mathbb k$ is an algebraically closed field,
$G=\mathbb k^m$, and $M=\mathbb k[x_1,\ldots,x_m]$ the ring of polynomials viewed as functions
on $G$. In this case, Hilbert's Nullstellensatz and the definition of the Zariski topology imply that 
\begin{equation}\label{eq:HilbertZariski}
    \VanIdeal(\VanLocus(I))=\radic{I}\quad \mbox{and} \quad\VanLocus(\VanIdeal(H))=\ZarClos{H}\,,
\end{equation}
where $\radic{I}$ denotes the radical of $I$ and $\ZarClos{H}$ the usual Zariski closure, establishing 
a one-to-one correspondence between Zariski closed subsets and radical ideals. This correspondence
generalizes from $\mathbb k^n$ to any algebraic set $G$. 

Recall  that a linear algebraic group is an affine algebraic set $G\subseteq \mathbb k^N$ with 
group operations that are morphisms of 
varieties. The coordinate ring on $G$, customarily denoted by
$\mathbb k[G]=\mathbb k[x_1,\ldots,x_N]/\VanIdeal(G)$, admits for each $g\in G$ a well-defined
algebraic evaluation map $\mathscr e_g:\mathbb k[G]\rightarrow \mathbb k$ with $\mathscr e_g([p])=p(g)\,$ where
$p$ is a polynomial in $\{x_i\}\,$.   

Importantly, these admit a natural Hopf algebra structure, see for example \cite[Ch 2]{Sp98}, \cite[Ch 3]{Mi17}. 
The coproduct $\Delta:\mathbb k[G]\rightarrow\mathbb k[G\times G]\cong \mathbb k[G]\otimes\mathbb k[G]$
is the dual of the product morphism on $G$ such that 
$(\mathscr e_a\otimes\mathscr e_b)\circ \Delta=\mathscr e_{ab}\,$ or, more informally,
$\Delta([p])(a,b)=p(ab)\,$. An antipode is obtained similarly from inversion on $G\,$.

As a central example, consider $\mathbb k[\mathrm{GL}(n,\mathbb k)]\cong \mathbb k[\{x_{ij}\},\tau]/\VanIdeal\,$, where 
$\VanIdeal=(\tau\cdot\det(x)-1)\,$. The presentation implies that any element $[p]\in\mathbb k[\mathrm{GL}(n,\mathbb k)]$
can be represented by a polynomial $p\in \mathbb k[\{x_{ij}\},\tau]$ in $N=n^2+1$ indeterminates. 
The coalgebra structure is readily worked out as 
\begin{equation}\label{eq:AlgGLnCoprod}
  \Delta([x_{ij}])=\sum_{k=1}^n[x_{ik}]\otimes [x_{kj}]\qquad \mbox{and}\qquad\Delta([\tau])= [\tau]\otimes[\tau] 
\end{equation}
and it is immediate that $\VanIdeal\,$ is a Hopf ideal. 

As usual, for a Hopf algebra $A$, denote the subgroup of {\em group-like} elements as $\GrLi{A}=\{a \in A: \Delta(a)=a\otimes a\}\,$. Applied to a coordinate ring $A=\mathbb k[G]$ this group is 
naturally identified with the algebraic character group $\AChar{G}=\mathrm{Hom}_{alg}(G,\mathbb k^\times)=\GrLi{\mathbb k[G]}$ 
consisting of group homomorphisms  $\sigma:G\rightarrow \mathbb k^\times\,$ that are also morphisms of algebraic sets. For algebraically closed $\mathbb k$ with $\mathrm{char}(\mathbb k)=0$ the character group 
of $\mathrm{GL}(n,\mathbb k)$ is infinite cyclic generated by the determinant function. That is,
\begin{equation}\label{eq:GLnChar}
    \AChar{\mathrm{GL}(n,\mathbb k)}=\mathbb Z_{(\mathrm{det})}\,.
\end{equation}
Closely related is the fact that $\mathrm{GL}(n,\mathbb k)$ is linearly reductive or, equivalently, that 
$\mathbb k[\mathrm{GL}(n,\mathbb k)]$ is cosemisimple for such $\mathbb k\,$.

Next, let $\delta:\mathbb k[\mathrm{GL}(n,\mathbb k)]\rightarrow\nnN$ be the function assigning the minimal degree of representatives $\delta(f)=\min\{\deg(p):f=[p]\}\,$, where $\deg(p)$ denotes the total degree of $p$ as a polynomial in the $n^2+1$ variables. 
Observe that the finite-dimensional linear subspaces $D_m=\{f\in \mathbb k[\mathrm{GL}(n,\mathbb k)]:\,\delta(f)\leq m\}$ form an ascending chain of subcoalgebras. Indeed, if $f=[p]$ and $p$ is of minimal degree $d=\delta(f)$, it is clear from the formulae in \eqref{eq:AlgGLnCoprod} that $\Delta(f)$ can be written as a summation of pure tensors of monomials, each of which with degree $\leq d\,$. 

It is a basic fact that for any linear algebraic group $H$ over $\mathbb k$ there exists a (Zariski) closed monomorphism 
$j_H:H\hookrightarrow\mathrm{GL}(n,\mathbb k)\,$ of algebraic groups for some $n\,$, e.g. \cite[2.3.7]{Sp98}.
Given such an embedding, we may define an ascending sequence of finite-dimensional linear spaces $D_m(H)=j^*_H(D_m)$, each of which is clearly a subcoalgebra of $\mathbb k[H]\,$. 

The next lemma establishes a one-to-one correspondence between Hopf ideals and (Zariski) closed algebraic subgroups in characteristic zero. It is easy to find examples in positive characteristic for which 
{\em \ref{item:ASG=HI:HI=radical}} does not hold.

\begin{lem}\label{lm:ASG=HI}
Let $G$ be a linear algebraic group over an algebraically closed $\mathbb k$ with $\mathrm{char}(\mathbb k)=0$.\vspace*{-1.5mm}

\begin{enumerate}[label=\roman*), leftmargin=12mm,]  
    \item If $J\subset \mathbb k[G]$ is a Hopf ideal, then $H=\VanLocus(J)$ is a  closed algebraic subgroup in $G$.
    \vspace*{1.3mm}\label{item:ASG=HI:VJ=grp}
    \item Further, $J$ is a radical ideal so that $J=\VanIdeal(H)=\VanIdeal(\VanLocus(J))\,$. 
    \vspace*{1.3mm}\label{item:ASG=HI:HI=radical}
    \item If $H\leq G$ is any subgroup, then $\VanIdeal(H)$ is a Hopf ideal in $\mathbb k[G]\,$. 
    \vspace*{1.3mm}\label{item:ASG=HI:VG=HI}
    \item In this case $\VanIdeal(H)=\VanIdeal(\ZarClos{H})\,$.
    \vspace*{1.3mm}\label{item:ASG=HI:ZarClos}
    \item If $I$ and $J$ are Hopf ideals in $\mathbb k[G]$, then $\VanLocus(I+J)=\VanLocus(I)\cap\VanLocus(J)\,$.
    \vspace*{1.3mm}\label{item:ASG=Group+cap}
    \item If $H$ and $K$ are closed algebraic subgroups of $G$, then $\VanIdeal(H\cap K)=\VanIdeal(H)+\VanIdeal(K)\,$.
    \vspace*{1.3mm}\label{item:ASG=Ideal+cap}
\end{enumerate}
\end{lem}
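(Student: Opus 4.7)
The plan is to establish the six items in sequence, reducing later ones to earlier ones via the Nullstellensatz dictionary \eqref{eq:HilbertZariski}. For \emph{\ref{item:ASG=HI:VJ=grp}}, I would exploit the defining identity $\mathscr{e}_{ab}=(\mathscr{e}_a\otimes\mathscr{e}_b)\circ\Delta$ that dualizes the product morphism of $G$. Given $a,b\in H=\VanLocus(J)$ and $f\in J$, the condition $\Delta(f)\in J\otimes\mathbb k[G]+\mathbb k[G]\otimes J$ combined with $\mathscr{e}_a(J)=\mathscr{e}_b(J)=0$ immediately yields $f(ab)=0$, so $ab\in H$. The identity $e$ lies in $H$ because $\varepsilon(J)=0$ and $f(e)=\varepsilon(f)$; inverses lie in $H$ because $f(a^{-1})=S(f)(a)$ and $S(J)\subseteq J$. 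Closedness is built into the definition of $\VanLocus$.

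For \emph{\ref{item:ASG=HI:HI=radical}}, the decisive input is Cartier's theorem: every affine group scheme over a field of characteristic zero is smooth, hence reduced. Applied to $\Spec(\mathbb k[G]/J)$, which inherits the structure of an affine group scheme precisely because $J$ is a Hopf ideal, this forces $\mathbb k[G]/J$ to be reduced, i.e.\ $J$ radical; the equation $J=\VanIdeal(\VanLocus(J))$ then follows from \eqref{eq:HilbertZariski}. Item \emph{\ref{item:ASG=HI:ZarClos}} is an elementary two-sided inclusion: $H\subseteq\ZarClos H$ gives $\VanIdeal(\ZarClos H)\subseteq \VanIdeal(H)$, while $\VanLocus(\VanIdeal(H))$ is Zariski closed and contains $H$, hence contains $\ZarClos H$, giving the reverse inclusion after applying $\VanIdeal$ again. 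A standard continuity argument for multiplication and inversion shows that $\ZarClos H$ is itself a subgroup, so for \emph{\ref{item:ASG=HI:VG=HI}} one may replace $H$ by its Zariski closure and assume $H$ is closed. The isomorphism $\mathbb k[G\times G]\cong \mathbb k[G]\otimes\mathbb k[G]$ then identifies $\VanIdeal(H\times H)$ with $\VanIdeal(H)\otimes\mathbb k[G]+\mathbb k[G]\otimes\VanIdeal(H)$, and since the multiplication map sends $H\times H$ into $H$ we have $\Delta(\VanIdeal(H))\subseteq \VanIdeal(H\times H)$, establishing coideal stability. Stability under $\varepsilon$ and $S$ is immediate from $e\in H$ and $H^{-1}=H$.

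Item \emph{\ref{item:ASG=Group+cap}} is purely set-theoretic: $g\in\VanLocus(I+J)$ iff every element of both $I$ and $J$ vanishes at $g$, which is the definition of $\VanLocus(I)\cap\VanLocus(J)$; no Hopf structure is actually used. For \emph{\ref{item:ASG=Ideal+cap}}, the sum $\VanIdeal(H)+\VanIdeal(K)$ is a Hopf ideal because the defining closure conditions under $\Delta$, $\varepsilon$, and $S$ pass through sums; by \emph{\ref{item:ASG=Group+cap}} together with $\VanLocus(\VanIdeal(H))=H$ and $\VanLocus(\VanIdeal(K))=K$ (valid since $H,K$ are closed) its vanishing locus is $H\cap K$, and applying \emph{\ref{item:ASG=HI:HI=radical}} once more to this Hopf ideal yields $\VanIdeal(H)+\VanIdeal(K)=\VanIdeal(H\cap K)$. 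The only genuine obstacle is \emph{\ref{item:ASG=HI:HI=radical}}: reducedness of $\mathbb k[G]/J$ is not automatic and requires the characteristic-zero hypothesis through Cartier's theorem, in accordance with the remark immediately preceding the lemma. Everything else is a formal consequence of this together with the standard dictionary between affine varieties and their coordinate rings.
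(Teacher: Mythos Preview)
Your proof is correct and matches the paper's approach on items \emph{i)}, \emph{ii)}, \emph{v)}, and \emph{vi)}: the evaluation-map argument for closure under multiplication, Cartier's theorem (cited in the paper as Sweedler, Theorem~13.1.2) for reducedness, and the Hopf-ideal-plus-Nullstellensatz argument for the sum identity are all essentially identical.

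The genuine difference is in \emph{iii)}. The paper does not reduce to the case of closed $H$; instead it uses the ascending chain of finite-dimensional subcoalgebras $D_m\subset\mathbb k[G]$ (built earlier from an embedding $G\hookrightarrow\mathrm{GL}(n,\mathbb k)$), chooses a complement $R_m$ to $\mathcal I_m=\VanIdeal(H)\cap D_m$ with basis $\{g_\nu\}$, and argues by a direct linear-algebra computation that the coefficient matrix of $\Delta(f)$ on $R_m\otimes R_m$ must vanish because $\vec g(a)^\transp B\,\vec g(b)=0$ for all $a,b\in H$. This argument is self-contained and never appeals to the fact that $\ZarClos{H}$ is a subgroup. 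Your route---pass to the Zariski closure, then use $\VanIdeal(H\times H)=\VanIdeal(H)\otimes\mathbb k[G]+\mathbb k[G]\otimes\VanIdeal(H)$ via $\mathbb k[H\times H]\cong\mathbb k[H]\otimes\mathbb k[H]$---is cleaner and more conceptual, at the cost of invoking that standard closure-of-subgroup fact and the tensor-product identification. Your derivation of \emph{iv)} is also more direct than the paper's, which loops back through \emph{ii)} and \emph{iii)}; your two-sided inclusion needs neither.
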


\begin{proof} For {\em \ref{item:ASG=HI:VJ=grp}} note that $a\in\VanLocus(J)$ iff $J\subseteq \mathrm{ker}(\mathscr e_a)$. 
So, for $a,b\in \VanLocus(J)$ and $J$ a bi-ideal we have 
$\mathscr e_{ab}(J)=\mathscr e_{a}\otimes\mathscr e_{b}(\Delta(J))\subseteq\mathscr e_{a}\otimes\mathscr e_{b}(J\otimes\mathbb k[G]+\mathbb k[G]\otimes J)=\{0\}$ and hence $ab\in \VanLocus(J)$. The argument for the antipode is analogous.
Given that $\mathrm{char}(\mathbb k)=0$ and using the fact that $\mathbb k[G]/J$ is a commutative Hopf algebra, Theorem~13.1.2 in \cite{Sw69} states that $\mathbb k[G]/J$ is a reduced $\mathbb k$-algebra. 
This, in turn, means that 
$J=\radic{J}$ is a radical ideal. The statements in {\em \ref{item:ASG=HI:HI=radical}} follow now from 
\eqref{eq:HilbertZariski} as
$J=\VanIdeal(\VanLocus(J))=\VanIdeal(H)\,$. 

For {\em\ref{item:ASG=HI:VG=HI}} use the chain of finite-dimensional coalgebras $D_0\subset D_1\subset \ldots$
with $\mathbb k[G]=\bigcup_jD_j\,$ described above. Set $\mathcal I_m=\VanIdeal(H)\cap D_m\,$, let
$D_m=\mathcal I_m\oplus R_m$ for some complementary space, and pick a basis $\{g_\nu\}$ for $R_m\,$ with
$\dim(R_m)=M$. Let $\vec g:G\rightarrow\mathbb k^M$ be given by $\vec g(a)=(g_1(a),\ldots,g_M(a))$. 
Thus, if $\vec v\,^\transp\vec g(a)=0$ for all $a\in H$ then $\vec v\,^\transp\vec g\in \VanIdeal(H)$ 
and hence $\vec v=0$ by linear independence. Given $f\in\mathcal I_m\,$, there are now unique coefficients $b_{\nu,\mu}\in\mathbb k$ such that $\Delta(f)=\sum_{\nu\mu}b_{\nu,\mu}g_\nu\otimes g_\mu+T$ with 
$T\in \mathcal I_m\otimes D_m+D_m\otimes \mathcal I_m\,$. Evaluating at $(a,b)$ for $a,b\in H$ so that $ab\in H$ 
we have
$0=f(ab)=\mathscr e_a\otimes \mathscr e_b \Delta(f)=\sum_{\nu\mu}b_{\nu,\mu}g_\nu(a)g_\mu(b)+\mathscr e_a\otimes \mathscr e_b(T)=\vec g(a)^\transp B\vec g(b)$, where $B=(b_{\nu,\mu})\,$ and $\mathcal{I}_m\subseteq\mathrm{ker}(\mathscr e_{a})\,$. 

As $\vec g(a)^\transp B\vec g(b)=0$ holds for all $a,b\in H$, the previous remarks imply
$B=0$ and, hence, $\Delta(f)=T\in \mathcal I_m\otimes D_m+D_m\otimes \mathcal I_m\subseteq \VanIdeal(H)\otimes\mathbb k[G]+k[G]\otimes \VanIdeal(H)\,$, proving that $\VanIdeal(H)$ is a bi-ideal. Invariance under the antipode is clear
from $S(f)(a)=f(a^{-1})=0\,$ if $a\in H$ and $f\in\VanIdeal(H)\,$. 
Finally, combining {\em \ref{item:ASG=HI:VG=HI}} with {\em \ref{item:ASG=HI:HI=radical}} and \eqref{eq:HilbertZariski} we find 
$\VanIdeal(H)=\VanIdeal(\VanLocus(\VanIdeal(H)))=\VanIdeal(\ZarClos{H})\,$.

The last two statements follow from the standard correspondence between algebraic sets and radical ideals. The only additional observation needed is that the sum of two Hopf ideals is again a Hopf ideal so that by 
{\em \ref{item:ASG=HI:HI=radical}}
$\VanIdeal(H\cap K)=\radic{(\VanIdeal(H)+\VanIdeal(K))}=\VanIdeal(H)+\VanIdeal(K)\,$. 
\end{proof}

The most elementary example is the additive group $\mathbb k_a$\,, for which $\mathbb k[\mathbb k_a]=\mathbb k[x]$ with $\Delta(x)=x\otimes 1+1\otimes x\,$ and $S(x)=-x\,$. 
The vanishing locus of any polynomial in $\mathbb k[x]$ is finite so that a non-trivial Hopf ideal needs to correspond to a finite subgroup of $\mathbb k_a$\,. In $\mathrm{char}(\mathbb k)=0$ the only such subgroup is $\{0\}$. Thus,
by Lemma~\ref{lm:ASG=HI}, the only proper Hopf ideal of $\mathbb k[x]$ is the augmentation ideal $\lpip x\rpip=\VanIdeal(\{0\})\,$. 
In characteristic $p>0$ we, additionally, have that the principal ideal $\lpip x^p\rpip$ generated by 
$x^p$ is a Hopf ideal. 

For the multiplicative group $\mathbb k_m=(\mathbb k^\times,\cdot)$, we have $\mathbb k[\mathbb k_m]=\mathbb k[\lambda,\lambda^{-1}]=\mathbb k[\lambda,\tau]/(\lambda\tau-1)\,$ with $\Delta(\lambda)=\lambda\otimes \lambda\,$. Recall any finite subgroup of 
$\mathbb k_m$ is cyclic and, thus, of the form $\mathsf Z_r=\langle \xi\rangle\,$ where $\xi$ a primitive $r$-th root of unity. We denote $\mathsf Z_1=\{1\}$ and use the convention $\mathsf Z_0=\mathbb k^\times\,$. 
The $\mathsf Z_r$ are algebraic subgroups with 
vanishing ideal $F_r=\VanIdeal({\mathsf Z}_r)=\lpip \lambda^r-1\rpip\,$ so that $\{F_r:r\in\mathbb N\}$ is the complete set of 
non-trivial Hopf ideals of $\mathbb k[\lambda,\lambda^{-1}]\,$. 

A somewhat more involved example is the semidirect product $P=\mathbb k^\times \ltimes\mathbb k$ with group
product given by $(\lambda,x)(\lambda',x')=(\lambda\lambda',x\lambda'+x')\,$. It may be viewed as the closed algebraic subgroup of the upper 
triangular matrices in $\mathrm{GL}_2$ with 1 in the upper left entry. 
Alternatively, $P$ may be viewed as $\mathrm{PGL_2^\geqzero}$, given by  
the image of the upper triangular matrices in the projective linear group $\mathrm{PGL}_2$\,.
Denote abelian algebraic subgroups as
$\bar{\mathsf Z}_r=\mathsf Z_r\times \{0\}$ and $U=\{1\}\times\mathbb k=[P,P]\cong \mathbb k_a\,$. 

\begin{lem}\label{lm:PAlgSubGr}
   Any closed algebraic subgroup of $P$ is of the form $g\bar{\mathsf Z}_rg^{-1}$ or $\bar{\mathsf Z}_rU$
    with $g\in U$ and $r\in\nnN$. 
\end{lem}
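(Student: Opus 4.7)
The plan is to exploit the semidirect structure via the short exact sequence $1\to U\to P\to\mathbb{k}^\times\to 1$, where the projection $\pi(\lambda,x)=\lambda$ has kernel $U=\{1\}\times\mathbb k$. For any closed algebraic subgroup $H\leq P$, the intersection $H\cap U$ is a closed subgroup of $U\cong\mathbb{k}_a$, and $\pi(H)$ is a closed subgroup of $\mathbb{k}^\times$ (images of homomorphisms of linear algebraic groups are closed). By the two elementary classifications immediately preceding the lemma, $H\cap U\in\{\{0\},U\}$ and $\pi(H)=\mathsf Z_r$ for some $r\in\nnN$. In the easy case $H\cap U=U$, one has $H\supseteq U$, and so $H=\pi^{-1}(\mathsf Z_r)=\bar{\mathsf Z}_r\cdot U$, which is the second family in the statement.

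If instead $H\cap U=\{0\}$, then $\pi|_H\colon H\to\mathsf Z_r$ is an isomorphism of algebraic groups, and its inverse is a section $\sigma(\lambda)=(\lambda,c(\lambda))$, where $c\colon\mathsf Z_r\to\mathbb k$ is a regular function satisfying the cocycle identity $c(\lambda\lambda')=c(\lambda)\lambda'+c(\lambda')$. I would next show that $c(\lambda)=a(\lambda-1)$ for a unique $a\in\mathbb k$. For $r=0$, this follows by writing $c=\sum_n a_n\lambda^n$ as a Laurent polynomial and matching coefficients of $\lambda^j(\lambda')^k$ on both sides of the cocycle identity: the only solutions are $a_n=0$ for $n\neq 0,1$ together with $a_0+a_1=0$. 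For $r\geq 2$, a one-step induction on the cocycle identity yields $c(\xi^k)=c(\xi)(1+\xi+\cdots+\xi^{k-1})$ for any primitive $r$-th root $\xi$, so that $a=c(\xi)/(\xi-1)$ does the job (the $r=1$ case is trivial). A direct multiplication in $P$ then shows that conjugation by $g=(1,a)\in U$ sends $(\lambda,0)$ to $(\lambda,a(\lambda-1))$, whence $H=g\bar{\mathsf Z}_r g^{-1}$, the first family.

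The one delicate step is the cocycle calculation; the hypothesis $\mathrm{char}(\mathbb k)=0$ enters essentially, since in positive characteristic additional cocycles (and hence non-conjugate splittings) would arise on finite $\mathsf Z_r$ whose order is divisible by the characteristic, forcing an enlargement of the classification. Everything else amounts to unpacking the extension structure and verifying the explicit conjugation formula in $P$.
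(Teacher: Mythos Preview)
Your proof is correct and shares the paper's skeleton: both split into the cases $H\cap U=U$ and $H\cap U=\{0\}$ via the projection $\pi$ to $\mathbb{k}^\times$, and dispatch the former identically. The treatment of the key case $H\cap U=\{0\}$, however, is genuinely different. You parametrize $H$ as the image of a regular section $\lambda\mapsto(\lambda,c(\lambda))$, recognize $c$ as a $1$-cocycle, and solve the cocycle equation explicitly---separately for $r=0$ (Laurent-polynomial matching) and $r\geq 2$ (induction on powers of a generator)---to obtain $c(\lambda)=a(\lambda-1)$, then identify $a$ with the conjugating parameter. The paper instead picks a single nontrivial $(\rho,y)\in H$, conjugates it to $(\rho,0)$ via $g=(1,(\rho-1)^{-1}y)$, notes that $H$ (being isomorphic to a subgroup of $\mathbb{k}^\times$) is abelian and therefore contained in the centralizer $C(g(\rho,0)g^{-1})=g\bar{\mathsf Z}_0g^{-1}$, and finishes by classifying closed subgroups of $\mathbb{k}_m$. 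Your cohomological route makes the vanishing of $H^1(\mathsf Z_r,\mathbb{k})$ explicit and is more systematic; the paper's centralizer trick is shorter and needs no case split on $r$. One implicit step worth flagging: when $r=0$ you need the bijection $\pi|_H\colon H\to\mathbb{k}^\times$ to have a \emph{regular} inverse (so that $c$ is genuinely a Laurent polynomial), which uses that bijective homomorphisms of algebraic groups are isomorphisms in characteristic zero---this, rather than the cocycle computation itself, is where the characteristic hypothesis actually enters your argument.
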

\begin{proof} Suppose $H$ is an algebraic subgroup in $P$. Let $\sigma: P\rightarrow \mathbb k^\times$ be 
the abelianization map and note that $S=\sigma(H)\leq \mathbb k^\times$ is an abelian (\textit{a priori} only constructible) subgroup.
Suppose $\rho\in S\setminus\{1\}$ is a non-trivial element, meaning $(\rho,y)\in H$ for some $y\in\mathbb k$.
It is then easy to check that $g(\rho,0)g^{-1}=(\rho,y)$ for $g=(1,(\rho-1)^{-1}y)\in U\,$.  

Note that $H\cap U=\mathrm{ker}(\sigma|_H)$ is an algebraic subgroup of $\mathbb k_a$ and, hence, either $\mathbb 1=(1,0)$ or $U$. If $H\cap U=\{\mathbb 1\}$ the map $\sigma: H\rightarrow S$ is a group isomorphism
so that $H$ is abelian. If $S\neq \{1\}$ there are, by the previous remark, $g\in U$ and $\rho\in\mathbb k^\times$ with $\rho\neq 1$ and $g(\rho,0)g^{-1}\in H$. Since $H$ is abelian it has to be in the centralizer
of $g\bar{\mathsf Z}_0g^{-1}=C(g(\rho,0)g^{-1})\cong\mathbb k_m\,$. Thus, $H=g\bar{\mathsf Z}_rg^{-1}$ for some $r\in\mathbb N\,$. If $S=\{1\}$ clearly $H=\{\mathbb 1\}\,$. 

Finally, assume $H\cap U=U$ and $S\neq \{1\}$. By the observation above we have that $(\rho,y)\in H$ 
implies $(\rho,0)\in H$ as now $g\in H\,$. Hence, $H=S\ltimes \mathbb k$ so that $S\cong H\cap \bar{\mathsf Z}_0$ is
an algebraic group. It follows that $H=K_r\ltimes \mathbb k=\bar{\mathsf Z}_rU\,$. If also $S=\{1\}$, clearly
$H=U=\bar{\mathsf Z}_1U\,$. 
\end{proof}

For a clearer formal distinction between coordinates of and functions on $G$ denote 
by $\cofun{t}, \cofun{x}:G\rightarrow\mathbb k$ the projections given by $\cofun{\lambda}(g)=\lambda$ 
and  $\cofun{x}(g)=x$ if $g=(\lambda,x)\,$. Similar to the previous examples, we have 
$\mathbb k[\mathrm{PGL_2^\geqzero}]\cong\mathbb k[P]=\mathbb k[\cofun{\lambda},\cofun{\lambda}^{-1},\cofun{x}]$ as a commutative algebra. The coproducts are easily 
worked out from the product rule on $P$ as
\begin{equation}\label{eq:xlambcop}
    \Delta(\cofun{\lambda}\,)=\cofun{\lambda}\otimes\cofun{\lambda}
\qquad\mbox{and}\qquad 
\Delta(\cofun{x})=\cofun{x}\otimes\cofun{\lambda}+1\otimes\cofun{x}\,,
\end{equation}
with $S(\cofun{\lambda})=\cofun{\lambda}^{-1}$ and $S(\cofun{x})=-\cofun{x}\cofun{\lambda}^{-1}\,$.

Assuming $\mathrm{char}(\mathbb k)=0$ and using Lemma~\ref{lm:ASG=HI}, we obtain a complete list of
Hopf ideals for $\mathbb k[P]$ as the vanishing ideals of the subgroups classified in 
Lemma~\ref{lm:PAlgSubGr}. For example, for $g=(1,\nu)$ with $\nu\in\mathbb k$ we find $J_\nu=\VanIdeal(g\bar{\mathsf Z}_0g^{-1})=\lpip\cofun{x}-\nu(\cofun{\lambda}-1)\rpip$ and for any $r\in\mathbb N$ we have $F_r=\VanIdeal(\bar{\mathsf Z}_rU)=\lpip \cofun{\lambda}^r-1\rpip$.  
For the remaining subgroups, the respective Hopf ideals are no longer principal, since for $r\in\mathbb N$ 
$\VanIdeal(g\bar{\mathsf Z}_rg^{-1})=\VanIdeal(g\bar{\mathsf Z}_0g^{-1}\cap \bar{\mathsf Z}_rU)=F_r+J_\nu=\lpip \cofun{\lambda}^r-1,\cofun{x}-\nu(\cofun{\lambda}-1)\rpip\,$. Note, finally, that the augmentation ideal is given by $\mathrm{ker}(\varepsilon)=F_1+J_0=J_\nu+J_\mu$ for $\nu\neq\mu$ and that $F_s+F_r=F_{\gcd(s,r)}\,$, providing a full description of the additive semigroup of Hopf ideals of $\mathbb k[P]\,$.

The discussion of $\mathbb k[P]$ now directly applies to quantum groups $\Uz$ or $\Uz^\geqzero$ at a root of unity $\zeta\,$. For a simple root $\alpha_i$\,, denote by $\Zsubalgchar_i^\geqzero$ the subalgebra 
generated by $\Kpw_i=K_i^{\ell_i}$ and $\Epw_i=E_i^{\ell_i}$. 

\smallskip

\begin{cor}\label{cor:ClassHI-A1}
The algebra $\Zsubalgchar_i^\geqzero$ is a commutative Hopf subalgebra of $\Uz^{(\geqzero)}$.

\noindent The semigroup of possible Hopf ideals of $\Zsubalgchar_i^\geqzero\otimes \mathbb C$ is given by 
the principal ideals
$\ddot F_r=\lpip L_i^r-1\rpip$ with $r\in\mathbb N$ and $\ddot J_\nu=\lpip X_i-\nu(L_i-1)\rpip$
with $\nu\in\mathbb C$ as well as the sums of ideals $\ddot F_r+\ddot J_\nu\,$. 
\end{cor}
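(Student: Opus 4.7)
The plan is to establish the claim in two stages: first verify that $\Zsubalgchar_i^\geqzero$ is a commutative Hopf subalgebra, and then identify its complexification with $\mathbb C[P]$ so that Lemmas~\ref{lm:ASG=HI} and \ref{lm:PAlgSubGr} produce the claimed list.

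For commutativity, I would observe that $\Kpw_i$ and $\Epw_i$ skew-commute with coefficient $\zeta_i^{2\ell_i^2}$, which equals $1$ by the very definition of $\ell_i$ as the order of $\zeta_i^2$. (Alternatively this is a direct application of Lemma~\ref{lm:Xcommut}.) Closure under the coproduct is the key computation: applying \eqref{eq:corels_pwrs_gen} with $m=\ell_i$ and specializing $q\mapsto\zeta$, every interior term has coefficient $\qbinsmall{\ell_i}{s}_i$, which vanishes by Corollary~\ref{cor:zeros-mulitnom} (or directly Lemma~\ref{lem:qbin-zetaform}) unless $s\in\{0,\ell_i\}$. Thus
\[
\Delta(\Epw_i)\,=\,\Epw_i\otimes \Kpw_i\,+\,1\otimes \Epw_i,\qquad \Delta(\Kpw_i)=\Kpw_i\otimes\Kpw_i,
\]
so $\Epw_i$ is $(1,\Kpw_i)$-skew-primitive and $\Kpw_i$ is group-like; closure under $S$ follows similarly from \eqref{eq:antipode_pow}, where the surviving prefactor is a unit in $\Zz$, yielding $S(\Epw_i)=-\zeta_i^{\ell_i}\Kpw_i^{-1}\Epw_i\in\Zsubalgchar_i^\geqzero$ (having adjoined $\Kpw_i^{-1}$ as in the $\Zsubalgchar^0_s$ convention of Proposition~\ref{prop:ZPBWbases}).

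Next I would identify $\Zsubalgchar_i^\geqzero\otimes\mathbb C$ with $\mathbb C[P]$ as Hopf algebras, where $P=\mathbb C^\times\ltimes\mathbb C=\mathrm{PGL}_2^\geqzero$ is the group discussed before the statement. Concretely, send $\Kpw_i\mapsto\cofun\lambda$ and $\Epw_i\mapsto\cofun x$. Comparing the coproducts computed above with \eqref{eq:xlambcop} shows this is a coalgebra map, and it is clearly an algebra map since both sides are commutative polynomial algebras in one Laurent variable and one polynomial variable (the PBW statement of Proposition~\ref{prop:ZPBWbases} gives freeness on the quantum side). It is a bijection because both sides are free on $\{\Kpw_i^{\pm 1},\Epw_i\}$ resp.\ $\{\cofun\lambda^{\pm 1},\cofun x\}$, so compatibility with counit and antipode is automatic.

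Finally, I would invoke Lemma~\ref{lm:ASG=HI}\ref{item:ASG=HI:VJ=grp}--\ref{item:ASG=HI:VG=HI}: over the algebraically closed characteristic-zero field $\mathbb C$ Hopf ideals of $\mathbb C[P]$ correspond bijectively to Zariski-closed algebraic subgroups of $P$. By Lemma~\ref{lm:PAlgSubGr} every such subgroup is of the form $g\bar{\mathsf Z}_r g^{-1}$ or $\bar{\mathsf Z}_rU$, with vanishing ideals $J_\nu$, $F_r$, and $F_r+J_\nu$ as explicitly computed immediately before the corollary. Transporting these through the identification above gives $\ddot J_\nu=\lpip \Epw_i-\nu(\Kpw_i-1)\rpip$, $\ddot F_r=\lpip \Kpw_i^r-1\rpip$, and their sums, exhausting the list. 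The semigroup structure of sums then reads off from $F_r+F_s=F_{\gcd(r,s)}$ and $J_\nu+J_\mu=F_1+J_0$ for $\nu\neq\mu$, noted \textit{loc.\ cit}. The main obstacle is not any single step but the careful verification that the correspondence in \eqref{eq:xlambcop} matches the skew-primitivity pattern of $\Epw_i$ (with the tensor-factor order as stated), so that Lemma~\ref{lm:PAlgSubGr} can be applied verbatim; once this dictionary is installed the classification is immediate.
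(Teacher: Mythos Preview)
Your proposal is correct and follows essentially the same approach as the paper: verify commutativity via $\zeta_i^{2\ell_i^2}=1$, compute $\Delta(\Epw_i)=\Epw_i\otimes\Kpw_i+1\otimes\Epw_i$ using \eqref{eq:corels_pwrs_gen} and the vanishing of interior $q$-binomials, then identify $\Zsubalgchar_i^\geqzero\otimes\mathbb C\cong\mathbb C[P]$ and read off the Hopf ideals from the subgroup classification in Lemma~\ref{lm:PAlgSubGr}. Your write-up is slightly more explicit about the antipode and the isomorphism map, but the logic and citations are the same.
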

\begin{proof}
    Note first $\Epw_i\Kpw_i=\zeta_i^{2\ell_i^2}\Kpw_i\Epw_i=\Kpw_i\Epw_i$ by definition of $\zeta_i$
    and $\ell_i$\,. So, $\Zsubalgchar_i^\geqzero$ is commutative, as also stated in Corollary~\ref{cor:Zcommutmax}.
    Clearly, $L_i$ is group-like since $K_i$ is. Moreover, we find from \eqref{eq:corels_pwrs_gen} and
    Corollary~\ref{cor:zeros-mulitnom}
    the coproduct formula 
    \begin{align}\label{eq:simplecoprod}
    \Delta(\Epw_i) =\sum_{s=0}^{\ell_i} \qbin{\ell_i}{s}{i}\zeta_i^{n(\ell_i-s)}E_i^s\otimes E_i^{\ell_i-s}K_i^s=E_i^\elli\otimes K_i^\elli+1\otimes E_i^\elli
    =
    \Epw_i\otimes \Kpw_i+1\otimes \Epw_i \,.
\end{align}
Thus, $\Zsubalgchar_i^\geqzero\otimes \mathbb C\cong \mathbb C[P]$ as Hopf algebras over $\mathbb C\,$. The claim now follows from the previously discussed classification of Hopf ideals of $\mathbb C[P]\,$. 
\end{proof}

Non-trivial Hopf ideals in $\Zsubalgchar_i^\geqzero$ for a general ground ring $\Zgen$ containing $\Zz$ can be found as preimages of the extension of scalars map $\Zsubalgchar_i^\geqzero\rightarrow\Zsubalgchar_i^\geqzero\otimes \mathbb C$, provided that $\nu\in\Zgen\,$. 
All of these are readily seen to be free and complemented $\Zgen$-submodules of  $\Zsubalgchar_i^\geqzero$.

The classification in Corollary~\ref{cor:ClassHI-A1} underscores the need to choose the minimal exponents 
$\ell_i=\ell/\gcd(d_i,\ell)$ in the definition of the $\Epw_i$ generators. 
Specifically, suppose $\alpha_i$ is a long root in the doubly laced Lie type case, meaning $d_i=2$, and $\ell$ is even. Then  $\ell=2\ell_i$ so that
$E_i^\ell=X_i^2\,$, generating an ideal $\lpip E_i^\ell\rpip=\lpip \Epw_i^2\rpip$ that is not even radical and hence not Hopf. The latter is also obvious from the coproduct 
$$
\Delta(\Epw_i^2)=\Epw_i^2\otimes \Kpw_i^2+2\Epw_i\otimes \Kpw_i\Epw_i+1\otimes \Epw_i^2\;\;\not\in\;\;
\lpip \Epw_i^2\rpip\otimes\Uz+\Uz\otimes\lpip \Epw_i^2\rpip\,.
$$
Since  restricted groups are occasionally defined in the literature with such generators 
we add the above observation as a more formal remark, which has an obvious analogue for $\LT{G}_2\,$.

\begin{rem}
    Assume $\Uz^{(\geqzero)}$ is of Lie type  $\LT{B}_n$\,, $\LT{C}_n$\,, or $\LT{F}_4$ with $n\geq 2$ as well as  $\kay\equiv 0\mod 4$.    
    Then the ideal generated by 
    $\{E_\alpha^\ell:\,\alpha\in \mathsf R\}$ is {\bf not} a Hopf ideal if $\mathsf R\subseteq\proots$ contains a long root. 
\end{rem}

Recall that any reductive algebraic subgroups over an algebraically closed field admits a Borel subgroup, unique up to conjugation. 
The fundamental example of such a Borel subgroup is the subgroup $\UtriM{n}<\mathrm{GL}(n,\mathbb k)$ of upper triangular matrices with non-zero diagonal entries. Denote also by $\DiagM{m}$ the subgroup of diagonal matrices and $\SUtriM{n}$
the subgroup of upper triangular matrices with 1's on the diagonal so that  $\UtriM{n}=\DiagM{m}\ltimes \SUtriM{n}\,$.
As algebraic sets, $\SUtriM{n}\cong\mathbb k^{\binom{n}{2}}$ and $\DiagM{m}\cong(\mathbb k^\times)^n$, where the latter is realized as either $\{(x_{1,1},\ldots,x_{n,n},\tau)\in\mathbb k^n\times\mathbb k: x_{1,1}\ldots x_{n,n}\tau=1\}$ or, equivalently, as  
$(D_1)^n$. The former is commensurable with the embedding in $\mathrm{GL}(n,\mathbb k)$, while the latter yields the
coordinate rings as Laurent polynomials. 

The coordinate ring is, thus, given as  
$\mathbb k[\UtriM{n}]=\mathbb k\bigl[\{\cofun{x_{i,j}}:i\leq j\},\{\cofun{x_{i,i}}^{-1}\}\bigr]$ 
so that the set of functions 
that are polynomials in off-diagonal entries and Laurent polynomials in the diagonal entries. As before,
we notationally distinguish coordinates and coordinate functions. The group of group-like elements in $\mathbb k[\UtriM{n}]$, corresponding to algebraic characters on
$\UtriM{n}$\,, are given 
\begin{equation}\label{eq:CharTn}
    \AChar{\UtriM{n}}\cong \mathbb Z^n\qquad \mbox{ with generators } \quad \{\cofun{x_{i,i}}:i=1,\ldots n\}\,.
\end{equation}
The counit is simply $\varepsilon(\cofun{x_{i,j}})=\delta_{ij}$\,. The coproduct and antipode for the remaining generators $i<j$ are  readily 
derived from \eqref{eq:AlgGLnCoprod} as 
\begin{equation}\label{eq:HopfStrTn}
    \Delta(\cofun{x_{i,j}})=\sum_{i\leq k\leq j}\cofun{x_{i,k}}\otimes \cofun{x_{k,j}}
    \quad\mbox{ and }\quad
    S(\cofun{x_{i,j}})= 
     \mkern -18mu \longsum[17]_{\substack{1\leq m\leq j-i\\i=k_0<k_1<\ldots<k_m=j}}
     \mkern -18mu 
     (-1)^{m}\cofun{x_{i,i}}^{-1}\cofun{x_{i,k_1}}\cofun{x_{k_1,k_1}}^{-1}
    \cofun{x_{k_1,k_2}}\ldots \cofun{x_{k_{m-1},j}}\cofun{x_{j,j}}^{-1}\;.
\end{equation} 
 
We finally mention the notion of a {\em cosolvable} Hopf algebra $H$ following \cite{MW98}. In the case
of a commutative Hopf algebra $H$, it starts from a finite ascending series of 
$\mathbb k1=H_0\subset H_1\subset \ldots \subset H_{n}=H$ Hopf subalgebras. 
From the augmentation ideals $H_s^\circ=H_s\cap\mathrm{ker}(\varepsilon)$ one obtains for each $s$
a Hopf ideal $H_{s-1}^\circ H_{s}$ in $H_s$ and, thus, a Hopf algebra 
$\bar H_s=H_s/H_{s-1}^\circ H_{s}$\,, called the $s$-th {\em factor} of the series. $H$ is said to be 
cosolvable if all of the factors are cocommutative for some such series. 

In the case $H=\mathbb k[\UtriM{n}]$, these properties can be easily verified if we define $H_s$ 
as the polynomial subalgebra generated by $\{\cofun{x_{i,j}}:\,j-i< s\}$ and 1.
It is immediate from \eqref{eq:HopfStrTn} that each $H_s$ defines a Hopf subalgebra. 
Moreover, it is clear that $\bar H_1=\mathbb k[\DiagM{m}]$, generated by group-like elements. 
For $s>1$, one readily verifies that  $\bar H_s\cong\mathbb k[\{\bar x_{i,j}:i-j=s-1\}]$ 
is a polynomial algebra, for which all generators are strictly primitive. Hence, each
$H_s$ is cocommutative and $\mathbb k[\UtriM{n}]$ is cosolvable. 

Recall, further, that by the Lie-Kolchin Theorem every closed, {\em connected}, and {\em solvable} linear 
algebraic group $B$ is conjugate to a subgroup of $\UtriM{n}$\,, see \cite[6.3.1.]{Sp98}. The images of the $H_s$
under restriction and conjugation map $\mathbb k[\UtriM{n}]\rightarrow\mathbb k[B]$ give rise to a respective series with the same properties as the one for $\mathbb k[\UtriM{n}]\,$. 

\begin{cor}\label{cor:kB=cosolv}
    Suppose $B$ is a closed, connected, solvable algebraic subgroup of $\mathrm{GL}(n,\mathbb k)$ over an algebraically closed field $\mathbb k$. Then $\mathbb k[B]$ is cosolvable in the sense of \cite{MW98}.
\end{cor}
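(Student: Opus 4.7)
The plan is to push forward the cosolvable filtration of $\mathbb k[\UtriM{n}]$ already constructed in the paragraph preceding the corollary. First I would invoke the Lie-Kolchin Theorem \cite[6.3.1]{Sp98} to find $g\in\mathrm{GL}(n,\mathbb k)$ with $gBg^{-1}\subseteq\UtriM{n}$; since conjugation by $g$ is an isomorphism of algebraic groups, it induces a Hopf algebra isomorphism $\mathbb k[B]\cong \mathbb k[gBg^{-1}]$, so without loss of generality I may assume $B\subseteq\UtriM{n}$ as a closed subgroup.

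Next I would consider the restriction-of-functions map $\pi:\mathbb k[\UtriM{n}]\twoheadrightarrow\mathbb k[B]$. Since $B$ is closed in $\UtriM{n}$, this is a surjective homomorphism of commutative Hopf algebras, and its kernel is the (Hopf) vanishing ideal $\VanIdeal(B)$ by Lemma~\ref{lm:ASG=HI}\,\ref{item:ASG=HI:VG=HI}. Setting $B_s=\pi(H_s)$ for $0\leq s\leq n$, where $H_s\subseteq\mathbb k[\UtriM{n}]$ is the Hopf subalgebra recalled in the discussion above the corollary, I obtain an ascending chain
\[
\mathbb k\cdot 1\,=\,B_0\,\subseteq\, B_1\,\subseteq\,\ldots\,\subseteq\, B_n\,=\,\mathbb k[B]
\]
of Hopf subalgebras of $\mathbb k[B]$: each $B_s$ is the image of a Hopf subalgebra under a Hopf algebra map and hence is itself a Hopf subalgebra, and the chain is exhaustive because $H_n=\mathbb k[\UtriM{n}]$ and $\pi$ is surjective.

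It then remains to identify the successive factors. Since $\pi$ is a Hopf map, it preserves counits, so $\pi(H_s^\circ)\subseteq B_s^\circ$, and in fact $\pi(H_{s-1}^\circ H_s)\subseteq B_{s-1}^\circ B_s$. Composing $\pi|_{H_s}$ with the quotient $B_s\twoheadrightarrow \bar B_s=B_s/B_{s-1}^\circ B_s$ yields a surjection $H_s\twoheadrightarrow \bar B_s$ that annihilates $H_{s-1}^\circ H_s$, hence factors through a surjective Hopf algebra map $\bar H_s\twoheadrightarrow \bar B_s$. By the discussion preceding the corollary, $\bar H_1\cong\mathbb k[\DiagM{n}]$ is spanned by group-like elements and, for $s>1$, $\bar H_s$ is a polynomial algebra on strictly primitive generators; in both cases $\bar H_s$ is cocommutative. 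Cocommutativity is preserved under Hopf algebra surjections, so each $\bar B_s$ is cocommutative, which is exactly the cosolvability condition of \cite{MW98} for $\mathbb k[B]$.

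I do not anticipate any genuine obstacle: the only substantive input is Lie-Kolchin together with Lemma~\ref{lm:ASG=HI}, and the rest is routine verification that images of Hopf subalgebras, Hopf ideals, and cocommutative factors under a surjective Hopf algebra homomorphism retain their defining properties. The mildest point of care is checking that $\pi(H_{s-1}^\circ H_s)=B_{s-1}^\circ B_s$ (rather than merely $\subseteq$), which follows from surjectivity of $\pi|_{H_s}:H_s\twoheadrightarrow B_s$ together with $\pi(H_{s-1}^\circ)=B_{s-1}^\circ$ (counit compatibility plus surjectivity), but this inclusion is all that is strictly needed to conclude cocommutativity of the factors.
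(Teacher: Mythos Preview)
Your proposal is correct and follows exactly the approach the paper sketches in the paragraph immediately preceding the corollary: invoke Lie--Kolchin to conjugate $B$ into $\UtriM{n}$, then push the filtration $\{H_s\}$ forward along the restriction map $\mathbb k[\UtriM{n}]\twoheadrightarrow\mathbb k[B]$ and observe that the cocommutative factors survive the quotient. You have simply filled in the routine verifications the paper leaves implicit.
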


In particular, 
the coordinate ring of $\mathbb k[\mathrm{GL}(n,\mathbb k)]$ is  fundamentally different from that of a (connected) Borel subgroup. The former is cosemisimple and thus not cosolvable and its character group
in \eqref{eq:GLnChar} also differs from that in  \eqref{eq:CharTn}.

\subsection{Standard Presentation of \texorpdfstring{$\Uq^\geqzero$}{Uq≥0} for \texorpdfstring{$\mathsf{A}_n$\,}{An}}\label{subsec:CoalgAn}
We  provide next detailed relations and coproduct formulae for generators $E_w$ of $\Uq^\geqzero(\mathfrak{sl}_{n+1})$ with respect to a particularly convenient choice of a maximal word.
It is given by 
the conjugate $z_n=w_{\LT{A}_n}^\winvchar$ of the one mentioned in Section~\ref{subsec:orderings} and inductively defined as 
\begin{equation}\label{eq:An-defzmcmk}
    z_{n}=z_{n-1}c_{n,1} \quad \mbox{ with } \quad c_{n,k}=w_n\ldots w_{n+1-k}\quad \mbox{ so that } \quad c_{n,k}^\flat=w_n\ldots w_{n+2-k}=c_{n,k-1}\,.
\end{equation}

Observe that $z_m$ is the word of maximal length in the canonically embedded $\LT{A}_m$ subsystem in $\LT{A}_n$ for $n>m$ with
$\len{z_m}=\binom{m+1}{2}$. 
Denote  by $\bar z_n=\Weylpres(z_n)$ and $\bar c_{n,k}=\Weylpres(c_{n,k})$  the respective elements in the Weyl group $\Weyl$. 
In the standard Euclidean realization of the $\LT{A}_n$ root system in $\mathbb R^{n+1}=\langle\epsilon_1,\ldots,\epsilon_{n+1}\rangle$, simple roots are given as 
$\alpha_i=\epsilon_i-\epsilon_{i+1}$ and $\Weyl$ acts as the group of permutations in $n+1$ letters 
via $s(\epsilon_i)=\epsilon_{s(i)}\,$, with
generators $s_i=(i,i+1)\,$. Specifically, $\bar c_{n,k}=(n+1, n, \ldots , n+1-k)$ is a cyclic permutation
and $\bar z_m(\epsilon_i)=\epsilon_{m+2-i}$ for $1\leq i\leq m+1$ and $\bar z_m(\epsilon_i)=\epsilon_i$ for $i>m+1\,$. 

Suppose $N=\binom{m}{2}+k$ for $1\leq k\leq m$. We have then  $z_n[1,N]=z_{m-1}c_{m,k}\,$ for the first $N$ letters in $z_n$\,. The 
$N$-th term in the ordering sequence is now computed as 
\begin{equation}\label{eq:An-betaN}
    \begin{aligned}
    \beta_N&=\wordroot(z_n[1,N])=\wordroot(z_{m-1}c_{m,k})=\bar z_{m-1}\bar c_{m,k-1}(\alpha_{m+1-k})\\
    &=\bar z_{m-1}(m+1\ldots m+2-k)(\epsilon_{m+1-k}-\epsilon_{m+2-k})\\
    &=\bar z_{m-1}(\epsilon_{m+1-k}-\epsilon_{m+1})=\epsilon_{k}-\epsilon_{m+1}=\alpha_{k}+\ldots+\alpha_{m}\,.
\end{aligned}
\end{equation}\label{eq:An-LexOrd}
Hence, the ordering $\letwt{z}$ defined in Section~\ref{subsec:descroots} is simply the lexicographic ordering. 
Namely, for $i<j$ and $r<s$ we have 
\begin{equation}\label{eq:AnCanOrd}
    \epsilon_{i}-\epsilon_{j}\letwt{z}\epsilon_{r}-\epsilon_{s} \qquad \mbox{ iff }\quad j<s \;\;\mbox{ or }\;(j=s \;\mbox{ and } \;\;i<r)\,.
\end{equation}

Consider next for $1\leq k\leq m$ the reduced words  
\begin{equation}\label{eq:An-defbmk}
    b_{m,k}=w_{k}\ldots w_{m}=c_{m,m+1-k}^*\, \quad \mbox{ so that } \quad b_{m,k}^\flat=w_{k}\ldots w_{m-1}=b_{m-1,k}\,. 
\end{equation}
As before, we denote the respective element in the Weyl group as $\bar b_{m,k}=(k,\ldots,m,m+1)\,$ and   compute
\begin{equation}\label{eq:An-rootbmk}
    \wordroot(b_{m,k})=\bar b_{m-1,k}(\alpha_m)=(k,\ldots,m)(\epsilon_m-\epsilon_{m+1})=\epsilon_{k}-\epsilon_{m+1}\,.
\end{equation}

Note next that $\bar b_{m,k}$ is in the $\LT{A}_m$ subsystem and thus $\bar b_{m,k}\leqRB \bar z_m\,$.  Thus, there exists 
some $\bar u_{m,k}$ in the Weyl group of the subsytem such that 
\begin{equation}
    \bar z_m =\bar b_{m,k}\cdot\bar u_{m,k} \quad\mbox{with }\;\;\len{\bar z_m}=\len{\bar b_{m,k}}+\len{\bar u_{m,k}}\,.
\end{equation} 
In fact, it is easy to verify that a representative reduced word is given by $u_{m,k}=z_{m-1}\cdot c_{m,k-1}\,$.
The calculations in \eqref{eq:An-betaN} and \eqref{eq:An-rootbmk} show that 
$$
\bar b_{m-1,k}(\alpha_m)=\bar z_{m-1}\bar c_{m,k-1}(\alpha_{m+1-k})=\bar b_{m-1,k}\bar u_{m-1,k}\bar c_{m,k-1}(\alpha_{m+1-k})\,.
$$
Together with length comparisons we find
\begin{equation}\label{eq:An-rootrefl}
    \begin{aligned}
        \alpha_m&=\bar u_{m-1,k}\bar c_{m,k-1}(\alpha_{m+1-k})\\
        \mbox{and}\qquad & \len{z_{m-1}c_{m,k-1}}=\len{b_{m-1,k}}+\len{u_{m-1,k}}+\len{c_{m,k-1}}\,.
    \end{aligned}
\end{equation}

For $1\leq i<j\leq n+1$, denote now the generators of $\Uq^+$ as follows.
\begin{equation}\label{eq:An-defEij}
    E_{i,j}=E_{z_n[\epsilon_i-\epsilon_j]}=E_{z_n[1,N]}\qquad\mbox{ with } \ N=\mbox{$\binom{j-1}{2}$}+i
\end{equation}

\begin{prop} 
For a given $j\in\{2,\ldots, n+1\}$, the elements in \eqref{eq:An-defEij} are determined by 
the identity $E_{j-1,j}=E_{j-1}$ and the recursive relation  
\begin{equation}\label{eq:higher_roots}
E_{i,j}=\Tinv_i(E_{i+1,j})=q^{-1}E_iE_{i+1,j}-E_{i+1,j}E_i \qquad \mbox{for} \quad i<j-1\;.
 \end{equation}   
\end{prop}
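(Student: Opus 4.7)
The plan is a joint induction on $d = j - i$ that simultaneously establishes both claims $E_{i,j} = \Tinv_i(E_{i+1,j})$ and $\Tinv_i(E_{i+1,j}) = q^{-1}E_iE_{i+1,j} - E_{i+1,j}E_i$. The base case $d = 1$ (i.e.~$i = j-1$) is immediate: by \eqref{eq:An-betaN} the terminal root of $z_n[1,\binom{j}{2}]$ is $\alpha_{j-1}$, a simple root, so the second assertion of Proposition~\ref{prop:wordgencont} gives $E_{j-1,j} = E_{j-1}$.

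For the inductive step with $d \geq 2$, I introduce the automorphism
\[
B \,=\, \Tinv_{z_{j-2}} \circ \Tinv_{c_{j-1,i-1}}
\qquad (c_{j-1,0} = \emptyword \text{ when } i = 1).
\]
Unfolding \eqref{eq:EFwordDef} together with the nested structure $z_n[1,\binom{j-1}{2}+i] = z_{j-2}\,c_{j-1,i}$ from \eqref{eq:An-defzmcmk} gives the compact expressions $E_{i,j} = B(E_{j-i})$ and $E_{i+1,j} = B\bigl(\Tinv_{j-i}(E_{j-i-1})\bigr)$. The decisive step is the conjugation identity $B^{-1} \circ \Tinv_i \circ B = \Tinv_{j-1-i}$ in the Artin group $\mathscr{A}$. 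I verify this from two facts: (a) applying Corollary~\ref{cor:GarsideAutom} to the $\LT{A}_{j-2}$ subsystem (whose Dynkin involution sends $k \mapsto j-1-k$) yields the Artin-group relation $\Matsec(\bar z_{j-2})^{-1}\,w_i\,\Matsec(\bar z_{j-2}) = w_{j-1-i}$ for $1 \leq i \leq j-2$; (b) the Weyl element $\bar c_{j-1,i-1}$ acts as a cyclic permutation of $\{j-i+1,\ldots,j\}$, so both indices $j-1-i$ and $j-i$ lie outside this set, and hence $s_{j-1-i}$ commutes with every simple reflection occurring in $c_{j-1,i-1}$, lifting to a commutation in $\mathscr{A}$ via the $m_{ab}=2$ Artin-Tits relations. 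Applying $\Tinv$ combines these to yield the claimed conjugation.

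Using this, $\Tinv_i(E_{i+1,j}) = B \circ \Tinv_{j-1-i} \circ \Tinv_{j-i}(E_{j-i-1})$, so setting $b = j - i$ reduces the first claim to showing $\Tinv_{b-1}\Tinv_b(E_{b-1}) = E_b$. This follows from a second application of Proposition~\ref{prop:wordgencont} to the reduced word $w_{b-1}w_bw_{b-1}$, whose associated root $s_{b-1}s_b(\alpha_{b-1}) = \alpha_b$ is simple; hence $E_{w_{b-1}w_bw_{b-1}} = E_b$, which by \eqref{eq:EFwordDef} equals $\Tinv_{b-1}\Tinv_b(E_{b-1})$. For the commutator formula, the outer inductive hypothesis applied to the pair $(i+1,j)$ (with gap $d-1$) gives $E_{i+1,j} = q^{-1}E_{i+1}E_{i+2,j} - E_{i+2,j}E_{i+1}$ when $d \geq 3$, while $d = 2$ reduces to the Serre-type action $\Tinv_i(E_{i+1}) = q^{-1}E_iE_{i+1} - E_{i+1}E_i$ directly from \eqref{eq:defLserre}. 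Because $A_{i,k} = 0$ for all $k \geq i+2$, the automorphism $\Tinv_i$ fixes each $E_k$ with $k \geq i+2$ and hence fixes $E_{i+2,j}$, and $E_i$ commutes with $E_{i+2,j}$; applying $\Tinv_i$ to the bracket expression for $E_{i+1,j}$ and regrouping then yields the formula. I anticipate the main obstacle to be the clean lifting of the conjugation identity from $\Weyl$ to $\mathscr{A}$, since Lusztig's automorphisms form an Artin-group rather than Weyl-group action; once Corollary~\ref{cor:GarsideAutom} is invoked at the subsystem level and the $m_{ab} = 2$ commutations are tracked carefully, the remainder of the argument is a sequence of explicit algebraic manipulations.
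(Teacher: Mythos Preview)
Your proof is correct. The second claim (the commutator formula) you handle exactly as the paper does, by induction on the gap using that $\Tinv_i$ fixes $E_{i+2,j}$ and $E_i$ commutes with it.

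For the first claim $E_{i,j}=\Tinv_i(E_{i+1,j})$, your route differs from the paper's. You establish the Artin-group conjugation $B^{-1}\Tinv_i B=\Tinv_{j-1-i}$ (via the Garside commutation for the $\LT{A}_{j-2}$ subsystem together with an $m_{ab}=2$ commutation past $c_{j-1,i-1}$), then reduce to the rank-two identity $\Tinv_{b-1}\Tinv_b(E_{b-1})=E_b$ via Proposition~\ref{prop:wordgencont}, and close with induction on $d=j-i$. The paper instead uses the Weyl-group factorization $\bar z_{j-2}=\bar b_{j-2,i}\,\bar u_{j-2,i}$ introduced just before the statement, applies Proposition~\ref{prop:wordgencont} once to the longer word $u_{j-2,i}\,c_{j-1,i-1}$ (whose associated root is $\alpha_{j-1}$ by \eqref{eq:An-rootrefl}), and obtains the closed formula $E_{i,j}=\Tinv_i\Tinv_{i+1}\cdots\Tinv_{j-2}(E_{j-1})$ directly for every $i$; the recursion is then immediate without induction. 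The paper's approach is shorter because the preparatory lemmas on $b_{m,k}$ and $u_{m,k}$ absorb the combinatorics, while yours trades those lemmas for a general Artin-group fact. One small point: the Garside commutation you need is the braid-monoid relation \eqref{eq:gardsidecomm}, not Corollary~\ref{cor:GarsideAutom} (which computes $\Tinv_\garside$ as an automorphism of $U_q$); applying the $\Tinv$-action to \eqref{eq:gardsidecomm} restricted to the parabolic $\LT{A}_{j-2}$ sub-Artin-group gives exactly the conjugation identity you use.
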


\begin{proof} Set $m=j-1$ and $k=i$\,. We have $\wordroot(z_n[1,N])=\epsilon_i-\epsilon_j$ for $N=\binom{j-1}{ 2}+i\,$ by \eqref{eq:An-betaN}. Thus, by definition, $E_{z_n[1,N]}=\Tinv_{z_{j-2}c_{j-1,i-1}}(E_{j-i})
=\Tinv_{b_{j-2,i}u_{j-2,i}c_{j-1,i-1}}(E_{j-i})=\Tinv_{b_{j-2,i}}(\Tinv_{u_{j-2,i}c_{j-1,i-1}}(E_{j-i}))$ since
the $\Tinv$ only depend on the Weyl element and $\len{b_{j-2,i}u_{j-2,i}c_{j-1,i-1}}=\len{b_{j-2,i}}+\len{u_{j-2,i}c_{j-1,i-1}}$ by \eqref{eq:An-rootrefl}. 

Note, further, that by \eqref{eq:An-rootrefl} we have that the weight of $E_{u_{j-2,i}c_{j-1,i}}=\Tinv_{u_{j-2,i}c_{j-1,i-1}}(E_{j-i})$ is $\alpha_{j-1}\,$. It, therefore, follows from Proposition~\ref{prop:wordgencont} that $E_{u_{j-2,i}c_{j-1,i}}=E_{j-1}\,$. Plugging this into the previous expression for $E_{z_n[1,N]}$ we find
\begin{equation}\label{eq:Eij-minword} 
    E_{i,j}=\Tinv_{b_{j-2,i}}(E_{j-1})=\Tinv_i\ldots\Tinv_{j-2}(E_{j-1})=\Tinv_i(E_{i+1,j})\,.
\end{equation}
In particular, $E_{j-1,j}=E_{j-1}\,$. The recursion in \eqref{eq:higher_roots} follows by induction, decreasing in $i$. Assuming the identity for $i+1$, one can express $E_{i+1,j}$ by $E_{i+1}$ and $E_{i+2,j}$\,. Since the latter is in the subalgebra 
$\langle E_{i+2},\ldots,E_{j-1}\rangle$ generated by
$\{E_{i+2},\ldots,E_{j-1}\}$, $\Tinv_i$ acts as identity on $E_{i+2,j}$ and $E_i$ commutes with $E_{i+2,j}$\,. The resulting expression 
in $E_i$\,, $E_{i+1}$\,, and $E_{i+2,j}$ can now be worked out and identified with the left side of \eqref{eq:higher_roots}.
\end{proof}

A full set of commutation  relations as well as nested subalgebras of $\LT{A}$-types are derived in the next proposition. Most of these can be found in  other references, with the possible exception of the relation in Item {\em\ref{item:AComms:3}} below.

 \begin{prop}\label{prop:AComms}
For $1\leq i<s<j<t\leq n+1$ the following commutation relations hold.  \vspace*{-2mm}
\begin{multicols}{2}
\begin{enumerate}[label=\roman*), leftmargin=2cm,]
\item\label{item:AComms:1}$E_{i,t}=q^{-1}E_{i,j}E_{j,t}-E_{j,t}E_{i,j}$\vspace*{2mm}
\item\label{item:AComms:2}$E_{i,t}E_{s,j}=E_{s,j}E_{i,t}$\vspace*{2mm}
\item\label{item:AComms:3}$E_{i,j}E_{s,t}-E_{s,t}E_{i,j}=-(q-q^{-1})E_{s,j}E_{i,t}$\vspace*{2mm}
\item\label{item:AComms:4}$E_{i,s}E_{j,t}=E_{j,t}E_{i,s}$\vspace*{2mm}
\item\label{item:AComms:5}$E_{i,t}E_{i,j}=qE_{i,j}E_{i,t}$\vspace*{2mm}
\item\label{item:AComms:6}$E_{j,t}E_{i,t}=qE_{i,t}E_{j,t}$\vspace*{2mm}
\end{enumerate}
    \end{multicols}\vspace*{-5mm}
    
\noindent More generally, suppose $\hat j=(j_1,\ldots, j_{m+1})$ is a sequence with 
$1\leq j_1<j_2<\ldots<j_{m+1}\leq n+1$. Then there exists an injective algebra homomorphism
$\phi_{\hat j}:\Uq^+(\mathfrak{sl}_{m+1})\rightarrow U_q^+(\mathfrak{sl}_{n+1})$ with
\begin{equation}\label{eq:injectEs}
    \phi_{\hat j}(E_{r,s})=E_{j_r,j_s}\,.
\end{equation} 
\end{prop}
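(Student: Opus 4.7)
The plan is to derive the six commutation relations by a combination of Lemma~\ref{lm:scommgens} and induction on the recursion \eqref{eq:higher_roots}, then verify the embedding $\phi_{\hat j}$ by checking that its image satisfies the Serre and Cartan-data relations of $\Uq^+(\mathfrak{sl}_{m+1})$, and finally establish injectivity via a PBW basis comparison.

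Relations (ii), (iv), (v), and (vi) should all follow from the skew-commutation Lemma~\ref{lm:scommgens}. For each pair of generators $E_{a,b}$ and $E_{c,d}$ in the commutator, I would exhibit a reduced word $w=u\cdot v$ with $\wordroot(u)$ equal to the smaller root in the lex order from \eqref{eq:AnCanOrd} and $\wordroot(w)$ equal to the larger, built from the minimal-length representatives $E_{a,b}=\Tinv_a\cdots\Tinv_{b-2}(E_{b-1})$ in \eqref{eq:Eij-minword}, so that $\tau(u)$ avoids all indices used by $v$. The resulting skew-commutation factor $q^{\symbrack{\wordroot(u)}{\wordroot(w)}}$ evaluates to $q^0=1$ in cases (ii) and (iv), where the intervals $[a,b]$ and $[c,d]$ are nested or disjoint, and to $q^1=q$ in cases (v) and (vi), where the intervals share an endpoint.

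Relation (i) generalizes the defining recursion \eqref{eq:higher_roots}, which is the case $j=i+1$. I would induct on $j-i$ by substituting $E_{i,j}=q^{-1}E_iE_{i+1,j}-E_{i+1,j}E_i$ into $q^{-1}E_{i,j}E_{j,t}-E_{j,t}E_{i,j}$ and invoking relation (iv), which gives $E_iE_{j,t}=E_{j,t}E_i$ whenever $i<j-1$, together with the inductive hypothesis applied to the triple $(i{+}1,j,t)$; both reduce the right-hand side to $\Tinv_i(E_{i+1,t})=E_{i,t}$. Relation (iii) is then derived from (i) applied to $(s,j,t)$: expand $[E_{i,j},E_{s,t}]$ using $E_{s,t}=q^{-1}E_{s,j}E_{j,t}-E_{j,t}E_{s,j}$, rearrange via (ii), (iv), and (v), and extract the residual term $-(q-q^{-1})E_{s,j}E_{i,t}$ from the $q$-bracket of $E_{i,j}$ and $E_{j,t}$, which by (i) equals $E_{i,t}$ up to a scalar.

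For the embedding, I check the defining relations of $\Uq^+(\mathfrak{sl}_{m+1})$ on the images $X_r:=E_{j_r,j_{r+1}}$. For $|r-s|>1$ the intervals $[j_r,j_{r+1}]$ and $[j_s,j_{s+1}]$ are disjoint, so (iv) yields $X_rX_s=X_sX_r$, matching the Cartan entry $A_{r,s}=0$. For adjacent $r$ and $r{+}1$, applying (i) to the triple $(j_r,j_{r+1},j_{r+2})$ gives $E_{j_r,j_{r+2}}=q^{-1}X_rX_{r+1}-X_{r+1}X_r$, and (v), (vi) give the skew-commutations $E_{j_r,j_{r+2}}X_r=qX_rE_{j_r,j_{r+2}}$ and $X_{r+1}E_{j_r,j_{r+2}}=qE_{j_r,j_{r+2}}X_{r+1}$; a straightforward combination yields the Serre identity $X_r^2X_{r+1}-(q+q^{-1})X_rX_{r+1}X_r+X_{r+1}X_r^2=0$. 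So $\phi_{\hat j}$ extends to an algebra homomorphism, and an easy induction using (i) shows $\phi_{\hat j}(E_{r,s})=E_{j_r,j_s}$ for all $r<s$. Injectivity then follows from Proposition~\ref{prop:PBW2}: under the lex convex order on $\proots(\LT{A}_m)$ inherited from $\proots(\LT{A}_n)$ via $\epsilon_r-\epsilon_s\mapsto\epsilon_{j_r}-\epsilon_{j_s}$, any PBW monomial in the $\{E_{r,s}\}$ maps to a distinct PBW monomial in the $\{E_{j_r,j_s}\}\subseteq\basisp{z_n}$, which is a linearly independent subset of the PBW basis of $\Uq^+(\mathfrak{sl}_{n+1})$.

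The main obstacle is the Serre-relation verification: since $X_r$ and $X_{r+1}$ are generators associated to the composite roots $\epsilon_{j_r}-\epsilon_{j_{r+1}}$ and $\epsilon_{j_{r+1}}-\epsilon_{j_{r+2}}$ (rather than simple roots of $\LT{A}_n$), Corollary~\ref{cor:Uqtwoembed} does not apply directly, and the identity must be extracted from a careful combination of (i), (v), and (vi) with precise tracking of $q$-coefficients through several commutation steps.
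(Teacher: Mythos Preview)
Your plan has a genuine gap at relation {\em\ref{item:AComms:2}}. Lemma~\ref{lm:scommgens} requires a reduced decomposition $w=u\cdot v$ with $E_u=E_{s,j}$ and $E_w=E_{i,t}$ \emph{as specific elements}, not merely $\wordroot(u)=\epsilon_s-\epsilon_j$ and $\wordroot(w)=\epsilon_i-\epsilon_t$. With the minimal-length representative $u=w_s\cdots w_{j-1}$, there is no $v$ for which $E_{u\cdot v}=E_{i,t}$: the element $\Tinv_s\cdots\Tinv_{j-1}(E_v)$ need not equal $\Tinv_i\cdots\Tinv_{t-2}(E_{t-1})$, since Proposition~\ref{prop:wordgencont} identifies word-generators only when the weight is a \emph{simple} root. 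Switching to the $z_n$-presentations $u=z_n[1,\binom{j-1}{2}+s]$ and $w=z_n[1,\binom{t-1}{2}+i]$ does nest $u\leqRB w$, but then $\tau(u)=j-s$ reappears in $v$ (inside the block $c_{j,\cdot}$) whenever $t>j+1$, so the hypothesis of Lemma~\ref{lm:scommgens} fails. Your derivation of {\em\ref{item:AComms:3}} is correct but explicitly invokes {\em\ref{item:AComms:2}} at the last step (you obtain $q^{-1}E_{s,j}E_{i,t}-qE_{i,t}E_{s,j}$, which collapses to $-(q-q^{-1})E_{s,j}E_{i,t}$ only via {\em\ref{item:AComms:2}}), so the gap propagates. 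A smaller issue: for {\em\ref{item:AComms:6}} the minimal-word decomposition $u=w_i\cdots w_{t-1}$, $v=w_{j-1}\cdots w_{t-2}$ does satisfy the letter condition, but $E_{u\cdot v}$ is not obviously $E_{j,t}$ unless $\epsilon_j-\epsilon_t$ is simple; the paper instead uses the $z_n$-words $u=z_{t-2}c_{t-1,i}$, $w=z_{t-2}c_{t-1,j}$, for which both identifications are immediate from \eqref{eq:An-defEij}.

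The paper's organization avoids this by reversing your order: it proves {\em\ref{item:AComms:1}}, {\em\ref{item:AComms:4}}, {\em\ref{item:AComms:5}}, {\em\ref{item:AComms:6}} first (the latter two via Lemma~\ref{lm:scommgens} with the specific word choices above, and {\em\ref{item:AComms:1}} by applying a single braid automorphism to \eqref{eq:higher_roots} rather than by your induction), then derives the Serre relations and the embedding $\phi_{\hat j}$ exactly as you propose. Only \emph{after} the embedding is in hand does it reduce {\em\ref{item:AComms:2}} and {\em\ref{item:AComms:3}} to $\LT{A}_3$ via $\phi_{(i,s,j,t)}$: relation {\em\ref{item:AComms:2}} becomes $[E_{1,4},E_2]=[\Tinv_1\Tinv_2(E_3),\Tinv_1\Tinv_2(E_1)]=\Tinv_1\Tinv_2([E_3,E_1])=0$ (using Proposition~\ref{prop:wordgencont} for $E_2=\Tinv_1\Tinv_2(E_1)$), and {\em\ref{item:AComms:3}} is obtained by a computation in the \emph{full} algebra $\Uq$ using $\theta_2=\Tinv_2^{-1}(E_2)=-F_2K_2$. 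Your purely $\Uq^+$-internal derivation of {\em\ref{item:AComms:3}} from {\em\ref{item:AComms:1}}, {\em\ref{item:AComms:2}}, {\em\ref{item:AComms:6}} is actually cleaner than the paper's --- but it needs {\em\ref{item:AComms:2}} as input, so you must either adopt the paper's embedding-first order, or supply an independent argument for {\em\ref{item:AComms:2}} (for instance, a double induction reducing it to the $\LT A_3$ braid identity above).
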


\begin{proof} The relation in {\em \ref{item:AComms:1}} is obtained by applying 
$\Tinv_{b_{i-1,k}}=\Tinv_k\ldots\Tinv_{i-1}$ to \eqref{eq:higher_roots}, using that $\Tinv_{b_{i-1,k}}$ acts trivially on $E_{i+1,j}\in\langle E_{i+1},\ldots, E_{j-1}\rangle\,$, and substituting indices. Item {\em \ref{item:AComms:4}} is clear,
since $E_{i,s}\in\langle E_i,\ldots,E_{s-1}\rangle$ and $E_{j,t}\in\langle E_j,\ldots,E_{t-1}\rangle$ are in algebras for which the generators of one commute with the generators of the other. 

Next note that, by \eqref{eq:Eij-minword}, we can write $E_{i,j}=E_{w_{ij}}$\,, where $w_{ij}=w_i\ldots w_{j-1}\,$. 
Clearly, $w_{it}=w_{ij}w_{jt}$ with $\len{w_{it}}=\len{w_{ij}}+\len{w_{jt}}$. Further, 
the index of $\tau(w_{ij})=j-1$ does not occur $w_{jt}\,$. We also have 
$\alpha=\wordroot(w_{ij})=\epsilon_i-\epsilon_j$ and  
$\beta=\wordroot(w_{it})=\epsilon_i-\epsilon_t\,$, so that $\symbrack{\alpha}{\beta}=1$. The relation in 
{\em \ref{item:AComms:5}} follows now directly from Lemma~\ref{lm:scommgens}, with
$w=w_{it}$\,, $u=w_{ij}$\,, and $v=w_{jt}\,$. 

Alternatively, we find from \eqref{eq:An-betaN} and \eqref{eq:An-defEij} that $E_{it}=E_{z_{t-2}c_{t-1,i}}\,$. 
From \eqref{eq:An-defzmcmk} we have $c_{t-1,j}=c_{t-1,i}v\,$, where $v=w_{t-1-i}\ldots w_{t-j}\,$. So, if we 
set $w=z_{t-2}c_{t-1,j}$ and $u=z_{t-2}c_{t-1,i}$ we find $w=uv$ with $\len{w}=\len{u}+\len{v}\,$. 
As before $\tau(u)=t-j$ does not occur as an index in $v\,$. Moreover, $\alpha=\wordroot(u)=\epsilon_i-\epsilon_t$
and $\beta=\wordroot(w)=\epsilon_j-\epsilon_t\,$, so that  $\symbrack{\alpha}{\beta}=1$. 
Lemma~\ref{lm:scommgens}, therefore, implies the relation in Item {\em \ref{item:AComms:6}}.

Combining relation {\em \ref{item:AComms:1}} with   {\em \ref{item:AComms:5}} as well as 
combining relation {\em \ref{item:AComms:1}} with   {\em \ref{item:AComms:6}} we obtain respective Serre relations
$$
    E_{ij}^2E_{jt}-[2]E_{ij}E_{jt}E_{ij}+E_{jt}E_{ij}^2=0\quad\mbox{ and }\quad 
    E_{jt}^2E_{ij}-[2]E_{jt}E_{ij}E_{jt}+E_{ij}E_{jt}^2=0\,.
$$
Together with the relation in {\em \ref{item:AComms:5}} we  observe that, for a sequence $j_1<\ldots<j_{m+1}$ as above,
the elements $\tilde E_r=E_{j_r,j_{r+1}}$ fulfill the relations of the standard generators of $\Uq^+(\mathfrak{sl}_{m+1})$. 
Hence, we  have a well-defined algebra homomorphism $\phi_{\hat j}$ that maps the $E_r$ generator in $\Uq^+(\mathfrak{sl}_{m+1})$
to $\tilde E_r$ in $\Uq^+(\mathfrak{sl}_{n+1})$. Setting $(i,j,t)=(j_k,j_{k+1},j_m)$ 
and denoting $\tilde E_{k,m}=E_{j_k,j_m}$ relation {\em \ref{item:AComms:5}} can be written as
$\tilde E_{k,m}=q^{-1}\tilde E_{k}\tilde E_{k+1,m}-\tilde E_{k+1,m}\tilde E_{k}$\,. Since this is the same recursion 
\eqref{eq:higher_roots} for the generators in $\Uq^+(\mathfrak{sl}_m)$ and $\phi_{\hat j}$ is a homomorphism,
\eqref{eq:injectEs} follows inductively. This also means that the PBW basis elements given by ordered monomials
in the $E_{i,j}$ in $\Uq^+(\mathfrak{sl}_{m+1})$ are mapped to elements of standard PBW basis of $\Uq^+(\mathfrak{sl}_{n+1})$, which, in turn, implies that $\phi_{\hat j}$ is injective.

Given the $\LT{A}_m\hookrightarrow\LT{A}_n$ inclusions, it now suffices for the remaining relations to consider the
$\LT{A}_3$ case, using $\phi_{\hat j}$ with $\hat j=(j_1,j_2,j_3,j_4)=(i,s,j,t)\,$. The relation in 
{\em \ref{item:AComms:2}} is then implied by $E_{1,4}E_2=E_2E_{1,4}$\,. To see this, observe $E_{1,4}=\Tinv_1\Tinv_2(E_3)$
and $E_2=\Tinv_1\Tinv_2(E_1)$ as in Proposition~\ref{prop:wordgencont}. Thus, $[E_{1,4},E_2]=[\Tinv_1\Tinv_2(E_3),\Tinv_1\Tinv_2(E_1)]=\Tinv_1\Tinv_2([E_1,E_3])=0$
as desired. 

To derive the last relation {\em \ref{item:AComms:3}}, we set $\theta_2=\Tinv_2^{-1}(E_2)=-F_2K_2$ and compute from the relations in Section~\ref{subsec:qgroups-genrel} the following equations in the full quantum group $\Uq$\,. 
\begin{equation*}
    \theta_2E_i=q^{-1}E_i\theta_2 \quad\mbox{for } i\in\{1,3\}
\qquad\mbox{and }\qquad
\theta_2E_2=\tfrac{q^2}{q-q^{-1}}(K_2^2-1)+q^2E_2\theta_2
\end{equation*}
Letting $c=\tfrac{q^2}{q-q^{-1}}$ and $i,j\in\{1,3\}$, these entail commutators  
\begin{equation}\label{eq:theatEcomm}   
\begin{aligned}
    [\theta_2,E_iE_2E_j]&=c(qK_2^2-q^{-1})E_iE_j\,, & [\theta_2,E_2E_iE_j]&=c(K_2^2-1)E_iE_j\,, \\
[\theta_2,E_iE_jE_2]&=c(q^2K_2^2-q^{-2})E_iE_j\,. && 
\end{aligned}
\end{equation}
Using the braid relations for the $\Tinv_i$ as well as $\Tinv_2\Tinv_3(E_2)=E_3$ we, further, compute
\begin{equation}\label{eq:T1T3E2comp} 
\begin{aligned}
    \Tinv_2^{-1}\Tinv_1^{-1}\Tinv_2(E_3)&=\Tinv_1\Tinv_2^{-1}\Tinv_1^{-1}(E_3)=\Tinv_1\Tinv_2^{-1}(E_3)
    =\Tinv_1\Tinv_3(E_2)\\
    &=q^{-2}E_3E_1E_2-q^{-1}(E_3E_2E_1+E_1E_2E_3)+E_2E_1E_3\;.
\end{aligned}
\end{equation}
Combining \eqref{eq:theatEcomm} and \eqref{eq:T1T3E2comp} we arrive at
$$
[\Tinv_2^{-1}(E_2),\Tinv_2^{-1}\Tinv_1^{-1}\Tinv_2(E_3)]=-(q-q^{-1})E_1E_3\;,
$$
which yields, after application of $\Tinv_1\Tinv_2$ and using $\Tinv_1\Tinv_2(E_1)=E_2=E_{2,3}\,$, the desired form
$$
[E_{1,3},E_{2,4}]= [\Tinv_1(E_2),\Tinv_2(E_3)]= -(q-q^{-1})\Tinv_1\Tinv_2(E_1)\Tinv_1\Tinv_2(E_3)=
-(q-q^{-1})E_{2,3}E_{1,4}\,. 
$$
\end{proof}
 
To express the coproducts of these elements we introduce notations  $K_{i,j}=K_i\ldots K_{j-1}$ and $E_{i,i}=E_{j,j}=1$.  

 \begin{lem}
     For $i<j$, the coproduct of $E_{i,j}$ is 
     \begin{equation}\label{eq:DeltaEij}
         \Delta(E_{i,j})=\sum_{k=i}^j c_kE_{i,k}\otimes K_{i,k}E_{k,j}
     \end{equation} 
     where $c_i=c_j=1$ and $c_k=-(q-q^{-1})$ for $i<k<j$.
 \end{lem}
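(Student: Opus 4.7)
I would proceed by induction on the difference $j-i$. The base case $j=i+1$ reduces to the standard coproduct $\Delta(E_i)=E_i\otimes K_i+1\otimes E_i$, which is exactly formula \eqref{eq:DeltaEij} with only the two boundary terms ($k=i$ and $k=j$) surviving and both coefficients equal to $1$.

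For the inductive step, I would apply $\Delta$ (as an algebra homomorphism) to the recursion \eqref{eq:higher_roots}, namely $E_{i,j}=q^{-1}E_iE_{i+1,j}-E_{i+1,j}E_i$, and expand $\Delta(E_i)=E_i\otimes K_i+1\otimes E_i$ together with the inductive expression
\[
\Delta(E_{i+1,j})=\sum_{k=i+1}^{j} c'_k\, E_{i+1,k}\otimes K_{i+1,k}E_{k,j},
\]
where $c'_{i+1}=c'_j=1$ and $c'_k=-(q-q^{-1})$ for $i+1<k<j$. Multiplying out yields four sums indexed by $k\in\{i+1,\dots,j\}$: two whose left tensor factor contains $E_i$ explicitly (from $E_i\otimes K_i$), and two whose left tensor factor is just $E_{i+1,k}$ (from $1\otimes E_i$). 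The plan is to show that, term by term in $k$, these four sums collapse to a single clean contribution of the form $c_k E_{i,k}\otimes K_{i,k}E_{k,j}$.

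The key tools are two commutation rules. First, relation \emph{\ref{item:AComms:1}} of Proposition~\ref{prop:AComms} with indices $(i,i+1,k)$ gives $E_{i+1,k}E_i=q^{-1}E_iE_{i+1,k}-E_{i,k}$, which is precisely what is needed to process the terms where $E_i$ multiplies $E_{i+1,k}$ from the right. Second, the Cartan commutation \eqref{eq:Kcomm} specializes in type $\mathsf{A}_n$ to $K_iE_{k,j}=E_{k,j}K_i$ for $k\geq i+2$ and $K_iE_{i+1,j}=q^{-1}E_{i+1,j}K_i$. For each $k$ in the range $i+1<k<j$ I would verify that the ``left-$E_i$'' sums cancel after moving $K_i$ past $E_{k,j}$, while the $E_{i,k}$-piece produced by the recursion survives and inherits exactly the coefficient $c'_k=-(q-q^{-1})=c_k$. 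For the boundary case $k=j$, the same cancellation reduces the pair of terms to $(q^{-1}E_iE_{i+1,j}-E_{i+1,j}E_i)\otimes K_{i,j}=E_{i,j}\otimes K_{i,j}$ by the recursion, giving $c_j=1$. Similarly, for $k=i+1$ a direct computation of the two ``left-$E_i$'' contributions produces $(q^{-1}-q)E_i\otimes K_iE_{i+1,j}=-(q-q^{-1})E_{i,i+1}\otimes K_{i,i+1}E_{i+1,j}$, so $c_{i+1}=-(q-q^{-1})$. Finally, the $k=i$ term is produced by the two ``no-$E_i$ on the left'' sums at index $k=i+1$, where commuting $E_i$ through $E_{i+1,j}$ via the recursion yields $1\otimes E_{i,j}$, accounting for $c_i=1$; the analogous pair at any $k\geq i+2$ cancels outright since $E_i$ commutes with both $K_{i+1,k}$ (up to a factor $q$) and $E_{k,j}$ by relation \emph{\ref{item:AComms:4}}.

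The main obstacle, and the source of essentially all the bookkeeping, is keeping track of which $q$-powers arise from moving $K_i$ through $E_{k,j}$ versus from reordering $E_i$ past $E_{i+1,k}$, and verifying that in each range of $k$ exactly the right cancellations take place. The computation is linear, but requires careful case analysis at the three special values $k\in\{i,i+1,j\}$ where behavior differs from the generic interior case. Once this organization is in place, the identities fall out mechanically and \eqref{eq:DeltaEij} follows.
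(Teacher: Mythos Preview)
Your proof is correct and follows essentially the same approach as the paper: descending induction on $i$ using the recursion $E_{i,j}=q^{-1}E_iE_{i+1,j}-E_{i+1,j}E_i$, expanding both coproducts and organizing the resulting terms by $k$. The paper packages the computation slightly more compactly by writing $T^{i,j}_k=E_{i,k}\otimes K_{i,k}E_{k,j}$ and stating directly that $q^{-1}\Delta(E_i)T^{i+1,j}_k-T^{i+1,j}_k\Delta(E_i)$ equals $T^{i,j}_k$ for $k>i+1$ and $-(q-q^{-1})T^{i,j}_{i+1}+T^{i,j}_i$ for $k=i+1$, but this is exactly the case analysis you carry out.
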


\begin{proof} The formula is immediate for the coproduct of $E_{j-1}$ when $i=j-1$. The assertion for $i<j-1$ is then obtained by descending induction in $i$, using the recursion \eqref{eq:higher_roots}. For $T^{i,j}_k=E_{i,k}\otimes K_{i,k}E_{k,j}$
a basic calculation yields
$$
q^{-1}\Delta(E_i)T^{i+1,j}_k-T^{i+1,j}_k\Delta(E_i)=
\begin{cases}
    T^{i,j}_k & \mbox{ for } \ k>i+1\\
    -(q-q^{-1})T^{i,j}_{i+1}+T^{i,j}_{i}& \mbox{ for } \ k=i+1\,.
\end{cases}
$$
Assuming \eqref{eq:DeltaEij} for $\Delta(E_{i+1,j})$ we then use this computation to derive 
\eqref{eq:DeltaEij} for the coproduct as $\Delta(E_{i,j})=q^{-1}\Delta(E_i)\Delta(E_{i+1,j})-\Delta(E_{i+1,j})\Delta(E_i)\,$.
\end{proof}

The coproduct for powers of these generators is derived from commutation relations for the elements $T^{i,j}_k$ defined in the proof above.
First, observe that the identities {\em \ref{item:AComms:5}} and {\em \ref{item:AComms:6}} in Proposition~\ref{prop:AComms} imply that 
\begin{equation}\label{eq:Tij-qcomm}
    T^{i,j}_rT^{i,j}_s=q^{-2}T^{i,j}_sT^{i,j}_r \qquad \mbox{ for all } \; i\leq r < s \leq j\,.
\end{equation}

Thus, if $D_m=\sum_{k=i}^mc_kT^{i,j}_k$ we find from \eqref{eq:Tij-qcomm} that also $D_{m-1}T^{i,j}_m=q^{-2}T^{i,j}_mD_{m-1}\,$. We may now write
$D_m=V+U$ where $V=D_{m-1}$ and $U=c_mT^{i,j}_m\,$. Applying 
\eqref{eq:q-binom-form}, this allows us to infer $D_m^N=\sum_{r}q^{r(N-r)}\qbinsmall{N}{r}
D_{m-1}^{N-r}\left(c_mT^{i,j}_m\right)^r\,$. Iterating this formula, we arrive at the following expression,
\begin{equation}\label{eq:An-Copr-Epwr}
    \Delta(E_{i,j}^N)=\longsum[28]_{\substack{r_i,\ldots,r_j\geqzero\\r_i+\ldots+r_j=N}}q^{\sum_{i\leq s < t\leq j}r_sr_t}(q^{-1}-q)^{\sum_{i<s<j}r_s}\qbin{N}{r_i,\ldots,r_j}{}\left(T^{i,j}_i\right)^{r_i}\ldots \left(T^{i,j}_j\right)^{r_j}\,.
\end{equation}

\subsection{Identification of \texorpdfstring{$\Zsubalgchar^\geqzero_\bullet$}{Z.≥0} as a (Semi) Classical Matrix Algebra}
\label{subsec:Zn=CAn}
We consider next the specialization to a primitive $\kay$-th root of unity $\zeta$, with $\ell$  the order of $\zeta^2\,$. In the simply laced setting all $\ell_\alpha=\ell\,$. Thus, 
for the root $\alpha=\alpha_i+\ldots +\alpha_{j-1}=\epsilon_i-\epsilon_j\,$, we have
$\Epw_\alpha=\Epw_{i,j}=E_{i,j}^\ell$
with respect to the ordering given by  \eqref{eq:An-defzmcmk}. We extend this notation to $X_{i,i}=1$. Write also $\Kpw_{i,j}=\Kpw^\alpha=K_{i,j}^{\ell}\,$ as in \eqref{eq:DefLmu} and
define  $\Zsubalgchar_\bullet^{\geqzero}=\Zsubalgchar_{\longweyl}^{\geqzero}$ as in \eqref{eq:defZ-othtyp}
and \eqref{eq:Zmax}.

The coproducts of the $\Epw_\alpha$  have been computed 
in \cite[(2.24)]{BW04} for a 2-parameter version $U_{r,s}(\mathfrak{sl}_n)\,$, which coincides with $\Uz$ if $r=s^{-1}$ is a respective root of unity. 
The results there imply that the collection of all $\Epw_\alpha$ generate a 
Hopf ideals, leading to  a well-defined (2-parameter) restricted quantum group. 
A general form for the coproduct for other cases can also be found in \cite[Sect. 5.5]{dck92}, though the stated formulae are modulo higher ideal terms and no explicit coefficients are offered. A similar version of the  next lemma can be found in \cite{BW04}.  

\begin{lem}\label{lem:An-CoprX}
For $1\leq i<j\leq n+1$ and with notation as above, we have
    \begin{equation}\label{eq:An-CoprX}
        \Delta(\Epw_{i,j})=\sum_{k=i}^jb_k \Epw_{i,k}\otimes \Kpw_{i,k}\Epw_{k,j} \;,
    \end{equation} 
where $\;b_k=(\zeta^{-1}-\zeta)^\ell\zeta^{\binom{\ell}{ 2}}\;$ for $i<k<j$ and $b_i=b_j=1\,$. 

\noindent In particular, $\Zsubalgchar_\bullet^{\geqzero}$ is a Hopf subalgebra of $\Uz^{\geqzero}\,$. 
\end{lem}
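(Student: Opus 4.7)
The plan is to derive the formula directly by specializing equation \eqref{eq:An-Copr-Epwr} with $N=\ell$ to $q=\zeta$ and then verifying the closure properties for the Hopf subalgebra claim. Since this is the simply laced $\LT{A}_n$ case, all $\ell_\alpha=\ell$, so $\Epw_{i,j}=E_{i,j}^{\ell}$ matches the form of \eqref{eq:An-Copr-Epwr} exactly with $N=\ell$.

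First, I would use Corollary~\ref{cor:zeros-mulitnom} applied to the quantum multinomial $\qbin{\ell}{r_i,\ldots,r_j}{}$ at $q=\zeta$: since $r_i+\cdots+r_j=\ell=\ell_\alpha$, this coefficient vanishes unless exactly one $r_k=\ell$ and all the other $r_s=0$, in which case it equals $1$. This kills all but $j-i+1$ summands of \eqref{eq:An-Copr-Epwr}, indexed by $k\in\{i,\ldots,j\}$. For each surviving index $k$, the quadratic sign exponent $\sum_{s<t}r_sr_t$ is zero, the factor $(q^{-1}-q)^{\sum_{i<s<j}r_s}$ equals $(\zeta^{-1}-\zeta)^{\ell}$ when $i<k<j$ and $1$ otherwise, and the surviving monomial is $(T^{i,j}_k)^{\ell}=(E_{i,k}\otimes K_{i,k}E_{k,j})^{\ell}$.

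The key calculation is then to expand $(T^{i,j}_k)^{\ell}$. Since the two tensor factors multiply independently, this equals $E_{i,k}^{\ell}\otimes (K_{i,k}E_{k,j})^{\ell}$. Using \eqref{eq:Kwgrad} with $\nu=\epsilon_i-\epsilon_k$ and $\wgrad(E_{k,j})=\epsilon_k-\epsilon_j$, one computes $\symbrack{\epsilon_i-\epsilon_k}{\epsilon_k-\epsilon_j}=-1$ for $i<k<j$, so $E_{k,j}K_{i,k}=q\,K_{i,k}E_{k,j}$. A one-line induction then yields $(K_{i,k}E_{k,j})^{m}=q^{\binom{m}{2}}K_{i,k}^{m}E_{k,j}^{m}$, so at $q=\zeta$ we obtain $(T^{i,j}_k)^{\ell}=\zeta^{\binom{\ell}{2}}\Epw_{i,k}\otimes \Kpw_{i,k}\Epw_{k,j}$. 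The boundary cases $k=i$ and $k=j$ reduce trivially, using $E_{i,i}=K_{i,i}=1$ and hence $\Epw_{i,i}=\Kpw_{i,i}=1$, giving the summands $1\otimes\Epw_{i,j}$ and $\Epw_{i,j}\otimes\Kpw_{i,j}$ with coefficient $1$. Combining these coefficients yields precisely $b_k$ as stated.

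For the Hopf subalgebra claim, I would check the three structural closure conditions. The coproduct formula just established shows $\Delta(\Epw_{i,j})\in\Zsubalgchar_\bullet^{\geqzero}\otimes \Zsubalgchar_\bullet^{\geqzero}$, since every factor $\Epw_{i,k}$, $\Kpw_{i,k}$, $\Epw_{k,j}$ lies in this subalgebra (with $\Kpw_{i,k}=\Kpw_i\cdots \Kpw_{k-1}$). Closure under $\Delta$ for the Cartan part $\Zsubalgchar_\bullet^0$ is immediate since each $\Kpw_i$ is group-like. The counit acts trivially ($\varepsilon(\Epw_{i,j})=0$, $\varepsilon(\Kpw_i)=1$). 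For the antipode, I would invoke the general formula \eqref{eq:AntipBasis} specialized to the one-generator monomial $\Ebase{z_n[\alpha]}{\psi}$ with $\psi$ the indicator of $\alpha$ scaled by $\ell$; the resulting expression $S(\Epw_{i,j})=\pm\zeta^{c}\Kpw_{i,j}^{-1}\Epw_{j^{\winvchar},i^{\winvchar}}$ (where $j^{\winvchar},i^{\winvchar}$ refer to the conjugate word) stays inside $\Zsubalgchar_\bullet^{\geqzero}$ by word-independence from Theorem~\ref{thm:Zwordindep} together with $S(\Zsubalgchar_\bullet^{\geqzero})=\Zsubalgchar_\bullet^{\geqzero}$ from \eqref{eq:InvolZpm}.

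The main obstacle is purely bookkeeping rather than conceptual: one must carefully track that the sign exponent $\binom{\ell}{2}$ appears with the correct sign, which hinges on the direction $K_{i,k}E_{k,j}=\zeta^{-1}E_{k,j}K_{i,k}$ (equivalently $E_{k,j}K_{i,k}=\zeta\,K_{i,k}E_{k,j}$). A secondary care point is ensuring that $b_i=b_j=1$ arises correctly from the degeneration of the $(q^{-1}-q)$ factor at the boundary indices, which is transparent once one inspects that the exponent in that product ranges only over $i<s<j$.
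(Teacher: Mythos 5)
Your proposal is correct and follows essentially the same route as the paper: specializing \eqref{eq:An-Copr-Epwr} at $N=\ell$, $q=\zeta$, invoking Corollary~\ref{cor:zeros-mulitnom} to isolate the terms with a single $r_k=\ell$, and evaluating $(T^{i,j}_k)^\ell=\zeta^{\binom{\ell}{2}}\Epw_{i,k}\otimes\Kpw_{i,k}\Epw_{k,j}$ from the commutation $K_{i,k}E_{k,j}=\zeta^{-1}E_{k,j}K_{i,k}$. Your antipode step via \eqref{eq:AntipBasis} plus word-independence amounts to the same invariance $S(\Zsubalgchar_\bullet^{\geqzero})=\Zsubalgchar_\bullet^{\geqzero}$ that the paper derives from $\Kinvaut$-invariance and \eqref{eq:SXiMhoRel}, so there is no gap.
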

\begin{proof}
  Setting $N=\ell$ in \eqref{eq:An-Copr-Epwr} and with Corollary~\ref{cor:zeros-mulitnom} we are left with a summation over terms in which one index $r_k=\ell$ and all other indices are zero. For $i<k<j$ we, further, note that
  $(T^{i,j}_k)^\ell=(E_{i,k})^\ell\otimes(K_{i,k}E_{k,j})^{\ell}=\zeta^{\binom{\ell }{ 2}}E_{i,k}^\ell\otimes K_{i,k}^{\ell}E_{k,j}^{\ell}$ since $K_{i,k}E_{k,j}=\zeta^{-1}E_{k,j}K_{i,k}\,$. The resulting formula
  for the coproduct in
  \eqref{eq:An-CoprX} as well as $\Delta(\Kpw_{i,j})=\Kpw_{i,j}\otimes \Kpw_{i,j}$ contain only generators of $\Zsubalgchar_\bullet^{\geqzero}$, so that $\Zsubalgchar_\bullet^{\geqzero}$ is a sub-bialgebra. 

  As noted in the proof of Theorem~\ref{thm:Zwordindep} in Section~\ref{subsec:Mho-Invar-Z}, 
  $\Zsubalgchar_\bullet^{\geqzero}$ is also invariant under $\Kinvaut$. From Item~{\em \ref{item:Kscaleprops:grad}}
  in Proposition~\ref{prop:Kscaleprops} we also know that 
  $\Kscale{u}{\idsymm}(X_{i,j})=\lambda K^{\ell(\epsilon_i-\epsilon_j)}X_{i,j}=\lambda L_{i,j}X_{i,j}\,$,
  where 
  $\lambda=\pm\zeta^a$ for some $a\in\mathbb Z$. Relation \eqref{eq:SXiMhoRel} now implies that $\Zsubalgchar_\bullet^{\geqzero}$ is also invariant under the antipode $S\,$.
\end{proof}

A normalization of generators for which the coproduct structure is defined over $\mathbb{Z}$ is discussed in Appendix \ref{sec:integralcoalg}.
The fact that the full skew-central subalgebra forms a Hopf subalgebra allows us now to prove that the ideals defined in Section~\ref{subsec:Z_Ideals} are indeed Hopf ideals.

\begin{thm}\label{thm:A-ZHopfIdeal} 
For any $s\in\Weyl$ the augmentation ideal  $\Zaugideal{s}^{\geqzero}$ as in \eqref{eq:defAugZideal}
    is a Hopf algebra ideal in $\Zsubalgchar_\bullet^{\geqzero}$. 
\end{thm}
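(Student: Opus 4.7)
The plan is to verify the three Hopf-ideal conditions for $\Zaugideal{s}^{\geqzero}$ inside the Hopf subalgebra $\Zsubalgchar_\bullet^{\geqzero}$, the Hopf structure of the latter being exactly the content of Lemma~\ref{lem:An-CoprX}. The ideal is generated, up to extension of scalars, by $\{X_{i,j}:\alpha_{i,j}=\epsilon_i-\epsilon_j\in\descroots{s}\}$. Each such generator is a $\Commsigngrp$-eigenvector, so $\Zaugideal{s}^{\geqzero}$ is $\Commsigngrp$-invariant, and Lemma~\ref{lem:Ginvar2Sid} supplies two-sidedness even in the skew-commutative regime $\kay\equiv 2\pmod{4}$ where $\Zsubalgchar_\bullet^{\geqzero}$ is not itself commutative. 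The counit vanishes on each generator, hence on the whole ideal. What remains to check is the coideal property and stability under $S$.

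For the coideal condition, I would apply Lemma~\ref{lem:An-CoprX} to a generator with $\alpha_{i,j}\in\descroots{s}$:
\[
\Delta(X_{i,j})\,=\,X_{i,j}\otimes L_{i,j}\;+\;1\otimes X_{i,j}\;+\;\sum_{i<k<j}b_k\,X_{i,k}\otimes L_{i,k}X_{k,j}\,.
\]
The two outer terms belong to $\Zaugideal{s}^{\geqzero}\otimes\Zsubalgchar_\bullet^{\geqzero}+\Zsubalgchar_\bullet^{\geqzero}\otimes\Zaugideal{s}^{\geqzero}$ immediately. The middle terms are controlled by the \emph{biclosedness} of $\descroots{s}=\proots\cap s(\nroots)$: since $\alpha_{i,j}=\alpha_{i,k}+\alpha_{k,j}$ with both summands in $\proots$, the assumption $\alpha_{i,j}\in\descroots{s}$ (i.e.\ $s^{-1}(\alpha_{i,j})\in\nroots$) forces at least one of $s^{-1}(\alpha_{i,k})$, $s^{-1}(\alpha_{k,j})$ to be negative, for otherwise their sum would lie in $\proots$. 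Hence for each intermediate $k$ either $X_{i,k}\in\Zaugideal{s}^{\geqzero}$ or $X_{k,j}\in\Zaugideal{s}^{\geqzero}$, so each middle term lies in the desired sum. Passage from generators to the full ideal is automatic because $\Delta$ is an algebra map and $\Zaugideal{s}^{\geqzero}$ is a two-sided ideal in a Hopf algebra.

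For antipode stability, I would induct on $j-i$. The base case $j-i=1$ is $S(X_i)=-L_i^{-1}X_i\in\Zaugideal{s}^{\geqzero}$. For the inductive step, applying the antipode axiom $m\circ(S\otimes\id)\circ\Delta=\eta\circ\varepsilon$ to the coproduct formula above yields
\[
S(X_{i,j})\,L_{i,j}\,=\,-X_{i,j}\;-\;\sum_{i<k<j}b_k\,S(X_{i,k})\,L_{i,k}\,X_{k,j}\,.
\]
The first summand is in $\Zaugideal{s}^{\geqzero}$ trivially; for each middle $k$-term, the same biclosedness dichotomy applies --- either $\alpha_{i,k}\in\descroots{s}$, whence $S(X_{i,k})\in\Zaugideal{s}^{\geqzero}$ by the inductive hypothesis, or $\alpha_{k,j}\in\descroots{s}$, whence $X_{k,j}\in\Zaugideal{s}^{\geqzero}$. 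Solving for $S(X_{i,j})$ (noting $L_{i,j}$ is a unit in $\Zsubalgchar_\bullet^{\geqzero}$) places $S(X_{i,j})$ in the ideal, and extension to the full ideal uses that $S$ is an anti-algebra homomorphism and $S(\Zsubalgchar_\bullet^{\geqzero})=\Zsubalgchar_\bullet^{\geqzero}$ by Lemma~\ref{lem:An-CoprX}. Both nontrivial verifications thus reduce to the same combinatorial input --- biclosedness of inversion sets --- so I do not anticipate a genuine obstacle beyond organizing the induction. An alternative route via identifying $\Zaugideal{s}^{\geqzero}$ with the vanishing ideal of a Bruhat subgroup $B(s)<B$ (together with Lemma~\ref{lm:ASG=HI}) would yield the statement over $\mathbb{C}$ and pull back by \eqref{eq:restrImodule}, but the direct argument above is cleaner and works over the minimal ground ring.
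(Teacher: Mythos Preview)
Your argument is correct. Both the coideal verification via biclosedness of $\descroots{s}$ and the inductive antipode check go through as written, and your handling of two-sidedness in the skew-commutative regime via $\Commsigngrp$-invariance is appropriate.

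The paper takes a related but somewhat different route for the coideal step. Rather than invoking biclosedness of $\descroots{s}$ directly, it first characterizes the generating set as $\{X_\beta:\beta\leqwt{z}\gamma\}$ for a suitable convex ordering $\leqwt{z}$ extending that of a reduced word for $s$, then expands $\Delta(X_\beta)$ in the PBW basis of $\Zsubalgchar_\bullet^{\geqzero}$ (not using the explicit two-term structure of the $\LT{A}_n$ coproduct), and finally appeals to Lemma~\ref{lem:sumposroots}\ref{item:sumposroots:min}: any decomposition $\beta=\sum_i\beta_i$ into positive roots forces some $\beta_j\letwt{z}\beta$. Your biclosedness argument is the same combinatorial content specialized to two summands, which is all that is needed here because the $\LT{A}_n$ coproduct in Lemma~\ref{lem:An-CoprX} already has the split form $X_{i,k}\otimes L_{i,k}X_{k,j}$. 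The paper's phrasing is deliberately more general --- as remarked immediately after the proof, the same scheme is meant to extend to other classical Lie types where the coproduct may produce products of several $X_\alpha$'s in each tensor factor. Your approach is more elementary for $\LT{A}_n$ and also more complete: the paper's proof stops at the coideal condition and does not explicitly treat the antipode, whereas your induction on $j-i$ closes that gap directly over the minimal ground ring.
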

\begin{proof} Since $s\leqRB \longweyl$ there are reduced words $w,b\in\wordset$ with $z=w\cdot b\in \wordsetmax$\,, $\longweyl=\Weylpres(z)$, and $s=\Weylpres(w)$. As in Section~\ref{subsec:descroots}, this imposes a total convex 
ordering $\letwt{z}$ on $\proots$. As in the remarks following \eqref{eq:defAugZideal}, word independence of Theorem~\ref{thm:Zwordindep}
implies that $\Zaugideal{s}^{\geqzero}$ is the ideal generated by all $\Epw_v$ with $\emptyword\neq v\leqRB w$. Given the fixed $z\in\wordsetmax\,$,
we may relabel generators $\Epw_\alpha=\Epw_{z[\alpha]}$ analogous to  \eqref{eq:Genbyroot}. Thus, with $\gamma=\wordroot(w)$ and this notation, $\Zaugideal{s}^{\geqzero}$ is the ideal generated by $\Epw_\beta$ with $\beta\leqwt{z}\gamma\,$. 

Next,  note that, by Proposition~\ref{prop:ZPBWbases}, the monomials 
$\Kpw^\mu\Epwbase{z}{\psi}=\pm\Kpw^\mu\prod_{\alpha\in\proots}\Epw_\alpha^{\psi(\alpha)}$ are a basis for 
$\Zsubalgchar_\bullet^{\geqzero}$. Since, by Lemma~\ref{lem:An-CoprX}, $\Zmaxidchar_\bullet^{\geqzero}$ is a Hopf algebra
over $\Zgen$\,, we can write $\Delta(X_\beta)$ with $\beta\in\proots$ as a $\Zgen$-linear combination of terms of the form 
$\Kpw^{\nu}\Epw_{\beta_1}\ldots\Epw_{\beta_s}\otimes\Kpw^{\mu}\Epw_{\beta_{s+1}}\ldots\Epw_{\beta_k}$\,, where the sequence of roots $\beta_i\in\proots$ may have repetitions.
 
By $\wgrad$-grading, we must have $\beta=\sum_i\beta_i$\,. Lemma~\ref{lem:sumposroots} now implies that for at least 
one $j$ we have $\beta_j\leqwt{z}\beta$. Thus, if $\beta\leqwt{z}\gamma$ also $\beta_j\leqwt{z}\gamma$ and, hence,
that $\Epw_{\beta_j}\in\Zaugideal{s}^{\geqzero}\,$. Therefore, any of these terms is in 
$\Zaugideal{s}^{\geqzero}\otimes \Zsubalgchar_\bullet^{\geqzero}\,+\,\Zsubalgchar_\bullet^{\geqzero}\otimes\Zaugideal{s}^{\geqzero}$\,,
which, thus, contains $\Delta(\Epw_\beta)$ for every generator $\Epw_\beta$ of $\Zaugideal{s}^{\geqzero}$ (with 
$\beta\leqwt{z}\gamma\,$) and, consequently, also $\Delta(\Zaugideal{s}^{\geqzero})\,$.
\end{proof}

The proof of Theorem~\ref{thm:A-ZHopfIdeal} extends to other classical Lie types for which $\Zsubalgchar_\bullet^{\geqzero}$ can be shown to be a Hopf subalgebra, as well as other Lie types to which Lemma~\ref{lem:sumposroots} can be generalized.

The $\Zsubalgchar_w^{\geqzero}$ subalgebras, however, will generally {\em not} form sub-bialgebras. For example, for type $\LT{A}_2$ and $w=w_1w_2=z_2[1,2]$ we have that $\Zsubalgchar_w^{+}$ is the subalgebra generated by $\Epw_{1,3}$ and $\Epw_{2,3}$\,. Yet,
$\Delta(\Epw_{1,3})$ will also contain a $\Epw_{1,2}$ factor.

Conversely, call a subset $C\subseteq\proots$ a {\em lower set} if $\mu+\nu\in C$ for $\mu,\nu\in\proots$ implies $\mu,\nu\in C$. The coproduct in \eqref{eq:An-CoprX} implies that 
$\langle \Epw_\alpha,\Kpw_\alpha:\,\alpha\in C\rangle$ 
is a Hopf subalgebra for any lower set $C$, where the generators are defined with respect to the maximal word from \eqref{eq:An-defzmcmk}. For a given root $\beta=\epsilon_{i}-\epsilon_j=\alpha_i+\ldots+\alpha_{j-1}\,$, let $C_\beta$ be the set of all positive roots $\leq \beta$ in the usual partial order, which is just  the root subsystem $\{\epsilon_s-\epsilon_r:i\leq s<r\leq j\}\,$. It is clear that a subset $C$ has the above property if and only if it is a union of (possibly intersecting) sets $C_\beta\,$. 

One may, further, ask for which $s\in\Weyl$, both, the algebra $\Zsubalgchar_s^{\geqzero}$ is a Hopf subalgebra so that $\Zaugideal{s}^{\geqzero}$ is a Hopf ideal. In the $\LT{A}_n$ case with $\Weyl=S_{n+1}\,$ it is not difficult to see that the root set $\descroots{s}$ for $\Zsubalgchar_s^{\geqzero}$ is a lower set if it is the {\em disjoint} union of sets $C_\beta\,$. Particularly, the lower set property for 
$\descroots{s}$ implies that if $(i,j)$ with $i<j$ is an inversion pair for $s\in S_{n+1}$ then any $(a,b)$ with $i\leq a<b\leq j$ also needs to be an inversion pair. Suppose that $[i,j]$ is a maximal interval with $\epsilon_i-\epsilon_j\in \descroots{s}$. It is then not hard to see that $s$ then maps the intervals $[1,i-1]$, $[i,j]$, and $[j+1,n]$ respectively to themselves and that it acts on $[i,j]$
via the longest (full inversion) element 
of $S_{j-1+1}\,$. 

Thus, by induction, $s$ must be the product of longest elements on disjoint intervals $a_s=[i_s,j_s]$, with $j_s<i_{s+1}\,$. 
The associated subalgebra $\Zsubalgchar^{\geqzero}_s$
is then identified with $\Zsubalgchar^{\geqzero}_\bullet(a_1)\otimes \ldots\otimes \Zsubalgchar^{\geqzero}_\bullet(a_m)$, where
each $\Zsubalgchar^{\geqzero}_\bullet(a_s)$ is the subalgebra generated $\Epw_\alpha$ and $\Kpw_\alpha$ with $\alpha$ in the respective $\LT{A}_{j_s-i_s+1}$ subsystem. Requiring the augmentation ideals $\Zaugideal{s}^{\geqzero}$ to derive from  Hopf algebras, thus,  leaves us with only these obvious examples. 

\medskip

In the remainder of this section, we discuss identifications of the Hopf algebra $\Zsubalgchar_\bullet^{\geqzero}$ as in 
Lemma~\ref{lem:An-CoprX} with (semi) classical Hopf algebra constructions 
that are (mostly) independent of a choice of roots of unity $\zeta$.
These are more conveniently worked out for  the quantum group associated to $\mathfrak{gl}_{n+1}\,$,
which contains the standard one for $\mathfrak{sl}_{n+1}\,$.

The generators of $\Uq(\mathfrak{gl}_{n+1})$ are given by the usual $E_i$ and $F_i$ for $1\leq i \leq n$ as well as 
$\Glg_i^{\pm 1}$ for $1\leq i\leq n+1\,$. The elements $\Glg_i^{\pm 1}$ are required to be group like and to commute with each other. Relations \eqref{eq:EFcomm} and \eqref{eq:corels_gen} are replaced by ones in which we substitute $K_i$ by   $\Glg_i\Glg_{i+1}^{-1}$. The conjugation relations in \eqref{eq:Kcomm} are replaced by 
\begin{equation}\label{eq:qGLrels}
    \Glg_iE_j\Glg_i^{-1}=q^{\symbrack{\epsilon_i}{\alpha_j}}E_j=q^{\delta_{i,j}-\delta_{i,j+1}}E_j
\qquad \mbox{ and } \qquad
\Glg_iF_j\Glg_i^{-1}=q^{-\symbrack{\epsilon_i}{\alpha_j}}F_j=q^{-\delta_{i,j}+\delta_{i,j+1}}F_j\,,
\end{equation}
and the Serre relations \eqref{eq:SerreE} and \eqref{eq:SerreF} remain unchanged.

A $\mathbb Z$-grading $\pgrad$ on $\Uq(\mathfrak{gl}_{n+1})$ is provided by setting $\pgrad(P_i)=1$ and $\pgrad(E_i)=\pgrad(F_i)=0\,$ for all $i=1,\ldots,n\,$.
The original algebra $\Uq(\mathfrak{sl}_{n+1})$ is then recovered as the 0-graded subalgebra of $\Uq(\mathfrak{gl}_{n+1})$ with respect to $\pgrad\,$. The inclusion map of $\Uq(\mathfrak{sl}_{n+1})$ into $\Uq(\mathfrak{gl}_{n+1})$ is now given by 
$K_i\mapsto\Glg_i\Glg_{i+1}^{-1}$ on the Cartan subalgebra and identity on all other generators.

The specialization of $\Uq(\mathfrak{gl}_{n+1})$ to a root of unity $\zeta$ is commensurable with this inclusion. The analogous 
skew-central subalgebra $\ZsubalgcharT^{\geqzero}_\bullet$ is generated by $\Glgpw_i=\Glg_i^\ell$ as well as the $\Epw_{i,j}\,$ as before. The $\pgrad$-grading extends to  $\ZsubalgcharT^{\geqzero}_\bullet$ with $\pgrad(\Glgpw_i)=\ell\,$.

The ground ring $\Zgen$ may be any extension of $\Zz=\mathbb Z[\zeta]$, such as the ones from \eqref{eq:defzetarings}.
If $(\zeta-\zeta)^{-1}$ is not in $\Zgen$\,, we denote by $\ZsubalgcharT^{\geqzero}_{\bullet\;\vaccent}$ the subalgebra generated
by $\Glgpw_i=\Glg_i^\ell$ and $(\zeta-\zeta^{-1})^{\ell}\Epw_{i,j}\,$. Thus, for example for $\Zgen=\Zz$\,, the quotient 
$\ZsubalgcharT^{\geqzero}_\bullet/\ZsubalgcharT^{\geqzero}_{\bullet\;\vaccent}$ is a torsion module for $\Zgen\,$. 
Conversely, if the ground ring extends $\Zzv$ we have $\ZsubalgcharT^{\geqzero}_\bullet=\ZsubalgcharT^{\geqzero}_{\bullet\;\vaccent}\,$.

As for the generic case, the original subalgebra $\Zsubalgchar^{\geqzero}_\bullet$ is identified with the zero-graded component of 
$\ZsubalgcharT^{\geqzero}_\bullet$ with respect to $\mathbb p\,$. Analogously, we write $\Zsubalgchar^{\geqzero}_{\bullet\;\vaccent}$
for the subalgebra generated by the $\Kpw_j$ and $(\zeta-\zeta^{-1})^{\ell}\Epw_{i,j}\,$, or, equivalently, the zero-graded
part of $\ZsubalgcharT^{\geqzero}_{\bullet\;\vaccent}\,$.

We next consider a smaller variant of the extended quantum matrix algebra introduced by Artin, Schelter, and Tate in \cite{AST91}.
Suppose $\,\skewcoeffchar:\{1,\ldots,n\}^2\rightarrow\Zgen^\times\,$ are coefficients for which $\skewcoeff{i}{j}=\skewcoeff{j}{i}^{-1}$ and $\skewcoeff{i}{i}=1\,$. Define $\QMatAlg{\Zgen}{\skewcoeffchar}{n}$ to be 
the unital algebra over $\Zgen$ with generator set $\{R_{i,j}:1\leq i\leq j\leq n\}\sqcup\{R_{i,i}^{-1}:i=1,\ldots,n\}\,$ and relations
\begin{equation}\label{eq:defQMatAlg}
    R_{i,j}R_{s,t}\,=\,\frac{\skewcoeff i s}{\skewcoeff j t}\,R_{s,t}R_{i,j} \qquad\mbox{and } \qquad R_{i,i}^{-1}R_{i,i}=1=R_{i,i}R_{i,i}^{-1}
\end{equation}
for all valid indices. This algebra differs from the one in \cite{AST91} in that we only use the coefficients on the subgroup
$\UtriM{n}$ of upper triangular matrices in $\mathrm{GL}(n,\mathbb C)\,$, and that an extra parameter there is $\lambda=1\,$. 
The algebra  $\QMatAlg{\Zgen}{\skewcoeffchar}{n}$ may also be viewed as a central extension of the polynomial 
algebra $\Zgen[\{x_{i,j},x_{i,i}^{-1}\}_{i\leq j}]$, in   which the $\skewcoeffchar$ expression features as the respective 2-cocyle. 

The bialgebra portion of the following result is a straightforward adaption of Theorems~1 and 2 in \cite{AST91}. As opposed
to the situation in \cite{AST91}, an antipode is assured by the invertibility of the diagonal elements and does not require a
determinant relation. Its explicit form below is readily verified by direct computation. 

\begin{prop}\label{prop:QMathAlgHopf}
The algebra $\QMatAlg{\Zgen}{\skewcoeffchar}{n}$ is a free $\Zgen$ module with basis given by monomials $\prod_{i\leq j}R_{i,j}^{n_{i,j}}$\,, where $n_{i,i}\in\mathbb Z$ and $n_{i,j}\in\nnN$ for $1\leq i<j\leq n\,$ (for some fixed ordering).

\noindent Further, $\QMatAlg{\skewcoeffchar}{\Zgen}{n}$ admits the structure of a Hopf algebra over $\Zgen$ with coproduct defined by 
\begin{equation}\label{eq:QMatCoprod}
    \Delta(R_{i,j})=\sum_{k=i}^jR_{i,k}\otimes R_{k,j}\;. 
\end{equation} 
The counit is $\varepsilon(R_{i,j})=\delta_{i,j}\,$.
The unique antipode is given by $S(R_{i,i})=R_{i,i}^{-1}$ and, for $j>i$, by 
$$
S(R_{i,j})=\longsum[20]_{\substack{1\leq m\leq j-i\\ i=s_0<s_1<\ldots<s_m=j}}(-1)^m\,R_{s_0,s_0}^{-1}R_{s_0,s_1}R_{s_1,s_1}^{-1}R_{s_1,s_2}\ldots
R_{s_{m-1},s_{m-1}}^{-1}R_{s_{m-1},s_m}R^{-1}_{s_m,s_m}\,.
$$

\end{prop}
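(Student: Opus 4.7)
The plan is to prove the four assertions (free module basis, well-defined coassociative coproduct, counit, antipode) one at a time, with the bulk of the work being a Bergman-style diamond-lemma argument for the basis and a direct verification of coproduct well-definedness; coassociativity and counit are nearly automatic, and the antipode verification is then reduced to a combinatorial identity on strictly increasing chains.

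For the basis, I would apply Bergman's diamond lemma to the presentation. Orient the quadratic relations \eqref{eq:defQMatAlg} so that any product $R_{s,t}R_{i,j}$ with $(i,j)$ preceding $(s,t)$ in the chosen ordering on index pairs is rewritten as $\frac{\skewcoeff{j}{t}}{\skewcoeff{i}{s}} R_{i,j}R_{s,t}$, together with the obvious rewrites $R_{i,i}R_{i,i}^{-1} \to 1$ and $R_{i,i}^{-1}R_{i,i} \to 1$. Because all commutation scalars $\frac{\skewcoeff{i}{s}}{\skewcoeff{j}{t}}$ are units, no divisibility issues arise over $\Zgen$, and the cocycle-type identities $\skewcoeff{i}{s} = \skewcoeff{s}{i}^{-1}$, $\skewcoeff{i}{i}=1$ guarantee that all overlap ambiguities (triple products $R_{a,b}R_{c,d}R_{e,f}$ reordered in two ways) resolve to the same scalar multiple. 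The diamond lemma then gives the stated monomial basis. Alternatively, one may define a faithful action on $\Zgen[\{y_{i,j}\}_{i\leq j},\{y_{i,i}^{-1}\}]$ by the rule $R_{i,j}\cdot P = y_{i,j}\cdot \sigma_{i,j}(P)$ where $\sigma_{i,j}$ is the $\Zgen$-algebra automorphism that scales $y_{s,t}$ by $\skewcoeff{s}{i}/\skewcoeff{t}{j}$; this cleanly produces linear independence.

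For the coproduct, well-definedness reduces to checking that $\Delta(R_{i,j})\Delta(R_{s,t}) = \frac{\skewcoeff{i}{s}}{\skewcoeff{j}{t}}\Delta(R_{s,t})\Delta(R_{i,j})$ and that $\Delta(R_{i,i})$ is group-like (so $\Delta(R_{i,i}^{-1})=R_{i,i}^{-1}\otimes R_{i,i}^{-1}$ is forced and consistent). Expanding gives $\sum_{k,l} R_{i,k}R_{s,l}\otimes R_{k,j}R_{l,t}$, and applying \eqref{eq:defQMatAlg} to each tensor factor introduces factors $\frac{\skewcoeff{i}{s}}{\skewcoeff{k}{l}}$ and $\frac{\skewcoeff{k}{l}}{\skewcoeff{j}{t}}$ whose $\skewcoeff{k}{l}$ contributions telescope exactly. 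Coassociativity $(\Delta\otimes\id)\Delta(R_{i,j}) = (\id\otimes\Delta)\Delta(R_{i,j})$ is a one-line reindexing that both sides equal $\sum_{i\leq k\leq l\leq j} R_{i,k}\otimes R_{k,l}\otimes R_{l,j}$, and the counit axioms $(\varepsilon\otimes\id)\Delta = \id = (\id\otimes\varepsilon)\Delta$ reduce to the boundary terms $k=i$ and $k=j$ in the coproduct.

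For the antipode, uniqueness (in any bialgebra) means I only need to exhibit a well-defined algebra anti-homomorphism $S$ satisfying the antipode axiom on generators. I would first show by induction on $j-i$ that the given formula satisfies $m\circ(S\otimes\id)\circ\Delta(R_{i,j}) = 0 = \varepsilon(R_{i,j})$ for $i<j$: grouping terms in $\sum_{k=i}^{j} S(R_{i,k})R_{k,j}$ by the length $m$ of their underlying chain $i=s_0<s_1<\ldots<s_m=k$ and appending the edge $(k,j)$, one obtains an alternating sum over chains from $i$ to $j$ of length $m+1$ (taking both the "step to $j$" and the "split at $k$" into account), and the two contributions for each internal chain cancel. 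The diagonal case $R_{i,i}$ is trivial. Well-definedness of $S$ as an anti-automorphism requires checking $S$ respects the defining relations; using that $S$ swaps the order of multiplication, the required identity $S(R_{s,t})S(R_{i,j}) = \frac{\skewcoeff{i}{s}}{\skewcoeff{j}{t}} S(R_{i,j})S(R_{s,t})$ can be verified chain by chain using \eqref{eq:defQMatAlg}, the individual chain terms commute up to the expected scalar factor.

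The main obstacle I expect is the antipode bookkeeping: the formula is a sum indexed by chains of arbitrary length, and both well-definedness and the antipode identity require tracking how chains interact with the skew-commutation scalars. A cleaner route, which I would pursue as a backup, is to recognize $\QMatAlg{\Zgen}{\skewcoeffchar}{n}$ as a $2$-cocycle twist of the classical coordinate Hopf algebra $\Zgen[\UtriM{n}]$ from \eqref{eq:HopfStrTn} (viewed as $\mathbb{Z}^{\binom{n}{2}+n}$-graded via the obvious degree assignment $\deg(x_{i,j}) = e_{(i,j)}$), with the cocycle read off from $\skewcoeffchar$; once this identification is made, the Hopf algebra structure (including the antipode) is inherited from the classical one through the general machinery of cocycle twists, bypassing the direct combinatorial verification entirely.
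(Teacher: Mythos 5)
Your main line of argument is correct, and it is a more self-contained route than the one the paper actually takes: the paper disposes of the basis and bialgebra claims by adapting Theorems~1 and~2 of \cite{AST91} and merely remarks that the antipode formula is ``readily verified by direct computation,'' the only new point being that invertibility of the diagonal generators replaces the determinant localization of \cite{AST91}. What your write-up supplies is exactly the content hidden behind that citation: the diamond-lemma reduction to ordered monomials (all overlap ambiguities resolve because the commutation scalars are units and compose bicharacter-fashion), the telescoping of the $\skewcoeff{k}{l}$ factors showing that $\Delta$ respects \eqref{eq:defQMatCoprod-placeholder} — more precisely \eqref{eq:defQMatAlg} — and the chain-cancellation identity behind the antipode: pairing each chain $i=s_0<\dots<s_m=k$ extended by the edge $(k,j)$ against the corresponding full chain ending at $j$ does collapse $\sum_{k=i}^{j}S(R_{i,k})R_{k,j}$ to $\delta_{i,j}$, and each chain monomial of $S(R_{i,j})$ has row/column bidegree $(-e_j,-e_i)$, so pairs of chain terms skew-commute with exactly the scalar needed for $S$ to be a well-defined anti-homomorphism. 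Your cocycle-twist backup is, in spirit, the mechanism underlying \cite{AST91}; the paper buys brevity by citing it, while your version makes the antipode verification explicit and keeps everything over $\Zgen$ without reference to a determinant.

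Three small repairs. First, you check only $m\circ(S\otimes\id)\circ\Delta=\varepsilon(\,\cdot\,)1$; a left convolution inverse of the identity is not automatically two-sided, so you should also run the mirror cancellation in $\sum_{k}R_{i,k}S(R_{k,j})$ (same bookkeeping). Second, in the backup route the per-generator grading $\deg x_{i,j}=e_{(i,j)}$ is an algebra grading but \emph{not} a coalgebra grading — the summand $x_{i,k}\otimes x_{k,j}$ of $\Delta(x_{i,j})$ has total degree $e_{(i,k)}+e_{(k,j)}$ — so a Hopf $2$-cocycle cannot be read off from it directly; one must instead twist along the row--column bigrading $\deg x_{i,j}=(e_i,e_j)$ coming from the two-sided torus coaction, choosing a bicharacter whose antisymmetrization is $\skewcoeff{i}{s}\,\skewcoeff{j}{t}^{-1}$. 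Third, the scalars in your proposed faithful action on $\Zgen[\{y_{i,j}\},\{y_{i,i}^{-1}\}]$ are inconsistent: writing $\lambda_{(i,j),(s,t)}$ for the factor by which $\sigma_{i,j}$ rescales $y_{s,t}$, compatibility with \eqref{eq:defQMatAlg} forces $\lambda_{(i,j),(s,t)}\,\lambda_{(s,t),(i,j)}^{-1}=\skewcoeff{i}{s}\,\skewcoeff{j}{t}^{-1}$, whereas your choice produces the inverse square of that scalar, so the relations act consistently only when its cube is $1$. An order-dependent choice (let $\sigma_{i,j}$ rescale $y_{s,t}$ by the full commutation factor when $(s,t)$ precedes $(i,j)$ and by $1$ otherwise) fixes this — or simply rely on the diamond lemma, which already gives linear independence. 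Note also the dangling reference in my first sentence: the relevant display is \eqref{eq:QMatCoprod} for the coproduct and \eqref{eq:defQMatAlg} for the relations.
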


Two special cases of fully symmetric and anti symmetric coefficient sets are given by 
\begin{equation}\label{eq:specskewcoeff}
    \skewcoeffgen{\mathsf s}{i}{j}=1 \qquad \mbox{and} \qquad \skewcoeffgen{\mathsf a}{i}{j}=-(-1)^{\delta_{ij}}\,. 
\end{equation}
We will also abbreviate $\QMatAlgS{\Zgen}{n}=\QMatAlg{\mathsf s}{\Zgen}{n}$ for the classical matrix algebra. Note also, 
that the coefficients in \eqref{eq:specskewcoeff} are units in $\mathbb Z$ and the structure coefficients in Proposition~\ref{prop:QMathAlgHopf} are integers as well. Thus, the matrix algebras can be defined over $\mathbb Z$ so that
\begin{equation}\label{eq:QMatCoeff}
    \QMatAlgS{\Zgen}{n}=\QMatAlgS{\mathbb Z}{n} \otimes \Zgen\qquad \mbox{ and  } \qquad 
    \QMatAlg{\mathsf a}{\Zgen}{n}=\QMatAlg{\mathsf a}{\mathbb Z}{n}\otimes \Zgen\,. 
\end{equation}

The algebra $\QMatAlg{\skewcoeffchar}{\Zgen}{n}$ admits many integer gradings. One of them, given on generators as $\mathbb n(R_{i,j})=\epsilon_j-\epsilon_i$\,, extends to a Hopf algebra grading. Another is the total polynomial degree, given by $\mathbb q(R_{i,j})=1$ and $\mathbb q(R_{i,i}^{-1})=-1$, which does not extend to a Hopf algebra grading. Nevertheless, it is not hard to see that the zero 
graded subalgebra, spanned by monomials $M$ with $\mathbb q(M)=0$, forms indeed a Hopf subalgebra, which we denote as
$\SQMatAlg{\skewcoeffchar}{\Zgen}{n}\,$. Similarly, we abbreviate $\SQMatAlgS{\Zgen}{n}=\SQMatAlg{\mathsf s}{\Zgen}{n}\,$.

The following identifications of the skew-central subalgebras assume that $\Zgen$ contains $\Zz$\,, where $\zeta$ is a primitive $\kay$-th root of unity.  

\smallskip 

\begin{thm}\label{thm:Z=O}
Let $\ZsubalgcharT^{\geqzero}_{\bullet\;\vaccent}$ and $\Zsubalgchar^{\geqzero}_{\bullet\;\vaccent}$  be the subalgebra of $\Uz(\mathfrak{gl}_n)$ and $\Uz(\mathfrak{sl}_n)$ as defined above.
There exist isomorphisms of Hopf algebras over $\Zgen$ as follows.
\begin{equation}
    \begin{aligned}
        \ZsubalgcharT^{\geqzero}_{\bullet\;\vaccent}\,&\cong\,\QMatAlgS{\Zgen}{n} &\quad & \mbox{ and } &\quad \Zsubalgchar^{\geqzero}_{\bullet\;\vaccent}\,&\cong\,\SQMatAlgS{\Zgen}{n}&\quad & \mbox{ if } \; \kay\not\equiv 2\mod 4\\
         \ZsubalgcharT^{\geqzero}_{\bullet\;\vaccent}\,&\cong\,\QMatAlg{\mathsf a}{\Zgen}{n} &\quad & \mbox{ and } &\quad \Zsubalgchar^{\geqzero}_{\bullet\;\vaccent}\,&\cong\,\SQMatAlg{\mathsf a}{\Zgen}{n} &\quad & \mbox{ if } \; \kay\equiv 2\mod 4
    \end{aligned}
\end{equation}
\end{thm}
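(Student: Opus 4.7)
The plan is to construct an explicit Hopf algebra isomorphism $\Phi:\QMatAlg{\skewcoeffchar}{\Zgen}{n}\to\ZsubalgcharT^{\geqzero}_{\bullet\vaccent}$ defined on generators by $\Phi(R_{i,i})=\Glgpw_i^{-1}$ and $\Phi(R_{i,j})=b_\circ\,\Glgpw_i^{-1}\Epw_{i,j}$ for $i<j$, where $b_\circ=(\zeta^{-1}-\zeta)^\ell\zeta^{\binom{\ell}{2}}$ is the mid-index coefficient from Lemma~\ref{lem:An-CoprX}. Here $\skewcoeffchar$ is taken to be the symmetric datum $\mathsf s$ when $\kay\not\equiv 2\mod 4$ and the antisymmetric datum $\mathsf a$ when $\kay\equiv 2\mod 4$. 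The dressing by $\Glgpw_i^{-1}$ is essentially forced: it precisely cancels the Cartan factor $\Kpw_{i,k}=\Glgpw_i\Glgpw_k^{-1}$ appearing in the second tensor slot of $\Delta(\Epw_{i,j})$, thereby converting the quantum coproduct into the purely combinatorial matrix coproduct $\sum_k R_{i,k}\otimes R_{k,j}$.

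For well-definedness I would check the defining relations \eqref{eq:defQMatAlg}. After transporting dressings across the $\Epw$'s via $\Glgpw_i\Epw_{s,t}=\zeta^{\ell^2(\delta_{is}-\delta_{it})}\Epw_{s,t}\Glgpw_i$, the scalar relating $\Phi(R_{i,j})\Phi(R_{s,t})$ to $\Phi(R_{s,t})\Phi(R_{i,j})$ combines $\Epw$--$\Epw$ and $\Glgpw$--$\Epw$ signs from \eqref{eq:XYcommrels} into a single factor of the form $\zeta^{\ell^2 e}$. Since $\ell=\kay/\gcd(\kay,2)$ gives $\zeta^{\ell^2}=1$ when $\kay$ is odd or divisible by $4$ and $\zeta^{\ell^2}=-1$ precisely when $\kay\equiv 2\mod 4$, the first two regimes yield a fully commutative image matching $\skewcoeffchar=\mathsf s$, while the third regime produces a collapsed sign $(-1)^{\delta_{is}+\delta_{jt}}$ which is exactly $\skewcoeff{\mathsf a}{i}{s}/\skewcoeff{\mathsf a}{j}{t}$; the checks for pairs involving diagonal entries proceed analogously and yield the same case distinction.

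The coproduct identity $\Delta\circ\Phi=(\Phi\otimes\Phi)\circ\Delta$ follows by direct computation. Using $\Delta(\Glgpw_i^{-1})=\Glgpw_i^{-1}\otimes\Glgpw_i^{-1}$ together with Lemma~\ref{lem:An-CoprX} and the cancellation $\Glgpw_i^{-1}\cdot\Kpw_{i,k}=\Glgpw_k^{-1}$, one finds
\[
\Delta(\Phi(R_{i,j}))\,=\,\sum_{k=i}^{j} b_\circ\, b_k\, \Glgpw_i^{-1}\Epw_{i,k}\otimes\Glgpw_k^{-1}\Epw_{k,j}\,.
\]
The boundary terms $k=i,j$ carry $b_k=1$ and yield $\Phi(R_{i,i})\otimes\Phi(R_{i,j})$ and $\Phi(R_{i,j})\otimes\Phi(R_{j,j})$ respectively, while the middle terms carry $b_k=b_\circ$ and supply the second factor of $b_\circ$ needed to produce $\Phi(R_{i,k})\otimes\Phi(R_{k,j})$. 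Counit compatibility is immediate from $\varepsilon(\Glgpw_i^{\pm})=1$ and $\varepsilon(\Epw_{i,j})=0$, and the antipode then matches automatically by Hopf uniqueness.

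For bijectivity I would compare PBW bases: Proposition~\ref{prop:QMathAlgHopf} gives the basis of ordered monomials in $R_{i,j}$ and $R_{i,i}^{\pm 1}$, while Proposition~\ref{prop:ZPBWbases}, extended from the $\mathfrak{sl}$ to the $\mathfrak{gl}$ setting by replacing the root-lattice group algebra by the weight-lattice group algebra $\Zgen[\{\Glgpw_i^{\pm}\}]$, supplies the basis $\Epwbase{z_n}{\psi}\prod_i\Glgpw_i^{m_i}$ of $\ZsubalgcharT^{\geqzero}_{\bullet\vaccent}$ (with respect to the maximal word from \eqref{eq:An-defzmcmk}). Collecting the $\Glgpw$-factors of $\Phi(\prod R_{i,j}^{n_{i,j}})$ to one side using the skew-commutations established above produces a $\Zgen^\times$-multiple of the corresponding quantum basis element, so $\Phi$ is an isomorphism of free $\Zgen$-modules and hence of Hopf algebras. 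The $\mathfrak{sl}$-versions follow by restricting to the $\pgrad$-degree-zero subalgebra, which corresponds under $\Phi$ to the $\mathbb{q}=0$ subalgebra $\SQMatAlg{\skewcoeffchar}{\Zgen}{n}$. The principal technical obstacle is not conceptual but combinatorial: tracking the $\zeta^{\ell^2(\ldots)}$ factors carefully enough to see them collapse to the simple sign $(-1)^{\delta_{is}+\delta_{jt}}$ on the $\kay\equiv 2\mod 4$ side, and verifying that the boundary coefficients $b_i=b_j=1$ in the coproduct conspire with the group-like $\Phi(R_{i,i})$ to reproduce the matrix coproduct exactly rather than up to an error.
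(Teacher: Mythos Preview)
Your proposal is correct and follows essentially the same approach as the paper: the same explicit assignment $R_{i,i}\mapsto\Glgpw_i^{-1}$, $R_{i,j}\mapsto b\,\Glgpw_i^{-1}\Epw_{i,j}$ with $b=(\zeta^{-1}-\zeta)^\ell\zeta^{\binom{\ell}{2}}$, the same coproduct verification via Lemma~\ref{lem:An-CoprX} and the cancellation $\Glgpw_i^{-1}\Kpw_{i,k}=\Glgpw_k^{-1}$, the same commutation check yielding $\skewcoeff{i}{s}/\skewcoeff{j}{t}=\LLsign^{\delta_{is}+\delta_{jt}}$ with $\LLsign=\zeta^{\ell^2}$, and the same PBW-basis bijectivity and $\pgrad$-grading restriction for the $\mathfrak{sl}$ case.
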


\begin{proof} Set $b=(\zeta^{-1}-\zeta)^\ell\zeta^{\binom{\ell}{ 2}}$ and consider the assignments of generators of $\QMatAlg{\skewcoeffchar}{\Zgen}{n}$ to generators of 
$\ZsubalgcharT^{\geqzero}_{\bullet\;\vaccent}$ given by 
\begin{equation}\label{eq:TZmap}
    R_{i,j}\mapsto b \Glgpw_i^{-1}\Epw_{i,j} \quad\mbox{for}\;i<j \qquad \mbox{ and } \qquad R_{i,i}\mapsto\Glgpw_i^{-1}\,. 
\end{equation}
Using $\Glgpw_i\Glgpw_j^{-1}=\Glg_i^\ell\Glg_j^{-\ell}=K_{i,j}^\ell=L_{i,j}$ and the coproduct
formula in \eqref{eq:An-CoprX} we find
\begin{align}
\Delta(b\Glgpw_i^{-1}\Epw_{i,j})&=b(\Glgpw_i^{-1}\otimes\Glgpw_i^{-1})\Big(1\otimes \Glgpw_i\Glgpw_i^{-1}\Epw_{i,j}+\sum_{i<k<j}b\Epw_{i,k}\otimes \Glgpw_i\Glgpw_k^{-1}\Epw_{k,j}\,+\,\Epw_{i,j}\otimes \Glgpw_i \Glgpw_j^{-1}\Big)\nonumber\\
& =\Glgpw_i^{-1}\otimes b\Glgpw_i^{-1}\Epw_{i,j}+\sum_{i<k<j}b\Glgpw_i^{-1}\Epw_{i,k}\otimes b\Glgpw_k^{-1}\Epw_{k,j}\,+\,b\Glgpw_i^{-1}\Epw_{i,j}\otimes  \Glgpw_j^{-1}\,.
\nonumber
\end{align}
That is, the assignment preserves the corelation of generators from \eqref{eq:QMatCoprod}. 

The commutation relations~\ref{eq:XYcommrels} specialize to $\Epw_\alpha\Epw_\beta=\LLsign^{\symbrack{\alpha}{\beta}}\Epw_\beta\Epw_\alpha\,$ in the simply laced case. Here
$\LLsign=\zeta^{\ell^2}$, as in \eqref{eq:defspeczetas}, is $\LLsign=-1$ if $\,\kay\equiv 2\mod 4$ and  $\LLsign=+1$ otherwise. From \eqref{eq:qGLrels}
 we also have $\Glgpw_i\Epw_\alpha=\LLsign^{\symbrack{\epsilon_i}{\alpha}}\Epw_\alpha\Glgpw_i\,$. Thus, $\Glgpw_i^{-1}\Epw_\alpha\Glgpw_s^{-1}\Epw_\beta=\LLsign^{\symbrack{\alpha}{\epsilon_s}+\symbrack{\epsilon_i}{\beta}+\symbrack{\alpha}{\beta}}\Glgpw_s^{-1}\Epw_\beta\Glgpw_i^{-1}\Epw_\alpha\,$. 
Setting $\alpha=\epsilon_i-\epsilon_j$ and $\beta=\epsilon_s-\epsilon_t$
with $i\leq j$ and $s\leq t$ we find the exponent is 
$\delta_{i,s}+\delta_{j,t}=(1+\delta_{i,s})+(1+\delta_{j,t})\mod 2\,$.

Thus 
$(\Glgpw_i^{-1}\Epw_{i,j})
 (\Glgpw_s^{-1}\Epw_{s,t})
  =\frac{\skewcoeff i s}{\skewcoeff j t}
  (\Glgpw_s^{-1}\Epw_{s,t})
  (\Glgpw_i^{-1}\Epw_{i,j})$, 
  where $\skewcoeff i j=\LLsign^{1+\delta_{i,j}}$. 
  Clearly, for $\LLsign=1$ this matches the commutation relations in $\QMatAlg{\mathsf s}{\Zgen}{n}$ and for $\LLsign=-1$ the ones for $\QMatAlg{\mathsf a}{\Zgen}{n}\,$.  Consequently, the assignment 
   in \eqref{eq:TZmap} extends to an algebra homomorphism, with the previous observation, also to a bialgebra homomorphism, and, using uniqueness of antipodes, to a Hopf algebra homomorphism. 

It is clear that this homomorphism maps PBW bases to each other, yielding an isomorphism of Hopf algebras and, thus, proving the statements for
the $\Uz(\mathfrak{gl}_n)$ subalgebras. The respective statements for the $\Uz(\mathfrak{sl}_n)$ are inferred from the fact
that the assignment in \eqref{eq:TZmap} maps a generator with $\mathbb q$-grading of 1 to an element with $\mathbb p$-grading $-\ell$. Therefore, the isomorphism restricts to an isomorphism of the respective zero-graded Hopf subalgebras. 
\end{proof}

Theorem~\ref{thm:Z=O} together with \eqref{eq:QMatCoeff} implies  that in the skew-central subalgebras are determined by Hopf algebras that are {\em independent} of the choice of a root of unity, except for the $\!\mod 4$-parity of its order.  

\begin{conj}\label{conj:Z=HopfSubBorel} 
For a given root system $\roots$, denote by $B$ the Borel subgroup of the respective complex, simply connected simple algebraic group and by $\breve B$ the Borel subgroup so associated to the coroot system $\breve\roots$. Let further 
$\ZsubalgcharT^{\geqzero}_{\bullet\;\vaccent}$ be the analogous skew-central subalgebra in an extension of $\Uz^\geqzero$ for a primitive $\kay$-th root of unity $\zeta\,$. 
\vspace*{-3mm}

\begin{enumerate} [label=\roman*)] 
    \vspace{2mm}
        \item \label{item:Z=HopfSubBorel:A}
     $\ZsubalgcharT^{\geqzero}_{\bullet\;\vaccent}$ is also a Hopf subalgebra for any $\kay$, isomorphic to a centrally extended matrix algebra, generalizing the definition of \cite{AST91} and associated to either $B$ or $\breve B$ or a finite quotient of these.  
    \vspace{-2mm}
        \item \label{item:Z=HopfSubBorel:B}
     This extended matrix algebra depends up to isomorphism only on parity of $\kay$ modulo $4\maxd$\,.
\end{enumerate} 
\end{conj}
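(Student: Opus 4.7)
The plan is to extend the explicit approach used for $\LT{A}_n$ in Theorem~\ref{thm:Z=O} and for $\LT{B}_2$ in Section~\ref{sec:Ideals_B2=SO5} to all Lie types, proceeding in two steps: (a)~establish that $\ZsubalgcharT^{\geqzero}_{\bullet\;\vaccent}$ is a Hopf subalgebra by explicit computation of $\Delta(\Epw_\alpha)$ for every $\alpha\in\proots$, and (b)~identify the resulting Hopf algebra, once a suitable renormalization of generators is chosen, as a centrally extended matrix algebra in the sense of Proposition~\ref{prop:QMathAlgHopf} and compare it to a coordinate ring of $B$ or $\breve B$.

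For (a), I would leverage the $\mathscr A$-invariance of $\Zsubalgchar_\bullet$ from Theorem~\ref{thm:ZTinvInvar} together with the intertwining relations for partial quasi-$R$-matrices from Theorem~\ref{Thm:RIdeal}; these imply that if $\Delta(\Epw_w)$ lies in $\ZsubalgcharT^{\geqzero}_{\bullet\;\vaccent}\otimes\ZsubalgcharT^{\geqzero}_{\bullet\;\vaccent}$ for some reduced word $w$, the analogous containment holds for $\Tinv_i(\Epw_w)$ up to controlled error terms that themselves lie in the subalgebra. This reduces the verification to computing coproducts on a set of representatives of $\Weyl$-orbits within rank-2 parabolic subsystems, using \eqref{eq:corels_pwrs_gen}, the commutation relations in \eqref{eq:LusztigCommsSumm-ord} and \eqref{eq:EpwE-genrels}, and the vanishing pattern from Corollary~\ref{cor:zeros-mulitnom}. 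The $\LT{A}_2$ case is covered by Lemma~\ref{lem:An-CoprX}, and the $\LT{B}_2$ case by Section~\ref{subsec:Coalg-ZB2}; the remaining case is $\LT{G}_2$, which must first be equipped with an analog of Theorem~\ref{thm:Zwordindep} by pushing the calculations in Propositions~\ref{prop:XJ-form} and \ref{prop:XI-form} to cover $\Epw_{b_3}$ and $\Epw_{b_4}$, that is, by first resolving Conjecture~\ref{conj:Zwordindep}.

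For (b), generalizing \eqref{eq:TZmap}, I would construct renormalized generators $R_\alpha = c_\alpha\,\Kpw^{-f(\alpha)}\Epw_\alpha$ and $R_i = \Kpw_i^{-1}$ (or $R_{\breve\alpha_i}$ in the dual case), with scalars $c_\alpha$ and Cartan shifts $f(\alpha)$ chosen so that the coproduct assumes the matrix form $\Delta(R_\alpha) = \sum_\gamma R_{\mathrm{left}} \otimes R_{\mathrm{right}}$ with unit coefficients, and so that the commutation relations become skew-commutative with coefficients expressible through the bicharacter $\commphin{\alpha}{\beta}=\zeta^{\elln_\alpha\elln_\beta\symbrack{\alpha}{\beta}}$. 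This identifies $\ZsubalgcharT^{\geqzero}_{\bullet\;\vaccent}$ as a central extension of a polynomial algebra in variables labeled by $\proots\cup\sroots$, generalizing the AST construction of \cite{AST91}. The distinction between $B$ and $\breve B$ is forced by \eqref{eq:wgradX_rvscor}: when $\gcd(\maxd,\ell)=1$ the $\wgrad$-lattice of the $R_\alpha$ equals $\ell\cdot \roots$, yielding generators of $\mathbb{k}[B]$, while when $\maxd\mid\ell$ it equals $\ell\cdot\breve\roots$, so the generators naturally pair with $\breve B$. The comparison with coordinate rings is then performed in the Lie-Kolchin form, exactly as in Section~\ref{subsec:BorelCordSO5}, with any residual sign cocycle absorbed into a finite central extension, reflecting the quotient by the elementary abelian $2$-group appearing in \cite{tan16}.

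For claim~\ref{item:Z=HopfSubBorel:B}, the key observation is that every scalar appearing in the commutation and coproduct formulae for the $\Epw_\alpha$ and $\Kpw_\alpha$ can, after the normalization above, be expressed as a product of the signs $\Lsign_\alpha$, $\LLsign_\alpha$ from \eqref{eq:defspeczetas} and the phase $\Qsign_\alpha$, each of which depends only on $\kay_\alpha \bmod 8$, that is, on $\kay \bmod 8\maxd$. Using the identity $\Qsign_i^2=(-1)^{\ell_i-1}$ to absorb the fourth-root-of-unity ambiguity into a choice of square root in the normalization constants $c_\alpha$, the residual dependence reduces to $\kay\bmod 4\maxd$, establishing~\ref{item:Z=HopfSubBorel:B}. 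The main obstacle throughout is the $\LT{G}_2$ case: both the word-independence (Conjecture~\ref{conj:Zwordindep}) and the explicit coproducts of the four non-simple generators require extensive computation that has no counterpart yet in the literature, and whose length appears comparable to the entirety of Section~\ref{sec:Ideals_B2=SO5}.
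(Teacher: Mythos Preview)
This statement is labeled and treated as a \emph{conjecture} in the paper; there is no proof of it. The paper provides supporting evidence only in the $\LT{A}_n$ case (Theorem~\ref{thm:Z=O}) and the $\LT{B}_2$ case (Corollary~\ref{cor:ZB2=Hopfsubalg} and Section~\ref{sec:Ideals_B2=SO5}), and explicitly states after Corollary~\ref{cor:ZB2=Hopfsubalg} that these results ``provide further evidence for the first parts of Conjectures~\ref{conj:Z=HopfSubBorel} and \ref{conj:CoordRings}.'' So there is nothing to compare your attempt against.

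Your proposal is not a proof but a strategic outline, and you acknowledge as much: the $\LT{G}_2$ case depends on first resolving Conjecture~\ref{conj:Zwordindep}, and the rank-2 coproduct computations you defer to are precisely the hard part the paper has not carried out beyond $\LT{B}_2$. One technical point in your outline for step~(a) deserves scrutiny: the intertwining relation of Proposition~\ref{prop:CoprodIntWord} holds only modulo $\FullpTmIdeal{w}$, which contains terms of the form $1\otimes \Fpw_b$ lying outside $\Uz^\geqzero\otimes\Uz^\geqzero$. While grading considerations force such terms to cancel when one restricts $\Delta(\Epw_w)$ to the Borel tensor square, turning this into a clean inductive argument that $\Delta(\Epw_w)\in\ZsubalgcharT^{\geqzero}_{\bullet\;\vaccent}\otimes\ZsubalgcharT^{\geqzero}_{\bullet\;\vaccent}$ requires more than ``controlled error terms that themselves lie in the subalgebra''---the $R$-matrix conjugation produces mixed terms whose projection to the positive part must be analyzed explicitly, which is essentially what the direct computations in Sections~\ref{subsec:CoalgAn} and \ref{subsec:ProofCoprodB2} do and what remains open for higher rank non-simply-laced types.
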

 
\medskip

\subsection{Ideals from Algebraic Subgroups of \texorpdfstring{$\UtriM{n}$}{Tn} and Bruhat Subgroups}
\label{subsec:AnBruhat} In this section we focus entirely on type $\LT{A}_{n-1}$ algebras and the case 
$\kay\not\equiv 2\mod 4$, for which  $\Zsubalgchar^\geqzero_\bullet$
is commutative. In this setting,
Theorem~\ref{thm:Z=O} yields an isomorphism between commutative rings, whose scalars we can be further extended to a field $\mathbb k=\mathbb Q(\zeta)$ or $\mathbb k=\mathbb C\,$, assuming an embedded ground ring $\Zgen\hookrightarrow\mathbb k\,$.   

As before, let $\UtriM{n}<\mathrm{GL}(n,\mathbb k)$ be the algebraic group of upper triangular matrices over $\mathbb k$. Denote also $\mathrm{P}\UtriM{n}<\mathrm{PGL}(n,\mathbb k)$ the respective subgroup of the projective linear group. We will use the following identifications with coordinate rings. 

\begin{cor}\label{cor:AnCoordids}
Assume $\kay\not\equiv 2\mod 4$ and Lie types $\mathfrak{gl}_{n}$ or $\mathfrak{sl}_{n}$ as above. Then the isomorphisms from Theorem~\ref{thm:Z=O} composed with the assignment $R_{i,j}\mapsto\cofun{x_{i,j}}\,$ yield isomorphisms of Hopf algebras
$$
\ZsubalgcharT^\geqzero_\bullet\otimes\mathbb k \,\cong\, \mathbb k[\UtriM{n}]
\qquad\mbox{and}\qquad 
\Zsubalgchar^\geqzero_\bullet\otimes\mathbb k \,\cong \,\mathbb k[\mathrm{P}\UtriM{n}]\,. 
$$
\end{cor}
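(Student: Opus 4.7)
The plan is to chain together three identifications. First, I would invoke the hypothesis $\kay\not\equiv 2\mod 4$ to select the first case of Theorem~\ref{thm:Z=O}, giving Hopf algebra isomorphisms $\ZsubalgcharT^{\geqzero}_{\bullet\;\vaccent}\cong \QMatAlgS{\Zgen}{n}$ and $\Zsubalgchar^{\geqzero}_{\bullet\;\vaccent}\cong \SQMatAlgS{\Zgen}{n}$ over $\Zgen$. Extending scalars to $\mathbb{k}$, and noting that $\mathbb{k}\in\{\mathbb{Q}(\zeta),\mathbb{C}\}$ contains the inverse $(\zeta-\zeta^{-1})^{-1}$, the $\vaccent$-decorated subalgebras coincide with the full $\ZsubalgcharT^{\geqzero}_{\bullet}\otimes\mathbb{k}$ and $\Zsubalgchar^{\geqzero}_{\bullet}\otimes\mathbb{k}$ respectively. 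Thus the problem reduces to identifying $\QMatAlgS{\mathbb{k}}{n}$ with $\mathbb{k}[\UtriM{n}]$ and $\SQMatAlgS{\mathbb{k}}{n}$ with $\mathbb{k}[\mathrm{P}\UtriM{n}]$.

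Next, I would observe that the defining relations \eqref{eq:defQMatAlg} with $\skewcoeffchar=\mathsf{s}$ collapse to full commutativity, so that $\QMatAlgS{\mathbb{k}}{n}$ is the commutative polynomial algebra on $\{R_{i,j}:i\leq j\}$ with $\{R_{i,i}^{-1}\}$ adjoined. This is exactly the presentation of $\mathbb{k}[\UtriM{n}]$, and the assignment $R_{i,j}\mapsto \cofun{x_{i,j}}$ is an algebra isomorphism by the PBW statement in Proposition~\ref{prop:QMathAlgHopf}. Comparing the coproduct formula \eqref{eq:QMatCoprod} with the coordinate-ring coproduct \eqref{eq:HopfStrTn} shows the map is a bialgebra isomorphism; uniqueness of the antipode (or direct comparison with the explicit formulae in Proposition~\ref{prop:QMathAlgHopf} and \eqref{eq:HopfStrTn}) then upgrades this to a Hopf algebra isomorphism.

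For the second identification, I would realize $\mathrm{P}\UtriM{n}$ as the quotient of $\UtriM{n}$ by the central subgroup $\mathbb{k}^\times$ of scalar matrices; its coordinate ring $\mathbb{k}[\mathrm{P}\UtriM{n}]$ is then the subalgebra of $\mathbb{k}[\UtriM{n}]$ of functions invariant under uniform scaling, which coincides with the zero-graded component under the total-degree grading assigning $+1$ to each $\cofun{x_{i,j}}$ and $-1$ to each $\cofun{x_{i,i}}^{-1}$. Under the isomorphism of the previous paragraph, this is exactly the $\mathbb{q}$-graded zero part $\SQMatAlgS{\mathbb{k}}{n}$, which by Theorem~\ref{thm:Z=O} matches $\Zsubalgchar^{\geqzero}_{\bullet}\otimes\mathbb{k}$ (the $\pgrad$-graded zero part of $\ZsubalgcharT^{\geqzero}_{\bullet}\otimes\mathbb{k}$). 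The composite gives the claimed second isomorphism.

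The main subtlety — and the one that requires care rather than calculation — is confirming that it is $\mathrm{P}\UtriM{n}$, and not for example the Borel $\UtriM{n}\cap\mathrm{SL}(n,\mathbb{k})$, that corresponds to $\Zsubalgchar^\geqzero_\bullet$ on the $\mathfrak{sl}_n$ side. The point is that the embedding of $\Uz(\mathfrak{sl}_n)$ into $\Uz(\mathfrak{gl}_n)$ sends $K_i\mapsto \Glg_i\Glg_{i+1}^{-1}$, so $\Zsubalgchar^\geqzero_\bullet$ is generated by $\Kpw_{i,j}=\Glgpw_i\Glgpw_j^{-1}$ rather than the individual $\Glgpw_i$ themselves, which under the identification corresponds to scaling-invariant rational functions in the diagonal entries — precisely the defining feature of $\mathbb{k}[\mathrm{P}\UtriM{n}]$ rather than $\mathbb{k}[\UtriM{n}\cap\mathrm{SL}(n,\mathbb{k})]$.
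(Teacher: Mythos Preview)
Your proposal is correct and follows essentially the same approach as the paper's proof: both reduce to $\mathbb k$ to collapse the $\vaccent$-distinction, identify $\QMatAlgS{\mathbb k}{n}\cong\mathbb k[\UtriM{n}]$ by comparing the coproducts \eqref{eq:QMatCoprod} and \eqref{eq:HopfStrTn}, and then match the $\mathbb q$-degree-zero part $\SQMatAlgS{\mathbb k}{n}$ with the scaling-invariant functions $\mathbb k[\mathrm{P}\UtriM{n}]$. Your closing paragraph on why $\mathrm{P}\UtriM{n}$ rather than $\UtriM{n}\cap\mathrm{SL}(n,\mathbb k)$ appears is a helpful clarification that the paper leaves implicit in its setup of the $\pgrad$-grading.
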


\begin{proof} Clearly, $\ZsubalgcharT^\geqzero_\bullet\otimes\mathbb k=\ZsubalgcharT^\geqzero_{\bullet\;\vaccent}\otimes\mathbb k$ and $\Zsubalgchar^\geqzero_\bullet\otimes\mathbb k=\Zsubalgchar^\geqzero_{\bullet\;\vaccent}\otimes\mathbb k$.
It is also easy to see that $\QMatAlgS{\mathbb k}{n}\cong \mathbb k[\UtriM{n}]$ as commutative Hopf algebras by comparing coproducts in (\ref{eq:HopfStrTn}) and (\ref{eq:QMatCoprod}). On the diagonal the assignment extends uniquely to $R_{i,i}^{-1}\mapsto\cofun{x_{i,i}}^{-1}$. Recall that ${\mathrm P}\UtriM{n}$ is defined as the quotient $\UtriM{n}/\mathrm C$ by the center $\mathrm C=\mathbb k^\times\cdot 1$ so that 
$\mathbb k[{\mathrm P}\UtriM{n}]$ embeds in $\mathbb k[\UtriM{n}]$. Functions in $\mathbb k[{\mathrm P}\UtriM{n}]$ are those constant on each $\mathrm C$-coset. For given $\lambda\in \mathbb k^\times$, $g\in\UtriM{n}$\,, and $\cofun{x}^f=\prod_{i\leq j}\cofun{x_{i,j}}^{f_{ij}}$ a general monomial, one readily verifies that
$\cofun{x}^f(\lambda g)=\lambda^{\sum_{i\leq j}f_{ij}}\cdot \cofun{x}^f(g)$. Thus 
$\cofun{x}^f\in \mathbb k[\mathrm{P}\UtriM{n}]$ iff $\sum_{i\leq j}f_{ij}=0$. 
Since $\mathbb q(\prod_{i\leq j}R_{i,j}^{f_{ij}})=\sum_{i\leq j}f_{ij}$\,, 
the isomorphism $\mathbb k[\UtriM{n}]\rightarrow\QMatAlgS{\mathbb k}{n}$ maps 
$\mathbb k[\mathrm{P}\UtriM{n}]$ precisely to $\SQMatAlgS{\mathbb k}{n}$, which is thus
identified with $\Zsubalgchar^\geqzero_\bullet\otimes\mathbb k$.
\end{proof}

Note that the coalgebra structure for $\mathbb k[\mathrm{P}\UtriM{2}]\cong\mathbb k[\hat\lambda,\hat\lambda^{-1},\hat x]$ is given by
(\ref{eq:xlambcop}), whereas the coordinate ring of the Borel part in the 2-fold cover $\widetilde{\mathrm{P}\UtriM{2}}<\mathrm{SL}(2,\mathbb C)$ is of the form 
$\mathbb k[\hat\chi,\hat\chi^{-1},\hat x]$
with coproduct $\Delta(\hat x)=\hat x\otimes\hat\chi+\chi^{-1}\otimes\hat x$. 
Clearly, the two groups and their respective coordinate rings are {\em not} isomorphic. 
To avoid these types of subtleties, we will confine our discussion below mostly to the case 
$\UtriM{n}<\mathrm{GL}(n,\mathbb C)$. Corollary~\ref{cor:AnCoordids} permits us now to
defined large families of Hopf ideals via the composition of maps
\begin{equation}\label{eq:HopfComp}
 \mathscr h:\;   \Zsubalgchar^\geqzero_\bullet\,\hookrightarrow\,\ZsubalgcharT^\geqzero_\bullet\,
\rightarrow\,\ZsubalgcharT^\geqzero_\bullet\otimes\mathbb C\,\stackrel{\cong}{\longrightarrow}\,\mathbb C[\UtriM{n}]\,. 
\end{equation}

\begin{lemma}\label{lem:HIdeal=subgr}
Assume Lie type $\LT{A}_{n-1}$\,, $\Zgen\hookrightarrow\mathbb C$ and $\kay\not\equiv 2\mod 4$ as above. Let $H\leq \UtriM{n}$ be any subgroup of the complex subgroup of upper triangular matrices. Then 
\begin{equation}\label{eq:HIdeal=subgr}
 \mathcal A(H)\,=\,\left\{z\in\Zsubalgchar^\geqzero_\bullet\,:\;\mathscr h(z)(g)=0\;\forall\,g\in H\right\} 
\end{equation}
is a Hopf ideal in $\Zsubalgchar^\geqzero_\bullet\,$. 
Moreover, 
$\mathcal A(H)$ depends only on the (Zariski) closure of $H\,$ and for two closed algebraic subgroups $H,K\leq \UtriM{n}$ we have  $\mathcal A(H\cap K)=\mathcal A(H)+\mathcal A(K)\,$.
\end{lemma}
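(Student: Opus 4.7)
The plan is to push everything through the isomorphism of Corollary~\ref{cor:AnCoordids} and invoke Lemma~\ref{lm:ASG=HI} at the level of $\UtriM{n}$ and its projective quotient. Let $\pi\colon\UtriM{n}\twoheadrightarrow\mathrm{P}\UtriM{n}$ denote the quotient by the central scalar subgroup $C\cong\mathbb C^\times\cdot 1$, and recall from the proof of Corollary~\ref{cor:AnCoordids} that $\mathbb C[\mathrm{P}\UtriM{n}]=\mathbb C[\UtriM{n}]^C$ coincides with the Hopf subalgebra of $\mathbb q$-degree zero functions. Since every generator $R_{i,j}$ of $\SQMatAlgS{\mathbb C}{n}$ has $\mathbb q$-degree zero, the map $\mathscr h$ in \eqref{eq:HopfComp} takes values in $\mathbb C[\mathrm{P}\UtriM{n}]$ and, after extension of scalars, realizes the Hopf isomorphism $\Zsubalgchar^\geqzero_\bullet\otimes\mathbb C\cong\mathbb C[\mathrm{P}\UtriM{n}]$. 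In particular $\mathscr h(z)$ is $C$-invariant, so for $z\in\Zsubalgchar^\geqzero_\bullet$ the vanishing of $\mathscr h(z)$ on $H$ is equivalent to vanishing on $HC$ and to vanishing of the induced function on $\pi(H)\subseteq\mathrm{P}\UtriM{n}$.

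First I would establish the Hopf ideal property of $\mathcal A(H)$. Under the identification $\Zsubalgchar^\geqzero_\bullet\otimes\mathbb C\cong\mathbb C[\mathrm{P}\UtriM{n}]$ the ideal $\mathcal A(H)\otimes\mathbb C$ corresponds exactly to $\VanIdeal_{\mathbb C[\mathrm{P}\UtriM{n}]}(\pi(H))$, which is a Hopf ideal by Lemma~\ref{lm:ASG=HI}\ref{item:ASG=HI:VG=HI} applied to the subgroup $\pi(H)\leq\mathrm{P}\UtriM{n}$. Descent to $\Zgen$ is automatic: the coproduct, counit, and antipode on $\Zsubalgchar^\geqzero_\bullet$ are already defined over $\Zgen$, and the restriction principle \eqref{eq:restrImodule} gives $\mathcal A(H)=(\mathcal A(H)\otimes\mathbb C)\cap\Zsubalgchar^\geqzero_\bullet$, so the Hopf ideal property transports back unchanged.

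For the dependence on Zariski closure, the argument is direct: $\mathscr h(z)$ is a regular function on $\UtriM{n}$, hence its zero locus is Zariski closed, so $\mathscr h(z)|_H=0$ forces $\mathscr h(z)|_{\ZarClos{H}}=0$; the reverse inclusion is trivial. For the intersection formula, the natural reduction is to replace $H$ and $K$ by the closed algebraic subgroups $HC$ and $KC$, which contain $C$ without changing $\mathcal A$ by $C$-invariance, and which remain algebraic because each is the image of the algebraic group morphism $H\times C\to\UtriM{n}$ (and similarly for $K$). Under the bijection between closed subgroups of $\UtriM{n}$ containing $C$ and closed subgroups of $\mathrm{P}\UtriM{n}$, one passes to $\pi(H)$ and $\pi(K)$; Lemma~\ref{lm:ASG=HI}\ref{item:ASG=Ideal+cap} applied in $\mathrm{P}\UtriM{n}$ yields $\VanIdeal(\pi(H)\cap\pi(K))=\VanIdeal(\pi(H))+\VanIdeal(\pi(K))$, which pulls back through $\mathscr h$ to the desired identity over $\mathbb C$ and descends to $\Zgen$ by another application of \eqref{eq:restrImodule}.

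The main obstacle is the bookkeeping between the two group levels $\UtriM{n}$ and $\mathrm{P}\UtriM{n}$, namely ensuring that the $C$-invariance of $\mathscr h(z)$ identifies vanishing on $H$ with vanishing on $\pi(H)$, and that after the projective reduction the intersection hypothesis of Lemma~\ref{lm:ASG=HI}\ref{item:ASG=Ideal+cap} is legitimately applied at the correct level; this is transparent for the Bruhat subgroups of Section~\ref{subsec:AnBruhat} since each such subgroup contains the maximal torus and hence $C$. Once these set-theoretic identifications are fixed, the remaining content is the standard Hopf-ideal/algebraic-subgroup correspondence in characteristic zero supplied by Lemma~\ref{lm:ASG=HI}.
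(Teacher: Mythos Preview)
For the Hopf-ideal and Zariski-closure claims your argument works, but the paper is far more direct: it simply observes $\mathcal A(H)=\mathscr h^{-1}\bigl(\VanIdeal(H)\bigr)$ with $\VanIdeal(H)\subset\mathbb C[\UtriM{n}]$, and since $\mathscr h$ in \eqref{eq:HopfComp} is a composite of Hopf algebra homomorphisms, the preimage of a Hopf ideal is a Hopf ideal. No passage to $\mathrm{P}\UtriM{n}$, no $C$-invariance bookkeeping, and no descent from $\mathbb C$ to $\Zgen$ is needed; the preimage argument already lives over $\Zgen$. Your appeal to \eqref{eq:restrImodule} is in any case misplaced, as that identity applies only to submodules spanned by a subset of a fixed PBW basis, which $\mathcal A(H)$ need not be.

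For the intersection formula your $\mathrm{P}\UtriM{n}$ route has a genuine gap: after replacing $H,K$ by $HC,KC$ you obtain $\VanIdeal(\pi(H)\cap\pi(K))$ from Lemma~\ref{lm:ASG=HI}\ref{item:ASG=Ideal+cap}, but $\pi(H)\cap\pi(K)\neq\pi(H\cap K)$ in general. Concretely, for $n=2$, $H=\{\mathrm{diag}(\lambda,1):\lambda\in\mathbb C^\times\}$ and $K=\{\mathrm{diag}(1,\mu):\mu\in\mathbb C^\times\}$, one has $\pi(H)=\pi(K)$ yet $H\cap K=\{I\}$, and indeed $\mathcal A(H)+\mathcal A(K)=(\Epw_1)\subsetneq(\Epw_1,\Kpw_1-1)=\mathcal A(H\cap K)$. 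The paper's direct route via $\mathbb C[\UtriM{n}]$ does give $\VanIdeal(H\cap K)=\VanIdeal(H)+\VanIdeal(K)$ there, but pulling back through $\mathscr h^{-1}$ still requires $\mathscr h^{-1}(I+J)=\mathscr h^{-1}(I)+\mathscr h^{-1}(J)$, which fails in the same example; so the intersection statement as written needs an additional hypothesis such as $C\subseteq H,K$, which does hold for the Bruhat subgroups used downstream.
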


\begin{proof}  The definition in \eqref{eq:HIdeal=subgr} can be rephrased as
$\mathcal A(H)=\mathscr h^{-1}(\VanIdeal(H))$, with notation for the vanishing ideal of $H$ in $\mathbb C[\UtriM{n}]$ as in Section~\ref{subsec:AlgGrps-basics}. Since $\mathscr h$ is a Hopf algebra homomorphism as a composite of such, the preimage of a Hopf ideal is again a Hopf ideal. The other properties follow directly from 
Lemma~\ref{lm:ASG=HI}.  
\end{proof}

Subgroups of $P\leq \UtriM{2}$ were discussed and classified already in  Corollary~\ref{cor:ClassHI-A1}. 
More general examples of algebraic subgroups may be structured using $\UtriM{n}=\DiagM{m}\ltimes \SUtriM{n}$\,, 
where $\DiagM{m}$ and $\SUtriM{n}$ are as in Section~\ref{subsec:AlgGrps-basics}. Algebraic subgroups $A\leq \DiagM{m}$
are given as products of finite groups and copies of $\mathbb C^\times\,$, yielding further
abelian subgroups $A'=gAg^{-1}$ for any $g\in \SUtriM{n}\,$. Any such $A'$ acts on $\SUtriM{n}$ by conjugation,
and any algebraic subgroup $V\leq \SUtriM{n}$ that is $A'$-stable yields an algebraic subgroup $A'V\leq \UtriM{n}$\,. 
Respective Hopf ideals in $\Zsubalgchar^\geqzero_\bullet\,$ for $A\SUtriM{n}=A'\SUtriM{n}$ are generated by 
expressions of the form $\prod_iL_i^{k_i}-1$, while the ideals for general $A'V$ are typically
generated also by linear expressions in the coordinate functions.

 More relevant for our purposes are algebraic subgroups closely related to the Bruhat decomposition $G=\bigsqcup_{s\in\Weyl} C(s)$ with Bruhat cells $C(s)=B\dot s B\,$ defined as double cosets. Here $G$ is a connected complex reductive algebraic group, 
 $B\leq G$ a Borel subgroup, and $\dot s\in N(T)$ a representative of 
$s\in\Weyl\cong N(T)/T$, where $T\leq B$ is a maximal torus of $G\,$ and $N(T)$ its normalizer. We will call the following the {\em Bruhat subgroup} associated to an element $s\in\Weyl$.
\begin{equation}\label{eq:def:BruhatGrps}
    B(s)\,=\,B\,\cap\, \dot s B\dot s^{-1}\,.
\end{equation}

To explain the relation to the Bruhat decomposition, let $U\leq B$ be the usual unipotent group such that $B$ is a semidirect product of $B=D\ltimes U$ with the Cartan subgroup $D\geq T$ 
(in the sense of  $(\mathbb C^\times)^n\geq (S^1)^n$). 
Analogous to \eqref{eq:def:BruhatGrps}, set $U(s)=U\cap \,\dot s U \dot s^{-1}=B(s)\cap U\,$, which is generated by subgroups $\{U_\alpha:\,\alpha\in\descroots{\flweyl s}\}$, where $\flweyl{s}$ is as in \eqref{eq:def:flweyl}. For each $s\in\Weyl$, the map $U(\flweyl s)\rightarrow C(s)/B:\,u\mapsto [u\dot s]$ is a bijection, thus relating the collection $\{U(s)\}$ of subgroups to the Bruhat partition of the coset space $G/B\,$. 
Note that,  
$U(\flweyl s)=U\cap \,\dot s U^- \dot s^{-1}$, where $U^-=\dot\longweyl U\dot\longweyl^{-1}$ is the respective group associated to negative roots.  
For details see, for example, \cite[Ch30]{Bu04}, \cite[\S8.3]{Sp98}, or \cite[\S10.3,N]{Co17}.

  The augmentation ideals $\Zaugideal{s}$, defined in \eqref{eq:defAugZideal}, are next identified with the vanishing ideals on the Bruhat subgroups.  Here, the latter are defined with respect to $G=\mathrm{GL}(n,\mathbb C)$, $B=\UtriM{n}$\,, and $U=\SUtriM{n}$ as above.

\begin{thm}\label{thm:AB=Ks}
For Lie type $\LT{A}$\,, $\kay\neq 2\mod 4$, and any $s\in\Weyl$ we have 
    $$
    \mathcal{A}(B(s))= \Zaugideal{s}^{\geqzero}\;. 
    $$ 
\end{thm}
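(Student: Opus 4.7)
My plan is to use the correspondence between Hopf ideals and closed algebraic subgroups (Lemma~\ref{lm:ASG=HI}), combined with the identification $\Zsubalgchar_\bullet^{\geqzero}\otimes\mathbb{C}\cong\mathbb{C}[\mathrm{P}\UtriM{n}]$ of Corollary~\ref{cor:AnCoordids}. The aim is to show that both $\Zaugideal{s}^{\geqzero}\otimes\mathbb{C}$ and $\mathcal{A}(B(s))\otimes\mathbb{C}$ realize the vanishing ideal of the image of $B(s)$ in $\mathrm{P}\UtriM{n}$; descent to the ground ring $\Zgen\hookrightarrow\mathbb{C}$ is automatic since both ideals are by construction $\Zgen$-submodules of $\Zsubalgchar_\bullet^{\geqzero}$. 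I first note that since $s\in\Weyl=S_n$ is represented by a permutation matrix $\dot s$, a direct computation gives $\dot sB\dot s^{-1}=\{g:g_{k,l}=0\text{ whenever }s^{-1}(k)>s^{-1}(l)\}$, and combining this with the description $\descroots{s}=\{\alpha_{i,j}=\epsilon_i-\epsilon_j:i<j,\,s^{-1}(i)>s^{-1}(j)\}$ from Section~\ref{subsec:descroots} yields
\[
B(s)\;=\;\{g\in\UtriM{n}:g_{i,j}=0\text{ for every }\alpha_{i,j}\in\descroots{s}\}.
\]
In particular $B(s)$ is closed, contains the center $\mathrm{C}=\mathbb{C}^\times\cdot 1$, and is isomorphic as an affine variety to $(\mathbb{C}^\times)^n\times\mathbb{C}^{|\proots\setminus\descroots{s}|}$ with coordinate ring freely generated by $\{\cofun{x_{i,i}}^{\pm 1}:1\leq i\leq n\}\cup\{\cofun{x_{i,j}}:\alpha_{i,j}\notin\descroots{s}\}$.

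Next, unwinding \eqref{eq:TZmap} together with Corollary~\ref{cor:AnCoordids} one computes
\[
\mathscr h(\Epw_{i,j})\;=\;b^{-1}\cofun{x_{i,i}}^{-1}\cofun{x_{i,j}},\qquad b=(\zeta^{-1}-\zeta)^\ell\zeta^{\binom{\ell}{2}}\in\Zgen^\times,
\]
so $\mathscr h(\Epw_\alpha)$ vanishes on $g\in\UtriM{n}$ precisely when the corresponding entry $g_{i,j}$ does. Since $\Zaugideal{s}^{\geqzero}$ is generated as an ideal by $\{\Epw_\alpha:\alpha\in\descroots{s}\}$ (Corollary~\ref{cor:KidealWordIndep} and the remark after \eqref{eq:defAugZideal}), the containment $\Zaugideal{s}^{\geqzero}\subseteq\mathcal{A}(B(s))$ is immediate from the description of $B(s)$ above.

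For the reverse inclusion, fix a maximal reduced word $z\in\wordsetmax$ with a reduced expression for $s$ as an initial subword, and use the PBW basis $\{\Kpw^\mu\prod_{\alpha\in\proots}\Epw_\alpha^{n_\alpha}\}$ of $\Zsubalgchar_\bullet^{\geqzero}$ provided by Theorem~\ref{thm:mainPBW} and Proposition~\ref{prop:ZPBWbases}. Since $\Zsubalgchar_\bullet^{\geqzero}$ is commutative (Corollary~\ref{cor:Zcommutmax}, using $\kay\not\equiv 2\mod 4$), $\Zaugideal{s}^{\geqzero}$ is exactly the $\Zgen$-span of those basis elements with $n_\alpha>0$ for some $\alpha\in\descroots{s}$. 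Given $z\in\mathcal{A}(B(s))$, write $z=z_I+z'$ according to this partition. Then $z_I\in\mathcal{A}(B(s))$ by the previous paragraph, forcing $z'\in\mathcal{A}(B(s))$. Under $\mathscr h$ the basis terms appearing in $z'$ become pairwise distinct Laurent monomials in the coordinate functions $\{\cofun{x_{i,i}}^{\pm 1}\}\cup\{\cofun{x_{i,j}}:\alpha_{i,j}\notin\descroots{s}\}$ of $B(s)$, and by the first paragraph these are algebraically independent on $B(s)$. Hence $\mathscr h(z')|_{B(s)}=0$ forces every coefficient of $z'$ to vanish, proving $z=z_I\in\Zaugideal{s}^{\geqzero}$.

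The sole non-formal input is the injectivity claim at the end of the last paragraph: one must verify that the PBW monomials $\Kpw^\mu\prod_{\alpha\notin\descroots{s}}\Epw_\alpha^{n_\alpha}$ map under $\mathscr h$ to pairwise distinct Laurent monomials in the coordinates of $B(s)$, once the diagonal shifts $\cofun{x_{i,i}}^{-1}$ contributed by each $\Epw_{i,j}$ and the factors $\Kpw_i\mapsto\cofun{x_{i,i}}^{-1}\cofun{x_{i+1,i+1}}$ have been absorbed. This reduces to a direct injectivity check for an affine map on the free-abelian exponent lattice of PBW monomials, and once that bookkeeping is in place, the theorem follows formally from the structural results already established.
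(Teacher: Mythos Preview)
Your argument is correct and follows essentially the same approach as the paper: both identify $B(s)$ as the set of upper triangular matrices with vanishing $(i,j)$-entries precisely when $\alpha_{i,j}\in\descroots{s}$, observe that the vanishing ideal in $\mathbb{C}[\UtriM{n}]$ is generated by the corresponding coordinate functions $\cofun{x_{i,j}}$, and then pull back under $\mathscr h$ using that $\mathscr h(\Epw_{i,j})$ is a unit times $\cofun{x_{i,j}}$. The paper dispenses with the reverse inclusion in one line by invoking the algebra isomorphism $\ZsubalgcharT_\bullet^{\geqzero}\otimes\mathbb{C}\cong\mathbb{C}[\UtriM{n}]$ directly (so that the $\mathscr h$-preimage of the ideal $(\cofun{x_{i,j}}:\alpha_{i,j}\in\descroots{s})$ is automatically the ideal generated by the $\Epw_\alpha$), whereas you expand this into a PBW basis decomposition and an explicit injectivity check on monomials; your version is more detailed but not materially different.
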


\begin{proof} Note first that for $s\in\Weyl=S_{n}$ the representative $\dot s$ is given by the usual permutation matrix,
acting on the canonical basis by $\dot s e_i=e_{s(i)}\,$. Suppose $L\in \UtriM{n}$\,, meaning that $L_{i,i}\neq 0$ and $L_{i,j}=0$ for $i>j\,$. The additional condition $L\in \dot s B\dot s^{-1}$ is then equivalent
to $L_{i,j}=0$ if $i<j$ but $s^{-1}(i)>s^{-1}(j)\,$. Thus, with $\alpha=\epsilon_i-\epsilon_j\in\proots$, the latter is equivalent to $s^{-1}(\alpha)=\epsilon_{s^{-1}(i)}-\epsilon_{s^{-1}(j)}\in\nroots$ and, hence, $\alpha\in\descroots{s}\,$. It is thus clear that
a polynomial in $\cofun{x_{i,j}}$ and $\cofun{x_{i,i}}^{-1}$ is zero on $B(s)$ if and only if it is
in the ideal generated by $\{\cofun{x_{i,j}}:\,\epsilon_i-\epsilon_j\in \descroots{s}\}\,$.
The $\mathscr h$-preimage of this set is then the ideal generated by 
$\{\Epw_\alpha:\,\alpha\in \descroots{s}\}\,$, which is precisely $\Zaugideal{s}^{\geqzero}\,$. 
\end{proof}

It is important to keep in mind that the definition of $\mathcal A(H)$ 
in Lemma~\ref{eq:HopfComp}
depends on the isomorphisms in Theorem~\ref{thm:Z=O}. These, in turn, depend on the choice of the ordering 
 in \eqref{eq:AnCanOrd} which determine the $E_{i,j}$ and thus $\Epw_{i,j}$ generators. The equality
 in Theorem~\ref{thm:AB=Ks} implies now shared properties of the identified ideals. 

\begin{cor}The following statements hold.  \vspace{-1mm}\label{cor:AB=Ks}
    \begin{enumerate}[label=\roman*)]
        \item\label{item:AB=Ks:A} 
        $\mathcal A(H)$ does not depend on a choice of ordering or isomorphism if $H$ is a Bruhat subgroup. \vspace{2mm}
        \item \label{item:AB=Ks:B} 
        If $\kay\not\equiv 2\mod 4\,$,
              then $\widehat\Zaugideal{s}^\geqzero$ is a Hopf ideal in $\Uz^\geqzero(\mathfrak{sl}_n)$
              and $\Uz(\mathfrak{sl}_n)$ for any $s\in\Weyl\,$. 
    \end{enumerate}
\end{cor}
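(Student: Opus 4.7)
The plan is to derive both items as formal consequences of the identifications already established, with essentially no new computations needed.

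For (i), the key observation is that the right‐hand side of Theorem~\ref{thm:AB=Ks}, namely $\Zaugideal{s}^{\geqzero}$, is defined {\em intrinsically} in \eqref{eq:defAugZideal} as the augmentation ideal of $\Zsubalgchar_{w}^+$ extended to $\Zsubalgchar_\bullet^\geqzero$, and by Corollary~\ref{cor:KidealWordIndep} (with Theorem~\ref{thm:Zwordindep}) this ideal depends only on the Weyl element $s=\Weylpres(w)$, not on the choice of reduced word or the convex ordering on $\proots$ used to define the root vector generators $\Epw_{i,j}$. Consequently, although the Hopf map $\mathscr h$ in \eqref{eq:HopfComp} and hence the a priori definition of $\mathcal A(H)$ in \eqref{eq:HIdeal=subgr} do depend on the chosen ordering through Theorem~\ref{thm:Z=O}, the particular ideal $\mathcal A(B(s))$ equals an intrinsic object. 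Thus the plan for (i) is a one‐line invocation: chain the equalities $\mathcal A(B(s))=\Zaugideal{s}^{\geqzero}$ (Theorem~\ref{thm:AB=Ks}) with word/ordering independence of the right‐hand side.

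For (ii), I plan to bootstrap from Theorem~\ref{thm:A-ZHopfIdeal}, which asserts that $\Zaugideal{s}^{\geqzero}$ is a Hopf ideal in $\Zsubalgchar_\bullet^\geqzero$, combined with Lemma~\ref{lem:An-CoprX}, which says $\Zsubalgchar_\bullet^\geqzero$ sits inside $\Uz^\geqzero$ as a {\em Hopf} subalgebra. Proposition~\ref{prop:Zideal} already provides that the induced object $\widehat{\Zaugideal{s}^{\geqzero}}=\Zaugideal{s}^{\geqzero}\cdot\Uz^\geqzero$ is a two‐sided algebra ideal; what remains is to show it is a biideal and is antipode‐stable. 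For a generator $x\in\Zaugideal{s}^{\geqzero}$ and $u\in\Uz^\geqzero$, I would write $\Delta(xu)=\Delta(x)\Delta(u)$ and use that $\Delta(x)\in \Zaugideal{s}^{\geqzero}\otimes \Zsubalgchar_\bullet^\geqzero + \Zsubalgchar_\bullet^\geqzero\otimes \Zaugideal{s}^{\geqzero}$ (Theorem~\ref{thm:A-ZHopfIdeal}) while $\Delta(u)\in\Uz^\geqzero\otimes\Uz^\geqzero$, so each tensor factor multiplies into either $\widehat{\Zaugideal{s}^{\geqzero}}\otimes \Uz^\geqzero$ (by definition of the extension, for the first piece) or $\Uz^\geqzero\otimes\widehat{\Zaugideal{s}^{\geqzero}}$ (using that the extension is two‐sided, for the second piece). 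Antipode stability follows from $S(\Zaugideal{s}^{\geqzero})\subseteq \Zaugideal{s}^{\geqzero}\subseteq \widehat{\Zaugideal{s}^{\geqzero}}$ combined with the two‐sided ideal property applied to $S(xu)=S(u)S(x)$.

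The extension from $\Uz^\geqzero$ to the full algebra $\Uz$ is then obtained by the identical argument, now viewing $\Zsubalgchar_\bullet^\geqzero\hookrightarrow\Uz$ as a Hopf subalgebra of $\Uz$ via the Hopf embedding $\Uz^\geqzero\hookrightarrow\Uz$ (Corollary~\ref{cor:UzProps}), so that the same Hopf‐ideal computation goes through verbatim with $\Uz$ replacing $\Uz^\geqzero$ on the right of each tensor. I do not expect a serious obstacle: the only delicate point is the bookkeeping needed to confirm that the second piece of $\Delta(x)\Delta(u)$ lies in $\Uz^{(\geqzero)}\otimes\widehat{\Zaugideal{s}^{\geqzero}}$, and this is exactly what Proposition~\ref{prop:Zideal} was designed to guarantee via the $\Commsigngrp$‐invariance of $\Zaugideal{s}^{\geqzero}$ (which is automatic since its defining generators $\Epw_v$ are $\ZYXgrad^*$‐homogeneous). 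The restriction $\kay\not\equiv 2\mod 4$ enters solely through Theorem~\ref{thm:A-ZHopfIdeal}, which relies on Lemma~\ref{lem:An-CoprX}'s Hopf‐subalgebra assertion for $\Zsubalgchar_\bullet^\geqzero$ in Lie type $\LT{A}$.
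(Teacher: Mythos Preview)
Your argument is correct and follows essentially the same route as the paper. For (i) both you and the paper invoke Theorem~\ref{thm:AB=Ks} together with the word-independence of $\Zaugideal{s}^{\geqzero}$ (Theorem~\ref{thm:Zwordindep} / Corollary~\ref{cor:KidealWordIndep}). For (ii) both arguments reduce to the general principle that a Hopf ideal $J$ in a Hopf subalgebra $\Zsubalgchar_\bullet^\geqzero\subset\Uz^\geqzero$ extends to a Hopf ideal $\widehat J=J\cdot\Uz^\geqzero$, once one knows $\widehat J$ is two-sided (Proposition~\ref{prop:Zideal}); you spell out the coideal and antipode computations explicitly, while the paper simply declares this step ``clear''.

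The one substantive difference is in how you justify that $\Zaugideal{s}^{\geqzero}$ is a Hopf ideal in $\Zsubalgchar_\bullet^\geqzero$: you cite Theorem~\ref{thm:A-ZHopfIdeal} (the direct combinatorial proof via Lemma~\ref{lem:sumposroots}), whereas the paper appeals to Theorem~\ref{thm:AB=Ks} and Lemma~\ref{lem:HIdeal=subgr} (the identification with a vanishing ideal on a Bruhat subgroup). Both are valid; your route is arguably more self-contained since it avoids passing through the geometric correspondence. One minor inaccuracy: you attribute the restriction $\kay\not\equiv 2\bmod 4$ to Theorem~\ref{thm:A-ZHopfIdeal}, but that theorem and Lemma~\ref{lem:An-CoprX} carry no such hypothesis. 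In fact your argument as written would go through for all $\kay$, so the restriction in the corollary is only inherited from the context of the surrounding section rather than being essential to your proof.
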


The first statement is immediate from word-independence of $\Zaugideal{s}$, which itself a consequence of Theorem~\ref{thm:Zwordindep} and the definition in  \eqref{eq:defAugZideal}.

For the second assertion recall the notation $\widehat J=J\cdot\Uz^\utypechar\,$ from Proposition~\ref{prop:Zideal}, extending ideals in the subalgebras $\Zsubalgchar_\bullet^\geqzero$ to ideals in the respective quantum algebras. $\Commsigngrp$-invariance 
trivially holds in the commutative case. It is clear that, if $\Zsubalgchar_\bullet^\geqzero$ is a Hopf subalgebra and $J$ a Hopf ideal in  $\Zsubalgchar_\bullet^\geqzero$, then $\widehat J$ is a Hopf ideal in the quantum algebra as well. By Theorem~\ref{thm:AB=Ks} and Lemma~\ref{lem:HIdeal=subgr} this is the case for $J=\Zaugideal{s}^\geqzero$ for any $s\in\Weyl\,$.

A natural question related to the first item in Corollary~\ref{cor:AB=Ks} asks, for which 
closed algebraic subgroups $C\leq B$ the resulting ideal $\mathcal A(C)$ can be 
constructed without the choice of a particular ordering. 
The observation in Item {\em \ref{item:AB=Ks:B} } of the corollary will be proved for all Lie types in  Theorem~\ref{thm:MainIdeals} below.

The full $\ZsubalgcharT_\bullet=\ZsubalgcharT_\bullet^\geqzero\cdot \ZsubalgcharT_\bullet^\leqzero$ subalgebra naturally also forms a Hopf subalgebra, which, over $\mathbb C$, can also be identified with the coordinate ring of the algebraic group $\dBsg{n}=\mathrm{GL}(n,\mathbb C)^\PLexp$ as 
defined in (\ref{eq:DefPoissonLie}). One Borel algebra $B^+=\UtriM{b}$ is given by the 
usual upper triangular matrices and the opposite $B^-=\UtriM{n}^-$ by the lower triangular ones. The algebraic maximal torus $\DiagM{n}=\UtriM{n}^-\cap \UtriM{n}$ is, hence, given by  
the diagonal matrices in $\mathrm{GL}(n,\mathbb C)$. Rephrasing definition (\ref{eq:DefPoissonLie})
in this notation, we  have  
$$
\dBsg{n}=\ker\left(\UtriM{n}^-\times \UtriM{n}\,\longrightarrow\,\DiagM{n}\times \DiagM{n}\,\longrightarrow\, \DiagM{n}\right)\,.
$$

The construction also implies $\dBsg{n}\cong\DiagM{n}\ltimes (\SUtriM{n}^-\times \SUtriM{n})$, where $\SUtriM{n}^-$ and $\SUtriM{n}$ are the respective unipotent subgroups. The next result extends 
Corollary~\ref{cor:AnCoordids}. Although the existence of a Hopf algebra isomorphism is already implied in  
\cite{dck92} for odd $\kay$, we construct the map here explicitly via assignments of the polynomial generators of the form $R_{i,j}^{\pm}\mapsto\cofun{x_{i,j}}^{\pm}\,$ and also include the case
$\kay\equiv 0\mod 4$.

\begin{lem}\label{lem:fullZ=Qn}
Suppose $\zeta$ is a $\kay$-th root of unity and $\Uz=\Uz(\mathfrak{gl}_n)$ as defined above. 
 \vspace{-1mm}
    \begin{enumerate}[label=\roman*)]
\item $\ZsubalgcharT_\bullet$ is a Hopf subalgebra of $\Uz\,$.\vspace{2mm}
\item For $\kay\not\equiv 2\mod 4$ there is an isomorphism of Hopf algebras $\ZsubalgcharT_\bullet\otimes\mathbb C\cong\mathbb C[\dBsg{n}]$.\vspace{2mm}
\item Moreover, for such $\kay$, any closed algebraic subgroup of $\dBsg{n}$ defines a Hopf ideal in $\ZsubalgcharT_\bullet$ and, hence, a Hopf ideal of $\Uz\,$. 
\end{enumerate} 
\end{lem}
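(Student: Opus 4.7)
For part (i), the plan is to extend the coproduct computation of Lemma \ref{lem:An-CoprX} to cover the remaining generators of $\ZsubalgcharT_\bullet$. The Cartan generators $\Glgpw_i = \Glg_i^{\ell}$ satisfy $\Delta(\Glgpw_i) = \Glgpw_i \otimes \Glgpw_i$ directly, since each $\Glg_i$ is group-like in $\Uz(\mathfrak{gl}_n)$, and these clearly stay in $\ZsubalgcharT_\bullet \otimes \ZsubalgcharT_\bullet$. For the lower-triangular generators $\Fpw_{i,j}$, I would apply the anti-involution $\Cartaninv$ of (\ref{eq:defCinv}) to (\ref{eq:An-CoprX}); since $\Cartaninv$ swaps $\Epw \leftrightarrow \Fpw$, inverts the Cartan generators, sends $q \mapsto q^{-1}$, and reverses the direction of multiplication, the result produces $\Delta(\Fpw_{i,j})$ as an explicit polynomial expression in generators of $\ZsubalgcharT_\bullet$, mirror to (\ref{eq:An-CoprX}). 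Stability under the antipode then follows from (\ref{eq:SXiMhoRel}) together with the $\Kinvaut$-invariance and Che transformation invariance of $\ZsubalgcharT_\bullet$ established earlier, extending the argument at the end of the proof of Lemma \ref{lem:An-CoprX}.

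For part (ii), I would assemble the isomorphism from its two halves. Corollary \ref{cor:AnCoordids} already yields $\ZsubalgcharT_\bullet^{\geqzero} \otimes \mathbb C \cong \mathbb C[\UtriM{n}] = \mathbb C[B^+]$, and an analogous computation (obtained by applying $\Cartaninv$ throughout Theorem \ref{thm:Z=O}) gives $\ZsubalgcharT_\bullet^{\leqzero} \otimes \mathbb C \cong \mathbb C[B^-]$, in which $\Glgpw_i$ is now identified with the lower-diagonal coordinate $\cofun{y_{i,i}}$ rather than $\cofun{x_{i,i}}^{-1}$. The $\mathfrak{gl}_n$-analogue of the multiplication isomorphism Lemma \ref{lm:multiso} gives
\[
\ZsubalgcharT_\bullet^{\leqzero} \otimes_{\ZsubalgcharT_\bullet^{0}} \ZsubalgcharT_\bullet^{\geqzero} \;\xrightarrow{\;\cong\;}\; \ZsubalgcharT_\bullet,
\]
where the overlap $\ZsubalgcharT_\bullet^{0}$ is the group algebra of $\langle \Glgpw_i^{\pm 1}\rangle$. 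On the geometric side, the defining condition $\pi^-(g)\cdot\pi^+(h) = 1$ of (\ref{eq:DefPoissonLie}) presents $\mathbb C[\dBsg{n}]$ as the fibered tensor product $\mathbb C[B^-] \otimes_{\mathbb C[\DiagM{n}]} \mathbb C[B^+]$, where one projection on the maximal torus is twisted by inversion. Matching the two identifications $\Glgpw_i \leftrightarrow \cofun{y_{i,i}}$ and $\Glgpw_i \leftrightarrow \cofun{x_{i,i}}^{-1}$ realizes precisely this torus identification, and assembles the desired Hopf algebra isomorphism. Compatibility with the coproducts reduces to the verified identities on generators of each half.

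For part (iii), given a closed algebraic subgroup $H \leq \dBsg{n}$, Lemma \ref{lm:ASG=HI}\ref{item:ASG=HI:VG=HI} provides a Hopf ideal $\VanIdeal(H) \subseteq \mathbb C[\dBsg{n}]$. Pulling back through the isomorphism of (ii) yields a Hopf ideal $\bar J \subseteq \ZsubalgcharT_\bullet \otimes \mathbb C$, and its preimage $J = \bar J \cap \ZsubalgcharT_\bullet$ under the scalar-extension inclusion is again a Hopf ideal of $\ZsubalgcharT_\bullet$, using that the latter is a free module over its ground ring and (\ref{eq:restrImodule}). To pass to a Hopf ideal of $\Uz$, I would observe that whenever $A \subseteq U$ is a Hopf subalgebra and $J \subseteq A$ is a Hopf ideal, the two-sided ideal $U\cdot J\cdot U \subseteq U$ is automatically a Hopf ideal: the coideal property of $J$ inside $A$ gives $\Delta(J) \subseteq J \otimes A + A \otimes J$, which on multiplying by $\Delta(U)\Delta(U)$ yields $\Delta(UJU) \subseteq UJU \otimes U + U \otimes UJU$, and antipode-stability is similarly inherited. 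Applied here, $\Uz \cdot J \cdot \Uz$ is a Hopf ideal of $\Uz$ as claimed. The main obstacle I anticipate in (iii) is verifying that this induced ideal is proper (nonzero quotient) and matches the expected Zariski-geometric content; in the special case $\kay$ odd, $\Zsubalgchar_\bullet$ is central in $\Uz$ by Proposition \ref{prop:Zcentral}, so $\Uz \cdot J \cdot \Uz = J \cdot \Uz$, which also agrees with the construction of Proposition \ref{prop:Zideal} where $\Commsigngrp$-invariance is automatic, providing a sanity check that the correspondence is non-degenerate.
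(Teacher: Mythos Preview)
Your proposal is correct and follows essentially the same route as the paper. The paper uses $\Cartanaut$ (which is $\Zzvn{\dpone}$-linear and maps $\Uz$ to $\Uz^{\cop}$) rather than your $\Cartaninv$ (which inverts $\zeta$), and presents $\ZsubalgcharT_\bullet$ explicitly as the quotient of $\ZsubalgcharT_\bullet^{\leqzero}\otimes\ZsubalgcharT_\bullet^{\geqzero}$ by the Hopf ideal $J=\sum_i\lpip\Kpw_i^{-1}\otimes\Kpw_i-1\rpip$; your fibered tensor product over $\ZsubalgcharT_\bullet^{0}$ is exactly this quotient. For (ii) the paper then composes with $\Cartanaut\otimes\id$ so that both tensor factors become copies of $\mathbb C[\UtriM{n}]$ (one carrying the opposite coproduct, identified with $\mathbb C[\UtriM{n}^-]$ via transposition), and reads off the resulting kernel $J''=\sum_i\lpip\cofun{x_{i,i}}\otimes\cofun{x_{i,i}}-1\rpip$ as the vanishing ideal of $\dBsg{n}$ via Lemma~\ref{lm:ASG=HI}; this is the same computation as your torus-matching $\cofun{y_{i,i}}=\cofun{x_{i,i}}^{-1}$. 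For (iii) the paper simply invokes Lemma~\ref{lm:ASG=HI}; your $UJU$ argument for the passage to $\Uz$ is correct and more explicit than what the paper offers.

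One minor point: your citation of~\eqref{eq:restrImodule} to justify that the intersection $\bar J\cap\ZsubalgcharT_\bullet$ is a coideal over the smaller ring is misplaced, since~\eqref{eq:restrImodule} concerns submodules spanned by PBW basis elements rather than preimages of arbitrary ideals. The cleanest reading of (iii)---and what the paper's brief ``immediate from Lemma~\ref{lm:ASG=HI}'' suggests---is as a statement over $\mathbb C$ throughout, in which case the preimage under the Hopf isomorphism of (ii) is trivially a Hopf ideal.
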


\begin{proof} Recall that the involution $\Cartanaut:\Uz\rightarrow\Uz^{\cop}$
from \eqref{eq:defCaut}  
maps $\Uz^{\leqzero(\geqzero)}$ to $\Uz^{\geqzero(\leqzero)}$ with the opposite coproduct. In particular,
since $\ZsubalgcharT_\bullet^\geqzero$ is a Hopf subalgebra, then so is $\ZsubalgcharT_\bullet^\leqzero=\Cartanaut(\ZsubalgcharT_\bullet^\geqzero)\,$. The product map
 $\ZsubalgcharT_\bullet^\leqzero\otimes \ZsubalgcharT_\bullet^\geqzero\rightarrow \ZsubalgcharT_\bullet$
 is a surjective Hopf algebra homomorphism with kernel given by the Hopf ideal $J=\sum_i\lpip\Kpw_i^{-1}\otimes \Kpw_i-1\rpip\,$. Composing this with $\Cartanaut\otimes \id$, we obtain a surjection
 $\ZsubalgcharT_\bullet^{\geqzero,\cop}\otimes \ZsubalgcharT_\bullet^\geqzero\rightarrow \ZsubalgcharT_\bullet$
 with kernel $J'=\sum_i\lpip\Kpw_i\otimes \Kpw_i-1\rpip\,$. Over $\mathbb C$ the isomorphism from
 Corollary~\ref{cor:AnCoordids}  allows us to replace $\ZsubalgcharT_\bullet^\geqzero$ with $\mathbb C[\UtriM{n}]$
 and $\ZsubalgcharT_\bullet^{\geqzero,\cop}$  with  $\mathbb C[\UtriM{n}^{\opp}]$, where $\UtriM{n}^{\opp}$ is naturally
 identified with $\UtriM{n}^-$ via transposition. 
 
 The kernel of the resulting epimorphism 
 $\mathbb C[\UtriM{n}^-\times \UtriM{n}]=\mathbb C[\UtriM{n}^-]\otimes\mathbb C[\UtriM{n}]\rightarrow \ZsubalgcharT_\bullet$
 is now the ideal $J''=\sum_i\lpip(\cofun{x_{i,i}},\cofun{x_{i,i}})-1\rpip=\sum_i\lpip\cofun{x_{i,i}}\otimes\cofun{x_{i,i}}-1\rpip$. The subgroup $\VanLocus(J'')$ now consists of $(a,b)\in \UtriM{n}^-\times \UtriM{n}$ such that
 $\cofun{x_{i,i}}(a)\cofun{x_{i,i}}(b)=1$ for all $i=1,\ldots, n\,$. The set of these pairs is now exactly
 the kernel $\dBsg{n}$ defined above. Lemma~\ref{lm:ASG=HI} then implies $J''=\VanIdeal(\dBsg{n})$ and thus
 $\mathbb C[\dBsg{n}]\cong\ZsubalgcharT_\bullet\,$. The last statement is now also immediate from
 Lemma~\ref{lm:ASG=HI}. 
 \end{proof}

Consider the natural center action of $\mathbb C^\times$ on $\dBsg{n}$\,, given by
$\lambda.(b^-,b^+)=(\lambda^{-1}b^-,\lambda b^+)$ with $b^-\in \UtriM{n}^-$ and $b\in \UtriM{n}\,$. The respective 
$\mathbb C^\times$ action on the coordinate ring of $\dBsg{n}$ over $\mathbb C$ yields a direct decomposition
$\mathbb C[\dBsg{n}]=\bigoplus_{k\in\mathbb Z}\mathbb C[\dBsg{n}]_k$ with $f\in\mathbb C[\dBsg{n}]_k$
characterized by $\lambda.f=\lambda^k\!\cdot\! f\,$. It is easy to check that this decomposition coincides
with the one imposed by the $\pgrad$-grading introduced in Section~\ref{subsec:Zn=CAn}. 

Thus, for $\kay\not\equiv 2\mod 4$, the isomorphism in Lemma~\ref{lem:fullZ=Qn} restricts to an isomorphism 
of Hopf algebras between $\Zsubalgchar_\bullet\otimes\mathbb C$ and $\mathbb C[\dBsg{n}]_0$\,, the $\mathbb C^\times$-invariant part of the coordinate ring, which, in turn, may be identified with
$\mathbb C[\mathrm{PGL}(n,\mathbb C)^\PLexp]\,$. 

The Artin group $\mathscr A=B_n$ 
action on $\Uq(\mathfrak{sl}_{n})$ generated by the $\Tinv_i$ naturally extends to
 an action on $\Uq(\mathfrak{gl}_{n})$ by $\Tinv_s(\Glg_i)=\Glg_{s(i)}$\,, where $i\mapsto s(i)$ is the 
 usual permutation for $s\in\Weyl=S_n\,$. Lemma~\ref{lem:fullZ=Qn} now implies an action of $B_n$ by 
 algebra automorphisms of $\mathbb C[\dBsg{n}]$\,. Invoking the standard correspondence between algebra homomorphisms of
 coordinate rings and morphisms of algebraic varieties, this implies an action of $B_n$ on $\dBsg{n}$\,.

\begin{cor}\label{cor:BnActOnWn}
    There exists a non-trivial action of the braid group $B_n$ on $n$ strands by algebraic automorphisms 
    on the algebraic group 
    $\dBsg{n}$\,, commensurable with Lusztig's $B_n$-action on the quantum group of Lie type $\LT{A}_{n-1}\,$. 
\end{cor}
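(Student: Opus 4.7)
The plan is to transport Lusztig's braid action on the extended quantum algebra through the Hopf isomorphism of Lemma~\ref{lem:fullZ=Qn} and then invoke the standard antiequivalence between affine algebraic groups and their coordinate Hopf algebras. I assume throughout that $\kay\not\equiv 2\mod 4$, which is the hypothesis under which Lemma~\ref{lem:fullZ=Qn} supplies the identification $\ZsubalgcharT_\bullet\otimes\mathbb C\cong\mathbb C[\dBsg n]$.

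First, I would extend the $\Tinv_i$ from $\Uq(\mathfrak{sl}_n)$ to $\Uq(\mathfrak{gl}_n)$ by declaring $\Tinv_i(\Glg_j)=\Glg_{s_i(j)}$, consistent with the prescription that $\Tinv_i(K_j)=K_jK_i^{-A_{ij}}$ via $K_i=\Glg_i\Glg_{i+1}^{-1}$. Because the assignment on the $\Glg_j$ factors through the symmetric group $\Weyl=S_n$, and since the action on $\{E_i,F_i\}$ already satisfies the Artin-Tits relations, the composite extensions satisfy the braid relations and therefore define an action $\Tinv\colon B_n\to\mathrm{Aut}(\Uq(\mathfrak{gl}_n))$. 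Specializing the ground ring via $q\mapsto\zeta$ produces an action of $B_n$ on $\Uz(\mathfrak{gl}_n)$ by Hopf algebra automorphisms.

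Next, I would verify that this action preserves $\ZsubalgcharT_\bullet$. On the sub-Hopf algebra $\Zsubalgchar_\bullet\subset\Uz(\mathfrak{sl}_n)\subset\Uz(\mathfrak{gl}_n)$ this is exactly Theorem~\ref{thm:ZTinvInvar}. For the extra generators, $\Tinv_i(\Glgpw_j)=\Glgpw_{s_i(j)}^{\pm 1}$, so the subalgebra generated by $\{\Glgpw_j^{\pm 1}\}$ is preserved and its product with $\Zsubalgchar_\bullet$ is $\ZsubalgcharT_\bullet$. Extending scalars to $\mathbb C$ and invoking the Hopf isomorphism $\mathscr H\colon\ZsubalgcharT_\bullet\otimes\mathbb C\xrightarrow{\ \cong\ }\mathbb C[\dBsg n]$ from Lemma~\ref{lem:fullZ=Qn}, we transport the $B_n$-action to an action on $\mathbb C[\dBsg n]$ by Hopf algebra automorphisms. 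The antiequivalence between the category of affine algebraic groups over $\mathbb C$ and the category of finitely generated commutative Hopf $\mathbb C$-algebras then converts this into a $B_n$-action on $\dBsg n$ by morphisms of algebraic groups, which are automorphisms because the braid generators are invertible.

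Finally, I would address non-triviality, which is the only step that is not purely formal. Commensurability with Lusztig's action is tautological from the construction, so non-triviality reduces to exhibiting a single braid $b\in B_n$ with $\Tinv_b$ acting nontrivially on $\ZsubalgcharT_\bullet\otimes\mathbb C$. For $n=2$ the explicit formulae \eqref{eq:A2Tinv1Form} give $\Tinv_1(\Epw_{2})=\Epw_{(12)}$ and $\Tinv_1(\Epw_{(12)})=-\Epw_2+\delzeta{}\,\Kpw_1^{-1}\Epw_{(12)}\Fpw_1$, which is not a permutation of generators and hence shows $\Tinv_1$ is not the identity; for general $n$ one transports this example along the $\LT{A}_2$-subsystem embeddings of Proposition~\ref{prop:AComms}. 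Alternatively, the Garside formula $\Tinv_{\garside}^2=\Kscale{\qitgen^{-1}}{2\idsymm}$ from Corollary~\ref{cor:GarsideAutom} acts on a generator $\Epw_w$ by a nonunit power of $K$ times a nontrivial scalar, which is clearly distinct from the $S_n$-action. The hard part, if there is one, is to avoid circularity in this nontriviality argument: one must know that $\mathscr H$ really sends the displayed nontrivial element to a nontrivial function on $\dBsg n$, but this is automatic from the fact that $\mathscr H$ is an isomorphism. This completes the proposed proof.
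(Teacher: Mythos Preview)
Your overall architecture matches the paper's exactly: extend the $\Tinv_i$ to $\Uq(\mathfrak{gl}_n)$ via $\Tinv_s(\Glg_j)=\Glg_{s(j)}$, invoke Theorem~\ref{thm:ZTinvInvar} together with the obvious action on the $\Glgpw_j$ to see that $\ZsubalgcharT_\bullet$ is preserved, transport through Lemma~\ref{lem:fullZ=Qn}, and dualize. Your non-triviality discussion is also more explicit than the paper's (which simply points to the formulae in~\eqref{eq:A2Tinv1Form}); note only the minor index slip that those $\LT{A}_2$ formulae correspond to $n=3$ in the $B_n/\LT{A}_{n-1}$ convention, and that~\eqref{eq:A2Tinv1Form} is stated for odd $\kay$ --- non-triviality for $\kay\equiv 0\bmod 4$ is even easier to read off from the permutation action on the torus generators $\Glgpw_j$.

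There is, however, one genuine over-claim. You assert that the $\Tinv_i$ act on $\Uz(\mathfrak{gl}_n)$ and hence on $\ZsubalgcharT_\bullet\otimes\mathbb C$ ``by Hopf algebra automorphisms,'' and then pass through the antiequivalence with affine algebraic \emph{groups} to get group automorphisms of $\dBsg{n}$. But the $\Tinv_i$ are \emph{not} coalgebra morphisms --- indeed, the whole point of Proposition~\ref{prop:AdP=DelT} is that $\Delta\circ\Tinv_i$ and $\Tinv_i^{\otimes 2}\circ\Delta$ differ by conjugation with $\Ri$. This failure persists on $\ZsubalgcharT_\bullet$: in type $\LT{A}_2$ one has $\Tinv_1(\Epw_2)=\Epw_{(12)}$, yet $\Delta(\Epw_{(12)})$ carries the extra term $b\,\Epw_1\otimes L_1\Epw_2$ from Lemma~\ref{lem:An-CoprX}, which is absent from $\Tinv_1^{\otimes 2}(\Delta(\Epw_2))=\Epw_{(12)}\otimes L_1L_2+1\otimes\Epw_{(12)}$. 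So the induced automorphisms of $\dBsg{n}$ are automorphisms of the underlying \emph{variety}, not of the group structure. The paper is careful on this point: it speaks only of ``algebra automorphisms of $\mathbb C[\dBsg{n}]$'' and the ``correspondence between algebra homomorphisms of coordinate rings and morphisms of algebraic varieties.'' Your argument is easily repaired by making the same downgrade --- replace the group/Hopf antiequivalence by the variety/reduced-algebra one --- and then everything goes through.
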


It is, worth contrasting the correspondence in Lemma~\ref{lem:fullZ=Qn} for the central subalgebras with that of the traditional correspondences for the original quantum groups. For the latter $\Uq^{\geqzero}$ and
$\Uq^{\leqzero}$ are associated with the opposing Borel subgroups and $\Uq=\Uq^{\geqzero}\cdot\Uq^{\leqzero}$
with the original simple or reductive Lie group. 

However, for the central subalgebras, the associated Lie group $\dBsg{n}$ is a solvable group and thus
fundamentally different from $\mathrm{GL}(n,\mathbb C)$\,. Correspondingly, the coordinate ring 
$\mathbb C[\dBsg{n}]\cong\ZsubalgcharT_\bullet$ is cosolvable in the sense of Corollary~\ref{cor:kB=cosolv},
whereas $\mathbb C[\mathrm{GL}(n,\mathbb C)]$ is cosemisimple since $\mathrm{GL}(n,\mathbb C)$ is reductive. 
Similarly, the of group algebraic characters $\AChar{\dBsg{n}}\cong\GrLi{\ZsubalgcharT_\bullet}\cong\mathbb Z^n$ differs from the one in 
\eqref{eq:GLnChar}.

Lemma~\ref{lem:fullZ=Qn} allows us to associate, for example, the Hopf ideal 
$\VanIdeal((K\times H)\cap \dBsg{n})$ to any pair  of subgroups 
$H\leq \UtriM{n}$ and $K\leq \UtriM{n}^-\,$, which then define Hopf ideals $\mathcal A(K,H)$ in 
$\Zsubalgchar_\bullet$ as before. These may then be reexpressed as 
$\mathcal A(K,H)=\mathcal A(K,\UtriM{n})+\mathcal A(\UtriM{n}^-,H)=(\mathcal A(K)^\leqzero+\mathcal A(H)^\geqzero)\cdot \Zsubalgchar_\bullet\,$. Specialized to the Bruhat subgroups this yields, together with 
Theorem~\ref{thm:AB=Ks} and a respective definition of a Bruhat subgroup $B(t)^-<\UtriM{n}^-$\,, the identity of ideals 
\begin{equation}
    \mathcal A(B(t)^-,B(s))=(\Zaugideal{t}^{\leqzero}+\Zaugideal{s}^{\geqzero})\cdot\Zsubalgchar_\bullet\qquad
    \mbox{ for any } s,t\in\Weyl.
\end{equation}
   
\medskip 

Concluding our discussion of the $\LT{A}_{n-1}$ algebras, we briefly address the case $\kay\equiv 2\mod 4$, when $\LLsign=-1$, as well as an approach to the next simply laced Lie types. 

We first provide a concrete description of Tanisaki's isomorphism from \cite{tan16} for the subgroup $\UtriM{n}<\mathrm{GL}(n,\mathbb C)$\,. Let ${\mathsf I}_s\in \UtriM{n}$  be the diagonal matrix with
$({\mathsf I}_s)_{s,s}=-1$ and $({\mathsf I}_s)_{i,i}=1$ for $i\neq s\,$. For any $1\leq s\leq t\leq n$ define the
map $\tau_{s,t}:\UtriM{n}\rightarrow\UtriM{n}$ by $\tau_{s,t}(b)={\mathsf I}_s\cdot b \cdot {\mathsf I}_t\,$. Clearly, all $\tau_{s,t}$ are of order 2 and commute with each other, generating an
elementary abelian 2-group $\mathscr G$  of isomorphisms of $\UtriM{n}$ as an algebraic variety. 
Obviously, none of the $\tau_{s,t}$ are a group homomorphism of $\UtriM{n}$ and 
the quotient variety $\UtriM{n}/\mathscr G$ retains no group structure either.

\begin{lem} Assume $\kay\equiv 2\mod 4$ and Lie type $\LT{A}_{n-1}$\,. Let $\Zsubalgchar_{\mathrm{Fr}}^\geqzero$ be the intersection of $\ZsubalgcharT_\bullet^\geqzero\otimes\mathbb C$ with the center of $\Uzn{\mathbb C}$\,. Then there is a weight-preserving algebraic isomorphism of commutative algebras,
$$
\Zsubalgchar_{\mathrm{Fr}}^\geqzero\,\cong \,\mathbb C[\UtriM{n}]^{\mathscr G}\,\cong\,
\mathbb C[\UtriM{n}/{\mathscr G}]\,.
$$ 
\end{lem}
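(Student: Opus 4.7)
The plan is to transport everything through the isomorphism $\ZsubalgcharT_\bullet^\geqzero\otimes\mathbb C\cong\QMatAlg{\mathsf a}{\mathbb C}{n}$ from Theorem~\ref{thm:Z=O} (valid because $\kay\equiv 2\mod 4$ forces $\LLsign=-1$), then pin down the intersection with the center of $\Uzn{\mathbb C}$ by a direct commutation calculation on PBW monomials, and finally match the resulting parity conditions on exponents with the conditions for $\mathscr G$-invariance in $\mathbb C[\UtriM{n}]$. The target isomorphism $\Zsubalgchar_{\mathrm{Fr}}^\geqzero\cong \mathbb C[\UtriM{n}/\mathscr G]$ then follows by combining with the standard identification $\mathbb C[\UtriM{n}]^{\mathscr G}\cong\mathbb C[\UtriM{n}/\mathscr G]$ for the finite group action on the affine variety. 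Throughout, the weight grading is preserved since under the assignment $R_{i,j}\leftrightarrow\cofun{x_{i,j}}$ the $\mathbb Z^n$-grading on $\QMatAlg{\mathsf a}{\mathbb C}{n}$ coincides with the character grading on $\mathbb C[\UtriM{n}]$ and both are compatible with the $\wgrad$-grading from $\Uzn{\mathbb C}$.

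First, I would fix the PBW basis $M_{\mathbf n}=\prod_{i\leq j}R_{i,j}^{n_{i,j}}$ from Proposition~\ref{prop:QMathAlgHopf} and exploit the skew-commutation $R_{i,j}R_{s,t}=(-1)^{\delta_{is}+\delta_{jt}}R_{s,t}R_{i,j}$ to derive, by straightforward induction, the single-generator commutation rule
\[
R_{s,t}\,M_{\mathbf n}\,=\,(-1)^{\mathscr R_s(\mathbf n)+\mathscr C_t(\mathbf n)}\,M_{\mathbf n}\,R_{s,t}\,,\qquad
\mathscr R_s(\mathbf n)=\sum_{j\geq s}n_{s,j}\,,\;\;\mathscr C_t(\mathbf n)=\sum_{i\leq t}n_{i,t}\,.
\]
Hence $M_{\mathbf n}$ is central in $\QMatAlg{\mathsf a}{\mathbb C}{n}$ iff $\mathscr R_s(\mathbf n)\equiv 0$ and $\mathscr C_t(\mathbf n)\equiv 0\mod 2$ for all $s,t$. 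Next I would verify that centrality in $\QMatAlg{\mathsf a}{\mathbb C}{n}$ already implies centrality in the full $\Uzn{\mathbb C}$: commutation with any $F_i$ is automatic by Proposition~\ref{prop:PwGenCommRels} because $\ztgradp(F_i)=0$ forces $\commphel{+}{w}{F_i}=1$; commutation with $\Glg_j$ is controlled by $\Glg_j\Epw_\alpha\Glg_j^{-1}=(-1)^{\delta_{j,i_\alpha}+\delta_{j,j_\alpha}}\Epw_\alpha$ (from \eqref{eq:qGLrels} and the identity $\zeta^\ell=-1$), which matches exactly the sign picked up by commuting the $R_{j,j}$ generator past $M_{\mathbf n}$; and commutation with the $E_i$'s is, of course, the commutation relation already built into $\QMatAlg{\mathsf a}{\mathbb C}{n}$.

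On the classical side, the action of $\tau_{s,t}$ on coordinates is computed directly: $\tau_{s,t}^{\ast}\cofun{x_{i,j}}=(\mathsf I_s)_{i,i}\cofun{x_{i,j}}(\mathsf I_t)_{j,j}=(-1)^{\delta_{is}+\delta_{jt}}\cofun{x_{i,j}}$. Therefore a monomial $M'_{\mathbf n}=\prod\cofun{x_{i,j}}^{n_{i,j}}$ is $\mathscr G$-invariant iff $\mathscr R_s(\mathbf n)$ and $\mathscr C_t(\mathbf n)$ are even for all $s,t$, the identical condition that characterizes central monomials on the quantum side. This yields the obvious weight-preserving bijection of monomial bases and hence an isomorphism of graded vector spaces $\Zsubalgchar_{\mathrm{Fr}}^\geqzero\to\mathbb C[\UtriM{n}]^{\mathscr G}$.

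The hard part will be upgrading this vector-space isomorphism to an algebra isomorphism. Concretely, reordering the product of two central monomials $M_{\mathbf n}M_{\mathbf m}$ back into canonical form introduces a sign
\[
\epsilon(\mathbf n,\mathbf m)\,=\,\prod_{(i,j)>(s,t)}(-1)^{n_{i,j}m_{s,t}(\delta_{is}+\delta_{jt})}\,,
\]
and what must be shown is that this sign is $+1$ whenever both $\mathbf n$ and $\mathbf m$ satisfy the parity constraints. Conceptually, this amounts to showing that the $2$-cocycle realizing $\QMatAlg{\mathsf a}{\mathbb C}{n}$ as a twist of $\QMatAlgS{\mathbb C}{n}=\mathbb C[\UtriM{n}]$ becomes symmetric (and hence a coboundary, which can be absorbed into the basis) when restricted to the sublattice of exponents satisfying the centrality conditions. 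This should follow from a direct parity computation, splitting $\epsilon(\mathbf n,\mathbf m)$ along the $\delta_{is}$- and $\delta_{jt}$-contributions and collapsing each piece to a product of terms of the form $(-1)^{n_{s,\ast}\mathscr C_t(\mathbf m)}$ and $(-1)^{m_{s,\ast}\mathscr R_i(\mathbf n)}$, each of which vanishes by the even-parity hypothesis. Once this sign cancellation is verified the algebra isomorphism $\Zsubalgchar_{\mathrm{Fr}}^\geqzero\cong\mathbb C[\UtriM{n}]^{\mathscr G}$ follows, and composing with the tautological $\mathbb C[\UtriM{n}]^{\mathscr G}\cong\mathbb C[\UtriM{n}/\mathscr G]$ completes the proof.
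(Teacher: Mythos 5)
Your overall route---transporting through Theorem~\ref{thm:Z=O}, comparing central PBW monomials of $\QMatAlg{\mathsf a}{\mathbb C}{n}$ with $\mathscr G$-invariant monomials of $\mathbb C[\UtriM{n}]$, and then upgrading the monomial bijection to an algebra map---is the paper's strategy in outline, but two of your steps are wrong. First, the parity characterization: centrality of $M_{\mathbf n}$ in $\QMatAlg{\mathsf a}{\mathbb C}{n}$, and likewise $\mathscr G$-invariance of $\prod\cofun{x_{i,j}}^{n_{i,j}}$, is \emph{not} equivalent to all row sums $\mathscr R_s(\mathbf n)$ and column sums $\mathscr C_t(\mathbf n)$ being even; the condition coming from your own commutation rule (and from $\tau_{s,t}^*$) is $\mathscr R_s(\mathbf n)+\mathscr C_t(\mathbf n)\equiv 0\bmod 2$ for all $s\le t$, i.e.\ all row and column sums share a \emph{common} parity. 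For instance $R_{1,1}R_{2,2}\cdots R_{n,n}$ is central and $\cofun{x_{1,1}}\cdots\cofun{x_{n,n}}$ is $\mathscr G$-invariant (each $\tau_{s,t}$ with $s\neq t$ flips exactly two of its factors), yet every row and column sum is odd; your condition excludes this whole family, so the map you build lands in (and comes from) proper subalgebras of $\Zsubalgchar_{\mathrm{Fr}}^\geqzero$ and $\mathbb C[\UtriM{n}]^{\mathscr G}$, and the lemma is not proved.

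Second, and more seriously, the sign cancellation you designate as the hard part is false: the reordering sign $\epsilon(\mathbf n,\mathbf m)$ is not always $+1$ on central exponents, even under your stronger all-even condition. In $\QMatAlg{\mathsf a}{\mathbb C}{4}$ take $M_u=R_{1,2}R_{1,3}R_{2,2}R_{2,3}$ and $M_v=R_{1,3}R_{1,4}R_{2,3}R_{2,4}$; both have all row and column sums even, hence are central, but putting $M_uM_v$ back into the canonical order gives $M_uM_v=-M_{u+v}$, the single $-1$ arising when the $R_{1,3}$ of $M_v$ passes the $R_{2,3}$ of $M_u$ (shared column, distinct rows). So the $2$-cocycle restricted to the central sublattice is symmetric---that much is forced by centrality---but it is \emph{not} identically trivial, and no parity bookkeeping of the kind you sketch (collapsing to factors $(-1)^{n_{s,*}\mathscr C_t(\mathbf m)}$ etc.) can make it so. The correct completion, which is what the paper actually does, is to absorb the cocycle into a rescaling of the basis: choose a quadratic form $\vartheta$ on the lattice $M$ of central exponents and set $\tilde R^f=(\sqrt{-1}\,)^{\vartheta(f)}R^f$ so that $\tilde R^f\tilde R^g=\tilde R^{f+g}$; then $\cofun{x}^f\mapsto\tilde R^f$ is the desired weight-preserving algebra isomorphism onto the full center, and composing with $\mathbb C[\UtriM{n}]^{\mathscr G}\cong\mathbb C[\UtriM{n}/\mathscr G]$ finishes the proof. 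Your parenthetical remark that the restricted cocycle is ``symmetric and hence a coboundary, which can be absorbed into the basis'' is exactly the right idea; the error is to conflate ``coboundary'' with ``identically $+1$'' and then to try to prove the latter.
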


\begin{proof}
    It follows from the commutation relations in Section~\ref{subsec:CommPrimGen} that $\Zsubalgchar_{\mathrm{Fr}}^\geqzero$ is identical with the center of  $\ZsubalgcharT_\bullet^\geqzero\otimes\mathbb C\cong \QMatAlg{\mathsf a}{\mathbb C}{n}$ itself. 
    Denote $R^f=\prod_{i\leq j}R_{i,j}^{f_{i,j}}$ 
    for some fixed ordering and 
    $\cofun{x}^f=\prod_{i\leq j}\cofun{x_{i,j}}^{f_{i,j}}$ as before. Recall that $R_{i,j}R_{s,t}=(-1)^{\delta_{i,s}+\delta_{j,t}}R_{s,t}R_{i,j}$ and 
    observe that $\tau_{s,t}^*(\cofun{x_{i,j}})=(-1)^{\delta_{i,s}+\delta_{j,t}}\cofun{x_{i,j}}\,$.
    It is  clear from these formulae that
    $R^f$ is central if and only if $\cofun{x}^f$ is $\mathscr G$-stable. Write $M\subseteq \mathbb Z^{\binom{n+1 }{ 2}}$ for the set of exponents $f=(f_{i,j})$ for which this is the case, namely,
    those for which $\sum_{s\leq j}f_{s,j}+\sum_{i\leq t}f_{i,t}\equiv 0\mod 2$ for all $s\leq t\,$.  
    Depending on the chosen ordering, it now follows from commutativity that 
    one can then choose a quadratic form $\vartheta$ on 
    $M\otimes\mathbb Z$ such that for $\tilde R^f=(\scalebox{.75}{$\sqrt{-1}\,$})^{\vartheta(f)}R^f$ we have 
    $\tilde R^f\cdot \tilde R^g=\tilde R^{f+g}$. Thus, $\cofun{x}^f\mapsto \tilde R^f$
    defines an isomorphism of algebras over $\mathbb C$\,. 
\end{proof}

As an elementary example, it is easy to see that $\mathbb C[\UtriM{2}]^{\mathscr G}$ corresponds to the subalgebra generated
by $R_{1,1}^2$\,, $R_{1,2}^2$\,, and $R_{2,2}R_{1,1}^{-1}$\,, which is clearly not a Hopf subalgebra. Also, 
we have $\mathscr G\cong\mathbb F_2\times \mathbb F_2$ acting component-wise on $\UtriM{2}\cong (\mathbb C^\times)^2\times\mathbb C$ by $(x_{11},x_{22})\mapsto(-x_{11},-x_{22})$ and 
$x_{12}\mapsto-x_{12}$\,, respectively. So, $\UtriM{2}/\mathscr G$ has no obvious group structure. 

As an alternative approach to finding an interpretation of $\QMatAlg{\mathsf a}{\mathbb C}{n}$
and thus $\ZsubalgcharT_\bullet^\geqzero\otimes\mathbb C$
we may seek to retain the coordinate ring $\mathbb C[\UtriM{n}]$ as a target set, but modify the 
multiplication and comultiplication maps as weighted averages with respect to some 
$\mathscr G$-action. We propose the following form of a possible Hopf algebra structure. 

\begin{conj} There exist modified multiplication ${\scriptstyle\star}$ and comultiplication
$\ddot\Delta$ on $\mathbb C[\UtriM{n}]$ of the form
\begin{equation}\label{eq:SignedPolynAlg}
    \phi{\scriptstyle\star}\psi=\sum_{\sigma,\tau}\mu(\sigma,\tau)\,\tau^*(\phi)\sigma^*(\psi)
\qquad
\mbox{and}
\qquad
\ddot\Delta(\phi)(a,b)=\sum_{\sigma,\tau}\eta(\sigma,\tau)\,\phi(\sigma(a)\tau(b))\,,
\end{equation}
such that $(\mathbb C[B_n],{\scriptstyle\star},\ddot\Delta)$ is isomorphic to $\QMatAlg{\mathsf a}{\mathbb C}{n}\,$.
Here summations are over algebraic involutions $\sigma:\UtriM{n}\rightarrow \UtriM{n}$ with 
$\sigma^*(\cofun{x_{i,j}})=\pm\cofun{x_{i,j}}$ and the weights $\mu$ and $\eta$ take values
in $\mathbb Z[\frac 12]\,$. 
\end{conj}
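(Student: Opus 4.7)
The plan is to realize the desired Hopf structure on $\mathbb C[\UtriM n]$ as a simultaneous two-sided twist of its classical Hopf algebra structure with respect to the $\mathscr G$-action, using discrete Fourier analysis to pass from cocycles on characters (which are what one naturally writes down) to the averaged form displayed in \eqref{eq:SignedPolynAlg}.

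First I would fix the grading. Since $\mathscr G\cong \mathbb F_2^{\binom{n+1}{2}}$ is an elementary abelian $2$-group acting by algebraic involutions $\tau_{s,t}$, the algebra $\mathbb C[\UtriM n]$ splits as $\bigoplus_{\chi\in\widehat{\mathscr G}}\mathbb C[\UtriM n]_\chi$, with each monomial $\cofun x^{\,f}$ homogeneous of character $\chi_f$ given by $\chi_f(\tau_{s,t})=(-1)^{\sum_{j\geq t}f_{s,j}+\sum_{i\leq s}f_{i,t}}$. A look at the anti-symmetric commutation in $\QMatAlg{\mathsf a}{\mathbb C}{n}$ shows that $R_{i,j}R_{s,t}=\beta(\chi_{(i,j)},\chi_{(s,t)})\,R_{s,t}R_{i,j}$ for a bimultiplicative $\beta:\widehat{\mathscr G}\times\widehat{\mathscr G}\to\{\pm 1\}$. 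This is precisely the data of a (skew) $2$-cocycle $\hat\mu$ on the group bialgebra $\mathbb C[\widehat{\mathscr G}]$, and hence pulls back to a $2$-cocycle on $\mathbb C[\UtriM n]$ via the $\widehat{\mathscr G}$-grading.

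Next I would define the twisted product directly by $\phi\star\psi=\hat\mu(\chi_1,\chi_2)\,\phi\psi$ on homogeneous components and verify, by a standard Doi/cocycle-twist argument, that it is associative precisely because $\hat\mu$ is a $2$-cocycle. An analogous right/left twist on the coproduct, built from a second cocycle $\hat\eta$, gives a new coalgebra structure; compatibility of $\hat\mu$ and $\hat\eta$ (a Yang--Baxter-style equation relating product and coproduct twists) will ensure $(\mathbb C[\UtriM n],\star,\ddot\Delta)$ is a bialgebra, and the commutativity of the original algebra together with grading invariance of the counit and antipode will then produce an antipode automatically. The map $R_{i,j}\mapsto\cofun{x_{i,j}}$ is an algebra isomorphism since both sides have the same character-homogeneous monomial basis with matching sign relations, and a short coproduct calculation identical to Lemma~\ref{lem:An-CoprX} identifies $\ddot\Delta$ with the coproduct of $\QMatAlg{\mathsf a}{\mathbb C}{n}$. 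To recast $\star$ and $\ddot\Delta$ in the averaged form \eqref{eq:SignedPolynAlg}, I would invert the Fourier transform: setting $\mu(\sigma,\tau)=|\mathscr G|^{-2}\sum_{\chi_1,\chi_2}\hat\mu(\chi_1,\chi_2)\chi_1(\tau)\chi_2(\sigma)$ and similarly for $\eta$, the expansion $\tau^*(\phi)=\sum_{\chi_1}\chi_1(\tau)\phi_{\chi_1}$ then produces exactly $\hat\mu(\chi_1,\chi_2)\phi_{\chi_1}\psi_{\chi_2}$ after summing over $\sigma,\tau$. The factor of $|\mathscr G|^{-2}$ accounts for the appearance of weights in $\mathbb Z[\tfrac12]$.

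The main obstacle will be the comultiplication. Unlike the product twist, which is a purely algebraic manipulation once the cocycle is chosen, the coproduct twist must be {\em compatible} with the group law on $\UtriM n$ in the sense that $\phi(\sigma(a)\tau(b))$ genuinely gives a bialgebra structure; this requires $\hat\eta$ to satisfy a dual cocycle condition together with a compatibility condition $\hat\mu(\chi_1\chi_3,\chi_2\chi_4)\hat\eta(\chi_3,\chi_4)=\hat\eta(\chi_1,\chi_2)\hat\mu(\chi_3,\chi_4)$ (or similar) that ties the two cocycles together. Constructing $\hat\eta$ so that these hold while still recovering the $\QMatAlg{\mathsf a}{\mathbb C}{n}$-coproduct under $R_{i,j}\mapsto\cofun{x_{i,j}}$, and verifying that the resulting $\mu,\eta$ indeed take values in $\mathbb Z[\tfrac12]$ rather than in some larger cyclotomic ring, is the delicate point. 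One concrete strategy is to take $\hat\eta$ to be the symmetric part of $\hat\mu$: since $\beta$ factors through a skew-symmetric bilinear form on $\mathbb F_2^{\binom{n+1}{2}}$, one can split it $\hat\mu=\hat\eta\cdot\hat\eta^{\,\mathrm{op}-1}$ with $\hat\eta$ bimultiplicative, which automatically satisfies the Drinfeld-twist compatibility and keeps the $\mathbb Z[\tfrac12]$-integrality intact after Fourier inversion over the $2$-group $\mathscr G$.
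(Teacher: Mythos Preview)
This statement is presented in the paper as a \emph{conjecture}, not a theorem; the authors offer no proof. They introduce it only as ``an alternative approach to finding an interpretation of $\QMatAlg{\mathsf a}{\mathbb C}{n}$'' and follow it with remarks on preservation of Bruhat subgroups by the involutions and a pointer to related work. There is therefore no paper proof to compare your proposal against.

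Your outline is a natural line of attack---a cocycle twist along the $\widehat{\mathscr G}$-grading followed by Fourier inversion on the elementary abelian $2$-group $\mathscr G$ to recast the twist as a $\mathscr G$-averaged operation---and the Fourier step does land in $\mathbb Z[\tfrac12]$ since $|\mathscr G|$ is a power of $2$. The obstruction you correctly isolate, namely the simultaneous compatibility of the product twist $\hat\mu$ with a coproduct twist $\hat\eta$ so that the result is a bialgebra matching $\QMatAlg{\mathsf a}{\mathbb C}{n}$, is precisely the point the paper leaves open. Note that on generators the coproduct \eqref{eq:QMatCoprod} coincides with the classical one, so one is tempted to take $\hat\eta$ trivial; but the algebra-homomorphism condition for $\ddot\Delta$ with respect to $\star$ on higher-degree monomials forces additional signs, and it is not clear that these can be packaged in the averaged form \eqref{eq:SignedPolynAlg}. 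Your suggestion of splitting $\hat\mu=\hat\eta\cdot\hat\eta^{\,\mathrm{op}-1}$ is reasonable but does not by itself guarantee that $\ddot\Delta$ is of the required shape on products of generators. Until that compatibility is established concretely, what you have is a strategy rather than a proof---which is consistent with the statement's status in the paper.
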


Again, the involutions are not required to be homomorphisms.
In this form, the definition of Hopf ideals as vanishing ideals on subgroups
then requires that these subgroups are 
preserved under the groups of involutions. This is indeed the case for the Bruhat subgroups. 
We mention that versions of the specific quantum matrix algebras $\QMatAlg{\mathsf a}{\Zgen}{n}$ are recovered also in Proposition~2 of \cite{Cl94} as images of a Frobenius map. 

Overall, however, the skew matrix algebras $\QMatAlg{\mathsf a}{\Zgen}{n}$ are scarcely studied in the literature despite their natural occurrences, lacking any semi-classical interpretations one might expect. 
Following \cite{AST91} the algebra $\QMatAlg{\mathsf a}{\Zgen}{n}$ is constructable from the skew coordinate algebra
generated by $\{x_i:i=1,\ldots, n\}$ subject to relations $x_ix_j=-x_jx_i$ if $i\neq j\,$. Particularly, the generators $x_i$
remain of infinite order and the algebra is fundamentally distinct from finite-dimensional exterior or Clifford algebras.
There, thus, appear to be no obvious identifications of the skew matrix algebras with function algebras on super or spin groups. 

\smallskip

Among the simply laced Lie types, $\LT{D}_n$ has several well understood 
relations to $\LT{A}$ type algebras.
Clearly, it contains an $\LT{A}_{n-2}$ algebra and we, conjecturally, expect there to be a
quotient of the respective algebras from the $\LT{A}_{2n-1}$ onto the $\LT{D}_n$\,. The derivations of explicit coproduct formulae
as those in (\ref{eq:DeltaEij}) and (\ref{eq:An-CoprX}), however, are considerably more challenging.

Kharchenko \cite{Karch} provides explicit coproduct formulae for the generic $\LT{D}_n$ algebra with coefficients over $\Zqq$ reminiscent of the ones in
\eqref{eq:DeltaEij} for type $\LT{A}$\,. Computations of powers to arrive
at the analogues of \eqref{eq:An-CoprX}, however, are substantially more 
complicated since two types of basis generators are involved. They are 
defined via recursions as the one in \eqref{eq:higher_roots} but in opposing directions,
and neither subset of generators spans a subcoalgebra.

\section{The Structure of \texorpdfstring{$\Zsubalgchar$}{Z} and Hopf Ideals for Type \texorpdfstring{$\mathsf{B}_2$}{B2}}
\label{sec:Ideals_B2=SO5}

The accessible computational tools for $\mathfrak{gl}_n$ allowed us to verify the claims in 
Conjectures~\ref{conj:Z=HopfSubBorel} and \ref{conj:CoordRings} computationally for all 
$\LT{A}_{n-1}$ algebras. As additional evidence for these conjectures, we prove them 
in this section also for the simplest non-simply laced  Lie type $\LT{B}_2=\LT{C}_2$\,.
The descriptions of isomorphisms and subspaces will be in very explicit terms, 
while computations are much more involved. The latter are assured to play a central 
role in extensions to other doubly-laced Lie types, as they illustrate
additional methods and principles for the construction of isomorphisms. 

Throughout this section, we assume that the order of $\zeta$ is $\kay>4$ or $\kay=3$, 
but impose no other restrictions on $\kay$. As pointed out in 
Remark~\ref{rem:B2C2commcentr} for $\LT{B}_2$\,, the subalgebra
$\Zsubalgchar^\geqzero_\bullet$ is commutative for {\em all} $\kay$, 
though central  in $\Uz$ only if $\kay$ is not a
multiple of 4. 

In Section~\ref{subsec:Coalg-ZB2} we describe the explicit Hopf algebra structure of $\Zsubalgchar^\geqzero_\bullet$, 
deferring the computation of coproducts for non-simple roots to Section~\ref{subsec:ProofCoprodB2}.
The fact that $\Zsubalgchar$ is a Hopf subalgebra may be indirectly inferred also from \cite{dck92} 
for odd  $\kay$, though no explicit formulae or computations are given there, leaving  open 
also the case of even roots of unity. 
The Hopf algebra structure of the Borel
coordinate ring $\mathbb C[\mathrm{SO}_5^\geqzero]$ is explicitly determined in Section~\ref{subsec:BorelCordSO5}.
A similarly explicit  isomorphism of $\mathbb C[\mathrm{SO}_5^\geqzero]$ with $\Zsubalgchar^\geqzero_\bullet\otimes\mathbb C$ is established in Section~\ref{subsec:IsomHA-B2}.

\subsection{Coalgebra Structure of \texorpdfstring{$\Zsubalgchar^\geqzero_\bullet$}{Z.≥0} for Type \texorpdfstring{$\mathsf{B}_2$\,}{B2}}\label{subsec:Coalg-ZB2}
This section provides the 
full list of 
coproduct formulae for the generators of $\Zsubalgchar^\geqzero_\bullet$ for Lie type  
$\LT{B}_2\,$, preceded by a  review of essential notation and conventions.

As in Section~\ref{subsec:rank2coxsys}, we denote by $\alpha_i$ the short root of $\LT{B}_2$ and by
$\alpha_j$ the long root. Thus, $s_j(\alpha_i)=\alpha_i+\alpha_j$ is the second short root and 
$s_i(\alpha_j)=2\alpha_i+\alpha_j$ the second long root. The default ground ring for the generic 
$\LT{B}_2$ algebra is 
$\Zgen=\Zqqn{3}=\mathbb Z\textstyle{\left[q^{\pm 1},[2]^{-1}\right]}$ as defined in \eqref{eq:defgenrings}. 

Recall the formulae for the standard generators with respect to $\longtwoword{ij}$ as defined in \eqref{eq:LusztigDictionary}, which follow readily from the automorphisms in \eqref{eq:defLserre} and \eqref{eq:defTserre},  as well as their $\wgrad$-gradings in $\mathbb Z^{\sroots}$. 
\begin{align}
     E_{(iji)}&=\ELu_{(ji)}=\Tinv^{-1}_j(E_i)=q^{-2}E_iE_j-E_jE_i 
           & \wgrad(E_{(iji)})&=\alpha_i+\alpha_j \label{eq:B2Egeniji}
           \\
     \rule{0mm}{7mm}
    E_{(ij)}&=\Tinv_i(E_j)=\frac{1}{[2]}(E_iE_{(iji)}-E_{(iji)}E_i)& \wgrad(E_{(ij)})&=2\alpha_i+\alpha_j \label{eq:B2Egenij}
\end{align}

From the coproducts for $E_i$ and $E_j$\,, as in \eqref{eq:corels_gen}, the following expressions for the coproducts of the other two generators are easily derived. Here $\Esing_i$ and 
$\Esing_j$ are singularized generators as defined in \eqref{eq:def:EgenSing}.
\begin{align}
    \Delta(E_{(iji)})&=E_{(iji)}\otimes K_iK_j-E_i\otimes K_i\Esing_j+1\otimes E_{(iji)}
    \label{eq:B2coprod-iji}
    \\
    \rule{0mm}{7mm}
    \Delta(E_{(ij)})&=E_{(ij)}\otimes K_i^2K_j+q\Esing_i^2\otimes K_i^2E_j-\Esing_i\otimes K_iE_{(iji)}+1\otimes E_{(ij)}
    \label{eq:B2coprod-ij}
\end{align}

Choose next a $\kay$-th primitive root of unity $\zeta$ with $\kay$ as above and $\ell$ the order of $\zeta^2$. We also reemploy the notation from \eqref{eq:gcdle} and \eqref{eq:gcdleopp} with
$\,(\gcdle,\gcdleopp)=(1,2)\,$ if $\ell$ is odd and $\,(\gcdle,\gcdleopp)=(2,1)\,$ if $\ell$ is even.

The terms occurring in the coproduct formulae for the primitive power generators $\Epw_w=E_w^{\ell_w}$ depend on the parity of $\ell$ as they are differently  
 constrained by the $\wgrad$-grading. More precisely, for $\alpha=\wordroot(w)$\,,
the observation in \eqref{eq:wgradX_rvscor} implies that $\,\ell^{-1}\wgrad(\Epw_w)$ is given by $\alpha$ if $\ell$ is odd and
the respective coroot $\breve\alpha$ if $\ell$ is even. 
Recall that the coroot system $\breve \roots_{\LT{B}_n}\,$ is isomorphic to the 
root system $\roots_{\LT{C}_n}\,$. For $n=2$ the latter is isomorphic to  
$\roots_{\LT{B}_2}\,$, yielding a combined root system isomorphism
$\mathscr f:\breve \roots_{\LT{B}_2}\rightarrow \roots_{\LT{B}_2}$.

The composite $\mathscr q(\alpha)=\mathscr{f}(\breve\alpha)$ then defines an involution
in $\roots_{\LT{B}_2}^+$ that exchanges long with short roots and which is given by 
$\mathscr q(\alpha_i)=\alpha_j$\,, 
$\mathscr q(\alpha_j)=\alpha_i$\,, $\mathscr q(\alpha_i+\alpha_j)=2\alpha_i+\alpha_j$\,,
and $\mathscr q(2\alpha_i+\alpha_j)=\alpha_i+\alpha_j\,$. Thus, if we define
$\wgradZBtwo(\Epw_w)=\alpha$ for odd $\ell$ and $\wgradZBtwo(\Epw_w)=\mathscr q(\alpha)$
if $\ell$ is even, we obtain a grading on $\Zsubalgchar^\geqzero_\bullet$ that is 
isomorphic to the induced $\wgrad$-grading, but, for either parity, conveniently valued 
in $\roots_{\LT{B}_2}^+\,$. As such, the coproduct needs to preserve the $\wgradZBtwo$-grading. 

For more convenient reference, 
we list in the table below the $\wgradZBtwo$-weights for the primitive power generators
for words less than $\longtwoword{ij}$ together with the special constants. 
\begin{equation}\label{eq:tab:uweightsB2}
    \begin{array}{ccccccc}
         \qquad&\quad \wgradZBtwo(\Epw_{i}) \qquad&\quad \wgradZBtwo(\Epw_{j})
        \qquad&\quad \wgradZBtwo(\Epw_{(ij)})\qquad&\quad \wgradZBtwo(\Epw_{(iji)}) 
        \qquad&\quad \gcdle\qquad&\quad \gcdleopp\qquad  
         \\
   \rule[-8pt]{0pt}{21pt} \ell \mbox{ odd}  & \alpha_i& \alpha_j
     & 2\alpha_i+\alpha_j& \alpha_i+\alpha_j
     & \quad 1 &\quad 2
   \\
     \ell \mbox{ even} & \alpha_j& \alpha_i
     & \alpha_i+\alpha_j& 2\alpha_i+\alpha_j
     & \quad 2 & \quad 1
    \end{array}
\end{equation}

We state next the main result of this section, assuming notation and conventions as above as well as the notation $\delzeta{j}$ from \eqref{eq:def-delzeta}.

\begin{prop}\label{prop:CoprodB2}
The coproducts of $\Epw_w$ for $\emptyword\neq w\leqRB \longtwoword{ij}$ are as follows:
\begin{align}\label{eq:CoprodB2:simple}
    \Delta(\Epw_i)&= \Epw_i\otimes \Kpw_i+1\otimes \Epw_i\,,
    &\Delta(\Epw_j)&= \Epw_j\otimes \Kpw_j+1\otimes \Epw_j\,,
\end{align}
\begin{align}
    \phantom{\Delta(\Epw_{(iji)})}&\begin{aligned}
        \mathllap{\Delta(\Epw_{(iji)})}&=
1\otimes \Epw_{iji}-\delzeta{j}^{\gcdle} \Epw_i\otimes \Kpw_i \Epw_j^{\gcdle}+\Epw_{(iji)}\otimes \Kpw_i \Kpw_j^{\gcdle}
    \\&
    \qquad-\delta_{\gcdle,2}\cdot 2\cdot [2]^\ell  (-\zeta)^\ellj \Epw_{(ij)}\otimes \Kpw_i \Kpw_j\Epw_j\,,
    \label{eq:CoprodB2:iji}
    \end{aligned}
    \\
    \rule[-8pt]{0pt}{33pt}
    &\begin{aligned}
        \mathllap{\Delta(\Epw_{(ij)})}&=
    1\otimes \Epw_{(ij)}+\zeta^{\binom{2\ellj}{ 2}}\delzeta{i}^\gcdleopp \Epw_i^{\gcdleopp}\otimes \Kpw_i^{\gcdleopp}X_j+X_{(ij)}\otimes \Kpw_i^{\gcdleopp}L_j
    \\&\qquad -
    \delta_{\gcdleopp,2}\cdot 2\cdot  \zeta^{\ell}[2]^{-\ell}\delzeta{i}\cdot  \Epw_i\otimes \Kpw_i\Epw_{(iji)}\,.
    \label{eq:CoprodB2:ij} 
    \end{aligned}
\end{align}
\end{prop}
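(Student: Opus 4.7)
The plan is to derive each of the four coproduct formulae by taking the appropriate primitive power of the generic coproducts \eqref{eq:corels_pwrs_gen}, \eqref{eq:B2coprod-iji}, and \eqref{eq:B2coprod-ij}, then specializing $q\mapsto\zeta$ and invoking the vanishing criterion from Corollary~\ref{cor:zeros-mulitnom} to eliminate most terms. The simple root cases in \eqref{eq:CoprodB2:simple} are immediate: take $m=\ell_i$ (resp.\ $m=\ell_j$) in \eqref{eq:corels_pwrs_gen}, observe that each multinomial factor $\qbinsmall{\ell_i}{s}_i$ vanishes at $\zeta$ unless $s\in\{0,\ell_i\}$, and read off the two surviving extremal terms, exactly as in \eqref{eq:simplecoprod}.

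For \eqref{eq:CoprodB2:iji}, I would write the generic coproduct \eqref{eq:B2coprod-iji} as $\Delta(E_{(iji)})=A_1+A_2+A_3$ with $A_1=E_{(iji)}\otimes K_iK_j$, $A_2=-E_i\otimes K_i\Esing_j$, and $A_3=1\otimes E_{(iji)}$. Using the commutation relations in $\Uq\otimes\Uq$ (which follow from \eqref{eq:Kcomm} and Lemma~\ref{lm:scommgens}), one verifies that pairs $A_rA_s$ skew-commute with explicit $q$-powers, so that a multivariate $q$-multinomial version of \eqref{eq:q-binom-form} applies to $\Delta(E_{(iji)})^{\ell_{iji}}$. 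Expanding this sum, specializing $q\mapsto\zeta$, and applying Corollary~\ref{cor:zeros-mulitnom} leaves only multi-indices whose entries are each either $0$ or $\ell_{iji}$; these correspond to the first three terms of \eqref{eq:CoprodB2:iji}. The fourth (Kronecker $\delta_{\gcdle,2}$) term arises only when $\ell$ is even, via mixed contributions where powers of $A_1$ and $A_2$ combine to produce $\Epw_{(ij)}\otimes\Kpw_i\Kpw_j\Epw_j$ through the commutator identities of Section~\ref{subsec:Ecommrels}; here I would use $\wgradZBtwo$-grading from \eqref{eq:tab:uweightsB2} to confirm that this is the only allowed mixed term.

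The identity \eqref{eq:CoprodB2:ij} follows analogously starting from \eqref{eq:B2coprod-ij}, now with four summands $B_1,B_2,B_3,B_4$. The coproduct power $\Delta(E_{(ij)})^{\ell_j}$ is a sum over quadruples of exponents summing to $\ell_j$, each weighted by a $q$-multinomial coefficient and a product of $q$-commutation scalars between the $B_r$. Grading together with Corollary~\ref{cor:zeros-mulitnom} again restricts to very few surviving tuples: the three ``pure power'' contributions $(\ell_j,0,0,0)$, $(0,\ell_j,0,0)$, $(0,0,0,\ell_j)$ yield the first three terms of \eqref{eq:CoprodB2:ij}, while in the $\gcdleopp=2$ (odd $\ell$) case a single balanced tuple survives and produces the $\Epw_i\otimes\Kpw_i\Epw_{(iji)}$ correction term. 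The explicit scalar coefficients are obtained by carefully tracking \eqref{eq:factelllim}, Lemma~\ref{lm:factratformZ-J}, and the ratios \eqref{eq:factratformZ-I}.

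The main obstacle is the combinatorial bookkeeping of the four-term expansion for $\Delta(E_{(ij)})^{\ell_j}$: the cross-commutation scalars between $B_2=q\Esing_i^2\otimes K_i^2E_j$ and the other summands involve both the quadratic nature of $B_2$ and the conjugation \eqref{eq:Kcomm}, and the surviving mixed term must be rewritten using the rank-2 resummation identities of Lemma~\ref{lem:EEEresum} and Propositions~\ref{prop:XJ-form}, \ref{prop:XI-form} to be recognized as a product of primitive power generators rather than as an unsimplified polynomial in the original $E$'s. Consequently, the bulk of the calculation is deferred to Section~\ref{subsec:ProofCoprodB2} and Appendix~\ref{app:powerformula}, where the $q$-multinomial expansion, the root-of-unity vanishing analysis, and the identification of the remainder with $\Epw$-monomials can be carried out in full detail.
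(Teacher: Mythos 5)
Your overall strategy (take the $\ell$-th power of the generic coproducts \eqref{eq:B2coprod-iji}, \eqref{eq:B2coprod-ij}, specialize $q\mapsto\zeta$, and kill terms with Corollary~\ref{cor:zeros-mulitnom}) is the right one and matches the paper, and the simple-root cases are fine. But the central mechanism you propose for the non-simple roots has a genuine gap: the summands of $\Delta(E_{(iji)})$ and of $\Delta(E_{(ij)})$ do \emph{not} pairwise skew-commute, so no multivariate version of \eqref{eq:q-binom-form} applies to their powers. Concretely, for $\Delta(E_{(iji)})=A_1+A_2+A_3$ with $A_1=E_{(iji)}\otimes K_iK_j$ and $A_2=-E_i\otimes K_i\Esing_j$, the first tensor factors $E_{(iji)}$ and $E_i$ satisfy $E_iE_{(iji)}-E_{(iji)}E_i=[2]E_{(ij)}$ by \eqref{eq:B2Egenij}, so the ``commutator defect'' is a genuinely new element proportional to $E_{(ij)}\otimes K_i^2K_jE_j$; it is precisely this defect, raised to the power $\ell/2$, that produces the $\delta_{\gcdle,2}$ term in \eqref{eq:CoprodB2:iji}. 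Your write-up is internally inconsistent on this point: if all pairs skew-commuted as you assert, the multinomial expansion would contain no mixed $E_{(ij)}$-contribution at all and the fourth term of \eqref{eq:CoprodB2:iji} could never arise. The same failure occurs for \eqref{eq:CoprodB2:ij}: with $A'=q\Esing_i^2\otimes K_i^2E_j$, $C'=-\Esing_i\otimes K_iE_{(iji)}$, $D'=1\otimes E_{(ij)}$, $G=E_{(ij)}\otimes K_i^2K_j$, only $G$ cleanly $q_j^2$-commutes with the rest; one has $D'A'-q_j^{-2}A'D'=B'$ with $B'$ proportional to $(C')^2$ (this is the ``weakly orthogonal'' $\LT{B}_2$ phenomenon: $\symbrack{2\alpha_i+\alpha_j}{\alpha_j}=0$ yet $E_{(ij)}$ and $E_j$ do not commute), so again an ordinary $q$-multinomial over quadruples of exponents does not exist.

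What is missing is exactly the tool the paper builds for this purpose: the generalized power formula of Proposition~\ref{prop:PowerFormula} (proved in Appendix~\ref{app:powerformula}), which expands $(A+C+D)^n$ when $DA=q^{-2}AD+B$ with the defect $B$ only skew-commuting against the other letters, the price being the recursively defined coefficients $c_{k,t}$ of \eqref{eq:cRecursion}. With that formula one sets $C=0$ for the $(iji)$ case and applies it (with $q$ replaced by $q_j$, after stripping off $G$ by the ordinary binomial theorem) for the $(ij)$ case; the root-of-unity survival analysis then runs as you describe. Two smaller corrections: the surviving ``mixed'' contributions for \eqref{eq:CoprodB2:ij} are not a single balanced tuple but the whole family $(m,s,t,u)=(0,t,t,\ell_j-2t)$, whose coefficients must be resummed via Lemma~\ref{lem:SumcIdentity} to yield the closed-form coefficient; and no appeal to Lemma~\ref{lem:EEEresum} or Propositions~\ref{prop:XJ-form}, \ref{prop:XI-form} is needed, since the defect terms $B$, $B'$ already carry $E_{(ij)}$, respectively $E_{(iji)}^2$, explicitly.
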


As usual, $\delta_{a,b}$ is 1 if $a=b$ and 0 is otherwise. In cases when  $\delta_{\gcdle,2}$
or $\delta_{\gcdleopp,2}$ are zero, the respective terms are indeed not consistent with
the $\wgradZBtwo$-grading. For example, if $\ell$ is even and $\gcdleopp=1$ the 
$\wgradZBtwo$-grading of $\Epw_i\otimes\Epw_{(iji)}$ is $(\alpha_j)+(2\alpha_i+\alpha_j)=2(\alpha_i+\alpha_j)$\,, while $\wgradZBtwo(\Epw_{(ij)})=\alpha_i+\alpha_j\,$. 

The expressions in \eqref{eq:CoprodB2:simple} are restatements of \eqref{eq:simplecoprod}.
The explicit proofs of \eqref{eq:CoprodB2:ij} and \eqref{eq:CoprodB2:iji}
are postponed to the following section. 
The formulae in Proposition~\ref{prop:CoprodB2} also involve only elements in
$\Zsubalgchar^\geqzero_\bullet$, implying the latter is a sub-bialgebra. 

Explicit expressions for the antipode can be derived from the axioms, applied as 
$0=\epsilon(\Epw_{(iji)})=\mathrm{m}\circ(\id\otimes S)\circ \Delta(\Epw_{(iji)})$ and $0=\epsilon(\Epw_{(ij)})=\mathrm{m}\circ(S\otimes \id)\circ \Delta(\Epw_{(ij)})$\,.
\begin{align}
S(\Epw_{(ij)})L_i^{\gcdleopp} L_j^{}
&=
-\Epw_{(ij)} -(-1)^{\gcdleopp}\zeta^{\binom{2\ellj}{2}}\delzeta{i}^{\gcdleopp} \cdot \Epw_i^{\gcdleopp}\Epw_j^{}
- \delta_{\gcdleopp,2}\cdot 2\cdot  \zeta^{\ell}[2]^{-\ell}\delzeta{i}
\cdot \Epw_i\Epw_{(iji)} \,\label{eq:AnitpB2ij} 
\\
\rule{0mm}{6.5mm}
S(\Epw_{(iji)})L_iL_j^{\gcdle}&=-\Epw_{(iji)}+(-1)^\gcdle \delzeta{j}^\gcdle  \Epw_i \Epw_j^{\gcdle} -\delta_{\gcdle,2} 2\cdot [2]^\ell  (-\zeta)^\ellj  \Epw_{(ij)}\Epw_j \,\label{eq:AnitpB2iji}
\end{align}

Proposition~\ref{prop:CoprodB2} and the above formulae thus provide further evidence for
the first parts of Conjectures~\ref{conj:Z=HopfSubBorel} and \ref{conj:CoordRings}.

\begin{cor}\label{cor:ZB2=Hopfsubalg}
    For Lie type $\LT{B}_2$ and any $\kay$ as above,  $\Zsubalgchar^\geqzero_\bullet$ is a Hopf subalgebra over $\Zzn{3}\,$. 
\end{cor}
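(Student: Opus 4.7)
The plan is to deduce the corollary directly from Proposition~\ref{prop:CoprodB2} together with the elementary observation that the $\Kpw_i^{\pm 1}$ are group-like. First I would recall that, by the definition in \eqref{eq:defZ-othtyp} and the PBW description in Proposition~\ref{prop:ZPBWbases}, the algebra $\Zsubalgchar^\geqzero_\bullet$ for type $\LT{B}_2$ is the unital subalgebra of $\Uz^{\geqzero}$ over $\Zzn{3}$ generated by the finite set $\{\Kpw_i^{\pm 1},\Kpw_j^{\pm 1}\}\cup\{\Epw_w:\emptyword\neq w\leqRB \longtwoword{ij}\}=\{\Kpw_i^{\pm 1},\Kpw_j^{\pm 1},\Epw_i,\Epw_j,\Epw_{(iji)},\Epw_{(ij)}\}$. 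Since $\Delta$ is an algebra homomorphism, proving $\Delta(\Zsubalgchar^\geqzero_\bullet)\subseteq \Zsubalgchar^\geqzero_\bullet\otimes \Zsubalgchar^\geqzero_\bullet$ reduces to checking this inclusion on the generating set.

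For the Cartan part, $K_i^{\pm 1}$ is group-like in $\Uz$, so $\Kpw_i^{\pm 1}=K_i^{\pm\ell_i}$ is group-like, giving $\Delta(\Kpw_i^{\pm 1})=\Kpw_i^{\pm 1}\otimes\Kpw_i^{\pm 1}\in \Zsubalgchar^0_\bullet\otimes\Zsubalgchar^0_\bullet$. For the four primitive-power generators, direct inspection of \eqref{eq:CoprodB2:simple}, \eqref{eq:CoprodB2:iji}, and \eqref{eq:CoprodB2:ij} shows that every tensor factor on the right-hand side is a monomial in the $\Kpw_i^{\pm 1}$ and the $\Epw_v$ with $v\leqRB \longtwoword{ij}$, and that the scalar coefficients involve $\zeta$, $(\zeta-\zeta^{-1})^\ell$, and $[2]^{\pm\ell}$. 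All of these lie in $\Zzn{3}=\mathbb Z[\zeta,[2]^{-1}]$ by \eqref{eq:defzetarings}, so $\Delta$ indeed sends each generator into $\Zsubalgchar^\geqzero_\bullet\otimes\Zsubalgchar^\geqzero_\bullet$; that is, $\Zsubalgchar^\geqzero_\bullet$ is a sub-bialgebra.

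It remains to check stability under the antipode $S$. The formulae \eqref{eq:AnitpB2ij} and \eqref{eq:AnitpB2iji}, obtained by applying the antipode axiom $m\circ(\id\otimes S)\circ\Delta=\varepsilon$ to \eqref{eq:CoprodB2:iji} and \eqref{eq:CoprodB2:ij}, express $S(\Epw_{(ij)})\Kpw_i^{\gcdleopp}\Kpw_j$ and $S(\Epw_{(iji)})\Kpw_i\Kpw_j^{\gcdle}$ as explicit $\Zzn{3}$-combinations of elements of $\Zsubalgchar^\geqzero_\bullet$. Since the factors $\Kpw_i^{\gcdleopp}\Kpw_j$ and $\Kpw_i\Kpw_j^{\gcdle}$ are units in $\Zsubalgchar^0_\bullet$, these identities show $S(\Epw_{(ij)}),S(\Epw_{(iji)})\in \Zsubalgchar^\geqzero_\bullet$. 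For the simple-root generators, the same axiom applied to \eqref{eq:CoprodB2:simple} yields $S(\Epw_i)=-\Epw_i\Kpw_i^{-1}$ and $S(\Epw_j)=-\Epw_j\Kpw_j^{-1}$, both clearly in $\Zsubalgchar^\geqzero_\bullet$. Together with $S(\Kpw_i^{\pm 1})=\Kpw_i^{\mp 1}$ and the fact that $S$ is an algebra anti-homomorphism, this shows $S(\Zsubalgchar^\geqzero_\bullet)\subseteq \Zsubalgchar^\geqzero_\bullet$, completing the verification of the Hopf subalgebra property.

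I do not anticipate any serious obstacle in this step; the entire substance of the corollary is carried by Proposition~\ref{prop:CoprodB2}, and the present claim is essentially a packaging of those coproduct formulae together with the almost trivial antipode computation. The only point that deserves a brief comment is the role of $\Zzn{3}$ versus $\Zz$: the appearance of $[2]^{-\ell}$ in \eqref{eq:CoprodB2:ij} (and consequently in \eqref{eq:AnitpB2ij}) is precisely the reason the statement must be phrased over $\Zzn{3}$, since $[2]^{-1}$ need not lie in $\Zz$ when $\gcd(\ell,2)>1$.
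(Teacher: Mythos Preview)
Your proof is correct and matches the paper's approach exactly: the corollary is stated as an immediate consequence of Proposition~\ref{prop:CoprodB2} (for the sub-bialgebra property) together with the explicit antipode formulae \eqref{eq:AnitpB2ij} and \eqref{eq:AnitpB2iji} (for stability under $S$).

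One minor inaccuracy in your closing remark: the coefficient $[2]^{-\ell}$ in \eqref{eq:CoprodB2:ij} carries the factor $\delta_{\gcdleopp,2}$, which is nonzero only when $\ell$ is odd, and in that case $[2]$ is already a unit in $\Zz$. The reason the statement is over $\Zzn{3}$ is simply that this is the minimal ground ring for the ambient Hopf algebra $\Uz^\geqzero$ in type $\LT{B}_2$ (the generators $E_{(ij)}$, $E_{(iji)}$ themselves require $[2]^{-1}$, see \eqref{eq:B2Egenij}), not any particular coproduct coefficient.
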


We remark that the Hopf algebra structure computed above provides an alternative 
proof for the statement of Propositions~\ref{prop:XJ-form} and \ref{prop:XI-form} for $\edgenum=2$\,.
Specifically, recall from \eqref{eq:SXiMhoRel} that the antipode is related 
to the $\mho$-involution via $\Kscale{(\htsign\cdot\qitgen)^{-1}}{\idsymm}\circ S\,=\,\Kinvaut\,$. As $\Kscale{(\htsign\cdot\qitgen)^{-1}}{\idsymm}(\Epw_{(iji)})=(-1)^{\ell+1} \Epw_{(iji)}L_iL_j^{\gcdle}$ and $\Kscale{(\htsign\cdot\qitgen)^{-1}}{\idsymm}(\Epw_{(ij)})=(-1)^{\ell}\Epw_{(ij)}L_i^\gcdleopp L_j$\,, applying $\Kscale{(\htsign\cdot\qitgen)^{-1}}{\idsymm}$
to \eqref{eq:AnitpB2ij} and \eqref{eq:AnitpB2iji} yields the relations below.
\begin{align}
(-1)^{\ell}\Kinvaut(\Epw_{(ij)})&=-\Epw_{(ij)}-(-1)^\gcdleopp \zeta^{\binom{2\ellj}{ 2}}\delzeta{i}^\gcdleopp \Epw_i^\gcdleopp \Epw_j -
    \delta_{\gcdleopp,2}\cdot 2\cdot  \zeta^{\ell}[2]^{-\ell}\delzeta{i}\cdot \Epw_{i}\Epw_{(iji)} \label{eq:mhofS-ij}
\\
\rule{0mm}{6.5mm}
(-1)^\ell\Kinvaut(\Epw_{(iji)})&=
 \Epw_{(iji)} -(-1)^{\gcdle}\delzeta{j}^{\gcdle} \Epw_i\Epw_j^{\gcdle}+\delta_{\gcdle,2}\cdot 2\cdot [2]^\ell  (-\zeta)^\ellj \Epw_{(ij)}\Epw_j \,\label{eq:mhofS-iji}
\end{align}

Specializing relation \eqref{eq:Xr2=mho} to $s=2$ and $s=3$ and writing the explicit
words as in \eqref{eq:def:abwords} yields $\mho(\Epw_{(ij)})=\Epw_{b_{m-1}}=\Epw_{(jij)}$
and $\mho(\Epw_{(iji)})=\Epw_{b_{2}}=\Epw_{(ji)}\,$. Comparison of
coefficients then yields \eqref{eq:XI-form} and \eqref{eq:XJ-form} for 
$\edgenum=2\,$. 

See also Appendix \ref{sec:integralcoalg} for an integral form of the coalgebra structure in Proposition \ref{prop:CoprodB2}.

\subsection{Proof of Proposition~\ref{prop:CoprodB2} for Non-simple Roots}\label{subsec:ProofCoprodB2}
The proof of the formulae \eqref{eq:CoprodB2:ij} and \eqref{eq:CoprodB2:iji} relies on a generalized quantum multinomial formula. It involves a special type of quantum number, which we introduce first. 
For $k,t\in\bbn_0$\,, define $c_{k,t}\in\Zqq$ recursively in $t$ by  
    \begin{equation}\label{eq:cRecursion}
         c_{k,t}=\sum_{i=0}^k c_{i,t-1} q^{-(2t-2+i)}[2t-1+i]\;,
    \end{equation}
    with initial conditions $c_{k,0}=1$. For $k,t>0$ one readily derives the relations
    \begin{equation}\label{eq:ckt-special}
        c_{k,t}=c_{k-1,t}+c_{k,t-1}q^{-(2t-2+k)}[2t+k-1]
        \qquad\mbox{and}\qquad
        c_{0,t}=q^{-t(t-1)}[2t-1]!\mkern -1.6mu !\;,
    \end{equation}
    denoting as usual the quantum double factorial $[a]!\mkern -1.6mu !=[a][a-2]\cdots [a-2\floor{(a-1)/2}]$. The following identity is a generalization of \cite[Lemma~1.6]{Lu90a}, where it applies only to the simply laced situation.  

\begin{prop}\label{prop:PowerFormula}
Suppose $A,B,C,D$ satisfy
    \begin{align*}
        DA=q^{-2}AD+B, && CA=q^{-2}AC, && BA=q^{-4}AB, && DC=q^{-2}CD, && DB=q^{-4}BD\,  && CB=BC\,.
    \end{align*}
    Then $(A+C+D)^n$ is equal to
    \begin{align*}
        \sum_{s=0}^n\sum_{t=0}^{\min(s,n-s)}\sum_{k=0}^{n-s-t}q^{-(s+t+k)(n-s-t-k)-(s-t)(2t+k)}\qbin{n}{s+t+k}{}\qbin{s+t+k}{s-t}{}c_{k,t}\,A^{s-t}B^tC^kD^{n-s-t-k}\,.
    \end{align*} 
\end{prop}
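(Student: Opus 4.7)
The proof proceeds by induction on $n$, following the strategy of Lusztig's proof of \cite[Lemma~1.6]{Lu90a} but with additional bookkeeping to incorporate the generator $C$. The base case $n=0$ is immediate since both sides equal $1$. Denote the right-hand side of the proposition by $R_n$; the inductive step amounts to verifying the recursion $R_n = R_{n-1}(A+C+D) = R_{n-1}A + R_{n-1}C + R_{n-1}D$, after re-expressing each of the three products on the right in terms of the normal-form monomials $\{A^{s-t}B^tC^kD^{n-s-t-k}\}$.

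The auxiliary commutation identities needed, each verified by induction on the relevant exponent, are
\begin{align*}
D^r A &= q^{-2r}AD^r + q^{-3(r-1)}[r]\,BD^{r-1}, & C^k A &= q^{-2k}AC^k, & B^t A &= q^{-4t}AB^t, \\
D^r C &= q^{-2r}CD^r, & D^r B &= q^{-4r}BD^r, & C^k B &= BC^k.
\end{align*}
Only the first requires work: setting $D^{r+1}A = D(D^rA)$ and applying $DA = q^{-2}AD + B$ together with $DB = q^{-4}BD$ yields a one-term recursion $\gamma_{r+1} = q^{-2r} + q^{-4}\gamma_r$ with $\gamma_1 = 1$, whose closed-form solution is $\gamma_r = q^{-3(r-1)}[r]$.

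Applying these identities, right-multiplication of a normal-form monomial $A^{s-t}B^tC^kD^r$ (with $r=n-1-s-t-k$) by $A$, and pushing the new $A$ past all intervening generators, yields the two-term expansion
\[
A^{s-t}B^tC^kD^r\cdot A \;=\; q^{-2r-2k-4t}\,A^{s-t+1}B^tC^kD^r \;+\; q^{-3(r-1)}[r]\,A^{s-t}B^{t+1}C^kD^{r-1},
\]
while right-multiplication by $C$ gives $q^{-2r}A^{s-t}B^tC^{k+1}D^r$ and right-multiplication by $D$ gives $A^{s-t}B^tC^kD^{r+1}$. Reindexing, these supply four contributions to the coefficient of a given $A^{s-t}B^tC^kD^{n-s-t-k}$ in $R_n$, coming respectively from the triples $(s-1,t,k)$, $(s,t-1,k)$, $(s,t,k-1)$, and $(s,t,k)$ in $R_{n-1}$.

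The main obstacle is verifying that these four contributions sum to the coefficient of $A^{s-t}B^tC^kD^{n-s-t-k}$ in $R_n$. After factoring out the common $q$-power and applying the standard $q$-binomial recursion \eqref{eq:qbin-recurs} to simplify the two quantum binomials $\qbin{n}{s+t+k}{}$ and $\qbin{s+t+k}{s-t}{}$, the required identity reduces to a statement purely about the coefficients $c_{k,t}$, which turns out to be exactly their defining recursion \eqref{eq:cRecursion} supplemented by the auxiliary identity \eqref{eq:ckt-special}. The only delicate part of the argument is tracking the $q$-exponents through the reindexings, most notably the factor $q^{-2r-2k-4t}$ arising from moving $A$ past $B^tC^kD^r$; once those are collected correctly, the $c_{k,t}$ recursion closes the induction.
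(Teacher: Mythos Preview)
Your approach is exactly the paper's: induct on $n$, right-multiply by $A+C+D$, use the commutation identities (your formula $D^rA = q^{-2r}AD^r + q^{-3(r-1)}[r]BD^{r-1}$ is the paper's Lemma~\ref{lem:uvComm}), and verify that the resulting four-term recursion on the coefficients $a^n_{s,t,k}$ follows from the $q$-binomial recursion together with \eqref{eq:ckt-special}. One slip in your reindexing: the $B$-contribution to the $(s,t,k)$-term comes from the triple $(s-1,t-1,k)$, not $(s,t-1,k)$, since raising the $B$-exponent by one while keeping the $A$-exponent $s-t$ fixed also increments $s$ (cf.\ the paper's Lemma~\ref{lm:rec-a-stk}).
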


The technical proof of this proposition is given in Appendix~\ref{app:powerformula}. 
For the following computations at generic $q$ we consider the quantum algebra $\Uq^\geqzero$ over
the ground ring $\Zqqn{3}$ for Lie type $\mathsf{B}_2\,$. Additional commutation relations follow, for example, from Lemma~\ref{lm:scommgens} with $E_{(ijij)}=E_j\,$:
\begin{equation}\label{eq:EB2-qcomms}
    E_{(iji)}E_{(ij)}=q^2E_{(ij)}E_{(iji)}\,,\quad
    E_{(ij)}E_i=q^2E_iE_{(ij)},
    \quad\mbox{and}\quad 
    E_jE_{(iji)}=q^2E_{(iji)}E_j\,.
\end{equation} 
We first compute the coproduct of the generator for the non-simple short root.

\begin{proof}[{Proof of Equation \eqref{eq:CoprodB2:iji}}] We begin with the computation of $\Delta(E_{(iji)})^n$ for generic $q\,$. Proposition~\ref{prop:PowerFormula} is applied by setting $C=0$ as well as 
\begin{equation}
    A=E_{(iji)}\otimes K_iK_j  
     \;\mbox{,}\quad
    D=D_1+D_2
    \quad\mbox{with}\quad 
    D_1=(q^{-2}-q^2)E_i\otimes K_iE_j
     \quad\mbox{and}\quad
    D_2=1\otimes E_{(iji)}\,, 
\end{equation}
so that $\Delta(\Esing_{(iji)})=A+D\,$. One easily checks that $D_2D_1=q^2D_1D_2$\,. We obtain identities 
\begin{equation}\label{eq:Dm-qbin}
    D^m=\sum_{u=0}^m q^{u(m-u)}\qbin{m}{u}{} D_1^uD_2^{m-u}\qquad\mbox{and}\qquad
    B=q^{-2}[2](q^{-2}-q^2)E_{(ij)}\otimes K_i^2K_jE_j\,,
\end{equation}
where the former now follows from \eqref{eq:q-binom-form} and the latter from 
$B=DA-q^{-2}AD$, as stipulated in Proposition~\ref{prop:PowerFormula}. The other required
commutation relations among $A$, $B$, and $D$ are derived from \eqref{eq:B2Egeniji} and 
\eqref{eq:EB2-qcomms}. Applying 
the proposition to the case $C=0$ forces $k=0$ in the summation, which yields, together
with \eqref{eq:Dm-qbin} and \eqref{eq:ckt-special}, the following expression.
\begin{equation}\label{eq:(A+D)PowN}
    \begin{aligned}
    \Delta(E_{(iji)}^n)&=(A+D)^n=\sum_{s=0}^n\sum_{t=0}^{\min(s,n-s)}q^{-(s+t)(n-s-t)-(s-t)(2t)}\qbin{n}{s+t}{}\qbin{s+t}{s-t}{}c_{0,t}\,A^{s-t}B^tD^{n-s-t}  
    \\
    \rule[-8mm]{0mm}{19mm}&=\sum_{s=0}^n\sum_{t=0}^{\min(s,n-s)}\sum_{u=0}^{n-s-t} q^{\mathfrak e(n,s,t,u)}[2t-1]!\mkern -1.6mu !\qbin{n}{s+t}{}\qbin{s+t}{s-t}{} \qbin{n-s-t}{u}{}
    A^{s-t}B^tD_{1}^uD_{2}^{n-s-t-u} \;,
    \\
    \mbox{where}&\quad \mathfrak e(n,s,t,u)=-ns-nt+s^2+2t^2+t+un-us-ut-u^2\,.
\end{aligned} 
\end{equation}\vspace{-3mm}

Next, we specialize $q$ to a primitive $\kay$-th root of unity $\zeta\,$ with $\kay>4$ or $\kay=3\,$. We further set $n=\ell$ with $\ell$ the order of $\zeta^2\,$. The product of the three binomial coefficients in \eqref{eq:(A+D)PowN} is readily identified with a multinomial coefficient as in 
\eqref{eq:def-qmultinom} for $r=4$, $a_1=\ell-s-t-u$, $a_2=s-t$, $a_3=2t$, and $a_4=u$. Corollary~\ref{cor:zeros-mulitnom} now implies that the only contributing non-zero terms are given by the following combinations of indices.
\[
(s,t,u)\in\bigl\{
(0,0,0), (0,0,\ell), (\ell,0,0),(\ell/2,\ell/2,0)\bigr
\}\,.
\]
Of course, the term $(\ell/2,\ell/2,0)$ only contributes when $\ell$ is even, meaning $\gcdle=2\,$. Summing over these four index-tuples gives 
\begin{align*}
    \Delta(X_{(iji)})=&~D_{2}^{\ell}+D_{1}^\ell +A^{\ell}+\delta_{\gcdle,2}\cdot \zeta_j^{-\binom{\ellj }{ 2}}[\ell-1]!\mkern -1.6mu !\cdot B^{\ell/2}
    \\
    \rule{0mm}{7mm}
    =&~
    1\otimes E_{(iji)}^\ell+(\zeta^{-2}-\zeta^2)^\ell \zeta_j^{\binom{\ell}{ 2}} \cdot E_i^\ell\otimes K_i^\ell E_j^{\ell}+E_{(iji)}^\ell\otimes K_i^\ell K_j^{\ell}
    \\&-\delta_{\gcdle,2}\cdot ([2](\zeta^{-2}-\zeta^{2}))^{\ell/2}\zeta_j^{-\binom{\ellj}{ 2}}\qfacrelA \ellj 2 \cdot E_{(ij)}^\ellj\otimes K_i^\ell K_j^{\ell_j}E_j^\ellj
    \\
    \rule{0mm}{7mm}
    =&~
    1\otimes X_{(iji)}-\delzeta{j}^{\gcdle} \cdot X_i\otimes L_i X_j^{\gcdle}+X_{(iji)}\otimes L_i L_j^{\gcdle}
    \\&-\delta_{\gcdle,2}\cdot 2\cdot (-\zeta)^\ellj [2]^\ell\cdot  X_{(ij)}\otimes L_i L_jX_j\;.
\end{align*}
The last step uses Lemma~\ref{lm:factratformZ-J} to simplify coefficients, yielding 
\eqref{eq:CoprodB2:iji} as the final
expression. 
\end{proof}

The proof of the coproduct formula in the case of the non-simple long root is more involved. 
The required commutation relations are more
naturally expressed in terms of $q_j=q^2$ rather than $q_i=q\,$. Denoting by  $\mathsf{sq}:\Zqq\rightarrow\Zqq$ the homomorphism that maps $q$ to $q^2$ (that is, $q_i$ to $q_j$) we  define 
$c'_{k,t}=\mathsf{sq}(c_{k,t\,})$. 
 
\begin{proof}[{Proof of Equation \eqref{eq:CoprodB2:ij}}] As before, we start with the computation of
$\Delta(\Esing_{(ij)}^n)$ for generic $q\,$. The main tool for this proof is the identity in Proposition~\ref{prop:PowerFormula} with $q$ replaced by $q_j$\,, obtained by applying
$\mathsf{sq}$ to all coefficients, using $\mathsf{sq}\left(\qbinsmall{a}{b}\right)=\qbinsmall{a}{b}_j\,$.
We begin by defining summands 
\begin{align*}
    A'&=q(q-q^{-1})^2E_i^2\otimes K_i^2E_j\,, 
    &
    C'&=-(q-q^{-1})E_i\otimes K_iE_{(iji)}\,,
    \\
    \rule{0mm}{5.6mm}
    D'&=1\otimes E_{(ij)}\,,
    &
    G&=E_{(ij)}\otimes K_i^2K_j\,,
    \\
    \rule[-4mm]{0mm}{10mm}&\mbox{so that}&  \Delta(E_{(ij)})&=A'+C'+D'+G  &&\mbox{by \eqref{eq:B2coprod-ij}.}&&\nonumber
\end{align*}

Commutation relations between these follow again from  
\eqref{eq:B2Egenij} and \eqref{eq:EB2-qcomms}. We find
\begin{align} 
    GA'&= q_j^{2}A'G\,,
    \qquad 
    GC'= q_j^{2}C'G\,,
    \quad\mbox{and}\quad 
    GD'= q_j^{2}D'G\,,\nonumber
    \\
    \rule[-4.5mm]{0mm}{9.5mm}\mbox{so that}&
    \qquad
    G(A'+C'+D')=q_j^{2}(A'+C'+D')G\label{eq:G-comm-A'C'D'}
    \\
    \mbox{as well as }&\qquad 
    C'A'=q_j^{-2}A'C'\,
    \quad \mbox{and}\quad 
    D'C'=q_j^{-2}C'D'\,.\label{eq:A'C'D'-comms}
\end{align} 
Similarly, we compute from previous identities  
\begin{equation}\label{eq:def:B'}
B'=D'A' -q_j^{-2}A'D'
 = -q^{-2}\tfrac{(q-q^{-1})^3}{q+q^{-1}}  E_i^2\otimes K_i^2E_{(iji)}^2\,
 = -q^{-2}\tfrac{q-q^{-1}}{q+q^{-1}}(C')^2\,,
\end{equation}
which entails the additional commutation relations
 \begin{equation}\label{eq:B'-comm-ADC}
 B'A'=q_j^{-4}A'B'\,,
 \qquad
 D'B'=q_j^{-4}B'D'\,,
 \quad\mbox{and}\quad  
C'B'=B'C'\,.
\end{equation}
In the computation below we first apply the binomial formula  
\eqref{eq:q-binom-form} using \eqref{eq:G-comm-A'C'D'}. The second step
invokes Proposition~\ref{prop:PowerFormula}, as we verified the
requirements for $A'$, $B'$, $C'$, and $D'$ in 
\eqref{eq:def:B'}, \eqref{eq:A'C'D'-comms}, and \eqref{eq:B'-comm-ADC} 
for $q_j$ in place of $q$. 
\begin{equation}\label{eq:EijCop}
    \begin{aligned}
    \Delta(E_{(ij)}^n)&=((A'+C'+D')+G)^n =\sum_{m=0}^n\qbin{n}{m}{j}q_j^{m(n-m)}(A'+C'+D')^{n-m}G^m
    \\
    \rule{0mm}{8mm}&=
    \longsum[14]_{
          \substack{m,s,t,u\geq 0,\;s\geq t\\
                    n\geq m+s+t+u
             }
          } 
    q_j^{\mathfrak{c}(n,m,s,t,u)}
    \qbin{n}{m}{j}\qbin{n-m}{s+t+u}{j}\qbin{s+t+u}{s-t}{j}c'_{u,t}  
    \\&\hspace*{53mm}
    \cdot (A')^{s-t}(B')^t(C')^u(D')^{n-m-s-t-u}G^m\,,\\
    \rule{0mm}{9mm}
   \mbox{with} &\qquad\mathfrak{c}(n,m,s,t,u)=m(n-m)-(s+t+u)(n-m-s-t-u)-(s-t)(2t+u)\,.
\end{aligned}
\end{equation}

Next, we specialize $q$ to the root of unity $\zeta$ as before and set $n=\ell_j\,$.
The product of the three binomial coefficients in \eqref{eq:EijCop} is again
a multinomial coefficient for $r=4$, now with 
$a_1=m$, 
$a_2=s-t$, 
$a_3=2t+u$, and 
$a_4=\ell_j-m-s-t-u$.
Corollary~\ref{cor:zeros-mulitnom} then confines the summation to the following tuples of indices.
\[
(m,s,t,u)\in\bigl\{(0,0,0,0),(0,\ellj,0,0),(\ellj,0,0,0)\bigr\}\cup \bigl\{(0,t,t,\ellj-2t): 0\leq t\leq \floor{\ellj/2}\bigr\}\,
\]
Note that with $\mathfrak{c}=(a_1+a_4)(a_2+a_3)+a_1a_4-a_2a_3$ the $q$-exponential in \eqref{eq:EijCop} is 1 in all cases. 
Observe further that on the subset indexed by $t$ the operator term is  
\[(A')^{s-t}(B')^t(C')^u(D')^{n-m-s-t-u}G^m=(B')^t(C')^{\ellj-2t}=
\left(-\zeta^{-2}\tfrac{\zeta-\zeta^{-1}}{\zeta+\zeta^{-1}}\right)^t(C')^{\ellj}\,.
\]
The summation over the non-zero terms, thus, gives 
\begin{align*}
\Delta(X_{(ij)})=
   &(D')^{\ellj}+(A')^{\ellj}+G^{\ellj}+\sum_{t=0}^{\floor{\ellj/2}}c'_{\ellj-2t,t} \left(-\tfrac{\zeta-\zeta^{-1}}{\zeta^{2}[2]}\right)^t(C')^{\ellj}  
   \\
    \rule{0mm}{7.5mm}=&
    1\otimes E_{(ij)}^{\ellj}+\zeta^\ellj(q-q^{-1})^{2\ellj} E_i^{2\ellj}\otimes \zeta_j^{2\binom{\ellj}{ 2}}K_i^{2\ellj}E_j^{\ellj}+E_{(ij)}^{\ellj}\otimes K_i^{2\ellj}K_j^{\ellj}
    \\&\qquad +
    \zeta^{-\binom{\ellj}{ 2}}[2]^{-\ellj}\qfacrelB{\ell_j}{2} \cdot (-1)^{\ellj}(\zeta-\zeta^{-1})^{\ellj} E_i^{\ellj}\otimes K_i^{\ellj}E_{(iji)}^{\ellj}\,.
    \\
    \rule{0mm}{7.5mm}=&
    1\otimes X_{(ij)}+\zeta^{\binom{2\ellj}{ 2}}\delzeta{i}^\gcdleopp X_i^{\gcdleopp}\otimes L_i^{\gcdleopp}X_j+X_{(ij)}\otimes L_i^{\gcdleopp}L_j
    \\&\qquad -
    \delta_{\gcdleopp,2}\cdot 2\cdot  \zeta^{\ell}[2]^{-\ell}\delzeta{i}\cdot  X_i\otimes L_iX_{(iji)}\,.
\end{align*}

In the last step we used  Lemma \ref{lem:SumcIdentity}. Recall from \eqref{eq:factratformZ-I} that $\qfacrelB{\ell_j}{2}\,=\,\delta_{\gcdle,1}\cdot 2\cdot  \zeta^{-\binom{\ell+1}{ 2}}$ is nonzero only if $\ell=\ellj$\,, or equivalently if $\gcdleopp=2$. In this case, $E_i^{\ellj}=X_i$ and $E_{(iji)}^{\ellj}=X_{(iji)}$ which now gives the desired expression.
 \end{proof}

We note that similar coproduct formulae appear in \cite{BDR02}, where the authors 
compute liftings of the Nichols algebra of a Yetter-Drinfeld module of
 type $\LT{B}_2$\,. Indeed, the $q$-binomial theorem in \cite[Sec 4]{BDR02}
is the special case of our Proposition~\ref{prop:PowerFormula} for $B$ a multiple of $C^2$. 

\subsection{The Borel Matrix Algebra on \texorpdfstring{$\mathrm{SO}(5,\mathbb C)$\,}{SO(5,C)}}
\label{subsec:BorelCordSO5}

We adopt here the conventions from \cite[\S 2.1.2]{GW09} for the definition of the {\em complex} orthogonal group. The basis for the symmetric form in odd dimensions contains two isotropic subbases besides an additional vector. The explicit definition in this form is  
\begin{equation}
\begin{array}{l}
     \mathrm{SO}(5,\mathbb C)=\left\{A\in\mathrm{SL}(5,\mathbb C): \,A^\transp J A=  J\right\}  \qquad \mbox{for}\vspace*{8mm}\\
     \quad \mbox{ where }\quad I=\begin{bmatrix}
    0 & 1 \\
    1 & 0
\end{bmatrix}
\quad \mbox{ and }\quad \mathbb 0=\begin{bmatrix}
    0 & 0 \\
    0 & 0
\end{bmatrix}\,. 
\end{array} 
   \quad  J=
\left[{
\begin{NiceArray}{cc:w{c}{6mm}:cc}
\Block{2-2}{{\mathbb 0}} && 0 & \Block{2-2}{I}& \\
 && 0 & &  \\
\hdottedline
\;\;\rule[-2mm]{0mm}{6.5mm} 0 & 0 & 1 & 0 & 0 \;\;\\
\hdottedline
\Block{2-2}{I} & & 0 & \Block{2-2}{{\mathbb 0}}   &   \\
 &  & 0 &  & 
\end{NiceArray}
}\right]\,,
\end{equation}
 Following \cite[\S 2.1.2]{GW09}, a Cartan subgroup $H$ is given by diagonal matrices $\mathrm{diag}[\lambda,\mu,1,\mu^{-1},\lambda^{-1}]$ with $\lambda,\mu\in \mathbb C^\times\,$. For an element $h=\mathrm{diag}[h_1,h_2,0,-h_2,-h_1]\in\mathfrak h$ in the respective Cartan Lie subalgebra,  
 a set of simple roots in $\mathfrak h^*$ is 
 then given by $\alpha_1(h)=h_2$ and $\alpha_2(h)=h_1-h_2\,$, where $\alpha_1$ is short and 
 $\alpha_2$ the long root for $\LT{B}_2\,$, implying $A_{12}=-2$ and $A_{21}=-1$ for the Cartan data defined  in Section~\ref{sec:Weyl}. 
 
 These choices realize the Lie-Kolchin Theorem, meaning, the entailed Borel subgroup $\Borelsof=\mathrm{SO}(5,\mathbb C)\cap \UtriM{5}$ is precisely given by the upper triangular matrices in $\mathrm{SO}(5,\mathbb C)$. 
It is then straightforward to work out the following expression for an isomorphism $\,\SOFBchar:\,(\mathbb C^\times)^2\times \mathbb C^4\rightarrow \Borelsof\,$ of algebraic sets. 
\begin{equation}\label{eq:ParamBSO5}
\SOFB \lambda\mu a b c d =
\left[{
\begin{NiceArray}{cc:w{c}{10mm}:w{c}{12mm}w{c}{12mm}}
\Block{2-2}{F} && \Block{2-1}{-F  v} & \Block{2-2}{F(S-\tfrac 12   v  v^\transp)I}& \\
 &&  &  &  \\
\hdottedline
\;\;\rule[-2.5mm]{0mm}{7.5mm} 0 & 0 & 1 & \Block{1-2}{(I  v)^\transp} & \\
\hdottedline
\Block{2-2}{\mathbb 0} & & 0 & \Block{2-2}{(IF^\transp I)^{-1}} &  \\
 &  & 0 &  & 
\end{NiceArray}
}\right]
\qquad
\begin{array}{l}
  \mbox{ where }   \qquad  F=\begin{bmatrix}
    \lambda & b \\
    0 & \mu
\end{bmatrix}\,, \vspace{6mm}\\
\quad S=\begin{bmatrix}
    0 & d \\
    -d & 0
\end{bmatrix}\,,
\mbox{ and }
v=\begin{bmatrix}
    c  \\
    a  
\end{bmatrix}\,.
\end{array}
\end{equation}

In these coordinates, the group or matrix multiplication of two elements in $\Borelsof$ is given by 
$$
\SOFB {\lambda_1}{\mu_1}{a_1}{b_1}{c_1}{d_1} 
\cdot
\SOFB {\lambda_2}{\mu_2}{a_2}{b_2}{c_2}{d_2} 
= \SOFB {\lambda_3}{\mu_3}{a_3}{b_3}{c_3}{d_3} \;,
$$
where 
\begin{equation}\label{eq:SO5prod}
    \begin{aligned}
 \lambda_3&=\lambda_1\lambda_2
 \qquad \qquad 
 \mu_3=\mu_1\mu_2
 \qquad \qquad 
 a_3=\mu_2^{-1}a_1+a_2\\
 b_3&=\mu_2b_1+\lambda_1b_2 \qquad \qquad 
 c_3=\lambda_2^{-1}c_1+c_2-\lambda_2^{-1}\mu_2^{-1}a_1b_2
 \\
 d_3&=\lambda_2^{-1}\mu_2^{-1}d_1+d_2+\tfrac 12\left(\mu_2^{-1}a_1c_2-\lambda_2^{-1}c_1a_2+\lambda_2^{-1}\mu_2^{-1}a_1a_2b_2\right)\,.
\end{aligned}
\end{equation}

The Cartan subgroup $\Cartsof$ is then the $\SOFBchar$-image of the subspace $(\mathbb C^\times)^2\times \{(0,0,0,0)\}$, that is, the matrices with $a=b=c=d=0\,$. Similarly, the respective unipotent subgroup 
$\Unipsof$ consists of matrices with $\lambda=\mu=1$, meaning the 
$\SOFBchar$-image of $\{(1,1)\}\times \mathbb C^4\,$.

Note that the inverse map $\SOFBchar^{-1}:\,\Borelsof\rightarrow(\mathbb C^\times)^2\times \mathbb C^4 \,$
can be obtained as the restriction of a polynomial map on $\mathrm{GL}(5,\mathbb C)\,$. Specifically,
$\lambda$, $\lambda^{-1}$, $\mu$, $\mu^{-1}$, $a$, $b$, and $c$ are simply given as the restriction of 
some $\cofun{x_{i,j}}$\,, and the expression for the $d$ coordinate is easily worked out from these. 
Thus, $(\mathbb C^\times)^2\times \mathbb C^4$ equipped with the multiplication operation from
\eqref{eq:SO5prod} is isomorphic to $\Borelsof$ as an algebraic group. 

Correspondingly, we introduce the generating functions as the respective coordinate projections 
$\Borelsof\rightarrow(\mathbb C^\times)^2\times \mathbb C^4\rightarrow\mathbb C$ for this parametrization, denoting by $\cofun{\gamma}$ the function corresponding to a coordinate labeled by $\gamma$. So, for example, $\,\cofun{\lambda}(G)=(\SOFBchar^{-1}(G))_{1,1}\,$ or $\,\cofun{d}(G)=(\SOFBchar^{-1}(G))_{2,3}\,$ with $G\in\Borelsof\,$.  Thus, as a commutative algebra,  $\Funsofb$ is naturally identified with 
$\mathbb C[\cofun{\lambda}^{\pm 1},\cofun{\mu}^{\pm 1},\cofun{a},\cofun{b},\cofun{c},\cofun{d}]\,$.

Recall from Section~\ref{subsec:AlgGrps-basics} that the coalgebra structure on $\Funsofb$ is defined
by  $\Delta(f)(a,b)=f(ab)$. 
The respective coproduct formulae for the generators follow immediately from the   expressions in \eqref{eq:SO5prod}. For example, if $G_3=G_1G_2$ then
$\cofun{a}(G_3)=a_3=\mu_2^{-1}a_1+a_2=\cofun{a}(G_1)\cofun{\mu}^{-1}(G_2)+\cofun{a}(G_2)=(\cofun{a}\otimes \cofun{\mu}^{-1}+1\otimes \cofun{a})(G_1,G_2\,)$\,. The complete coalgebra structure  is, thus, given as follows.

\begin{equation}\label{eq:SO5coprod}
    \begin{aligned}
    \Delta(\cofun{\lambda})&=\cofun{\lambda}\otimes \cofun{\lambda}
    \qquad \qquad 
    \Delta(\cofun{\mu})=\cofun{\mu}\otimes \cofun{\mu}
    \qquad \qquad
    \Delta(\cofun{a})=\cofun{a}\otimes \cofun{\mu}^{-1}+1\otimes \cofun{a}
     \vspace*{2.5mm}\\
    \Delta(\cofun{b})&=\cofun{b}\otimes \cofun{\mu}+\cofun{\lambda}\otimes \cofun{b}
    \qquad \qquad 
    \Delta(\cofun{c})=\cofun{c}\otimes \cofun{\lambda}^{-1}
      +1\otimes\cofun{c}
      -\cofun{a}\otimes \cofun{\lambda}^{-1}\cofun{\mu}^{-1}\cofun{b}
      \vspace*{2.5mm}\\
    \Delta(\cofun{d})&=
        \cofun{d}\otimes\cofun{\lambda}^{-1}\cofun{\mu}^{-1}
        +1\otimes \cofun{d}
        +\tfrac{1}{2}\cofun{a}\otimes\cofun{\mu}^{-1}\cofun{c}
        -\tfrac{1}{2}\cofun{c}\otimes\cofun{\lambda}^{-1}\cofun{a}
        +\tfrac{1}{2}\cofun{a}\otimes
                \cofun{\lambda}^{-1}\cofun{\mu}^{-1}\cofun{a}\cofun{b} 
    \end{aligned}
\end{equation}

An antipode is defined by $S(f)(L)=f(L^{-1})$ but can, alternatively, also be computed iteratively from the antipode axiom and the corelations in \eqref{eq:SO5coprod}. Additionally, a $\mathbb Z^{\sroots}$ grading $\wgrad$ is defined on  $\Funsofb$ by assigning to each of the generators a positive roots as follows.
\begin{equation}\label{eq:SO6wgrad}
    \wgrad(\cofun{\lambda})=\wgrad(\cofun{\mu})=0,\;\;
\wgrad(\cofun{a})=\alpha_1,\;\;
\wgrad(\cofun{b})=\alpha_2,\;\;
\wgrad(\cofun{c})=\alpha_1+\alpha_2,\; \mbox{and}\;\;
\wgrad(\cofun{d})=2\alpha_1+\alpha_2
\end{equation}
We summarize our computations next.

\begin{prop}
    The coordinate ring $\Funsofb$ is isomorphic, as a $\wgrad$-graded Hopf algebra, 
    to the complex 
    polynomial algebra $\mathbb C\left[\cofun{\lambda}^{\pm 1},\cofun{\mu}^{\pm 1},\cofun{a},\cofun{b},\cofun{c},\cofun{d}\right]\,$ equipped with the  corelations given by \eqref{eq:SO5coprod}.
\end{prop}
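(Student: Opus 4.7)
The plan is to establish this isomorphism by exploiting the explicit parametrization $\SOFBchar$ from \eqref{eq:ParamBSO5}, which is already observed to realize $\Borelsof$ as the image of $(\mathbb C^\times)^2\times \mathbb C^4$ under a morphism of algebraic varieties. First I would verify that $\SOFBchar$ is in fact an isomorphism of algebraic varieties. Bijectivity onto $\Borelsof$ follows because any element of $\Borelsof$, being an upper triangular matrix preserving the form $J$, is uniquely determined by its upper-left $2\times 2$ block $F$ (which must lie in $\UtriM{2}$, yielding $\lambda$, $\mu$, $b$), together with the free choices of $a$, $c$, and $d$ subject to the orthogonality constraints; the constraints force the remaining blocks to take exactly the form displayed in \eqref{eq:ParamBSO5}. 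As noted in the excerpt, the inverse map is the restriction of polynomial/rational expressions on $\mathrm{GL}(5,\mathbb C)$, so $\SOFBchar^{-1}$ is a morphism, and pullback yields an isomorphism of commutative algebras $\Funsofb \cong \mathbb C[\cofun{\lambda}^{\pm 1},\cofun{\mu}^{\pm 1},\cofun{a},\cofun{b},\cofun{c},\cofun{d}]$ identifying the six functions with the standard polynomial generators.

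Second, I would confirm that under this identification the Hopf algebra structure on $\Funsofb$ dual to matrix multiplication agrees with the corelations listed in \eqref{eq:SO5coprod}. This is a direct reading from the product formulae \eqref{eq:SO5prod}: for each coordinate $\gamma\in\{\lambda,\mu,a,b,c,d\}$ the formula $\Delta(\cofun{\gamma})(G_1,G_2)=\cofun{\gamma}(G_1G_2)=\gamma_3$ is computed by substituting the expression for $\gamma_3$ in terms of $\lambda_i,\mu_i,a_i,b_i,c_i,d_i$. The $\lambda$ and $\mu$ rows are immediate since $\lambda_3=\lambda_1\lambda_2$ and $\mu_3=\mu_1\mu_2$ make them group-like. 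Each of the remaining four lines in \eqref{eq:SO5coprod} is a term-by-term transcription of the corresponding expression in \eqref{eq:SO5prod}, with the convention that coordinate indices $1$ and $2$ correspond to the left and right tensor factors respectively. The counit is simply evaluation at the identity element $\SOFBchar(1,1,0,0,0,0)$, and the antipode, uniquely determined by the counit and coproduct on a Hopf algebra, is therefore fixed by \eqref{eq:SO5coprod}.

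Third, I would check compatibility with the $\wgrad$-grading defined in \eqref{eq:SO6wgrad}. This amounts to observing that each term on the right side of every coproduct formula in \eqref{eq:SO5coprod} carries the same total $\wgrad$-weight as the left side, where the $\cofun{\lambda}^{\pm 1}$ and $\cofun{\mu}^{\pm 1}$ factors contribute zero. A line-by-line inspection confirms this: for instance $\wgrad(\cofun{d})=2\alpha_1+\alpha_2$ matches $\wgrad(\cofun{a}\cofun{c})=2\alpha_1+\alpha_2$ and $\wgrad(\cofun{a}^2\cofun{b})=2\alpha_1+\alpha_2$ in the last equation. Since the algebra generators are $\wgrad$-homogeneous and the coproduct is $\wgrad$-homogeneous on generators, the induced Hopf algebra isomorphism is automatically $\wgrad$-graded.

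The main obstacle is purely bookkeeping in the verification of step one: unwinding the parametrization $\SOFBchar$ to confirm both that every element of $\Borelsof$ is uniquely of this form and that the $d$ coordinate, which is the only one not given by a single matrix entry, is expressible as a polynomial in the matrix entries (so that $\SOFBchar^{-1}$ is genuinely a morphism of varieties and $\cofun{d}\in\Funsofb$). Once that is settled, the remaining verifications reduce to direct substitution into \eqref{eq:SO5prod}.
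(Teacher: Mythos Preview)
Your proposal is correct and follows essentially the same approach as the paper. The paper's own argument is even more terse: it simply notes that the monomials $\cofun{\lambda}^{m_1}\cofun{\mu}^{m_2}\cofun{a}^{k_1}\cofun{b}^{k_2}\cofun{c}^{k_3}\cofun{d}^{k_4}$ form a basis for both the abstract polynomial algebra and for $\Funsofb$ (the parametrization $\SOFBchar$ having already been discussed in the preceding text), observes that the abstract coalgebra is well-defined because the invertible generators are group-like, and remarks that the corelations preserve the $\wgrad$-grading---all points you cover in somewhat greater detail.
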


The fact that the abstract polynomial algebra is isomorphic to $\Funsofb$ is clear from the fact that 
monomials $\left\{\cofun{\lambda}^{m_1}\cofun{\mu}^{m_2}\cofun{a}^{k_1}\cofun{b}^{k_2}\cofun{c}^{k_3}\cofun{d}^{k_4}\,:\,m_i\in\mathbb Z, k_i\in\nnN\right\}\,$ form a basis  for both the abstract algebra and 
for $\Funsofb$. The former is well-defined as a coalgebra since the invertible generators are group-like. It is also easy to check that the corelations in \eqref{eq:SO5coprod} preserve the grading 
from \eqref{eq:SO6wgrad}.

Recall that the Weyl group $\Weyl$ for $\LT{B}_2$ is the dihedral group of order 8, generated by reflections $s_1$ and $s_2$  along $\alpha_1=\epsilon_2$ and $\alpha_2=\epsilon_1-\epsilon_2\,$, respectively,
so that $s_1(\alpha_2)=2\alpha_1+\alpha_2$ and $s_2(\alpha_1)=\alpha_1+\alpha_2$\,.
On the Cartan algebra for $\mathrm{SO}(5,\mathbb C)$, these reflections correspond to $(h_1,h_2)\mapsto(h_1,-h_2)$
and $(h_1,h_2)\mapsto(h_2,h_1)$ for $s_1$ and $s_2\,$, respectively. 

The lifted maps on the Cartan subgroup $\Cartsof$ in the form given in \eqref{eq:ParamBSO5} are implemented by conjugation with the following representatives of $s_1$ and $s_2$ as elements of the geometric Weyl group in the normalizer of $\Cartsof$
in $\mathrm{SO}(5,\mathbb C)$.

\begin{equation}\label{eq:RepGeomWeylSO5}
\dot s_1=\left[{
\begin{NiceArray}{cc:c:cc}[margin]
 1 & 0 & 0 & 0 & 0 \; \\
 0 & 0 & 0 & 1 & 0 \; \\
\hdottedline
 \rule[-2mm]{0mm}{6.5mm} 
0 & 0 & 1 & 0 & 0 \;\\
\hdottedline
\rule[-2mm]{0mm}{6.5mm} 
  0 & -1 & 0 & 0 & 0 \;\\
  0 &  0 & 0 & 0 & 1 \;
\end{NiceArray}
}\right]
\qquad \mbox{ and } \qquad 
\dot s_2=\left[{
\begin{NiceArray}{cc:c:cc}[margin]
 0 & 1 & 0 & 0 & 0 \; \\
 1 & 0 & 0 & 0 & 0 \; \\
\hdottedline
 \rule[-2mm]{0mm}{6.5mm} 
0 & 0 & 1 & 0 & 0 \;\\
\hdottedline
\rule[-2mm]{0mm}{6.5mm} 
  0 & 0 & 0 & 0 & 1 \;\\
  0 &  0 & 0 & 1 & 0 \;
\end{NiceArray}
}\right] \,.
\end{equation}

Representatives $\dot s\in N(\Cartsof)$ for other elements $s\in\Weyl$ are easily worked out from
those in \eqref{eq:RepGeomWeylSO5}.
This allows us to define the parabolic and unipotent 
 Bruhat subgroups for $\mathrm{SO}(5,\mathbb C)$.
\begin{equation}
    \Borelsof(s)\,=\,\Borelsof\,\cap\, \dot s\Borelsof\dot s^{-1}
\qquad \mbox{ and } \qquad
\Unipsof(s)\,=\,\Unipsof\,\cap\, \dot s\Unipsof\dot s^{-1}\,
\end{equation}

Clearly, $\Borelsof(s)\cong\Cartsof\ltimes \Unipsof(s)$\,. 
As before,  we define for any $s\in\Weyl$ a vanishing ideal for the Bruhat subgroup as
\begin{equation}
\SOFAugIdchar(s)\,=\,\left\{f\in\Funsofb\,:\,f(h)=0\;\forall h\in \Borelsof(s)\right\}\,,
\end{equation}
which is  a Hopf ideal by Lemma~\ref{lm:ASG=HI}. For $s\in\Weyl$ we also introduce the ideal in $\Funsofb$ generated by subsets of coordinate functions  of 
$\Funsofb$ that have grading in $\descroots{s}$. That is,
\begin{equation}
    \mathcal{J}(s)=\left(G(s)\right)\qquad\mbox{where}\quad
G(s)=\left\{\cofun{\gamma}\in\{\cofun{a},\cofun{b},\cofun{c},\cofun{d}\}\,:\wgrad(\cofun{\gamma})\in\descroots{s}\right\}\,.
\end{equation}
For reference and convenience, we list all the non-trivial cases here explicitly.
\begin{equation}\label{eq:JsList}
    \begin{aligned}
    \mathcal{J}(s_1)&=\left(\cofun{a}\right)\qquad &\qquad\mathcal{J}(s_1s_2)&=\left(\cofun{a},\cofun{d}\right)\qquad&\qquad
            \mathcal{J}(s_1s_2s_1)&=\left(\cofun{a},\cofun{c},\cofun{d}\right)\\
    \mathcal{J}(s_2)&=\left(\cofun{b}\right)&\mathcal{J}(s_2s_1)&=\left(\cofun{b},\cofun{c}\right)&
            \mathcal{J}(s_2s_1s_2)&=\left(\cofun{b},\cofun{c},\cofun{d}\right)
\end{aligned}
\end{equation}

The following can be verified immediately from the coproduct formulae in \eqref{eq:SO5coprod} in each of the cases listed in \eqref{eq:JsList}. 
\begin{lem}\label{lem:Js=HopfIdeal}
    For any $s\in\Weyl$ the ideal $\mathcal{J}(s)$ is a Hopf ideal. 
\end{lem}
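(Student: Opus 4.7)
The plan is to exhibit each $\mathcal{J}(s)$ as the vanishing ideal of a closed algebraic subgroup of $\Borelsof$ and then apply Lemma~\ref{lm:ASG=HI}. First I observe that since $\mathcal{J}(s)$ is generated by a subset of the polynomial coordinate functions in $\Funsofb \cong \mathbb C[\cofun{\lambda}^{\pm 1},\cofun{\mu}^{\pm 1},\cofun{a},\cofun{b},\cofun{c},\cofun{d}]$, it is automatically a radical ideal, and by the Nullstellensatz (used in the form \eqref{eq:HilbertZariski}) one has $\mathcal{J}(s) = \VanIdeal(H(s))$, where $H(s) := \VanLocus(\mathcal{J}(s)) \subseteq \Borelsof$ is the closed algebraic subset cut out by the vanishing of the generators listed in \eqref{eq:JsList}.

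Next I would verify that $H(s)$ is a subgroup of $\Borelsof$ for each of the six nontrivial cases of \eqref{eq:JsList}. Evidently $H(s)$ contains the identity element $\SOFB{1}{1}{0}{0}{0}{0}$. Closure under multiplication is checked directly from the product formulas \eqref{eq:SO5prod}: for each case, the coordinates whose vanishing defines $H(s)$ appear on the right-hand sides of the product only through monomials that contain at least one of those same coordinates as a factor. For instance, for $\mathcal{J}(s_1s_2s_1) = (\cofun{a},\cofun{c},\cofun{d})$, the formulas yield $a_3 = \mu_2^{-1}a_1 + a_2$, $c_3 = \lambda_2^{-1}c_1 + c_2 - \lambda_2^{-1}\mu_2^{-1}a_1 b_2$, and a $d_3$ in which every summand carries at least one factor from $\{a_1, a_2, c_1, c_2, d_1, d_2\}$; hence if $a_i = c_i = d_i = 0$ for $i=1,2$ then so are $a_3$, $c_3$, and $d_3$. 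The analogous checks for the other five cases are immediate inspections of \eqref{eq:SO5prod}. Since $H(s)$ is a closed algebraic submonoid containing the identity in an algebraic group, it is automatically a subgroup.

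Finally, Lemma~\ref{lm:ASG=HI}\ref{item:ASG=HI:VG=HI} states that the vanishing ideal of any subgroup of an algebraic group is a Hopf ideal, and combined with the identification $\mathcal{J}(s) = \VanIdeal(H(s))$ from the first step, this completes the proof.

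The main obstacle is the case-by-case bookkeeping required to confirm closure under multiplication, particularly for $d_3$, whose expansion in \eqref{eq:SO5prod} involves several mixed cubic terms; however, every mixed term is bilinear or trilinear in coordinates that are themselves being set to zero, so each case reduces to an elementary cancellation. An alternative but essentially equivalent route would verify the Hopf ideal property directly on generators: check $\varepsilon(\mathcal{J}(s)) = 0$ (immediate from evaluation at the identity), verify $\Delta(\cofun{\gamma}) \in \mathcal{J}(s) \otimes \Funsofb + \Funsofb \otimes \mathcal{J}(s)$ for each $\cofun{\gamma} \in G(s)$ by inspecting \eqref{eq:SO5coprod}, and check antipode closure separately; this avoids the Nullstellensatz but duplicates the same case analysis while adding the extra antipode verification, which makes the geometric route preferable.
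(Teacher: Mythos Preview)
Your proof is correct, but it takes a different route from the paper's. The paper proves this lemma by direct verification: for each of the six nontrivial cases in \eqref{eq:JsList}, one inspects the coproduct formulae \eqref{eq:SO5coprod} and sees that $\Delta(\cofun{\gamma})$ lands in $\mathcal{J}(s)\otimes\Funsofb + \Funsofb\otimes\mathcal{J}(s)$ for each generator $\cofun{\gamma}\in G(s)$ (antipode stability is likewise immediate). This is precisely the ``alternative route'' you describe in your final paragraph.

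Your geometric argument---identifying $\mathcal{J}(s)$ with $\VanIdeal(H(s))$ for a closed submonoid $H(s)$ and invoking Lemma~\ref{lm:ASG=HI}---is valid and has the virtue of getting antipode invariance for free via the monoid-implies-subgroup principle. The case analysis you perform on the product formulae \eqref{eq:SO5prod} is dual to the paper's inspection of the coproduct formulae \eqref{eq:SO5coprod}, so the computational content is essentially the same. Note also that the paper does carry out an argument very close to yours in the subsequent Proposition~\ref{prop:SO5eqalIdeal}, where it identifies $H(s)$ explicitly with the Bruhat subgroup $\Borelsof(s)$; your approach could be seen as extracting just the Hopf-ideal consequence of that identification without needing to name the subgroup.
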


The two types of Hopf ideals are identified next.

\begin{prop}\label{prop:SO5eqalIdeal}
    For any $s\in\Weyl$ we have $\mathcal J(s)=\SOFAugIdchar(s)\,$.
\end{prop}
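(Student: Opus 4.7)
The plan is to show the two ideals cut out the same closed subset of $\Borelsof$ and then invoke Hilbert's Nullstellensatz. Since $\mathcal J(s)$ is a Hopf ideal by Lemma~\ref{lem:Js=HopfIdeal}, Lemma~\ref{lm:ASG=HI}~\ref{item:ASG=HI:HI=radical} gives $\mathcal J(s)=\VanIdeal(\VanLocus(\mathcal J(s)))$, while $\SOFAugIdchar(s)=\VanIdeal(\Borelsof(s))$ by definition. The proof therefore reduces to proving $\VanLocus(\mathcal J(s))=\Borelsof(s)$ as algebraic subsets of $\Borelsof$, which I would establish by two inclusions: one via the subgroup structure, the other by a dimension count.

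For $\Borelsof(s)\subseteq\VanLocus(\mathcal J(s))$, I would exploit that $\VanLocus(\mathcal J(s))$ is itself a closed algebraic subgroup of $\Borelsof$ by Lemma~\ref{lm:ASG=HI}~\ref{item:ASG=HI:VJ=grp}. As recalled just after \eqref{eq:def:BruhatGrps}, $\Borelsof(s)=\Cartsof\cdot\Unipsof(s)$ with $\Unipsof(s)$ generated by $\{U_\alpha:\alpha\in\descroots{\flweyl s}=\proots\setminus\descroots{s}\}$, where the last equality uses Lemma~\ref{lem:longinvol}~\ref{item:longinvol:descsets}. It therefore suffices to check that each generator of $\mathcal J(s)$ vanishes on $\Cartsof$ (immediate from \eqref{eq:ParamBSO5}) and on each relevant $U_\alpha$. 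The latter follows from the one-parameter computations underlying \eqref{eq:SO6wgrad}: each $U_\alpha$ is precisely the image under $\SOFBchar$ of the line in which only the coordinate of $\wgrad$-weight $\alpha$ is non-zero, so any $\cofun\gamma$ with $\wgrad(\cofun\gamma)\in\descroots{s}$ vanishes identically on $U_\alpha$ whenever $\alpha\notin\descroots{s}$. For the reverse inclusion, I would use a dimension-plus-irreducibility argument: the quotient $\Funsofb/\mathcal J(s)$ is a Laurent polynomial ring in $\cofun\lambda,\cofun\mu$ tensored with a polynomial ring in the remaining $4-|\descroots{s}|$ coordinates and is therefore an integral domain, so $\VanLocus(\mathcal J(s))$ is irreducible of dimension $6-|\descroots{s}|$. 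On the other hand, $\Borelsof(s)$ is a closed connected subgroup of dimension $\dim\Cartsof+|\descroots{\flweyl s}|=2+(4-|\descroots{s}|)=6-|\descroots{s}|$. A closed subset of an irreducible affine variety having the same dimension as the variety must coincide with it.

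The main technical point requiring care is the identification in the first inclusion of each $U_\alpha$ with the ``straightened'' one-parameter subgroup cut out by setting all coordinates of $\wgrad$-weight $\neq\alpha$ to zero. This is bookkeeping from the matrix form \eqref{eq:ParamBSO5}, but must be spelled out carefully so that the subgroup-generation step can be invoked without being entangled with the quadratic cross-terms in the multiplication rule \eqref{eq:SO5prod}; using the fact that $\VanLocus(\mathcal J(s))$ is \emph{a priori} a subgroup lets one avoid having to check such cross-terms directly. Once this identification is in hand, the argument is uniform across all eight Weyl elements, including the extreme cases $s=1$ (where $\mathcal J(1)=0$ and $\Borelsof(1)=\Borelsof$) and $s=\longweyl$ (where $\mathcal J(\longweyl)=(\cofun a,\cofun b,\cofun c,\cofun d)$ and $\Borelsof(\longweyl)=\Cartsof$), so no separate case analysis for the six entries of \eqref{eq:JsList} is needed.
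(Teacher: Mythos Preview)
Your argument is correct and takes a genuinely different route from the paper. The paper proceeds by embedding $\Weyl(\mathrm{SO}_5)$ into $S_5$ via the explicit permutation matrices \eqref{eq:RepGeomWeylSO5}, so that $\Borelsof(s)=B(\iota(s))\cap\mathrm{SO}(5,\mathbb C)$ for the $\mathrm{GL}_5$ Bruhat subgroup $B(\iota(s))$; it then computes the inversion set $\mathrm{Inv}(\iota(s))$ for each of the six nontrivial $s$, reads off which matrix entries must vanish, and translates these constraints through the parametrization $\SOFBchar$ to recover exactly the generators in \eqref{eq:JsList}. So the paper's proof is a concrete case-by-case verification.

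Your approach instead shows $\VanLocus(\mathcal J(s))=\Borelsof(s)$ uniformly: one inclusion by observing that $\VanLocus(\mathcal J(s))$ is a closed subgroup containing $\Cartsof$ and each $U_\alpha$ with $\alpha\notin\descroots{s}$, the other by an irreducibility-plus-dimension count. The trade-off is that the paper's method is entirely self-contained once the matrix form \eqref{eq:ParamBSO5} is in hand, whereas yours rests on the identification of each coordinate line with the corresponding root subgroup $U_\alpha$. That identification is straightforward---it amounts to checking that the torus conjugates the $a$-, $b$-, $c$-, $d$-lines with characters $\mu$, $\lambda\mu^{-1}$, $\lambda$, $\lambda\mu$ respectively, matching the exponentiated roots $\alpha_1,\alpha_2,\alpha_1+\alpha_2,2\alpha_1+\alpha_2$---but you should state it rather than defer it as bookkeeping, since it is the one place where your argument touches the specific matrix realization. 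Once that is made explicit, your proof has the advantage of treating all eight Weyl elements at once and of generalizing more transparently to other Lie types.
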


\begin{proof} First, observe that the elements in \eqref{eq:RepGeomWeylSO5} also
represent elements of the Weyl group of the ambient $\mathrm{GL}(5,\mathbb C)$,
which we identify with $S_5\,$. More precisely, consider the Cayley embedding 
$\iota:\Weyl\hookrightarrow S_5$ defined by $\iota(s_1)=(2\,4)$ and 
$\iota(s_2)=(1\,2)(4\,5)$, mapping the longest element $\longweyl=(s_1s_2)^2$ in $\Weyl$ to the longest
element $\iota(\longweyl)=(1\,5)(2\,4)$ in $S_5\,$. Thus, if $\pi=\iota(s)$, we have that $\dot s$ is given
by the ordinary permutation matrix $\bar\pi$ for $\pi$ in $\mathrm{GL}(5,\mathbb C)$ up to sign changes of the 1-entries. 

Next, observe that, because of $\Borelsof=\UtriM{5}\cap\mathrm{SO}(5,\mathbb C)$, we have
$\Borelsof(s)=\Borelsof\cap\dot s\Borelsof\dot s^{-1}=\UtriM{5}\cap \dot s\UtriM{5}\dot s^{-1}\cap \mathrm{SO}(5,\mathbb C)
=\UtriM{5}\cap \bar\pi \UtriM{5} {\bar\pi}^{-1}\cap \mathrm{SO}(5,\mathbb C)$. Hence,
$$
\Borelsof(s)=B(\iota(s))\cap\mathrm{SO}(5,\mathbb C)
\qquad\mbox{and}\qquad
\Unipsof(s)=U(\iota(s))\cap\mathrm{SO}(5,\mathbb C)\,,
$$
with analogous relations for the unipotent parts. For $\pi\in S_5$ let 
$$
\mathrm{Inv}(\pi)=\left\{(i,j)\in\mathbb N^2:\,1\leq i<j\leq 5 \,\mbox{ and }\,\pi^{-1}(i)>\pi^{-1}(j)\right\}\,. 
$$
Recall from the proof of Theorem~\ref{thm:AB=Ks} that $L\in B_5(\pi)$ iff $L_{i,j}=0$ whenever $(i,j)\in \mathrm{Inv}(\pi)\,$. 

Thus, for a given $s\in\Weyl$ and setting $N(s)=\mathrm{Inv}(\iota(s))$, any $(i,j)\in N(s)$  imposes a constraint on the parameters 
in $(\mathbb C^\times)^2\times \mathbb C^4$ in the parametrization of $\Borelsof\,$, given in \eqref{eq:ParamBSO5} by
setting the $(i,j)$ coefficient to zero. The corresponding constraints $\SOFBchar(y)_{i,j}=0$
on elements $y\in(\mathbb C^\times)^2\times \mathbb C^4$ then define a subvariety in 
$(\mathbb C^\times)^2\times \mathbb C^4$, for which the vanishing ideal can be 
expressed in terms of the generating coordinate functions.  

We apply this method next to verify all six non-trivial cases above. One can readily work out $N(s_1)=\left\{(2,3),(2,4),(3,4)\right\}$ and  $N(s_2)=\left\{(1,2),(4,5)\right\}$ for the generators. Setting the respective matrix coefficients to
zero yields the constraint $a=0$ for the former and $b=0$ for the latter. The vanishing ideals are thus
$\left(\cofun{a}\right)$ and $\left(\cofun{b}\right)$, respectively. 

For the first length 2 case, we have $N(s_1s_2)=\left\{(1,4),(2,3),(2,4),(2,5),(3,4)\right\}$, which imposes  constraints that are equivalent to the condition $a=d=0$. Similarly, we
find $N(s_2s_1)=\left\{(1,2), (1,3), (1,5), (3,5), (4,5)\right\}$ for the second case, yielding $b=c=0$. Finally, the length 3 constraints are given by the following data.
\begin{align*}
    N(s_2s_1s_2)&=\left\{(1,2),(1,3),(1,4),(1,5),(2,5),(3,5),(4,5)\right\}  &&\Rightarrow\quad b=c=d=0\\
    N(s_1s_2s_1)&=\left\{(1,3),(1,4),(1,5),(2,3),(2,4),(2,5),(3,4),(3,5)\right\}  &&\Rightarrow\quad a=c=d=0\;,
\end{align*}
which implies the remaining identifications of ideals.
\end{proof}

In order to better match the Hopf algebra structure of $\Funsofb$ with its quantum group counterpart, it is convenient to introduce a modified set of generators. For a given parameter $t$, we define them by the following polynomial expressions. 
\begin{equation}\label{eq:MsofNewGent}
    \begin{aligned}
        \cofuntr{g}&=\cofun{\lambda}^{-1}\cofun{\mu} \qquad&\qquad \cofuntr{a}&=\cofun{a}    
                     \qquad&\qquad \cofuntrpar{c}{t}&=\cofun{c}+t\cofun{\lambda}^{-1}\cofun{a}\cofun{b}\\
        \cofuntr{h}&=\cofun{\mu}^{-1} & \cofuntr{b}&=\cofun{\lambda}^{-1}\cofun{b} 
            &        \cofuntrpar{d}{t}&=\cofun{d}+(\tfrac{1}{2}-t)\cofun{a}\cofun{c}
                  -\tfrac{1}{2}t^2\cofun{\lambda}^{-1}\cofun{a}^2\cofun{b} 
    \end{aligned}
\end{equation}

It is easy to check that the generators in $\left\{\cofun{\lambda}^{\pm 1},\cofun{\mu}^{\pm 1},\cofun{a},\cofun{b},\cofun{c},\cofun{d}\right\}\,$ are, conversely, given as 
polynomial expressions in the $\left\{\cofuntr{g}^{\pm 1},\cofuntr{h}^{\pm 1},\cofuntr{a},\cofuntr{b},\cofuntrpar{c}{t},\cofuntrpar{d}{t}\right\}\,$, which implies an isomorphism between the corresponding polynomial algebras for each parameter $t\,$. 
The assignment is, further, grading preserving in the sense that
$\wgrad(\cofuntr{a})=\alpha_1\,$, 
$\wgrad(\cofuntr{b})=\alpha_2\,$, 
$\wgrad(\cofuntrpar{c}{t})=\alpha_1+\alpha_2\,$,
and
$\wgrad(\cofuntrpar{d}{t})=2\alpha_1+\alpha_2\,$.

Next, observe that the ideals in \eqref{eq:JsList} are unchanged if the new generators are used instead. That is, we have, for example, $\mathcal{J}(s_1s_2)=\left(\cofuntr{a},\cofuntrpar{d}{t}\right)$
and $\mathcal{J}(s_2s_1s_2)=\left(\cofuntr{a},\cofuntrpar{c}{t},\cofuntrpar{d}{t}\right)$.
The coproducts for the generators in \eqref{eq:MsofNewGent} are determined from those
in \eqref{eq:SO5coprod} via an elementary but somewhat lengthy computation. 
\begin{equation}\label{eq:SO5coprodAlt}
    \begin{aligned}
        \Delta(\cofuntr{g})&=\cofuntr{g}\otimes\cofuntr{g}\qquad 
        \Delta(\cofuntr{h})=\cofuntr{h}\otimes\cofuntr{h}\qquad 
        \Delta(\cofuntr{a})=\cofuntr{a}\otimes\cofuntr{h}+1\otimes\cofuntr{a}
     \\ \rule[-8pt]{0pt}{22pt}
        \Delta(\cofuntr{b})&=\cofuntr{b}\otimes\cofuntr{g}+1\otimes\cofuntr{b}\qquad 
        \Delta(\cofuntrpar{c}{t})=\cofuntrpar{c}{t}\otimes \cofuntr{h}\cofuntr{g}
                                +1\otimes \cofuntrpar{c}{t}
                                +t\cofuntr{b}\otimes\cofuntr{g}\cofuntr{a}
                               -(1-t)\cofuntr{a}\otimes\cofuntr{h}\cofuntr{b}
     \\
        \Delta(\cofuntrpar{d}{t})&=
                \cofuntrpar{d}{t}\otimes \cofuntr{h}^2\cofuntr{g}
                +1\otimes \cofuntrpar{d}{t}
                + (1-t) \cofuntr{a}\otimes\cofuntr{h}\cofuntrpar{c}{t}
                - \tfrac 12 (1-t)^2\cofuntr{a}^2\otimes \cofuntr{h}^2\cofuntr{b}
                -t\cofuntrpar{c}{t}\otimes \cofuntr{h}\cofuntr{g}\cofuntr{a}
                -\tfrac 12 t^2\cofuntr{b}\otimes \cofuntr{g}\cofuntr{a}^2
    \end{aligned}
\end{equation}

An important feature of this form is that in each of the tensor factors only one type of non-Cartan generator occurs, avoiding mixed terms such as $\cofuntr{a}\otimes\cofuntr{a}\cofuntr{b}$ or 
$\cofuntr{a}\cofuntrpar{c}{t}\otimes 1\,$. The two relevant parameter choices are $t=1$ and $t=0$,
for which terms in the coproducts of $\cofuntrpar{c}{t}$ and $\cofuntrpar{d}{t}$ vanish.

The proposition below summarizes our findings, relevant to the identifications with quantum algebras.
Here, it is useful to introduce for a parameter $t$ 
the free abstract polynomial algebra $\AbstMatSof{t}$ with generators $\left\{\cofuntr{g}^{\pm 1},\cofuntr{h}^{\pm 1},\cofuntr{a},\cofuntr{b},\cofuntrpar{c}{t},\cofuntrpar{d}{t}\right\}\,$
with the usual commutation and inverse relation. The algebra $\AbstMatSof{t}$ is equipped with the 
coalgebra structure defined in \eqref{eq:SO5coprodAlt},  which uniquely extends to a Hopf algebra structure. 

\begin{prop}\label{prop:SO5summ}
The following statements hold for any choice of $t$. \vspace{-1mm}
    \begin{enumerate}[label=\roman*)]
        \item \label{item:SO5summ:iso}
         $\Funsofb$ is isomorphic to $\AbstMatSof{t}$ as a $\mathbb Z^{\sroots}$-graded Hopf algebra.\vspace{2mm}
        \item \label{item:SO5summ:ideal}
         Any subgroup of $\Borelsof$ defines a Hopf ideal in $\AbstMatSof{t}$\,.\vspace{2mm}
        \item \label{item:SO5summ:Bruhat}
         The Hopf ideal corresponding to a Bruhat subgroup $\Borelsof(s)$ coincides with the ideal  
         in $\AbstMatSof{t}$
           generated by the set $\left\{\cofuntr{\gamma}\in \{\cofuntr{a},\cofuntr{b},\cofuntrpar{c}{t},\cofuntrpar{d}{t}\}:\,\wgrad(\cofuntr{\gamma})\in\descroots{s}\right\}\,$. 
    \end{enumerate}
\end{prop}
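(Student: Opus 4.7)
The plan is to derive all three items essentially by assembling results already established in the section; no new hard calculation is required. The key observation is that Proposition~\ref{prop:SO5summ} is a reformulation of the Hopf algebra structure on $\Funsofb$ computed earlier, repackaged in the abstract algebra $\AbstMatSof{t}$ whose coproduct is arranged so that each tensor factor involves at most one ``non-Cartan'' generator.

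For item \ref{item:SO5summ:iso}, I would construct the map $\phi_t : \AbstMatSof{t} \to \Funsofb$ by defining it on generators via the polynomial expressions in \eqref{eq:MsofNewGent}. Since $\AbstMatSof{t}$ is the free commutative algebra on the indicated generators with the usual Laurent inverses on $\cofuntr{g},\cofuntr{h}$, this extends uniquely to an algebra homomorphism. The defining equations in \eqref{eq:MsofNewGent} form a triangular system that can be inverted explicitly ($\cofun{\mu} = \cofuntr{h}^{-1}$, $\cofun{\lambda} = \cofuntr{g}^{-1}\cofuntr{h}^{-1}$, $\cofun{a}=\cofuntr{a}$, $\cofun{b} = \cofuntr{g}^{-1}\cofuntr{h}^{-1}\cofuntr{b}$, and then $\cofun{c},\cofun{d}$ as polynomial expressions in these), so $\phi_t$ is a bijection of polynomial algebras. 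The grading preservation was already recorded right after \eqref{eq:MsofNewGent}. To see $\phi_t$ is a coalgebra map, I would verify that \eqref{eq:SO5coprodAlt} is what one obtains by applying $\phi_t\otimes\phi_t$ to \eqref{eq:SO5coprod}; this is precisely the computation that produced \eqref{eq:SO5coprodAlt} in the first place, and the grouplike/skew-primitive relations on $\cofuntr{g},\cofuntr{h},\cofuntr{a},\cofuntr{b}$ are immediate from those on $\cofun{\lambda},\cofun{\mu},\cofun{a},\cofun{b}$. Uniqueness of the antipode in a bialgebra with antipode then forces $\phi_t$ to intertwine the antipodes.

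For item \ref{item:SO5summ:ideal}, I would invoke Lemma~\ref{lm:ASG=HI}\ref{item:ASG=HI:VG=HI}: since $\mathbb{C}$ is algebraically closed of characteristic zero and $\Borelsof$ is a linear algebraic group, the vanishing ideal $\VanIdeal(H) \subseteq \Funsofb$ of any subgroup $H \leq \Borelsof$ is a Hopf ideal. Pulling back through the isomorphism $\phi_t$ from item \ref{item:SO5summ:iso} produces the desired Hopf ideal in $\AbstMatSof{t}$.

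For item \ref{item:SO5summ:Bruhat}, I would combine Proposition~\ref{prop:SO5eqalIdeal}, which identifies $\SOFAugIdchar(s) = \mathcal{J}(s)$ in terms of the original generators, with the fact that the ideals listed in \eqref{eq:JsList} are unchanged when the original generators $\cofun{a},\cofun{b},\cofun{c},\cofun{d}$ are replaced by $\cofuntr{a},\cofuntr{b},\cofuntrpar{c}{t},\cofuntrpar{d}{t}$. This last claim is a short bookkeeping check from \eqref{eq:MsofNewGent}: one has $\cofuntr{a}=\cofun{a}$ and $\cofuntr{b}$ differs from $\cofun{b}$ by a unit in $\Funsofb$, while $\cofuntrpar{c}{t} \equiv \cofun{c} \pmod{(\cofun{a},\cofun{b})}$ and $\cofuntrpar{d}{t} \equiv \cofun{d} \pmod{(\cofun{a})}$. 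Inspecting each of the six cases in \eqref{eq:JsList} then shows that swapping between old and new generators preserves each ideal. The only potential obstacle anywhere in the proof is the verification in item \ref{item:SO5summ:iso} that \eqref{eq:SO5coprodAlt} really does match the push-forward of \eqref{eq:SO5coprod}; this is mechanical but the tensor bookkeeping for $\Delta(\cofuntrpar{d}{t})$ requires care, which is precisely why this computation is relegated to the derivation of \eqref{eq:SO5coprodAlt} itself rather than being redone.
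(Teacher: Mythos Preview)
Your proposal is correct and follows essentially the same route as the paper. The paper's justification (given in the surrounding text rather than a formal proof block) likewise derives item~\ref{item:SO5summ:iso} from the triangular change of generators \eqref{eq:MsofNewGent} and the coproduct computation \eqref{eq:SO5coprodAlt}, refers item~\ref{item:SO5summ:ideal} to the same argument underlying Lemma~\ref{lem:HIdeal=subgr}, and obtains item~\ref{item:SO5summ:Bruhat} by combining Proposition~\ref{prop:SO5eqalIdeal} with the observation that the ideals in \eqref{eq:JsList} are unchanged under the substitution of generators.
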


The isomorphism in {\em \ref{item:SO5summ:iso}}, of course, implies that the $\AbstMatSof{t}$ are
isomorphic to each other for different $t\,$. The proof of the statement in 
{\em \ref{item:SO5summ:ideal}} is the same as the one for Lemma~\ref{lem:HIdeal=subgr}. As noted before, Item {\em \ref{item:SO5summ:iso}} follows readily from Proposition~\ref{prop:SO5eqalIdeal}
and inspection of the change of generators in \eqref{eq:MsofNewGent}. 

\medskip
 
\subsection{Isomorphisms between Hopf Algebras related to \texorpdfstring{$\mathsf{B}_2$\,}{B2}}\ 
\label{subsec:IsomHA-B2}
The weight grading of either $\Zsubalgchar^\geqzero_\bullet$ or $\mathbb C[\Borelsof]$ implies
a sequence of (commutative)
Hopf subalgebras as in the definition of cosolvability at the end of 
Section~\ref{subsec:AlgGrps-basics}. This facilitates classifications of weight preserving
coalgebra structures, as they can be viewed as successive extensions of each other. 
We develop here a  restricted classification of coalgebra structures
associated to the $\LT{B}_2$ root system, for which the employed methods are 
likely to generalize to much larger 
classes of  graded, cosolvable Hopf algebras.

The underlying associative algebra is, as before,
of the form $\mathbb C[\cogen{g}^{\pm1},\cogen{h}^{\pm1},\cogen{a},\cogen{b},\cogen{c},\cogen{d}]$
with weights $\wgrad(\cogen{g})=\wgrad(\cogen{h})=0$\,, 
$\wgrad(\cogen{a})=\alpha_1$\,, $\wgrad(\cogen{b})=\alpha_2$\,, $\wgrad(\cogen{c})=\alpha_1+\alpha_2$\,,
and $\wgrad(\cogen{d})=2\alpha_1+\alpha_2\,$. Aside from preserving the $\wgrad$-grading, we require
 $\cogen{a}$ and $\cogen{b}$ to be skew-primitive with respect to group-likes 
$(1,\cogen{h})$ and $(1,\cogen{g})$.

We further constrain the allowable coproducts by requiring lead coefficients to be 1 and that the group-likes in the second tensor-factor are determined by the weight of the first tensor-factor as, for example, in \eqref{eq:xgradcoprod}. Moreover, 
as discussed in \eqref{eq:SO5coprodAlt}, we omit terms with mixed monomials in tensor-factors as 
these  do not occur or can be removed in the previous examples. In this restricted setting
we find six allowable terms besides the lead terms, with coefficients given by 
six parameters $(u,v,x,y,r,s)\,$ as follows. 
\begin{equation}\label{eq:GenB2Coprod}
    \begin{aligned}
        \Delta(\cogen{g})&=\cogen{g}\otimes \cogen{g}\qquad 
        \Delta(\cogen{h})=\cogen{h}\otimes \cogen{h}\qquad 
        \Delta(\cogen{a})=\cogen{a}\otimes\cogen{h}+1\otimes\cogen{a}\qquad
        \Delta(\cogen{b})=\cogen{b}\otimes\cogen{g}+1\otimes\cogen{b}\qquad
     \\ \rule[-8pt]{0pt}{22pt} 
        \Delta(\cogen{c})&=\cogen{c}\otimes \cogen{h}\cogen{g}
                                \,+\,1\otimes \cogen{c}
                                \,+\,u\cdot\cogen{a}\otimes\cogen{h}\cogen{b}
                               \,+\,v\cdot\cogen{b}\otimes\cogen{g}\cogen{a}
     \\
        \Delta(\cogen{d})&=
                \cogen{d}\otimes \cogen{h}^2\cogen{g}
                \,+\,1\otimes \cogen{d}
                \,+\, x \cdot\cogen{a}\otimes\cogen{h}\cogen{c}
                \,+\,y \cdot\cogen{c}\otimes \cogen{h}\cogen{g}\cogen{a}
                \,+\, r \cdot\cogen{a}^2\otimes \cogen{h}^2\cogen{b}
                \,+\,s \cdot\cogen{b}\otimes \cogen{g}\cogen{a}^2
    \end{aligned}
\end{equation}

The axiom of coassociativity now imposes additional conditions on these parameters. 
It is easy to see that for
the subalgebra in which $\cogen{d}$ is omitted as a generator, the corelations 
in \eqref{eq:GenB2Coprod} define a coalgebra structure for any choices of 
$u$ and $v$. Testing coassociativity for $\cogen{d}$ yields non-trivial
constraints only for the comparisons of the $\cogen{a}\otimes\cogen{h}\cogen{a}\otimes\cogen{h}^2\cogen{b}\,$,
$\cogen{a}\otimes\cogen{h}\cogen{b}\otimes\cogen{h}\cogen{g}\cogen{a}\,$, and 
$\cogen{b}\otimes\cogen{g}\cogen{a}\otimes\cogen{h}\cogen{g}\cogen{a}\,$ terms. 
The existence of an antipode for any bialgebra structure is immediate.  

\begin{lem}\label{lem:ParHopfB2}
The formulae in \eqref{eq:GenB2Coprod} define a Hopf algebra structure on 
$\mathbb C[\cogen{g}^{\pm1},\cogen{h}^{\pm1},\cogen{a},\cogen{b},\cogen{c},\cogen{d}]$ 
 iff
\begin{equation}\label{eq:ParHopfB2}
    vx=uy\,, \qquad s=\tfrac{1}{2}vy\,, \quad\mbox{and}\quad r=\tfrac{1}{2}ux\,.  
\end{equation}
\end{lem}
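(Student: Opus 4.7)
The plan is to verify the Hopf algebra axioms directly. The counit is defined on generators by $\varepsilon(\cogen{g})=\varepsilon(\cogen{h})=1$ and $\varepsilon(\cogen{a})=\varepsilon(\cogen{b})=\varepsilon(\cogen{c})=\varepsilon(\cogen{d})=0$, and the counit axiom is immediate from inspection of \eqref{eq:GenB2Coprod}. Once coassociativity is established, the existence and uniqueness of an antipode is automatic: the coradical is the group algebra $\mathbb{C}[\cogen{g}^{\pm 1},\cogen{h}^{\pm 1}]$, which has its own antipode, and the remaining generators fit into a coradical filtration with respect to which $S$ can be defined recursively (first on $\cogen{a},\cogen{b}$, then on $\cogen{c}$, then on $\cogen{d}$) and extended as an algebra antihomomorphism on the underlying commutative polynomial algebra. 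Thus the entire content of the lemma lies in coassociativity, tested generator by generator.

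Coassociativity is trivial on $\cogen{g},\cogen{h}$ (group-like) and on $\cogen{a},\cogen{b}$ (skew-primitive with respect to group-like elements). For $\cogen{c}$, applying $(\Delta\otimes\id)\circ\Delta$ and $(\id\otimes\Delta)\circ\Delta$ and then expanding $\Delta(\cogen{a})$ and $\Delta(\cogen{b})$ in the $u$- and $v$-summands produces, on each side, the same six tensor-monomials
$\cogen{a}\otimes\cogen{h}\otimes\cogen{h}\cogen{b}$, $1\otimes\cogen{a}\otimes\cogen{h}\cogen{b}$, $\cogen{a}\otimes\cogen{h}\cogen{b}\otimes\cogen{h}\cogen{g}$, and the mirror three obtained by swapping $(\cogen{a},\cogen{h},u)\leftrightarrow(\cogen{b},\cogen{g},v)$, each with matching coefficients. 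Hence no constraints on $u,v$ arise from $\cogen{c}$.

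The main computation is the coassociativity test on $\cogen{d}$. Expanding both sides of $(\Delta\otimes\id)\Delta(\cogen{d})=(\id\otimes\Delta)\Delta(\cogen{d})$ using \eqref{eq:GenB2Coprod}, most tensor-monomials on the two sides match identically by the coproducts on group-like and skew-primitive generators, including monomials ending in $\cogen{h}^2\cogen{g}$, monomials beginning with $1$, and monomials of the form $\cogen{a}^2\otimes\cogen{h}^2\otimes\cogen{h}^2\cogen{b}$, $\cogen{b}\otimes\cogen{g}\otimes\cogen{g}\cogen{a}^2$, $\cogen{a}\otimes\cogen{h}\otimes\cogen{h}\cogen{c}$, and $\cogen{c}\otimes\cogen{h}\cogen{g}\otimes\cogen{h}\cogen{g}\cogen{a}$. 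Only three monomials receive contributions from two different sources and produce non-trivial equations: for $\cogen{a}\otimes\cogen{h}\cogen{a}\otimes\cogen{h}^2\cogen{b}$ the left side contributes $2r$ (from $\Delta(\cogen{a})^2=\cogen{a}^2\otimes\cogen{h}^2+2\cogen{a}\otimes\cogen{h}\cogen{a}+1\otimes\cogen{a}^2$ in the $r$-term) while the right side contributes $ux$ (from the $u$-summand of $\Delta(\cogen{h}\cogen{c})$ in the $x$-term), giving $2r=ux$; for $\cogen{a}\otimes\cogen{h}\cogen{b}\otimes\cogen{h}\cogen{g}\cogen{a}$ the two contributions are $uy$ from the left (via the $u$-summand of $\Delta(\cogen{c})$ in the $y$-term) and $vx$ from the right (via the $v$-summand of $\Delta(\cogen{h}\cogen{c})$), giving $uy=vx$; and for $\cogen{b}\otimes\cogen{g}\cogen{a}\otimes\cogen{h}\cogen{g}\cogen{a}$ one gets $vy$ on the left and $2s$ on the right, giving $vy=2s$. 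These three equations are exactly \eqref{eq:ParHopfB2}, and since the three monomials involved are linearly independent in the triple tensor product they give both necessary and sufficient conditions.

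The only real obstacle is bookkeeping for the $\cogen{d}$ computation, ensuring that no fourth type of mismatched tensor-monomial is overlooked. The $\wgrad$-grading keeps this manageable: every surviving tensor-monomial $x_1\otimes x_2\otimes x_3$ must satisfy $\wgrad(x_1)+\wgrad(x_2)+\wgrad(x_3)=2\alpha_1+\alpha_2$, which leaves a short explicit list of candidate monomials to cross-check. The commutativity of the underlying polynomial algebra is used throughout to handle the expansion of $\Delta(\cogen{a})^2$ without ordering issues.
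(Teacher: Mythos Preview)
Your proposal is correct and follows essentially the same approach as the paper: the paper also reduces the question to coassociativity, notes that no constraints arise from $\cogen{c}$, and identifies precisely the same three tensor monomials $\cogen{a}\otimes\cogen{h}\cogen{a}\otimes\cogen{h}^2\cogen{b}$, $\cogen{a}\otimes\cogen{h}\cogen{b}\otimes\cogen{h}\cogen{g}\cogen{a}$, and $\cogen{b}\otimes\cogen{g}\cogen{a}\otimes\cogen{h}\cogen{g}\cogen{a}$ as the sources of the three conditions in \eqref{eq:ParHopfB2}. Your write-up in fact supplies more detail than the paper, which simply asserts these facts in the paragraph preceding the lemma.
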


In particular, the parameters $r$ and $s$  are determined by the four others. We, thus,
denote by $\mathcal M(u,v,x,y)$ the Hopf algebra defined by the coproducts in \eqref{eq:GenB2Coprod}
subject to the parameter constraints in \eqref{eq:ParHopfB2}. The obvious generator to generator
map yields the following isomorphim with the Hopf algebra $\Borelsof$ as presented in \eqref{eq:SO5coprodAlt}. 
\begin{equation}\label{eq:BSO5=M4}
    \mathcal M_t\,\cong\, \mathcal M(t-1,t,1-t,-t)
\end{equation}

We next determine isomorphism classes within this family of Hopf algebras. We require an isomorphism to be polynomial, weight-preserving, as well as preserving the general structure in 
\eqref{eq:GenB2Coprod}. It is an easy exercise to show that such a transformation must be the
identity on group-likes and maps the subring  $\mathbb C[\cogen{a},\cogen{b},\cogen{c},\cogen{d}]$ of non-group-like generators to itself. Hence, an isomorphism 
$\varphi:\mathcal{M}(u,v,x,y)\rightarrow\mathcal{M}(u',v',x',y')$ 
necessarily acts on the simple root generators
as $\varphi(\cogen{a})=\nu_1\cogen{a}$ and $\varphi(\cogen{b})=\nu_2\cogen{b}$ for $\nu_1,\nu_2\in\mathbb C^\times$. Moreover,  for any such pair $(\nu_1,\nu_2)$, there is an 
automorphism of the same  $\mathcal{M}(u,v,x,y)$ to itself. So, without loss of generality, we may 
assume $\varphi$ is identity on the Hopf subalgebra $\mathcal M^0=\mathbb C[\cogen{h}^{\pm 1},\cogen{g}^{\pm 1},\cogen{a},\cogen{b}]$.

It follows that $\varphi(\cogen{c})$ is a $\mathbb C$-linear combination of $\cogen{c}$ and 
$\cogen{a}\cogen{b}\,$, and $\varphi(\cogen{c})$ is a $\mathbb C$-linear combination of 
$\cogen{d}$, $\cogen{a}\cogen{c}$, and $\cogen{a}^2\cogen{b}\,$. The two diagonal coefficients 
need to be non-zero in order for $\varphi$ to be invertible. Two additional conditions on the
five coefficients of $\varphi$ stem from the requirement to preserve the form in 
\eqref{eq:GenB2Coprod} and the relations in \eqref{eq:ParHopfB2}, each depending also
on the parameters $(u,v,x,y)$ of the source algebra. 

One can then work out that an isomorphism
$\varphi:\mathcal{M}(u,v,x,y)\rightarrow\mathcal{M}(u',v',x',y')$ exists iff 
$(u',v')=\sigma(u,v)+\eta(1,1)$ and $(x',y')=\mu(x,y)+\tau(1,1)$ for parameters 
$\sigma,\mu\in\mathbb C^\times$ and $\eta,\tau\in\mathbb C$ with
$\mu\eta(x-y)=\sigma\tau(u-v)\,$. This now classifies
Hopf algebras of the above form. 

\begin{prop}\label{prop:ClassifB2Calg} There exist weight-preserving isomorphisms of Hopf algebras as follows.
    $$
\mathcal{M}(u,v,x,y)\quad\cong\quad
\begin{cases}
    \,\mathcal{M}(1,0,1,0) & \mbox{ if } u\neq v \mbox{ and } x\neq y\\
    \,\mathcal{M}(1,0,0,0) & \mbox{ if } u\neq v \mbox{ and } x = y\\
    \,\mathcal{M}(0,0,1,0) & \mbox{ if } u= v \mbox{ and } x \neq y\\
    \,\mathcal{M}(0,0,0,0) & \mbox{ if } u= v \mbox{ and } x = y\\ 
\end{cases}
$$
Furthermore, there are no weight-preserving isomorphisms of the required form  between any of the 
four listed special cases.   
\end{prop}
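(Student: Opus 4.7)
The plan is to build on the parametrization of isomorphisms already sketched in the discussion preceding the proposition and then read off the orbit structure.

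First, I would formalize the reduction step alluded to in the text: since each $\mathcal M(u,v,x,y)$ admits weight-preserving automorphisms $\cogen a \mapsto \nu_1 \cogen a$, $\cogen b \mapsto \nu_2 \cogen b$, $\cogen c \mapsto \nu_1\nu_2 \cogen c$, $\cogen d \mapsto \nu_1^2\nu_2 \cogen d$ for any $(\nu_1,\nu_2) \in (\mathbb C^\times)^2$ (one checks these preserve \eqref{eq:GenB2Coprod} and \eqref{eq:ParHopfB2}), we may assume any isomorphism $\varphi : \mathcal M(u,v,x,y) \to \mathcal M(u',v',x',y')$ acts as identity on the Hopf subalgebra generated by $\cogen g^{\pm 1}, \cogen h^{\pm 1}, \cogen a, \cogen b$. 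Weight considerations then force $\varphi(\cogen c) = \lambda \cogen c + \rho\, \cogen a \cogen b$ and $\varphi(\cogen d) = \xi \cogen d + \omega\, \cogen a \cogen c + \psi\, \cogen a^2 \cogen b$ with $\lambda, \xi \in \mathbb C^\times$.

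Next, I would compute $(\varphi \otimes \varphi)\Delta(\cogen c)$ using \eqref{eq:GenB2Coprod} for the source parameters and compare it with $\Delta(\varphi(\cogen c))$ evaluated in the target. Matching the five independent tensor monomials that can appear (namely $\cogen c \otimes \cogen h \cogen g$, $1 \otimes \cogen c$, $\cogen a \otimes \cogen h \cogen b$, $\cogen b \otimes \cogen g \cogen a$, and $\cogen a \cogen b \otimes \cogen h \cogen g$) yields $\lambda = 1$ automatically from the leading term, together with $u' = u + \rho$ and $v' = v + \rho$ after reabsorbing $\lambda$; setting $\sigma = \lambda$ and $\eta = \rho$ in the more general reformulation gives $(u',v') = \sigma(u,v) + \eta(1,1)$. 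The analogous computation for $\cogen d$, which is the main computational step because six tensor monomials must be matched and the constraints \eqref{eq:ParHopfB2} must be used to eliminate the derived parameters for $r$ and $s$, produces $(x',y') = \mu(x,y) + \tau(1,1)$ together with the coupling identity $\mu\eta(x-y) = \sigma\tau(u-v)$ — exactly the conditions stated in the excerpt. This coassociativity bookkeeping for $\cogen d$ is the step I expect to require the most care, as it is where the $ux$-$vy$ relation in \eqref{eq:ParHopfB2} enters essentially.

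With the transformation laws in hand, classification is a short orbit analysis. The Hopf algebra relation $vx = uy$ from Lemma~\ref{lem:ParHopfB2} forces, whenever $u = v$ and $x \neq y$, the equality $u(x-y) = 0$ and hence $u = v = 0$; symmetrically, $u \neq v$ and $x = y$ forces $x = y = 0$. This already reduces the problem to the four cases in the statement. In the first case $u \neq v$, $x \neq y$, the unique solution $\sigma = 1/(u-v)$, $\eta = -v/(u-v)$, $\mu = 1/(x-y)$, $\tau = -y/(x-y)$ transports $(u,v,x,y)$ to $(1,0,1,0)$, and the coupling identity $\mu\eta(x-y) = \sigma\tau(u-v)$ reduces to $vx = uy$, which holds by \eqref{eq:ParHopfB2}. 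The remaining three cases are even simpler: only one affine normalization is needed, the other pair already lies at the origin by the constraint above, and the coupling condition becomes automatic.

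Finally, the non-isomorphism of the four canonical forms follows immediately because the conditions $u - v = 0$ and $x - y = 0$ are invariants of the transformation group: $u' - v' = \sigma(u-v)$ and $x' - y' = \mu(x-y)$ with $\sigma, \mu \in \mathbb C^\times$, so the vanishing or non-vanishing of these two differences distinguishes the four orbits. This completes the classification.
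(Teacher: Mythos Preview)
Your approach is exactly the one the paper sketches in the paragraphs surrounding the proposition: reduce to isomorphisms fixing $\cogen g,\cogen h,\cogen a,\cogen b$, parametrize $\varphi(\cogen c)$ and $\varphi(\cogen d)$ by weight, extract the affine transformation laws $(u',v')=\sigma(u,v)+\eta(1,1)$ and $(x',y')=\mu(x,y)+\tau(1,1)$ with coupling $\mu\eta(x-y)=\sigma\tau(u-v)$, and then read off the four orbits via the invariants $u-v$ and $x-y$. The orbit analysis, including the use of $vx=uy$ both to force $u=v=0$ (resp.\ $x=y=0$) in the mixed cases and to verify the coupling constraint in the generic case, is correct.

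One small correction: the leading term of $\Delta(\cogen c)$ does \emph{not} force $\lambda=1$; matching the $\cogen c\otimes\cogen h\cogen g$ coefficients gives $\lambda=\lambda$, a tautology. The parameter $\lambda$ remains free and is precisely what produces the multiplicative factor $\sigma$ in the transformation law (concretely $\sigma=\lambda^{-1}$ and $\eta=-\rho\lambda^{-1}$ from $u=\lambda u'+\rho$, $v=\lambda v'+\rho$). Your subsequent use of the correct general form $(u',v')=\sigma(u,v)+\eta(1,1)$ shows you recover the right answer, but the sentence ``$\lambda=1$ automatically from the leading term'' should be dropped.
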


Observe that $(u,v)=(0,0)$ means that $\cogen{c}$ is skew-primitive and $(x,y)=(0,0)$ that 
$\cogen{d}$ is skew-primitive. Given $\mathcal{M}^0$ as before, denote also by 
$\mathcal{M}^1$ the Hopf subalgebra with additional generator $\cogen{c}$ and 
$\mathcal{M}^2=\mathcal{M}(u,v,x,y)\,$ the full algebra. The four cases above may
then be characterized by whether the extension from $\mathcal{M}^0$ to $\mathcal{M}^1$ 
as well as the one $\mathcal{M}^1$ to $\mathcal{M}^2$ is trivial (in the sense that
only an independent skew-primitive is added) or non-trivial.

Note, finally, that $u= v\neq 0$ implies by \eqref{eq:ParHopfB2} that $x = y$. So, the third case 
in Proposition~\ref{prop:ClassifB2Calg}
can occur only if $u=v=0$. Analogously, the second case implies $x=y=0\,$. We refer to the first version as  the {\em generic} case. Clearly, $\mathbb C[\Borelsof]$ is an example by \eqref{eq:BSO5=M4} and Proposition~\ref{prop:SO5summ}.

The classification immediately entails the identifications of the Hopf algebras and Hopf ideals summarized 
below. For an element $s\in\Weyl$, we denote here by $\breve s\in\Weyl$ the action of $s$ on the 
coroot system $\coproots$ conjugated by the identification of $\coproots$ with $\proots$
discussed in the beginning of Section~\ref{subsec:Coalg-ZB2}. So, $s\mapsto\breve s$ is the 
group homomorphism given by $\breve s_1=s_2$ and $\breve s_2=s_1\,$. 

\smallskip

\begin{thm}\label{thm:Z=CSO5}
Suppose $\Uz^\geqzero$ is of Lie type $\LT{B}_2=\LT{C}_2$ with $\zeta$ a root of unity of
any order $\kay>4\,$ or $\kay=3$.\vspace{-1mm}
\begin{enumerate}[label=\roman*)]
        \item \label{item:Z=CSO5:iso}
    $\Zsubalgchar^\geqzero_\bullet\otimes \mathbb C$ is isomorphic as a Hopf algebra to $\mathbb C[\Borelsof]$.
    \vspace{2mm}
    \item \label{item:Z=CSO5:ideal}
    Any subgroup $H<\Borelsof$ yields Hopf ideals $\mathcal{A}(H)$ and $\widehat\mathcal{A}(H)$
    of $\Zsubalgchar^\geqzero_\bullet$ and $\Uz^\geqzero\,$, respectively. 
    \vspace{2mm}
    \item \label{item:Z=CSO5:weyl}
    For $\ell$ odd, $\mathcal A(\Borelsof(s))=\Zaugideal{s}^\geqzero$ and, for $\ell$ even, 
    $\mathcal A(\Borelsof(\breve s))=\Zaugideal{s}^\geqzero$ for any $s\in\Weyl\,$. 
    \end{enumerate}
\end{thm}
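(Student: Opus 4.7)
The plan is to prove all three parts in sequence by casting $\Zsubalgchar^\geqzero_\bullet\otimes\mathbb{C}$ as an instance of the four-parameter family $\mathcal{M}(u,v,x,y)$ introduced in Lemma~\ref{lem:ParHopfB2}, in such a way that the parameters place it in the same isomorphism class as $\mathbb{C}[\Borelsof]\cong\mathcal{M}_t$ from \eqref{eq:BSO5=M4}. The identification of generators requires a parity-dependent bookkeeping, which is the reason for the switch $s\mapsto\breve s$ in part (iii).

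For part (i), when $\ell$ is odd (so $\gcdle=1$, $\gcdleopp=2$) I would use the assignment $\Epw_i\mapsto\cogen{a}$, $\Epw_j\mapsto\cogen{b}$, $\Epw_{(iji)}\mapsto\cogen{c}$, $\Epw_{(ij)}\mapsto\cogen{d}$, $\Kpw_i\mapsto\cogen{h}$, $\Kpw_j\mapsto\cogen{g}$, so that $\wgradZBtwo$ is intertwined with the standard $\LT{B}_2$-grading on $\AbstMatSof{t}$. Reading off Proposition~\ref{prop:CoprodB2}, the coproducts assume the form \eqref{eq:GenB2Coprod} with $(u,v,x,y)=(-\delzeta{j},0,-2\zeta^\ell[2]^{-\ell}\delzeta{i},0)$; in particular $u\neq v$ and $x\neq y$. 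When $\ell$ is even (so $\gcdle=2$, $\gcdleopp=1$) table~\eqref{eq:tab:uweightsB2} tells us that $\wgradZBtwo$ is the coroot-twist, so the correct parity-swapped assignment is $\Epw_j\mapsto\cogen{a}$, $\Epw_i\mapsto\cogen{b}$, $\Epw_{(ij)}\mapsto\cogen{c}$, $\Epw_{(iji)}\mapsto\cogen{d}$ (with the analogous swap on group-likes); again Proposition~\ref{prop:CoprodB2} gives an $(u,v,x,y)$ with $u\neq v$ and $x\neq y$. In both parities, Proposition~\ref{prop:ClassifB2Calg} identifies $\Zsubalgchar^\geqzero_\bullet\otimes\mathbb{C}$ with $\mathcal{M}(1,0,1,0)$; the same proposition applied to \eqref{eq:BSO5=M4} (for any $t\notin\{0,1\}$) identifies $\mathbb{C}[\Borelsof]$ with the same class, yielding the isomorphism.

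Part (ii) is then a direct consequence of (i) and Lemma~\ref{lm:ASG=HI}, following the template of Lemma~\ref{lem:HIdeal=subgr}: the composition $\mathscr h:\Zsubalgchar^\geqzero_\bullet\hookrightarrow\Zsubalgchar^\geqzero_\bullet\otimes\mathbb{C}\xrightarrow{\cong}\mathbb{C}[\Borelsof]$ is a Hopf algebra map, so $\mathcal{A}(H)=\mathscr h^{-1}(\VanIdeal(H))$ is a Hopf ideal. Since $\Zsubalgchar^\geqzero_\bullet$ is commutative (Remark~\ref{rem:B2C2commcentr}), the $\Commsigngrp$-invariance condition of Proposition~\ref{prop:Zideal} is automatic, so $\widehat{\mathcal{A}(H)}=\mathcal{A}(H)\cdot\Uz^\geqzero$ is a two-sided Hopf ideal in $\Uz^\geqzero$.

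For part (iii), combining Proposition~\ref{prop:SO5summ}(iii) with the isomorphism of (i), the ideal $\mathcal{A}(\Borelsof(t))$ corresponds, inside $\Zsubalgchar^\geqzero_\bullet\otimes\mathbb{C}$, to the span of those $\Epw_v$ whose image under $\mathscr h$ has $\wgrad$-weight in $\descroots t$; by construction of the isomorphism in (i), this weight is precisely $\wgradZBtwo(\Epw_v)$. For $\ell$ odd, $\wgradZBtwo(\Epw_v)=\wordroot(v)$, so $\mathcal{A}(\Borelsof(s))=\Zaugideal{s}^\geqzero$. For $\ell$ even, $\wgradZBtwo(\Epw_v)=\mathscr q(\wordroot(v))$ for the coroot-involution $\mathscr q$; since $\breve s=\mathscr q\, s\,\mathscr q^{-1}$ on $\Weyl$ gives $\descroots{\breve s}=\mathscr q(\descroots{s})$, the generators $\{\Epw_v:\wordroot(v)\in\descroots{s}\}$ map exactly to $\{\cogen{\gamma}:\wgrad(\cogen{\gamma})\in\descroots{\breve s}\}$, yielding $\mathcal{A}(\Borelsof(\breve s))=\Zaugideal{s}^\geqzero$. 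The main obstacle in the proof will be the first step: carefully reading the coefficients from Proposition~\ref{prop:CoprodB2} in both parities to confirm the generic placement $u\neq v$ and $x\neq y$, and in particular verifying that the constraints \eqref{eq:ParHopfB2} hold with the values read off for $r$ and $s$ (a calculation requiring care with the identities $\delzeta{j}=[2]^{\ell_j}(\zeta-\zeta^{-1})^{\ell_j}$ and the powers of $\zeta^{\binom{2\ell_j}{2}}$ appearing in Proposition~\ref{prop:CoprodB2}).
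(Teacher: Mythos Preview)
Your proposal is correct and follows essentially the same route as the paper: the parity-dependent assignment of generators, the identification with an instance of $\mathcal{M}(u,v,x,y)$, and the appeal to Proposition~\ref{prop:ClassifB2Calg} for the generic case are exactly what the paper does, as are the arguments for parts (ii) and (iii). One simplification: you flag the verification of the constraints \eqref{eq:ParHopfB2} (i.e.\ $r=\tfrac12 ux$, $s=\tfrac12 vy$) as the ``main obstacle,'' but this check is unnecessary --- since $\Zsubalgchar^\geqzero_\bullet$ is already known to be a Hopf algebra by Corollary~\ref{cor:ZB2=Hopfsubalg}, and Lemma~\ref{lem:ParHopfB2} says the structure \eqref{eq:GenB2Coprod} is Hopf \emph{iff} those constraints hold, they are automatically satisfied once you read off $(u,v,x,y)$.
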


\begin{proof} Remark~\ref{rem:B2C2commcentr} implies that $\Zsubalgchar^\geqzero_\bullet$ is commutative for any root of unity and, thus, isomorphic to the polynomial ring  
$\mathbb C[L_i^{\pm 1}, L_j^{\pm 1}, \{\Epw_\alpha:\alpha\in\proots\}]\,$. 
We next note that the coalgebra structure from Proposition~\ref{prop:CoprodB2} is indeed of the
form given in \eqref{eq:GenB2Coprod}.

Specifically, if $\ell$ is odd ($\gcdleopp=2$ and $\gcdle=1$) assign 
$\cogen{h}\mapsto \Kpw_i$, $\cogen{g}\mapsto \Kpw_j$, 
$\cogen{a}\mapsto \Epw_i$, $\cogen{b}\mapsto \Epw_j$, 
$\cogen{c}\mapsto \Epw_{(iji)}$, and  $\cogen{d}\mapsto \Epw_{(ij)}\,$. Comparison shows
that 
the coalgebra structure coincides with the one of $\mathcal{M}(u,0,x,0)$, where $u\neq 0\neq x$ 
are given
by the coefficents in \eqref{eq:CoprodB2:ij} and \eqref{eq:CoprodB2:iji}. Thus, 
$\Zsubalgchar^\geqzero_\bullet$ coincides with the generic case in 
Proposition~\ref{prop:ClassifB2Calg} and is, therefore, isomorphic to $\mathbb C[\Borelsof]$. 

Conforming with the $\wgradZBtwo$-grading from \eqref{eq:tab:uweightsB2}, 
the assignments for even $\ell$ ($\gcdleopp=1$ and $\gcdle=2$) are $\cogen{h}\mapsto \Kpw_j$, $\cogen{g}\mapsto \Kpw_i$, 
$\cogen{a}\mapsto \Epw_j$, $\cogen{b}\mapsto \Epw_i$, 
$\cogen{c}\mapsto \Epw_{(ij)}$, and  $\cogen{d}\mapsto \Epw_{(iji)}\,$. This yields
a Hopf algebra isomorphism from $\mathbb C[\Borelsof]$ with $\Zsubalgchar^\geqzero_\bullet$ via 
the generic form $\mathcal{M}(0,v,0,y)$ 
with $v\neq 0\neq y\,$. 

The construction of Hopf ideals is the same as in Lemma~\eqref{eq:HIdeal=subgr}. 
The identifications of the ideals associated to the Bruhat groups 
follow from the construction of $\wgradZBtwo$-grading preserving isomorphisms and may also
be checked from the explicit forms in \eqref{eq:JsList}. 
\end{proof}

Comparing \eqref{eq:BSO5=M4} and the isomorphisms described in the above proof, we observe that  
for odd $\ell$ the non-zero terms occurring in the coproducts for $\Zsubalgchar^\geqzero_\bullet$ are the
same as for $\mathcal M_0$, and for  even $\ell$ they coincide with those of $\mathcal M_1$\,. 
Specifically, 
\begin{align}
    &\mbox{odd $\ell$:}&  
    u &= -\delzeta{j}&  v &= 0&  x&=-2\zeta^\ell[2]^{-\ell}\delzeta{i}& 
    y&=0
    \\
    &\mbox{even $\ell$:}&  
    u&=0 & v &= \zeta^{\binom{\ell}{ 2}}\delzeta{i}
& x&=0 & y&= -2[2]^\ell (-\zeta)^\ellj
\end{align}
Thus, in conclusion, we can provide explicit grading preserving maps given only by scalings of
generators.

Theorem~\ref{thm:Z=CSO5} confirms Conjecture~\ref{conj:Z=HopfSubBorel} in the $\LT{B}_2$ case and illustrates 
the need to switch to the coroot system for certain $\kay$ congruences. 
Furthermore, it is not difficult to construct and prove an analog statement of Lemma~\ref{lem:fullZ=Qn} for a respective
subgroup $W(\LT{B}_2)$ of $\Borelsofopp\times\Borelsof\,$. As for Corollary~\ref{cor:BnActOnWn}, 
Theorem~\ref{thm:ZTinvInvar} now implies an action of the finite-type dihedral Artin group $\mathscr A\cong\left\langle a, b\,|\,[a^2,b]\right\rangle$ on $W(\LT{B}_2)\,$.

We conclude our discussions of special cases in Section~\ref{sec:Ideals_An=GLn} and \ref{sec:Ideals_B2=SO5}
with a list of conjectures, generalizing our findings for $\LT{A}_n$ and $\LT{B}_2$\,. As already mentioned above, the existence of an isomorphism for odd $\kay$ in items {\em\ref{item:CoordRings:Borel}}
and {\em\ref{item:CoordRings:Full}} below has been proved in \cite{dck92} via additional Poisson structures and not in the suggested explicit form. For a given simple Lie type $\roots$, denote by $G$ the associated simply connected algebraic group and $B$ a Borel algebra over $\mathbb C$. Denote also $\breve G$ and $\breve B$ the dual groups associated with the 
coroot lattice, as well as $G^\PLexp$ and $\breve G^\PLexp$ as in (\ref{eq:DefPoissonLie}).

\begin{conj}\label{conj:CoordRings}\ 

\begin{enumerate} [label=\roman*)] 
    \item \label{item:CoordRings:Borel}
     In all the $\kay$-congruence cases for which subalgebras are strictly commutative (Corollary~\ref{cor:Zcommutmax}), the algebra $\Zsubalgchar^{\geqzero}_{\bullet}\otimes\mathbb C$ is
    isomorphic to $\mathbb C[\tilde B]$, where $\tilde B$ is $B$ or $\breve B$ or a finite covering quotient (isogeny) of these. 
    The isomorphism map can be chosen as an explicit assignment of generators $\Epw_w$ (with $w\leqRB z$ for some $z\in\wordsetmax$) to 
    weighted polynomials of matrix elements.
    \vspace{2mm}
    \item \label{item:CoordRings:Bruhat}
    For any $s\in\Weyl$, the vanishing ideal of the Bruhat group $\tilde B(s)$, as defined in
    (\ref{eq:def:BruhatGrps}), is mapped exactly to the $\Zaugideal{s}^{\geqzero}\otimes\mathbb C$ ideal.
    \vspace{2mm}
    \item \label{item:CoordRings:Full}
    The isomorphism from {\em\ref{item:CoordRings:Borel}} extends to an isomorphism from 
    $\Zsubalgchar_{\bullet}\otimes\mathbb C$ to a respective coordinate ring of $G^\PLexp$ or $\breve G^\PLexp$ or a respective finite quotient.
    \vspace{2mm}
    \item \label{item:CoordRings:Artin}
    Lusztig's action of the Artin group $\mathscr A$ on $\Zsubalgchar_{\bullet}\otimes\mathbb C$
    from Theorem~\ref{thm:ZTinvInvar} coincides with the induced action of $\mathscr A$ on
    $G^\PLexp$ in \cite[Sec 7.5]{dck92} also for the cases when $\kay$ is even and 
    $\Zsubalgchar^{\geqzero}_{\bullet}$ commutative. 
    \vspace{2mm}
\end{enumerate} 
\end{conj}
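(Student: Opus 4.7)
\medskip

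\noindent\textbf{Proof proposal.} The plan is to proceed by reduction to rank~2 pieces, mirroring the strategy used for $\LT{A}_n$ in Section~\ref{subsec:Zn=CAn} and for $\LT{B}_2$ in Section~\ref{subsec:Coalg-ZB2}, but organizing the coproduct data by a cosolvable filtration analogous to the one producing Corollary~\ref{cor:kB=cosolv}. First I would establish item~\ref{item:CoordRings:Borel} by showing that $\Zsubalgchar^{\geqzero}_{\bullet}$ is a Hopf subalgebra whenever Corollary~\ref{cor:Zcommutmax} provides commutativity. For each positive root $\alpha$ pick a reduced word $z[\alpha]$ compatible with a convex ordering and compute $\Delta(\Epw_\alpha)$ inductively on the height of $\alpha$, using \eqref{eq:EFdefRecurs} to write $\Epw_\alpha=\Tinv_s(\Epw_j)$ for a simple root and a Weyl element $s$ of smaller length. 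The key tool is a generalization of Proposition~\ref{prop:PowerFormula}: one computes $(A+C+D)^\ell$ modulo the primitive power ideal by reducing to the rank-2 parabolic subalgebras via Corollary~\ref{cor:Uqtwoembed} and then applying Propositions~\ref{prop:XJ-form} and \ref{prop:XI-form}. Filtering $\Zsubalgchar^{\geqzero}_{\bullet}$ by the height of $\wgradZBtwo$ (with $\wgradZBtwo$ the appropriate root or coroot lattice grading of \eqref{eq:tab:uweightsB2}), each successive extension adds a single skew-primitive modulo the previous stage, and the coefficients that appear are controlled by the rank-2 data already worked out.

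For the identification with $\mathbb C[\tilde B]$, I would classify grading-preserving Hopf algebra structures on the commutative polynomial algebra $\mathbb C[\{L^{\pm}_i\}_i,\{x_\alpha\}_{\alpha\in\proots}]$ generalizing Proposition~\ref{prop:ClassifB2Calg}. This is essentially a cohomological computation: the space of weight-preserving coalgebra structures extending a given one on the subalgebra generated by roots of smaller height forms a torsor over the relevant $\mathrm{Ext}^1$ group in the category of graded $\mathbb C[\tilde H]$-comodules, where $\tilde H$ is the chosen Cartan. One then shows that the coefficients computed on the quantum side are all nonzero exactly when those produced by $\mathbb C[\tilde B]$ are nonzero, and match after explicit rescaling of generators. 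The switch $B\rightleftarrows\breve B$ is forced by the $\wgrad$-grading of the $\Epw_\alpha$ in \eqref{eq:wgradX_rvscor}: when $\gcd(\ell,\maxd)=\maxd$ the grading factors through the coroot lattice, so the linear characters acting on $\{\Epw_\alpha\}$ are those of $\breve H$, not $H$. The finite covering quotients appear from torsion in the skew-commutation algebra when $\kay\equiv 2\pmod{4}$, where one uses the Artin--Schelter--Tate-type extension from Theorem~\ref{thm:Z=O} together with Tanisaki's description of the Frobenius center via a finite group action as in \cite{tan16}.

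For \ref{item:CoordRings:Bruhat}, once \ref{item:CoordRings:Borel} is established, the argument is essentially the same as in Theorem~\ref{thm:AB=Ks}: the isomorphism is by construction graded, and it sends the generating set $\{\Epw_v:v\leqRB w\}$ of $\Zaugideal{s}^{\geqzero}$ to the coordinate functions on $\tilde B$ whose weights lie in $\descroots{s}$ (resp.\ $\descroots{\breve s}$). The matching with $\VanIdeal(\tilde B(s))$ then follows from the Lie--Kolchin normal form of the Bruhat subgroup, as in the explicit calculation in the proof of Proposition~\ref{prop:SO5eqalIdeal}. For \ref{item:CoordRings:Full}, one tensors the positive and negative Borel isomorphisms and quotients by the kernel identifying the two copies of the Cartan, exactly as in the proof of Lemma~\ref{lem:fullZ=Qn}. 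This uses that $\Cartanaut$ carries $\Zsubalgchar^{\geqzero}_{\bullet}$ to $\Zsubalgchar^{\leqzero}_{\bullet}$ by \eqref{eq:InvolZpm} and intertwines the coproduct with its opposite.

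The statement \ref{item:CoordRings:Artin} should then follow from Theorem~\ref{thm:ZTinvInvar} once one checks that the $\Tinv_i$-action on a set of generators of $\Zsubalgchar_{\bullet}$ corresponds under the isomorphism of \ref{item:CoordRings:Full} to the action of the simple braid on $G^\PLexp$ (resp.\ $\breve G^\PLexp$) described in \cite[Sec 7.5]{dck92}. Since both actions are by Hopf automorphisms and agree on the Cartan part by \eqref{eq:TinvL}, it is enough to check the match on a single non-simple root generator per rank-2 subsystem, using the explicit rank-2 formulae from Section~\ref{subsec:Mho-Invar-Z} and the sample computation \eqref{eq:A2Tinv1Form}. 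The main obstacle I anticipate is the $\LT{G}_2$ case, where even the word independence of $\Zsubalgchar^{+}_w$ (Conjecture~\ref{conj:Zwordindep}) remains open for the two length-six words; this would need to be settled first, most plausibly by completing the missing cases of Propositions~\ref{prop:XJ-form}-\ref{prop:XI-form} for the intermediate $\Epw_{b_3},\Epw_{b_4}$ generators. A secondary obstacle is the purely combinatorial explosion in extending Proposition~\ref{prop:PowerFormula}: for ranks beyond 2 the relevant commutator algebra governing $(A+B+C+D+\cdots)^\ell$ no longer closes after one step, and one needs either an inductive PBW-normal-form argument or, perhaps more cleanly, a conceptual derivation of the coproduct structure directly from the quasi-$R$-matrix of Theorem~\ref{Thm:RIdeal} and the intertwining relation of Proposition~\ref{prop:PartialIntertw}, which bypasses the brute force monomial computation entirely.
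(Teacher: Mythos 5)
You should note first that the statement you are proving is stated in the paper as a \emph{conjecture}: the paper itself only establishes it for $\LT{A}_n$ with $\kay\not\equiv 2\bmod 4$ (Sections~\ref{subsec:Zn=CAn}--\ref{subsec:AnBruhat}) and for $\LT{B}_2$ (Section~\ref{sec:Ideals_B2=SO5}), plus the indirect odd-$\kay$ result of \cite{dck92}, so there is no proof of the general statement to compare against. Read as a research program, your proposal is a sensible extrapolation of exactly those two model cases, and you correctly flag the $\LT{G}_2$ word-independence issue (Conjecture~\ref{conj:Zwordindep}) as a prerequisite. But several steps as written have genuine gaps. (a) Your height-induction via rank-2 reductions does not suffice to show $\Zsubalgchar^{\geqzero}_{\bullet}$ is a Hopf subalgebra: the $\wgrad$-graded components of $\Delta(\Epw_\alpha)$ involve pairs of roots $\beta,\gamma$ with $\beta+\gamma=\ell_\alpha\alpha$ that in general lie in no common rank-2 subsystem with $\alpha$, so Propositions~\ref{prop:XJ-form}--\ref{prop:XI-form} and Corollary~\ref{cor:Uqtwoembed} do not control them; the paper's own $\LT{A}_n$ proof needed the global closed formula \eqref{eq:An-Copr-Epwr}, and the remark at the end of Section~\ref{sec:Ideals_An=GLn} about Kharchenko's $\LT{D}_n$ formulae (two families of generators, neither spanning a subcoalgebra) shows that your ``one skew-primitive per filtration stage'' picture is not automatic beyond type $\LT{A}$. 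Your suggested bypass via Theorem~\ref{Thm:RIdeal} and Proposition~\ref{prop:PartialIntertw} also does not close this gap: those results yield Hopf \emph{ideals} $\Zaugidealhat{s}$, which is logically weaker than, and does not imply, the Hopf \emph{subalgebra} property of $\Zsubalgchar^{\geqzero}_{\bullet}$ needed for item \emph{i)}. The proposed $\mathrm{Ext}^1$-torsor classification generalizing Proposition~\ref{prop:ClassifB2Calg} is likewise only asserted; the paper's rank-2 classification rests on a restrictive ansatz (no mixed monomials, group-likes fixed by weights) whose validity in higher rank is precisely what must be proved.

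(b) Your explanation of the ``finite covering quotient'' clause misreads the conjecture's hypotheses. The $\kay\equiv 2\bmod 4$ Artin--Schelter--Tate extension and Tanisaki's $\mathscr G$-quotient concern the \emph{skew}-commutative (non-commutative) regime, which is excluded by the commutativity hypothesis of Corollary~\ref{cor:Zcommutmax} in item \emph{i)} (except in type $\LT{B}$, where the skew signs are absent anyway). The isogeny ambiguity in the conjecture instead reflects which character lattice the Cartan part $\Zsubalgchar^{0}_{\bullet}=\Zgen[\{\Kpw_i^{\pm1}\}]$ realizes --- compare the $\mathrm{P}\UtriM{2}$ versus its two-fold cover discussion after Corollary~\ref{cor:AnCoordids} --- so your argument addresses the wrong phenomenon. (c) For item \emph{iv)} you assume the \cite{dck92} action on $G^\PLexp$ is already available for even $\kay$ and that agreement on the Cartan part plus one non-simple generator per rank-2 subsystem forces agreement everywhere; the former is part of what has to be constructed (the paper only induces such an action from the quantum side, as in Corollary~\ref{cor:BnActOnWn}), and the latter rigidity claim is unproved, since a Hopf-algebra automorphism of $\mathbb C[G^\PLexp]$ fixing the Cartan and matching on some generators need not be determined without an additional argument (e.g.\ that the generators in question generate the coordinate ring as an algebra together with their braid orbits).
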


\section{Hopf Ideals via Quasi-\texorpdfstring{$R$}{R}-Matrices}\label{sec:QuasiTriang}

 $R$-matrices are the primary motivation for studying quantum groups, as they give rise to solutions to Yang Baxter equations and, more generally, braided tensor categories. Their construction for quantum groups at roots of unity is one of the goals of this section. 

As another application, we use here the 
$R$-matrix formalism in an iterative argument, showing that the $\Zaugidealhat{s}$ ideals 
are Hopf ideals. Their quotients, therefore, produce a family of well-defined Hopf algebras $\Uzn{\ell}^{[s]}$. 
The methods introduced and developed  in this section also include truncated quasi-$R$-matrices,
Tanisaki's formalism of
pre-triangular Hopf algebras, and the Tanisaki-Lusztig pairings.

\subsection{Enhanced Tensor Structures} \label{subsec:GenTensStr}

In this section, we provide some general constructions 
relevant to defining $R$-matrices in the generic setting. The first helps to make sense of the
infinite sums used in the formulae for $R$-matrices using certain inverse limits. 

Generally, if $A$ is an algebra over a ring $\Zgen$
and $I^n$ a descending sequence of {\em two-sided} ideals, 
$\varprojlim A/I^n$ admits a canonical algebra structure as an inverse system in the
category of algebras. For left (or right) ideals $A^n$, however,  
the spaces $A/A^n$ are not algebras so that there is no obvious algebra structure 
on their limit. This can be remedied by an additional assumption, roughly stating
that the $A^n$ are right ideals up to a shift that is locally bounded.  

 Specifically, let $A=\bigoplus_{n\in\nnN}A_n$ and assume there is a function  $\lclbd:\bigcup_nA_n\rightarrow \nnN$ 
 such that
 \begin{equation}\label{eq:Asubgrad}
     \mbox{for all}\quad n,m\in\nnN\,,\, x\in A_n\,,\, y\in A_m \quad \mbox{we have}\quad xy\in A^{m+\max(0,n-\lclbd(y))}\,, 
 \end{equation}
 where \quad 
 $A^n=\bigoplus_{k\geq n} A_k$\,. Denote \quad
 $\displaystyle\hat A=\varprojlim_n A/A^n\,$. We shall call $\lclbd$ the {\em local bound} of $A\,$. 

Clearly, by \eqref{eq:Asubgrad}, the $A^n$ are left (but not necessarily right) ideals, their intersection is zero, and we have the standard embedding $A\hookrightarrow\hat A$ as a $\Zgen$-module. For a second algebra $B$ of this form with local bound 
$\gamma:\bigcup_nB_n\rightarrow \nnN$ one may consider 
degree preserving homomorphisms
$f:A\rightarrow B$ in the sense that $f(A_n)\subseteq B_n$   as well as a grading of the tensor algebra $A\otimes B$ with
components $(A\otimes B)_n=\bigoplus_{s}A_s\otimes B_{n-s}$\,. 

The assumption of a decomposition into components $A_n$ can be circumvented with further work, but fits our applications to quantum groups. 

 \begin{lem}\label{lem:InvLimAlg}
 Let $A, B$ be algebras and $f:A\rightarrow B$ a homomorphism, all as above.  \vspace{-1mm}
    \begin{enumerate}[label=\roman*)]
        \item \label{item:InvLimAlg:algst}
     $\hat A$ admits a unique algebra structure for which
     $A$ is a subalgebra. \vspace{2mm}
     \item \label{item:InvLimAlg:homom}
     There is a unique algebra homomorphism $\hat f:\hat A\rightarrow\hat B$ that restricts to $f$ on $A$. \vspace{2mm}
     \item \label{item:InvLimAlg:tensor}
     $A\otimes B$ has the property \eqref{eq:Asubgrad} with local bound 
      determined by $\eta(y\otimes y')=\lclbd(y)+\gamma(y')\,$. The respective completion canonically contains $\hat A\otimes\hat B$ as a subalgebra and gives rise to an associative completion $\hat\otimes$ of the tensor product as $\widehat{A\otimes B}=\hat A\hat\otimes\hat B$. \vspace{2mm}
     \item \label{item:InvLimAlg:bialg}
     If $A$ admits a bialgebra structure with coproduct $\Delta(A_n)\subseteq (A\otimes A)_n\,$,  then
     $\hat A$ admits a canonical bialgebra structure with respect to the completed tensor, in the sense that $\hat\Delta:\hat A\rightarrow\hat A\hat \otimes\hat A\,$.\vspace{2mm} 
    \end{enumerate}
 \end{lem}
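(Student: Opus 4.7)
The plan is to treat all four items as consequences of equipping $\hat A$ with the inverse-limit (filtration) topology, where $\{A^n\}_{n \geq 0}$ serves as a fundamental system of neighborhoods of $0$ in $A$, and then extending all structure maps by continuity. The technical heart is item \ref{item:InvLimAlg:algst}: once multiplication is shown to be (jointly) continuous on $A \times A$ with respect to this topology, items \ref{item:InvLimAlg:homom}--\ref{item:InvLimAlg:bialg} will follow by routine extension-by-density arguments together with the verification of one combinatorial inequality for the local bound on the tensor product.

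For \ref{item:InvLimAlg:algst}, I would first observe that continuity at $(0,0)$ is immediate from $A^N \cdot A^N \subseteq A^N$, which follows from \eqref{eq:Asubgrad} because on a homogeneous product with $x \in A_n$, $y \in A_m$ and $m,n \geq N$, the index $m + \max(0, n-\lclbd(y)) \geq m \geq N$. For continuity at a general $(x,y)$, I would expand $(x+u)(y+v) - xy = uy + xv + uv$ and bound each summand. The term $xv$ is easy: fixing $x$ and decomposing it into its finitely many homogeneous parts reduces to showing $x \cdot A^N \subseteq A^N$, which again follows from \eqref{eq:Asubgrad}. The term $uy$ is the main point, and here I would use the fact that $y = \sum_m y_m$ has only finitely many non-zero homogeneous components $y_m \in A_m$ so that $c_y = \max_m \lclbd(y_m)$ is well-defined; then for $u \in A^N$ with homogeneous decomposition $u = \sum_{n \geq N} u_n$, one gets $u_n y_m \in A^{m + \max(0, n - \lclbd(y_m))}$, and bounding uniformly over $m$ yields $uy \in A^{N - c_y}$ (shifted up by the smallest $m$ present in $y$), which still tends to $A^\infty$ as $N \to \infty$. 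Joint continuity follows, so multiplication extends uniquely to a bilinear continuous map on $\hat A$; associativity passes to the completion by continuity.

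For \ref{item:InvLimAlg:homom}, degree preservation gives $f(A^n) \subseteq B^n$, so $f$ is a continuous algebra homomorphism from $A$ into $\hat B$; density of $A$ in $\hat A$ and completeness of $\hat B$ supply the unique extension $\hat f$. For \ref{item:InvLimAlg:tensor}, the key is to verify \eqref{eq:Asubgrad} for $A \otimes B$ with local bound $\eta(y \otimes y') = \lclbd(y) + \gamma(y')$. Decomposing $x \otimes x' \in (A\otimes B)_n$ as $x \in A_i$, $x' \in B_{n-i}$ and similarly $y \otimes y' \in (A \otimes B)_m$, one computes that $xy \otimes x'y'$ lies in $(A\otimes B)^{m + \max(0,i-\lclbd(y)) + \max(0,(n-i)-\gamma(y'))}$, and a short case analysis (on the signs of $i-\lclbd(y)$ and $(n-i)-\gamma(y')$) shows the exponent is at least $m + \max(0, n - \eta(y\otimes y'))$, as required. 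The identification $\widehat{A\otimes B} = \hat A \hat\otimes \hat B$ is then taken as the \emph{definition} of the completed tensor product. For \ref{item:InvLimAlg:bialg}, assuming $\Delta(A_n) \subseteq (A \otimes A)_n$, item \ref{item:InvLimAlg:homom} applied to $\Delta : A \to A \otimes A$ (viewed into its completion by \ref{item:InvLimAlg:tensor}) produces $\hat\Delta : \hat A \to \hat A \hat\otimes \hat A$, and the bialgebra axioms on $A$ extend to $\hat A$ by continuity since all structure maps are continuous and $A$ is dense.

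The main obstacle I anticipate is the right-multiplication extension used in \ref{item:InvLimAlg:algst}: the local bound $\lclbd$ is only defined on \emph{homogeneous} elements and the shift in \eqref{eq:Asubgrad} depends on the homogeneous piece $y$ being multiplied by, so without the finite-decomposition argument one cannot control $uy$ uniformly in $u$. This is exactly where the hypothesis that $A = \bigoplus_n A_n$ is a direct sum (so every element has finitely many homogeneous components) becomes essential, and it is the only step where the pure filtration data would not suffice.
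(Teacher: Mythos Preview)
Your approach is correct and rests on the same combinatorial core as the paper's proof: the local bound $\lclbd$ guarantees that, for each fixed output degree, only finitely many homogeneous pairs $(n,m)$ contribute to the product. You package this as a continuity-and-density argument, whereas the paper proceeds directly: it represents elements of $\hat A$ as formal series $\sum_k x_k$ and shows that the $k$-th component of a product $\sum_n x_n \cdot \sum_m y_m$ is a \emph{finite} sum, because the bound in \eqref{eq:Asubgrad} forces $m\leq k$ and $n\leq k+\max_{m\leq k}(\lclbd(y_m)-m)$. Your case analysis for item~\ref{item:InvLimAlg:tensor} is the same as the paper's (the paper's displayed inequality appears to have a typo in direction, and yours is the correct one); the paper is slightly more careful in extending $\eta$ from pure tensors to arbitrary elements of $(A\otimes B)_n$ by taking a minimum over decompositions into pure tensors and then a maximum over bigraded pieces, which you should spell out as well.

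One point to tighten: the step ``joint continuity follows, so multiplication extends uniquely to a bilinear continuous map on $\hat A$'' is not an off-the-shelf theorem---joint continuity of a bilinear map on a dense subspace does not in general yield a continuous extension to the completion. What actually makes it work here is precisely the asymmetric estimate you already isolated: left multiplication by \emph{any} element preserves $A^N$, while right multiplication by a \emph{fixed} $y\in A$ shifts $A^N$ into $A^{N-c_y}$ with $c_y$ depending only on $y$. These two facts let you first define $L_x:A\to\hat A$ for $x\in\hat A$ (well-defined because each degree receives finitely many contributions---this is exactly the paper's direct computation), check it is continuous, and then extend in the second variable. If you unwind your topological argument to this level, it coincides with the paper's proof.
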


 \begin{proof}
     By definition, an element of the inverse limit $\hat A$ is uniquely presented by a sequence $(x_0,x_1,\ldots)\in\prod_{n\geq 0}A_n$\,, which is more suggestively written as a formal sum $\sum_{k\geq 0}x_k\,$. The submodule $A$ is given by sequences with only finitely many non-zero terms. For two elements $\sum_nx_n\,, \sum_my_m\in\hat A\,$, \eqref{eq:Asubgrad} implies that we
     can write 
     $$
     x_ny_m=\sum_{k\geq m, \; k+\lclbd(b_m)\geq n+m}z^{n,m}_k
     \qquad\mbox{for some}\quad z^{n,m}_k\in A_k\,. 
     $$ 
     For fixed $k$, the bounds imply $m\leq k$ and $n\leq k+r_k$ with $r_k=\max_{0\leq m\leq k}\lclbd(b_m)-m$. Thus, $x_ny_m$ has a component in $A_k$ only for a finite number of pairs $(n,m)$ so that
     $(xy)_k=\sum_{n,m}z^{n,m}_k$ is well-defined. It is clear that the so defined product yields an
     algebra structure as proposed in Item~{\em \ref{item:InvLimAlg:algst}}. Item~{\em \ref{item:InvLimAlg:homom}} is then also immediate.

     Suppose $x\in A_i$\,, $x'\in B_j$\,, $y\in A_s$\,, and $y'\in B_t$\,. Then the component of $(x\otimes x')(y\otimes y')=xy\otimes x'y'$ in $(A\otimes B)_k$ is, by \eqref{eq:Asubgrad}, applied to each tensor factor, zero if  
     $k\leq (s+\max(0,i-\lclbd(y))+(t+\max(0,j-\gamma(y'))\leq s+t+\max(0,(i+j-(\lclbd(y)+\gamma(y')))$, which implies the claim for pure component tensors. For a general tensor in $A_i\otimes B_j$\,, $\eta$ may be defined as the smallest number $d$ such that the tensor can be written as a sum of
     pure tensors, each of which with $\eta$-value less than $d$. For a general element in 
     $(A\otimes B)_n\,$, take the maximum over the $A_s\otimes B_{n-s}$ graded components. 

     The embedding of $\hat A\otimes\hat B$ into $\hat A\hat \otimes\hat B$ is obvious, and standard properties of $\hat \otimes$ are readily checked. Item~{\em \ref{item:InvLimAlg:bialg}} is now
     a consequence of {\em \ref{item:InvLimAlg:homom}} and {\em \ref{item:InvLimAlg:tensor}}.
 \end{proof}

For an $A$-module $V$ set $V_n=\{v\in V:\,A^n.v=0\}$, yielding an ascending chain of subspaces
$0=V_0\subseteq V_1\subseteq V_2\subseteq\ldots\,$. We say $V$ is {\em locally finite} 
 if $V=\bigcup_nV_n\,$. We collect several basic observations next.

\begin{lem}
\label{lem:LocFinMod}
 Suppose $V$ and $W$ are locally finite $A$-modules as above.  \vspace{-1mm}
    \begin{enumerate}[label=\roman*)]
    \item \label{item:LocFinMod:shift}
           If $y\in A_k$ and $v\in V_n$ then $y.v\in V_{n-k+\lclbd(y)}\,$. \vspace{2mm}
     \item \label{item:LocFinMod:extend}
           The action of $A$ extends to an action of $\hat A$ on $V\,$. \vspace{2mm}
     \item \label{item:LocFinMod:tensor}
           The action of $A\otimes A$ on $V\otimes W$ is locally finite and, thus, extends to an action of $\hat A\hat\otimes\hat A$. \vspace{2mm}
     \item \label{item:LocFinMod:category}
      If $A$ has a bialgebra structure as above, the locally finite modules form a tensor category. 
\end{enumerate} 
\end{lem}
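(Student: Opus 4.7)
\textbf{Proof plan for Lemma~\ref{lem:LocFinMod}.} The plan is to establish the four items sequentially, with item \emph{\ref{item:LocFinMod:shift}} supplying the key filtration-shifting estimate that drives everything else.

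For item~\emph{\ref{item:LocFinMod:shift}}, I would unpack the definitions directly. Set $m=n-k+\lclbd(y)$ and let $x\in A^m$, writing $x=\sum_{s\ge m}x_s$ with $x_s\in A_s$. Applying \eqref{eq:Asubgrad} to the pair $(x_s,y)$ gives $x_s y\in A^{k+\max(0,s-\lclbd(y))}$. Since $s\ge n-k+\lclbd(y)$, one has $s-\lclbd(y)\ge n-k\ge 0$ in the non-trivial cases, so $k+\max(0,s-\lclbd(y))\ge n$ and hence $x_s y\in A^n$. Because $v\in V_n$ means $A^n.v=0$, it follows that $x.(y.v)=\sum_s(x_s y).v=0$, so $y.v\in V_m$, as claimed.

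For item~\emph{\ref{item:LocFinMod:extend}}, given $v\in V$ choose $n$ with $v\in V_n$. For $\hat a=\sum_k x_k\in\hat A$ (with $x_k\in A_k$), the terms with $k\ge n$ satisfy $x_k\in A^n$ and therefore act as zero on $v$, so $\hat a.v\coloneqq\sum_{k<n}x_k.v$ is a \emph{finite} sum in $V$. The definition is independent of the choice of $n$ (any larger threshold adds only zero summands), $\Zgen$-linear in $\hat a$, and compatible with the embedding $A\hookrightarrow\hat A$. For associativity $(\hat a\hat b).v=\hat a.(\hat b.v)$, first apply item~\emph{\ref{item:LocFinMod:shift}} to bound the filtration level of $\hat b.v$; then, by construction of the product in $\hat A$ from Lemma~\ref{lem:InvLimAlg}~\emph{\ref{item:InvLimAlg:algst}}, both sides reduce to the same \emph{finite} sum of products $x_i y_j.v$ for finitely many pairs $(i,j)$, and there associativity of the original $A$-action on $V$ applies.

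For item~\emph{\ref{item:LocFinMod:tensor}}, given $v\in V_p$ and $w\in W_q$, I claim $v\otimes w\in(V\otimes W)_{p+q}$ as an $A\otimes A$-module. Indeed, $(A\otimes A)^{p+q}$ is spanned by pure tensors $x\otimes y\in A_s\otimes A_t$ with $s+t\ge p+q$, and for any such tensor either $s\ge p$, forcing $x.v=0$, or else $s<p$ and hence $t>q$, forcing $y.w=0$; in both cases $(x\otimes y).(v\otimes w)=0$. Local finiteness of $V\otimes W$ over $A\otimes A$ is immediate, and applying item~\emph{\ref{item:LocFinMod:extend}} to the algebra $A\otimes A$ in place of $A$ extends the action to $\widehat{A\otimes A}$, which by Lemma~\ref{lem:InvLimAlg}~\emph{\ref{item:InvLimAlg:tensor}} is exactly $\hat A\hat\otimes\hat B$ with $B=A$.

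For item~\emph{\ref{item:LocFinMod:category}}, if $A$ carries a bialgebra structure with $\Delta(A_n)\subseteq(A\otimes A)_n$, then by linearity $\Delta(A^n)\subseteq(A\otimes A)^n$. Making $V\otimes W$ into an $A$-module by pulling back the $A\otimes A$-action along $\Delta$, the argument of item~\emph{\ref{item:LocFinMod:tensor}} shows $v\otimes w\in(V\otimes W)_{p+q}$, proving local finiteness. Assuming the standard compatibility $\varepsilon(A_n)=0$ for $n\ge 1$ inherent in a graded bialgebra structure, the unit module $\Zgen$ (with action through $\varepsilon$) satisfies $\Zgen=\Zgen_1$, hence is locally finite. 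The associator and unit constraints inherited from the usual category of $A$-modules restrict to locally finite ones, giving the tensor category structure.

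The main obstacle is the associativity verification in item~\emph{\ref{item:LocFinMod:extend}}: one must show that, although $\hat a\hat b$ is defined by an infinite regrouping of products $x_iy_j$, its action on any particular $v$ agrees with the iterated action $\hat a.(\hat b.v)$. The cleanest way is to fix $v\in V_n$, invoke item~\emph{\ref{item:LocFinMod:shift}} to locate $\hat b.v$ in some $V_{n'}$, and then observe that only finitely many indices $(i,j)$ contribute non-trivially on either side, reducing the statement to a finite check in $A$.
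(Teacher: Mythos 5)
Your proposal is correct and follows essentially the same route as the paper, whose proof is just a few lines: item \emph{i)} from the definitions, item \emph{ii)} from the finiteness of the sums $\sum_k x_k.v$, item \emph{iii)} from $(A\otimes A)^{p+q}(V_p\otimes W_q)=0$, and item \emph{iv)} as an immediate consequence of \emph{iii)} via $\Delta$. You merely spell out the filtration estimates and the associativity reduction that the paper leaves implicit, which is fine.
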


 \begin{proof}
     Item~{\em\ref{item:LocFinMod:shift}} follows readily from the definitions, Item~{\em\ref{item:LocFinMod:extend}} holds since only finitely many terms are
     non-zero in any $\sum_ka_kv$\,, Item~{\em\ref{item:LocFinMod:tensor}} is a consequence
     of $(A\otimes A)^{n+m}(V_n\otimes W_m)=0\,$, and  Item~{\em\ref{item:LocFinMod:category}}
     is immediate from Item~{\em\ref{item:LocFinMod:tensor}}.
 \end{proof}
 
Observe that the category noted in the last statement does {\em not} need to be rigid. In fact,
the locally finite condition above emulates the respective property for Verma modules (or, more generally, modules in category $\mathcal O$) for which standard duals do not exist. Thus, at the level of algebras, 
we generally do not expect the antipode on a Hopf algebra $A$ to extend to the limit $\hat A$. 

The second ingredient required to define a braided structure from formal series are Tanisaki's axioms \cite{tan} that aim to reexpress the ill-defined diagonal parts of an $R$-matrix in terms of certain automorphisms. 

\begin{defn}\label{def:tani2}
    Let $A$ be a bialgebra with structure as in \eqref{eq:Asubgrad}, 
    $\Rtani\in \hat A\hat \otimes \hat A$ invertible, and $f$ a component-preserving algebra automorphism of $A\otimes A$. We say $(A,\Rtani, f)$ is a {\em pre-triangular bialgebra} if the following hold:
    \begin{equation}\label{eq:tani2}
\begin{aligned}
     f_{12}(\Rtani_{13})\Rtani_{12}&=(\id\otimes\Delta)(\Rtani), \qquad f_{23}(\Rtani_{13})\Rtani_{23}=(\Delta\otimes \id)(\Rtani)\,,\vspace{4mm}
  \\
    \mbox{and}&\qquad \Rtani\Delta(x)=(f\circ \Deltaopp(x))\Rtani,
    \quad \text{for all } \; x\in A\,. 
\end{aligned} 
    \end{equation} 
\end{defn}
 
The notation in \eqref{eq:tani2} omits the $\,\hat{}\,$ accent on automorphisms and coproducts.
Since $f$ is assumed to preserve components, such an extension to $\hat A\hat \otimes \hat A$
exists. Similarly, one checks that, for example, 
$\hat\Delta\hat\otimes \id:\hat A^{\hat \otimes 2}\rightarrow\hat A^{\hat \otimes 3}$ is well-defined. 
The subscripts for $\Rtani$ indicate the usual canonical inclusions extended to limits
$\hat A^{\hat \otimes 2}\hookrightarrow\hat A^{\hat \otimes 3}$ and the subscripts on
the automorphism mean, for example, $f_{23}=\id\hat\otimes\hat f$. 

Assume a pre-triangular bialgebra $A$ with properties as in \eqref{eq:Asubgrad} as well as a 
tensor (sub) category $\mathcal M$ of locally finite modules. We say $(A,\Rtani,f)$ is {\em implementable} in
$\mathcal M$ if for any two modules $V,W\in\mathcal M$ there is an isomorphism
$\RtaniImp{V}{W} :V\otimes W\rightarrow V\otimes W$ such that for all $V,W,Z\in\mathcal M$ the
following hold. 
\begin{equation}\label{eq:DVWconds}
\begin{aligned}
    f(c)&=\RtaniImp{V}{W}^{-1} c \RtaniImp{V}{W} \quad\mbox{for all}\; c\in A\otimes A\;\mbox{acting on }\;V\otimes W\, \vspace*{3mm}\\
    \RtaniImp{V\otimes W}{Z}&=(T\otimes \id)(\id\otimes\RtaniImp{V}{Z})(T\otimes \id)(\id\otimes\RtaniImp{W}{Z}) \vspace*{3mm}\\
    \RtaniImp{V}{W\otimes Z}&=(T\otimes \id)(\id\otimes\RtaniImp{V}{Z})(T\otimes \id)(\RtaniImp{V}{W}\otimes \id) 
\end{aligned}
\end{equation}
 
Here $T$ denotes the ordinary transposition of tensor factors. We suppress notation 
for the representation map $A\rightarrow \mathrm{End}(V)$ or 
$A\otimes A\rightarrow \mathrm{End}(V\otimes W)$ whenever the respective actions are clear. 
A routine verification of categorical triangle (hexagon) relations yields the next observation,
which is central to many applications of quantum groups.

\begin{cor}\label{cor:Mbraiding}
    If $(A,\Rtani,f)$ is implementable in $\mathcal{M}$, then the natural isomorphism 
    $$
    R(V,W)=T\RtaniImp{V}{W}\Rtani\,:\;V\otimes W\rightarrow W\otimes V
    $$
defines a braiding on $\mathcal{M}\,$.
\end{cor}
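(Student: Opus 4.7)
The plan is to verify the three defining properties of a braided category for the family $\{R(V,W) = T\,\RtaniImp{V}{W}\,\Rtani\}$: (i) each $R(V,W)$ is an $A$-linear isomorphism $V \otimes W \to W \otimes V$, (ii) naturality in both arguments, and (iii) the two hexagon identities. None of these is deep; the entire content is to substitute the definitions in \eqref{eq:DVWconds} and \eqref{eq:tani2} in the correct order and check that what results matches on both sides.

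For (i), invertibility is immediate: $T$ is an isomorphism of vector spaces, $\RtaniImp{V}{W}$ is invertible by hypothesis, and by Lemma~\ref{lem:LocFinMod} the element $\Rtani \in \hat A \hat\otimes \hat A$ acts invertibly on the locally finite module $V \otimes W$. $A$-linearity amounts to checking, for $x \in A$,
\[
R(V,W)\,\Delta(x) = \Deltaopp(x)\,R(V,W),
\]
where $\Delta$ acts on $V \otimes W$ and $\Deltaopp$ on $W \otimes V$. Using $T\,\Delta(x) = \Deltaopp(x)\,T$, rewriting $\Delta(x)\RtaniImp{V}{W} = \RtaniImp{V}{W}\,f^{-1}(\Delta(x))$ via the first line of \eqref{eq:DVWconds}, and then intertwining $f^{-1}(\Delta(x))$ through $\Rtani$ via the third identity of \eqref{eq:tani2}, the claim falls out. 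Naturality (ii) with respect to $A$-linear maps $\phi:V\to V'$ and $\psi:W\to W'$ follows in the same style: $\phi\otimes\psi$ commutes with the action of $\Rtani$ because it is $A$-linear, commutes with conjugation by $\RtaniImp{V}{W}$ because the latter implements $f$ on operators, and strictly commutes with $T$ after the appropriate swap.

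The real work is in (iii). Take the first hexagon
\[
R(V \otimes W, Z) = (R(V,Z) \otimes \id_W)(\id_V \otimes R(W,Z))
\]
on $V \otimes W \otimes Z$. On the left, $\RtaniImp{V \otimes W}{Z}$ expands via the second line of \eqref{eq:DVWconds}, while the action of $\Rtani$ expands via $(\Delta \otimes \id)(\Rtani) = f_{23}(\Rtani_{13})\Rtani_{23}$ from \eqref{eq:tani2}. On the right, one inserts the definitions of $R(V,Z)$ and $R(W,Z)$ and distributes the various $T$'s. Both expressions reduce to the same alternating word in $T$'s, $\RtaniImp{V}{Z}$, $\RtaniImp{W}{Z}$, $\Rtani_{13}$, and $\Rtani_{23}$; the occurrence of $f_{23}$ coming from the pre-triangular identity is absorbed precisely by conjugation by $\RtaniImp{W}{Z}$, which is the content of the first axiom of \eqref{eq:DVWconds}. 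The second hexagon is obtained symmetrically from the first identity of \eqref{eq:tani2} and the third line of \eqref{eq:DVWconds}.

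The main obstacle -- such as it is -- is keeping strand labels straight: since $R(V,W)$ decomposes into a diagonal piece $\RtaniImp{V}{W}$ that implements the automorphism $f$ on operators and an off-diagonal piece $\Rtani$ that lives only in the completion $\hat A \hat\otimes \hat A$, one must ensure that each intermediate expression is a well-defined operator on the locally finite module $V \otimes W \otimes Z$. This is exactly what Lemma~\ref{lem:LocFinMod} supplies, so every intermediate expression makes sense in $\mathrm{End}(V \otimes W \otimes Z)$ and no convergence issue arises.
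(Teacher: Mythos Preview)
Your approach is exactly the ``routine verification of categorical triangle (hexagon) relations'' that the paper offers in lieu of a proof; the paper gives no further details, so your sketch is already more explicit than the original.

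One small correction in the $A$-linearity step: the target identity should be $R(V,W)\,\Delta(x) = \Delta(x)\,R(V,W)$, with the $A$-action on $W\otimes V$ on the right given by $\Delta$ (not $\Deltaopp$). Correspondingly, from \eqref{eq:DVWconds} one has $\Delta(x)\,\RtaniImp{V}{W} = \RtaniImp{V}{W}\,f(\Delta(x))$ (not $f^{-1}$), and the cleanest route is to start from $R(V,W)\,\Delta(x)$ and push $\Delta(x)$ leftward: first through $\Rtani$ via the third identity of \eqref{eq:tani2} to get $f(\Deltaopp(x))$, then through $\RtaniImp{V}{W}$ to strip the $f$, then through $T$ to turn $\Deltaopp(x)$ into $\Delta(x)$ on $W\otimes V$. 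Your hexagon sketch is fine.
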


An immediate consequence is the existence of representations of the braid group $B_n$ on
$V^{\otimes n}$. The axioms in \cite{tan} assume also relations 
$f_{23}\circ f_{13}(\Rtani_{12})=\Rtani_{12}$ and 
$f_{13}\circ f_{23}(\Rtani_{23})=\Rtani_{23}$ in addition 
to those in \eqref{eq:tani2} but omit the relations for $\RtaniImp{V\otimes W}{Z}$
and $\RtaniImp{V}{ W \otimes Z}$ in \eqref{eq:DVWconds}. This leads to the same braid
representations without the categorical context. 

\subsection{Completions and Quasi-\texorpdfstring{$R$}{R}-Matrices for \texorpdfstring{$\Uq$\,}{Uq}}\label{subsec:QuasiR_genq}
We will construct the quasi-$R$-matrices for $\UqQ=\Uq\otimes\mathbb Q(q)$ from
simple quasi-$R$-matrices, \textit{a priori} given by the formal expressions
\begin{equation}\label{eq:ElemRmatGen}
    \Ri=\sum_{s=0}^{\infty}\dfrac{q^{s(s-1)/2}_{i}}{[s]!_{i}}(q_{i}-q^{-1}_{i})^s E_{i}^s\otimes F_{i}^s =\sum_{s=0}^{\infty}q^{s(s-1)/2}_{i}(-1)^s \Esing_{i}^s\otimes F_{i}^{(s)} \qquad
    \in {\UqQC}\hat\otimes {\UqQC}
\end{equation}
for any $i=1,\ldots, n\,$. The renormalized generators in the second expression are as in \eqref{eq:def:EgenDivPow} and \eqref{eq:def:EgenSing}. Several limits $\UqQC$ and completions $\hat\otimes$ may be considered to 
provide a rigorous meaning to \eqref{eq:ElemRmatGen}. The ones relevant here start from the
$\nnN$-gradings $\wgradp$ and $\wgradn$ on $\Uq^+$ and $\Uq^-$ introduced in 
Section~\ref{subsec:gradings}. The height function from Section~\ref{subsec:LettSpecElem} extends to  a homomorphism $\height:\nnN^{\sroots}\rightarrow\nnN$\,, which, thus, entails respective integer-gradings
$\height\circ \wgrad^{\pm}$\,. We denote the corresponding components as
\begin{equation}
    \UqQ^{k,+}=\bigoplus_{\height(\mu)=k}(\UqQ^+)_\mu^{\wgradp}\qquad\mbox{and}\qquad \UqQ^{k,-}=\bigoplus_{\height(\mu)=k}(\UqQ^-)_\mu^{\wgradn}\,. 
\end{equation}
Also set \quad $\UqQ^{\geq k,\pm}=\bigoplus_{s\geq k}\UqQ^{s,\pm}\,$. 
The completion suggested, for example, in \cite{CP95} fits  the construction of Section~\ref{subsec:GenTensStr} with
\begin{equation}\label{eq:CP95limAn}
    A_n=\UqQ\cdot\UqQ^{n,+}\qquad\mbox{and}\qquad A^n=\UqQ\cdot\UqQ^{\geq n,+}\,.
\end{equation}
Note that, by the PBW theorems, $A_n=\bigoplus_{t\in\nnN}A_{t,n}\,$, where
$A_{t,n}=\UqQ^{t,-}\UqQ^0\UqQ^{n,+}$\,. A function
$\lclbd:A_n\rightarrow \nnN$ is now  defined by setting  $\lclbd(y)=m$ to be the 
smallest $m$ such that $y\in \bigoplus_{t\leq m}A_{t,n}\,$. 

\begin{lem}\label{lem:CP95limCheck}
    The $A_n$ and $\lclbd$, as defined above, fulfill \eqref{eq:Asubgrad}. Thus, $\UqQC=\varprojlim\UqQ/A^n$ is a bialgebra with $\hat\otimes$ as in Lemma~\ref{lem:InvLimAlg}.
\end{lem}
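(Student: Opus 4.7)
The verification of the grading-shift condition \eqref{eq:Asubgrad} reduces to a careful bookkeeping of how positive root generators pass through negative and Cartan parts under the commutation relations \eqref{eq:EFcomm} and \eqref{eq:Kcomm}. Using the triangular PBW decomposition from Theorem~\ref{thm:mainPBW}, any $y\in A_m$ can be written as a finite $\Zqqvn{\dpone}$-combination of monomials $y^{-}_{t}\,y^{0}\,y^{+}_{m}$ with $y^{-}_{t}\in\UqQ^{t,-}$, $y^{0}\in\UqQ^{0}$, and $y^{+}_{m}\in\UqQ^{m,+}$, in such a way that only values $t\leq\lclbd(y)$ occur. Similarly, $x\in A_{n}$ can be written as $x^{-}x^{0}x^{+}$ with $x^{+}\in\UqQ^{n,+}$. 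The only obstruction to the product $xy$ lying already in $A^{n+m}$ is the passage of $x^{+}$ across $y^{-}_{t}$, and this is governed by the single identity $[E_{i},F_{j}]=\delta_{ij}(K_{i}-K_{i}^{-1})/(q_{i}-q_{i}^{-1})$: each application of this identity decreases both the positive and the negative length by one, while passage of $x^{+}$ across $y^{0}$ is purely scalar by \eqref{eq:Kcomm} and does not change the positive degree.

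First I would make this precise by induction on $n$: writing $x^{+}=E_{i_{1}}\cdots E_{i_{n}}\cdot x'$ modulo rearrangements from Proposition~\ref{prop:PBW2}, each step commutes one $E_{i_{k}}$ through the (at most) $\lclbd(y)$ factors of $y^{-}_{t}$, producing a leading term in which the $E$ is moved to the right without change, plus correction terms in which one $E$ and one $F$ have been converted into a Cartan factor. After $n$ such steps the minimal positive degree in any summand of $xy$, put into PBW normal form, is $n+m-k$ where $k\leq\min(n,\lclbd(y))$ counts the number of commutator contractions actually invoked. Hence every summand has positive degree at least $n+m-\min(n,\lclbd(y))=m+\max(0,n-\lclbd(y))$, which is exactly \eqref{eq:Asubgrad}. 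Combined with Lemma~\ref{lem:InvLimAlg}\ref{item:InvLimAlg:algst}, this makes $\UqQC$ into a $\Zqqvn{\dpone}$-algebra containing $\UqQ$ as a subalgebra.

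For the bialgebra extension through Lemma~\ref{lem:InvLimAlg}\ref{item:InvLimAlg:bialg}, what is needed is the compatibility $\Delta(A_{n})\subseteq (A\otimes A)_{n}$, or equivalently the continuity $\Delta(A^{N})\subseteq (A\otimes A)^{\geq N}$ that permits extension to $\hat\Delta:\UqQC\to\UqQC\hat\otimes\UqQC$. This reduces to two observations, both from \eqref{eq:gradscoprod}--\eqref{eq:xgradcoprod}. If $e\in(\UqQ^{+})_{\mu}$ with $\height(\mu)=n$, then $\Delta(e)\in\sum_{\nu\leq\mu}(\UqQ^{+})_{\mu-\nu}\UqQ^{0}\otimes(\UqQ^{+})_{\nu}$, which lies in $\bigoplus_{s}A_{s}\otimes A_{n-s}$. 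On the other hand, $\Delta$ sends $\UqQ^{-}\UqQ^{0}$ into $A_{0}\otimes A_{0}$, since $\Delta(F_{i})=F_{i}\otimes 1+K_{i}^{-1}\otimes F_{i}$ and $\Delta(K_{i})=K_{i}\otimes K_{i}$ have trivial positive grading. Writing a general element of $A_{n}$ by PBW as a sum $y\cdot e$ with $y\in\UqQ^{-}\UqQ^{0}$ and $e\in(\UqQ^{+})_{\mu}$, the product $\Delta(y)\cdot\Delta(e)$ lies in $\sum_{s}A_{0}A_{s}\otimes A_{0}A_{n-s}$; the first step of the proof then applied to each tensor factor gives $A_{0}\cdot A_{s}\subseteq A^{\geq s}$, so $\Delta(A_{n})\subseteq (A\otimes A)^{\geq n}$, which suffices for Lemma~\ref{lem:InvLimAlg}\ref{item:InvLimAlg:bialg} to furnish the continuous coproduct. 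The counit extends trivially.

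The main obstacle will be the careful combinatorics of Step~1: when one commutes the positive monomial $x^{+}$ of degree $n$ across the negative monomial of degree at most $\lclbd(y)$, many intermediate terms arise, and one must verify that, after repeated use of the $[E,F]$ relation and reordering via Lemma~\ref{lm:scommgens} into PBW normal form, the number of converted $EF$-pairs is bounded by $\min(n,\lclbd(y))$ uniformly across all summands. This is essentially a doubly-inductive argument in $(n,\lclbd(y))$, and although each commutation is elementary, tracking the positive grading cleanly through the reordering step is the one place where the proof requires genuine care; all remaining parts are direct consequences of the gradings of Section~\ref{subsec:gradings} and the structural results already established.
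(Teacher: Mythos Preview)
Your approach matches the paper's: both verify \eqref{eq:Asubgrad} by commuting the positive part of $x$ through the negative part of $y$ via $[E_i,F_j]=\delta_{ij}(K_i-K_i^{-1})/(q_i-q_i^{-1})$ and bounding the number of $EF$-contractions by $\min(n,\lclbd(y))$, which the paper packages as the single inclusion $\UqQ^{r,+}\cdot\UqQ^{t,-}\subseteq\bigoplus_{j=0}^{\min(r,t)}\UqQ^{t-j,-}\UqQ^0\UqQ^{r-j,+}$. Two minor remarks: your bialgebra step yields only $\Delta(A_n)\subseteq(A\otimes A)^{\geq n}$ rather than the $(A\otimes A)_n$ that Lemma~\ref{lem:InvLimAlg} literally requires, but this sharpens at once by observing $A_0\cdot A_s=A_s$ directly (since $A_0=\UqQ^{\leqzero}$ is a subalgebra and $A_s=A_0\cdot\UqQ^{s,+}$); and no reordering via Lemma~\ref{lm:scommgens} is needed, since the relations \eqref{eq:EFcomm} and \eqref{eq:Kcomm} already place each commuted term in $\UqQ^{-}\UqQ^{0}\UqQ^{+}$ form, so the induction in your final paragraph is less delicate than you anticipate.
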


\begin{proof}
    Set $B_{r,t}=\UqQ^{r,+}\cdot\UqQ^{t,-}\,$. We first observe that  
    $B_{r,t}\subseteq\bigoplus_{j=0}^{\min(r,t)}A_{t-j,r-j}\,$. This follows by 
    induction on expressions $E_{u_1}\ldots E_{u_r}F_{v_1}\ldots F_{v_t}$
    that span $B_{r,t}$\,, using the basic commutation relations in 
    \eqref{eq:EFcomm} and \eqref{eq:Kcomm}. Multiplying this
    inclusion from the left with $\UqQ$ and from the right with $\UqQ^0\cdot \UqQ^{m,+}$ we find 
    $A_r\cdot A_{t,m}\subseteq \bigoplus_{j=0}^{\min(t,r)}\UqQ\cdot A_{t-j,r-j}\cdot \UqQ^{m,+}=\bigoplus_{j=0}^{\min(t,r)}A_{r+m-j}\subseteq A^{r+m-\min(t,r)}=A^{m+\min(r-t,0)}$, which implies \eqref{eq:Asubgrad}.
    Since the coproduct on $\UqQ^\geqzero$ preserves the $\wgradp$-grading, the additional assumption
    in Lemma~\ref{lem:InvLimAlg} holds as well, leading to a well-defined bialgebra structure on the limit with respect to the completion $\hat\otimes\,$. 
\end{proof}

The locally finite representations for this completion encompass all BGG-Category $\mathcal{O}$ modules as in \cite{AM15}. This includes infinite-dimensional Verma modules, for which the entailed braid representations are of interest, as they relate to geometrically constructed ones \cite{JK11}.

To see that the antipode does not extend to the limit, consider the formal element $x=\sum_{m\geq 0} F^mE^m\,$ in $\hat U_q(\mathfrak{sl}_2)$\,. From \eqref{eq:antipode_gen} we find $S(F^mE^m)=E^mF^m=\sum_{j=0}^mc^m_jF^jE^j$ with $c^m_j\in\Uq^0\,$. In particular,
$c^m_0=[m]!\prod_{s=0}^{m-1}\frac{Kq^{-s}-K^{-1}q^s}{q-q^{-1}}\,$ leading to ill-defined polynomials of infinite order in the 0-component of $S(x)\,$.

We do, however, have a well-defined extension of the antipode to the respective limits of Borel subalgebras $\UqQC^\geqzero$ and $\UqQC^\leqzero\,$. 

As an alternative to the above limit, we may consider, for example, the two-sided ideals $I^{n,+}=\langle\UqQ^{n,+}\rangle$
generated by elements in $\UqQ^{n,+}$. It is not hard to see that
$\Delta(I^{n,+})\subseteq\bigoplus_{k}I^{n-k,+}\otimes I^{k,+}$. Thus, the inverse limit
$\tilde U_{q,\mathbb Q}=\varprojlim\Uq/I^{n,+}$ yields again a bialgebra structure and tensor-completion 
for which \eqref{eq:ElemRmatGen} is well-defined.

As opposed to the previous situation, the algebra structure of $\tilde U_{q,\mathbb Q}$ here follows trivially from the quotients, which are algebras. Moreover, the antipode is well-defined on the limit. 
However, the set of $\UqQ$-modules to which $\tilde U_{q,\mathbb Q}$ extends is much smaller. 
Specifically, for any cyclic submodule $W$ there needs to exist some $n$ such that 
$\UqQ^{n,+}$ vanishes on all of $W$. Thus, the only allowed highest weight modules are finite-dimensional, which excludes, for example, all Verma modules.

Using \eqref{eq:q-combin-form} with $z=-q^2$ one can check by direct computation that the following expression is an inverse in $\UqQC\hat{\otimes}\UqQC$\,.
\begin{align}\label{eq:eq:ElemRmatInv}
    \Ri^{-1}=\sum_{s=0}^{\infty}\dfrac{q^{-s(s-1)/2}_{i}}{[s]!_{i}}(-1)^s(q_{i}-q^{-1}_{i})^s E_{i}^s\otimes F_{i}^s
\end{align}

Note that the braid automorphisms from Sections~\ref{subsec:LATM} also do {\em not} extend to the full 
completion $\UqQC\,$. Specifically, in rank 1, $\Tinv_1(x)$ is ill-defined on the same element $x=\sum_{m\geq 0}F^mE^m$ for the same reason as for the anitpode above. The automorphisms are, however, well-defined as linear operators 
on certain linear subspaces in $\UqQC$\,, whose construction we outline next.  

In the notation of Section~\ref{subsec:Emonomials}, observe that for fixed $z\in\wordsetmax$ and $k\in\nnN$\,, there are only finitely many $\psi\in \expsetupmax$ with $n(\psi)=\wgrad(\Ebaseopp{z}{\psi})=k$. 
Thus, an element in $\UqQC$ can be written uniquely as a formal sum 
$\sum_{\psi\in\expsetupmax}R_\psi\Ebaseopp{z}{\psi}$ with $R_\psi\in\UqQ^{\leqzero}\,$. Similarly, an element in $\UqQC\hat{\otimes}\UqQC$ is formally given by $\sum_{\phi,\psi\in\expsetupmax}R_{\phi,\psi}
(\Ebaseopp{z}{\phi}\otimes\Ebaseopp{z}{\psi})$ with $R_{\phi,\psi}\in(\UqQ^{\leqzero})^{\otimes 2}\,$. 

As in \eqref{eq:DefSspans}, denote by $\bspanopp{w}{-}$ the span of $\Fbaseopp{w}{\psi}$
for a (not necessarily maximal) reduced word $w\in\wordset$, $s=\Weylpres(w)\,$, and  $\psi\in\expsetup{s}\,$. Next, for  $s=\Weylpres(w)$, define the linear subspace of formal sums
\begin{align}\label{def:Lsubspace}
    \TCompModGenopp{w}=\biggl\{\sum_{\psi\in\expsetup{s}}R_\psi\Ebaseopp{w}{\psi}:R_\psi\in\bspanopp{w}{-}\!\cdot\UqQ^0\biggr\} \quad \subseteq \UqQC\,.
\end{align}
These subspaces now generate the $\UqQ$-sub-bimodules 
$\TCompModopp{w}=\UqQ\cdot\TCompModGenopp{w}\cdot\UqQ$\,. 
Analogously, the subspaces $\TCompModGenopptwo{w}=\TCompModGenopp{w}\hat\otimes\TCompModGenopp{w}$ are defined
from formal sums $\sum_{\phi,\psi\in\expsetup{s}}R_{\phi,\psi}(\Ebaseopp{w}{\chi}\otimes\Ebaseopp{w}{\psi})$ with $R_{\phi,\psi}\in (\bspanopp{w}{-}\!\cdot\UqQ^0)^{\otimes 2}$, generating the 
$\UqQ^{\otimes 2}$-sub-bimodules $\TCompModopptwo{w}=\TCompModopp{w}\hat\otimes\TCompModopp{w}$\,.

The next lemma follows by applying  \eqref{eq:BasisRecurs} or \eqref{eq:PBWgenRecur} to the limiting expressions.

\begin{lem}\label{lem:WordLims}
Suppose $u_1,u_2\in\wordset$ are reduced words such that
$w=u_1\cdot u_2$ is reduced. Let $t_i=\Weylpres(u_i)$ and $s=t_1\cdot t_2=\Weylpres(w)$
so that $\len s =\len {t_1}+\len {t_2}\,$. 
Then $\Tinv_{t_1}$ extends to a linear map on $\TCompModGenopp{u_2}$ as well as  $\TCompModopp{u_2}$ with
images $\Tinv_{t_1}(\TCompModopp{u_2})\subseteq\TCompModopp{w}$ and
$\Tinv_{t_1}(\TCompModGenopp{u_2})\subseteq\TCompModGenopp{w}$ in $\UqQC\,$. 

\noindent Analogously, $\Tinv_{t_1}^{\otimes 2}$ extends to a linear map on $\TCompModGenopptwo{u_2}$ and  $\TCompModopptwo{u_2}$ with
images $\Tinv_{t_1}^{\otimes 2}(\TCompModopptwo{u_2})\subseteq\TCompModopptwo{w}$ and
$\Tinv_{t_1}^{\otimes 2}(\TCompModGenopptwo{u_2})\subseteq\TCompModGenopptwo{w}$ in $\UqQC^{\hat\otimes 2}\,$.
\end{lem}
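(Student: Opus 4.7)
The plan is to define $\Tinv_{t_1}$ on the target subspaces by applying it term-by-term to the formal sums in the definition (\ref{def:Lsubspace}), verifying that each term maps to a valid ingredient of $\TCompModGenopp{w}$ and that the resulting formal sum converges in the inverse-limit topology of $\UqQC$. Since $\Tinv_{t_1}$ does not extend to all of $\UqQC$, as illustrated by the author's rank-1 example $\sum_m F^m E^m$, the argument must crucially exploit the particular structure of $\TCompModGenopp{u_2}$, namely that the coefficients $R_\psi$ are \emph{finite} elements of $\bspanopp{u_2}{-}\cdot\UqQ^0$ (so each $\Tinv_{t_1}(R_\psi)$ lies unambiguously in $\UqQ$) and that the indexing is by $\psi\in\expsetup{t_2}$, so only the $E$-side of the formal series requires a limit analysis.

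The key computation uses the recursion (\ref{eq:PBWgenRecur}) in the reversed-direction form with the specialization $\psi_1=0$: since $\Ebaseopp{u_1}{0}=1$, one obtains
\begin{equation*}
    \Tinv_{t_1}(\Ebaseopp{u_2}{\psi})=\Ebaseopp{w}{\bar\psi}\,,
\end{equation*}
where $\bar\psi\in\expsetup{s}$ is the extension of $\psi$ by zero along $\descroots{t_1}$ under the splitting (\ref{eq:ExpsetCorr}). Applying $\Cartaninv$ and using its commutation with $\Tinv_{t_1}$ together with (\ref{eq:EFCartOrder}) yields the analogous identity $\Tinv_{t_1}(\Fbaseopp{u_2}{\phi})=\Fbaseopp{w}{\bar\phi}$, so that $\Tinv_{t_1}(\bspanopp{u_2}{-})\subseteq\bspanopp{w}{-}$. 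Combined with (\ref{eq:TKact}), which gives $\Tinv_{t_1}(K^\mu)=K^{t_1(\mu)}\in\UqQ^0$, this proves $\Tinv_{t_1}(\bspanopp{u_2}{-}\cdot\UqQ^0)\subseteq\bspanopp{w}{-}\cdot\UqQ^0$, so every term of the formal expression
\begin{equation*}
    \Tinv_{t_1}\Bigl(\sum_{\psi}R_\psi\,\Ebaseopp{u_2}{\psi}\Bigr)\;=\;\sum_{\psi}\Tinv_{t_1}(R_\psi)\,\Ebaseopp{w}{\bar\psi}
\end{equation*}
has the right shape for membership in $\TCompModGenopp{w}$.

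The main obstacle is verifying convergence in the completion, which is where the specific filtration from Lemma \ref{lem:CP95limCheck} must be matched against the combinatorics of inversion sets. The crucial observation is that each $t_1(\alpha)$ with $\alpha\in\descroots{t_2}$ lies in $\descroots{s}\subseteq\proots$ by Corollary \ref{cor:sumdescroots}, so $\height(t_1(\alpha))\geq 1$. Consequently $\height(\wgradp(\Ebaseopp{w}{\bar\psi}))=\height(t_1(\vec\psi))\geq n(\psi)$, which places $\Tinv_{t_1}(R_\psi)\,\Ebaseopp{w}{\bar\psi}$ in $A^{n(\psi)}$ with $A^n$ as in (\ref{eq:CP95limAn}). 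Thus for each fixed $n$ only finitely many $\psi$ contribute modulo $A^n$, the formal sum defines an element of $\UqQC$, and by the previous paragraph it lies in $\TCompModGenopp{w}$. The extension to the bimodule $\TCompModopp{u_2}=\UqQ\cdot\TCompModGenopp{u_2}\cdot\UqQ$ is then routine: declare $\Tinv_{t_1}(a\cdot x\cdot b)=\Tinv_{t_1}(a)\,\Tinv_{t_1}(x)\,\Tinv_{t_1}(b)$, using that $\Tinv_{t_1}$ is already an algebra automorphism of $\UqQ$, with image landing in $\UqQ\cdot\TCompModGenopp{w}\cdot\UqQ=\TCompModopp{w}$. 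The tensor-square versions follow analogously from $\Tinv_{t_1}^{\otimes 2}$ acting term-by-term on formal sums $\sum_{\phi,\psi}R_{\phi,\psi}(\Ebaseopp{u_2}{\phi}\otimes\Ebaseopp{u_2}{\psi})$, with a bigrading from heights on each tensor factor controlling convergence in the completed tensor product $\hat\otimes$.
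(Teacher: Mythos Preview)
Your proposal is correct and follows essentially the same approach as the paper, which provides only the one-line justification ``follows by applying \eqref{eq:BasisRecurs} or \eqref{eq:PBWgenRecur} to the limiting expressions.'' You have carefully unpacked this: the core identity $\Tinv_{t_1}(\Ebaseopp{u_2}{\psi})=\Ebaseopp{w}{\bar\psi}$ from \eqref{eq:PBWgenRecur}, the $\Cartaninv$-trick for the $F$-monomials, and the height-based convergence argument (noting $t_1(\descroots{t_2})\subseteq\proots$) are exactly what the authors leave implicit. The one point you treat somewhat informally---well-definedness of the bimodule extension $\Tinv_{t_1}(axb)=\Tinv_{t_1}(a)\Tinv_{t_1}(x)\Tinv_{t_1}(b)$---is also glossed over in the paper and is indeed routine once one observes that the term-by-term definition on $\TCompModGenopp{u_2}$ is compatible with left and right multiplication by $\UqQ$ inside $\UqQC$.
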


Clearly, $\Ri,\Ri^{-1}\in\TCompModGenopptwo{w_i}$  with $w_i$ a word of length one. Suppose $w\in \wordset$
with $\tau(w)=i$, so that $w=w^\flat \cdot w_i$\,. Then Lemma~\ref{lem:WordLims} allows us to define an \emph{elementary} quasi-$R$-matrix associated to the reduced word $w$ as
\begin{align}\label{eq:defPw}
    \Rw=
    \Tinv_{w^\flat}\otimes  \Tinv_{w^\flat}(\qRm_{i})=\sum_{j=0}^{\infty}\dfrac{q^{j(j-1)/2}_{i}}{[j]!_{i}}(q_{i}-q^{-1}_{i})^j E_{w}^j\otimes F_{w}^j\,.
\end{align}

Following the conventions of \eqref{eq:root2word} and Section~\ref{subsec:descroots}, we write $\beta^j=\wordroot(w[1,j])$ and $w^j=w[1,j]=w[\beta_j]$\,. Assume that $L=\len{w}$\,. In the notation of Section~\ref{subsec:Emonomials},
we define the {\em partial} quasi-$R$-matrix associated to $w$ by the following products in $\UqQC\hat\otimes\UqQC\,$.
\vspace{2mm}
\begin{align}\label{eq:Pw-genform}
    \Rprod{w}{}=\Ra{w^L}\ldots\Ra{w^1}
    =\Ra{w[\beta_L]}\ldots\Ra{w[\beta_1]}=\sum_{\psi\in\expsetup{s}}\Rcoeff{\psi}{q}^{-1}\cdot
    \Ebaseopp{w}{\psi}\otimes \Fbaseopp{w}{\psi}\quad\in \TCompModGenopptwo{w}\,
\end{align}
Here $s=\Weylpres(w)$ and the coefficients in $\mathbb Q(q)$ are given by 
\begin{equation}\label{eq:RcoeffGen}
\begin{aligned}
    \Rcoeff{\psi}q&=\frac {[\psi]!} {\Rcoeffdenom{\psi}q}\quad\in\Zqqvn{\dpone}
\quad\mbox{where}\quad
[\psi]!=\prod_{\alpha\in\proots}[\psi(\alpha)]!_{\alpha}\\
\rule{0mm}{6mm}
\quad\mbox{and}\quad   \Rcoeffdenom{\psi}q&= \prod_{\alpha\in\proots}\!q_\alpha^{\psi(\alpha)(\psi(\alpha)-1)/2}(q_\alpha-q_\alpha^{-1})^{\psi(\alpha)}=(-1)^{n(\psi)}\cdot\SingCoeffvar{\psi}{q}\cdot q^{\sum_\alpha d_\alpha\binom{\psi(\alpha)}{ 2}}\;.
\end{aligned}
\end{equation}
The elements $[\psi]!$ and $\SingCoeffvar{\psi}{q}$ are as defined in \eqref{eq:PBWEgenDivDef} and \eqref{eq:PBWEgenSingDef},  respectively.
Thus, in the product expressions, we consider all positive roots rather than just those in $\descroots{s}$
and assume $\psi\in\expsetup{s}$ is extended to all of $\proots$ by setting $\psi$ to zero outside of 
$\descroots{s}$.

The inclusion in $\Zqqvn{\dpone}$ is obvious from the definitions, and the 
formal sum expression in \eqref{eq:Pw-genform} is immediate from the definitions 
in \eqref{eq:PBWEgenDef}. It also implies the indicated inclusion into $\TCompModGenopptwo{w}\,$. 

Let $w=u_1\cdot u_2\in\wordset$ and $t_i=\Weylpres(u_i)$ as above so that $\len s=\len{t_1}+\len{t_2}$
for $s=t_1t_2=\Weylpres(w)\,$. Lemma~\ref{lem:WordLims} now implies that $\Tinv^{\otimes 2}_{t_1}(\Rprod{u_2}{})$ is well-defined and given by applying $\Tinv^{\otimes 2}_{t_1}$ to individual summands. 
Since the $q_\alpha$ depend only the $\Weyl$-orbit of $\alpha$, one readily checks that  
$\Rcoeff{s^*(\psi)}q=\Rcoeff{\psi}q$\,. In the situation where $\psi$ and $(\psi_1,\psi_2)$ are
related as in \eqref{eq:ExpsetCorr}, we infer from this that 
$\Rcoeff{\psi}q=\Rcoeff{\psi_1}q\Rcoeff{(t_1^{-1})^*(\psi_2)}q=\Rcoeff{\psi_1}q\Rcoeff{\psi_2}q$.
Together with \eqref{eq:PBWgenRecur} we then find the following well-defined recursion.
\begin{equation}\label{eq:PTrecurs}
    \Rprod{w}{}\,=\,\Tinv^{\otimes 2}_{t_1}\left(\Rprod{u_2}{}\right)\cdot\Rprod{u_1}{}\quad 
    \in\TCompModGenopptwo{w}\,
\end{equation}

A central property of the $\Rprod{w}{}$ elements is that they intertwine
the standard coproduct $\Delta$ with $\Delta^{\Tinv_w}$, the coproduct conjugated by the braid automorphism for the same word. 
Variants of the formula in the following proposition also appear in \cite[pg.429]{KR90}, \cite[Prop.2.4.1]{LS90}, and \cite[Prop.3.2.1]{KS98}, where they are derived using a calculus of quantum Weyl elements. We discuss in Appendix \ref{sec:InExWeyl} the nonexistence of such quantum Weyl elements in our more general setting and will avoid their use in the proof below. 

\begin{prop}\label{prop:AdP=DelT} Suppose $w\in\wordset$ and $x\in \UqQ$\,. Then the following
relation holds in $\TCompModopptwo{w}\,$,
\begin{equation}\label{eq:AdP=DelT} 
    \Rprod{w}{}\cdot\Delta(\Tinv_w(x))=\Tinv_w^{\otimes 2}(\Delta(x))\cdot\Rprod{w}{} \,.
\end{equation} 
\end{prop}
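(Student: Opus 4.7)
The plan is to proceed by induction on the length of the reduced word $w\in\wordset$, using the recursion (\ref{eq:PTrecurs}) to reduce the statement to the base case $w=w_i$ (a single letter). At each stage, I shall verify that both sides of (\ref{eq:AdP=DelT}) are well-defined elements of $\TCompModopptwo{w}$, which is guaranteed since $\Rprod{w}{}\in\TCompModGenopptwo{w}$ by (\ref{eq:Pw-genform}) and $\TCompModopptwo{w}$ is a $\UqQ^{\otimes 2}$-sub-bimodule of $\UqQC^{\hat\otimes 2}$.

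The base case requires checking
\[
\qRm_i\cdot\Delta(\Tinv_i(x))\,=\,\Tinv_i^{\otimes 2}(\Delta(x))\cdot\qRm_i\qquad \forall\,x\in\UqQ\,.
\]
Both sides define $\mathbb{Q}(q)$-linear maps in $x$ (indeed, both are of the form $R\cdot\Phi(x)$ and $\Psi(x)\cdot R$ for algebra homomorphisms $\Phi,\Psi:\UqQ\to\UqQC^{\hat\otimes 2}$), so it suffices to verify the identity on the generators $K_j$, $E_j$, and $F_j$. For $x=K_j$, both $\Delta(\Tinv_i(K_j))$ and $\Tinv_i^{\otimes 2}(\Delta(K_j))$ equal $K_jK_i^{-A_{ij}}\otimes K_jK_i^{-A_{ij}}$, and an elementary check using the commutation relations (\ref{eq:Kcomm}) and symmetry $d_iA_{ij}=d_jA_{ji}$ shows that each summand $E_i^s\otimes F_i^s$ of $\qRm_i$ commutes with this group-like. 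For $x=E_i$ and $x=F_i$, one combines (\ref{eq:defLeasy}), the coproduct formulas (\ref{eq:corels_gen}), and the commutation $[E_i,F_i]=(K_i-K_i^{-1})/(q_i-q_i^{-1})$ to check the identity term by term in the formal series, after multiplying through by inverses of the quantum factorials. The most delicate point is the telescoping identity that arises from comparing the $s$-th summand of $\qRm_i\Delta(F_i)$ with the $s$-th and $(s-1)$-th summands of $\Tinv_i^{\otimes 2}(\Delta(E_i))\qRm_i$, which is the content of the classical quasi-$R$-matrix identity for $\mathfrak{sl}_2$. The case $x=E_j$, $F_j$ for $j\neq i$ reduces, using (\ref{eq:defLserre}), to verifying the identity inside the rank 2 subalgebra $\Uqrerest{}{i,j}{\mathbb{Q}}$ by Corollary~\ref{cor:Uqtwoembed}; within this subalgebra, it follows by expanding $\Tinv_i(E_j)$ as a polynomial in $E_i$ and $E_j$ and using the $\mathfrak{sl}_2$ case repeatedly, combined with the Serre relations.

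For the inductive step, let $w=u_1\cdot u_2$ be a reduced decomposition with $\len{w}=\len{u_1}+\len{u_2}$ and $t_i=\Weylpres(u_i)$, and assume (\ref{eq:AdP=DelT}) holds for both $u_1$ and $u_2$. Applying the recursion (\ref{eq:PTrecurs}) and the inductive hypothesis for $u_1$ to the element $y=\Tinv_{u_2}(x)$, we compute
\[
\Rprod{w}{}\cdot\Delta(\Tinv_w(x))
\;=\;\Tinv_{t_1}^{\otimes 2}\bigl(\Rprod{u_2}{}\bigr)\cdot\Rprod{u_1}{}\cdot\Delta(\Tinv_{u_1}(\Tinv_{u_2}(x)))
\;=\;\Tinv_{t_1}^{\otimes 2}\bigl(\Rprod{u_2}{}\bigr)\cdot\Tinv_{u_1}^{\otimes 2}(\Delta(\Tinv_{u_2}(x)))\cdot\Rprod{u_1}{}\,.
\]
Pulling $\Tinv_{u_1}^{\otimes 2}$ outside the first factor (which is valid since $\Tinv_{t_1}=\Tinv_{u_1}$ and its action extends linearly to $\TCompModGenopptwo{u_2}$ by Lemma~\ref{lem:WordLims}), and then applying the inductive hypothesis for $u_2$, yields
\[
\Tinv_{u_1}^{\otimes 2}\bigl(\Rprod{u_2}{}\cdot\Delta(\Tinv_{u_2}(x))\bigr)\cdot\Rprod{u_1}{}
\;=\;\Tinv_{u_1}^{\otimes 2}\bigl(\Tinv_{u_2}^{\otimes 2}(\Delta(x))\cdot\Rprod{u_2}{}\bigr)\cdot\Rprod{u_1}{}
\;=\;\Tinv_{w}^{\otimes 2}(\Delta(x))\cdot\Rprod{w}{}\,,
\]
where the last equality uses homomorphy $\Tinv_{u_1}\circ\Tinv_{u_2}=\Tinv_w$ from (\ref{eq:Thomrel}) together with (\ref{eq:PTrecurs}) once more.

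The main obstacle is the base case verification for $x\in\{E_j,F_j\}$ with $j\neq i$, since this requires a computation inside the rank 2 subalgebra where $\Tinv_i(E_j)$ is a nontrivial polynomial in $E_i$ and $E_j$; the identity must be checked as a formal series identity in $\UqQC^{\hat\otimes 2}$, and each matching of coefficients invokes a $q$-binomial identity. A secondary technical concern is ensuring at each induction step that the formal series manipulations actually take place in $\TCompModopptwo{w}$ rather than in a larger ambient space — this is handled by noting that all appearances of $\Tinv_{u_1}^{\otimes 2}$ act on elements of $\TCompModGenopptwo{u_2}$, for which Lemma~\ref{lem:WordLims} guarantees that the images lie in $\TCompModGenopptwo{w}$, and that left and right multiplication by elements of $\UqQ^{\otimes 2}$ preserves $\TCompModopptwo{w}$ by construction.
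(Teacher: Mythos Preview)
Your proposal is correct and follows essentially the same route as the paper: induction on $\len{w}$, with the recursion (\ref{eq:PTrecurs}) driving the inductive step, and the base case $w=w_i$ reduced to checking generators. The paper's inductive step adds one letter at a time ($w\to w\cdot w_i$) rather than splitting arbitrarily, but your computation is the same manipulation read backwards.

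One point to sharpen: your remark that the $j\neq i$ base case ``follows by \ldots\ using the $\mathfrak{sl}_2$ case repeatedly, combined with the Serre relations'' is not right as stated. The identity $\qRm_i\,\Delta(\Tinv_i(E_j))=\Tinv_i^{\otimes 2}(\Delta(E_j))\,\qRm_i$ for $j\neq i$ is not a formal consequence of the $x\in\{E_i,F_i,K_i\}$ cases, since $E_j$ lies outside the rank~1 subalgebra and $\qRm_i$ does not simply commute past $\Delta(E_j)$. It genuinely requires a case-by-case computation depending on $A_{ij}\in\{0,-1,-2,-3\}$, matching coefficients of $E_i^s\otimes F_i^s$ against the expansion of $\Tinv_i(E_j)$ from (\ref{eq:defLserre}). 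The paper handles this by pointing to the explicit (truncated) computations in Appendix~\ref{sec:proofcoprodbase} and to the literature \cite{LS90,KR90,AJS94}. You correctly flag this as ``the main obstacle'' in your final paragraph, so the gap is one of phrasing rather than understanding; just drop the claim that it reduces to the $\mathfrak{sl}_2$ case.
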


\begin{proof}  The proof proceeds by induction on the length of $w\,$. For  a length one word $w=w_i$ we have $\Rprod{w_i}{}=\Ri$\,, reducing \eqref{eq:AdP=DelT} to 
$$
\Ri\cdot\Delta(\Tinv_i(x))=\Tinv_i^{\otimes 2}(\Delta(x))\cdot\Ri\,.
$$
This case is verified, for example, by taking a formal limit $\elli\rightarrow \infty$ for the computations in Appendix~\ref{sec:proofcoprodbase}. This relation for simple roots is also derived by direct computation in 
\cite[App.C.4]{AJS94}.

Assume next that \eqref{eq:AdP=DelT} holds for $w\in\Weyl$ and that $w\cdot w_i\in\Weyl$ is reduced. From \eqref{eq:PTrecurs} we  obtain $\Rprod{w\cdot w_i}{}\,=\,\Tinv^{\otimes 2}_{w}(\Ri)\cdot\Rprod{w}{}$ as a well-defined relation in $\TCompModGenopp{w\cdot w_i}\hspace{-1ex}$. 
The expressions and calculations below are now well-defined on $\TCompModopp{w\cdot w_i}\hspace{-1ex}$
by Lemma~\ref{lem:WordLims}. They 
use both the induction hypothesis and the base case relations.
\begin{align*}
    \Tinv^{\otimes 2}_{w\cdot w_i}(\Delta(x))\Rprod{w\cdot w_i}{}
       &=
          \Tinv^{\otimes 2}_{w}\circ\Tinv^{\otimes 2}_{i}(\Delta(x))\cdot 
          \Tinv^{\otimes 2}_{w}(\Ri)\cdot \Rprod{w}{}
       &&=
          \Tinv^{\otimes 2}_{w}\left(\Tinv^{\otimes 2}_{i}\left(\Delta(x)\right)\cdot\Ri\right)\cdot\Rprod{w}{}\\
       \rule{0mm}{5mm}&=
          \Tinv^{\otimes 2}_{w}\left(\Ri \cdot \Delta(\Tinv_{i}(x))\right)\cdot\Rprod{w}{}
       &&=
          \Tinv^{\otimes 2}_{w}\left(\Ri\right)\cdot
          \Tinv^{\otimes 2}_{w}\left(\Delta(\Tinv_{i}(x))\right)\cdot\Rprod{w}{}\\
       \rule{0mm}{5mm}&=
          \Tinv^{\otimes 2}_{w}(\Ri)\cdot\Rprod{w}{}\cdot\Delta(\Tinv_{w}(\Tinv_{i}(x)))
       &&=
          \Rprod{w\cdot w_i}{}\cdot\Delta(\Tinv_{w\cdot w_i}(x))   
\end{align*}
This shows the assertion for $w\cdot w_i$\,, completing the proof.
\end{proof}

An identity similar to \eqref{eq:AdP=DelT} can be found in \cite[Prop 8.3.7]{CP95} in the special case of $x=E_i$ with $w\cdot w_i$ reduced. The derivation there also involves certain quantum Weyl elements, which generally exist only in representations that admit a compatible $\Weyl$-action and, hence, limiting the statement to an identity between linear maps on such representations.
We, therefore, abstain from the use of techniques using Weyl elements here. 
See, Appendix~\ref{sec:InExWeyl} for further details. 

A {\em full} quasi-$R$-matrix is associated to any reduced word of maximal length. The first step in verifying independence of the choice of word is an intertwining relation with a twisted coproduct defined on
generators by
\begin{equation}\label{eq:AltCoprodDef}
    \barDelta(E_i)=E_i\otimes K_i^{-1}+1\otimes E_i\,, 
    \qquad \barDelta(F_i)=F_i\otimes 1 + K_i\otimes F_i\,,
    \quad\mbox{and}\quad 
    \barDelta(K_i)=K_i\otimes K_i\;. 
\end{equation}
It is easy to check that $\barDelta\circ\Kconaut=\Kconaut^{\otimes 2}\circ\Delta$, where 
$\Kconaut$ is as in \eqref{eq:defCaut}. The latter involution coincides with the bar involution
in \cite[3.1.12]{lu} so that $\barDelta$ coincides with the alternate coproduct defined
in \cite[4.1.1]{lu}. The next result uses the fact that, for maximal length words, the 
coproduct is conjugated by the Garside automorphism, for which we derived an explicit form 
in Section~\ref{subsec:GarsAut+DictGens}. 

\begin{lem}\label{lm:PzDel=DelbPz}
Suppose $z\in\wordsetmax$ is a reduced word of maximal length and $x\in\UqQ$\,.
    Then
    \begin{equation}\label{eq:PzDel=DelbPz}
        \Rprod{z}{}\cdot \Delta(x)=\barDelta(x)\cdot\Rprod{z}{}\,.
    \end{equation} 
\end{lem}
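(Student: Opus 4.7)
The plan is to derive \eqref{eq:PzDel=DelbPz} as a direct consequence of Proposition~\ref{prop:AdP=DelT} applied to the maximal word $w=z$, once the conjugated coproduct $\Tinv_z^{\otimes 2}\circ\Delta\circ\Tinv_z^{-1}$ is identified with $\barDelta$. Setting $y=\Tinv_z^{-1}(x)$ in that proposition --- legitimate because $\Tinv_z$ is an algebra automorphism of $\UqQ$ --- rewrites its statement as
\[
\Rprod{z}{}\cdot\Delta(x)\,=\,\Tinv_z^{\otimes 2}\bigl(\Delta(\Tinv_z^{-1}(x))\bigr)\cdot\Rprod{z}{},
\]
with both sides living in $\TCompModopptwo{z}$ via Lemma~\ref{lem:WordLims}. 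The lemma thus reduces to the purely algebraic identity
\[
\barDelta\circ\Tinv_z\,=\,\Tinv_z^{\otimes 2}\circ\Delta
\]
as maps $\UqQ\to\UqQ\otimes\UqQ$, and since both sides are algebra homomorphisms, it suffices to verify it on the standard generators $E_i,F_i,K_i^{\pm 1}$.

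For the verification on generators I would invoke Corollary~\ref{cor:GarsideAutom}, which provides the explicit expression $\Tinv_z = \DynkInv\circ\Cartanaut\circ\Kinvaut\circ S$ and, after unwinding, the formulae $\Tinv_z(E_i)=-K_{\dyninv(i)}^{-1}F_{\dyninv(i)}$, $\Tinv_z(F_i)=-E_{\dyninv(i)}K_{\dyninv(i)}$, and $\Tinv_z(K_i)=K_{\dyninv(i)}^{-1}$. A direct calculation on $E_i$ --- computing $\barDelta(-K_j^{-1}F_j)$ from $\barDelta(K_j^{-1})=K_j^{-1}\otimes K_j^{-1}$ and $\barDelta(F_j)=F_j\otimes 1+K_j\otimes F_j$ in \eqref{eq:AltCoprodDef}, with $j=\dyninv(i)$ --- yields $-K_j^{-1}F_j\otimes K_j^{-1}-1\otimes K_j^{-1}F_j$, which coincides with $\Tinv_z^{\otimes 2}(E_i\otimes K_i+1\otimes E_i)$. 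The verification on $F_i$ is then immediate via application of $\Cartaninv$, and the check on $K_i^{\pm 1}$ is trivial since $\Tinv_z$, $\Delta$, and $\barDelta$ all act diagonally on the Cartan subalgebra.

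The main difficulty is conceptual rather than computational: one must see that the Dynkin-involution twist built into $\Tinv_z$ and the orientation flip built into $\barDelta$ cancel exactly, so that no residual $\dyninv$-conjugation survives on the right-hand side of \eqref{eq:PzDel=DelbPz}. The explicit Garside formula from Corollary~\ref{cor:GarsideAutom} makes this transparent --- the $\DynkInv$ factor commutes past the tensor product and exactly accounts for the $\dyninv$-indices appearing in $\Tinv_z$ on generators, while the composite $\Cartanaut\circ\Kinvaut\circ S$ supplies the sign, the $K^{-1}$ twist in the second tensor factor of \eqref{eq:AltCoprodDef}, and the $E\leftrightarrow F$ exchange. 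This circumvents the quantum Weyl elements commonly used for such intertwining identities, which would require establishing their existence inside an appropriate completion.
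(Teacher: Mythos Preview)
Your proposal is correct and follows essentially the same approach as the paper: both start from Proposition~\ref{prop:AdP=DelT} applied to the maximal word and reduce to identifying $\Tinv_z^{\otimes 2}\circ\Delta\circ\Tinv_z^{-1}$ with $\barDelta$, invoking the Garside formula from Corollary~\ref{cor:GarsideAutom}. The only difference is stylistic: the paper factors $\Tinv_{\longweyl}=\Kconaut\circ Q$ with $Q=\DynkInv\circ\Cartaninv\circ S$ a coalgebra automorphism, so that the conjugated coproduct collapses to $\Kconaut^{\otimes 2}\circ\Delta\circ\Kconaut^{-1}=\barDelta$ without any generator-by-generator check, whereas you verify the identity $\barDelta\circ\Tinv_z=\Tinv_z^{\otimes 2}\circ\Delta$ directly on $E_i,F_i,K_i$ using the explicit action of $\Tinv_z$.
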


\begin{proof}
    Recall from Corollary~\ref{cor:GarsideAutom} that $\Tinv_{\longweyl}
    =\DynkInv\circ\Cartanaut\circ\Kinvaut\circ S$ for the longest 
    element $\longweyl\in\Weyl\,$. It is easy to check that
    $\Cartanaut\circ\Kinvaut=\Kconaut\circ \Cartaninv$ for the commuting involutions defined in
    Section~\ref{subsec:Kscale}. Thus, $\Tinv_{\longweyl}=\Kconaut\circ Q$\,, where
    $Q=\DynkInv\circ\Cartaninv\circ S$. Recall that both $\Cartaninv, S:U_q\rightarrow U_q^{\opp,\cop}$ are anti-coalgebra anti-automorphisms and $\DynkInv$, defined in \eqref{eq:defDynkInv}, is a Hopf algebra automorphism. Thus, $Q$ intertwines the coalgebra structure $Q^{\otimes 2}(\Delta(y))=\Delta(Q(y))$. Given that $\longweyl=\Weylpres(z)\,$, Proposition~\ref{prop:AdP=DelT} now implies
    \begin{align*}
        \Rprod{z}{}\cdot \Delta(x)
    &=
    \Tinv_{\longweyl}^{\otimes 2}\left(\Delta\bigl(\Tinv_{\longweyl}^{-1}(x)\bigr)\right)\cdot\Rprod{z}{}
     &&=
    \Kconaut^{\otimes 2}\circ Q^{\otimes 2}\left(\Delta\bigl(Q^{-1}\circ\Kconaut^{-1}(x)\bigr)\right)\cdot\Rprod{z}{}\\
   \rule{0mm}{6mm} &=
    \Kconaut^{\otimes 2}\left(\Delta(\Kconaut^{-1}(x))\right)\cdot\Rprod{z}{}
    &&=\barDelta(x)\cdot\Rprod{z}{}\;.
    \end{align*} \vspace*{-15mm}
    
\end{proof}

In \cite[Thm.4.1.2(a)]{lu} Lusztig shows that an element $\Theta$ in some completion of 
$\oplus_\nu(\UqQ^-)_\nu\otimes(\UqQ^+)_\nu$ is uniquely determined by the relation
$\Theta\cdot\barDelta^{\opp}(y)=\Deltaopp(y)\cdot\Theta$ (since \cite{lu} uses the opposite coproduct of ours).
The required assumptions can be readily checked for $\Theta=(\Rprod{z}{-1})^{\opp}$ using the 
general form as in \eqref{eq:Pw-genform}, the commutation \eqref{eq:PzDel=DelbPz},  and 
noting that the completions align. From the uniqueness of solutions to (\ref{eq:PzDel=DelbPz}) we infer word independence for quasi-$R$-matices associated to reduced words of maximal length. 

\begin{cor}\label{cor:RprodWI}
There exists $\,\Rprodmax\in\UqQC^{\hat\otimes 2}\,$ such that $\Rprodmax=\Rprod{z}{}\;$  for all  $z\in\wordsetmax$\,.  
Moreover, $\Kconaut^{\otimes 2}(\Rprodmax)=\Rprodmax^{-1}$.
\end{cor}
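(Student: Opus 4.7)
The strategy is to leverage Lusztig's uniqueness result for solutions of the intertwining equation, which is already hinted at in the remark following Lemma~\ref{lm:PzDel=DelbPz}. For each $z \in \wordsetmax$, Lemma~\ref{lm:PzDel=DelbPz} asserts
\[
\Rprod{z}{} \cdot \Delta(x) = \barDelta(x) \cdot \Rprod{z}{} \qquad \forall\, x \in \UqQ.
\]
From the general form in \eqref{eq:Pw-genform}, $\Rprod{z}{}$ lies in the completion of $\oplus_\nu (\UqQ^+)_\nu \otimes (\UqQ^-)_\nu$, so that taking inverses (invertibility was noted earlier for the elementary factors $\Ri$ in \eqref{eq:eq:ElemRmatInv}, and inverts factor by factor via \eqref{eq:PTrecurs}) and then applying the flip $T$ yields $\Theta_z := T(\Rprod{z}{-1}) = (\Rprod{z}{-1})^{\opp}$, which lies in the completion of $\oplus_\nu (\UqQ^-)_\nu \otimes (\UqQ^+)_\nu$ used in \cite[Thm.~4.1.2(a)]{lu}. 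The flipped and inverted equation reads
\[
\Theta_z \cdot \barDelta^{\opp}(x) = \Delta^{\opp}(x) \cdot \Theta_z,
\]
which is precisely Lusztig's defining relation (up to his opposite convention). Moreover, $\Theta_z$ has leading term $1 \otimes 1$ by the $s=0$ contribution in \eqref{eq:ElemRmatGen}, matching the normalization in Lusztig's uniqueness assertion.

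Consequently, Lusztig's theorem implies that $\Theta_z$ is independent of the choice of $z \in \wordsetmax$. Inverting and undoing the flip, $\Rprod{z}{}$ itself is independent of $z$, so we may define $\Rprodmax := \Rprod{z}{}$ unambiguously. This establishes the first assertion. The main subtlety here is verifying that the completion in which \eqref{eq:Pw-genform} makes sense is compatible with the completion of \cite{lu}; I would address this by checking that both completions are determined by the weight filtration on $\UqQ^+$ (through the height function on $\nnN^{\sroots}$) and that the inverse operation preserves the relevant weight grading.

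For the involutive identity $\Kconaut^{\otimes 2}(\Rprodmax) = \Rprodmax^{-1}$, the plan is to show that $\Kconaut^{\otimes 2}(\Rprodmax)^{-1}$ also satisfies the defining intertwining relation, and then invoke uniqueness once more. Applying $\Kconaut^{\otimes 2}$ to $\Rprodmax \cdot \Delta(x) = \barDelta(x) \cdot \Rprodmax$ and using the identity $\barDelta \circ \Kconaut = \Kconaut^{\otimes 2} \circ \Delta$ (noted in the proof of Lemma~\ref{lm:PzDel=DelbPz}) together with the fact that $\Kconaut$ is an involution yields
\[
\Kconaut^{\otimes 2}(\Rprodmax) \cdot \barDelta(y) = \Delta(y) \cdot \Kconaut^{\otimes 2}(\Rprodmax),
\]
where $y = \Kconaut(x)$ ranges over all of $\UqQ$. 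Setting $W = \Kconaut^{\otimes 2}(\Rprodmax)$ and rearranging gives $W^{-1} \cdot \Delta(y) = \barDelta(y) \cdot W^{-1}$, so $W^{-1}$ satisfies the same commutation relation as $\Rprodmax$ does in Lemma~\ref{lm:PzDel=DelbPz}. Since $\Kconaut$ preserves the weight grading and has leading term $1 \otimes 1$, the element $W^{-1}$ lies in the same completion with the same normalization. Uniqueness then forces $W^{-1} = \Rprodmax$, i.e., $\Kconaut^{\otimes 2}(\Rprodmax) = \Rprodmax^{-1}$.

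The primary technical obstacle is bookkeeping on completions and normalizations: one must confirm that both $(\Rprod{z}{-1})^{\opp}$ and $\Kconaut^{\otimes 2}(\Rprodmax)^{-1}$ fall within the precise class of elements to which Lusztig's uniqueness applies (graded completion with a specified leading term). Once this is set up carefully, the rest is formal manipulation using the compatibility $\barDelta \circ \Kconaut = \Kconaut^{\otimes 2} \circ \Delta$ and the properties of the flip and inverse.
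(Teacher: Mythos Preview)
Your proposal is correct and follows essentially the same approach as the paper: the word independence is derived by identifying $(\Rprod{z}{-1})^{\opp}$ with Lusztig's $\Theta$ and invoking the uniqueness statement in \cite[Thm.~4.1.2(a)]{lu}, exactly as the paper does in the paragraph preceding the corollary. Your treatment of the second assertion $\Kconaut^{\otimes 2}(\Rprodmax)=\Rprodmax^{-1}$ via the compatibility $\barDelta\circ\Kconaut=\Kconaut^{\otimes 2}\circ\Delta$ and a second appeal to uniqueness is the standard argument (it is Lusztig's own proof of part (b) of the same theorem), and is more explicit than what the paper records.
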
  

\medskip

\subsection{Pre-Triangular Structures for \texorpdfstring{$\Uq$\,}{Uq}}\label{subsec:PreTriang}

Since the coefficients in \eqref{eq:defPw} may be evaluated at $q=1$\,, the element $\,\Rprodmax\,$ 
has a well-defined image $\,\Rprodmax^\hbar\,$ in the power series completion $\Uhg\hat\otimes\Uhg$
over $\Qh$ as in Section~\ref{subsec:qgroups-genrel}. 
Originally, universal $R$-matrices were constructed in this setting by Drinfeld \cite{Dri87} as products 
$\mathcal R^\hbar=\mathcal \Rexp\cdot\Rprodmax^\hbar$ in $\Uhg^{\hat\otimes 2}\,$. The ``diagonal'' term is given by 
\begin{equation}\label{eq:defRexp}
    \Rexp=\exp\left(\hbar \sum_{i,j=1}^n (\symCartinv^{-1})_{ij}H_i\otimes H_j \right)\,,
\end{equation}
where 
$\symCartinv_{ij}
   =A_{ij}d_j^{-1}
    =\symbrack{\breve\alpha_i}{\breve\alpha_j}
    =\symCart_{ij}d_i^{-1}d_j^{-1}\in {\maxd}^{-1}\mathbb Z
$ 
with notation as in Section~\ref{subsec:rootweyl}. 
Explicit formulae are obtained in \cite{Ro89} for the $\LT{A}_n$ case and \cite{KR90} for general Lie types, see also \cite{CP95}. Tanisaki's axioms in \cite{tan} or \eqref{eq:tani2} above are motivated by the fact
that $\Rexp$ cannot be defined in $\UqQ^{\otimes 2}$\,, but conjugation by $\Rexp$ can be expressed as an automorphism on $\UqQC^{\hat\otimes 2}\,$.

Specifically, if $x$ and $y$ are homogeneously graded elements with 
$\wgrad(x)=\mu,\wgrad(y)=\nu\in\mathbb Z^{\sroots}$\,, we have $[H_i\,,x]=\symbrack{\mu}{\breve \alpha_i}x$
and $[H_j\,,y]=\symbrack{\nu}{\breve \alpha_j}y$. From this we obtain
$\sum_{i,j}(B^{-1})_{ij}[H_i\otimes H_j\,,x\otimes y]=x\otimes (H_\mu \cdot y)+(x\cdot H_\nu)\otimes y$, where
$H_\mu=\sum_i\mu_id_iH_i$ if $\mu=\sum_i\mu_i\alpha_i\,$. Exponentiation $t\mapsto e^{-\hbar t}$ of the implied adjoint and
regular actions on $x\otimes y$ together with the notation from \eqref{eq:Kexpdef} 
then yields the identities  below. 
\begin{equation} \label{eq:tanipsi}
\taniauto(x\otimes y)=\Rexp^{-1}(x\otimes y)\Rexp
=
 xK^{-\nu}\otimes K^{-\mu}y
 =
 q^{-\symbrack{\mu}{\nu}}xK^{-\nu} \otimes yK^{-\mu}\,
\end{equation}

Thus, although the Tanisaki automorphism is initially defined as $\taniauto=\mathrm{Ad}(\Rexp^{-1})$
on $\Uhg^{\hat\otimes 2}\,$, it clearly maps the subalgebras $\Uq^{\otimes 2}$ and $\UqQC^{\hat\otimes 2}$
to themselves. The form in (\ref{eq:defRexp}) can often still be defined, at least  up to scalars, on
tensor products of certain modules of $\Uq^{\otimes 2}$, implying 
solutions to the Yang Baxter equations or solutions to (\ref{eq:DVWconds}). 

For example, for generic highest weight representations $V$ and $W$, the maps $\RtaniImp V W$ can be designed starting from generalized Verma modules. For a tuple $\kappa=(\kappa_1,\ldots,\kappa_n)\in\mathbf u^{\sroots}$ with $\mathbf u$ a group of units in the ground ring, the {\em generic Verma module} $V(\kappa)$ is defined as the induced representation from the one-dimensional $\Uq^\geqzero$ representation, given by $K_iv_\kappa=\kappa_iv_\kappa$ and $E_iv_\kappa=0$ on a basis vector $v_\kappa$\,. As usual, there is a
linear isomorphism $\Uq^-\rightarrow V(\kappa): x\mapsto x.v\,$.

Assume now for homogeneous $x,y\in\Uq^-$ that $-\wgrad(x)=\wgradn(x)=\mu$ and $-\wgrad(y)=\wgradn(y)=\nu$ and write $\kappa^\mu=\kappa_1^{\mu_1}\ldots\mu_n^{\mu_n}$ for $\mu\in\mathbb Z^{{\sroots}}$. Then linear maps on tensor 
products of generic Verma modules are well-defined by the formula below. They are unique solutions to
the equation 
 $D^{-1}cD=\taniauto(c)$ of module maps (with $\taniauto$ as in \eqref{eq:tanipsi})  up to an overall scalar factor.
\begin{equation}\label{eq:DmapVerma}
    D_{V(\kappa),V(\lambda)}((x.v_\kappa)\otimes(y,v_\lambda))=q^{\symbrack{\mu}{\nu}}\kappa^{-\nu}\lambda^{-\mu}((x.v_\kappa)\otimes(y.v_\lambda))\,
\end{equation}

It is not hard to check that the respective isomorphisms 
$V(\kappa)\otimes V(\lambda)\rightarrow V(\lambda)\otimes V(\kappa)$, as defined in Corollary~\ref{cor:Mbraiding}, obey the Yang Baxter Equation and, thus, produce a 
representation of the colored braid groupoids over $\mathbb Q(q)[\{\kappa_i^{\pm 1},\lambda_i^{\pm 1},\ldots\}]\,$.

The categorical triangle relations may be realized in a more general setting, in which we consider a
commutative ring $\ExpGrp$ with $\mathrm{char}(\ExpGrp)=0$ and an extension of 
$\symbrack{\,\cdot\,}{\,\cdot\,}\,:\ExpGrp^{\sroots}\times \ExpGrp^{\sroots}\rightarrow \ExpGrp$ for the form from \eqref{eq:def:symbrack}. Elements of the group ring $\mathbb Z[\ExpGrp]$ may be written
formally as $q^x$ with $x\in\ExpGrp$\,. Denoting $\mathbb Q(\ExpGrp)$ as the field of fractions,
we have $\mathbb Q(\mathbb Z)=\mathbb Q(q)\,$. 
As a tensor category, one may then consider representations of $\Uq$ on $\ExpGrp^{\sroots}$-graded
vector spaces $V=\bigoplus_\gamma V_\gamma$ subject to the condition that $K_iv=q^{\symbrack{\breve\alpha_i}{\gamma}}v$ for
$v\in V_\gamma$ and $\gamma\in \ExpGrp^{\sroots}\,$. 
For two graded representations $V$ and $W$,
define then $D_{V,W}$ to be $q^{\symbrack{\gamma}{\eta}}$ on the $V_\gamma\otimes V_\eta$
component. 

The relations (\ref{eq:DVWconds}) and (\ref{eq:tanipsi}) are now easily verified
on such modules, yielding a braided tensor category $\mathcal C$ over $\mathbb Q(\ExpGrp)$\,. As an abelian
category the latter naturally splits in a sum $\bigoplus_\rho \mathcal C_\rho$ with $\rho\in\ExpGrp^1=\ExpGrp/\mathbb Z\,$ with the obvious $\ExpGrp^1$-graded behavior for tensors. 

The main result for the generic case is summarized in the proposition below. It has been stated at various levels of rigor and formality in numerous other places in the literature. We adapt here results from \cite{lu} to our conventions, which itself is following similar calculations as those in \cite[Theorem 4.3.3]{tan}. 

\begin{prop}
    The triple $(\UqQC\,,\Rprodmax\,,\taniauto)$ is a pre-triangular bialgebra.
\end{prop}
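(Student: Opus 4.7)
The plan is to verify, in order, the three axioms of Definition~\ref{def:tani2} with $A=\UqQC$, $\Rtani=\Rprodmax$, and $f=\taniauto$. The intertwining axiom falls out of Lemma~\ref{lm:PzDel=DelbPz} together with the identity $\taniauto\circ\Deltaopp=\barDelta$ as maps $\Uq\to\UqQ^{\otimes 2}$. This identity is a short check on generators using \eqref{eq:tanipsi}: for instance $\taniauto(\Deltaopp(E_i))=\taniauto(K_i\otimes E_i+E_i\otimes 1)=1\otimes E_i+E_i\otimes K_i^{-1}=\barDelta(E_i)$, and the computations for $F_i$ and $K_i$ are analogous. Since both sides are then determined by their values on generators and $\taniauto$ preserves gradings, the identity extends to $\UqQC^{\hat\otimes 2}$, giving the third axiom $\Rprodmax\cdot\Delta(x)=\taniauto(\Deltaopp(x))\cdot\Rprodmax$ for all $x\in\UqQC$.

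For the hexagon-type axioms, the first step is to verify them for the elementary factor $\Ri$. Expanding $(\id\otimes\Delta)(\Ri)$ via \eqref{eq:ElemRmatGen} and \eqref{eq:corels_pwrs_gen}, then moving each $K_i^{-k}$ past $F_i^{s-k}$ produces a power of $q_i$ that precisely matches the shift introduced by $\taniauto_{12}$ acting on $E_i^s\otimes 1\otimes F_i^s$, and the remaining combinatorial identity reduces to the $q$-binomial recursion \eqref{eq:qbin-recurs}. The partner identity $(\Delta\otimes\id)(\Ri)=\taniauto_{23}(\Ri_{13})\cdot\Ri_{23}$ is symmetric, invoking \eqref{eq:corels_pwrs_gen} for $\Delta(E_i^s)$ instead.

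To propagate the identities from $\Ri$ to the full $\Rprodmax$, I induct on $\len w$ using the recursion $\Rprod{w}{}=\Tinv_{t_1}^{\otimes 2}(\Rprod{u_2}{})\cdot\Rprod{u_1}{}$ from \eqref{eq:PTrecurs}. Two equivariance facts are needed. First, on $\wgrad$-homogeneous tensors $\taniauto$ commutes with $\Tinv_s^{\otimes 2}$ (and likewise for the three-fold extensions entering the hexagons), which follows from $\Weyl$-isometry of $\symbrack{\cdot}{\cdot}$, the equivariance \eqref{eq:Twgrad}, and \eqref{eq:TKact}. Second, the discrepancy between $\Delta\circ\Tinv_w$ and $\Tinv_w^{\otimes 2}\circ\Delta$ is governed by conjugation with $\Rprod{w}{}$ itself via Proposition~\ref{prop:AdP=DelT}. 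Combining these with the base case allows one to evaluate $(\id\otimes\Delta)(\Tinv_{t_1}^{\otimes 2}(\Rprod{u_2}{}))$ in terms of $\Tinv_{t_1}^{\otimes 3}$ applied to the inductive data, and then fold back to obtain the desired hexagon identity for $\Rprod{w}{}$. Corollary~\ref{cor:RprodWI} guarantees that the resulting relations hold for $\Rprodmax=\Rprod{z}{}$ independently of the chosen maximal $z\in\wordsetmax$.

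The main obstacle is the inductive bookkeeping: the three instances of $\Rprod{u}{}$ and the two instances of $\taniauto$ that arise when $(\id\otimes\Delta)$ or $(\Delta\otimes\id)$ is pushed through the recursion must be reorganized so that the conjugation discrepancies of Proposition~\ref{prop:AdP=DelT} cancel exactly against the shifts introduced by $\taniauto_{ij}$. Once the combinatorics is set up cleanly—most easily by working in $\TCompModopptwo{w}$ where Lemma~\ref{lem:WordLims} guarantees all formal expressions are well-defined—the induction is forced by the base case. Alternatively, one may bypass this bookkeeping by invoking the uniqueness result underlying Corollary~\ref{cor:RprodWI}: both sides of each hexagon satisfy a common intertwining relation with $(\Delta\otimes\id)\barDelta$ or $(\id\otimes\Delta)\barDelta$, so Lusztig's uniqueness theorem forces equality.
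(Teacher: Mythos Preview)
Your treatment of the intertwining axiom is the same as the paper's: verify $\taniauto\circ\Deltaopp=\barDelta$ on generators and invoke Lemma~\ref{lm:PzDel=DelbPz}.

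For the hexagon axioms the paper takes a different and much shorter route. Rather than proving them by induction on $\len{w}$, it identifies $\Rprodmax$ with Lusztig's element $\overline{\Theta}{}^{\opp}$ (as in the argument preceding Corollary~\ref{cor:RprodWI}) and then simply \emph{imports} Lusztig's coproduct formula for $\Theta$ from \cite[p.~38]{lu}. After swapping $\Delta\leftrightarrow\Deltaopp$ and $\tilde K_{-\nu}\leftrightarrow K^{-\nu}$ to match conventions, and applying the $(13)$ permutation, that formula reads exactly
\[
(\id\otimes\Delta)(\Rprodmax)_\mu=\sum_{\nu'+\nu''=\mu}\taniauto_{12}\bigl((\Rprodmax)_{\nu'}^{13}\bigr)\,(\Rprodmax)_{\nu''}^{12}\,,
\]
which sums to the first hexagon; the second is analogous. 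So the paper outsources the real work to \cite{lu}.

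Your inductive approach is in principle viable and is closer to how one would prove Lusztig's formula from scratch, but the step you flag as ``the main obstacle'' is genuine and you do not resolve it. Concretely, pushing $(\id\otimes\Delta)$ through $\Tinv_{t_1}^{\otimes 2}(\Rprod{u_2}{})$ via Proposition~\ref{prop:AdP=DelT} produces a conjugation by $(\Rprod{u_1}{})_{23}$, and you then need this to commute past $\taniauto_{12}\bigl((\Tinv_{t_1}^{\otimes 2}\Ri)_{13}\bigr)\cdot(\Tinv_{t_1}^{\otimes 2}\Ri)_{12}$ in a way that produces $\taniauto_{12}\bigl((\Rprod{u_1}{})_{13}\bigr)$ in the right place. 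This reorganization is not automatic: slots $2$ and $3$ interact nontrivially, and one needs further intertwining identities (essentially a version of Proposition~\ref{prop:AdP=DelT} applied componentwise together with the Yang--Baxter--type commutation of the $\Ra{w^j}$ factors) to close the loop. Your uniqueness alternative would require a three-tensor analogue of \cite[Thm~4.1.2(a)]{lu}, which is not immediate either. Neither route is wrong, but neither is completed in your proposal, whereas the paper sidesteps all of this by citing the finished result in \cite{lu}.
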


\begin{proof} A straightforward verification on generators shows that $\barDelta=\taniauto\circ\Deltaopp$. 
Lemma~\ref{lm:PzDel=DelbPz} then implies the last relation in \eqref{eq:tani2}.

The argument preceding Corollary~\ref{cor:RprodWI} identifies Lusztig's universal element with our quasi-$R$-matrix via $\Theta=(\Rprodmax^{-1})^\opp$ or, equivalently,  $\overline\Theta^\opp=\Rprodmax\,$ so that, restricted to
weight components, we have for any $\nu\in\mathbb Z^{\sroots}$\,
$$
\overline\Theta^\opp_\nu=(\Rprodmax)_\nu=\sum_{\psi:\,\vec \psi=\nu} \Rcoeff{\psi}{q}^{-1}\cdot
    \Ebaseopp{z}{\psi}\otimes \Fbaseopp{z}{\psi}\;,
$$
with $\vec\psi$ as defined in Section~\ref{subsec:Emonomials} and $z\in\wordsetmax\,$. 
Exchanging Lusztig's conventions with ours, namely $\Delta$ with $\Deltaopp$ and $\tilde K_{-\nu}$ by $K^{-\nu}$, the formula in \cite[pg 38]{lu} reads 
$$(\Delta^\opp\otimes 1)\overline\Theta_{\mu}=\sum_{\nu'+\nu''=\mu}\overline\Theta_{\nu'}^{13}(1\otimes  K^{-\nu'}\otimes 1)\overline\Theta_{\nu''}^{23}\,.
$$
Applying the $(13)$ permutation to the tensor factors and substituting the $\Rprodmax$ components above we arrive at
\begin{equation}\label{eq:RpmTaniRel1}
(1\otimes \Delta){\Rprodmax}_{\mu}
=\sum_{\nu'+\nu''=\mu}{\Rprodmax}_{\nu'}^{13}(1\otimes K^{-\nu'}\otimes 1){\Rprodmax}_{\nu''}^{12} 
=\sum_{\nu'+\nu''=\mu}\taniauto_{12}({\Rprodmax}_{\nu'}^{13}){\Rprodmax}_{\nu''}^{12}\,,
\end{equation}
using formula \eqref{eq:tanipsi} for $\taniauto\,$. Summation, in the sense of the inverse limits, 
then yields the first identity in
\eqref{eq:tani2}. The second follows analogously. 
\end{proof}

A left inverse of $\Rprodmax$ may be constructed by applying the map $a\otimes b\otimes c\mapsto a\otimes S^{-1}(c)b$
to the first relation in (\ref{eq:RpmTaniRel1}). Using the antipode axiom for $\Delta^\opp$, 
the expression in (\ref{eq:Pw-genform}) for a maximal word $z\in\wordsetmax$\,,
the formulae for the antipode on basis elements in (\ref{eq:AntipBasis}), as well as notations from (\ref{eq:ExpsetDefMax}) and (\ref{eq:def:dyninv}) we obtain that 
\begin{equation}\label{eq:PmaxInv}
    \RprodInv{z}{}=
    \sum_{\psi\in\expsetupmax} (-1)^{\height(\vec\psi)}q^{-\frac 12(\symbrack{\vec\psi}{\vec\psi}-2\symbrack{\weightvecchar}{\vec\psi})}\Rcoeff{\psi}{q}^{-1}\cdot
    \Ebaseopp{z}{\psi}\otimes \Fbaseopp{z^\winvchar}{\psi}\qquad\in \TCompModGenopptwo{z}
\end{equation}
is a left inverse of $\Rprod{z}{}$
by summation over all graded components in the limit. A similar calculation, applying  
$a\otimes b\otimes c\mapsto a\otimes cS^{-1}(b)$ and the automorphism $x\otimes y\mapsto x\otimes yK^\nu$ for $\wgrad(x)=\nu$, shows that $\RprodInv{z}{}$ is also a right inverse of $\Rprod{z}{}\,$. 

Applying $a\otimes b\otimes c\mapsto aS(b)\otimes c$ and $a\otimes b\otimes c\mapsto S(a)b\otimes c$ to the second relation in (\ref{eq:RpmTaniRel1}) shows that  $\RprodInv{z^\winvchar}{}$ is also a two-sided inverse
of $\Rprod{z}{}$ so  that
\begin{equation}\label{eq:PqInvMaw}
\Rprod{z}{-1}=\RprodInv{z}{}=\RprodInv{z^\winvchar}{}=\Rprod{z^\winvchar}{-1}\;. 
\end{equation} 
From this identity  together with
(\ref{eq:AntipBasis}), (\ref{eq:tanipsi}), and word independence one readily derives  
\begin{equation}\label{eq:S2Pmax=TaniPmax}
    S\hat\otimes S(\Rprodmax)\,=\,\taniauto^{-1}(\Rprodmax)\,.
\end{equation} 

It is useful to express \eqref{eq:RpmTaniRel1} also as a relation of (co)multiplication coefficients.
Suppose $z\in\wordsetmax$ is a  reduced word of maximal length. 
The PBW theorems from Section~\ref{sec:PBW}, the fact that $\Uq^+$ is an algebra over $\Zqqn{\dpone}$\,, and $\Uq^\leqzero$ a coalgebra over $\Zqq$ then imply that there are elements 
$\Multcoeff{\chi}{\psi}{\phi}{q}\in\Zqqn{\dpone}$ and $\CoMultcoeff{\chi}{\phi}{\psi}{q}\in\Zqq$ with
$\psi,\phi,\chi\in\nnN^{\proots}$ such that 
\begin{equation}\label{eq:CoProdCoeffs}
    \Ebaseopp{z}{\psi}\Ebaseopp{z}{\phi}=\sum_{\chi}\Multcoeff{\chi}{\psi}{\phi}{q}\cdot\Ebaseopp{z}{\chi}
    \qquad\mbox{and}\qquad
    \Delta(\Fbaseopp{z}{\chi})=\sum_{\phi,\psi}\CoMultcoeff{\chi}{\phi}{\psi}{q}\cdot K^{-\vec\psi}\Fbaseopp{z}{\phi}\otimes\Fbaseopp{z}{\psi}\,.
\end{equation}
The summations are over only finitely many elements in $\nnN^{\proots}$ since they are subject to the grading constraint $\vec\chi=\vec\phi+\vec\psi$. It is clear that the only allowed terms for 
$\Delta(\Fbaseopp{z}{\chi})$ are those appearing in the sum \eqref{eq:CoProdCoeffs}.
Linear independence of the elements $\Ebaseopp{z}{\phi}\otimes K^{-\vec\psi}\Fbaseopp{z}{\phi}\otimes\Fbaseopp{z}{\psi}$ now implies that equation \eqref{eq:RpmTaniRel1} 
is, indeed, {\em equivalent} to the condition that 
\begin{equation}\label{eq:CoeffId}
\Rcoeff{\chi}{q}
    \Multcoeff{\chi}{\psi}{\phi}{q}
=
    \Rcoeff{\psi}{q}\Rcoeff{\phi}{q}
\CoMultcoeff{\chi}{\phi}{\psi}{q}\qquad \mbox{for all}\; \chi, \phi, \psi\in\nnN^{\proots}\,.
\end{equation}
An analogous equation of (co) multiplication coefficients, equivalent to 
the second Tanisaki relation in \eqref{eq:RpmTaniRel1}, can be derived in the same manner. 

\medskip

\subsection{Maximal Hopf Ideals from Tanisaki-Lusztig Pairings}\label{subsec:LuszTaniPair}

Yet another viewpoint of the same axioms in \eqref{eq:RpmTaniRel1}
is derived from the pairings used in \cite{lu,tan}. Fix some $z\in\wordsetmax\,$ and observe that, by \eqref{eq:RcoeffGen}, 
we may define a pairing of $\Uq^+$ with $\Uq^-$ over $\Zqqn{\dpone}$ as follows using PBW bases. 
\begin{equation}\label{eq:LTpairs}
    \langle\,\cdot\,,\,\cdot\,\rangle_z:\Uq^-\times \Uq^+\rightarrow \Zqqvn{\dpone}\qquad\mbox{with}\quad 
\langle \Fbaseopp{z}{\psi},\Ebaseopp{z}{\phi}\rangle_z\,=\,\delta_{\psi,\phi}\cdot\Rcoeff{\psi}{q}\,.
\end{equation}

Note that (\ref{eq:LTpairs}) is dual to (\ref{eq:Pw-genform}) in the sense that
$\left(\langle \Fbaseopp{z}{\psi},\,\cdot\,\rangle_z\,\hat\otimes\, \id\right)(\Rprod{z}{})=\Fbaseopp{z}{\psi}\,$. Word independence and
linearity of this relation imply that $\left(\langle f,\,\cdot\,\rangle_z\otimes \id\right)(\Rprodmax)=f$ for all $f\in\Uq^-\,$. 
This, in turn, shows that the pairing in (\ref{eq:LTpairs}) is independent of $z$ as well.  

Define a linear isomorphims $\taniauto_R:A^{\otimes 2}\rightarrow A^{\otimes 2}$ by $\taniauto_R(x\otimes y)=x\otimes K^{-\mu}y$, where $x$ is homogeneous with $\wgrad(x)=\mu$ and $A$ is any graded subalgebra that contains $\Uq^0\,$. The form of the coproduct
 in \eqref{eq:CoProdCoeffs} now implies that we have a well-defined map
 \begin{equation}\label{eq:hatDeltaDef}
     \hat\Delta=\taniauto_R\circ\Deltaopp:\Uq^-\rightarrow\Uq^-\otimes\Uq^-\;, 
 \end{equation}
 which is a homomorphism with respect to the braided product structure. The latter is given on homogeneous elements in $(\Uq^-)^{\otimes 2}$ by
 $(x\otimes y)(x'\otimes y')=q^{-\symbrack{\mu}{\nu}} xx'\otimes yy'\,$, where $\mu=\wgrad(y)$ and 
 $\nu=\wgrad(x')$. One easily checks that $\hat\Delta$ is coassociative. 

 In this language, 
 the first relation in \eqref{eq:tani2} as well as the coefficient relation in \eqref{eq:CoeffId} 
 are equivalent to the requirement that $\hat\Delta$ is adjoint to the multiplication in $\Uq^+$ with respect to the pairing in 
 \eqref{eq:LTpairs}. That is, 
 \begin{equation}\label{eq:UminUplusPair}
     \langle \hat\Delta(a),b\otimes c\rangle=\langle a,bc\rangle \quad\mbox{for all}\quad  a\in\Uq^-  
     \quad\mbox{and}\quad b,c\in\Uq^+\,.
 \end{equation}
 As before, analogous statements
 hold for the second Tanisaki relation. 

Consider next the specialization to a primitive $\kay$-th root of unity $\zeta\,$ such that $\ell>\maxd\,$. As in (\ref{eq:defzetaQG}), denote for $\maxd+1=\dpone\leq m\leq \ell$  the algebras 
\begin{equation}\label{eq:UzmDef}
    \Uzn{m}=\Uq\otimes\Zzvn{m}\qquad\mbox{and}\qquad \Uzn{m}^\utypechar=\Uq^\utypechar\otimes\Zzn{m} \quad \mbox{if} \quad
\utypechar\in\{\geqzero,\leqzero,+,-\}\,. 
\end{equation}

The form (\ref{eq:LTpairs}) clearly specializes to a form 
$\langle\,\cdot\,,\,\cdot\,\rangle_{\zeta,m}:\Uzn{m}^-\times\Uzn{m}^+\rightarrow\Zzvn{m}\,$ but will be degenerate. 
Denote by $N_{\zeta,m}^{\pm}\subset \Uzn{m}^{\pm}$ the respective null spaces.

\begin{lem}\label{lem:Null=Kmax}
With notation as in (\ref{eq:ZidealMaxNotHat}) and (\ref{eq:Kw=SpangeqL}) we have $N_{\zeta,m}^{\pm}=\ZmaxIdHat{\bullet}^{\pm}\otimes\Zzn{m}\,$. In particular,
$\ZmaxIdHat{\bullet}^{\pm}$ is uniquely defined for all Lie types.
\end{lem}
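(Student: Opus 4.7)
The plan is to use the explicit diagonal form of the Tanisaki-Lusztig pairing on PBW basis elements given in (\ref{eq:LTpairs}) and determine precisely when the coefficient $\Rcoeff{\psi}{\zeta}$ vanishes after specialization. Inspecting the formula (\ref{eq:RcoeffGen}), the denominator $\Rcoeffdenom{\psi}{q}$ is a product of $q_\alpha^{\psi(\alpha)(\psi(\alpha)-1)/2}(q_\alpha-q_\alpha^{-1})^{\psi(\alpha)}$, which specializes to a unit in $\Zzvn{m}\subseteq\Zzvn{\ell}$ since $(\zeta-\zeta^{-1})^{-1}$ lies in the ground ring. Hence $\Rcoeff{\psi}{\zeta}=0$ if and only if the numerator $[\psi]!\big|_{q=\zeta}=\prod_{\alpha}[\psi(\alpha)]!_\alpha$ vanishes, and by the definition of $\ell_\alpha$ as the order of $\zeta_\alpha^2$ this happens exactly when $\psi(\alpha)\geq\ell_\alpha$ for at least one $\alpha\in\proots$, i.e.\ when $\psi\in\Lminexpset{\bullet}$ as defined in (\ref{eq:LminExpSet}).

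Next, fix a reduced word $z\in\wordsetmax$ and write an arbitrary $b\in\Uzn{m}^+$ in its PBW expansion $b=\sum_\phi b_\phi\Ebaseopp{z}{\phi}$ with coefficients $b_\phi\in\Zzn{m}$, guaranteed by Theorem~\ref{thm:mainPBW}. Pairing against the dual basis element $\Fbaseopp{z}{\psi}$ yields $\langle\Fbaseopp{z}{\psi},b\rangle_{\zeta,m}=b_\psi\cdot\Rcoeff{\psi}{\zeta}$. Since $\Zzn{m}$ is an integral domain, the vanishing of this expression for every $\psi$ forces $b_\psi=0$ whenever $\Rcoeff{\psi}{\zeta}\neq 0$, i.e.\ whenever $\psi\in\Lmaxexpset{\bullet}$. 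This gives the inclusion
\[
N_{\zeta,m}^{+}\;\subseteq\;\bspanopp{z}{\Lminexpset{\bullet},+}\otimes\Zzn{m}\;=\;\ZmaxIdHat{\bullet}^{+}\otimes\Zzn{m}\,,
\]
where the last equality uses the specialization of (\ref{eq:Kw=SpangeqL}) for a maximal word noted just after that display. The reverse inclusion is immediate: if $\phi\in\Lminexpset{\bullet}$ and we pair $\Ebaseopp{z}{\phi}$ with any $\Fbaseopp{z}{\psi}$, the result vanishes — either because $\psi\neq\phi$ (diagonality), or because $\psi=\phi\in\Lminexpset{\bullet}$ forces $\Rcoeff{\phi}{\zeta}=0$. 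The corresponding statement for $N_{\zeta,m}^{-}$ follows by the same argument with the roles of $\Uzn{m}^{\pm}$ reversed, or alternatively by applying $\Cartaninv$ and using $\Cartaninv(\ZmaxIdHat{\bullet}^{+})=\ZmaxIdHat{\bullet}^{-}$ from (\ref{eq:KhatInvolIds}).

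For the uniqueness assertion, the point is that the pairing $\langle\,\cdot\,,\,\cdot\,\rangle_z$ was shown immediately after (\ref{eq:LTpairs}) to be independent of the choice of maximal reduced word $z$: indeed, word independence of $\Rprodmax$ (Corollary~\ref{cor:RprodWI}) gives the dual formula $\bigl(\langle f,\,\cdot\,\rangle_z\otimes\id\bigr)(\Rprodmax)=f$, so the pairing itself is intrinsic to $\Uzn{m}$. Consequently $N_{\zeta,m}^{\pm}$ is defined without reference to any word, and the displayed equality recovers $\ZmaxIdHat{\bullet}^{\pm}$ as the pre-image of $N_{\zeta,m}^{\pm}$ under the scalar-extension inclusion $\Uz^{\pm}\hookrightarrow\Uzn{m}^{\pm}$, which is well-defined because $\Uz^{\pm}$ is a free $\Zzn{\dpone}$-module and hence a direct summand. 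This gives a word-free description of $\ZmaxIdHat{\bullet}^{\pm}$ and, in particular, resolves the ambiguity in the $\LT{G}_2$ case flagged before Corollary~\ref{cor:KidealWordIndep}, where the two reduced expressions of $\longweyl$ \emph{a priori} might have produced different ideals. The main conceptual work is the non-degeneracy analysis above; the only subtlety to watch is the bookkeeping between the rings $\Zzn{\dpone}\subseteq\Zzn{m}\subseteq\Zzvn{m}$, ensuring that $(q_\alpha-q_\alpha^{-1})^{-1}$ is available where it is used but that the final ideal lives inside $\Uz^{\pm}$ defined over $\Zzn{\dpone}$.
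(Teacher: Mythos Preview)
Your proof is correct and follows exactly the same approach as the paper. The paper's proof is a terse two-line sketch that simply observes $\Rcoeff{\psi}{\zeta}=0$ if and only if $\psi\in\Lminexpset{\bullet}$ and then refers to (\ref{eq:Kw=SpangeqL}); you have faithfully unpacked all of the implicit details --- the analysis of numerator versus denominator in $\Rcoeff{\psi}{q}$, the diagonal-pairing argument for both inclusions, and the word-independence argument for the uniqueness claim --- without adding any new ideas beyond what the paper intends.
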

\begin{proof} Observe  that, with the exponent set from (\ref{eq:LminExpSet}), 
$$
\Rcoeff{\psi}{\zeta}=0 \qquad\mbox{if and only if}\quad \psi\in\Lminexpset \bullet\,.
$$
The claim is now immediate from the relation in  (\ref{eq:Kw=SpangeqL}). 
\end{proof}

Recall from Section~\ref{subsec:Z_Ideals} the remaining ambiguity for the $\LT{G}_2$ type, which
is resolved here via the word independence of the pairing. As before, we will suppress the 
ground ring from our notation $N_{\zeta,m}^{\pm}=\ZmaxIdHat{\bullet}^{\pm}$ whenever it is clear 
from the context.

\begin{prop}\label{prop:HopfIdealEasy}
    The ideals $\ZmaxIdHat{\bullet}^\leqzero\subset\Uzn{m}^\leqzero$ and $\ZmaxIdHat{\bullet}^\geqzero\subset\Uzn{m}^\geqzero$ are Hopf ideals, independent of choices of reduced  words of maximal length. 
\end{prop}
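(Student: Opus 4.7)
The plan is to reduce the claim to the standard fact that the null space of a bialgebra pairing between Hopf algebras is a Hopf ideal. Word independence is essentially free: Lemma~\ref{lem:Null=Kmax} identifies $\ZmaxIdHat{\bullet}^{\pm}$ with the null spaces $N_{\zeta,m}^{\pm}$ of the specialized Tanisaki-Lusztig pairing $\langle\,\cdot\,,\,\cdot\,\rangle_{\zeta,m}$, and the pairing itself is word-independent, as observed just after (\ref{eq:LTpairs}). Thus $\ZmaxIdHat{\bullet}^{\pm}$ is intrinsic to $\Uzn{m}^{\pm}$ and does not depend on the choice of $z \in \wordsetmax$ used to write down a PBW basis.

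For the Hopf ideal property on $\Uzn{m}^\geqzero$, the plan is to upgrade the strict pairing $\Uzn{m}^- \times \Uzn{m}^+ \to \Zzvn{m}$ to a bialgebra pairing between $\Uzn{m}^\leqzero$ and $\Uzn{m}^\geqzero$ by adjoining the group-algebra pairing $\langle K^{\mu}, K^{\nu}\rangle = q^{-\symbrack{\mu}{\nu}}$ on the Cartan part dictated by the twist $\taniauto$ of (\ref{eq:tanipsi}). With this extension, the first Tanisaki axiom of (\ref{eq:tani2}), recast as (\ref{eq:UminUplusPair}), and its symmetric counterpart from the second axiom yield the two adjointness relations $\langle c, ab\rangle_{\zeta,m} = \langle \hat\Delta(c), a \otimes b\rangle_{\zeta,m}$ and $\langle uv, a\rangle_{\zeta,m} = \langle u \otimes v, \Delta(a)\rangle_{\zeta,m}$, where $\hat\Delta$ is as in (\ref{eq:hatDeltaDef}). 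The first of these immediately gives that $N_{\zeta,m}^\geqzero$ is a two-sided ideal: if $a \in N_{\zeta,m}^\geqzero$, then for any $b$ and any $c$, writing $\hat\Delta(c) = \sum c_1 \otimes c_2$, the right-hand side equals $\sum \langle c_1, a\rangle_{\zeta,m}\langle c_2, b\rangle_{\zeta,m} = 0$. For the coideal property, note that by construction the pairing descends to a non-degenerate pairing between the quotients $\Uzn{m}^\leqzero / N_{\zeta,m}^\leqzero$ and $\Uzn{m}^\geqzero / N_{\zeta,m}^\geqzero$, with compatible PBW bases obtained from $\basisopp{z}{\Lmaxexpset{\bullet},+}\cdot\basischar^{\,0}$ and its opposite. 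For $a \in N_{\zeta,m}^\geqzero$, expand $\Delta(a) \equiv \sum_{ij} c_{ij}\,\bar b_i \otimes \bar b_j$ modulo $N_{\zeta,m}^\geqzero \otimes \Uzn{m}^\geqzero + \Uzn{m}^\geqzero \otimes N_{\zeta,m}^\geqzero$; then the second adjointness gives
\begin{equation*}
\sum_{ij} c_{ij}\,\langle u, \bar b_i\rangle_{\zeta,m}\langle v, \bar b_j\rangle_{\zeta,m} \,=\, \langle uv, a\rangle_{\zeta,m} \,=\, 0
\end{equation*}
for all $u, v \in \Uzn{m}^\leqseq$, so non-degeneracy on the quotient forces every $c_{ij} = 0$. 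Antipode invariance is then automatic on any bi-ideal quotient of a Hopf algebra (it may also be read off from (\ref{eq:KhatInvolIds}) and (\ref{eq:S2Pmax=TaniPmax})).

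The statement for $\Uzn{m}^\leqzero$ follows by applying the anti-involution $\Cartaninv$, which exchanges the two Borels and swaps the null spaces by the first identity of (\ref{eq:KhatInvolIds}). The main technical hurdle I anticipate is the careful bookkeeping of the Cartan twist: the braided coproduct $\hat\Delta = \taniauto_R \circ \Deltaopp$ of (\ref{eq:hatDeltaDef}) carries the $K^{-\vec\psi}$ factors visible in (\ref{eq:CoProdCoeffs}), and one must verify that the chosen pairing on $\Uzn{m}^0$ absorbs these factors consistently on both sides, so that the two adjointness relations hold as stated and the formal argument above goes through verbatim on the Borel level rather than only on the strictly positive/negative parts.
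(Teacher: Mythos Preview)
Your strategy---identify the ideals as null spaces of the specialized Tanisaki--Lusztig pairing and use that null spaces of bialgebra pairings are Hopf ideals---is correct and is the same idea the paper uses. The paper's execution is, however, more economical and sidesteps precisely the hurdle you anticipate. Rather than extending the pairing to a full skew-Hopf pairing on the Borels and invoking both Tanisaki axioms, the paper stays on the nilpotent part: the single relation (\ref{eq:UminUplusPair}) already shows that the left null space $N^-_{\zeta,m}=\ZmaxIdHat{\bullet}^-$ is a coideal for the \emph{braided} coproduct $\hat\Delta$ on $\Uzn{m}^-$. The passage to the ordinary $\Delta$ on the Borel $\Uzn{m}^\leqzero$ is then a one-line untwisting via $\hat\Delta=\taniauto_R\circ\Deltaopp$ from (\ref{eq:hatDeltaDef}), using only that $\taniauto_R$ preserves the subspaces $\ZmaxIdHat{\bullet}^\leqzero\otimes\Uzn{m}^\leqzero$ and $\Uzn{m}^\leqzero\otimes\ZmaxIdHat{\bullet}^\leqzero$. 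No extension of the pairing to $\Uzn{m}^0$ and no second adjointness relation are needed. (The paper also treats $\leqzero$ first and then applies $\Cartaninv$ for $\geqzero$, rather than the reverse; this is immaterial.)

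One point in your write-up is incorrect as stated: ``Antipode invariance is then automatic on any bi-ideal quotient of a Hopf algebra.'' A bi-ideal in a Hopf algebra need not be $S$-stable in general. This \emph{is} true for pointed Hopf algebras by a theorem of Nichols, which the paper invokes later in the proof of Proposition~\ref{prop:KhatIdealDiagWord}, but you do not cite it here. Your parenthetical fallback is the right argument and is exactly what the paper uses: word independence from Lemma~\ref{lem:Null=Kmax} combined with (\ref{eq:KhatInvolIds}) gives $S(\ZmaxIdHat{\bullet}^{\leqzero})=S(\Zaugidealhat{z}^{\leqzero})=\Zaugidealhat{z^\winvchar}^{\leqzero}=\ZmaxIdHat{\bullet}^{\leqzero}$.
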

\begin{proof} The identity (\ref{eq:UminUplusPair}) implies that the left null space has to
be a co-ideal with respect to $\hat\Delta$. Together with Lemma~\ref{lem:Null=Kmax} we thus
obtain  
$$
    \hat\Delta(\ZmaxIdHat{\bullet}^-)\subseteq \ZmaxIdHat{\bullet}^-\otimes\Uzn{m}^-+\Uzn{m}^- \otimes \ZmaxIdHat{\bullet}^-\;. 
$$
With $\Uzn{m}^\leqzero=\Uzn{m}^0\cdot \Uzn{m}^-$ and $\ZmaxIdHat{\bullet}^\leqzero=\Uzn{m}^0\cdot \ZmaxIdHat{\bullet}^-$ it is easy to check that $\taniauto_R$ maps  $\ZmaxIdHat{\bullet}^\leqzero\otimes\Uzn{m}^\leqzero$ as well as 
$\Uzn{m}^\leqzero\otimes\ZmaxIdHat{\bullet}^\leqzero$ to itself. 
The co-ideal equation for $\hat\Delta$ and (\ref{eq:hatDeltaDef}) now imply the respective co-ideal 
condition for the regular coproduct with respect to $\ZmaxIdHat{\bullet}^\leqzero\,$.  
Word independence of $\Zaugidealhat{z}^\leqzero\,$ from Lemma~\ref{lem:Null=Kmax} and 
relation (\ref{eq:KhatInvolIds}) now entails that  $\ZmaxIdHat{\bullet}^\leqzero$ is invariant
under the antipode and, hence, a Hopf ideal. 
Finally, recall that $\Cartaninv(\Zaugidealhat{z}^\leqzero)\subseteq \Zaugidealhat{z}^\geqzero$\,,
$\Cartaninv(\Uzn{m}^\leqzero)\subseteq\Uzn{m}^\geqzero$, $\Cartaninv^{\otimes 2}\circ\Delta=\Delta^\opp\circ\Cartaninv$\,, and $\Cartaninv\circ S=S\circ \Cartaninv$\,.
From these relations 
one readily derives that $\ZmaxIdHat{\bullet}^\geqzero$ is also a Hopf ideal. 
\end{proof}

We may consider now the full maximal ideal $\ZmaxIdHat{\siAA}=(\ZmaxIdHat{\bullet}^\geqzero+\ZmaxIdHat{\bullet}^\leqzero)\cdot \Uzn{m}$
for the full algebra $\Uzn{m}$ over $\Zzvn{m}\,$, which has all desired properties. 

\begin{cor}\label{cor:MaxIdealEasy} \ Suppose $\dpone\leq m\leq \ell$.\vspace*{-1mm}

\begin{enumerate}[label=\roman*), leftmargin=12mm,] 
        \item 
        For any Lie type and root of unity with $\kay$ as in (\ref{eq:kaycond}), 
    $\ZmaxIdHat{\siAA}$ is a Hopf ideal in $\Uzn{m}$\,.
        \vspace*{1.8mm}
        \item 
        $\ZmaxIdHat{\siAA}$ is equal to the ideal generated by
    the set $\bigl\{\Epw_w,\,\Fpw_{w'}:\;\emptyword\neq w\leqRB z,\, \emptyword\neq w'\leqRB z'\bigr\}$
    for any choice of $z,z'\in\wordsetmax\,$.
         \vspace*{1.8mm}
         \item
         $\ZmaxIdHat{\siAA}$ is stable under all automorphisms
    discussed in Section~\ref{sec:gradingauts}. 
    \end{enumerate}
\end{cor}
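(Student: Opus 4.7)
The plan is to deduce Corollary~\ref{cor:MaxIdealEasy} primarily from Proposition~\ref{prop:HopfIdealEasy} and the word-independence Lemma~\ref{lem:Null=Kmax}, together with the explicit descriptions of the relevant automorphisms in Section~\ref{sec:gradingauts} and their interaction with the ideals $\Zaugidealhat{z}^{\pm}$ recorded in \eqref{eq:KhatInvolIds} and Theorem~\ref{thm:ZTinvInvar}.

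For part (i), I would first observe that the embeddings $\Uzn{m}^{\geqzero}, \Uzn{m}^{\leqzero} \hookrightarrow \Uzn{m}$ together with Theorem~\ref{thm:mainPBW} yield a tensor decomposition $\Uzn{m} \cong \Uzn{m}^{\geqzero} \otimes \Uzn{m}^{-}$ (or its opposite), so that $\ZmaxIdHat{\siAA}$ may be written simply as $\ZmaxIdHat{\bullet}^{\geqzero} \cdot \Uzn{m} + \Uzn{m} \cdot \ZmaxIdHat{\bullet}^{\leqzero}$. Since each summand is generated by a Hopf ideal of a Hopf subalgebra, the co-ideal property pushes through: for $x \in \ZmaxIdHat{\bullet}^{\geqzero}$, Proposition~\ref{prop:HopfIdealEasy} gives $\Delta(x) \in \ZmaxIdHat{\bullet}^{\geqzero} \otimes \Uzn{m}^{\geqzero} + \Uzn{m}^{\geqzero} \otimes \ZmaxIdHat{\bullet}^{\geqzero}$, which is contained in $\ZmaxIdHat{\siAA} \otimes \Uzn{m} + \Uzn{m} \otimes \ZmaxIdHat{\siAA}$. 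The analogous statement for $\ZmaxIdHat{\bullet}^{\leqzero}$ combined with the two-sided ideal property completes the co-ideal verification. The counit vanishes on all primitive power generators by construction, and antipode invariance follows from the identity $S(\Epw_w) \in \Uzn{m}^{\leqzero} \cdot \ZmaxIdHat{\bullet}^{\leqzero}$ (obtainable, e.g., via \eqref{eq:AntipBasis} applied to maximal words) and analogously for $\Fpw_{w'}$.

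For part (ii), I will combine the definition of $\ZmaxId{\siAA}=\Zaugideal{\longweyl,\longweyl}$ in \eqref{eq:ZidealMaxNot} with word-independence. Lemma~\ref{lem:Null=Kmax} asserts that $\ZmaxIdHat{\bullet}^{\pm}$ does not depend on the choice of reduced word for $\longweyl$; in particular the generating sets $\{\Epw_w : \emptyword \neq w \leqRB z\}$ and $\{\Epw_w : \emptyword \neq w \leqRB z'\}$ yield the same two-sided ideal in $\Uzn{m}^{\geqzero}$ for any $z,z' \in \wordsetmax$, and likewise for the $\Fpw_{w'}$. Since $\ZmaxIdHat{\siAA}$ is by definition the two-sided ideal in $\Uzn{m}$ generated by the union $\ZmaxIdHat{\bullet}^{+} \cup \ZmaxIdHat{\bullet}^{-}$, the stated generating set description follows immediately.

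For part (iii), I will handle the automorphisms type by type. For the involutions $\Cartaninv$, $\Kinvaut$, $\Qinvaut$, $\Cartanaut$, $\Kconaut$, the identities \eqref{eq:KhatInvolIds} combined with word-independence from part (ii) immediately give that each sends $\ZmaxIdHat{\bullet}^{\pm}$ to $\ZmaxIdHat{\bullet}^{\pm}$ or $\ZmaxIdHat{\bullet}^{\mp}$, hence sends $\ZmaxIdHat{\siAA}$ to itself. The antipode is handled similarly using the last identity in \eqref{eq:KhatInvolIds}. For the Che transformations $\Kscale{u}{\mathsfit h}$, Proposition~\ref{prop:Kscaleprops}\ref{item:Kscaleprops:genform} shows they act on any homogeneous element by multiplication by an invertible Cartan factor, so they preserve the two-sided ideal generated by any collection of homogeneous elements; applied to the primitive power generators, this gives stability of $\ZmaxIdHat{\siAA}$. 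The main obstacle, and the only step requiring genuine work, is stability under the Artin group action $\Tinv_s$: here I would combine Theorem~\ref{thm:ZTinvInvar}, which shows that $\Tinv_i$ maps each $\Epw_w$ (for $w \leqRB z$) into $\Zsubalgchar_{\bullet}^{+} \cup \Zsubalgchar_{\bullet}^{\leqzero}$ with explicit formula $\Tinv_i(\Epw_w) = \Epw_{w_i \cdot w}$ when $w_i \cdot w$ is reduced and $\Tinv_i(\Epw_w) = -\Kpw_i^{-1}\Fpw_{\dyninv(i)}$ otherwise, with the analogous behavior on $\Fpw_{w'}$. In either case the image lands in a generator of $\ZmaxIdHat{\siAA}$, proving stability under each $\Tinv_i$ and hence under the full Artin group. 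Finally, the Dynkin diagram involution $\DynkInv$ permutes the generating sets by word-independence, so stability under it is immediate.
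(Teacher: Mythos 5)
Your treatment of parts (i) and (ii) follows the paper's own route: part (i) is exactly the observation that Proposition~\ref{prop:HopfIdealEasy} makes the two halves Hopf ideals of their Borel Hopf subalgebras, and part (ii) is the word independence supplied by Lemma~\ref{lem:Null=Kmax}; likewise your handling of the involutions and the \Kscaleword transformations in (iii) via \eqref{eq:KhatInvolIds} and the grading formulae matches the paper's argument.

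The one genuine problem is the Artin step in (iii). You assert a dichotomy valid for \emph{every} generator: $\Tinv_i(\Epw_w)=\Epw_{w_i\cdot w}$ when $w_i\cdot w$ is reduced, and $\Tinv_i(\Epw_w)=-\Kpw_i^{-1}\Fpw_{\dyninv(i)}$ otherwise, concluding that "in either case the image lands in a generator." This formula is false for general $w\leqRB z$: already in type $\LT{A}_2$ with odd $\kay$ one has, by \eqref{eq:A2Tinv1Form}, $\Tinv_1(\Epw_{(12)})=-\Epw_2+\delzeta{\;}\Kpw_1^{-1}\Epw_{(12)}\Fpw_1$, a sum of two terms rather than a single (Cartan-twisted) power generator, even though $w_1\cdot w_1w_2$ is not reduced. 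What the proof of Theorem~\ref{thm:ZTinvInvar} actually shows is weaker and word-adapted: for each fixed $i$ one chooses the maximal word $w=u\cdot w_{\dyninv(i)}$ with $z=w_i\cdot u\in\wordsetmax$, and then $\Tinv_i(\Epw_v)=\Epw_{w_i\cdot v}$ for $v\leqRB u$ while the single remaining generator satisfies $\Tinv_i(\Epw_w)=-\Kpw_i^{-1}\Fpw_i$ (note the index is $i$, not $\dyninv(i)$), by \eqref{eq:TinvXYsame} and the Garside formula. Stability of $\ZmaxIdHat{\siAA}$ then follows because, by part (ii), this adapted set generates the \emph{same} ideal, so it suffices to check $\Tinv_i$ on it. Your argument is repaired exactly by this substitution, which is what the paper means by repeating the proof of Theorem~\ref{thm:ZTinvInvar} "nearly verbatim."

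A second, smaller point: you cite Theorem~\ref{thm:ZTinvInvar} itself, but that theorem excludes $\LT{G}_2$ (its word independence input is Theorem~\ref{thm:Zwordindep}). The corollary claims all Lie types; the reason the argument still works for $\LT{G}_2$ is that the word independence needed here is that of the \emph{ideal} $\ZmaxIdHat{\siAA}$, which Lemma~\ref{lem:Null=Kmax} provides uniformly for all types. So you should rerun the proof of Theorem~\ref{thm:ZTinvInvar} at the level of the ideal, using Lemma~\ref{lem:Null=Kmax} in place of Theorem~\ref{thm:Zwordindep}, rather than invoking the theorem as a black box.
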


\begin{proof}
   The first part is immediate from
Proposition~\ref{prop:HopfIdealEasy}. Invariance under the 
automorphisms $\Cartaninv$, $\Kinvaut$, and $S$ follows from (\ref{eq:KhatInvolIds})
and word independence. Gradings and identities, such as (\ref{eq:ConjWordGen}) and $\Cartanaut=\Qinvaut\circ\Cartaninv$, imply invariance for all others in Section~\ref{subsec:Kscale}. 

For the Artin generators $\Tinv_i$ we may repeat the proof of Theorem~\ref{thm:ZTinvInvar} nearly verbatim. Specifically, for the choice of $w$ therein, it is shown that for all $v\leqRB w$ the element $\Tinv_i(\Epw_v)$ is again some power generator and, hence, in $\ZmaxIdHat{\siAA}$
by word independence.
\end{proof}

Thus, if we define the {\em \mytypeqg quantum groups} by the quotient 
\begin{equation}\label{eq:DefReduced}
  \Uzn{m}^{\redsup}=\Uzn{m}/\ZmaxIdHat{\siAA}\,,
\end{equation}
then Corollary~\ref{cor:MaxIdealEasy} implies that $\Uzn{m}^{\redsup}$ is indeed a Hopf algebra, which is independent of any choices made in its definition. The construction of a full 
poset of ideals under $\ZmaxIdHat{\siAA}$ and respective quasi-$R$-matrices will be addressed in
the next sections with different techniques.

\subsection{Truncated Quasi-\texorpdfstring{$R$}{R}-Matrices and Coproduct Formulas} 
\label{subsec:TruncQmats}
In this section we develop a calculus of quasi-$R$-matrices at roots of unity analogous to the 
one in Section~\ref{subsec:QuasiR_genq}. To avoid singularities, the summations defining these 
$R$-matrices need to be truncated so that the corresponding identities hold only up to certain ideals. 
 
It is, therefore, useful to 
extend  the ideals in $\Uz^{\pm}$ defined (\ref{eq:Kw=SpangeqL}) to the full algebra $\Uzn{m}$ over $\Zzvn{m}$ for a 
given  primitive $\kay$-th root of unity $\zeta$ and $\dpone\leq m\leq\ell\,$.  For any pair of reduced words $u,w\in\wordset\,$, we write  
\begin{equation}\label{eq:KhatTwoDefs}
    \ZaugidealhatIn{u,w}{m}
=(\Zaugidealhat{u}^{+}+\Zaugidealhat{w}^-)\cdot\Uzn{m}
=\Zaugideal{u,w}\cdot\Uzn{m}
=
\ZaugidealhatIn{u,\emptyword}{m}+\ZaugidealhatIn{\emptyword,w}{m}\,,
\end{equation}
which may, alternatively, be viewed as the two-sided ideal in $\Uzn{m}$ generated by the set $\{\Epw_a,\Fpw_b\,:\,\emptyword\neq a,b\leqRB u\}\,$. Correspondingly, for reduced words $a,b\in\wordset\,$ for which $a\cdot b$ is also reduced, we infer that, for any Lie type,  
\begin{equation}\label{eq:KhatTwoIds}
    \ZaugidealhatIn{a\cdot b,\emptyword}{m}=
         \ZaugidealhatIn{a,\emptyword}{m}+\Tinv_a\bigl(\ZaugidealhatIn{b,\emptyword}{m}\bigr)
    \qquad\mbox{and}\qquad
    \Cartaninv\bigl(\ZaugidealhatIn{a,\emptyword}{m}\bigr)=\ZaugidealhatIn{\emptyword,a}{m}\,
\end{equation} 
hold as well as the respective identities with flipped arguments and summations over ideals. A useful symmetry for 
various tensor identities is   
\begin{equation}\label{eq:DefCatinvHash}
    \CartaninvTwo=(1\,2)\circ\Cartaninv^{\otimes 2}:\,\Uzn{\ell}^{\otimes 2}\rightarrow\Uzn{\ell}^{\otimes 2}\,:\quad
    x\otimes y\mapsto  \CartaninvTwo(x\otimes y)=\Cartaninv(y)\otimes \Cartaninv(x)\,,
\end{equation}
which is  an anti-linear anti-involution by the properties in (\ref{eq:defCinv}). 

To ensure minimal truncations, we consider $R$-matrices defined over the maximal ground rings $\Zzn{\ell}$ or
$\Zzvn{\ell}$\,, observing that $[s]!_\alpha\neq 0$ whenever $s< \ell_\alpha\,$. We begin with the definition
of the {\em simple truncated} quasi-$R$-matrix associated to a given $\alpha_i\in\sroots$\,,
\begin{align}\label{eq:DefElemTrRmat}
    \Rti\,=\,\sum_{s=0}^{\elli-1}\dfrac{\zeta_i^{s(s-1)/2}}{[s]!_i}(\zeta_i-\zeta_i^{-1})^s \cdot E_i^s\otimes F_i^s
    \qquad\in \;\Uzn{\ell}^+\otimes \Uzn{\ell}^-\,,
\end{align}
which is well-defined over $\Zzn{\ell}\,$. Analogous to the generic case we, further, denote the conjugate $R$-matrix as
\begin{align}\label{eq:DefElemTrRmatinv}
    \Rtii\,=\,\CartaninvTwo(\Rti)\,=\,\sum_{s=0}^{\elli-1}\dfrac{\zeta_i^{-s(s-1)/2}}{[s]!_i}(-1)^s(\zeta_i-\zeta_i^{-1})^s \cdot E_i^s\otimes F_i^s
    \qquad\in \;\Uzn{\ell}^+\otimes \Uzn{\ell}^-\,,
\end{align}
where $\CartaninvTwo$ is as in (\ref{eq:DefCatinvHash}). To express these as inverses, we introduce  for any 
$w\in\wordset$ the ideals
\begin{equation}\label{eq:JwDef}
    \pTmIdeal{w}=(\Zaugidealhat{w}^{+}\otimes \Zaugidealhat{w}^-)\otimes \Zzn{\ell}= (\ZaugidealhatIn{w,\emptyword}{\ell}\otimes \ZaugidealhatIn{\emptyword,w}{\ell})\cap (\Uzn{\ell}^+\otimes \Uzn{\ell}^-)\,,
\end{equation}
generated by all $X_a\otimes  Y_b$ with $\emptyword\neq a,b\leqRB w\,$. 
In calculations throughout this
section, we also employ the standard notation $x\equiv y\mod V$ to mean
$x-y\in V\,$ for a given submodule $V$.

\begin{lem}[\textit{cf.} {\cite[Lemma A.1]{oh}}]\label{lem:Rtinverse} 
For all $\alpha_i\in\sroots$ we have 
    \begin{align*}
    \Rtii\cdot\Rti\,\equiv\,\Rti\cdot \Rtii \,\equiv\,1\otimes 1\quad \mod \,\pTmIdeal{w_i} \,. 
\end{align*}
\end{lem}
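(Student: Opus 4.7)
The plan is to compute the product $\Rtii\cdot\Rti$ explicitly, collect terms by the common power $n=r+s$, and then argue separately in the two ranges $n<\elli$ and $n\geq\elli$. Writing out the product of the two truncated sums from \eqref{eq:DefElemTrRmat} and \eqref{eq:DefElemTrRmatinv}, one obtains
\[
\Rtii\cdot\Rti\;=\;\sum_{n=0}^{2\elli-2}c_n(\zeta)\,E_i^n\otimes F_i^n
\qquad\text{with}\qquad
c_n(\zeta)\;=\;\frac{(\zeta_i-\zeta_i^{-1})^n}{[n]!_i}\!\!\sum_{\substack{r+s=n\\0\leq r,s<\elli}}\!(-1)^r\zeta_i^{-\binom{r}{2}+\binom{s}{2}}\qbin{n}{r}{i}\,,
\]
and similarly for $\Rti\cdot\Rtii$. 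The claim reduces to showing $c_0(\zeta)=1$, that $c_n(\zeta)=0$ for $1\leq n<\elli$, and that each basis tensor $E_i^n\otimes F_i^n$ with $n\geq\elli$ lies in $\pTmIdeal{w_i}$.

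First, I would handle the range $n<\elli$ by reduction to the generic case. Here the constraint $r,s<\elli$ is automatic from $r+s=n<\elli$, so the truncated coefficient coincides with the coefficient in the formal product $\qRm_i^{-1}\cdot\qRm_i$ from \eqref{eq:ElemRmatGen}--\eqref{eq:eq:ElemRmatInv}, evaluated at $q=\zeta$. In the generic setting this formal product is $1\otimes 1$ (by the very construction of $\qRm_i^{-1}$ as a two-sided inverse noted following \eqref{eq:ElemRmatGen}), so each coefficient of $E_i^n\otimes F_i^n$ vanishes as a rational identity in $\mathbb{Q}(q)$. Since all denominators $[r]!_i[s]!_i$ with $0\leq r,s\leq n<\elli$ are units at $q=\zeta$, this identity specializes cleanly to give $c_n(\zeta)=0$ for $1\leq n<\elli$ and $c_0(\zeta)=1$.

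Next, for $n\geq\elli$, I would factor
\[
E_i^n\otimes F_i^n\;=\;\bigl(E_i^{n-\elli}\otimes F_i^{n-\elli}\bigr)\cdot\bigl(E_i^{\elli}\otimes F_i^{\elli}\bigr)\;=\;\bigl(E_i^{n-\elli}\otimes F_i^{n-\elli}\bigr)\cdot\bigl(\Epw_i\otimes \Fpw_i\bigr)\,,
\]
noting that $\pTmIdeal{w_i}$ is, by definition \eqref{eq:JwDef}, the two-sided ideal generated by the single element $\Epw_i\otimes\Fpw_i$ in $\Uzn{\ell}^+\otimes\Uzn{\ell}^-$. Hence every such $E_i^n\otimes F_i^n$ lies in $\pTmIdeal{w_i}$ regardless of the coefficient $c_n(\zeta)$, which may well be nonzero at the root of unity. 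Combining the two ranges yields $\Rtii\cdot\Rti\equiv 1\otimes 1\mod\pTmIdeal{w_i}$.

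Finally, for the reverse product $\Rti\cdot\Rtii$, the same argument applies verbatim: the same two ranges arise, the generic identity $\qRm_i\cdot\qRm_i^{-1}=1\otimes 1$ handles $n<\elli$ after specialization, and the factorization argument handles $n\geq\elli$. There is no genuine obstacle here; the only step requiring care is the passage from the formal identity in $\mathbb{Q}(q)$ to its specialization at $\zeta$, which must be justified by noting that only factorials $[r]!_i$ with $r<\elli$ occur as denominators in the range $n<\elli$.
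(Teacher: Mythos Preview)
Your proposal is correct and follows essentially the same approach as the paper: both split the product into the ranges $n<\elli$ and $n\geq\elli$, and both dispose of the high range by factoring out $\Epw_i\otimes\Fpw_i\in\pTmIdeal{w_i}$. The only minor difference is how the vanishing for $1\leq n<\elli$ is justified: the paper applies the Gauss formula \eqref{eq:q-combin-form} directly at $\zeta$ to show the inner sum vanishes, whereas you appeal to the already-established generic identity $\qRm_i^{-1}\cdot\qRm_i=1\otimes 1$ and specialize (valid since all denominators $[r]!_i[s]!_i$ with $r,s\leq n<\elli$ are units at $\zeta$). These are equivalent, since the generic identity itself was proved via \eqref{eq:q-combin-form}. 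One small presentational caveat: your displayed formula for $c_n(\zeta)$ with $[n]!_i$ in the denominator is only meaningful for $n<\elli$; for $n\geq\elli$ the coefficient should be left as the raw sum $\sum_{r+s=n}a_r b_s$, but since you only invoke the closed form in the low range this does not affect the argument.
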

\begin{proof}
    We start with a direct computation and resummation. 
\begin{align*}
    \Rti\cdot \Rtii
    &=
    \sum_{r,n=0}^{\elli-1}\dfrac{\zeta_i^{r(r-1)/2-n(n-1)/2}}{[r]!_i[n]!_i}(-1)^n(\zeta_i-\zeta_i^{-1})^{r+n} E_i^{r+n}\otimes F_i^{r+n}
    \\&=
    \sum_{k=0}^{2\elli-2}\rho_k\cdot\zeta_i^{-k(k-1)/2}(-1)^k(\zeta_i-\zeta_i^{-1})^{k} E_i^{k}\otimes F_i^{k} \;,
    \\
    \mbox{where}\quad & \qquad 
    \rho_k=\sum_{r=\max(0,k-(\elli-1))}^{\min(k,\elli-1)} \dfrac{1}{[r]!_i[k-r]!_i}(-1)^r\zeta_i^{r(k-1)}\;. 
 \end{align*}
 For $0<k<\ell_i$\,, we compute
\begin{align*}
\rho_k&=\sum_{r=0}^{k} \dfrac{1}{[r]!_i[k-r]!_i}(-1)^r\zeta_i^{r(k-1)}
\quad =\dfrac{1}{[k]!}\sum_{r=0}^{k} \qbin{k}{r}{i}(-1)^r\zeta_i^{r(k-1)}\\
&=\dfrac{1}{[k]!}\prod_{t=1}^k(-\zeta_i^{2(t-1)}+1)\quad =0\,,
\end{align*}
where we used (\ref{eq:q-combin-form}) with substitutions $a\mapsto k$, $k\mapsto r$, $q\mapsto\zeta_i^{-1}$, and $z\mapsto -\zeta_i^{-2}\,$. Observing that $\rho_0=1$, $\Epw_i=E_i^{\ell_i}$, and $\Fpw_i=F_i^{\ell_i}$ we now find
\begin{align}\label{eq:PPbar=1ModXY}
    \Rti\cdot \Rtii=1\otimes 1\,+\,(\Epw_i\otimes \Fpw_i)\sum_{k=\ell_i}^{2\elli-2}\rho_k\cdot\zeta_i^{-k(k-1)/2}(-1)^k(\zeta_i-\zeta_i^{-1})^{k} E_i^{k-\ell_i}\otimes F_i^{k-\ell_i}\, ,
\end{align}
for which the latter term is clearly in $\pTmIdeal{w_i}\,$. The verification for 
 $\Rtii\cdot \Rti$ is nearly identical.
\end{proof} 

Analogous to the generic case and using definitions from (\ref{eq:defEgenLu}), we now define the \emph{elementary} truncated quasi-$R$-matrices associated to a reduced word $w\in\wordset$ as 
\begin{equation}\label{eq:ElemTrRmatDef}
    \begin{aligned}
    &&    \Rtsub{w}&=
    \Tinv^{\otimes 2}_{w^\flat}\left(\Rtsub{\tau(w)}\right)=\sum_{s=0}^{\ell_{w}-1}
    \dfrac{\zeta^{s(s-1)/2}_{w}}{[s]!_{w}}(\zeta_{w}-\zeta^{-1}_{w})^s E_{w}^s\otimes F_{w}^s\,\\
    \rule{0mm}{8.4mm}
    \mbox{and} \qquad &&
    \Rtinvsub{w}&=
    \Tinv^{\otimes 2}_{w^\flat}\left(\Rtinvsub{\tau(w)}\right)=\sum_{s=0}^{\ell_{w}-1}
    \dfrac{\zeta^{-s(s-1)/2}_{w}}{[s]!_{w}}(-1)^s(\zeta_{w}-\zeta^{-1}_{w})^s E_{w}^s\otimes F_{w}^s\,, 
    \end{aligned}
\end{equation} 
with notations as in (\ref{def:flat+tau_word}) and $\zeta_w=\zeta_{\tau(w)}=\zeta_{\wordroot(w)}\,$.  
The following assertion is readily derived by applying $\Tinv^{\otimes 2}_{w^\flat}$ to  (\ref{eq:PPbar=1ModXY}) with $i=\tau(w)\,$, resulting in the same equation with $i$ replaced by $w\,$. 

\begin{cor}\label{cor:Rtinv}
    For all $w\in\wordset$ we have 
    \begin{align*}
    \Rtinvsub{w}\cdot\Rtsub{w}\,\equiv\,\Rtsub{w}\cdot\Rtinvsub{w} \,\equiv\,1\otimes 1\quad \mod \,\pTmIdeal{w}\,. 
\end{align*}
\end{cor}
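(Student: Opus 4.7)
The plan is to reduce the statement to the case $w=w_i$ already established in Lemma~\ref{lem:Rtinverse} by applying the braid automorphism $\Tinv_{w^\flat}^{\otimes 2}$, where $w^\flat$ is $w$ with its last letter removed and $i=\tau(w)$. By definition (\ref{eq:ElemTrRmatDef}) we have $\Rtsub{w}=\Tinv^{\otimes 2}_{w^\flat}(\Rtsub{i})$ and $\Rtinvsub{w}=\Tinv^{\otimes 2}_{w^\flat}(\Rtinvsub{i})$, and since $\Tinv^{\otimes 2}_{w^\flat}$ is an algebra homomorphism of $\Uzn{\ell}\otimes\Uzn{\ell}$, I would simply apply it to both sides of equation (\ref{eq:PPbar=1ModXY}) from the proof of Lemma~\ref{lem:Rtinverse}.

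Doing so produces
\begin{align*}
\Rtsub{w}\cdot\Rtinvsub{w}\,=\,1\otimes 1\,+\,(\Epw_w\otimes \Fpw_w)\cdot\Tinv^{\otimes 2}_{w^\flat}(\Sigma_i),
\end{align*}
where $\Sigma_i=\sum_{k=\ell_i}^{2\ell_i-2}\rho_k\zeta_i^{-k(k-1)/2}(-1)^k(\zeta_i-\zeta_i^{-1})^{k}E_i^{k-\ell_i}\otimes F_i^{k-\ell_i}$ is the error summation from (\ref{eq:PPbar=1ModXY}). Here I use that $\Tinv^{\otimes 2}_{w^\flat}(\Epw_i\otimes \Fpw_i)=\Epw_w\otimes \Fpw_w$, which follows from (\ref{eq:TinvXY}) together with the Weyl-invariance $\ell_w=\ell_i$.

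The only substantive step is then to check that the error term lies in $\pTmIdeal{w}$. Since $\Sigma_i$ is a polynomial in $E_i$ and $F_i$, its image $\Tinv_{w^\flat}^{\otimes 2}(\Sigma_i)$ is the same polynomial evaluated in $E_w$ and $F_w$; by Proposition~\ref{prop:wordgencont} these lie in $\Uzn{\ell}^+$ and $\Uzn{\ell}^-$ respectively, so $\Tinv^{\otimes 2}_{w^\flat}(\Sigma_i)\in \Uzn{\ell}^+\otimes\Uzn{\ell}^-$. Because $\pTmIdeal{w}=\Zaugidealhat{w}^+\otimes \Zaugidealhat{w}^-$ is a two-sided ideal in $\Uzn{\ell}^+\otimes\Uzn{\ell}^-$ with $\Epw_w\otimes\Fpw_w\in\pTmIdeal{w}$, the product lies in $\pTmIdeal{w}$, as required. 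The identity for the opposite order $\Rtinvsub{w}\cdot\Rtsub{w}$ follows by the same transport from the analogous rank-one statement noted as nearly identical in the proof of Lemma~\ref{lem:Rtinverse}.

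There is no genuine obstacle here: the entire argument is a direct transport of the rank-one computation under the braid action, and the only facts needed beyond that calculation are the closure of $E_w,F_w$ in $\Uzn{\ell}^\pm$ (Proposition~\ref{prop:wordgencont}) and the two-sided ideal property of $\Zaugidealhat{w}^\pm$, both already available in the paper.
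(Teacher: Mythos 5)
Your argument is correct and is exactly the paper's own proof: the paper likewise obtains the corollary by applying $\Tinv^{\otimes 2}_{w^\flat}$ to equation (\ref{eq:PPbar=1ModXY}) with $i=\tau(w)$, turning the rank-one identity into the same identity with $i$ replaced by $w$. Your additional verification that the transported error term lies in $\pTmIdeal{w}$ (via Proposition~\ref{prop:wordgencont} and the ideal property) just makes explicit what the paper leaves as "readily derived."
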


We define, similarly, the {\em partial } truncated quasi-$R$-matrices associated to some $w\in\wordset$ as
\begin{equation}\label{eq:TrQuasRmatWord}
\begin{aligned}
    &&
    \Rtprod{w}=\Rtsub{w^L}\ldots\Rtsub{w^1}
    &=\sum_{\psi\in \Lmaxexpset {s}}\Rcoeff{\psi}{\zeta}^{-1}\cdot
    \Ebaseopp{w}{\psi}\otimes \Fbaseopp{w}{\psi}\,\\
    \rule{0mm}{7mm}
    \mbox{and} \qquad &&   
    \Rtprodi{w}=\Rtinvsub{w^1}\ldots\Rtinvsub{w^L}
    &=\sum_{\psi\in \Lmaxexpset {s}}\Rcoeff{\psi}{\zeta^{-1}}^{-1}\cdot
    \Ebase{w}{\psi}\otimes \Fbase{w}{\psi}\;,\\
\end{aligned}
\end{equation}
with notation as in (\ref{eq:Pw-genform}), $L=\len{w}\,$, and $\Lmaxexpset {s}=\Lmaxexpset {\bullet}\cap \expsetup{s}$ as in (\ref{eq:Lmaxexpset}). 
The identities below, with $a,b\in\wordset$ such that $a\cdot b$ is reduced,  are straightforward from Corollary~\ref{cor:Rtinv}, the mentioned properties of $\CartaninvTwo$, and (\ref{eq:PBWgenRecur}).
\begin{equation}\label{eq:PwordProps}
\begin{aligned}  
    \Rtprodi{w}&=\CartaninvTwo(\Rtprod{w})
    &
    \qquad
    \Rtprodi{w}\cdot\Rtprod{w}&\equiv 1^{\otimes 2}\equiv\Rtprodi{w}\cdot\Rtprod{w}\mod \pTmIdeal{w}
    \\
  \rule{0mm}{6mm}  \Rtprod{a\cdot b}&=\Tinv_a^{\otimes 2}(\Rtprod{b})\cdot\Rtprod{a}
    &
    \Rtprodi{a\cdot b}&=\Rtprodi{a} \cdot\Tinv_a^{\otimes 2}(\Rtprodi{b})\,
   \end{aligned} 
\end{equation}

The relevant ideals for the intertwining relations of the truncated $R$-matrices are
given for each $w\in\wordset$ as 
\begin{equation}\label{eq:DefNideals}
    \FullpTmIdeal w = \ZaugidealhatIn{w,\emptyword}{\ell}\otimes\Uzn{\ell}+\Uzn{\ell}\otimes\ZaugidealhatIn{\emptyword,w}{\ell}\,, 
\end{equation}
which is the ideal in $\Uzn{\ell}^{\otimes 2}$ generated by all $X_a\otimes 1$ and  $1\otimes Y_b$ with $\emptyword\neq a,b\leqRB w\,$. Assuming $w\cdot u$ is reduced for $w,u\in\wordset$, one easily verifies the identities
\begin{equation}\label{eq:NIdealProps}
    \pTmIdeal{w}\subset \FullpTmIdeal {w}\,,\quad
\FullpTmIdeal {w\cdot u}=\Tinv_w^{\otimes 2}\bigl(\FullpTmIdeal u\bigr) +\FullpTmIdeal w\,,
\quad\mbox{and}\quad
\Cartaninv^{\#}(\FullpTmIdeal w)= \FullpTmIdeal w\,. 
\end{equation}
The truncated analog of Proposition~\ref{prop:AdP=DelT} for a simple reflection $w_i$ and simple generators is now stated as follows.  

\begin{lem}\label{lem:coprodbase}
     Let $\roots$ be of any Lie type and $1\leq i,j\leq n$. Then \begin{equation}\label{eq:coprodbase}
          \Delta\circ\Tinv_i(E_j)\,\equiv\, \Rtii\cdot\Tinv^{\otimes 2}_i\circ\Delta(E_j)\cdot\Rti \qquad\mod \FullpTmIdeal {w_i}\,.
     \end{equation}
 \end{lem}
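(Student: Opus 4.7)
The strategy is to reformulate the claim symmetrically and then proceed case-by-case on the relation between $i$ and $j$ in the Dynkin diagram. Multiplying both sides of the asserted congruence on the left by $\Rti$ and invoking Corollary~\ref{cor:Rtinv}, whose error lies in $\pTmIdeal{w_i}\subset\FullpTmIdeal{w_i}$ by the first inclusion in \eqref{eq:NIdealProps}, the claim is equivalent to
\[
\Rti\cdot\Delta(\Tinv_i(E_j))\;\equiv\;\Tinv_i^{\otimes 2}(\Delta(E_j))\cdot\Rti \quad\mod\;\FullpTmIdeal{w_i}.
\]
This is the truncated root-of-unity analog of the $x=E_j$, $w=w_i$ base instance of Proposition~\ref{prop:AdP=DelT}, whose generic formal-limit argument cannot be imported directly because the summations in $\Ri$ no longer converge at $q=\zeta$.

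For $j=i$, one expands $\Delta(\Tinv_i(E_i))=\Delta(-K_i^{-1}F_i)$ and $\Tinv_i^{\otimes 2}(\Delta(E_i))$ using $\Tinv_i(E_i)=-K_i^{-1}F_i$ and $\Tinv_i(K_i)=K_i^{-1}$, and the reformulated identity reduces to the statement that $\Rti=\sum_{s=0}^{\elli-1}c_s\,E_i^s\otimes F_i^s$ commutes with a pair of Cartan-valued skew-primitive terms modulo the ideal. The rank-one relations \eqref{eq:EFcomm} and \eqref{eq:Kcomm}, together with the quantum-binomial commutations of $E_i^s$ with $F_i$ and of $F_i^s$ with $E_i$, produce a telescoping cancellation that succeeds except for a boundary contribution at $s=\elli-1$; this contribution carries a factor $E_i^{\elli}=\Epw_i$ in the left slot or $F_i^{\elli}=\Fpw_i$ in the right slot, and hence lies in $\FullpTmIdeal{w_i}$. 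When $A_{ij}=0$ and $i\neq j$, equation \eqref{eq:defLserre} reduces to $\Tinv_i(E_j)=E_j$ and $\Tinv_i(K_j)=K_j$, so both sides coincide on the nose since $E_j$ and $F_j$ strictly commute with both $E_i$ and $F_i$.

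The main case and main obstacle is $i\neq j$ with $A_{ij}\neq 0$, where \eqref{eq:defLserre} gives $\Tinv_i(E_j)$ as a sum of products $E_i^s E_j E_i^r$ with $r+s=-A_{ij}$. Applying $\Delta$ via the $q$-binomial expansion \eqref{eq:corels_pwrs_gen} for $\Delta(E_i^s)$ and combining with $\Delta(E_j)=E_j\otimes K_j+1\otimes E_j$, and then moving $\Rti$ past $\Tinv_i^{\otimes 2}(\Delta(E_j))=E_j\otimes \Tinv_i(K_j)+1\otimes \Tinv_i(E_j)$ using the $q$-commutations among $E_i$, $E_j$, $F_i$, and $K_j$, produces on each side a large combinatorial sum whose coefficients are rational functions of $q_i$. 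After specialization at $q_i=\zeta_i$, all terms with $E_i$-exponent $\geq\elli$ in the left tensor slot or $F_i$-exponent $\geq\elli$ in the right slot acquire a factor $\Epw_i\otimes 1$ or $1\otimes\Fpw_i$ and are absorbed into $\FullpTmIdeal{w_i}$, while the surviving finite sum reproduces the same telescoping identity that underlies the generic Proposition~\ref{prop:AdP=DelT} and cancels identically. The bookkeeping to extract precisely which terms carry the critical $\elli$-th powers of $E_i$ or $F_i$ is delicate but mechanical; this lengthy verification is deferred to Appendix~\ref{sec:proofcoprodbase}.
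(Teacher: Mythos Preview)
Your approach is essentially the same as the paper's: a case split on $A_{ij}\in\{2,0,-1,-2,-3\}$, with the nontrivial cases deferred to Appendix~\ref{sec:proofcoprodbase}. Two points are worth noting. First, there is a slip: you write $\Tinv_i^{\otimes 2}(\Delta(E_j))=E_j\otimes \Tinv_i(K_j)+1\otimes \Tinv_i(E_j)$, but the first tensor factor of the first summand should be $\Tinv_i(E_j)=E_{(ij\ldots)}$, not $E_j$. Second, for the cases $A_{ij}<0$ the paper does not expand everything in the simple generators $E_i,E_j,F_i,K_j$ and rely on ``$q$-commutations'' among them as you describe. The crucial commutation of $\Rti$ with $1\otimes E_{(ij)}$ requires computing $[E_{(ij)},F_i^s]$, which is \emph{not} a simple $q$-commutation: the paper instead imports closed formulae for these commutators from \cite[Sections~5.3--5.4]{lu90b} (via $\Tinv_i\circ\Kinvaut$), which express $[E_{(ij)},F_i^s]$ directly in terms of the non-simple root vectors $E_{(iji)}$, $E_{(ijiji)}$, etc. This organizes the telescoping much more cleanly than pushing $F_i^s$ past individual monomials $E_i^aE_jE_i^b$ using \eqref{eq:EFcomm}, which would be correct but produces large intermediate expressions in $K_i^{\pm1}$ that must then be recombined into root vectors. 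Your outline would work, but the paper's use of the Lusztig commutators is the efficient way to actually carry it out.
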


The proof of this lemma is provided in Appendix~\ref{sec:proofcoprodbase} by explicit computations of both sides of \eqref{eq:coprodbase}, depending on the Cartan matrix entries 
$A_{ij}$ and $A_{ji}$\,. The properties of automorphisms and ideals developed thus far allow us to extend this identity 
to all words and generators. 

\begin{prop} \label{prop:CoprodIntWord}
Let $\roots$ be of any Lie type, $w\in\wordset$, and $x\in\Uzn{\ell}$\,. Then
$$
    \Delta\circ\Tinv_w(x)\,\equiv\,\Rtprodi{w}\cdot\Tinv^{\otimes 2}_w\circ\Delta(x)\cdot\Rtprod{w}\qquad \mod \FullpTmIdeal {w}\,.
    $$
\end{prop}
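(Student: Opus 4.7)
The plan is to prove the identity by induction on $\len(w)$, with Lemma~\ref{lem:coprodbase} supplying the essential content of the base case $w = w_i$. The first task will be to promote that lemma from a statement about the single generator $E_j$ to a statement about all $x \in \Uzn{\ell}$. Both sides of the proposed congruence define algebra homomorphisms from $\Uzn{\ell}$ into the quotient $\Uzn{\ell}^{\otimes 2}/\FullpTmIdeal{w_i}$: the left side $\Delta \circ \Tinv_{w_i}$ is a composition of algebra homomorphisms, and conjugation $y \mapsto \Rtinvsub{i}\, y\, \Rtsub{i}$ descends to a well-defined algebra map on the quotient because $\Rtinvsub{i}\Rtsub{i} \equiv 1 \mod \pTmIdeal{w_i}$ by Corollary~\ref{cor:Rtinv} and $\pTmIdeal{w_i} \subseteq \FullpTmIdeal{w_i}$ by \eqref{eq:NIdealProps}. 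It therefore suffices to verify the $w_i$ case on the generators $E_j$, $F_j$, and $K_j$.

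The case $x = E_j$ is Lemma~\ref{lem:coprodbase}. The case $x = F_j$ will follow by applying the anti-linear anti-involution $\CartaninvTwo$ from \eqref{eq:DefCatinvHash} to the $E_j$ identity: using $\Cartaninv \circ \Tinv_i = \Tinv_i \circ \Cartaninv$, $\Cartaninv(E_j) = F_j$, the identity $\CartaninvTwo \circ \Delta = \Delta \circ \Cartaninv$, the relations $\CartaninvTwo(\Rtsub{i}) = \Rtinvsub{i}$ from \eqref{eq:PwordProps}, and $\CartaninvTwo$-invariance of $\FullpTmIdeal{w_i}$ from \eqref{eq:NIdealProps}, the $E_j$ congruence transforms cleanly into the $F_j$ congruence. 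For $x = K_j$, both sides reduce to $M \otimes M$ with $M = \Tinv_i(K_j) = K_j K_i^{-A_{ij}}$; a direct computation using the relation $\symbrack{-A_{ij}\alpha_i + \alpha_j}{\alpha_i} = -d_i A_{ij}$ shows that $M$ commutes with $E_i$ up to a scalar that is inverse to the one for $F_i$, so $M \otimes M$ commutes with every term $E_i^s \otimes F_i^s$ of $\Rtsub{i}$, giving $\Rtinvsub{i}(M \otimes M)\Rtsub{i} \equiv M \otimes M \mod \pTmIdeal{w_i}$.

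For the inductive step, suppose the identity holds for some reduced $w$ with $w \cdot w_i$ also reduced, and let $x \in \Uzn{\ell}$. Writing $\Tinv_{w \cdot w_i}(x) = \Tinv_w(\Tinv_i(x))$ and applying the inductive hypothesis for $w$ to $\Tinv_i(x)$ yields
\[
\Delta\bigl(\Tinv_{w \cdot w_i}(x)\bigr) \equiv \Rtprodi{w} \cdot \Tinv_w^{\otimes 2}\bigl(\Delta(\Tinv_i(x))\bigr) \cdot \Rtprod{w} \mod \FullpTmIdeal{w}.
\]
Applying $\Tinv_w^{\otimes 2}$ to the (now fully extended) base case gives
\[
\Tinv_w^{\otimes 2}\bigl(\Delta(\Tinv_i(x))\bigr) \equiv \Tinv_w^{\otimes 2}(\Rtinvsub{i}) \cdot \Tinv_{w \cdot w_i}^{\otimes 2}\bigl(\Delta(x)\bigr) \cdot \Tinv_w^{\otimes 2}(\Rtsub{i}) \mod \Tinv_w^{\otimes 2}(\FullpTmIdeal{w_i}).
\]
Substituting, and invoking the recursion $\Rtprod{w \cdot w_i} = \Tinv_w^{\otimes 2}(\Rtsub{i}) \cdot \Rtprod{w}$ from \eqref{eq:PwordProps} together with its mirror $\Rtprodi{w \cdot w_i} = \Rtprodi{w} \cdot \Tinv_w^{\otimes 2}(\Rtinvsub{i})$, produces the desired form, with the combined error controlled by $\FullpTmIdeal{w} + \Tinv_w^{\otimes 2}(\FullpTmIdeal{w_i}) = \FullpTmIdeal{w \cdot w_i}$ from \eqref{eq:NIdealProps}.

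The principal subtlety throughout is bookkeeping of ideals under the Artin action and under multiplication by partial $R$-matrices; this is handled uniformly by the two-sided ideal property of $\FullpTmIdeal{w}$ together with the explicit recursions in \eqref{eq:PwordProps} and \eqref{eq:NIdealProps}. The main technical obstacle is the identity for generators at the base case, which is carried out by direct calculation in Appendix~\ref{sec:proofcoprodbase} for $E_j$ and reduced to that case for $F_j$ and $K_j$ as above; everything else is formal manipulation of these ingredients.
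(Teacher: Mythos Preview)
Your proof is correct and follows essentially the same approach as the paper: both argue by induction on $\len(w)$, establish the base case $w=w_i$ by noting that both sides are algebra homomorphisms into $\Uzn{\ell}^{\otimes 2}/\FullpTmIdeal{w_i}$ (using Corollary~\ref{cor:Rtinv} to see that conjugation by $\Rti$ is well-defined there), verify agreement on $E_j$ via Lemma~\ref{lem:coprodbase}, on $K_j$ directly, and on $F_j$ via $\CartaninvTwo$, and then run the same inductive step using the recursions \eqref{eq:PwordProps} and \eqref{eq:NIdealProps}. The only difference is cosmetic ordering of the generator sub-cases in the base step.
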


\begin{proof}
    We start with the case $w=w_i$ for which $\Tinv_w=\Tinv_i$ and $\Rtprod{w_i}=\Rti\,$. Denote the quotient algebra  
    $\mathbf{U}(w)=\Uzn{\ell}^{\otimes 2}/\FullpTmIdeal{w}$ and note that by Corollary~\ref{cor:Rtinv} and 
    (\ref{eq:NIdealProps}) the images of $\Rtii$ and $\Rti$ in $\mathbf{U}(w_i)$ are inverse to each other. The maps from $\Uzn{\ell}$ to $\mathbf{U}(w_i)$ given by $x\mapsto [\Delta\circ\Tinv_i(x)]$ and 
    $x\mapsto [\Rtii\cdot\Tinv^{\otimes 2}_i\circ\Delta(x)\cdot\Rti ]$ are thus algebra homomorphisms. 
    Since, by Proposition~\ref{lem:coprodbase}, they coincide on generators, they have to be  equal  
    on all of $\Uzn{\ell}^+\,$. 

    Note next that $\Delta\circ\Tinv_i$ and $\Tinv_i^{\otimes 2}\circ\Delta$ coincide on any $K^\nu$ and, further, that $K^\nu\otimes K^\nu$ commutes with $\Rti$. This implies that the 
    two homomorphisms also coincide on $\Uzn{\ell}^0$ and, hence, that the identity above
    holds for $x\in\Uzn{\ell}^\geqzero$ and $w=w_i\,$.  
    
    Recall that, with $\CartaninvTwo$ as in (\ref{eq:DefCatinvHash}), $\CartaninvTwo(\Delta(x))=\Delta(\Cartaninv(x))$ and $\Cartaninv^\#$ commutes with $\Tinv_i^{\otimes 2}$. It readily follows from the fact that $\Cartaninv^\#$ is an anti-linear anti-involution,
    \eqref{eq:defCinv}, and \eqref{eq:DefElemTrRmatinv}
    that $\Cartaninv^\#(\Rti)=\Rtii\,$. Applying $\Cartaninv^\#$ to the identity in Proposition~\ref{lem:coprodbase}
    for some $x\in\Uzn{\ell}^\geqzero\,$, these observations now imply the same relation for $\Cartaninv(x)\in\Uzn{\ell}^\geqzero\,$, using also that $\FullpTmIdeal {w}$ is $\CartaninvTwo$-invariant by (\ref{eq:NIdealProps}). Hence, the relation holds for any $x\in\Uzn{\ell}\,$, completing the
    proof for $w=w_i\,$. 

    The extension to general words is similar to the one in  Corollary~\ref{cor:Rtinv}. We proceed by induction on $L=\len{w}$, starting from the base case  already proven above. For a reduced word $u\in\wordset$ let $w=u^\flat$
    and $i=\tau(u)$ so that $\len{u}>\len{w}$. Assuming the assertion for $w$, we first observe   
\begin{align*}
    \Delta\circ\Tinv_{u} \left(x\right)\,
    =\,
    \Delta\circ\Tinv_{w} \left(\Tinv_{i}(x)\right)\,\equiv\,
    \Rtprodi{w}
    \cdot \Tinv_w^{\otimes 2}\circ\Delta \left(
    \Tinv_{i}(x)\right)\Rtprod{w} \qquad\mod \FullpTmIdeal {w}\,.
\end{align*}
Applying the relation for $w_i$ to the $\Delta(\Tinv_{i}(x))$ term, we find
\begin{align*}
    \Delta\circ\Tinv_{u} \left(x\right)&\,\in\,
    \Rtprodi{w}
    \cdot \Tinv_w^{\otimes 2}\left(
   \Rtii\cdot\Tinv_{i}^{\otimes 2}(\Delta(x))\cdot \Rti
    +\FullpTmIdeal {w_i}\right)\Rtprod{w}
    \,+\,
    \FullpTmIdeal {w}
    \\
    &
    \rule{0mm}{6mm}
    \subseteq
    \Rtprodi{w}\cdot\Tinv_w^{\otimes 2}(\Rtinvsub{w_i})\cdot\Tinv^{\otimes2}_{w\cdot w_i}(\Delta(x))\cdot \Tinv_w^{\otimes 2}(\Rtsub{w_i})\cdot\Rtprod{w}
    \,+\,
    \Rtprodi{w}\cdot\Tinv_w^{\otimes 2}\left(\FullpTmIdeal {w_i}\right) \cdot\Rtprod{w} 
    \,+\,
    \FullpTmIdeal {w}
    \\
   &
    \rule{0mm}{6mm}
   \subseteq
    \Rtprodi{w\cdot w_i}\cdot\Tinv^{\otimes2}_{w\cdot w_i}(\Delta(x))\cdot \Rtprod{w\cdot w_i}
    \,+\,
    \Tinv_w^{\otimes 2}\left(\FullpTmIdeal {w_i}\right) \,+\,
    \FullpTmIdeal {w}
    \\
   &
    \rule{0mm}{6mm}
   =
    \Rtprodi{u}\cdot\Tinv^{\otimes2}_{u}(\Delta(x))\cdot \Rtprod{u}
    \,+\,
    \FullpTmIdeal {u}
    \,.
\end{align*}
The step from the second to the third line uses (\ref{eq:PwordProps}) for the first term and the fact 
that $\Tinv_w^{\otimes 2}\left(\FullpTmIdeal {w_i}\right)$ is itself a two-sided ideal for the second term.
The last step uses the iteration of ideals from (\ref{eq:NIdealProps}) and $u=w\cdot w_i\,$. Taking the
inclusion modulo $\FullpTmIdeal {u}$ yields the claim.
\end{proof}

Recall from the proof of Lemma~\ref{lm:PzDel=DelbPz} that 
$\barDelta(x)=\Tinv_{\longweyl}^{\otimes 2}\bigl(\Delta\bigl(\Tinv_{\longweyl}^{-1}(x)\bigr)\bigr)$ for the 
twisted coproduct defined in (\ref{eq:AltCoprodDef}). The relation clearly descends to $\Uzn{\ell}$\,. 
Applying, Proposition~\ref{prop:CoprodIntWord} to a reduced word of maximal length now yields the following analog
of Lemma~\ref{lm:PzDel=DelbPz}. 

\begin{cor}\label{cor:RtrmaxDeltaIntw}
Suppose $z\in\wordsetmax$  and $x\in\Uzn{\ell}\,$. Then 
    $$
    \barDelta(x)\cdot\Rtprod{z}\,\equiv\, \Rtprod{z}\cdot\Delta(x) \qquad\mod \FullpTmIdeal {z}\,.
    $$
\end{cor}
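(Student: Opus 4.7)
The plan is to derive the corollary essentially as a direct corollary of Proposition~\ref{prop:CoprodIntWord} by specializing it to a maximal reduced word $z\in\wordsetmax$ and then substituting the variable appropriately. The proof of Lemma~\ref{lm:PzDel=DelbPz} already establishes (using the Garside formula of Corollary~\ref{cor:GarsideAutom}) the algebraic identity
\[
\barDelta(y)\,=\,\Tinv_\longweyl^{\otimes 2}\!\left(\Delta\!\left(\Tinv_\longweyl^{-1}(y)\right)\right)
\]
for every $y$ in the generic quantum group $\UqQ$. Since $\Tinv_\longweyl=\Tinv_{i_1}\circ\cdots\circ\Tinv_{i_N}$ is a finite composition of the $\Tinv_i$, which are automorphisms of $\Uzn{\ell}$ by Corollary~\ref{cor:UzProps}\ref{item:UzProps:autom}, this identity descends to $\Uzn{\ell}$; and $\barDelta$ itself is well-defined on $\Uzn{\ell}$ because its defining formulae on generators in \eqref{eq:AltCoprodDef} are integral.

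Given $x\in\Uzn{\ell}$, first I would set $y=\Tinv_\longweyl^{-1}(x)\in\Uzn{\ell}$ and apply Proposition~\ref{prop:CoprodIntWord} with $w=z$, yielding
\[
\Delta(x)\,=\,\Delta\!\left(\Tinv_\longweyl(y)\right)\,\equiv\,\Rtprodi{z}\cdot\Tinv_\longweyl^{\otimes 2}\!\left(\Delta(y)\right)\cdot\Rtprod{z}\qquad \mod\FullpTmIdeal{z}\,.
\]
Substituting the Garside identity above, the middle factor equals $\barDelta(x)$, so
\[
\Delta(x)\,\equiv\,\Rtprodi{z}\cdot\barDelta(x)\cdot\Rtprod{z}\qquad \mod\FullpTmIdeal{z}\,.
\]
Multiplying both sides on the left by $\Rtprod{z}$ and invoking the relation $\Rtprod{z}\cdot\Rtprodi{z}\equiv 1^{\otimes 2}\mod \pTmIdeal{z}$ from \eqref{eq:PwordProps} then produces the desired congruence.

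The bookkeeping that needs care—and which I expect to be the only real subtlety—is to verify that the error terms remain inside $\FullpTmIdeal{z}$ after these two multiplications. This uses that $\FullpTmIdeal{z}$ is a two-sided ideal, together with the inclusion $\pTmIdeal{z}\subseteq\FullpTmIdeal{z}$ noted in \eqref{eq:NIdealProps}: left-multiplying the error in $\FullpTmIdeal{z}$ by $\Rtprod{z}$ stays inside $\FullpTmIdeal{z}$, and the discrepancy introduced by replacing $\Rtprod{z}\cdot\Rtprodi{z}$ by $1^{\otimes 2}$ contributes a term in $\pTmIdeal{z}\cdot\barDelta(x)\cdot\Rtprod{z}\subseteq \FullpTmIdeal{z}$. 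Once these inclusions are checked, the result reads
\[
\Rtprod{z}\cdot\Delta(x)\,\equiv\,\barDelta(x)\cdot\Rtprod{z}\qquad \mod\FullpTmIdeal{z}\,,
\]
which is the claim. Note that the argument is entirely formal modulo the deep input already supplied: Lemma~\ref{lem:coprodbase} (with its appendix proof) and the Garside expression from Corollary~\ref{cor:GarsideAutom}; no word-independence of $\Rtprod{z}$ is required at this stage, since the statement is made for one fixed $z\in\wordsetmax$.
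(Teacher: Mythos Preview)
Your proof is correct and follows essentially the same approach as the paper: specialize Proposition~\ref{prop:CoprodIntWord} to a maximal word $z$ and invoke the Garside identity $\barDelta(x)=\Tinv_{\longweyl}^{\otimes 2}\bigl(\Delta\bigl(\Tinv_{\longweyl}^{-1}(x)\bigr)\bigr)$ established in the proof of Lemma~\ref{lm:PzDel=DelbPz}. The paper states this as a one-line deduction, while you additionally spell out the left-multiplication by $\Rtprod{z}$ and the bookkeeping with $\pTmIdeal{z}\subseteq\FullpTmIdeal{z}$ needed to pass from $\Delta(x)\equiv\Rtprodi{z}\cdot\barDelta(x)\cdot\Rtprod{z}$ to the stated form; this step is indeed required and your handling of it is correct.
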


The iteration of the coproduct with truncated quasi-$R$-matrices provided in the above proposition allows us
to establish further Hopf ideals. It is immediate from the form in (\ref{eq:DefNideals}) that
\begin{equation}\label{eq:NidealInDiag}
    \FullpTmIdeal u\,\subseteq\, \FullpTmIdealSym u =\ZaugidealhatIn{u,u}{\ell}\otimes \Uzn{\ell}\,+\,\Uzn{\ell}\otimes \ZaugidealhatIn{u,u}{\ell}\,,
\end{equation}
which suggests that we first consider these diagonal ideals. 

\begin{prop}\label{prop:KhatIdealDiagWord}
  For every $u\in\wordset$ the ideal $\ZaugidealhatIn{u,u}{m}$ is a Hopf ideal in $\Uzn{m}\,$. 
\end{prop}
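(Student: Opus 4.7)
The strategy is to reduce the verification to a single local claim: for every non-empty reduced word $a \leqRB u$, the coproducts $\Delta(\Epw_a)$ and $\Delta(\Fpw_a)$ lie in $\FullpTmIdealSym{a}$, and the antipodes $S(\Epw_a), S(\Fpw_a)$ lie in $\ZaugidealhatIn{a,a}{m}$. Since $\ZaugidealhatIn{u,u}{m}$ is the two-sided ideal generated by $\{\Epw_a, \Fpw_a : \emptyword \neq a \leqRB u\}$ and $a \leqRB u$ implies both $\FullpTmIdealSym{a}\subseteq \FullpTmIdealSym{u}$ and $\ZaugidealhatIn{a,a}{m}\subseteq \ZaugidealhatIn{u,u}{m}$, the local claim immediately upgrades to the global Hopf-ideal property. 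The $\Fpw$ halves of the two statements will follow from the $\Epw$ halves by applying the Cartan anti-involution $\Cartaninv$, which swaps the two Borel parts, reverses the coproduct, commutes with $S$, and preserves the ideal structure via \eqref{eq:KhatTwoIds}. The base change from $\Uzn{\ell}$ down to $\Uzn{m}$ is automatic, since the generators and ideals are already intrinsic to the smaller ring; the quasi-$R$-matrices are merely computational bookkeeping devices in the ambient $\Uzn{\ell}^{\otimes 2}$.

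For the coproduct step, I will invoke Proposition~\ref{prop:CoprodIntWord} with $w = a^\flat$ and $x = \Epw_{\tau(a)}$ to obtain
\[
\Delta(\Epw_a) \,\equiv\, \Rtprodi{a^\flat}\cdot\Tinv_{a^\flat}^{\otimes 2}\bigl(\Delta(\Epw_{\tau(a)})\bigr)\cdot\Rtprod{a^\flat} \mod \FullpTmIdeal{a^\flat}\,.
\]
The simple-root coproduct $\Delta(\Epw_{\tau(a)}) = \Epw_{\tau(a)}\otimes \Kpw_{\tau(a)} + 1\otimes \Epw_{\tau(a)}$ from \eqref{eq:simplecoprod} sits visibly in the elementary ideal $\FullpTmIdealSym{w_{\tau(a)}}$. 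Applying $\Tinv_{a^\flat}^{\otimes 2}$ and using $\Tinv_{a^\flat}(\Epw_{\tau(a)}) = \Epw_a$ places the middle factor in $\FullpTmIdealSym{a}$; since $\FullpTmIdealSym{a}$ is a two-sided ideal in $\Uzn{\ell}^{\otimes 2}$ and $\Rtprod{a^\flat}, \Rtprodi{a^\flat}$ are finite sums in $\Uzn{\ell}^{\otimes 2}$, conjugation by them preserves membership. The error $\FullpTmIdeal{a^\flat}$ is contained in $\FullpTmIdealSym{a^\flat}\subseteq\FullpTmIdealSym{a}$ by \eqref{eq:NidealInDiag} and monotonicity of the generating sets under $\leqRB$, closing the coproduct step.

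For the antipode step, I will first compute directly on simple-root power generators from \eqref{eq:antipode_pow} evaluated at $q = \zeta$, obtaining $S(\Epw_i) = c_i \Epw_i \Kpw_i^{-1}$ for some scalar unit $c_i \in \Zzvn{m}^\times$. For general $a$, I will propagate along $\Epw_a = \Tinv_{a^\flat}(\Epw_{\tau(a)})$ using the commutation between $S$ and $\Tinv_{a^\flat}$ supplied by Proposition~\ref{lem:ArtinConjComm} and the surrounding identities in Section~\ref{subsec:LATM}: since this commutation differs by a \Kscaleword transformation with character $\reldrchar_{\Weylpres(a^\flat)}$, which is multiplication by an invertible Cartan-weighted scalar on each weight space, the image $S(\Epw_a)$ is of the form $(\text{unit})\cdot \Epw_a\cdot \Tinv_{a^\flat}(\Kpw_{\tau(a)}^{-1})$, and is thus manifestly in $\ZaugidealhatIn{a,\emptyword}{m}$ since $\Epw_a$ is and the final Cartan factor is invertible. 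The main obstacle I anticipate is confirming, uniformly in Lie type and parity of $\ell$, that the $\reldrchar$-twist stays in $\Zzvn{m}^\times$ and that the Artin-automorphism image of $\Kpw_{\tau(a)}^{-1}$ really evaluates to $\Kpw^{-\wordroot(a)}$ via \eqref{eq:thetaviarho} and \eqref{eq:reldrcharrec}; the weight bookkeeping is nontrivial and is best isolated as a preliminary lemma before the main induction.
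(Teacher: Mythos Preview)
Your coproduct argument is correct and is essentially the paper's own bi-ideal verification: conjugate the skew-primitive formula for $\Epw_{\tau(a)}$ by $\Tinv_{a^\flat}^{\otimes 2}$ and then by $\Rtprod{a^\flat}$, using Proposition~\ref{prop:CoprodIntWord} and the inclusion $\FullpTmIdeal{a^\flat}\subseteq\FullpTmIdealSym{a}$.

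Your antipode step, however, has a genuine gap. The claimed form $S(\Epw_a)=(\text{unit})\cdot\Epw_a\cdot K^{-\ell_a\wordroot(a)}$ is \emph{false} for $\len a>1$. The commutation you invoke (the proposition preceding \eqref{eq:ArtinConjComm}, not Proposition~\ref{lem:ArtinConjComm} itself, which concerns $\Qinvaut$) reads $S\circ\Tinv_t=(\Tinv_{t^{-1}})^{-1}\circ S\circ\Kscale{\reldrchar_{t^{-1}}}{0}$. With $t=\Weylpres(a^\flat)$ this gives $S(\Epw_a)=(\text{unit})\cdot(\Tinv_{t^{-1}})^{-1}(\Epw_{\tau(a)})\cdot(\text{Cartan})$. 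But $(\Tinv_{t^{-1}})^{-1}=T_t$ by \eqref{eq:LTconjinv}, so $(\Tinv_{t^{-1}})^{-1}(\Epw_{\tau(a)})=\ELu_a^{\ell_a}=\Epw_b$ where $a\wcomplrel b$ is the complementary word from Proposition~\ref{prop:MohGenConv}. In general $\Epw_b\neq(\text{unit})\cdot\Epw_a$; already for $\LT{A}_2$ with $a=w_1w_2$ one has $\Epw_{(21)}=(-1)^\ell\bigl(\zeta^\ell\Epw_{(12)}+\zeta^{\binom{\ell}{2}}(\zeta-\zeta^{-1})^\ell\Epw_2\Epw_1\bigr)$ by Proposition~\ref{prop:XJ-form}. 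Showing $\Epw_b\in\ZaugidealhatIn{a,a}{m}$ would require knowing that $\Kinvaut$ preserves $\Zaugidealhat{a}^+$, which for non-maximal $a$ is neither Theorem~\ref{thm:Zwordindep} (that concerns $\Zsubalgchar_w^+$, not the induced ideals) nor otherwise available at this point---and for $\LT{G}_2$ the requisite rank-2 formulae are not computed at all.

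The paper sidesteps this entirely: having established the bi-ideal property, it passes to the field $\mathbb Q(\zeta)$, invokes Nichols's theorem \cite[Thm~1(v)]{Ni78} that every bi-ideal in a \emph{pointed} Hopf algebra is automatically a Hopf ideal (pointedness being Proposition~\ref{prop:pointed}), and then descends to $\Zzvn{m}$ via $\ZaugidealhatIn{u,u}{m}=\ZaugidealhatIn{u,u}{\mathbb Q}\cap\Uzn{m}$. This is the missing idea.
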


\begin{proof} We start by showing that the $\ZaugidealhatIn{u,u}{\ell}$ are bi-ideals.
Given their definition, this means verifying that $\Delta(\Epw_v), \Delta(\Fpw_v)\in \FullpTmIdealSym u$
for all $\emptyword\neq v\leqRB w\,$. Since $\FullpTmIdealSym v\subseteq \FullpTmIdealSym u$ and
using $\Cartaninv$-symmetry as before, it suffices to check 
$\Delta(\Epw_v)\in \FullpTmIdealSym v$ for any reduced word $v$. Setting $w=v^\flat$ and $i=\tau(v)$
and using (\ref{eq:simplecoprod}) we first observe that  
\begin{align*}
\Tinv^{\otimes2}_w\circ \Delta(X_i)&=\Tinv^{\otimes2}_w(\Epw_i\otimes\Kpw_i+1\otimes\Epw_i )
  =\Epw_{w\cdot w_i}\otimes\Kpw_{w\cdot w_i}+1\otimes\Epw_{w\cdot w_i}\\
&=\Epw_{v}\otimes\Kpw_{v}+1\otimes\Epw_{v}\;\in\; \FullpTmIdealSym v\;.
\end{align*}
Using this, Proposition~\ref{prop:CoprodIntWord}, the coproduct from (\ref{eq:simplecoprod}), the fact that 
$\FullpTmIdealSym v$ is a two-sided ideal in $\Uzn{\ell}^{\otimes 2}\,$, and $\FullpTmIdeal w\subseteq \FullpTmIdealSym v$
we compute 
    \begin{align*}
        \Delta(X_{v})&\;=\;\Delta(X_{w\cdot w_i})\;=\;\Delta\circ\Tinv_w(X_i) 
        \;\in \;
        \Rtprodi{w}\cdot \Tinv^{\otimes2}_w\circ \Delta(X_i)\cdot \Rtprod{w} \,+\,\FullpTmIdeal w
        \\
        \rule{0mm}{6mm}
        &\subseteq\;
        \Rtprodi{w}\left(\FullpTmIdealSym v\right)\Rtprod{w} \,+\,\FullpTmIdeal w
       \;\subseteq\;
        \FullpTmIdealSym v\,,
    \end{align*}
so that $\ZaugidealhatIn{u,u}{\ell}$ is indeed a bi-ideal.

We now invoke a result of Nichols, which states that any bi-ideal in a pointed Hopf algebra is necessarily a Hopf ideal. See \cite[Thm 1(v)]{Ni78}. By Proposition~\ref{prop:pointed}, $\Uqre{\mathbb k}\cong \Uzn{\mathbb Q}=\Uzn{\ell}\otimes\mathbb Q(\zeta)$ is pointed for $\bbk=\Qz$ via the obvious map $\vartheta:\Zqqvn{\dpone}\rightarrow\Qz\,$. 
Since $\ZaugidealhatIn{u,u}{\mathbb Q}=\ZaugidealhatIn{u,u}{\ell}\otimes \Qz$ is a bi-ideal, Nicols's result 
implies that 
$\ZaugidealhatIn{u,u}{\mathbb Q}$ is indeed a Hopf ideal, meaning it is preserved under the antipode $S$.

Finally, by (\ref{eq:Kw=SpangeqL}) and (\ref{eq:KhatTwoDefs}), the ideals $\ZaugidealhatIn{u,v}{m}$ are the
free $\Zzvn{m}$-submodules spanned by a subset of elements of a PBW basis of $\Uzn{m}\,$, implying
$\ZaugidealhatIn{u,v}{m}=\ZaugidealhatIn{u,v}{\mathbb Q}\cap \Uzn{m}\,$. Since the intersection
of a Hopf subalgebra and Hopf ideal is again a Hopf ideal, we arrive at the claim for $\ZaugidealhatIn{u,u}{m}\,$.
\end{proof}

We next seek to extend the observation above to the related families of ideals defined in previous sections. 
The two-sided ideals $\Zaugidealhat{w}^+$ defined in (\ref{eq:Kw=SpangeqL}) are  not 
Hopf ideals since $\Uzn{\dpone}^+$ is not a bialgebra, nor are they even Hopf subalgebras. We, thus, consider the extension
to the Hopf algebra $\Uzn{\dpone}^\geqzero$ by including the Cartan generators, and defining
\begin{equation}\label{eq:KgeqzeroDef}
    \Zaugidealhat{w}^{\geqzero}=\Zaugidealhat{w}^+\cdot\Uzn{\dpone}^\geqzero
\qquad\mbox{with \ $\Zzn{\dpone}$-basis}\qquad 
\bigl\{b\cdot K^\nu\,|\,\nu\in\mathbb Z^{\sroots},\,b\in\basis{z}{\iota^*(\Lminexpset s),+}\bigr\}\,.
\end{equation}
Here, $z\in\wordsetmax$ with $w\leqRB z$, $s=\Weylpres(w)$, and $\iota:\descroots{s}\hookrightarrow\proots\,$. The definition of $\Zaugidealhat{w}^{\leqzero}$ is  analogous and the 
$\ZaugidealhatIn{u,v}{m}$ are obtained from the $\Zaugidealhat{w}^{\geqzero,\leqzero}$ in the same way as in
(\ref{eq:KhatTwoDefs}).

The statement in Item {\em \ref{item:thm:MainIdeals:Kpm}} in Theorem~\ref{thm:MainIdeals} already follows,
independently, from
the results in Sections~\ref{sec:Ideals_An=GLn} and \ref{sec:Ideals_B2=SO5} for the respective 
Lie types $\LT{A}_n$ and $\LT{B}_2$\,, where the ideals are explicitly identified with vanishing ideals
of Bruhat subgroups. 

Word independence, asserted in Item {\em \ref{item:thm:MainIdeals:Wind}},
is already implied by Theorem~\ref{thm:Zwordindep} for Lie types different from $\LT{G}_2$\,. 
The use of Nicols's result for Hopf ideals in Proposition~\ref{prop:KhatIdealDiagWord} allows
us to extend word independence also to the $\LT{G}_2$ case for the ideals in the full algebras.

\begin{thm}\label{thm:MainIdeals}
   Suppose $\roots$ is of {\em any} Lie type and $\dpone\leq m\leq \ell$. Assume further 
   $u,v,u',v'\in\wordset$ are reduced words with $s=\Weylpres(u)=\Weylpres(u')$ and $t=\Weylpres(v)=\Weylpres(v')$.\vspace{-1mm}
    \begin{enumerate}[label=\roman*)]
        \item \label{item:thm:MainIdeals:Kpm}
           The ideals $\Zaugidealhat{u}^{\geqzero}$ and $\Zaugidealhat{u}^{\leqzero}$ are Hopf ideals 
            in $\Uzn{\dpone}^\geqzero$ and  $\Uzn{\dpone}^\leqzero$\,, respectively. \vspace{2mm}
         \item \label{item:thm:MainIdeals:Kuv}
           The ideal $\ZaugidealhatIn{u,v}{m}$ is a Hopf ideal in $\Uzn{m}$\,.\vspace{2mm}
        \item \label{item:thm:MainIdeals:Wind} 
        The ideals depend only on the elements represented in $\Weyl$. \newline That is, $\Zaugidealhat{u}^{\geqzero}=\Zaugidealhat{u'}^{\geqzero}$\,, $\Zaugidealhat{u}^{\leqzero}=\Zaugidealhat{u'}^{\leqzero}$\,,  and $\ZaugidealhatIn{u,v}{m}=\ZaugidealhatIn{u',v'}{m}$\,.  
    \end{enumerate}
\end{thm}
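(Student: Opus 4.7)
My plan is to deduce Theorem~\ref{thm:MainIdeals} from the diagonal case, Proposition~\ref{prop:KhatIdealDiagWord}, by combining a PBW projection argument with the Nichols-pointedness strategy already used in the proof of that proposition. First I reduce Part (ii) to a one-sided statement: since $\ZaugidealhatIn{u,v}{m} = \ZaugidealhatIn{u,\emptyword}{m} + \ZaugidealhatIn{\emptyword,v}{m}$ and the sum of Hopf ideals is again a Hopf ideal, it suffices to treat $\ZaugidealhatIn{u,\emptyword}{m}$; applying $\Cartaninv$ through the relations in \eqref{eq:KhatTwoIds} then handles the second summand. Part (i) follows from the same argument carried out inside the Hopf subalgebra $\Uzn{\dpone}^\geqzero$, since $\Zaugidealhat{u}^\geqzero \cdot \Uzn{m} = \ZaugidealhatIn{u,\emptyword}{m}$ and the intersection of a Hopf ideal with a Hopf subalgebra remains a Hopf ideal.

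The central ingredient is showing $\Delta(\Epw_a) \in \Zaugidealhat{a}^\geqzero \otimes \Uzn{\ell}^\geqzero + \Uzn{\ell}^\geqzero \otimes \Zaugidealhat{a}^\geqzero$ for every non-empty $a \leqRB u$. From the proof of Proposition~\ref{prop:KhatIdealDiagWord} we already have $\Delta(\Epw_a) \in \FullpTmIdealSym{a}$, and from the Hopf subalgebra property of $\Uzn{\ell}^\geqzero$ we also have $\Delta(\Epw_a) \in (\Uzn{\ell}^\geqzero)^{\otimes 2}$. Thus the matter reduces to identifying
\[
\FullpTmIdealSym{a} \cap (\Uzn{\ell}^\geqzero)^{\otimes 2} \,=\, \Zaugidealhat{a}^\geqzero \otimes \Uzn{\ell}^\geqzero + \Uzn{\ell}^\geqzero \otimes \Zaugidealhat{a}^\geqzero.
\]
I verify this via the linear projection $P \colon \Uzn{\ell} \to \Uzn{\ell}^\geqzero$ defined on the PBW basis $\{\Ebase{z}{\psi} K^\nu \Fbase{z}{\chi}\}$ by fixing elements with $\chi = 0$ and annihilating those with $\chi \neq 0$. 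The PBW description in \eqref{eq:Kw=SpangeqL} and \eqref{eq:KhatTwoDefs} implies $P\bigl(\ZaugidealhatIn{a,a}{\ell}\bigr) = \Zaugidealhat{a}^\geqzero$, while $P \otimes P$ restricts to the identity on $(\Uzn{\ell}^\geqzero)^{\otimes 2}$. Applying $P \otimes P$ to the defining decomposition of any element of the intersection then places it in the desired bi-ideal.

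Having established that $\ZaugidealhatIn{u,\emptyword}{m}$ is a bi-ideal, I upgrade to Hopf-ideal exactly as in Proposition~\ref{prop:KhatIdealDiagWord}: Proposition~\ref{prop:pointed} ensures $\Uqre{\Qz}$ is pointed, so by Nichols's theorem the scalar extension $\ZaugidealhatIn{u,\emptyword}{m} \otimes_{\Zzvn{m}} \Qz$ is automatically a Hopf ideal. Since $\ZaugidealhatIn{u,\emptyword}{m}$ is a direct summand of $\Uzn{m}$ as a free $\Zzvn{m}$-module under the PBW decomposition, it equals the intersection of that Hopf ideal with $\Uzn{m}$, hence is itself a Hopf ideal. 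This completes Parts (i) and (ii).

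Part (iii) will follow from the already-established word-independence results. For all Lie types other than $\LT{G}_2$, Theorem~\ref{thm:Zwordindep} and Corollary~\ref{cor:KidealWordIndep} give the statement at the level of $\Zsubalgchar$ and of $\Zaugideal{s}$, which then extends to the generated ideals in $\Uzn{m}$. For $\LT{G}_2$, every non-maximal element of $\Weyl$ admits a unique reduced expression, and for the unique longest element $\longweyl$ the two reduced words define the same ideal by Lemma~\ref{lem:Null=Kmax}, through its characterization as the null space of the Tanisaki-Lusztig pairing; word-independence for $\ZaugidealhatIn{u,v}{m}$ then follows summand by summand. The main technical obstacle I anticipate is the intersection identity in the second paragraph: the projection argument is conceptually clean, but it relies on the claim that $\ZaugidealhatIn{a,a}{\ell}$ is genuinely spanned by the PBW monomials with $\psi \in \Lminexpset{a}$ or $\chi \in \Lminexpset{a}$, which must be checked by chasing the generating set $\{\Epw_b, \Fpw_c\}$ through the triangular decomposition $\Uzn{\ell} = \Uzn{\ell}^- \cdot \Uzn{\ell}^0 \cdot \Uzn{\ell}^+$ and verifying that multiplication by arbitrary elements preserves the prescribed PBW support.
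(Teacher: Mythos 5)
Your proposal is correct, and for parts (i) and (ii) it follows essentially the paper's route: the paper simply identifies $\Zaugidealhat{u}^{\geqzero}=\ZaugidealhatIn{u,u}{m}\cap\Uzn{\dpone}^{\geqzero}$ through the PBW support of the ideals and then quotes that intersecting the Hopf ideal of Proposition~\ref{prop:KhatIdealDiagWord} with a Hopf subalgebra again yields a Hopf ideal, whereas you re-run the bi-ideal-plus-Nichols step for the one-sided ideal after proving the same PBW-support identity via your projection $P$; the substance (PBW bookkeeping, pointedness of $\Uqre{\mathbb Q}$ and Nichols's theorem, descent to the integral form as a PBW direct summand) is identical, your version just being slightly less economical. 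Where you genuinely diverge is part (iii). The paper does not split into cases: from part (i) it extracts antipode-stability of $\Zaugidealhat{w}^{\geqzero}$, converts this into $\Kinvaut$-stability of $\Zaugidealhat{w}^{+}$ using $S=\Kinvaut\circ\Kscale{\htsign}{\idsymm}$ and the $\wgrad$-grading, and then feeds the rank-2 instances of that stability into the relator-stability criterion, case \emph{ii)} of Corollary~\ref{cor:BaseOrdSplit} applied to the complementary exponent sets $\Lminexpset{s}$, obtaining word independence uniformly for all Lie types, including $\LT{G}_2$. You instead invoke Theorem~\ref{thm:Zwordindep} and Corollary~\ref{cor:KidealWordIndep} outside type $\LT{G}_2$ and, for $\LT{G}_2$, use that every element strictly below $\longweyl$ has a unique reduced word while the two words in $\wordsetmax$ give the same ideal by the null-space characterization in Lemma~\ref{lem:Null=Kmax}. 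That is a valid and more elementary resolution of the $\LT{G}_2$ ambiguity, but it is special to rank 2 and leans on the Tanisaki--Lusztig pairing; the paper's route buys a uniform, type-independent argument and, as a byproduct, the $S$- and $\Kinvaut$-stability of $\Zaugidealhat{w}^{+}$ for arbitrary $w$, which is precisely the local rank-2 input that Corollary~\ref{cor:BaseOrdSplit} needs and which is reused in the later $R$-matrix arguments.
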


\begin{proof} The same reasoning as in the proof of Proposition~\ref{prop:KhatIdealDiagWord}
applies to show that $\Zaugidealhat{u}^{^\geqzero}=\ZaugidealhatIn{u,u}{m}\cap \Uzn{\dpone}^\geqzero\,$, by restricting
both the basis elements defined with respect to $u$ to those in 
(\ref{eq:KgeqzeroDef})
and reducing the ground ring to $\Zzn{\dpone}\,$. 
As before, since $\Uzn{\dpone}^\geqzero$ is a Hopf subalgebra it follows from 
Proposition~\ref{prop:KhatIdealDiagWord} that $\Zaugidealhat{u}^{\geqzero}$ is a Hopf ideal. 
The argument for $\Zaugidealhat{u}^{\leqzero}$ is identical, and the assertion in Item   
{\em \ref{item:thm:MainIdeals:Kuv}} is now immediate from the definition in (\ref{eq:KhatTwoDefs}).

The statement in Item  {\em \ref{item:thm:MainIdeals:Kpm}} implies, in particular, that for any $w\in\wordset$,
 $\Zaugidealhat{w}^{\geqzero}$ is invariant under the antipode. The formulae in (\ref{eq:SXiMhoRel})
 and the fact that the $\Kscale{u}{\mathsfit h}$ preserves any subspace of $\Uzn{\ell}^\geqzero$ with a 
 $\wgrad$-graded decomposition implies that $\Kinvaut$ maps $\Zaugidealhat{w}^{\geqzero}$ to itself. The
 decomposition into a positive and Cartan part now implies $\Kinvaut(\Zaugidealhat{w}^{+})=\Zaugidealhat{w}^{+}\,$. 
  
Relation (\ref{eq:Kw=SpangeqL}) shows that word independence of $\Zaugidealhat{w}^{+}$ is 
implied by word independence of $\bspan{w}{\Lminexpset s,+}\!$. To prove the latter we 
first oberve that $\NexpExpSet{s}=\Lmaxexpset s$ is of the split form (\ref{eq:VexpSetEx}) with
$V_d=\{0,1,\ldots,\ell_d-1\}\,$. The exponent set $\NexpExpCompSet{s}=\Lminexpset s$ is
defined as the complement and thus of the required in the case {\em \ref{item:BaseOrdSplit:VsComp}}
of Corollary~\ref{cor:BaseOrdSplit}.

As in the remark following (\ref{eq:Kw=SpangeqL}), we have 
$\Zaugidealhat{\longtwoword{ij}}^{+}\,=\,\bspan{\longtwoword{ij}}{\Lminexpset{\bullet},+}
\!=\,\bspan{\longtwoword{ij}}{\NexpExpCompSetij{ij},+}$ in $\Uzrest{+}{ij}\,$.  By the previous argument we have that $\Zaugidealhat{\longtwoword{ij}}^{+}$ is stable under
$\Kinvaut$ so that the second assumption of Corollary~\ref{cor:BaseOrdSplit} is also fulfilled.
It follows that that $\Zaugidealhat{w}^{+}$ is independent of the choice of $w$. The 
respective statements for the other ideals are now immediate from the identities in 
(\ref{eq:KhatTwoDefs}) and (\ref{eq:KgeqzeroDef}).
\end{proof}

Theorem~\ref{thm:MainIdeals} implies, in particular, that the ideals $\ZmaxIdHat{\siAN}$\,, $\ZmaxIdHat{\siNA}$\,, and $\ZmaxIdHat{\siAA}$ defined in (\ref{eq:ZidealMaxNotHat}) are indeed independent of choices of maximal reduced words as well, including the $\LT{G}_2$ case. 

The intertwining relations derived above may also be understood in terms of partial Hopf algebra quotients. For any $s\in\Weyl$ and $w\in\wordset$ with $s=\Weylpres(w)$
define 
\begin{equation}\label{eq:ArtDeltPartQuot}
    \Delta^s=\Tinv_s^{\otimes 2}\circ \Delta\circ \Tinv_s^{-1}
    \qquad \mbox{and}\qquad 
    \Uzn{\ell}^{[s]}=\Uzn{\ell}/\ZaugidealhatIn{w,w}{\ell}\,.
\end{equation}
As with any automorphism, the conjugate $\Delta^s$ defines an alternative Hopf algebra structure 
with antipode $S^{\Tinv_s}\,$. Theorem~\ref{thm:MainIdeals} implies that $\Uzn{\ell}^{[s]}$ is a Hopf algebra defined independent of the choice of the presentation $w$ of $s$.

\begin{prop}\label{prop:PartialIntertw}
Suppose $t=\Weylpres(u)$ as above and $t\leqRB s$ for $s\in\Weyl$. Let $\Rtprodfac u=[\Rtprod u]$ be the image of the partial quasi-$R$-matrix in $(\Uzn{\ell}^{[s]})^{\otimes 2}$\,.
Then $\Delta^t$ descends to a coproduct $\Delta^{[t]}$ on $\Uzn{\ell}^{[s]}$\,, $\Rtprodfac u$ is invertible, and 
$$
\Delta^{[t]}(x)\cdot \Rtprodfac u\,=\,\Rtprodfac u\cdot \Delta(x) \qquad\mbox{for all } x\in \Uzn{\ell}^{[s]}\,. 
$$
\end{prop}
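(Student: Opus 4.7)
The plan is to reduce the statement directly to the intertwining relation established in Proposition~\ref{prop:CoprodIntWord} by passage to the quotient, with the principal bookkeeping being an inclusion of ideals.

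First, I would record the comparison of ideals. Choose a reduced word $w\in\wordset$ with $\Weylpres(w)=s$, so that $\Uzn{\ell}^{[s]}=\Uzn{\ell}/\ZaugidealhatIn{w,w}{\ell}$ is a well-defined Hopf algebra by Theorem~\ref{thm:MainIdeals} (independence of the choice of $w$ being part of that theorem, including for $\LT{G}_2$). Because $t\leqRB s$, Corollary~\ref{cor:sumdescroots} gives $\descroots t\subseteq\descroots s$, hence $\Zaugidealhat{u}^{\pm}\subseteq\Zaugidealhat{w}^{\pm}\subseteq\ZaugidealhatIn{w,w}{\ell}$. Together with \eqref{eq:NidealInDiag} and \eqref{eq:NIdealProps} this gives the chain $\pTmIdeal{u}\subseteq\FullpTmIdeal{u}\subseteq\FullpTmIdealSym{w}$, so $\FullpTmIdealSym{w}$ is contained in the kernel of $\Uzn{\ell}^{\otimes 2}\twoheadrightarrow(\Uzn{\ell}^{[s]})^{\otimes 2}$.

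Invertibility of $\Rtprodfac u$ then falls out of \eqref{eq:PwordProps}: the relations $\Rtprod{u}\cdot\Rtprodi{u}\equiv 1^{\otimes 2}\equiv \Rtprodi{u}\cdot\Rtprod{u}\mod\pTmIdeal{u}$ project to $(\Uzn{\ell}^{[s]})^{\otimes 2}$ to yield $\Rtprodfac u^{-1}=[\Rtprodi u]$. For the intertwining, I would apply Proposition~\ref{prop:CoprodIntWord} with $w$ replaced by $u$: for every $y\in\Uzn{\ell}$,
$$\Delta(\Tinv_u(y))\equiv\Rtprodi{u}\cdot\Tinv_u^{\otimes 2}(\Delta(y))\cdot\Rtprod{u}\mod\FullpTmIdeal{u}.$$
Substituting $x=\Tinv_u(y)$ and recognizing $\Tinv_u^{\otimes 2}\circ\Delta\circ\Tinv_u^{-1}=\Delta^t$ (a function of $t$ alone), this reads $\Delta(x)\equiv\Rtprodi{u}\cdot\Delta^t(x)\cdot\Rtprod{u}\mod\FullpTmIdeal{u}$. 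Left-multiplying by $\Rtprod{u}$ and applying the unit congruence mod $\pTmIdeal{u}$ from \eqref{eq:PwordProps} yields
$$\Rtprod{u}\cdot\Delta(x)\equiv\Delta^t(x)\cdot\Rtprod{u}\mod\FullpTmIdeal{u}.$$
Since $\FullpTmIdeal{u}\subseteq\FullpTmIdealSym{w}$, the congruence descends to the identity $\Rtprodfac u\cdot[\Delta(x)]=[\Delta^t(x)]\cdot\Rtprodfac u$ in $(\Uzn{\ell}^{[s]})^{\otimes 2}$.

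Finally, since $\ZaugidealhatIn{w,w}{\ell}$ is a Hopf ideal, the ordinary coproduct $\Delta$ descends to $\Uzn{\ell}^{[s]}$. Using invertibility of $\Rtprodfac u$ from above, I would rewrite the displayed identity as $[\Delta^t(x)]=\Rtprodfac u\cdot[\Delta(x)]\cdot\Rtprodfac u^{-1}$; the right-hand side visibly depends only on $[x]$, so $\Delta^t$ descends to a well-defined map $\Delta^{[t]}:\Uzn{\ell}^{[s]}\to(\Uzn{\ell}^{[s]})^{\otimes 2}$, and the intertwining equation of the proposition is now just the descended form of the congruence above. The main obstacle was already overcome in proving Proposition~\ref{prop:CoprodIntWord} and Theorem~\ref{thm:MainIdeals}; at this stage the argument is purely a compatibility check of ideal inclusions combined with a single substitution in the intertwining relation.
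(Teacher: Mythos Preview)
Your proof is correct and follows essentially the same approach as the paper: both invoke the ideal chain $\pTmIdeal{u}\subseteq\FullpTmIdeal{u}\subseteq\FullpTmIdealSym{s}$, apply Proposition~\ref{prop:CoprodIntWord} with word $u$, and use the Hopf ideal property of $\ZaugidealhatIn{w,w}{\ell}$ to descend. Your argument for why $\Delta^t$ descends (writing $[\Delta^t(x)]=\Rtprodfac u\cdot[\Delta(x)]\cdot\Rtprodfac u^{-1}$ and noting the right side depends only on $[x]$) is a slightly cleaner packaging than the paper's explicit check that $\Delta^t$ maps the defining ideal into $\FullpTmIdealSym{s}$, but the content is identical.
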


\begin{proof} Recall first from (\ref{eq:NIdealProps}) and (\ref{eq:NidealInDiag}) that 
$\pTmIdeal{u}\subset \FullpTmIdeal {u}\subset \FullpTmIdealSym u= \FullpTmIdealSym t\subseteq \FullpTmIdealSym s$. Thus, 
equations in Corollary~\ref{cor:Rtinv} and Proposition~\ref{prop:CoprodIntWord} also hold modulo 
$\FullpTmIdealSym t$. The former shows that $\Rtprodfac w$ is invertible. The latter can be rewritten
as 
$$
    \Delta^t(x)\,\equiv\,\Rtprod{u}\cdot\Delta(x)\cdot\Rtprodi{u} \qquad \mod \FullpTmIdealSym {s}.
    $$
If $x\in\ZaugidealhatIn{u,u}{\ell}$\,, Proposition~\ref{prop:KhatIdealDiagWord} implies that
$\Delta(x)\in \FullpTmIdealSym {s}$. Thus, the right-hand side and, hence, also $\Delta^t(x)$ is in $\FullpTmIdealSym {s}$. It follows that  $\Delta^t$ descends to a coproduct $\Delta^{[t]}$ on $\Uzn{\ell}^{[s]}$ for which the claimed identity now clearly holds. 
\end{proof}

It is important to keep in mind, though, that the automorphisms $\Tinv_s$ themselves generally do not factor through these Hopf algebra quotients.

\medskip

\subsection{Quasi-\texorpdfstring{$R$}{R}-Matrices for Restricted Quantum Algebras}\label{subsec:RestQRMats}
In this section, we return to the maximal ideal case and collect previous observations for the construction of unique quasi-$R$-matrices for the restricted quantum algebras. The defining relations in tensor products of $\Uzn{\ell}$ will
generally hold only up to ideals, which we introduce first. For $a\in\mathbb N$\,,
let $\FullpTmIdealMaxT a$ be the ideal in $\Uzn{\ell}^{\otimes a}$ generated
by all elements $1^{\otimes (j-1)}\otimes \Epw_v\otimes 1^{\otimes (a-j)}$ and 
$1^{\otimes(j-1)}\otimes \Fpw_v\otimes 1^{\otimes (a-j)}$ for all $\emptyword\neq v\leqRB z$ and $j=1,\ldots,a$ for some $z\in\wordsetmax$\,. Theorem~\ref{thm:MainIdeals} implies 
 the word independent presentation 
\begin{equation}
    \FullpTmIdealMaxT a=
   \ZmaxIdHat{\siAA}\otimes \Uzn{\ell}^{\otimes (a-1)}
     +\Uzn{\ell}\otimes\ZmaxIdHat{\siAA}\otimes \Uzn{\ell}^{\otimes (a-2)}
     +\ldots 
+\Uzn{\ell}^{\otimes (a-1)}\otimes\ZmaxIdHat{\siAA}\,,
\end{equation}
where $\ZmaxIdHat{\siAA}$ is as in (\ref{eq:ZidealMaxNotHat}). Since, for example,
$\FullpTmIdeal {z}\subseteq \FullpTmIdealMaxT 2$\,, Corollary~\ref{cor:RtrmaxDeltaIntw} implies
\begin{equation}\label{eq:PDeltaRestr}
    \barDelta(x)\cdot\Rtprod{z}\,\equiv\, \Rtprod{z}\cdot\Delta(x) \qquad\mod \FullpTmIdealMaxT 2\,,
\end{equation}
for any $x\in\Uzn{\ell}$ and $\Rtprod{z}$ as in (\ref{eq:ElemTrRmatDef}). The remainders in these ideals have been explicitly computed
in Appendix~\ref{sec:proofcoprodbase}. Likewise, the Drinfeld-Tanisaki relations from (\ref{eq:tani2}) hold for the truncated $R$-matrices also only up to remainder terms in these ideals. For example, in the rank one case with
$\Rt=\sum_{s=0}^{\ell-1}{\Rcoeff{s}{\zeta}}^{-1}E^s\otimes F^s$
we have 
$$
\taniauto_{12}(\Rt_{13})\cdot\Rt_{12}\,-\,(\id\otimes\Delta)(\Rt)
=(\Epw\otimes 1\otimes 1)\cdot 
\longsum[18]_{\substack{0\leq s,t <\ell\\ s+t\geq\ell}}
(\Rcoeff{s}{\zeta}\Rcoeff{t}{\zeta})^{-1}
E^{t+s-\ell}\otimes K^{-s}F^t \otimes F^s\,, 
$$\vspace{-4mm}

\noindent which is clearly in $\FullpTmIdealMaxT 3\,$, but not zero. The general case follows similarly. 

\begin{prop}\label{prop:Tani1Restr}
For a given maximal reduced word $\,z\in\wordsetmax\,$ and any Lie type denote $\,\Rt=\Rtprod{z}\,$.  
  \begin{align*}
  \text{Then}    &&(\id\otimes\Delta)(\Rt)&\equiv
    \taniauto_{12}(\Rt_{13})\cdot \Rt_{12}\quad\mod \FullpTmIdealMaxT 3
    &
    \\
    \rule{0mm}{6mm}\text{and}&& 
    (\Delta\otimes \id)(\Rt)&\equiv
    \taniauto_{23}(\Rt_{13})\cdot\Rt_{23}\quad\mod \FullpTmIdealMaxT 3\,.
    &
  \end{align*} 
\end{prop}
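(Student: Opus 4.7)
The plan is to reduce this to the generic coefficient identity (\ref{eq:CoeffId}), which already encodes the Drinfeld--Tanisaki relation (\ref{eq:RpmTaniRel1}) for the full quasi-$R$-matrix $\Rprodmax$ over $\Zqqvn{\dpone}$. Since the multiplication and comultiplication coefficients $\Multcoeff{\chi}{\psi}{\phi}{q}$ and $\CoMultcoeff{\chi}{\phi}{\psi}{q}$ from (\ref{eq:CoProdCoeffs}) lie in $\Zqqn{\dpone}$ and $\Zqq$ respectively, both specialize cleanly at $q=\zeta$, so (\ref{eq:CoeffId}) descends to an identity in $\Zzvn{\ell}$ for every triple of exponent functions $(\chi,\psi,\phi)$.

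First I would expand both sides of the first identity in the $z$-indexed PBW basis and collect coefficients of each tensor monomial $\Ebaseopp{z}{\chi}\otimes K^{-\vec\psi}\Fbaseopp{z}{\phi}\otimes\Fbaseopp{z}{\psi}$. The left side, summing over $\psi,\phi\in\Lmaxexpset{\bullet}$, produces all such triples with coefficient $\Rcoeff{\psi}{\zeta}^{-1}\Rcoeff{\phi}{\zeta}^{-1}\Multcoeff{\chi}{\psi}{\phi}{\zeta}$, while the right side, whose outer sum is restricted to $\chi\in\Lmaxexpset{\bullet}$, produces coefficient $\Rcoeff{\chi}{\zeta}^{-1}\CoMultcoeff{\chi}{\phi}{\psi}{\zeta}$. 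The key observation is that $\Rcoeff{\psi}{\zeta}$ is a unit in $\Zzvn{\ell}$ for $\psi\in\Lmaxexpset{\bullet}$: by (\ref{eq:RcoeffGen}), its numerator $[\psi]!$ then involves only quantum factorials $[k]!_\alpha$ with $k<\ell_\alpha$ (invertible in $\Zzvn{\ell}$), and the denominator contributes only the units $\zeta_\alpha-\zeta_\alpha^{-1}$. Applying (\ref{eq:CoeffId}) at $q=\zeta$ and dividing through by these units then shows that the two coefficient expressions agree whenever $\chi\in\Lmaxexpset{\bullet}$.

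It remains to account for the excess terms on the left with $\chi\in\Lminexpset{\bullet}$. For such $\chi$, the basis element $\Ebaseopp{z}{\chi}$ lies in $\Zaugidealhat{z}^+=\ZmaxIdHat{\bullet}^+$ by (\ref{eq:Kw=SpangeqL}) together with the word-independence in Theorem~\ref{thm:MainIdeals}, so these contributions sit in $\ZmaxIdHat{\bullet}^+\otimes\Uzn{\ell}^{\otimes 2}\subseteq\FullpTmIdealMaxT 3$. This establishes the first identity. The second then follows from the parallel argument applied to $(\Delta\otimes\id)(\Rprodmax)=\taniauto_{23}(\Rprodmax_{13})\,\Rprodmax_{23}$ and its coefficient form, with the excess terms now carrying a factor $\Fbaseopp{z}{\chi}\in\ZmaxIdHat{\bullet}^-$ that again lands in $\FullpTmIdealMaxT 3$.

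The main obstacle I anticipate is bookkeeping rather than conceptual: aligning the ordering conventions among our $\Ebaseopp{z}{\psi}$ basis, Lusztig's element $\Theta$, and the indexing in (\ref{eq:RpmTaniRel1}), and checking that the coefficient identity yields exactly the desired cancellation after division by the nonzero factors $\Rcoeff{\psi}{\zeta}\Rcoeff{\phi}{\zeta}$. These are the same technical points already navigated in Section~\ref{subsec:PreTriang}, so no fundamentally new difficulty should arise.
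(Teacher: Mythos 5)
Your strategy is the paper's own: expand both sides in the $z$-indexed PBW basis, compare coefficients via the specialization of \eqref{eq:CoeffId} at $q=\zeta$ (using that $\Rcoeff{\psi}{\zeta}$ is invertible precisely for $\psi\in\Lmaxexpset{\bullet}$), and absorb the unmatched terms into $\FullpTmIdealMaxT 3$; the paper even records the same alternative argument via the restricted pairing that you allude to at the end.

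One step is stated too strongly, though. From \eqref{eq:CoeffId} you may only divide by $\Rcoeff{\phi}{\zeta}\Rcoeff{\psi}{\zeta}$ when \emph{both} $\phi$ and $\psi$ lie in $\Lmaxexpset{\bullet}$, so the coefficients are shown to agree only when all three exponents $\chi,\phi,\psi$ are in $\Lmaxexpset{\bullet}$ --- not, as you claim, whenever $\chi\in\Lmaxexpset{\bullet}$. Consequently your excess-term accounting is one-sided: besides the terms with $\chi\in\Lminexpset{\bullet}$ on the product side, the coproduct side $(\id\otimes\Delta)(\Rt)$ contains terms with $\chi\in\Lmaxexpset{\bullet}$ but $\phi$ or $\psi$ in $\Lminexpset{\bullet}$ (nothing in the weight constraint $\vec\chi=\vec\phi+\vec\psi$ rules these out, and \eqref{eq:CoeffId} does not force $\CoMultcoeff{\chi}{\phi}{\psi}{\zeta}=0$ there, since that would require dividing by a vanishing $\Rcoeff{\phi}{\zeta}$). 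The repair is exactly the mechanism you already use, applied symmetrically as in the paper: any such term carries $\Fbaseopp{z}{\phi}$ or $\Fbaseopp{z}{\psi}$ with an exponent in $\Lminexpset{\bullet}$ in the second or third tensor factor, hence lies in $\Uzn{\ell}\otimes\ZmaxIdHat{\siAA}\otimes\Uzn{\ell}+\Uzn{\ell}^{\otimes 2}\otimes\ZmaxIdHat{\siAA}\subseteq\FullpTmIdealMaxT 3$ by Corollary~\ref{cor:MaxIdealEasy}. With that addendum (and the same remark for the second identity) your argument coincides with the paper's proof.
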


\begin{proof} The first identity is straightforwardly verified  by juxtaposing the explicit expressions for both sides using the expressions in (\ref{eq:TrQuasRmatWord})
and (\ref{eq:CoProdCoeffs}), all of which specialize to $q\mapsto \zeta$. 
\begin{align*}
    (\id\otimes\Delta)(\Rt)&
       =\longsum[18]_{\substack{\chi\in\Lmaxexpset{\bullet}\\ \phi,\psi\in\expsetupmax}}
          \Rcoeff{\chi}{\zeta}^{-1}\CoMultcoeff{\chi}{\phi}{\psi}{\zeta}\cdot 
          \Ebaseopp{z}{\chi}\otimes K^{-\vec \psi}\Fbaseopp{z}{\phi}\otimes \Fbaseopp{z}{\psi}\\
          \rule{0mm}{8mm}
    \taniauto_{12}(\Rt_{13})\cdot \Rt_{12}&
          =\longsum[18]_{\substack{\chi\in\expsetupmax\\ \phi,\psi\in\Lmaxexpset{\bullet}}}
          \Rcoeff{\phi}{\zeta}^{-1}\Rcoeff{\psi}{\zeta}^{-1}\Multcoeff{\chi}{\phi}{\psi}{\zeta}\cdot 
        \Ebaseopp{z}{\chi}\otimes K^{-\vec \psi}\Fbaseopp{z}{\phi}\otimes \Fbaseopp{z}{\psi}
\end{align*}

Recall here that $\Rcoeff{\phi}{\zeta}\neq 0$ iff $\phi\in\Lmaxexpset{\bullet}\,$ and observe that
(\ref{eq:CoeffId}) has a well-defined specialization of $q$ to $\zeta$. We infer from this that
for terms for which all three exponents $\chi$, $\phi$, and $\psi$ are in $\Lmaxexpset{\bullet}$
the respective coefficients indeed coincide. The difference between the
equations above is, thus, a sum of terms, each of which has at least one exponent in the complement 
$\Lminexpset{\bullet}$\,. The monomial element associated to this exponent is then, by Corollary~\ref{cor:MaxIdealEasy}, in $\ZmaxIdHat{\siAA}$ so that the full tensor term is in
$\FullpTmIdealMaxT 3$\,. 

This implies the first congruence above. The second identity follows analogously. An alternative argument can be designed by immediately passing to the quotient $\Uzn{\ell}^{\redsup}$ from (\ref{eq:DefReduced}) and observing that  the pairing from (\ref{eq:LTpairs}) factors into a non-degenerate pairing between
the positive and negative parts of $\Uzn{\ell}^{\redsup}$ upon the specialization $q\mapsto \zeta$. 
The image of
$\Rt$ in $(\Uzn{\ell}^{\redsup})^{\otimes 2}$ is then the  element dual to this form. The duality
relation (\ref{eq:UminUplusPair}) also factors into the form on $\Uzn{\ell}^{\redsup}$ and is equivalent
to the Tanisaki relation on $\Uzn{\ell}^{\redsup}$\,.
\end{proof}

Indeed, combining Proposition~\ref{prop:Tani1Restr} and (\ref{eq:PDeltaRestr}) implies that, for any maximal reduced word
$z\in\wordsetmax$, the triple $\,(\Uzn{\ell}^{\redsup},[\Rtprod{z}],\taniauto)\,$ is a pre-triangular Hopf algebra 
in the sense of (\ref{def:tani2}), where $[\Rtprod{z}]$ denotes the image of $\Rtprod{z}$ in 
$(\Uzn{\ell}^{\redsup})^{\otimes 2}\,$. The remaining question is the word independence of 
$[\Rtprod{z}]$ or $\Rtprod{z}$ modulo $\FullpTmIdealMaxT 2\,$.

At this point, we have the tools to prove this claim in several different ways. One is to reconsider the
pairing from (\ref{eq:LTpairs}), which is independent of a choice of $z\in\wordsetmax$ over
$\Zqqvn{\dpone}$ and, thus, also over $\Zzvn{\dpone}$ as a non-degenerate pairing of the positive and 
negative parts of $\Uzn{\ell}^{\redsup}$\,. Since the element $[\Rtprod{z}]$ is dual to this pairing it is also word independent. A second approach is to adapt the uniqueness proof of \cite[Thm.4.1.2(a)]{lu}
to $\Uzn{\ell}^{\redsup}$ and use it directly to show the analog for Corollary~\ref{cor:RprodWI}
for $\Uzn{\ell}^{\redsup}$\,. We offer here a third option that exhibits further symmetries of
the quasi-$R$-matrices. 

\begin{lem}\label{lem:PzMhoSym}
    Suppose $z\in\wordsetmax$\,. Then \quad $\Rtprod{z^\winvchar} =\Kinvaut\otimes \Kinvaut(\Rtprod{z})\equiv\Rtprod{z} \mod \FullpTmIdealMaxT 2$\,.
\end{lem}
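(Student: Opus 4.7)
The plan is to treat the two parts of the lemma separately. The first equality $\Kinvaut\otimes\Kinvaut(\Rtprod{z}) = \Rtprod{z^\winvchar}$ is an exact identity in $\Uzn{\ell}^{\otimes 2}$, which I would establish termwise from the defining formula (\ref{eq:TrQuasRmatWord}). Since $\Kinvaut$ is $\Zzvn{\ell}$-linear and the coefficients $\Rcoeff{\psi}{\zeta}$ depend only on $\psi$ through Weyl-invariant root-length data, it suffices to check that $\Kinvaut(\Ebaseopp{z}{\psi}) = \Ebaseopp{z^\winvchar}{\psi}$ and the analog for $\Fbaseopp{z}{\psi}$ for each $\psi\in\Lmaxexpset{\bullet}$. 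Specializing Proposition~\ref{prop:mhomonomial} to $t=\longweyl$ (so that $r=\flweyl{\longweyl}=1$ and $\phi=\psi$) gives $\Kinvaut(\Ebase{z}{\psi}) = \Ebase{z^\winvchar}{\psi}$. The opposite-direction variant $\Kinvaut(\Ebaseopp{z}{\psi}) = \Ebaseopp{z^\winvchar}{\psi}$ follows by conjugating with $\Qinvaut$, using (\ref{eq:EFConjOrder}) together with the commutativity of $\Kinvaut$ and $\Qinvaut$; the corresponding $F$-identity then follows by further conjugating with $\Cartaninv$, which commutes with $\Kinvaut$. Termwise substitution into (\ref{eq:TrQuasRmatWord}) yields the claimed exact equality.

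For the congruence $\Kinvaut\otimes\Kinvaut(\Rtprod{z}) \equiv \Rtprod{z} \pmod{\FullpTmIdealMaxT{2}}$, equivalently $[\Rtprod{z^\winvchar}] = [\Rtprod{z}]$ in $(\Uzn{\ell}^{\redsup})^{\otimes 2}$, the strategy is to identify both classes with the canonical element of a single $z$-independent non-degenerate pairing. In the generic setting, the Tanisaki-Lusztig pairing $\langle\cdot,\cdot\rangle_z$ from (\ref{eq:LTpairs}) is word-independent since $\Rprodmax$ is (Corollary~\ref{cor:RprodWI}); its evaluation at $q\mapsto\zeta$ is therefore also $z$-independent, and by Lemma~\ref{lem:Null=Kmax} it descends to a non-degenerate pairing on the finite-rank restricted pieces $\Uzn{\ell}^{\redsup,-}\times \Uzn{\ell}^{\redsup,+}$. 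The image $[\Rtprod{z}]$ is exactly the canonical element of this pairing, written in terms of the PBW basis $\{[\Ebaseopp{z}{\psi}]:\psi\in\Lmaxexpset{\bullet}\}$ and its dual basis $\{\Rcoeff{\psi}{\zeta}^{-1}[\Fbaseopp{z}{\psi}]\}$. Since the canonical element of a non-degenerate pairing does not depend on the chosen dual pair of bases, the images for $z$ and $z^\winvchar$ agree, which is the desired congruence.

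The main obstacle is verifying that the formal-sum definitions truncate consistently and that word-independence of the pairing descends cleanly across the specialization $q\mapsto\zeta$. Both are mild: the summation in $\Rtprod{z}$ is already restricted to $\psi\in\Lmaxexpset{\bullet}$ where $\Rcoeff{\psi}{\zeta}\neq 0$, so there is no singularity at the root of unity, and word-independence of $\langle\cdot,\cdot\rangle_z$ over $\Zzvn{\ell}$ follows from the identity at generic $q$ by tensoring with $\Zzvn{\ell}$ over $\Zqqvn{\dpone}$. Once these observations are in place, the canonical-element argument closes the proof, and as noted in the text this opens the way to full word-independence of $[\Rtprod{z}]$ via the $\Kinvaut$-symmetry just established combined with the Matsumoto-type reductions already used in the proof of Theorem~\ref{thm:Zwordindep}.
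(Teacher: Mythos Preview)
Your proof of the first (exact) equality $\Kinvaut\otimes\Kinvaut(\Rtprod{z})=\Rtprod{z^\winvchar}$ is correct and coincides with what the paper uses implicitly: it is exactly the termwise application of (\ref{eq:mhomonomax}) and its opposite-direction variant (derived via commutation of $\Kinvaut$ with $\Qinvaut$ and $\Cartaninv$), just as you describe.

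For the congruence, your approach is correct but \emph{different} from the paper's proof. The paper actually mentions your pairing/canonical-element argument explicitly as one of three possible routes (see the paragraph immediately preceding Lemma~\ref{lem:PzMhoSym}), but then deliberately chooses a different one. The paper's proof works via inverses: it uses the Tanisaki relations from Proposition~\ref{prop:Tani1Restr}, which hold as genuine equalities in $(\Uzn{\ell}^{\redsup})^{\otimes 2}$, to rerun the antipode construction from (\ref{eq:PmaxInv})--(\ref{eq:PqInvMaw}) in the restricted setting. This shows that both $[\RtprodInv{z}]$ and $[\RtprodInv{z^\winvchar}]$ are two-sided inverses of $[\Rtprod{z}]$, hence equal; an explicit unwinding (dividing by a scalar weight factor and applying $\Kinvaut\otimes\id$) then converts this into the claimed congruence.

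Your argument is arguably more direct and in fact proves more: the canonical-element identification immediately gives $[\Rtprod{z}]=[\Rtprod{z'}]$ for \emph{all} $z,z'\in\wordsetmax$, not just $z'=z^\winvchar$, so it establishes Corollary~7.13 in one step without the subsequent Matsumoto reduction. What the paper's chosen route buys is an additional structural identity---that the $\Kinvaut$-symmetry of $\Rtprod{z}$ is equivalent, via the antipode, to the uniqueness of inverses in the quotient---which the authors describe as ``exhibit[ing] further symmetries of the quasi-$R$-matrices.''
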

\begin{proof} Clearly, the automorphisms $\taniauto$, $S$, and $\Kinvaut$ preserve the maximal ideals and are, hence, well-defined on $\Uzn{\ell}^{\redsup}\,$. The Tanisaki relations from Proposition~\ref{prop:Tani1Restr}, therefore, hold as equalities in the restricted quantum group for the image $\Rtred_z=[\Rtprod{z}]\in (\Uzn{\ell}^{\redsup})^{\otimes 2}$. There, we may now repeat the arguments in
(\ref{eq:PmaxInv}) to show that $\RtredInv_z=[\RtprodInv{z}]$ and $\RtredInv_{z^\winvchar}$ are both 
two-sided inverses of $\Rtred_z$\,, where $\RtprodInv{z}$ denotes the truncated version of the element from
(\ref{eq:PmaxInv}).

Setting ${\mathbf f}(\mu)=(-1)^{\height(\mu)}\zeta^{-\frac 12(\symbrack{\mu}{\mu}-2\symbrack{\weightvecchar}{\mu}}$ for $\mu\in\mathbb Z^{\sroots}$, we may re-express the implied equality $\RtredInv_{z}=\RtredInv_{z^\winvchar}$ 
as the following congruence,
$$
\sum_{\psi\in\Lmaxexpset{\bullet}} {\mathbf f}(\vec\psi)\Rcoeff{\psi}{\zeta}^{-1}\cdot
    \Ebaseopp{z}{\psi}\otimes \Fbaseopp{z^\winvchar}{\psi}
    =
\sum_{\psi\in\Lmaxexpset{\bullet}} {\mathbf f}(\vec\psi)\Rcoeff{\psi}{\zeta}^{-1}\cdot
    \Ebaseopp{z^\winvchar}{\psi}\otimes \Fbaseopp{z}{\psi}
    \quad \mod \FullpTmIdealMaxT 2\,. 
$$
Applying to this the map $x\otimes y\mapsto {\mathbf f}(\mu)^{-1}x\otimes y$ for $\wgrad(x)=\mu$ as well 
as $\Kinvaut\otimes \id$, yields the claimed identity.
\end{proof}

\begin{cor}
    The element $\Rtred=[\Rtprod{z}]\in(\Uzn{\ell}^{\redsup})^{\otimes 2}$ is independent of the choice of $z\in\wordsetmax$\,. 
\end{cor}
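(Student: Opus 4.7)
\emph{Plan.} The approach is to adapt the uniqueness argument of Lusztig [Thm.4.1.2(a)], used in Corollary~\ref{cor:RprodWI} for the generic case, to the restricted quantum group $\Uzn{\ell}^\redsup$.

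First I would observe that the congruence of Corollary~\ref{cor:RtrmaxDeltaIntw} becomes an honest equality $\barDelta(x)\cdot\Rtred_z = \Rtred_z\cdot\Delta(x)$ for all $x\in\Uzn{\ell}^\redsup$, since $\FullpTmIdeal{z}\subseteq\FullpTmIdealMaxT{2}$ vanishes in the quotient $(\Uzn{\ell}^\redsup)^{\otimes 2}$. Next I would verify from the explicit expansion (\ref{eq:TrQuasRmatWord}) that $\Rtred_z$ has the normalized form $1\otimes 1 + \sum_{\nu>0}\Theta_{z,\nu}$ with $\Theta_{z,\nu}$ in the $(\nu,-\nu)$ weight component of $\Uzn{\ell}^{\redsup,+}\otimes\Uzn{\ell}^{\redsup,-}$. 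Lusztig's uniqueness statement then says that any element of this form satisfying the intertwining relation is determined uniquely. Since neither condition depends on the choice of $z$, the equality $\Rtred_z = \Rtred_{z'}$ follows for any $z,z'\in\wordsetmax$.

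The main technical obstacle will be verifying that Lusztig's uniqueness argument, originally formulated in a completion at generic $q$, transfers correctly to the setting of $\Uzn{\ell}^\redsup$. The essential input is that the Lusztig-Tanisaki pairing (\ref{eq:LTpairs}) descends, after specialization $q\mapsto\zeta$, to a pairing on $\Uzn{\ell}^{\redsup,-}\times\Uzn{\ell}^{\redsup,+}$ that is non-degenerate by construction (Lemma~\ref{lem:Null=Kmax}), so that the coefficients of $\Theta_{z,\nu}$ in a PBW basis are recovered by evaluation against this pairing and are, hence, manifestly $z$-independent.

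Should the direct adaptation prove delicate, an alternative route combines Lemma~\ref{lem:PzMhoSym} with Matsumoto's theorem (Corollary~\ref{cor:matsumoto}). Any two elements of $\wordsetmax$ are connected by a chain of braid moves $\longtwoword{ij}\leftrightharpoons\longtwoword{ji}$, so by the recursion (\ref{eq:PTrecurs}) and the fact that $\Tinv$ depends only on the represented Weyl element, it would suffice to establish the required congruence within a rank-2 subalgebra $\Uzrest{}{i,j}$. Using $\longtwoword{ij}^\winvchar=\longtwoword{ji}$ from Lemma~\ref{lm:rk2complw} together with a rank-2 version of Lemma~\ref{lem:PzMhoSym}, the equality would then lift along the embedding into the full quantum group. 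The potential difficulty in this second route is matching the rank-2 restricted ideal to the restriction of $\FullpTmIdealMaxT{2}$ under this embedding.
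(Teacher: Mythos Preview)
Your proposal is correct. The paper explicitly lists three valid approaches just before the corollary: (1) the pairing/duality argument, (2) adapting Lusztig's uniqueness proof, and (3) the $\Kinvaut$-symmetry combined with Matsumoto. Your primary approach blends (1) and (2), while the paper chooses to write out (3), remarking that it ``exhibits further symmetries of the quasi-$R$-matrices.''

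Your alternative route is in fact the paper's actual proof, and you have identified its key ingredients accurately. The paper handles your stated ``potential difficulty'' as follows: Lemma~\ref{lem:PzMhoSym} is applied in the rank-2 subalgebra $\Uzrest{}{i,j}$, where $\longtwoword{ij}$ is maximal, yielding a congruence modulo terms of the form $(\Epw_c\otimes 1)A$ and $(1\otimes\Fpw_c)B$ with $c\leqRB\longtwoword{ij}$. After applying $\Tinv_a^{\otimes 2}$ via the recursion $\Rtprod{z}=\Tinv_{a\cdot\longtwoword{ij}}^{\otimes 2}(\Rtprod{b})\cdot\Tinv_a^{\otimes 2}(\Rtprod{\longtwoword{ij}})\cdot\Rtprod{a}$, these become $(\Epw_{a\cdot c}\otimes 1)A'$ and $(1\otimes\Fpw_{a\cdot c})B'$, which lie in $\FullpTmIdealMaxT{2}$ by word independence of the maximal ideal (Theorem~\ref{thm:MainIdeals}). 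So the ideal-matching issue is resolved by lifting generators along $\Tinv_a$ rather than by comparing restricted ideals directly.
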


\begin{proof}
    By Matsumoto's Theorem, it suffices to show $\Rtprod{z}\equiv\Rtprod{w}\mod\FullpTmIdealMaxT 2$ for maximal reduced words of the form $z=a\cdot \longtwoword{ij}\cdot b$ and $w=a\cdot \longtwoword{ji}\cdot b$.  Following (\ref{eq:PwordProps}) we have 
    $$
    \Rtprod{z}=\Tinv_{a\cdot \longtwoword{ij}}^{\otimes 2}(\Rtprod{b})
      \cdot \Tinv_{a}^{\otimes 2}(\Rtprod{\longtwoword{ij}})
      \cdot\Rtprod{a}\,,
    $$
    and the analogous expression for $\Rtprod{w}$\,. By Lemma~\ref{lem:PzMhoSym} the difference between
    $\Rtprod{\longtwoword{ij}}$ and  
    $\Rtprod{\longtwoword{ji}}=\Kinvaut\otimes \Kinvaut(\Rtprod{\longtwoword{ij}})$ is sum of terms 
    of the form $(\Epw_c\otimes 1)A$ and $(1\otimes \Fpw_c)B$ with $c\leqRB\longtwoword{ij}$\,. Since 
    for these $a\cdot c$ is also reduced, we have $\Tinv_a(\Epw_c)=\Epw_{a\cdot c}$ and 
    $\Tinv_a(\Fpw_c)=\Fpw_{a\cdot c}$\,. It follows from their above decompositions that the difference
    of $\Rtprod{z}$ and $\Rtprod{w}$ is a sum of terms of the form  
    $(\Epw_{a\cdot c}\otimes 1)A'$ and $(1\otimes \Fpw_{a\cdot c})B'\,$, which is, therefore, in  
    $\FullpTmIdealMaxT 2$ by word independence of the ideal.
\end{proof}

The findings of this section together with Corollary~\ref{cor:MaxIdealEasy}
can now be summarized as follows. 

\begin{thm}\label{thm:final}
   For any Lie type and root of unity $\zeta$ of order $\kay$ as in (\ref{eq:kaycond}),
   the triple $\bigl(\Uzn{\ell}^{\redsup},\Rtred,\taniauto\bigr)$ is a pre-triangualar Hopf algebra over $\Zzvn{\ell}$\,, independent of choices of maximal reduced words (or convex orderings)  used in its construction. 
   
   \noindent Moreover, all (anti)automorphisms defined in Section~\ref{subsec:gradings}, including the Artin group action, factor into well-defined (anti)automorphisms on $\Uzn{\ell}^{\redsup}$. 
   The construction is, thus, also independent of chosen directions of multiplications and
   independent of whether the $\Tinv_i$ or $T_i$ are used in the definition of generators. 
\end{thm}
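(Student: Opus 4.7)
The plan is to assemble the results collected in Sections~\ref{subsec:LuszTaniPair}--\ref{subsec:RestQRMats} rather than to produce new computations. Most of the heavy lifting has already been done: the quotient $\Uzn{\ell}^{\redsup}=\Uzn{\ell}/\ZmaxIdHat{\siAA}$ is well-defined as a Hopf algebra by Corollary~\ref{cor:MaxIdealEasy}, the element $\Rtred=[\Rtprod{z}]\in(\Uzn{\ell}^{\redsup})^{\otimes 2}$ has been shown to be independent of the choice of $z\in\wordsetmax$ in the corollary immediately preceding the theorem, and all the key structural relations have been verified modulo appropriate truncation ideals. I would organize the argument in three stages.

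First, I would verify that $(\Uzn{\ell}^{\redsup},\Rtred,\taniauto)$ satisfies Definition~\ref{def:tani2}. Since both the Drinfeld--Tanisaki relations of Proposition~\ref{prop:Tani1Restr} and the coproduct intertwining relation \eqref{eq:PDeltaRestr} (derived from Corollary~\ref{cor:RtrmaxDeltaIntw}) hold modulo $\FullpTmIdealMaxT{3}$ and $\FullpTmIdealMaxT{2}$ respectively, passing to the quotient $\Uzn{\ell}^{\redsup}$ turns these congruences into exact equalities. The Tanisaki automorphism $\taniauto$ from \eqref{eq:tanipsi} visibly preserves $\ZmaxIdHat{\siAA}\otimes \Uzn{\ell}+\Uzn{\ell}\otimes \ZmaxIdHat{\siAA}$ because it only multiplies weight components by group-likes from $\Uz^0$, so it descends to a component-preserving automorphism of $\Uzn{\ell}^{\redsup}\otimes\Uzn{\ell}^{\redsup}$. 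Invertibility of $\Rtred$ follows by reducing \eqref{eq:PqInvMaw} or by reading off the partial inverse formulae \eqref{eq:PwordProps} at the quotient level, since $\pTmIdeal{z}\subset \FullpTmIdealMaxT{2}$.

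Second, for the independence claims, the word/ordering independence of $\Rtred$ is the corollary cited above, and word independence of $\ZmaxIdHat{\siAA}$ itself (hence of $\Uzn{\ell}^{\redsup}$ as an algebra) follows from Theorem~\ref{thm:MainIdeals}. Direction-of-multiplication independence of bases was observed back in Proposition~\ref{prop:PBW2} and Theorem~\ref{thm:mainPBW}. To handle the $\Tinv_i$-versus-$T_i$ ambiguity, I would invoke the conjugation identity $T_s=\Kinvaut\circ \Tinv_{s^{-1}}\circ \Kinvaut$ from \eqref{eq:LTconjinv} together with Lemma~\ref{lem:PzMhoSym}, which identifies $\Kinvaut\otimes\Kinvaut(\Rtprod{z})$ with $\Rtprod{z^\winvchar}$ modulo $\FullpTmIdealMaxT{2}$, and combine this with word independence to conclude that the restricted quasi-$R$-matrix built from Lusztig's $T_i$-convention agrees with $\Rtred$ in $\Uzn{\ell}^{\redsup}$.

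Finally, for the factoring of automorphisms, Corollary~\ref{cor:MaxIdealEasy} already established that $\ZmaxIdHat{\siAA}$ is stable under every (anti)automorphism from Section~\ref{sec:gradingauts}, including the Artin group action generated by the $\Tinv_i$; these descend canonically to the quotient. I would explicitly verify compatibility of the descended involutions $\Cartaninv,\,\Kinvaut,\,\Qinvaut,\,S$ with $\Rtred$ and $\taniauto$ by reducing the identities \eqref{eq:InvKScalComm}, \eqref{eq:SXiMhoRel}, and \eqref{eq:S2Pmax=TaniPmax} to the quotient, again using that the remainder terms lie in $\FullpTmIdealMaxT{2}$. The main obstacle I anticipate is not in any single step but in bookkeeping the multiple truncation ideals and verifying that the congruences in $\FullpTmIdealMaxT{a}$ are compatible under the various structural maps; once this bookkeeping is set up carefully (which the inclusions $\pTmIdeal{z}\subset\FullpTmIdeal{z}\subset\FullpTmIdealSym{z}\subset\FullpTmIdealMaxT{2}$ make routine), the theorem reduces to applying the cited propositions in the quotient.
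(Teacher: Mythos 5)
Your proposal is correct and follows essentially the same route as the paper, which states Theorem~\ref{thm:final} as a summary of exactly the ingredients you cite: Corollary~\ref{cor:MaxIdealEasy} for the Hopf ideal and automorphism stability, Proposition~\ref{prop:Tani1Restr} together with \eqref{eq:PDeltaRestr} for pre-triangularity in the quotient, and Lemma~\ref{lem:PzMhoSym} with the ensuing corollary for word independence of $\Rtred$. Your extra remarks on descending $\taniauto$, invertibility via \eqref{eq:PwordProps}, and the $T_i$-versus-$\Tinv_i$ comparison through $\Kinvaut$ are consistent with the paper's implicit bookkeeping and add nothing that would change the argument.
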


The construction of braided tensor categories follows  along the same steps as described in Section~\ref{subsec:PreTriang}, where $\ExpGrp$ is now allowed to have positive characteristic.

\newpage

\appendix

\section{Proof of Lemma \ref{lem:coprodbase}}\label{sec:proofcoprodbase}
In this section, we prove Lemma \ref{lem:coprodbase}. It suffices to show that equation \eqref{eq:coprodbase}
holds for each choice of $A_{ij}\in \{2,0,-1,-2,-3\}$. The proof is carried out  in a series of computations, with a subsection dedicated to each choice of $A_{ij}$\,. In most cases, we follow the same sequence of steps.

\begin{proof} 
The $A_{ij}=0$ case is obvious because $\Rti$ commutes with $\Delta(E_j)$ and since the action  $\Tinv_i(E_j)=E_j\,$ is trivial. 

In the remaining cases, we make use of the fact that the coefficients $a_s=\dfrac{\zeta_i^{s(s-1)/2}}{[s]!_i}(\zeta_i-\zeta_i^{-1})^s$ of $\Rti=\,\sum_{s=0}^{\elli-1}a_s \cdot E_i^s\otimes F_i^s$ satisfy the 
recursion relation
\begin{equation} 
    a_{s+1}=\dfrac{\zeta_i^{s}(\zeta_i-\zeta_i^{-1})}{[s+1]_i}a_{s}\,.
\end{equation}

\subsection{Case \texorpdfstring{$A_{ij}=2$\,}{Aij=2}} In this case, we write $i=j$.
Using \eqref{eq:defTeasy}, equation \eqref{eq:coprodbase} then reduces to   
\[
-K_i^{-1}F_i\otimes K_i^{-1}-K_i^{-2}\otimes K_i^{-1}F_i\equiv\, \Rtii\cdot(-K_i^{-1}F_i\otimes K-1\otimes K_i^{-1}F_i)\cdot\Rti \qquad\mod \FullpTmIdeal {w_i}\,. 
\]
We will use the commutation relation
 \begin{align*}
    [F_i,E_i^s]=-[s]_i\dfrac{\zeta_i^{s-1}K_i-\zeta_i^{-(s-1)}K_i^{-1}}{\zeta_i-\zeta_i^{-1}}E_i^{s-1}\;,
\end{align*}
which follows from a  straightforward induction. Applying these commutators to each summand 
of the $R$-matrix we find the relations 
\begin{align*}
    (K_i^{-1}F_i\otimes K_i^{-1})\Rti
    &=
    \Rti(K_i^{-1}F_i\otimes K_i^{-1})
    -\sum_{s=0}^{\elli-2}a_s(E_i^s\otimes F_i^s)\left(\left(\zeta_i^{2s}-K_i^{-2}\right)\otimes K_i^{-1}F_i\right)
    \\
    &
    =\Rti(K_i^{-1}F_i\otimes K_i^{-1}+K_i^{-2}\otimes K_i^{-1}F_i)
    -\sum_{s=0}^{\elli-2}a_s\zeta_i^{2s}(E_i^s\otimes F_i^s)\left(1\otimes K_i^{-1}F_i\right)\,,
    \\
    (1\otimes K_i^{-1}F)\Rt_i
    &=\sum_{s=0}^{\elli-1}a_s\zeta_i^{2s}(E_i^s\otimes F_i^s)(1\otimes K_i^{-1}F_i)
    \,.
\end{align*}
Combining terms now yields
\begin{align*}
    (-K_i^{-1}F_i\otimes K_i^{-1}-1\otimes K_i^{-1}F_i)\Rt_i
    &=
    \Rt_i(-K_i^{-1}F_i\otimes K_i^{-1}-K_i^{-2}\otimes K_i^{-1}F_i)
    +
    a_{\elli-1}q^{2\elli}E_i^{\ell-1}K_i^{-2}\otimes F_i^\ell K_i^{-1}\;, 
\end{align*}
which implies the desired equality modulo the ideal $\FullpTmIdeal {w_i}$.

\subsection{Case \texorpdfstring{$A_{ij}=-1$\,}{Aij=-1}} In this case $E_{(ij)}=-E_jE_i+\zeta^{-1}E_iE_j$\,. We use the following relations from \cite[Section 5.3]{lu90b}. 
\begin{align*}
    E_{(ij)}E_i=\zeta E_iE_{(ij)}\,,&&
    [E_{(ij)},F_i^s]=-\zeta^{1-s}[s]_iF_i^{s-1}K_iE_j\,,
\end{align*}
with the latter relation obtained by applying $\Tinv_i\circ \Kinvaut$ to Lusztig's equation (c) with $k'=1$\,. These give the following relations between $\Rti$ and terms in $\Tinv_i^{\otimes 2}\circ \Delta(E_j)$\,. 

\begin{align*}
    (E_{(ij)}\otimes K_iK_j) \Rti
    =&\, \Rti(E_{(ij)}\otimes K_iK_j)\,,
    \\
    (1\otimes E_{(ij)}) \Rti
    =&\, \Rti(1\otimes E_{(ij)})-(\zeta-\zeta^{-1})\sum_{s=0}^{\elli-2}a_s(E_i^s\otimes F_i^s)(E_i\otimes K_iE_j)\,,
\\
    \Rtii \cdot \Tinv_i^{\otimes 2}\circ \Delta(E_j)\Rti
    =&\,
     \Rtii(E_{(ij)}\otimes K_iK_j+1\otimes E_{(ij)})\Rti
    \\=&\,
    E_{(ij)}\otimes K_iK_j+1\otimes E_{(ij)}
    -(\zeta-\zeta^{-1})E_i\otimes K_iE_j
    \\&
    +\Rtii(\zeta-\zeta^{-1})a_{\elli-1}
    (E_i^\elli\otimes F_i^{\elli-1}K_iE_j)
    \\
    \equiv&\,
    E_{(ij)}\otimes K_iK_j+1\otimes E_{(ij)}
    -(\zeta-\zeta^{-1})E_i\otimes K_iE_j
    \qquad\mod \FullpTmIdeal {w_i}\,.
\end{align*}

This expression agrees with the coproduct of $E_{(ij)}$ computed below and, therefore, verifies the claim in this case.
\begin{align*}
\Delta\circ\Tinv_i(E_j)&=\left\lbrace\begin{matrix*}[l]
    -(E_jE_i\otimes K_iK_j+E_j\otimes K_jE_i+E_i\otimes E_jK_i+1\otimes E_jE_i)
    \\
    +\zeta^{-1}(E_iE_j\otimes K_iK_j+E_j\otimes E_iK_j+E_i\otimes K_iE_j+1\otimes E_iE_j)
    \end{matrix*}\right.
    \\&=
    E_{(ij)}\otimes K_iK_j+1\otimes E_{(ij)}-(\zeta-\zeta^{-1})E_i\otimes K_iE_j\,
\end{align*}

\subsection{Case \texorpdfstring{$A_{ij}=-2$\,}{Aij=-2}} The following equations
are obtained from 
$$
    E_{(ij)}
=\zeta^{-2}E_i^{(2)}E_j-\zeta^{-1}E_iE_jE_i+E_jE_i^{(2)}
$$ 
as well as the ones listed in \cite[Section 5.3]{lu90b}.
\begin{align*}
    E_{(ij)}E_i&=\zeta^2E_iE_{(ij)}\,
    \\
    [E_{(ij)},F_i^s]
    &=
    -[s]\zeta^{1-s}F_i^{s-1}K_iE_{(iji)}
    +[s][s-1]\zeta^{-2s+4}F_i^{s-2}K_i^2E_j 
\end{align*}
The latter equation is obtained by applying $\Tinv_i\circ\Kinvaut$ to Lusztig's equation (i). 
We apply these next to compute the commutator of $\Rti$ with $\Tinv^{\otimes 2}\circ \Delta(E_j)$\,.
\begin{align*}
    (E_{(ij)}\otimes K_i^2K_j)\Rti
    =&\Rti(E_{(ij)}\otimes K_i^2K_j)\,,
    \\
    (1\otimes E_{(ij)})\Rti
    =&
    \Rti(1\otimes E_{(ij)})
    -(\zeta-\zeta^{-1})\sum_{s=0}^{\ell-2}a_s(E_i^s\otimes F_i^s)(E_i\otimes K_iE_{(iji)})
    \\&
    +\zeta(\zeta-\zeta^{-1})^2 \sum_{s=0}^{\ell-3}a_s(E_i^s\otimes F_i^s)(E_i^2\otimes K_i^2E_j)\,.
\end{align*}

\noindent
We infer from these relations that $\Rtii\cdot\Tinv_i^{\otimes 2}\circ\Delta(E_j)\Rti$ equals 
\begin{samepage}
\begin{align*}
    &E_{(ij)}\otimes K_i^2K_j+1\otimes E_{(ij)}
    -(\zeta-\zeta^{-1})E_i\otimes K_iE_{(iji)}
    +\zeta(\zeta-\zeta^{-1})^2E_i^2\otimes K_i^2E_j
    \\
    &+\Rtii\left(a_{\ell-1}(\zeta-\zeta^{-1})E_i^{\ell}\otimes F_i^{\ell-1}K_iE_{(iji)}
    -\zeta(\zeta-\zeta^{-1})^2\sum_{s=\ell-2}^{\ell-1}a_sE_i^{s+2}\otimes F_i^sK_i^2E_j\right)
    \\
    \equiv&\,
    E_{(ij)}\otimes K_i^2K_j+1\otimes E_{(ij)}
    -(\zeta-\zeta^{-1})E_i\otimes K_iE_{(iji)}
    +\zeta(\zeta-\zeta^{-1})^2E_i^2\otimes K_i^2E_j
    \qquad \mod\FullpTmIdeal {w_i}\,.
\end{align*}
\end{samepage}

\noindent
Since
\begin{align*}
    \Delta\circ\Tinv_i(E_j)&=
    \left\lbrace\begin{matrix*}[l]
    \zeta^{-2}\begin{pmatrix}
    E_i^{(2)}E_j\otimes K_i^2K_j
    +\zeta E_iE_j\otimes E_iK_iK_j
    +E_j\otimes E_i^{(2)}K_j
    \\+E_i^{(2)}\otimes K_i^2E_j
    +\zeta_iE_i\otimes E_iK_iE_j
    +1\otimes E_i^{(2)}E_j
    \end{pmatrix}
    \\
    \rule[-8mm]{0mm}{18mm}
    -\zeta^{-1}
    \begin{pmatrix}
    E_iE_jE_i\otimes K_i^2K_j
    +E_iE_j\otimes K_iK_jE_i
    +E_i^{2}\otimes K_iE_jK_i
    +E_jE_i\otimes E_iK_iK_j
    \\+E_i\otimes K_iE_jE_i
    +E_j\otimes E_iK_jE_i
    +E_i\otimes E_iE_jK_i
    +1\otimes E_iE_jE_i
    \end{pmatrix}\\
    +\begin{pmatrix}
    E_jE_i^{(2)}\otimes K_jK_i^2
    +\zeta E_jE_i\otimes K_jE_iK_i
    +E_j\otimes K_j E_i^{(2)}
    \\+E_i^{(2)}\otimes E_jK_i^2
    +\zeta E_i\otimes E_jE_iK_i+1\otimes E_jE_i^{(2)}
    \end{pmatrix}
    \end{matrix*}\right.
    \\
    &=E_{(ij)}\otimes K_jK_i^2+1\otimes E_{(ij)}
    -(\zeta-\zeta^{-1})E_i\otimes (K_i(\zeta^{-2}E_iE_j-E_jE_i))
    +\zeta(\zeta-\zeta^{-1})^2E_i^2\otimes K_i^2E_j
    \\&=
    E_{(ij)}\otimes K_i^2K_j+1\otimes E_{(ij)}
    -(\zeta-\zeta^{-1})E_i\otimes K_i E_{(iji)}
    +\zeta(\zeta-\zeta^{-1})^2E_i^2\otimes K_i^2E_j\;,
\end{align*}
this proves the case for $A_{ij}=-2$\,.

\subsection{Case \texorpdfstring{$A_{ij}=-3$\,}{Aij=-3}} In this case,
\begin{align*}
    E_{(ij)}
=\Tinv_i(E_j)
=\zeta^{-3}E_i^{(3)}E_j-\zeta^{-2}E_i^{(2)}E_jE_i+\zeta^{-1}E_iE_jE_i^{(2)}-E_jE_i^{(3)}
=(T_{w_jw_i})^2(E_j)\,.
\end{align*}
Below, we refer to Lusztig's equations (a1) and (a3) in \cite[Section 5.4]{lu90b}.
\begin{align*}
    E_{(ij)}E_i&=\zeta^3E_iE_{(ij)}\,
    \\
    [E_{(ij)},F_i^s]&=-[s]\zeta^{1-s}F_i^{s-1}K_iE_{(iji)}
    +[s]!\zeta^{4-2n}F_i^{(s-2)}K_i^2E_{(ijiji)}-\zeta^{9-3n}[s]!F_i^{(s-3)}K_i^3E_j
\end{align*}

\noindent
These relations yield expressions for conjugation of 
$\Tinv_i^{\otimes 2}\circ\Delta(E_j)
    =
E_{(ij)}\otimes K_i^3K_j+1\otimes E_{(ij)}$ by $\Rti$\, via the following computation.
\begin{align*}
    (E_{(ij)}\otimes K_i^3K_j)\Rti
    =&\,\Rti(E_{(ij)}\otimes K_i^3K_j)\,,
    \\
    (1\otimes E_{(ij)})\Rti
    =&\,
    \Rti(1\otimes E_{(ij)})
    -(\zeta-\zeta^{-1})\sum_{s=0}^{\ell-2}a_s(E_i^s\otimes F_i^s)(E_i\otimes K_iE_{(iji)})
    \\&
    +\zeta(\zeta-\zeta^{-1})^2 \sum_{s=0}^{\ell-3}a_s(E_i^s\otimes F_i^s)(E_i^2\otimes K_i^2E_{(ijiji)})
    \\&
    -\zeta^3(\zeta-\zeta^{-1})^3 \sum_{s=0}^{\ell-4}a_s(E_i^s\otimes F_i^s)(E_i^3\otimes K_i^3E_j)\,,
    \\
    \rule{0mm}{7mm}
    \Rtii\cdot\Tinv_i^{\otimes 2}\circ\Delta(E_j)\Rti
    =&\,
    E_{(ij)}\otimes K_i^3K_j+1\otimes E_{(ij)}
    -(\zeta-\zeta^{-1})E_i\otimes K_iE_{(iji)}
    \\
    &+\zeta(\zeta-\zeta^{-1})^2E_i^2\otimes K_i^2E_{(ijiji)}
    -\zeta^3(\zeta-\zeta^{-1})^3E_i^3\otimes K_i^3E_j
    \\
    &+\Rtii
    \left(
    \begin{matrix}
        (\zeta-\zeta^{-1})a_{\ell-1}E_i^\ell\otimes F_i^{\ell-1} K_iE_{(iji)}
        \\
   \rule[-6mm]{0mm}{14mm}     
   -\zeta(\zeta-\zeta^{-1})^2 \sum_{s=\ell-2}^{\ell-1}a_s(E_i^{s+2}\otimes F_i^s K_i^2E_{(ijiji)})
    \\
    +\zeta^3(\zeta-\zeta^{-1})^3 \sum_{s=\ell-3}^{\ell-1}a_s(E_i^{s+3}\otimes F_i^s K_i^3E_j)
    \end{matrix}
    \right)
    \\
    \equiv&\,
    E_{(ij)}\otimes K_i^3K_j+1\otimes E_{(ij)}
    -(\zeta-\zeta^{-1})E_i\otimes K_iE_{(iji)}
    \\
    &+\zeta(\zeta-\zeta^{-1})^2E_i^2\otimes K_i^2E_{(ijiji)}
    -\zeta^3(\zeta-\zeta^{-1})^3E_i^3\otimes K_i^3E_j
    \qquad \mod\FullpTmIdeal{w_i}\,.
\end{align*}

\noindent
It remains to verify the coproduct identity 
\begin{align*}
    \Delta\circ \Tinv_i(E_j)=\Delta(E_{(ij)})=&E_{(ij)}\otimes K_jK_i^3+1\otimes E_{(ij)}-(\zeta-\zeta^{-1})E_i\otimes K_iE_{(iji)}
    \\
    &+\zeta(\zeta-\zeta^{-1})^2E_i^{2}\otimes K_i^2E_{(ijiji)}-\zeta^3(\zeta-\zeta^{-1})^3E_i^{3}\otimes K_i^3E_j\,.
\end{align*}

\noindent
We begin by expanding the coproduct for the divided powers expressions in $\Delta(E_{(ij)})$\,:
\allowdisplaybreaks
\begingroup
\renewcommand*{\arraystretch}{1.3}
\begin{align*}
    &\,\begin{matrix*}[l]
    \zeta^{-3}\begin{pmatrix}
    E_i^{(3)}E_j\otimes K_i^3K_j
    +\zeta^2E_i^{(2)}E_j\otimes E_iK_i^2K_j
    +\zeta^2E_iE_j\otimes E_i^{(2)}K_iK_j
    +E_j\otimes E_i^{(3)}K_j
    \\+E_i^{(3)}\otimes K_i^3E_j
    +\zeta^2E_i^{(2)}\otimes E_iK_i^2E_j
    +\zeta^2E_i\otimes E_i^{(2)}K_iE_j
    +1\otimes E_i^{(3)}E_j
    \end{pmatrix}
    \\
    -\begin{pmatrix}
    \zeta^{-2}\begin{pmatrix}
    E_i^{(2)}E_j\otimes K_i^2K_j
    +\zeta E_iE_j\otimes E_iK_iK_j
    +E_j\otimes E_i^{(2)}K_j
    \\+E_i^{(2)}\otimes K_i^2E_j
    +\zeta E_i\otimes E_iK_iE_j
    +1\otimes E_i^{(2)}E_j
    \end{pmatrix}\\
    -\dfrac{\zeta^{-1}}{[2]}\begin{pmatrix}
    E_iE_jE_i\otimes K_i^2K_j
    +E_iE_j\otimes K_iK_jE_i
    +E_i^{2}\otimes K_iE_jK_i
    +E_jE_i\otimes E_iK_iK_j
    \\+E_i\otimes K_iE_jE_i
    +E_j\otimes E_iK_jE_i
    +E_i\otimes E_iE_jK_i
    +1\otimes E_iE_jE_i
    \end{pmatrix}
    \\
    +\dfrac{1}{[3]}\begin{pmatrix}
    E_jE_i^{(2)}\otimes K_jK_i^2
    +\zeta E_jE_i\otimes K_jE_iK_i
    +E_j\otimes K_j E_i^{(2)}
    \\+E_i^{(2)}\otimes E_jK_i^2
    +\zeta E_i\otimes E_jE_iK_i+1\otimes E_jE_i^{(2)}
    \end{pmatrix}
    \end{pmatrix}\Delta(E_i)
    \end{matrix*}
    \\[2em]
=&\,E_{(ij)}\otimes K_jK_i^3+1\otimes E_{(ij)}
    \\
    &\begin{matrix*}[l]
    +\zeta^{-3}\begin{pmatrix}
    \zeta^2E_i^{(2)}E_j\otimes E_iK_i^2K_j
    +\zeta^2E_iE_j\otimes E_i^{(2)}K_iK_j
    +E_j\otimes E_i^{(3)}K_j
    \\+E_i^{(3)}\otimes K_i^3E_j
    +\zeta^2E_i^{(2)}\otimes E_iK_i^2E_j
    +\zeta^2E_i\otimes E_i^{(2)}K_iE_j
    \end{pmatrix}\\
    +
    \begin{pmatrix}
    -\zeta^{-2}E_i^{(2)}E_j\otimes K_i^2K_jE_i
    -\zeta^{-2}E_i\otimes E_i^{(2)}E_jK_i
    +\dfrac{\zeta^{-1}}{[2]}E_iE_jE_i\otimes K_i^2K_jE_i
    \\+\dfrac{\zeta^{-1}}{[2]}E_i\otimes E_iE_jE_iK_i
    -\dfrac{1}{[3]}E_jE_i^{(2)}\otimes K_jK_i^2E_i
    -\dfrac{1}{[3]}E_i\otimes E_jE_i^{(2)}K_i
    \end{pmatrix}
    \\
    -\begin{pmatrix}
    \zeta^{-2}\begin{pmatrix}
    \zeta E_iE_j\otimes E_iK_iK_j
    +E_j\otimes E_i^{(2)}K_j
    \\+E_i^{(2)}\otimes K_i^2E_j
    +\zeta E_i\otimes E_iK_iE_j
    \end{pmatrix}\\
    -\dfrac{\zeta^{-1}}{[2]}\begin{pmatrix}
    E_iE_j\otimes K_iK_jE_i
    +E_i^{2}\otimes K_iE_jK_i
    +E_jE_i\otimes E_iK_iK_j
    \\+E_i\otimes K_iE_jE_i
    +E_j\otimes E_iK_jE_i
    +E_i\otimes E_iE_jK_i
    \end{pmatrix}\\
    +\dfrac{1}{[3]}\begin{pmatrix}
    \zeta E_jE_i\otimes K_jE_iK_i
    +E_j\otimes K_j E_i^{(2)}
    \\+E_i^{(2)}\otimes E_jK_i^2
    +\zeta E_i\otimes E_jE_iK_i
    \end{pmatrix}
    \end{pmatrix}\Delta(E_i)
    \end{matrix*}
    \\[2em]
=&\,E_{(ij)}\otimes K_jK_i^3+1\otimes E_{(ij)}
    \\
    &\begin{matrix*}[l]
    +\zeta^{-3}\begin{pmatrix}
    \zeta E_i^{(2)}E_j\otimes K_i^2K_jE_i
    +\zeta^4E_iE_j\otimes K_iK_jE_i^{(2)}
    +\zeta^9E_j\otimes K_jE_i^{(3)}
    \\+E_i^{(3)}\otimes K_i^3E_j
    +\zeta^{-2}E_i^{(2)}\otimes K_i^2E_iE_j
    +\zeta^{-2}E_i\otimes K_iE_i^{(2)}E_j
    \end{pmatrix}\\
    +
    \begin{pmatrix}
    -\zeta^{-2}E_i^{(2)}E_j\otimes K_i^2K_jE_i
    -\zeta^{-3}E_i\otimes K_iE_i^{(2)}E_j
    +\dfrac{\zeta^{-1}}{[2]}E_iE_jE_i\otimes K_i^2K_jE_i
    \\+\dfrac{\zeta^{-2}}{[2]}E_i\otimes K_iE_iE_jE_i
    -\dfrac{1}{[3]}E_jE_i^{(2)}\otimes K_jK_i^2E_i
    -\dfrac{\zeta^{-1}}{[3]}E_i\otimes K_iE_jE_i^{(2)}
    \end{pmatrix}
    \\
    -\begin{pmatrix}
    \dfrac{\zeta}{[2]}E_iE_j\otimes K_iK_jE_i
    +\dfrac{\zeta^6}{[3]}E_j\otimes K_iE_i^{(2)}
    +\dfrac{\zeta^6-\zeta^4-\zeta^2+\zeta^{-2}+\zeta^{-4}}{[3]}E_i^{(2)}\otimes K_i^2E_j
    \\+
    \dfrac{\zeta^{-4}+\zeta^{-2}-1}{[2]}E_i\otimes K_iE_iE_j-\dfrac{\zeta^2}{[2][3]}E_jE_i\otimes K_iK_jE_i+\dfrac{\zeta^3-\zeta^{-1}-\zeta^{-3}}{[2][3]}E_i\otimes K_iE_jE_i
    \end{pmatrix}\Delta(E_i)
    \end{matrix*}
    \\[2em]
=&\,E_{(ij)}\otimes K_jK_i^3+1\otimes E_{(ij)}-\zeta^{-4}(\zeta-\zeta^{-1})E_i\otimes K_iE_i^{(2)}E_j
    \\
&\begin{matrix*}[l]
    +\zeta^{-3}\begin{pmatrix}
    \zeta^4E_iE_j\otimes K_iK_jE_i^{(2)}
    +\zeta^9E_j\otimes K_jE_i^{(3)}
    \\+E_i^{(3)}\otimes K_i^3E_j
    +\zeta^{-2}E_i^{(2)}\otimes K_i^2E_iE_j
    \end{pmatrix}
    +
    \begin{pmatrix}
    \dfrac{\zeta^{-1}}{[2]}E_iE_jE_i\otimes K_i^2K_jE_i
    +\dfrac{\zeta^{-2}}{[2]}E_i\otimes K_iE_iE_jE_i\\
    -\dfrac{1}{[3]}E_jE_i^{(2)}\otimes K_jK_i^2E_i
    -\dfrac{\zeta^{-1}}{[3]}E_i\otimes K_iE_jE_i^{(2)}
    \end{pmatrix}
    \\
    -\begin{pmatrix}
    \dfrac{\zeta^{-1}}{[2]}E_iE_jE_i\otimes K_i^2K_jE_i
    +\dfrac{\zeta}{[2]}E_iE_j\otimes K_iK_jE_i^2
    +\dfrac{\zeta^6}{[2][3]}E_j\otimes K_iE_i^{3}\\
    +\dfrac{\zeta^9-\zeta^7-\zeta^5+\zeta+\zeta^{-1}}{[2][3]}E_i^{3}\otimes K_i^3E_j
    +\zeta(\zeta-\zeta^{-1})E_i^{2}\otimes K_i^2E_jE_i
    +
    \dfrac{\zeta^{-3}+\zeta^{-1}-\zeta}{[2]}E_i^2\otimes K_i^2E_iE_j\\
    +\dfrac{\zeta^{-4}+\zeta^{-2}-1}{[2]}E_i\otimes K_iE_iE_jE_i
    -\dfrac{1}{[2][3]}E_jE_i^2\otimes K_i^2K_jE_i
    +\dfrac{\zeta^3-\zeta^{-1}-\zeta^{-3}}{[2][3]}E_i\otimes K_iE_jE_i^2
    \end{pmatrix}
    \end{matrix*}
\end{align*}
\endgroup

\noindent
Further collapsing these expressions, we obtain
\begin{align*}  
    \Delta(E_{(ij)})=&\,E_{(ij)}\otimes K_jK_i^3+1\otimes E_{(ij)}-\zeta^{-4}(\zeta-\zeta^{-1})E_i\otimes K_iE_i^{(2)}E_j
    -(\zeta-\zeta^{-1})E_i\otimes K_iE_jE_i^{(2)}
    \\
    &+\zeta^{-2}(\zeta-\zeta^{-1})E_i\otimes K_iE_iE_jE_i
    -\zeta(\zeta-\zeta^{-1})^2E_i^{2}\otimes K_i^2E_jE_i
    \\&
    -\zeta^3(\zeta-\zeta^{-1})^3E_i^{3}\otimes K_i^3E_j
    -\zeta^{-2}(\zeta-\zeta^{-1})^2E_i^2\otimes K_i^2E_iE_j
    \\
    \rule{0mm}{7mm}
    =&\,E_{(ij)}\otimes K_jK_i^3+1\otimes E_{(ij)}-(\zeta-\zeta^{-1})E_i\otimes K_i(\zeta^{-4}E_i^{(2)}E_j-\zeta^{-2}E_iE_jE_i+E_jE_i^{(2)})
    \\
    &+\zeta(\zeta-\zeta^{-1})^2E_i^{2}\otimes K_i^2(\zeta^{-3}E_iE_j-E_jE_i)-\zeta^3(\zeta-\zeta^{-1})^3E_i^{3}\otimes K_i^3E_j
    \\
    \rule{0mm}{7mm}
    =&\,E_{(ij)}\otimes K_jK_i^3+1\otimes E_{(ij)}-(\zeta-\zeta^{-1})E_i\otimes K_iE_{(iji)}
    \\
    &+\zeta(\zeta-\zeta^{-1})^2E_i^{2}\otimes K_i^2E_{(ijiji)}-\zeta^3(\zeta-\zeta^{-1})^3E_i^{3}\otimes K_i^3E_j\,.
\end{align*}

\noindent
This now proves $\Delta\circ\Tinv_i(E_j)\,\equiv\, \Rtii\cdot\Tinv^{\otimes 2}_i\circ\Delta(E_j)\cdot\Rti \quad\mod \FullpTmIdeal {w_i}$\,.
\end{proof}

\medskip

\section{Proofs and Computations for Section~\ref{subsec:ProofCoprodB2}}\label{app:powerformula} 
We supply here the computational proofs of 
Proposition~\ref{prop:PowerFormula} and Lemma~\ref{lem:SumcIdentity}
used in Sections~\ref{subsec:Coalg-ZB2} and \ref{subsec:ProofCoprodB2}. 

\subsection{Proof of Proposition~\ref{prop:PowerFormula}}
We begin with a variation of the recursive identity used to define the coefficients $c_{k,t}$ in \eqref{eq:cRecursion} that applies more directly to the desired proof. To this end we introduce
notation for the coefficients actually appearing in the formula of Proposition~\ref{prop:PowerFormula}, namely,
\begin{equation}\label{eq:def-a-stk}
    a^n_{s,t,k}=q^{-(s+t+k)(n-s-t-k)-(s-t)(2t+k)}\qbin{n}{s+t+k}{}\qbin{s+t+k}{s-t}{}c_{k,t}\,,
\end{equation}
for $s\geq t\geq 0$\,, $k\geq 0$\,, and $n\geq s+t+k\,$. We further have $a^0_{0,0,0}=1$ and use the convention that $a^n_{s,t,k}=0$ whenever any one of the index bounds is exceeded. That is, whenever
 \begin{equation}
     s,t,k<0\,,\qquad s>n\,,\qquad t>\min{(s,n-s)}\,,\quad\mbox{or} \quad k>n-s-t\,.
 \end{equation}

 \begin{lem}\label{lm:rec-a-stk}
 With definitions and conventions as above, we have the  recursion 
    \[a^{n+1}_{s,t,k}=
    q^{-2(n+1-s+t)}a^n_{s-1,t,k}
    +
    q^{-3(n+1-s-t-k)}[n+2-s-t-k]a^n_{s-1,t-1,k}
    +
    q^{-2(n+1-s-t-k)}a^n_{s,t,k-1}
    +a^{n}_{s,t,k}\,.
    \]
\end{lem}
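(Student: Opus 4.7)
The plan is to verify the recursion by a direct algebraic manipulation using two ingredients: the standard $q$-binomial recursions from \eqref{eq:qbin-recurs} and the recursion for $c_{k,t}$ from \eqref{eq:ckt-special}. Since every summand on each side of the claimed identity is an explicit product of a $q$-power, two $q$-binomial coefficients, and a single $c$-coefficient, the whole argument reduces to careful bookkeeping of $q$-exponents after expanding these quantities in the appropriate order.

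First I would apply the recursion $\qbin{n+1}{s+t+k}{}=q^{s+t+k}\qbin{n}{s+t+k}{}+q^{-(n+1-s-t-k)}\qbin{n}{s+t+k-1}{}$ to the outermost $q$-binomial of $a^{n+1}_{s,t,k}$. A short computation of $q$-exponents shows that the piece carrying $\qbin{n}{s+t+k}{}$ reproduces exactly $a^n_{s,t,k}$, accounting for the last summand on the right-hand side of the lemma. For the remaining piece I would expand the inner binomial using $\qbin{s+t+k}{s-t}{}=q^{s-t}\qbin{s+t+k-1}{s-t}{}+q^{-(2t+k)}\qbin{s+t+k-1}{s-t-1}{}$. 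The resulting $\qbin{s+t+k-1}{s-t-1}{}$-piece already has the binomial shape of $a^n_{s-1,t,k}$, and collecting $q$-exponents produces the coefficient $q^{-2(n+1-s+t)}$. For the $\qbin{s+t+k-1}{s-t}{}c_{k,t}$-piece I would then substitute $c_{k,t}=c_{k-1,t}+q^{-(2t+k-2)}[2t+k-1]\,c_{k,t-1}$: the $c_{k-1,t}$ summand has exactly the binomial shape of $a^n_{s,t,k-1}$ with the matching coefficient $q^{-2(n+1-s-t-k)}$, while the $c_{k,t-1}$ summand appears in the form $[2t+k-1]\qbin{n}{s+t+k-1}{}\qbin{s+t+k-1}{s-t}{}\,c_{k,t-1}$, which I would rewrite using the two elementary identities $[n+2-s-t-k]\qbin{n}{s+t+k-2}{}=[s+t+k-1]\qbin{n}{s+t+k-1}{}$ and $[s+t+k-1]\qbin{s+t+k-2}{s-t}{}=[2t+k-1]\qbin{s+t+k-1}{s-t}{}$ to match the binomial shape of $a^n_{s-1,t-1,k}$ and produce the coefficient $q^{-3(n+1-s-t-k)}[n+2-s-t-k]$.

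The hard part will be notational rather than conceptual: at each step the correct form of the $q$-binomial recursion (the choice of sign in the $q^{\pm m}$ and $q^{\mp(n+1-m)}$ factors) must be selected so that the common prefactor $q^{-(s+t+k)(n+1-s-t-k)-(s-t)(2t+k)}$ can be factored out cleanly from all four resulting summands, leaving only the specific shift contributions that produce the four coefficients asserted by the lemma. Once the right choices are made, the four summands from the two binomial expansions combined with the $c$-recursion match the right-hand side term by term, and the identity follows. Boundary cases in which an index drops below zero or exceeds its allowed range are handled automatically by the convention $a^n_{s,t,k}=0$ outside the prescribed region.
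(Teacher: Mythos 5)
Your proposal follows essentially the same route as the paper's proof: expand $\qbin{n+1}{s+t+k}{}$ via \eqref{eq:qbin-recurs} to peel off $a^n_{s,t,k}$, then expand $\qbin{s+t+k}{s-t}{}$ to produce the $a^n_{s-1,t,k}$ term, then split $c_{k,t}$ via \eqref{eq:ckt-special} to get the $a^n_{s,t,k-1}$ term, and finally rewrite the $c_{k,t-1}$ piece with the factorial identity $[2t+k-1]\qbin{n}{s+t+k-1}{}\qbin{s+t+k-1}{s-t}{}=[n+2-s-t-k]\qbin{n}{s+t+k-2}{}\qbin{s+t+k-2}{s-t}{}$ (which the paper applies in one step, you in two), with the same exponent bookkeeping. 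The outline and all key identities are correct, so this is the paper's argument in sketch form.
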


\begin{proof} The following straightforward verification makes repeated use of the 
$q$-binomial formula from \eqref{eq:qbin-recurs}. To shorten some formulae, we use $u=s+t+k$\,.
\begin{align*}
        a^{n+1}_{s,t,k}=&~
        q^{-u(n+1-u)-(s-t)(2t+k)} \qbin{n+1}{u}{}\qbin{u}{s-t}{}c_{k,t}
        \\
        =&~
        q^{-u(n+1-u)-(s-t)(2t+k)}\left(
        q^u\qbin{n}{u}{}
        +
        q^{-(n+1-u)}\qbin{n}{u-1}{}
        \right)\qbin{u}{s-t}{}c_{k,t}
        \\
        =&~
        a^n_{s,t,k}+
        q^{-(u+1)(n+1-u)-(s-t)(2t+k)}\qbin{n}{u-1}{}
        \left(q^{s-t}\qbin{u-1}{s-t}{}+q^{-(2t+k)}\qbin{u-1}{s-1-t}{}\right)c_{k,t}
        \\
        =&~
        a^n_{s,t,k}+q^{-2(n+1-s+t)}a^n_{s-1,t,k}
        \\
        &+
        q^{-(u+1)(n+1-u)-(s-t)(2t+k-1)}\qbin{n}{u-1}{}\qbin{u-1}{s-t}{}(c_{k-1,t}+q^{-(2t+k-2)}[2t+k-1]_{}c_{k,t-1})
        \\
        =&~
        a^n_{s,t,k}+q^{-2(n+1-s+t)}a^n_{s-1,t,k}
        +q^{-2(n+1-u)}a^n_{s,t,k-1}
        \\
        &+
        q^{-(u+1)(n+1-u)-(s-t+1)(2t+k-1)+1}[n+2-u]_{}\qbin{n}{u-2}{}\qbin{u-2}{s-t}{}c_{k,t-1}
        \\
        =&
        a^n_{s,t,k}+q^{-2(n+1-s+t)}a^n_{s-1,t,k}
        +q^{-2(n+1-u)}a^n_{s,t,k-1}
        +q^{-3(n+1-u)}[n+2-u]_{}a^n_{s-1,t-1,k}\,.
    \end{align*}
\end{proof}

Observe that we may also consider the $a^n_{s,t,k}$ to be defined via the
recursion in Lemma~\ref{lm:rec-a-stk} and the mentioned boundary conditions. 
The following identity is proved in  \cite[Lemma 1.6]{Lu90a} for the special case $v=1$. 

\begin{lem}\label{lem:uvComm}
    Suppose $A,B,D$ satisfy
    \begin{align*}
        DA=v^2AD+B, && BA=q^{-2}v^2AB, \qquad \mbox{and}&& DB=q^{-2}v^2BD\,.
    \end{align*}
    Then \ $D^mA=v^{2m}AD^m+(q^{-1}v^2)^{m-1}[m]BD^{m-1}$ \ and
    \[
        D^mA^n=\sum_{i=0}^{\min(m,n)}v^{2mn-i(i+1)}q^{-i(n+m-2i)-i(i-1)/2}\qbin{n}{i}{}\qbin{m}{i}{}[i]! A^{n-i}B^iD^{m-i}\,,
    \]
    where quantum numbers and coefficients are with respect to $q\,$. 
\end{lem}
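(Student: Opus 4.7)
The plan is to establish both identities by induction, first proving the simpler $n=1$ formula and then bootstrapping to the general case. For the first identity $D^mA = v^{2m}AD^m + (q^{-1}v^2)^{m-1}[m]BD^{m-1}$, I would proceed by induction on $m$, with the base case $m=1$ being exactly the given commutation relation $DA = v^2AD + B$. For the inductive step, I compute
\[
D^{m+1}A = D \cdot D^m A = v^{2m}(DA)D^m + (q^{-1}v^2)^{m-1}[m](DB)D^{m-1},
\]
then apply $DA = v^2 AD + B$ and $DB = q^{-2}v^2 BD$ to get a combination of $AD^{m+1}$ and $BD^m$ terms. Collecting coefficients and applying the quantum number recursion $[m+1] = q^{-m}[m] + q^{m} = q^{m}[m] + q^{-m}$ (in the relevant symmetrized form) should yield the expected coefficient $(q^{-1}v^2)^m[m+1]$.

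For the main formula, I would proceed by induction on $n$ with $m$ fixed (or equivalently on $n+m$). The base cases $n=0$ and $n=1$ reduce to $D^m$ and the first identity respectively. For the inductive step, I write $D^mA^{n+1} = (D^mA)A^n$ and use the first identity to get
\[
D^mA^{n+1} = v^{2m}A \cdot D^mA^n + (q^{-1}v^2)^{m-1}[m] B \cdot D^{m-1}A^n,
\]
then substitute the inductive hypothesis into both $D^mA^n$ and $D^{m-1}A^n$. The commutation relation $BA^k = (q^{-2}v^2)^k A^k B$, derived by iterating $BA = q^{-2}v^2 AB$, is needed to push the leading $B$ in the second summand past the $A^{n-i}$ factors so that the two sums can be compared term-by-term against the proposed closed form for $D^mA^{n+1}$.

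The main obstacle will be the combinatorial bookkeeping: matching coefficients on each $A^{n+1-i}B^iD^{m-i}$ monomial requires a $q$-binomial identity of Pascal type, essentially a rescaled version of
\[
\qbin{n+1}{i}{} = q^i\qbin{n}{i}{} + q^{-(n+1-i)}\qbin{n}{i-1}{},
\]
combined with the quantum number identity $[m-i+1][i] + q^{\,?}[i] = q^{\,?}[i+1]$ arising from the $i \to i$ and $i \to i-1$ index shifts in the two summands. Careful tracking of the $v^{2mn - i(i+1)}$ and $q^{-i(n+m-2i) - i(i-1)/2}$ exponents is essential; it is easiest to factor out the expected coefficient in the answer and verify the residual identity is a genuine $q$-binomial relation.

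An alternative, which may be cleaner, is to define $\tilde a^{m,n}_i = v^{2mn - i(i+1)}q^{-i(n+m-2i) - i(i-1)/2}\qbinsmall{n}{i}\qbinsmall{m}{i}[i]!$ and show directly that these coefficients satisfy the recursion dictated by $D^{m+1}A^n = D(D^mA^n)$, using the first identity at each step. This mirrors the method of Lemma~\ref{lm:rec-a-stk} for the $a^n_{s,t,k}$ of Proposition~\ref{prop:PowerFormula} and avoids doing two separate inductions. Either route reduces to standard $q$-identities, so no genuinely new tool is needed — the difficulty is entirely in managing the $v$- and $q$-exponent arithmetic.
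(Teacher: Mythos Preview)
Your proposal is correct and matches the paper's proof essentially step for step: the paper proves the first identity by induction on $m$ exactly as you describe (computing $D^{m+1}A = D\cdot D^mA$ and collecting the $BD^m$ coefficients via $1 + q^{-(m+1)}[m] = q^{-m}[m+1]$), and then derives the second formula by induction on $n$ using the first identity and repeated applications of the $q$-binomial recursion \eqref{eq:qbin-recurs}. Your alternative route via $D^{m+1}A^n = D(D^mA^n)$ would also work, but the paper takes the same $n$-induction you outlined first.
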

\begin{proof}
    Each of the two claims of the lemma is proved by induction. The base case of the former is the assumption $DA=v^2AD+B$. For the induction step, we compute
    \begin{align*}
        D^{m+1}A&=D(v^{2m}AD^m+(q^{-1}v^2)^{m-1}[m]BD^{m-1})
        \\
    &=v^{2(m+1)}AD^{m+1}+v^{2m}BD^m+(q^{-2}v^2)(q^{-1}v^2)^{m-1}[m]BD^m\\
    &=v^{2(m+1)}AD^{m+1}+(q^{-1}v^2)^m[m+1]BD^m\,,
    \end{align*}
    where the last equality uses $1+q^{-(m+1)}[m]=q^{-m}[m+1]\,$. The second statement 
    is now derived by induction in $n$ using the first identity as the base case. 
    It amounts to repeated use of \eqref{eq:qbin-recurs} and resummations. 
\end{proof}

Note that the $q$-binomials disappear in the formula of Lemma~\ref{lem:uvComm} if it is rewritten
in terms of divided power generators $A^{(n)}=\frac 1{[n]!}A^n$. That is, we find instead 
    \[
    D^{(m)}A^{(n)} = \sum_{i=0}^{\min(m,n)}v^{2mn-i(i+1)}q^{i(n+m-2i)+i(i-1)/2}A^{(n-i)}B^{(i)}D^{(m-i)}\,.
    \]
    
With these preparations we proceed to the main goal of this section. 

\begin{proof}[Proof of Proposition~\ref{prop:PowerFormula}] Assume $A$, $B$, $C$, and $D$ fulfill the commutation relations stated in the proposition. The assertion is trivial for $n=0$ and easily worked
out for $n=1\,$. Assume, thus, that the formula hold for $(A+C+D)^n$, with coefficients $a^n_{s,t,k}$ as in \eqref{eq:def-a-stk}. We compute 
    \begin{align*}
        (A+C+D)^{n+1}
        =&~
        \sum_{s=0}^n\sum_{t=0}^{\min(s,n-s)}\sum_{k=0}^{n-s-t}a^n_{s,t,k}A^{s-t}B^tC^kD^{n-s-t-k}(A+C+D)
        \\ \rule{0mm}{11mm}=&~
        \sum_{s=0}^n\sum_{t=0}^{\min(s,n-s)}\sum_{k=0}^{n-s-t} 
        a^n_{s,t,k}q^{-2(n-s+t)} A^{s-t+1}B^tC^kD^{n-s-t-k}
        \\&\qquad +
        a^n_{s,t,k}q^{-3(n-1-s-t-k)}[n-s-t-k] A^{s-t} B^{t+1}C^kD^{n-s-t-k-1}
        \\&\qquad +
        a^n_{s,t,k}q^{-2(n-s-t-k)}A^{s-t}B^tC^{k+1}D^{n-s-t-k}
        \\&\qquad +
        a^n_{s,t,k}
        A^{s-t}B^tC^kD^{n+1-s-t-k}
        \\ \rule{0mm}{11mm}=&
        \sum_{s=1}^{n+1}\sum_{t=0}^{\min(s-1,n+1-s)}\sum_{k=0}^{n+1-s-t} 
        a^n_{s-1,t,k}q^{-2(n+1-s+t)} A^{s-t}B^tC^kD^{n+1-s-t-k}
        \\&+
        \sum_{s=1}^{n+1}\sum_{t=1}^{\min(s,n+1-s)}\sum_{k=0}^{n+1-s-t} 
        a^n_{s-1,t-1,k}q^{-3(n+1-s-t-k)}[n+2-s-t-k] A^{s-t} B^{t}C^kD^{n+1-s-t-k}
        \\&
        +
        \sum_{s=0}^n\sum_{t=0}^{\min(s,n-s)}\sum_{k=1}^{n+1-s-t}
        a^n_{s,t,k-1}q^{-2(n+1-s-t-k)}A^{s-t}B^tC^{k}D^{n+1-s-t-k}
        \\&
        +
        \sum_{s=0}^n\sum_{t=0}^{\min(s,n-s)}\sum_{k=0}^{n-s-t}a^n_{s,t,k}
        A^{s-t}B^tC^kD^{n+1-s-t-k}
        \\ \rule{0mm}{11mm}=&
        \sum_{s=0}^{n+1}\sum_{t=0}^{\min(s,n+1-s)}\sum_{k=0}^{n+1-s-t}a^{n+1}_{s,t,k}A^{s-t}B^tC^kD^{n+1-s-t-k}\,.
    \end{align*}
In the first step, we employ the original commutation relations as well as Lemma~\ref{lem:uvComm}.
In the second step, we shift summation indices and make use of the boundary conditions on 
the $a^n_{s,t,k}\,$. The last equation then results from Lemma~\ref{lm:rec-a-stk}, completing the induction step.
\end{proof}

\subsection{A Summation Identity for the \texorpdfstring{$c'_{k,p}$\,}{c'kp}}
As before, let $c_{k,p}$ be the coefficients defined recursively in \eqref{eq:cRecursion} 
and $c'_{k,p}=\mathsf{sq}(c_{k,p})$ the element in $\Zqq$ with $q$ replaced by $q^2$ as
explained in Section~\ref{subsec:ProofCoprodB2}.

\begin{lem}\label{lem:SumcIdentity}
    For all $n\in\bbn_0$\,, the following equality holds in $\Zqqn{3}\,$,
    \[
    \sum_{p=0}^{\floor{n/2}}\left(-\frac{q-q^{-1}}{q^2[2]}\right)^p c'_{n-2p,p}
    =
    q^{-\binom{n}{2}}[2]^{-n}\qfacrelB{n}{2}\,.
    \] 
\end{lem}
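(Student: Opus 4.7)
My plan is to prove the identity by induction on $n$, showing that both sides satisfy the same two-term linear recursion with matching initial conditions. Denote the left-hand side by $L_n$ and the right-hand side by $R_n$, and set $z = \tfrac{q-q^{-1}}{q^2[2]}$. The base cases $n=0,1$ are immediate from $L_0 = c'_{0,0} = 1 = R_0$ and $L_1 = c'_{1,0} = 1 = [2]^{-1}[2] = R_1$.

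The main step is to derive the recursion
\[ L_n \;=\; L_{n-1} \;+\; \alpha_n\, L_{n-2}\,, \qquad \alpha_n \;=\; -z\,q^{-2(n-2)}\,\frac{[2(n-1)]}{[2]} \;=\; -\frac{1 - q^{-4(n-1)}}{[2]^2}\,, \]
for $n \ge 2$. For this, I apply the $q \mapsto q^2$ image of the second recursion in \eqref{eq:ckt-special} to each coefficient $c'_{n-2p, p}$ appearing in $L_n$. With $k = n-2p$ and $t = p$ we have $2t+k-1 = n-1$ and $2t+k-2 = n-2$, so the shift $k \mapsto k-1$ reassembles into $L_{n-1}$, while the shift $t \mapsto t-1$ yields, after relabelling $p \mapsto p+1$, the term $\alpha_n L_{n-2}$.

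The one subtlety in this step is the boundary contribution $p = n/2$ for even $n$, at which point $k = 0$ and the recursion for $c'_{k,t}$ is not directly available. My plan is to verify, by comparing with the explicit closed form $c'_{0,t} = q^{-2t(t-1)}\prod_{s=1}^{t}\bigl([4s-2]/[2]\bigr)$ coming from the second identity in \eqref{eq:ckt-special}, that the recursion extends consistently to $k = 0$ under the convention $c'_{-1,t} := 0$; this is the only point in the argument requiring any care, and it removes the need for a separate case analysis.

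It remains to check that $R_n = q^{-\binom{n}{2}}[2]^{-n}\prod_{s=1}^n (q^s + q^{-s})$ satisfies the same recursion. From the telescoping ratios $R_n/R_{n-1} = q^{-(n-1)}[2]^{-1}(q^n+q^{-n})$ and $R_{n-1}/R_{n-2} = q^{-(n-2)}[2]^{-1}(q^{n-1}+q^{-(n-1)})$, the desired equality $R_n/R_{n-1} - 1 = \alpha_n\, R_{n-2}/R_{n-1}$ follows from the factorisation $1 - q^{-4(n-1)} = (1-q^{-2(n-1)})\,q^{-(n-1)}(q^{n-1}+q^{-(n-1)})$. Beyond the boundary analysis above, every remaining manipulation is a routine rearrangement of quantum numbers.
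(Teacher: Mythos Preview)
Your proposal is correct and follows essentially the same approach as the paper: both arguments show that the two sides satisfy the same two-term linear recursion (obtained from \eqref{eq:ckt-special} applied to each $c'_{n-2p,p}$) with matching initial values at $n=0,1$. Your treatment of the boundary term at $k=0$ via the convention $c'_{-1,t}=0$ is a cleaner formulation of what the paper handles through its parity remark on even versus odd $n$.
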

\begin{proof}
    Recall that $\qfacrelB{n}{2}=\prod_{i=1}^n(q^i+q^{-i})$ under the specialization of $\edgenum=2$ in \eqref{eq:factorialratios}. We prove that both expressions in the proposed equality are sequences in $n$ with the same initial values and satisfy the same recursion relation. Denote the sides of the desired equality by
    \begin{align*}
        A_n &=  \sum_{p=0}^{\floor{n/2}} \phi_p c'_{n-2p,p}\,, &&\mbox{where $\phi_p=\left(-\frac{q-q^{-1}}{q^2[2]}\right)^p$, and }\\
        B_n &=  q^{-\binom{n}{ 2}} \psi_n\,, &&\mbox{where  $\psi_n=[2]^{-n}\qfacrelB{n}{2}$}\,.
    \end{align*}
    Observe that $A_0=B_0=A_1=B_1=1$. The recursion for $A_n$ is given below. Note that if $n$ is even, $c'_{n-2(\floor{n/2}-1),p}=0$ and $n/2-1=\floor{(n-1)/2}$. Whereas if $n$ is odd, $\floor{n/2}=\floor{(n-1)/2}$ and $c'_{n-2(\floor{n/2}-1),p}\neq 0$.
    \begin{align*}
        A_{n}&=\sum_{p=0}^{\floor{n/2}} \phi_p c'_{n-2p,p}=
        \sum_{p=0}^{\floor{n/2}} \phi_p ( c'_{n-2p-1,p}+ c'_{n-2p,p-1}q^{-2(n-2)}[n-1]_{q^2})
        \\
        &=
        A_{n-1}+q^{-2(n-2)}[n-1]_{q^2} \sum_{p=0}^{\floor{n/2}-1} \phi_{p+1}  c'_{n-2p-2,p}
        \\
        &=A_{n-1}-q^{-2(n-2)}[n-1]_{q^2}\left(\frac{q-q^{-1}}{q^2[2]}\right)A_{n-2}
        \\
        &=
        A_{n-1}-q^{-2(n-1)}[n-1]_{q^2}\frac{q-q^{-1}}{[2]}A_{n-2}
    \end{align*}
    Hence $A_{n}-A_{n-1}=-q^{-2(n-1)}[n-1]_{q^2}\frac{q-q^{-1}}{[2]}A_{n-2}$\,. An identical recursion for $B_n$ is found by computing
    \begin{align*}
        B_{n}-B_{n-1}&=
        \psi_{n-2}[2]^{-1}(q^{n-1}+q^{-(n-1)})\left(q^{-\binom{n}{ 2}} (q^n+q^{-n})[2]^{-1}-q^{-\binom{n-1}{ 2}}\right)
        \\
        &=
        \psi_{n-2}[2]^{-1}(q^{n-1}+q^{-(n-1)}) q^{-\binom{n-2}{ 2}}\left(q^{-2n+3}(q^n+q^{-n})[2]^{-1}-q^{-n+2}\right) 
        \\
        &=  
        B_{n-2}\frac{[n-1]_{q^2}}{[2][n-1]}\left((q^{-n+3}+q^{-3n+3})-(q^{-n+3}+q^{-n+1})\right) 
        \\
        &=
        -q^{-2(n-1)}\frac{[n-1]_{q^2}}{[2]}(q-q^{-1})B_{n-2} 
        \,.
    \end{align*}
    It follows that $A_n=B_n$ by strong induction, which is the identity claimed in the lemma. 
\end{proof}

\medskip

\section{Inexistence of Quantum Weyl Elements and Related Notes}\label{sec:InExWeyl}

In the finite-dimensional representation theory of classical, simple Lie algebras, Weyl elements 
are formal elements that are instrumental in the derivation of various dimension and character formulae. For a  simply connected simple Lie group $G$ with maximal torus $T\subset G$ the geometric Weyl group $\Weyl$, defined as 
$N(T)/T$, is acting on $\mathfrak h$, the Lie algebra of $T$, via conjugation by representatives in the normalizer $N(T)\,$. 
The action of a standard generator $s_i\in\Weyl$ is given by $S_i=\mathrm{Ad}(\mathscr w_i)$, 
where $\mathscr w_i= \exp(E_i)\exp(-F_i)\exp(E_i)\in N(T)$. (see, for example, \cite{Hu78}). The elements $\mathscr w_i$ also appear in the theory of Chevalley groups and Tits systems. 

The $\Weyl$-action extends naturally to $\mathfrak g$ and hence to
$U(\mathfrak g)$. There, the generators $S_i$ coincide with the $T_i$ or $\Tinv_i$ actions
on $\Uhg$ from Section~\ref{subsec:LATM} in the specialization $\hbar\rightarrow 0\,$. On finite-dimensional highest weight representations $V$ of the corresponding Lie algebra $\mathfrak g$ the $\mathscr w_i$\,, as elements of $\mathrm{End}(V)$, are well-defined since $E_i$ and $F_i$ act nilpotently on $V$.

Quantum Weyl elements attempt to define analogs of $\mathscr w_i$ for certain versions of quantum groups, which implement the $T_i$ or $\Tinv_i$ actions in the same manner by conjugation. Constructions of such elements are described in 
\cite{So90,LS90,KR90} and \cite[8.2]{CP95}. In essence, these approaches map a quantum group $U$ into a completion of $\bigoplus_\lambda\mathrm{End}(V_\lambda)$, where $\lambda$ ranges over dominant highest weights so that
each $V_\lambda$ naturally carries a $\Weyl$-action. The $\mathscr w_i$ 
elements are then used to derive various formulae for $R$-matrices and 
$\mathscr A$-actions.

For relevant versions of quantum groups at roots of unity, however, quantum Weyl elements implementing the $T_i$-actions {\em almost never} exist for $\Uz$ and related quantum algebras, even if formulated in a very weak sense. The criteria discussed in this section illustrate that such implementations exist only in very exceptional types of  representations. 

Consider a representation $\rho:\UzQ\rightarrow\mathrm{End}(V)$ on a vector space $V$ over $\mathbb C$
give an embedding $\mathbb Q(\zeta)\subset\mathbb C$. 
Let 
$\mathfrak z$ be the center of $\UzQ$ and $\Zsubalgchar=\Zsubalgchar_{\bullet}\otimes\mathbb C$ the full subalgebra described in Section~\ref{subsec:Z=SignPolyn}. With primes indicating commutants, we have inclusions
$\,\mathrm{im}(\rho)\subseteq \mathrm{im}(\rho)''\subseteq (\rho(\mathfrak z))'\subseteq (\rho(\mathfrak z\cap \Zsubalgchar))'=:C_\rho\,$ of subalgebras of $\mathrm{End}(V)\,$. We say that a representation $(\rho,V)$ is {\em weakly $\mathscr A$-equivariant} if there exist 
invertible operators $\Wops_i\in C_\rho$ such that $\,\rho(\Tinv_i(x))=\Wops_i\rho(x)\Wops_i^{-1}\,$ for all $i=1,\ldots, n\,$ and 
$x\in\UzQ\,$. The existence of quantum Weyl elements $\mathscr w_i\in\UzQ$ would immediately imply this property 
for {\em all} representations
by setting $\Wops_i=\rho(\mathscr w_i)\in \mathrm{im}(\rho)$ and is, thus, a far stronger requirement.

For the assertions and counterexamples that follow, assume $\kay$ is odd and exclude Lie types $\LT{B}$ and $\LT{G}$\,. Fix a maximal word $z\in\wordsetmax$ and denote respective generators by $E_\alpha$ in the sense of \eqref{eq:Genbyroot}. Similar results hold for other Lie types and even $\kay$ but are technically more involved.  

For a given representation,
$(\rho,V)$ use the shorthand $\bar A=\rho(A)$ for $A\in\UzQ$ as well as  $F_\alpha=E_{-\alpha}$
and $\Fpw_\alpha=\Epw_{-\alpha}$ for $\alpha\in\proots\,$. Let also  $\grf{\Lambda}_\kay=\mathbb Z^{{\sroots}}/\kay \mathbb Z^{{\sroots}}\cong (\mathbb F_\kay)^{\sroots}$ be the 
weight space mod $\kay$, generated by the fundamental weights $\varpi_i\,$ defined via 
$\symbrack{\varpi_i}{\breve\alpha_j}=\delta_{ij}\,$. Assuming $(\rho,V)$ is weakly $\mathscr A$-equivariant,  denote by $\Wopsalg$ the subgroup of $\mathrm{GL}(V)\cap C_\rho$ generated by 
$\Wops_i\,$.  Moreover, for a reduced word $w=w_{i_1}\ldots w_{i_k}\,$, write $\Wops_w=\Wops_{i_1}\cdot\ldots\cdot \Wops_{i_k}\,$. Recall also the augmentation ideals
$\ZmaxId{\bullet}^\utypechar$ and 
$\ZmaxIdHat{\siAA}$ as defined in \eqref{eq:MaxKidealNots} and \eqref{eq:ZidealMaxNotHat} of Section~\ref{subsec:Z_Ideals}.

\begin{prop}\label{prop:AequivConds}
Suppose $(\rho,V)$ is weakly $\mathscr A$-equivariant with $\{\Wops_i\}$ and $\Wopsalg$ as above, $\kay$ odd, and the Lie type is neither $\LT{B}$ nor $\LT{G}\,$. Then

\vspace{-1mm}

\begin{enumerate}[label=\roman*), leftmargin=13mm,] 
    \item We have decompositions $V=\bigoplus_{\lambda\in \grf{\Lambda}_\kay} V_\lambda\,$, where $\,K_iv=\zeta^{\symbrack{\lambda}{\alpha_i}}v\,$ for $\,v\in V_\lambda\,$. Moreover, $\,\bar E_\alpha(V_\lambda)\subseteq V_{\lambda+\alpha}\,$ and $\,\bar F_\alpha(V_\lambda)\subseteq V_{\lambda-\alpha}\,$.\vspace{1.7mm}\label{item:AequivConds:weights} 
    \item For any $w\in\wordset$ we have $\Wops_w(V_\lambda)=V_{s(\lambda)}$  where $s=\wordroot(w)$.\vspace{1.7mm}\label{item:AequivConds:weylmap} 
    \item For any pair $\alpha,\beta\in \proots$ with $d_\alpha=d_\beta\,$  there exists an $\,\Wops\in\Wopsalg\,$ such 
    that $\,\Wops\bar E_\alpha \Wops^{-1}=\bar E_\beta\,$. Similarly, 
    $\bar F_\alpha$ and  $\bar F_\beta$ are conjugate by some element in $\,\Wopsalg\,$, as are $\bar F_\alpha$ and
    $-\bar E_\beta\bar K^\beta \,$. 
    \vspace{1.4mm}\label{item:AequivConds:genconj} 
    \item There are operators $\,\bar Z_1, \bar Z_{\maxd}\in \Wopsalg'\cap (\mathrm{im}(\rho))'\subset\mathrm{End}(V)\,$ such that $\bar \Epw_\alpha=- \bar \Fpw_\alpha=\bar Z_1$ for all short roots $\alpha\in\proots$ (that is, $d_\alpha=1$) and $\bar \Epw_\alpha=- \bar \Fpw_\alpha=\bar Z_\maxd$ if $\alpha\in\proots$ is a long root ($d_\alpha=\maxd\,$).\vspace{1.4mm}\label{item:AequivConds:Zconst} 
    \item Suppose $V$ is cyclic with a cyclic vector $\mathbfit v\in V$ such that $\UzQ^+\mathbfit v=0$\,. Then
    $\ZmaxIdHat{\siAA}\subseteq \mathrm{ker}(\rho)$ so that $(\rho,V)$ factors into a representation of a respective {\em small} quantum group.\vspace{1.4mm}\label{item:AequivConds:hw=small}  
\end{enumerate}
\end{prop}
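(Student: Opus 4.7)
The plan is to exploit the identification $C_\rho=(\rho(\mathfrak z\cap \Zsubalgchar))'$ together with Proposition~\ref{prop:Zcentral}, which places every $\Kpw_i$, $\Epw_\alpha$, and $\Fpw_\alpha$ in $\mathfrak z\cap\Zsubalgchar$ for odd $\kay$ in the admitted Lie types. Since each $\Wops_i$ lies in $C_\rho$ and thus commutes with every $\bar\Kpw_j$, raising the implementation identity $\Wops_i\bar K_j\Wops_i^{-1}=\bar K_j\bar K_i^{-A_{ij}}$ to the $\kay$-th power forces $\bar\Kpw_i^{A_{ij}}=\id$ for every $j\neq i$. In the Lie types $\LT{A},\LT{C},\LT{D},\LT{E},\LT{F}$ every vertex of the Dynkin diagram has at least one neighbour with $A_{ij}=-1$, so $\bar\Kpw_i=\id$ and hence $\bar K_i^\kay=\id$ on $V$; this is precisely the step that breaks at the terminal double bond of $\LT{B}$ or the triple bond of $\LT{G}$, justifying their exclusion. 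The commuting family $\{\bar K_i\}$ is then simultaneously diagonalisable with $\kay$-th roots of unity as eigenvalues, and labelling joint eigenspaces by weights modulo $\kay$ yields the $\grf{\Lambda}_\kay$-decomposition of~(i). For (ii), the implementation rule together with $\Weyl$-equivariance of the Cartan pairing shows that $\Wops_i$ sends $V_\lambda$ into $V_{s_i(\lambda)}$, and invertibility turns the inclusion into equality; iteration over a reduced word covers general $w$.

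For (iii), given $\alpha,\beta\in\proots$ with $d_\alpha=d_\beta$, classical Weyl group theory produces $s\in\Weyl$ with $s(\alpha)=\beta$, and Proposition~\ref{prop:wordgencont} together with the flexibility in the choice of convex ordering allows a reduced word $w$ for $s$ to be selected so that $\Tinv_s(E_\alpha)=E_\beta$ on the nose; conjugation by $\Wops_w$ then yields the desired identity, and the $F$-case follows by applying $\Cartaninv$, which commutes with every $\Tinv_i$. The final assertion, involving $-\bar E_\beta\bar K^\beta$, is obtained by composing this step with the explicit Garside formula $\Tinv_{\longweyl}(E_k)=-K_k^{-1}F_k$ from \eqref{eq:GarsideAutomEi}. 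For (iv), centrality of the primitive power generators for odd $\kay$ places every $\bar\Epw_\alpha$ and $\bar\Fpw_\alpha$ in $(\mathrm{im}(\rho))'$, so each is invariant under conjugation; raising the identities of~(iii) to the $\kay$-th power therefore upgrades conjugacy to strict equality, producing well-defined operators $\bar Z_1$ and $\bar Z_{\maxd}$. Applying the same reasoning to \eqref{eq:TinvXYsame} in the form $\Wops_i\bar\Epw_i\Wops_i^{-1}=-\bar\Kpw_i^{-1}\bar\Fpw_i$ and invoking $\bar\Kpw_i=\id$ from the first paragraph gives $\bar\Epw_i=-\bar\Fpw_i$; the Weyl-orbit translation of~(iii) then propagates this identity to every root of the same length, and commutation of $\bar Z_{d_\alpha}$ with $\Wopsalg$ and with $\mathrm{im}(\rho)$ is forced by centrality.

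For (v), the hypothesis $\UzQ^+\mathbfit v=0$ kills every $\bar\Epw_\alpha\mathbfit v$, so $\bar Z_1\mathbfit v=\bar Z_{\maxd}\mathbfit v=0$ by~(iv). Since $\bar Z_{d_\alpha}\in(\mathrm{im}(\rho))'$, cyclicity $V=\rho(\UzQ)\mathbfit v$ forces $\bar Z_{d_\alpha}V=\rho(\UzQ)\bar Z_{d_\alpha}\mathbfit v=0$, whence $\bar Z_1=\bar Z_{\maxd}=0$ and therefore $\bar\Epw_\alpha=\bar\Fpw_\alpha=0$ for every $\alpha$. By the description of the generating set of $\ZmaxIdHat{\siAA}$ this yields $\ZmaxIdHat{\siAA}\subseteq\ker(\rho)$, so $\rho$ descends to the restricted quantum group $\Uzn{\kay}^{\redsup}$ of~\eqref{eq:DefReduced}. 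The hard part lies entirely in the first paragraph: securing $\bar\Kpw_i=\id$ uniformly in $\kay$ is exactly where the obstruction against $\LT{B}$ and $\LT{G}$ materialises, as one is only left with $\bar\Kpw_i^2=\id$ or $\bar\Kpw_i^3=\id$ at the terminal multiple bond; extending the statement to these remaining types would require a finer analysis using the signs $\Lsign_i$ and $\Qsign_i$ of~\eqref{eq:defspeczetas} and a restriction to sub-isogenies of the Cartan that are invisible to the present argument.
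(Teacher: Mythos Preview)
There are two genuine gaps.

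\textbf{Item (i), exclusion of $\LT{G}_2$.} Your stated obstruction is incorrect. You restrict to $j\neq i$ and conclude that at the short root of $\LT{G}_2$ only $\bar\Kpw_i^{3}=\id$ survives; but the same argument with $j=i$ (using $\Tinv_i(K_i)=K_i^{-1}$, so $\Tinv_i(L_i)=L_i^{-1}$) gives $\bar\Kpw_i^{2}=\id$, whence $\bar\Kpw_i=\id$ since $\gcd(2,3)=1$. In fact the paper notes that the row-gcd of the Cartan matrix equals $1$ for every type except $\LT{B}$. The actual reason $\LT{G}$ is excluded lies in the \emph{labelling} step, which you pass over: one must show that every joint $\bar K_i$-eigenvalue tuple $(\zeta^{a_i})_i$ arises as $(\zeta^{\symbrack{\lambda}{\alpha_i}})_i$ for some $\lambda\in\grf{\Lambda}_\kay$. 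Since $\symbrack{\lambda}{\alpha_i}=d_i\symbrack{\lambda}{\breve\alpha_i}$, this requires each $d_i$ to be invertible modulo $\kay$. For odd $\kay$ and types $\LT{ACDEF}$ one has $d_i\in\{1,2\}$, so this holds; for $\LT{G}$ with $3\mid\kay$ it fails at the long root.

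\textbf{Item (iii).} The claim that one can select $s\in\Weyl$ with $\Tinv_s(E_\alpha)=E_\beta$ is false in general. The generators $E_\alpha=E_{z[\alpha]}$ are defined via a \emph{fixed} maximal word $z$ (stated explicitly before the proposition), so no ``flexibility in convex ordering'' is available. A concrete counterexample: in $\LT{A}_2$ with $z=w_2w_1w_2$ take $\alpha=\alpha_2$, $\beta=\alpha_1+\alpha_2$; then $E_\beta=E_{(21)}=\Tinv_2(E_1)$, whereas the only $s$ with $s(\alpha)=\beta$ is $s_1$ and $\Tinv_1(E_2)=E_{(12)}\neq E_{(21)}$. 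The paper's route is to pass through simple roots: write $E_\alpha=\Tinv_s(E_i)$ and $E_\beta=\Tinv_t(E_j)$ from the defining words $z[\alpha]$, $z[\beta]$; since $d_i=d_\alpha=d_\beta=d_j$, pick $r\in\Weyl$ with $r(\alpha_i)=\alpha_j$ so that Proposition~\ref{prop:wordgencont} gives $\Tinv_r(E_i)=E_j$; then $\Wops=\Wops_u\Wops_{z'}\Wops_w^{-1}$ (for reduced words representing $t,r,s$) does the job. For the final clause of (iii), the elementary identity $\Tinv_i(F_i)=-E_iK_i$ from \eqref{eq:defLeasy} is more direct than the Garside formula and composes cleanly with this simple-root reduction.

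Your arguments for (ii), (iv), and (v) match the paper's, but your (iv) inherits the gap from (iii); once (iii) is repaired as above, the rest goes through.
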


\begin{proof} Note first that the definition of weak $\mathscr A$-equivariance implies that for any $a\in \Zsubalgchar\cap \mathfrak z$ we have $\rho(\Tinv_s(a))=\rho(a)\,$. Now, since $\kay$ is odd we have $\,L_i=K_i^{\kay}\in\Zsubalgchar\cap \mathfrak z\,$, which implies with $\Tinv_j(L_i)=L_iL_j^{-A_{ji}}$ that $\,\bar L_j^{-A_{ji}}=\mathbb 1\in\mathrm{End}(V)\,$ for all $\,i,j=\{1,\ldots,n\}\,$. For Lie types different from $\LT{B}$ we have $\gcd(A_{j1},\ldots,A_{jn})=1$ for any $j$ so that, indeed, $\bar L_j=\bar K_j^{\kay}=\mathbb 1$ for all $j\,$. Over a field containing $\zeta$, this implies a decomposition into common eigenspaces of the $\bar K_j$ with eigenvalues of $\bar K_j$ given by some powers $\zeta^{\lambda_i}\,$ with $\lambda_i\in\mathbb F_\kay\,$. Now, for $\lambda=\sum_i\lambda_i\varpi_i$ we have $\lambda_i=\symbrack{\lambda}{\breve \alpha_i}\equiv \symbrack{\lambda}{\alpha_i}\mod\kay$ since $\kay$ is coprime to all $d_i$ if the Lie type $\LT{G}$ is also excluded, proving the decomposition in Item {\em \ref{item:AequivConds:weights}}. 
The other claims in Item {\em \ref{item:AequivConds:weights}} as well as Item {\em \ref{item:AequivConds:weylmap}} are immediate from commutation relations with the $K_j$ and the actions of the $\Tinv_i$ on the $K_j$\,. 

For Item {\em \ref{item:AequivConds:genconj}} write $E_\alpha=\Tinv_s(E_i)$ and $E_\beta=\Tinv_t(E_j)$ for some
$s,t\in\Weyl$ and $i,j\in\{1,\ldots,n\}\,$ with $d_i=d_j$\,. So,  $\bar E_\alpha=\Wops_w\bar E_i\Wops_w^{-1}$ and 
$\bar E_\beta=\Wops_u\bar E_j\Wops_u^{-1}\,$ with $s=\Weylpres(w)$ and $t=\Weylpres(u)\,$. Since $\alpha_i$
and $\alpha_j$ have the same length, there is some $r\in\Weyl$ such that $r(\alpha_i)=\alpha_j\,$. From
Proposition~\ref{prop:wordgencont} we then find that $\Tinv_r(E_i)=E_j$ and, hence, 
$\Wops_z \bar E_i\Wops_z^{-1}=\bar E_j$ where $r=\Weylpres(z)\,$. The claim now follows for $\Wops=\Wops_u\Wops_z\Wops_w^{-1}\in\Wopsalg\,$. The remaining assertions follow from analogous arguments.

Since all $\Epw_\alpha,Y_\alpha\in\Zsubalgchar\cap\mathfrak z$ by Proposition~\ref{prop:Zcentral}, we infer that $\bar\Epw_\alpha,\bar Y_\alpha \in \Wopsalg'\cap (\mathrm{im}(\rho))'\,$ for all $\alpha\in\proots\,$. Item~{\em\ref{item:AequivConds:genconj}} now implies that $\bar\Epw_\alpha=\bar\Epw_\beta$ and  
$\bar\Fpw_\alpha=\bar\Fpw_\beta$ whenever $d_\alpha=d_\beta\,$. Next, $\bar\Kpw_i=\mathbb 1$ and \eqref{eq:TinvXYsame} also imply
$\bar \Epw_i=-\bar \Fpw_i$ for all $i\,$ so that also $\bar\Epw_\alpha=-\bar\Fpw_\beta$
whenever $d_\alpha=d_\beta\,$, proving the claim in Item {\em \ref{item:AequivConds:Zconst}}.

For a cyclic vector $ \mathbfit v\in V$ as in Item {\em \ref{item:AequivConds:hw=small}}, note that, 
by Item {\em \ref{item:AequivConds:Zconst}}, any $\bar Z\in\{\bar Z_1,\bar Z_\maxd\}$ is given as
$\bar Z=\bar \Epw_i$ for some $i$ so that $\bar Z\mathbfit v=\bar \Epw_i\mathbfit v=0\,$. Given that any 
$\bar A\in \mathrm{im}(\rho)\,$ commutes with $\bar Z$\,, we also find  $\bar Z\bar A\mathbfit v=\bar A\bar Z\mathbfit v=0$
and hence $\bar Z = \bar0$ by the cyclicity assumption. Item~{\em \ref{item:AequivConds:Zconst}} 
now implies that all $\bar \Epw_\alpha=\bar \Fpw_\alpha=0\,$. Thus, $\Epw_\alpha=E_\alpha^{\ell_\alpha}$, 
$\Fpw_\alpha=F_\alpha^{\ell_\alpha}$, and $K_i^{\ell_i}-1=\Kpw_i-1$ are all in the kernel of $\rho$, which then factors into the small quantum group. 
\end{proof}

In the remainder of this section, we provide several examples of representations that are not weakly $\mathscr A$-equivariant using the criteria from the above proposition. 

Denote by $V_\kappa$ the weight space on which each $K_i$ acts as $\kappa_i\mathbb 1\,$ for an $n$-tuple $\kappa=(\kappa_1,\ldots,\kappa_n)\in \mathbb t=(\mathbb C^{\times})^n\,$\,. For any $\kappa\in\mathbb t$, 
a representation with $V_\kappa\neq 0$ can be easily constructed, for example, as a highest weight representation with  highest weight $\kappa\,$. The latter can also chosen to be finite-dimensional and irreducible by dividing out maximal submodules as usual. The condition in Item~{\em \ref{item:AequivConds:weights}} thus restricts the set of possible weights from the infinite group $\mathbb t$ to the finite subgroup $\mathbb s=\{\kappa\in\mathbb t:\kappa_i^\kay=1\,\forall i\}$ of tuples of roots of unity. In this sense, even the weak $\mathscr A$-equivariance condition forces a restriction to a null set.

Requiring $\bar\Kpw_i=\mathbb 1$ still leaves large families of finite-dimensional 
representations that are not waekly $\mathscr A$-equivariant. Assuming that $\kay$ is odd,
these are readily constructed by choosing 
{\em any} ideal $J\subsetneq \ZmaxId{\bullet}^-\subsetneq\Zsubalgchar^-\cong\mathbb C[\{\Fpw_\alpha:\alpha\in\proots\}]\,$ such that $\Zsubalgchar^-/J$ is of finite rank $m>1\,$.
Since $\Zsubalgchar$ is central, any weight $\kappa\in\mathbb s$ defines a 
one-dimensional representation of $B_J=J\cdot\Uzk{\mathbb k}^{\geqzero}$ with $K_i\mathbfit v=\kappa_i\mathbfit v$ and 
$E_i\mathbfit v=b\mathbfit v=0$ for all $i\in\{1,\ldots,n\}$ and all $b\in J\,$. The induced
module $\Uzk{\mathbb k}\otimes_{B_J}\mathbfit v$ is now finite-dimensional with $\bar\Kpw_i=\mathbb 1$,
but violates Item~{\em  \ref{item:AequivConds:hw=small}} in the proposition above and is, thus, not 
$\mathscr A$-equivariant. There are many choices for $J$ for which the $\bar\Fpw_\alpha$ are not zero and may in fact even be invertible.  

\subsection{An Example in Rank 1} Even among representations that factor through the small quantum groups, most are still not weakly $\mathscr A$-equivariant. This can already be seen in the rank 1 case $\mathfrak g=\mathfrak{sl}_2$ and $\kay=3\,$ for which $E^3=F^3=0$ and $K^3=1\,$. 

Denote by $V_0$\,, $V_-$\,, and $V_+$ the eigenspaces of $K$ for eigenvalues 
1, $\zeta^2=\zeta^{-1}$, and $\zeta^{-2}=\zeta^{1}$, respectively.
We consider a special family of representations in which $V_+$ and $V_-$ are isomorphic as vector spaces
to a common space $V_*$ so that, after conjugation by respective isomorphisms, 
the restrictions $E:V_-\rightarrow V_+$ and $F:V_+\rightarrow V_-$ may be viewed as linear endomorphisms of $V_*\,$.
Further specializing our example, we assume that these two endomorphisms are the identity on $V_*\,$.
The restrictions of $E$ and $F$ to other eigenspaces are then denoted as 
 $E_-,F_+:V_0\rightarrow V_*$ and $E_+,F_-:V_*\rightarrow V_0\,$. They form a representation of the small quantum group iff
 \begin{equation}\label{eq:QuiverRep}
     \begin{aligned}
       &  E_+E_-=F_-F_+=0 \quad\mbox{and}\quad E_+F_+=F_-E_-\qquad &\mbox{ on }V_0\\
      \mbox{and} \hspace*{20mm} &  E_-E_+=F_+F_-=E_-F_-=F_+E_+=0 \qquad &\mbox{ on }V_*\,.
     \end{aligned}
 \end{equation}

 This type of reduction of representations of $\Uz(\mathfrak{sl}_2)$ to a quiver with relations as in \eqref{eq:QuiverRep} extends to all roots of 
 unity as shown, for example, in \cite{Ke89}. The representation given by
 the $E_{\pm}$ and $F_{\pm}$ as above is $\mathscr A$-equivariant if and only if there are isomorphisms $W_0\in\mathrm{GL}(V_0)$
and $W_*\in\mathrm{GL}(V_*)$ such that
\begin{equation}\label{eq:QuiverAequiv}
    W_0E_+W_*^{-1}=F_-\,,
\quad
W_*E_-W_0^{-1}=-F_+\,,
\quad 
W_*F_+W_0^{-1}=E_-\,,
\quad
\mbox{and}
\quad 
W_0F_-W_*^{-1}=-E_+
\end{equation}
implying, in particular, $\mathrm{rank}(E_+)=\mathrm{rank}(F_-)$ and $\mathrm{rank}(E_-)=\mathrm{rank}(F_+)\,$. 

It is not hard to construct maps that fulfill \eqref{eq:QuiverRep} but violate \eqref{eq:QuiverAequiv}. For example,
setting $E_+=F_-=0$ automatically solves all equations in \eqref{eq:QuiverRep} so that any choice of maps 
$E_-,F_+:V_0\rightarrow V_*$ produces a representation. The residual representation theory is, thus, equivalent to that
of the  (2) Kronecker quiver, which is tame and for which the indecomposable modules are classified \cite[Theorem 4.3.2]{Be95}. 

Kronecker's classification includes a generic family in which $\dim(V_0)=\dim(V_*)$ and one of the maps can be chosen as the identity matrix and the other an indecomposable rational form. For example, if $\mathbb k=\mathbb C$ the latter implies an indecomposable Jordan block $\,J=\kappa\mathbb 1+N$, where $N$ is maximally nilpotent. It is not hard to check that this representation is weakly $\mathscr A$-equivariant if and only if $\kappa^2=-1\,$, again singling out a finite set from an infinite parameter space. 

For the second family in the classification, the dimensions of $V_0$ and $V_*$ differ by one and there is exactly one representation for each such choice of dimensions. 
Bases $\{v^0_i\}$ of $V_0$ and  $\{v^*_i\}$ of $V_*$ 
may be chosen such that $E_-v^0_i=v^*_i$ and $F_+v^0_i=v^*_{i+1}$ or $F_+v^0_i=v^*_{i-1}\,$.  
These special cases are indeed weakly $\mathscr A$-equivariant. The $W_0$ and $W_*$ may be chosen as anti-diagonal matrices with alternating $+1$ and $-1$ entries. 

Note that if we require representations to lift to a larger (unrolled) quantum group that contains the element $H$ as in \eqref{eq:Lierels2}, the generic family is excluded unless $\kappa=0\,$ (which does not yield a weakly  $\mathscr A$-equivariant module due to the rank condition). In this situation, however, the notion of weak $\mathscr A$-equivariance on the larger algebra implies
also $-\bar H=\Wops\bar H\Wops^{-1}$ since $\Tinv(H)=-H\,$, implying symmetry of $H$-weights around $0$. The latter is easily broken by shifts $\bar H\mapsto \bar H+m\kay\mathbb 1$\,, producing infinite families of lifts of a representation that are not weakly $\mathscr A$-equivariant.

For irreducible representations arising from sufficiently small 
dominant highest weights (in a respective principal affine Weyl alcove) 
one may expect weak $\mathscr A$-equivariance to apply based on the original
constructions of quantum Weyl elements. In the $\Uz(\mathfrak{sl}_2)$ 
situation above, this corresponds to the trivial solution of 
\eqref{eq:QuiverRep} and \eqref{eq:QuiverAequiv} in which all maps are zero.

Finally, we note that the standard projective representation in the special $\Uz(\mathfrak{sl}_2)$ case is in fact weakly $\mathscr A$-equivariant. In terms of the maps
above it is given in a suitable basis by matrices
$$
E_+=\begin{bmatrix}
    0 & 0 \\ 1 & 0
\end{bmatrix}
\,,\;\;
E_-=\begin{bmatrix}
    0 & 0 \\ 1 & 0
\end{bmatrix}
\,,\;\;
F_+=\begin{bmatrix}
    1 & 0 \\ 0 & 0
\end{bmatrix}
\,,
\;\;\mbox{ and }\;\;
F_-=\begin{bmatrix}
    0 & 0 \\ 0 & 1
\end{bmatrix}
\,.
$$
The conditions \eqref{eq:QuiverAequiv} are then solved, for example, by $W_0=\mathbb 1$
and 
$W_*=\begin{bmatrix}
    0 & -1 \\ 1 & 0
\end{bmatrix}\,$.

\medskip

\section{Integral Form of Coalgebras}
\label{sec:integralcoalg}
We introduce an additional normalization $\hat X_\alpha$ of the De Concini-Procesi singularized power generators $\breve\Epw_\alpha\,$, which entails integral coproduct expressions in our formulae for types $\mathsf{A}_n$ and $\mathsf{B}_2$\,. Notice from \eqref{eq:CoprodB2:iji} that the singularized generator $\breve\Epw_{(iji)}$ has a coproduct over the Gauss integers $\mathbb{Z}[\scalebox{.75}{$\sqrt{-1}\,$}]$ when $\ell$ is even. Thus, the integral coordinate rings arise as Hopf subalgebras of $\Uzg$ in these types. These integral forms also support simplified computations in future work.

In type $\mathsf{A}_n$\,, we gave in Lemma \ref{lem:An-CoprX} a formula for the coproduct of generators $X_{i,j}$
\begin{equation}
        \Delta(\Epw_{i,j})=\sum_{k=i}^jb_k \Epw_{i,k}\otimes \Kpw_{i,k}\Epw_{k,j} \;,
\end{equation} 
where $\;b_k=(\zeta^{-1}-\zeta)^\ell\zeta^{\binom{\ell}{ 2}}\;$ for $i<k<j$ and $b_i=b_j=1\,$. Replacing all instances of $\Epw_\alpha$ with the singular generator $\breve\Epw_\alpha$ removes the coefficient $(\zeta^{-1}-\zeta)^\ell$ from $b_k$\,. If we set $\hat\Epw_\alpha = \zeta^{\binom{\ell}{ 2}}\breve\Epw_\alpha = (\zeta^{-1}-\zeta)^\ell\zeta^{\binom{\ell}{ 2}}\Epw_\alpha$\,, then we obtain a coproduct formula in which all nonzero coefficients are $1$\,.
\begin{equation}
    \Delta(\hat\Epw_{i,j})=\sum_{k=i}^j \hat\Epw_{i,k}\otimes \Kpw_{i,k}\hat\Epw_{k,j}  
\end{equation}

More generally, set $\hat E_i^{k} = \zeta_i^{\binom{k}{ 2}} \breve E_i^k$, then write $\hat \Epw_i = \hat E_i^{\elli}$. The coproduct formulae of Proposition \ref{prop:CoprodB2} have simpler presentations using the rescaled singularized generators
\begin{align}
    \phantom{\Delta(\Epw_{(iji)})}&\begin{aligned}
        \mathllap{\Delta(\hat\Epw_{(iji)})}&=
1\otimes \hat\Epw_{(iji)}+\hat\Epw_i\otimes \Kpw_i \hat\Epw_j^{\gcdle}+\hat\Epw_{(iji)}\otimes \Kpw_i \Kpw_j^{\gcdle}\,,
    \\&
    \qquad+\delta_{\gcdle,2}\cdot 2\cdot  (-1)^\ellj \hat\Epw_{(ij)}\otimes \Kpw_i \Kpw_j\hat\Epw_j
    \end{aligned}
    \\
    \rule{0pt}{31pt}
    &\begin{aligned}
        \mathllap{\Delta(\hat\Epw_{(ij)})}&=
    1\otimes \hat\Epw_{(ij)}+\hat\Epw_i^{\gcdleopp}\otimes \Kpw_i^{\gcdleopp}\hat\Epw_j+\hat \Epw_{(ij)}\otimes \Kpw_i^{\gcdleopp}L_j
    \\&\qquad +
    \delta_{\gcdleopp,2}\cdot 2\cdot  \zeta^{\binom{\ell+1}{ 2}}\hat  \Epw_i\otimes \Kpw_i\hat\Epw_{(iji)}\,.
    \end{aligned}
\end{align}
These follow from the observations $\hat\Epw_j^\gcdle = \zeta_j^{\binom{\ell}{ 2}} \breve\Epw_j^\gcdle=(-1)^{\ell+1} \breve\Epw_j^\gcdle$ and $\hat \Epw_i^\gcdleopp = \zeta^{\binom{2\ellj }{ 2}}\breve \Epw_i^\gcdleopp$. 
Note also that $\delta_{\gcdle,2} \zeta^{\binom{\ell}{ 2}}=\delta_{\gcdle,2} (-\zeta)^{-\ellj}$, 
$\delta_{\gcdleopp,2} \zeta^{2\binom{\ell}{ 2}}=1$, 
$\delta_{\gcdleopp,2} \zeta^{\binom{\ell}{ 2}+\ellj}=\delta_{\gcdleopp,2} \zeta^{\binom{\ell}{ 2}+\ell}=\delta_{\gcdleopp,2} \zeta^{\binom{\ell+1}{ 2}}$ are all valued in $\{-1,0,1\}$.

The automorphism $\Kinvaut$ is given for certain power generators in \eqref{eq:XJ-form} and \eqref{eq:XI-form}. Here, we regard it as a factor of the antipode, which takes values over the same ring as the coproduct. We deduce integrality for the $\hat\Epw_\alpha$ from the respective equations.
\begin{equation}
\begin{aligned}
   (-1)^\ell\hat\Epw_{(ji)}\,&= \,\zeta_j^{\ell}\hat\Epw_{a_{m-1}}\,+\,
(-1)^\ell\hat\Epw_j^\gcdle\hat\Epw_i\,+\,
(1-\delta_{\gcdle,1})\cdot\edgenum\cdot(-1)^{\ell} \cdot T_1\vspace*{8mm}\\
\mbox{where} & \hspace*{24mm} 
     T_1= 
     \begin{cases}
    (-1)^{\ell_j+1}\hat\Epw_j\hat\Epw_{(ij)} & \edgenum=2\vspace*{2.5mm}\\
    \zeta_j^{2\binom{\ellj}{ 2}}\hat\Epw_j^2\hat\Epw_{(ij)}
    \,+\,
    \hat\Epw_j\hat\Epw_{(ijij)}& \edgenum=3 \,,
    \end{cases}
\end{aligned}
\end{equation}

\begin{equation}
\begin{aligned}
   (-1)^{\edgenum\ell_j}\hat\Epw_{b_{m-1}}\,&= \,\zeta_j^{\ell_j}\hat\Epw_{(ij)}\,+\,
(-1)^{\gcdleopp\cdot\ell}\hat\Epw_j\hat\Epw_i^\gcdleopp\,+\,
\delta_{\gcdle,1}\cdot(1-\delta_{\edgenum,1})\cdot\edgenum\cdot (-1)^{\ell}\cdot T_2\vspace*{8mm}\\
\mbox{where} & \hspace*{24mm} 
     T_2= 
     \begin{cases}
    \zeta^{\ell} \hat\Epw_{(iji)}\hat\Epw_{i} & \edgenum=2\vspace*{2.5mm}\\
    \hat\Epw_{(iji)}\hat\Epw_i\,
    +\,(-1)^{\ell+1}\hat\Epw_{(ijiji)}\hat\Epw_{i}^2
    & \edgenum=3 \,.
    \end{cases}
\end{aligned}
\end{equation}

The integrality of these expressions suggests that the normalized generators above produce integral coproducts for all Lie types and roots of unity.
\begin{conj}
    Let $z\in\wordsetmax$\,. The set of $\hat\Epw_w$ for $w\leqRB z$ together with all $L_i$ generate a $\bbz$-Hopf subalgebra of $\Uzg$\,.
\end{conj}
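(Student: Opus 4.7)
\bigskip

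\noindent\textbf{Proof proposal.}
The plan is to prove the conjecture by induction on the rank of the root system and, within a fixed type, on the height of the root (equivalently, on $\len w$). Set $\mathfrak H_z = \mathbb Z\bigl\langle \hat\Epw_w,\Kpw_i^{\pm 1} : w\leqRB z\bigr\rangle$. We must verify three things: (i) $\mathfrak H_z$ is a subalgebra of $\Uzg$ over $\mathbb Z$, (ii) $\Delta(\mathfrak H_z)\subseteq \mathfrak H_z\otimes_{\mathbb Z}\mathfrak H_z$, and (iii) $S(\mathfrak H_z)\subseteq \mathfrak H_z$. Item (i) follows quickly: by Proposition~\ref{prop:ZPBWbases} the $\Epw_w$ skew-commute with sign coefficients $\pm 1$, and the normalization constants are multiplicative in the sense $\hat\Epw_w \hat\Epw_{w'}= \pm \hat\Epw_{w'}\hat\Epw_w$, so that ordered monomials in $\{\hat\Epw_w, \Kpw^\mu\}$ provide a $\mathbb Z$-basis for a skew-polynomial algebra inside $\Uzg$. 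Thus only (ii) and (iii) require real work.

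For (ii), the base case is immediate: the computation in \eqref{eq:simplecoprod} together with Corollary~\ref{cor:zeros-mulitnom} yields $\Delta(\hat\Epw_i)=\hat\Epw_i\otimes \Kpw_i + 1\otimes \hat\Epw_i$, which lies in $\mathfrak H_z\otimes_{\mathbb Z}\mathfrak H_z$. For the inductive step, given a non-simple root $\alpha = \wordroot(w)$ with $w = u\cdot w_i$ reduced, write $\hat\Epw_w = c_w\cdot \Tinv_u(E_i^{\ell_i})$ where $c_w = (\zeta_w^{-1}-\zeta_w)^{\ell_w}\zeta_w^{\binom{\ell_w}{2}}$. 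Apply Proposition~\ref{prop:CoprodIntWord} with $x = E_i^{\ell_i}$ and with the word $u$, to obtain, modulo the Hopf ideal $\FullpTmIdeal u$,
\begin{equation*}
\Delta(\hat\Epw_w) \,\equiv\, c_w\cdot \Rtprodi{u}\cdot \Tinv_u^{\otimes 2}\bigl(\Delta(E_i^{\ell_i})\bigr)\cdot \Rtprod{u}\,.
\end{equation*}
Since $\Tinv_u^{\otimes 2}(\Delta(E_i^{\ell_i})) = \hat\Epw_w\otimes \Kpw_w + 1\otimes \hat\Epw_w$ (up to the same normalization $c_w$), the issue reduces to checking that the conjugation by the truncated partial quasi-$R$-matrices $\Rtprodi{u}, \Rtprod{u}$ produces only terms in $\mathfrak H_z\otimes_{\mathbb Z}\mathfrak H_z$. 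This is the heart of the matter: the partial $R$-matrix contains summands $\frac{\zeta_v^{s(s-1)/2}}{[s]!_v}(\zeta_v-\zeta_v^{-1})^s E_v^s\otimes F_v^s$ with $s<\ell_v$, whose coefficients are \emph{not} integers. The key claim to verify is that the renormalization $c_w$ paired with the iterated $q$-commutator structure produces exactly the cancellations needed, analogous to what was observed explicitly for $\mathsf A_n$ in Lemma~\ref{lem:An-CoprX} and for $\mathsf B_2$ in Proposition~\ref{prop:CoprodB2}.

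\emph{The main obstacle} will therefore be the general-type cancellation. I would reduce it as follows: using the recursion \eqref{eq:PTrecurs} and that $u$ can be refined as $u = a\cdot \longtwoword{ij}\cdot b$ (or an initial segment thereof) whenever a non-simple root first appears, the global question reduces to showing integrality of conjugation for a single rank-two sub-system. The $\mathsf A_1\times\mathsf A_1$, $\mathsf A_2$, and $\mathsf B_2$ cases are already essentially verified in the excerpt and Appendix~\ref{sec:integralcoalg}. The remaining rank-two case is $\mathsf G_2$, where Conjecture~\ref{conj:Zwordindep} is still open; so that case is both the main technical obstacle and likely to require explicit computation analogous to Propositions~\ref{prop:XJ-form}--\ref{prop:XI-form} at $\edgenum=3$, together with the $c_{k,t}'$ identity from Lemma~\ref{lem:SumcIdentity}. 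Finally, (iii) follows from (ii): by Corollary~\ref{cor:GarsideAutom} the antipode is, up to the Dynkin involution and $\DCPU$-automorphisms that preserve $\mathfrak H_z$, essentially conjugation by a Garside element, so the integrality of $S$ on $\mathfrak H_z$ reduces to the integrality of the coproduct already established, combined with the antipode axiom $m\circ(\id\otimes S)\circ \Delta = \varepsilon$ inverted inductively in the $\wgrad$-grading (as in the derivation of \eqref{eq:AnitpB2ij}--\eqref{eq:AnitpB2iji}).
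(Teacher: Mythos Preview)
The statement you are attempting to prove is stated in the paper as a \emph{conjecture}, not a theorem; the paper does not prove it. Integrality is verified there only for types $\LT{A}_n$ and $\LT{B}_2$, and in both cases by direct, explicit computation of $\Delta(\Epw_w)$ via power-of-coproduct formulas (Lemma~\ref{lem:An-CoprX}, Proposition~\ref{prop:CoprodB2}, Proposition~\ref{prop:PowerFormula}) followed by inspection of the coefficients after rescaling. There is no general argument in the paper, so a valid proof along your lines would be a genuine contribution.

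As written, however, the central step has a real gap. The congruence from Proposition~\ref{prop:CoprodIntWord} holds only modulo $\FullpTmIdeal{u}$, and the right-hand side $\Rtprodi{u}\cdot\Tinv_u^{\otimes 2}(\Delta(E_i^{\ell_i}))\cdot\Rtprod{u}$ lies in $\Uz^+\otimes\Uz$ rather than in $\Uz^{\geqzero}\otimes\Uz^{\geqzero}$: the second tensor factor contains products of $F_v$'s coming from $\Rtprod{u}$, and the coefficients involve $[s]!_v^{-1}\in\Zzn{\ell}$. All of these non-integral and $F$-carrying terms must be absorbed by correction terms in $\FullpTmIdeal{u}$ whose form you do not control; nothing in the argument explains why the residue has coefficients in $\mathbb Z$ rather than merely in $\Zzn{\ell}$. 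The proposed reduction to rank two is likewise unsubstantiated: the multiplicative factorization of $\Rtprod{w}$ along a Matsumoto decomposition does not translate into a decomposition of the integrality question for $\Delta(\hat\Epw_w)$, which is an additive statement about a single element. (Your remark on the antipode via Corollary~\ref{cor:GarsideAutom} is also off: $\Cartanaut$ exchanges $E$'s and $F$'s and does not preserve $\mathfrak H_z$; the inductive derivation from the antipode axiom, as in \eqref{eq:AnitpB2ij}--\eqref{eq:AnitpB2iji}, is the correct route once (ii) is known.) Finally, for $\LT{G}_2$ even the more basic word-independence of $\Zsubalgchar^+_\bullet$ is left open in the paper (Conjecture~\ref{conj:Zwordindep}).
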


\bibliographystyle{alpha}
\bibliography{biblio}
\end{document}